\newtheorem{dummy}{dummy}[section]
\newtheorem{lemma}[dummy]{Lemma}
\newtheorem{theorem}[dummy]{Theorem}
\newtheorem{corollary}[dummy]{Corollary}
\newtheorem{proposition}[dummy]{Proposition}
\theoremstyle{definition}
\newtheorem{definition}[dummy]{Definition}
\newtheorem{convention}[dummy]{Convention}
\newtheorem{notation}[dummy]{Notation}
\newtheorem{remark}[dummy]{Remark}
\newtheorem{construction}[dummy]{Construction}
\newtheorem{observation}[dummy]{Observation}
\newtheorem{property}{Property}
\newcommand{\Dee}{D}
\newcommand{\You}{U}
\newcommand{\Are}{R} 
\newcommand{\El}{L} 
\newcommand{\C}{\mathbb{C}}
\newcommand{\sgn}{\mbox{sgn}}
\newcommand{\dd}{\partial}
\newcommand{\Z}{\mathbb{Z}}
\newcommand{\R}{\mathbb{R}}
\newcommand{\A}{\mathcal{A}}
\newcommand{\cE}{\mathcal{E}}
\newcommand{\tL}{\tilde{L}}
\newcommand{\lchA}{\mathcal{A}_{\mathit{LCH}}}
\newcommand{\e}{\epsilon}
\newcommand{\sq}{\mathit{Sq}}
\newcommand{\thmAlg}{2.1~}%{X.X~}
\newcommand{\ea}{\epsilon_1}
\newcommand{\ec}{\epsilon_2}%{\textcolor{YellowOrange}{\epsilon_3}}
\newcommand{\ed}{\epsilon_3}%{\textcolor{OrangeRed}{\epsilon_4}}
\newcommand{\grad}{\delta}%{\textcolor{OrangeRed}{\delta}}
\newcommand{\tra}{\pitchfork}
\begin{document}

\title[Cellular LCH for sufaces, II]{Cellular Legendrian contact homology for surfaces, part II}

\thanks{We thank Tobias Ekholm for explaining his transversality argument.  The second author is supported by grant 317469 from the Simons Foundation. He thanks the Centre de Recherches Mathematiques for hosting him while some of this work was done.}

\author{Dan Rutherford}
\address{Ball State University}
\email{rutherford@bsu.edu}

\author{Michael Sullivan}
\address{University of Massachusetts, Amherst}
\email{sullivan@math.umass.edu}

\begin{abstract}
This article is a continuation of \cite{RuSu1}.  For Legendrian surfaces in  $1$-jet spaces, we prove that the Cellular DGA defined in \cite{RuSu1} is stable tame isomorphic to the Legendrian contact homology DGA.
\end{abstract}

\maketitle

{\small \tableofcontents}

\section{Introduction}

This article is a continuation of \cite{RuSu1}.  Together the two articles provide a formulaic computation of the Legendrian contact homology (LCH) differential graded algebra (DGA)  for any Legendrian surface in a $1$-jet space.
% for which all differentials are given by explicit matrix formulas.  
In \cite{RuSu1} we compute the LCH DGA for several families of examples, and in \cite{RuSu3} we apply our results to study augmentations and generating families.

Let $S$ be a surface, and let $L \subset J^1(S)$ be a closed
%\footnote{\dr{4-11: Do we have anything to say about the non-closed case?} \ms{7/13/16: added fgi case}} 
Legendrian surface in the $1$-jet space of $S.$  
%(Our results could apply to non-compact Legendrians with ``finite-geometry at infinity," a sufficient condition for Gromov compactness of pseudo-holomorphic curves, however, we do not do this here.)
In \cite{RuSu1}, we defined the   cellular DGA of $L$, denoted here as $(\A_{\mathit{cell}},\partial)$.  The definition of $(\A_{\mathit{cell}},\partial)$ requires as input a cellular decomposition of the base projection of $L$ that contains in its $1$-skeleton the projection of 
%the singular set of $L$, i.e. 
%in its $1$-skeleton contains
all crossings arcs, cusp edges, and swallowtail points from the front projection.   For each cell, we have one generator of $\A_{\mathit{cell}}$ for each pair of sheets above the cell that and do not cross one another.  After collecting generators into upper triangular matrices, the differential $\partial$ is characterized by simple matrix formulas as summarized in Figure \ref{fig:ReebChords}.  (Additional terms appear when swallowtail points are present.)  

We have established in Theorem 4.1 of \cite{RuSu1} that $(\A_{\mathit{cell}},\partial)$ is determined, up to stable tame isomorphism, by the Legendrian $L$, i.e., it does not depend on the cellular decomposition or other choices involved in the definition.  
In the present article, we prove the following:
\begin{theorem} \label{thm:CellularLCH}  Let $L \subset J^1(S)$ be a closed Legendrian surface.  
The cellular DGA of $L$ is stable tame isomorphic to the Legendrian contact homology DGA of $L$ with coefficients in $\Z/2$.
\end{theorem}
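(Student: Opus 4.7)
The plan is to reduce the analytic LCH differential to combinatorial counts via Ekholm's gradient flow tree (GFT) formalism, and then to construct a specific Legendrian representative whose GFT DGA manifestly equals the cellular DGA built from a compatible cellular decomposition. Invariance of the two DGAs under the choices involved will then upgrade the isomorphism for this representative to a stable tame isomorphism for $L$ itself.

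First, after fixing a generic Riemannian metric on $S$, Ekholm's flow tree theorem identifies the LCH differential of $L$ with a count of rigid gradient flow trees whose vertices are Reeb chords (critical points of differences of sheets) and whose edges are negative gradient flow lines of such differences. This reduces the problem to a combinatorial comparison between two DGAs, both generated by data localized along the base projection. The transversality required for the tree-count to yield a well-defined DGA is precisely the ingredient we attribute to Ekholm in the acknowledgments.

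Next, I would deform $L$ to a Legendrian $\tilde L$ tailored to a chosen cellular decomposition whose $1$-skeleton contains all crossing arcs, cusp edges and swallowtail points. The perturbation should be arranged so that for every pair of non-crossing sheets $i<j$ the difference $f_i - f_j$ becomes a Morse function on each cell's closure with exactly one non-degenerate critical point in each open cell, of Morse index equal to the dimension of that cell. This produces a canonical bijection between Reeb chords of $\tilde L$ and generators of $\A_{\mathit{cell}}$, and the gradings agree by the usual Conley--Zehnder to Morse-index translation for such Morse-Bott-type Reeb chord clusters.

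The heart of the argument is then to show that the GFT differential of $\tilde L$ reproduces the matrix formulas of $\partial_{\mathit{cell}}$. For a Reeb chord over a top-dimensional cell $e$, the rigid flow trees descend from the critical point of $f_i-f_j$ on $e$ into critical points over lower-dimensional cells, with trivalent vertices implementing the matrix multiplication of the sheet matrices over incident cells. Near cusp edges and crossing arcs one should recover the local boundary terms already isolated in \cite{RuSu1} by analyzing the standard local models of an Arnold--Legendrian cusp and a transverse double point. The swallowtail case is the most delicate: the sheet differences degenerate there, and one must fix a careful local model whose rigid flow trees exactly account for the swallowtail contributions in the cellular differential. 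I expect this swallowtail analysis, together with the global transversality needed to exclude stray flow trees leaving the local charts, to be the principal obstacle. Once the flow tree count is shown to agree with $\partial_{\mathit{cell}}$ on $\tilde L$, Theorem~4.1 of \cite{RuSu1} combined with Legendrian isotopy invariance of LCH promotes this equality into the desired stable tame isomorphism for $L$.
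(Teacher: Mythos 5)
Your overall strategy is the one the paper follows: reduce to Ekholm's rigid gradient flow trees, deform $L$ to a tailored $\tilde L$ adapted to a square decomposition, compute the tree differential square by square, and finish with invariance of both DGAs. However, there is a genuine gap in the step where you claim a ``canonical bijection between Reeb chords of $\tilde L$ and generators of $\A_{\mathit{cell}}$.'' No such bijection can be arranged. Whenever a crossing arc lies over a square, the two crossing sheets are forced to produce additional Reeb chords (local maxima and saddles of $F_j-F_i$ on the side where the sheets have swapped order) that have no counterpart among the cellular generators, which are indexed only by pairs of sheets that do \emph{not} cross over the given cell. These ``exceptional generators'' feed into the differentials of the legitimate generators through genuinely new rigid trees (e.g.\ terms like $c_{i,k+1}\tilde b^U_{k+1,k}a^{-,-}_{k,j}$ in a square with a single crossing arc), so you cannot simply read off the cellular matrix formulas from the tree count. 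The paper's resolution is to compute $\partial$ only modulo the two-sided ideal $I$ generated by the exceptional chords, prove that $I$ is a differential ideal whose quotient is stable tame isomorphic to $(\A_{LCH},\partial)$ by cancelling the exceptional chords in pairs, and only then compare with the cellular DGA. Making that cancellation globally consistent is itself delicate: one needs combinatorial constraints on the square decomposition (conditions (A2)--(A4) in Section 3) to guarantee that no boundary Reeb chord is asked to cancel against two different partners, and the cellular decomposition must be modified (from $\mathcal{E}_\pitchfork$ to $\mathcal{E}_\parallel$, with its own stable tame quotient) before the generating sets match.

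A second, smaller point: even after the generating sets are matched, the identification is not the identity on generators near swallowtail points. The flow-tree differential in the swallowtail squares contains switch vertices and yields formulas (e.g.\ $\partial C = \cdots + (I+B_U)(I+B_L)(I+A_{-,-}E_{k+1,k}+E_{k+1,k+2})+\cdots$) that differ from the cellular ones; the paper must exhibit an explicit tame isomorphism sending $B_0\mapsto B_0[I+E_{k+1,k+2}]$ on the adjacent edge matrix to reconcile them. So your final sentence, in which equality of differentials on $\tilde L$ is ``promoted'' to the theorem, should be replaced by: stable tame isomorphisms on both sides onto intermediate quotients, followed by a nontrivial tame isomorphism between those quotients.
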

In particular, the cellular DGA is an invariant of the Legendrian isotopy class of $L$.  We remark that it would be interesting to have an independent, low-tech proof of invariance that skirts the theory of holomorphic curves.  

\subsubsection{Outline of proof}
The proof of Theorem \ref{thm:CellularLCH} occupies this entire article, and  
%Given this background the argument is entirely self contained.  
% While the idea of the proof is quite straightforward, the execution is somewhat lengthy.  
we now provide an outline of the argument.  In this introduction, we denote the LCH DGA  as $(\mathcal{A}_{\mathit{LCH}}, \partial)$.

%and take the time to consider one illustrative case in detail.  

%In this introduction, we denote the LCH DGA  as $(\mathcal{A}_{\mathit{LCH}}, \partial)$.  
Our starting point is the work of Ekholm \cite{Ekholm07} that allows the differential in $(\mathcal{A}_{\mathit{LCH}}, \partial)$, that was originally defined in \cite{EkholmEtnyreSullivan05b} and \cite{EkholmEtnyreSullivan07} by a count of holomorphic disks, to be computed via a count of rigid {\it gradient flow trees} (abbrv. GFTs).  In Section \ref{sec:Background}, we recall relevant background about the LCH DGA and GFTs.  
%In Section \ref{sec:transverse}, 
In Section \ref{sec:transverse}, we construct a cellular decomposition of $S$ into squares, $\mathcal{E}_\pitchfork$, 
%subject to several restrictions.   In particular, the  $1$-skeleton of $\mathcal{E}_\pitchfork$ is transverse to $\pi_x(L)$, and 
so that, in the front projection of $L$, the form of the singular set above each square matches one of 14 standard types.  (See Figure \ref{fig:generators}, below.)  %Unlike the cellular decompositions used in defining the cellular DGA, the  $1$-skeleton of $\mathcal{E}_\pitchfork$ is transverse to the singular set of $L$.  
%;, and hence $\mathcal{E}_\pitchfork$ is not suitable from the point of view of the cellular DGA.  Accordingly, 
In addition, we construct a related cellular decomposition, $\mathcal{E}_\parallel$, that is suitable from the point of view of the cellular DGA.

For computing LCH, we modify $L$ by a Legendrian isotopy to a Legendrian $\tilde{L}$ that has several properties that aid in the explicit enumeration of rigid GFTs. Most importantly, the sheets of $\tilde{L}$ are pinched together above the $1$-skeleton of $\mathcal{E}_\pitchfork$  with the pinching most exaggerated above the $0$-skeleton.  As a result, above each cell of dimension $0 \leq d \leq 2$, each pair of sheets are connected by a Reeb chord that when viewed as a critical point of the difference of $z$-coordinates of sheets has index $d$.  (Some additional Reeb chords that we call exceptional generators appear as well and are discussed below.)  Labeling Reeb chords between the $i$-th and $j$-th sheet above $0$-cells, $1$-cells, and $2$-cells in the form $a_{i,j}$, $b_{i,j}$, and $c_{i,j}$, gives a rough indication of the correspondence between generators of $\A_{\mathit{cell}}$ and $\mathcal{A}_{\mathit{LCH}}$ underlying the isomorphism of Theorem 1.1.  See Figure \ref{fig:ReebChords}.
%the cellular DGA and LCH DGA that are local mins, saddle points, and maxs that appear above a cell of $\mathcal{E}_\pitchfork$ as $S_1, \ldots, S_n$
%As a result, critical points of differences of local defining functions for $\tilde{L}$, have local minima and saddles appearing above neighborhoods of $0$-cells and $1$-cells 
As an additional consequence of the pinching near the $1$-skeleton, 
the GFTs used in computing the differential of $(\mathcal{A}_{\mathit{LCH}}, \partial)$  become localized and must remain entirely within a neighborhood of the square that they begin in.  The Reeb chords that appear above suitable neighborhoods of any closed cell from $\mathcal{E}_\pitchfork$ then generate a sub-DGA, so that computation of $(\mathcal{A}_{\mathit{LCH}}, \partial)$ may be carried out on a square-by-square basis.

\begin{figure}
\labellist
%\small
%\pinlabel $1/4$ [t] at 338 21
\small
\pinlabel $C=(c_{i,j})$  at 170 192
\pinlabel $B_U$ [b] at 170 350
\pinlabel $B_L$ [r] at 0 232
\pinlabel $B_R$ [l] at 328 232
\pinlabel $B_D$ [t] at 170 20
\pinlabel $A_{+,+}$ [l] at 328 330
\pinlabel $A_{+,-}$ [l] at 328 36
\pinlabel $A_{-,+}$ [r] at 0 330
\pinlabel $A_{-,-}$ [r] at 0 36
\pinlabel $c_{i,j}$ [l] at 640 240
\pinlabel $b^R_{i,j}$ [l] at 820 240
\pinlabel $a^{-,-}_{i,j}$ [bl] at 504 46
\endlabellist

\centerline{ \includegraphics[scale=.5]{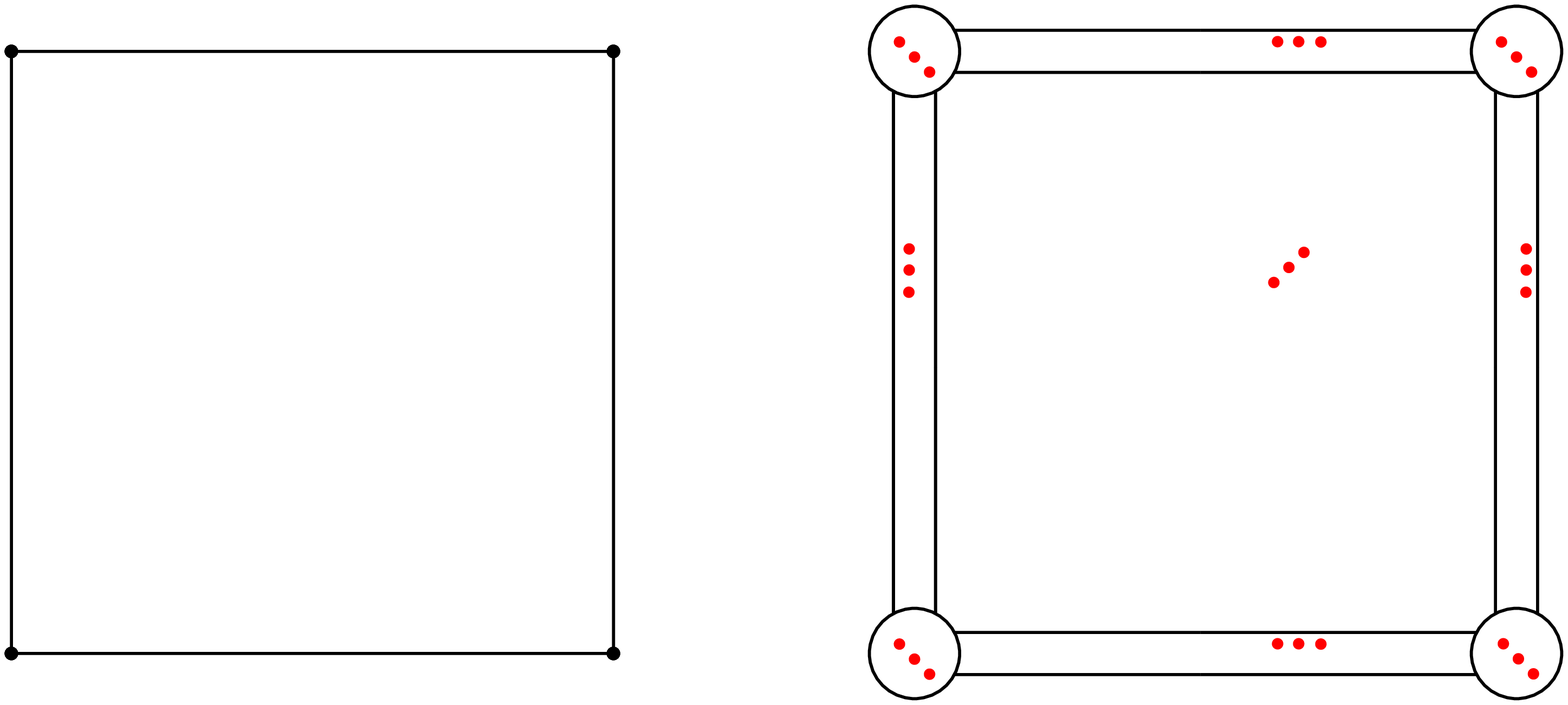}  }

\quad

$\partial A= A^2$;  \quad \quad $\partial B = A_+(I+B) +(I+B)A_-$;

$\partial C = A_{-,-}C + CA_{+,+} + (I+B_U)(I+B_L) + (I+B_R)(I+B_D)$.
\caption{(left)  Matrix formulas for the differential in the Cellular DGA.  (right) Geometrically, the generators of the Cellular DGA correspond to Reeb chords $a_{i,j}$, $b_{i,j}$, and $c_{i,j}$ that are local minima, saddle points, and local maxima located in neighborhoods of the $0$-, $1$-, and $2$-cells of a decomposition of $S$ into squares.}

\label{fig:ReebChords}
\end{figure}

The enumerations of GFTs required for computation of the sub-DGAs corresponding to individual cells of $\mathcal{E}_\pitchfork$ are carried out in Sections  \ref{sec:CompLCH}-\ref{sec:SwallowComp}.  A common argument applies for all squares that do not contain swallowtail points.  To orient the reader for this general computation, the logically independent Section \ref{sec:3.5} outlines in detail the computation for two of the most basic square types with technical details simplified or ignored.  %orient the reader to the general proof, we present a computation of the differential in a square with out crossing or cusp a simplified 
In Section \ref{sec:CompLCH}, we compute the sub-DGAs associated to $0$-cells and $1$-cells.  In Section \ref{sec:Comp2Cells}, we compute in a uniform manner the sub-DGAs of all $2$-cells that do not contain swallowtail points.  In Section \ref{sec:SwallowComp}, we address squares with swallowtail points.

To be more precise, in Sections \ref{sec:CompLCH}-\ref{sec:SwallowComp} we only give a partial calculation of the differential in $(\mathcal{A}_{\mathit{LCH}}, \partial)$.  
 The reason for this is that 
the presence of crossing arcs above a square forces  
 additional Reeb chords that we call {\it exceptional generators} to appear.  In turn, these exceptional generators lead to GFTs that can be quite complicated, so that a complete calculation  would require special arguments for many of the different square types.  To keep the computation as uniform as possible, in Sections \ref{sec:CompLCH}-\ref{sec:SwallowComp} we only compute $\partial$ up to terms belonging to an ideal $I$ involving these exceptional generators.  Aside from these terms, for each of the first 12 square types the differential is given by a common matrix formula where certain matrix entries are replaced with $0$'s and $1$'s in a way that is determined by the location of crossing arcs and cusp edges.  The final two square types contain swallow tail points, and as a result differentials in these square types have a form that is genuinely distinct from the other twelve square types.

%To complete the proof of Theorem, 
To avoid having to explicitly identify the remaining terms involving exceptional generators, we show in the first half of Section \ref{sec:Iso}, that the quotient of $(\A_{LCH},\partial)$ by the differential ideal $I$ is stable tame isomorphic to $(\A_{LCH},\partial)$.  Moreover, our partial calculations from Sections \ref{sec:CompLCH}-\ref{sec:SwallowComp} give a complete calculation of the differential in $\A_{LCH}/I$.  Section \ref{sec:Iso} is then concluded by establishing Theorem \ref{thm:CellularLCH} via a stable tame isomorphism between $(\A_{LCH}/I)$ and the cellular DGA.

%up to terms that involve certain Reeb chords that we call {\it exceptional generators}.  
%However, an exact calculation 
%The presence of 
%the crossing arcs force 
% additional Reeb chords to appear.  
%  that do not appear above simpler square types.  
% that do not appear above squares without crossing arcs.  
%Especially, in squares with more than one crossing arc, 
%The extra Reeb chords then allow for GFTs that can be quite complicated, especially in squares with more than one crossing arc, and  
% for squares with more than one crossing arc.  
%identifying all of these GFTs would require special arguments for each of the different square types.
%., and would likely produce a  calculation of the differential highly involved.  

%To avoid doing this, we %In actuality, in Sections \ref{sec:CompLCH}-\ref{sec:SwallowComp}, we only give a calculation of the differential up to terms that involve certain Reeb chords that we call {\it exceptional generators}.  

%instead identify a suitable stable tam

%show in the first half of Section \ref{sec:Iso}, that the quotient of $(\A_{LCH},\partial)$ by a suitable ideal $I$ involving the exceptional generators is stable tame isomorphic to $(\A_{LCH},\partial)$.  Moreover, our partial calculations of differentials from Sections \ref{sec:CompLCH}-\ref{sec:SwallowComp} give a complete calculation in $\A_{LCH}/I$.  Section \ref{sec:Iso} is then concluded by establishing Theorem \ref{thm:CellularLCH} via a stable tame isomorphism between $(\A_{LCH}/I)$ and the cellular DGA.

The final Sections \ref{sec:Constructions}-\ref{sec:ProofSetup} give a rather explicit coordinate model for $\tilde{L}$  that is used for the calculation of   $(\A_{\mathit{LCH}},\partial)$   in the earlier sections of the paper. 
%The GFTs are trees  .  In carrying out the square-by-square computation of LCH, we make use of several properties of gradient vector fields that are arranged via Legendrian isotopy.  To prove that the properties can all simultaneously be obtained, we construct a rather explicit coordinate model for $L$ in the sections.  
This construction is technical (although elementary).  To allow the reader to understand the computation of LCH without worrying about the details of this coordinate construction, we have listed 
%in Sections \ref{sec:CompLCH}-\ref{sec:SwallowComp} 
those properties of $\tilde{L}$ that are required for the arguments as  Properties \ref{pr:14models}-\ref{pr:SwitchBarriers} appearing in Sections \ref{sec:CompLCH}-\ref{sec:SwallowComp}.  %We believe that all of these properties are quite reasonable, and 
We expect that most readers will not find it surprising that a Legendrian satisfying Properties \ref{pr:14models}-\ref{pr:SwitchBarriers} exists.  However, as many properties involving the critical points and gradient vector fields of local defining functions for $\tilde{L}$ and their difference functions are demanded simultaneously,  we did not feel that the existence of $\tilde{L}$ was entirely obvious.  
%This is why we have included the detailed construction.  
An outline of the construction appears at the start of Section \ref{sec:Constructions}.  

%To illustrate the isomorphism as well as some of the challenges that need to be addressed, we outline the computation for two of the simplest square types with many technical details simplified or ignored.  

\subsubsection{Strategies for reading.}  For a first reading, the details of the construction from Section \ref{sec:Constructions}-\ref{sec:ProofSetup} may be omitted.  The reader may also reduce the length of the proof by considering only the case of Legendrians without swallowtail points.  This eliminates Sections \ref{sec:SwallowComp} and \ref{sec:ConstructionsST}, and simplifies the isomorphism in Section \ref{sec:Iso} as well as parts of Section \ref{sec:Properties}.  In fact, for some purposes considering only Legendrian surfaces without swallowtail points may suffice.  By work of Entov \cite[Corollary 1.8 and Section 1.8]{Entov99}, assuming the base surface $S$ and Legendrian $L$ are orientable, it is always possible to remove all swallowtail points from $L$ via a Legendrian isotopy.  
%Thus, in applications where $L$ only needs to be considered if one  for such $S$ and $L$ Thus, if one only needs to know that there is some Legendrian .   
However, in practice, it can be difficult to explicitly find such an isotopy for a given Legendrian, and the number of generators of the cellular DGA  may be greatly increased by removing swallowtail points.  Thus, aside from removing any assumptions on the orientability of $L$ and $S$, the extra work involved to include swallow tail points is justified in order to make the cellular DGA a flexible tool for computations. For example, a configuration of four swallowtail points can collapse to a cone point, whose DGA we compute in Section 5.3 of \cite{RuSu1}, and the cone points appear in the work-in-progress by Casals and Murphy on Lefschetz fibrations.

\section{Background}  \label{sec:Background}

We assume familiarity with \cite[Sections 2 and 3]{RuSu1}.  In particular, the reader should be familiar with the generic appearance of the base and front projection of a Legendrian surface in a $1$-jet space; the latter may include {\bf cusp edges}, {\bf crossing arcs}, and (upward and downward) {\bf swallowtail points}.  We refer to this subset of the front projection (cusp edges, crossing arcs and swallowtail points) and sometimes its base projection as the {\bf singular set}.  The definition of the {\bf cellular DGA} from \cite[Section 3]{RuSu1} as well as algebraic results on \textbf{DGAs} (differential graded algebras) and {\bf stable tame isomorphism} as found in \cite[Section 2]{RuSu1} are required for the proof of Theorem \ref{thm:CellularLCH}.  
In this section, we recall background on Legendrian contact homology, focusing on Ekholm's work on gradient flow trees which is fundamental for this article.

Let $S$ be a surface.  Denote the $1$-jet space of $S$ as  $J^1S = T^*S \times \R$.  Define the {\bf front projection} $\pi_{xz}: J^1S \rightarrow J^0S = S \times \R,$ the {\bf base projection} $\pi_{x}: J^1S \rightarrow S$ and the {\bf Lagrangian projection} $\pi_{xy}: J^1S \rightarrow T^*S.$ 
In coordinates $(x_1,x_2,y_1,y_2,z) \in J^1S$ arising from a choice of local coordinates $(x_1,x_2)$ on $S$, the standard contact structure on $J^1 S$ is the kernel of the contact form $dz -y\,dx := dz - y_1 \,dx_1 - y_2\, dx_2.$  Let $\omega$ be the standard symplectic structure on $T^*S,$ where locally $\omega = dx \wedge dy.$

\subsection{LCH}
Let $L \subset J^1S$ be a Legendrian surface.  The Legendrian contact homology (LCH) of $L$ is the homology of a differential graded algebra (DGA) denoted $(\mathcal{A}_{LCH}, \partial)$ whose definition is based on the theory of $J$-holomorphic curves.  The LCH DGA was first introduced in \cite{EliashbergGiventalHofer00} and was then rigorously defined in \cite{EkholmEtnyreSullivan05b, EkholmEtnyreSullivan07}.  In reviewing the definition, it is natural to use $\Z/2[H_1(L)]$-coefficients and introduce a $\Z$-grading on $\mathcal{A}_{LCH}$, but in the remainder of the article we will specialize to coefficients in $\Z/2$ (all group ring generators are replaced with $1$) and collapse the grading modulo the Maslov number of $L$. 

\subsubsection{Reeb chords and the algebra $\mathcal{A}_{LCH}$}

%The restriction of $\pi_{xy}$ to $L$ is an exact Lagrangian immersion, which means that $\int y\,dx$ vanishes on the image of $\pi_1(L)$ projected to $\pi_{xy}(L).$
The Reeb vector field $X_\alpha$ associated to a contact form $\alpha$ is defined as the unique vector field
which satisfies $\alpha(X_\alpha) = 1$ and  $d\alpha(X_\alpha, \cdot)=0.$ 
For our contact form on $J^1S$, the Reeb vector field is  $\partial_z.$
The {\bf{Reeb chords}} $R(L)$ for our Legendrian $L$ are the (non-constant) flows lines of the Reeb vector field that begin and end on the Legendrian.

Assuming that $L$ has a generic front projection, we can write $L = L_0 \sqcup L_1 \sqcup L_2$ where $L_i \subset L$ has codimension $i$ and the front projection $\pi_{xz}|_L$ 
satisfies:
\begin{itemize}
\item $\pi_{xz}|_L$ is an immersion when restricted to $L_0$;
\item $\pi_{xz}|_L$ has a cusp edge singularity along $L_1$;
\item $\pi_{xz}|_L$ has (upward or downward) swallowtail singularities at points in $L_2$.
\end{itemize}
Any $p \in L_0$ has a neighborhood $U \subset L$ that is the $1$-jet of a function $F: \pi_x(U) \rightarrow \R$.  We say that $F$ is a {\bf local defining function} for $L$ at $p$.  
The  Reeb chords $R(L)$ are in one-to-one correspondence with the critical points $F_i - F_j$ of positive critical value, where $F_i$ and $F_j$ are two such local defining functions for $L.$  [The critical points are in the intersection of the domains of $F_i$ and $F_j.$]
As an additional generic assumption on $L$ the front projection of $L$, we assume that the critical points of the $F_i-F_j$ are Morse.

The restriction of $\pi_{xy}$ to $L$ is an exact Lagrangian immersion.  Reeb chords are also in one-to-one correspondence with the double points of $\pi_{xy}(L)$, which are transverse due to the previous assumption on the $F_i-F_j$.

The underlying algebra $\mathcal{A}_{LCH} = \mathcal{A}_{LCH}(L)$ of the LCH DGA is defined to be the free unital associative (non-commutative) over the group ring $\Z/2[H_1(L)]$ with generating set $R(L)$.  
 This is the tensor algebra of the $\Z/2[H_1(L)]$-module generated by $R(L).$  

%%%%%OLD REWRITTEN FOR MULTI-COMPONENT CASE
%We review the (Maslov) grading of a Reeb chord and the (minimal) Maslov number of the Legendrian, as formulated in \cite[Section 3]{EkholmEtnyreSullivan05a}.
%Let $\gamma \in C^\infty([0,1], L)$ be such that $\gamma([0,1]) \subset L_0$ except for at isolated points
%where $\gamma$ intersects $L_1$ transversely.
%Define $D(\gamma)$ (resp. $U(\gamma)$) to be the number of intersections of $\gamma$ with $L_1$ where $\pi_{xz}(\gamma)$
%(oriented by $[0,1]$) goes from the upper (resp. lower) sheet to the the lower (resp. upper) sheet of the pair of cusping sheets. 
%Let $\mu(\gamma) := U(\gamma) - D(\gamma).$
%Define the {\bf{Maslov number}} of $L,$ $\mu(L),$  to be the minimum non-negative integer value achieved by $\mu(\gamma)$ over all loops $\gamma \in C^\infty(S^1, L).$
%Given a Reeb chord $c \in R(L),$ let $c \cap L = \{ c^+, c^-\}$ where $c^+$ has the greater $z$-coordinate of the two.
%Choose any $\gamma_c \in C^\infty([0,1], L),$ called a capping path for $c,$ such that $\gamma_c(0) = c^-$ and $\gamma_c(1) = c^+.$
%Let $c_S = \pi_x(c^\pm) \in S.$ 
%Define\footnote{\dr{Are we doing the $\Z$ grading (which we may as well do if we use $H_1$ coeff.)? Changed $\Z/m(L)\Z$ to $\Z$ in this definition.}} the {\bf{grading}} of $c,$ $|c| \in \Z$ by
%$$
%|c| = \mu(\gamma_c) + \mbox{Ind}(c_S, F_i-F_j) -1
%$$
%where $F_i$ and $F_j$ are local defining functions for the sheets containing $c^+$ and $c^-$, and $\mbox{Ind}(c_S, F_i-F_j)$ denotes the Morse index of $c_S$ as a critical point of $F_i - F_j$.

%\dr{REWRITE FOR MULTI-COMPONENT CASE}
\subsubsection{The grading on $\mathcal{A}_{LCH}$}

We review the (Maslov) grading of a Reeb chord and the (minimal) Maslov number of the Legendrian, as formulated in \cite[Section 3]{EkholmEtnyreSullivan05a}.
Let $\gamma \in C^\infty([0,1], L)$ be such that $\gamma([0,1]) \subset L_0$ except for at isolated points
where $\gamma$ intersects $L_1$ transversely.
Define $D(\gamma)$ (resp. $U(\gamma)$) to be the number of intersections of $\gamma$ with $L_1$ where $\pi_{xz}(\gamma)$
(oriented by $[0,1]$) goes from the upper (resp. lower) sheet to the the lower (resp. upper) sheet of the pair of cusping sheets. 
Let %\footnote{\dr{2-19:  Changed this formula by multiplication by $(-1)$ to match the KCH paper, and the 1-dim case, and correspondingly changed the capping path to be oriented from upper end point to lower end point. OK?  Maybe not.  This would change $\mu(A)$ in the formula for dim of $M(a;b_1,..)$ by a negative sign... Is there an error in KCH exposition?  OK.  Looking at the Etnyre-Ng-Sabloff paper where they do the $Z$-coefficients, and E-E-S, my impression is that computation of $\mu$ for a closed curve $A$ is $\mu(A) = D(A) -U(A)$ not the other way around.  So, I'll leave the change that I made for now.  Where did you get the $U(\gamma)- D(\gamma)$ definition from?} \ms{2/29/16: must have been a typo since it is always $D-U$ and not $U-D.$}} 
$\mu(\gamma) := D(\gamma) - U(\gamma).$
%If $\gamma(0) = \gamma(1),$ then $\mu(\gamma)$ agrees with the Maslov index of the path of  Lagrangian tangent planes along $\gamma,$ viewed as a loop in the (unoriented) Lagrangian Grassmanian.
%\footnote{\dr{Question:  Does this make sense for $L \subset J^1S$ if interpreted properly?  How do we identify all of the tangent spaces (or I guess the contact planes) with a single symplectic vector space?} \ms{you are correct. commented out sentence}}
Define the {\bf{Maslov number}} of $L,$ $\mu(L),$  to be the minimum non-negative integer value achieved by $\mu(\gamma)$ over all loops $\gamma \in C^\infty(S^1, L).$
%\footnote{\dr{Since we want to compare with the grading in the cellular DGA, defined in terms of a maslov potential, may be good to introduce a maslov potential at this point.} \ms{what is definition? do you choose base point as well as base point for each sheet and then connect with path?} \dr{2-17-16:  Maslov potential= function from complement of cusp set to $Z/\mu(L)$ that increases by 1 when you go from lower sheet to upper sheet at a cusp.  It is then clear that the formula you wrote becomes $|c| = \mu{c^+}- \mu{c^-} + Morse Index - 1$ in the 1-component case.  I think this is in your paper  E-E-S.  Should rewrite to allow multi-component case.  For multi-component case, fix a base point on each component, and choose paths from $c^\pm$ to the base point of the corresponding component.  This will be useful for fully non-commutative DGA anyway.   For details, look at a background section in a paper with Lenny, perhaps Knot Contact Homology.}} 

Denote the connected components of $L$ as $L = \bigsqcup_{i=1}^{\ell} L^i$, and choose base points $p_i \in L^i$.
Given a Reeb chord $c \in R(L)$, 
let $c \cap L = \{ c^+, c^-\}$ where $c^+$ has the greater $z$-coordinate of the two.  Supposing that $c^+ \in L^i$ and $c^- \in L^j$, we choose  generic {\bf base point paths} $\gamma^\pm_c \in C^\infty([0,1], L),$ such that $\gamma^\pm_c(0) = c^\pm$, $\gamma^+_c(1) = p_i$, and $\gamma^-_c(1) = p_j$, and let $\gamma_c = \gamma^+_c \cup -\gamma^-_c$ which is a single oriented path from $c^+$ to $c^-$ when $i=j$ and a disjoint union of oriented paths when $i \neq j$.    
%Let $c_S = \pi_x(c^\pm) \in S.$ 
Define the {\bf{grading}} of $c,$ $|c| \in \Z $ by
\begin{equation} \label{eq:pathgrading}
|c| = \mu(\gamma_c) + \mbox{Ind}(c, F_i-F_j) -1
\end{equation}
where $F_i$ and $F_j$ are local defining functions for the sheets containing $c^+$ and $c^-$ respectively, and $\mbox{Ind}(c, F_i-F_j)$ denotes the Morse index of $c$ as a critical point of $F_i - F_j$.

We extend the grading of Reeb chords to a grading of $\mathcal{A}_{LCH}$ as a direct sum of $\Z/2$-modules, $\mathcal{A}_{LCH}= \oplus_{k \in \Z} \mathcal{A}_{k}$, by requiring that for a homology class $A \in H_1(L)$, $e^A \in \Z/2[H_1(L)]$ has $|e^A| = -\mu(A),$
 and that $|x\cdot y| = |x| + |y|$ holds when $x$ and $y$ are homogeneous elements.

\begin{remark} \label{rem:gradingchange}
For Legendrian isotopy invariance of $(\mathcal{A}_{LCH}, \partial)$ in the multiple component case where $\ell \geq 2$, we need to allow for an overall grading change determined by a choice of integers $m = (m_1, \ldots, m_\ell) \in \Z^{\ell}$.  Given $m$, we obtain an alternate grading of Reeb chords by replacing each $|q|$ with $|q| + m_j - m_i$ when $q$ has lower endpoint on $L^j$ and upper endpoint on $L^i$.
\end{remark}

%\footnote{\dr{Check that this is compatible with the way the grading is discussed in Section 7... The grading is not discussed in Section 7 yet.  Will need to make sure everything is compatible with Article 1.}}  
Alternatively, a grading of Reeb chords by $\Z/ m(L)$ may be defined without reference to base point paths as follows.  Fix a choice of {\bf Maslov potential}, i.e. a locally constant function
$\mu:  L_0 \rightarrow \Z/m(L)$ whose value increases by $1$ when passing from the lower sheet to the upper sheet at cusp edges.  Then, 
\begin{equation}  \label{eq:gradingdef}
|c| = \mu(c^+) - \mu(c^-) + \mbox{Ind}(c, F_i-F_j) -1 \quad \quad (\mbox{mod $m(L)$}).
\end{equation}
Note that when $\Lambda$ is connected, the grading is independent of the choice of Maslov potential and is the reduction of the grading from (\ref{eq:pathgrading}) mod $m(L)$.    In the multi-component case, gradings resulting from different Maslov potentials are related to one another and to the grading from (\ref{eq:pathgrading}) as in Remark \ref{rem:gradingchange}.

%\dr{Can also throw in a note that to define the $Z$ grading with a Maslov potential, at least when $L$ is orientable,  we may fix embedded curves that represent a symplectic basis $\mu_1, \lambda_1, \ldots, \mu_g, \lambda_g$ for $H_1(L)$.  There removal cuts $L$ into a Legendrian disk for which a $\Z$ valued Maslov potential may be defined.  This agrees with the grading from (\ref{eq:pathgrading}) provided base point paths are chosen to be disjoint from the $\mu_i$ and $\lambda_i$.  Moreover, this allows us to compute the class $A$ that appears in the differential via intersections with the $\mu_i$ and $\lambda_i$...}

\subsubsection{The LCH differential}
For any non-negative integer $n,$ let $D_{n+1}$ be the space of closed unit disks in $\C$ centered on the origin, with
punctures $p_0, \ldots, p_n$ removed from the boundary in counter-clockwise order such that $p_0 =1,$ $p_1 = i,$ $p_2 = -1$ (if $p_1,p_2$ exist).
 Let $\mathcal{J}$ be the set of $\omega$-tame almost complex structures $\mathcal{J} := \{ J \in \mbox{End}(TT^*M)\,\, | \,\, J^2 = -Id, \omega (J v, v) \ge 0\},$
 with equality holding if and only if $v=0.$ 
 As a technical condition, we also need to assume for $J \in \mathcal{J}$ there exists some neighborhood of the double points in $\pi_{xy}(L)$ where $J$ looks like the standard complex structure of $\C^2$ \cite[page 3305]{EkholmEtnyreSullivan07}.
Let $J \in \mathcal{J}$ and let $i$ denote the complex structure on any $\Delta \in D_{n+1}$ inherited from $\C.$
Fix $a, b_1, \ldots, b_n \in R(L)$ with base point paths $\gamma^\pm_a, \gamma^\pm_{b_1}, \ldots, \gamma^\pm_{b_n}.$
Fix an element $A \in H_1(L).$
For $n \ge 0,$ let $\mathcal{M}(a; b_1, \ldots b_n; A)$ be the moduli space of equivalence class of pairs $(u,\Delta)$ such that the following hold.
\begin{itemize}
\item
The pair is comprised of $\Delta  \in D_{n+1}$ and $u:(\Delta, \partial \Delta) \rightarrow (T^*M, \pi_{xy}(L)).$
\item
The image under $u$ of a sequence of points $\{z_n\} \subset \Delta$ converging to the  puncture $p_0$ (resp. $p_j$)
 converges to the double point $\pi_{xy}(a)$ (resp. $\pi_{xy}(b_j)$).  
Moreover, the image of $u(\partial \Delta)$ near $p_0$ (resp. $p_j$) in the positive (resp. negative) oriented boundary direction runs from the branch (of the two branches that intersect at the double point $\pi_{xy}(a)$ (resp. $\pi_{xy}(b_j)$))
of $\pi_{xy}(L)$ with the lower $z$-coordinate lift in $L$ to the branch with the higher $z$-coordinate lift.
\item   
Combining the continuous lift of $u(\partial \Delta \setminus \{p_0, p_1, \ldots, p_n\})$ in $L$ with the base point paths $-\gamma^+_a\cup \gamma^-_a$ and  $\gamma^+_{b_i}\cup -\gamma^-_{b_i}$ produces a representative for $A \in H_1(L).$
%\footnote{\dr{2-19: For fully non-commutative case, we take $A= (A_0, A_1, \ldots, A_n)$ to be a sequence of homology classes with $A_i$ in the appropriate $H_1(L)$ and require that each $A_i$ is represented by the union of the lift of the portion of $\partial \Delta$ between $p_i$ and $p_{i+1}$, $\gamma_i$, with the (properly oriented) base point paths associated to the Reeb chord end points that $\gamma_i$ limits to.} }

\item
The map $u$ restricted to the interior of $\Delta$ is $J$-holomorphic, $J  \circ du = du \circ i.$
\item
Two pairs $(u,\Delta)$ and $(u',\Delta')$ are equivalent if there exists a biholomorphic map $\psi$ of the unit disk taking the punctures of $\Delta$ to those of $\Delta'$ such that $u = u' \circ \phi.$
\end{itemize}
Define the {\bf{formal dimension}} of the moduli space $\mathcal{M}(a; b_1, \ldots b_n; A)$ to be
$$\mbox{fdim} \mathcal{M}(a; b_1, \ldots b_n; A) = |a| - \sum_{j=1}^n |b_j| + \mu(A) -1.$$
For any $a, b_1, \ldots b_n, A$ such that $\mbox{fdim} \mathcal{M}(a; b_1, \ldots b_n; A) \le 1,$  there exists an open dense subset $ \mathcal{J}_{\mathit{reg}} \subset \mathcal{J}$ 
such that $\mathcal{M}(a; b_1, \ldots b_n; A)$ is a manifold, compact in the sense of Gromov (see \cite{EkholmEtnyreSullivan05b}, for example) whose dimension equals its formal dimension \cite{EkholmEtnyreSullivan07}.
So for  $J \in \mathcal{J}_{\mathit{reg}},$ $\mathcal{M}(a; b_1, \ldots b_n; A)$ is a finite set of points when $\mbox{fdim} \mathcal{M}(a; b_1, \ldots b_n; A) = 0$, cf. \cite{EkholmEtnyreSullivan05b, EkholmEtnyreSullivan07}.

%We can now discuss the LCH DGA.
%Let $R$ be the group ring $\Z/2[H_1(L)].$
%Define the associative algebra $\mathcal{A}_{LCH} = \mathcal{A}_{LCH}(L)$ to be the unital tensor algebra\footnote{\dr{This is probably correct as is, but usually we speak of the tensor algebra of a vector space or $R$-module, right?} \ms{corrected} \dr{I'm not sure.  Should ``vector space'' be replaced with ``$\Z/2[H_1(L)]$-module''?}} with $\Z/2[H_1(L)]$-coefficients, over the vector space generated by $R(L).$ We denote the unit by $1.$
%For any monomial $A x_1 \cdots x_l$ where $A \in H_1(L)$ and $x_j \in R(L),$ define the grading by 
%$|A x_1 \cdots x_l| = \mu(A) + \sum_j |x_j|.$ This implies $|1| = 0$ and makes $\mathcal{A}_{LCH}$ a graded algebra.
We can now define the  differential $\partial: \mathcal{A}_{LCH} \rightarrow \mathcal{A}_{LCH}$.  For Reeb chords $a \in R(L)$, set
$$\partial a = \sum_{ |e^Ab_1 \cdots b_n| = |a| -1} \sharp \mathcal{M}(a; b_1, \ldots, b_n; A) e^A b_1 \cdots b_n,$$
and then extend $\partial$ to all of $\mathcal{A}_{LCH}$ with the ($\Z/2[H_1(L)]$-linear) Leibniz rule.
Here $\sharp \mathcal{M}(a; b_1, \ldots, b_n; A)$ is a $\Z/2$-count.
If $n=0,$ then the empty word is $b_1 \cdots b_n = 1.$
\begin{theorem}[\cite{EkholmEtnyreSullivan05b, EkholmEtnyreSullivan07}]
The differential has degree $-1$ and satisfies $\partial^2=0.$
Moreover, if $L$ and $L'$ are Legendrian isotopic, then for any $J  \in \mathcal{J}_{\mathit{reg}}(L)$, $J' \in \mathcal{J}_{\mathit{reg}}(L')$ and any choice of base point paths, 
the DGAs $(\mathcal{A}_{LCH}, \partial) = (\mathcal{A}_{LCH}(L), \partial(L,J))$ and
$(\mathcal{A}_{LCH}', \partial') = (\mathcal{A}_{LCH}(L'), \partial(L',J'))$ are stable tame isomorphic (after possibly applying a  grading change as in Remark \ref{rem:gradingchange}).
\end{theorem}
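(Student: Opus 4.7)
The plan is to prove the three assertions — degree $-1$, $\partial^2=0$, and stable tame isomorphism invariance — in sequence, using the transversality and Gromov-compactness packages just cited as black boxes. The arguments are the standard ones from punctured $J$-holomorphic curve theory; the content lies in verifying that the various compactifications behave as expected in the presence of cusp edges and swallowtail points.

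First, the degree claim is a bookkeeping check: any moduli space $\mathcal{M}(a;b_1,\dots,b_n;A)$ that contributes to $\partial a$ has formal dimension zero, so $|a|-\sum_j|b_j|+\mu(A)-1=0$. Combined with $|e^A|=-\mu(A)$ this gives $|e^Ab_1\cdots b_n|=|a|-1$, and the Leibniz rule extends the conclusion from generators to all of $\mathcal{A}_{LCH}$. For $\partial^2=0$, I would analyze the compactified one-dimensional moduli spaces $\overline{\mathcal{M}}(a;\vec b;A)$ with formal dimension $1$. Transversality makes the interior a smooth $1$-manifold, and the cited Gromov compactness identifies its boundary with the set of two-level broken disks, glued along an interior Reeb chord. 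A direct inspection matches these broken configurations bijectively with the monomials appearing in the coefficient of each word in $\partial^2 a$; since a compact $1$-manifold has even mod-$2$ boundary, the coefficient vanishes and $\partial^2=0$ follows from the Leibniz rule.

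For invariance, I would reduce a Legendrian isotopy $\{L_t\}_{t\in[0,1]}$ to a generic one and handle regular and singular parameter values separately. On each regular subinterval, the trace of the isotopy is an exact Lagrangian cobordism $\Sigma\subset\R\times J^1S$, and I would define a DGA morphism by counting rigid $J$-holomorphic disks in the symplectization with one positive puncture at a chord of $L_{t_+}$ and negative punctures at chords of $L_{t_-}$. A parametrized version of the $\partial^2=0$ argument, applied to one-dimensional moduli spaces of cobordism disks, shows that concatenating the cobordism with its reverse yields a chain map chain-homotopic to the identity, so that these regular-interval morphisms are DGA quasi-isomorphisms; in fact they are tame isomorphisms after relabelling generators. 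At each singular parameter value, generic behaviour produces one of two model moves: birth/death of a Reeb-chord pair, which induces an algebraic stabilization, or a handle-slide, which induces a tame automorphism. Concatenating across all parameter values gives the stable tame isomorphism, with the grading shift of Remark \ref{rem:gradingchange} absorbing the indeterminacy in the choice of base-point paths on $\Sigma$.

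The principal technical obstacle throughout is the same, namely controlling the compactifications of the relevant moduli spaces. One must exclude bubbling of spheres or interior disks (ensured by exactness of $\omega$ and of $\pi_{xy}(L)$), runaway behaviour along the Reeb direction (controlled by the primitive of $y\,dx$ on $L$), and, most delicately for this setting, limits of holomorphic disks that interact pathologically with cusp or swallowtail strata of the front. These are precisely the analytic ingredients proved in \cite{EkholmEtnyreSullivan05b, EkholmEtnyreSullivan07}, and my proof plan would invoke them as a black box rather than re-derive them; the remainder of the argument is then the formal combinatorial matching sketched above.
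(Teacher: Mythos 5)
This theorem is not proved in the paper at all: it is stated in the Background section as a result imported wholesale from \cite{EkholmEtnyreSullivan05b, EkholmEtnyreSullivan07}, so there is no in-paper argument to compare yours against. Your outline is, however, a fair summary of the standard proof in those references for the first two assertions: the degree count is exactly the dimension-zero bookkeeping you describe, and $\partial^2=0$ is the mod-$2$ boundary count of compactified one-dimensional moduli spaces, with the analytic content (transversality, Gromov compactness, gluing, exclusion of bubbling via exactness) supplied by the cited papers.

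For the invariance assertion your route diverges from the one actually taken in \cite{EkholmEtnyreSullivan05b, EkholmEtnyreSullivan07}, and the divergence matters. Those papers do not construct cobordism maps by counting disks in the symplectization of the Lagrangian trace; that machinery for Legendrians in $1$-jet spaces was developed later. Instead they run a bifurcation analysis of a generic isotopy directly: away from finitely many singular times the identification of DGAs is essentially canonical, and at the singular times one gets either a birth/death of a pair of Reeb chords (algebraic stabilization/destabilization) or a handle slide (an elementary tame automorphism). Your sketch mentions these moves, but the cobordism half of your plan has a gap as stated: producing a DGA \emph{quasi-isomorphism} from a cobordism and its reverse is strictly weaker than the \emph{stable tame isomorphism} the theorem asserts, and quasi-isomorphism does not in general upgrade to stable tame isomorphism for free. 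If you want to follow the cobordism route you would need the additional argument (e.g.\ a triangular-form analysis of the cobordism map for a short isotopy) showing the induced map is itself tame; otherwise you should commit to the bifurcation argument throughout, which is what the cited sources do.
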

In particular, the homology $H_*(\mathcal{A}_{LCH}, \partial),$ called the {\bf{Legendrian contact homology}}, is 
a well-defined Legendrian isotopy invariant.

\subsection{Gradient flow trees}
\label{ssec:GFT}

%\dr{Terminologies to define.  ``flow line, GFT, PFT, input/output vertices  ($e$-vertices are considered outputs), word of a GFT:  $w(\Gamma)$'',  $Y_0,Y_1, sw-, e-$ vertices.  The point that $Y_1$ and $sw$-vertices cannot appear at points where gradint points to the side of the cusp locus where fewer sheets exist.  Switch points, non-degenerate switch points.  Internal vertices, External vertices.}

In \cite{Ekholm07}, Ekholm showed that the count of homolorphic disks appearing in the definition of LCH may be replaced with a count of suitable gradient flow trees (abbrv. GFTs).  This is a significant extension of earlier results of Floer \cite{Fl}  and Fukaya-Oh \cite{FukayaOh} that relate holomorphic strips and disks to gradient trajectories and flow trees in the case of graphical Legendrians.  
In this article, we only consider GFTs with a single positive puncture.  This allows a definition of GFT that appears somewhat different from the general definition given in \cite{Ekholm07}, but more closely parallels the presentation of flow trees from \cite{FukayaOh}.  See Remark \ref{rem:Compare}.

We consider {\bf{rooted trees}}, i.e. trees with a finite number of edges and with a chosen $1$-valent {\textbf{initial vertex}}, $v_0.$ 
(For those familiar with \cite{Ekholm07}, we recast $2$-valent initial vertices as discussed later in Remark \ref{rem:2ValentMinMax}.)
 We orient the edges of a rooted tree so that all edges are oriented away from $v_0$.  Note that any other vertex $v \neq v_0$ has precisely one incoming edge that is oriented towards $v$ while any other adjacent edges are oriented away from $v$.  Vertices with valence $1$ (resp. valence greater than $1$) are called {\bf external} (resp. {\bf internal}).  Sometimes, we refer to $v_0$ as the {\bf input} vertex, and to the remaining external vertices as {\bf output} vertices.  Edges with both endpoints at internal vertices are {\bf internal}, while edges with at least one endpoint at an external vertex are {\bf external}.  

%The domains of GFTs are trees with the following additional structure.  
%\begin{definition}
A \textbf{metric tree} with $1$ input is a rooted tree $\Gamma$ equipped with the following additional structure:
\begin{enumerate}
\item Each edge $e$ is assigned a length $l(e) \in (0, +\infty]$ such that (i) all internal edges have finite length, and (ii) the initial edge that starts at $v_0$ has length $+\infty$.  
\item At each internal vertex an ordering of the outgoing edges is chosen.  
\end{enumerate}

%\end{definition}
To each edge, $e$,  of a metric tree, we assign an interval $[c,d] \subset [-\infty, +\infty]$ according to:
\begin{itemize}
\item The initial edge beginning at $v_0$ has $[c,d] = [-\infty, 0]$ unless it is the only edge of $\Gamma$ in which case $[c,d] = [-\infty, +\infty]$.
\item All other edges, $e$, have $[c,d] = [0, l(e)]$.
\end{itemize}

Fix a Riemannian metric $g$ on $S$.  
\begin{definition} \label{def:GFT}
A {\bf gradient flow tree} (abbrv.  {\bf GFT}) of $L$ (with respect to the metric $g$) is a metric tree $\Gamma$ together with for each edge $e$ a continuous map 
\[
\gamma: [c,d] \rightarrow S
\]
 together with a pair of $1$-jet lifts 
\[
\gamma^+: [c,d] \rightarrow L \quad \mbox{and} \quad \gamma^-: [c,d] \rightarrow L, \quad \pi_x \circ \gamma^\pm = \gamma
\] 
satisfying:
\begin{enumerate}
\item For all, $t \in (c,d), \, z(\gamma^+(t)) > z(\gamma^-(t))$.
% such that if $z(\gamma_e^+(t)) = z(\gamma_e^-(t))$ then $t = b$ and $\gamma_e^+(t) = \gamma_e^-(t) \in L_1$,
\item For all $t \in (c,d)$,  
 $\gamma'(t) = -\nabla(f^+-f^-)(\gamma(t))$ where $f^+$ and $f^-$ are local defining functions for $L$ at $\gamma^+(t)$ and $\gamma^-(t)$, respectively.
\item If $e$ is an internal edge, then $\gamma$ is non-constant.
\item At each internal vertex, $v$, write the edges adjacent to $v$ as $e_0, e_1, \ldots, e_{r}$, $r \geq 1$, where $e_0$ is the unique edge oriented into $v$ and $e_1, \ldots, e_{r}$ are the outgoing edges at $v$ appearing according to their order.  Notate corresponding edge maps as $\gamma_i: [c_i,d_i] \rightarrow S$, $0 \leq i \leq r$. We require that the $1$-jet lifts $\gamma^\pm_i$ 
%of the edge maps $\gamma_i: [a_i,b_i] \rightarrow S$, $0 \leq i \leq r$, 
fit together continuously at $v$ as follows
\[
\gamma^+_{0}(d_0) = \gamma^+_{1}(c_1);  \quad \gamma^-_{i}(c_i) = \gamma^+_{{i+1}}(c_{i+1}) \mbox{ for $1 \leq i \leq r-1$}; \quad \gamma^-_{{r}}(c_{r}) = \gamma^-_{0}(d_0).
\] 
(See Figure \ref{fig:GFTdef}.)
\item The initial edge, $e$, that begins at $v_0$, satisfies $\displaystyle \lim_{t \rightarrow -\infty} \gamma(t) = a$ where the Reeb chord $a$ is a critical point of $f_+ - f_-$.  
\item An external vertex at the end of an edge $e$, we either have
\begin{enumerate}
\item $l(e) = +\infty$ and $\displaystyle \lim_{t \rightarrow +\infty} \gamma(t) = b$ where the Reeb chord $b$ is a critical point of $f_+-f_-$, or 
\item the images $\gamma^+(b)=\gamma^-(b)$ agree and belong to a cusp edge of $L$.
\end{enumerate}
\end{enumerate}

A {\bf partial flow tree} (abbrv.  {\bf PFT}) is a rooted tree $\Gamma$ satisfying all of the properties of a GFT except that the initial edge starting at $v_0$ is parametrized with $[c,d]$ where $ -\infty < c < 0$, and no requirement is placed on the image at $c$ other than $z(\gamma^+(c)) > z(\gamma^-(c))$.  In particular, this input edge is no longer required to begin at a  Reeb chord.
\end{definition}

We introduce some additional terminology associated with a GFT or PFT $\Gamma$.  When the input vertex $v_0$  begins at the Reeb chord $a$ we say that the vertex $v_0$ is a {\bf positive puncture} at $a$.  When an output vertex $v$ limits to a Reeb chord $b$ as $t \rightarrow +\infty$, we say that $v$ is a {\bf negative puncture} at $b$.  We refer to output vertices that end at a point on the cusp edge of $L$ as {\bf $e$-vertices}.

Often, we will provide a labeling of the sheets of $L$ above some subset $U \subset S$ as $S_1, \ldots, S_n$, and denote the corresponding local defining functions by $F_1, \ldots, F_n$.  (We emphasize that such an enumerations of sheets of $L$ only makes sense locally.)  Typically, we shorten notation for the local {\bf difference functions} to 
\[
F_{i,j} := F_i-F_j.
\]
When the $1$-jet lifts, $\gamma^+$ and $\gamma^-$, of an edge $e$ of a GFT or PFT
%, with associated map $\gamma:[c,d] \rightarrow S$ $1$-jet lifts $\gamma^+$ and $\gamma^-$ 
have their images in $S_i$ and $S_j$ respectively, we say that $S_i$ (resp.  $S_j$) is the {\bf upper sheet} (resp. {\bf lower sheet}) of $e$.  Note that in this case $\gamma$ is a trajectory for $-\nabla F_{i,j}$, and we say that $\gamma$ is an {\bf $(i,j)$-flow line}.

\begin{observation}
\begin{enumerate}
\item[(i)] For any point $x$ belonging to the interior of one of the edges of a GFT or PFT $\Gamma$, there is a unique PFT whose initial vertex is $x$ that consists of the portion of $\Gamma$ that lies below $x$ (with respect to the orientation of edges of the domain tree of $\Gamma$).  Usually, we refer to this PFT as simply {\bf the PFT starting with $x$}.

\item[(ii)]  External edges may be mapped to Reeb chords as constant maps.  In fact this must be the case for GFTs that have a positive (resp. negative) puncture at a Reeb chord that is a local minimum (resp. maximum) of $f^+-f^-$.     

\item[(iii)]  The sheet difference $z(\gamma^+(t)) - z(\gamma^-(t))$ decreases not only along every non-constant edge (as a standard consequence of the negative gradient flow equation), but also when passing internal vertices (from the incoming edge to an outgoing edge).  Thus, if $x$ and $y$ are any points in the domain of a PFT $\Gamma$ with $x$ above $y$ (with respect to the orientation of edges of $\Gamma$), then the sheet difference at $x$ is larger than at $y$.

\end{enumerate}
\end{observation}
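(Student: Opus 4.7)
The three parts of the observation follow directly from the PFT/GFT definitions, so the plan is simply to dispatch each in turn with a short verification, with no substantive geometric input beyond the gradient flow equation and the matching conditions at internal vertices.

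For (i), I would take the subtree whose initial edge is the portion of $e_*$ from $x = \gamma(t_0)$ to its terminal vertex, together with every edge downstream, keeping all existing $1$-jet lifts and vertex orderings. Time-translation invariance of the gradient flow equation lets me reparametrize the truncated top fragment as an interval $[c,d]$ with $c < 0$, giving a valid PFT initial edge, and all remaining PFT axioms are inherited verbatim from $\Gamma$. Uniqueness is immediate from the construction.

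For (ii), I would first verify that a constant external edge $\gamma \equiv a$ at a Reeb chord satisfies the relevant conditions: since $a$ is a critical point of $F_{i,j}$, the gradient equation reduces to $0 = 0$, and $z(\gamma^+) > z(\gamma^-)$ holds because $a$ is a Reeb chord with positive critical value. For the forced-constancy claim at an extremal Reeb chord, suppose $a$ is a local minimum of $F_{i,j}$ and $\gamma(t) \to a$ as $t \to -\infty$. Linearizing $-\nabla F_{i,j}$ at $a$ yields $\dot y = -Hy$ with $H$ positive definite, so every nonzero solution of the linearized system grows at least like $e^{|t|\lambda_{\min}}$ as $t \to -\infty$; in particular, the unstable manifold at $a$ is trivial, so no non-constant trajectory of the full flow can satisfy $\gamma(-\infty) = a$. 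The negative-puncture/local-max case is symmetric.

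For (iii), strict decrease along a non-constant edge is the standard computation $\tfrac{d}{dt} F_{i,j}(\gamma(t)) = -|\nabla F_{i,j}(\gamma(t))|^2 < 0$, using that a non-constant gradient trajectory avoids every critical point. For decrease across an internal vertex $v$, I would index the edges as $e_0, e_1, \ldots, e_r$ in the order of condition (4) and write the matching identities in terms of the defining functions $F_{U_i}, F_{L_i}$ of the upper and lower sheets of $e_i$ at $p = \pi_x(v)$; these collapse telescopically to
\[
\sum_{i=1}^{r} \bigl(F_{U_i}(p) - F_{L_i}(p)\bigr) = F_{U_0}(p) - F_{L_0}(p).
\]
Each summand is nonnegative by continuity from condition (1), so each outgoing sheet difference at $p$ is bounded by the incoming one (strictly when $r \geq 2$). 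Chaining these local decreases along the unique directed path from $x$ to $y$ in the tree then yields the global monotonicity. The only step requiring any real care is the bookkeeping of the sheet-matching data at internal vertices; once the telescoping identity is written down, no further obstacle arises.
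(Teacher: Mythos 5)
The paper records this as an unproved \emph{Observation}, so there is no argument of record to compare against; your verifications are correct and are exactly the ones implicit in the definitions. In particular, the truncation-and-reparametrization in (i), the triviality of the unstable (resp.\ stable) manifold of $-\nabla F_{i,j}$ at a nondegenerate local minimum (resp.\ maximum) in (ii) — legitimate here because the paper assumes the critical points of the $F_i-F_j$ are Morse — and the telescoping of the vertex-matching conditions in (iii) all go through. Your parenthetical that the decrease at a vertex is strict only when $r\geq 2$ is the right reading: at a $2$-valent switch vertex the sheet difference is preserved exactly (since $F_k=F_{k+1}$ on the cusp locus), and the global strict inequality comes from the non-constancy of the intervening internal edges.
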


\begin{remark}  \label{rem:Compare}
\begin{enumerate}
\item[(i.)]  Our definition of PFT is much more restrictive than in \cite{Ekholm07}, where more than one exceptional vertex is allowed. 
\item[(ii.)]  In \cite{Ekholm07}, the definition of GFT does not allow constant external edges (mapped to Reeb chords).  However, \cite{Ekholm07} does allow punctures to occur at internal vertices, as we explicitly do not.   The equivalence of these conventions is easily seen:  A constant edge may be replaced with a puncture at an internal vertex in a unique way and the converse statement holds also.  
\item[(iii.)]  The condition $z(\gamma^+) > z(\gamma^-)$ is not included in the definition of GFT from Ekholm.  Indeed, this condition may fail for GFTs with more than one positive puncture.  However, Lemma 2.8 from \cite{Rizell11}, establishes this condition for GFTs with $1$ positive puncture.
%(Note Ekholm's definition of puncture.)  
%This convention is different from Ekholm's convention.  However, he allows punctures to occur at internal vertices.
\end{enumerate}
\end{remark}

\subsubsection{Words of GFTs and PFTs}  \label{sec:GFTPFT}

We define the {\bf word}, $w(\Gamma)$, of a PFT or GFT, $\Gamma$, using induction on the number of internal vertices.  If $\Gamma$ has no internal vertices then, its domain is a single edge with one output vertex, $v$.  In this case, declare
\[
w(\Gamma) = \left\{ \begin{array}{cr} b, & \mbox{if $v$ has a negative puncture at $b$}  \\
1,  & \mbox{if $v$ is an $e$-vertex.} \end{array} \right.
\]
Now, supposing $w(\Gamma)$ has been defined for PFTs with fewer internal vertices, we consider the first internal vertex, $v$, of $\Gamma$.  Let the outgoing edges at $v$ be ordered as $e_1, \ldots, e_r$, for some $r \geq 1$.  Let $\Gamma_1, \ldots, \Gamma_r$ denote the PFTs of $\Gamma$ that begin at interior points of the edges $e_1, \ldots, e_r$ respectively, and define
\[
w(\Gamma) = w(\Gamma_1) \cdots w(\Gamma_r).
\]
 
An alternate formulation of $w(\Gamma)$ is as follows:  The ordering of outgoing edges at internal vertices, allows us to embed the domain tree of $\Gamma$ into $D^2$ by requiring that the outgoing edges $e_1, \ldots, e_r$ at an internal vertex $v$ appear in counterclockwise order when we travel around $v$ starting at the incoming edge.  Moreover, we can arrange the external vertices to sit on $\partial D^2$, and their cyclic ordering is well defined.  We then take the product of all negative punctures, $b_1, \ldots, b_n$, as they appear in counter-clockwise order around $\partial D^2$, starting at $v_0$.  That is,
\[
w(\Gamma) = b_1 \cdots b_n.
\]

Yet another perspective on $w(\gamma)$ is the following:  
The union of the Lagrangian projections of the one jet lifts $\gamma^+$ and $-\gamma^-$  (the negative indicates orientation reverse) of all edges of $\Gamma$ fit together to give a continuous closed  curve, $C$, with image in $\pi_{xy}(L)$.  We call $C$ the {\bf boundary curve} of $\Gamma$.  See Figure \ref{fig:GFTdef}.  When instead considered as a curve on $L$, $C$ has discontinuities that are in bijection with the positive and negative punctures of $\Gamma$.  (The $z$-coordinate increases (resp. decreases) 
when passing a positive (resp. negative) puncture.)  From this perspective, $w(\Gamma)$ is the product of negative punctures as they appear along the curve $C$, when starting with the upper lift $\gamma^+$ of the initial edge of $\Gamma$.

When $\Gamma$ is a GFT with positive puncture at $a$ and $w(\Gamma) = b_1\cdots b_n$, we say that $\Gamma$ is a GFT {\bf from $a$ to $b_1, \ldots, b_n$}.

\begin{figure}
\labellist
%\small
%\pinlabel $1/4$ [t] at 338 21
\small
\pinlabel $\gamma^+$ [r] at 44 144
\pinlabel $-\gamma^-$ [l] at 90 144
\pinlabel $a$ [b] at 334 200
\pinlabel $b_1$ [t] at 254 12
\pinlabel $1$ [t] at 304 4
\pinlabel $b_2$ [t] at 344 -2
\pinlabel $b_3$ [t] at 386 4
\pinlabel $b_4$ [t] at 434 20
\pinlabel $e_0$ [b] at 72 174
\pinlabel $e_1$ [t] at -2 30
\pinlabel $e_2$ [t] at 72 36
\pinlabel $e_3$ [t] at 146 30
\endlabellist

\centerline{ \includegraphics[scale=.6]{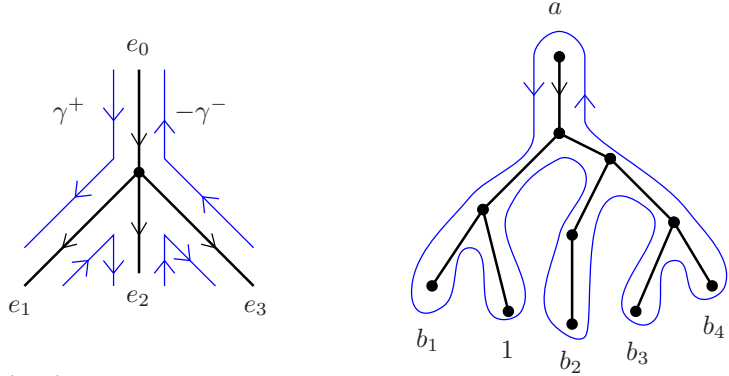}  }
\caption{  (left) As specified in Definition \ref{def:GFT}, the $1$-jet lifts of adjacent edges fit together continuously at internal vertices.  (right) A GFT with $w(\Gamma) = b_1b_2b_3b_4$.  All edges of $\Gamma$ are oriented downward, and the output vertex labelled with a $1$ is an $e$-vertex.  The $1$-jet lifts fit together as indicated to form a continuous closed curve, $C$, on $\pi_{xy}(L)$.
%The outputor each edge of $\Gamma$, picture the domain of the $1$-jet lifts $\gamma^+$ and $\gamma^-$ by shifting $e$ a small amount to the right and left (with respect to the orientation of $e$).   The resulting curve is continuous on $\pi_{xz}(L)$.
}
\label{fig:GFTdef}
\end{figure}

%Caption:  As specified in Definition ???, the $1$-jet lifts of adjacent edges fit together continuously at internal vertices and $e$-vertices.  The cotangent projections fit together  at external vertices.  For each edge of $\Gamma$, picture the domain of the $1$-jet lifts $\gamma^+$ and $\gamma^-$ by shifting $e$ a small amount to the right and left (with respect to the orientation of $e$).   The resulting curve is continuous on $\pi_{xz}(L)$.

%We define the {\bf word} of $\Gamma$,
%\[
%w(\Gamma) = b_1 b_2 \cdots b_n
%\]
%where the Reeb chords $b_1, \ldots, b_n$ are the negative punctures of $\Gamma$  output vertices appear in counterclockwise order starting from $v_0$ when the domain of $\Gamma$ is drawn in $D^2$ with exterior vertices on $\partial D^2$.

\subsection{GFTs and Holomorphic Disks}

The connection between GFTs and Legendrian contact homology is the following.

In Section 3 of \cite{Ekholm07}, Ekholm associates a {\bf formal dimension} to a GFT, $\Gamma$, from $a$ to $b_1, \ldots, b_n$ written, $\mathit{fdim}(\Gamma)$.  It is equal to the formal dimension of the moduli space of holomorphic disks $\mathcal{M}(a; b_1, \ldots, b_n; [\widehat{C}])$, where $\widehat{C}$ is the lift of the boundary curve of $\Gamma$ to $L$ (this is discontinuous at punctures) together with the base point paths $-\gamma^+_a\cup \gamma^-_a$ and  $\gamma^+_{b_i}\cup -\gamma^-_{b_i}$.  The pair consisting of the Legendrian and metric $(L, g)$ is {\bf $1$-regular} when (i) there are no GFTs with $\mathit{fdim}(\Gamma)<0$, (ii) all trees with $\mathit{fdim}(\Gamma)= 0$ are transversally cut out (i.e. they are obtained via iterated transverse intersection of certain suitably defined ascending and descending manifolds), and (iii) the set of trees with $\mathit{fdim}(\Gamma) = 0$ is finite.  (The ``$1$'' in $1$-regular refers to the number of positive punctures.)  When $(L,g)$ is $1$-regular, GFTs with formal dimension $0$ are called {\bf rigid}.

%all GFTs of formal dimension $\leq 0$ with $1$ positive puncture satisfy a suitable transversality condition (see \ref{sec:GFTTran} below), and 
It is shown in \cite[Theorem 1.1 (a)]{Ekholm07} that any $(L,g)$ can be made $1$-regular after a $C^\infty$-small perturbation.  For the Legendrians that we consider we will use a mild strengthening of this statement; see Proposition \ref{prop:LemmaA1}.

\begin{theorem}[\cite{Ekholm07}, Theorem 1.1]  \label{thm:EkholmMain}
Suppose that $(L,g)$ is $1$-regular.  There exists a Legendrian $\widehat{L} \subset J^1S$ such that
\begin{enumerate}
\item $\widehat{L}$ is Legendrian isotopic to $L$, and has its Reeb chords canonically identified with those of $L$, and
\item there exists $J \in \mathcal{J}_{\mathit{reg}}$ for $\widehat{L}$ such that when $\mathit{fdim}(\mathcal{M}(a; b_1, \ldots, b_n; A)) =0$, disks in $\mathcal{M}(a; b_1, \ldots, b_n; A)$ for $\widehat{L}$ are in bijection with the set of (equivalence classes of) rigid GFTs for $L$ from $a$ to $b_1, \ldots, b_n$  whose boundary curves satisfy $[\widehat{C}] = A$. 
\end{enumerate}
In particular, the differential in the LCH DGA of $L$ may be computed by
\[
\partial a = \sum_{\Gamma} [\widehat{C}] \cdot w(\Gamma)
\]
where the sum is over all rigid GFTs for $L$ that begin at $a$.
\end{theorem}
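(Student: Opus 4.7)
The plan is to prove this via a scaling/gluing argument introduced by Ekholm. Given the $1$-regular pair $(L,g)$, I would take $\widehat{L}$ to be a rescaled Legendrian $L_\lambda$ for sufficiently small $\lambda>0$, where locally, if $L$ is the $1$-jet of defining functions $F_i$, then $L_\lambda$ is the $1$-jet of $\lambda F_i$. This is a Legendrian isotopy that preserves the base projection, so Reeb chords of $\widehat{L}$ are canonically identified with those of $L$ (their $z$-heights are scaled by $\lambda$). Crucially, the Lagrangian projection $\pi_{xy}(L_\lambda)$ is the union of the graphs of $\lambda\, dF_i$, so as $\lambda\to 0$ the sheets pinch together, and holomorphic disks with boundary on $\pi_{xy}(L_\lambda)$ should degenerate to gradient trees for the differences $F_{i,j}$.

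First I would carry out the \textbf{approximate disk construction}: given a rigid GFT $\Gamma$ for $L$ from $a$ to $b_1,\dots,b_n$, build an approximately $J$-holomorphic map $u_\lambda:\Delta\to T^*S$ with boundary on $\pi_{xy}(L_\lambda)$ and asymptotics matching $a,b_1,\dots,b_n$. For each edge of $\Gamma$ labelled by an $(i,j)$-flow line $\gamma(t)$, the standard local model is a thin $J$-holomorphic strip of width $O(\lambda)$ between sheets $i$ and $j$, obtained by exponentiating around $\gamma$. These strips are then pre-glued at the internal vertices (Y-vertices), at $e$-vertices on cusp edges, and at punctures using established local models for each configuration. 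The homology class of the boundary curve of $u_\lambda$ is forced to equal $[\widehat{C}]$ by the construction.

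Next I would perform the \textbf{Newton iteration / quantitative implicit function theorem} to deform $u_\lambda$ to an honest $J$-holomorphic disk. The key analytic input is that the linearized $\bar{\partial}$-operator $D_{u_\lambda}$ on an appropriate weighted Sobolev space is invertible with operator norm bounded uniformly in $\lambda$. The surjectivity of $D_{u_\lambda}$ is precisely encoded by the transversality definition of $1$-regularity: the transverse intersection of iterated ascending/descending manifolds at internal vertices, combined with Morse nondegeneracy at punctures, makes the linearization at each local model surjective, and these glue to a surjective operator on the full configuration once $\lambda$ is small. For each rigid $\Gamma$ this produces a unique honest holomorphic disk, giving the map ``GFT $\mapsto$ disk.''

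Finally I would establish \textbf{compactness in the reverse direction}: for $\lambda$ small enough, every $J$-holomorphic disk $u \in \mathcal{M}(a;b_1,\dots,b_n;A)$ of formal dimension zero arises from the gluing above. Given a sequence $u_{\lambda_k}$ with $\lambda_k\to 0$, an a priori area/energy estimate bounds the Hausdorff distance from $\pi_x(u_{\lambda_k})$ to a tree in $S$ by $O(\sqrt{\lambda_k})$; after extracting subsequential Gromov limits and carefully analyzing the degeneration near internal vertices, the limit is a GFT $\Gamma_0$ for $L$ of formal dimension zero with the correct asymptotics and boundary homology class. Since rigid GFTs are isolated, $u_{\lambda_k}$ for large $k$ must lie in the image of the gluing map applied to $\Gamma_0$, and uniqueness in the implicit function theorem gives the bijection. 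Finally, Theorem 2.15 (upgrading to $J\in\mathcal{J}_{\mathit{reg}}$) is obtained by a small further perturbation of $J$ that preserves the bijection, since the gluing map is stable under small perturbations.

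The hard part will be the gluing step: one needs uniform-in-$\lambda$ Fredholm estimates on $D_{u_\lambda}$ in weighted Sobolev spaces that simultaneously handle the boundary puncture asymptotics (exponential weights tuned to the Reeb chord spectrum), the internal Y-vertex regions (where three sheets interact and the model problem has a $120^\circ$ boundary condition at a corner of the strip), and especially the cusp $e$-vertices, where the Lagrangian itself is singular in the $\lambda\to 0$ limit. Each of these local operators must be shown invertible on its weighted space with constants uniform in $\lambda$, and then the inverses must be patched together compatibly; this patching, together with the nonlinear error estimate needed to run Newton iteration, is the technical heart of \cite{Ekholm07}.
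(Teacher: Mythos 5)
This theorem is quoted directly from \cite{Ekholm07} and is not proved in the present article, so the relevant comparison is with Ekholm's own argument; your sketch faithfully reproduces its architecture (adiabatic rescaling $L_\lambda$, construction of approximate disks from rigid trees via local models at punctures, $Y_0$-, $Y_1$-, $\mathit{sw}$-, and $e$-vertices, Newton iteration with uniform-in-$\lambda$ Fredholm estimates, and a converse compactness/localization analysis showing every rigid disk degenerates to a tree). As you acknowledge in your final paragraph, everything of substance — the weighted-space estimates at the various vertex types and the patching of local inverses — is deferred, so this is a correct roadmap to \cite{Ekholm07} rather than an independent proof; the stray reference to a ``Theorem 2.15'' should also be removed, as no such statement exists in this paper.
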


\subsection{Generic behavior at vertices}

In Section 3 of \cite{Ekholm07}, it is shown that when $(L,g)$ is $1$-regular 
%satisfying a suitable transversality condition (see ???, below) 
the internal vertices that appear in rigid GFTs are limited to the following types:

\begin{itemize} 
\item  A {\bf $Y_0$-vertex}, $v$, is a $3$-valent vertex with image in $S$ disjoint from the cusp locus of $L$.  At $v$, there exist sheets $S_i, S_m, S_j$ with defining functions satisfying $F_i(v) > F_m(v) > F_j(v)$, such that the incoming edge $e_0$ at $v$ is an $(i,j)$-flow line while the outgoing edges $e_1$ and $e_2$ are respectively $(i,m)$- and $(m,j)$-flow lines.  

\item  A {\bf $Y_1$-vertex}, $v$, is a $3$-valent vertex with image at a point of $S$ belonging to the cusp locus of $L$ such that the sheets corresponding to edges adjacent to $v$ are as follows.  Near $v$ there are sheets $S_i, S_{k}, S_{k+1}, S_j$ of $L$ such that  sheets $S_k$ and $S_{k+1}$ meet at a cusp edge so that $S_k$ has larger $z$-coordinate than $S_{k+1}$;  $S_i$ sits above the cusp edge; and $S_j$ sits below the cusp edge.  The incoming edge $e_0$ is an $(i,j)$-flow line while the out going edges $e_1$ and $e_2$ are respectively $(i,k+1)$- and $(k,j)$-flow lines.  In addition, $-\nabla F_{i,j},  -\nabla F_{i,k+1}$  and $-\nabla F_{k,j}$ are all transverse to the cusp locus of $L$ at $v$, and necessarilly point to the side of the cusp locus where $S_k$ and $S_{k+1}$ exist. 

\item  A {\bf $\mathit{sw}$-vertex} (also called a {\bf switch vertex}), $v$, is a $2$-valent vertex with image at the base projection of a cusp edge where sheets $S_k$ and $S_{k+1}$ meet with $S_k$ the upper sheet of the cusp edge.  In addition, one  of the following holds:
\begin{itemize}
\item[(i)] The incoming (resp. outgoing) edge at $v$ is an $(i,k)$-flow line (resp. an $(i,k+1)$-flow line) for some sheet $S_i$ that sits above the cusp edge at $v$.  Note that  $-\nabla F_{i,k}= -\nabla F_{i,k+1}$ must be tangent to the cusp locus at $v$. 
\item[(ii)] The incoming (resp. outgoing) edge at $v$ is a $(k+1,j)$-flow line (resp. a $(k,j)$-flow line) for some sheet $S_j$ that sits below the cusp edge.  Note that $-\nabla F_{k,j}= -\nabla F_{k+1,j}$ is tangent to the cusp locus at $v$.  
\end{itemize}
\end{itemize}
See Figure \ref{fig:Y0}.

We use the terminology {\bf switch point} for a point along the projection of a cusp edge between sheets $S_{k}$ and $S_{k+1}$  where a third sheet $S_{i}$ or $S_{j}$ as in (i) or (ii) has $-\nabla F_{i,k}$ or $-\nabla F_{k,j}$ tangent to the projection of the cusp edge. 
A switch point is {\bf non-degenerate} when the tangency of the restriction of the corresponding $-\nabla F_{i,k}$ or $-\nabla F_{k,j}$ to the cusp locus has order $1$.  Of course, switch vertices can only have their images at switch points.

\begin{figure}
\labellist
%\small
%\pinlabel $1/4$ [t] at 338 21
\large
\pinlabel $\mathbf{Y_0:}$ [r] at -50 102
\small
\pinlabel $-\nabla F_{i,j}$ [l] at 50 126
\pinlabel $-\nabla F_{i,m}$ [br] at 14 50
\pinlabel $-\nabla F_{m,j}$ [bl] at 62 50
\pinlabel $S_i$ [l] at 388 154
\pinlabel $S_m$ [l] at 388 90
\pinlabel $S_j$ [l] at 388 26
\endlabellist

\centerline{ \includegraphics[scale=.5]{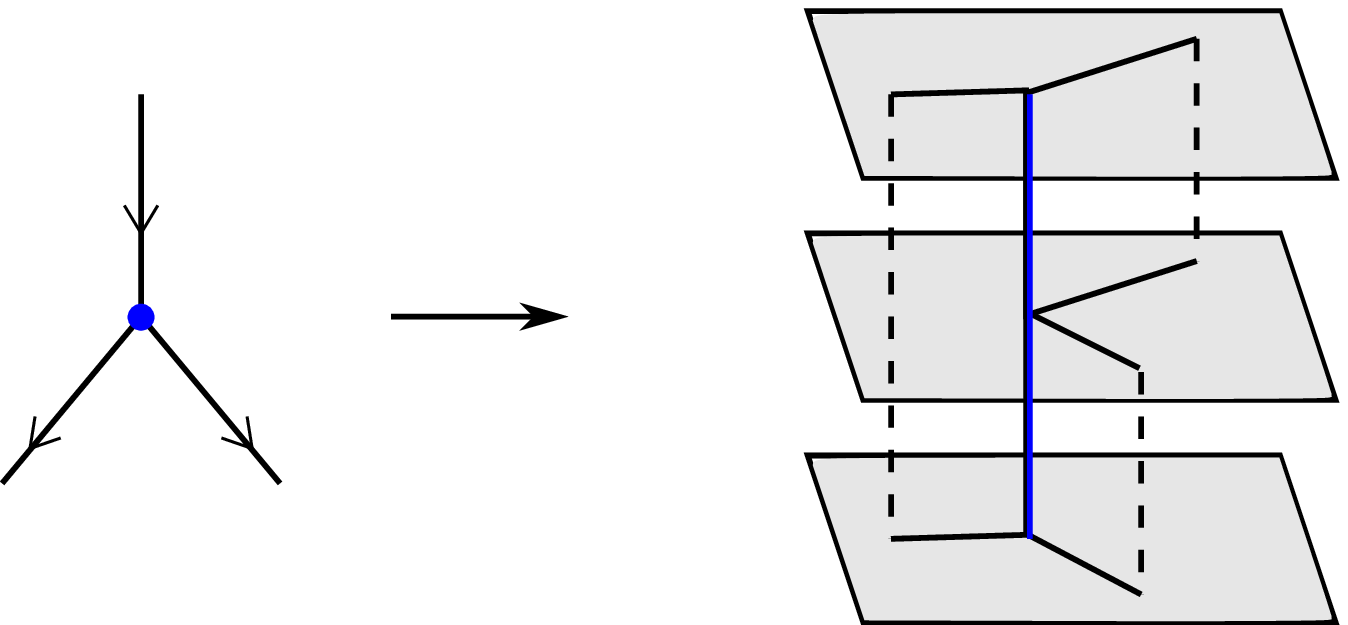}   }

\bigskip

\labellist
%\small
%\pinlabel $1/4$ [t] at 338 21
\large
\pinlabel $\mathbf{Y_1:}$ [r] at -50 102
\small
\pinlabel $-\nabla F_{i,j}$ [l] at 50 126
\pinlabel $-\nabla F_{i,k+1}$ [br] at 14 50
\pinlabel $-\nabla F_{k,j}$ [bl] at 62 50
\pinlabel $S_i$ [l] at 398 154
\pinlabel $S_{k}$ [l] at 398 108
\pinlabel $S_{k+1}$ [l] at 398 60
\pinlabel $S_j$ [l] at 398 26
\endlabellist

\centerline{ \includegraphics[scale=.5]{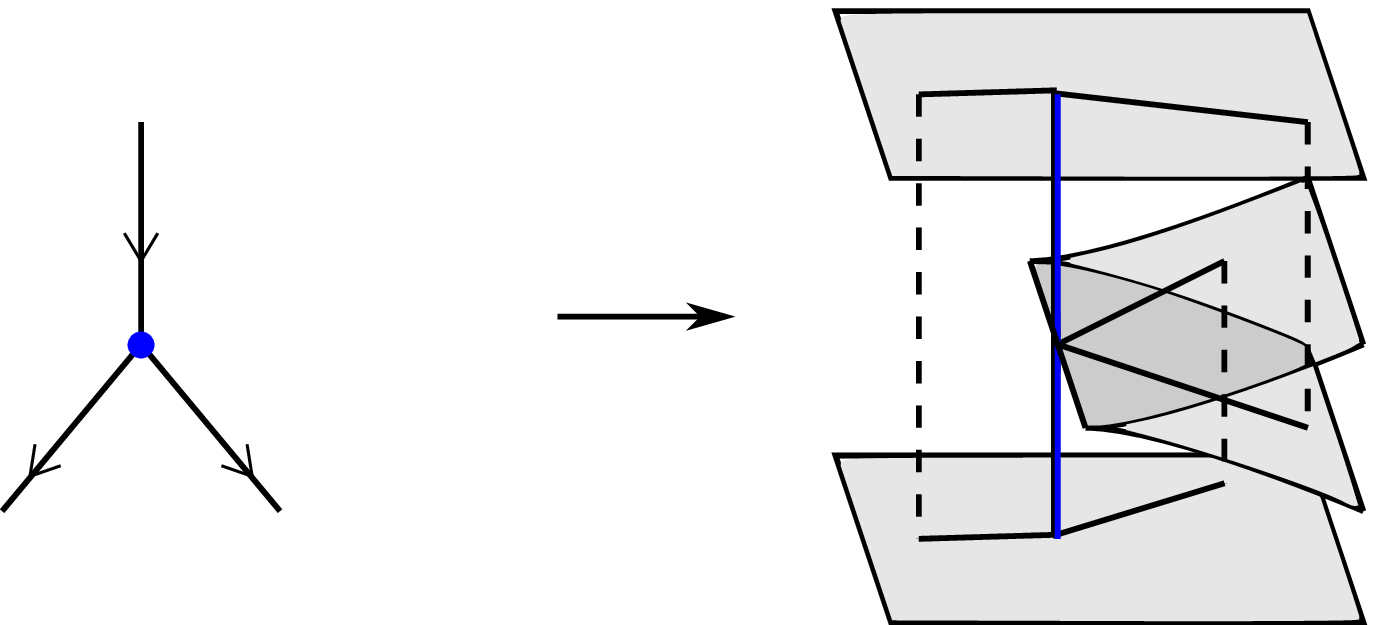}  }

\bigskip

\labellist
%\small
%\pinlabel $1/4$ [t] at 338 21
\large
\pinlabel $\mathbf{SW:}$ [r] at -60 102
\small
\pinlabel $-\nabla F_{k+1,j}$ [l] at 18 88
\pinlabel $-\nabla F_{k,j}$ [l] at 18 44
%\pinlabel $-\nabla F_{k,j}$ [bl] at 62 50
%\pinlabel $S_i$ [l] at 398 154
\pinlabel $S_{k}$ [l] at 360 108
\pinlabel $S_{k+1}$ [l] at 360 60
\pinlabel $S_j$ [l] at 360 18
\endlabellist

\centerline{ \includegraphics[scale=.5]{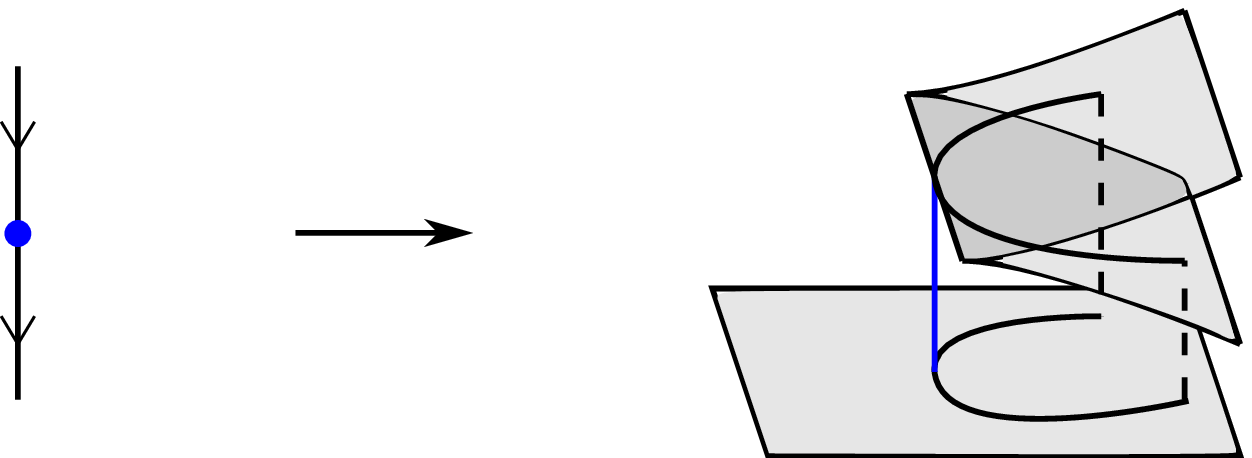}  }
\caption{  The generic internal vertices pictured via their domain trees and $1$-jet lifts.
%The outputor each edge of $\Gamma$, picture the domain of the $1$-jet lifts $\gamma^+$ and $\gamma^-$ by shifting $e$ a small amount to the right and left (with respect to the orientation of $e$).   The resulting curve is continuous on $\pi_{xz}(L)$.
}
\label{fig:Y0}
\end{figure}

%A generic condition on $(L,g)$ from \cite{Ekholm}, that we call \emph{GFT-transversality} is defined below in Section ???.

\begin{theorem}[\cite{Ekholm07}]   \label{thm:1regV}
Assume $(L,g)$ is 1-regular.  Then, all internal vertices of rigid GFTs must be either $Y_0$-, $Y_1$-, or $\mathit{sw}$-vertices.  (External vertices may be punctures or $e$-vertices.) 
\end{theorem}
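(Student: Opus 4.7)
The plan is to analyze each internal vertex locally in terms of the ascending and descending manifolds of the relevant difference functions, and to show that any vertex not of type $Y_0$, $Y_1$ or $\mathit{sw}$ forces a codimension condition large enough to make the formal dimension of any containing GFT strictly negative. Since $(L,g)$ is $1$-regular, rigid (i.e., $\mathit{fdim}=0$) trees are cut out as iterated transverse intersections of such manifolds, so vertex configurations imposing excess codimension are excluded.

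First I would classify internal vertices by valence and by their position relative to the cusp locus of $L$. An internal vertex $v$ of valence $r+1$ carries sheets $S_{a_0},S_{a_1},\ldots,S_{a_r}$ such that the unique incoming edge is an $(a_0,a_r)$-flow line and the outgoing edges are $(a_{k-1},a_k)$-flow lines for $k=1,\ldots,r$. For $v$ to be an interior point of all these flow lines in the $2$-dimensional base $S$, and for the $1$-jet lifts to match up as required in Definition \ref{def:GFT}, one collects the local Morse-theoretic constraints: each outgoing edge beyond the first corresponds to the intersection of the ascending manifold of the incoming $(a_0,a_r)$-trajectory with the descending manifold of an $(a_{k-1},a_k)$-trajectory, and generic transversality yields a specific dimension. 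A bookkeeping with Ekholm's formal dimension formula from Section~3 of \cite{Ekholm07} shows that the only configurations in the interior of the smooth locus that contribute non-negatively are exactly the trivalent $Y_0$-vertices: any vertex of valence $\geq 4$, or any trivalent configuration in which the three sheets $S_i,S_m,S_j$ fail to be strictly ordered at $v$, imposes an additional codimension condition (for instance equality $F_{a_{k-1}}(v)=F_{a_k}(v)$ beyond the two already present in a $Y_0$-vertex), which drops $\mathit{fdim}$ by at least one.

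Next I would treat vertices meeting the cusp locus. Near a cusp edge between sheets $S_k,S_{k+1}$, each trajectory must be analyzed using the fact that $-\nabla F_{i,k}$ and $-\nabla F_{i,k+1}$ (respectively $-\nabla F_{k,j}$ and $-\nabla F_{k+1,j}$) agree on the cusp edge. A generic $3$-valent branching with $S_i$ above and $S_j$ below gives exactly the $Y_1$-vertex; a generic $2$-valent vertex that changes which cusping sheet is used, at a point where $-\nabla F_{i,k}$ or $-\nabla F_{k,j}$ is tangent to the cusp edge, gives the $\mathit{sw}$-vertex. Any other configuration, for example a trivalent vertex at a cusp edge with an extra sheet coincidence, a $\mathit{sw}$-type vertex at a degenerate (non order-$1$) switch point, or any vertex at the projection of a swallowtail point or crossing arc, corresponds to a cusp-locus singularity of higher codimension than is generically transverse to the tree; the same dimension count then shows it cannot appear in a rigid GFT.

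The main technical obstacle will be the cusp-locus analysis, since the naive ascending/descending manifolds are no longer smooth as one crosses a cusp edge and one must use the branched-manifold model for GFT moduli spaces developed in Section~3 of \cite{Ekholm07}. The argument there is carried out under a generic assumption on $(L,g)$, so after invoking Proposition~\ref{prop:LemmaA1} to ensure $1$-regularity, one applies Ekholm's formulas case-by-case. Finally, for the output vertices themselves, the requirement $\dim =0$ together with the constraint $z(\gamma^+)>z(\gamma^-)$ from Remark~\ref{rem:Compare}(iii) forces each external vertex to be either a negative Reeb puncture or an $e$-vertex on a cusp edge, completing the list allowed by the statement.
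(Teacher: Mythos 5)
Your proposal follows the same route as the paper: the paper's proof of Theorem \ref{thm:1regV} simply invokes the dimension bookkeeping of Section~3 of \cite{Ekholm07} (Lemma~3.7 and Proposition~3.14), which is exactly the case-by-case vertex analysis you are sketching, so there is no genuinely new idea needed and your plan, if executed, reproduces Ekholm's argument.

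One point of your framing is off, though, and it is worth fixing because it is the actual mechanism of the exclusion. You assert that a non-generic vertex ``drops $\mathit{fdim}$ by at least one'' and hence makes the formal dimension of the containing tree negative. The formal dimension is determined by the gradings of the punctures and the homology class of the boundary curve; it is insensitive to the internal vertex structure. What a non-generic vertex does is impose an extra codimension condition on the \emph{geometric} dimension $\mathit{gdim}(\Gamma)$, so that $\mathit{gdim}(\Gamma)<\mathit{fdim}(\Gamma)$; Lemma~3.7 of \cite{Ekholm07} records that the generic vertex types ($Y_0$, $Y_1$, $\mathit{sw}$, punctures, $e$-vertices) are precisely those contributing zero to the deficit $\mathit{fdim}-\mathit{gdim}$. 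The contradiction for a rigid tree then comes from $1$-regularity: a transversally cut out tree lives in a manifold of dimension $\mathit{gdim}(\Gamma)\geq 0$, while rigidity gives $\mathit{fdim}(\Gamma)=0$, so any strict inequality $\mathit{gdim}<\mathit{fdim}$ is impossible. Relatedly, you do not need to invoke Proposition~\ref{prop:LemmaA1} here, since $1$-regularity of $(L,g)$ is a hypothesis of the theorem. With these corrections your outline matches the paper's proof.
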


\begin{proof} 
In Section 3 of \cite{Ekholm07}, GFTs are assigned a geometric dimension, $\mathit{gdim}(\Gamma)$.  
%, such that rigid trees must have $\mathit{gdim}(\Gamma) = \mathit{fdim}(\Gamma) =0$.  (The meaning of being transversally cut out for GFTs is implicit from the proof of 
\cite[Proposition 3.14]{Ekholm07} shows that when $\Gamma$ is transversally cut out, a space of nearby GFTs with similar geometric properties, eg. homeomorphic domain trees with corresponding branches having the same upper and lower sheet in $L$, is a manifold of dimension $\mathit{gdim}(\Gamma)$.  In particular, a transversally cut out tree has $\mathit{gdim}(\Gamma) \geq 0$.  Moreover, \cite[Lemma 3.7]{Ekholm07} shows that in general $\mathit{gdim}(\Gamma) \leq \mathit{fdim}(\Gamma)$, so that a rigid tree must have $\mathit{gdim}(\Gamma) = \mathit{fdim}(\Gamma) =0$.  Possible vertices of trees with  $\mathit{gdim}(\Gamma) = \mathit{fdim}(\Gamma)$ are listed in \cite[Lemma 3.7]{Ekholm07}.
%, with all internal vertices shown to be $Y_0$-, $Y_1$-, or $\mathit{sw}$-vertices.          
\end{proof}

\begin{remark}
\label{rem:2ValentMinMax}
In \cite[Lemma 3.7]{Ekholm07}, $2$-valent vertices with a positive (resp. negative) puncture at a local minimum (resp. local maximum) are also listed as possible internal vertices.  However, with our conventions for GFTs, such punctures become $Y_0$'s with an adjacent external edge mapped to a constant trajectory at the corresponding critical point.  See Remark \ref{rem:Compare} (ii).
\end{remark}

Moreover, for GFTs with these generic vertices, formal dimension may be computed from the count of vertices of various types and the Morse index of punctures.

\begin{proposition} \label{prop:EY1SWFormula}  Suppose that $(L,g)$ is $1$-regular.  Let $\Gamma$ is a rigid GFT with a postive puncture at $a$ and negative punctures at $b_1, \ldots, b_m$.  Let $E$, $Y_1$, and $SW$  denote the number of $e-$, $Y_1-$, and $sw$-vertices of $\Gamma$ respectively, and $\mbox{Ind}(a)$  and $\mbox{Ind}(b_i)$ denote the Morse indices of $a$ and $b_i$ as critical points of  local difference functions $F_i-F_j$ with $F_i > F_j$.  
Then, we have
\begin{equation} \label{eq:EY1SWFormula}
2 - \mbox{Ind}(a) = \sum_{i=1}^m \left(1 - \mbox{Ind}(b_i)\right) +  E - Y_1 - SW.
\end{equation}
\end{proposition}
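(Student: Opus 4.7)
The plan is to derive the identity directly from the formal dimension formula
\[
\mathit{fdim}(\Gamma) \;=\; |a| \;-\; \sum_{j=1}^m |b_j| \;+\; \mu([\widehat{C}]) \;-\; 1,
\]
which equals $0$ since $\Gamma$ is assumed rigid. The main task is to evaluate the Maslov number $\mu([\widehat{C}])$ of the closed cycle obtained by combining the lifted boundary curve of $\Gamma$ with the base point paths $-\gamma^+_a \cup \gamma^-_a$ at $a$ and $\gamma^+_{b_j} \cup -\gamma^-_{b_j}$ at each $b_j$, and then to simplify.

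First I will decompose $\mu([\widehat{C}])$ into the sum of a boundary-curve contribution and a base-point-path contribution. By $1$-regularity together with Theorem \ref{thm:1regV}, every edge of $\Gamma$ is a flow line of some $-\nabla F_{i,j}$ whose lifts avoid the cusp locus, so the boundary curve of $\Gamma$ meets $L_1$ only at the vertices. A local check, using the matching conditions
\[
\gamma^+_0(d_0) = \gamma^+_1(c_1),\quad \gamma^-_i(c_i) = \gamma^+_{i+1}(c_{i+1}),\quad \gamma^-_r(c_r) = \gamma^-_0(d_0)
\]
from Definition \ref{def:GFT} and the fact that the lower lifts $\gamma^-$ appear with reversed orientation in the boundary curve, shows that at an $e$-vertex the curve traverses the cusp from the upper to the lower cusp sheet (a $D$-crossing, contributing $+1$), while at each $Y_1$- and each $\mathit{sw}$-vertex it traverses the cusp from the lower to the upper cusp sheet (a $U$-crossing, contributing $-1$). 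The boundary-curve contribution is therefore $E - Y_1 - SW$.

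For the base point paths, I will invoke the grading formula (\ref{eq:pathgrading}) to rewrite $\mu(\gamma^+_c) - \mu(\gamma^-_c) = \mu(\gamma_c) = |c| - \mbox{Ind}(c) + 1$ for any Reeb chord $c$. Summing the signed Maslov numbers of the base point paths yields
\[
\mu([\widehat{C}]) \;=\; (E - Y_1 - SW) \;-\; \bigl[|a| - \mbox{Ind}(a) + 1\bigr] \;+\; \sum_{j=1}^m \bigl[|b_j| - \mbox{Ind}(b_j) + 1\bigr].
\]
Substituting this into $\mathit{fdim}(\Gamma) = 0$, the gradings $|a|$ and $|b_j|$ cancel and one is left with
\[
0 \;=\; \mbox{Ind}(a) \;-\; \sum_{j=1}^m \mbox{Ind}(b_j) \;+\; m \;+\; E \;-\; Y_1 \;-\; SW \;-\; 2,
\]
which rearranges to (\ref{eq:EY1SWFormula}).

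I expect the main obstacle to be the sign bookkeeping at internal vertices: one must carefully track which of the two sheets in a cusping pair has the larger $z$-coordinate, and how the orientation reversal of the lower lifts interacts with the matching rules at $\mathit{sw}$- and $Y_1$-vertices, so that each cusp traversal contributes the correct $\pm 1$. Once these local computations are performed (and sanity-checked against the simplest cases, e.g.\ a single flow line from $a$ to $b$ or a single $Y_0$-vertex involving no cusps), the remainder of the argument is a routine substitution.
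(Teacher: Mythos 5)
Your argument is correct, and it reaches the identity by a genuinely different route than the paper. The paper's proof is essentially a one-line citation: it quotes the first formula for $\dim(\Gamma)$ from \cite[Definition 3.4]{Ekholm07}, namely $(n-3)+\sum_{p}(I(p)-(n-1))-\sum_q(I(q)-1)+\sum_r\mu(r)$ with $n=2$, and identifies $\sum_r\mu(r)$ with $E-Y_1-SW$ by matching Ekholm's notation, so all of the vertex bookkeeping is outsourced. You instead start from the equality $\mathit{fdim}(\Gamma)=\mathit{fdim}\,\mathcal{M}(a;b_1,\ldots,b_m;[\widehat{C}])=|a|-\sum_j|b_j|+\mu([\widehat{C}])-1=0$ and compute $\mu([\widehat{C}])$ yourself: the base-point-path contributions are converted into indices via (\ref{eq:pathgrading}) (with the signs $-\mu(\gamma_a)$ and $+\mu(\gamma_{b_j})$ dictated by the homology condition in the definition of the moduli space), and the boundary-curve contribution $E-Y_1-SW$ comes from the local matching conditions of Definition \ref{def:GFT}: at an $e$-vertex the curve passes from the upper to the lower cusp sheet ($+1$ to $D$), while at a $Y_1$-vertex the junction $\gamma^-_1(c_1)=\gamma^+_2(c_2)$ and at a $\mathit{sw}$-vertex the junction between the two lifts each send the curve from the lower cusp sheet to the upper one ($+1$ to $U$, hence $-1$ to $\mu$); your appeal to $1$-regularity and Theorem \ref{thm:1regV} correctly guarantees that these vertices are the only points where the lifted boundary curve meets $L_1$. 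The gradings $|a|$, $|b_j|$ then cancel exactly as you say, and the algebra checks out. What the two approaches buy: the paper's is shorter but opaque, relying on the reader trusting Ekholm's $\mu(r)$ assignments; yours is self-contained within this paper's conventions and makes the Maslov bookkeeping at vertices explicit, at the cost of the sign-tracking you rightly flag as the delicate point.
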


\begin{proof}  
The first formula for the formal dimension, $\mathit{fdim}(\Gamma)$, from \cite[Definition 3.4]{Ekholm07}  reads
\[
\dim(\Gamma) = (n-3) + \sum_{p \in P(\Gamma)}(I(p)-(n-1)) - \sum_{q \in Q(\Gamma)}(I(q)-1) + \sum_{r\in R(\Gamma)} \mu(r).
\]
Comparing the notation there with ours, $\dim(\Gamma) = \mathit{fdim}(\Gamma) = 0,$ 
$n=2,$ $\sum_{p \in P(\Gamma)}(I(p)-(n-1)) = \mbox{Ind}(a)-1,$ 
$\sum_{q \in Q(\Gamma)}(I(q)-1) = \sum_{i =1}^m(\mbox{Ind}(b_i) -1) $ and 
 $\sum_{r\in R(\Gamma)} \mu(r)=E - Y_1 - SW.$
The equation (\ref{eq:EY1SWFormula}) follows.  
\end{proof}

\section{Transverse Square Decompositions}  \label{sec:transverse}

Let $L \subset J^1S$ be a Legendrian with generic front projection and base projection, and let $\Sigma$ denote the singular set  (cusps, crossings and swallowtails) of the front projection of $L$.
% of the singular set $\Sigma \subset L$ is generic.  
In this section, we construct a cell decomposition of $S$ into squares.  The $1$-skeleton will be transverse to $\pi_x(\Sigma)$, so we will denote the decomposition as $\mathcal{E}_\pitchfork$.  Above each square of $\mathcal{E}_\tra$, $\Sigma$ is required to match one of a finite collection of standard forms that are introduced in Section \ref{ssec:ElemSq} below, and a few additional technical requirements are imposed on $\mathcal{E}_\tra$ for later use.  In addition, we construct a related cellular decomposition $\mathcal{E}_{||}$ that will be used in Section \ref{sec:Iso} when working with the Cellular DGA.    
%In Sections \ref{sec:Constructions} and \ref{sec:ConstructionsST}, we will construct a Legendrian, $\widetilde{L}$, given by explicit coordinate models in each square so that $\widetilde{L}$ is Legendrian isotopic to $L$.    

\subsection{Regularity requirements}  \label{sec:RegReq}

% In order for the square-by-square definition of $\tilde{L}$ to fit together smoothly to produce a globally defined Legendrian, 
We impose some regularity requirements on our square decomposition.
In this section we use the notation $I = [-1,1]$.

For the transverse cell decomposition $\mathcal{E}_\pitchfork= \{e^i_\alpha\}$ constructed below, we always require the following conditions.  Note that these requirements may be obtained from an arbitrary polygonal decomposition (as in Section 3.1 of \cite{RuSu1}) into squares by altering the original characteristic maps first near $0$-cells and then in a neighborhood of $1$-cells.

\begin{enumerate}
\item We have characteristic maps
\[
c^i_\alpha : I^i \stackrel{\cong}{\rightarrow} \overline{e^i_\alpha} \subset S
\] 
which are homeomorphisms and are smooth  with smooth inverse except 
%that the differential is not invertible\footnote{Possibly not differentiable at all at the corners since the polar coordinate map may not be smooth at $r=0$.  Not important, but should be stated correctly.} 
possibly at the corners of $2$-cells. (Compare with requirement (2) below). 

\item Each $0$-cell $e^0_\alpha$ has a disk coordinate neighborhood $U \subset S \rightarrow D^2$ so that if $N$ distinct $2$-cells meet at $e^0_\alpha$ then near the corner each of their characteristic maps rescales the angle.  More precisely, for a given $2$-cell $e^2_\beta$ containing $e^0_\alpha$ as a corner, we use polar coordinates on $I^2$ centered at the corner of $I^2$ corresponding to $e^0_\alpha$ with the angle coordinate rotated so that $0 \leq \theta \leq \pi/2$ parametrizes a neighborhood of this corner.  We require that the characteristic map for $e^2_\beta$ takes $\{ 0 \leq r \leq 1/8\}$ into $U  \cong D^2$ and has the form 
%near the corner of $I^2$, 
\[
c(r,\theta) = \left(r, \pm\frac{4}{N}\theta + (m 2 \pi)/N\right)
\]
for $0 \leq r \leq 1/8$, for some $0 \leq m < N$.

\item Along each $1$-cell, the characteristic maps of the $2$-adjacent faces may be glued together to give smooth maps $(-\epsilon, \epsilon) \times (-1,1) \rightarrow S$.
\end{enumerate}

\subsubsection{Where do the technical requirements come into play?}

In Sections \ref{sec:Constructions}-\ref{sec:ProofSetup}, we will define a front projection for a Legendrian $\tilde{L}$ that is isotopic (but not literally equal to) the original front projection of $L$.  This is done by providing local defining functions $F_i: I^2 \rightarrow \R$ for the sheets of $\tilde{L}$ above each square using coordinates given by the characteristic maps.  This explains (1).  Near corners each local defining function will have the form $a r^2 + b$ in the  polar coordinate system centered at the corner of $I^2$ as described in (2) with the coefficients $a$ and $b$ agreeing for sheets in distinct squares that meet at the $0$-cell.  Therefore, the requirements from (2) show that the front will be smooth in a neighborhood of a $0$-cell.  Moreover, the construction of sheets will be standard near edges as well, so that (3) will guarantee the smoothness of the pieced together front above $1$-cells.  

In addition, the conditions allow the following:

\begin{construction} \label{construct:metric} Let $L \subset J^1S$ be a Legendrian with local defining functions given by $a r^2+b$ in the coordinate neighborhoods of $0$-cells prescribed in (2).  There exists a metric on $S$ with respect to which the gradients of local defining functions (and hence also local difference functions) agree with the gradients taken in each square with respect to the standard Euclidean metric on $I^2$.  
\end{construction}
\begin{proof}
The Euclidean metrics on squares piece together to give a metric on the complement of the $0$-cells of $S$.  In a $D^2$ coordinate neighborhood (as in (2)) of a $0$-cell adjacent to $N$ faces, the metric  appears in polar coordinates as $dr^2 + (\frac{4}{Nr})^2 d\theta^2$.  In order to extend smoothly to $r=0$, we alter the metric inside  $D^2$ to $dr^2 + [(1- \alpha(r))(\frac{4}{Nr})^2 + \alpha(r) (\frac{1}{r})^2]d\theta^2$ where $\alpha(r)$ is a bump function that is $0$ outside values of $r$ where all local defining functions have the form $a r^2 + b$ and is $1$ in a neighborhood of $0$.   

This change to the metric does not alter gradients of functions of the form $f(r,\theta) = g(r)$.  
\end{proof}

\subsection{Elementary squares}  \label{ssec:ElemSq}

In Figure \ref{fig:generators}, some standard forms for the base projection of the crossing and cusp loci of a Legendrian $L$ to a square $[-1,1] \times [-1,1]$ are presented and divided into Types (1)-(14).    
%The boundary edges of these squares are always oriented from the lower left corner to the upper right corner.  
Note
that parametrizations of squares may be orientation reversing (supposing $S$ is oriented, which we do not require in general) so that the images of the cusp and crossing locus in $S$ may appear reflected across $x_1=x_2$ in some squares.

%that reflections across the line $y=x$ are also allowed and will be denoted as Types (1')-(14').  

For some edges of the squares of type (5), (6), (8) or (12), a pair of arcs of the crossing locus intersect a single edge.  Along such an edge either the $x_1$ or $x_2$-coordinate goes from $-1$ to $1$ while the other coordinate remains fixed.  Denote the coordinate that varies by $x_i$.
%In these cases, we always require that when such an edge is parametrized from the lower left corner to the upper right corner of the square, i.e. with increasing $x_1$ or $x_2$-coordinate,  
We require that the two crossings that appear along the restriction of $L$ to that edge arise from a strand with larger $z$-coordinate at $x_i=-1$ passing through two consecutive strands with lesser $z$-coordinate at $x_i=-1$ as $x_i$ increases.  See the image labelled (2Cr) in Figure \ref{fig:EdgeTypesB}.  Thus, when viewed with this prescribed orientation, above each edge of a square of Type (1)-(14) the singular set $L$ matches $1$ of the  $4$ forms pictured in Figure \ref{fig:EdgeTypesB}.

We refer to a square in $S$ above which $L$ has one of the types (1)-(14) 
%or (1')-(14') 
as an {\bf elementary square} for $L$.

\begin{figure}
%\centerline{\includegraphics[scale=.4]{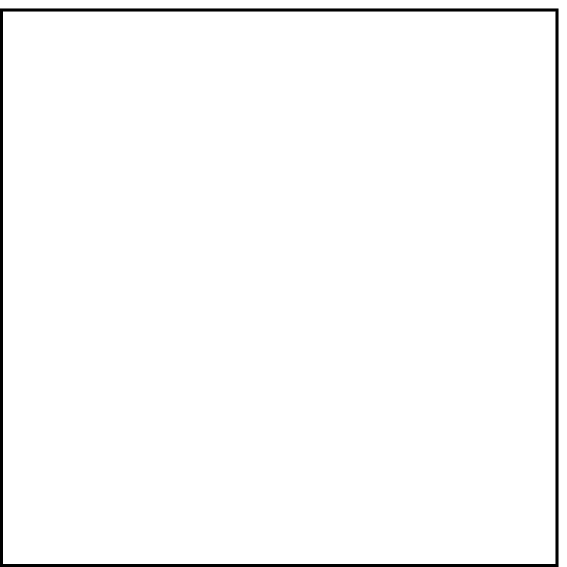}  \quad \includegraphics[scale=.4]{images/SCusp} \quad \includegraphics[scale=.4]{images/SCr}
%\quad \includegraphics[scale=.4]{images/SCrCorner}  \quad \includegraphics[scale=.4]{images/SCr2}} 

\centerline{$\begin{array}{cccccc} \includegraphics[scale=.4]{images/SVanilla}  & \includegraphics[scale=.4]{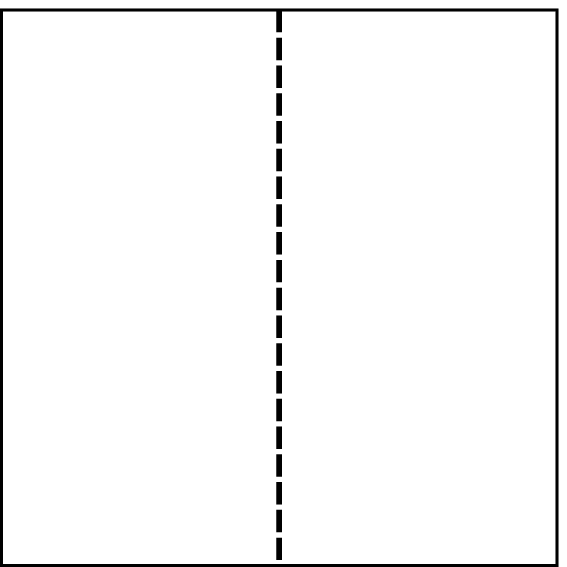} & \includegraphics[scale=.4]{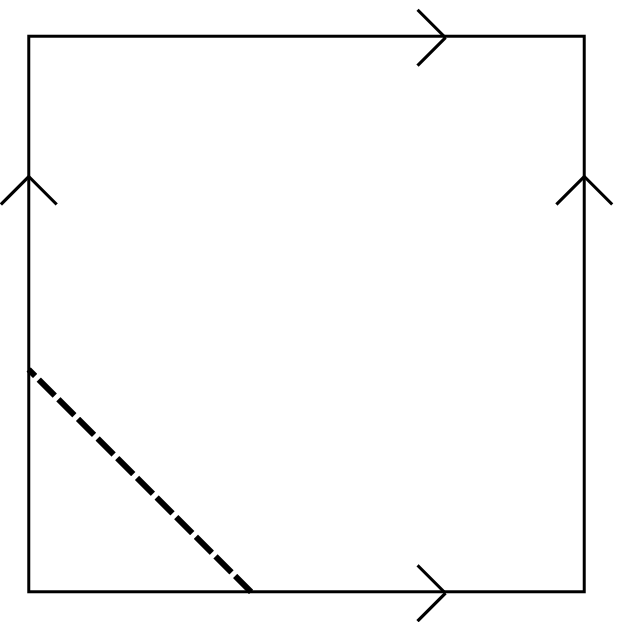}
& \includegraphics[scale=.4]{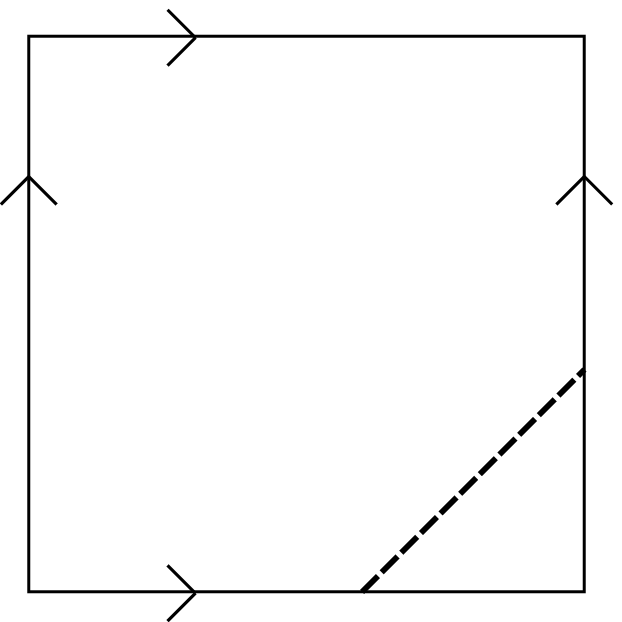}  & \includegraphics[scale=.4]{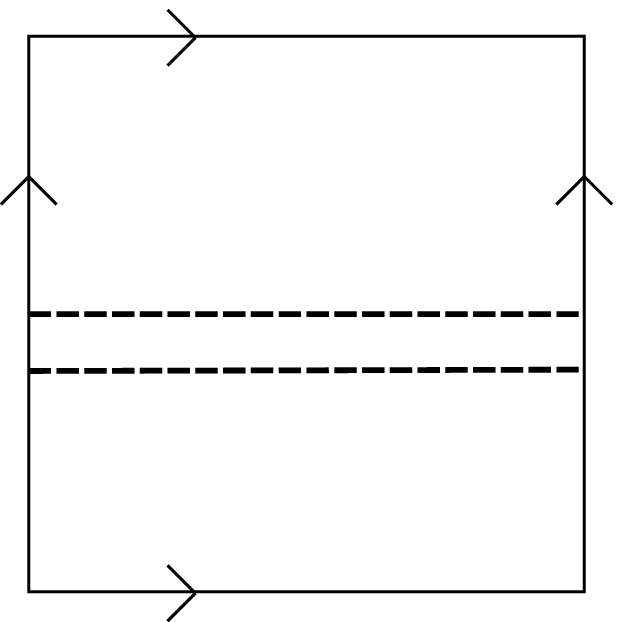} & \includegraphics[scale=.4]{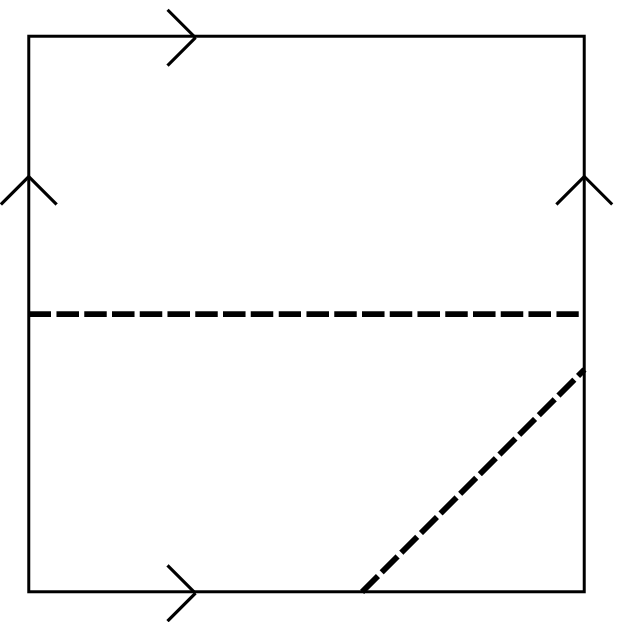} \\
 \mbox{(1)} & \mbox{(2)} & \mbox{(3)} & \mbox{(4)} & \mbox{(5)} & \mbox{(6)} \end{array}$}

\bigskip

\centerline{$\begin{array}{cccccc}  \includegraphics[scale=.4]{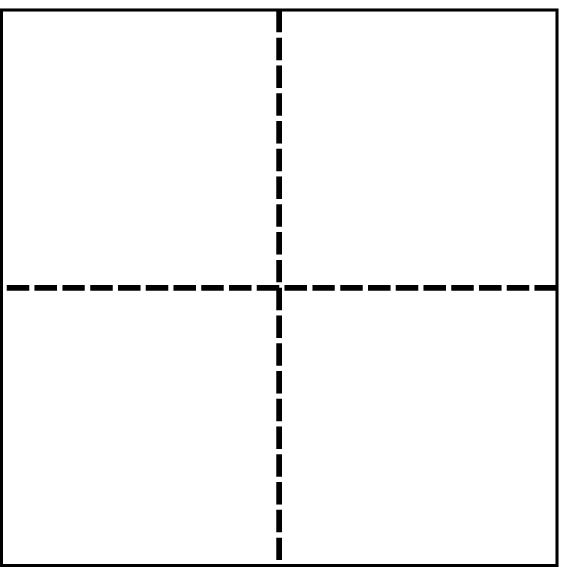} & \includegraphics[scale=.4]{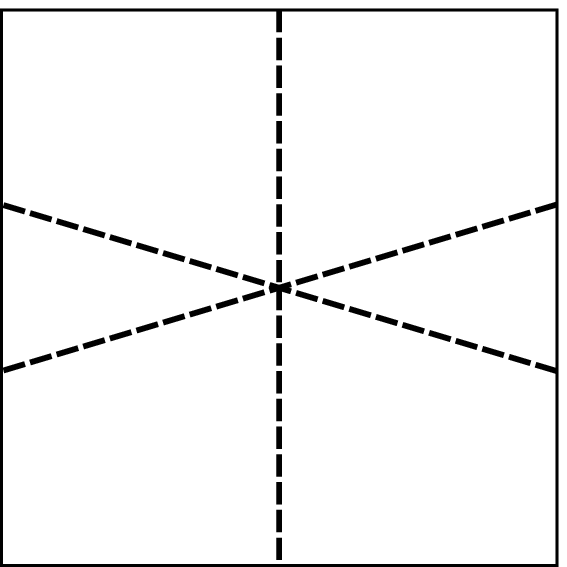} & \includegraphics[scale=.4]{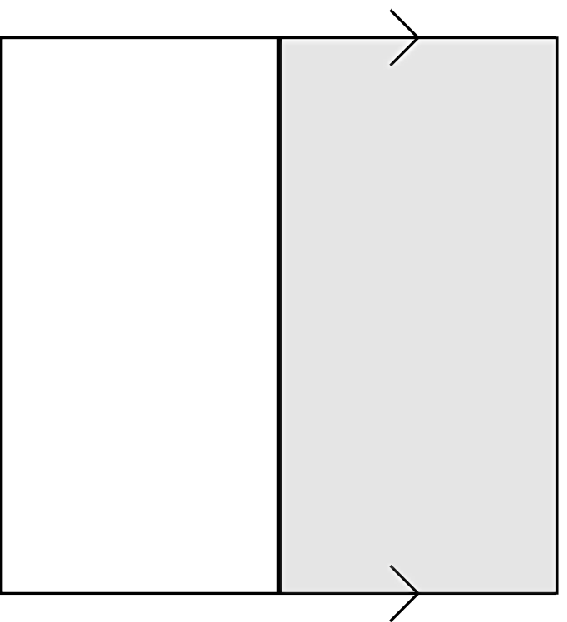} & \includegraphics[scale=.4]{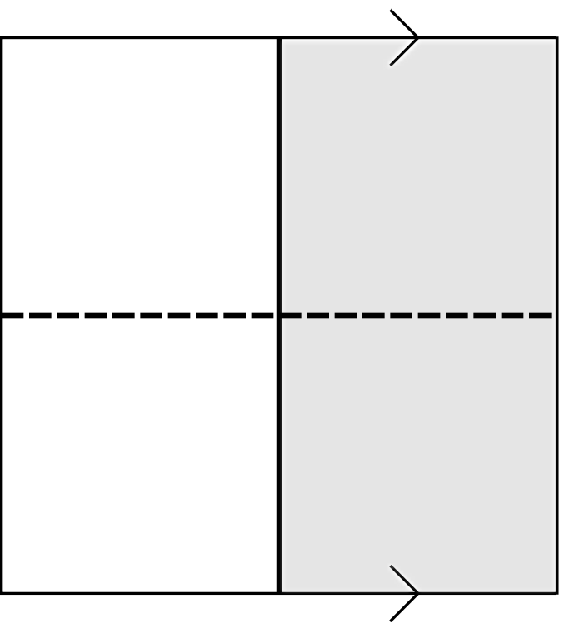}
& \includegraphics[scale=.4]{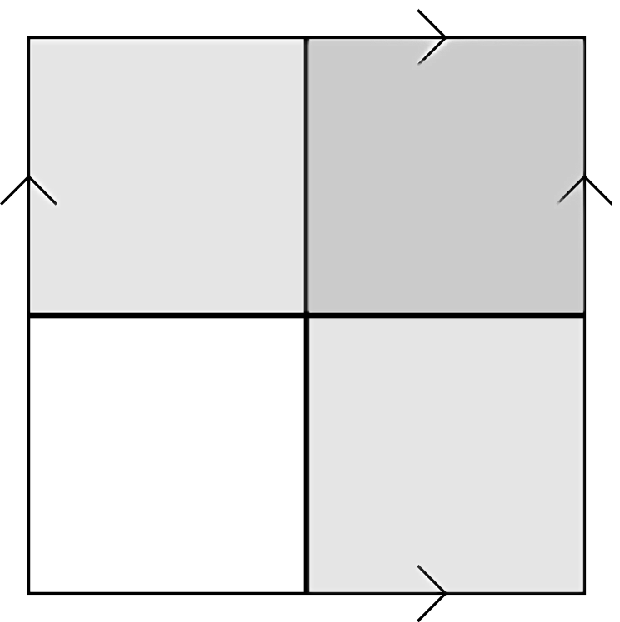} & \includegraphics[scale=.4]{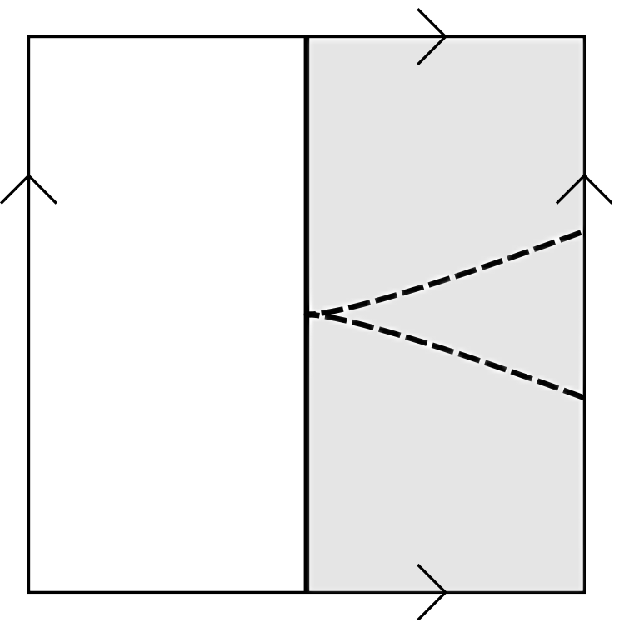} \\  \mbox{(7)} & \mbox{(8)} &\mbox{(9)} & \mbox{(10)} & \mbox{(11)} & \mbox{(12)}  \end{array}$}

%\bigskip

%\centerline{$\begin{array}{cccc} \includegraphics[scale=.4]{images/S9} & \includegraphics[scale=.4]{images/S10}
%& \includegraphics[scale=.4]{images/S11} & \includegraphics[scale=.4]{images/S12} \\ \mbox{(9)} & \mbox{(10)} & \mbox{(11)} & \mbox{(12)}  \end{array}$} 

\bigskip

\centerline{$\begin{array}{ccc} \includegraphics[scale=.4]{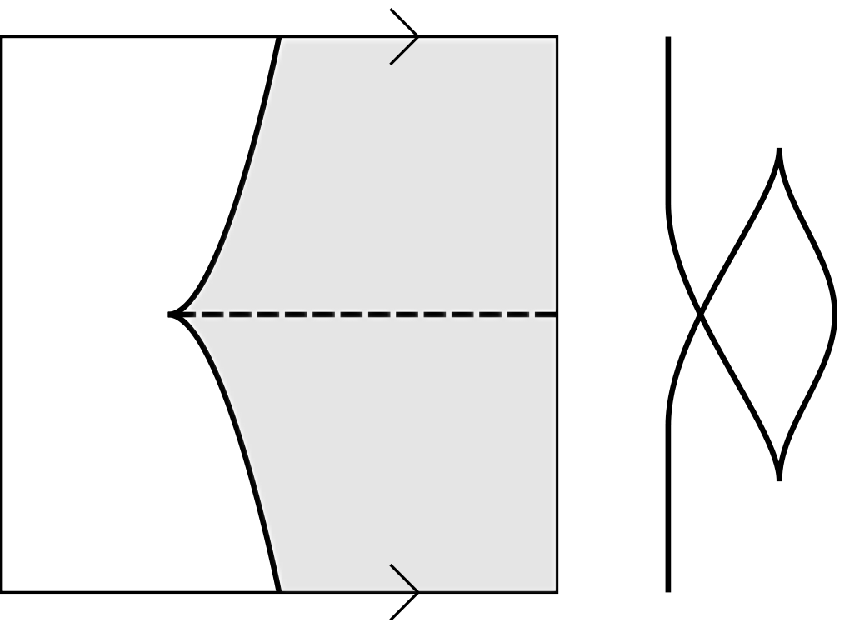}  & \includegraphics[scale=.4]{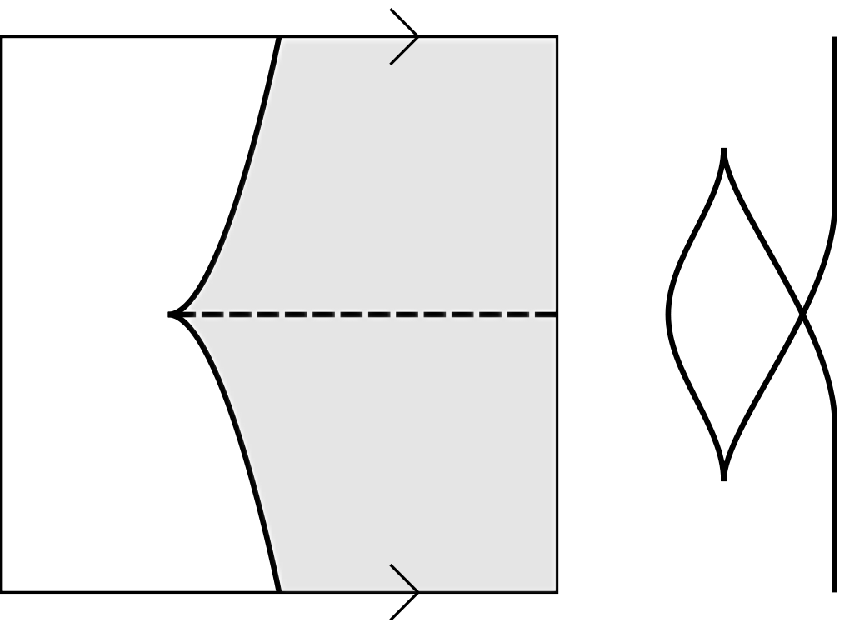}  \\ \mbox{(13)} & \mbox{(14)}  \end{array}$}

\caption{Allowed base projections of the cusp and crossing locus for elementary squares.  Solid lines denote cusp edges and dotted lines denote intersection of sheets.  The side of the cusp locus where the cusp sheets lie is shaded.    Codimension $2$ phenomena appear in (7), (8) and (10)-(14).  In (7), (10), and (11) the two crossing and/or cusp lines involve $4$ distinct sheets.  The squares (9), (12), and (13)-(14) are base projections of a triple point, a sheet crossing a cusp edge, and the upward and downward  swallow tails respectively. }
\label{fig:generators}
\end{figure}

%\begin{figure}
%\centerline{ \includegraphics[scale=.5]{images/2Cr1D}} 
%\caption{The form of $L$ above edges that intersect two arcs of the crossing locus.  If strands are numbered from top to bottom as they appear at $x_i=-1$, crossings occur first between the $k$ and $k+1$ strands and then between the $k+1$ and $k+2$ strands.  }
%\label{fig:2Cr1D}
%\end{figure}

\begin{figure}

\labellist
%\small
%\pinlabel $1/4$ [t] at 338 21
\pinlabel (PV) [t] at 100 0
\pinlabel (1Cr) [t] at 370 0
\pinlabel (2Cr) [t] at 640 0
\pinlabel (Cu) [t] at 910 0
\small
\pinlabel $k$ [l] at 468 96
\pinlabel $k+1$ [l] at 468 56
\pinlabel $k$ [l] at 738 96
\pinlabel $k+1$ [l] at 738 56
\pinlabel $k+2$ [l] at 738 16
\pinlabel $k$ [l] at 1008 96
\pinlabel $k+1$ [l] at 1008 56
\endlabellist

\centerline{ \includegraphics[scale=.4]{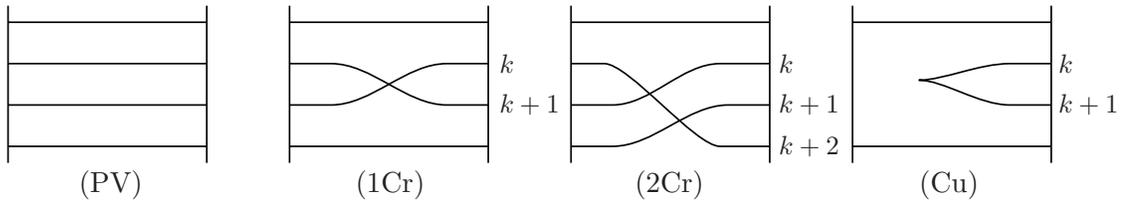}} 

\quad

\caption{The four possibilities for the singular set of $L$ above edges of elementary squares, pictured in the front projection:  (PV) Plain vanilla, i.e. no crossings or cusps; (1Cr) 1 Crossing; (2Cr) 2 Crossings; and (Cu) a single left cusp.  The left and right vertical lines denote the endpoints of the edge where $x_i=-1$ and $x_i=+1$ respectively for either $i=1$ or $2$.  There may be many additional strands that are disjoint from those pictured and do not have any crossings or cusps.  Note that in the (2Cr) square if strands are numbered from top to bottom as they appear at $x_i=+1$, crossings occur first between the $k+2$ and $k$ strands and then between the $k+2$ and $k+1$ strands.  }
\label{fig:EdgeTypesB}
\end{figure}

\subsection{Requirements for the transverse square decomposition $\mathcal{E}_\tra$}  \label{ssec:TransverseSqRe}

We will want to construct a transverse square decomposition $\mathcal{E}_\pitchfork$ for $L$ together with a choice of orientation for each of the $1$-cells of $\mathcal{E}_\pitchfork$ so that a list of conditions (A1)-(A4) is satisfied.  
We state (A1)-(A3) now, and postpone (A4) until \ref{sec:parallel}.

\begin{itemize}
\item[(A1)] All $2$-cells are elementary squares for $L$ parametrized so that all orientations of $1$-cells are from the lower left corner to upper right corner. %has one of the forms pictured in Figure \ref{fig:generators}.

%\item[(A2)]  Opposite edges of squares have the same orientation.  Moreover, it should be possible to identify each square of $\mathcal{E}_\tra$ with an elementary square from Figure \ref{fig:generators} (or with the reflection of such a square across $y=x$) so that all orientations of $1$-cells are from the lower left corner to upper right corner.

%\item[(A3)]Edges intersecting the cusp locus of $L$ are oriented in the direction where the number of sheets increases.
%\item  Above each square $L$ has one of the forms pictured in Figure \ref{fig:generators}.  

\item[(A2)] For any $0$-cell, $e^0_\alpha$, and pair of sheets $S_{i}$, $S_{j}$ above $e^0_\alpha$  consider the set of $1$-cells, 
\[
T(e^0_\alpha, S_i,S_j) =\{e^1_\beta \, | \,  \mbox{$e^0_\alpha$ is the initial vertex of $e^1_\beta$; and $S_i$ and $S_j$ intersect above $e^1_\beta$}\}.
\]
The cardinality of $T(e^0_\alpha, S_i,S_j)$ satisfies
\[
|T(e^0_\alpha, S_i,S_j)| \leq 2,
\]
and if $|T(e^0_\alpha, S_i,S_j)| = 2$ then the two $1$-cells in $T(e^0_\alpha, S_i,S_j)$ form the bottom and left edge of a single Type (3) square that has $e^0_\alpha$ as its lower left vertex.
\item[(A3)]  No two Type (3) squares share a common boundary edge or vertex.

\end{itemize} 
%Note that (A3) is actually a consequence of (A2).  

\subsection{Shifting $\pi_x(\Sigma)$ into the $1$-skeleton} \label{sec:parallel}
In order to later make contact with the cellular DGA, we will want to be able to associate an $L$-compatible cell decomposition to $\mathcal{E}_\pitchfork$ which we will denote $\mathcal{E}_\parallel$.  (Recall from \cite{RuSu1} that a polygonal decomposition is $L$-compatible if its $1$-skeleton contains the projection of the singular set of $L$.)  The remaining condition (A4) is designed to ensure that such a procedure is successful.  

To begin, we shift the base projection of the singular set, denoted $\pi_x(\Sigma)$, into the $1$-skeleton of $\mathcal{E}_\pitchfork$.  This is done one square at a time.  Except in the case of squares of type (12)-(14), the intersection of an arc of the cusp or crossing locus with a given elementary square has its endpoints in the interiors of some $1$-cells at the boundary of the square.  We move these endpoints so that they sit at the initial vertex (with respect to the orientation) of the given $1$-cell.  This specifies a unique edge of the square that we shift the arc into, or in the case of a square of type (3) the arc is shifted into a single vertex.    For Type (12)-(14) cells, we place the codimension $2$ point in the lower left corner of the square and then proceed as above.  See Figure \ref{fig:Homotopy}.

\begin{figure}
\centerline{ \includegraphics[scale=.4]{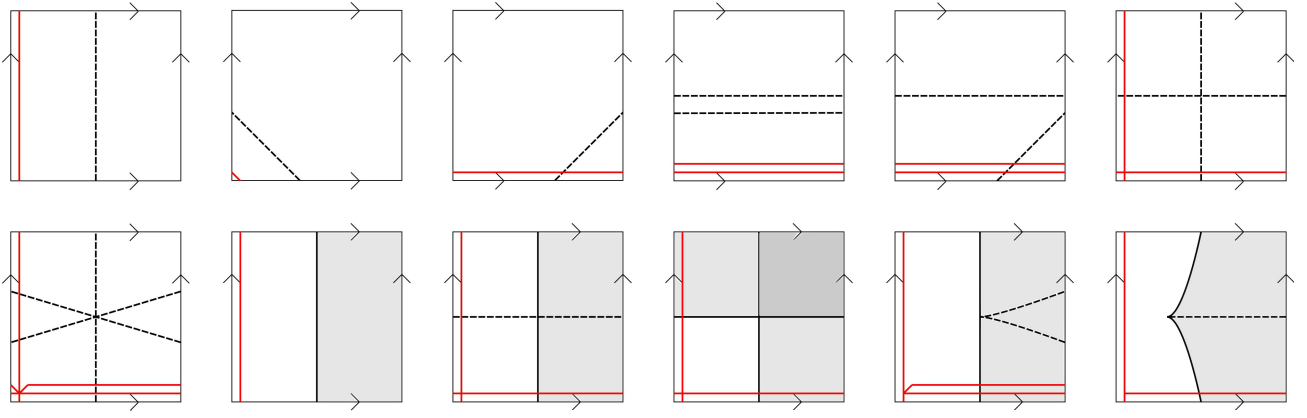} }
%\centerline{ \includegraphics[scale=.8]{images/SubdivideSq} }
\caption{Homotoping the singular set into the $1$-skeleton.  Red lines are drawn next to the edge or vertex that an arc of $\pi_x(\Sigma)$ is moved to.}
\label{fig:Homotopy}
\end{figure}

As the relocation of endpoints is determined by the orientation of the $1$-cells, this shifting of segments pieces together to give a global homotopy of $\pi_x(\Sigma)$ into the $1$-skeleton of $\mathcal{E}_\pitchfork$.  In general, this homotopy cannot be realized by an isotopy of $S$ since for elementary squares of type (5), (6), (8), and (12) two distinct arcs are homotoped to the same edge of a square.  Also, it is possible that a closed component of the crossing locus only intersects squares of type (3) and hence is collapsed to a point.  We can now state the remaining requirement  (A4).

\begin{itemize}
\item[(A4)]  In the above homotopy, no closed component of $\pi_x(\Sigma)$ is shrunk to a point, and the only distinct arcs that are placed on the same edge are those arising from squares of type (5), (6), (8), and (12) as described above.
\end{itemize} 
For instance, we do not allow two squares of type (2) to be located next to one another with the orientation of horizontal borders pointing away from a shared vertical edge.  See Figure \ref{fig:A4Ex}.

\begin{figure}
\centerline{ \includegraphics[scale=.4]{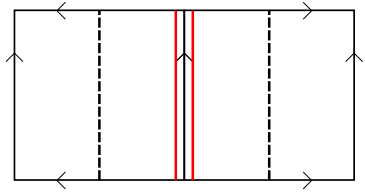} }
%\centerline{ \includegraphics[scale=.8]{images/SubdivideSq} }
\caption{An example where crossing arcs from distinct squares are shifted to the same $1$-cell so that (A4) fails.}
\label{fig:A4Ex}
\end{figure}

%complete construction of $\mathcal{E}_\parallel$.

\subsection{Construction of $\mathcal{E}_\pitchfork$}

After constructing $\mathcal{E}_\pitchfork$ we construct the associated $L$-compatible decomposition $\mathcal{E}_\parallel$ in \ref{sec:mathcalE} below.

%be a Legendrian in the $1$-jet space, $J^1 S$ where  $S$ is a surface.  Assume the front projection of $L$ is generic.

\begin{proposition}  \label{prop:Etra}
For any $L \subset J^1S$ with generic front and base projection, we can find a transverse square decomposition $\mathcal{E}_\pitchfork$  of a neighborhood of $S$ satisfying (A1)-(A4).
\end{proposition}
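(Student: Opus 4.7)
The plan is to construct $\mathcal{E}_\pitchfork$ in stages, starting from an arbitrary fine square decomposition and refining it to enforce conditions (A1)--(A4) in order.

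First, I would fix a smooth square decomposition of $S$ (obtained, for example, by starting with a triangulation and subdividing), and then repeatedly subdivide until each closed $2$-cell meets at most one codimension-$2$ feature of the singular set $\Sigma$ (swallowtail, triple point, or cusp-crossing crossing) and no more than a bounded number of codimension-$1$ arcs. Using a $C^\infty$-small isotopy of $S$ applied to the characteristic maps (while preserving the regularity requirements of Section~3.1), I would arrange the $1$-skeleton to be transverse to $\pi_x(\Sigma)$ and disjoint from all codimension-$2$ points of $\Sigma$. Next, by further barycentric subdivisions of any square containing more than one arc of $\pi_x(\Sigma)$ in its interior, combined with small perturbations of the new interior $1$-cells to maintain transversality, I would ensure that each $2$-cell matches one of the local patterns (1)--(14): namely, squares without codimension-$2$ points fall into types (1)--(8), and squares containing a single codimension-$2$ point can be further perturbed so that the boundary intersection pattern matches one of types (9)--(14).

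Second, to achieve (A1), I would pick a ``lower-left corner'' for each square and use this to orient its four edges. Because the $1$-cells are shared between squares, this choice must be globally consistent. Orientations can be assigned by a coloring procedure: choose them square-by-square and, whenever a shared edge would receive incompatible orientations, subdivide the offending square in half (introducing a new middle edge) to decouple the choices. A finite number of such subdivisions suffices, and the regularity requirements of Section~3.1 are preserved.

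Third, to enforce (A2) and (A3), I would handle configurations at $0$-cells where multiple $1$-cells emanate and pairs of sheets cross along them. Generically, after small perturbation, a given pair of sheets $S_i, S_j$ meets along at most one smooth arc of $\pi_x(\Sigma)$ near any $0$-cell; by subdividing squares near such vertices, any instance of $|T(e^0_\alpha, S_i, S_j)| > 2$ can be reduced, and when $|T(e^0_\alpha, S_i, S_j)| = 2$ the two arcs can be arranged to form the bottom and left edges of a newly-introduced square of Type (3). Isolation of Type (3) squares (A3) is then achieved by subdividing any neighbor of a Type (3) square, inserting buffer squares of Type (1) or (2).

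Finally, for (A4), I would inspect the homotopy of $\pi_x(\Sigma)$ into the $1$-skeleton described in Section~3.4 and detect its failures: a closed component of $\pi_x(\Sigma)$ collapses to a point exactly when it is traversed only by squares of Type (3), and distinct arcs collide on a shared $1$-cell when two non-allowed square types meet with their orientations forcing a common shift. Each failure is local and can be eliminated by inserting an extra row or column of elementary squares into the decomposition, thereby ``breaking'' the offending cycle or adjacency. The main obstacle is to carry out these refinements in an order that does not undo the preceding conditions; I would proceed by inducting on a complexity measure (say, the number of codimension-$2$ points plus the number of violations of (A2)--(A4)), showing that each local refinement strictly decreases this measure while preserving (A1). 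Simultaneity of all four conditions is the essential difficulty, since refinements enforcing (A4) may introduce new $0$-cells that must again be checked against (A2)--(A3); resolving this requires choosing each refinement to be local enough that it does not interact with earlier ones.
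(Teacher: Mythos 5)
Your overall strategy---start from an arbitrary square decomposition and iteratively refine to enforce (A1)--(A4)---differs from the paper's, which builds the decomposition outward from the singular set (codimension-$2$ points first, then a neighborhood of the cusp locus, then of the crossing locus, then the rest of $S$) and only at the very end performs a single global subdivision with a canonical orientation rule. The difference matters, because the hard content of the proposition lives exactly in the steps you leave unresolved. The most serious gap is the orientation step for (A1). If squares $P$ and $Q$ share an edge $e$ and their chosen lower-left corners force opposite orientations on $e$, subdividing $P$ ``in half'' does not decouple anything: $e$ (or each of its halves, once you subdivide it --- which forces a matching subdivision of $Q$) is still shared, and each piece still needs a single orientation compatible with a corner-to-corner flow in \emph{both} adjacent squares. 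The paper resolves this with a symmetric rule --- split every edge at its midpoint and orient both halves away from the midpoint, while subdividing every face about its center --- so that the orientation assigned to each half-edge is the same when computed from either side, and (A1) holds by construction rather than by an iteration whose termination you have not argued.

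The second gap is the claimed induction. Your complexity measure (number of codimension-$2$ points plus number of violations of (A2)--(A4)) does not obviously decrease: the codimension-$2$ count is fixed by $L$, and inserting ``an extra row or column of squares'' to repair an (A4) failure creates new $0$-cells, new shared edges, and new adjacencies that can introduce fresh violations of (A1)--(A3). You acknowledge this interaction but only assert that the refinements can be chosen ``local enough'' not to interfere --- that assertion is precisely what needs proof. The paper sidesteps the issue by making (A2)--(A4) consequences of explicit placement choices (e.g.\ mandating a Type (2) square between any two codimension-$2$ portions of the crossing locus, and prescribing the exact subdivisions near triple points and cusp-crossing points as in Figures \ref{fig:SpecialSub}--\ref{fig:CrossingSub}), so that all four conditions are verified simultaneously on the finished object rather than restored one at a time.
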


\begin{proof} Let $A, B \subset S$ denote the (base projection of the) cusp and crossing locus respectively.   Generically, they have the following form.

The cusp locus, $A$, is a union of transversely intersecting and self-intersecting closed curves which are immersed except for cusp points at the projections of swallow tail points.  The crossing locus, $B$, is a union of closed curves and arcs with endpoints at swallow tails.  The crossing locus has cusp points at some points of $A\cap B$ where a sheet crosses a cusp edge as in Figure \ref{fig:generators} (12).  In addition, there are a finite number of transverse double points between parts of the crossing locus and itself as well as transverse double points between crossing and cusp curves.  Finally, there are a finite number of triple points of $B$, as in Figure \ref{fig:generators} (8), that correspond to an intersection of $3$ sheets of $L$.

We will construct the required decomposition of $S$ into squares in the following steps:  
\begin{itemize}
\item[1.] At the codimension $2$ parts of $A \cup B$. 
\item[2.] In a neighborhood of $A$.  
\item[3.] In a neighborhood of $A\cup B$.
\item[4.] The rest of $S$.
\item[5.] Subdivide the result, and assign orientations to edges.   
\end{itemize}

 \textbf{Step 1.} Begin with squares around all codimension $2$ singularities, with various edges intersecting arcs of $A$ and $B$ as perscribed in Figure \ref{fig:generators} (7)-(8) and (10)-(14).  

%\smallskip

 \textbf{Step 2.}  Next, square the remaining portion of a regular neighborhood $N(A)$ of the cusp locus so that boundaries of individual squares have two opposite boundary edges perpendicular to $A$ and the other two opposite edges on $\partial N(A)$.  (We choose the neighborhood $N(A)$ so that this is also the case for the squares surrounding codimension $2$ points except for the case of a transverse intersection of two cusp edges for which only the vertices of the square lie on the boundary of $N(A)$.)  If necessary, add extra edges perpendicular to $A$ to ensure that no two codim $2$ squares share a common edge.  

%At this point we assign orientations to all edges that intersect $A$;  orient these edges so that they point from the side of the cusp locus with fewer sheets to the side with more sheets.  Note that the orientation of opposite edges does agree.

%\smallskip

 \textbf{Step 3.}
Now we extend the squaring to a neighborhood of $A\cup B$.  Of the squares containing codimension $2$ front singularities there are two types with edges having two intersections with the crossing locus; see  Figure \ref{fig:generators} (8) and (12).  Begin by placing an adjacent square  next to each of these edges with the boundary edges chosen so that the crossing locus appears as in the first half of Figures \ref{fig:SpecialSub} and \ref{fig:SpecialSub2} below.  (These new squares are as in Figure 1 (6).)  Now, the remainder of $B$ should be a union of arcs with boundaries on edges that contain just one intersection with $B$.  Use a single Type (2) square for the neighborhoods of each of these arcs.  (Note that we do require that between any two codimension $2$ points of the crossing locus there is  one square of type (2).)

%\smallskip

 \textbf{Step 4.}
We now have squares in a neighborhood of $A\cup B$, $N(A\cup B)$.  The complement of the interior of this neighborhood, $T = S \setminus \mathit{Int}(N(A\cup B))$, is a surface with boundary with a triangulation of the boundary already constructed.  We can extend this triangulation of $\partial T$ to a triangulation of all of $T$ (into triangles not squares).  To see that this is possible, first, triangulate $T$ up to a collar neighborhood of the boundary.  Then, observe that a triangulation of the boundary of an annulus (which we now have along the boundary of each component of this collar) can always be extended to a triangulation of the annulus.

%\smallskip

 \textbf{Step 5.}
Now, we subdivide our current polygonal decomposition into squares and orient the corresponding edges.  Subdivide the triangles in $T$ into $3$ squares by dividing each edge in half and adding edges from the midpoints of edges to a new vertex in the center of the triangle.  Orient the edges of these squares so that they point away from the center of the triangle.  
This is indicated in Figure \ref{fig:TriangleSub}.  Subdivide each of the squares in $N(A\cup B) - \mathit{N(A)}$ into $4$ smaller squares by cutting the square into quarters.  Again, orient all edges to point away from the center of the original square; see Figure \ref{fig:TriangleSub}.   Finally, cut squares in $N(A)$ in half along the direction that is perpendicular to the cusp edge with the exception of those squares that contain double points of $A$ which we do not subdivide at all.  
%\footnote{\ms{7/22/15: `` we do not subdivide at all" or `` we do not NEED TO subdivide at all"?} \dr{7/23:  1st one.  If the reader for some reason decides to take the initiative to go ahead and subdivide these squares then he/she will mess up the construction (because then some of the bordering squares will now be pentagons and so will have to be subdivided again).}}
Orient the edges that are perpendicular to the cusp edge  so that they point from the side of the cusp locus with fewer sheets to the side with more sheets.  Orient the remaining edges away from the newly added edge in the middle of the original squares.

\begin{figure}
\centerline{ \includegraphics[scale=.5]{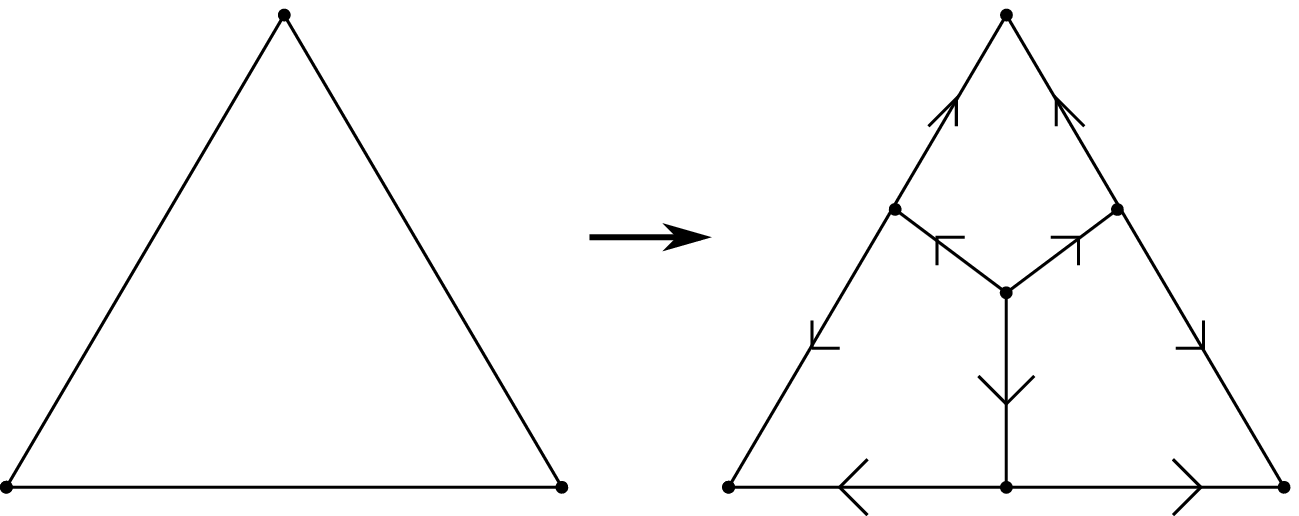} \quad \quad \includegraphics[scale=.5]{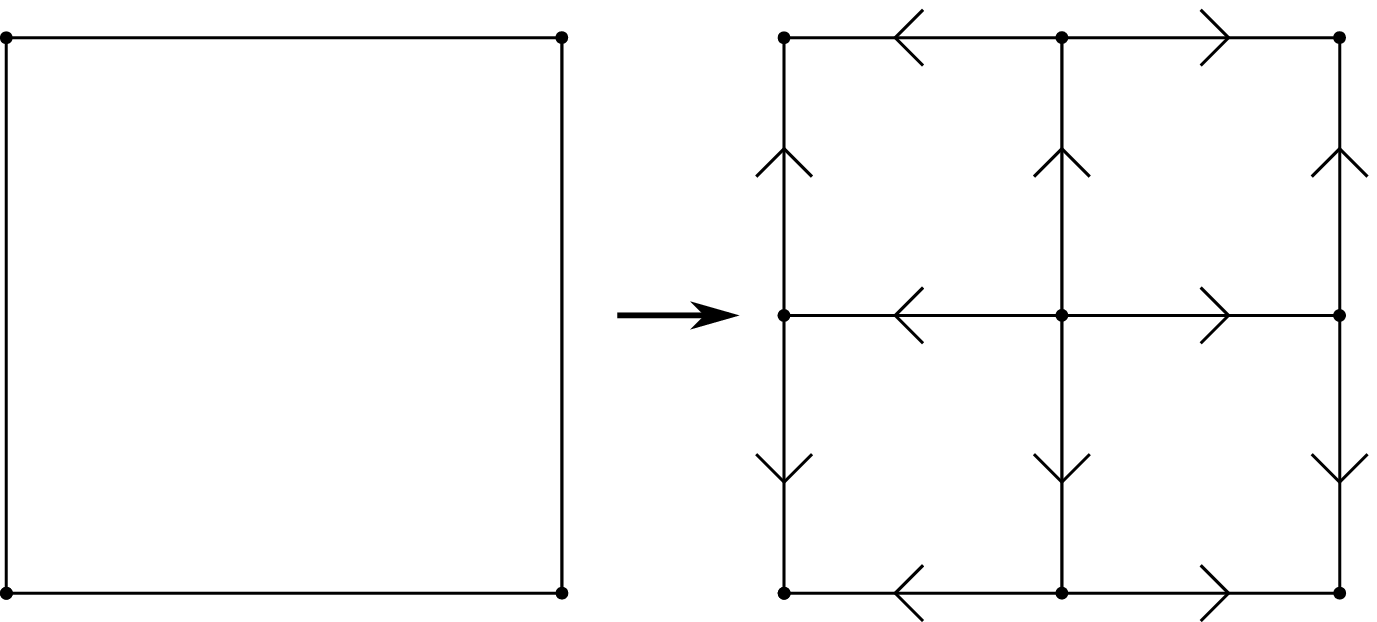}  }
%\centerline{ \includegraphics[scale=.8]{images/SubdivideSq} }
\caption{Subdividing triangles and squares.  Orientations for edges are  pictured after the subdivision.}
\label{fig:TriangleSub}
\end{figure}

Each edge of the original decomposition is shared by two squares, so it is important to note that the subdivision and orientation prescribed for these edges is the same with respect to either face.  This is the case since all edges are subdivided in half except for edges that intersect the cusp locus; the orientation of the two subdivided pieces is always away from the new vertex.  Also, note that all squares have opposite edges oriented in the same direction. 

Finally, one should take some care in subdividing squares that intersect the crossing locus, $B$, so that the subdivided pieces all fit one of the forms specified in Figure \ref{fig:generators}.  This is particularly important at squares of the form (8) and (12) which contain triple points and sheets intersecting the cusp edge.  We subdivide these squares and their adjacent squares as pictured in Figure \ref{fig:SpecialSub} and \ref{fig:SpecialSub2} respectively.  The edges that intersect the crossing locus twice can be made to have the form indicated in  Figure \ref{fig:EdgeTypesB} (2Cr) by the following considerations.  For squares of type (8), choose the identification of the given square (before subdivision) with the pictured one so that the vertical sides have this property when oriented from lower to upper corners.  For squares of type (12), exactly
one of the subdivisions pictured in Figure \ref{fig:SpecialSub2} has the desired property, and this is the subdivision that we use.  

Note that our subdivisions prescribe particular halves of the original edges that the crossing arc should leave these squares along.  This extra restriction can be met, since in our original decomposition we required one square containing a single arc of type (2) between any two codimension $2$ portions of $B$.  The subdivision of such a square can be chosen so that the crossing arc will leave along any perscribed halves of the bottom and top edges, and so that the subdivision uses only squares of type (2), (3), and (4).  See Figure \ref{fig:CrossingSub}.

\begin{figure}
\centerline{ \includegraphics[scale=.6]{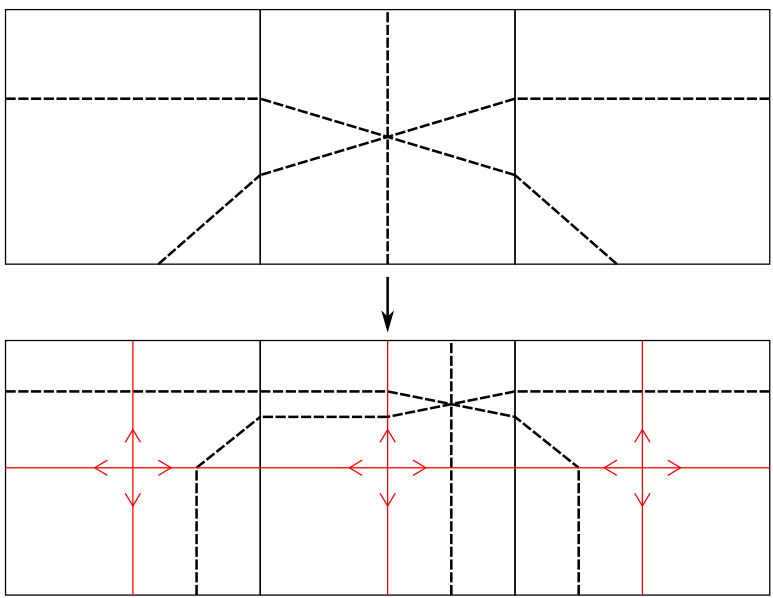}  }
\caption{Subdivision near a triple point.}
\label{fig:SpecialSub}
\end{figure}

\begin{figure}
\centerline{ \includegraphics[scale=.6]{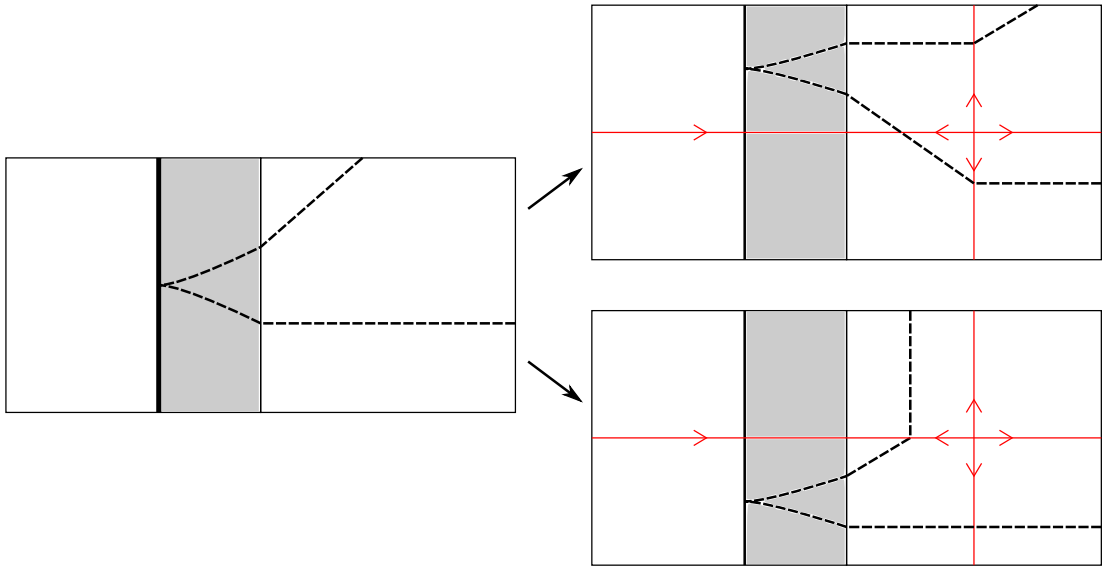}  }
\caption{Subdivision near a sheet intersecting a cusp edge.  Note that the left square contains a cusp edge, so it is only subdivided into two smaller squares.  Of the two subdivisions pictured, we use the one that results in the edge with two crossings matching the form given in Figure \ref{fig:EdgeTypesB} (2Cr).  (This way, only one version of (12) is necessary.)}
\label{fig:SpecialSub2}
\end{figure}

\begin{figure}
\centerline{ \includegraphics[scale=.6]{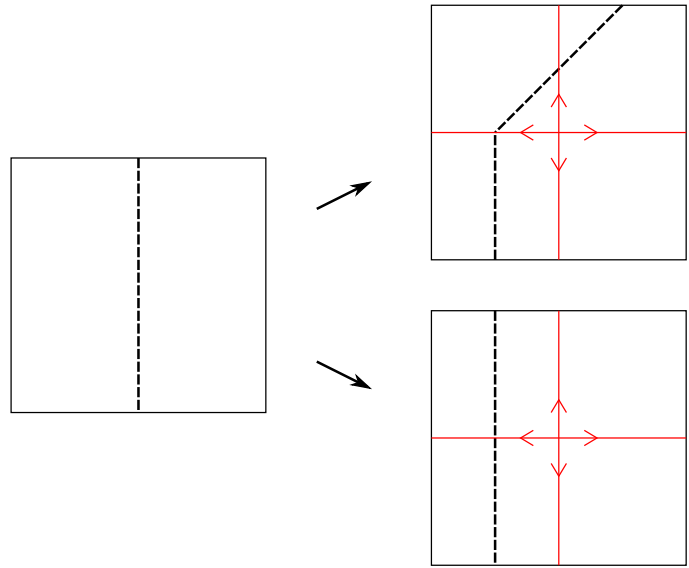}  }
\caption{Subdividing a Type (2) square.  Using one of the pictured subdivisions or their reflections across a vertical axis, we arrange for the crossing arc to intersect any perscribed halves of the boundary.}
\label{fig:CrossingSub}
\end{figure}

With the construction complete, we verify that (A1)-(A4) holds.  Property (A1) is clear from the construction, keeping in mind that in our figures the singular set in the image of an elementary square may appear in a manner that differs from the depiction in Figure \ref{fig:generators} by reflected across $x_1=x_2$.  
Property (A2) is seen to hold by considering the appearance of the crossing locus in relation to the orientations of squares after Step 5. is completed.  (In particular, examine Figures \ref{fig:SpecialSub}, \ref{fig:SpecialSub2}, and \ref{fig:CrossingSub}.)

For (A3), note that Type (3) squares only appear when some of the Type (12) and Type (2) squares are subdivided during Step 5.  (See the Figures \ref{fig:SpecialSub2}  and \ref{fig:CrossingSub}.)  Examining the placement of squares after the subdivision  shows that no two Type (3) squares share an edge or vertex.  

To verify (A4), first check that the condition holds near codimension $2$ points (examine Figures \ref{fig:SpecialSub} and \ref{fig:SpecialSub2}). Before the subdivision, all remaining squares that intersect $A\cup B$ have type (2) or (9) and are bordered on the right and left by squares of type (1).  After the subdivision and resulting orientation of $1$-cells, the crossing arc for a subdivided Type (2) square will be homotoped to the center two $1$-cells of the four squares arising from the subdivision.  The cusp arcs for subdivided Type (9) squares are homotoped to the left sides of these squares (which are bordered by squares of type (1)).  

\end{proof}

\begin{remark}
As constructed, some squares of $\mathcal{E}_\pitchfork$ may be disjoint from $\pi_x(L)$.  These squares can be safely ignored so that we view $\mathcal{E}_\pitchfork$ as a square decomposition of a neighborhood of $\pi_x(L)$. 
\end{remark}

\subsection{Construction of $\mathcal{E}_\parallel$} \label{sec:mathcalE}

From this construction of $\mathcal{E}_\tra$ we can now produce a related $L$-compatible polygonal decomposition of $\pi_x(L) \subset S$ denoted $\mathcal{E}_\parallel$.  As in \ref{sec:parallel}, we have a map of $\pi_x(\Sigma)$ into the $1$-skeleton of $\mathcal{E}_\tra$ that is not realized by an isotopy of $S$ only because in  squares of type (5), (6), (8) and (12) two crossing arcs are mapped to the same edge.  Note that all apperances of these squares in $\mathcal{E}_\parallel$ are as  specified in Figures \ref{fig:SpecialSub} and \ref{fig:SpecialSub2}).  To arrive at the decomposition $\mathcal{E}_\parallel$ we simply seperate these edges by placing new $2$-cells and $1$-cells between them in an appropriate manner.  See Figures \ref{fig:EParallel} and \ref{fig:EParallel12}.

More precisely, the relationship between the cells of $\mathcal{E}_\pitchfork$ and $\mathcal{E}_\parallel$ is the following:

\begin{proposition}  \label{prop:EparProp}
The $L$-compatible polygonal decomposition $\mathcal{E}_\parallel$ is obtained by applying a homeomorphism to the cell decomposition  $\mathcal{E}_\tra$ and then making the following subdivisions:
\begin{itemize}
\item  The cell decompositions of closed squares of type (5), (8), and (12) from $\mathcal{E}_\tra$ are subdivided by placing an extra $0$-cell in the interior of the edge of the square that corresponds to the right edge in Figure \ref{fig:generators} and then  connecting this new $0$-cell to the lower left corner of the square with a new $1$-cell.  
\item  The cell decompositions of closed squares of type (6) are subdivided by placing an extra $0$-cell on both the left and right edges and then connecting these new $0$-cells with a new $1$-cell.
\end{itemize}
\end{proposition}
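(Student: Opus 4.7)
The plan is to build $\mathcal{E}_\parallel$ in two stages: first apply an ambient homeomorphism $\phi \colon S \to S$ so that $\phi^{-1}(\pi_x(\Sigma))$ is contained in the $1$-skeleton of $\mathcal{E}_\tra$ except for one residual arc in each closed square of type (5), (6), (8), or (12); second, perform the subdivisions listed in the statement to absorb those residual arcs.

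For the first stage, I would argue $2$-cell by $2$-cell. Inside each closed $2$-cell $\overline{e^2_\alpha}$ the homotopy of Section~\ref{sec:parallel} prescribes a terminal position in the $1$-skeleton for every arc of $\pi_x(\Sigma)$. For a square not of the problematic types, these terminal arcs are pairwise disjoint (or collapse to a corner in Type~(3)), so the homotopy can be realized by an ambient isotopy of $\overline{e^2_\alpha}$ that fixes each boundary edge setwise and slides arc endpoints along edges to the initial vertex prescribed by (A1). For a problematic square the same construction works for all but one arc, whose prescribed target coincides with another arc's; we leave this residual arc inside the $2$-cell with endpoints already relocated to the correct boundary points. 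The prescribed endpoint motion along a given $1$-cell is determined entirely by the orientation of that $1$-cell, so it agrees from the two adjacent faces, and (A4) prevents any other kind of conflict across $1$-cells; at $0$-cells, (A2) and (A3) limit how squares of various types may sit around the vertex and guarantee that the various local prescriptions, including those from Type~(3) squares that collapse whole arcs to the vertex, are mutually compatible. The patched-together ambient homeomorphism $\phi$ is the one we want, and the resulting cell decomposition $\phi(\mathcal{E}_\tra)$ contains $\pi_x(\Sigma)$ in its $1$-skeleton except for the residual arcs.

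For the second stage, the prescribed subdivisions provide exactly the $1$-cells needed to host the residual arcs. In a square of type (5), (8), or (12) the residual arc runs from the lower-left corner to an interior point of the right edge, i.e.\ precisely from the old $0$-cell at the corner to the new $0$-cell inserted on the right edge; a supported ambient isotopy inside the square straightens this arc onto the new diagonal $1$-cell, and the subdivided square then has the entire singular set in its $1$-skeleton. In a square of type (6) the residual arc runs horizontally between interior points of the left and right edges, and a supported isotopy straightens it onto the new horizontal $1$-cell joining the two inserted $0$-cells. The main delicate step throughout is verifying compatibility of the local isotopies at shared $0$-cells where several different square types may meet, which amounts to a finite case analysis driven by (A1)--(A4); once this is handled, the resulting decomposition contains $\pi_x(\Sigma)$ in its $1$-skeleton by construction, is evidently polygonal, and so is the $\mathcal{E}_\parallel$ claimed in the proposition.
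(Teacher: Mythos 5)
Your proposal is correct and follows essentially the same route as the paper, which in fact offers no formal proof of this proposition: it simply observes (as you do) that the homotopy of Section~\ref{sec:parallel} fails to be an isotopy only where two arcs are sent to a common $1$-cell in squares of type (5), (6), (8), (12), and that the stated subdivisions supply exactly the extra $1$-cells needed to host the second arc (cf.\ Figures~\ref{fig:EParallel} and \ref{fig:EParallel12}). Your additional remarks on gluing the local isotopies across $1$-cells via the orientation convention and on the roles of (A2)--(A4) at vertices make explicit what the paper leaves implicit.
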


\begin{figure}
\centerline{ \includegraphics[scale=.6]{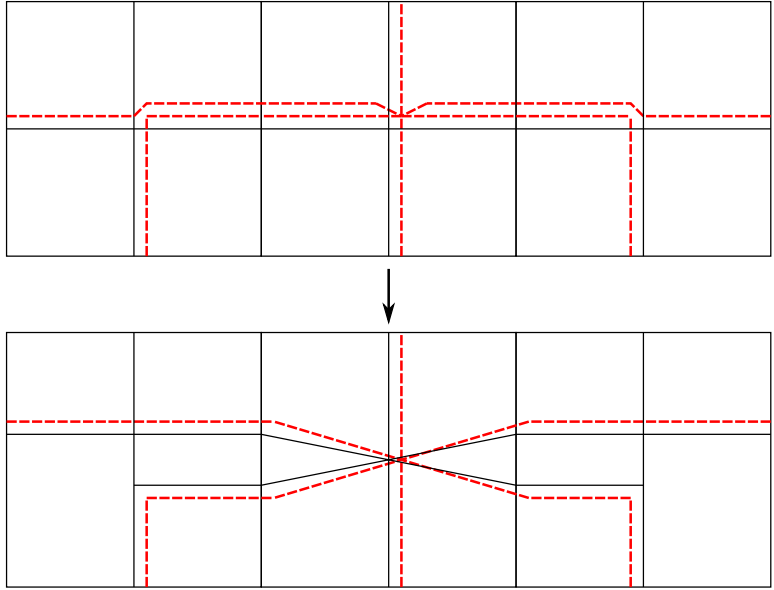}  }
\caption{(Top) The result of shifting $\pi_x(\Sigma)$ into the $1$-skeleton of $\mathcal{E}_\pitchfork$ is pictured near a Type (8) square.  (Bottom) We obtain $\mathcal{E}_\parallel$ by subdividing squares of type (5),(6),(8) and (12) and seperating the crossing arcs that previously belonged to the same $1$-cells.  
%Near a triple point this is done as pictured, and the procedure near a Type (12) square is similar.  
The dotted red lines represent the crossing locus, and are intended to coincide with parts of the $1$-skeleton (black).}
\label{fig:EParallel}
\end{figure}

\begin{figure}
\centerline{ \includegraphics[scale=.6]{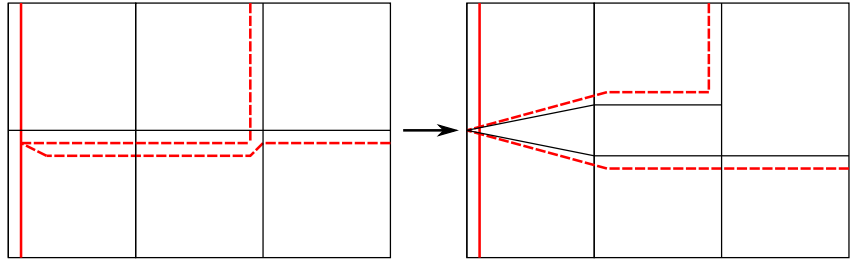}  }
\caption{ Shifting $\pi_x(\Sigma)$ into the $1$-skeleton and forming $\mathcal{E}_\parallel$ near a Type (12) square.   
%(Bottom) We obtain $\mathcal{E}_\parallel$ by subdividing squares of type (5),(6),(8) and (12) and seperating the crossing arcs that previously belonged to the same $1$-cells.  Near a triple point this is done as pictured, and the procedure near a Type (12) square is similar.  
The solid red line is the cusp locus while the dotted red lines represent the crossing locus.   Both are intended to coincide with parts of the $1$-skeleton (black).}
\label{fig:EParallel12}
\end{figure}

\section{Blueprint for enumeration of GFTs}  \label{sec:3.5}

%Geometrically the generators of the Cellular DGA correspond to Reeb chords $a_{i,j}$, $b_{i,j}$, and $c_{i,j}$ that are local minima, saddle points, and local maxima located in neighborhoods of the $0$-, $1$-, and $2$-cells of a decomposition of $S$ into squares.

%After Section \ref{sec:transverse}, we have decomposed a neighborhood of the base projection of $L$ into squares, and above each square $L$ matches one of the types (1)-(14) from Figure \ref{fig:generators}.  
The next step of the proof is to perform a (partial) computation of the LCH DGA above each square of the decomposition $\mathcal{E}_\pitchfork$ using a Legendrian isotopic surface, $\tilde{L}$, for which the Reeb chords near any particular square form sub-DGAs.  
This is carried out in Sections \ref{sec:CompLCH}-\ref{sec:SwallowComp}.  To compute the LCH differential using Theorem \ref{thm:EkholmMain}, we will need to enumerate relevant rigid GFTs for each of the 14 square types.    To help prepare the reader for the general enumeration procedure, in this section we outline a computation for the Type (1) square using a simplified model for $\tilde{L}$.  This should serve as a blue print for the unified approach used for square types (1)-(12).  (The (13) and (14) squares which have swallowtail points require an additional argument.) 
%uniformally in Section \ref{sec:Comp2Cells}.
%where no crossings or cusps are present 
%using a simplified model for $\tilde{L}$, and 
We then explain some of the challenges that are later addressed in extending this approach to other square types.

Section \ref{sec:3.5} is intended as a guide for the proof of Theorem \ref{thm:CellularLCH}.  The remainder of the article is independent of Section \ref{sec:3.5}, both logically and in exposition.  

\subsection{Type (1) square as an example}
%To give an idea for the arguments involved in our computations of LCH in particular squares, and make to illuminate somewhat the geometric connection between the cellular DGA and LCH, 
%We conclude this introduction with a computation in an illustrative special case.

%We start with an explicit model, although because of the possible need to allow a perturbation to achieve the transversality of GFTs, the model used in Sections ??-?? is not quite this specific.  (See below.)

We consider a square, in coordinates $[-1,1]\times[-1,1]$, above which $L$ consists of $n$ non-intersecting sheets without cusp edges.  (This is a Type (1) square in Figure \ref{fig:generators}.)  Above the square, the front projection of $L$ appears as a subset of $[-1,1]\times [-1,1] \times \R$ that consists of the union of the graphs of $n$ functions $F_1, \ldots, F_n:[-1,1]\times[-1,1] \rightarrow \R$, which we label to satisfy $F_1 > \ldots > F_n$.  We take $F_i$ of the form
\[
F_i(x_1,x_2) = f_i(x_1)+f_i(x_2)
\]
where $f_i :[-1,1] \rightarrow \R$ are $1$-dimensional functions designed so that for any $1 \leq i < j \leq n$, the difference function $f_{i} -f_j$ has local minima at $-1$ and $1$ and a single local maximum $\beta_{i,j} \in (-1,1)$.  Moreover, we can arrange that the locations of the local maxima are lexicographically ordered in $i$ and $j$,
\[
\beta_{1,2} < \beta_{1,3} < \ldots <\beta_{2,3} < \beta_{2,4} < \ldots < \beta_{n-1,n}.
\]
(Figure \ref{fig:LinStair}, below illustrates how this ordering of critical points may be arranged.)
The Reeb chords in $[-1,1]\times[-1,1]$, are critical points of difference functions $F_{i,j} := F_i -F_j$ with $i<j$, and for each $F_{i,j}$ 
we have $9$ critical points
% for each of the $2$-dimensional difference functions
\[
a^{\pm,\pm}_{i,j}, b^U_{i,j}, b^R_{i,j}, b^D_{i,j}, b^L_{i,j},  c_{i,j}, \quad \quad 1 \leq i <j \leq n
\]
located as pictured in Figure \ref{fig:Intro}.  Critical points labeled with $a$'s, $b$'s, and $c$'s are respectively local minima, saddle points, and local maxima.  The horizontal and vertical segments connecting the $b^X_{i,j}$ to $c_{i,j}$ are flow lines for $-\nabla F_{i,j}$ that divide the square into four closed {\bf $(i,j)$-quadrants}.  We call the upper right and the lower left of these quadrants the $1$-st and $3$-rd $(i,j)$-quadrant.

\begin{figure}

\quad

\quad

\labellist
\small
\pinlabel $c_{i,j}$ [tr] at 142 142
\pinlabel $b^R_{i,j}$ [l] at 254 150
\pinlabel $b^U_{i,j}$ [b] at 150 252
\pinlabel $b^D_{i,j}$ [t] at  150 0
\pinlabel $b^L_{i,j}$ [r] at  0 150
\pinlabel $a^{+,+}_{i,j}$ [bl] at 254 254
\pinlabel $a^{-,+}_{i,j}$ [br] at 0 254
\pinlabel $a^{-,-}_{i,j}$ [tr] at  0 0
\pinlabel $a^{+,-}_{i,j}$ [tl] at  254 0
\endlabellist
\centerline{ \includegraphics[scale=.6]{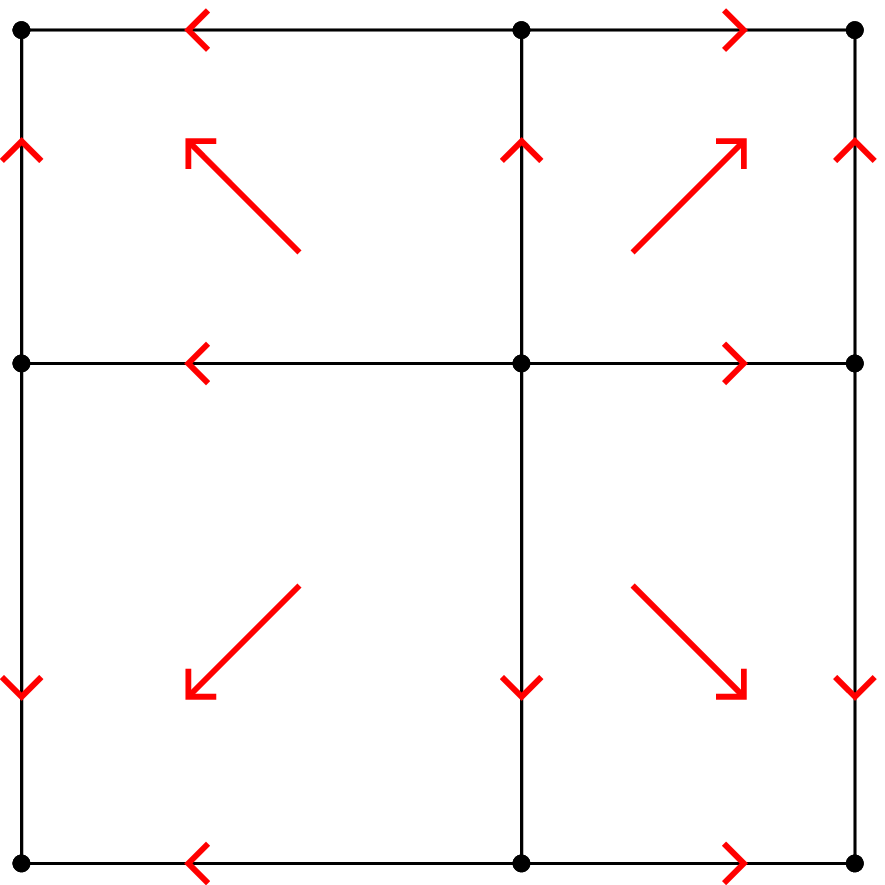} }
%\centerline{ \includegraphics[scale=.8]{images/SubdivideSq} }

\quad 

\caption{Location of the critical points of $F_{i,j}$.  Red arrows indicate the sign of the components of $-\nabla F_{i,j}$.  }
\label{fig:Intro}
\end{figure}

We now compute the differential $\partial c_{i,j}$ (with $\Z/2$ coefficients) which counts rigid GFTs begining at $c_{i,j}$.  Due to the absence of cusp edges, internal vertices can only be  $Y_0$-vertices.  (We assume the $1$-regular condition is satisfied with respect to the Euclidean metric on $[-1,1]\times[-1,1]$.)  Thus, by Proposition \ref{prop:EY1SWFormula}, a GFT beginning at $c_{i,j}$ is rigid if and only if the number of outputs at local minima, $a^{\pm,\pm}_{k,l}$, is the same as the number of outputs at local maxima, $c_{k,l}$.
For each, $X\in \{U,L,R,D\}$, the single edge tree that is the $-\nabla F_{i,j}$ flow line from $c_{i,j}$ to $b^X_{i,j}$ is rigid, and $4$ other families of rigid GFTs are pictured in Figure \ref{fig:4TreesB}.  We will show that these are the only rigid GFTs, so that 
\[
\partial c_{i,j} = b^U_{i,j} + b^L_{i,j} + b^R_{i,j} + b^D_{i,j}+ \sum_{i<m<j} \left(a^{+,+}_{i,m}c_{m,j} + c_{i,m}a^{-,-}_{m,j} + b^U_{i,m} b^L_{m,j} + b^R_{i,m}b^D_{m,j}\right) 
\]
Putting $C =(c_{i,j})$,  $B_U=(b^U_{i,j})$,  etc., (with $0$'s for entries when the corresponding Reeb chord does not exist), we get the matrix equation 
\begin{equation}  \label{eq:vanillaEx}
\partial C = A_{+,+}C+CA_{-,-} + (I+B_U)(I+B_L)+(I+B_R)(I+B_D)
\end{equation}
which is identical to the differential of the cellular DGA for a square without crossings or cusps above its boundary.

\begin{figure}

\quad

\quad

\labellist
\small
\pinlabel $c_{i,j}$ [br] at 134 144
\pinlabel $c_{m,j}$ [br] at 182 192
\pinlabel $a^{+,+}_{i,m}$ [b] at 248 248
\pinlabel $c_{i,j}$ [br] at  454 144
\pinlabel $c_{i,m}$ [br] at  414 104
\pinlabel $a^{-,-}_{m,j}$ [t] at 318 -2
\pinlabel (A) [t] at 123 -8
\pinlabel (B) [t] at 453 -8
\endlabellist
\centerline{ \includegraphics[scale=.4]{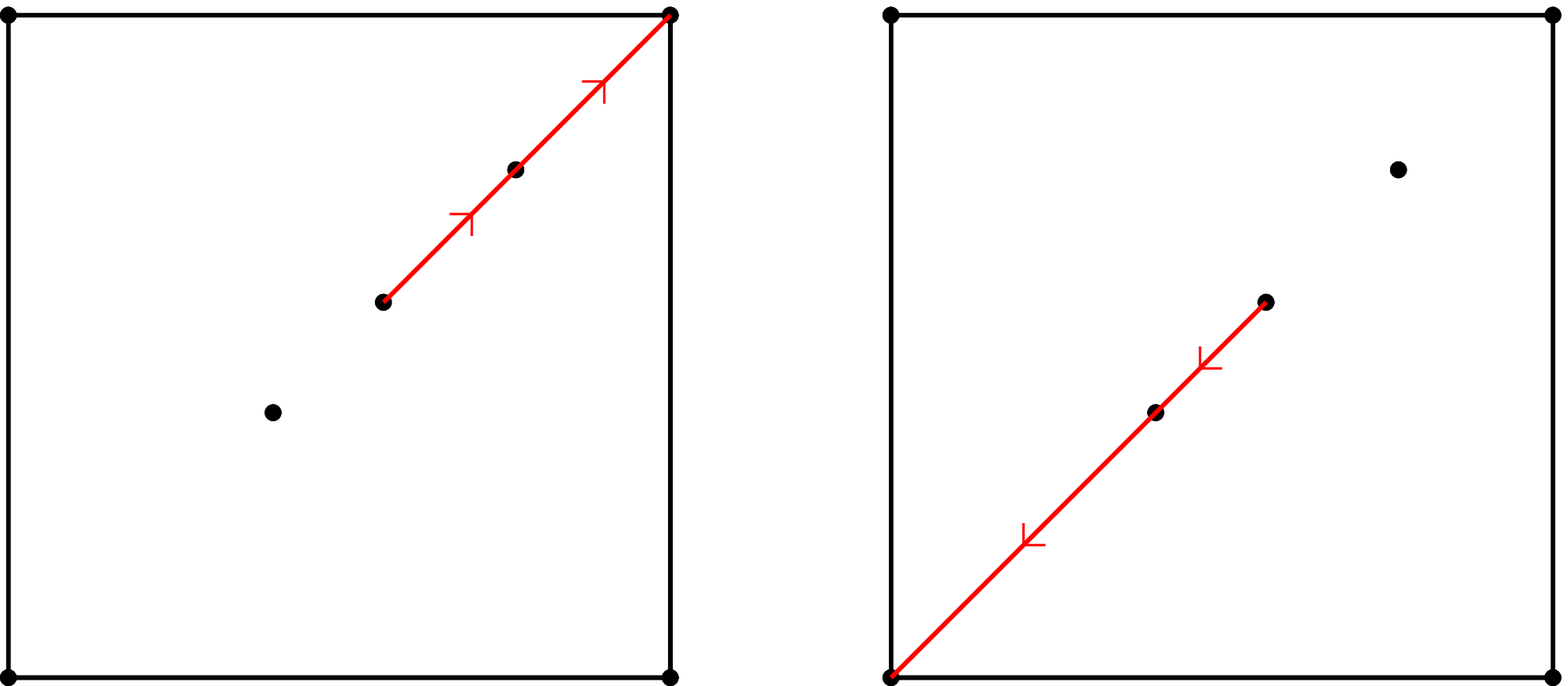} 
 \quad \quad \quad
\labellist
\small
\pinlabel $c_{i,j}$ [tl] at 144 134
\pinlabel $b^L_{m,j}$ [r] at -2 186
\pinlabel $b^U_{i,m}$ [b] at 98 248
\pinlabel $c_{i,j}$ [br] at 454 144
\pinlabel $b^D_{m,j}$ [t] at  508 -2
\pinlabel $b^R_{i,m}$ [l] at  568 98
\pinlabel (C) [t] at 123 -8
\pinlabel (D) [t] at 453 -8
\endlabellist
\includegraphics[scale=.4]{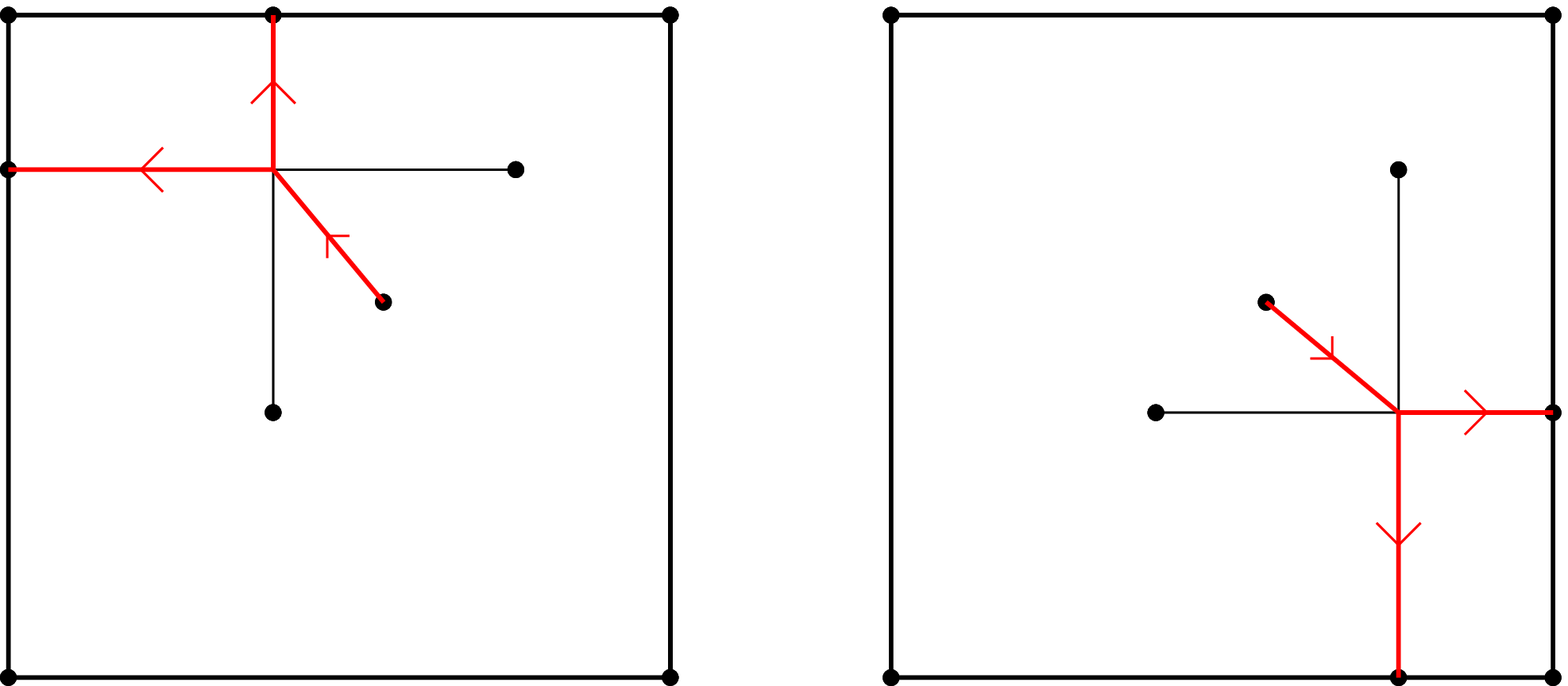}
}

\quad 

\quad

\caption{For each $i<m<j$ there are $4$ rigid GFTs with a single $Y_0$-vertex, and two outputs.  Note that edges that precede outputs at $c$'s are constant.  }
\label{fig:4TreesB}
\end{figure}

%admit the following simplified description.  Beginning with the root at $c$, the initial edge of $\Gamma$ is a flow line for $-\nabla F_{i,j}$.   $\Gamma$ may have internal vertices at which (with all edges oriented away from the root) a flow line for some $-\nabla F_{l_1,l_2}$ splits into flow lines for $-\nabla F_{l_1,m}$ and $-\nabla F_{m,l_2}$.  All output vertices limit to critical points, $x_1, \ldots, x_N$.  Such a tree is rigid provided that the number of outputs at local minima, $a^{\pm,\pm}_{i,j}$, is the same as the number of outputs at local maxima, $c_{i,j}$, and each such rigid tree produces a term of the form $\partial c = x_1\cdots x_N + \cdots$.  

%We examine here the differential of the $c_{i,j}$ generators, which is given by the simple matrix equation
%\[
%\partial C= A^{+,+} C + C A^{-,-} + (I+B_U)(I+B_L) + (I+B_R)(I+B_D).
%\]
%Note that other than slightly different notation for superscripts of generators, this is exactly the part of the Cellular DGA associated to a square without crossings or cusps above its boundary.   

\begin{proof}[Proof that there are no other rigid GFTs beginning at $c$'s]

\begin{figure}

\quad

\labellist
\small
\pinlabel $\vdots$ [t] at 0 20
\pinlabel $\vdots$ [b] at 16 70
\pinlabel $\vdots$ [b] at 144 70
\pinlabel $\vdots$ [b] at 448 70
\pinlabel $c_{m,j}$ [t] at  48 -2
\pinlabel $c_{m,j}$ [t] at  176 -2
\pinlabel $c_{m,j}$ [t] at 312 -2
\pinlabel $a^{+,+}_{i,m}$ [t] at  112 -2
\pinlabel $a^{+,+}_{i,m}$ [t] at  248 -2
\pinlabel $b^U_{i,m}$ [t] at 416 -2
\pinlabel $b^U_{i,m}$ [t] at 544 -2
\pinlabel $b^L_{m,j}$ [t] at 480 -2
\pinlabel $b^L_{m,j}$ [t] at 608 -2

\pinlabel $c_{i,j}$ [b] at 280 102
\pinlabel $c_{i,j}$ [b] at 576 102

\endlabellist
\centerline{ \includegraphics[scale=.6]{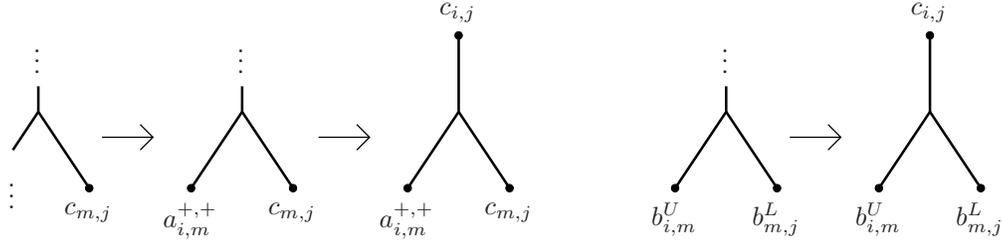} }
%\centerline{ \includegraphics[scale=.8]{images/SubdivideSq} }

\quad 

\caption{Pictorial summary of the arguments in Case 1 and Case 2. }
\label{fig:ExampleSteps}
\end{figure}

The steps of the proof are summarized in Figure \ref{fig:ExampleSteps}.  We will use
%that the above are the only rigid GFTs beginning at $c_{i,j}$ is based on 
the following:
\begin{lemma}[1st and 3rd Quadrant Lemma]  \label{lem:1st3rd}
Suppose that an edge $\gamma \subset \Gamma$ on a GFT is a $-\nabla F_{i,j}$ flow line.   
\begin{enumerate}
\item If $\gamma$ has a point that is mapped to the $1$-st $(j,l)$-quadrant for some $i<j<l$.  Then, all outputs on the part of $\Gamma$ below $\gamma$ are at $a^{+,+}$ critical points.   
\item If $\gamma$ has a point that is mapped to the $3$-rd $(h, i)$-quadrant for some $h<i<j$.  Then, all outputs on the part of $\Gamma$ below $\gamma$ are at $a^{-,-}$ critical points.
\end{enumerate}
\end{lemma}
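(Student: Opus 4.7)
I would prove the lemma by induction on the number of internal vertices of $\Gamma$ lying below $\gamma$, with the following geometric observation as the main input. Because $F_{p,q}(x_1,x_2) = (f_p - f_q)(x_1) + (f_p - f_q)(x_2)$ and the $1$-dimensional difference $f_p - f_q$ has a unique interior critical point at $\beta_{p,q}$, the gradient $-\nabla F_{p,q}$ has both components strictly positive on $(\beta_{p,q},1)^2$ and strictly negative on $(-1,\beta_{p,q})^2$. As an immediate consequence, the region $[\beta_{p',q'},1]^2$ is forward-invariant under the $-\nabla F_{p,q}$ flow whenever $\beta_{p',q'} > \beta_{p,q}$, and dually $[-1,\beta_{p',q'}]^2$ is forward-invariant whenever $\beta_{p',q'} < \beta_{p,q}$. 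The lexicographic order then hands me the specific inequalities I need: for $i<j<l$, the pair $(j,l)$ lex-dominates $(i,j)$ and every $(m,j)$ with $m<j$, so $\beta_{i,j},\beta_{m,j} < \beta_{j,l}$; symmetrically, for $h<i<j$, the pair $(h,i)$ is lex-dominated by $(i,m)$ and by $(h,m)$ for any $m>i$, so $\beta_{h,i} < \beta_{i,m},\beta_{h,m}$.

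For part (1), the base case is that the subtree below $\gamma$ is empty, so $\gamma$ terminates at an output. Since a Type (1) square contains no cusp edges, $e$-vertices are ruled out and the output is a negative puncture at a critical point of $F_{i,j}$. The forward-invariance forces this puncture to lie in $[\beta_{j,l},1]^2$, and the only one of the nine critical points of $F_{i,j}$ with both coordinates $\geq \beta_{j,l} > \beta_{i,j}$ is the corner $(1,1) = a^{+,+}_{i,j}$, as required. In the inductive step, $\gamma$ terminates at a $Y_0$-vertex $v$ (the absence of cusp edges excludes switch- and $Y_1$-vertices) carrying a middle sheet $S_m$ with $i<m<j$. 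Its outgoing edges are an $(i,m)$-flow line $\gamma_1$ and an $(m,j)$-flow line $\gamma_2$, both starting at $v\in[\beta_{j,l},1]^2$. The $(m,j)$-edge $\gamma_2$ stays in the 1st $(j,l)$-quadrant (using $\beta_{m,j} < \beta_{j,l}$), matching the hypothesis with the same $l$. For $\gamma_1$, pick any $l' \in \{m+1,\ldots,n\}$ (non-empty since $m<j\leq n$); then $(m,l')<(j,l)$ yields $v\in[\beta_{m,l'},1]^2$, placing $\gamma_1$ in the 1st $(m,l')$-quadrant. Both $\gamma_1$ and $\gamma_2$ have strictly fewer internal vertices below them, so the inductive hypothesis finishes the argument.

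Part (2) is proved by the same induction with the inequalities reversed: one replaces upper-right quadrants with the corresponding lower-left regions $[-1,\beta_{h',m'}]^2$, replaces $a^{+,+}_{i,j}$ by $a^{-,-}_{i,j}$ in the base case, and uses the dual lex computation (taking $h'=h$ at each $Y_0$-vertex) to propagate the hypothesis to both outgoing edges. The main effort in the whole argument is simply the bookkeeping that ensures each outgoing edge of a $Y_0$-vertex lands in a $-\nabla F$-invariant quadrant of the appropriate type; once this is verified the induction closes itself, and I do not expect any conceptual obstacle.
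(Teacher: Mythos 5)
Your proof is correct and follows essentially the same route as the paper: the key input in both is that the lexicographic ordering of the $\beta$'s makes the fixed $1$-st $(j,l)$-quadrant forward-invariant for every $-\nabla F_{i',j'}$ with $i\le i'<j'\le j$, and that the only critical point of any such $F_{i',j'}$ in that region is $a^{+,+}_{i',j'}$. The paper phrases this as a single containment argument (all edges below $\gamma$ stay in the one quadrant $(j,l)$) rather than your induction that re-indexes the quadrant at each $Y_0$, but the geometric content is identical.
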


\begin{proof}  We prove (1) as (2) is similar.  Any edge of $\Gamma$ below $\gamma$ (with respect to the orientation of the domain tree of $\Gamma$) must be an $(i',j')$-flow line (i.e. a trajectory for $-\nabla F_{i',j'}$) for some $i\leq i' < j' \leq j$.  For all such $(i',j')$,  the $1$-st $(j,l)$-quadrant is contained in the $1$-st $(i',j')$-quadrant where $-\nabla F_{i',j'}$ has both components positive.  Thus, all edges below $\gamma$ must remain entirely within the $1$-st $(j,l)$-quadrant.  Moreover, since the only critical points of the $F_{i',j'}$ in the $1$-st $(j,l)$-quadrant are the $a^{+,+}_{i',j'}$, (1) follows.  
\end{proof}

With Lemma \ref{lem:1st3rd} in hand, let $\Gamma$ be a rigid GFT beginning at some $c$ Reeb chord.  

\medskip

\noindent{\bf Case 1:}  $\Gamma$ has at least one output at a $c$.

\medskip

%Since the only $-\nabla F_{i,j}$ flowline ending at $c_{i,j}$ is the constant flowline at $c_{i,j}$ itself, 
The only way that a $c_{m,l}$ can occur as an output of a GFT is when a $Y_0$-vertex has its image at $c_{m,l}$ with one of the outgoing edges the constant $(m,l)$-flow line.   Supposing the other outgoing edge is an $(i,m)$-flow line (resp. an $(l,j)$-flow line), (using the lexicographical ordering of the $c_{i,j}$ along the diagonal)
 the $1$-st and $3$-rd quadrant lemma shows that all outputs that occur below this edge can only be at $a^{+,+}$'s (resp. $a^{-,-}$'s).  Since the number of outputs at $c$'s and $a$'s must agree, we see that: 
\begin{itemize}
\item[(i)] At a $Y_0$ with one outgoing edge having an output at a  $c$, the other outgoing edge must proceed directly to an $a$ without branching.  
\item[(ii)] Moreover, any edge with an output at an $a$ must begin with such a $Y_0$-vertex at a $c$-vertex.  
\end{itemize}

So far, we know $\Gamma$ has a $Y_0$-vertex $x$ where one of the outgoing edges is a constant map limiting to a $c$.  Suppose that the  incoming edge at $x$ is an $(i,j)$-flow line.  We consider the case where for some $i<m<j$ the constant outgoing edge of $x$ is at $c_{m,j}$, so that (i) implies 
%with one of the outgoing edges constant, ending at a puncture at $c_{m,j}$ 
the other outgoing edge limits to a puncture at some $a^{+,+}_{i,m}$.  
%We consider the case where the other outgoing flow line limits to $a^{+,+}_{i,m}$.     
We show that the incoming edge $\alpha$ at $x$ must begin at a positive puncture at $c_{i,j}$, so that $\Gamma$ is as in (A) of Figure \ref{fig:4TreesB}.  (In the case where the outgoing edges have punctures at $c_{i,m}$ and $a^{-,-}_{m,j}$ a similar argument will show that $\Gamma$ is as in (B) of Figure \ref{fig:4TreesB}.)

Suppose that this is not the case, so that $\alpha$ begins at another $Y_0$ vertex, $y$.  As $t$ increases, $\alpha$ traces out some part of the $-\nabla F_{i,j}$ trajectory from $c_{i,j}$ to $c_{m,j}$, so the image of $\alpha$ is entirely contained in the intersection of the $1$-st $(i,j)$-quadrant and the $3$-rd $(m,j)$-quadrant.  
\begin{itemize}
\item  Subcase:  The other outgoing edge at $y$ is a $(j,l)$-flow line for some $j<l$.   Then, since $y$ is in the $3$-rd $(m,j)$-quadrant, Lemma \ref{lem:1st3rd} (2) applies to contradict (ii).
\item  Subcase:  The other outgoing edge at $y$ is a $(h,i)$-flow line for some $h<i$.   Then, since $y$ is in the $1$-st $(i,j)$-quadrant, Lemma \ref{lem:1st3rd} (1) applies to contradict (ii).
\end{itemize}

\medskip

\noindent{\bf Case 2:}  $\Gamma$ does not have any outputs at $c$'s.

Since the number of outputs at $a$'s and $c$'s is equal, $\Gamma$ has all outputs at $b$'s.  The rigid GFTs with only one edge (these are simply gradient trajectories of $-\nabla F_{i,j}$) are as specified.  Assuming more than one edge, we can find a ``lower most'' $Y_0$ vertex, $x$, where for some $i<j$, an $(i,j)$-flow line branches into two edges that limit to critical points of the form $b^X_{i,m}$ and $b^Y_{m,j}$.  For this to occur, $x$ must have its image at the intersection of flow lines from $c_{i,m}$ and $c_{m,j}$ to  $b^X_{i,m}$ and $b^Y_{m,j}$, and since these flow lines are horizontal or vertical line segments, we see that either $(X,Y) = (U,L)$ or $(X,Y)=(R,D)$.  We show that the incoming edge $\alpha$ at $x$ must begin at a positive puncture at $c_{i,j}$, so that $\Gamma$ is as in (C) or (D) of Figure \ref{fig:4TreesB}. 

Suppose that instead $\alpha$ begins at a $Y_0$ vertex, $y$.  Since $\alpha$ is a portion of the $(i,j)$-flow line from $c_{i,j}$ to the image of $x$, the image of $\alpha$ is entirely contained in the intersection of the $1$-st $(i,m)$-quadrant and the $3$-rd $(m,j)$-quadrant.  Then, arguing as in the sub-cases above, considering the other outgoing edge at $y$ and applying Lemma \ref{lem:1st3rd} contradicts (ii).

\end{proof}

\subsection{Issues with extending the argument to other square types}
The biggest difficulty with extending the argument to remaining squares of type (1)-(12) is that for squares with crossing arcs additional Reeb chords are present.  As an example, consider the Type (2) square where a crossing between sheets $k$ and $k+1$ runs vertically through the center of the square.  To the left of the crossing locus, an additional local maximum $\tilde{c}_{k+1,k}$ appears and two new saddle points $\tilde{b}^U_{k+1,k}$ and $\tilde{b}^D_{k+1,k}$ appear along the edges $U$ and $D$.  (We order the subscripts, so that the first subscript indicates the upper endpoint of the Reeb chord.)  The new Reeb chords can appear as outputs in GFTs beginning at some $c_{i,j}$ that do not have the form identified in Figure \ref{fig:4TreesB}.  For instance, $\partial c_{i,j}$ may have terms of the form $c_{i,k+1}\tilde{b}_{k+1,k}^U a^{-,-}_{k,j}$;  see Figure \ref{fig:2tree}.

\begin{figure}

\quad

\quad

\labellist
\small
\pinlabel $\tilde{c}_{k+1,k}$ [l] at 360 242
\pinlabel $c_{i,k+1}$ [tl] at 496 214
\pinlabel $\tilde{b}^U_{k+1,k}$ [b] at 356 328
\pinlabel $\tilde{b}^D_{k+1,k}$ [t] at 356 -2
\pinlabel $a^{-,-}_{k,j}$ [r] at 268 2
\pinlabel $c_{i,j}$ [l] at 544 246
\pinlabel $b^L_{k+1,k}$ [r] at 268 242
\pinlabel $a^{-,-}_{k,j}$ [t] at 138 102 
\pinlabel $\tilde{b}^U_{k+1,k}$ [t] at 44 102
\pinlabel $c_{i,k+1}$ [t] at 4 140
\pinlabel $c_{i,j}$ [b] at 52 246

\endlabellist
\centerline{ \includegraphics[scale=.4]{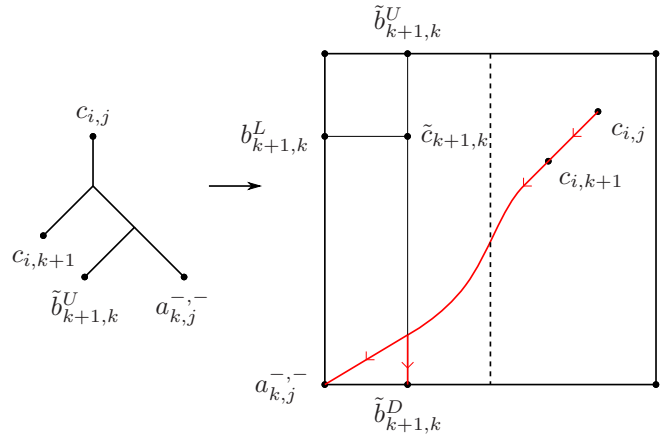} }
%\centerline{ \includegraphics[scale=.8]{images/SubdivideSq} }

\quad 

\caption{A rigid GFT in a Type (2) square giving $\partial c_{i,j} = c_{i,k+1}\tilde{b}^U_{k+1,k} a^{-,-}_{k,j}+ \cdots$. }
\label{fig:2tree}
\end{figure}

To avoid having to identify these additional trees, we instead observe the (more easily derived) formulas 
\[
\partial \tilde{c}_{k+1,k} = \tilde{b}^U_{k+1,k} + \tilde{b}^D_{k+1,k} + b^L_{k+1,k}; \quad \partial \tilde{b}^U_{k+1,k} = a^{-,+}_{k+1,k}; \quad \partial \tilde{b}^D_{k+1,k} = a^{-,-}_{k+1,k}.
\]
Then, (using \cite[Theorem 2.1]{RuSu1}) the quotient of the DGA by the ideal generated by the {\it exceptional generators} $\tilde{c}_{k+1,k}, \tilde{b}^U_{k+1,k}, \tilde{b}^D_{k+1,k}, b^L_{k+1,k}, a^{-,+}_{k+1,k}, a^{-,-}_{k+1,k}$ is stable tame isomorphic to the original DGA.  In this quotient, terms in the differential arising from the new GFTs with endpoints at exceptional generators all become $0$, so that the formula (\ref{eq:vanillaEx}) remains valid with the caveat that the matrices $A_{-,+}, B_L$ and $A_{-,-}$ have the $(k,k+1)$-entry equal to $0$.  This is precisely the form of the part of the cellular DGA associated to the corresponding square of $\mathcal{E}_{||}$ (where the crossing arc now sits directly above the left edge of the square).  In Section \ref{sec:Iso}, we carry out a similar quotient procedure for all squares, with care taken when canceling $a$ or $b$ Reeb chords that appear in the boundary of more than one $2$-cell.  (This is the reason that the technical requirements (A2)-(A4) were imposed on the square decomposition $\mathcal{E}_\pitchfork$.)

We mention briefly some of the other adjustments to the above argument that appear in Sections \ref{sec:CompLCH}-\ref{sec:SwallowComp}. 
\begin{enumerate}
\item  The construction of $\tilde{L}$ needs to allow for a perturbation to obtain the $1$-regular condition; see Section \ref{sec:ProofSetup} for  details.  As a result, in Sections \ref{sec:Constructions}-\ref{sec:ProofSetup} the location of Reeb chords is only specified up to $\epsilon$, and the behavior of individual flow lines is not as precisely known.  Typically, to restrict the locations of flow lines we require that  $-\nabla F_{i,j}$ points transversally to particular curves in $[-1,1]\times[-1,1]$, as such conditions are preserved by perturbation.   

\item  No major changes are required for the $1$-st Quadrant Lemma.  However, extra care is required in the $3$-rd Quadrant Lemma, for instance to allow the possibility that GFTs terminate at an $e$-vertex along a cusp edge.
 
\item  The location of the gradient trajectories connecting the $c_{i,j}$ to surrounding saddle points $b^X_{i,j}$ is no longer precisely known.  (The relative location of saddle points $b^X_{i,j}$ and $b^Y_{i,j}$ along edges seperated by a crossing or cusp arc are different, so that, even ignoring the perturbation, we should no longer expect these trajectories to be straight lines.)     
%and in some squares it is not clear that we can provide bounds on their location without imposing properties on $\tilde{L}$ that become tedious to construct.  
%Instead, when 
In identifying rigid GFTs without endpoints at $c$'s we make a more radical departure, and give a topological argument to show that the $\Z/2$-count of such GFTs is independent of the precise location of these flow lines.  See Section \ref{ssec:112btrees}.  (This added flexibility is also important in the case of (13)-(14) squares.)  

\end{enumerate}

\section{Computation of LCH, Part 1: $0$-cells and $1$-cells}
\label{sec:CompLCH}

In this section, we replace $L$ with the Legendrian isotopic surface $\tilde{L}$.  We then establish in Corollary \ref{cor:SVK} that the %LCH differential preserves the sub-algebras generated by 
 Reeb chords of $\tilde{L}$ located in a suitable neighborhood of any given cell of $\mathcal{E}_\pitchfork$ generate a sub-algebra that is preserved by the LCH differential.   In Propositions \ref{prop:LCH0comp} and \ref{prop:LCH1comp}, we compute the sub-DGAs associated to $0$-cells and $1$-cells.

\subsection{The Legendrian $\tL$}

\label{ssec:AxiomaticProperties}

Recall from Section \ref{sec:transverse} that we have found a cell decomposition, $\mathcal{E}_\pitchfork$, of a neighborhood of $\pi_x(L) \subset S$ into squares so that the projection of the crossing and cusp locus of $L$ to each square matches (up to isotopy) one of 14 model squares.  
Moreover, the closed squares are parametrized by $[-1,1]^2$, and (except possibly at the corners) 
 the parametrizations provide smooth local coordinates on $S$ that we notate as $(x_1,x_2)$.  Coordinates of neighboring squares fit together at $1$-cells and $0$-cells as specified in Section \ref{sec:transverse}.

\begin{theorem}  \label{thm:PropertiesofLtilde} Let $\mathcal{E}_\pitchfork$ be a transverse square decomposition for the front generic Legendrian $L \subset J^1(S)$. Then, there exists a Legendrian $\tilde{L} \subset J^1(S)$ and a Riemannian metric $g$ on $S$ such that: 
\begin{itemize}
\item Above each of the squares of $\mathcal{E}_\pitchfork$, the singular sets of $L$ and $\tilde{L}$ have the same topological form, i.e. they match the same square type with the same sheets meeting at crossing and cusp arcs and swallowtail points.  In particular, $\tilde{L}$ and $L$ are Legendrian isotopic.
\item The Legendrian and metric pair $(\tilde{L},g)$ is $1$-regular.
%\item With respect to the Riemannian metric from Construction \ref{construct:metric},  $\tilde{L}$ is $1$-regular.
\item In the local coordinates given by $\mathcal{E}_\pitchfork$, the local defining functions of $\tilde{L}$ and their gradients with respect to $g$ satisfy Properties \ref{pr:14models}-\ref{pr:SwitchBarriers} as stated below. 
\end{itemize}
\end{theorem}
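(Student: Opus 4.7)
The plan is to construct $\tilde{L}$ explicitly, square by square, providing coordinate models whose local defining functions are tailored so that Properties \ref{pr:14models}--\ref{pr:SwitchBarriers} hold by design. For each of the 14 square types of Figure \ref{fig:generators}, I would prescribe local defining functions $F_1, \ldots, F_n : [-1,1]^2 \to \R$ whose graphs exhibit the prescribed topological form of the singular set (cusp edges, crossing arcs, swallowtails in the right positions) and whose pairwise differences $F_{i,j} = F_i - F_j$ have the critical-point structure of Figure \ref{fig:Intro}: $a$-type minima near the $0$-cells, $b$-type saddles near the interiors of $1$-cells, $c$-type maxima near the interior of the square, with pinching most exaggerated above the $0$-skeleton. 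The simplest starting ansatz is a sum of one-variable functions, $F_i(x_1,x_2) = f_i(x_1) + g_i(x_2)$, modified near cusp edges and crossing arcs to carry the correct singular behavior. Choosing the one-variable factors with appropriately ordered critical points (as in the lexicographic arrangement displayed in Section \ref{sec:3.5}) will produce the required ordering conditions that the later GFT enumeration exploits.

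Next I would address the gluing between neighboring squares. The regularity requirements of Section \ref{sec:RegReq}, particularly the polar/angular condition near $0$-cells, were imposed precisely to make such models fit smoothly: in the disk coordinate near a $0$-cell I take each local defining function of the form $a r^2 + b$, with the coefficients $(a,b)$ matching across all squares that share a given sheet at the $0$-cell, which yields a smooth front. Along $1$-cells the defining functions will be put in a standard form depending only on the coordinate transverse to the edge, so that smoothness across the edge follows from item (3) of Section \ref{sec:RegReq}. For the metric $g$ I would use Construction \ref{construct:metric}, which guarantees that the intrinsic gradient of any local difference function in a coordinate patch coincides with its Euclidean gradient on $[-1,1]^2$. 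This reduces the verification of the gradient transversality clauses in Properties \ref{pr:14models}--\ref{pr:SwitchBarriers} to explicit Euclidean computations inside each of the 14 model squares.

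With $\tilde{L}$ and $g$ in hand, I would verify the listed properties square-type by square-type. The Euclidean gradient of each $F_{i,j}$ can be read off directly from the product/sum ansatz, and transversality of $-\nabla F_{i,j}$ to the various curves that intervene in the GFT analysis can be engineered by choosing the one-variable factors with appropriate signs and monotonicity. The verification is case-based but routine once the ansatz is fixed; the real bookkeeping is maintaining all the conditions simultaneously for every pair $(i,j)$ and for every square adjacent to a given $0$-cell or $1$-cell. To upgrade this $\tilde{L}$ to a $1$-regular pair I would invoke the strengthened version of Ekholm's transversality theorem cited as Proposition \ref{prop:LemmaA1}: a sufficiently $C^\infty$-small perturbation supported away from the coordinate structures already fixed produces $1$-regularity. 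Because Properties \ref{pr:14models}--\ref{pr:SwitchBarriers} are all open conditions on the pair $(\tilde{L}, g)$ (nondegenerate critical points in prescribed regions plus transverse gradients to prescribed curves), a small enough perturbation preserves them.

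The hard part, as the authors themselves flag, will be enforcing all the properties simultaneously rather than each in isolation. Because each difference function $F_{i,j}$ carries its own critical-point and transversality requirements, and these must hold coherently across the square decomposition and all 14 types, finding a uniform ansatz that satisfies everything at once is the technical crux. I expect the swallowtail square types (13) and (14) to require a dedicated construction, since the number of sheets changes across the cuspidal-edge birth/death and the sum ansatz above does not apply directly; this matches the paper's decision to handle them in a separate Section \ref{sec:ConstructionsST}. The final $1$-regularity upgrade is the conceptually cleaner step, as it is a standard genericity argument on top of the explicit model.
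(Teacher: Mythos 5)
Your proposal follows essentially the same route as the paper: one-variable edge models combined (via cutoff interpolation) into two-variable defining functions on each square, smooth gluing via the regularity requirements of Section \ref{sec:RegReq} and Construction \ref{construct:metric}, a dedicated swallowtail construction, and a final perturbation for $1$-regularity using the strengthened transversality statement of Proposition \ref{prop:LemmaA1}. The one caveat is your claim that Properties \ref{pr:14models}--\ref{pr:SwitchBarriers} are all ``open conditions'': several of them (transversality up to the cusp locus where the relevant gradients vanish, existence and uniqueness of switch points, conditions along the crossing locus, which itself moves under perturbation) are not preserved by bare openness and require the stability argument of Proposition \ref{prop:LemmaA2}, which is also why the perturbation must be supported away from a fixed neighborhood of the cusp locus.
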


%\footnote{\ms{7/22/15: This next paragraph should be expanded and put in a proof environment.} \dr{7-22:  Putting ``The proof of Theorem ?? will be given in Sections ...'' in a proof seems contradictory.  We are explicitly saying that the proof is there--and not here.  The rest of the paragraph is just pointing the reader to where the Properties are all stated.   Go ahead and edit if you want though.}}

The proof of Theorem \ref{thm:PropertiesofLtilde} will be given in Sections \ref{sec:Constructions}-\ref{sec:ProofSetup}.  The Properties \ref{pr:14models}-\ref{pr:1cmono} of Theorem \ref{thm:PropertiesofLtilde} are stated in the current section; Properties \ref{pr:Reeb2}-\ref{pr:CuspTransversality} appear in Section \ref{sec:Comp2Cells}; and Properties \ref{pr:monotonicityIIST}-\ref{pr:SwitchBarriers} which concern the form of $\tilde{L}$ above the swallow tail squares (13) and (14) are stated in Section \ref{sec:SwallowComp}.

\begin{notation}
 We denote the LCH DGA of $\tilde{L}$ with coefficients in $\Z/2$ (specialize all homology classes in $\Z/2[H_1(\tilde{L})]$ to $1$) and grading reduced modulo $m(L)$ by $(\lchA,\partial)$.  
\end{notation}

From Theorem \ref{thm:EkholmMain}, 
%there is a further Legendrian $L'$ that is also isotopic to $L$, such that 
the differential of $(\lchA,\partial)$ can be computed by summing over rigid GFTs of $\tilde{L}$. 
This DGA is computed up to stable tame isomorphism over the course of Sections \ref{sec:Comp2Cells}-\ref{sec:Iso} with the end result stated in Proposition \ref{prop:lchAI}  below.

\begin{notation}
We use the notation $\nabla F = (\grad_{x_1} F, \grad_{x_2} F)$ for the components of the gradient of a function $F$ with respect to the metric $g$ from Theorem \ref{thm:PropertiesofLtilde}. 
\end{notation}

\subsubsection{Cusp locus, crossing locus}  
\label{sssec:AxiomNotation}

As stated in Theorem \ref{thm:PropertiesofLtilde}, the projection of the singular set of $\tL$ to each square of $\cE_\tra$ matches one of the square types (1)-(14) up to an ambient isotopy of $[-1,1]\times[-1,1]$.  The following property provides a stronger restriction on the location of the cusp and crossing locus.
\begin{property}[The square models]\label{pr:14models}
%The crossing loci and cusp loci appear as drawn in Section \ref{sec:transverse}.
All crossing loci are contained in the region $\{-1/4 < x_1 < 1/4\} \cup \{-1/4 < x_2 < 1/4\}.$
All cusp loci are straight lines in $\{x_1 = -3/8\}$ or $\{x_2 = -3/8\},$
except possibly in disks of radii $3/32$ containing the swallowtail points.
\end{property}

%\subsection{The sub-DGAs $\lchA(e^d_\alpha)$}

%To each cell $e^d_\alpha$, $d=0,1,2$ of $\mathcal{E}_\pitchfork$ we associate a sub-DGA of $(\lchA,\partial)$.  

\subsection{Invariant neighborhoods of $0$-cells and $1$-cells and the sub-DGAs $\lchA(e^d_\alpha)$}  
%\label{sssec:AxiomNotation}
\label{sssec:FlowLinesBoundary}

%In Section \ref{sec:RegReq}, various compatibility requirements were placed on the parametrization of closed squares that share $0$ or $1$-cells in their boundaries.  
Recall from Section \ref{sec:RegReq} that each $0$-cell, $e^0_\alpha$, has a disk neighborhood consisting of the union of balls of radius $1/16$ (with respect to the euclidean metric used in the domain of parametrization $[-1,1]^2$) centered at all corners of $2$-cells where $e^\alpha_0$ appears.  Denote this neighborhood as %$B(e^0_\alpha, 1/16)$, 
$N(e^0_\alpha)$, and note that requirement (2) of Section \ref{sec:RegReq} shows that $\partial N(e^0_\alpha)$ is a smooth circle in $S$.

\begin{property}[$0$-cells]  \label{pr:0cells}  Let $e^0_\alpha$ be a $0$-cell.
For all local defining functions $F_i$ and $F_j$ with $F_{i} > F_j$ in $N(e^0_\alpha)$, the gradient $-\nabla F_{i,j}$ points inward along the boundary  $\partial N(e^0_\alpha)$.
\end{property}

\begin{property}[$1$-cells]  \label{pr:1cells}

  Let $e^1_\alpha$ be a $1$-cell with endpoints at the $0$-cells $e^0_-$ and $e^0_+$.  Then, $e^1_\alpha$ has a neighborhood  $N(e^1_\alpha) \subset S$ with the following features.

\begin{enumerate}	
\item We have $N(e^0_-),N(e^0_+) \subset N(e^1_\alpha)$ and the boundary of $N(e^1_\alpha)$ is a piecewise smooth curve consisting of the union of an arc from the boundary of $N(e^0_-)$, an arc from the boundary of $N(e^0_+)$, and two paths $P_1$ and $P_2$.  

\item Each of $P_l$ is contained in the interior of a single $2$-cell that contains $e^1_\alpha$ as an edge. In $[-1,1]\times[-1,1]$ coordinates, $P_l$ is piecewise linear, contained within a distance of $1/32$ from $e^1_\alpha,$ 
and monotonically increasing in the coordinate, $x_i$, that is parametrizes $e^1_\alpha$, and is parallel to $e^1_\alpha$ for $1/2 \leq x_i \leq 3/4$.

\item 
For any $(x_1,x_2) \in P_l$ and $i,j$ such that $F_{i,j}(x_1,x_2) >0,$ we have that $-\nabla F_{i,j}(x_1,x_2)$ is transverse to $P_l$ at $(x_1,x_2)$ and points into $N(e^1_\alpha)$.  (If $(x_1,x_2)$ is a non-smooth point of $P_l$, then $-\nabla F_{i,j}(x_1,x_2)$ should be transverse to both segments of $P_l$ that meet at $(x_1,x_2)$.)

\item Suppose the sheets defined by $F_i$ and $F_{j}$ cross at $(x_1,x_2) \in P_l$.  Then $(x_1,x_2)$ is a non-smooth point of $P_l$ where two line segments of $P_l$ meet.  Label these segments $Q_1$ and $Q_2$ so that $F_{i} \geq F_j$ along $Q_1$ and $F_{j} \geq F_{i}$ along $Q_2$.  Then, at $(x_1,x_2)$,  $-\nabla F_{i,j}$ (resp. $-\nabla F_{j,i}$) is transverse to $Q_1$ (resp. $Q_2$) and points to the side of $Q_1$ (resp. $Q_2$) that borders $N(e^1_\alpha)$.

\end{enumerate}
\end{property}

\begin{remark}
The item (4) in Property \ref{pr:1cells} is included to simplify the proof of Theorem \ref{thm:PropertiesofLtilde} and is not needed for computing $(\lchA,\partial)$.
\end{remark}

For any $2$-cell, $e^2_\alpha$, we define a neighborhood $N(e^2_\alpha)$ that is the union of $e^2_\alpha$ and the neighborhoods of its boundary $0$- and $1$-cells given by Properties \ref{pr:0cells} and \ref{pr:1cells}.  See Figure \ref{fig:InvariantN}.  

\begin{figure}

\labellist
\small
\endlabellist
\centerline{ \includegraphics[scale=.6]{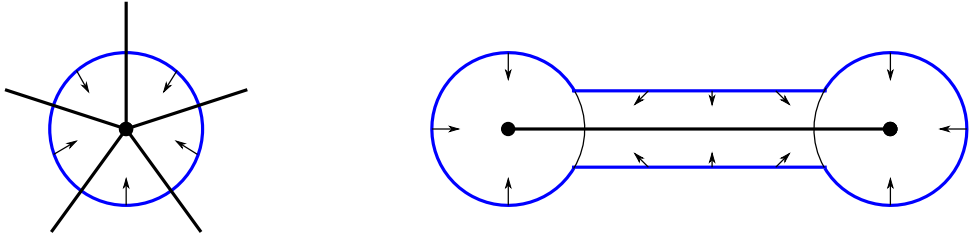} }

\caption{Neighborhoods of $0$-cells and $1$-cells.  (left) The neighborhood $N(e^0_\alpha)$ is pictured (boundary in blue)  along with the $1$-skeleton near $e^0_\alpha$. (right) The boundary of $N(e^1_\alpha)$ appears in blue with the polygonal paths $P_1$ and $P_2$ pictured as horizontal segments. The division of $N(e^1_\alpha)$ into $N(e^0_-)$, $\widehat{N}(e^1_\alpha)$, and $N(e^0_+)$ is indicated by the thin black lines.  The arrows indicate the direction of negative gradients $-\nabla F_{i,j}$ with $F_{i}>F_j$ as specified by Properties \ref{pr:0cells} and \ref{pr:1cells}.}
\label{fig:InvariantN}
\end{figure}

The following allows us to localize the computation of LCH to a square-by-square calculation.
\begin{lemma}  \label{lem:N} Let $N$ denote any of the neighborhoods $N(e^0_\alpha)$, $N(e^1_\alpha)$ or $N(e^2_\alpha)$.  Any (partial or complete) gradient flow tree for $\tilde{L}$ that starts at a point in $N$ must have its entire image contained in $N$.
\end{lemma}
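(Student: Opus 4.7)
The plan is a barrier-type argument: no edge of the GFT can cross $\partial N$ from inside to outside, and this propagates to the whole tree via its internal vertices. The essential input is that Properties \ref{pr:0cells} and \ref{pr:1cells} are engineered precisely so that the gradients $-\nabla F_{i,j}$ of every relevant difference function (those with $F_i > F_j$ near the boundary) point strictly inward along every smooth piece of $\partial N$.

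The first step is a one-edge invariance. Let $\gamma : [c,d] \to S$ be the base projection of a single edge whose $1$-jet lifts use upper sheet $S_i$ and lower sheet $S_j$, so $F_{i,j} > 0$ along $\gamma$ and $\gamma'(t) = -\nabla F_{i,j}(\gamma(t))$. If $\gamma(t_0) \in N$, I claim $\gamma(t) \in N$ for all $t \in [t_0,d]$. Otherwise set $t_* = \inf\{t > t_0 : \gamma(t) \notin N\}$, so that $\gamma(t_*) \in \partial N$ and $\gamma'(t_*)$ either points outward or is tangent to $\partial N$. For $N = N(e^0_\alpha)$, the boundary is a smooth circle and Property \ref{pr:0cells} directly contradicts this; note that Property \ref{pr:14models} places all crossing and cusp loci away from the corners, so $F_i > F_j$ persists throughout $N(e^0_\alpha)$ and Property \ref{pr:0cells} applies to this pair. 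For $N = N(e^1_\alpha)$, decompose $\partial N(e^1_\alpha)$ into arcs of $\partial N(e^0_\pm)$, handled by the previous case, and the paths $P_l$, where Property \ref{pr:1cells}(3) supplies the inward-pointing. At the non-smooth points of $P_l$ produced by crossing arcs, Property \ref{pr:1cells}(4) furnishes the inward-pointing for whichever of $F_{i,j}$, $F_{j,i}$ is positive on the side from which $\gamma$ approaches.

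The second step propagates the invariance down the tree. At any internal vertex $v$ — whether a $Y_0$, $Y_1$, or switch vertex — the image of $v$ is simultaneously the endpoint of the incoming edge and the starting point of every outgoing edge, and each outgoing edge is a $-\nabla F_{i',j'}$-trajectory with $F_{i'} > F_{j'}$ at the initial point. Inducting on the distance of an edge from the root $v_0$: the initial edge starts at $v_0 \in N$ and remains in $N$ by step one, and at each subsequent internal vertex we reach, every outgoing edge begins in $N$ and so remains in $N$ by another application of step one. External vertices, whether punctures or $e$-vertices, are endpoints (or limits) of edges already contained in $N$. The PFT case is identical: the only difference is that the initial edge need not limit to a Reeb chord, but step one uses only that it starts in $N$ at parameter $c$.

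For $N(e^2_\alpha)$, write it as $\overline{e^2_\alpha} \cup \bigcup_\beta N(e^0_\beta) \cup \bigcup_\gamma N(e^1_\gamma)$, with the unions over boundary $0$- and $1$-cells. Its boundary consists entirely of arcs of $\partial N(e^0_\beta)$ and $\partial N(e^1_\gamma)$ lying on the side opposite $e^2_\alpha$, so step one applies verbatim on each arc and the same propagation yields the conclusion. The main anticipated obstacle is the bookkeeping at non-smooth points of the composite boundary $\partial N(e^1_\alpha)$ — the corners where $P_l$ meets $\partial N(e^0_\pm)$, and the interior corners of $P_l$ arising from crossing arcs — but in every case Properties \ref{pr:0cells}, \ref{pr:1cells}(3), and \ref{pr:1cells}(4) are tailored so that both boundary segments meeting at the corner are barriers, which together exclude exit.
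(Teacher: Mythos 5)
Your proof is correct and is essentially the paper's argument: the paper's own proof is just the two-sentence observation that Properties \ref{pr:0cells} and \ref{pr:1cells} make $-\nabla F_{i,j}$ point inward along all of $\partial N$ for every positive difference function, so no branch can exit as $t$ increases. Your write-up simply makes explicit the single-edge barrier step and its propagation through internal vertices, which the paper leaves implicit.
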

\begin{proof}
From Properties \ref{pr:0cells} and \ref{pr:1cells}, we have that at any point on the boundary of $N$ all negative gradients of positive local difference functions point into $N$.  Therefore, no branch of the tree can ever cross out of $N$ as $t$ increases.  
\end{proof}

For any cell $e^d_\alpha$, $d = 0,1,$ or $2$, of $\cE_\tra,$ let $\lchA(e^d_\alpha)$ denote the sub-algebra of $\lchA$ generated by Reeb chords belonging to $N(e^d_\alpha)$.

\begin{corollary}
\label{cor:SVK}
For any cell $e^d_\alpha$, of $\cE_\tra,$ the sub-algebra $\lchA(e^d_\alpha)$ 
is a sub-DGA of $(\lchA,\partial)$.
\end{corollary}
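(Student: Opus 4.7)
The plan is to deduce the corollary directly from Lemma \ref{lem:N} together with the GFT formula for the differential from Theorem \ref{thm:EkholmMain}. Concretely, since $\lchA(e^d_\alpha)$ is by definition a subalgebra, it suffices to check that $\partial$ sends generators of $\lchA(e^d_\alpha)$ into $\lchA(e^d_\alpha)$.

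First, I would fix a Reeb chord $a$ of $\tilde L$ lying in $N(e^d_\alpha)$ and invoke Theorem \ref{thm:EkholmMain} to write
\[
\partial a \;=\; \sum_{\Gamma} w(\Gamma),
\]
where the sum is over rigid GFTs for $\tilde L$ with positive puncture at $a$ (the $H_1$ labels collapse to $1$ since we work over $\Z/2$ with all homology classes set to $1$). The word $w(\Gamma)$ is a product of Reeb chords occurring as negative punctures at output vertices of $\Gamma$. Hence it is enough to show that every such negative puncture lies in $N(e^d_\alpha)$, as then each monomial in the sum is a product of generators of $\lchA(e^d_\alpha)$.

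Here the main (and essentially only) input is Lemma \ref{lem:N}: since the initial edge of $\Gamma$ has $\lim_{t\to-\infty}\gamma(t) = a \in N(e^d_\alpha)$, the GFT starts at a point of $N(e^d_\alpha)$ (strictly speaking, all its edge images limit into $N(e^d_\alpha)$ from points arbitrarily close to $a$; alternatively one may apply the lemma to any point on $\Gamma$ close to $a$ and use continuity). Lemma \ref{lem:N} then forces the whole image of $\Gamma$ to remain in $N(e^d_\alpha)$. In particular, every negative puncture of $\Gamma$, which is a Reeb chord whose base projection lies on $\Gamma$, must be a Reeb chord of $\tilde L$ contained in $N(e^d_\alpha)$, i.e., a generator of $\lchA(e^d_\alpha)$.

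Finally, extending $\partial$ to all of $\lchA(e^d_\alpha)$ by the Leibniz rule preserves this property, so $\partial\bigl(\lchA(e^d_\alpha)\bigr) \subset \lchA(e^d_\alpha)$. No step looks genuinely hard: the only subtle point is the tiny ``start at $a$'' issue noted above, which is handled either by observing that the flow trajectories of positive difference functions on the closed set $\partial N$ point strictly inward (so a tree born at an interior point cannot escape) or by applying Lemma \ref{lem:N} to interior points of the initial edge and taking limits.
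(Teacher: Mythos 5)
Your proposal is correct and matches the paper's argument: the paper likewise deduces the corollary immediately from Theorem \ref{thm:EkholmMain} (the differential counts rigid GFTs) together with Lemma \ref{lem:N} (a GFT starting in $N(e^d_\alpha)$ stays in $N(e^d_\alpha)$). Your extra care about the ``start at $a$'' limit is fine but not needed beyond what the paper already asserts.
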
  
\begin{proof}
Since the differential of $(\lchA,\partial)$ may be computed via a count of rigid GFTs of $\tilde{L}$, the corollary follows from Lemma \ref{lem:N}.
\end{proof}

%In computing $(\mathcal{A}_{\mathit{LCH}}, \partial ),$ therefore,  we can compute differentials of generators in the neighborhood of each cell separately.  In the remainder of Section \ref{sec:CompLCH} we focus primarily on $2$-cells of Type (1)-(12), while the computation of the subalgebras associated to swallow tail squares of Type (13) and (14) is postponed until Section \ref{sec:SwallowComp}.

\subsubsection{Notations for sheets and Reeb chords}

 %In the following paragraphs, we will enumerate the Reeb chords in all $0$ and $1$-cells as well as in $2$-cells of Type (1)-(12).  %It is extremely awkward to assign a  fixed notation to the Reeb chords and the sheets of $\tilde{L}$ in the front projection, once and for all.  
%many formulas are simplified by allowing the same Reeb chord to have different notations depending on the context.    
%Rather than attempt to do so, 

In computing the sub-DGAs $\lchA(e^d_\alpha)$, we will use a notation for Reeb chords arising from a numbering of the sheets restricted to $e^d_\alpha$ as $S_1, \ldots, S_n$.  The notation for a Reeb  chord uses two subscripts to indicate the endpoints of the Reeb chord by the following:

\begin{convention}  \label{conv:subscript}
The order of sub-scripts for a Reeb chord $x_{i,j}$ is  always chosen so that the first (resp. second) subscript indicates the upper (resp. lower) sheet of the Reeb chord 
\end{convention}

In general, the enumeration of sheets cannot be carried out in a consistent manner globally, and, as a result, the same Reeb chord may be denoted with different notations in the context of different $\lchA(e^d_\alpha)$.  %In Section \ref{sec:CompLCH}-???, 
%We typically work with one single closed $2$-cell at a time, and during such local considerations the notation for Reeb chords is fixed.  However, we caution that a single $0$-cell or $1$-cell belongs to the closure of more than one $2$-cell, and the notation provided to a Reeb chord above a $0$ or $1$-cell in the context of different closed $2$-cells may not agree.  
We pay special attention to this point in Section \ref{sec:Iso} where we combine our local calculations of the $\lchA(e^d_\alpha)$ to compute the full DGA $(\lchA,\partial)$.% a global isomorphism between the LCH and cellular DGAs.  

\begin{remark} This issue is parallel to the choice of total orderings of sheets required in defining the differential in the cellular DGA. 
\end{remark}

\subsection{Computation of $(\lchA, \partial)$ near the $0$-skeleton}

Recall from Section \ref{ssec:GFT} that we denote local defining functions for sheets of $\tilde{L}$ labeled $S_1, \ldots, S_n$ as $F_1, \ldots, F_n$, and that a Reeb chord starting on sheet $S_j$ and ending on sheet $S_i$ (with respect to the Reeb vector field $\partial_z$) corresponds to a critical point of $F_{i,j}:=F_i-F_j$ with positive critical value.  Trajectories of $-\nabla F_{i,j}$ are referred to as $(i,j)$-flow line.  The abbreviations GFT and PFT respectively refer to gradient flow trees and partial flow trees of $\tilde{L}$.

Consider a $0$-cell, $e^0_\alpha$. By Property \ref{pr:14models}, above $N(e^0_\alpha)$, $\tilde{L}$ is a union of non-intersecting sheets that we label $S_1, \ldots, S_n$ with (pointwise) decreasing $z$-coordinate, i.e. $F_1>F_2> \ldots > F_n$ above $N(e^0_\alpha)$.
\begin{property}[Reeb chords above $0$-cells] \label{pr:Reeb0}  
For any pair of sheets, $S_i, S_j$, above $e^0_\alpha$ with $F_i > F_j$, there is a unique Reeb chord $a_{i,j}$ in $N(e^0_\alpha)$.  Each $a_{i,j}$ is a non-degenerate local minimum of $F_{i,j}$, and there are no other Reeb chords in $N(e^0_\alpha)$. 
\end{property}

%[COMPUTE THE DIFFERENTIAL ABOVE $0$-cells here? For now anyway.]

\begin{proposition} \label{prop:LCH0comp} For some $0$-cell, $e^0_\alpha$, let $a_{i,j}$, $1 \leq i < j \leq n$, be the Reeb chords $N(e^0_\alpha)$.  In $(\lchA(e^0_\alpha), \partial)$ we have
\[
\partial a_{i,j} = \sum_{i<k<j} a_{i,k}a_{k,j}.
\]
\end{proposition}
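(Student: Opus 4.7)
The plan is to apply Lemma \ref{lem:N} to confine rigid GFTs starting at $a_{i,j}$ inside $N(e^0_\alpha)$, and then to exploit the absence of cusps and crossings in $N(e^0_\alpha)$ (Property \ref{pr:14models}) together with the fact that every Reeb chord there is a Morse local minimum (Property \ref{pr:Reeb0}) to pin such trees down to one candidate per choice of intermediate sheet. Concretely, the confinement forces any such rigid GFT $\Gamma$ to have no $e$-, $Y_1$-, or switch-vertices, so all internal vertices are $Y_0$'s, and every possible negative-puncture Reeb chord has Morse index $0$. Substituting $\mbox{Ind}(a_{i,j}) = 0$, $E = Y_1 = SW = 0$, and $\mbox{Ind}(b_\ell) = 0$ for each $\ell$ into Proposition \ref{prop:EY1SWFormula} yields $2 = m$, so $\Gamma$ has exactly two outputs. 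A tree with one $1$-valent input, two $1$-valent outputs, and only $3$-valent interior vertices has exactly one interior vertex, so $\Gamma$ contains a single $Y_0$-vertex.

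Next I would pin down the geometry of this unique tree. Observation (ii) after Definition \ref{def:GFT} requires the initial edge, which is a positive puncture at the local minimum $a_{i,j}$, to be constant, so the $Y_0$ sits at the point $a_{i,j} \in S$. At such a $Y_0$ an intermediate sheet $S_k$ with $i < k < j$ must be chosen, and the two outgoing edges are the $-\nabla F_{i,k}$- and $-\nabla F_{k,j}$-trajectories emanating from $a_{i,j}$. By Property \ref{pr:0cells} these trajectories remain trapped in $N(e^0_\alpha)$, and by Property \ref{pr:Reeb0} the only critical points of $F_{i,k}$ and $F_{k,j}$ inside $N(e^0_\alpha)$ are $a_{i,k}$ and $a_{k,j}$; Morse theory then forces each trajectory to converge to the corresponding local minimum. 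Thus each $k$ with $i < k < j$ yields exactly one rigid GFT, whose word (read in the prescribed order at the $Y_0$) is $a_{i,k} a_{k,j}$, and Theorem \ref{thm:EkholmMain} delivers the claimed formula.

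The one delicate point is verifying that each tree enumerated above is transversally cut out and contributes with multiplicity one, but this is subsumed by the $1$-regularity hypothesis built into Theorem \ref{thm:PropertiesofLtilde}; so no extra transversality analysis is needed. Even the convergence of the outgoing gradient trajectories is automatic from the Morse hypothesis together with the inward-pointing trapping supplied by Property \ref{pr:0cells}, making the enumerative step above essentially the entire proof.
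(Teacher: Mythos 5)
Your proof is correct and follows essentially the same route as the paper's: Lemma \ref{lem:N} confines the tree to $N(e^0_\alpha)$, the absence of cusps there rules out $e$-, $Y_1$-, and $\mathit{sw}$-vertices, Proposition \ref{prop:EY1SWFormula} forces exactly two negative punctures and hence a single $Y_0$, and the constant initial edge at the local minimum $a_{i,j}$ pins down one tree per intermediate sheet. No substantive differences from the argument in the text.
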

\begin{proof}
By Theorem \ref{thm:EkholmMain} and Theorem \ref{thm:PropertiesofLtilde}, $\partial a_{i,j}$ is computed as a sum over rigid GFTs with a postive puncture at $a_{i,j}$.  Any such flow tree, $\Gamma$, is contained entirely in $N(e^0_\alpha)$ (by Lemma \ref{lem:N}.  Since the cusp locus of $\tilde{L}$ is disjoint from $N(e^0_\alpha)$, 
%$\Gamma$ can have only punctures at other $a_{kl}$ Reeb chords and $Y_0$-vertices.   
internal vertices of $\Gamma$ can only be $Y_0$-vertices, and all outputs of $\Gamma$ must be at Reeb chords in $N(e^0_\alpha)$.
Thus, Proposition \ref{prop:EY1SWFormula} shows that $\Gamma$ must have two negative punctures and, therefore, a single $Y_0$.  As the initial branch of $\Gamma$ must be constant (since $a_{i,j}$ is a local minimum), we have exactly one such tree for each intermediate sheet $S_k$ with $F_i > F_k > F_j$ as follows:  After the $Y_0$ the outgoing edges of $\Gamma$ are flow lines for $-\nabla F_{i,k}$ and $-\nabla F_{k,j}$ that must limit to $a_{i,k}$ and $a_{k,j}$ respectively.  See Figure \ref{fig:0Trees}.
\end{proof}

\begin{figure}

\labellist
\small
%\pinlabel $1/4$ [t] at 338 21
\pinlabel $a_{i,j}$ [b] at 32 100
\pinlabel $a_{i,k}$ [t] at 0 -2
\pinlabel $a_{k,j}$ [t] at 64 -2
\endlabellist
\centerline{ \includegraphics[scale=.6]{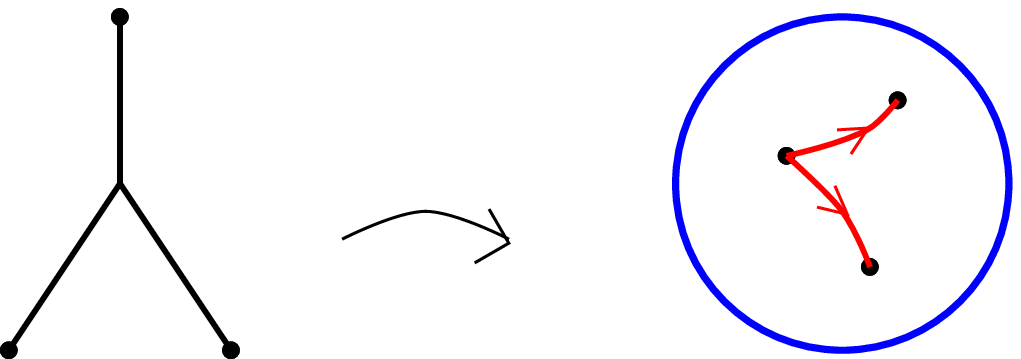} }

\quad 

%%\centerline{ \includegraphics[scale=.8]{images/SubdivideSq} } %Dan had already commented out this line

\caption{A rigid GFT $\Gamma$ beginning at $a_{i,j}$ with domain and image pictured.  The top branch of the tree maps to the constant $-\nabla F_{i,j}$ at $a_{i,j}$.}
\label{fig:0Trees}
\end{figure}

\subsection{Computation of $(\lchA, \partial)$ near the $1$-skeleton}
\label{ssec:Compute1cell}

Consider now a $1$-cell $e^1_\alpha$ parametrized by $[-1,1]$, and label the sheets above $N(e^1_\alpha)$ as $S_1, \ldots, S_n$ with descending $z$-coordinate in the order they appear above $+1$.  
As in Section \ref{ssec:ElemSq}, above $e^1_\alpha$ the front projection of $\tL$ has one of four forms that we refer to with the abbreviations
\begin{enumerate}
\item[(PV)]  Plain Vanilla:  The sheets do not intersect. 
\item[(1Cr)]  $1$-crossing:  Sheets $k$ and $k+1$ cross one another somewhere in $[-1/4,1/4]$. 
\item[(2Cr)]  $2$-crossing:  Sheet $k+2$ crosses sheets $k+1$ and $k$ somewhere in $[-1/4,1/4]$ as $x \in [-1,1]$ decreases from $x=+1$ to $x=-1$.
\item[(Cu)]  Left cusp:  Sheets $k$ and $k+1$ meet at a cusp at $x = -3/8$.
\end{enumerate}  
 See Figure \ref{fig:EdgeTypesB}.  As specified by Property \ref{pr:14models}, the location of the cusp point in (Cu) is at $x = -3/8$, and the location of crossings in (1Cr) and (2Cr) are between $x= -1/4$ and $x=1/4$.  

Let $\widehat{N}(e^1_\alpha)$ denote $N(e^1_\alpha)$ with the the interior of each $N(e^0_\pm)$ removed where $e^0_+$ and $e^0_-$ are the endpoints of $e^1_\alpha$.  Then, $N(e^1_\alpha) = N(e^0_+) \cup N(e^0_-) \cup \widehat{N}(e^1_\alpha)$ with all subsets compact, see Figure \ref{fig:InvariantN}.  
Moreover, $\widehat{N}(e^1_\alpha)$ consists of a portion of $e^1_\alpha$ together with subsets of two squares that contain $e^1_\alpha$ in their boundary.  In each of these squares, the intersection of $\widehat{N}(e^1_\alpha)$  lies, with respect to the parametrization by $[-1,1]^2$, within a strip of distance $1/32$ from $e^1_\alpha$.  According to the third regularity requirement stated in Section \ref{sec:RegReq}, the parametrizations of these bordering squares can be combined in a suitable manner to realize $\widehat{N}(e^1_\alpha)$ as a subset of $(-1, 1) \times [-1/32, 1/32]$.  The boundary of $\widehat{N}(e^1_\alpha)$ consist of some subsets of $\partial N(e^0_\pm)$ which are circular arcs centered at $(\pm1,0)$ together with polygonal paths $P_1$ and $P_2$ which respectively belong to $[-1,1] \times (0,1/32]$ and $[-1,1] \times [-1/32, 0)$ and project to the first component in a one-to-one manner.  

Label sheets above $N(e^1_\alpha)$ as $S_1, \ldots, S_n$ with decreasing $z$-coordinate as they appear above $e^+_0$.  For $p \in \widehat{N}(e^1_\alpha) \subset [-1,1] \times [-1/32,1/32]$ we let $x_1(p) \subset [-1,1]$ denote the first coordinate of $p$.

\begin{property}[Reeb chords above $1$-cells] \label{pr:Reeb1} The only Reeb chords in $\widehat{N}(e^1_\alpha)$ are as follows.  
\begin{itemize}
\item  For every $i<j$, there is a Reeb chord $b_{i,j}$ with  $x_1(b_{i,j}) \in [1/2,3/4]$.
\item  If sheets $i$ and $j$ with $i <j$ cross above $e^1_\alpha$, then there is a Reeb chord $\tilde{b}_{j, i}$ with  $x_{1}(\tilde{b}_{j i}) \in [-3/4,-1/2]$.
\end{itemize}
Moreover, all of the $b_{i,j}$ and $\tilde{b}_{j,i}$ are non-degenerate saddle points.
\end{property}
(We follow Convention \ref{conv:subscript}; in all cases the first and second subscripts refer respectively to the upper and lower sheets of the Reeb chord.)

Note that the only Reeb chords of the second type are $\tilde{b}_{k+1,k}$ for a (1Cr) edge and $\tilde{b}_{k+2,k+1}, \tilde{b}_{k+2,k}$ for a (2Cr) edge.

\subsubsection{Statement of the differential}

Let $a^-_{i,j}, a^+_{i,j}, b_{i,j}, \tilde{b}_{i,j}$ denote Reeb chords in $N(e^0_-)$, $N(e^0_+)$, and $N(e^{1}_\alpha)$ respectively, where subscripts of all Reeb chords %indices $i$ and $j$ refer respectively to the upper and lower sheets of the Reeb chord where 
now correspond to the ordering of sheets as $S_1, \ldots, S_n$  above $e^+_0$.  

We place these generators into strictly upper triangular matrices $A_-$, $A_+$, and $B$ whose respective $(i,j)$-entries with $i<j$ are given by $a^-_{i,j}, a^+_{i,j},$ and $b_{i,j}$, when they exist.  (The $\tilde{b}_{i,j}$ do not appear in any of these matrices.)  All other entries are $0$ with the exception that if $S_k$ and $S_{k+1}$ meet at a cusp edge above $e^1_\alpha$, then the $(k,k+1)$-entry of $A_-$ is $1$.  (Note that for $i<j$, if $S_i$ and $S_j$ cross above $e^1_\alpha$ or exactly one of these sheets ends at a cusp edge, then the $(i,j)$-entry of $A_-$ is $0$.) 
%with $i\leq j$ are $0$.  
 %make the convention that if $F_i < F_j$ at $e^0_-$ or either $F_i$ or $F_j$ is not defined at $e^0_-$, then $a^-_{i,j} =0$ with 
% the exception that if $S_k$ and $S_{k+1}$ meet at a cusp edge above $e^1_\alpha$, then $a^-_{k,k+1}=1$.

%\dr{Could be better organization to move this statement to appear before Property 6.}

\begin{proposition}  \label{prop:LCH1comp}
Let $e^1_\alpha$ be a $1$-cell with Reeb chords $a^-_{i,j}, a^+_{i,j}, b_{i,j}, \tilde{b}_{i,j}$ as above.  In $(\lchA(e^1_\alpha), \partial)$ we have
\[
\partial B = A_+(I+B)  + (I+B) A_- + X
\]
where all entries of $X$ belongs to the ideal generated by the $\tilde{b}_{i,j}$.
\end{proposition}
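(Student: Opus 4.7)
The plan is to enumerate rigid GFTs with positive puncture at each $b_{i,j}$, working modulo those trees involving exceptional generators $\tilde{b}_{l,m}$ (which form the ideal $X$). By Corollary \ref{cor:SVK} and Theorem \ref{thm:EkholmMain}, $\partial b_{i,j}$ is a $\Z/2$-count of rigid GFTs contained in $N(e^1_\alpha)$, with negative punctures at Reeb chords from the list $a^\pm_{i,j}, b_{i,j}, \tilde{b}_{l,m}$ and possible $e$-vertices on cusp edges. Since $\mathrm{Ind}(b_{i,j}) = 1$, Proposition \ref{prop:EY1SWFormula} constrains such a tree by $1 = \sum_k (1 - \mathrm{Ind}(b_k)) + E - Y_1 - SW$, where the sum is over negative punctures. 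The initial edge of every such GFT traverses the one-dimensional unstable manifold of $b_{i,j}$, in one of two directions (roughly, rightward with $x_1$ increasing or leftward).

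For the single-output trees, the initial edge is a full gradient trajectory and the tree has no internal vertices. Flowing rightward, Properties \ref{pr:0cells} and \ref{pr:1cells} force the trajectory to enter $N(e^0_+)$ and remain there, and by Property \ref{pr:Reeb0} it must limit at the unique $(i,j)$-minimum $a^+_{i,j}$. Flowing leftward, the trajectory either enters $N(e^0_-)$ and limits at $a^-_{i,j}$, or, when $j = i+1$ and $S_i, S_{i+1}$ meet at a cusp edge above $e^1_\alpha$, it terminates at an $e$-vertex at a point where $F_{i,i+1} = 0$, contributing the constant $1$. These cases give exactly the $(i,j)$-entries of $A_+ \cdot I$ and $I \cdot A_-$, with the cusp case matching the prescribed $1$-entry of $A_-$.

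For two-output trees with one internal $Y_0$-vertex, the index formula forces precisely one $a$-output and one $b$-output. Following the blueprint of Section \ref{sec:3.5} and the observation that $b_{i,j}$ is a saddle whose unstable manifold is one-dimensional, I will identify two families of rigid trees: if the initial edge goes right, the $Y_0$ sits in $\widehat{N}(e^1_\alpha)$ at the transverse intersection of the rightward unstable manifold of $b_{i,j}$ with the leftward stable manifold of some $b_{k,j}$, with $(i,k)$-branch continuing into $N(e^0_+)$ to limit at $a^+_{i,k}$, producing the word $a^+_{i,k} b_{k,j}$ (this is the $(i,j)$-entry of $A_+ B$); symmetrically, left-branching trees produce $b_{i,k} a^-_{k,j}$ (the $(i,j)$-entry of $B A_-$). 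The two-output trees in which one output is a $\tilde{b}_{l,m}$ (only available for $(1Cr)$ and $(2Cr)$ edges) lie in the ideal generated by the $\tilde{b}$'s, hence are absorbed into $X$.

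The main obstacle is paragraph four: ruling out all other rigid trees that do not involve $\tilde{b}$-outputs, in a way that is uniform across the four edge types. For trees with three or more negative punctures, I will adapt the $1$st/$3$rd Quadrant Lemma of Section \ref{sec:3.5}: any subsequent branching after the first $Y_0$ would require a further $Y_0$ on a $(i,k)$- or $(k,j)$-subflow ending at an $a$- or $b$-output, and the transversality of $-\nabla F_{i',j'}$ to $P_1, P_2$ and $\partial N(e^0_\pm)$ from Properties \ref{pr:0cells}, \ref{pr:1cells} confines each branch to a region whose only non-$\tilde{b}$ Reeb chords have already been accounted for; any additional branching therefore either produces a non-rigid configuration or requires a $\tilde{b}$-output, placing the tree in $X$. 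The trickiest case will be trees with $Y_1$-vertices or switch vertices at a cusp edge present in the $(Cu)$ type: these must either be excluded by the transversality of $-\nabla F_{i,j}$ to the cusp curve (which will follow from the analogous Property \ref{pr:CuspTransversality} in Section \ref{sec:Comp2Cells}) or shown to coincide with trees already counted. Once these exclusions are verified, the three families identified above assemble into the matrix formula and the remaining trees assemble into $X$, completing the proof.
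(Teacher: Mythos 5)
Your proposal follows essentially the same route as the paper: single-edge trees from the unstable manifold of $b_{i,j}$ give the $A_+\cdot I + I\cdot A_-$ terms (with the cusp $e$-vertex supplying the $1$ in $A_-$), single-$Y_0$ trees at intersections of the unstable manifold of $b_{i,j}$ with stable manifolds of $b_{i,m}$ or $b_{m,j}$ give the quadratic terms, and the index formula plus confinement arguments rule out everything else modulo the $\tilde b$'s. Two details deserve care when you write this up. First, "the transverse intersection" (singular) is not justified: after the perturbation achieving $1$-regularity one only knows that the unstable manifold of $b_{i,j}$ meets the stable manifold of $b_{m,j}$ in an \emph{odd} number of points — the unstable manifold crosses the vertical strip $[\beta_{m,j}-\epsilon,\beta_{m,j}+\epsilon]\times[-1/32,1/32]$ from side to side while the stable manifold crosses it top to bottom (this is the paper's Lemma \ref{lem:IntersectB}, resting on Proposition \ref{prop:bUS} and Property \ref{pr:1cmono}) — and it is this odd parity, not uniqueness, that makes the $\Z/2$ coefficient equal to $1$. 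Second, your exclusion of further branching can be made precise by one clean statement rather than a quadrant argument: since the stable manifolds of distinct $b_{i',j'}$ lie in pairwise disjoint vertical strips (lexicographic ordering of the $\beta_{i,j}$, Property \ref{pr:Location1}), no $Y_0$ can have both outgoing branches ending at $b$'s; hence any PFT with all outputs at $b$'s is a single edge, which forces exactly one $Y_0$ adjacent to the unique $a$/$e$-output and pins its incoming edge to start at $b_{i,j}$ (the paper's Lemma \ref{lem:AllBs1Sk} and Steps 1–2).
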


After discussing relevant properties of $\tilde{L}$ above $1$-cells, the proof of Proposition \ref{prop:LCH1comp} is given at the conclusion of this section.

\subsubsection{Properties of Reeb chords and gradients above $1$-cells}

Locations of Reeb chords are specified more precisely in the
%\footnote{\dr{Using different notation for the $b_{i,j}$ because of a different numbering at the upper right corner of a $2$-cell should probably result in a different notation of the $\beta_{i,j}$ along the edge.  This point will probably only come up in the Construction section.  Actually, the superscript $X$ may clear up any ambiguity.}} 
following.
\begin{property}[Location of $1$-cell Reeb chords] \label{pr:Location1} For a $1$-cell $e^1_\alpha$ with $n$ sheets above $x =+1$, there exists a collection of numbers $\beta_{i,j} \in [1/2, 3/4]$, for any $i<j$ and $\epsilon >0$, 
  depending only on $n$, such that 
\begin{itemize}
\item  the intervals $[\beta_{i,j} - \epsilon, \beta_{i,j} + \epsilon]$ are all disjoint; 
\item the $\beta_{i,j}$ appear in lexicographic order, i.e.  
\begin{equation}
\label{eq:StaircaseAxiom}
\beta_{i,j} < \beta_{i',j'} \quad \mbox{whenever $i < i'$, or $i=i'$ and $j < j'$;  and}
\end{equation}
\item $x_1(b_{i,j}) \in (\beta_{i,j} - \epsilon, \beta_{i,j} + \e)$ 
\end{itemize}
Moreover, in the case of a (1Cr) or (2Cr) edge respectively, there are in addition $\tilde{\beta}_{k+1,k} \in [-3/4, -1/2]$ or $\tilde{\beta}_{k+2,k+1},\tilde{\beta}_{k+2,k} \in [-3/4, -1/2]$ such that  $x_1(\tilde{b}_{j,i}) \in (\tilde{\beta}_{j,i}-\e, \tilde{\beta}_{j,i}+\e)$ and, in the (2Cr) case, the $[\tilde{\beta}_{i,j} - \epsilon, \tilde{\beta_{i,j}} + \epsilon]$ are disjoint with  $\tilde{\beta}_{k+2,k} < \tilde{\beta}_{k+2,k+1}$.
\end{property}

Property \ref{pr:Location1} is illustrated in Figure \ref{fig:1staircase}.

\begin{figure}

\quad

\quad 

\labellist
\small
\pinlabel $b_{1,2}$ [b] at 31 52
\pinlabel $(\beta_{i,j}-\epsilon,\beta_{i,j}+\epsilon)$ [t] at 112 0
\pinlabel $b_{1,3}$ [b] at 72 52
\pinlabel $b_{1,4}$ [b] at 112 52
\pinlabel $b_{2,3}$ [b] at 168 52
\pinlabel $b_{2,4}$ [b] at 208 52
\pinlabel $b_{3,4}$ [b] at 264 52
\endlabellist
\centerline{ \includegraphics[scale=.6]{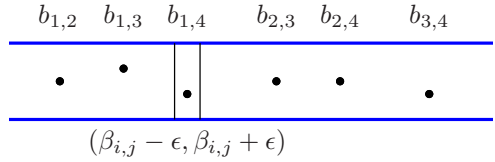} }

%%\centerline{ \includegraphics[scale=.8]{images/SubdivideSq} } %Dan had already commented out this line

\caption{The $b_{i,j}$ are lexicographically ordered by their $x_1$-coordinates.}
\label{fig:1staircase}
\end{figure}

\begin{property}[Monotonicity above $1$-cells]  \label{pr:1cmono}
Let $(x_1,x_2) \in \widehat{N}(e^1_\alpha)$, and $\nabla F_{i,j} = (\grad_{x_1}F_{i,j}, \grad_{x_2}F_{i,j})$.  

Suppose $F_i > F_j$ when $x_1 \in [1/4, 1].$
\begin{itemize}
\item If $ \beta_{i,j} + \epsilon \leq x_1$, then $-\grad_{x_1} F_{i,j}(x_1,x_2) > 0$. 
\item If $  x_1 \in [1/4, \beta_{i,j} - \epsilon]$, then $-\grad_{x_1} F_{i,j}(x_1,x_2) < 0$.
\end{itemize}

Suppose $F_i > F_j$ when $x_1 \in [-1,-1/4]$ and that sheets $S_i$ and $S_j$ do not cross in $\widehat{N}(e^1_\alpha)$.  
\begin{itemize}
\item If  $ x_1 \leq -1/4$, then  $-\grad_{x_1} F_{i,j}(x_1,x_2) < 0$
\end{itemize}

Suppose $F_i > F_j$ when $x_1 \in [-1,-1/4]$ and that sheets $S_i$ and $S_j$ cross in $\widehat{N}(e^1_\alpha)$.  
\begin{itemize}
\item If  $  x_1 \in [\tilde{\beta}_{i,j} + \epsilon , -1/4]$, then  $-\grad_{x_1} F_{i,j}(x_1,x_2) > 0$.
\item If  $ x_1 \leq \tilde{\beta}_{i,j} - \epsilon$, then  $-\grad_{x_1} F_{i,j}(x_1,x_2) < 0$.
\end{itemize}
\end{property}

See Figure \ref{fig:Monotone1Sk}.  

\begin{figure}

\labellist
\small
\pinlabel $b_{i,j}$ [b] at 430 229
\pinlabel $(\beta_{i,j}-\epsilon,\beta_{i,j}+\epsilon)$ [t] at 433 173
\pinlabel $1/4$ [t] at 338 173
\pinlabel $-1/4$ [t] at 273 173
\pinlabel $\cdots$  at 305 201
\pinlabel $\tilde{b}_{i,j}$ [b] at 177 77
\pinlabel $(\tilde{\beta}_{i,j}-\epsilon,\tilde{\beta}_{i,j}+\epsilon)$ [t] at 177 21
%\pinlabel $1/4$ [t] at 338 21
\pinlabel $-1/4$ [t] at 273 21
\pinlabel $\cdots$  at 305 49
\endlabellist
\centerline{ \includegraphics[scale=.6]{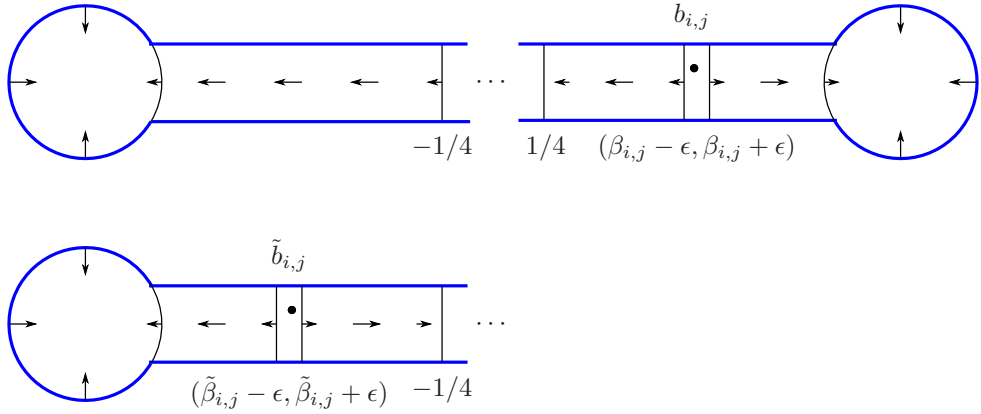} }

%%\centerline{ \includegraphics[scale=.8]{images/SubdivideSq} } %Dan had already commented out this line

\caption{The horizontal component of $-\nabla F_{i,j}$ with $F_i> F_j$ in the neighborhood of a $1$-cell when (top) the $S_i$ and $S_j$ do not cross or when (bottom) sheets $S_i$ and $S_j$ cross in the $1$-cell with $S_i$ above $S_j$ when $x_1 \leq -1/4$.}
\label{fig:Monotone1Sk}
\end{figure}

\subsubsection{Stable and unstable manifolds of the $b_{i,j}$}

The $b_{i,j}$ are saddle points of $F_i-F_j$, and thus their {\it stable} and {\it unstable} manifolds with respect to $-\nabla F_{i,j}$  are $1$-dimensional open disks.  
The stable (resp. unstable) manifold of $b_{i,j}$ consists of $b_{i,j}$ together with two non-constant $(i,j)$-flow lines ending (resp. beginning) at $b_{i,j}$, i.e. limiting to $b_{i,j}$ as $t \rightarrow +\infty$ (resp. $t \rightarrow -\infty$).  By Properties \ref{pr:0cells} and \ref{pr:1cells}, a flow line belonging to the unstable manifold of $b_{i,j}$ is entirely contained in $N(e^1_\alpha)$ as long as it remains in a region where $F_i -F_j$ is positive.  Thus, as $t$ increases towards $+\infty$, such a flow line may either limit to a Reeb chord above $N(e^0_{\pm})$, reach the cusp edge at an $e$-vertex, or {\it terminate at the cusp or crossing locus}.     In more detail, for $1$-cells of Type (1Cr) or (2Cr), the flowline may pass the crossing locus and enter a region where $F_i-F_j <0$; for $1$-cells of  Type (Cu) the flowline may reach the cusp locus where, if the graphs of $F_i$ and $F_j$ meet at the cusp edge, it ends as a $e$-vertex, or, when exactly one of the sheets $S_i$ and $S_j$ is a cusp sheet, the difference function $F_i-F_j$ ceases to be defined.  
%Notice that with our conventions for the meaning of $a^-_{i,j}$ these rigid flow trees contribute exactly the term $a^+_{i,j}+ a^-_{i,j}$ to $\partial b_{i,j}$.

\begin{proposition} \label{prop:bUS} Consider a Reeb chord $b_{i,j}$ in $N(e^1_\alpha)$ where  $e^1_\alpha$ is a $1$-cell with endpoints $e^0_\pm$.
\begin{enumerate}
\item One of the non-constant flow lines of the unstable manifold of $b_{i,j}$ limits to a Reeb chord in $N(e^0_+)$, while the other either limits to a Reeb chord in $N(e^0_-)$; reaches an $e$-vertex; or terminates at the cusp or crossing locus.
\item Any point $p$ in the intersection of the stable manifold of $b_{i,j}$ with $N(e^1_\alpha)$ satisfies $x_1(p) \in [\beta_{i,j} - \epsilon, \beta_{i,j} + \e]$ with respect to the coordinates $\widehat{N}(e^1_\alpha) \subset [-1,1] \times [-1/32,1/32]$.
\item The two flowlines of the stable manifold of $b_{i,j}$ exit $N(e^1_\alpha)$ along opposite sides of the boundary.  
\end{enumerate}
\end{proposition}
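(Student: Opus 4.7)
The plan is to analyse the dynamics of $-\nabla F_{i,j}$ near the saddle $b_{i,j}$ separately for the unstable and stable branches. The key technical input is Property \ref{pr:1cmono}, which pins down the sign of $-\grad_{x_1}F_{i,j}$ outside the thin vertical strip $T := \{x_1 \in [\beta_{i,j}-\epsilon,\beta_{i,j}+\epsilon]\}$, together with Properties \ref{pr:0cells} and \ref{pr:1cells} governing $-\nabla F_{i,j}$ on $\partial N(e^1_\alpha)$. Since $\beta_{i,j}\in[1/2,3/4]$ and $\epsilon$ is small, $T$ is contained in $\{x_1\geq 1/4\}$, so Property \ref{pr:1cmono} applies at both boundaries of $T$.

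For (1), Property \ref{pr:1cmono} forces the $(1,1)$-entry of the Hessian of $F_{i,j}$ at $b_{i,j}$ to be non-positive, and combined with the non-degeneracy from Property \ref{pr:Reeb1} this implies the unstable manifold is transverse to $\{x_1=\beta_{i,j}\}$ at $b_{i,j}$, so one branch enters $\{x_1>\beta_{i,j}\}$ and the other $\{x_1<\beta_{i,j}\}$. On the right branch, Property \ref{pr:1cmono} forces $x_1$ to strictly increase throughout $\widehat{N}(e^1_\alpha)\cap\{x_1\geq\beta_{i,j}+\epsilon\}$, while Property \ref{pr:1cells}(3) prevents exit through $P_1$ or $P_2$; thus the orbit eventually enters $N(e^0_+)$, where Property \ref{pr:0cells} traps it and Property \ref{pr:Reeb0} forces convergence to some local minimum. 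For the left branch, monotonicity drives $x_1$ leftward on $[1/4,\beta_{i,j}-\epsilon]$; once past $x_1=-1/4$, its fate depends on the edge type and on whether $S_i,S_j$ cross or either is a cusping sheet, producing the three listed outcomes: convergence to a Reeb chord in $N(e^0_-)$ via the second clause of Property \ref{pr:1cmono}, arrival at an $e$-vertex when $S_i,S_j$ are the cusping pair, or termination at the cusp or crossing locus.

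For (2), I parametrise a stable branch backwards from $b_{i,j}$ as $\gamma(s)$ with $\dot\gamma=\nabla F_{i,j}$, and argue by contradiction. Let $s_0 := \inf\{s>0 : x_1(\gamma(s))\notin T\}$; by continuity $x_1(\gamma(s_0))\in\{\beta_{i,j}\pm\epsilon\}$. If $x_1(\gamma(s_0))=\beta_{i,j}+\epsilon$, Property \ref{pr:1cmono} gives $\grad_{x_1}F_{i,j}(\gamma(s_0))<0$, so $\tfrac{d}{ds}x_1(\gamma(s))|_{s_0}<0$, forcing $x_1(\gamma(s))<\beta_{i,j}+\epsilon$ for $s$ slightly above $s_0$ and contradicting the choice of $s_0$. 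The symmetric case at $\beta_{i,j}-\epsilon$ uses the other clause of Property \ref{pr:1cmono}.

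Part (3) is the main step. By (2), both branches of the stable manifold lie in $T$; since $T$ is disjoint from the circular arcs of $\partial N(e^0_\pm)\cap\partial\widehat{N}(e^1_\alpha)$ (which sit near $x_1=\pm 1$) and the backward flow points out of those arcs into $\widehat{N}(e^1_\alpha)$, each branch must exit $\widehat{N}(e^1_\alpha)$ through $P_1$ or $P_2$. To rule out both exiting on the same side, say $P_1$, I argue by contradiction: together with the segment of $P_1$ connecting their exit points, the two branches form a simple closed curve in $T\cap\widehat{N}(e^1_\alpha)$ that bounds a disk $D\subset T$. Along $\partial D$ the forward flow of $-\nabla F_{i,j}$ is tangent on the two stable branches and points into $D$ along $P_1\cap\partial D$ by Property \ref{pr:1cells}(3), so $D$ is forward-invariant. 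Since $F_{i,j}$ strictly decreases along non-constant orbits and is bounded below on the compact set $\overline D$, every forward orbit in $\overline D$ must $\omega$-converge to a critical point of $F_{i,j}$ in $\overline D$; the only candidate is $b_{i,j}$ itself, because by Property \ref{pr:Location1} the chord $\tilde b_{i,j}$ (the only other critical point of $F_{i,j}$ possibly present in $\widehat{N}(e^1_\alpha)$) sits near $x_1\in[-3/4,-1/2]$, outside $T$. Thus $\overline D$ would lie entirely in the stable manifold of $b_{i,j}$, contradicting its $1$-dimensionality. The main obstacle is verifying the topological setup cleanly: that $D$ really is a disk inside $T$, that $-\nabla F_{i,j}$ points into $D$ (rather than out of $D$) along $P_1\cap\partial D$, and that no critical point other than $b_{i,j}$ lies in $\overline D$.
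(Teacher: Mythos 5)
Part (1) of your argument has a genuine gap. First, the claim that Property \ref{pr:1cmono} forces the $(1,1)$-entry of the Hessian of $F_{i,j}$ at $b_{i,j}$ to be non-positive is not justified: that property only pins down the sign of $\grad_{x_1}F_{i,j}$ \emph{outside} the strip $x_1\in[\beta_{i,j}-\epsilon,\beta_{i,j}+\epsilon]$, and $b_{i,j}$ lies in the interior of that strip, where nothing is asserted about first (let alone second) derivatives. Second, and more seriously, even granting that the unstable direction at $b_{i,j}$ is transverse to the vertical line through $b_{i,j}$, this only controls the \emph{initial} direction of each unstable branch. Inside the width-$2\epsilon$ strip the sign of $\grad_{x_1}F_{i,j}$ is uncontrolled, so the branch that starts into $\{x_1>x_1(b_{i,j})\}$ could a priori turn around and exit the strip through $\{x_1=\beta_{i,j}-\epsilon\}$; your next step, which applies monotonicity "throughout $\{x_1\geq\beta_{i,j}+\epsilon\}$," presupposes that one branch actually reaches that region. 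The repair is global rather than local, and it is exactly the trapping idea you deploy in part (3): if both unstable branches had the same fate (both limiting to the same Reeb chord, or both terminating on the cusp/crossing locus), their union would bound a region $R\subset N(e^1_\alpha)$ with no critical points of $F_{i,j}$ in its interior; since the four separatrices of a non-degenerate saddle alternate, one stable branch would be confined to $R$, and its backward flow could neither limit to a local minimum, nor reach the crossing locus (where $F_{i,j}=0$, while $F_{i,j}$ increases backward from the positive value $F_{i,j}(b_{i,j})$), nor reach the cusp locus (Property \ref{pr:1cmono}) — a contradiction. This is the paper's proof of (1).

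Parts (2) and (3) are correct. Part (2) is the paper's argument. For part (3) you take a genuinely different route: the paper deduces (3) immediately from (1) and (2), using that the unstable manifold runs across $\widehat{N}(e^1_\alpha)$ and separates it into two halves containing the two stable separatrices. Your forward-invariant-disk argument is self-contained (it does not even invoke (1)), and the three points you flag as obstacles all check out: the two stable branches hit $P_1$ at distinct points by uniqueness of trajectories, so the curve is a Jordan curve bounding a disk inside the strip; $-\nabla F_{i,j}$ points into $D$ along $P_1\cap\partial D$ because $D$ lies on the $N(e^1_\alpha)$-side of $P_1$; and Properties \ref{pr:Reeb1} and \ref{pr:Location1} leave $b_{i,j}$ as the only critical point of $F_{i,j}$ in $\overline{D}$, while $\overline{D}$ is far from the cusp and crossing loci so forward orbits are defined for all time. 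The cost of your route is only that it is longer; its benefit is that it does not depend on first establishing (1).
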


See Figure \ref{fig:StableM1Sk}.

\begin{figure}

\labellist
\small
\pinlabel $b_{i,j}$ [b] at 375 92
%\pinlabel $(\beta_{i,j}-\epsilon,\beta_{i,j}+\epsilon)$ [t] at 433 173
\pinlabel $a^+_{i,j}$ [t] at 537 72
\pinlabel $a^-_{i,j}$ [b] at 56 63
%\pinlabel $\cdots$  at 305 201
%\pinlabel $\tilde{b}_{i,j}$ [b] at 177 77
%\pinlabel $(\tilde{\beta}_{i,j}-\epsilon,\tilde{\beta}_{i,j}+\epsilon)$ [t] at 177 21
%\pinlabel $1/4$ [t] at 338 21
%\pinlabel $-1/4$ [t] at 273 21
%\pinlabel $\cdots$  at 305 49
\endlabellist
\centerline{ \includegraphics[scale=.6]{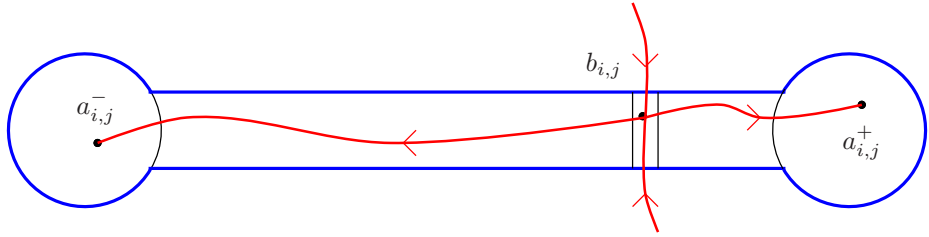} }

%%\centerline{ \includegraphics[scale=.8]{images/SubdivideSq} } %Dan had already commented out this line

\caption{The stable and unstable manifold of $b_{i,j}$.  If sheet $S_i$ is no longer above $S_j$ when $x_1 \leq -1/4$ due to a crossing or cusp, then the left flowline of the unstable manifold ends at the crossing or cusp locus. }
\label{fig:StableM1Sk}
\end{figure}

\begin{proof}
To verify (1), suppose that instead both flow lines of the unstable manifold limit to the same point or that they both end at the cusp/crossing locus. Then, the unstable manifold of $b_{i,j}$ together with the limiting critical point, or some appropriate portion of the crossing or cusp locus, bounds a region $R \subset N(e^1_\alpha)$ that does not contain any critical points of $F_{i,j}$ in its interior.  As the unstable manifold of $b_{i,j}$ seperates the two non-constant flow lines of the stable manifold of $b_{i,j}$,
one of the two $(i,j)$-flow lines from the stable manifold of $b_{i,j}$  would have to be entirely contained in $R$.  This is impossible since as $t$ decreases toward $-\infty$ this flow line cannot limit to critical points above $N(e^0_\pm)$, since they are local minimums, and cannot run into the crossing locus (since $F_{i,j}$ increases as $t$ decreases) or the cusp locus (by Property \ref{pr:1cmono}).

For (2), Property \ref{pr:1cmono} shows that as $t$ decreases from $+\infty$ a flowline of the stable manifold cannot cross the vertical lines $x_1=\beta_{i,j}\pm\epsilon$.  Finally, (3) follows from (1) and (2), since the unstable manifold of $b_{i,j}$ seperates $\widehat{N}(e^1_\alpha)$ into two halves.
\end{proof}

%\begin{property}[Intersections of unstable and stable manifolds of the $b_{i,j}$]  \label{pr:IntersectB} 
%For any $i<j$ and $i'<j'$, the unstable manifold of $b_{i,j}$ intersects the stable manifold of $b_{i',j'}$ transversally in a unique point $(x_1,x_2) \in \widehat{N}(e^1_\alpha) \subset [-1,1] \times [-1/32,1/32]$ 
% with $x_1 \in (\beta_{i',j'}-\epsilon, \beta_{i',j'} + \epsilon)$.  
%\end{property}

\begin{lemma}  \label{lem:IntersectB}
For any $i<j$ and $i'<j'$ with $(i,j) \neq (i',j')$, the unstable manifold of $b_{i,j}$ intersects the stable manifold of $b_{i',j'}$ transversally in an odd number of points.  Any such intersection point $(x_1,x_2) \in \widehat{N}(e^1_\alpha) \subset [-1,1] \times [-1/32,1/32]$ 
 satisfies $x_1 \in (\beta_{i',j'}-\epsilon, \beta_{i',j'} + \epsilon)$.  
\end{lemma}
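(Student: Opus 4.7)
My plan is to deduce the lemma by a topological parity argument, using that the stable manifold of $b_{i',j'}$ separates $\widehat{N}(e^1_\alpha)$. First, the location statement is immediate: by Proposition \ref{prop:bUS}(2) the stable manifold $W^s(b_{i',j'})$ is confined to the strip $\{x_1 \in [\beta_{i',j'}-\epsilon, \beta_{i',j'}+\epsilon]\}$, and any intersection point with $W^u(b_{i,j})$ lies in the open interior of this strip, because it sits in the interior of $\widehat{N}(e^1_\alpha)$ (the interior endpoint $b_{i',j'}$ of $W^s(b_{i',j'})$ is inside the open interval by Property \ref{pr:Location1}, and the other endpoints of $W^s(b_{i',j'})$ lie on $P_1,P_2$, where $W^u(b_{i,j})$ does not reach by Property \ref{pr:1cells}(3)).

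For the parity count, I will write $C$ for the closed arc $W^s(b_{i',j'}) \cup \{b_{i',j'}\}$ and $C'$ for $W^u(b_{i,j}) \cup \{b_{i,j}\}$. By Proposition \ref{prop:bUS}(3), $C$ is an embedded arc in $\widehat{N}(e^1_\alpha)$ with one endpoint on $P_1$ and the other on $P_2$, and since $\widehat{N}(e^1_\alpha)$ is a topological disk, $C$ separates it into a ``left'' component containing the left boundary arc $B_- \subset \partial N(e^0_-)$ and a ``right'' component containing the right boundary arc $B_+ \subset \partial N(e^0_+)$.

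Next I would locate the endpoints of $C'$. Properties \ref{pr:0cells} and \ref{pr:1cells} force $-\nabla F_{i,j}$ to point into $\widehat{N}(e^1_\alpha)$ along $P_1, P_2$ and from $\widehat{N}(e^1_\alpha)$ into $N(e^0_\pm)$ along $B_\pm$, so once a flow ray from $b_{i,j}$ exits $\widehat{N}(e^1_\alpha)$ it cannot return; hence $C' \cap \widehat{N}(e^1_\alpha)$ is a single embedded arc. By Proposition \ref{prop:bUS}(1), the right-going ray exits through $B_+$, with $x_1 \geq 15/16$ since $N(e^0_+)$ is the radius-$1/16$ disk around $(1,0)$, while the left-going ray either exits through $B_-$ (with $x_1 \leq -15/16$), ends at an $e$-vertex on the cusp locus $\{x_1 = -3/8\}$ by Property \ref{pr:14models}, or terminates on the crossing locus, contained in $\{-1/4 < x_1 < 1/4\}$ again by Property \ref{pr:14models}. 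In every case the left end of $C'$ has $x_1 \leq 1/4$.

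Taking $\epsilon$ small enough in Property \ref{pr:Location1} that $1/4 < \beta_{i',j'}-\epsilon$ and $\beta_{i',j'}+\epsilon < 15/16$, the right endpoint of $C'$ lies strictly in the right component of $\widehat{N}(e^1_\alpha) \setminus C$ and the left endpoint strictly in the left component. A standard parity argument for arcs in a disk then gives that the mod-$2$ count of transverse intersections of $C$ and $C'$ is $1$. The only point requiring care is transversality of $W^u(b_{i,j})$ with $W^s(b_{i',j'})$, which I expect to follow from the genericity underlying the $1$-regular condition on $(\tilde{L},g)$; this is the main (minor) obstacle.
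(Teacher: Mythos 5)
Your argument is correct, and it reaches the same three conclusions (location from Proposition \ref{prop:bUS}(2), transversality from $1$-regularity, oddness from a mod-$2$ intersection parity) but sets up the parity count differently from the paper. The paper localizes to the rectangle $T = N(e^1_\alpha) \cap \bigl([\beta_{i',j'}-\epsilon, \beta_{i',j'}+\epsilon]\times[-1/32,1/32]\bigr)$ and shows that the unstable manifold of $b_{i,j}$ crosses $T$ in a \emph{single} arc joining its left and right sides — this uses Property \ref{pr:1cmono}, i.e.\ the monotonicity of the $x_1$-component of $-\nabla F_{i,j}$ on the strip around $\beta_{i',j'}$ (which is disjoint from $[\beta_{i,j}-\epsilon,\beta_{i,j}+\epsilon]$) — while the stable manifold of $b_{i',j'}$ crosses $T$ top to bottom; two such arcs in a rectangle meet oddly. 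You instead work globally in the disk $\widehat{N}(e^1_\alpha)$: you use Proposition \ref{prop:bUS}(3) to see that the stable manifold separates it, and Proposition \ref{prop:bUS}(1) to place the two ends of the unstable manifold on opposite sides of that separating arc. Your route avoids Property \ref{pr:1cmono} for the unstable manifold (you never need it to be monotone in $x_1$ inside the strip, only to know where it terminates), at the cost of having to verify that the flow-out/flow-in conditions along $P_1,P_2$ and $\partial N(e^0_\pm)$ make both restricted manifolds single embedded arcs — which you do. Two small remarks: the $\epsilon$ of Property \ref{pr:Location1} is fixed by the construction rather than chosen by you, but the inequalities $1/4 < \beta_{i',j'}-\epsilon$ and $\beta_{i',j'}+\epsilon < 15/16$ do hold for it; and your justification that intersection points lie in the \emph{open} interval is not quite a consequence of being interior to $\widehat{N}(e^1_\alpha)$, though the paper elides the same open-versus-closed point.
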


\begin{proof}
The unstable manifold of $b_{i,j}$ is entirely contained in $N(e^1_\alpha)$ (by Lemma \ref{lem:N}) while the intersection of the stable manifold of  $b_{i',j'}$ with $N(e^1_\alpha)$ belongs to the subset where $x_1 \in (\beta_{i',j'}-\epsilon, \beta_{i',j'} + \epsilon)$ (by Proposition \ref{prop:bUS}).  Thus, the location of any intersection points must be as claimed.

That the unstable and stable manifolds intersect transversally follows from the $1$-regularity of $\tilde{L}$ stated in Theorem \ref{thm:PropertiesofLtilde}.  Finally, the intersections points are odd in number since the unstable manifold of $b_{i,j}$  (resp. the stable manifold of $b_{i',j'}$) intersects $T =N(e^1_\alpha) \cap \left([\beta_{i',j'}-\epsilon, \beta_{i',j'} + \epsilon] \times[-1/32,1/32]\right)$ in a single line segment with endpoints on the left and right boundary of $T$ (resp. on the upper and lower boundary of $T$) by Property \ref{pr:1cmono} (resp. by Proposition \ref{prop:bUS} and Property \ref{pr:1cells}).    

\end{proof}

\subsubsection{Computation of $\partial b_{i,j}$}

%Let $a^-_{i,j}, a^+_{i,j}, b_{i,j}, \tilde{b}_{i,j}$ denote Reeb chords in $N(e^0_-)$, $N(e^0_+)$, and $N(e^{1}_\alpha)$ respectively, where in all cases the indices correspond to labeling of sheets as $S_1, \ldots, S_n$ with decreasing $z$-coordinate as they appear above $e^+_0$.  We make the convention that if $F_i < F_j$ at $e^0_-$ or either $F_i$ or $F_j$ is not defined at $e^0_-$, then $a^-_{i,j} =0$ with the exception that if $S_k$ and $S_{k+1}$ meet at a cusp edge above $e^1_\alpha$, then $a^-_{k,k+1}=1$.

%\dr{Could be better organization to move this statement to appear before Property 6.}

%\begin{proposition}  \label{prop:LCH1comp}
%Let $e^1_\alpha$ be a $1$-cell with  $a^-_{i,j}, a^+_{i,j}, b_{i,j}, \tilde{b}_{i,j}$ as above.  For any $i<j$, in $(\lchA(e^1_\alpha), \partial)$ we have
%\[
%\partial b_{i,j} = a^+_{i,j} + a^-_{i,j}+\sum_{i<m<j} ( a_{i,m}^+ b_{m,j} + b_{i,m} a^-_{m,j}) + x
%\]
%where $x$ belongs to the ideal generated by the $\tilde{b}_{i,j}$.
%\end{proposition}

\begin{proof}[Proof of Proposition \ref{prop:LCH1comp}]
In this proof, we make the convention that if $F_i < F_j$ at $e^0_-$ or either $F_i$ or $F_j$ is not defined at $e^0_-$, then $a^-_{i,j} =0$ with the exception that if $S_k$ and $S_{k+1}$ meet at a cusp edge above $e^1_\alpha$, then $a^-_{k,k+1}=1$.
Then, the stated formula for $\partial B$ is equivalent to the term-by-term formula, 
\[
\partial b_{i,j} = a^+_{i,j} + a^-_{i,j}+\sum_{i<m<j} ( a_{i,m}^+ b_{m,j} + b_{i,m} a^-_{m,j}) + x
\]
where $x$ belongs to the ideal generated by the $\tilde{b}_{l,l'}$.

We first identify an odd number of rigid GFTs contributing each of the terms, other than $x$, that appear in the formula for $\partial b_{i,j}$.  Then, to complete the proof we show that any other rigid GFT would have to have an endpoint at one of the $\tilde{b}_{i,j}$.

First, consider rigid flow trees beginning at $b_{i,j}$ that do not have any internal vertices, i.e. flow lines of $-\nabla F_{i,j}$ that start at $b_{i,j}$ and limit to a Reeb chord as $t\rightarrow +\infty$ or terminate at an $e$-vertex.  There are two non-constant $(i,j)$-flow lines beginning at $b_{i,j}$, and, from Proposition \ref{prop:bUS}, one of these two flow lines must limit to $a^+_{i,j}$, while the other either limits to $a^-_{i,j}$; ends at an $e$-vertex; or terminates at the cusp or crossing locus.  Notice that with our conventions for the meaning of $a^-_{i,j}$ these rigid flow trees contribute exactly the term $a^+_{i,j}+ a^-_{i,j}$ to $\partial b_{i,j}$, and there are no other rigid flow trees without internal vertices.

We now specify some rigid flow trees with exactly $1$ internal vertex.  For any $i<m<j$, consider a flow tree $\Gamma$ whose initial branch is a portion of the $-\nabla F_{i,j}$  flowline beginning at $b_{i,j}$ that  intersects the stable manifold of $b_{i,m}$  (resp. $b_{m,j}$), as specified by Lemma   \ref{lem:IntersectB}.  Next, a $Y_0$ occurs in $\Gamma$ at one of the odd number of intersection points;  the outgoing edges are $(i,m)$-flows and $(m,j)$-flows.  If the $Y_0$ point is on the stable manifold of $b_{i,m}$, then the outgoing branch that is an $(i,m)$-flow line limits to $b_{i,m}$, while other outgoing branch is an $(m,j)$-flow line that has initial point $(x_1,x_2)$ with 
\[
x_1 \leq \beta_{i,m} +\epsilon < \beta_{m,j}-\epsilon  \quad \quad \mbox{by (2) of Proposition \ref{prop:bUS} and equation (\ref{eq:StaircaseAxiom}).}
\]
Property \ref{pr:1cmono} applies to show that this $(m,j)$-flow must remain to the left of $x_1 = \beta_{m,j}-\epsilon$ in $N(e^1_\alpha)$, and will therefore either limit to $a^-_{m,j}$; reach an $e$-vertex at the cusp locus; or terminate at a crossing arc or cusp edge.  (In the final case, $\Gamma$ is not actually a flow tree.)  As we use $\Z/2$ coefficients, trees of this type contribute precisely the term $b_{i,m} a^-_{m,j}$ to $\partial b_{i,j}$, since we have set $a^-_{m,j} = 0$  (resp. $1$) exactly when the $(m,j)$-flow terminates at the cusp or crossing edge (resp. ends at an $e$-vertex).  In a similar manner, when the $Y_0$ point belongs to the stable manifold of $b_{m,j}$, the outgoing edges limit to $a^+_{i,m}$ and $b_{m,j}$ respectively, and these trees give the $a^+_{i,m} b_{m,j}$ terms.
See Figure \ref{fig:1Trees}.

\begin{figure}

\labellist
\small
\pinlabel $b_{i,j}$ [b] at 35 104
\pinlabel $b_{i,m}$ [t] at 2 0
\pinlabel $a^-_{m,j}$ [t] at 66 2
\pinlabel $a^-_{m,j}$ [t] at 222 10
\pinlabel $b_{i,m}$ [t] at 409 22
\pinlabel $b_{i,j}$ [t] at 460 22
\pinlabel $b_{m,j}$ [t] at 521 22

%\pinlabel $\cdots$  at 305 201
%\pinlabel $\tilde{b}_{i,j}$ [b] at 177 77
%\pinlabel $(\tilde{\beta}_{i,j}-\epsilon,\tilde{\beta}_{i,j}+\epsilon)$ [t] at 177 21
%\pinlabel $1/4$ [t] at 338 21
%\pinlabel $-1/4$ [t] at 273 21
%\pinlabel $\cdots$  at 305 49
\endlabellist
\centerline{ \includegraphics[scale=.6]{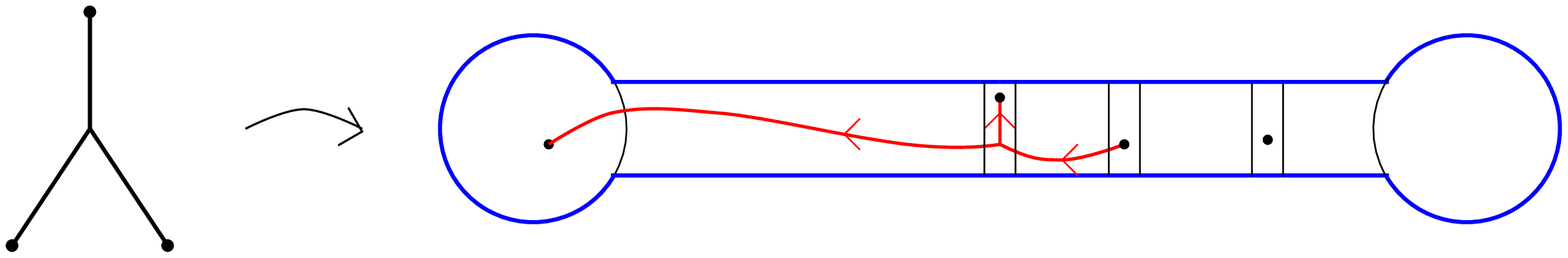} }

%%\centerline{ \includegraphics[scale=.8]{images/SubdivideSq} } %Dan had already commented out this line

\quad

\quad

\quad

\labellist
\small
\pinlabel $b_{i,j}$ [b] at 35 104
\pinlabel $a^+_{i,m}$ [t] at 2 2
\pinlabel $b_{m,j}$ [t] at 66 0
\pinlabel $a^+_{i,m}$ [t] at 608 10
\pinlabel $b_{i,m}$ [t] at 409 22
\pinlabel $b_{i,j}$ [t] at 460 22
\pinlabel $b_{m,j}$ [t] at 521 22

%\pinlabel $\cdots$  at 305 201
%\pinlabel $\tilde{b}_{i,j}$ [b] at 177 77
%\pinlabel $(\tilde{\beta}_{i,j}-\epsilon,\tilde{\beta}_{i,j}+\epsilon)$ [t] at 177 21
%\pinlabel $1/4$ [t] at 338 21
%\pinlabel $-1/4$ [t] at 273 21
%\pinlabel $\cdots$  at 305 49
\endlabellist
\centerline{ \includegraphics[scale=.6]{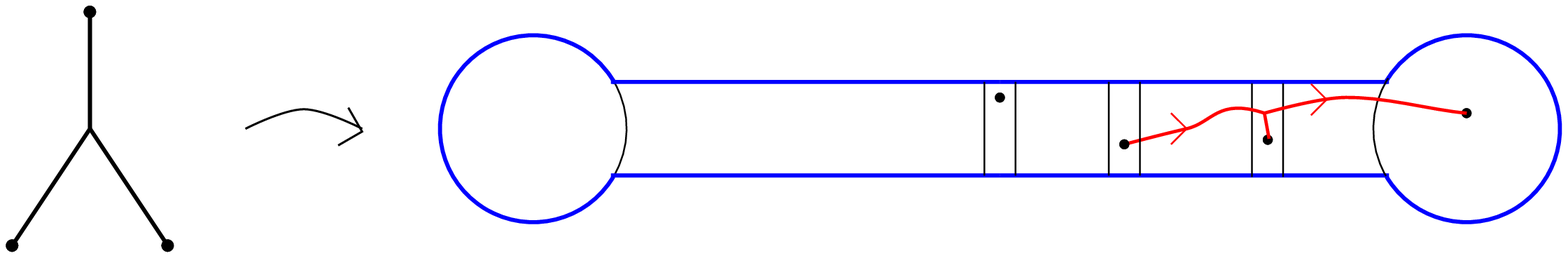} }

\caption{The domain and image of rigid flow trees beginning at $b_{i,j}$ and having $1$ $Y_0$-vertex.}
\label{fig:1Trees}
\end{figure}

To complete the proof, we need to show that any rigid GFT $\Gamma$ begining at some $b_{i,j}$ with at least $1$ internal vertex, and without endpoints at any $\tilde{b}_{i,j}$ must be one of the trees from the previous paragraph.  Such a flow tree can only have endpoints at Reeb chords of the form $a^\pm_{l,l'}$, $b_{l,l'}$ or $e$-vertices, and cannot have any $Y_1$ or $sw$-vertices because Properties  \ref{pr:14models} and \ref{pr:1cmono} show that all $-\nabla F_{i,j}$ with $F_i>F_j$ point transversally to the cusp edge in the direction where the cusp sheets are not defined.  Therefore, Proposition \ref{prop:EY1SWFormula} gives 
\[
1 = A +E 
\] 
where $A$ and $E$ are respectively the number of endpoints of $\Gamma$ at $a$ Reeb chords and at $e$-vertices.  Therefore, $\Gamma$ has all but one endpoint at $b$ Reeb chords, with the remaining endpoint at an $a$ Reeb chord or an $e$-vertex.  

Consider the edge that has its endpoint at an $a$ Reeb chord or an $e$-vertex.  The initial vertex of this edge, $x$, must be a $Y_0$.

\medskip

\noindent{\bf Step 1.}  Suppose that the incoming edge at $x$ is an $(i,j)$-flow with the outgoing edges $(i,m)$ and $(m,j)$-flows.  If the $(i,m)$ edge (resp. the $(m,j)$ edge) ends at an $a$ Reeb chord or $e$-vertex, then the $(m,j)$ edge (resp. the $(i,m)$ edge) ends at $b_{m,j}$ (resp. at $b_{i,m}$).  See Figure \ref{fig:BStep1}.

\begin{figure}

\quad

\quad 

\labellist
\small
%\pinlabel $1/4$ [t] at 338 21
\pinlabel $\vdots$ [b] at 35 231
\pinlabel $\vdots$ [b] at 163 231
\pinlabel $\vdots$ [b] at 339 231
\pinlabel $\vdots$ [b] at 467 231
\pinlabel $\vdots$ [b] at 35 70
\pinlabel $\vdots$ [b] at 339 70
\pinlabel $\vdots$ [t] at 320 188
\pinlabel $\vdots$ [t] at 53 188
\pinlabel $a_{i,m}$ [t] at 2 155
\pinlabel $a_{i,m}$ [t] at 130 155
\pinlabel $b_{m,j}$ [t] at 194 155
\pinlabel $a_{m,j}$ [t] at 371 155
\pinlabel $b_{i,m}$ [t] at 434 155
\pinlabel $a_{m,j}$ [t] at 498 155
\pinlabel $a_{i,m}$ [t] at 2 155
\pinlabel $a_{i,m}$ [t] at 130 0
\pinlabel $b_{m,j}$ [t] at 194 0
\pinlabel $a_{m,j}$ [t] at 371 0
\pinlabel $b_{i,m}$ [t] at 434 0
\pinlabel $a_{m,j}$ [t] at 498 0
\pinlabel $a_{i,m}$ [t] at 2 0
\pinlabel $b_{m,j}$ [t] at 66 0
\pinlabel $b_{i,m}$ [t] at 306 0
\pinlabel $b_{i,j}$ [b] at 162 104
\pinlabel $b_{i,j}$ [b] at 466 104
\pinlabel Step~2: [r] at -10 34
\pinlabel Step~1: [r] at -10 194

\endlabellist
\centerline{ \includegraphics[scale=.6]{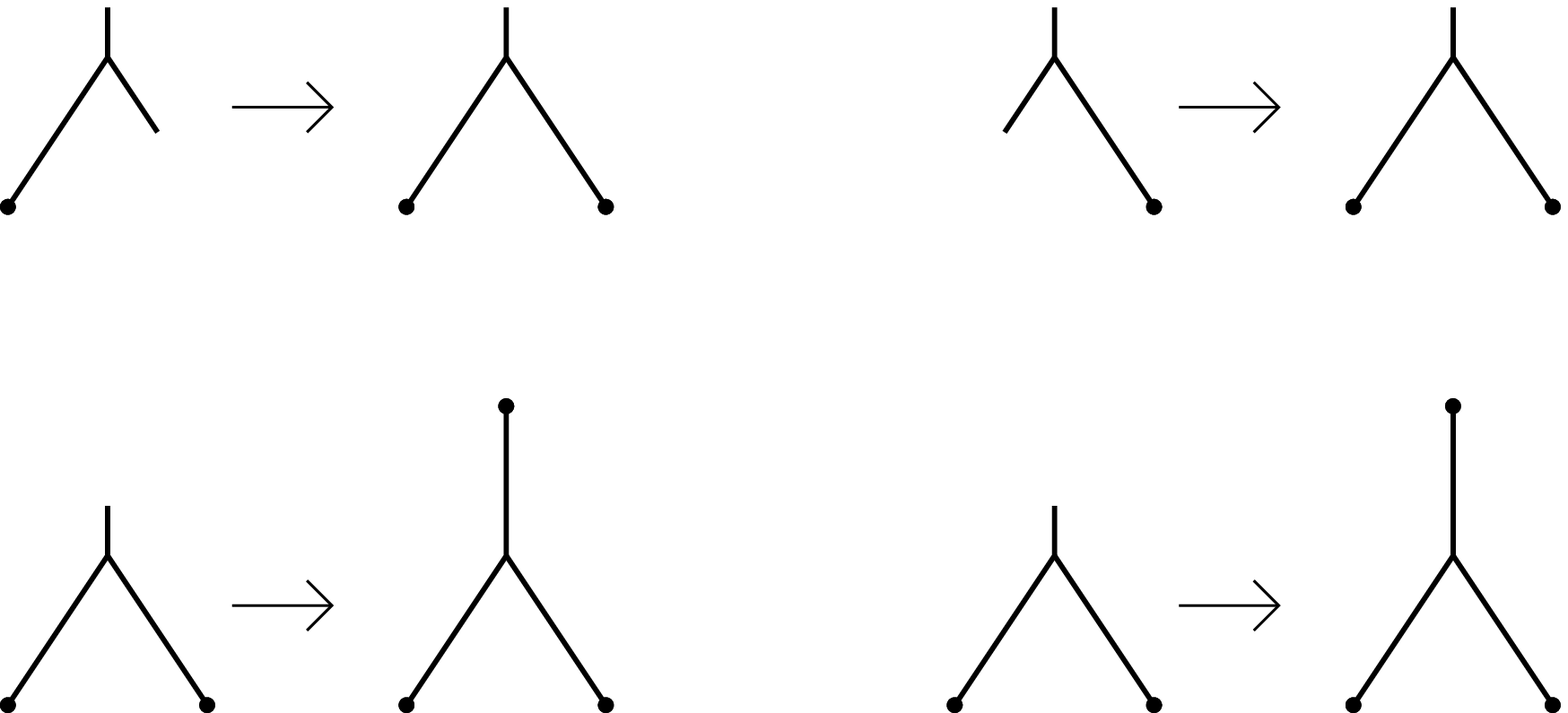} }

\quad 

%%\centerline{ \includegraphics[scale=.8]{images/SubdivideSq} } %Dan had already commented out this line

\caption{Steps 1 and 2 from the proof of Proposition \ref{prop:LCH1comp}.}
\label{fig:BStep1}
\end{figure}

\medskip 

The partial flow tree that starts with the edge in question has all of its endpoints at $b$ Reeb chords.  Therefore, Step 1. follows from 
\begin{lemma} \label{lem:AllBs1Sk}
Any PFT that begins in $N(e^1_\alpha)$ and has all endpoints at Reeb chords of the form $b_{i',j'}$ 
 has no internal vertices.
\end{lemma}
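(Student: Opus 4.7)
The plan is to obtain a contradiction from the assumption that a PFT $\Gamma \subset N(e^1_\alpha)$ with all endpoints at $b$-type Reeb chords has at least one internal vertex, by exploiting the tight localization of the stable manifolds of the saddles $b_{i',j'}$. Specifically, Proposition \ref{prop:bUS}(2,3) confines the stable manifold of $b_{i',j'}$ inside $\widehat{N}(e^1_\alpha)$ to the vertical strip $x_1 \in [\beta_{i',j'} - \epsilon, \beta_{i',j'} + \epsilon]$, and Property \ref{pr:Location1} makes any two such strips disjoint when the index pairs differ. Since every output is at a $b$-chord, each terminal external edge has $l(e) = +\infty$ and so lies entirely on the stable manifold of its $b$-endpoint; this is the key mechanism that will force constraints back onto interior vertices.

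Assuming $\Gamma$ has an internal vertex, I would choose a \emph{bottommost} one, say $v$: an internal vertex all of whose descendant edges lead directly to external vertices (such a $v$ exists because the tree is finite). Each outgoing edge at $v$ is then an infinite-length gradient trajectory converging to a $b$-chord, so $v$ itself sits on the stable manifold of every $b$-chord appearing at its child endpoints. Now case on the type of $v$, using Theorem \ref{thm:1regV}. If $v$ is a $Y_0$-vertex with outgoing $(i,m)$- and $(m,j)$-flows ($i<m<j$) terminating at $b_{i,m}$ and $b_{m,j}$, then $x_1(v)$ must lie simultaneously in $[\beta_{i,m}-\epsilon, \beta_{i,m}+\epsilon]$ and $[\beta_{m,j}-\epsilon, \beta_{m,j}+\epsilon]$, which are disjoint by the lexicographic ordering in Property \ref{pr:Location1}. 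If $v$ is a $Y_1$-vertex, the outgoing flows are of type $(i,k+1)$ and $(k,j)$ with $i<k<k+1<j$, so the same $\beta$-disjointness applies. If $v$ is an $sw$-vertex it has a unique outgoing edge, ending at some $b_{i',j'}$, so $x_1(v) \in [\beta_{i',j'}-\epsilon, \beta_{i',j'}+\epsilon] \subset [1/2-\epsilon, 3/4+\epsilon]$; on the other hand, $v$ lies on the cusp locus, which by Property \ref{pr:14models} sits at $x_1 = -3/8$ inside $N(e^1_\alpha)$, yielding the contradiction for small $\epsilon$.

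The one delicate point, and the main thing to verify carefully, is that $v$ genuinely lies in $\widehat{N}(e^1_\alpha)$, so that its $x_1$-coordinate in the local $(-1,1)\times[-1/32,1/32]$-model is actually meaningful (a priori $v$ could lie in $N(e^0_\pm)$, where those coordinates do not apply). This, however, is exactly what Proposition \ref{prop:bUS}(3) gives: the two nonconstant stable-manifold flow lines of any $b_{i',j'}$ exit $N(e^1_\alpha)$ only through the paths $P_1$ and $P_2$, so the entire stable manifold of $b_{i',j'}$ is contained in $\widehat{N}(e^1_\alpha)$; since $v$ lies on at least one such stable manifold, the same holds for $v$. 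Once this containment is in hand, the case analysis above applies verbatim and finishes the lemma.
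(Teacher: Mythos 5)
Your argument is correct and essentially the paper's own: the paper likewise locates a bottommost internal vertex whose outgoing edges both end at $b$-chords, observes that its image would be an intersection point of the corresponding stable manifolds, and derives the contradiction from Proposition \ref{prop:bUS} (2) together with the disjointness of the intervals $[\beta_{i,j}-\epsilon,\beta_{i,j}+\epsilon]$ from Property \ref{pr:Location1}. The only cosmetic difference is that the paper rules out $Y_1$- and $\mathit{sw}$-vertices once and for all in the surrounding proof of Proposition \ref{prop:LCH1comp} (via Properties \ref{pr:14models} and \ref{pr:1cmono}), whereas you dispose of those cases inside the lemma by comparing the location of the cusp locus with the $\beta$-strips.
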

\begin{proof}[Proof of Lemma \ref{lem:AllBs1Sk}]
Suppose that such a PFT has some internal vertex.  Then, by the combinatorics of rooted trees, we can find a pair of edges that end at Reeb chords $b_{i_1,j_1}$ and $b_{i_2,j_2}$ respectively and start at a common $Y_0$-vertex.   The image of this $Y_0$-vertex would be an intersection in $N(e^1_{\alpha})$ of the stable manifolds of $b_{i_1,j_1}$ and $b_{i_2,j_2}$.  However, no such intersection point can exist by (2) of Proposition \ref{prop:bUS} since the intervals $[\beta_{i_1,j_1}-\epsilon,\beta_{i_1,j_1}+\epsilon]$ and  $[\beta_{i_2,j_2}-\epsilon, \beta_{i_2,j_2}+\epsilon]$ are disjoint.
\end{proof}

As a consequence of Step 1., note that since $x$ belongs to the stable manifold of $b_{i,m}$ or $b_{m,j}$ it is to the right of any crossings in the case $e^1_\alpha$ is a (1Cr) or (2Cr) edge.  Thus, the sheets $S_1, \ldots, S_n$ appear with $z$-coordinates strictly descending at $x$, so it follows that $i<m<j$.

\medskip

\noindent{\bf Step 2.}  The incoming edge at $x$ must begin at $b_{i,j}$.  See Figure \ref{fig:BStep1}.

\medskip

To establish Step 2., we need to show that the $(i,j)$ edge, $E$, that ends at $x$ cannot begin at a $Y_0$ vertex.  From Step 1., we know that $x$ is located on either the stable manifold of $b_{i,m}$ or $b_{m,j}$.  Therefore by Properties \ref{pr:Location1} and \ref{pr:1cmono} and Proposition \ref{prop:bUS}, any point in the image of $E$ must have $x_1$-coordinates in $[b_{i,m}-\epsilon, b_{m,j}+\epsilon]$.  If the initial endpoint of $E$ were at a $Y_0$, $y$, then the other outgoing edge of $y$, $F$, would either be a $(j,l)$-flow or an $(h,i)$-flow for some $h < i <j< l$.  By Lemma \ref{lem:AllBs1Sk}, $F$ would have to belong to the stable manifold of either $b_{j,l}$ or $b_{h,i}$, but, by Proposition \ref{prop:bUS} and Property \ref{pr:Location1}, this is impossible since $\beta_{h,i}+\epsilon < \beta_{i,m}-\epsilon$ and $\beta_{m,j}+\epsilon < \beta_{j,l} - \epsilon$.

With Step 1. and Step 2. established, we see that the $Y_0$ vertex $x$ is at one of the odd number of  intersection of the unstable manifold of $b_{i,j}$ with either the stable manifold of $b_{i,m}$ or $b_{m,j}$.  There are no other internal vertices in the tree, so $\Gamma$ is indeed one of the previously identified GFTs.
\end{proof}

\section{Computation of LCH, Part 2: $2$-cells without swallowtail singularities}
\label{sec:Comp2Cells}

In this section, we compute in Theorem \ref{thm:SquareComp}
 the sub-DGAs associated to $2$-cells of type (1)-(12).

\subsection{Reeb chords above a $2$-cell:  Types (1)-(12)}

Let $e^2_\alpha$ be a $2$-cell of type (1)-(12).  The closure of $e^2_\alpha$ is parametrized by $[-1,1]^2$, and we use $e^0_{+,+}, e^0_{-,+}, e^0_{-,-}, e^0_{+,-}$ to denote the $0$-cells that appear above the corners of $e^2_\alpha$ at $(x_1,x_2) = (1,1), (-1,1), (-1,-1), (1,-1)$ respectively.  We denote the $1$-cells that form the edges $\{-1\}\times [-1,1]$, \, $[-1,1] \times \{-1\}$, \, $\{+1\}\times [-1,1]$, \, $[-1,1] \times \{+1\}$ by $e^1_L$, $e^1_{D}$, $e^1_{R}$, $e^1_U$ respectively.  (Here and elsewhere, the subscripts $L, D, R, U$ are intended to convey ``Left'', ``Down'', ``Right'', and ``Up''.)
  We recall that, since the transverse square decomposition   $\cE_\tra$ satisfies the requirement (A1) from Section \ref{ssec:TransverseSqRe}, the orientation of both $e^1_L$ and $e^1_R$ (resp. $e^1_D$ and $e^1_U$) is from the bottom edge to the top edge (resp. the left edge to the right edge) of $[-1,1]^{2}$. 

We define a subset $\widehat{N}(e^2_\alpha) \subset N(e^2_\alpha)$ to satisfy
\[
N(e^2_\alpha) = N(e^1_L) \cup N(e^1_{D})  \cup N(e^1_{R}) \cup N(e^1_U) \cup \widehat{N}(e^2_\alpha)
\]
so that $\widehat{N}(e^2_\alpha)$ is compact and only intersects neighborhoods of $1$-cells along subsets of their boundaries.  In coordinates, $\widehat{N}(e^2_\alpha)$ is a subset of the interior of $[-1,1]^2$.  Note in addition that the earlier parametrizations of the subsets $\widehat{N}(e^1_X)$ for $X\in \{L,D,R,U\}$ by subsets of $[-1,1] \times[-1/32,1/32]$ can be suitably modified and pieced together with the parametrization of $e^2_\alpha$ to realize  $N(e^2_\alpha)$ with the interiors of the $N(e^0_{\pm,\pm})$ removed as a subset of $[-1-1/32,1+1/32]\times [-1-1/32,1+1/32]$.  Denote this subset of $N(e^2_\alpha)$ by $\widetilde{N}(e^2_\alpha)$.  See Figure \ref{fig:Neighbor1}.

\begin{figure}

\labellist
\small
\pinlabel $N(e^2_\alpha)$ [t] at 80 0
%\pinlabel $(\beta_{i,j}-\epsilon,\beta_{i,j}+\epsilon)$ [t] at 433 173
\pinlabel $\widehat{N}(e^2_\alpha)$ [t] at 320 0
\pinlabel $\widetilde{N}(e^2_\alpha)$ [t] at 560 0
%\pinlabel $\cdots$  at 305 201
%\pinlabel $\tilde{b}_{i,j}$ [b] at 177 77
%\pinlabel $(\tilde{\beta}_{i,j}-\epsilon,\tilde{\beta}_{i,j}+\epsilon)$ [t] at 177 21
%\pinlabel $1/4$ [t] at 338 21
%\pinlabel $-1/4$ [t] at 273 21
%\pinlabel $\cdots$  at 305 49
\endlabellist
\centerline{ \includegraphics[scale=.6]{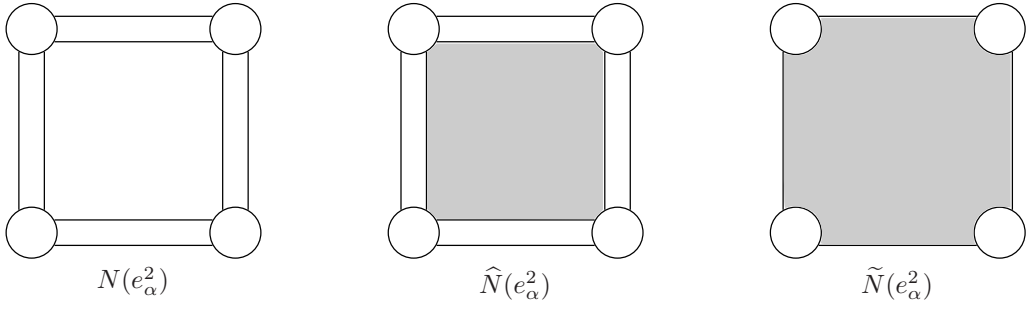} }

%%\centerline{ \includegraphics[scale=.8]{images/SubdivideSq} } %Dan had already commented out this line

\quad

\quad

\caption{The neighborhood $N(e^2_\alpha)$ (left) with the subsets $\widehat{N}(e^2_\alpha)$ (center) and $\widetilde{N}(e^2_\alpha)$ (right) shaded. }
\label{fig:Neighbor1}
\end{figure}

\subsubsection{Reeb chords above $N(e^2_\alpha)$}

Above $e^2_\alpha$, the components of the complement of the cusp edge of $\tilde{L}$ each project in a one-to-one manner to $[-1,1]$.  Thus, within $e^2_\alpha$ we can label the sheets of $\tilde{L}$ as $S_1, \ldots, S_n$ so that above $(x_1,x_2) =  (+1,+1)$ their $z$-coordinates appear in decreasing order.  For $X \in \{L,D,R,U\}$, we have Reeb chords in neighborhoods of $N(e^0_{\pm,\pm})$ and $N(e^1_X)$ as specified by Properties \ref{pr:Reeb0} and \ref{pr:Reeb1}, which we denote by $a^{\pm,\pm}_{i,j}$,  $b^X_{i,j}$, and $\tilde{b}^X_{i,j}$.  We follow Convention \ref{conv:subscript} so that the subscript $i,j$  indicates a Reeb chord whose upper (resp. lower) endpoint is on $S_i$ (resp. $S_j$).  The location of $b^X_{i,j}$ is determined by Property \ref{pr:Location1} to lie within the subset of $N(e^1_X)$ that is specified in our current coordinate system by a requirement of the form
\[
x_1(b^X_{i,j}) \in (\beta_{i,j}^X-\e, \beta_{i,j}^X+\e)  \mbox{   when $X \in \{D,U\}$, or } \quad x_2(b^X_{i,j}) \in  (\beta_{i,j}^X-\e, \beta_{i,j}^X+\e)  \mbox{   when $X \in \{L,R\}$,}
\]
for appropriate $\beta^X_{i,j} \in [1/2,3/4]$.  
Similarly, we have $\tilde{\beta}^X_{i,j} \in [-3/4,-1/2]$ that specify the location of the $\tilde{b}^X_{i,j}$ should they exist.

\begin{remark}  \label{rem:betanotate}

\begin{itemize}
\item[(i)] In general, $\beta^U_{i,j} = \beta^R_{i,j} = \beta_{i,j}$.  (Recall, that the $\beta_{i,j}$ specify the position of Reeb chords in neighborhoods of $1$-cells and depend only on $n$ and not the type of the $1$-cell.)   However, we caution that, 
\[
\beta^L_{i,j} = \beta_{\sigma_L(i), \sigma_L(j)} \mbox{  and  } \beta^D_{i,j} = \beta_{\sigma_D(i), \sigma_D(j)}
\] where $\sigma_L(i)$ and $\sigma_D(i)$ indicate the position (with respect to descending $z$-coordinate) that sheet $S_{i}$ appears above $(x_1,x_2) = (-1,+1)$ and $(x_1,x_2) = (+1,-1)$  respectively.  Thus, if a crossing or cusp appears above $U$ or $R$, then for some values of $i,j$ we may have $\beta^L_{i,j} \neq \beta_{i,j}$, or $\beta^D_{i,j} \neq \beta_{i,j}$.

\item[(ii)] In parallel to (i), note that the indices provided on Reeb chords of the form $b^L_{i,j}$ or $b^D_{i,j}$ may be distinct from the indices that would be used to specify the same Reeb chord when considering $e^1_L$ or $e^1_D$ in the setting of Section \ref{ssec:Compute1cell}.  This is because if a crossing or cusp appears along edges $U$ or $R$, then the sheets $S_1, \ldots, S_n$ are labeled in a different manner above $e^1_L$ and $e^1_D$ than in Section \ref{ssec:Compute1cell}.  For instance, if sheets $k$ and $k+1$ cross above $e^1_U$, then there is a Reeb chord $b^L_{k+1,k}$ with $x_2(b^L_{k+1,k}) \in [1/2,3/4]$, while in Section \ref{ssec:Compute1cell}  all Reeb chords above the upper half of $N(e^1_L)$ would be labeled with subscripts $i,j$ with $i<j$.  The use of different labellings of Reeb chords in the context of the sub-DGAs $\mathcal{A}(e^d_\alpha)$ for different $e^d_\alpha$ is addressed in Section \ref{sec:Iso}.

%We caution that as a consequence, the $\beta^L_{i,j}$ and $\beta^D_{i,j}$ may not satisfy the lexicographic ordering of equation (\ref{eq:StaircaseAxiom}).

\end{itemize}

\end{remark} 

\begin{property} \label{pr:Reeb2} The only Reeb chords in $\widehat{N}(e^2_\alpha)$ are as follows.
\begin{itemize}  
\item For any $i<j$, there is a unique Reeb chord $c_{i,j}$ within $[1/2,3/4]^2$.  The location of $c_{i,j}$ is more precisely specified by
\[
c_{i,j} \in (\beta^U_{i,j}-\e, \beta^U_{i,j}+\e) \times (\beta^R_{i,j}-\e, \beta^R_{i,j}+\e).
\]
\item For each pair of sheets $S_i$ and $S_j$ with $i<j$ that cross one another somewhere in $[-1,1]^2$, there is a unique Reeb chord with upper sheet $S_j$ and lower sheet $S_i$ that we denote $\tilde{c}_{j,i}$.
\end{itemize}  
Moreover, all of the $c_{i,j}$ and $\tilde{c}_{j,i}$ are non-degenerate local maxima.
\end{property}

See Figure \ref{fig:Stairc}.   
%STAIRCASE ORDERING OF C's ALONG MAIN DIAGONAL  Caption: ... The critical points, $c_{i,j}$, that occur in $[1/4,1]\times[1/4,1]$ have both coordinates increasing with $i$ and, for a fixed $i$, increasing with $j$.  

\begin{figure}

\quad

\quad

\labellist
\small
\pinlabel $c_{1,2}$ [br] at 34 46
%\pinlabel $(\beta_{i,j}-\epsilon,\beta_{i,j}+\epsilon)$ [t] at 433 173
\pinlabel $c_{1,3}$ [br] at 68 74
\pinlabel $c_{1,4}$ [br] at 86 122
\pinlabel $c_{2,3}$ [br] at 144 150
\pinlabel $c_{2,4}$ [br] at 174 182
\pinlabel $c_{3,4}$ [br] at 210 216
\pinlabel $\beta^R_{1,4}-\epsilon$ [l] at 248 122
\pinlabel $\beta^R_{1,4}+\epsilon$ [l] at 248 86
\pinlabel $\beta^U_{1,4}-\epsilon$ [br] at 90 246
\pinlabel $\beta^U_{1,4}+\epsilon$ [bl] at 120 246
\endlabellist
\centerline{ \includegraphics[scale=.6]{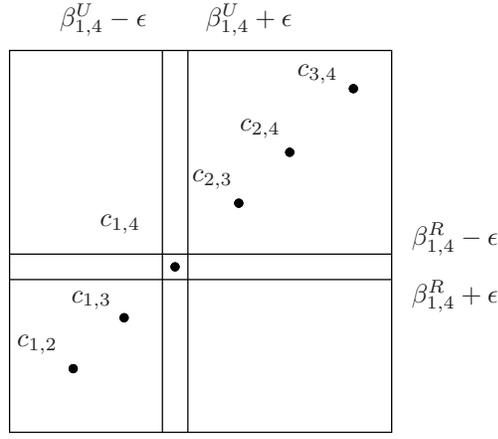} }

%%\centerline{ \includegraphics[scale=.8]{images/SubdivideSq} } %Dan had already commented out this line

\caption{The critical points, $c_{i,j}$, that occur in $[1/2,3/4]\times[1/2,3/4]$ have both coordinates increasing with $i$ and, for a fixed $i$, increasing with $j$. }
\label{fig:Stairc}
\end{figure}

To specify the location of $\tilde{c}_{j,i}$ we make the following observation.  By considering the squares of type (1)-(12) in a case-by-case manner, we see that if sheets $S_i$ and $S_j$ cross somewhere in $[-1,1]^2$,  then they either cross above exactly one of $e^1_U$ or $e^1_L$, or, as happens only in the case of the Type (3) square, they cross above both $e^1_D$ and $e^1_R$.  

\begin{property} \label{pr:Location2} Let $i<j$.
\begin{itemize} 
\item If sheets $S_i$ and $S_j$ cross above $e^1_R$, then
\[
\tilde{c}_{j,i} \in (\beta^D_{j,i}-\e, \beta^D_{j,i}+\e) \times (\tilde{\beta}^R_{j,i}-\e, \tilde{\beta}^R_{j,i}+\e).
\]
\item If sheets $S_i$ and $S_j$ cross above $e^1_U$, then
\[
\tilde{c}_{j,i} \in (\tilde{\beta}^U_{j,i}-\e, \tilde{\beta}^U_{j,i}+\e) \times (\beta^L_{j,i}-\e, \beta^L_{j,i}+\e).
\]
\item If sheets $S_i$ and $S_j$ cross above both $e^1_L$ and $e^1_D$, then
\[
\tilde{c}_{j,i} \in (\tilde{\beta}^D_{j,i}-\e, \tilde{\beta}^D_{j,i}+\e) \times (\tilde{\beta}^L_{j,i}-\e, \tilde{\beta}^L_{j,i}+\e).
\]
\end{itemize}
\end{property}

\subsubsection{Exceptional generators}  
The equivalence between the cellular DGA and $(\lchA, \partial)$ of Theorem \ref{thm:CellularLCH} will actually be realized by comparing the cellular DGA with a certain stable tame isomorphic quotient of $(\lchA, \partial)$.  In this quotient, we will remove the following generators from the generating set.   

\begin{definition} \label{def:ex-gen}
We say that a Reeb chord is an {\bf exceptional generator} in $N(e^2_\alpha)$ if either of the following conditions applies:
\begin{enumerate}
\item  The generator has the form $\tilde{b}_{i,j}$ or $\tilde{c}_{i,j}$, i.e. the Reeb chord sits in the half of an edge where $-3/4 <x_i < -1/2$ or in the interior of the square but outside of the region $[1/2,3/4]\times [1/2,3/4]$.  OR
\item  When the singular set is pushed into the boundary of $e^2_\alpha$ as in Section \ref{sec:parallel}, a crossing arc between the sheets that form the end points of the Reeb chord sits above the $0$-cell or $1$-cell that the chord corresponds to. 
\end{enumerate}
\end{definition}

\begin{remark}
In particular, according to (2) of Definition \ref{def:ex-gen}, any generator of the form $b^X_{i,j}$ or $a^{\pm,\pm}_{i,j}$ such that $i>j$ will be an exceptional generator.  However, in some cases there are also exceptional generators of the form $b^X_{i,j}$ or $a^\pm_{i,j}$ with $i<j$, eg. above a Type (4) square $a^{-,-}_{k,k+1}$ is an exceptional generator.  A complete square-by-square list of exceptional generators appears in Section \ref{ssec:DiffExceptional}.  
\end{remark}

\subsection{Computation of $(\lchA(e^2_\alpha), \partial):$ the statement}

For a given $2$-cell, $e^2_\alpha$, of type (1)-(12) we define strictly upper triangular matrices $A_{+,+},$ $A_{-,-},$ $B_X$ for $X \in \{L, D, R, U\}$, and $C$.  All $(i,j)$-entries with $i\geq j$ are $0$.  For $i<j$, the $(i,j)$-entries are respectively the Reeb chords of the form $a^{+,+}_{i,j}$, $a^{-,-}_{i,j}$, $b^X_{i,j}$, and $c_{i,j}$.  If no such Reeb chord exists, the $(i,j)$-entry is $0$ with the following exceptions:  For squares of type (9)-(12), a pair of sheets $S_k$ and $S_{k+1}$ meet at a cusp before reaching the lower left side of the square.  In these cases, the matrices $B_L$ and $A_{-,-}$ have all entries in the $k$ and $k+1$ rows and columns equal to $0$ except for the $(k,k+1)$-entry of $A_{-,-}$ which is taken to be $1$.  For a Type (11) square an additional, similar adjustment is made to the two columns of $B_{D}$ and $A_{-,-}$ that correspond to the sheets that meet at a cusp before reaching the bottom edge of the square.

\begin{remark}
Reeb chords of the form $\tilde{b}^X_{i,j}$ or $\tilde{c}_{i,j}$ do not appear as entries of these matrices, nor do Reeb chords of the form $a^{\pm,\pm}_{i,j}$ or $b^X_{i,j}$ with $i>j$.  For instance, if $e^2_\alpha$ has type (2), then both the $(k,k+1)$- and $(k+1,k)$-entries are $0$ in the matrices $A^{-,-}$ and $B_L$.
\end{remark}

\begin{theorem} \label{thm:SquareComp}
For a $2$-cell $e^2_\alpha$ of type (1)-(12),  in $(\lchA(e^2_\alpha), \partial)$ we have 
\begin{equation}\label{eq:SquareComp}
\partial C = A_{+,+} C + C A_{-,-} + (I+B_R)(I+B_D) + (I+B_U)(I+B_L)+ X.
\end{equation}
where all of the entries of the matrix $X$ belong to the ideal generated by the exceptional generators in $N(e^2_\alpha)$.
\end{theorem}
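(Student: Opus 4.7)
The plan is to compute $\partial c_{i,j}$ for each $c_{i,j}$ by enumerating rigid GFTs with positive puncture at $c_{i,j}$, localized to $N(e^2_\alpha)$ by Lemma \ref{lem:N}, and to show that the non-exceptional contributions are exactly the $(i,j)$-entry of the right-hand side of \eqref{eq:SquareComp}. Theorem \ref{thm:EkholmMain} reduces the calculation of $\partial c_{i,j}$ to a $\mathbb{Z}/2$-count of rigid GFTs starting at $c_{i,j}$, and Proposition \ref{prop:EY1SWFormula} (combined with the fact that $c_{i,j}$ has Morse index $2$) gives a linear relation between the number of outputs at $a$-, $b$-, and $c$-type Reeb chords, the number of $e$-vertices, and the numbers of $Y_1$- and $sw$-vertices. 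In particular, for GFTs all of whose output chords are non-exceptional the balance forces either exactly one $c$-output paired with exactly one $a$-output, or no $a$- or $c$-outputs at all (only $b$'s, plus possibly $e$-vertices or cusp-related corrections).

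With the enumeration organized by Case 1 (``has a $c$-output'') and Case 2 (``all outputs are $b$'s or $e$-vertices''), I will essentially follow the blueprint of Section \ref{sec:3.5}. For Case 1, the $c$-output must occur at a constant edge attached to a $Y_0$-vertex, and I will extend the $1$st-and-$3$rd Quadrant Lemma to account for crossing arcs and cusp edges using Property \ref{pr:1cmono}: the gradient $-\nabla F_{i',j'}$ points transversally through the lexicographic ``staircase'' at positions $\beta_{i',j'}$, so a branch that enters the relevant $(j,l)$-quadrant or $(h,i)$-quadrant can have descendant outputs only at $a^{+,+}$'s or $a^{-,-}$'s respectively (up to allowed $e$-vertex terminations on cusps). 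Combined with the lexicographic ordering in Property \ref{pr:Location2} and the stable/unstable manifold analysis of Proposition \ref{prop:bUS} (generalized to $c$'s), this pins down the generating trees contributing exactly $a^{+,+}_{i,m}c_{m,j} + c_{i,m}a^{-,-}_{m,j}$, i.e.\ the $(i,j)$-entry of $A_{+,+}C + CA_{-,-}$, modulo terms ending at exceptional generators (which absorb into $X$).

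For Case 2, I cannot expect the flow lines joining $c_{i,j}$ to the surrounding saddles $b^X_{i,j}$ to be the straight segments of the Type~(1) model, because crossings and cusps shift these trajectories. So instead of a direct geometric identification, I will give a topological / parity argument: consider PFTs with one input edge and only $b$-outputs emanating from the unstable manifold of $c_{i,j}$, and count their rigid completions modulo $2$ by showing that the answer depends only on the combinatorial data at $\partial N(e^2_\alpha)$ (the ordering of the $b^X$ along each edge and the identification of which sheets form each $b^X$). Because this combinatorial data matches that of the Type~(1) reference model, the $\mathbb{Z}/2$-count of such trees is invariant under deformation and equals the count in the model, namely $(I+B_U)(I+B_L) + (I+B_R)(I+B_D)$ together with the single-edge contributions $B_U+B_L+B_R+B_D$ (which are the $I\cdot B$ and $B\cdot I$ diagonal pieces of the products). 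All remaining rigid GFTs with at least one endpoint at a $\tilde{b}$, $\tilde{c}$, or other exceptional generator, or with an $e$-vertex or $Y_1$/$sw$ vertex that forces such an endpoint downstream, contribute to $X$ by the definition of the exceptional ideal.

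The main obstacle is the Case~2 topological/parity argument: I need a robust ``mod~$2$ invariance'' statement that lets me replace the complicated unstable manifolds of the $c_{i,j}$ in the true model by their straight-line analogues, while keeping track of the fact that exceptional generators in the interior of $N(e^2_\alpha)$ can in principle be hit. Making this precise requires (i) showing that any one-parameter family of ``almost'' rigid PFTs begins and ends at configurations differing by either a standard cancellation or a term ending in an exceptional generator, and (ii) handling the codimension-one square types (5), (6), (7), (8), (10), (11), (12) where two $b$-outputs share an edge or where cusp-transversality (Property \ref{pr:CuspTransversality}) is used to exclude stray $Y_1$- or $sw$-vertices. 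Once this deformation-invariance is established, the check that the model count gives precisely $(I+B_R)(I+B_D)+(I+B_U)(I+B_L)$ is exactly the computation done in Section~\ref{sec:3.5}.
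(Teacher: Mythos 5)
Your proposal follows essentially the same route as the paper: the differential is split according to whether a rigid tree has a $c$-output, the $c$-output case is handled by First and Third Quadrant Lemmas (Lemmas \ref{lem:1stQuad} and \ref{lem:3rdQuad}) yielding $A_{+,+}C + CA_{-,-}$ modulo exceptional terms, and the $b$-only case is reduced to a model computation by a mod-$2$ deformation-invariance argument. The ``main obstacle'' you flag is exactly what the paper resolves with the notion of a generalized $b$-manifold for a disk datum (Definition \ref{def:genbmfd}) and the invariance statement of Proposition \ref{prop:dAind}, whose proof checks a short list of isotopy moves and uses the Jacobi identity for commutators of elementary matrices to handle the triple-point move.
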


The proof of Theorem \ref{thm:SquareComp} appears at the conclusion of this section after relevant properties of gradients have been stated and several preliminary results about  GFTs in $N(e^2_\alpha)$ have been established.

\subsection{Properties of gradients above $2$-cells}

The following Properties \ref{pr:monotonicityI} and \ref{pr:monotonicityII} may be viewed as extensions of the monotonicity property of the $-\nabla F_{i,j}$ above $1$-cells, i.e. Property \ref{pr:1cmono}, to the interiors of $2$-cells.

For a given $2$-cell $e^2_\alpha$ with boundary edges $e^1_L, e^1_D, e^1_R, e^1_U,$
we label the four components of
$$\left\{ (x_1, x_2) \in [-1,1]^2 \,\, \left| \,\, |x_1| \ge \frac{1}{4} \, \mbox{and} \, |x_2| \ge \frac{1}{4} \right. \right\} \setminus \left(N(e^1_L) \cup N(e^1_D) \cup N(e^1_R) \cup N(e^1_U)\right)$$
as $C_{R,U}, \, C_{L, U}, \, C_{L,D}, \, C_{R, D}$ so that for $X \in \{L,R\}$ and $Y\in \{D, U\}$ the subset $C_{X,Y}$ shares some portion of its boundary with $N(e^1_X)$ and $N(e^1_Y)$.

\begin{property}[Monotonicity in corners] \label{pr:monotonicityI}
For $X \in \{L,R\}$ and $Y\in \{D, U\}$,  
suppose that $F_i > F_j$ in $C_{X,Y}$ and that $S_i$ and $S_j$ are both defined in all of $C_{X,Y}$ (i.e., neither sheet meets a cusp edge in $C_{X,Y}$).  Write $\nabla F_{i,j} = (\grad_{x_1}F_{i,j}, \grad_{x_2}F_{i,j})$.  

\begin{itemize}
\item  Suppose there is a Reeb chord in $\widehat{N}(e^1_X)$ with upper sheet $S_i$ and lower sheet $S_j$.  If the Reeb chord has the form $b^X_{i,j}$  (resp. $\tilde{b}^X_{i,j}$),  set $\alpha^X_{i,j} =\beta^X_{i,j}$ (resp. $\alpha^X_{i,j} = \tilde{\beta}_{i,j}^X$).  Then, 
\[
-\grad_{x_2} F_{i,j}(x_1,x_2) >0  \quad \mbox{ if $x_2 \geq \alpha^X_{i,j} + \e$, and}
\] 
\[
-\grad_{x_2} F_{i,j}(x_1,x_2) <0 \quad \mbox{ if $x_2 \leq \alpha^X_{i,j} - \e$.}
\]

\item Suppose there is no Reeb chord in $\widehat{N}(e^1_X)$ with upper sheet $S_i$ and lower sheet $S_j$.  Then,
\[
-\grad_{x_2} F_{i,j}(x_1,x_2) <0. 
\]
\end{itemize}

In a similar manner, the sign of $-\grad_{x_1} F_{i,j}(x_1,x_2)$ is specified in regions of $C_{X,Y}$ that are determined by the location of Reeb chords with upper sheet $S_i$ and lower sheet $S_j$ in $\widehat{N}(e^1_Y)$.  
\end{property}

See Figure \ref{fig:MonoCRU1} for an illustration of Property \ref{pr:monotonicityI} in $C_{R,U}$, and see Figure \ref{fig:MonoExample} for an example.

\begin{figure}

\labellist
\small
%\pinlabel $1/4$ [t] at 338 21
\pinlabel $C_{R,U}$ [t] at 368 -2
\pinlabel $C_{R,U}$ [t] at 56 -2
\pinlabel $b^U_{i,j}$ [b] at 88 182
\pinlabel $b^R_{i,j}$ [l] at 486 88
\pinlabel $c_{i,j}$ [r] at 78 88
\pinlabel $c_{i,j}$ [b] at 392 100
\endlabellist
\centerline{ \includegraphics[scale=.6]{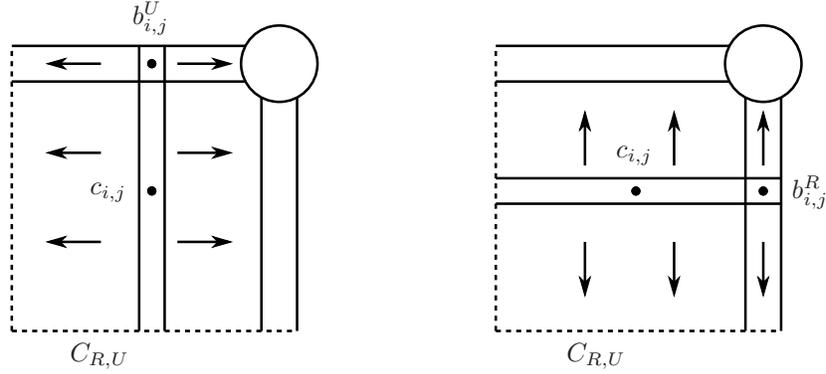} }

\quad 

%%\centerline{ \includegraphics[scale=.8]{images/SubdivideSq} } %Dan had already commented out this line

\caption{The direction of the $x_1$- and $x_2$-components of $-\nabla F_{i,j}$ in $C_{R,U}$ as determined by Property \ref{pr:monotonicityI}.}
\label{fig:MonoCRU1}
\end{figure}

\begin{remark}
The location of Reeb chords and signs of partial derivatives specified in Properties \ref{pr:Reeb2}-\ref{pr:monotonicityI} are explained by the following considerations. 
%Any of the corners $C_{X,Y}$ shares borders with a part of the neighborhoods of the $1$-cells $N(e^1_X)$ and $N(e^1_Y)$.  
Up to a $C^\infty$-small perturbation, the defining functions $F_{i}$ (and also the difference functions $F_{i,j}$) for sheets of $\tilde{L}$ above any of the corners $C_{X,Y}$ are sums 
\begin{equation} 
F_{i}(x_1,x_2) = F^Y_i(x_1) +F^X_i(x_2)
\end{equation}
where, up to additive constants, $F^Y_i$ and $F^X_i$ are the restriction of $F_i$ to the $1$-cells $e^1_Y$ and $e^1_X$.    Thus, we have the following:  
The Reeb chords in $C_{X,Y}$ are precisely at points $(x_1,x_2) \in C_{X,Y}$ such that $d_x F^Y_{i,j}(x_1)=0$ and $d_xF^X_{i,j}(x_2)=0$ where Reeb chords $b^Y_{i,j}$ and $b^X_{i,j}$ sit along the $1$-skeleton.   In $C_{X,Y}$, the horizontal line through $b^X_{i,j}$ and vertical line through $b^Y_{i,j}$ are flow lines of $-\nabla F_{i,j}$, and they divide $C_{X,Y}$ into regions where the signs of the components of $-\nabla F_{i,j}$ are uniform.  

The set up from Properties \ref{pr:Reeb2}-\ref{pr:monotonicityI} is just the portion of this picture that remains after an arbitrary suitably small perturbation.  See Section \ref{sec:Constructions} for details.

%Such $x_1$ and $x_2$ correspond precisely to the location of the $b$ Reeb chords along $e^1_Y$ and $e^1_X$, respectively.  When restricted to the $1$-skeleton they are at local maxima of the $F^X_{i,j}$ and $F^Y_{i,j}$, so (\ref{eq:remarkCxy}) explains the signs of partial derivatives of $F_{i,j}$ from Property \ref{pr:monotonicity1}.  

%The sign of the $-\partial_{x_1} F_{i,j}(x_1,x_2)$  (resp. $-\partial_{x_2} F_{i,j}(x_1,x_2)$ is as it appears above the projection of $(x_1,x_2)$ to corresponding points of $e^1_X$ (resp. $e^1_Y$).
\end{remark}

\begin{property}[Monotonicity in half spaces] \label{pr:monotonicityII}
Let $1 \leq i< j \leq n$, and write $\nabla F_{i,j} = (\grad_{x_1}F_{i,j}, \grad_{x_2}F_{i,j})$.  
  
\begin{itemize}
\item Suppose $S_i$ and $S_j$ do not cross above $e^1_U$ and at most one of them ends at a cusp edge above $e^1_U$.
Then, for $(x_1,x_2) \in \widehat{N}(e^2_\alpha)$ we have
\begin{equation} \label{eq:monoIIeq1}
-\grad_{x_2} F_{i,j}(x_1,x_2) <0,  \quad \mbox{  whenever  $x_2 =1/2$.}
\end{equation}
\item Similarly, if $S_i$ and $S_j$ do not cross above $e^1_R$ and at most one of them ends at a cusp edge above $e^1_R$.
Then, for $(x_1,x_2) \in \widehat{N}(e^2_\alpha)$ we have
\begin{equation} \label{eq:monoIIeq2}
-\grad_{x_1} F_{i,j}(x_1,x_2) <0,  \quad \mbox{  whenever  $x_1=1/2$.}
\end{equation}
\end{itemize}
\end{property}

See Figure \ref{fig:Mono2Example}.

\begin{figure}

\labellist
\small
\pinlabel $c_{i,j}$ [tr] at 332 348
\pinlabel $b^R_{i,j}$ [l] at 440 360
\pinlabel $b^U_{i,j}$ [b] at 345 439
\pinlabel $b^L_{i,j}$ [r] at 8 322
\pinlabel $b^D_{i,j}$ [t] at 345 10
\endlabellist
\centerline{ \includegraphics[scale=.6]{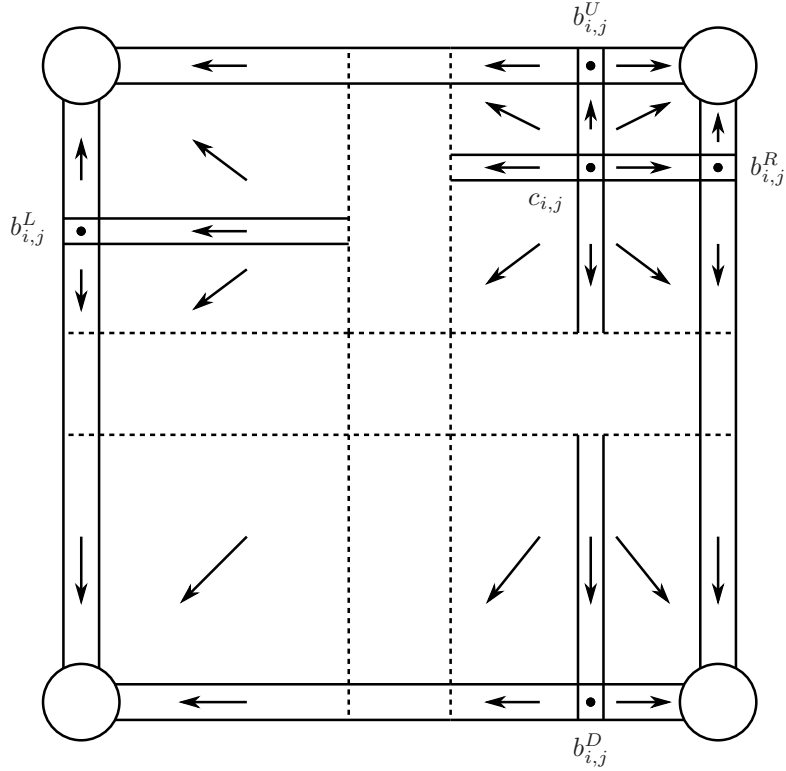} }

\quad 

\caption{ Signs of the components of $-\nabla F_{i,j}$ specified by Properties \ref{pr:1cmono} and \ref{pr:monotonicityI}.  A diagonal arrow indicates the sign of both components while a horizontal or vertical arrow indicates only one component.  No sign has been specified in regions without an arrow.  The dotted lines are $x_1= \pm 1/4$ and $x_2 =\pm 1/4$.  
In this example, $\beta^R_{i,j} < \beta^L_{i,j}$ as may happen, for instance, if the square has Type (2) with $i<k$ and $j = k+1$.}
\label{fig:MonoExample}
\end{figure}

\begin{figure}

\labellist
\small
\pinlabel $x_1=1/2$ [t] at 305 9
\pinlabel $x_2=1/2$ [r] at 9 305
\endlabellist
\centerline{ \includegraphics[scale=.4]{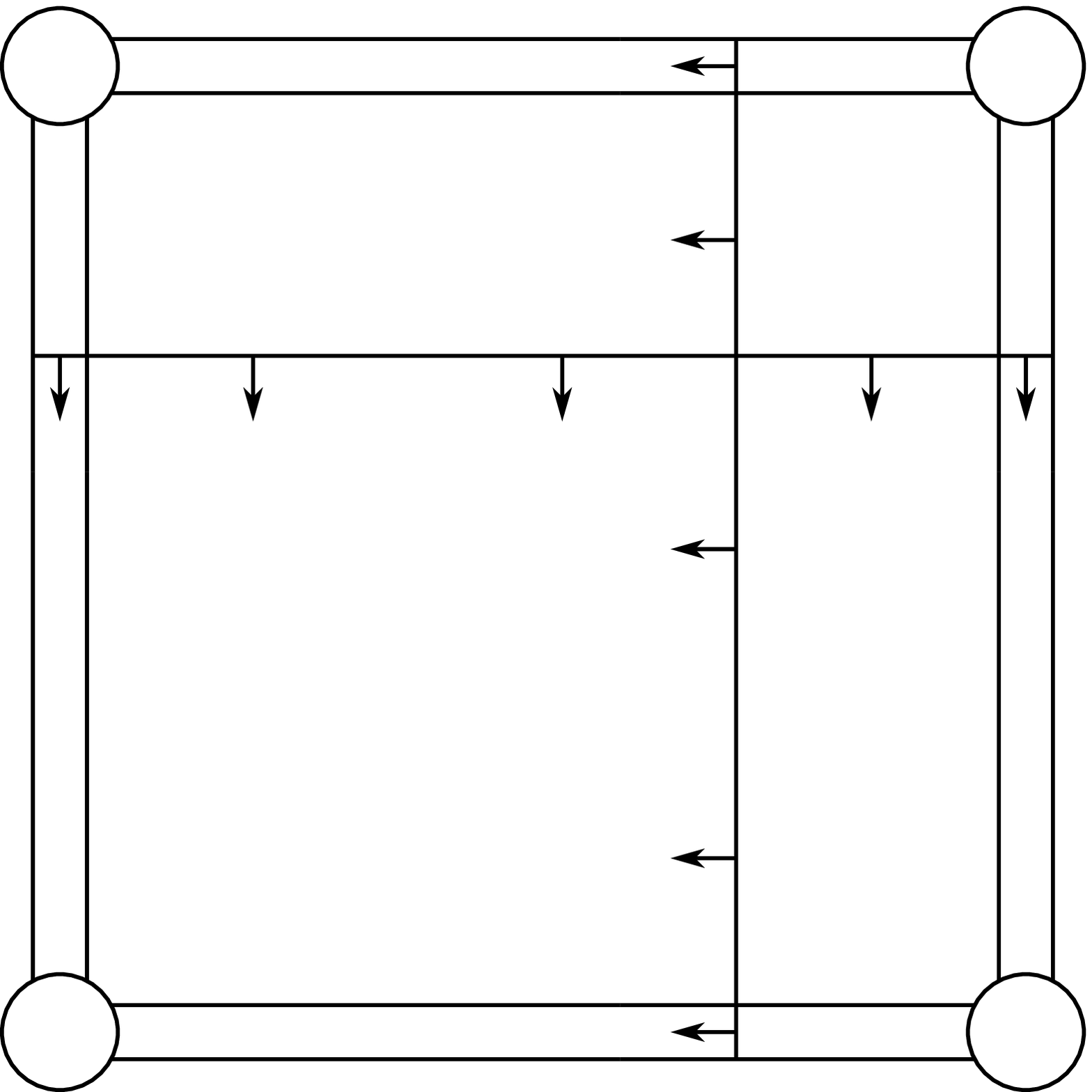} }

\quad 

\caption{ Components of $-\nabla F_{i,j}$ point down along $x_1 = 1/2$ and left along $x_2 =1/2$ (in most cases) as specified by Properties \ref{pr:monotonicityII} and \ref{pr:1cmono}.   }
\label{fig:Mono2Example}
\end{figure}

\begin{property} \label{pr:monotonicityIV}
There are line segments $B_1$ and $B_2$, such that the following hold.
\begin{itemize}
\item  The segment $B_1$ begins on $x_1 = 1/2$, ends on $x_1 = 3/4$ and is contained in the rectangle $[1/2,3/4]\times (1/4, 1/2)$.
\item  For all $i<j$, and $(x_1,x_2) \in B_1$ with $1/2 \leq x_1 \leq \beta^U_{i,j} -\e$,  $-\nabla F_{i,j}(x_1,x_2)$ is transverse to $B_1$ and points in to the region above $B_1$.
\item  The segment $B_2$ satisfies analogous conditions with all locations reflected across $x_1=x_2$.
\end{itemize}
\end{property}
See Figure \ref{fig:PABC}, below, where portions of the segments $B_1$ and $B_2$ are pictured.

\subsubsection{Transversality at cusps}
\label{sssec:TransCusp}

Recall from Section \ref{ssec:GFT} that if sheets $S_k$ and $S_{k+1}$ meet at a cusp, then a point along the $(k,k+1)$-cusp edge where $\nabla F_{i,k}$ or $\nabla F_{k+1,j}$ is tangent to the (base projection) of the cusp locus for some $S_i$ above $S_k$ or $S_j$ below $S_{k+1}$ is called an $(i,k)$- or $(k+1,j)$-switch point.  Note also that along the cusp locus $\nabla F_{i,k}=\nabla F_{i,k+1}$ and $\nabla F_{k,j}= \nabla F_{k+1,j}$.

\begin{property}[Cusp transversality] \label{pr:CuspTransversality}
For a square of type (1)-(11), suppose sheets $S_k, S_{k+1}$ form a cusp edge.  Then, at any point, $(x_1,x_2)$ where $S_i$ (resp. $S_j$) is above (resp. below) the 
cusp edge 
the vector fields  $-\nabla F_{i,k+1}$, $-\nabla F_{k, j}$, and $-\nabla F_{i,j}$ are transverse to the cusp locus and point towards the region with fewer sheets.  

In each Type (12) square, the above condition holds with the following exception:  With sheets labeled $S_1, \ldots, S_n$ as they appear above $(+1,+1)$ such that $S_k$ and $S_{k+1}$ meet at the cusp edge, there is a unique non-degenerate $(k+1,k+2)$-switch point, $P$, located with $x_2$-coordinate above the point $Q$  where  the crossing locus and the cusp locus intersect and below $x_2 = 1/2$.  Between $P$ and $Q$, $-\nabla F_{k+1,k+2}$ is transverse to the cusp locus and points towards the region with more sheets.
\end{property}

See Figure \ref{fig:CuspTrans12}.

\begin{figure}

\labellist
\small
%\pinlabel $1/4$ [t] at 338 21
\pinlabel $P$ [bl] at 64 134
\pinlabel $Q$ [r] at 50 65
\pinlabel $k+1,k+2$ [l] at 198 98
\pinlabel $k,k+2$ [l] at 198 34
\endlabellist
\centerline{ \includegraphics[scale=.6]{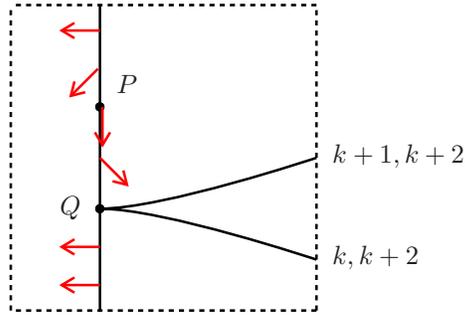} }

\caption{The unique $(k+1,k+2)$-switch point at $P$.    The vertical line denotes the $(k,k+1)$-cusp locus.  The upper (resp. lower) branch of the horizontal curve is the $(k+1,k+2)$-crossing locus (resp. the $(k,k+2)$-crossing locus). Above $Q$ the arrows denote $-\nabla F_{k+1,k+2}$, while below $Q$ the arrows denote $-\nabla F_{k+2,k+1}$. }
\label{fig:CuspTrans12}
\end{figure}

\subsubsection{Stable manifolds of the $b^X_{i,j}$}
\label{sssec:blines}

For $X \in \{L,D, R, U\}$, any Reeb chord $b^X_{i,j}$ or $\tilde{b}^X_{i,j}$ is a saddle point of $F_{i,j}$, and has  stable manifold, with respect to $-\nabla F_{i,j}$, consisting of the point itself and two non-constant flow lines that limit to the  point as $t \rightarrow +\infty$.   Moreover, Proposition \ref{prop:bUS} shows that precisely one of these flow lines  
is contained entirely in $N(e^2_\alpha)$.  (The other flow line is contained in the other $2$-cell that borders $e^1_X$.)    We refer to this non-constant flow line as the {\bf $b^X_{i,j}$-line} or the {\bf $\tilde{b}^X_{i,j}$-line} in $N(e^2_\alpha)$.

\begin{figure}

\labellist
\small
%\pinlabel $1/4$ [t] at 338 21
\pinlabel $b^L_{i,j}$ [r] at 10 64
\pinlabel $c_{i,j}$ [l] at 358 104
\endlabellist
\centerline{ \includegraphics[scale=.6]{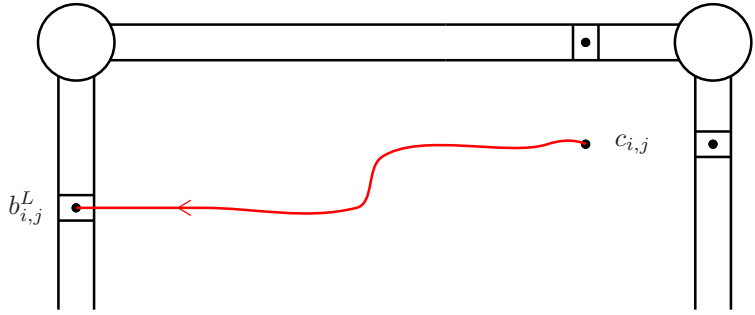} }

\caption{ The $b^L_{i,j}$-line.  }
\label{fig:Bfolds3}
\end{figure}

\begin{lemma}[Limits of  $b$-lines] \label{lem:blimits}  For squares of type (1)-(12),
as $t \rightarrow -\infty,$ the $b^X_{i,j}$-line limits to $c_{i,j}$ (resp. $\tilde{c}_{i,j}$) if $i<j$ (resp. if $i>j$),  while (if it exists)   
 the $\tilde{b}^X_{i,j}$-line  limits to $c_{i,j}$ (resp. $\tilde{c}_{i,j}$) if $i<j$ (resp. if $i>j$).
\end{lemma}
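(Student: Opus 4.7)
The plan is to argue by the standard dynamics of a negative gradient flow on a compact region, and then use the monotonicity properties from Section \ref{sec:Comp2Cells} to pin down which local maximum is the $\alpha$-limit of the $b$-line.

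First, I would establish convergence. By Lemma \ref{lem:N}, the $b^X_{i,j}$-line is contained in the compact set $N(e^2_\alpha)$, and along a flow line of $-\nabla F_{i,j}$ the difference function $F_{i,j}$ strictly increases as $t$ decreases. One must also verify that the $b$-line does not limit to the cusp or crossing locus of $\tilde{L}$ as $t \to -\infty$: the crossing locus is excluded because $F_{i,j}$ would have to approach $0$ there while the flow is increasing $F_{i,j}$ away from the saddle value $F_{i,j}(b^X_{i,j}) > 0$, and the cusp locus is excluded by Property \ref{pr:CuspTransversality}, which shows $-\nabla F_{i,j}$ crosses the cusp transversally into the region with fewer sheets, so in backward time the trajectory is pushed away from the cusp. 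A standard compactness argument then shows that the $\alpha$-limit set is a nonempty, connected, invariant set consisting of critical points of $F_{i,j}$ with value strictly larger than $F_{i,j}(b^X_{i,j})$.

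Next, I would argue the limit is a local maximum. Since the $b^X_{i,j}$-line is an embedded flow line in the one-dimensional stable manifold of the non-degenerate saddle $b^X_{i,j}$, it approaches its $\alpha$-limit from a single well-defined direction, and by $1$-regularity every critical point of the various $F_{i,j}$ is Morse. A non-constant flow line landing on a saddle in backward time would force a non-generic connection between two saddles of the same index, which is ruled out; a flow line cannot limit on a minimum in backward time. Hence the $\alpha$-limit is a single local maximum. By Property \ref{pr:Reeb2}, the only local maxima of $F_{i,j}$ inside $\widehat{N}(e^2_\alpha)$ are $c_{i,j}$ (when $i<j$) and $\tilde{c}_{i,j}$ (when $i<j$, one considers instead $\tilde{c}_{j,i}$); an analogous statement holds inside $N(e^1_Y)$ via Proposition \ref{prop:bUS}, but the local maxima in a $1$-cell neighborhood are ruled out because Property \ref{pr:Reeb1} lists only saddles there.

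Finally, I would distinguish $c_{i,j}$ from $\tilde{c}_{i,j}$ by using the monotonicity Properties \ref{pr:1cmono}, \ref{pr:monotonicityI}, \ref{pr:monotonicityII}, together with the precise location data in Properties \ref{pr:Location1} and \ref{pr:Location2}. Concretely, when $i<j$ with respect to the labeling above $(+1,+1)$, the region where $F_i > F_j$ that contains $b^X_{i,j}$ is the same component as that containing $c_{i,j}$: monotonicity forces the $b$-line, flowing backward, into the rectangle $[\beta^U_{i,j}-\e,\beta^U_{i,j}+\e] \times [\beta^R_{i,j}-\e,\beta^R_{i,j}+\e]$, which contains $c_{i,j}$ by Property \ref{pr:Reeb2}. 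Symmetrically, when $i>j$, the saddle $b^X_{i,j}$ lies in a region obtained after crossing sheets $i$ and $j$, so the $F_i > F_j$ region is the one containing $\tilde{c}_{i,j}$, and its location is pinned down by Property \ref{pr:Location2}. The analogous argument applies verbatim to the $\tilde{b}^X_{i,j}$-line, replacing $\beta^X$ with $\tilde{\beta}^X$. The main obstacle in this step is bookkeeping: verifying square-by-square that the corner and half-space monotonicity sign data indeed funnel the backward trajectory into the correct rectangle without letting it escape through the crossing locus; this is where Property \ref{pr:monotonicityII} (for walking inward from $x_i = 1/2$) and the lexicographic staircase ordering \eqref{eq:StaircaseAxiom} do the essential work.
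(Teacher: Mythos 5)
Your overall skeleton matches the paper's: follow the $b$-line backward, note that $F_{i,j}$ increases as $t$ decreases so the trajectory cannot terminate at a crossing, and conclude it must limit to a critical point, which is then identified. But there is one genuine gap and one place where you do far more work than needed.

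The gap is in your dismissal of termination at the cusp locus. You cite Property \ref{pr:CuspTransversality}, but that property contains an explicit exception, and the exception is precisely the delicate case. In a Type (12) square there is a $(k+1,k+2)$-switch point $P$, and between $P$ and the point $Q$ where the crossing locus meets the cusp locus, $-\nabla F_{k+1,k+2}$ points toward the region with \emph{more} sheets; so for the pair $(k+1,k+2)$ in a Type (12) square, backward termination at the cusp edge is not excluded by transversality, and Type (12) is within the scope of the lemma. The paper closes this case separately: the only Reeb chords with upper sheet $S_{k+1}$ and lower sheet $S_{k+2}$ are $b^U_{k+1,k+2}$ and $b^R_{k+1,k+2}$, whose stable manifolds enter $\widehat{N}(e^2_\alpha)$ in the corner $\{x_1\geq 1/4,\ x_2\geq 1/4\}$ by Proposition \ref{prop:bUS} and are trapped there by Property \ref{pr:monotonicityI}, hence can never reach the cusp locus. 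Without an argument of this kind your proof is incomplete.

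Your final step is also over-engineered. Once the backward trajectory is known to limit to a critical point of $F_{i,j}$ in $\widehat{N}(e^2_\alpha)$ with $F_{i,j}>0$ (and by Property \ref{pr:1cells} it cannot re-enter any edge or vertex neighborhood once it has left, so the limit does lie in $\widehat{N}(e^2_\alpha)$), Property \ref{pr:Reeb2} already says there is exactly one such critical point for the ordered pair with upper sheet $S_i$ and lower sheet $S_j$: it is $c_{i,j}$ if $i<j$ and $\tilde{c}_{i,j}$ if $i>j$. The $c$ versus $\tilde{c}$ dichotomy is therefore decided by the subscript convention alone, and the square-by-square funneling into $[\beta^U_{i,j}-\e,\beta^U_{i,j}+\e]\times[\beta^R_{i,j}-\e,\beta^R_{i,j}+\e]$ that you flag as the main obstacle can be skipped entirely. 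For the same reason the saddle-to-saddle genericity discussion is unnecessary: $\widehat{N}(e^2_\alpha)$ contains no saddles of $F_{i,j}$ at all.
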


See Figure \ref{fig:Bfolds3}.
 
\begin{proof}
Follow the flow line, $\gamma$, as $t$ decreases towards $-\infty$.  Once $\gamma$ enters $\widehat{N}(e^2_\alpha)$ it cannot leave (by Property \ref{pr:1cells}).  Moreover, as $t\rightarrow -\infty$, $\gamma$ must either limit to a critical point of $F_{i,j}$, or terminate at a cusp edge.  (Since $F_{i,j}$ increases as $t$ decreases, the flow line cannot terminate at a crossing.)  The only critical point of $F_{i,j}$ in $\widehat{N}(e^2_\alpha)$ with $F_{i,j} >0$ is $c_{i,j}$ (resp. $\tilde{c}_{i,j}$), so the result follows once we show that $\gamma$ cannot terminate at a cusp edge.  

Suppose $\gamma$ does terminate at a point $x$ of the cusp edge.  
Then
  $-\nabla F_{i,j}(x)$ must point to the side of the cusp locus where more sheets exist, and one of $i$ or $j$ must be a cusp sheet.  [Near a cusp locus, $-\nabla F_{i,j}$ is only defined on the side where $S_i$ and $S_j$ both exist.  Therefore if $\gamma$ runs into a cusp locus involving $S_i$ or $S_j$ it must approach this cusp locus from the side where $S_i$ and $S_j$ both exist, and since we follow $\gamma$ in the direction where $t$ decreases it follows that $-\nabla F_{i,j}(x)$ points to the side of the cusp locus where $S_i$ and $S_j$ both exist.]  By Property \ref{pr:CuspTransversality}, the square is then of type (12) with $i = k+1$ and $j=k+2$, so $\gamma$ belongs to either the stable manifold of $b^U_{k+1,k+2}$ or $b^R_{k+1,k+2}$.  [These are the only Reeb chords with upper and lower sheets $S_{k+1}$ and $S_{k+2}$ respectively.]  However, by Proposition \ref{prop:bUS}, these stable manifolds enter $\widehat{H}(e^2_\alpha)$ in the upper right corner where $x_1 \geq 1/4$ and $x_2 \geq 1/4$.  Then,  Property \ref{pr:monotonicityI} shows that the stable manifolds of $b^U_{k+1,k+2}$ or $b^R_{k+1,k+2}$ must remain in the region where $x_1 \geq 1/4$ and $x_2 \geq 1/4$, so they cannot reach the cusp locus. 
\end{proof}

\subsubsection{Non-existence of switches and $Y_1$ vertices} \label{ssec:preliminary}

\begin{proposition}
\label{prop:NoY1NoSW}
Consider any 2-cell of $e^2_\alpha$ of type (1)-(12).
No GFT in $N(e^2_\alpha)$ has a $Y_1$- or $sw$-vertex.
\end{proposition}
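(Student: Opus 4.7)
The plan is to invoke Property \ref{pr:CuspTransversality}, which constrains the gradients $-\nabla F_{i,j}$, $-\nabla F_{i,k+1}$, $-\nabla F_{k,j}$ at any point of a cusp locus lying in $N(e^2_\alpha)$, and to read off a contradiction directly from the defining conditions for $Y_1$- and $sw$-vertices given in Section \ref{ssec:GFT}. Recall that a $Y_1$-vertex requires all three of $-\nabla F_{i,j}$, $-\nabla F_{i,k+1}$, $-\nabla F_{k,j}$ to point into the half-plane where the cusp sheets $S_k,S_{k+1}$ exist, while an $sw$-vertex (variant (i) or (ii)) requires the corresponding difference-function gradient to be \emph{tangent} to the cusp locus.

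For 2-cells of type (1)--(11), Property \ref{pr:CuspTransversality} asserts that $-\nabla F_{i,k+1}$, $-\nabla F_{k,j}$, and $-\nabla F_{i,j}$ are all \emph{transverse} to the cusp locus and point toward the region with \emph{fewer} sheets. Transversality immediately contradicts the tangency condition required by either variant of an $sw$-vertex. Pointing toward fewer sheets is the opposite of what a $Y_1$-vertex requires, so $Y_1$-vertices are also excluded. This dispatches types (1)--(11) in one stroke.

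For 2-cells of type (12), the same argument applies at every point of the cusp locus \emph{except} the unique non-degenerate $(k+1,k+2)$-switch point $P$ singled out by Property \ref{pr:CuspTransversality}. At $P$ only $-\nabla F_{k+1,k+2}$ is tangent, while the gradients $-\nabla F_{i,k+1}$ for any other sheet $S_i$ above the cusp are still transverse and point away from the cusp-sheet side. Hence a $Y_1$-vertex at $P$ is still ruled out (it would need a second sheet $S_i$ above the cusp with $-\nabla F_{i,k+1}$ pointing inward). The only surviving configuration to exclude is an $sw$-vertex of variant (ii) at $P$ with $j = k+2$: an incoming $(k+1,k+2)$-flow line and an outgoing $(k,k+2)$-flow line at $P$.

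To rule this out I plan to track the unique $(k+1,k+2)$-trajectory that arrives tangentially at $P$ backward in $t$ and show that it cannot occur as an edge of a GFT inside $N(e^2_\alpha)$. The non-degeneracy of the tangency at $P$ uniquely determines this trajectory near $P$. Combining the monotonicity statements of Properties \ref{pr:1cmono}, \ref{pr:monotonicityI}, and \ref{pr:monotonicityII} with the prescribed locations of the $(k+1,k+2)$-type Reeb chords furnished by Properties \ref{pr:Reeb2} and \ref{pr:Location2}, I will argue that as the trajectory is continued backward it cannot terminate at any admissible source (a Reeb chord, a $Y_0$-vertex sitting on the stable manifold of some $b$-saddle, or an $e$-vertex on the cusp), nor can it remain inside $N(e^2_\alpha)$. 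This local analysis at $P$ is the main obstacle; types (1)--(11) are essentially immediate from Property \ref{pr:CuspTransversality}.
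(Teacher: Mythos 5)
Your handling of types (1)--(11), and of the $Y_1$-exclusion in the Type (12) square, is correct and matches the paper: Property \ref{pr:CuspTransversality} gives transversality of $-\nabla F_{i,k+1}$ and $-\nabla F_{k,j}$ (killing both variants of $sw$-vertex) and the wrong direction for $-\nabla F_{i,j}$ (killing $Y_1$'s).

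The gap is in your plan for the $sw$-vertex at the $(k+1,k+2)$-switch point $P$ of the Type (12) square. You propose to trace the \emph{incoming} $(k+1,k+2)$-flow line backward and show it cannot arise inside a GFT. But that edge does arise: traced backward, the trajectory ending tangentially at $P$ increases $F_{k+1,k+2}$, stays in $N(e^2_\alpha)$, and limits to the Reeb chord $c_{k+1,k+2}$ (a nondegenerate local maximum of $F_{k+1,k+2}$); this is exactly the ``switch flow line'' phenomenon that the paper establishes for the geometrically identical configuration in the Type (13) square (Lemma \ref{lem:k1k2switchflow}), where such switch vertices genuinely do occur in rigid GFTs. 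So no contradiction can be extracted from the incoming edge, and the monotonicity properties you cite will not rule it out.

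The obstruction lives on the \emph{outgoing} side. The outgoing edge at the switch is a $(k,k+2)$-flow line $\gamma$ starting at $P$. By Property \ref{pr:monotonicityI} it cannot enter the region $T=\{x_1\geq 1/2,\ x_2\geq 1/2\}$ where the only positive $(k,k+2)$ Reeb chord lives, and $S_k$ and $S_{k+2}$ do not meet at a cusp, so $\gamma$ can neither end at a negative puncture nor at an $e$-vertex; it can only end at a $Y_0$ splitting into a $(k,k+1)$- and a $(k+1,k+2)$-flow, and the $(k+1,k+2)$-branch in turn has no admissible ending (no reachable critical point, no $(k+1,k+2)$-cusp edge, no intermediate sheet for a further $Y_0$, and it cannot return to the switch point since the sheet difference strictly decreases along a PFT). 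This is what distinguishes Type (12) from Type (13): in the swallowtail square the outgoing flow can terminate at an $e$-vertex on the $(k,k+2)$-cusp locus, while in Type (12) there is no such cusp and the tree cannot be completed. You should redirect your local analysis at $P$ from the incoming to the outgoing edge.
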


\begin{proof}
The image of a $Y_1$-vertex must be at a point on the cusp locus.  Moreover, at this image there there must be sheets $S_{i}$ and $S_j$ that respectively lie above and below the cusp edge and have the property that $-\nabla F_{i,j}$ points into the side of the cusp locus where the number of sheets increases by $2$.  Such a point cannot exist by Property \ref{pr:CuspTransversality}.

Property \ref{pr:CuspTransversality} also states $N(e^2_\alpha)$ has a switch point only when $e^2_\alpha$ is a Type (12) square.  Then, there is a unique $(k+1,k+2)$ switch point with $x_2$-coordinate above the intersection of the  crossing and cusp loci.  We show that a switch cannot occur at this point in any PFT.  If this were to happen, the outgoing edge of the switch would be a $(k,k+2)$-flow line, $\gamma$, that begins at the switch point.  By Property \ref{pr:monotonicityI}, $\gamma$ cannot cross into the region $T$ where $x_1 \geq 1/2$ and $x_2 \geq 1/2$.  There are no critical points of $F_{k,k+2}$ in the part of $N(e^2_\alpha)$  with $F_{k,k+2} >0$ outside of $T$, and $S_{k}$ and $S_{k+2}$ do not meet at a cusp edge.  Thus, if it did not end at an internal vertex, $\gamma$ would  only be able terminate at the cusp or crossing locus, and would not be part of a valid PFT.  Now, if $\gamma$ ends at an internal vertex it can only be a $Y_0$ that occurs above the $(k+1,k+2)$-crossing locus.  
 The two outgoing branches of the $Y_0$ will be a $(k,k+1)$-flow line and a $(k+1,k+2)$-flow line.  However, the $(k+1,k+2)$-branch cannot be completed to a valid PFT.  Indeed, Property \ref{pr:monotonicityI} applies to show the flowline stays in the complement of $T$ where there are no critical points of $F_{k+1,k+2}$ with $F_{k+1,k+2}>0$, and there is no $(k+1,k+2)$-cusp edge.  Thus, if it were part of a PFT, then this edge would have to end with an internal vertex.  However, $S_{k+1}$ and $S_{k+2}$ have no sheets between them in the region where $F_{k+1,k+2} >0$, so this edge cannot end at a $Y_0$- or $Y_1$-vertex.  Moreover, the edge cannot end at the $(k+1,k+2)$-switch point, since the difference between $z$-coordinates of sheets strictly decreases along all non-constant edges of a PFT.
\end{proof}

\begin{proposition}  \label{prop:ACE112}
For a square $e^2_\alpha$ of type (1)-(12), any GFT $\Gamma$ starting at one of the generators $c_{i,j}$ or $\tilde{c}_{i,j}$ is rigid if and only if 
\[
A-C+E = 0
\]
where $A$ and $C$ denote the number of endpoints at generators of the form $a^{\pm,\pm}_{i,j}$ and $c_{i,j}$ or $\tilde{c}_{i,j}$ respectively, and $E$ denotes the number of endpoints at $e$-vertices.
\end{proposition}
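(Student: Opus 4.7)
The proof will be an essentially mechanical consequence of the dimension formula underlying Proposition \ref{prop:EY1SWFormula} together with Proposition \ref{prop:NoY1NoSW}. My plan is to extract the same dimension identity used there, but without specializing $\mathit{fdim}(\Gamma)$ to zero, and then substitute the explicit Morse indices of our $a$-, $b$-, and $c$-type Reeb chords.

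First I would revisit the formula
\[
\dim(\Gamma) = (n-3) + \sum_{p \in P(\Gamma)}(I(p)-(n-1)) - \sum_{q \in Q(\Gamma)}(I(q)-1) + \sum_{r\in R(\Gamma)} \mu(r)
\]
from \cite[Definition 3.4]{Ekholm07} as it was applied in the proof of Proposition \ref{prop:EY1SWFormula}. Carrying out the same substitutions ($n=2$, one positive puncture, $\sum \mu(r) = E - Y_1 - SW$) without imposing rigidity yields
\[
\mathit{fdim}(\Gamma) = \bigl(\mathrm{Ind}(a) - 2\bigr) + \sum_{i=1}^m \bigl(1 - \mathrm{Ind}(b_i)\bigr) + E - Y_1 - SW
\]
for any GFT $\Gamma$ with positive puncture at $a$ and negative punctures at $b_1,\dots,b_m$.

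Next I would plug in the Morse indices dictated by Properties \ref{pr:Reeb0}, \ref{pr:Reeb1}, and \ref{pr:Reeb2}. Since $c_{i,j}$ and $\tilde{c}_{i,j}$ are non-degenerate local maxima of their respective difference functions, $\mathrm{Ind}(a) = 2$ and the first term vanishes. By Lemma \ref{lem:N} every negative puncture of $\Gamma$ lies in $N(e^2_\alpha)$, and the Reeb chords there split into three classes: the $a$-generators $a^{\pm,\pm}_{i,j}$ (local minima, index $0$, contributing $+1$ each); the $b$-generators $b^X_{i,j}$ and $\tilde{b}^X_{i,j}$ (non-degenerate saddles, index $1$, contributing $0$); and the $c$-generators $c_{i,j}$ and $\tilde{c}_{i,j}$ (local maxima, index $2$, contributing $-1$). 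Therefore $\sum_{i=1}^m(1 - \mathrm{Ind}(b_i)) = A - C$.

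To finish, I would invoke Proposition \ref{prop:NoY1NoSW}: for squares of type (1)-(12) no GFT in $N(e^2_\alpha)$ has a $Y_1$- or $sw$-vertex, so $Y_1 = SW = 0$ and $\mathit{fdim}(\Gamma) = A - C + E$. Since $(\tilde L, g)$ is $1$-regular by Theorem \ref{thm:PropertiesofLtilde}, rigidity is by definition $\mathit{fdim}(\Gamma) = 0$, which gives the stated equivalence. There is no real obstacle here; the only point requiring any attention is that Proposition \ref{prop:NoY1NoSW} is applied to arbitrary GFTs (not merely rigid ones), which is legitimate since its statement carries no rigidity hypothesis, and that the derivation in the proof of Proposition \ref{prop:EY1SWFormula} is simply a rewriting of $\mathit{fdim}$ and not a consequence of $\mathit{fdim} = 0$.
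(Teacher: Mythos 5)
Your argument is correct and matches the paper's proof, which simply cites Propositions \ref{prop:NoY1NoSW} and \ref{prop:EY1SWFormula}; you have just unpacked those two references explicitly. Your care in noting that the dimension identity is a rewriting of $\mathit{fdim}(\Gamma)$ valid for all GFTs (not only rigid ones) is exactly the point needed to get the ``if and only if,'' and the index substitutions ($\mathrm{Ind}=0,1,2$ for $a$-, $b$-, $c$-chords) are the intended ones.
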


\begin{proof}
This follows from Proposition \ref{prop:NoY1NoSW} and Proposition \ref{prop:EY1SWFormula}.
\end{proof}

\subsection{Rigid GFTs with at least one $c_{i,j}$ output}

The computation of $\partial c_{i,j}$, as stated in Theorem \ref{thm:SquareComp}, is accomplished in two parts.  In this section, we verify those terms in $\partial C$ that correspond to rigid GFTs with at least one output at a $c$ Reeb chord.  Then, in \ref{ssec:112btrees} we identify the remaining terms in $\partial C$.

For $1 \leq i < j\leq n$, we introduce the notation
\[
\partial_c c_{i,j} = \sum_{\Gamma} w(\Gamma)
\]
where the sum is over those rigid GFTs, $\Gamma$, with an input at $c_{i,j}$ and having at least one output vertex at a Reeb chord of the form $c_{r,s}$ or $\tilde{c}_{r,s}$.

\begin{theorem} \label{thm:112ctrees}
For a $2$-cell $e^2_\alpha$ of type (1)-(12), and for any $1 \leq i < j \leq n$, we have
\begin{equation} \label{eq:112ctrees}
\partial_c C = A^{+,+} C + C A^{-,-} + X_1
%\sum_{i<m<j} a^{+,+}_{i,m} c_{m,j} + \sum_{i<m<j} c_{i,m} a^{+,+}_{m,j} 
\end{equation}
where the matrices $C, A^{+,+}$, and $A^{-,-}$ are as in Theorem \ref{thm:SquareComp}, and $X_1$ is a matrix whose entries belong to the $2$-sided ideal generated by exceptional generators in $N(e^2_\alpha)$.
\end{theorem}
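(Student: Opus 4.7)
The plan is to carry out the argument sketched in the blueprint of Section \ref{sec:3.5} for the Type (1) square in the more general setting of squares of type (1)--(12). The main idea is that any rigid GFT from some $c_{i,j}$ with at least one non-exceptional $c$-type output has exactly one $Y_0$-vertex, producing a contribution of the form $a^{+,+}_{i,m}c_{m,j}$ or $c_{i,m}a^{-,-}_{m,j}$; all remaining rigid GFTs counted by $\partial_c C$ must involve at least one exceptional generator in their output word, and hence their contributions lie in $X_1$.

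First I would establish a generalized $1$st and $3$rd Quadrant Lemma. For a PFT whose initial edge is an $(i,j)$-flow line passing through the upper-right region $\{x_1 \geq \beta^U_{j,l} + \e, \, x_2 \geq \beta^R_{j,l} + \e\}$ for some $j<l$, Property \ref{pr:monotonicityI} together with the description of Reeb chords in Properties \ref{pr:Reeb2}--\ref{pr:Location2} forces all subsequent edges to remain in the corner $\{x_1,x_2 \geq 1/4\}$, where the only non-exceptional Reeb chords are $c$-chords and $a^{+,+}$-chords. A symmetric statement holds in the lower-left region for $a^{-,-}$-chords, with attention to cusp edges via Property \ref{pr:CuspTransversality} and Lemma \ref{lem:blimits} to rule out unwanted $e$-vertex outputs.

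Next I would analyze a rigid GFT $\Gamma$ from $c_{i,j}$ having a non-exceptional $c_{m,l}$ output. Since $c_{m,l}$ is a local maximum of $F_{m,l}$, it can appear as an output only via a $Y_0$-vertex $v$ with image at $c_{m,l}$ whose incoming edge is some $(i',j')$-flow line and whose two outgoing edges are the constant trajectory at $c_{m,l}$ and a non-constant flow line into the appropriate quadrant. By Proposition \ref{prop:ACE112} combined with the Quadrant Lemma, modulo exceptional generators the non-constant branch must terminate at a single $a^{+,+}$ or $a^{-,-}$ chord. The ordering convention from Section \ref{sec:GFTPFT} identifies the two resulting words as $a^{+,+}_{i',m}c_{m,j'}$ and $c_{i',m}a^{-,-}_{m,j'}$. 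I would then show that the incoming edge at $v$ must originate directly at the positive puncture $c_{i,j}$, so that $(i',j')=(i,j)$: otherwise it would begin at an earlier $Y_0$-vertex $y$ whose image, by Property \ref{pr:Location1}'s lexicographic ordering of the $\beta^U$ and $\beta^R$ coordinates, is confined to a narrow band; the other outgoing edge of $y$ would then enter a quadrant region, forcing its subtree to have only $a^{\pm,\pm}$ outputs and violating the bookkeeping of Proposition \ref{prop:ACE112}. Summing the contributions of both tree shapes over all intermediate indices $m$ yields exactly $A^{+,+}C + CA^{-,-}$.

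The main obstacles I anticipate are: first, formulating the Quadrant Lemma so that its conclusions are robust under the $\e$-level uncertainty in the positions of Reeb chords and gradient trajectories (which, unlike in the Type (1) toy model, are not pinned down by explicit formulas); second, handling the twelve square types uniformly, especially types (9)--(12), where cusp edges allow $e$-vertex outputs that must be excluded via Property \ref{pr:CuspTransversality}; and third, verifying that every tree with outputs lying outside the two identified shapes necessarily involves at least one exceptional generator, either because an output itself has the form $\tilde c_{j,i}$ or $\tilde b^X_{j,i}$, or because the presence of a crossing arc flags a seemingly ``normal'' output as exceptional per Definition \ref{def:ex-gen}.
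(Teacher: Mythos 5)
Your proposal follows essentially the same route as the paper: the First and Third Quadrant Lemmas (Lemmas \ref{lem:1stQuad} and \ref{lem:3rdQuad}), the count from Proposition \ref{prop:ACE112} to show that the non-constant branch below a $c$-puncture is a single edge ending at one $a$-chord or $e$-vertex, and a confinement argument (the paper uses the region $\mathit{Sq}(c_{i,m},c_{i,j})$, along whose boundary $-\nabla F_{i,j}$ points outward) to force the edge above the puncture to run straight from $c_{i,j}$.

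One point needs correcting: you propose to ``rule out'' or ``exclude'' $e$-vertex outputs at cusp edges. In fact the Third Quadrant Lemma must \emph{allow} them. When sheets $S_m$ and $S_j$ meet at a cusp in the square, the $(m,j)$-flow line issuing from the puncture at $c_{i,m}$ terminates at an $e$-vertex, and the resulting rigid tree contributes $c_{i,m}\cdot 1$ to $\partial_c c_{i,j}$; these contributions are precisely what produce the $1$ in the $(k,k+1)$-entry of $A_{-,-}$ for squares of type (9)--(12) in the matrix conventions of Theorem \ref{thm:SquareComp}. If all $e$-vertex outputs were excluded, the right-hand side of (\ref{eq:112ctrees}) would be missing these terms and the identity would fail for those square types. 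The correct dichotomy, as in Lemmas \ref{lem:3rdQuad} and \ref{lem:1412}, is that every PFT starting in the thickened third quadrant either has all endpoints at $a^{-,-}$ chords and $e$-vertices, or has at least one endpoint at an exceptional generator; the $0$'s and $1$'s in $A_{-,-}$ are then chosen exactly to absorb the crossing/cusp cases.
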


The proof of Theorem \ref{thm:112ctrees} is given below after the preliminary Lemmas \ref{lem:1stQuad} and \ref{lem:3rdQuad}.

\subsubsection{The First and Third Quadrant Lemmas}
\label{sssec:QuadInv}

Recall that we have coordinates $\widetilde{N}(e^2_\alpha) \subset [-1-1/32,1+1/32] \times [-1-1/32,1+1/32]$ where $\widetilde{N}(e^2_\alpha) \subset N(e^2_\alpha)$ is obtained by removing interiors of the $N(e^0_{\pm,\pm})$.  
Now, take the full upper right corner of $N(e^2_\alpha)$,  
\[
\{(x_1,x_2) \in \widetilde{N}(e^2_\alpha) \, \left| \,  x_1\geq 1/4, \,\, x_2 \geq 1/4  \}\right. \bigcup N(e^0_{+,+}),
\]
and remove the vertical and horizontal bands 
\[
(\beta^U_{i,j} - \e,\beta^U_{i,j} + \e ) \times [1/4,1+1/32] \quad \mbox{ and } \quad [1/4,1+1/32] \times (\beta^R_{i,j} - \e,\beta^R_{i,j} + \e ).
\]
This results in four regions that we call the {\bf $(i,j)$-quadrants} and denote by $Q1(i,j), Q2(i,j), Q3(i,j), Q4(i,j)$ where 
\begin{align*}
Q1(i,j) \, =\, \{ (x_1,x_2)  \in \widetilde{N}(e^2_\alpha)  \, \left| \, x_1 \geq \beta^U_{i,j} +\e, \,\, 
 x_2 \geq \beta^R_{i,j}+\e\}\right. \bigcup N(e^0_{+,+}) 
\end{align*} 
and the numeration proceeds in a counter-clockwise manner.    In addition, let $N_\epsilon Q1(i,j)$ and $N_{\epsilon} Q3(i,j)$ denote thickened versions of $Q1(i,j)$ and $Q3(i,j)$ defined by
\[
N_\epsilon Q1(i,j) \, =\, \{ (x_1,x_2)  \in \widetilde{N}(e^2_\alpha)  \, \left| \, x_1 \geq \beta^U_{i,j} -\e, \,\, 
 x_2 \geq \beta^R_{i,j}-\e\}\right. \bigcup N(e^0_{+,+});
\] 
\[
N_\epsilon Q3(i,j) \, =\, \{ (x_1,x_2)  \in \widetilde{N}(e^2_\alpha)  \, \left| \, 1/4 \leq x_1 \leq \beta^U_{i,j} +\e, \,\, 
 1/4 \leq x_2 \leq \beta^R_{i,j}+\e\}\right..
\] 
See Figure \ref{fig:NeQ1}.

\begin{figure}

\labellist
\small
\pinlabel $N_\e Q3(i,j)$ [r] at -2 50 
\pinlabel $N_\e Q1(i,j)$ [l] at 490 126
\endlabellist
\centerline{ \includegraphics[scale=.6]{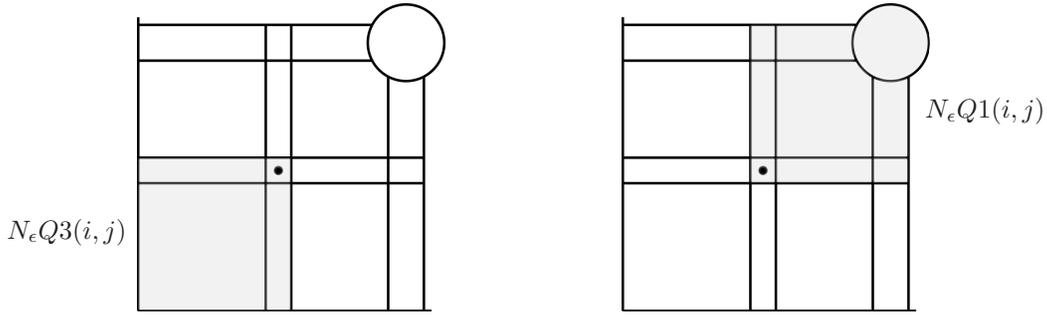} }

\caption{The enlarged first and third quadrants.  }
\label{fig:NeQ1}
\end{figure}

\begin{lemma}[First Quadrant Lemma] \label{lem:1stQuad}  Let $i< m_1 \leq m_2 < j$.
   Any PFT, $\Gamma$, that begins with an $(i,m_1)$-flow line at a point in $N_\e Q1(m_2,j)$ has all endpoints at $a^{+,+}$ generators.  Moreover, the image of $\Gamma$ is entirely contained in $N_\e Q1(m_2,j)$. 
\end{lemma}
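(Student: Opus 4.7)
The plan is to use the lexicographic ordering of the $\beta^X_{\cdot,\cdot}$ from Property \ref{pr:Location1} together with the monotonicity statement of Property \ref{pr:monotonicityI} to force a ``northeast'' flow throughout $\Gamma$, and then read off the possible endpoints.

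The key observation I would establish first is that every edge of $\Gamma$ is an $(i',j')$-flow with $i \leq i' < j' \leq m_1$; this follows by induction on internal vertices, since each $Y_0$ splits an incoming $(i',j')$-flow into outgoing $(i', m)$- and $(m, j')$-flows with $i' < m < j'$. For any such pair, $i' \leq m_1 - 1 < m_2$, so $(i',j') <_{\mathrm{lex}} (m_2, j)$ strictly, and the disjointness of the intervals in Property \ref{pr:Location1} gives
\[
\beta^U_{i',j'} + \e < \beta^U_{m_2, j} - \e, \qquad \beta^R_{i',j'} + \e < \beta^R_{m_2, j} - \e.
\]
Hence on $N_\e Q1(m_2, j)$ we have $x_1 \geq \beta^U_{i',j'} + \e$ and $x_2 \geq \beta^R_{i',j'} + \e$, and Property \ref{pr:monotonicityI} (supplemented by Property \ref{pr:1cmono} and the inward-gradient condition of Property \ref{pr:1cells} on the thin strips inside $\widehat{N}(e^1_U)$ and $\widehat{N}(e^1_R)$) forces both components of $-\nabla F_{i',j'}$ to be strictly positive there away from the $a^{+,+}$ minima. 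Consequently no edge can cross the lines $\{x_1 = \beta^U_{m_2,j} - \e\}$ or $\{x_2 = \beta^R_{m_2, j} - \e\}$, and once an edge enters $N(e^0_{+,+})$ it is trapped. A short induction on the number of $Y_0$-vertices then gives that the image of $\Gamma$ lies in $N_\e Q1(m_2, j)$.

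The remaining step is to classify endpoints. Proposition \ref{prop:NoY1NoSW} rules out any $Y_1$- or $sw$-vertex in $N(e^2_\alpha)$. Property \ref{pr:14models} places every cusp in $\{x_1 = -3/8\} \cup \{x_2 = -3/8\}$, which is disjoint from $N_\e Q1(m_2, j)$, so no $e$-vertex can occur. Among the Reeb chords listed in Properties \ref{pr:Reeb0}--\ref{pr:Reeb2}, every $b^U_{i',j'}$, $b^R_{i',j'}$, $c_{i',j'}$, $\tilde{b}$ and $\tilde{c}$ generator with $(i',j') <_{\mathrm{lex}} (m_2, j)$ sits strictly to the lower-left of $N_\e Q1(m_2, j)$ by the same interval-ordering bound, so the only negative punctures available are the $a^{+,+}_{i',j'}$ inside $N(e^0_{+,+})$, which is exactly the claim. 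The main (though routine) obstacle I anticipate is verifying the monotonicity uniformly across the four interfaces between $C_{R,U}$, $\widehat{N}(e^1_U)$, $\widehat{N}(e^1_R)$, and $N(e^0_{+,+})$, where different Properties govern $-\nabla F_{i',j'}$ and one must check that the boundary behavior on the polygonal paths $P_l$ from Property \ref{pr:1cells} is compatible with the corner-monotonicity of Property \ref{pr:monotonicityI}.
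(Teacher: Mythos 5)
Your proof is correct and follows essentially the same route as the paper's: lexicographic ordering of the $\beta$'s gives $N_\e Q1(m_2,j)\subset Q1(l_1,l_2)$ for every relevant index pair, the monotonicity/inward-pointing properties trap all flow lines in the region, and since sheets appear in indexed order there the only available negative punctures are the $a^{+,+}_{l_1,l_2}$. The only cosmetic difference is that you invoke Proposition \ref{prop:NoY1NoSW} to exclude $Y_1$- and $sw$-vertices, whereas the paper just observes the region is disjoint from the cusp locus — the latter phrasing is what lets the argument transfer verbatim to the swallowtail squares in Lemma \ref{lem:holdovers}.
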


\begin{proof}  
%According to Lemma \ref{lem:QuadrantFlows}, an $(i,m+1)$-flow cannot\footnote{Somewhere around here would be nice to illustrate the staircase ordering of $c_{i,j}$ and some quadrants.} leave $\widehat{Q}1(m_2,j) = Q1(m_2,j)$.  
For any $i \leq l_1 <l_2\leq m_1$, (\ref{eq:StaircaseAxiom}) implies that $N_\e Q1(m_2,j) \subset Q1(l_1,l_2)$   Moreover, 
%the monotonicity statement in Lemma \ref{lem:monotone} then implies 
Properties \ref{pr:1cmono} and \ref{pr:monotonicityI} imply that an $(l_1,l_2)$-flow line that begins in $N_\e Q1(m_2,j)$ cannot leave $N_\e Q1(m_2,j)$ as $t$-increases
%that $Q1(m_2,j)$ is invariant in positive time for $(l_1,l_2)$-flows.
(since $-\nabla F_{l_1,l_2}$ points inward along all portions of the boundary of $N_\e Q1(m_2,j)$).
  Within $N_\e Q_1(m_2,j)$, sheets appear in the same order that they are indexed, so any branch of the PFT tree that is below the initial  $(i,m_1)$-flow line can only be an $(l_1,l_2)$-flow with $i \leq l_1 <l_2\leq m_1$.    (Any $Y_0$-vertices, including punctures, can only increase the first subscript and decrease the second.)  
%Applying Lemma \ref{lem:QuadrantFlows} again, we see that  $Q1(m_2,j)$-quadrant is invariant for any of these flows.  
The result then follows, since the only Reeb chord with upper and lower sheets $S_{l_1}$ and $S_{l_2}$ contained in $N_\e Q1(m_2,j)$ is $a^{+.+}_{l_1,l_2}$.
\end{proof}

\begin{lemma}[Third Quadrant Lemma] \label{lem:3rdQuad}
Suppose $i< m_1 \leq m_2 <j$ and $x  \in N_{\e}Q3(i,m_1) \cap [1/2,1]\times [1/2,1]$.

Then any PFT beginning with an $(m_2,j)$-flow at $x$ has  
\begin{itemize}
\item[(1)] all endpoints at $a^{-,-}$'s and $e$-vertices, or

\item[(2)] at least $1$ endpoint at an exceptional generator.
\end{itemize}
\end{lemma}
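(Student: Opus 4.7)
My plan is to mirror the proof of the First Quadrant Lemma \ref{lem:1stQuad} by exhibiting a trapping region $R$ containing $x$ and enumerating the Reeb chords of the relevant difference functions inside $R$. By Proposition \ref{prop:NoY1NoSW}, no GFT in $N(e^2_\alpha)$ contains a $Y_1$- or $sw$-vertex, so every internal vertex of the PFT is a $Y_0$; the usual index-splitting observation then forces each edge of the PFT to be an $(l_1, l_2)$-flow with $m_2 \leq l_1 < l_2 \leq j$.

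For the trapping I would take
\[
R = \{(x_1, x_2) \in N(e^2_\alpha) : x_1 \leq \beta^U_{i,m_1}+\e \ \text{or}\ x_2 \leq \beta^R_{i,m_1}+\e\},
\]
so that $R$ is the complement in $N(e^2_\alpha)$ of the small upper-right rectangle above $(\beta^U_{i,m_1}+\e,\beta^R_{i,m_1}+\e)$. By hypothesis $x\in R$, and $\partial R\cap N(e^2_\alpha)$ lies entirely in the corner $C_{R,U}$. The strict spacing $\beta_{i,m_1}+\e<\beta_{l_1,l_2}-\e$ from Property \ref{pr:Location1} (which holds because $i<m_2\le l_1$), combined with Property \ref{pr:monotonicityI}, gives $-\grad_{x_1}F_{l_1,l_2}<0$ on the vertical piece $\{x_1=\beta^U_{i,m_1}+\e,\,x_2\ge\beta^R_{i,m_1}+\e\}$ of $\partial R$, and $-\grad_{x_2}F_{l_1,l_2}<0$ on the horizontal piece. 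Hence every $(l_1,l_2)$-flow with $m_2\le l_1<l_2\le j$ starting in $R$ remains in $R$. A finer argument using Property \ref{pr:monotonicityII} along $x_1=1/2$ and $x_2=1/2$ as one-way half-space barriers shows that such a flow from $x$ cannot reach $N(e^0_{-,+})$ or $N(e^0_{+,-})$: the flow is pushed leftward at $x_1=1/2$ and downward at $x_2=1/2$, so $x_1$ can never recover to be near $+1$ and $x_2$ can never recover to be near $+1$.

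Enumerating Reeb chords in the region actually reached by the PFT: by Properties \ref{pr:Reeb0}-\ref{pr:Location2}, the chords $c_{l_1,l_2},b^U_{l_1,l_2},b^R_{l_1,l_2},a^{+,+}_{l_1,l_2}$ lie strictly outside $R$, and by the half-space argument $a^{-,+}_{l_1,l_2},a^{+,-}_{l_1,l_2}$ are unreachable. The remaining candidates are $a^{-,-}_{l_1,l_2}$ inside $N(e^0_{-,-})$, possibly $b^L_{l_1,l_2}$ and $b^D_{l_1,l_2}$, and exceptional chords $\tilde b^L,\tilde b^D,\tilde c$. Endpoints at $e$-vertices on cusp edges are allowed by option~(1). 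To rule out non-exceptional $b^L_{l_1,l_2}$ or $b^D_{l_1,l_2}$ as outputs I would adapt the stable-manifold argument of Lemma \ref{lem:AllBs1Sk}: if sheets $S_{l_1}$ and $S_{l_2}$ cross above the corresponding edge so that the crossing arc lies above it in $\mathcal{E}_\parallel$, then $b^L_{l_1,l_2}$ (respectively $b^D_{l_1,l_2}$) is itself exceptional by Definition \ref{def:ex-gen}(2); otherwise the induced relabelings $\sigma_L$ (respectively $\sigma_D$) preserve the strict inequality $\sigma_L(l_1)>i$ (respectively $\sigma_D(l_1)>i$), and the resulting spacing $\beta_{i,m_1}+\e<\beta^L_{l_1,l_2}-\e$ keeps the stable manifold of the saddle disjoint from the flow.

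The main obstacle is this last step: the square-type-by-square-type confirmation that crossings which disrupt the $\sigma_L$ or $\sigma_D$ orderings correspond precisely to $b^L$ or $b^D$ saddles being exceptional in the sense of Definition \ref{def:ex-gen}(2). Type~(12) is the most delicate, requiring the unique non-degenerate $(k+1,k+2)$-switch point given by Property \ref{pr:CuspTransversality} and a careful analysis of how the $(k+1,k+2)$-crossing arc is placed in $\mathcal{E}_\parallel$. Once these verifications are in hand, the induction on the number of $Y_0$-vertices of the PFT runs exactly as in the First Quadrant Lemma.
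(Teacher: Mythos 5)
Your overall architecture (a forward-invariant region, an enumeration of the Reeb chords it contains, and a square-by-square verification that the problematic ones are exceptional) is the right shape, and you correctly identify the case analysis over square types as the crux. But there are two genuine gaps.

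First, the claim that ``the usual index-splitting observation forces each edge of the PFT to be an $(l_1,l_2)$-flow with $m_2 \leq l_1 < l_2 \leq j$'' is false here. That observation is valid in $N_\e Q1(m_2,j)$ precisely because the sheets there appear in the order they are indexed; a third-quadrant tree, by contrast, must travel into regions to the left of or below crossing arcs, where the $z$-ordering of sheets disagrees with the indexing. A $Y_0$ there can split an $(l_1,l_2)$-flow using an intermediate sheet whose index is \emph{not} in $(l_1,l_2)$ — e.g.\ in a Type (2) square, left of the crossing a $(k+1,j')$-flow can split off a $(k+1,k)$-branch. Such reversed-index branches are harmless for the statement (every Reeb chord whose upper sheet has the larger label is exceptional, so an easy induction shows these branches always terminate at exceptional generators), but your framework silently excludes them, and this separate observation is needed before any trapping argument can even be formulated edge by edge.

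Second, and more seriously, your region $R$ is too large and your exclusion of the extra chords it contains does not work. The half-space barriers of Property \ref{pr:monotonicityII} are one-way: they prevent a flow from re-entering $\{x_1\ge 1/2\}$ or $\{x_2\ge 1/2\}$, but the flow \emph{starts} with both coordinates $\ge 1/2$. To reach $a^{+,-}_{l_1,l_2}$ it need only keep $x_1$ large while $x_2$ decreases, and to reach $a^{-,+}_{l_1,l_2}$ or a non-exceptional $b^L_{l_1,l_2}$ (which sits at $x_2\approx\beta^L_{l_1,l_2}\in[1/2,3/4]$) it need only keep $x_2$ large while $x_1$ decreases; nothing in your argument forbids either. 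The missing mechanism is the pair of diagonal barrier segments $B_1,B_2$ from Property \ref{pr:monotonicityIV}: together with Property \ref{pr:monotonicityI} they bound a hexagonal region out of which the tree can only exit through the segments $x_1=1/2$ or $x_2=1/2$ below the barriers, forcing \emph{both} coordinates into $[1/4,1/2]$ before the tree leaves $N_\e Q3(i,m_1)$ (this is the content of Lemma \ref{lem:14121}). Only after this funneling step does one get a usable invariant region, namely $L(1/2)\cap D(1/2)$, which excludes $a^{\pm,\mp}$, $b^L$, and $b^D$ outright and reduces the endpoint enumeration to $a^{-,-}$'s, $e$-vertices, and exceptional generators; the remaining square-by-square work (your last paragraph) is then about showing the relevant $(i,j)$-chords in that quadrant either do not exist or are exceptional, not about separating stable manifolds of $b^L$ or $b^D$ saddles. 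Without the funneling, your Reeb-chord enumeration in $R$ is incomplete and the conclusion does not follow.
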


\begin{proof}  We begin by establishing the following Lemma.

\begin{lemma} \label{lem:14121} Consider a PFT, $\Gamma$, that begins with a $(m_2,j)$-flow line at a point  $x \in N_{\epsilon}Q3(i,m_1) \cap [1/2,1]\times [1/2,1]$, for some $i < m_1 \leq m_2 < j$.  Let $\gamma$ be a path beginning at $x$ and consisting of several consecutive edges of $\Gamma$, (each parametrized according to their orientation in $\Gamma$).  The path $\gamma$ must cross into $[1/4,1/2]\times [1/4,1/2]$ before leaving $N_{\epsilon}Q3(i,m_1)$.
\end{lemma}

\begin{proof}[Proof of Lemma \ref{lem:14121}]

\begin{figure}
\labellist
\small
\pinlabel $A_1$ [l] at 214 158
\pinlabel $A_2$ [b] at 158 214
\pinlabel $B_1$ [l] at 172 42
\pinlabel $B_2$ [b] at 42 172
\pinlabel $C_1$ [r] at 122 80
\pinlabel $C_2$ [t] at 80 122
\pinlabel $1/4$ [t] at 32 -2
\pinlabel $1/4$ [r] at -2 32
\pinlabel $1/2$ [t] at 128 -2
\pinlabel $1/2$ [r] at -2 128
\pinlabel $\beta^U_{i,m_1}+\e$ [t] at 208 -2
\pinlabel $\beta^R_{i,m_1}+\e$ [r] at -2 208
\pinlabel $P$ [r] at 160 160
\endlabellist

\quad

\quad

\quad 

\centerline{ \includegraphics[scale=.6]{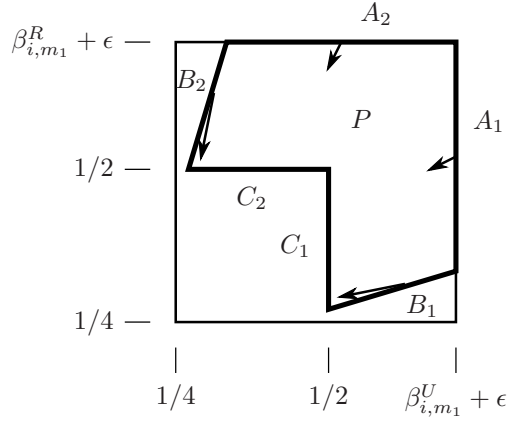} }

\quad

\caption{The polygonal region $P$ and its bounding segments.  The negative gradients $-\nabla F_{l_1,l_2}$ with $m_1 \leq l_1 < l_2 \leq j$ point into $P$ along $A_1\cup A_2 \cup B_1 \cup B_2$.}
\label{fig:PABC}
\end{figure}

\medskip

Consider the (non-convex) hexagonal subset $P$ of  
$N_\e Q3(i,m_1)$
bounded by line segments $A_1,A_2, B_1,B_2, C_1,C_2.$ 
Here,  $B_1 $ and $B_2 $ are the segments from Property \ref{pr:monotonicityIV};  
$A_1$ is the vertical segment from $(\beta_{i,m_1}^U+\e, \beta_{i,m_1}^U+\e) = (\beta_{i,m_1}^R+ \e, \beta_{i,m_1}^R+\e)$ 
to the right endpoint of $B_1$; and $C_1$ is   
the vertical segment from $(1/2,1/2)$ to the left endpoint of $B_1$.  
The remaining segments $A_2,B_2,C_2$ are obtained from reflecting $A_1,B_1,C_1$ across $x_1=x_2$.  See Figure \ref{fig:PABC}.

The region $P$ satisfies 
\[
N_{\epsilon}Q3(i,m_1) \cap [1/2,1]\times [1/2,1] \subset P \subset N_{\epsilon}Q3(i,m_1) \cap [1/4,1]\times [1/4,1].
\]
  Therefore, the path $\gamma$ which begins in $N_{\epsilon}Q3(i,m_1) \cap [1/2,1]\times [1/2,1]$ must leave $P$ to leave $N_{\epsilon}Q3(i,m_1)$.   Initially, $\gamma$ is a $(m_2,j)$-flow line with $i<m_1\leq m_2 <j$.  Moreover, until we leave $N_{\epsilon}Q3(i,m_1) \cap [1/4,1]\times [1/4,1]$ the only type of vertices that may appear in $\Gamma$ are $Y_0$-vertices.  [This is because the cusp locus is disjoint from $N_{\epsilon}Q3(i,m_1) \cap [1/4,1]\times [1/4,1]$.]  Within $N_{\epsilon}Q3(i,m_1) \cap [1/2,1]\times [1/2,1]$ the sheets of $\tilde{L}$ appear in the order that they are indexed, thus after a $Y_0$, either outgoing edge is, for some $m_2 \leq l_1 < l_2 \leq j$, an $(l_1,l_2)$-flow line. In particular, the edges that constitute $\gamma$ consist of such flows as long as $\gamma$ remains in $N_{\epsilon}Q3(i,m_1) \cap [1/4,1]\times [1/4,1]$. 
Property \ref{pr:monotonicityIV} then shows that when leaving $P$ the path $\gamma$ cannot cross $B_1$ or $B_2$, and Property \ref{pr:monotonicityI} shows $\gamma$ cannot cross our of $P$ along $A_1$ or $A_2$.  Therefore, $\gamma$ must leave $P$ along one of the segments $C_1$ or $C_2$, and immediately afterward its image lies in $[1/4, 1/2] \times[1/4,1/2]$.
\end{proof}

We return now to the proof of Lemma \ref{lem:3rdQuad}.  Consider an endpoint of a partial flow tree $\Gamma$ satisfying the stated hypothesis.  Let $\gamma$ denote the portion of the tree that travels from $x$ to this endpoint. Applying Lemma \ref{lem:14121}, at some point $y$ the path $\gamma$ is in $[1/4,1/2]\times[1/4,1/2]$. The proof will now be completed by applying the following Lemma \ref{lem:1412} to the PFT that consists of the subset of $\Gamma$ below $y$.

%reach an edge where the image of the path crosses out of $P$ for the first time. [Within $P$ only $Y_0$-vertices can occur since, for any $m_1\leq l_1<l_2\leq j$, $F_{l_1,l_2}$ has no critical points in $P$, and the new edges appearing after a $Y_0$-vertex are always $(l_1,l_2)$-flows with $m_1 \leq l_1 < l_2 \leq j$.]  The claim shows that this crossing must occur along one of the segments $C_1$ or $C_2$, and so the image lies in $[1/4, 1/2] \times[1/4,1/2]$.  The proof will now be completed by the following Lemma \ref{lem:1214}.
\end{proof}

\begin{lemma}\label{lem:1412}
Any partial flow tree with initial point in $[1/4,1/2] \times [1/4,1/2]$ has
\begin{itemize}
\item[(1)] all endpoints  at $a^{-,-}$'s or $e$-vertices,
or
\item[(2)] at least $1$ endpoint  at an exceptional generator.
\end{itemize}
\end{lemma}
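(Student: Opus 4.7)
The plan is to argue by induction on the number of internal vertices of the PFT. By Proposition \ref{prop:NoY1NoSW}, every internal vertex of a PFT in $N(e^2_\alpha)$ is a $Y_0$-vertex, so this inducts cleanly. The key ingredient is a \emph{confinement} claim: any edge of the PFT that is an $(i,j)$-flow line (with $F_i > F_j$ along its image) with the additional property that $S_i$ and $S_j$ neither cross above $e^1_U$ nor have exactly one of them meet a cusp above $e^1_U$ must have image contained in $\{x_2 \leq 1/2\}$ once it has a point there; the analogous statement with $e^1_U$ replaced by $e^1_R$ gives confinement below $x_1 = 1/2$. Both follow directly from Property \ref{pr:monotonicityII}, since along $x_2 = 1/2$ (resp.\ $x_1 = 1/2$) the corresponding component of $-\nabla F_{i,j}$ is strictly negative, so an $(i,j)$-flow line that ever enters the lower half-plane cannot re-emerge.

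For the base case a PFT is a single flow line $\gamma$ on some pair $(S_i, S_j)$ beginning at $x \in [1/4,1/2]^2$. Its forward limit is either an $e$-vertex---an allowed conclusion---or a Reeb chord of $F_{i,j}$. The complete list of Reeb chords, with their locations, is given by Properties \ref{pr:Reeb0}, \ref{pr:Reeb1}, \ref{pr:Reeb2}, \ref{pr:Location1}, and \ref{pr:Location2}; maxima are ruled out as forward limits of flow lines. If $(S_i, S_j)$ is not in one of the crossing/cusp configurations above $e^1_U$ or $e^1_R$, then confinement forces the terminal Reeb chord to lie in $\{x_1 \leq 1/2\} \cup \{x_2 \leq 1/2\}$; inspection of the location data shows that the only such Reeb chord is $a^{-,-}_{i,j}$, since each $b^X_{i,j}$ lies on the half of $e^1_X$ with coordinate in $[1/2, 3/4]$ (by Property \ref{pr:Location1}), $c_{i,j}$ sits in $[1/2, 3/4]^2$, and the $a^{\pm,\pm}_{i,j}$ with any positive sign lie in neighborhoods of the other three $0$-cells. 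If instead $(S_i,S_j)$ is in one of the excluded crossing/cusp configurations, confinement fails but then the endpoint is an exceptional generator: $\tilde b$- and $\tilde c$-chords are exceptional by Definition \ref{def:ex-gen}(1), and the remaining candidates $b^X_{i,j}$ and $a^{\pm,\pm}_{i,j}$ that become accessible are exactly those for which the shifted crossing arc of Section \ref{sec:parallel} lies above the $1$- or $0$-cell carrying the chord, hence exceptional by Definition \ref{def:ex-gen}(2).

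For the inductive step, at the first $Y_0$-vertex $v$ with incoming $(i,j)$-flow and outgoing $(i,m)$- and $(m,j)$-flows, confinement applied to the initial edge places $v$ in the lower-left region. Each outgoing branch is itself a sub-PFT; I would replace the hypothesis ``initial point in $[1/4, 1/2]^2$'' by the slightly stronger ``initial point in $\{x_1 \leq 1/2\} \cup \{x_2 \leq 1/2\}$'' so that the induction hypothesis applies to each sub-branch, noting that the base-case argument uses the initial point only through confinement, which is intact in this larger region. Combining the two sub-conclusions gives the lemma for the full PFT. The main obstacle I anticipate is precisely this bookkeeping: one must verify case-by-case over the twelve square types---with particular attention to the cusp configurations (9)--(12) and the triple-crossing (8), where the relabeling of sheets across $e^1_U$ or $e^1_R$ is most delicate---that any non-$a^{-,-}$, non-$e$-vertex endpoint obtained when confinement fails really does match the conditions of Definition \ref{def:ex-gen}, and in particular that there are no residual $b^L_{i,j}$ or $b^D_{i,j}$ endpoints (for the global labeling at $(+1,+1)$) that escape both categories.
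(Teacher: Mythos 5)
Your overall mechanism is the right one — this is essentially the paper's argument: forward-invariance of half-planes from Property \ref{pr:monotonicityII}, induction on the number of $Y_0$'s (using Proposition \ref{prop:NoY1NoSW}), enumeration of Reeb chords in the invariant region, and exceptional generators absorbing everything that escapes. But the specific inductive strengthening you propose is false, and this breaks the induction. You cannot strengthen the hypothesis to ``initial point in $\{x_1 \leq 1/2\} \cup \{x_2 \leq 1/2\}$.'' Take a Type (1) square (no crossings, no cusps, hence no exceptional generators at all) and a point $p$ with $x_1 = 0$ on the $b^L_{i,j}$-line, which by Proposition \ref{prop:bUS} has $x_2 \in [\beta^L_{i,j}-\e, \beta^L_{i,j}+\e]$ with $\beta^L_{i,j} \in [1/2,3/4]$. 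Then $p$ lies in your union, but the single-edge PFT from $p$ ends at $b^L_{i,j}$, which is not an $a^{-,-}$, not an $e$-vertex, and not exceptional. The correct strengthening — the one the paper uses — is the \emph{intersection} $L(1/2) \cap D(1/2)$, for which both confinements are simultaneously available whenever the pair of sheets has no crossing/cusp configuration on either $e^1_U$ or $e^1_R$, and in that generic case the only accessible non-$e$ endpoint really is $a^{-,-}_{i,j}$.

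Once you use the intersection, the configurations where one confinement fails can no longer be folded into the induction as you propose, because an outgoing branch at a $Y_0$ may itself be such a configuration even when the incoming edge is not. The paper therefore dispatches all crossing and cusp cases \emph{first}, by direct non-inductive arguments, and these then serve as additional base cases. Two of these you omit entirely and they are not mere bookkeeping: (i) any PFT beginning with an index-inverted $(j,i)$-flow ($j>i$) must end at an exceptional generator (proved by propagating the inversion through $Y_0$'s — this is needed because $Y_0$'s below a crossing locus produce such branches); and (ii) in the squares with two crossings on $R$ (types (5), (6), (8), (12)), a PFT starting with a $(k+1,k+2)$-flow in $D(1/2)$ needs its own argument, since in the Type (8) square a $Y_0$ \emph{is} possible there and must be shown to force an exceptional endpoint via (i). In the other crossing cases the key point is that the crossing sheets are adjacent wherever their difference is positive, so no $Y_0$ can occur and the single-flow-line analysis suffices. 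Your closing paragraph correctly identifies that the case-check is where the danger lies, but the danger is structural (the induction hypothesis and the order of the cases), not just a matter of verifying Definition \ref{def:ex-gen} endpoint by endpoint.
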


\begin{proof}  Let $L(1/2) = \{(x_1,x_2) \in \widetilde{N}(e^2_\alpha) \, |\, x_1 \leq 1/2\} \bigcup N(e^0_{-,+}) \bigcup N(e^0_{-,-})$ and $D(1/2) = \{(x_1,x_2) \in \widetilde{N}(e^2_\alpha) \, |\, x_2 \leq 1/2\} \bigcup N(e^0_{+,-}) \bigcup N(e^0_{-,-})$ so that $L(1/2) \cap D(1/2) = \{(x_1,x_2) \in \widetilde{N}(e^2_\alpha) \, |\, x_1 \leq 1/2, \, x_2 \leq 1/2,\} \bigcup N(e^0_{-,-})$.  

We prove the stronger statement that any PFT with initial point in $L(1/2) \cap D(1/2)$ satisfies (1) or (2).

\medskip

\noindent {\bf Claim 1}.  If sheets $i,j$ with $i<j$ do not have a crossing arc on the $R$ (resp. $U$) and do not meet at a cusp above the $R$ (resp. $U$) edge, then 
$L(1/2)$  (resp. $D(1/2)$) is invariant for the $(i,j)$-flow, i.e. is $-\nabla F_{i,j}$ invariant in positive time. 

\medskip

This follows since together Properties \ref{pr:0cells}, \ref{pr:1cells}, \ref{pr:1cmono}, and \ref{pr:monotonicityII} show that  $-\nabla F_{i,j}$ points inward along all portions of the boundary.

\medskip

\noindent {\bf Claim 2}.  Any partial flow tree starting with a $(j,i)$-flow line with $j>i$ has an endpoint at an exceptional generator.

\medskip

At the first $Y_0$-vertex, the $(j,i)$-flow will split into edges that are $(j,m)$-flows and $(m,i)$-flows.  Since $j>i$, we will have either $j>m$ or $m>i$.  Proceeding inductively, we can then find an endpoint that limits to a Reeb chord whose upper endpoint is on a sheet with larger label than the lower endpoint.  All such Reeb chords are exceptional.

\medskip

\noindent {\bf Claim 3}.  Consider a square that has $2$ crossings along the edge $R$, i.e. of type (5), (6), (8) or (12), so that along $R$ sheet $k+2$ crosses sheets $k+1$ and $k$.  [Sheets are labelled as they appear at $(+1,+1)$.]  
   Then, any partial flow tree that begins with a $(k+1,k+2)$-flow in $D(1/2)$ has at least one endpoint at an exceptional Reeb chord.

\medskip
 
From Claim 1, we see that $D(1/2)$ is $(k+1,k+2)$-invariant.  [In any of the 12 squares, it is never the case that the same two sheets cross on both the edges $R$ and $U$.]  Observe that there are in fact no non-exceptional $(k+1,k+2)$ Reeb chords in $D(1/2)$ (and, in the case of a Type (12) square, no $(k+1,k+2)$ cusp edge).  Thus, such a partial flow tree without $Y_0$-vertices would necessarily contain an exceptional generator.  In all cases except for Type (8), sheets $k+1$ and $k+2$ are always adjacent in $D(1/2) \cap \{F_{k+1,k+2} >0\}$, so no $Y_0$-vertex can occur.  For a Type (8) square, the only portion of $D(1/2) \cap \{F_{k+1,k+2} >0\}$  where a $Y_0$-vertex may occur in a $(k+1,k+2)$-flow is in the portion of the square where sheets appear in the order $z(S_{k+1}) > z(S_{k}) >z(S_{k+2})$.  See Figure \ref{fig:TripPointOrder}.  One of the outgoing branches of such a $Y_0$-vertex would be a $(k+1,k)$-flow, and Claim 2 would then imply that the tree has an endpoint at an exceptional generator.

\begin{figure}

\quad

\quad

\labellist
\small
\pinlabel \fbox{$\begin{array}{l}k\\k+1\\k+2\end{array}$} [l] at 228 160
\pinlabel \fbox{$\begin{array}{l}k\\k+2\\k+1\end{array}$} [l] at 228 80
\pinlabel \fbox{$\begin{array}{l}k+2\\k\\k+1\end{array}$} [l] at 228 0
\pinlabel \fbox{$\begin{array}{l}k+1\\k\\k+2\end{array}$} [r] at -4 160
\pinlabel \fbox{$\begin{array}{l}k+1\\k+2\\k\end{array}$} [r] at -4 80
\pinlabel \fbox{$\begin{array}{l}k+2\\k+1\\k\end{array}$} [r] at -4 0
\endlabellist
\centerline{ \includegraphics[scale=.6]{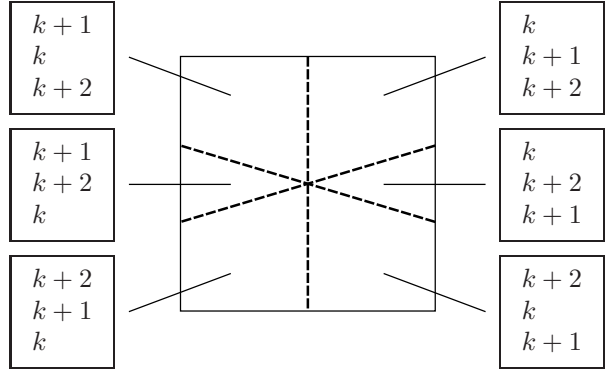} }

%%\centerline{ \includegraphics[scale=.8]{images/SubdivideSq} } %Dan had already commented out this line

\quad 

\quad

\caption{The order in which the $3$ sheets that meet at a triple point appear in the $6$ portions of a Type (8) square.}
\label{fig:TripPointOrder}
\end{figure}

\medskip

With Claims 1-3 in hand, suppose that we are given a partial flow tree in a square of type (1)-(12) that begins with an $(i,j)$-flow at $x \in L(1/2) \cap D(1/2)$.   In establishing the conclusion of the lemma, by Claim 2, we may assume $i <j$.  We first treat all of the special cases in which the $i$ and $j$ sheets cross or meet at a cusp edge, and then address the remaining cases inductively.  Note that an $(i,j)$ crossing or cusp arc cannot have endpoints on both the $R$ and $U$ edge.  
\begin{itemize}
\item Case 1:  The $(i,j)$ crossing arc does have a single endpoint on edge $R$ or $U$.    
This includes the squares of type (2), (4)-(8), (10), (12) where $(i,j)$ can correspond to any of the crossing arcs.  Note that for any of the squares (5), (6), (12) with $(i,j) = (k+1,k+2)$, Claim 3 applies.  The remaining cases are as follows:
\begin{itemize}
\item Subcase 1: The square is any of (2), (4), (7), (10) with $(i,j)$ any of the pictured arcs.  

First consider the case where the $(i,j)$ crossing occurs on edge $R$.  
Then, from Claim 1, $D(1/2)$ is $(i,j)$-invariant.  Moreover, in the subset of $D(1/2)$ where $F_{i,j}>0$, the sheets $i$ and $j$ are adjacent so that there are no possibilities for a partial flow starting at $x$ to have $Y_0$-vertices.  Finally, observe that the only non-exceptional $(i,j)$ Reeb chord in $D(1/2) \cap \{F_{i,j}>0\}$ is at $(-1,-1)$.  [For a general square, the only non-exceptional $(i,j)$ Reeb chords in $D(1/2)$ are $a^{-,-}_{i,j}, b^D_{i,j}$ and $a^{+,-}_{i,j}$, but in the present case there are no Reeb chords of the form $b^D_{i,j}$ and $a^{+,-}_{i,j}$ as they would necessarily on the other side of the $(i,j)$ crossing arc.]

If the crossing occurs on edge $U$, a similar argument (reflected across $x_1=x_2$) applies.
  
\item Subcase 2: The square is (5), (6), or (12) with $(i,j) = (k,k+2)$.  Again, $D(1/2)$ is $(k,k+2)$ invariant, and the only non-exceptional $(k,k+2)$ Reeb chord in $D(1/2)$ would have the form $a^{-,-}_{i,j}$.  Thus, in the case there are no $Y_0$-vertices the result follows.   If the flow tree has $Y_0$-vertices, then at the first $Y_0$-vertex the tree splits into a $(k,k+1)$-flow and a $(k+1,k+2)$-flow in the region above the $(k+1,k+2)$-crossing arc.  Now, an application of Claim 3 shows that the flow tree must contain an exceptional generator.  

\item Subcase 3:  The triple point square (8).  
The case $(i,j) = (k+1,k+2)$ is covered by Claim 3.
For  $(i,j) = (k,k+1)$, 
the partial flow tree must have an end point at an exceptional generator.   This follows since $L(1/2)$ 
is $(i,j)$-invariant and contains no non-exceptional Reeb chords of type $(i,j).$  The only possibility for a $Y_0$-vertex would produce an outgoing edge that is a $(k+2,k+1)$-flow 
to which we apply Claim 2.  (See Figure \ref{fig:TripPointOrder}.)

Finally, consider $(i,j) = (k,k+2)$.   The $(k,k+2)$-flow is $D(1/2)$-invariant with all $(i,j)$ Reeb chords in $D(1/2)$ exceptional.  If a $Y_0$-vertex occurs one out going edge will be a $(k+1,k+2)$-flow in $D(1/2)$ which must have an endpoint at an exceptional generator by Claim 3.

\end{itemize}
\item Case 2:  The $(i,j)$ crossing arc has its endpoints on edges $L$ and $D.$
This only occurs in (3) with $(i,j)=(k,k+1)$.  Here, Claim 1 gives that $L(1/2) \cap D(1/2)$ is $(k,k+1)$-invariant.  In this region, sheets $k$ and $k+1$ are adjacent and all $(k,k+1)$ Reeb chords are exceptional.

\item Case 3:  The $(i,j)$ cusp edge has its endpoint on $U$ or $R$.  Supposing the cusp edge lies on $U$, Claim 1 shows that $L(1/2)$ is $(i,j)$ invariant.  As they are adjacently indexed, any $Y_0$ would produce an outgoing edge that is a $(m,n)$-flow with $m>n$, and Claim 2 would then produce an exceptional generator.  Since there are no $(i,j)$ Reeb chords in $L(1/2)$, if the PFT has no $Y_0$'s, then the $(i,j)$-flow necessarily reaches an $e$-vertex.
\end{itemize}

With the special cases out of the way we handle the general case using induction on the number of $Y_0$-vertices in the partial flow tree.  Since we can assume that the $i,j$ sheets do not cross, Claim 1 shows that the quadrant $L(1/2)\cap D(1/2)$ is $(i,j)$-invariant.  The only $(i,j)$-Reeb chord in $L(1/2)\cap D(1/2)$ is $a^{-,-}_{i,j}$, so in the case of no $Y_0$-vertices the result holds.  For the inductive step, at the first $Y_0$-vertex, an $(i,j)$-flow line splits into a $(i,l)$-flow line and a $(l,j)$-flow line for some $l$.  Since $L(1/2) \cap D(1/2)$ is $(i,j)$ invariant, the inductive hypothesis applies to both partial flow trees below the $Y_0$-vertex.  
%If $k$ is not between $i$ and $j$, then one of these two flow lines is an $(m,n)$-flow with $m>n$, so Claim 2 guarantees an endpoint at an exceptional generator. 
\end{proof}

\begin{proof}[Proof of Theorem \ref{thm:112ctrees}.]

The matrix formula (\ref{eq:112ctrees}) is equivalent to the term-by-term formula 
\begin{equation}\label{eq:termbyterm}
\partial_c c_{i,j} =\sum_{i < m < j} \left( a^{+,+}_{i,m}c_{m,j} +c_{i,m}a^{-,-}_{m,j}\right) + x
\end{equation}
where any of the terms $a^{-,-}_{i,m}$ which correspond to Reeb chords that do not exist or are exceptional generators are replaced with $0$, except that $a^{-,-}_{k,k+1}$ is replaced with $1$ if $S_{k}$ and $S_{k+1}$ meet at a cusp edge in $N(e^2_\alpha)$;  the $x$ is an element of the ideal generated by exceptional generators in $N(e^2_\alpha)$.  We show that, aside from $x$, each of the non-zero terms in (\ref{eq:termbyterm}) corresponds to an odd number of rigid GFTs  beginning at $c_{i,j}$ and that any other rigid GFTs with an output at some $c_{r,s}$ or $\tilde{c}_{r,s}$ must have endpoints at exceptional generators.

For each of the (non-zero) quadratic terms of (\ref{eq:termbyterm}) we  now observe a single GFT beginning at $c_{i,j}$ (and rigid by Proposition \ref{prop:ACE112}) whose word agrees with the given term.

\medskip

\noindent \textbf{Term 1:  $a^{+,+}_{i,m}c_{m,j}$ for $i<m<j$.}

Travel along the unique $(i,j)$-flow line beginning at $c_{i,j}$ that intersects $c_{m,j}$.  Once this point is reached a puncture occurs.  That is, a $Y_0$-vertex appears where the two outgoing edges are an $(i,m)$-flow line and the constant $(m,j)$-flow line at $c_{m,j}$.  Since $c_{m,j} \in N_\e Q1(m,j)$, %(by Property \ref{pr:Reeb2} and equation (\ref{eq:StaircaseAxiom}))  
 the non-constant outgoing edge is an $(i,m)$-flow line that limits to an output vertex at $a^{+,+}_{i,m}$ by Lemma \ref{lem:1stQuad}.  See Figure \ref{fig:112Term1}.

\noindent \textbf{Term 2:  $c_{i,m}a^{-,-}_{m,j}$ for $i<m<j$.}
Travel along the $(i,j)$-flow line that hits $c_{i,m}$, and then puncture.
Since $c_{i,m}\in N_\e Q3(i,m)$, Lemma \ref{lem:3rdQuad} shows that 
the outgoing $(m,j)$-flow line  behaves as follows:
\begin{enumerate}
\item It limits to $a^{-,-}_{m,j}$ as long as $S_m$ and $S_j$ do not cross and they both exist at $(x_1,x_2) =(-1,-1)$.  
\item If $S_m$ and $S_j$ cross, then if the crossing locus separates $c_{i,m}$ from $(-1,-1)$ the flow line runs into the crossing locus (and is not part of a valid GFT).  In the case that the crossing locus does not separate $c_{i,m}$ from $(-1,-1)$, as can happen for Type (4) or (6) squares, the flow line may either run into the crossing locus or end at $a^{-,-}_{m,j}$.  For such squares, $a^{-,-}_{m,j}$ is an exceptional generator so the precise behavior is not relevant.
\item If $S_m$ or $S_j$ meets a third sheet in a cusp edge, then the $(m,j)$-flow line ceases to exist when the cusp edge is reached (and is not part of a valid GFT).
\item If $S_m$ and $S_j$ meet one another at a cusp, then the flow line reaches an $e$-vertex at the cusp edge.  In this case we do have a valid rigid GFT.
\end{enumerate}
Note that the results described in (2) and (3) are consistent with the $0$ entries that appear in $A^{-,-}$ as a result of crossings or cusps.  Replacing the $(k,k+1)$-entry of $A^{-,-}$ with a $1$ when $S_k$ and $S_{k+1}$ meet at a cusp edge is consistent with (4). 
 See Figure \ref{fig:112Term1}.

\begin{figure}
\labellist
\small
\pinlabel $c_{i,j}$ [b] at 40 164
\pinlabel $c_{m,j}$ [t] at 56 92
\pinlabel $a^{+,+}_{i,m}$ [t] at 0 68
\pinlabel $c_{i,j}$ [r] at 162 48
\pinlabel $c_{m,j}$ [br] at 220 110
\pinlabel $a^{+,+}_{i,m}$ [l] at 286 162
\pinlabel $c_{i,j}$ [b] at 408 164
\pinlabel $c_{i,m}$ [t] at 392 92
\pinlabel $a^{+,+}_{m,j}$ [t] at 448 68
\pinlabel $c_{i,j}$ [br] at 644 141
\pinlabel $c_{i,m}$ [t] at 640 110
\pinlabel $a^{+,+}_{m,j}$ [r] at 506 10
\endlabellist
\centerline{ \includegraphics[scale=.6]{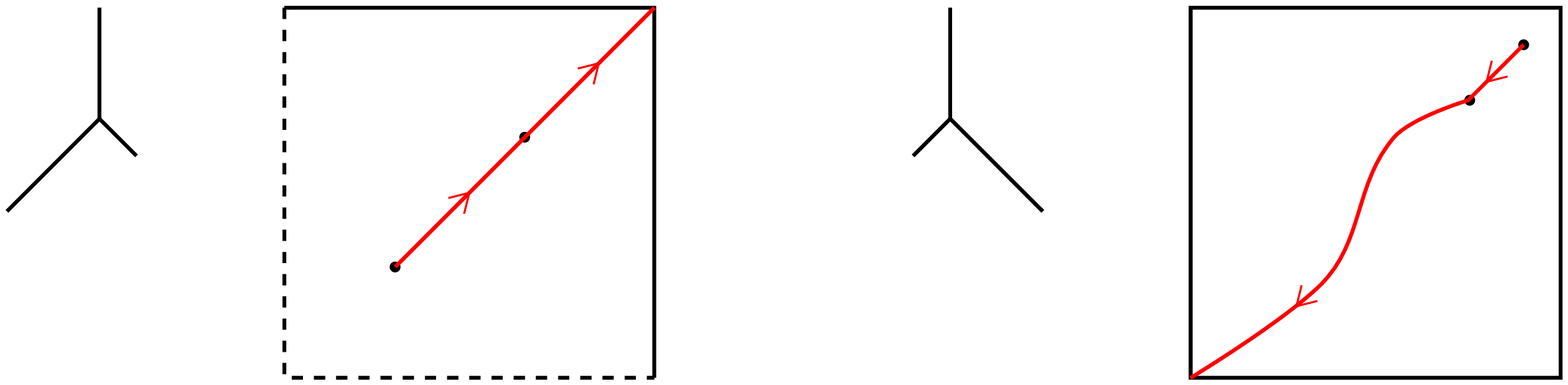} }
\caption{The domain and image (in red) of the trees from Terms 1 and 2.  The short edges in the domain trees are constant when mapped into $N(e^2_\alpha)$. }
%The arrows indicate the direction along edges of all $(i,j)$-flows (negative gradients) other than the $(k+2,k+1)$ flow.  
%(right) The partial flow trees in $A$ described in (2) of Lemma \ref{lem:RegionA}.  
\label{fig:112Term1}
\end{figure}

\medskip

\noindent \textbf{Proof of uniqueness.}  We have thus identified a single
 rigid GFT for each of the (non-zero) terms in (\ref{eq:termbyterm}) aside from $x$.  
  To complete the proof, we show that these are the only rigid GFTs beginning at $c_{i,j}$ with at least one output at some $c_{r,s}$ or $\tilde{c}_{r,s}$ and without any outputs at exceptional generators.  We follow the outline indicated in Figure \ref{fig:CStep1}.

Let $\Gamma$ be a rigid GFT that begins at some $c_{i',j'}$, $i'< j'$,  without endpoints at exceptional generators and having at least one puncture at some $c_{r,s}$ or $\tilde{c}_{r,s}$.  In the latter case, we are done since $\tilde{c}_{r,s}$ is an exceptional generator.

\medskip

\noindent \textbf{Step 1.}  If $\Gamma$ has an $(i,j)$-edge that punctures at $c_{i,m}$ (resp. $c_{m,j}$), then there are no further internal vertices on the non-constant branch that follows the puncture.  Thus, after the puncture $\Gamma$ agrees with the GFT found in Step 1 (resp. Step 2). 

\medskip

The critical point $c_{i,m}$ is located in $N_\e Q3(i,m)\cap [1/2,1]\times[1/2,1]$.  Therefore, the PFT starting with the non-constant branch following the puncture begins with a $(m,j)$-flow in  $N_\e Q3(i,m)\cap [1/2,1]\times[1/2,1]$, and  Lemma \ref{lem:3rdQuad} implies all endpoints below the puncture must be at $a^{-,-}$ Reeb chords or $e$-vertices.    A similar argument with Lemma \ref{lem:1stQuad} used in place of Lemma \ref{lem:3rdQuad} shows that all endpoints below a puncture at $c_{k,j}$ must be at $a^{+,+}$ Reeb chords. 
Since Proposition  \ref{prop:NoY1NoSW} shows that any internal vertices of $\Gamma$ must be $Y_0$'s, the number of outputs of the PFT that starts below the puncture will be equal to the number of $Y_0$'s plus $1$.  
However, Proposition \ref{prop:ACE112} states that the total number of outputs of $\Gamma$ at $a$ Reeb chords and $e$-vertices must be the same as the number of $c$ outputs.  Thus, there can be only one $a$ or $e$-vertex below any $c$ puncture, so after the puncture there are no $Y_0$-vertices and Step 1 follows. 

\medskip 

In addition, Proposition \ref{prop:ACE112} together with Step 1 show that: 

\begin{equation}\label{eq:ac}
\mbox{Any edge with its endpoint at an $a$ or $e$-vertex must begin with a $Y_0$ that is a puncture at a $c$.}
\end{equation}

[As we have seen, any puncture at a $c$  Reeb chord has its non-constant branch ending at an $a$ or an $e$-vertex.  This, gives an injection from the set of $c$ endputs of $\Gamma$ to the set of endpoints of $\Gamma$ at $a$ Reeb chords and at $e$-vertices.  Proposition \ref{prop:ACE112} shows that this injection is actually a bijection.]

\begin{figure}

\quad

\quad 

\labellist
\small
%\pinlabel $1/4$ [t] at 338 21
\pinlabel $\vdots$ [b] at 35 229
\pinlabel $\vdots$ [b] at 163 229
\pinlabel $\vdots$ [b] at 339 229
\pinlabel $\vdots$ [b] at 467 229
\pinlabel $\vdots$ [b] at 35 68
\pinlabel $\vdots$ [b] at 339 68
\pinlabel $\vdots$ [t] at 320 186
\pinlabel $\vdots$ [t] at 53 186
\pinlabel $c_{i,m}$ [t] at 2 153
\pinlabel $c_{i,m}$ [t] at 130 153
\pinlabel $a^{-,-}_{m,j}$ [t] at 194 153
\pinlabel $c_{m,j}$ [t] at 371 153
\pinlabel $a^{+,+}_{i,m}$ [t] at 434 153
\pinlabel $c_{m,j}$ [t] at 498 153
\pinlabel $c_{i,m}$ [t] at 2 -2
\pinlabel $c_{i,m}$ [t] at 130 -2
\pinlabel $a^{-,-}_{m,j}$ [t] at 194 -2
\pinlabel $c_{m,j}$ [t] at 371 -2
\pinlabel $a^{+,+}_{i,m}$ [t] at 434 -2
\pinlabel $c_{m,j}$ [t] at 498 -2
\pinlabel $a^{-,-}_{m,j}$ [t] at 66 -2
\pinlabel $a^{+,+}_{i,m}$ [t] at 306 -2
\pinlabel $c_{i,j}$ [b] at 162 102
\pinlabel $c_{i,j}$ [b] at 466 102
\pinlabel Step~2: [r] at -10 32
\pinlabel Step~1: [r] at -10 192
%\pinlabel Step~3: [r] at -10 34
%\pinlabel $\vdots$ [b] at 186 70
%\pinlabel $b_{i,m}$ [t] at 154 -2
%\pinlabel $b_{m,j}$ [t] at 218 -2
%\pinlabel $b_{i,m}$ [t] at 282 -2
%\pinlabel $b_{m,j}$ [t] at 346 -2
%\pinlabel $c_{i,j}$ [b] at 314 102
\endlabellist
\centerline{ \includegraphics[scale=.6]{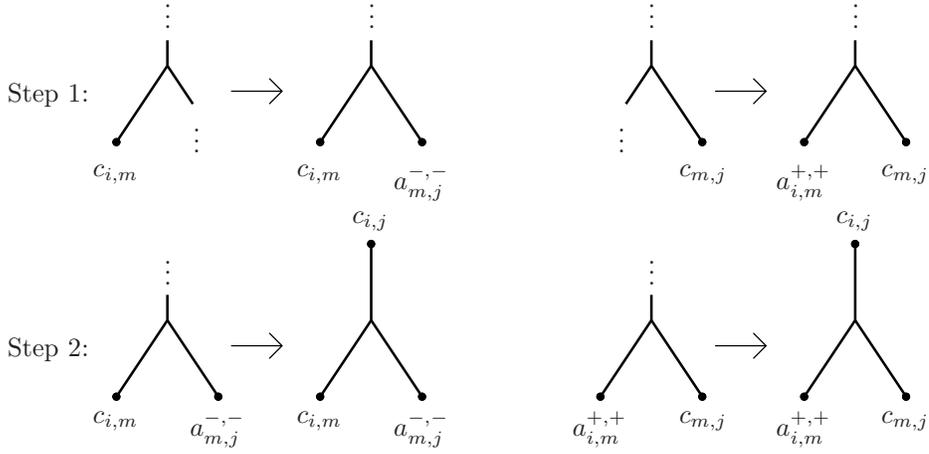} }

\quad 

%%\centerline{ \includegraphics[scale=.8]{images/SubdivideSq} } %Dan had already commented out this line

\caption{Steps 1 and 2 in the Proof of Uniqueness.}
\label{fig:CStep1}
\end{figure}

\medskip

%%%\noindent \textbf{Step 2.}  Any edge terminating at an $a$ must be immediately preceeded by a $c$.   

%%%This is for index reasons.  There must be a bijection between $c$'s and $a$'s.  Part 1 gives an injection from $c$'s to $a$'s and it must be bijective.

\noindent \textbf{Step 2.}  In (the domain of) $\Gamma$ a puncture at a $c$ cannot appear below any internal vertex, $x$.  

\medskip

By Proposition \ref{prop:NoY1NoSW} the internal vertex $x$ can only be a $Y_0$-vertex. 

\begin{itemize}
\item[ Case 1:] Assume $i<j<l$, and an $(i,l)$-flow branches at $x$ into the $(i,j)$-flow to some $c$ and a $(j,l)$-flow.  
\begin{itemize}
\item[$\bullet$] Subcase $c=c_{i,m}$:  Introduce the notation 
\[
\mathit{Sq}(c_{i,m}, c_{i,j}) = [\beta^U_{i,m} - \e, \beta^U_{i,j}+\e] \times [\beta^R_{i,m}-\e, \beta^R_{i,j} +\e],
\]
which may be approximately viewed as a square with lower left corner at $c_{i,m}$ and upper right corner $c_{i,j}$.  See Figure \ref{fig:Sqcij}.   According to Property \ref{pr:monotonicityI}, $-\nabla F_{i,j}$ points outward along each of the edges of $\mathit{Sq}(c_{i,m}, c_{i,j})$.  Therefore, as we follow the $(i,j)$-edge with decreasing $t$, from its end at $c_{i,m}$ towards its beginning at $x$, the edge cannot leave the square.  Thus, 
\[
x \in \mathit{Sq}(c_{i,m}, c_{i,j}) \subset  N_\e Q3(i,j) \cap \left( [1/2,1]\times[1/2,1] \right),
\]
and we apply Lemma \ref{lem:3rdQuad} to the $(j,l)$-branch that starts at $x$ to arrive at a contradiction of (\ref{eq:ac}).

\begin{figure}

\labellist
\small
\pinlabel $c_{i,j}$ [l] at 94 90
\pinlabel $c_{r,s}$ [r] at 155 159
\endlabellist
\centerline{ \includegraphics[scale=.6]{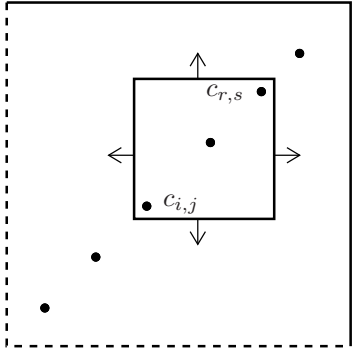} }

\caption{The square, $\mathit{Sq}(c_{i,j},c_{r,s})$.  For any $(p,q)$ such that $(i,j) \preceq (p,q) \preceq (r,s)$ with respect to lexicographic order, $-\nabla F_{p,q}$ points outward along all edges of $\mathit{Sq}(c_{i,j}, c_{r,s})$. }
%The arrows indicate the direction along edges of all $(i,j)$-flows (negative gradients) other than the $(k+2,k+1)$ flow.  
%(right) The partial flow trees in $A$ described in (2) of Lemma \ref{lem:RegionA}.  
\label{fig:Sqcij}
\end{figure}

\item[$\bullet$] Subcase $c=c_{m,j}$:  A similar argument shows
\[
x \in \mathit{Sq}(c_{i,j}, c_{m,j}) \subset  N_\e Q3(m,j) \cap \left( [1/2,1]\times[1/2,1] \right),
\] 
and we again apply Lemma \ref{lem:3rdQuad} to the $(j,l)$-branch starting at $x$ to deduce a contradiction.
\end{itemize}
\item[Case 2:] Assume $h<i<j$, and an $(h,j)$-flow branches at $x$ into an $(h,i)$-flow and the $(i,j)$-flow to $c$.  

An argument similar to that of Case 1 applies with Lemma \ref{lem:1stQuad} used in place of Lemma \ref{lem:3rdQuad}.
%in a similar way to deduce a contradiction.  (This time pay attention to the $1$st quadrant.) 
\end{itemize}

\medskip

At this point, as a consequence of Steps 1 and 2, we see that 
%if $\Gamma$ has one endpoint at a $c$ (or equivalently, at least one endpoint at an $a$ or an $e$-vertex), then $
$\Gamma$ must be one of the trees identified in Terms 1 and 2 above.  This completes the proof.
%Indeed, Steps 1 and 2 show that  they agree with the trees described above in connection with the terms $a^{+,+}_{i,m}c_{m,j}$ and $c_{i,m} a^{-,-}_{m,j}$.   

\end{proof}

%\input{LCH5b}

%\subsection{New approach}

%Use the new conditions on $d_xf_{i,i+1}$ in $[1/4,1/2]$ (with the interval where $d_xf= \epsilon_4$ extended) to establish.

%\begin{theorem}
%Suppose $i< m_1 \leq m_2 <j$ and $x = (X,Y) \in [1/2,1]\times [1/2,1]$.

%If in addition $x$ belongs to the third $(i,m_1)$-quadrant, then any partial flow tree beginning with a $(m_2,j)$-flow at $x$ has 
%\begin{itemize}
%\item[(1)] all endpoints of the partial flow tree are at $a^{-,-}$'s or cusp edge points,
%OR
%\item[(2)] at least $1$ endpoint is at an exceptional generator.
%\end{itemize}

%If instead $x$ belongs to the $1$-st $(m_2,j)$-quadrant, then any partial flow tree beginning with a $(i,m_1)$-flow at $x$ has all endpoints at %$a^{+,+}$'s.
%\end{theorem}

%Idea of Proof:  For first half, show that all $(k,l)$-flows with $m_2\leq k < l \leq j$ cannot cross a union of line segments $x_1= x_1(c_{i,m_1})$, $x_2 =x_1(c_{i,m_1})$ and appropriate lines of slope $\epsilon_3$.  This forces all such flows into the region $[1/4,1/2] \times [1/4,1/2]$.  Thus it suffices to establish the conclusion for flows in this region.  This should basically follow from what is written above.

%\begin{lemma}\label{lem:1412}
%Any partial flow tree with initial point in $[1/4,1/2] \times [1/4,1/2]$ has
%\begin{itemize}
%\item[(1)] all endpoints of the partial flow tree are at $a^{-,-}$'s or cusp edge points,
%OR
%\item[(2)] at least $1$ endpoint is at an exceptional generator.
%\end{itemize}
%\end{lemma}

\subsection{Rigid GFTs without outputs at any $c_{i,j}$}  \label{ssec:112btrees}

For $1\leq i < j \leq n$, let
\[
\partial_b c_{i,j} = \sum_{\Gamma} w(\Gamma)
\]
where the sum is over rigid GFTs that do \emph{not} have outputs at Reeb chords of the form $c_{r,s}$ or $\tilde{c}_{r,s}$.   

\begin{theorem} \label{thm:112btrees}
For a $2$-cell $e^2_\alpha$ of type (1)-(12), and for any $1 \leq i < j \leq n$, we have
\begin{equation} \label{eq:112btrees}
\partial_b C = (I+B_R)(I+B_D) + (I+B_U)(I+B_L) + X_2
%\sum_{i<m<j} a^{+,+}_{i,m} c_{m,j} + \sum_{i<m<j} c_{i,m} a^{+,+}_{m,j} 
\end{equation}
where the matrices $C$, and $B_R, B_D, B_U, B_L$ are as in Theorem \ref{thm:SquareComp}, while $X_2$ is a matrix whose entries belong to the $2$-sided ideal generated by exceptional generators in $N(e^2_\alpha)$.
\end{theorem}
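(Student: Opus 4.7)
The plan is to expand the matrix formula term by term and verify it via enumeration. Written out in $\Z/2$, the $(i,j)$-entry of $(I+B_R)(I+B_D) + (I+B_U)(I+B_L)$ equals
\[
b^U_{i,j} + b^L_{i,j} + b^R_{i,j} + b^D_{i,j} + \sum_{i<m<j}\left(b^U_{i,m}\,b^L_{m,j} + b^R_{i,m}\,b^D_{m,j}\right),
\]
with any entry corresponding to an exceptional generator replaced by $0$. The goal is to exhibit, for each such non-exceptional monomial, an odd number of rigid GFTs contributing it, and then to show that every other rigid GFT from $c_{i,j}$ whose outputs are all non-exceptional $b$-chords has been accounted for.

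For the linear terms, Lemma \ref{lem:blimits} supplies, for each non-exceptional $b^X_{i,j}$, a non-constant $(i,j)$-flow line limiting to $c_{i,j}$ as $t\to-\infty$ and to $b^X_{i,j}$ as $t\to+\infty$. This is a single-edge rigid GFT, and $1$-regularity makes it isolated. For the quadratic terms, I produce a $Y_0$-tree whose vertex lies at an intersection of the two relevant $b$-lines; say, for $b^U_{i,m}\,b^L_{m,j}$, at an intersection of the $b^U_{i,m}$-line and the $b^L_{m,j}$-line inside $\widehat{N}(e^2_\alpha)$. By Lemma \ref{lem:blimits} these lines respectively join $c_{i,m}$ to $b^U_{i,m}$ and $c_{m,j}$ to $b^L_{m,j}$, and the lex-ordering of the $\beta^X_{\cdot,\cdot}$ (Property \ref{pr:Location1} together with Remark \ref{rem:betanotate}) combined with the monotonicity Properties \ref{pr:monotonicityI}--\ref{pr:monotonicityII} forces these two arcs to cross transversally in an odd number of points, exactly as in Lemma \ref{lem:IntersectB}. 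At each intersection point $p$, a $Y_0$ can be placed with outgoing edges along the two $b$-lines, and the incoming $(i,j)$-edge can be traced backward in $-\nabla F_{i,j}$-time; the topological content I must establish is that this backward orbit limits to $c_{i,j}$. I prove this by setting up a ``$2$nd quadrant'' analog of Lemmas \ref{lem:1stQuad}--\ref{lem:3rdQuad}: the union of $c_{i,j}$'s unstable disk for $-\nabla F_{i,j}$ fills the quadrant $Q2(i,j)$ containing $p$, since the complement of that disk has boundary built out of monotonicity barriers which trap all trajectories. The symmetric argument yields the $b^R_{i,m}\,b^D_{m,j}$ trees.

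For uniqueness, let $\Gamma$ be a rigid GFT from $c_{i,j}$ with all outputs at non-exceptional $b$-chords. By Proposition \ref{prop:NoY1NoSW} every internal vertex is a $Y_0$, and a valence count gives exactly $k-1$ internal vertices when $\Gamma$ has $k$ $b$-outputs. The case $k=1$ is handled by Lemma \ref{lem:blimits}: the unique rigid single-edge GFTs from $c_{i,j}$ are the four lines already enumerated. For $k=2$, the single $Y_0$ sits at an intersection $p$ of $b^X_{i,m}$-line and $b^Y_{m,j}$-line, and I must show that only the pairs $(X,Y)\in\{(U,L),(R,D)\}$ can appear. For each of the four disallowed pairs I show either that the two $b$-lines are separated within $\widehat{N}(e^2_\alpha)$ by one of the monotonicity barriers of Properties \ref{pr:monotonicityI}--\ref{pr:monotonicityIV}, or that any intersection point lies outside the $(i,j)$-basin of $c_{i,j}$, the latter ruled out again by a quadrant-invariance argument. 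For $k\geq 3$, I adapt the strategy of Step 1 in the proof of Proposition \ref{prop:LCH1comp}: consider a ``lowest'' $Y_0$ whose two outgoing edges both terminate directly at $b$-chords; the same $(U,L)/(R,D)$ restriction applies, forcing this $Y_0$ to feed into an $(i',j')$-edge attached to the corresponding interior $c$-chord, but then the incoming branch begins a PFT with all $b$-endpoints entering this $c$-chord's neighborhood, which an iteration of the staircase-disjointness argument from Lemma \ref{lem:AllBs1Sk} forbids. Induction on $k$ then completes the uniqueness step.

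The hard part will be the uniqueness step when $k=2$ and when $k\geq 3$. The challenge is that Properties \ref{pr:1cmono}--\ref{pr:monotonicityIV} pin down only the \emph{signs} of $-\nabla F_{i,j}$ on various barriers and not the precise trajectories; so all arguments must proceed by intersection-theoretic/topological means using the barriers to corral the trajectories. Checking all six disallowed $(X,Y)$ pairs against all twelve square types requires a uniform combinatorial accounting, and the inductive ruling out of $k\geq 3$ trees rests on the analog of Lemma \ref{lem:AllBs1Sk} in the two-dimensional setting, which must be proved from the staircase arrangement of the $b$-lines rather than by direct disjointness of vertical strips.
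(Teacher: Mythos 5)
Your enumeration of the linear terms and of the allowed quadratic terms is essentially sound: the single-edge trees are the $b^X_{i,j}$-lines, and for the $(U,L)$ and $(R,D)$ pairs the odd intersection number does follow from the interleaving of the endpoints of the two $b$-lines on the boundary of the relevant region, with backward-flow trapping carrying the incoming edge to $c_{i,j}$. The gap is in the uniqueness step. You treat the disallowed configurations --- the ``wrong-order'' pairs such as $b^L_{i,m}b^U_{m,j}$, and all trees with $k\ge 3$ outputs --- as nonexistent, to be excluded by barrier separation or by iterating the disjointness argument of Lemma \ref{lem:AllBs1Sk}. That argument has no two-dimensional analogue: in the $1$-cell case the stable manifolds of distinct $b$'s lie in \emph{disjoint} vertical strips, whereas in a $2$-cell the $b^U_{i,m}$- and $b^L_{m,j}$-lines are confined only to the \emph{nested} regions $B^{LU}(\cdot,\cdot)$ and are in fact forced to intersect --- that is exactly where your quadratic terms come from. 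Consequently a $b^L_{i,m}$-line can meet a $b^U_{m,j}$-line (an even, possibly nonzero, number of times), and the top branch of a two-output tree can meet further $b$-lines, producing genuine rigid GFTs with three or more outputs. Such trees are not forbidden; they must instead be shown to cancel in pairs mod $2$, and neither ``separation by barriers'' nor ``lying outside the basin of $c_{i,j}$'' is the reason they do not contribute.

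What is missing is therefore an argument that the total $\Z/2$-count is insensitive to the actual, uncontrolled positions of the $b$-lines, so that it can be evaluated on a convenient model in which no higher-order or wrong-order intersections occur. This is the point of the paper's ``generalized $b$-manifold'' machinery: the top branches of all $b^{LU}$-trees form such an object for a fixed disk datum, and Proposition \ref{prop:dAind} shows $\partial_A I_{i,j}$ depends only on the datum, via a homotopy argument whose one nontrivial move (a path sliding past a triple point) is resolved by the Jacobi identity for the matrix commutators recording upper and lower indices. Without this invariance statement, or some substitute proving the pairwise cancellation of the wrong-order and $k\ge 3$ trees directly, your induction on $k$ does not close.
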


We prove Theorem \ref{thm:112btrees} at the conclusion of this section after reducing the computation to a purely topological problem.  

\subsubsection{$b$-trees}

\begin{lemma}  \label{lem:112reduce}
In $N(e^2_\alpha)$ where $e^2_\alpha$ has type (1)-(12), any rigid GFT starting at a $c_{i,j}$ and without outputs at Reeb chords of the form $c_{r,s}$ or $\tilde{c}_{r,s}$ must have
\begin{enumerate}
\item all outputs at Reeb chords of the form $b^X_{i,j}$ or $\tilde{b}^X_{i,j}$ with $X \in \{R, D, L, U\}$; and
\item all internal vertices are $Y_0$'s.
\end{enumerate}
\end{lemma}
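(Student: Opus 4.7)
The plan is to derive both conclusions as essentially immediate consequences of the two structural results already established, namely Proposition \ref{prop:NoY1NoSW} and Proposition \ref{prop:ACE112}, together with the enumeration of Reeb chord types in $N(e^2_\alpha)$ provided by Properties \ref{pr:Reeb0}, \ref{pr:Reeb1}, and \ref{pr:Reeb2}.

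First I would dispose of conclusion (2). Since the rigid GFT $\Gamma$ has its image in $N(e^2_\alpha)$ (by Lemma \ref{lem:N}) and $e^2_\alpha$ is of type (1)--(12), Proposition \ref{prop:NoY1NoSW} states that no $Y_1$- or $sw$-vertex can occur. By Theorem \ref{thm:1regV}, every internal vertex of a rigid GFT is a $Y_0$-, $Y_1$-, or $sw$-vertex, so only $Y_0$-vertices remain as possibilities.

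Next I would handle conclusion (1) using the index formula. Let $A$, $C$, and $E$ respectively denote the numbers of negative punctures of $\Gamma$ at Reeb chords of the form $a^{\pm,\pm}_{i,j}$, at Reeb chords of the form $c_{r,s}$ or $\tilde{c}_{r,s}$, and at $e$-vertices. By hypothesis $C=0$, and Proposition \ref{prop:ACE112} gives $A-C+E=0$, so $A+E=0$; since both are non-negative, $A=0$ and $E=0$. It remains only to observe that the complete list of Reeb chords in $N(e^2_\alpha)$ is $\{a^{\pm,\pm}_{i,j}\}\cup\{b^X_{i,j},\tilde b^X_{i,j}\}\cup\{c_{i,j},\tilde c_{i,j}\}$ (by Properties \ref{pr:Reeb0}, \ref{pr:Reeb1}, \ref{pr:Reeb2} applied to the $0$-cells, $1$-cells, and the interior respectively, together with Lemma \ref{lem:N} to ensure $\Gamma$ does not leave $N(e^2_\alpha)$). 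Since the outputs of $\Gamma$ are external vertices that are either $e$-vertices or negative punctures at some Reeb chord, and we have excluded $e$-vertices, $a$-chords, $c$-chords, and $\tilde c$-chords, every output must sit at some $b^X_{i,j}$ or $\tilde b^X_{i,j}$.

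There is no genuine obstacle here; the lemma is essentially a bookkeeping consequence of the rigidity formula and the classification of vertices in $1$-regular trees, both of which have already been set up. The only point one must be slightly careful about is to cite Lemma \ref{lem:N} so as to justify that the classification of Reeb chords in $N(e^2_\alpha)$ genuinely exhausts the set of possible negative punctures for $\Gamma$.
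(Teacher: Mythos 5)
Your proof is correct and follows exactly the paper's own argument: the paper's proof is a one-line citation of Propositions \ref{prop:NoY1NoSW} and \ref{prop:ACE112}, and your write-up simply fills in the same deduction (no $Y_1$- or $sw$-vertices, then $A-C+E=0$ with $C=0$ forcing $A=E=0$, leaving only $b$-type outputs).
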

\begin{proof}
This is a consequence of Propositions \ref{prop:NoY1NoSW} and \ref{prop:ACE112}.
\end{proof}
\begin{definition}
A {\bf $b$-tree} is a PFT or GFT that satsifies conditions (1) and (2) from Lemma \ref{lem:112reduce} and also has
\begin{enumerate}
\item[(3)] no outputs at exceptional generators.
\end{enumerate}
Define a {\bf $b^{LU}$-tree} to be a $b$-tree with all endpoints at $b^L_{i,j}$ and/or $b^U_{i,j}$ Reeb chords.  Define a { \bf $b^{RD}$-tree} in a similar manner.
\end{definition}
%Rephrase the computation of $\partial c_{i,j}$ as in swallowtail section by writing $ \partial_c c_{i,j} + \partial_b c_{i,j} +\partial_x c_{i,j}$.  Observe that $\partial_b c_{i,j}$ is a sum over {\bf $b$-trees} which we define as PFTs or GFTs with only $Y_0$'s and all endpoints at (non-exceptional) $b^X_{i,j}$ Reeb chords.  Define a {\bf $b^{LU}$-tree} to be a $b$-tree with all endpoints at $b^L$ and/or $b^U$ Reeb chords.  Define {\bf $b^{RD}$-tree} in a similar manner.  

For $1 \leq i < j \leq n$, define a region $B^{LU}(i,j) \subset \widehat{N}(e^2_\alpha)$ to be the closed subset that is 
\begin{itemize}
\item  bounded on the right by $x_1 = \beta^U_{i,j} + \e$;
\item bounded below by (i) the horizontal segment from  $(\beta^U_{i,j}+\e, \beta^R_{i,j}-\e)$ to the barrier $B_2$ from Property \ref{pr:monotonicityIV}; (ii)  the part of the barrier $B_2$ that lies between $x_2 = \beta^R_{i,j}-\e$ and $x_2 = 1/2$; and (iii) the horizontal segment at $x_2 = 1/2$ from $B_2$ to the left boundary of $\widehat{N}(e^2_\alpha)$; and  
\item bounded on the left and above by $\partial \widehat{N}(e^2_\alpha)$.
\end{itemize}
In addition, define regions $B^{RD}(i,j)$ to be the reflection of $B^{LU}(i,j)$ across the line $x_1 = x_2$.  
See Figure \ref{fig:BLUij}.
%\footnote{\ms{7/22/15: Can you label $x_1 = 1/4$ in Figure \ref{fig:BLUij}?} \dr{7/24:  Of course the figure is not to scale at all.  Why do you want $x_1=1/4$?  To remind readers of the approximate location of $B_2$ (which is actually only narrowed down to $x_1 \in (1/4,1/2)$ in the statement of the property)?  Nothing done yet, but this is an easy thing to add if you convince me its a good idea.}}

\begin{figure}

\quad

\labellist
\small
%\pinlabel $1/4$ [t] at 338 21
\pinlabel $c_{i,j}$ [br] at 328 142
\pinlabel $x_2=\beta^R-\e$ [l] at 380 128
\pinlabel $x_1=\beta^U_{i,j}+\e$ [t] at 344 92
\pinlabel $B_2$ [l] at 238 32
\pinlabel $x_2=1/2$ [r] at -2 0
%\pinlabel $I_{2,3}$ [r] at 360 208
\endlabellist
\centerline{ \includegraphics[scale=.6]{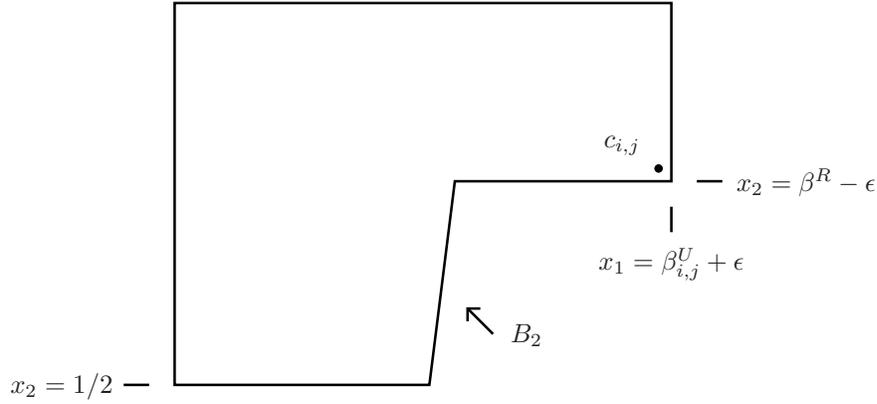} }

%%\centerline{ \includegraphics[scale=.8]{images/SubdivideSq} } %Dan had already commented out this line

\caption{The region $B^{LU}(i,j)$.  In the later Section \ref{sec:bLU13trees}, we will modify the definition of $B^{LU}(i,j)$ so that the segment $B_2$ from Property \ref{pr:monotonicityIV} is replaced with the Swallow Tail Barrier, $P$, from Property \ref{pr:STBnew}. 
%pictured with flow lines from the stable manifolds of $b^U_{i,j}$ and $b^L_{i,j}$.
}
\label{fig:BLUij}
\end{figure}

\begin{lemma} \label{lem:BLUij}   %The intersection of any edge, $\gamma$, of a $b^{LU}$-tree (resp. a $b^{RD}$-tree) with $\widehat{N}(e^2_\alpha)$ must be an $(i,j)$-flow line for some $i<j$, and must have its image contained in the region $B^{LU}(i,j)$ (resp. $B^{RD}(i,j)$).  
 %The intersection of A
 Any edge, $\gamma$, of a $b^{LU}$-tree (resp. a $b^{RD}$-tree) must 
 \begin{enumerate}
 \item be an $(i,j)$-flow line for some $i<j$ and 
\item have the intersection of its image with $\widehat{N}(e^2_\alpha)$ contained in the region $B^{LU}(i,j)$ (resp. $B^{RD}(i,j)$).  Moreover, the image of $\gamma$ is entirely in $\widehat{N}(e^2_\alpha)$ unless $\gamma$ is part of a $b^X_{i,j}$-line for $X \in \{L,D,R,U\}$.    
\end{enumerate}
\end{lemma}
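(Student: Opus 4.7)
My plan is to prove parts (1) and (2) simultaneously by bottom-up induction on the combinatorial tree. Since Lemma \ref{lem:112reduce} restricts internal vertices to $Y_0$-vertices, the induction has a uniform structure. Part (1) follows immediately: leaves end at $b^L_{k,l}$ or $b^U_{k,l}$ which, being non-exceptional (Definition \ref{def:ex-gen}), force $k<l$; and at each $Y_0$-vertex an $(i,j)$-edge with $i<j$ splits into $(i,m)$- and $(m,j)$-edges with $i<m<j$.

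For the base case of (2), each leaf edge is (a portion of) a $b^X_{k,l}$-line running from $c_{k,l}$ at $t=-\infty$ (by Lemma \ref{lem:blimits}) to $b^X_{k,l}$. The $b^U_{k,l}$-line is handled by Property \ref{pr:monotonicityI} in the corner $C_{R,U}$, which pins $x_1$ close to $\beta^U_{k,l}$ and keeps the line in $B^{LU}(k,l)$. For the $b^L_{k,l}$-line, the key observation is that Property \ref{pr:monotonicityIV} makes $B_2$ a one-way barrier: $-\nabla F_{k,l}$ at $B_2$ points east, so a forward flow line can cross $B_2$ only in the west-to-east direction. Since the $b^L_{k,l}$-line travels from $c_{k,l}$ east of $B_2$ to $b^L_{k,l}$ west of $B_2$, it must detour over the top of $B_2$ at $x_2 > 3/4$. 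Property \ref{pr:monotonicityII} simultaneously prevents the line from dipping below $x_2=1/2$. Together these confine the line to $B^{LU}(k,l)$.

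For the inductive step, consider an internal $(i,j)$-edge $\gamma$ ending at a $Y_0$-vertex $v_2$ with outgoing $(i,m)$- and $(m,j)$-branches. By induction $v_2 \in B^{LU}(i,m) \cap B^{LU}(m,j)$; using the lex ordering \eqref{eq:StaircaseAxiom} of the $\beta$'s, one checks this intersection sits inside $B^{LU}(i,j)$. Now trace $\gamma$ backward from $v_2$ and examine each boundary component of $B^{LU}(i,j)$: Property \ref{pr:monotonicityI} handles the right edge $x_1 = \beta^U_{i,j}+\e$ and the horizontal $x_2 = \beta^R_{i,j}-\e$, Property \ref{pr:monotonicityII} handles $x_2 = 1/2$, Property \ref{pr:1cells} handles $\partial \widehat{N}(e^2_\alpha)$, and the one-way barrier property of $B_2$ handles the $B_2$-segment. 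The backward flow continues until it reaches the starting vertex $v_1$, which is either an earlier $Y_0$ (in some $B^{LU}$-region by induction) or $c_{p,q}$ (located in $B^{LU}(p,q)$ by Property \ref{pr:Reeb2}).

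The main obstacle will be the one-way barrier $B_2$: $B^{LU}(i,j)$ is not strictly backward-invariant along $B_2$, so in principle a backward flow line starting inside could exit by crossing $B_2$ east-to-west. The resolution exploits the strict monotonicity of $F_{i,j}$ along a gradient flow line together with the rightward-only crossing rule: a flow line crosses $B_2$ at most once and only eastward, so if both endpoints of $\gamma$ sit on the same side of $B_2$ the entire edge stays on that side, and both sides are contained in $B^{LU}(i,j)$. Confirming that the tree structure forces the endpoints to be arranged consistently with respect to $B_2$ (using the locations of leaves, the initial puncture, and the inductive hypothesis) will be the most delicate part of the argument.
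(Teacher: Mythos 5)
Your overall strategy (bottom-up induction on the tree, base case from the $b^X_{i,j}$-lines, inductive step via the nesting $B^{LU}(i,m)\cap B^{LU}(m,j)\subset B^{LU}(i,j)$) is the same as the paper's, but two points need attention. First, your ``main obstacle'' is not actually there: you have the orientation of $B_2$ relative to $B^{LU}(i,j)$ backwards. The only portion of $B_2$ that lies in $\partial B^{LU}(i,j)$ is the piece with $1/2 \le x_2 \le \beta^R_{i,j}-\e$, which is exactly the range where Property \ref{pr:monotonicityIV} applies; there the region $B^{LU}(i,j)$ sits on the \emph{left} of $B_2$ while $-\nabla F_{i,j}$ points to the \emph{right}, i.e.\ out of the region. (The part of $B^{LU}(i,j)$ lying to the right of $B_2$ is separated from it by the horizontal segment $x_2=\beta^R_{i,j}-\e$, where Property \ref{pr:monotonicityI} again gives outward pointing.) Hence $-\nabla F_{i,j}$ points outward along the entire boundary, the region is honestly invariant under the backward flow from the edge's terminal point, and no crossing-count argument or analysis of ``which side of $B_2$ the endpoints lie on'' is needed --- which is fortunate, since you left precisely that step open as ``the most delicate part.''

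Second, and more seriously, your induction does not close as written because Property \ref{pr:monotonicityII} (the $x_2=1/2$ barrier) is only available when $S_i$ and $S_j$ neither cross above $e^1_U$ nor meet one another at a cusp above $e^1_U$; moreover, when one of the two sheets ends at a cusp edge above $e^1_U$, the difference function $F_{i,j}$ is not even defined on all of $B^{LU}(i,j)$. The paper handles this by proving the stronger statement that each edge lies in a smaller region $A^{LU}(i,j)$ --- equal to $B^{LU}(i,j)$ intersected with $\{x_1\ge 1/4\}$ or $\{x_1\ge -3/8\}$ in the crossing/cusp cases, where Properties \ref{pr:monotonicityI} and \ref{pr:CuspTransversality} supply the missing outward-pointing boundary segments --- and then verifying $A^{LU}(i,m)\cap A^{LU}(m,j)\subset A^{LU}(i,j)$ case by case (for instance, for a (2Cr) edge above $U$ both $A^{LU}(k,k+1)$ and $A^{LU}(k+1,k+2)$ are cut off at $x_1=1/4$, which is what saves the inclusion). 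Without some version of this refinement your argument already breaks at the base case for pairs that cross above $e^1_U$, for which $b^L_{i,j}$ does not exist and the $x_2=1/2$ barrier is unavailable.
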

(The terminology \emph{$b^X_{i,j}$-line} from (2) is as in \ref{sssec:blines}.)

\begin{proof}  First, in some cases, we need to focus on certain subsets $A^{LU}(i,j) \subset B^{LU}(i,j)$ defined by
\begin{enumerate}
\item $A^{LU}(i,j) = B^{LU}(i,j) \cap \{x_1 \geq 1/4\}$
%\{(x_1,x_2) \in \widehat{N}(e^2_\alpha) \, |\, \beta^U_{i,j} -\e \leq x_1 \leq \beta^U_{i,j} +\e, \quad \beta^R_{i,j}-\e \leq x_2\}$ 
if either $S_i$ and $S_j$ cross above $e^1_U$ or meet one another at a cusp above $e^1_U$;
\item $A^{LU}(i,j) = B^{LU}(i,j) \cap \{x_1 \geq -3/8\}$ when exactly one of $S_i$ and $S_j$ ends at a cusp edge above $e^1_U$; and
\item $A^{LU}(i,j) = B^{LU}(i,j)$ in all other cases.
\end{enumerate}
Observe that that $-\nabla F_{i,j}$ points outward along all segments of $\partial A^{LU}(i,j)$.  [With $A^{LU}(i,j)$ as in (1), for boundary segments that are not part of $\partial \widehat{N}(e^2_\alpha)$, just use Properties \ref{pr:monotonicityI}, \ref{pr:monotonicityII} and \ref{pr:monotonicityIV}; as in (3) add Properties \ref{pr:monotonicityII}; as in (2) add Property \ref{pr:CuspTransversality}.]

We prove the proposition by establishing the same result with each $B^{LU}(i,j)$ replaced with $A^{LU}(i,j)$ (this is a stronger statement) using induction on $N$, the number of $Y_0$'s in the PFT beginning with $\gamma$.
By Proposition \ref{prop:bUS}, all of the $b^L_{i,j}$- and $b^U_{i,j}$-lines intersect  $\partial \widehat{N}(e^2_\alpha)$ at a point in $\partial A^{LU}(i,j)$.  [Note that if $A^{LU}(i,j)$ is as in (1) or (2) then there is no $b^L_{i,j}$-line.]   It follows that as $t$ decreases they must remain in $A^{LU}(i,j)$.  Thus, the case $N = 0$ holds.  
%Note also that the part of the $b^L_{i,j}$- and $b^U_{i,j}$-lines that lie outside of $\widehat{N}(e^2_\alpha)$ are contained in disjoint strips in $e^1_L$ and $e^1_U$.

For the inductive step, consider the $Y_0$ at the end of $\gamma$. Both of the PFTs that begin with the outgoing edges of the $Y_0$ are themselves $b^{LU}$-trees.  Thus, by the inductive hypothesis, this $Y_0$ has its image in $A^{LU}(i,m) \cap A^{LU}(m,j)$ for some $m$ with $i < m < j$.  [Note that the parts of the $b^L_{i,j}$- and $b^U_{i,j}$-lines that lie outside of $B^{LU}_{i,j}$ are contained in disjoint strips in $e^1_L$ and $e^1_U$ and cannot intersect one another.]
Thus, the result follows once we verify that:

\medskip

\noindent {\bf Claim.} For any Type (1)-(12) square, $e^2_\alpha$, for all $i < m <j$, we have  $A^{LU}(i,m) \cap A^{LU}(m,j) \subset A^{LU}(i,j)$.

\medskip

That 
\[
A^{LU}(i,m) \cap A^{LU}(m,j) \subset B^{LU}(i,m) \cap B^{LU}(m,j) \subset B^{LU}(i,j)
\] holds in general.  [The second inclusion is easily verified using the lexicographic ordering of the $\beta_{i,j}$.  See Figure \ref{fig:BLUijPf}.]  Thus, when $A^{LU}(i,j) = B^{LU}(i,j)$ the Claim follows.  Now, if $A^{LU}(i,j)$ is as in (2), then at least one of the pairs $(i,m)$ and $(m,j)$ includes a sheet that ends at a cusp edge above $e^1_U$, and so at least one of $A^{LU}(i,m)$ or $A^{LU}(m,j)$ lies entirely to the right of $x_1=-3/8$ so that 
\[
A^{LU}(i,m) \cap A^{LU}(m,j) \subset (B^{LU}(i,m) \cap B^{LU}(m,j)) \cap \{x_1 \geq -3/8\} \subset 
\]
\[
B^{LU}(i,j) \cap \{x_1 \geq -3/8\} = A^{LU}(i,j).
\] 
Finally, if $A^{LU}(i,j)$ is as in (1), then the only way that there exists $m$ with $i<m<j$ is when $\tilde{L}$ has two crossings, i.e. Type (2Cr), above $N(e^1_U)$, and $i = k, m=k+1, j=k+2$.   Since sheets $k+1$ and $k+2$ also cross in this case, $A^{LU}(k+1,k+2)$ is also as in (1), and we see that
\[
A^{LU}(k,k+1) \cap A^{LU}(k+1,k+2)  \subset (B^{LU}(i,m) \cap B^{LU}(m,j)) \cap \{x_1 \geq 1/4\} \subset 
\]
\[
B^{LU}(i,j) \cap \{x_1 \geq 1/4\} = A^{LU}(i,j).
\] 
\end{proof}

\begin{figure}

\quad

\quad

\labellist
\small
%\pinlabel $1/4$ [t] at 338 21
\pinlabel $c_{m,j}$ [l] at 252 152
\pinlabel $c_{i,m}$ [l] at 156 56
\pinlabel $c_{i,j}$ [l] at 204 104
\pinlabel $x_1=1/4$ [b] at 0 198
%\pinlabel $I_{2,3}$ [r] at 360 208
\endlabellist
\centerline{ \includegraphics[scale=.6]{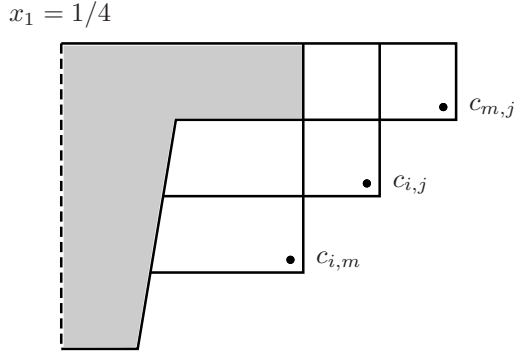} }

%%\centerline{ \includegraphics[scale=.8]{images/SubdivideSq} } %Dan had already commented out this line

\caption{The regions $B^{LU}(i,m)$, $B^{LU}(m,j)$, and $B^{LU}(i,j)$ all agree to the left of $x_1 =1/4$.  There bordering segments are pictured, with $B^{LU}(i,m) \cap B^{LU}(m,j)$ shaded.  }
\label{fig:BLUijPf}
\end{figure}

\begin{proposition} \label{prop:bLUbRD}  Any $b$-tree is either a $b^{LU}$-tree or a $b^{RD}$-tree.
\end{proposition}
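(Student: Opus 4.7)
The plan is to argue by contradiction, reducing the claim to a local geometric fact about the intersections of the regions $B^{LU}$ and $B^{RD}$ from Lemma \ref{lem:BLUij}.

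Suppose $\Gamma$ is a $b$-tree with outputs of both types: at least one at some chord $b^X_{p,q}$ with $X\in\{L,U\}$ and at least one at some $b^Y_{p',q'}$ with $Y\in\{R,D\}$. First I would locate an internal vertex $v$ of $\Gamma$ whose two outgoing subtrees are of opposite pure types---one a $b^{LU}$-tree and the other a $b^{RD}$-tree. Such a $v$ is obtained by a standard descent from the root: at each $Y_0$ that has a still-mixed outgoing subtree, descend into it; the process terminates, because $\Gamma$ is finite, at a $Y_0$ whose two outgoing subtrees are each pure and, since their union is mixed, of opposite types. By Proposition \ref{prop:NoY1NoSW}, $v$ is a $Y_0$-vertex, so its incoming edge is an $(i,j)$-flow with $i<j$ and its outgoing edges are $(i,m)$- and $(m,j)$-flows with $i<m<j$. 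Applying Lemma \ref{lem:BLUij} to each outgoing subtree then forces $v$ to lie in $B^{LU}(i,m)\cap B^{RD}(m,j)$ or in $B^{RD}(i,m)\cap B^{LU}(m,j)$.

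The heart of the argument will be showing that both of these intersections are empty; the second is related to the first by the reflection $x_1\leftrightarrow x_2$ (which interchanges $B^{LU}$ and $B^{RD}$), so I would focus on the first. By Remark \ref{rem:betanotate} we have $\beta^U_{p,q}=\beta^R_{p,q}=\beta_{p,q}$, and by (\ref{eq:StaircaseAxiom}) we have $\beta_{i,m}<\beta_{m,j}$. The region $B^{LU}(i,m)$ is contained in $\{x_1\leq\beta_{i,m}+\e\}$. On the other hand, the reflected region $B^{RD}(m,j)$ has its upper-left boundary (the reflection of piece (i) of $B^{LU}(m,j)$'s lower boundary, horizontal at $x_2=\beta_{m,j}-\e$) equal to the vertical segment $x_1=\beta_{m,j}-\e$, and this is the binding bound wherever $x_2$ lies above the barrier $B_1\subset[1/2,3/4]\times(1/4,1/2)$. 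Consequently, on $\{x_2\geq 1/2\}$ the intersection would force $\beta_{m,j}-\e\leq x_1\leq\beta_{i,m}+\e$, which is impossible by the disjointness of $[\beta_{i,m}-\e,\beta_{i,m}+\e]$ and $[\beta_{m,j}-\e,\beta_{m,j}+\e]$ guaranteed by Property \ref{pr:Location1}.

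The main technical obstacle I anticipate is handling the narrow slice where $x_2$ slips slightly below $1/2$ along piece (i) of $B^{LU}(i,m)$'s lower boundary. In that slice $B^{LU}(i,m)$ is pinched against the barrier $B_2\subset(1/4,1/2)\times[1/2,3/4]$, forcing $x_1<1/2$, whereas $B^{RD}(m,j)$ above $B_1$ still requires $x_1\geq\beta_{m,j}-\e\geq 1/2-\e$. Since Property \ref{pr:monotonicityIV} keeps $B_2$ uniformly bounded away from $x_1=1/2$, choosing $\e$ small enough rules out the slice as well. The real bookkeeping burden is tracking which $\beta$-value controls which boundary piece after reflection, but once this is aligned the strict gap $\beta_{m,j}>\beta_{i,m}$ from the staircase axiom delivers the contradiction.
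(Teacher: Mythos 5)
Your argument is correct and is essentially the paper's: the paper proves this by induction on the number of $Y_0$-vertices, which is just the inductive form of your "lowest mixed vertex" descent, and in both cases the crux is that $B^{LU}(i,m)$ and $B^{RD}(m,j)$ are disjoint for $i<m<j$ (the paper asserts this disjointness without the boundary-piece bookkeeping you carry out, so your extra detail is a harmless elaboration rather than a divergence).
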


\begin{proof}
Use induction on the number of $Y_0$'s and Lemma \ref{lem:BLUij}.  At the inductive step, note that the regions $B^{LU}(i,j)$ and $B^{RD}(i',j')$ are all disjoint except when $(i,j) = (i',j')$. 
\end{proof}

\subsubsection{Topological preliminary}

Let $D$  be a (region diffeomorphic to) a  closed disk in $\mathbb{R}^2$.  Suppose that we are given a collection of points $\{p_1, \ldots, p_M\} \subset \partial D$ and a collection of closed intervals $\{I_{i,j} \, | \, 1 \leq i < j \leq n\}$ with $I_{i,j} \subset \partial D$.  In addition, suppose that
\begin{itemize}
\item the points $p_m$ and intervals $I_{i,j}$ are all pairwise disjoint from one another, and
\item each point $p_m$ is assigned an {\bf upper index} $i(p_m)$ and a {\bf lower index} $j(p_m)$ that are integers with $1 \leq i(p_m) < j(p_m) \leq n$.
\end{itemize}
We refer to such a triple $(D, \{p_m\}, \{I_{i,j}\})$ as a {\bf disk datum}.

\begin{definition} \label{def:genbmfd} A {\bf generalized $b$-manifold} for the disk datum $(D, \{p_m\}, \{I_{i,j}\})$ is a collection $A = \bigsqcup_{r=1}^{n-1} A_{r}$ of (embedded) paths in $D$ with the following properties:
\begin{enumerate}

%\item The paths in $A$ all intersect transversally. These intersections are all disjoint from other paths in $A$ and are disjoint from the endpoints...  
\item Each path $\gamma \in A_r$ has upper and lower indices, $1 \leq i(\gamma) < j(\gamma) \leq n$, satisfying $r =j(\gamma)- i(\gamma)$.
%such that $j(\gamma_1) = i(\gamma_2)$, then $\gamma_1$ and $\gamma_2$ are transverse and are disjoint from one another at their boundary points.  

\item For any $\gamma \in A$, the initial point of $\gamma$ belongs to $I_{i(\gamma), j(\gamma)}$.

%The paths in $A$ all intersect transversally. These intersections are all disjoint from other paths in $A$ and are disjoint from the endpoints...  

%\item Paths in $A_1$ are in bijection with those points $p_m$, with $1 =j(p_m)- i(p_m)$.  The path, $\gamma_m$, corresponding to $p_m$ has $\left(i(\gamma_m), j(\gamma_m)\right) = \left(i(p_m), j(p_m)\right)$.  The endpoint of $\gamma_m$ is at $p_m$.
\item For $r \geq 1$, paths in $A_r$ are in bijection with (i) points $p_m$, with $r =j(p_m)- i(p_m)$, and (ii) triples $(\gamma_1, \gamma_2, x)$ where $\gamma_1 \in A_{r_1}$, $\gamma_2 \in A_{r_2}$, $x \in \gamma_1 \cap \gamma_2$, that satisfy $r_1 +r_2 = r$, and $j(\gamma_1) = i(\gamma_2)$.  

\item A path $\gamma \in A_r$ corresponding to a point $p_m$ as in (3) (i) has its endpoint at $p_m$ and satisfies $\left(i(\gamma), j(\gamma)\right) = \left(i(p_m), j(p_m)\right)$.  

\item A path $\gamma \in A_r$ corresponding to a triple $(\gamma_1,\gamma_2,x)$ as in (3) (ii) has its endpoint at $x$ and satisfies $(i(\gamma), j(\gamma)) = (i(\gamma_1), j(\gamma_2))$.

\item All intersections between distinct $\gamma_1, \gamma_2 \in A$ are transverse.  Moreover, the only case when intersections can occur at endpoints of a path, or when three paths meet at a common point is as specified in (5).  There are no points where four or more paths intersect.

\end{enumerate}
\end{definition}

See Figure \ref{fig:GenBMfld} for an example of a generalized $b$-manifold.

For each path $\gamma \in A$, we assign a {\bf word}, $w(\gamma)$, that is a formal product of points from $\{p_1, \ldots, p_M\}$, and a {\bf sign}, $\sigma(\gamma) \in \{+1,-1\}$, as follows.  
\begin{itemize}
\item For $\gamma$ corresponding to a point $p_m$ as in (3) (i),  let $w(\gamma) = p_m$ and $\sigma(\gamma)=+1$.  
\item For $\gamma$ corresponding to a triple $(\gamma_1, \gamma_2, x)$ as in (3) (ii), let  $w(\gamma) = w(\gamma_1)\cdot w(\gamma_2)$ and $\iota(\gamma_1,\gamma_2,x) \sigma(\gamma_1)\sigma(\gamma_2)$.  
Here, $\iota(\gamma_1,\gamma_2,x)$ is the intersection sign of $\gamma_1 \cap \gamma_2$ at $x$.  (We assume an orientation on $D$ is fixed.)
\end{itemize}
For $1 \leq i < j \leq n$, we define an element $ \partial_A I_{i,j} \in  \Z \langle p_1, \ldots, p_M \rangle$ in the free associative $\Z$-algebra generated by $\{p_1, \ldots, p_M\}$ by
\begin{equation} \label{eq:pbIij}
\partial_A I_{i,j} = \sum_{\gamma} \sigma(\gamma)w(\gamma)
\end{equation}
where the sum is over those $\gamma \in A$ that begin on $I_{i,j}.$
(We introduce here a $\Z$-algebra instead of a $\Z_2$-algebra because we hope to generalize our main DGA computations to $\Z$-coefficients in a future paper. For this article, however, the reader can think of $ \partial_A I_{i,j} \in  \Z_2 \langle p_1, \ldots, p_M \rangle.$)

\begin{figure}

\quad

\quad

\labellist
\small
%\pinlabel $1/4$ [t] at 338 21
\pinlabel $a$ [tr] at 28 12
\pinlabel $b$ [tl] at 132 12
\pinlabel $I_{1,2}$ [bl] at 148 134
\pinlabel $I_{1,3}$ [b] at 80 164
\pinlabel $I_{2,3}$ [br] at 12 134
\pinlabel $\gamma_2$ [bl] at 103 43
\pinlabel $\gamma_1$ [br] at 57 43
\pinlabel $\gamma_3$ [r] at 80 118

\endlabellist
\centerline{ \includegraphics[scale=.6]{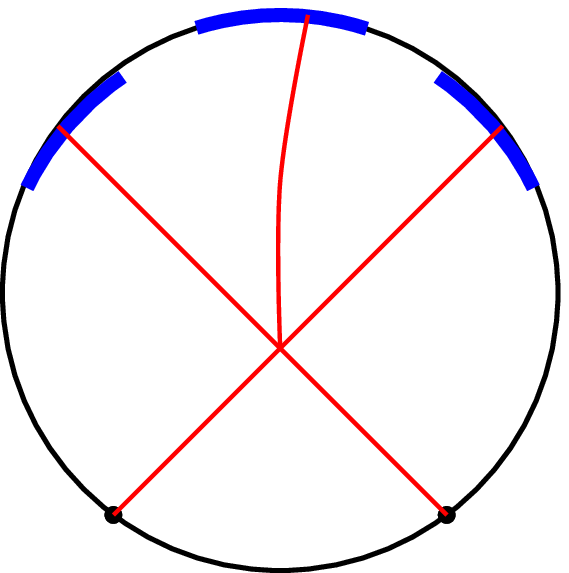} }

%%\centerline{ \includegraphics[scale=.8]{images/SubdivideSq} } %Dan had already commented out this line

\caption{A generalized $b$-manifold, $A$, for the disk datum $(D, \{a,b\}, \{I_{1,2}, I_{1,3}, I_{2,3}\})$ with $(i(a), j(a)) = (1,2)$ and $(i(b),j(b)) = (2,3)$.  Here, $A = A_1 \sqcup A_2$ with $A_1 = \{\gamma_1, \gamma_2\}$ and $A_2 = \{\gamma_3\}$.  We have $\partial_A I_{1,2} = a$, $\partial_AI_{2,3} = b$, and $\partial_A I_{1,3} = a b$. }
\label{fig:GenBMfld}
\end{figure}

\medskip

\noindent {\bf Main Example.} 
Consider a Type (1)-(12) square, $e^2_\alpha$, such that $\tilde{L}$ has $n$ sheets above $N(e^2_\alpha)$.  We let $D^{LU} \subset [-1,1] \times [-1,1]$ be obtained from $\bigcup_{1 \leq i < j \leq n} B^{LU}(i,j)$ by removing the small squares $Sq_{i,j} := (\beta^U_{i,j}-\e, \beta^U_{i,j}+\e) \times (\beta^R_{i,j}-\e, \beta^R_{i,j}+\e)$  that contain the $c_{i,j}$.  Let $I_{i,j} \subset \partial D^{LU}$ be the upper and left sides of the closed square $\overline{Sq_{i,j}}$.  By a slight abuse of notation, for $i<j$, we use $b^L_{i,j}$ and $b^U_{i,j}$ to denote the unique intersection point of the stable manifolds of these critical points with the boundary of $\widehat{N}(e^2_\alpha)$.  (Note that $b^L_{i,j}$ is only defined for those $i<j$ such that $S_i$ sits above $S_j$ along the upper half of $e^1_L$.)  The triple $(D^{LU}, \{b^L_{i,j}, b^U_{i,j}\}, \{I_{i,j}\})$ is a disk datum, where the upper and lower indices of the points $\{b^L_{i,j}, b^U_{i,j}\}$ are given by their subscripts.  See Figure \ref{fig:DiskDLU}.

\begin{figure}

\quad

\quad

\quad

\labellist
\small
%\pinlabel $1/4$ [t] at 338 21
\pinlabel $c_{1,2}$ [tl] at 254 64
\pinlabel $c_{1,3}$ [tl] at 318 128
\pinlabel $c_{2,3}$ [tl] at 382 192
\pinlabel $I_{1,2}$ [br] at 236 86
\pinlabel $I_{1,3}$ [br] at 306 156
\pinlabel $I_{2,3}$ [r] at 360 208
\pinlabel $b^U_{2,3}$ [b] at 376 254
\pinlabel $b^U_{1,3}$ [b] at 312 254
\pinlabel $b^U_{1,2}$ [b] at 248 254
\pinlabel $b^L_{1,3}$ [r] at -2 72
\pinlabel $b^L_{1,2}$ [r] at -2 136
\endlabellist
\centerline{ \includegraphics[scale=.6]{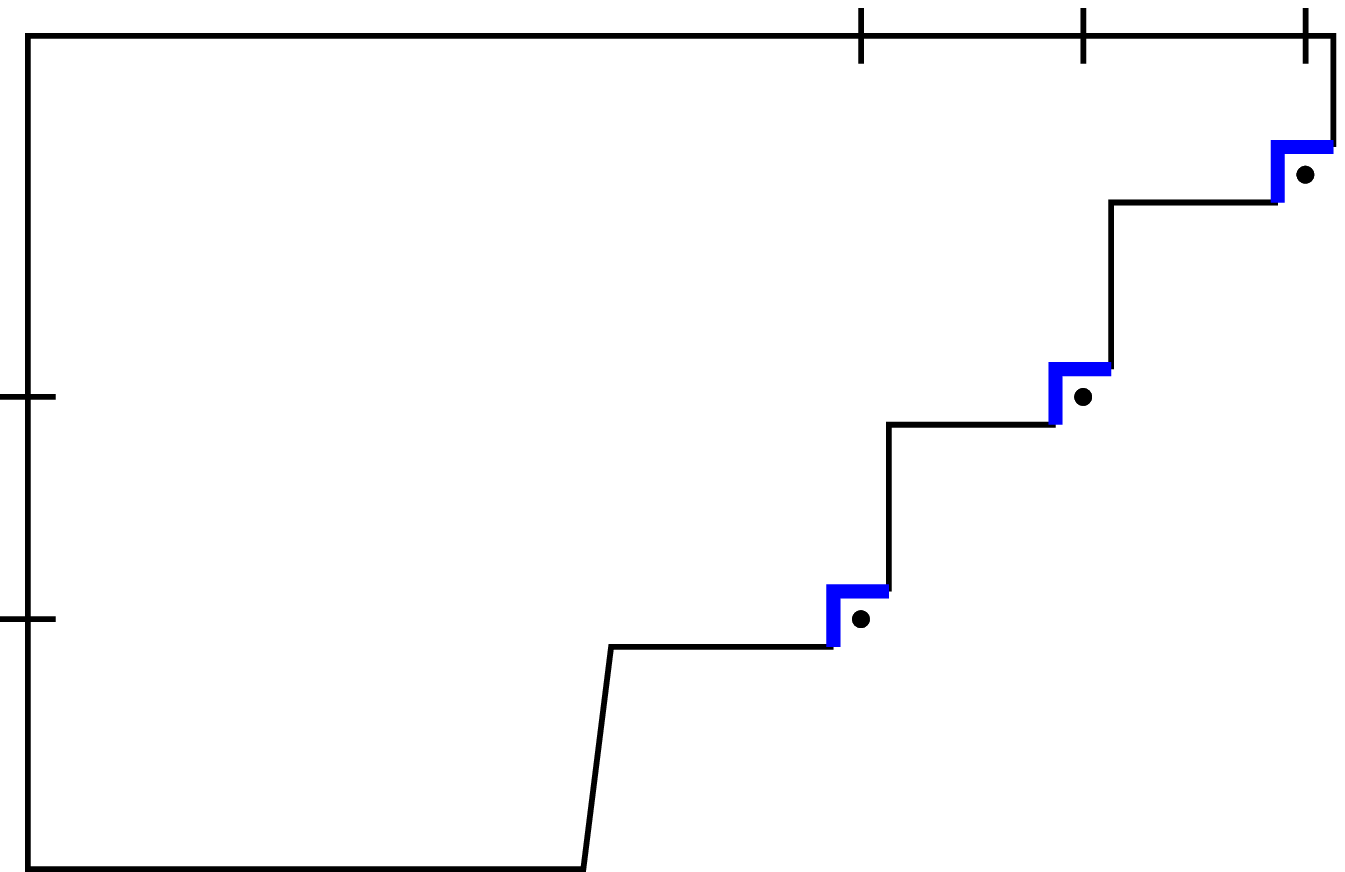} }

%%\centerline{ \includegraphics[scale=.8]{images/SubdivideSq} } %Dan had already commented out this line

\caption{The disk datum $(D^{LU}, \{b^L_{i,j}, b^U_{i,j}\}, \{I_{i,j}\})$.  In the pictured case the number of sheets is $n = 3$, and sheets $2$ and $3$ cross one another above $e^1_U$.  Thus, $b^L_{2,3}$ does not exist.}
\label{fig:DiskDLU}
\end{figure}

We will define a generalized $b$-manifold for the disk datum $(D^{LU}, \{b^L_{i,j}, b^U_{i,j}\}, \{I_{i,j}\})$.  Suppose that  $\widehat{\gamma}$ is the top branch of a GFT that is a $b^{LU}$-tree beginning at some $c_{i,j}$, and let $\gamma = \widehat{\gamma} \cap D^{LU}$.  In addition, define upper and lower indices $1 \leq i(\gamma) < j(\gamma) \leq n$, so that $\gamma$ is a trajectory for $-\nabla F_{i(\gamma),j(\gamma)}$, i.e. $S_{i(\gamma)}$ and $S_{j(\gamma)}$ are the sheets of $\widetilde{L}$ that correspond to $\widehat{\gamma}$ in the GFT that it belongs to.   

% that are obtained from $\widehat{\gamma}$ is the top branch of some GFT that is a $b$-tree beginning at $c_{i,j}$.  
\begin{proposition}  \label{prop:MainGenB}
Let $A = \bigsqcup_{r} A_r$ denote the collection of such paths $\gamma$ with $i(\gamma)$ and $j(\gamma)$ as above.  Then, $A$ is a generalized $b$-manifold for the disk datum $(D^{LU}, \{b^L_{i,j}, b^U_{i,j}\}, \{I_{i,j}\})$.  Moreover, 
\[
\overline{\partial_A I_{i,j}} = \sum_{\Gamma} w(\Gamma)
\]
where the sum on the right is over $b^{LU}$-trees that start at $c_{i,j}$, and $\overline{\partial_A I_{i,j}}$ denotes $\partial_AI_{i,j}$ with coefficients reduced mod $2$.

An analogous result holds for $b^{RD}$-trees.
\end{proposition}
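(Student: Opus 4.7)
The plan is to build $A$ as the collection of top branches (cut to $D^{LU}$) of $b^{LU}$-trees starting at some $c_{i,j}$, to assign $(i(\gamma), j(\gamma))$ by reading off the indices of the initial flow line, and to verify the six axioms of Definition \ref{def:genbmfd} by induction on the number of $Y_0$-vertices. Axioms (1) and (2) are immediate: $\widehat{\gamma}$ is an $(i,j)$-flow starting at $c_{i,j}$ in the interior of $\overline{Sq_{i,j}}$, so its entry point into $D^{LU}$ lies on $I_{i,j}$. Axioms (4) and (5) describe the two possible endings of the top branch: either the tree has no $Y_0$-vertex at all and $\widehat{\gamma}$ limits directly to a $b^L_{i,j}$ or $b^U_{i,j}$ Reeb chord, or else the top branch terminates at the first internal $Y_0$-vertex $x$, whose two outgoing edges are by induction the top branches of two smaller $b^{LU}$-trees $\gamma_1, \gamma_2 \in A$ meeting at $x$ with $j(\gamma_1) = i(\gamma_2)$. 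Axiom (6) follows from the $1$-regularity of $(\tilde{L},g)$ provided by Theorem \ref{thm:PropertiesofLtilde}: rigid configurations force generic transverse intersections and exclude triple or higher coincidences.

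The content of axiom (3) is the converse: every triple $(\gamma_1, \gamma_2, x)$ with matching indices must arise from a unique $\gamma \in A$. To produce this $\gamma$, I would take the $(i(\gamma_1), j(\gamma_2))$-flow line through $x$ and run it backward in time. By Lemma \ref{lem:BLUij} we have
\[
x \in B^{LU}(i(\gamma_1), j(\gamma_1)) \cap B^{LU}(i(\gamma_2), j(\gamma_2)) \subset B^{LU}(i(\gamma_1), j(\gamma_2)),
\]
and by Properties \ref{pr:monotonicityI}, \ref{pr:monotonicityII}, \ref{pr:monotonicityIV}, and \ref{pr:CuspTransversality} the gradient $-\nabla F_{i(\gamma_1), j(\gamma_2)}$ points into $B^{LU}(i(\gamma_1), j(\gamma_2))$ along its boundary, so the backward trajectory is trapped there. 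Arguing as in Lemma \ref{lem:blimits}, this trajectory must limit to $c_{i(\gamma_1), j(\gamma_2)}$. Assembling this flow line with the portions of the $b^{LU}$-trees corresponding to $\gamma_1, \gamma_2$ sitting below $x$ yields a new $b^{LU}$-tree whose top branch is the desired $\gamma$, and uniqueness is automatic since both the trajectory through $x$ and the sub-trees below $x$ are uniquely determined.

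With the bijection in hand, the word identity $\overline{\partial_A I_{i,j}} = \sum_\Gamma w(\Gamma)$ follows by comparing the recursive definitions from (\ref{eq:pbIij}) and Section \ref{sec:GFTPFT}: the multiplication $w(\gamma) = w(\gamma_1) w(\gamma_2)$ at an intersection matches the counterclockwise ordering of negative punctures around the boundary curve of $\Gamma$ precisely because the outgoing order at $x$ in the newly assembled tree reads off $\gamma_1$ before $\gamma_2$, and the signs $\sigma(\gamma)$ are killed by the mod-$2$ reduction. The $b^{RD}$ statement follows from reflection across $x_1 = x_2$ and requires no new argument. The main obstacle in practice is not conceptual but bookkeeping: one must verify that the backward-flow-confinement hypothesis used above is robust across all twelve square types and in the presence of cusp edges and crossings. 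This uses the same case analysis already completed in the proof of Lemma \ref{lem:BLUij}, so no essentially new difficulties are expected.
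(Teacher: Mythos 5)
Your proposal is correct and takes essentially the same route as the paper's proof: the axioms of Definition \ref{def:genbmfd} are checked by identifying the two possible endings of a top branch (limit to $b^L_{i,j}$/$b^U_{i,j}$, or a $Y_0$ at an intersection of two smaller top branches), with the word identity following by the recursive definitions. Your backward-flow-confinement argument for the converse of axiom (3) is exactly the content the paper leaves implicit via Lemma \ref{lem:BLUij} and the argument of Lemma \ref{lem:blimits}, so no new issues arise.
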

(Recall that the word, $w(\Gamma)$, of a GFT, $\Gamma$, was defined in Section \ref{sec:GFTPFT}.)

\begin{proof}
The condition (1) in Definition \ref{def:genbmfd} is clear.  Condition (6) is one of the transversality conditions that is implied by $\tilde{L}$ being $1$-regular. 

  The top most branch $\widehat{\gamma}$ of a $b^{LU}$-tree, $\Gamma$, must start at some $c_{i,j}$.  Since $c_{i,j} \in Sq_{i,j}$, and $\gamma \subset B^{LU}(i,j)$ (by Lemma \ref{lem:BLUij}), we see that, as $t$ increases from $-\infty$, $\widehat{\gamma}$ must enter $D^{LU}$ along a unique point in $I_{i,j}$.  [Uniqueness follows since $-\nabla F_{i,j}$ points outward along the entire boundary of $Sq_{i,j}$.]  Thus, (2) holds.
	
	Now, as $t \rightarrow +\infty$, $\widehat{\gamma}$ must either (a) limit to a critical point $b^{L}_{i,j}$ or $b^{U}_{i,j}$ or (b) end at a $Y_0$ that occurs at the intersection of two edges of $\Gamma$, $\gamma_1$ and $\gamma_2$, with $j(\gamma_1) = i(\gamma_2)$.  Clearly, those $\gamma$ satisfying (a) are in bijection with $\{b^L_{i,j}, b^U_{i,j}\}$ as in (3) (i) and satisfy (4).  When $\gamma$ satisfies (b), note that, when extended to allow $t \rightarrow -\infty$, the branches $\gamma_1$ and $\gamma_2$ are themselves the top branches of unique $b^{LU}$-trees.  Thus, the $\gamma$ satisfying (b) are in bijection with intersection points as in (3) (ii) and satisfy (5).

The equality $\overline{\partial_b I_{i,j}} = \partial_b c_{i,j}$ is clear since the word associated to a path $\gamma \in A$ that is a top branch of the $b^{LU}$-tree $\Gamma$ is precisely the word $w(\Gamma)$.  [This is verified by an obvious induction.]

%\dr{Cite something about $b$-trees only having $Y_0$-vertices.}
%By Lemma \ref{lem:BLUij}, $\widehat{\gamma}$ has its image in $B^{LU}(i,j)$ except possibly if $\widehat{\gamma}$ limits to $b^{L}_{i,j}$ or $b^U_{i,j}$ as $t\rightarrow \infty$, in which case $\widehat{\gamma}$ exits $B^{LU}_{i,j}$ at the point on $\partial D^{LU}$ that we have also labelled $b^L_{i,j}$ or $b^U_{i,j}$.  Now, $B^{LU}_{i,j}$
\end{proof}

The following key proposition will allow us to compute $\partial_b c_{i,j}$ without requiring detailed knowledge of the locations of $b^X$-lines.

\begin{proposition} \label{prop:dAind}
For any disk datum $(D, \{p_m\}, \{I_{i,j}\})$, the sums $\partial_A I_{i,j}$ are independent of the choice of generalized $b$-manifold, $A$, associated to $(D, \{p_m\}, \{I_{i,j}\})$. 
\end{proposition}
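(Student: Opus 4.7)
The plan is to prove invariance of $\partial_A I_{i,j}$ by a homotopy argument: connect two generalized $b$-manifolds $A^0=A$ and $A^1=A'$ for the same disk datum by a generic $1$-parameter family $\{A^t\}_{t\in[0,1]}$ and verify that $\partial_{A^t} I_{i,j}$ is preserved across the finitely many codimension-$1$ bifurcations that occur. The first step is to establish that the space of generalized $b$-manifolds for a fixed disk datum is path-connected. Paths in $A_1$ are in canonical bijection with the points $p_m$ having $j(p_m)-i(p_m)=1$, and each such path is an arc in $D$ from $I_{i(p_m),j(p_m)}$ to $p_m$, unique up to ambient isotopy; higher-level paths have endpoints rigidly determined by the bijection in (3), so their geometric realizations can likewise be deformed through isotopy to match any other valid choice. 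Any two generalized $b$-manifolds for the same disk datum can therefore be joined by a smooth family in which paths deform continuously, with intersections among paths being created or destroyed generically.

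In a generic family, the codimension-$1$ bifurcations that I would enumerate are: (T) a tangency between two paths $\gamma_1\in A^t_{r_1}$ and $\gamma_2\in A^t_{r_2}$ with matching middle index $j(\gamma_1)=i(\gamma_2)$, at which a pair of transverse intersection points $x_\pm$ with opposite signs $\iota(\gamma_1,\gamma_2,x_+)=-\iota(\gamma_1,\gamma_2,x_-)$ is born or dies; (R) a Reidemeister-$\mathrm{III}$-type triple-point move at which three paths with appropriately matching indices are concurrent; and (E) an endpoint on some $I_{i',j'}$ sliding past another path. For (T), the newborn paths $\gamma_\pm\in A^t_{r_1+r_2}$ (one attached to each of $x_\pm$) carry the same word $w(\gamma_\pm)=w(\gamma_1)w(\gamma_2)$ but opposite signs $\sigma(\gamma_\pm)=\iota(\gamma_1,\gamma_2,x_\pm)\sigma(\gamma_1)\sigma(\gamma_2)$, so their direct contributions to $\partial_{A^t}I_{i(\gamma_1),j(\gamma_2)}$ cancel. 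Choosing representatives so that $\gamma_+$ and $\gamma_-$ are $C^0$-close and bound a thin bigon near the tangency, any transverse path $\gamma'$ meets the pair in matched intersections of opposite sign; hence the higher-level descendants of $\gamma_\pm$ also come in canceling pairs, and the cancellation cascades by induction on the level through all of $A^t$. For (R), a direct local check of the three cyclic orderings of pairwise intersections around the triple point shows that the signed count of paths ending at those intersections is preserved, and (E) changes only boundary positions and so leaves all signs and words intact.

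The main obstacle is the multi-level cascade triggered by a single tangency: the birth of $\gamma_+,\gamma_-$ at level $r_1+r_2$ can generate new paths at every higher level via further intersections, and showing that all resulting terms in $\partial_{A^t}I_{i,j}$ cancel requires a careful induction on level that combines the sign conventions from (5) with the geometric bigon picture. A secondary technical point is to verify that a generic $1$-parameter family between $A$ and $A'$ can be chosen so that (T), (R), (E) are the only bifurcations encountered and so that the representatives of $\gamma_\pm$ born at each tangency can be taken in thin-bigon form; this is handled by a standard transversality argument on the ambient space of families of paths in $D$.
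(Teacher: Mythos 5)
Your overall strategy—connect $A$ to $A'$ by a family, reduce to finitely many codimension-$1$ events, and check invariance move by move—is the same as the paper's, and your treatment of the tangency move (T) (a cancelling pair of intersections, thin-bigon representatives so that all higher-level descendants also cancel in pairs, induction on level) matches the paper's Move I. The endpoint-slide (E) is also fine, modulo noting that it does create one new crossing between two paths with identical indices $(i,j)$, which is harmless only because $j(\gamma_1)=i(\gamma_2)$ would force $i=j$. Two structural remarks: the paper avoids having to formalize a ``generic family of generalized $b$-manifolds'' (a delicate object, since the underlying set of paths changes discretely as lower-level paths move) by inducting on the level $R$ and isotoping a \emph{single} top-level path at a time, rebuilding all higher levels from scratch at each stage; and your list of moves omits the event where a path's endpoint is anchored at a crossing of two other paths and the isotopy creates a \emph{single} new intersection there (the paper's Move II)---this one turns out to be index-irrelevant, but it must appear in the enumeration.

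The genuine gap is in the triple-point move (R). The difficulty there is not the three pairwise intersections $x\in\beta_1\cap\beta_2$, $y\in\beta_1\cap\gamma$, $z\in\gamma\cap\beta_2$ themselves (these persist through the move with unchanged signs and words, as you say); it is the \emph{second-generation} intersections. Before the move, the descendant paths $\alpha_x,\alpha_y,\alpha_z$ emanating from $x,y,z$ can be chosen disjoint from $\gamma,\beta_2,\beta_1$ respectively; after the move, each necessarily acquires one new crossing with the strand it previously avoided, producing three new intersection points $u,v,w$ that may each demand a new descendant path, and these do \emph{not} pair off by any local sign count. The paper's resolution is algebraic: assigning to each path the coefficient $\sigma(\nu)E_{i(\nu),j(\nu)}$, the coefficients of $u,v,w$ are the three cyclic double commutators $\bigl[[\beta_2,\beta_1],\gamma\bigr]$, $\bigl[[\beta_1,\gamma],\beta_2\bigr]$, $\bigl[[\gamma,\beta_2],\beta_1\bigr]$ with a common overall sign, which sum to zero by the Jacobi identity; since each iterated commutator of matrices $E_{i,j}$ with $i<j$ is either $0$ or again $\pm E_{i',j'}$, either all three vanish or exactly two survive with opposite signs, and only then can one run the parallel-pair cancellation. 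Your ``direct local check of the three cyclic orderings'' does not engage with $u,v,w$ at all, so as written the invariance under (R) is unproved; this is the crux of the whole proposition and needs the Jacobi-identity mechanism (or an equivalent) to close.
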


\begin{proof}
We prove the following statement by induction on $R$ with $R$ decreasing from $n$ to $1$:

\medskip

\noindent {\bf Inductive Statement:}  Let $A = \sqcup_{r=1}^{n-1} A_r$ and $A' = \sqcup_{r=1}^{n-1} A'_r$ be two generalized $b$-manifolds for $(D, \{p_m\}, \{I_{i,j}\})$ such that 
\[
\mbox{$A_r = A'_r$ for all $1 \leq r < R$.}
\]  Then, for all $1 \leq i < j \leq n$,
\[
\mbox{$\partial_A I_{i,j} = \partial_{A'} I_{i,j}$}
\]
 where $\partial_A I_{i,j}$ and $\partial_{A'} I_{i,j}$ are elements of $\Z\langle p_1, \ldots, p_M \rangle$ as defined in (\ref{eq:pbIij}).

\medskip  

The case $R=n$ is tautological.  For the inductive step, suppose that the statement is known for larger values of $R$, 
and suppose $A_r = A'_r$ for $r< R$.  Notice that paths in $A_R$ and $A'_R$ are in bijection, and have the same endpoints and indices.  [The number of paths in $A_R$ as well as  their endpoints and indices are determined by the $\{p_m\}$ and intersections of paths in $\bigsqcup_{r<R} A_r$.]   Thus, without loss of generality, we may assume that $A_R$ and $A'_R$ are identical except for a single pair of paths $\gamma \in A_R$ and $\gamma' \in A_R$ with the same endpoint and equal indices.  

Since $D$ is a topological disk, and $\gamma$ and $\gamma'$ are smoothly embedded arcs with the same endpoints, they are isotopic via an isotopy of arcs $\gamma^t$, $0 \leq t \leq 1$, with $\gamma^0 = \gamma$ and $\gamma^1 = \gamma'$ that preserves endpoints.  

Set 
\[
X = \left( \bigsqcup_{r\leq R} A_r \right)\setminus\{\gamma\}.  
%\bigcup \left \{ \beta \in A \, | \, \mbox{ $j(\beta) = i(\gamma)$ or $j(\gamma) = i(\beta)$} \right\}.
\]
By standard transversality results, after possibly modifying the isotopy $\gamma^t$, we may assume that there exist $0 = t_0 < t_1 < \ldots < t_T = 1$ such that, the $\gamma^{t_{s}}$ are transverse to all paths in $X$, and  for $1 \leq s \leq T$ either:
\begin{enumerate}
\item[{\bf O.}]  The paths in $X \cup \{\gamma^{t_{s}}\}$ are obtained from the corresponding paths in $X \cup \{\gamma^{t_{s-1}}\}$ by applying an orientation preserving diffeomorphism $\varphi: D \stackrel{\cong}{\rightarrow} D$.  
\end{enumerate}
Or, when $t_{s-1} < t < t_{s}$, $\gamma^t$ remains fixed except in a small disk $D' \subset D$ where one of the following four moves occurs:
\begin{enumerate}
\item[{\bf I.}]  %When $t_{i-1} < t < t_{i}$, $\gamma^t$ remains constant except in a small 
The disk $D' \subset D$ intersects a single path $\beta \in X$ in a properly embedded arc.  For some $t_{s-1} < t < t_{s}$,  $\gamma^t$ has a tangential intersection with $\beta$ that results in the creation or cancellation of two transverse intersections.    

\item[{\bf II.}] Suppose $\gamma$ has its endpoint at a transverse crossing point, $x$, of paths $\beta_1, \beta_2 \in X$.  The disk $D' \subset D$ contains $x$, and for some $t_{s-1} < t < t_{s}$,  $\gamma^t$ becomes tangent to either $\beta_1$ or $\beta_2$ at $x$ resulting in the addition or removal of a single transverse intersection between $\gamma^t$ and $\beta_1$ or $\beta_2$. 

\item[{\bf III.}] The disk $D' \subset D$ contains a crossing, $x$, of two paths $\beta_1, \beta_2 \in X$. Note that if $\beta_1$ and $\beta_2$ have a common upper and lower index, then $x$ may be the endpoint of a third path $\beta_3 \in X$.  
%As $t$ increases from $t_{s-1}$ to $t_s$, $\gamma^t$ passes $x$, so that both
Both  $\gamma^{t_{s-1}}$ and $\gamma^{t_s}$ have a single intersection point  with $\beta_1$ in $D'$, and these intersection points lie on opposite sides of $x$.  The same holds for intersections of $\gamma^{t_{s-1}}$ and $\gamma^{t_s}$ with $\beta_2$.  If $\beta_3$ exists, then precisely one of $\gamma^{t_{s-1}}$ and $\gamma^{t_s}$ intersects $\beta_3$ in $D'$, and this intersection is unique.  

\item[{\bf IV.}] The disk $D' \subset D$ contains on its boundary part of the interval $I_{i,j}$  where $\gamma$ has its initial point.  For $t_{s-1} < t < t_{s}$ the endpoint of $\gamma$ passes the location of the endpoint of another path $\beta \in X$ whose endpoint also lies on $I_{i,j}$.  This creates or removes a crossing between $\gamma$ and $\beta$.

\end{enumerate} 

See Figure \ref{fig:GenBMoves} for a depiction of Moves I-IV.

\begin{figure}

\quad

\labellist
\small
%\pinlabel $1/4$ [t] at 338 21
\pinlabel \textbf{I.} [r] at -6 300
\pinlabel \textbf{II.} [r] at 612 300
\pinlabel \textbf{III.} [r] at -6 80
\pinlabel \textbf{IV.} [r] at 612 80

\endlabellist
\centerline{ \includegraphics[scale=.4]{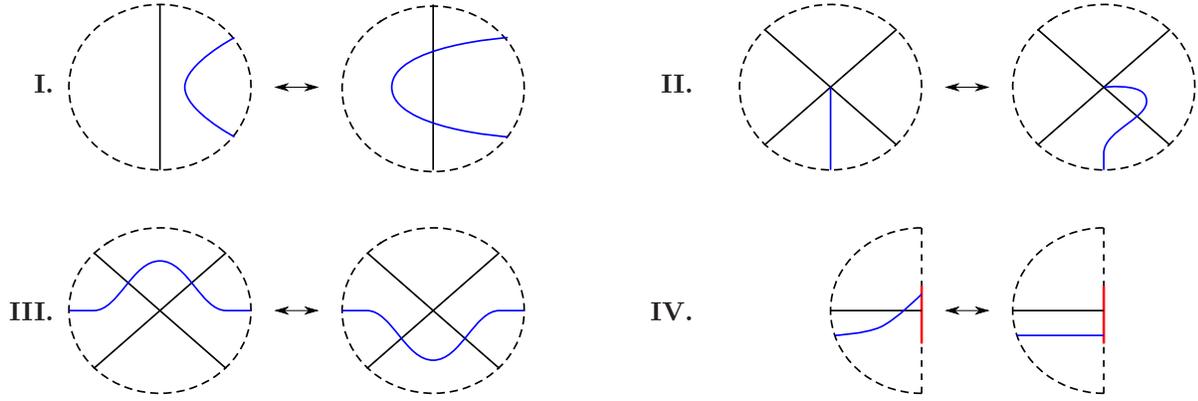} }

%%\centerline{ \includegraphics[scale=.8]{images/SubdivideSq} } %Dan had already commented out this line

\caption{Moves I-IV.  The blue path denotes $\gamma^{t}$ while the black paths belong to $X$.  In Move III, a third path from $X$ may have an endpoint at the crossing of the two black curves.  In Move IV, the red segment is part of the interval $I_{i,j} \subset \partial D$.}
\label{fig:GenBMoves}
\end{figure}

Making use of the inductive hypothesis, it suffices to establish the following:

\medskip

\noindent {\bf Claim.} For any $1 \leq s \leq T$, there exist some generalized $b$-manifolds, $C$ and $C'$ for $(D, \{p_m\}, \{I_{i,j}\})$ such that $\partial_C I_{i,j} = \partial_{C'} I_{i,j}$;
\[
\bigsqcup_{r \leq R} C_r = %\left( \bigsqcup_{r\leq R} A_R \right)\setminus\{\gamma\}  
X \bigcup \left \{ \gamma^{t_{s-1}} \right\}; \quad \mbox{and} \quad 
\bigsqcup_{r \leq R} C'_r = %\left( \bigsqcup_{r\leq R} A_R \right)\setminus\{\gamma\}   
X \bigcup \left \{ \gamma^{t_{s}} \right\}.
\] 

\medskip
We prove the claim when $\gamma^{t_{s-1}}$ and $\gamma^{t_{s}}$ are related by Moves O and I-III in a case-by-case manner.  
% and $\gamma^{t_{s}}$ appear respectively as in the diagrams on the left 

\medskip

\noindent {\bf Move O.}  Extend $X \cup \{\gamma^{t_{s-1}}\}$ to obtain a generalized $b$-manifold $C$ as follows.   Take $C_r = A_r$ for $r<R$, and $C_R = \left( A_R \setminus \{\gamma\} \right) \bigcup \left \{ \gamma^{t_{s-1}} \right\}$.  We then define $C_r$
 with $r >R$ inductively.  With all $C_{r'}$ having $r' < r$ already defined, the required number of paths in $C_{r}$ as well as their endpoints and indices are specified by $\bigsqcup_{r'<r} C_{r'}$ and $\{p_m\}$ according to Definition \ref{def:genbmfd}.  
 We then define $C_{r}$
  by choosing some collection of paths with the desired endpoints that are transverse to all paths in $\bigsqcup_{r'<r} C_{r'}$ and to one another.  
 [Note that in order for Definition \ref{def:genbmfd} to be met, any new intersection points $x \in \beta_1 \cap \beta_2$ with $\beta_1 \in C_{r}$ and $\beta_2 \in C_{r'}$, for some $r' \leq r$, and such that either $j(\beta_1)= i(\beta_2)$ or  $i(\beta_1)= j(\beta_2)$ will have to be the endpoint of some path in $C_{r''}$ with $r'' = r+r'$.  Since $r'' > r$, this will be arranged at a later step of the induction, and in particular will not require any modification of the already defined subsets $\bigsqcup_{r'\leq r} C_{r'}$.]

 The construction of $C$ is thus completed after finitely many such steps. We then arrive at $C'$ by applying the diffeomorphism $\varphi$ to each of the paths in $C$.

\medskip

In considering Moves I-IV, we assume without loss of generality that $\gamma^{t_{s-1}}$ (resp. $\gamma^{t_{s}}$) corresponds to the diagram on the left (resp. right) as depicted in Figure \ref{fig:GenBMoves}.  To simplify notation, we let $\gamma^L$ and $\gamma^R$ denote $\gamma^{t_{s-1}}$ and $\gamma^{t_s}$ as they appear on the left and right side of Moves I-IV.

\medskip

\noindent {\bf Move I.}   As in Move O., extend $X \cup \{\gamma^{L}\}$ to a generalized $b$-manifold $C$, but at the inductive step impose the additional requirement that all new paths are disjoint from $D'$.  Unless $j(\beta) = i(\gamma)$ or $i(\beta) = j(\gamma)$, $C'$ is obtained from $C$ by simply replacing $\gamma^{L}$ with $\gamma^{R}$.

Consider now the case where  $j(\beta) = i(\gamma)$ or $i(\beta) = j(\gamma)$.  Form $C'_0$ from $C$ by replacing $\gamma^{L}$ with $\gamma^{R}$.  We need to add two additional paths $\alpha_1$ and $\alpha_2$ to $C'_0$ that end at the intersection points $x_1$ and $x_2$ of $\gamma^R$ and $\beta$ that appear on the right side of Move II.  Do so by choosing a single path $\alpha$ from the right side of $\partial D'$ to $I_{i,j}$ (where $i$ and $j$ are as specified by the indices of $\gamma^R$ and $\beta$).  Then define $\alpha_1$ and $\alpha_2$ to travel from $x_1$ and $x_2$ to the initial point of $\alpha$, and then follow parallel to $\alpha$ until they reach $I_{i,j}$.  We take $\alpha_1$ and $\alpha_2$ to be close enough to one another so that their intersections with paths in $C'_0$ are are in bijection with one another with the same signs, and are located in small neighborhoods of the intersection points of $\alpha$.  Set $C'_1 = C'_0 \cap\{ \alpha_1,\alpha_2\}$.  See Figure \ref{fig:GenBPf1}.

Now, the new paths $\alpha_1$ and $\alpha_2$ may have corresponding intersection points $y_1 \in \alpha_1 \cap \nu$ and $y_2 \in \alpha_2 \cap \nu$ with some $\nu \in C'_1$, such that for $p =1,2$,  $i(\alpha_p) = j(\nu)$ or $j(\alpha_p) = i(\nu)$.  For each such pair of intersections, we connect $y_1$ and $y_2$ to the appropriate $I_{i,j}$ using a pair of sufficiently close paths, and  then denote the union of $C'_1$ with all such pairs of paths as $C'_2$.  We proceed inductively, and note that at each step the difference $j-i$ increases for all new paths.  Thus, this process terminates after less than $n$ steps, and we take $C'$ to be the resulting generalized $b$-manifold.  

To verify that $\partial_C I_{i,j} = \partial_{C'} I_{i,j}$, note that $(C\setminus\{\gamma^L\}) \subset (C'\setminus\{\gamma^R\})$.  Now, $w(\gamma^L) = w(\gamma^R)$, and  paths in $C' \setminus (C \cup \{\gamma^R\})$ that begin at $I_{i,j}$ come in pairs (as constructed in the previous paragraph),  $\tau_1$ and $\tau_2$, such that $w(\tau_1)= w(\tau_2)$ and $\sigma(\tau_1)=-\sigma(\tau_2)$ (since the two intersections of $\gamma^R$ with $\beta$ have opposite signs).  Such pairs cancel out in the sum from (\ref{eq:pbIij}) that defines $\partial_{C'} I_{i,j}$.

\begin{figure}

\quad

\labellist
\small
%\pinlabel $1/4$ [t] at 338 21
\pinlabel $\beta$ [r] at 86 142
\pinlabel $\gamma^R$ [r] at 46 84
\pinlabel $\alpha_1$ [b] at 144 88
\pinlabel $\alpha_2$ [t] at 144 72
\endlabellist
\centerline{ \includegraphics[scale=.6]{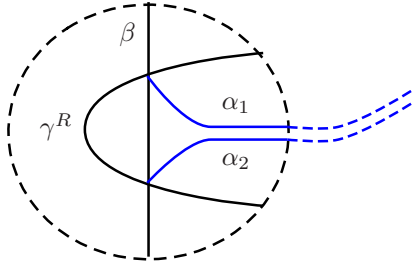} }

%%\centerline{ \includegraphics[scale=.8]{images/SubdivideSq} } %Dan had already commented out this line

\caption{The paths $\alpha_1$ and $\alpha_2$ from $C'_1$. }
\label{fig:GenBPf1}
\end{figure}

%We can choose $\alpha$ to be transverse to all paths in $C'_0$.  We then take $\alpha'$ to be a very small shift of $\alpha$ off of itself, so that intersections of $\alpha$ and $\alpha'$ with any curve $\nu \in C^'_0$ are in bijection and bound a segment of $\nu$ that is disjoint from all other curves in $C^'_0$.     and we can thicken $\alpha$ to an embedded strip, $S \cong [0,1] \times[-\delta, \delta]$, such that $[0,1] \times \{0\}$ is $\alpha$ and all intersections of the push offs $\alpha_-$ $[0,1] \times \{-\delta\}$ and $[0,1] \times \{-delta\}$

%Next, choose a   At each step ??? $R$ increases so this process does not continue indefinitely.  Moreover, each pair of paths that is added clearly have equal words, so they cancel out in $\partial_{C'} I_{i,j}$...

\medskip

\noindent {\bf Move II.}  Since the indices of the $\beta_p$, $p = 1,2$, must be of the form $(a,b)$ and $(b,c)$ for some $a<b<c$, and $(i(\gamma), j(\gamma))= (a,c)$, we have $i(\gamma) \neq j(\beta_p)$ and $j(\gamma) \neq i(\gamma)$.  Thus, we can take $C$ to be any extension of $X \cup \{\gamma^{L}\}$ with all new paths disjoint from $D'$, and set $C' = (C\setminus  \{\gamma^{L}\}) \cup \{\gamma^{R}\}$.

\medskip

\noindent {\bf Move III.} Let $\beta_1, \beta_2$ denote the two paths of $X$ that cross in $D'$.  In our figures we picture $\beta_1$ with negative slope and $\beta_2$ with positive slope.  
Label the intersection points in $D'$ as $x \in \beta_1 \cap \beta_2$, $y^{K} \in \beta_1 \cap \gamma^K$, and $z^K \in \beta_2 \cap \gamma^K$ for $K \in \{L, R\}$.  See Figure \ref{fig:GenBPf2}.

Introduce the following notation for indices:  
\[
(a,b) = (i(\beta_1), j(\beta_1)); \quad (c,d) = (i(\beta_2), j(\beta_2)); \quad \mbox{and} \quad (p,q) = (i(\gamma), j(\gamma)).
\]

Recall that there may  be an additional path $\beta_3 \in X$ that intersects $D'$ and has its endpoint at $x$.  If it exists, in $D'$, $\beta_3$ intersects precisely one of $\gamma^L$ and $\gamma^R$, and this intersection is in a single point.  Without loss of generality we assume that, within $D'$, $\beta_3$ is disjoint from $\gamma^L$ and intersects $\gamma^R$ precisely once.  [If this is not the case, then interchange the left and right side of Move III and rotate the diagrams by $180^{\circ}$.]

%Note that in the case that $a=d$ or $b=c$, any generalized $b$-manifold containing $X$ must have a path, $\beta_3$, with endpoint at $x$ (the pictured crossing of $\beta_1$ and $\beta_2$).  (Such a path may or may not be contained in $X$ depending on if $i(\beta_3)-j(\beta_3)$ is larger than $R$ or not.)  Without loss of generality we assume that, at least within $D'$, $\beta_3$ is disjoint from $\gamma$ in the left diagram of Move III, and intersects $\beta_3$ precisely once in the right diagram.  [If this is not the case, then interchange left and right side of Move III and rotate the diagrams by $180^{\circ}$.]

To begin, choose an extension of $X\bigsqcup \{\gamma^L\}$ to a generalized $b$-manifold, $C$, so that any paths starting at $x$, $y^L$, and $z^L$ (should they exist) are respectively disjoint from $\gamma^L$, $\beta_2$, and $\beta_1$.  Denote these paths by $\alpha_x$, $\alpha_y$, $\alpha_z$, and note that it may be the case that $\alpha_x = \beta_3$.  See Figure \ref{fig:GenBPf2}.

\begin{figure}

\labellist
\small
%\pinlabel $1/4$ [t] at 338 21
\pinlabel $\beta_1$ [br] at 22 140
\pinlabel $\beta_2$ [bl] at 156 140
\pinlabel $\gamma^L$ [r] at -2 82
\pinlabel $y^L$ [b] at 57 114
\pinlabel $z^L$ [b] at 120 114
\pinlabel $x$ [t] at 88 74
\pinlabel $y^R$ [t] at 384 44
\pinlabel $z^R$ [t] at 322 44
\pinlabel $x$ [b] at 352 88
\pinlabel $\gamma^R$ [l] at 444 80
\pinlabel $\beta_1$ [br] at 284 140
\pinlabel $\beta_2$ [bl] at 420 140

\endlabellist
\centerline{ \includegraphics[scale=.6]{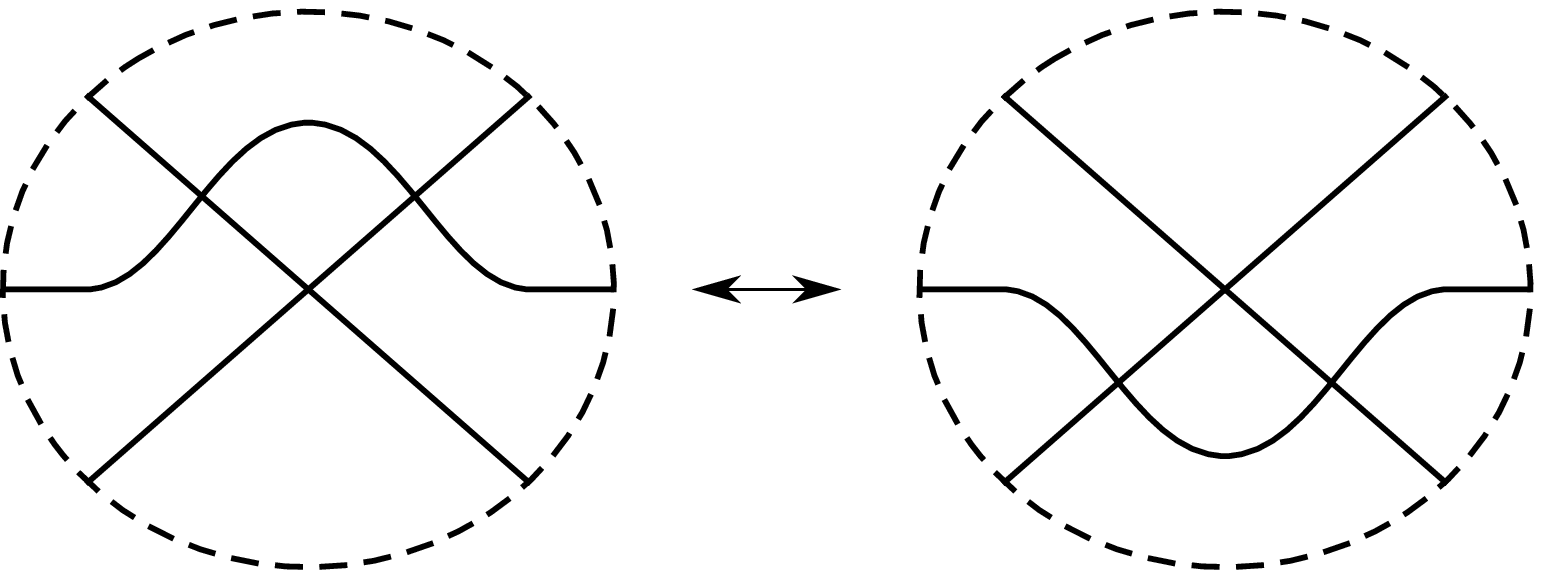} }

\quad

\quad

\labellist
\small
%\pinlabel $1/4$ [t] at 338 21
%\pinlabel $E_{p,q}$ [r] at -2 102
%\pinlabel $E_{a,b}$ [br] at 40 178
%\pinlabel $E_{c,d}$ [bl] at 172 178
\pinlabel $\alpha_x$ [tr] at 6 50
\pinlabel $\alpha_y$ [b] at 104 198
\pinlabel $\alpha_z$ [tl] at 202 60
\pinlabel $\alpha_x$ [tr] at 410 50
\pinlabel $\alpha'_y$ [b] at 508 198
\pinlabel $\alpha'_z$ [tl] at 606 60
%\pinlabel $[E_{a,b},E_{c,d}]$ [tr] at 6 50
%\pinlabel $[E_{p,q},E_{a,b}]$ [b] at 104 198
%\pinlabel $[E_{c,d},E_{p,q}]$ [tl] at 202 60
\pinlabel $u$ [b] at 452 92
\pinlabel $v$ [b] at 546 142
\pinlabel $w$ [b] at 572 48
\endlabellist
\centerline{ \includegraphics[scale=.6]{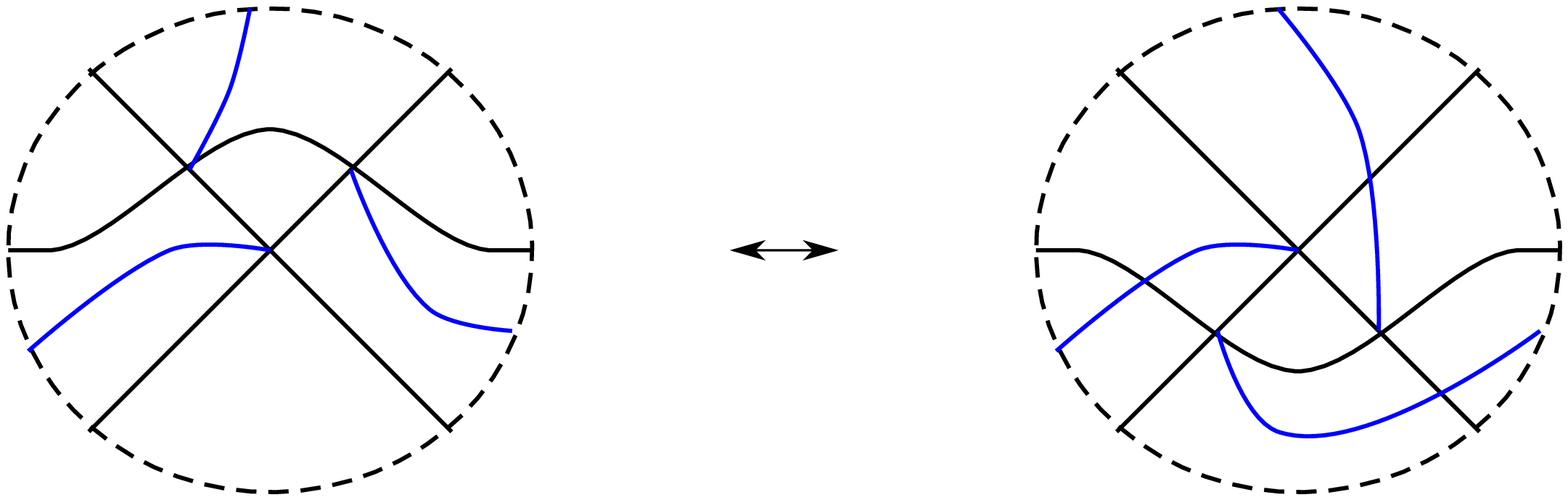} }

%%\centerline{ \includegraphics[scale=.8]{images/SubdivideSq} } %Dan had already commented out this line

\caption{(top) The notations used for Move III.  (bottom left) The paths $\alpha_x$, $\alpha_y$, and $\alpha_z$ are pictured in blue.  (bottom right) The paths $\alpha_x$, $\alpha_y'$, and $\alpha_z'$ along with intersection points $u, v$, and $w$. }
\label{fig:GenBPf2}
\end{figure}

To keep track of indices, it is convenient to assign a coefficient to each path $\nu \in C$, $F_\nu$, that is the strictly upper triangular matrix $F_{\nu} = \sigma(\nu)E_{i(\nu), j(\nu)} \in \mathit{Mat}(n, \Z)$  whose $(i(\nu), j(\nu))$-entry is $\sigma(\nu)$, and with other entries equal to $0$.  We then make the observation that if two paths $\nu, \nu' \in C$  
%coefficients $E_{i,j}$ and $E_{k,l}$ 
intersect at a point $w$ then Definition \ref{def:genbmfd} specifies that the coefficient of the path in $C$ that ends at $w$ is 
\[
\iota(\nu,\nu',w)[ F_\nu, F_{\nu'}] = \iota(\nu',\nu,w)[ F_\nu', F_{\nu}]
%[E_{i,j}, E_{k,l}] := \delta_{j,k} E_{i,l} - \delta_{l,i} E_{k,j},
\]
where if $[F_\nu, F_{\nu'}] =0$,
%$[E_{i,j}, E_{k,l}]=0$, 
then no such path exists in $C$.  Thus, we interpret a coefficient of $0$ to indicate that the path is not part of $C$.  

Above, $[\cdot,\cdot]$ is the commutator of matrices in $\mathit{Mat}(n, \Z)$.  Up to a sign, $[F_\nu, F_{\nu'}]$ has the form
\[
[E_{i,j}, E_{k,l}] = \delta_{j,k} E_{i,l} - \delta_{l,i} E_{k,j}.
\]
 Note that since the subscripts satisfy $i<j$ and $k<l$, only one of the terms on the right can be non-zero.  This implies the following:

\medskip

\noindent {\bf Observation 1.} Any iterated commutator of matrices of the form $E_{i,j}$ with $i<j$ again retains this form or is $0$. 

\medskip

 Note also that because a product of the form $E_{i,j} U E_{i,j}$ must be $0$ when $i<j$ and $U$ is upper triangular, we must have:

\medskip

\noindent {\bf Observation 2.} If the same $E_{i,j}$ appears more than once in such an iterated commutator, then the result is $0$.

\medskip

Abusing notation by writing $\nu$ for its coefficient $F_{\nu}$, the coefficients of $\alpha_x, \alpha_y,$ and $\alpha_z$ are respectively 
\[
\alpha_x= \iota(\beta_2,\beta_1, x)[\beta_2,\beta_1], \quad \alpha_y= \iota(\beta_1,\gamma^L, y^L)[\beta_1,\gamma^L], \quad \alpha_z= \iota(\gamma^L, \beta_2, z^L)[\gamma^L, \beta_2]. 
\]
%$[E_{a,b}, E_{c,d}]$, $[E_{p,q}, E_{a,b}]$, and $[E_{c,d}, E_{p,q}]$ 

%as indicated in Figure \ref{fig:GenBPf2}.

To construct $C'$, start with $C'_0$ as follows.  Take $\left( C \setminus\{\gamma^L\} \right) \cup \{\gamma^R\}$, but replace the paths $\alpha_y$, $\alpha_z$ (if they exist) with new paths $\alpha_y'$ and $\alpha_z'$ that, outside of $D'$, agree respectively with $\alpha_y$ and $\alpha_z$.  [Since the point $x$ remains unchanged, we can retain the path $\alpha_x$, which is the only one of the three that may belong to $X$.]  We can arrange  that $\alpha_x$, $\alpha_y'$, $\alpha_z'$, respectively have unique intersection points in $D'$ with $\gamma^R$, $\beta_2$, and $\beta_1$.  Denote these intersection points as 
\[
u \in \gamma^R \cap \alpha_x; \quad v \in \beta_2 \cap \alpha_y'; \quad \mbox{and} \quad w \in \beta_1 \cap \alpha_z'.
\]

Now, it may be the case that additional paths starting at $u,v$, and/or $w$ must be added to $C'_0$ to form a generalized $b$-manifold.  Whether such paths are necessary, and what their indices should be are determined by the coefficients associated to these intersection points.  They are
\[
\begin{array}{rcr} \mbox{Coeff. of $u$:} & & \iota(\alpha_x,\gamma^L,u)\iota(\beta_2,\beta_1,x)  \left[[\beta_2,\beta_1], \gamma^L\right]; \\
\mbox{Coeff. of $v$:} & & \iota(\alpha'_y,\beta_2,v)\iota(\beta_1,\gamma^L,y^L)   \left[[\beta_1,\gamma^L], \beta_2\right]; \\
\mbox{Coeff. of $w$:} & & \iota(\alpha'_z,\beta_1,w)\iota(\gamma^L,\beta_2,z^L)  \left[[\gamma^L, \beta_2], \beta_1\right]. \\ 
\end{array}
\]
%\[
%\begin{array}{rcc} \mbox{Coeff. of $u$:} & & \left[[E_{a,b},E_{c,d}], E_{p,q}\right]; \\
%\mbox{Coeff. of $v$:} & & \left[[E_{p,q},E_{a,b}], E_{c,d}\right]; \\
%\mbox{Coeff. of $w$:} & & \left[[E_{c,d},E_{p,q}], E_{a,d}\right]. 
%\end{array}
%\]
[Note that at the level of coefficients, $\alpha_y=\alpha'_y$, $\alpha_z=\alpha'_z$, and $\gamma^L=\gamma^R$.]
Moreover, the initial signs are all equal
\[
\iota(\alpha_x,\gamma^L,u)\iota(\beta_2,\beta_1,x) = \iota(\alpha'_y,\beta_2,v)\iota(\beta_1,\gamma^L,y^L) = \iota(\alpha'_z,\beta_1,w)\iota(\gamma^L,\beta_2,z^L).
\]
[To verify this, note that $\alpha_x, \alpha'_y,$ and $\alpha'_z$ are all oriented toward their  endpoints which appear in $D'$.  Moreover, reversing the orientation of any one of $\beta_1, \beta_2, \gamma^L$ changes all three quantities by a sign.  Therefore, it is enough to check the equality for a single choice of orientation of $\beta_1, \beta_2, \gamma^L$.  If we orient these three curves to the right as pictured, the three products become
\[
(-1)(-1)=(+1)(+1)=(+1)(+1). \quad]
\]

Therefore, by the Jacobi identity, the sum of these three coefficients is 
\[
u+v+w = 0 \in \mathit{Mat}(n, \Z).
\]  Since each of them is either $0$ or a single matrix of the form $\pm E_{i,j}$ with $i<j$ (by Observation 1), we have two possibilities:
\begin{enumerate}
\item  {\bf All three coefficients are $0$.}  This means that no additional paths are necessary, so $C' = C'_0$ is a generalized $b$-manifold with the desired properties.  
\item  {\bf One coefficient is $0$ and the other two are negatives of one another.}  This means that we need to add two paths starting at two of the points $u,v,$ and $w$ and with the same indices but opposite signs.  
We choose these paths so that outside of $D'$ they are small shift of a single path from the $\partial D'$ to the appropriate $I_{i,j}$.   Note that the coefficients of any intersection points in $D'$ between these two new paths and existing paths  must be $0$ because of the Observation 2.
Then, proceeding as in Move I, we construct $C'$ inductively so that all paths not corresponding to paths in $C$ cancel in pairs when computing $\partial_{C'}I_{i,j}$.  
%no additional paths are necessary, so $C' = C'_0$ is a generalized $b$-manifold with the desired properties.
\end{enumerate}

%\noindent {\bf Case: $p \notin \{b,d\}$ and $q \notin \{a,c\}$.}  Then, no paths need to begin at the intersections of $\gamma^L$ or $\gamma^R$ with $\beta_a$ for $a=1,2,3$.  Simply extend $X \cup \{\gamma^L\}$ to form $C$, and then put $C' = (C \setminus \{\gamma^L\}) \cup \{\gamma^R\}$. 

%\medskip

%\noindent {\bf Case: $p \in \{b,d\}$ and $q \notin \{a,c\}$.}  Without loss of generality we can assume $p = b$, since we can reflect across a vertical axis to interchange $\beta_1$ and $\beta_2$.

%medskip

%\noindent {\bf Case: $p \notin \{b,d\}$ and $q \in \{a,c\}$.}

%\medskip

%\noindent {\bf Case: $p \in \{b,d\}$ and $q \in \{a,c\}$.}

\medskip

\noindent {\bf Move IV.}  Since $(i(\gamma), j(\gamma))= (i(\beta), j(\beta))$, Move IV may be treated as in Move II.

\end{proof}

\begin{proof}[Proof of Theorem \ref{thm:112btrees}]
From Propositions \ref{prop:bLUbRD}, \ref{prop:MainGenB}, and \ref{prop:dAind} we have
\[
\partial_b c_{i,j}= \overline{\partial_{A^{LU}}I_{i,j}} + \overline{\partial_{A^{RD}} I_{i,j}} + X
\]
where $A^{LU}$ (resp. $A^{RD}$) is any generalized $b$-manifold associated to the disk datum $(D^{LU}, \{b^L_{i,j}, b^U_{i,j}\}, \{I_{i,j}\})$  (resp. $(D^{RD}, \{b^R_{i,j}, b^D_{i,j}\}, \{I_{i,j}\})$).

Thus, Theorem \ref{thm:112btrees} follows from:

\medskip

\noindent {\bf Claim.}  There exists generalized $b$-manifolds $A^{LU}$ and $A^{RD}$ for the above disk data such that
\[
\overline{\partial_{A^{LU}}I_{i,j}} = b^U_{i,j} + b^L_{i,j} + \sum_{i<m<j} b^{U}_{i,m} b^L_{m,j}; \quad \mbox{and} \quad \overline{\partial_{A^{RU}} I_{i,j}} = b^R_{i,j} + b^D_{i,j}+ \sum_{i<m<j} b^{R}_{i,m} b^D_{m,j}.
\]
(Here, any of the $b^L_{i,j}$ or $b^D_{i,j}$ that do not exist are replaced with $0$.)

\medskip

We construct $A^{LU}$ as the construction of $A^{RD}$ is identical after reflecting across $x_1=x_2$.
\begin{enumerate}
\item Connect each $b^U_{i,j}$
 to $I_{i,j}$ with a vertical (i.e. slope $\infty$) line segment, $\gamma^U_{i,j}$. 
\item Connect each $b^L_{i,j}$ to $I_{i,j}$ by a piecewise linear segment (smoothed at the corner), $\gamma^L_{i,j}$, consisting of (i) the line segment from $b^L_{i,j}$ to $(0, \beta^R_{i,j})$, followed by (ii) the horizontal segment (i.e. slope $0$) from $(0,\beta^R_{i,j})$ to $I_{i,j}$.  
\item For each $1\leq i<m<j \leq n$ such that $b^L_{m,j}$ exists, the paths $\gamma^U_{i,m}$ and $\gamma^L_{m,j}$ 
%associated to $b^U_{i,m}$ in (1) and to $b^L_{m,j}$ in (2) 
 intersect in a unique point.  (This follows from the location of the intervals $I_{i,j}$.)  Connect this intersection point to $I_{i,j}$ via a straight line segment, $\gamma_{i,m,j}$.
\end{enumerate}
See Figure \ref{fig:112Term2}.

\begin{figure}

\quad 

\quad 

\labellist
\small
\pinlabel $c_{m,j}$ [l] at 280 112
\pinlabel $b^L_{m,j}$ [r] at -2 88
\pinlabel $b^U_{i,m}$ [b] at 200 162
\pinlabel $c_{i,j}$ [l] at 238 71
\pinlabel $c_{i,m}$ [l] at 206 40 
\pinlabel $\gamma^L_{m,j}$ [b] at 90 106
\pinlabel $\gamma^U_{i,m}$ [r] at 198 142
\pinlabel $\gamma_{i,m,j}$ [l] at 222 98
\endlabellist
\centerline{ \includegraphics[scale=.6]{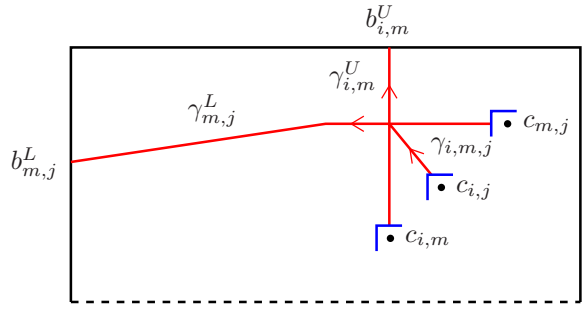} }

\quad

\quad

\caption{The paths $\gamma^U_{i,m}$, $\gamma^L_{m,j}$, and $\gamma_{i,m,j}$.}
%The arrows indicate the direction along edges of all $(i,j)$-flows (negative gradients) other than the $(k+2,k+1)$ flow.  
%(right) The partial flow trees in $A$ described in (2) of Lemma \ref{lem:RegionA}.  
\label{fig:112Term2}
\end{figure}

It is clear from definitions that  $\overline{\partial_{A^{LU}}I_{i,j}}$ has the desired form, so to complete the proof we only need to verify that $A^{LU}$ is in fact a generalized $b$-manifold.  To this end, it suffices to check that the only intersections between paths $\gamma_1, \gamma_2 \in A^{LU}$ for which 
\begin{equation} \label{eq:jgammaigamma}
j(\gamma_1) = i(\gamma_2)
\end{equation} 
are as in (3).  

Clearly, all the $\gamma^U_{i,j}$ are disjoint from one another.  When $x_1>0$, the $\gamma^L_{i,j}$ are horizontal lines, so because of the relative positioning of the $I_{i,j}$ (lexicographically ordered along the diagonal, $x_1=x_2$) 
$\gamma^U_{m,j}$ and $\gamma^L_{i,m}$ are disjoint.  

To see that $\gamma_{i,m,j}$ is disjoint from any path with lower index $i$, note that any path of the form $\gamma^U_{h,i}, \gamma^L_{h,i}$ or $\gamma_{h,g, i}$ is, at least when $x_1>0$, entirely above $x_2 = \beta^R_{h,i}-\e$ and to the left of $x_1 = \beta^U_{h,i}+\e$.  However, $\gamma_{i,m,j}$ is entirely to the right of $x_1 = \beta^U_{i,m} - \e$ and below $x_2= \beta^R_{m,j} +\e$.  These regions are disjoint (since $(h,i)$ preceds both $(i,m)$ and $(m,j)$ lexicographically).  A similar argument shows that $\gamma_{i,m,j}$ is disjoint from paths with upper index $j$.

Finally, we show that for any $i<m<j$, $\gamma^L_{i,m}\cap \gamma^L_{m,j} =\emptyset$ (provided both of these paths exist).  As in (2) (i), the first half of the $\gamma^L_{i,j}$ are line segments that, for all $i<j$, begin and end at on a common pair of vertical lines.  [Recall that, by Property (2) of \ref{pr:1cells} the left side of $\partial \widehat{N}(e^2_\alpha)$ is a vertical line when $1/2 \leq x_2 \leq 3/4$.] 
Two such segments intersect if and only if the ordering of their $x_2$-coordinates is opposite along the verical lines where the left and right endpoints sit.   The left endpoint of $\gamma^L_{i,j}$ satisfies $\beta^L_{i,j}-\e \leq x_2 \leq \beta^L_{i,j}+\e$, while the right endpoint is at $x_2=\beta^R_{i,j} = \beta_{i,j}$.  Now, $\beta^L_{i,j} = \beta_{\sigma_L(i), \sigma_L(j)}$ where $\sigma_L(i)$ is the position of the sheet $S_i$ above $(-1,1)$.  Thus, in order for $\gamma^L_{i,m}$ and $\gamma^L_{m,j}$ to cross we would need to have $(\sigma_L(m),\sigma_L(j)) \prec (\sigma_L(i),\sigma_L(m))$ with respect to lexicographic order.  Since $\sigma_L(m) \neq \sigma_L(i)$ this would mean that $\sigma_L(m) < \sigma_L(i)$ which can only happen if sheets $S_i$ and $S_m$ cross above $e^1_U$.  However, in this case $b^L_{i,m}$ doesn't exist.
%\footnote{\dr{7/26:  Need to come back and add one more (trivial) Move to the long proof.}}

\end{proof}

\begin{proof}[Proof of Theorem \ref{thm:SquareComp}] As $\partial C = \partial_c C +\partial_b C$, the result follows from Theorems \ref{thm:112ctrees}  and  \ref{thm:112btrees}.
\end{proof}

\section{Computation of LCH, Part 3: $2$-cells with swallowtail singularities}  \label{sec:SwallowComp}

In this section, we compute in Theorem \ref{thm:SwallowComp}
 the sub-DGAs $(\lchA(e^2_\alpha), \partial)$ when $e^2_\alpha$ is a square of type (13) or (14).  
% proceeds in much the same manner as in Section ???.  First, a collection of exceptional generators is analyzed.  After canceling these generators, 
A key feature of the Type (13) and (14) squares is that some rigid GFTs contain switch vertices.  This results in a differential that departs substantially from the common formula from Theorem \ref{thm:SquareComp} that applies to squares of type (1)-(12).  

\subsubsection{Upward and downward swallowtails}
Recall from Section \ref{ssec:ElemSq}  that the Type (13) square contains a single upward swallowtail point while the Type (14) contains a single downward swallowtail point.   The projections of the crossing and cusp locus in Type (13) and (14) squares are identical.   

 To simplify exposition, we consider only the case of the Type (13) square in detail while an analogous construction and computation holds for a Type (14) square.  (See Remark \ref{rem:about14} for more about translating the discussion of the Type (13) square to the case of a Type (14) square.)

\subsubsection{Labeling of sheets and Reeb chords above the Type (13) and (14) squares}  Let $e^2_\alpha$ be a $2$-cell of type (13).  %There is a small issue with labeling the sheets as the two sheets that cross both correspond to a single component of $L$ above $N(e^2_\alpha)$ after removing the cusp set.  
Near $(x_1,x_2) = (+1,+1)$ we label the sheets of $L$ above $e^2_\alpha$ as $S_1, \ldots, S_n$ with descending $z$-coordinate, so that $S_k,S_{k+1},S_{k+2}$ denote the three sheets that all meet at the swallowtail point.   %Extending this labeling to all of $N(e^2_\alpha)$ requires a small adjustment.  
The 
sheets $S_{k+1}$ and $S_{k+2}$ that cross one another near the swallow tail point both correspond to the same sheet to the left of the cusp locus, so we cannot extend this labelling to the entire square.  Instead, we use the following:   
\begin{convention} \label{conv:stlabeling}
\begin{enumerate}
\item For $m \notin \{k+1,k+2\}$, we let $S_m$ denote the entire sheet that sits in position $m$ 
%(ordered with decreasing $z$-coordinate) 
at $(+1,+1)$.  Note that $S_k$ only exists to the right of the cusp locus.   
\item We use $S_{k+1}$ and $S_{k+2}$ to denote only the closed subset of the sheets with position $k+1$ and $k+2$ at $(+1,+1)$ that lies on or to the right of the cusp locus.  More precisely, the base projections of $S_{k+1}$ and $S_{k+2}$ lie on or to the right of the cusp locus.
%with position $k+1$ and $k+2$ at $(+1,+1)$.  Specifically,  let $S_{k+1}$ and $S_{k+2}$ denote the subset of these sheets 
%whose base projection lies on or to the right of the cusp locus.  
\item We use $\widetilde{S}_k$ to denote the sheet with base projection on or to the left of the cusp locus that extends to agree with $S_{k+2}$ (resp. $S_{k+1}$) when $x_2$ is above (resp. below) the swallowtail point; denote the defining function of $\widetilde{S}_k$ by $\widetilde{F}_k$.  
\end{enumerate}
\end{convention}
Thus, to the left of the cusp locus the sheets appear with decreasing $z$-coordinate in the order $S_1, \ldots, S_{k-1}, \widetilde{S}_k, S_{k+3}, \ldots, S_n$.

The Reeb chords above the neighborhoods of the boundary edges and corners of $e^2_\alpha$ are specified by Properties \ref{pr:0cells} and \ref{pr:1cells}.  Reeb chords above $\widehat{N}(e^2_\alpha)$, and their locations are as in Properties \ref{pr:Reeb2} and \ref{pr:Location2}:
\begin{itemize}
\item For all $1 \leq i <j \leq n$, there is a Reeb chord $c_{i,j}$ with 
\[
c_{i,j} \in (\beta^U_{i,j} - \e, \beta^U_{i,j} + \e) \times (\beta^R_{i,j} - \e, \beta^R_{i,j} + \e).
\]
\item Sheets $S_{k+1}$ and $S_{k+2}$ cross, so there is a Reeb chord $\tilde{c}_{k+2,k+1}$ with
\[
\tilde{c}_{k+2,k+1} \in (\beta^D_{k+2,k+1}- \e, \beta^D_{k+2,k+1}+ \e) \times (\tilde{\beta}^R_{k+2,k+1}- \e, \tilde{\beta}^R_{k+2,k+1}+ \e).
\]
\end{itemize}
% continue to hold to specify the locations of Reeb chords above $\widehat{N}(e^2_\alpha)$.    

As long as neither endpoint of a Reeb chord lies on $\widetilde{S}_k$ we follow our earlier Convention \ref{conv:subscript} so that $a^{\pm,\pm}_{i,j}$, $b^L_{i,j}$, $b^D_{i,j}$, $b^U_{i,j}$, $b^R_{i,j}$, and $c_{i,j}$ denote Reeb chords above the various vertices, edges, and interior of the square with upper endpoint on $S_{i}$ and lower endpoint on $S_{j}$.  There are two Reeb chords $\tilde{b}^R_{k+2,k+1}$ 
and $\tilde{c}_{k+2,k+1}$ that are decorated with tildes.  They respectively lie in the lower half of the right edge and in the lower right portion of $\widehat{N}(e^2_\alpha)$.
  
\begin{convention}  \label{c:Sktilde} If a Reeb chord has an endpoint on the sheet $\widetilde{S}_k$, then we use $k+2$ for the corresponding subscript when $x_2 >0$, i.e. for the the $b^L$ and $a^{-,+}$ Reeb chords, and $k+1$ when $x_2 <0$, i.e. for the $a^{-,-}$ Reeb chords.  
\end{convention}
%Probably label sheets at $-,-$ with $k,k+2$ omitted.  What about sheets at $-,+$.

\begin{remark} \label{rem:about14}
A similar enumeration of sheets and Reeb chords applies to the Type (14) square.  There the roles of the swallowtail sheets are reversed, so that exposition of the computation of GFTs for the Type (14) square would require translating considerations applied to the sheets $\tilde{S}_k$, $S_{k}$, $S_{k+1}$, and $S_{k+2}$ from the Type (13) square  to considerations for sheets of the form $\tilde{S}_l, S_l, S_{l-1}$, and $S_{l-2}$.   In particular, for Reeb chords in a Type (14) square with endpoints on $\tilde{S}_l$, we use the subscript $l-2$ when $x_2 >0$, and $l-1$ when $x_2<0$.
\end{remark}

\subsubsection{Statement of the differential}  %To provide a condensed formula for the differential of the $c_{i,j}$ generators, we 
We form matrices containing Reeb chords in $N(e^2_\alpha)$ where $e^2_\alpha$ has type (13).  Let $A_{+,+}$, $A_{-,-}$, $B_X$ for $X \in \{L, D, R, U\}$, and $C$ denote strictly upper triangular matrices with $(i,j)$-entries, for $i<j$, given by the corresponding Reeb chord provided that it exists.  For example, take  $a^{+,+}_{i,j}$ for the $(i,j)$-entry of $A_{+,+}$ when $i<j$.  In view of Convention \ref{c:Sktilde}, there are no $a^{-,-}$ Reeb chords with $k$ or $k+2$ as a subscript.  We set the $(k,k+2)$-entry of $A_{-,-}$ to be $1$, while all remaining entries in rows and columns $k$ and $k+2$ are set to $0$.    Convention \ref{c:Sktilde} also shows that there are no $b^L$ Reeb chords with subscript  $k$ or $k+1$, 
 and we set all entries of the $k$ and $k+1$ rows and columns of $B_L$ to be $0$.  We also place a $0$ in the $(k+1,k+2)$-entry of $B_D$.  The Reeb chords $b^D_{k+2,k+1}, \tilde{b}^R_{k+2,k+1}, \tilde{c}_{k+2,k+1}$ do not appear in any of these matrices.

%we replace the $k+2,k+1$ entry with $0$ even though a Reeb chord $b^D_{k+2,k+1}$ exists.  In addition, set the $k,k+2$ entry of $A_{-,-}$ to be $1$.  All remaining entries are set to $0$. We note that  except for the $1$ in the $k,k+2$ position, all other entries in rows and columns $k$ and $k+2$ of $A_{-,-}$ are $0$.  In addition, the $k$ and $k+1$ columns of $B_L$ have all entries $0$ and the $k+1,k+2$ entry of $\tilde{B}_D$ is $0$.  All remaining entries above the main diagonal are non-zero. 

A similar procedure is applied to form matrices from the Reeb chords of a Type (14) square.  In this case, $A_{-,-}$ has $(l-2,l)$ entry $1$, while all remaining entries in the $l-2$ and $l$ rows and columns are $0$; $B_L$ has $0$ entries in the $l$ and $l-1$ rows and columns.  

Recall that $E_{i,j}$ denotes the matrix with $(i,j)$-entry $1$ and all other entries $0$.

\begin{theorem} \label{thm:SwallowComp}  For  a $2$-cell $e^2_\alpha$ of type (13), in $(\lchA(e^2_\alpha), \partial)$ we have
\[
\begin{array}{rl}
\partial C = & A_{+,+} C + C (I+E_{k+2,k+1}) A_{-,-} (I+E_{k+2,k+1}) + \\ & (I+B_U)(I+B_L)(I+ A_{-,-}E_{k+1,k}+ E_{k+1,k+2}) + (I+B_R)(I+B_D + B_D E_{k+2,k+1}) + X, 
\end{array}
\]
where $X$ denotes a strictly upper triangular matrix with entries in the ideal generated by $\tilde{b}^R_{k+2,k+1}, \tilde{c}_{k+2,k+1}$.  In particular, the Reeb chord $b^{D}_{k+2,k+1}$ does not appear in the differential of any of the $c_{i,j}$.

 For  a $2$-cell $e^2_\alpha$ of Type (14), in $(\lchA(e^2_\alpha), \partial)$ we have
\[
\begin{array}{rl}
\partial C = & A_{+,+} C + C (I+E_{l-1,l-2}) A_{-,-} (I+E_{l-1,l-2}) + \\ & (I+B_U)(I+B_L)(I+ E_{l, l-1}A_{-,-}+ E_{l-2, l-1}) + (I+B_R)(I+B_D + B_D E_{l-1,l-2} ) + X, 
\end{array}
\]
where $X$ denotes a strictly upper triangular matrix with entries in the ideal generated by $\tilde{b}^R_{l-1,l-2}, \tilde{c}_{l-1,l-2}$.  In particular, the Reeb chord $b^{D}_{l-1,l-2}$ does not appear in the differential of any of the $c_{i,j}$.
\end{theorem}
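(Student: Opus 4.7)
The plan is to adapt the two-part structure of the proof of Theorem \ref{thm:SquareComp}, writing $\partial C = \partial_c C + \partial_b C$ where $\partial_c$ counts rigid GFTs with at least one output at a $c$-chord and $\partial_b$ counts those without any such output. The new feature relative to types (1)-(12) is that Property \ref{pr:CuspTransversality} no longer precludes switch vertices and $Y_1$-vertices: above a swallowtail square there are genuine switch points along the $(k,k+1)$-cusp edge (and, for type (14), along the cusp edge bordering $\widetilde S_l$), so rigid GFTs of new topological types appear. The matrix adjustments $(I+E_{k+2,k+1})A_{-,-}(I+E_{k+2,k+1})$, $A_{-,-}E_{k+1,k}+E_{k+1,k+2}$, and $B_D+B_DE_{k+2,k+1}$ precisely record the effect of a single switch vertex on the upper/lower sheet labels of an edge in a tree; my task is to realize this algebraic pattern geometrically.

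For $\partial_c C$, I would begin by verifying that the First Quadrant Lemma carries over verbatim: the swallowtail and the $(k,k+1)$-cusp edge lie in the lower-left portion of the square (by Property \ref{pr:14models}), and the enlarged first quadrant $N_\epsilon Q1(m_2,j)$ is disjoint from them. The Third Quadrant Lemma, however, requires modification. A $(m,j)$-flow emanating from a puncture at $c_{i,m}$ may hit the cusp edge and pass through a non-degenerate $(m,j)$-switch point, continuing as a $(m',j)$-flow where $\{m,m'\}=\{k+1,k+2\}$, or as a $(k,j)$-flow following the analogous switch on the other cusp sheet. Using the Swallowtail Barrier Property \ref{pr:SwitchBarriers} and the refined monotonicity Properties \ref{pr:monotonicityIIST}-\ref{pr:CuspTransversality} in the swallowtail region, I would show that after any such switch the flow either limits to the appropriate $a^{-,-}$ or reaches an $e$-vertex, and no further $Y_0$-branching can reintroduce non-exceptional generators outside of the allowed combinatorial pattern. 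This is what produces the conjugation by $(I+E_{k+2,k+1})$ on the right side of $C$. Symmetrically, trees with a puncture at $c_{m,j}$ whose $(i,m)$-branch passes through a switch contribute nothing new because the First Quadrant region remains free of switch points, giving an unmodified $A_{+,+}C$.

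For $\partial_b C$, I would reuse the generalized $b$-manifold framework of Section \ref{ssec:112btrees}, enlarging $B^{LU}(i,j)$ to the region bounded by the Swallowtail Barrier of Property \ref{pr:STBnew} instead of $B_2$. The classification of $b$-trees into $b^{LU}$- and $b^{RD}$-trees extends, but now a $b^{LU}$-tree may contain a single switch vertex on the cusp edge, producing an $(i,k)$-flow line that continues on as an $(i,k+1)$-flow (terminating at some $a^{-,-}_{m,k+1}$ or being exceptional) or vice versa; this explains the $A_{-,-}E_{k+1,k}+E_{k+1,k+2}$ correction. Similarly, $b^{RD}$-trees can switch between $(i,k+1)$- and $(i,k+2)$-flows near the cusp as they travel downward, yielding the $B_D E_{k+2,k+1}$ term. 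I would augment the disk datum $(D^{LU},\{b^L_{i,j},b^U_{i,j}\},\{I_{i,j}\})$ with additional boundary points recording the switch-endpoints, extend the path collection accordingly, and then invoke Proposition \ref{prop:dAind} to conclude that the count $\partial_b C$ is independent of the chosen generalized $b$-manifold. Choosing an explicit such manifold built from vertical and horizontal segments plus one extra segment per available switch then gives exactly the stated matrix formula.

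The main obstacle will be the refined Third Quadrant analysis and the accompanying enumeration of rigid GFTs that contain a switch vertex: one must rule out cascades of multiple switches, show that $Y_1$-vertices cannot occur in rigid trees despite the presence of switch points, and verify that the combinatorial bookkeeping of endpoints after a switch matches the entries of $(I+E_{k+2,k+1})$ exactly. The Type (14) case follows by the same argument with the roles of $\widetilde S_k$ replaced by $\widetilde S_l$ and the subscripts reflected as in Remark \ref{rem:about14}; the formal mirror symmetry between upward and downward swallowtails reduces it to the Type (13) analysis, the only substantive change being the direction of the cusp (pointing into rather than out of $\widehat N(e^2_\alpha)$), which reverses the order in which matrix factors appear in the final formula.
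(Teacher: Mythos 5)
Your overall architecture is the paper's: split $\partial C$ into $\partial_c C+\partial_b C$, keep the First Quadrant Lemma, replace the Third Quadrant Lemma in the swallowtail region, and run the generalized $b$-manifold machinery with the disk data augmented by switch points and with $B_2$ replaced by the Swallowtail Barrier, invoking Proposition \ref{prop:dAind} at the end. However, the geometric mechanism you assign to the correction terms is wrong in a way that would derail the enumeration. You repeatedly describe a flow line ``passing through a switch point'' and continuing with its index changed from $k+1$ to $k+2$ (or vice versa). A $\mathit{sw}$-vertex can only exchange the upper cusp sheet $S_k$ with the lower sheet of the \emph{same} branch of the cusp ($k\leftrightarrow k+1$ on the upper half, $k\leftrightarrow k+2$ on the lower half); there is no switch converting a $(k+2,j)$-flow into a $(k+1,j)$-flow, and Property \ref{pr:switches} explicitly asserts there are \emph{no} $(k+2,j)$-switch points on the lower cusp for $k+2<j$. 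So the trees you would go looking for to produce $C\,E_{k+2,k+1}A_{-,-}$, the entry $y_{k,k+1}=1$, and $B_D E_{k+2,k+1}$ do not exist.

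What actually produces these terms is the $(k+2,k+1)$-switch GFT: the unique flow line from $\tilde c_{k+2,k+1}$ to the $(k+2,k+1)$-switch point, continued after the switch to an $e$-vertex, which carries the empty word. The relevant rigid trees have a $Y_0$ where the main $(k+2,j)$-flow (resp.\ the $(k,k+1)$-flow, resp.\ an $(i,k+1)$-flow heading toward $b^D_{i,k+2}$) \emph{crosses} this switch flow line; one outgoing branch is the tail of the switch GFT (word $1$), the other continues with the index shifted. The $\Z/2$-count is then governed by the parity of these intersections — odd for the $b^D_{i,k+2}$-line, even for the $b^D_{k+1,j}$-line — which is exactly why $B_DE_{k+2,k+1}$ survives while no symmetric $E_{k+2,k+1}B_D$-type term appears. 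Relatedly, the bookkeeping you defer as ``the main obstacle'' (no cascades of switches, no $Y_1$'s in rigid trees, at most one special branch per tree) requires a quantitative tool you have not supplied: a degree $D(\Gamma)=A-C+E-Y_1-SW$ for partial flow trees, derived from Proposition \ref{prop:EY1SWFormula}, together with invariant regions on which $D\geq 0$ with equality only for subsets of the three switch GFTs. Without that, the classification of rigid trees containing switch vertices — and hence the exactness of the matrix formula — is not established. (Your reduction of Type (14) to Type (13) is fine in outcome, but the left/right placement of the $E$-factors reflects which index the switch modifies, not a reversal of the cusp's direction.)
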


Theorem \ref{thm:SwallowComp} is proved at the conclusion of this section as a consequence of Theorem \ref{thm:dc} and Propositions
\ref{prop:dbc} and \ref{prop:bRDcomp}.   In the remainder of this section we present only the case of a Type (13) square, i.e. an upward swallowtail, as the Type (14) square is analogous.

\subsection{Properties of gradients}  \label{sec:61}
In the following we continue to use the terminology {\it $(i,j)$-flow line} to denote a flow line for $-\nabla F_{i,j}$, while  {\it $(\tilde{k}, j)$-flow line} or {\it $(i, \tilde{k})$-flow line} refers to  a flow line of $-\nabla( \tilde{F}_k - F_j)$ or $-\nabla( \tilde{F}_k - F_j)$ respectively.  Notice that a single edge of a GFT may change, for instance, from being an $(i,k+2)$-flow line to an $(i,\tilde{k})$-flow line if its image crosses the upper half of the cusp locus.

Many of the properties of squares of type (1)-(12) stated in Section \ref{sec:Comp2Cells} continue to hold for the Type (13) square.  Specifically, as previously discussed,  Properties \ref{pr:Reeb2} and \ref{pr:Location2} hold. In addition,  Property \ref{pr:monotonicityIV} continues to hold as stated.  The statements from Property \ref{pr:monotonicityI} also hold and apply as well to difference functions $F_i-\tilde{F}_k$ and $\tilde{F}_k-F_j$ provided  $\beta^L_{i,k+2}$ and $\beta^L_{k+2,j}$ are used when providing the restrictions on $x_2$ that appear in the statements of these properties.  (This is consistent with the Convention \ref{c:Sktilde}.)

\begin{lemma} \label{lem:holdovers}
The Lemmas \ref{lem:1stQuad},  and \ref{lem:14121} continue to hold for the Type (13) square.  
\end{lemma}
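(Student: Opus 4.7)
The plan is to observe that both lemmas control PFT behavior only in regions contained in $[1/4,1]\times[1/4,1]$, which is disjoint from the singular set of $\tilde{L}$ over a Type (13) square.  By Property \ref{pr:14models}, all crossing loci lie in $\{-1/4 < x_1 < 1/4\} \cup \{-1/4 < x_2 < 1/4\}$, and all cusp loci lie on $\{x_1 = -3/8\}$ or $\{x_2 = -3/8\}$ except within disks of radius $3/32$ around swallowtail points.  Since $-3/8 + 3/32 = -9/32 < -1/4$, the entire singular set of $\tilde{L}$ over the square is confined to $\{x_1 < -1/4\} \cup \{x_2 < -1/4\}$, and is thus disjoint from $[1/4,1]\times[1/4,1]$.

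Next, I would note that the domains of interest for the two lemmas, namely $N_\e Q1(m_2,j) \cap \widehat{N}(e^2_\alpha)$, $N_\e Q3(i,m_1) \cap [1/2,1]^2$, the hexagonal region $P$ from the proof of Lemma \ref{lem:14121}, and the bordering segments $A_1, A_2, B_1, B_2$, all lie in $[1/4,1]\times[1/4,1]$ (this was verified explicitly in the original proof).  Within this region the sheets of $\tilde{L}$ admit the standard labelling $S_1,\dots,S_n$ in descending $z$-order without any ambiguity involving $\widetilde{S}_k$, so the gradient monotonicity statements used in the original arguments, namely the relevant parts of Properties \ref{pr:1cmono}, \ref{pr:monotonicityI}, \ref{pr:monotonicityII}, and \ref{pr:monotonicityIV}, continue to apply; their Type (13) analogues are to be recorded among Properties \ref{pr:monotonicityIIST}-\ref{pr:SwitchBarriers}.

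With these observations in hand, the original proofs carry over essentially verbatim.  For Lemma \ref{lem:1stQuad}, the ingredients used were the inward-pointing property of $-\nabla F_{l_1,l_2}$ along $\partial (N_\e Q1(m_2,j))$, the ordering of sheets by $z$-coordinate in this quadrant, and the identification of $a^{+,+}_{l_1,l_2}$ as the only Reeb chord there; each still holds.  For Lemma \ref{lem:14121}, the argument used that any path in the PFT exits $P$ only through $C_1$ or $C_2$, invoking the monotonicity properties to rule out exits across $A_1, A_2, B_1, B_2$, together with the absence of cusps in $P$ to force only $Y_0$-vertices inside the hexagon; again all of these remain true.

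I do not anticipate a substantive obstacle.  The entire subtlety of the Type (13) square, including switch vertices, the sheet $\widetilde{S}_k$, and the swallowtail point itself, is confined to the lower-left portion of the square, whereas the reasoning underlying Lemmas \ref{lem:1stQuad} and \ref{lem:14121} is purely local to the upper right.  The only step requiring care is matching the monotonicity statements appealed to with their Type (13) analogues from Properties \ref{pr:monotonicityIIST}-\ref{pr:SwitchBarriers}, which amounts to checking that those properties cover the corner $C_{R,U}$ and the segments $B_1, B_2$ in the same form as for squares of type (1)-(12).
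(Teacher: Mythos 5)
Your proposal is correct and follows essentially the same route as the paper, which simply observes that the proofs of Lemmas \ref{lem:1stQuad} and \ref{lem:14121} use only Properties \ref{pr:monotonicityI} and \ref{pr:monotonicityIV} (which are noted in Section \ref{sec:61} to persist for the Type (13) square) and do not depend on the precise form of the singular set over $N(e^2_\alpha)$. Your additional observation that the singular set is confined away from $[1/4,1]\times[1/4,1]$ is a harmless elaboration of the same point.
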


\begin{proof}
The proofs of these Lemmas use only Properties \ref{pr:monotonicityI} and \ref{pr:monotonicityIV} from Section \ref{sec:Comp2Cells}, and do not depend on the precise form of the singular set above $N(e^2_\alpha)$.
\end{proof}

Property \ref{pr:monotonicityII} requires the following modification, and strengthening.  Recall the notation $\nabla F= (\grad_{x_1}F,\grad_{x_2}F)$.

\begin{property}[Monotonicity in half spaces, Type (13) square]  \label{pr:monotonicityIIST} The defining functions in a Type (13) square satisfy:
\begin{itemize}
\item Suppose that $1 \leq i < j \leq n$, and $\{i,j\} \not\subset \{k,k+1,k+2\}$.  Then, (where defined)
\[
-\grad_{x_2} F_{i,j} <0, \quad \mbox{when $x_2 = 1/2$}.
\]
Moreover, for $i<k$ and $k+2<j$, we also have (where defined)
\[
\mbox{$-\grad_{x_2}(F_{i}-\widetilde{F}_k) <0$ and $-\grad_{x_2}(\widetilde{F}_k-F_j) <0$,    when $x_2=1/2$.}
\]
\item Suppose that $1 \leq i < j \leq n$, and $(i,j) \neq (k+1,k+2)$.  Then, for $(x_1,x_2) \in \widehat{N}(e^2_\alpha)$ we have
\begin{equation} \label{eq:monoIIeq1ST}
-\grad_{x_1} F_{i,j}(x_1,x_2) <0,  \quad \mbox{  whenever  $x_1  \in \left[1/4,\, \mbox{Min}\{ \beta^U_{i,j}, \beta^D_{i,j} \}- \e\right]$, and }
\end{equation}
\begin{equation} \label{eq:monoIIeq2ST}
-\grad_{x_1} F_{i,j}(x_1,x_2) >0,  \quad \mbox{  whenever  $x_1  \in \left[\mbox{Max}\{\beta^U_{i,j}, \beta^D_{i,j}\}+ \e, 3/4\right]$.}
\end{equation}
\end{itemize}
\end{property}

\begin{remark}
Since sheets $S_{k+1}$ and $S_{k+2}$ cross above $e^1_R$, we have $\beta^D_{i,j}= \beta^U_{\sigma_D(i), \sigma_D(j)}$ where $\sigma_D$ transposes $k+1$ and $k+2$.  Compare with Remark \ref{rem:betanotate}.
\end{remark}

On the other hand, Property \ref{pr:CuspTransversality} requires a serious modification for Type (13) squares.  This leads to many new rigid gradient flow trees.

\begin{property}[Existence and uniqueness of switch points] \label{pr:switches}
Let $O_1$ %and $O_2$ 
denote the closed disk centered at $(-3/8,0)$ with radius $R_1 = 1/16$,  
%and $R_2 = 3/32$, 
and let $\Sigma \subset N(e^2_\alpha)$ denote (the base projection of) the cusp locus within a Type (13) square.  
The projection of $\Sigma$ to the $x_2$-coordinate is injective.
The swallowtail point, $Q$, is located within $O_1$ along the horizontal line $x_2=0$ at a point with $-7/16 < x_1  <-3/8$.  At $Q$ two branches of $\Sigma$ meet at a semi-cubical cusp, where the $x_1$-coordinate is minimized along $\Sigma$.  For $x_2 \geq 0$ sheets $S_k$ and $S_{k+1}$ meet along $\Sigma$, while for $x_2 \leq 0$ sheets $S_k$ and $S_{k+2}$ meet along $\Sigma$.  For $|x_2| \leq 1/32$, the $(k,k+1)$-cusp locus and $(k,k+2)$-cusp locus each project injectively to the $x_1$-axis.  Outside of $O_1$, $\Sigma$ agrees with the vertical line $x_2 = -3/8$.

The defining functions $F_{i}$ 
 satisfy the following.
\begin{enumerate}
\item For any $i <k$, there is a unique $(i,k)$-switch point along $\Sigma$.  This switch point has $x_2>0$.
\item For any $k+2 \leq j$, there is a unique $(k+1,j)$-switch point in $\Sigma \cap \{x_2 \geq 0\}$.  %This point is located within $O_1$.
\item For any $k+2 < j$, there are no $(k+2,j)$-switch points in $\Sigma \cap \{x_2 \leq 0\}$.  
\item There is a unique $(k+2,k+1)$-switch point in  $\Sigma \cap \{x_2 \leq 0\}$.  %This point is located within $O_1$.
\end{enumerate}
Moreover, all of these switch points are non-degenerate and are located in $O_1$ with $|x_2| \leq 1/32$.
\end{property}

\begin{definition}
We refer to the portion of the cusp locus with $x_2 >0$ (resp. $x_2<0$) where sheets $S_k$ and $S_{k+1}$ (resp. $S_k$ and $S_{k+2}$) meet as the {\bf $(k,k+1)$-cusp locus} (resp. the {\bf $(k,k+2)$-cusp locus}).
\end{definition}

\begin{corollary}  \label{cor:mnPQ}
\begin{enumerate}

\item  
For $(r,s) = (i,k)$, or $(r,s)= (k+1,j)$, with $i<k$, $k+1<j$,   $-\nabla F_{r,s}$ is transverse to the $(k,k+1)$-cusp locus  at all points  besides the $(r,s)$-switch point, $P$.  Moreover, between $P$ and $Q$ (the swallowtail point), $-\nabla F_{r,s}$ points in the direction where the number of sheets increases, while at all other points $-\nabla F_{r,s}$ points in the direction where the number of sheets decreases.

\item For $(r,s) = (i,k)$, or $(r,s)= (k+2,j)$, with $i<k$, $j>k+2$,   $-\nabla F_{r,s}$ is transverse to the $(k,k+2)$-cusp locus everywhere and  points in the direction where the number of sheets increases.  The same statement applies for $(r,s) = (k+2,k+1)$ except between the $(k+2,k+1)$-switchpoint, $P$, and the swallowtail point $Q$ where $-\nabla F_{k+2,k+1}$ points in the direction where the number of sheets increases.

\end{enumerate}

\end{corollary}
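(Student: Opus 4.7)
The plan is to derive the corollary directly from Property \ref{pr:switches}, splitting the verification into three pieces: transversality on each cusp arc, sign change of the normal component at switch points, and identification of a base direction on each arc. The corollary essentially repackages switch-point data as flow-direction data along the two cusp branches emanating from $Q$.

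\textbf{Transversality.} I would first note that, by definition, a switch point on the cusp locus is precisely where $-\nabla F_{r,s}$ becomes tangent to that locus. So the uniqueness assertions in Property \ref{pr:switches}(1), (2), and (4) immediately give transversality at every non-switch point of each cusp arc considered in part (1) and for the pair $(k+2,k+1)$ in part (2). For part (2) with $(r,s)=(i,k)$ or $(k+2,j)$, Property \ref{pr:switches}(1) confines the only $(i,k)$-switch point to $\{x_2>0\}$ (the $(k,k+1)$-cusp), while (3) excludes $(k+2,j)$-switch points on $\{x_2\le 0\}$; hence transversality holds along the entire $(k,k+2)$-cusp for these pairs.

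\textbf{Sign flip.} Non-degeneracy of $P$ (order-$1$ tangency of the restriction of $-\nabla F_{r,s}$ to the cusp) forces the normal component of $-\nabla F_{r,s}$ to vanish simply at $P$ and therefore to reverse sign across $P$. Combined with transversality, the side of the cusp locus toward which $-\nabla F_{r,s}$ points is then locally constant on each connected component of the cusp arc with $P$ removed. On arcs containing no switch points (the $(k,k+2)$-cusp for $(r,s)=(i,k)$ and $(k+2,j)$), the direction is constant throughout.

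\textbf{Base direction.} To pin down the side, I would use two kinds of base-point arguments. First, continuity of $-\nabla F_{r,s}$ at the swallowtail $Q$: for $(r,s)=(i,k)$ with $i<k$, the sheets $S_i$, $S_k$, $S_{k+1}$, $S_{k+2}$ are all well-defined on the more-sheet side of a neighborhood of $Q$, so $-\nabla F_{i,k}$ extends continuously to $Q$ and must agree whether one approaches along the top or bottom cusp; similarly one relates the $(k+1,j)$ data on the top cusp to $(k+2,j)$ on the bottom via the sheet that continues across $Q$ (Convention \ref{conv:stlabeling}). This ties the direction on the arc between $P$ and $Q$ of the top cusp to the direction on the bottom cusp near $Q$. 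Second, to fix the direction absolutely, I would use the straight portion of the cusp locus outside $O_1$, where Property \ref{pr:14models} together with the $\Sigma$ description of Property \ref{pr:switches} reduces the geometry to a standard cusp edge as in Types (1)--(11); a direct calculation with the normal form $F_k,F_{k\pm 1}\sim\pm y^{3/2}$ for the cusping pair, and $F_i,F_j$ smooth, reads off the correct side.

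\textbf{Main obstacle.} The delicate part will be the bookkeeping in this last step: one must carefully track how the swallowtail sheets are relabeled across $Q$ (so that continuity matches $-\nabla F_{i,k}$ on the top cusp with $-\nabla F_{i,k}$ on the bottom, but $-\nabla F_{k+1,j}$ on the top with $-\nabla F_{k+2,j}$ on the bottom, etc.) and then reconcile the direction dictated by the straight-cusp base point with the direction on the $P$-to-$Q$ subarc after the single sign flip at $P$. Once this is carried out case-by-case for $(r,s)\in\{(i,k),(k+1,j),(k+2,j),(k+2,k+1)\}$, the two claims of the corollary fall out.
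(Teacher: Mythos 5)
Your skeleton coincides with the paper's: transversality to $\Sigma$ can only fail at tangencies, which are by definition the switch points enumerated in Property \ref{pr:switches}; non-degeneracy forces the normal component of $-\nabla F_{r,s}$ to vanish to first order and hence change sign across each switch point; and the side is then propagated along each branch from a base point. The gap is in how you fix the base direction. The paper anchors it at the ends of the cusp locus, near $x_2=\pm1$, where Property \ref{pr:1cmono} states outright that $-\grad_{x_1}F_{i,j}<0$ for positive difference functions with $x_1\le -1/4$; since the swallowtail sheets live to the right of $\{x_1=-3/8\}$, this is ``toward fewer sheets,'' and counting switch points on each branch finishes the proof. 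Your substitute --- ``a direct calculation with the normal form $F_k,F_{k\pm1}\sim\pm y^{3/2}$\dots and $F_i,F_j$ smooth'' --- cannot determine the answer: at a cusp point $\nabla F_k=\nabla F_{k+1}$ is a finite vector, so $\nabla F_{i,k}=\nabla F_i-\nabla F_k$ is nonzero and its direction relative to $\Sigma$ is exactly the global datum that the switch points measure. It is invisible in the local model, and Properties \ref{pr:14models} and \ref{pr:switches} only locate $\Sigma$; they say nothing about the sign of $\partial_{x_1}(F_i-F_k)$ there. You must invoke the constructed monotonicity near the (Cu) edges $U$ and $D$, i.e.\ Property \ref{pr:1cmono}.

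Two further cautions on the bookkeeping you defer. First, the continuity-at-$Q$ step carries a sign you do not record: the inward normals of the two branches of $\Sigma$ limit to \emph{opposite} directions at the semicubical cusp $Q$, so a single continuous vector $-\nabla F_{i,k}(Q)$ with nonzero vertical component points toward more sheets on one branch near $Q$ and toward fewer on the other; reading ``continuity'' as ``same side on both branches'' flips the answer. Second, once the base direction is taken from Property \ref{pr:1cmono}, your own argument yields ``toward fewer sheets'' everywhere on the $(k,k+2)$-locus for $(r,s)=(i,k)$ and $(k+2,j)$ (Property \ref{pr:switches} gives no switch points for these pairs with $x_2\le 0$), which is also how the corollary is applied later (e.g.\ Lemma \ref{lem:NoY1swinA} uses $-\grad_{x_1}F_{i,k}<0$ and $-\grad_{x_1}F_{k+2,j}<0$ there). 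So do not expect the literal word ``increases'' in the first sentence of part (2) to ``fall out'': the content to prove is the fewer-sheets direction on that branch, with the more-sheets direction occurring only on the subarcs between a switch point and $Q$.
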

See Figure \ref{fig:13Points2}.

\begin{proof}
As we precede along the $(k,k+1)$- or $(k,k+2)$-cusp locus, the direction of transversality of  $-\nabla F_{r,s}$ to  $\Sigma$ changes exactly when either 
$-\nabla F_{r,s}$ becomes tangent to the cusp locus; for $(r,s)$ as in the statement of the Corollary, such points of tangency are exactly at the $(r,s)$-switch points. 
[The direction of transversality \emph{must} change at a switch point because all switch points are non-degenerate.]
Moreover, when $x_2$ is near $\pm 1$, the direction of transversality for all positive difference functions is from right to left, by Property \ref{pr:1cmono}.  The result follows.
\end{proof}

\begin{figure}
\labellist
\small
\pinlabel $P$ [l] at 108 176
\pinlabel $P_1$ [l] at 426 176
\pinlabel $P_2$ [l] at 426 96
\pinlabel $Q$ [r] at 60 136
\endlabellist
\centerline{ \includegraphics[scale=.6]{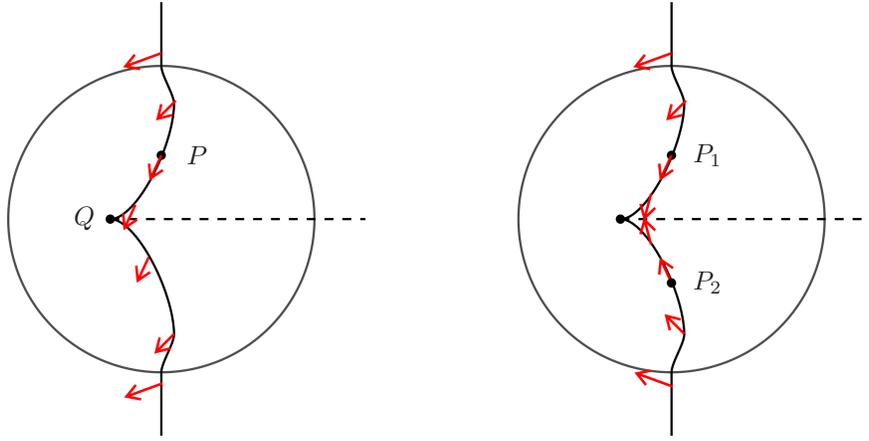} }
\caption{(left)  The $(r,s)$-switch point $P$ for $(r,s) = (i,k)$ with $i<k$ or $(r,s)= (k+1,j)$ with $k+2<j$. The vector field $-\nabla F_{r,s}$ is schematically pictured in red.   (right)   The $(k+1,k+2)$-switch point, $P_1$, and the $(k+2,k+1)$-switch point, $P_2$.  The vector field $-\nabla F_{k+1,k+2}$ (resp. $-\nabla F_{k+2,k+1}$) is pictured along the $(k,k+1)$-cusp locus (resp. along the $(k,k+2)$-cusp locus).  
}  
\label{fig:13Points2}
\end{figure}

\subsubsection{Additional monotonicity properties for squares containing swallowtails}

The following Properties \ref{pr:STBnew}-\ref{pr:SwitchBarriers} provide more detailed constraints on the direction of gradient vector fields.  They are used at various places during the course of the proof of Theorem \ref{thm:SwallowComp}.

\begin{property}[The Swallowtail Barrier] \label{pr:STBnew}  For the Type (13) square, there is a polygonal path $P= P_1 \cup P_2$ that satisfies the following.
\begin{enumerate}
\item The subset $P_1$ is a single line segment that starts on $x_2 = -3/8$; ends on $x_2 = 3/4$; and is entirely contained in $(1/4,1/2) \times [-3/8,3/4]$.
\item For all $1 \leq i < j \leq n$ and for all $(x_1,x_2) \in P_1$ with $-3/8 \leq x_2 \leq \beta^R_{i,j} -\e$, 
\[
\mbox{$-\nabla F_{i,j}(x_1,x_2)$ points transversally into the region to the right of $P_1$.}
\]
\item The subset $P_2$ has monotonic $x_1$-coordinate with its right endpoint at the lower endpoint of $P_1$ and its left endpoint on the left boundary of $N(e^2_\alpha)$.  Moreover, $P_2 \subset [-1-1/32, 1/2] \times [-3/8, -1/4]$.
\item For all $(x_1,x_2) \in P_2$, and $(i,j) \neq (k+2,k+1)$ such that $F_{i,j}(x_1,x_2) >0$, 
\[
\mbox{$-\nabla F_{i,j}(x_1,x_2)$ points transversally into the region below $P_2$.}  
\]
(We include the case of $-\nabla(F_i- \widetilde{F}_k)$ and $-\nabla(\widetilde{F}_k - F_j)$.)
\end{enumerate}
\end{property}
See Figure \ref{fig:NewSTBarrier}.

\begin{figure}
\labellist
\small
\pinlabel $x_2=\beta^R_{i,j}-\e$ [l] at 188 138
\pinlabel $P$ [l] at 102 50
\endlabellist
\centerline{ \includegraphics[scale=.6]{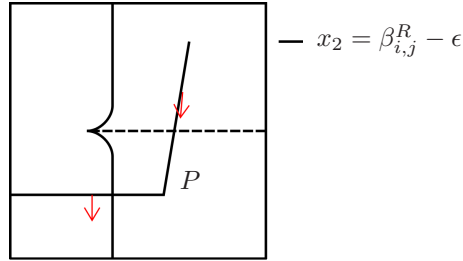} }
\caption{  The Swallowtail Barrier, $P$.  For $i<j$, $-\nabla F_{i,j}$, pictured in red, points to the region below and to the right along $P$ when $x_2 \leq \beta^R_{i,j} - \e$.
}  
\label{fig:NewSTBarrier}
\end{figure}

\begin{property}[Barrier at $x_1 = -17/64$]
\label{pr:leftC} For
 any $i <k$, we have  $-\grad_{x_1} F_{i,k}<0$, $-\grad_{x_1} F_{i,k+1}<0$, and $-\grad_{x_1} F_{i,k+2}<0$ at all points along $ x_1 = -17/64$.
\end{property}

\begin{property}[Downward gradients when $x_2 \leq -1/4$]   \label{pr:monoLtilde}
 For
$i< k$ and $j>k+2 $, we have  $-\grad_{x_2}(F_i -w)(x_1,x_2) <0$ and $-\grad_{x_2}(w-F_j)(x_1,x_2) <0$ for all $(x_1,x_2) \in (\mbox{domain of $w$})\cap\{ (x_1,x_2) \in \widetilde{N}(e^2_\alpha) \, |\, x_1 \leq -1/4 \mbox{  and  } -1/4 \leq x_2  \leq 1/4 \}$  where $w$ is any of $F_k,F_{k+1},F_{k+2}$ or $\widetilde{F}_k$.  Moreover, at points with $x_1 \leq -1/4$ that belong to the crossing locus, $-\nabla (F_i-w)$ points into the region where $F_{k+2} > F_{k+1}$.
\end{property}

\begin{property}[Switch point barriers] \label{pr:SwitchBarriers}

There exists a line segment $L_{k+1,k+2}$ contained entirely in $O_1$
 with upper endpoint at the  $(k+1,k+2)$-switch point and lower endpoint on the $(k,k+2)$ cusp locus such that 
\begin{itemize} 
\item $-\nabla F_{k+1,k+2}$ points from right to left at all points of $L_{k+1,k+2}$ above or on the crossing locus; and 
\item $-\nabla F_{k,k+2}$ points from right to left along all of $L_{k+1,k+2}$ except for the endpoint on the $(k,k+2)$ cusp edge.
\end{itemize}
Similarly, there is a line segment $L_{k+2,k+1}$ contained in $O_1$ with endpoints at the $(k+2,k+1)$-switch point and on the $(k,k+1)$ cusp edge such that
\begin{itemize} 
\item $-\nabla F_{k+2,k+1}$ points from right to left at all points of $L_{k+2,k+1}$ below or on the crossing locus; and 
\item $-\nabla F_{k,k+1}$ points from right to left along all of $L_{k+2,k+1}$ except for the endpoint on the $(k,k+2)$ cusp edge.
\end{itemize}

\end{property}

See Figure \ref{fig:FlowDirect}.

\subsubsection{The $(i,k)$, $(k+1,k+2)$, and $(k+2,k+1)$ switch flow lines and GFTs}

Flow lines that end at switch points play a role in the Type (13) square that is similar to the stable manifolds of $b$ Reeb chords.  In this subsection, we examine the flow lines ending at the $(i,k)$-, $(k+1,k+2)$-, and $(k+2,k+1)$-switch points.

For each $(r,s)$-switch point identified in Property \ref{pr:switches}, we call the (unique) $(r,s)$-flow line that ends at the switch point the {\bf{$(r,s)$-switch flow line}}.

\begin{lemma}  \label{lem:k1k2switchflow}
\begin{enumerate}
\item The $(k+1,k+2)$-switch flow line (resp. $(k+2,k+1)$-switch flow line) limits to $c_{k+1,k+2}$ (resp. $\tilde{c}_{k+2,k+1}$) as $t \rightarrow -\infty$. 
\item The $(k,k+2)$-flow line (resp. $(k,k+1)$-flow line) starting at the $(k+1,k+2)$-switch point (resp. the $(k+2,k+1)$-switch point) ends at an $e$-vertex at a point on the $(k,k+2)$-cusp locus (resp. $(k,k+1)$-cusp locus).
\end{enumerate}
\end{lemma}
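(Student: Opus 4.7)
View each switch flow line as a one-dimensional gradient trajectory of the relevant difference function, and combine the directional controls of Properties~\ref{pr:monotonicityIIST}--\ref{pr:SwitchBarriers} and Corollary~\ref{cor:mnPQ} with $1$-regularity (Theorem~\ref{thm:PropertiesofLtilde}) to pin down its two endpoints.

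\textbf{Part (1).} Trace the $(k+1,k+2)$-switch flow line $\gamma$ backward from the switch point $P$, i.e.\ along $+\nabla F_{k+1,k+2}$. Near $P$, $\gamma$ is tangent to the $(k,k+1)$-cusp from the side on which both $S_{k+1}$ and $S_{k+2}$ are defined, and $F_{k+1,k+2}(\gamma(t))$ strictly increases as $t\to-\infty$. One first shows $\gamma$ stays inside $R := \{F_{k+1,k+2} > 0\} \cap N(e^2_\alpha)$: it cannot cross the $(k+1,k+2)$-crossing locus (where $F_{k+1,k+2}=0$), cannot exit $\partial N(e^2_\alpha)$ by Properties~\ref{pr:0cells},~\ref{pr:1cells}, and cannot reach the $(k,k+1)$-cusp at any point other than $P$ since by Corollary~\ref{cor:mnPQ}(1) the transverse direction of $+\nabla F_{k+1,k+2}$ to the cusp either pushes $\gamma$ back into $R$ or is incompatible with $\gamma$ arriving from the $R$-side. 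Hence $\gamma$ is defined for all $t \in \mathbb{R}$ and by standard Morse theory must limit to a critical point of $F_{k+1,k+2}$; $1$-regularity excludes saddles, and Property~\ref{pr:Reeb2} leaves $c_{k+1,k+2}$ as the unique local maximum in $R$. The $(k+2,k+1)$-switch flow line is handled symmetrically, with $\tilde c_{k+2,k+1}$ as the unique local maximum of $F_{k+2,k+1}$ in its corresponding positive region (carved out to the left of the $(k+1,k+2)$-crossing).

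\textbf{Part (2).} For the outgoing $(k,k+2)$-flow $\delta$ from the $(k+1,k+2)$-switch point, the initial velocity equals $-\nabla F_{k+1,k+2}(P)$ (since $F_k=F_{k+1}$ on the cusp), and $F_{k,k+2}$ strictly decreases forward in time. The only possible terminations are a local minimum $a^{\pm,\pm}_{k,k+2}$, a saddle (ruled out by $1$-regularity), an $e$-vertex on the $(k,k+2)$-cusp, or exit through $\partial N(e^2_\alpha)$ (ruled out by Properties~\ref{pr:0cells},~\ref{pr:1cells}). Using the directional controls in Properties~\ref{pr:monotonicityIIST}, \ref{pr:monoLtilde}, and \ref{pr:SwitchBarriers}, route $\delta$ downward past the swallowtail; Corollary~\ref{cor:mnPQ}(2) together with Property~\ref{pr:switches} then says $-\nabla F_{k,k+2}$ is transverse to the $(k,k+2)$-cusp pointing into the side where $S_{k+2}$ exists, which rules out the relevant $0$-cell minima (they lie on the far side of the cusp, inaccessible to $\delta$) and forces $\delta$ to strike the $(k,k+2)$-cusp transversely, yielding an $e$-vertex. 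The analogous argument for the $(k,k+1)$-flow from the $(k+2,k+1)$-switch point uses Property~\ref{pr:SwitchBarriers} to route $\delta$ back up into $\{x_2>0\}$ and hit the $(k,k+1)$-cusp.

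\textbf{Main obstacle.} The hard part is part (2): once $\delta$ leaves $P$ tangent to the starting cusp, one must certify its global routing through the swallowtail region, avoiding every local minimum, saddle, and boundary exit. This is precisely what the elaborate barrier network of Properties~\ref{pr:STBnew}, \ref{pr:leftC}, \ref{pr:monoLtilde}, and \ref{pr:SwitchBarriers} is designed to enforce, and the routing argument must chain these together carefully; once this is done, the limit statements in both parts follow from standard Morse-theoretic arguments and $1$-regularity.
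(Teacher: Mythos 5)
There is a genuine gap in both parts, and in both cases it is the same missing ingredient: the barrier segments of Property~\ref{pr:SwitchBarriers}. In part (1), your dichotomy ``the transverse direction of $+\nabla F_{k+1,k+2}$ either pushes $\gamma$ back into $R$ or is incompatible with $\gamma$ arriving from the $R$-side'' does not dispose of the arc of the $(k,k+1)$-cusp locus between the switch point $P$ and the swallowtail point $Q$. Corollary~\ref{cor:mnPQ}(1) says that precisely on that arc the transversality direction of $-\nabla F_{k+1,k+2}$ is reversed (it points toward \emph{more} sheets), so the backward trajectory can arrive from the side where both sheets are defined and terminate there; nothing in your argument excludes this. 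The paper excludes it by observing that the backward flow cannot cross the segment $L_{k+1,k+2}$ from right to left (Property~\ref{pr:SwitchBarriers}) and that the dangerous arc between $P$ and $Q$ lies entirely to the left of $L_{k+1,k+2}$ — you never invoke the barrier in part (1). Separately, ``$1$-regularity excludes saddles'' is not a valid step: a backward gradient trajectory can perfectly well lie on the unstable manifold of a saddle. What actually excludes the $b$-chords is that the backward flow is trapped in $\widehat{N}(e^2_\alpha)$ (Property~\ref{pr:1cells}), where $c_{k+1,k+2}$ is the only $(k+1,k+2)$-Reeb chord.

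In part (2) the routing is asserted rather than proved, and the one concrete claim you make is not meaningful: along the $(k,k+2)$-cusp locus one has $\nabla F_k=\nabla F_{k+2}$, so $\nabla F_{k,k+2}$ \emph{vanishes} there — reaching that locus is exactly the $e$-vertex condition, not a transverse crossing ``into the side where $S_{k+2}$ exists.'' The argument you are missing is a trapping argument: the outgoing flow line starts at one endpoint of the barrier segment ($L_{k+1,k+2}$, resp.\ $L_{k+2,k+1}$); the relevant gradient points \emph{into} the triangle-like region bounded by that segment and the cusp locus, both along the segment (Property~\ref{pr:SwitchBarriers}) and along the wrong half of the cusp (Corollary~\ref{cor:mnPQ}, using that the gradients of the two difference functions agree along the cusp); hence the trajectory is confined to that triangle and can only exit through the correct half of the cusp locus, where it ends at an $e$-vertex. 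This confinement is also what rules out the minima $a^{\pm,\pm}$ — they simply do not lie in the triangle. Your closing paragraph concedes that this chaining has not been carried out, so as written the proposal is a plan for part (2), not a proof.
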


\begin{proof}
For (1), recall that any flow line must either limit to a Reeb chord or terminate at the cusp edge as $t\rightarrow -\infty$.  (See the discussion around Proposition \ref{prop:bUS}.) If the $(k+1,k+2)$-switch flow line were to terminate at the cusp edge as $t$ decreases, it could only do so at a point of the $(k,k+1)$-cusp edge where $-\nabla F_{k+1,k+2}$ points in the direction where the number of sheets increases.  From  Corollary \ref{cor:mnPQ}, such a point must be located between the $(k+1,k+2)$-switch point,  $P$, and the swallow tail point, $Q$.  However, Property \ref{pr:SwitchBarriers} prevents the $(k+1,k+2)$-switch flow line from passing to the left of $L_{k+1,k+2}$ 
 as $t$ decreases, and the portion of the cusp locus between $P$ and $Q$  lies entirely to the left of $L_{k+1,k+2}$.  Thus, as $t\rightarrow -\infty$, the $(k+1,k+2)$-switch flow line limits to a $(k+1,k+2)$-Reeb chord in $\widehat{N}(e^2_\alpha)$; the only such Reeb chord is $c_{k+1,k+2}$.  

A similar argument (using the statement about $-\nabla F_{k+2,k+1}$ and the segment $L_{k+2,k+1}$ in Property \ref{pr:SwitchBarriers}) applies to show the $(k+2,k+1)$-switch flow line begins at $\tilde{c}_{k+2,k+1}$.  

For (2), consider only the $(k,k+1)$-flow line starting at the $(k+2,k+1)$-switch point as a similar proof will establish the statement about the $(k,k+2)$-flow line.   Along the segment, $L_{k+2,k+1}$, that starts at the $(k+2,k+1)$-switch point,   
%the vertical segment through the switch point, 
$-\nabla F_{k,k+1}$ points to the left (by Property \ref{pr:SwitchBarriers}).  Therefore, as $t$ increases, the flow must exit the triangle-like region formed by $L_{k+2,k+1}$ and the cusp edge somewhere along the cusp edge.  Along the lower half of the cusp locus, to the left of the $(k+2,k+1)$-switch point, $-\nabla F_{k,k+1}$ points across the cusp edge from the side with fewer sheets to the side with more sheets.  [This follows from  Corollary \ref{cor:mnPQ} since $\nabla F_{k,k+1}$ and $\nabla F_{k+2,k+1}$ agree along the $(k,k+2)$-cusp edge.]  Thus, as $t$ increases the flow line can only reach the upper half of the cusp edge (which is part of the $(k,k+1)$-cusp locus) where it ends at an $e$-vertex.  See Figure \ref{fig:FlowDirect} (left).
\end{proof}

\begin{figure}
\labellist
\small
\pinlabel $L_{k+2,k+1}$ [l] at 70 96
\endlabellist
\centerline{ \includegraphics[scale=.6]{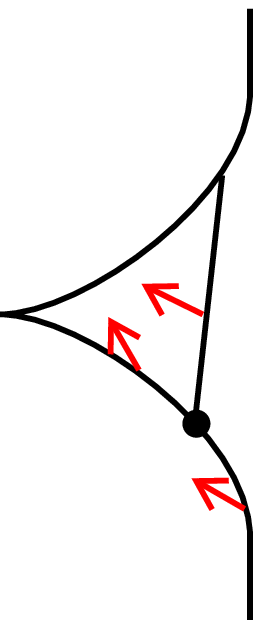} 
\quad \quad \quad \quad \quad \quad \quad \quad 
\labellist
\small
\pinlabel $c_{k+1,k+2}$ [r] at 136 144
\pinlabel $\tilde{c}_{k+2,k+1}$ [r] at 362 14
\endlabellist
\includegraphics[scale=.6]{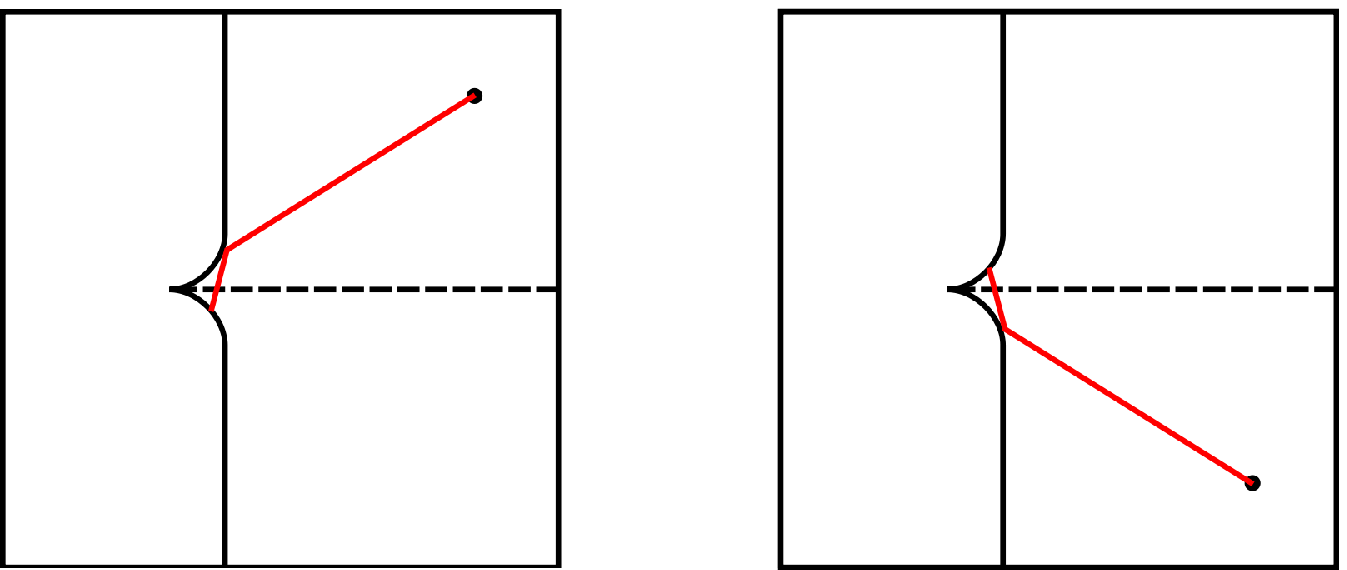}
}
\caption{(left)  The $(k+2,k+1)$-switch point appears as a dot.  Arrows indicate the direction of $-\nabla F_{k,k+1}$ along the lower half of the cusp locus and the line segment $L_{k+2,k+1}$ as used in Proof of Lemma \ref{lem:k1k2switchflow}.  (right)  The images of the GFTs resulting from Lemma \ref{lem:k1k2switchflow}.
}  
\label{fig:FlowDirect}
\end{figure}

The flow lines identified in (1) and (2) of Lemma \ref{lem:k1k2switchflow} fit together in pairs to produce gradient flow trees starting at $c_{k+1,k+2}$ and $c_{k+2,k+1}$ respectively.   Both of these GFTs contain a single switch as an internal vertex and  have a single output at an $e$-vertex.  See Figure \ref{fig:FlowDirect}.  We refer to these gradient flow trees as the {\bf $(k+1,k+2)$- and $(k+2,k+1)$-switch GFTs}.

We will make use of the following constraint on the image of the $(k+1,k+2)$-switch flow line.  Let $R(\mathit{sw}_{k+1,k+2})\subset \widehat{N}(e^2_\alpha)$ denote the region bounded
\begin{enumerate}
\item on the left by $L_{k+1,k+2}$ and the $(k,k+1)$-cusp locus;
\item below by a path constructed from left to right from (i) the $(k+1,k+2)$ crossing locus from $L_{k+1,k+2}$ to the Swallowtail Barrier $P$ (from Property \ref{pr:STBnew});  (ii) the part of $P$ below $x_2 = \beta^R_{k+1,k+2}-\e$; and (iii) the part of the line segment $\{x_2=\beta^R_{k+1,k+2}-\e\}$ between $P$ and $x_1 = \beta^U_{k+1,k+2}+\e$;
\item on the right by $\{x_1 = \beta^U_{k+1,k+2}+\e\}$;
\item and above by $\partial \widehat{N}(e^2_\alpha)$.
\end{enumerate}
See Figure \ref{fig:Rswk1k2}.

\begin{figure}
\labellist
\small
\pinlabel $L_{k+1,k+2}$ [l] at 60 104
\pinlabel $P$ [l] at 244 128
\pinlabel $c_{k+1,k+2}$ [r] at 322 214
%\pinlabel $x_2=\beta^R_{k+1,k+2}-\e$ [l] at 316 200
%\pinlabel $x_1=\beta^U_{k+1,k+2}+\e$ [tl] at 288 170
\endlabellist
\centerline{ \raisebox{16ex}{\includegraphics[scale=.5]{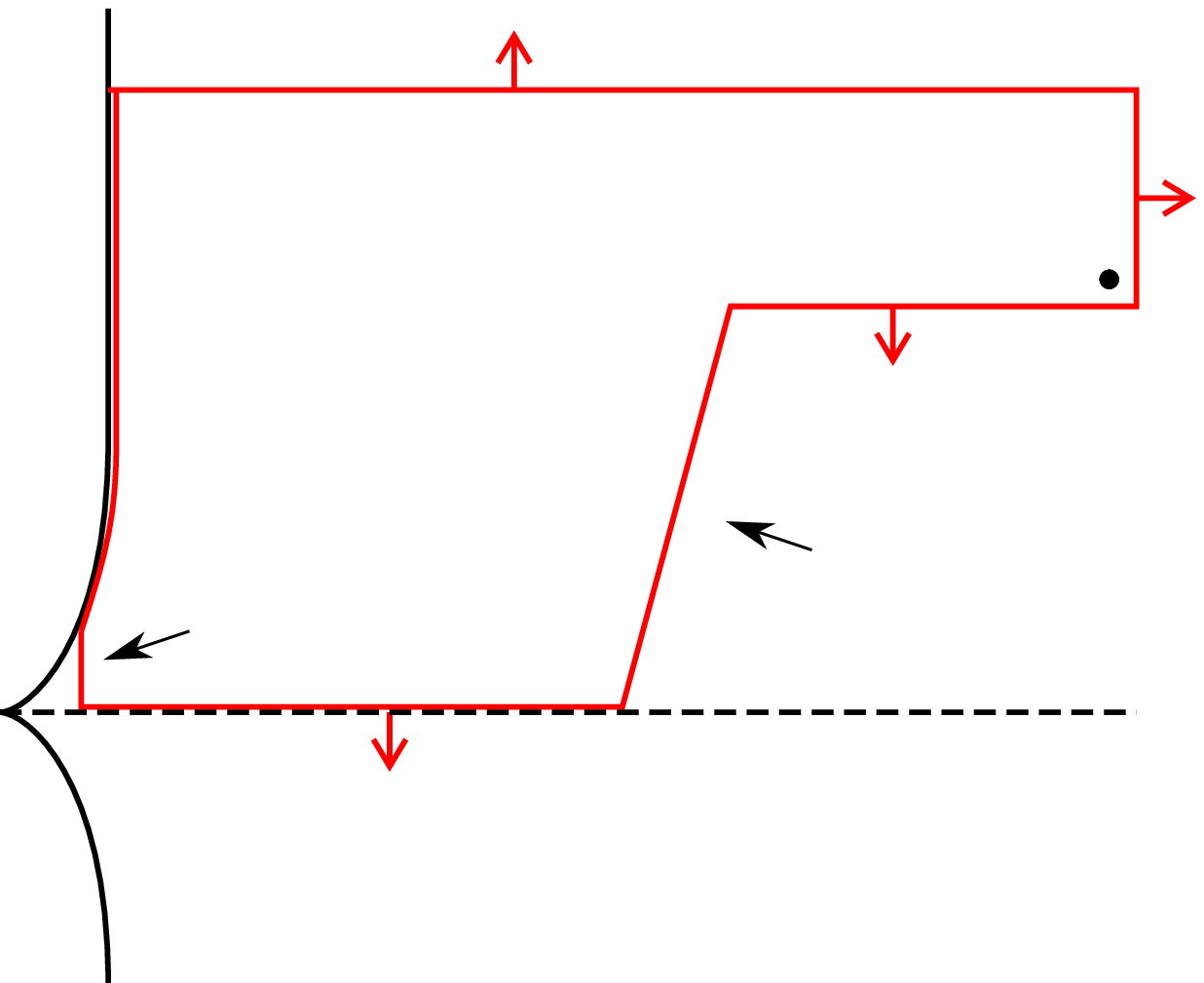}} 
\quad \quad \quad \quad \quad \quad 
\labellist
\small
\pinlabel $a^{-,-}_{i,k+1}$ [l] at 28 24
%\pinlabel $P$ [l] at 244 128
\pinlabel $c_{i,k}$ [r] at 422 318
%\pinlabel $x_2=\beta^R_{k+1,k+2}-\e$ [l] at 316 200
%\pinlabel $x_1=\beta^U_{k+1,k+2}+\e$ [tl] at 288 170
\endlabellist
\includegraphics[scale=.5]{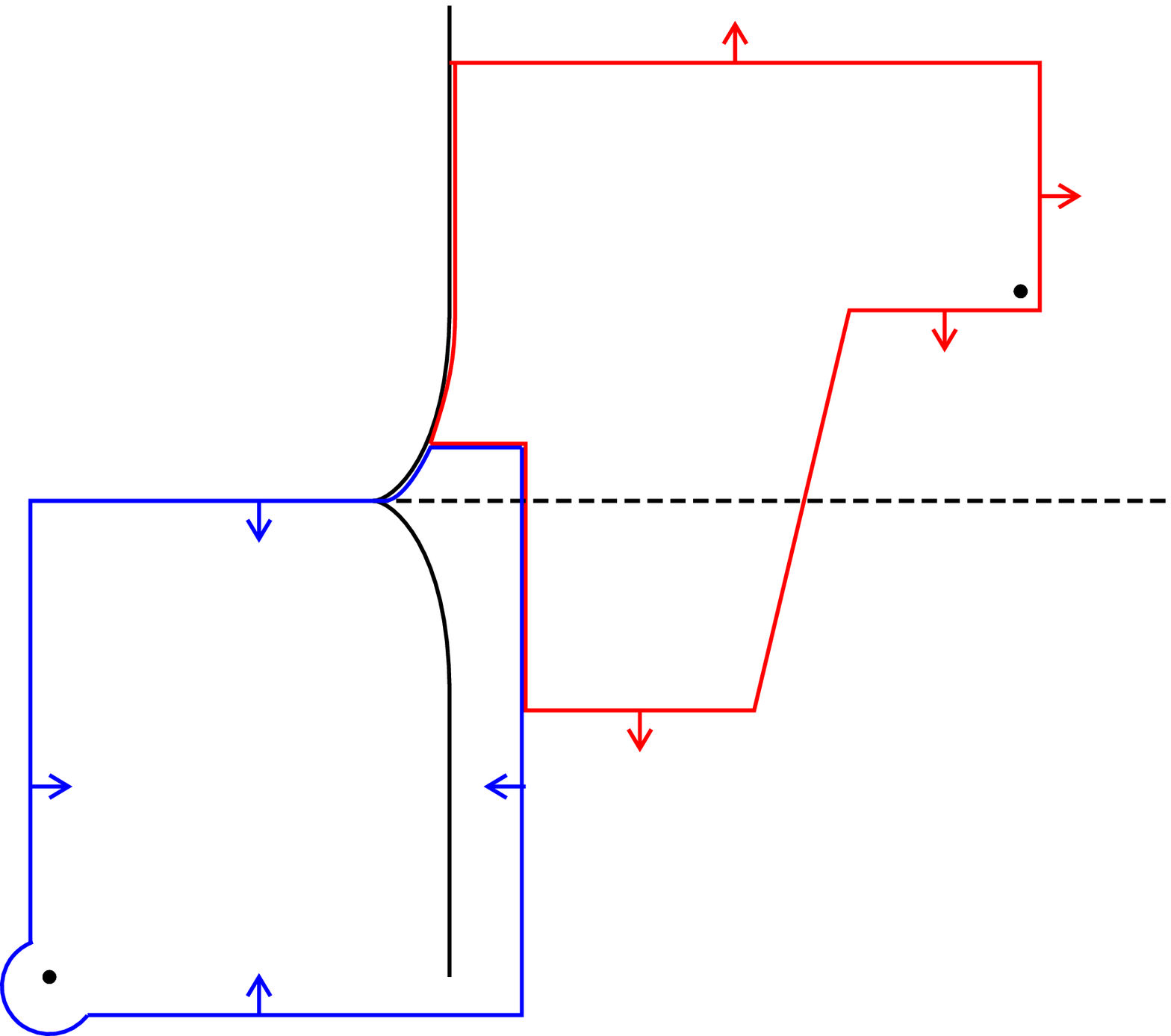} 
}
\caption{(left) The region $R(\mathit{sw}_{k+1,k+2})$.  (right)  The regions $R(\mathit{sw}_{i,k})$ (in red) and $C_{i,k}$ (in blue).
%The arrows indicate the direction along edges of all $(i,j)$-flows (negative gradients) other than the $(k+2,k+1)$ flow.  
%(right) The partial flow trees in $A$ described in (2) of Lemma \ref{lem:RegionA}.
}  
\label{fig:Rswk1k2}
\end{figure}

\begin{lemma} \label{lem:Rswk1k2} The $(k+1,k+2)$-switch flow line is entirely contained in $R(\mathit{sw}_{i,k})$.
\end{lemma}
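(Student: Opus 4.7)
The plan is to trace the $(k+1,k+2)$-switch flow line $\gamma$ in backward time from the switch point. By Lemma \ref{lem:k1k2switchflow}(1), $\gamma(t) \to c_{k+1,k+2}$ as $t \to -\infty$, so it suffices to verify that $\gamma$ remains in $\overline{R(\mathit{sw}_{k+1,k+2})}$. I would do this by analyzing each component of $\partial R(\mathit{sw}_{k+1,k+2})$ in turn and using the properties from Section \ref{sec:61}.

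First I would dispatch the straightforward boundary pieces. The function $F_{k+1,k+2}$ strictly decreases along $\gamma$ in forward time and is strictly positive at the switch point (there $F_{k+1} = F_k > F_{k+2}$, the inequality holding because the switch point lies above the $(k+1,k+2)$-crossing locus). Hence $F_{k+1,k+2} > 0$ everywhere along $\gamma$, ruling out crossings with the $(k+1,k+2)$-crossing locus on which $F_{k+1,k+2} \equiv 0$. The $(k,k+1)$-cusp locus piece of $\partial R(\mathit{sw}_{k+1,k+2})$ cannot be crossed either, since $F_{k+1}$ is undefined on its other side. Along the segment $L_{k+1,k+2}$ above the crossing locus, Property \ref{pr:SwitchBarriers} gives $-\nabla F_{k+1,k+2}$ pointing westward, i.e. outward from $R(\mathit{sw}_{k+1,k+2})$; along the horizontal segment $\{x_2 = \beta^R_{k+1,k+2} - \e\}$ and the right vertical segment $\{x_1 = \beta^U_{k+1,k+2} + \e\}$, Property \ref{pr:monotonicityI} (applied in the corner $C_{R,U}$) gives $-\nabla F_{k+1,k+2}$ outward as well; finally, along the top portion $\partial \widehat{N}(e^2_\alpha)$, Property \ref{pr:1cells}(3) shows $-\nabla F_{k+1,k+2}$ points into $N(e^1_U)$, equivalently out of $R(\mathit{sw}_{k+1,k+2})$. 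On each of these pieces, $+\nabla F_{k+1,k+2}$ --- the direction of backward flow --- points inward, so $\gamma$ cannot leak out across them.

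The hard part will be the piece of the Swallowtail Barrier $P$ with $x_2 \leq \beta^R_{k+1,k+2} - \e$, where Property \ref{pr:STBnew} gives $-\nabla F_{k+1,k+2}$ pointing eastward into $R(\mathit{sw}_{k+1,k+2})$. Here the backward flow points westward, so $\gamma$ could a priori exit $R(\mathit{sw}_{k+1,k+2})$ across $P$. I would rule this out by a topological trapping argument: suppose for contradiction that $\gamma$ first crosses $P$ westward at some $t_1 < t_0$. The same Property \ref{pr:STBnew} applies uniformly along the relevant portion of $P$, so at any subsequent contact of $\gamma$ with $P$ the backward flow direction $+\nabla F_{k+1,k+2}$ also points west. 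Hence $\gamma$ cannot re-cross $P$ back to the east and is trapped in the region west of $P_1$. But $c_{k+1,k+2}$ lies strictly east of $P_1$ --- since $P_1 \subset (1/4, 1/2) \times [-3/8, 3/4]$ while $x_1(c_{k+1,k+2}) \in (\beta^U_{k+1,k+2} - \e, \beta^U_{k+1,k+2} + \e)$ with $\beta^U_{k+1,k+2} \geq 1/2$ --- so $\gamma$ cannot limit to $c_{k+1,k+2}$, contradicting Lemma \ref{lem:k1k2switchflow}(1). This contradiction shows that $\gamma$ never exits $R(\mathit{sw}_{k+1,k+2})$, completing the proof.
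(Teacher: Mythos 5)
Your overall strategy---trace the switch flow line backward from the switch point and show it cannot cross any component of $\partial R(\mathit{sw}_{k+1,k+2})$---is exactly the paper's, and your treatment of the crossing locus, the cusp locus, $L_{k+1,k+2}$, the two segments governed by Property \ref{pr:monotonicityI}, and the top boundary is fine. The problem is the piece you single out as the ``hard part.'' You have the geometry of $R(\mathit{sw}_{k+1,k+2})$ relative to the Swallowtail Barrier reversed: along the portion of $P_1$ with $0 \leq x_2 \leq \beta^R_{k+1,k+2}-\e$, the region lies to the \emph{left} (west) of $P_1$, not to the right. The lower boundary runs rightward along the crossing locus, then up $P_1$, then rightward along $\{x_2 = \beta^R_{k+1,k+2}-\e\}$; the region sits above and to the left of this staircase and reaches $c_{k+1,k+2}$ only through the strip $x_2 \geq \beta^R_{k+1,k+2}-\e$. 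Consequently Property \ref{pr:STBnew} says $-\nabla F_{k+1,k+2}$ points \emph{out} of $R(\mathit{sw}_{k+1,k+2})$ along this piece, the backward flow points inward, and this boundary component is handled exactly like all the others---no special argument is needed.

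The workaround you build is not only unnecessary but invalid. The switch point lies on the $(k,k+1)$-cusp locus at $x_1 \approx -3/8$, i.e.\ already west of $P_1 \subset (1/4,1/2)\times[-3/8,3/4]$, while $c_{k+1,k+2}$ is east of $P_1$; so the genuine flow line must pass from west of $P_1$ to east of $P_1$ in backward time. Your claim that a trajectory west of $P_1$ ``cannot re-cross $P$ back to the east and is trapped'' would therefore contradict Lemma \ref{lem:k1k2switchflow}(1), which you cite. The trapping claim fails because Property \ref{pr:STBnew} constrains the flow only along the part of $P_1$ with $x_2 \leq \beta^R_{k+1,k+2}-\e$; the flow line passes to the east above that height, which is precisely why $R(\mathit{sw}_{k+1,k+2})$ is defined to widen out there. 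If you replace your ``hard part'' with the observation that the region lies west of $P_1$ along that segment, your argument reduces to the paper's one-line proof.
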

\begin{proof}
Note that $-\nabla F_{k+1,k+2}$ points outward along all segments of the boundary of $R(\mathit{sw}_{i,k})$.  [Working clockwise from $L_{k+1,k+2}$, use Property \ref{pr:SwitchBarriers}, Corollary \ref{cor:mnPQ}, \ref{pr:1cells},
Property \ref{pr:monotonicityI} (twice), Property \ref{pr:STBnew}, and that $F_{k+1,k+2}$ goes from being positive to being negative when we pass the crossing locus.]  Since the $(k+1,k+2)$-switch flow line begins at the upper endpoint of $L_{k+1,k+2}$ it will remain within $R(\mathit{sw}_{i,k})$ as $t$ decreases.
\end{proof}

Next, we study the $(i,k)$-switch flow line.  
Define a closed region $R(\mathit{sw}_{i,k}) \subset \widehat{N}(e^2_\alpha)$ to be bounded by the curve that, beginning at the $(i,k)$-switch point and proceeding clock-wise, consists of
\begin{enumerate}
\item the upper part of the $(k,k+1)$-cusp locus;
\item part of the upper boundary of $\widehat{N}(e^2_\alpha)$;
\item the vertical line $\{x_1= \beta^U_{i,k}+\e\}$;
\item the horizontal line segment $\{x_2 = \beta^R_{i,k} - \e\}$;
\item the part of the Swallowtail Barrier, $P$, with $x_2 \leq \beta^R_{i,k} - \e$ and $x_1 \geq -17/64$;
\item the vertical segment $\{x_1=-17/64\}$ from $P$ to the $x_2$-coordinate of the $(i,k)$-switch point; and
\item the horizontal segment from the $(i,k)$-switch point to $x_1= -17/64$.
\end{enumerate}
See Figure \ref{fig:Rswk1k2}.  

\begin{lemma}  \label{lem:ikGFT}
For $i<k$, 
\begin{enumerate}
\item the $(i,k)$-flow line ending at the $(i,k)$-switch point (i.e. the $(i,k)$-switch flow line) is entirely contained in $R(\mathit{sw}_{i,k})$ and limits to $c_{i,k}$ as $t \rightarrow -\infty$;
\item the $(i,k+1)$-flow line beginning at the $(i,k)$-switch point limits to $a^{-,-}_{i,k+1}$.  
%Any other partial flow tree, $\Gamma$, starting with this flowline has $D(\Gamma) \geq 2$.
\end{enumerate}
\end{lemma}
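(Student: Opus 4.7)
The plan for (1) is to verify that $-\nabla F_{i,k}$ points outward at every point of $\partial R(\mathit{sw}_{i,k})$ except the switch point itself, and then run the trapping argument: the reverse-time trajectory (following $+\nabla F_{i,k}$) from the switch point cannot exit $R(\mathit{sw}_{i,k})$, hence limits to a critical point of $F_{i,k}$ inside $R(\mathit{sw}_{i,k})$, and by Properties \ref{pr:Reeb0}, \ref{pr:Reeb1}, \ref{pr:Reeb2} the only such critical point is $c_{i,k}$. The outward-pointing check is performed segment by segment, invoking one earlier Property for each of the seven boundary pieces: Corollary \ref{cor:mnPQ}(1) for the $(k,k+1)$-cusp locus above $P$ (where $-\nabla F_{i,k}$ points west, toward the region with fewer sheets); Property \ref{pr:1cells} for the upper boundary of $\widehat{N}(e^2_\alpha)$; Property \ref{pr:monotonicityI} in the corner $C_{R,U}$ for the vertical at $x_1=\beta^U_{i,k}+\e$ and the horizontal at $x_2=\beta^R_{i,k}-\e$; Property \ref{pr:STBnew}(2) and (4) for the $P_1$ and $P_2$ portions of the Swallowtail Barrier; Property \ref{pr:leftC} for the vertical at $x_1=-17/64$; and Property \ref{pr:monoLtilde} (with $w=F_k$) for the horizontal at the $x_2$-coordinate of the switch point. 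One has to be careful with the orientation of the non-convex region $R(\mathit{sw}_{i,k})$: along the $P_1$ piece of the boundary the interior sits to the west, so Property \ref{pr:STBnew}(2), which asserts that $-\nabla F_{i,k}$ points east of $P_1$, gives the required outward direction there.

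For (2), continuity at the switch point forces the outgoing $(i,k+1)$-flow to depart tangent to the cusp in the same downward direction in which the incoming $(i,k)$-trajectory from (1) arrived. Corollary \ref{cor:mnPQ}(1), applied with $-\nabla F_{i,k+1}=-\nabla F_{i,k}$ on the cusp, shows the flow cannot reenter the $(k,k+1)$-cusp locus between $P$ and $Q$ (where the gradient pushes east, into the region of definition of $F_{k+1}$). Property \ref{pr:leftC} then confines the flow to $\{x_1\leq -17/64\}$, a half-plane already containing the switch point, and Property \ref{pr:monoLtilde}---applied with $w=F_{k+1}$, and, after the flow crosses the $(k,k+2)$-cusp locus and is renamed an $(i,\widetilde{k})$-flow, with $w=\widetilde{F}_k$---forces the $x_2$-coordinate to decrease monotonically through $[-1/4,1/4]$. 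Once the trajectory enters $\{x_2\leq -1/4\}$, the monotonicity of Property \ref{pr:monotonicityI} in the corner $C_{L,D}$ funnels it into the lower-left corner, where the only available $(i,k+1)$-critical point is $a^{-,-}_{i,k+1}$.

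The technical heart of the argument is part (2): one must package these monotonicity ingredients into an explicit trapping region $C_{i,k}$ that is invariant under $-\nabla F_{i,k+1}$ east of the $(k,k+2)$-cusp and under $-\nabla(F_i-\widetilde{F}_k)$ west of it, and then verify that $a^{-,-}_{i,k+1}$ is the unique critical point of the difference function inside. The cusp crossing is delicate because it forces one to switch from $F_{k+1}$ to $\widetilde{F}_k$ while tracking the same trajectory, and one must also rule out the flow's terminating prematurely at the $(k+1,k+2)$-crossing locus; the overall structure of the argument parallels Lemma \ref{lem:k1k2switchflow}(2).
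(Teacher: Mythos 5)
Your proposal is correct and follows essentially the same route as the paper: part (1) is the outward-pointing trapping argument on $\partial R(\mathit{sw}_{i,k})$ using exactly the properties you list for each of the seven boundary segments, and part (2) is handled by building the invariant region $C_{i,k}$ (inward-pointing for $-\nabla F_{i,k+1}$ and $-\nabla(F_i-\widetilde{F}_k)$) and observing that $a^{-,-}_{i,k+1}$ is the only compatible Reeb chord inside. The only superfluous worry is the $(k+1,k+2)$-crossing locus, which cannot terminate an $(i,k+1)$-flow line since that locus involves neither of its sheets.
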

\begin{proof}
Along each of the boundary segments (1)-(7) of $R(\mathit{sw}_{i,k})$ we have that $-\nabla F_{i,k}$ points outward.  [Verify with Corollary \ref{cor:mnPQ}, Property \ref{pr:1cells}, Property \ref{pr:monotonicityI} (used twice), then Properties \ref{pr:STBnew}, \ref{pr:leftC}, and \ref{pr:monoLtilde}.]  Thus, as $t$ decreases the $(i,k)$-switch flow line remains in $R(\mathit{sw}_{i,k})$ and can therefore only limit to $c_{i,k}$.  

To verify the claim concerning the $(i,k+1)$-flow line, $\gamma$, that begins at the $(i,k)$-switch point, we consider a region $C_{i,k}$ 
%\footnote{\ms{7/22/15: It would be good to have $C_{i,k}$ defined outside of the proof environment: the other two regions in the figure are; also $C_{i,k}$ is used in a later Lemma.} \dr{7/26:Probably true.  Not done yet.}}
bounded by:
\begin{enumerate}
\item the horizontal segment from the $(i,k)$-switch point to $x_1=-17/64$;
\item the vertical segment $x_1=-17/64$ from the $x_2$-coordinate of the $(i,k)$-switch point to the lower boundary of $N(e^2_\alpha)$;  
\item the portion of $\partial N(e^2_\alpha)$ running clock-wise from the bottom side at $x_1 = -17/64$ to the left side at $x_2=0$;
\item the horizontal segment from the left side of $\partial N(e^2_\alpha)$ at $x_2=0$ to the swallowtail point $Q$; and
\item the portion of the $(k,k+1)$-cusp locus between $Q$ and the $(i,k)$-switch point.
\end{enumerate}
Note that $-\nabla F_{i,k+1}$ and $-\nabla(F_i - \widetilde{F}_k)$ point inward along those parts of $\partial C_{i,k}$ where they are defined.  [Verify along (1)-(5) using Properties \ref{pr:monoLtilde}, \ref{pr:leftC}, \ref{pr:1cells}, \ref{pr:monoLtilde} (again), and Corollary \ref{cor:mnPQ}.]  Thus, even though $\gamma$ may change from being an $(i,k+1)$-flow line to an $(i,\tilde{k})$ flow line if it crosses the $(k,k+2)$-cusp locus,  as $t$ increases $\gamma$ must be entirely contained in $C_{i,k}$.  The only Reeb chord in $C_{i,k}$ with upper endpoint on $S_i$ and lower endpoint on $S_{k+1}$ or $\widetilde{S}_k$ is $a^{-,-}_{i,k+1}$ (which in fact has its lower endpoint on $\widetilde{S}_k$).  Thus, $\gamma$ must limit to $a^{-,-}_{i,k+1}$ as $t \rightarrow +\infty$.
\end{proof}

We refer to the gradient flow tree obtained by appending the flow lines from (1) and (2) together at a switch vertex as the {\bf $(i,k)$-switch GFT}.

\subsubsection{The $(k+1,k+2)$ and $(k+2,k+1)$ partial flow trees}

Partial flow trees beginning with certain $(k+1,k+2)$- or $(k+2,k+1)$-flow lines serve as a base case for inductive arguments used later in the section.  In this subsection we establish some preliminary results about such PFTs.

\begin{lemma}  \label{lem:PFTrees}  The only non-constant PFTs starting with a $(k+2,k+1)$-flow at a point in $\widehat{N}(e^2_\alpha)$ (to the right of the cusp edge and below the crossing locus) are 
\begin{enumerate}
\item  PFTs consisting of a single edge with output vertex limiting to $b^D_{k+2,k+1}$ or $\tilde{b}^R_{k+2,k+1}$.  These flow trees are subsets of unique flow lines that exist from $\tilde{c}_{k+2,k+1}$ to $b^D_{k+2,k+1}$ or $\tilde{b}^R_{k+2,k+1}$, and are respectively contained in the vertical strip 
\[
V = \{ (x_1,x_2) \in \widetilde{N}(e^2_\alpha) \, \left|\,  x_1 \in (\beta^D_{k+2,k+1}- \e,\beta^D_{k+2,k+1}+ \e)  \mbox{  and  } x_2 \leq \tilde{\beta}^R_{k+2,k+1} + \e \right.\}
\]
or the horizontal strip
\[
H = \{ (x_1,x_2) \in \widetilde{N}(e^2_\alpha) \, \left|\,  x_1 \geq  \beta^D_{k+2,k+1}- \e  \mbox{  and  } x_2 \in ( \tilde{\beta}^R_{k+2,k+1} - \e, \tilde{\beta}^R_{k+2,k+1} + \e) \right.\}.
\]

\item PFTs consisting of a single edge with output at $a^{+,-}_{k+2,k+1}$.  These PFTs are contained entirely in 
\[
\{ (x_1,x_2) \in \widetilde{N}(e^2_\alpha) \, \left| \, x_1 \geq \beta^D_{k+2,k+1} - \e \mbox{  and  }  x_2 \leq \tilde{\beta}^R_{k+2,k+1} + \e  \right.\} \bigcup N(e^0_{+,-}).
\]
%in the interior of the square with vertices at $\tilde{c}_{k+2,k+1}$, $b^D_{k+2,k+1}$, $\tilde{b}^R_{k+2,k+1}$, and $(+1,-1)$
\item PFTs that form some subset of $(k+2,k+1)$-switch GFT.
%the flow tree that starts at $\tilde{c}_{k+2,k+1}$, has a switch at the unique $(k+2,k+1)$-switch point, and then ends at an $e$-vertex somewhere along the $(k,k+1)$ cusp locus (as in Proposition \ref{prop:k1k2switchflow}).

%Partial flow trees that form some subset of the flow tree that starts at $\tilde{c}_{k+2,k+1}$, has a switch at the unique $(k+2,k+1)$-switch point, and then ends at an $e$-vertex somewhere along the $(k,k+1)$ cusp locus (as in Proposition \ref{prop:k1k2switchflow}).
\end{enumerate}
\end{lemma}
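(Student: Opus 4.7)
The plan is to follow the initial $(k+2,k+1)$-edge of the PFT starting at $x$, and systematically enumerate all possible outcomes using sheet adjacency together with the geometric restrictions from Properties~\ref{pr:switches}, \ref{pr:1cells}, and the monotonicity properties. In the region where $F_{k+2,k+1} > 0$ (to the right of the cusp and below the crossing), sheets $S_{k+1}$ and $S_{k+2}$ are adjacent in the $z$-ordering, so no $Y_0$-vertex can occur on a $(k+2,k+1)$-edge, nor can a $Y_1$-vertex; Property~\ref{pr:switches}(4) isolates the unique $(k+2,k+1)$-switch point on the $(k,k+2)$-cusp locus. Property~\ref{pr:1cells}(3) confines the flow to $N(e^2_\alpha)$, reaching the crossing locus is forbidden since it would force $z(\gamma^+) = z(\gamma^-)$ in violation of the PFT definition, and no $(k+1,k+2)$-cusp exists in the region, ruling out $e$-vertices on such an edge.

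Given these constraints, the initial edge must either limit to a critical point of $F_{k+2,k+1}$ (the only options being $a^{+,-}_{k+2,k+1}$, $b^D_{k+2,k+1}$, $\tilde{b}^R_{k+2,k+1}$, or $\tilde{c}_{k+2,k+1}$ by Properties~\ref{pr:Reeb2} and \ref{pr:Location2}) or terminate at the $(k+2,k+1)$-switch point. The limit to $\tilde{c}_{k+2,k+1}$ is immediately excluded since $F_{k+2,k+1}$ decreases along non-constant flow. For the local minimum $a^{+,-}_{k+2,k+1}$, I would show using Property~\ref{pr:monotonicityI} that $-\nabla F_{k+2,k+1}$ points into the rectangle $\{x_1 \ge \beta^D_{k+2,k+1}-\e\} \cap \{x_2 \le \tilde{\beta}^R_{k+2,k+1}+\e\}$ along its upper and left boundaries, so the basin of attraction of $a^{+,-}_{k+2,k+1}$ is contained in this rectangle together with $N(e^0_{+,-})$, giving case~(2). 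For the saddles $b^D_{k+2,k+1}$ and $\tilde{b}^R_{k+2,k+1}$, the initial edge lies on the $1$-dimensional stable manifold, and Property~\ref{pr:monotonicityI} combined with the locations from Property~\ref{pr:Location2} confine this manifold to the strips $V$ and $H$ respectively; tracing back with $t \to -\infty$, the flow stays in the strip and must limit to the only available critical point, $\tilde{c}_{k+2,k+1}$, yielding the unique flow lines in case~(1).

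For the remaining possibility, Lemma~\ref{lem:k1k2switchflow}(1) identifies the initial edge as a portion of the $(k+2,k+1)$-switch flow line, so $x$ itself lies on this line. After the switch vertex, the PFT must continue with the $(k,k+1)$-flow line emerging from the switch point, which by Lemma~\ref{lem:k1k2switchflow}(2) ends at an $e$-vertex on the $(k,k+1)$-cusp locus. To finish case~(3), I would verify that no further internal vertex occurs along this post-switch $(k,k+1)$-edge: Property~\ref{pr:SwitchBarriers} (the segment $L_{k+2,k+1}$) keeps the edge on the side of the crossing locus where $S_k$ and $S_{k+1}$ remain adjacent (ruling out $Y_0$-vertices), and Property~\ref{pr:switches} leaves no further switch point accessible to this branch.

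I expect the main technical obstacle to lie in case~(1): carefully establishing that the stable manifolds of the two saddles $b^D_{k+2,k+1}$ and $\tilde{b}^R_{k+2,k+1}$ are confined to the vertical and horizontal strips $V$ and $H$ respectively and limit back to $\tilde{c}_{k+2,k+1}$ in backward time. This requires combining the monotonicity results of Property~\ref{pr:monotonicityI} in the relevant corner regions with the lexicographic location data from Properties~\ref{pr:Location1} and \ref{pr:Location2}, together with Property~\ref{pr:monotonicityIIST} to handle the behavior across $x_1 = 1/2$ or $x_2 = 1/2$. A secondary difficulty, in case~(3), is ensuring the post-switch $(k,k+1)$-edge remains in the adjacency region of $S_k$ and $S_{k+1}$; this relies on the barrier provided by Property~\ref{pr:SwitchBarriers} together with a careful tracking of where the flow crosses $x_2 = 0$ relative to the crossing locus.
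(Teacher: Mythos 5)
Your overall strategy coincides with the paper's: adjacency of $S_{k+1}$ and $S_{k+2}$ in the region $\{F_{k+2,k+1}>0\}$ rules out $Y_0$- and $Y_1$-vertices on the initial edge, so the only possible internal vertex is the unique $(k+2,k+1)$-switch; trees without internal vertices are classified by which critical point the edge limits to, with the strips $V$ and $H$ obtained from Properties \ref{pr:monotonicityI} and \ref{pr:1cmono}. (For $a^{+,-}_{k+2,k+1}$ the paper argues slightly differently --- its stable manifold is trapped between the stable and unstable manifolds of the two saddles since flow lines cannot cross --- but your basin argument is an acceptable substitute.)

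The one step whose stated mechanism would fail is the exclusion of a second internal vertex after the switch in case (3). You claim that $L_{k+2,k+1}$ keeps the post-switch $(k,k+1)$-edge ``on the side of the crossing locus where $S_k$ and $S_{k+1}$ remain adjacent.'' That is false at the outset: by Property \ref{pr:switches}, the $(k+2,k+1)$-switch point lies on the $(k,k+2)$-cusp locus with $x_2\leq 0$, i.e.\ below the crossing locus, where the sheets are ordered $S_k > S_{k+2} > S_{k+1}$; hence $S_{k+2}$ sits between the two sheets of the post-switch edge and a $Y_0$ splitting it into a $(k,k+2)$-flow and a $(k+2,k+1)$-flow is a priori available. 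The paper closes this off not by adjacency but by bootstrapping off the classification already established: such a $Y_0$ would have to occur to the left of $L_{k+2,k+1}$ (since $-\nabla F_{k,k+1}$ points to the left of that segment by Property \ref{pr:SwitchBarriers}), but then its outgoing $(k+2,k+1)$-branch would begin at a point that is neither in the region $\{x_1\geq \beta^D_{k+2,k+1}-\e\}$ of cases (1)--(2) nor on the $(k+2,k+1)$-switch flow line (which lies entirely to the right of $L_{k+2,k+1}$ except at its endpoint), so that branch cannot be completed to a PFT. You should replace your adjacency claim with this recursive use of the statement being proved.
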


\begin{proof}  First, we show that any PFT, $\Gamma$, without internal vertices and beginning with a $(k+2,k+1)$ flow in $\widehat{N}(e^2_\alpha)$ must be as in (1) or (2).  Since $\Gamma$ does not have internal vertices it must simply be a portion of a flow line that limits to a Reeb chord as $t\rightarrow +\infty$.  [Ending at an $e$-vertex is impossible since there is no $(k+1,k+2)$-cusp edge.]  The only $(k+2,k+1)$ Reeb chords are $\tilde{c}_{k+2,k+1}, \tilde{b}^R_{k+2,k+1}, b^D_{k+2,k+1},$ and $a^{+,-}_{k+2,k+1}$, so $\Gamma$ must begin on a point belonging to the stable manifolds of one of these Reeb chords. The stable manifolds of $b^D_{k+2,k+1}$ and $\tilde{b}^R_{k+2,k+1}$  must be contained in the strips $V$ and $H$ respectively, by Properties \ref{pr:monotonicityI} and \ref{pr:1cmono} (and therefore both limit to $\tilde{c}_{k+2,k+1}$ respectively as claimed).  On the other hand, a point in the intersection of the stable manifold of $a^{+,-}_{k+2,k+1}$ with $\widehat{N}(e^2_\alpha)$ must be in the region bounded by the stable and unstable manifolds of $b^D_{k+2,k+1}$ and $\tilde{b}^R_{k+2,k+1}$, since flow lines cannot cross.

%These Reeb chords are all located in $[1/4,+1] \times [-1,-1/4]$ where $F_{k+2,k+1}(x_1,x_2) = f^{Cu}_{k+1,k+2}(x_1) +f^{1Cr}_{k+2,k+1}(x_2)$.  In particular, the $x_1$ (resp. $x_2$) coordinate of $\tilde{c}_{k+2,k+1}$ agrees with the $x_1$ coordinate of $b^D_{k+2,k+1}$ (resp. the $x_2$ coordinate of $\tilde{b}^R_{k+2,k+1}$), and the vertical and  horizontal segments conecting  $\tilde{c}_{k+2,k+1}$ with these Reeb chords are indeed flow lines.  Since flow lines cannot cross, the only $(k+2,k+1)$ flow lines beginning in $[-1,1]\times[-1,1]$ that can terminate at $a^{+,-}_{k+2,k+1}$ are those that begin within the square described in (2) of the statement of the Lemma.  We have therefore established that the only partial flow trees without internal vertices and beginning with a $(k+2,k+1)$-flow are as described in items (1) and (2).  [A $(k+2,k+1)$ flow line starting somewhere outside of the square from (2) must eventually run into the $(k+2,k+1)$ crossing arc or into the $(k,k+2)$ cusp edge since there are no other Reeb chords for it to terminate at.]

It remains to show that the only PFTs starting with a $(k+2,k+1)$-flow in $\widehat{N}(e^2_\alpha)$ and having internal vertices are as in (3).  Consider the first internal vertex of such a PFT, $\Gamma$.  Since sheets $S_{k+2}$ and $S_{k+1}$ are adjacent in the entire region where $S_{k+2}$ is above $S_{k+1}$, it follows that this vertex cannot be a $Y_1$ or $Y_0$-vertex.  Therefore, the vertex can only be a switch at the unique $(k+2,k+1)$-switch point, so it follows that the initial edge of the PFT is indeed a subset of the $(k+2,k+1)$-switch flow line.
 %from $\tilde{c}_{k+2,k+1}$ to this switch point.   %that was identified in Proposition \ref{prop:k1k2switchflow}.  
The edge of $\Gamma$ that begins after the switch vertex is the $(k,k+1)$-flow line 
%We have already shown in the proof of Proposition \ref{prop:T13ExcGen} that the $(k,k+1)$ flow line 
that begins at the $(k+2,k+1)$-switch point.  
%Proposition \ref{prop:k1k2switchflow} shows that this flow line must terminate somewhere along the $(k,k+1)$ cusp edge.  
We need to show that an internal vertex cannot occur along this edge.  
%this is the only way that there our PFT can be completed after the switch point.  

Suppose instead that $\Gamma$ has a second internal vertex occurs after the switch point.  The only possibility is a $Y_0$ vertex where the $(k,k+1)$-flow splits into a $(k,k+2)$ flow and a $(k+2,k+1)$-flow.  Furthermore, the image of this $Y_0$ vertex must be located below the $(k+1,k+2)$-crossing locus (otherwise there are no intermediate sheets between $S_k$ and $S_{k+1}$), and to the left of $L_{k+2,k+1}$ (since by Property \ref{pr:SwitchBarriers} $-\nabla F_{k,k+1}$ points to the left side of $L_{k+2,k+1}$).  However, the outgoing branch of the $Y_0$ that is a $(k+2,k+1)$-flow line cannot be completed to a PFT since we have already seen that the only PFTs starting with a $(k+2,k+1)$-flows must have their initial point either with $x_1 \geq \beta^D_{k+2,k+1} -\e$ or somewhere along the 
$(k+2,k+1)$-switch flow line.
%flow line from $\tilde{c}_{k+2,k+1}$ to the switch point.  
Moreover, by Property \ref{pr:SwitchBarriers} the $(k+2,k+1)$-switch flow line is contained entirely to the right of $L_{k+2,k+1}$ except at its endpoint. 
% shows that the flow line from $\tilde{c}_{k+2,k+1}$ to the $(k+2,k+1)$-switch point must be entirely to the right of the switch point except at its endpoint.
% can be\footnote{How do we know the $Y_0$ is not along the switch flow.  Extend the monotonicity along the vertical line through the switch to the $k+2,k+1$ gradient.} completed to PFTs.
\end{proof}

Recall the notations $D(1/2) = \{(x_1,x_2) \in \widetilde{N}(e^2_\alpha) \, |\, x_2 \leq 1/2\} \bigcup N(e^0_{-,-}) \bigcup N(e^0_{+,-})$ and $L(1/2) = \{(x_1,x_2) \in \widetilde{N}(e^2_\alpha) \, |\, x_1 \leq 1/2\} \bigcup N(e^0_{-,-}) \bigcup N(e^0_{-,+})$.

%\dr{ISSUE:  1.  What about $(k,k+1)$?  2. Really, what about any $(i,j)$ with $i,j \in \{k,k+1,k+2\}$.  Monotonicity 2 probably fails for these.  In fact, Monotonicity 1 may fail in $LU$ and $LD$ square.  Currently, it is used in $LD$.}
%\begin{lemma} \label{lem:T13D12}
%For the Type (13) square we have:
%\begin{enumerate}
%\item  For any $i,j$ with $F_{i} > F_{j}$, $D(1/2)$ is invariant for the $(i,j)$-flow, i.e. trajectories of $-\nabla F_{i,j}$ that begin in $D(1/2)$ remain there as $t$ increases provided $F_{i,j}$ remains positive.  This statement includes trajectories of $-\nabla(\tilde{F}_k -F_j)$ and $-\nabla(F_i- \tilde{F}_k)$.
%\item  For any $i,j$ with $F_{i}> F_{j}$ except $(k+1,k+2)$ or $(k+2,k+1)$, $L(1/2)$ is invariant for the $(i,j)$-flow.
%\end{enumerate}
%\end{lemma}
%\begin{proof}
%This follows from Properties \ref{pr:monotonicityII} and \ref{pr:1cmono}.

%\end{proof}

\begin{lemma}  \label{lem:PFTreesk1k2}  The only PFTs starting with a $(k+1,k+2)$-flow in $D(1/2) \cup L(1/2)$ are subsets of the $(k+1,k+2)$-switch GFT.
%flow tree that starts at 
%$c_{k+1,k+2}$, has a switch at the unique $(k+1,k+2)$-switch point, and then terminates somewhere along the $(k,k+2)$ cusp locus. 
\end{lemma}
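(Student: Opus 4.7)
The plan is to trace the PFT $\Gamma$ edge-by-edge, forcing it to coincide with a sub-tree of the $(k+1,k+2)$-switch GFT. Let $p \in D(1/2) \cup L(1/2)$ be the initial point, so $F_{k+1}(p) > F_{k+2}(p)$ and the initial edge is a $(k+1,k+2)$-flow. Because $S_{k+1}$ and $S_{k+2}$ are adjacent in $z$-order wherever both are defined, no $Y_0$-vertex can occur along this edge, and since no cusp edge involves both $S_{k+1}$ and $S_{k+2}$ (the two cusps of a Type (13) square are the $(k,k+1)$- and $(k,k+2)$-cusps), no $Y_1$-vertex can occur either. Hence the initial edge is a pure integral curve of $-\nabla F_{k+1,k+2}$. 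As $t \to +\infty$ it must either limit to a $(k+1,k+2)$-Reeb chord, reach an $e$-vertex (impossible, since there is no $(k+1,k+2)$-cusp), arrive at a switch point, or terminate at the $(k+1,k+2)$-crossing locus (invalid for a PFT).

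To rule out the Reeb chord option I would argue as follows. The $(k+1,k+2)$-Reeb chords in $N(e^2_\alpha)$ are $c_{k+1,k+2}$, $b^U_{k+1,k+2}$, $b^R_{k+1,k+2}$ and $a^{+,+}_{k+1,k+2}$; the local maximum $c_{k+1,k+2}$ is not an $\omega$-limit of any non-constant trajectory, and by Properties \ref{pr:0cells}, \ref{pr:1cmono} and \ref{pr:monotonicityI} the stable manifolds of the other three are confined to the set $\{x_1 \geq \beta^U_{k+1,k+2} - \epsilon,\ x_2 \geq \beta^R_{k+1,k+2} - \epsilon\} \cup N(e^0_{+,+})$, which for $\epsilon$ small lies strictly in the upper-right corner of $N(e^2_\alpha)$ and so is disjoint from $D(1/2) \cup L(1/2)$. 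Thus the initial edge must arrive at a switch point. By Property \ref{pr:switches}(2) the unique $(k+1,k+2)$-switch point lies on the $(k,k+1)$-cusp locus in the upper half, and uniqueness of the flow line tangent to a cusp there forces the initial edge to be a sub-arc of the $(k+1,k+2)$-switch flow line, which by Lemma \ref{lem:k1k2switchflow}(1) emanates from $c_{k+1,k+2}$.

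After the switch vertex, $\Gamma$ continues with a $(k,k+2)$-flow starting at the $(k+1,k+2)$-switch point; by Lemma \ref{lem:k1k2switchflow}(2) this flow, followed without branching, ends at an $e$-vertex on the $(k,k+2)$-cusp locus. To complete the proof I would rule out any $Y_0$-vertex on this $(k,k+2)$-flow (the only candidate would split it into $(k,k+1)$- and $(k+1,k+2)$-branches). Since, by Property \ref{pr:switches}, the $(k,k+2)$-switch flow lies entirely inside $O_1$ together with the vertical segment $\{x_1=-3/8\}$, and in particular in $L(1/2)$, any such $(k+1,k+2)$-branch would itself be a PFT starting with a $(k+1,k+2)$-flow at a point in $L(1/2)$. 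Applying the first two paragraphs to this sub-PFT --- which is not circular, since those paragraphs only concern the initial edge --- its initial edge would have to be a sub-arc of the $(k+1,k+2)$-switch flow line; but the $(k+1,k+2)$- and $(k,k+2)$-switch flow lines meet only at the switch point itself, never in the interior of the $(k,k+2)$-flow, yielding a contradiction. The main obstacle will be to rigorously verify that the $(k,k+2)$-switch flow stays in $L(1/2)$ and meets the $(k+1,k+2)$-switch flow only at the switch point, which requires careful use of Properties \ref{pr:switches}, \ref{pr:SwitchBarriers}, \ref{pr:monoLtilde} and Corollary \ref{cor:mnPQ} to control the trajectory of the $(k,k+2)$-switch flow relative to the $(k+1,k+2)$-crossing locus near the swallowtail.
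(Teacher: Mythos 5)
Your argument is correct and follows the paper's proof essentially step for step: the initial edge is forced onto the $(k+1,k+2)$-switch flow line (the paper phrases the Reeb-chord exclusion via forward-invariance of $D(1/2)\cup L(1/2)$ under Property \ref{pr:monotonicityI} rather than via stable manifolds, but this is cosmetic), and a $Y_0$ on the post-switch $(k,k+2)$-edge is excluded by applying the first part of the lemma to the would-be $(k+1,k+2)$-branch. The obstacle you flag at the end is supplied exactly by Property \ref{pr:SwitchBarriers} together with Lemma \ref{lem:Rswk1k2}: the segment $L_{k+1,k+2}$ separates the post-switch $(k,k+2)$-flow, which immediately crosses to its left and is then trapped between $L_{k+1,k+2}$ and the cusp locus inside $O_1\subset L(1/2)$, from the $(k+1,k+2)$-switch flow line, which lies in $R(\mathit{sw}_{k+1,k+2})$ to its right --- this is precisely how the paper closes the analogous case in Lemma \ref{lem:PFTrees}.
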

\begin{proof}
%By Lemma \ref{lem:T13D12}, 
Any $(k+1,k+2)$-flow line that starts in $D(1/2)\cup L(1/2)$ must remain in $D(1/2)\cup L(1/2)$ since Property \ref{pr:monotonicityI} 
 shows that $-\nabla F_{k+1,k+2}$ points inward along the boundary segments $\{(x_1,x_2) \, | \, x_1 \geq 1/2, x_2 = 1/2\}$ and  $\{(x_1,x_2) \, | \, x_1 = 1/2, x_2 \geq 1/2\}$.  There are no $(k+1,k+2)$ Reeb chords in $D(1/2)\cup L(1/2)$.  Thus any such PFT must have an internal vertex.  Since $S_{k+1}$ and $S_{k+2}$ are always adjacent the first vertex can only be a switch at the unique $(k+1,k+2)$-switch point. After the switch, an argument similar to that used in Lemma \ref{lem:PFTrees} shows that the outgoing $(k,k+2)$-flow line 
%must terminate somewhere along the $(k,k+2)$ cusp locus 
cannot end at an internal vertex.  
%[Consider Figure \ref{fig:FlowDirect}, but with the graphic reflected across $x_2=0$.]
\end{proof}

\subsubsection{ The invariant region  $A$}

%In the following Lemmas \ref{lem:T13D12}, \ref{lem:Aprime}, and \ref{lem:RegionA}, 

%we identify some regions of $N(e^2_\alpha)$ that are invariant for particular gradient flows and classify certain degree $0$ PFTs. 

%Next, recall the roughly vertical, polygonal path $P$ identified in Property \ref{pr:STBarrier}.   
Let $A$ denote the subset of $D(1/2) \cap L(1/2)$ obtained by removing those points that lie strictly above %the line $\{x_2 = -1/4\}$ 
and to the left of the Swallowtail Barrier $P$.  Note that the boundary of $A$ consists of:
 %$\bigcup_{i=1}^5 B_i$ where the $B_i$ are specified by:
\begin{enumerate}
\item the vertical segment $\{x_1=1/2\} \cap \{x_2 \leq 1/2\}$;
\item the segment of $\{x_2= 1/2\}$ with endpoints on  $P$ and $x_1=1/2$;
\item the portion of $P$ below $x_2 =1/2$;
% $x_2 = 1/2$ and $x_2= -1/4$ is $B_3$.
%\item The segment of $\{x_2= -1/4\}$ with endpoints on the left boundary of $N(e^2_\alpha)$ and $P$ is $B_4$.
\item the portion of $\partial N(e^2_\alpha)$ that sits below $P$ and to the left of $\{x_1 = 1/2\}$.
\end{enumerate}

%Let $A$ denote the closed subset of $N(e^2_\alpha)$ whose upper upper and right boundaries is given by the line segment from 
%$(-1-1/32,-1/4)$ to $(-1,-1)$ to 
%$(1/2,-1-1/32)$ to $(1/2,1/2)$, a horizontal segment from $(1/2,1/2)$ to $P$, the portion of $P$ below $x_2=1/2$, and then a horizontal segment from the lower endpoint of $P$ to $(-1-1/32,-1/4)$.  The lower left portion of the boundary of $A$ is given by an appropriate subset of $\partial N(e^2_\alpha)$.   
See Figure \ref{fig:RegionA}.

% $A$ is the subset of $D(1/2) \cap L(1/2)$ obtained by removing those points that lie strictly above the line $\{x_2 = -1/4\}$ and to the left of $P$.
%dr{Make this the definition then list the boundary segments}
\begin{figure}
\labellist
\small
%\pinlabel $R_-$ [c] at 350 140
\endlabellist
\centerline{ %\includegraphics[scale=.6]{images/RegionA'} \quad 
\includegraphics[scale=.6]{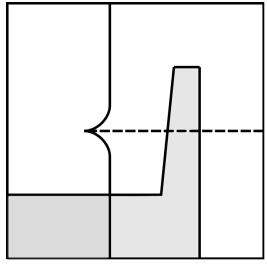} }
\caption{%(left)  The region $A'$.  (right) 
The region $A$.  
%The polygonal path $P$ from Property \ref{pr:STBnew}, 
% is pictured as simply a vertical line.
}  
\label{fig:RegionA}
\end{figure}

\begin{lemma} \label{lem:AAinvariance}
Suppose $(i,j) \neq (k+1,k+2)$, and $(i,j) \neq (k+2,k+1)$.  Any $(i,j)$-flow line that begins in $A$ remains in $A$ as $t$ increases (as long as it is defined).  This statement includes $(i,\tilde{k})$- and $(\tilde{k},j)$-flow lines.
\end{lemma}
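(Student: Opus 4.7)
The plan is a standard invariance argument: show that along each piece of $\partial A$, the vector $-\nabla F_{i,j}$ (and its analogues $-\nabla(F_i - \widetilde{F}_k)$ or $-\nabla(\widetilde{F}_k - F_j)$ for edges that cross the cusp locus) points strictly into $A$. Once this is established, an $(i,j)$-flow line starting in $A$ cannot escape across $\partial A$ as $t$ increases, giving the conclusion.

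Concretely, I would decompose $\partial A$ into the four pieces enumerated in the statement: (a) the vertical segment $\{x_1 = 1/2\} \cap \{x_2 \leq 1/2\}$; (b) the horizontal segment $\{x_2 = 1/2\}$ from $P$ to $\{x_1 = 1/2\}$; (c) the portion of the Swallowtail Barrier $P$ below $x_2 = 1/2$; and (d) the portion of $\partial N(e^2_\alpha)$ below $P$. Piece (d) is immediate from Properties \ref{pr:0cells} and \ref{pr:1cells}, which guarantee that $-\nabla F_{i,j}$ points into $N(e^2_\alpha)$ along its boundary for every positive difference function, including the ones involving $\widetilde{F}_k$. Piece (c) follows from Property \ref{pr:STBnew}: item (4) supplies the inward-pointing statement along $P_2$ with precisely the exclusion $(i,j) \neq (k+2,k+1)$ that the lemma assumes, while item (2) covers $P_1$ in the relevant range of $x_2$ (the constraint $x_2 \leq \beta^R_{i,j} - \epsilon$ there holds along segment (c) provided the $\beta$'s from Property \ref{pr:Location1} are arranged strictly inside $(1/2,3/4)$, which can be enforced by a minor strengthening of that property). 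Piece (a) is given by the second bullet of Property \ref{pr:monotonicityIIST}---this is where the exclusion $(i,j) \neq (k+1,k+2)$ of the lemma is required---and piece (b) follows from the first bullet of the same property together with its ``moreover'' clause, which explicitly treats the $(i,\widetilde{k})$- and $(\widetilde{k},j)$-difference functions.

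The main obstacle I anticipate lies in piece (b) for the residual pairs $(i,j) \in \{(k,k+1),(k,k+2)\}$: these are not excluded by the hypothesis of the lemma, yet they satisfy $\{i,j\} \subset \{k,k+1,k+2\}$ and so fall outside the first bullet of Property \ref{pr:monotonicityIIST}. The relevant segment lies far above the swallowtail point (at $x_2 = 0$) and well to the right of the cusp locus (at $x_1 \leq -3/8$), so on this segment the defining functions $F_k, F_{k+1}, F_{k+2}$ behave locally as on a Type (1)--(12) square; the analogue of Property \ref{pr:monotonicityII} then supplies the required negativity of $-\grad_{x_2} F_{k,k+1}$ and $-\grad_{x_2} F_{k,k+2}$ at $x_2 = 1/2$, and one should be able to absorb this observation into Property \ref{pr:monotonicityIIST} itself once the construction of Sections \ref{sec:Constructions}--\ref{sec:ProofSetup} is fixed. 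The remaining subtlety---that an $(i,k+2)$-flow edge may transition into an $(i,\widetilde{k})$-flow (or an $(\widetilde{k},j)$-flow into a $(k+1,j)$-flow) when it crosses the cusp locus inside $A$---causes no new difficulty, since the inward-pointing inequalities invoked above are stated uniformly across the cusp locus in Properties \ref{pr:STBnew} and \ref{pr:monotonicityIIST}.
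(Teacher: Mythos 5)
Your proposal is correct and follows essentially the same route as the paper, whose proof is a one-line boundary-invariance argument citing Property \ref{pr:monotonicityIIST} for segments (1)--(2), Property \ref{pr:STBnew} for (3), and Properties \ref{pr:0cells}--\ref{pr:1cells} for (4). The residual case you flag on the top segment for $(i,j)\in\{(k,k+1),(k,k+2)\}$ is a real omission in the literal statement of Property \ref{pr:monotonicityIIST} that the paper's terse proof glosses over, and your proposed resolution is sound: for $x_1\geq 1/4$ one has $\partial_{x_2}F_{k,k+1}(x_1,1/2)=d_xf^R_{k,k+1}(1/2)>0$ (and similarly for $(k,k+2)$) directly from the construction, since $\phi\equiv 1$ there kills the interpolation term.
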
  

\begin{proof}
This follows since $-\nabla F_{i,j}$ points into $A$ along all segments comprising the boundary of $A$.  [Check with Property \ref{pr:monotonicityIIST} for (1) and (2), Property \ref{pr:STBnew} for (3), and
% For $B_4$,  we can apply Property \ref{pr:monotonicityI} (again, note the adjustment to this Property discussed in \dr{????}).  Finally, for $B_5$
%For the intersection of $A$ with the boundary of $N(e^2_\alpha)$ 
Properties \ref{pr:0cells} and \ref{pr:1cells} for (4).]  
\end{proof}

\subsection{Enumeration of flow trees with at least one $c_{i,j}$ output}

For a Type (13) square, $e^2_\alpha$, we again write 
%let us write the differential in  $(\lchA(e^2_\alpha), \partial)$ of a generator $c_{i,j}$  in the form
\begin{equation}\label{eq:dcdb}
\partial_c c_{i,j} = \sum_\Gamma w(\Gamma) 
%+ \partial_b c_{i,j} + \partial_x c_{i,j}
\end{equation}
%where $\partial_c c_{i,j}$ arises from 
with the sum over rigid GFTs that begin at $c_{i,j}$ and contain at least one puncture at a Reeb chord of the form $c_{r,s}$ or $\tilde{c}_{k+2,k+1}$.

%%;  $\partial_x c_{i,j}$ is the sum over rigid GFTs with at least one output at $\tilde{c}_{i,j}$ or $\tilde{b}^R_{k+2,k+1}$; and $\partial_b c_{i,j}$ denotes the sum over remaining rigid GFTs.  

%As we are working in $(\widetilde{\mathcal{A}}_{\mathit{LCH}}, \partial)$, all occurences of $b^D_{k+2,k+1}$ are replaced with $1$ and the other exceptional generators are replaced with $0$.

Recall that we have labeled the Reeb chords in $N(e^0_{-,-})$ with the index $k+1$ used for chords that begin or end on $\tilde{S}_k$.
%, so the  indices $k$ and $k+2$ do not appear.

\begin{theorem} \label{thm:dc}
We have
\begin{equation}  \label{eq:cterms}
%\partial_c c_{i,j} =  X_{I} + X_{II} + X_{III} + X_{IV} + X_{V} 
\partial_c c_{i,j} = \sum_{i<m<j} a^{+,+}_{i,m} c_{m,j} +  \sum_{i<m<j} c_{i,m} y_{m,j} +  x
\end{equation}
where  
$y_{m,j}$ denotes the $(m,j)$-entry of the matrix $(I+E_{k+2,k+1}) A_{-,-} (I+E_{k+2,k+1})$ from Theorem \ref{thm:SwallowComp} and $x$ belongs to the two-sided ideal generated by $\tilde{c}_{k+2,k+1}$ and $\tilde{b}^R_{k+2,k+1}$.  That is,
\begin{itemize}
%\item $\displaystyle X_{I} = \sum_{i<m<j} a^{+,+}_{i,m} c_{m,j}$
%\item $\displaystyle X_{II} = \sum_{i<m<j} c_{i,m} a^{-,-}_{m,j}$\footnote{How to label $a^{-,-}$ reeb chords for the easiest formula?}
\item for $\{m,j\} \cap \{k,k+2\}= \emptyset$, $y_{m,j} = a^{-,-}_{m,j}$;
\item $y_{k,k+2} = 1$;
\item $y_{k,k+1} = 1$;
\item for $k+2<j$, $y_{k+2,j} = a^{-,-}_{k+1,j}$;
\item all remaining $y_{m,j}$ are zero.
\end{itemize}

\end{theorem}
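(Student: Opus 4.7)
The plan is to adapt the proof structure of Theorem \ref{thm:112ctrees} to the Type (13) square, accounting for the swallowtail point, the switch vertices supplied by Property \ref{pr:switches}, and the auxiliary sheet $\widetilde{S}_k$. The first step is to expand $(I+E_{k+2,k+1}) A_{-,-}(I+E_{k+2,k+1})$ entry-by-entry and rewrite the matrix identity as the term-by-term formula
\[
\partial_c c_{i,j} = \sum_{i<m<j} a^{+,+}_{i,m} c_{m,j} + \sum_{i<m<j} c_{i,m}\, y_{m,j} + x,
\]
whose factor $y_{m,j}$ naturally splits into three families: (a) the standard case $y_{m,j}=a^{-,-}_{m,j}$ with $\{m,j\}\cap\{k,k+2\}=\emptyset$; (b) the switch terms $y_{k,k+1}=y_{k,k+2}=1$; and (c) the $\widetilde{S}_k$-terms $y_{k+1,j}=y_{k+2,j}=a^{-,-}_{k+1,j}$ for $j>k+2$.

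For each nonzero term I would exhibit an odd number of rigid GFTs contributing to it. The $a^{+,+}_{i,m}c_{m,j}$ terms and the family (a) terms arise essentially as in Theorem \ref{thm:112ctrees}: after a puncture at $c_{m,j}$ or $c_{i,m}$, the non-constant outgoing branch is constrained by the First Quadrant Lemma \ref{lem:1stQuad} (still valid by Lemma \ref{lem:holdovers}) and limits to the required $a^{\pm,\pm}$-chord. The family (b) terms come from the $(i,k)$-switch GFT of Lemma \ref{lem:ikGFT}, precomposed with a $Y_0$-puncture at $c_{i,k}$ during an $(i,k+2)$- or $(i,k+1)$-flow emanating from $c_{i,k+2}$ or $c_{i,k+1}$; the non-constant sibling at the puncture is a $(k,k+2)$- or $(k,k+1)$-flow running into the corresponding cusp locus at an $e$-vertex, with transversality supplied by Corollary \ref{cor:mnPQ}. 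The family (c) terms arise from a puncture at $c_{i,k+2}$ (resp.~$c_{i,k+1}$) whose outgoing $(k+2,j)$-flow (resp.~$(k+1,j)$-flow) transversally crosses the upper $(k,k+1)$-cusp (resp.~lower $(k,k+2)$-cusp), becoming a $(\widetilde{k},j)$-flow that limits to $a^{-,-}_{k+1,j}$; the absence of tangencies along the relevant branches is guaranteed by Property \ref{pr:switches}.

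For uniqueness I would follow the two-step pattern from Theorem \ref{thm:112ctrees}. Step 1 argues that below a $c$-puncture no further branching occurs, using Lemma \ref{lem:1stQuad} and an adapted 3rd Quadrant Lemma obtained by combining Lemma \ref{lem:AAinvariance} with the classifications of $(k+2,k+1)$- and $(k+1,k+2)$-initiated PFTs in Lemmas \ref{lem:PFTrees}--\ref{lem:PFTreesk1k2} and Lemma \ref{lem:Rswk1k2}. The combinatorial constraint $A - C + E = Y_1 + SW$ from Proposition \ref{prop:EY1SWFormula}, together with the absence of $Y_1$-vertices guaranteed by Corollary \ref{cor:mnPQ}, forces each $c$-output to be ``paid for'' by a single $a$- or $e$-output (possibly via one switch), so the subtree below a $c$-puncture must be either a gradient trajectory to an $a$- or $e$-vertex or an $(i,k)$-switch GFT. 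Step 2 argues that the incoming edge to the top branching $Y_0$ begins at $c_{i,j}$ itself, using invariance of the rectangles $\mathit{Sq}(c_{i,m},c_{i,j})$ and $\mathit{Sq}(c_{i,j},c_{m,j})$ under the relevant negative gradients (Property \ref{pr:monotonicityI}) together with Lemma \ref{lem:1stQuad} and the adapted 3rd Quadrant Lemma applied to the would-be sibling at an earlier $Y_0$.

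The main obstacle I expect is the adapted 3rd Quadrant Lemma: in Type (13) an $(m,j)$-flow starting near $c_{i,m}$ may cross a cusp and become a $(\widetilde{k},j)$- or $(i,\widetilde{k})$-flow, pass a switch point, or interact with the $\tilde{c}_{k+2,k+1}$ region, and one must check case-by-case in $m,j$ relative to $\{k,k+1,k+2\}$ that the only outputs not already in the ideal generated by $\tilde{c}_{k+2,k+1}$ and $\tilde{b}^R_{k+2,k+1}$ are the $y_{m,j}$-outputs from families (a), (b), (c). The barriers in Properties \ref{pr:STBnew}, \ref{pr:leftC}, \ref{pr:monoLtilde}, and \ref{pr:SwitchBarriers} confine the flows into regions where Property \ref{pr:switches} and Corollary \ref{cor:mnPQ} pin down the unique non-exceptional endpoint, but checking all subcases---especially $(m,j)=(k+1,j)$ or $(k+2,j)$ with $j>k+2$, where the flow may hop between $S_{k+1}$, $S_{k+2}$ and $\widetilde{S}_k$---will be the most technical part of the argument.
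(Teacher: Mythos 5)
Your overall architecture (list the rigid trees term by term, then prove uniqueness via a degree count and an adapted Third Quadrant Lemma) matches the paper's, and your treatment of the $a^{+,+}_{i,m}c_{m,j}$ terms and of $y_{m,j}=a^{-,-}_{m,j}$ with $\{m,j\}\cap\{k,k+2\}=\emptyset$ is correct. However, the trees you propose for the entries $y_{k,k+1}=1$ and $y_{k+2,j}=a^{-,-}_{k+1,j}$ do not exist, and this is exactly where the conjugation by $I+E_{k+2,k+1}$ comes from. After the puncture at $c_{i,k}$ (resp.\ $c_{i,k+2}$), the outgoing $(k,k+1)$-flow (resp.\ $(k+2,j)$-flow) is trapped in the region $A$ of Lemma \ref{lem:AAinvariance}, and $A$ meets the cusp locus only along its lower, $(k,k+2)$-branch. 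There the sheets $S_k$ and $S_{k+1}$ (resp.\ $S_{k+2}$ and $S_j$) do \emph{not} meet, so the edge merely terminates and cannot end in an $e$-vertex; and the upper $(k,k+1)$-cusp, which your family (c) invokes for the $(k+2,j)$-flow, lies outside $A$ and is unreachable. The $(i,k)$-switch GFT of Lemma \ref{lem:ikGFT} that you cite for family (b) contains no $c$-puncture at all: it contributes the $\mathit{sw}_{i,k}\mapsto a^{-,-}_{i,k+1}$ output to $\partial_b$ (the $(I+B_U)(I+B_L)(I+A_{-,-}E_{k+1,k}+\cdots)$ block of Theorem \ref{thm:SwallowComp}), not to $\partial_c$.

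The missing idea is the role of the $(k+2,k+1)$-switch GFT. The $(k,k+1)$-flow from $c_{i,k}$ (resp.\ the $(k+2,j)$-flow from $c_{i,k+2}$) intersects the $(k+2,k+1)$-switch flow line transversally in an odd number of points (a topological count: the two flow lines have endpoints interleaved on the boundary of a suitable disk), and the rigid trees carry a $Y_0$ at one of these intersections, with one outgoing branch completing the $(k+2,k+1)$-switch GFT (word $1$) and the other ending at an $e$-vertex on the $(k,k+2)$-cusp, resp.\ at $a^{-,-}_{k+1,j}$. Correspondingly, your uniqueness step needs more than ``each $c$-output is paid for by one $a$- or $e$-output'': you need the sharp statement that a PFT beginning in $A$ has $D(\Gamma)\ge 0$ with equality \emph{only} for subsets of the $(k+2,k+1)$-switch GFT. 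It is precisely this equality case that both permits and forces the extra $Y_0$ above, and without it your Step 1 would wrongly exclude the trees producing $y_{k,k+1}$ and $y_{k+2,j}$ while failing to exclude the invalid single-edge ones.
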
  

The proof of Theorem \ref{thm:dc}  consists of two parts.  In Section \ref{sec:listGFTs},  we identify an odd number of rigid GFTs corresponding to each of the terms in the sum.  Then, in Section \ref{sec:estunique}, after some preliminary results, we show that any rigid GFT with a puncture at a $c_{r,s}$ must be one of the GFTs from our list.

%The proof is then completed by showing that these are the only rigid flow trees.  \dr{Before turning to the proof we establish several preliminary results on the behavior of flow trees in a Type (13) square. MOVED.}

\subsubsection{Degree of a partial flow tree}

%The $3$-rd Quadrant Lemma (Lemma \ref{lem:3rdQuad}) that holds for Type (1)-(12) squares fails for the Type (13) square.  

%In establishing properties of flow trees in a Type (13) square, it is useful to restrict our attention to those PFTs that may appear as parts of rigid GFTs.

Rigid GFTs can be identified by the following criterion.

\begin{lemma} \label{lem:aceyswST} Let $e^2_\alpha$ be a Type (13) square.  A GFT $\Gamma$ in $N(e^2_\alpha)$ that begins at some $c_{i,j}$ or $\tilde{c}_{k+2,k+1}$ is rigid if and only if 
\[
A-C+E -Y_1-SW=0
\]
where $A$  (resp. $C$) denotes the number of endpoints at Reeb chords of the form $a^{\pm,\pm}_{m,n}$ (resp. $c_{m,n}$ or $\tilde{c}_{k+2,k+1})$; $E$ denotes the number of endpoints at $e$-vertices; $Y_1$ denotes the number of $Y_1$-vertices; and $SW$ denotes the number of switch vertices.
\end{lemma}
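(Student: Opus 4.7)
The plan is to apply Proposition \ref{prop:EY1SWFormula} essentially verbatim, tracking the Morse-index contributions of each negative puncture type. Unlike the $2$-cells of type (1)-(12), where Proposition \ref{prop:NoY1NoSW} forced $Y_1 = SW = 0$ and the formula simplified to $A - C + E = 0$ (Proposition \ref{prop:ACE112}), in a Type (13) square the switch vertices identified in Property \ref{pr:switches} really can occur, so we must retain $Y_1$ and $SW$ in the accounting.

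First I would observe that by Property \ref{pr:Reeb2} both $c_{i,j}$ and $\tilde{c}_{k+2,k+1}$ are non-degenerate local maxima of their respective positive difference functions. Thus the positive puncture of $\Gamma$ has $\mathrm{Ind}(a) = 2$, and the left-hand side $2 - \mathrm{Ind}(a)$ of (\ref{eq:EY1SWFormula}) vanishes.

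Next I would partition the negative punctures by critical-point type. Reeb chords of the form $a^{\pm,\pm}_{r,s}$ are non-degenerate local minima (index $0$) and each contributes $1 - 0 = 1$ to $\sum_{i}(1 - \mathrm{Ind}(b_i))$; Reeb chords of the form $b^{X}_{r,s}$ or $\tilde{b}^{R}_{k+2,k+1}$ are non-degenerate saddles (index $1$) and each contributes $1 - 1 = 0$; and Reeb chords of the form $c_{r,s}$ or $\tilde{c}_{k+2,k+1}$ are non-degenerate local maxima (index $2$) and each contributes $1 - 2 = -1$. Summing gives $A - C$ in the notation of the lemma, so Proposition \ref{prop:EY1SWFormula} rearranges to $A - C + E - Y_1 - SW = 0$, proving the forward direction.

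For the converse, I would use that the same index substitutions applied to Ekholm's formal-dimension formula \cite[Definition 3.4]{Ekholm07} (the formula quoted in the proof of Proposition \ref{prop:EY1SWFormula}) give $\mathit{fdim}(\Gamma) = -(A - C + E - Y_1 - SW)$ for any GFT $\Gamma$ beginning at $c_{i,j}$ or $\tilde{c}_{k+2,k+1}$, with no assumption of rigidity. Since the pair $(\tilde L, g)$ is $1$-regular by Theorem \ref{thm:PropertiesofLtilde}, a GFT is rigid precisely when $\mathit{fdim}(\Gamma) = 0$, which by the displayed identity is equivalent to $A - C + E - Y_1 - SW = 0$. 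There is no genuine obstacle: the entire argument is a bookkeeping exercise in Morse indices applied to a formula already in hand, and the only subtle point is remembering not to discard the $Y_1$ and $SW$ terms as one would in the Type (1)-(12) setting.
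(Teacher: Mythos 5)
Your proposal is correct and is essentially the paper's own argument: the paper proves this lemma in one line by citing Proposition \ref{prop:EY1SWFormula} together with the fact that the $a$, $b$, and $c$ Reeb chords are respectively local minima, saddles, and local maxima, which is exactly the index bookkeeping you carry out. (Minor quibble: the substitution actually gives $\mathit{fdim}(\Gamma)=A-C+E-Y_1-SW$ rather than its negative, but since the criterion is vanishing this does not affect anything.)
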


(Recall from Theorem \ref{thm:1regV}, that since $\tilde{L}$ is $1$-regular all vertices of $\tilde{L}$ are either inputs, outputs, $e$-, $Y_0$-, $Y_1$-, or $sw$-vertices.)

\begin{proof}
This is just Proposition \ref{prop:EY1SWFormula} 
since all $a$, $b$, and $c$ Reeb chords are respectively local minima, saddles, and local maxima.
\end{proof}

We make the following definition to help identify PFTs that may appear as part of some rigid GFT.

\begin{definition}  
For a PFT, $\Gamma$, contained in  $N(e^2_\alpha)$ we define the {\bf degree of $\Gamma$} to be
\[
D(\Gamma) = A-C+E -Y_1-SW
\]
with $A, C, E, Y_1,$ and $SW$ as in Lemma \ref{lem:aceyswST}.
\end{definition}

\subsubsection{List of rigid GFTs with punctures at $c$'s.}  \label{sec:listGFTs}

%1.  DESCRIBE ALL TREES WITH PICTURES.

%2.  Prove existence of each flow tree.

%3.  Prove that these are the only flow trees.

For each term in the formula for $\partial_c c_{i,j}$ given in Theorem \ref{thm:dc} we identify an odd number of corresponding rigid GFTs.

\medskip

\noindent {\bf Term 1: $a^{+,+}_{i,m} c_{m,j}$ for $i < m < j$.}   The GFT begins by following the unique $(i,j)$-flow line that limits to $c_{i,j}$ as $t \rightarrow -\infty$ and passes the location of $c_{m,j}$.  When the flowline reaches $c_{m,j}$ a puncture ($Y_0$-vertex) occurs.  The non-constant branch after the puncture is the $(i,m)$-flow line  
beginning at $c_{m,j}$.  In view of the lexicographic ordering of the $c_{i,j}$ along the diagonal $x_1=x_2$, (this follows from Properties \ref{pr:Reeb2} and \ref{pr:Location1}), we have $c_{m,j} \in N_\e Q1(i,m)$, 
 i.e. $c_{m,j}$ is to the right and above $c_{i,m}$.  Therefore, by Lemma \ref{lem:1stQuad} (see also Lemma \ref{lem:holdovers}), this flowline limits to $a^{+,+}_{i,m}$ as $t \rightarrow + \infty$.  See Figure \ref{fig:Term1}.

\begin{figure}

\quad

\labellist
\small
\pinlabel $c_{i,j}$ [b] at 40 164
\pinlabel $c_{m,j}$ [tl] at 58 94
\pinlabel $a^{+,+}_{i,m}$ [tr] at -2 70
\pinlabel $c_{i,j}$ [br] at 164 52
\pinlabel $c_{m,j}$ [br] at 221 107
\pinlabel $a^{+,+}_{i,m}$ [bl] at 283 163
\endlabellist
\centerline{ \includegraphics[scale=.6]{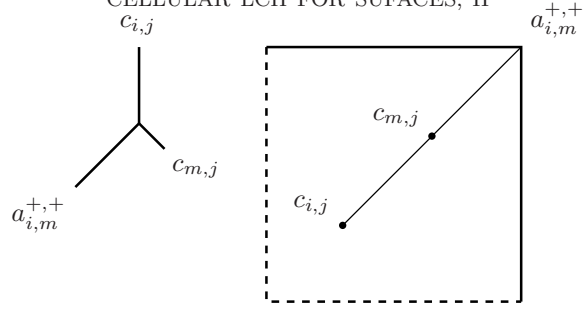} }
\caption{The domain and image of the trees from Term 1.}
%The arrows indicate the direction along edges of all $(i,j)$-flows (negative gradients) other than the $(k+2,k+1)$ flow.  
%(right) The partial flow trees in $A$ described in (2) of Lemma \ref{lem:RegionA}.  
\label{fig:Term1}
\end{figure}

\medskip

\noindent {\bf Term 2: $c_{i,m} a^{-,-}_{m,j}$ for $i<m<j$ with $\{m,j\} \cap \{k,k+2\}= \emptyset$.}   Begin by following the unique $(i,j)$-flow line from $c_{i,j}$ to $c_{i,m}$.  When the flow line reaches $c_{i,m}$ a puncture occurs.  After the puncture the non-constant branch is the $(m,j)$-flow line starting at $c_{i,m}$.  As stated in Lemma \ref{lem:holdovers}, the Lemma \ref{lem:14121} is applicable in this situation, and it shows that this flow line will pass into $[1/4,1/2]\times[1/4,1/2] \subset A$ where $A$ is the region from Lemma \ref{lem:AAinvariance}.  Once this occurs,  Lemma \ref{lem:AAinvariance} shows that the limit of this flow line can only be the Reeb chord $a^{-,-}_{m,j}$ as this is the only $(m,j)$-Reeb chord in $A$.  [Note that because of the restriction $\{m,j\} \cap \{k,k+2\}= \emptyset$, neither the $m$ or $j$ sheet has a cusp locus in $A$.  If one of $m$ or $j$ is $k+1$, then the Convention \ref{c:Sktilde} is relevant since the $S_{k+1}$ sheet
%relevant since one of the sheets $S_{m}$ or $S_{j}$ will 
becomes $\tilde{S}_k$ when the flow line crosses the $(k,k+2)$-cusp locus.]  See Figure \ref{fig:Term2}.

%beginning at $c_{m,j}$.  According to Lemma STAIRCASE, $c_{m,j}$ is in the $1$-st $(i,m)$-quadrant, i.e. is to the right and above $c_{i,m}$.  Therefore, this flowline limits to $a^{+,+}_{i,m}$ as $t \rightarrow + \infty$.

\begin{figure}
\labellist
\small
\pinlabel $c_{i,j}$ [b] at 16 162
\pinlabel $c_{i,m}$ [tr] at -2 94
\pinlabel $a^{-,-}_{m,j}$ [tl] at 57 71
\pinlabel $c_{i,j}$ [br] at 260 142
\pinlabel $c_{i,m}$ [br] at 245 126
\pinlabel $a^{-,-}_{m,j}$ [r] at 118 0
\endlabellist
\centerline{ \includegraphics[scale=.6]{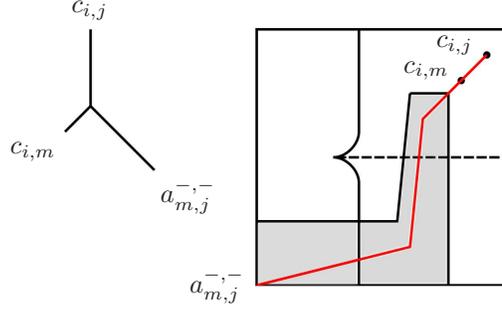} }
\caption{The domain and image (in red) of the trees from Term 2.}
%The arrows indicate the direction along edges of all $(i,j)$-flows (negative gradients) other than the $(k+2,k+1)$ flow.  
%(right) The partial flow trees in $A$ described in (2) of Lemma \ref{lem:RegionA}.  
\label{fig:Term2}
\end{figure}

\medskip

\noindent {\bf Term 3: $c_{i,k}\cdot 1$ for $i<k<j=k+2$.}
Begin with the unique $(i,k+2)$-flow line from $c_{i,k+2}$ to $c_{i,k}$, and then puncture at $c_{i,k}$.  By Lemma \ref{lem:14121} and Lemma \ref{lem:AAinvariance}, the $(k,k+2)$-flow line  that begins at $c_{i,k}$ must then enter and remain in $A$ where it can only end at an $e$-vertex along the lower half of the cusp edge where sheets $S_{k}$ and $S_{k+2}$ meet;  this produces a flow tree as in Figure \ref{fig:Term3}.

\begin{figure}
\labellist
\small
\pinlabel $c_{i,k+2}$ [b] at 16 162
\pinlabel $c_{i,k}$ [tr] at -2 94
\pinlabel $1$ [tl] at 57 71
\pinlabel $c_{i,k+2}$ [br] at 260 142
\pinlabel $c_{i,k}$ [br] at 245 126
\pinlabel $1$ [r] at 179 19
\endlabellist
\centerline{ \includegraphics[scale=.6]{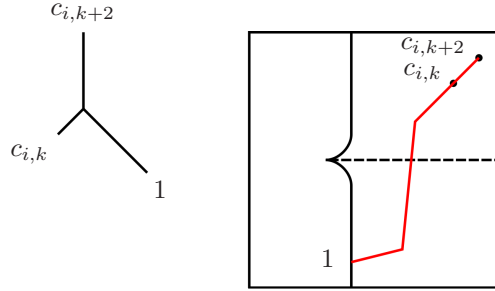} }
\caption{The domain and image (in red) of the trees from Term 3.}
%The arrows indicate the direction along edges of all $(i,j)$-flows (negative gradients) other than the $(k+2,k+1)$ flow.  
%(right) The partial flow trees in $A$ described in (2) of Lemma \ref{lem:RegionA}.  
\label{fig:Term3}
\end{figure}

\medskip

\noindent {\bf Term 4: $c_{i,k}\cdot 1$ for $i<k<j=k+1$.}

Again by Lemmas \ref{lem:14121} and Lemma \ref{lem:AAinvariance}, the $(k,k+1)$-flow line $\gamma$,that  starts at $c_{i,k}$ enters and remains in $A$ where it can only terminate at the $(k,k+2)$-cusp locus.

Consider the closed subset  $D \subset N(e^2_\alpha)$, consisting of the part of $N(e^2_\alpha)$ that sits to the right of the cusp locus with the upper right corner removed just below and to left of  $c_{i,k}$, and the lower right corner removed just above and to left of $\tilde{c}_{k+1,k+2}$.  More precisely,  remove 
\[
\{ (x_1,x_2) \in \widetilde{N}(e^2_\alpha) \, | \, x_{1} > \beta^U_{i,k} - \e, \, x_2 > \beta^R_{i,k}-\e\} \bigcup N(e^0_{+,+}),
\]  
and
\[
\{ (x_1,x_2) \in \widetilde{N}(e^2_\alpha) \, | \, x_{1} > \beta^D_{k+2,k+1} - \e, \, x_2 < \tilde{\beta}^R_{k+2,k+1}+\e\} \bigcup N(e^0_{+,-}).
\]  
Now, $D$ is homeomorphic to $D^2$, and the intersection of $\gamma$ and the $(k+2,k+1)$-switch flow line, $\lambda$, with $D$ are closed intervals with their endpoints on $\partial D$.  [That the intersections of these flow lines with $D$ consist of single arcs follows from Property \ref{pr:monotonicityI}.]  Moreover, the endpoints of $\gamma$, $A_1,A_2$,  and the endpoints of $\lambda$, $B_1,B_2$, appear cyclically in the order $A_iB_kA_jB_l$.  As $\gamma$ and $\lambda$ intersect transversally (by the $1$-regularity of $\tilde{L}$), standard differential topology shows that they intersect in an odd number of points, $w_1, \ldots, w_{2r+1}$.

To produce an odd number of GFTs:  Begin with the $(i,k+1)$-flow line from $c_{i,k+1}$ to $c_{i,k}$, and puncture at $c_{i,k}$.  Follow $\gamma$ until it reaches one of the $w_i$.  At $w_i$, a $Y_0$-vertex occurs so that the outgoing edges are a $(k,k+2)$-flow line, $\gamma_a$, and a $(k+2,k+1)$-flow line, $\gamma_b$.  Since $w_{i} \in A$, $\gamma_a$ must remain in $A$ and will thus terminate at an $e$-vertex along the $(k,k+2)$-cusp edge; $\gamma_b$ ends at a switch vertex at the $(k+2,k+1)$-switch point that is followed by a $(k,k+1)$-flow line that terminates at the cusp edge.

See Figure \ref{fig:Term4}.

\medskip

\begin{figure}
%\labellist
%\small
%\pinlabel $c_{i,k+1}$ [b] at 16 162
%\pinlabel $c_{i,k}$ [tr] at -2 94
%\pinlabel $1$ [tl] at 57 71
%\pinlabel $c_{i,k+1}$ [br] at 260 142
%\pinlabel $c_{i,k}$ [br] at 245 126
%\pinlabel $1$ [r] at 186 108
%\endlabellist
\centerline{ 
\labellist
\small
\pinlabel $c_{i,k+1}$ [b] at 16 162
\pinlabel $c_{i,k}$ [tr] at -2 94
\pinlabel $k,k+1$ [l] at 24 112
\pinlabel $1$ [tl] at 58 30
\pinlabel $1$ [tr] at 16 56
\pinlabel $c_{i,k+1}$ [br] at 260 142
\pinlabel $c_{i,k}$ [br] at 245 126
\pinlabel $1$ [r] at 186 100
\pinlabel $1$ [r] at 186 24
\pinlabel $\tilde{c}_{k+2,k+1}$ [t] at 252 24
\endlabellist
\includegraphics[scale=.6]{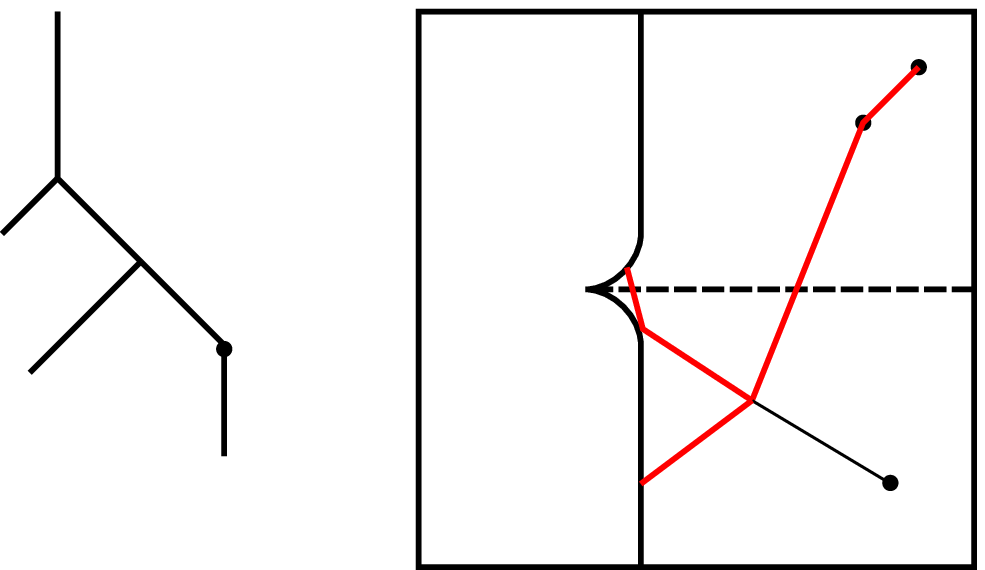} }
\caption{A tree from Term 4.  More intersections between the $(k,k+1)$-flow line starting at $c_{i,k}$ and the $(k+2,k+1)$-switch flow line may occur, resulting in an even number of additional trees.}
\label{fig:Term4}
\end{figure}

\noindent {\bf Term 5: $c_{i,k+2} a^{-,-}_{k+1,j}$ for $i<k+2<j$.}

We identify an odd number of GFTs, each of which has initial branch consisting of the $(i,j)$-flow line from $c_{i,j}$ to $c_{i,k+2}.$
%, and ending with a puncture at $c_{i,k+2}$.  
The $(k+2,j)$-flow line beginning at $c_{i,k+2}$ will enter $A$ (by Lemma \ref{lem:14121}).  Next, by Lemma \ref{lem:AAinvariance}  this flow line will proceed to terminate at a point on the $(k,k+2)$-cusp locus.  Along its way it will intersect the $(k+2,k+1)$ switch line transversally in an odd number of points, $w_1, \ldots, w_{2r+1}$.  [The reasoning is as in Term 4.]
%in a unique point, by Property \ref{pr:UniqueInt}.  
To construct GFTs $\Gamma_1, \ldots, \Gamma_{2r+1}$, after the puncture we follow the $(k+2,j)$-flow line starting at $c_{i,k+2}$ to one of the $w_s$ where a $Y_0$ occurs.  The branch that is a $(k+1,j)$ flow will then proceed to $a^{-,-}_{k+1,j}$ (again by Lemma \ref{lem:AAinvariance}), so a GFT of the form pictured in Figure \ref{fig:Term5} results.

\begin{figure}
\labellist
\small
\pinlabel $c_{i,j}$ [b] at 16 162
\pinlabel $c_{i,k+2}$ [tr] at -2 94
\pinlabel $k+2,j$ [l] at 24 112
\pinlabel $a^{-,-}_{k+1,j}$ [t] at 72 53 
\pinlabel $1$ [t] at 16 28
\pinlabel $c_{i,j}$ [br] at 260 142
\pinlabel $c_{i,k+2}$ [br] at 245 126
\pinlabel $1$ [r] at 186 100
\pinlabel $a^{-,-}_{k+1,j}$ [tr] at 120 0
\pinlabel $\tilde{c}_{k+2,k+1}$ [t] at 252 24
\endlabellist
\includegraphics[scale=.6]{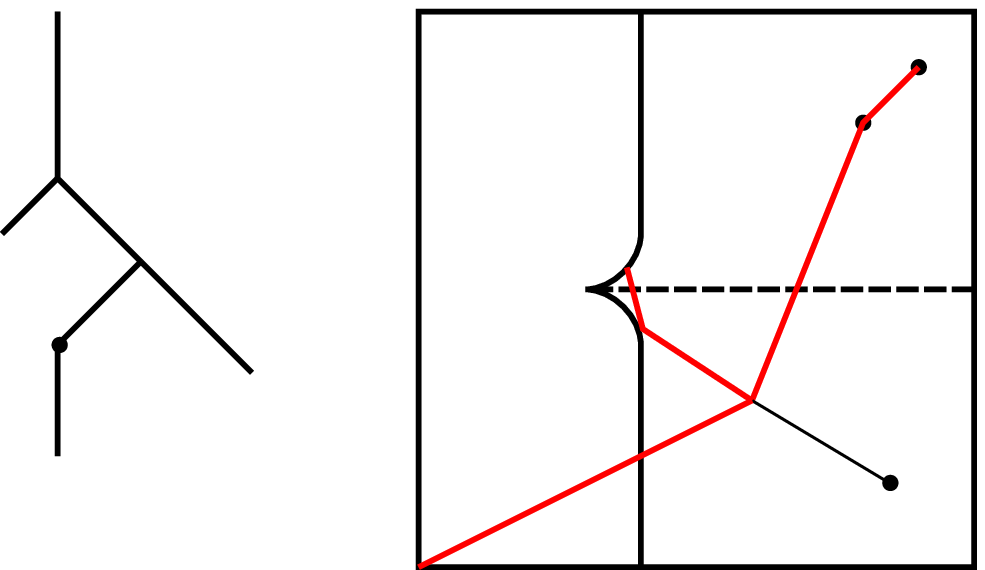} 
\caption{The domain and image of trees from Term 5.  More intersections between the $(k+2,j)$-flow line starting at $c_{i,k+2}$ and the $(k+2,k+1)$-switch flow line may occur, resulting in an even number of additional trees.}
%The arrows indicate the direction along edges of all $(i,j)$-flows (negative gradients) other than the $(k+2,k+1)$ flow.  
%(right) The partial flow trees in $A$ described in (2) of Lemma \ref{lem:RegionA}.  
\label{fig:Term5}
\end{figure}

\medskip

Note that by Lemma \ref{lem:aceyswST}, the GFTs identified in Terms 1-5 are all rigid.

\subsubsection{Establishing uniqueness}  \label{sec:estunique}

%\dr{Once exposition is completed, do a careful re-evaluation of which statements to call Lemma, Proposition, Theorem.}

The following Lemmas \ref{lem:RegionA} and \ref{lem:3Q} serve as a replacement for the $3$-rd Quadrant Lemma.

%Recall the regions $A'$ and $A$ appearing in Lemma \ref{lem:AAinvariance}.  

%\dr{Since all of these Lemmas have the same conclusion, could be best to convert them to one Proposition with a (1)-(3) hypothesis.}

\begin{lemma}  \label{lem:RegionA}
Any PFT, $\Gamma$, beginning at a point 
%with an $(i,j)$-flow 
in $A$  satisfies $D(\Gamma) \geq 0$.  Moreover, equality holds only for those PFTs that are subsets of the 
%$(k+1,k+2)$- or 
$(k+2,k+1)$- switch GFT beginning at a point on the initial edge, i.e. beginning with part of the 
%$(k+1,k+2)$- or 
$(k+2,k+1)$- switch flow line. 
\end{lemma}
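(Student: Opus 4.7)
The plan is to prove the inequality and the equality case simultaneously by induction on $N$, the number of internal vertices of $\Gamma$. The strategy exploits that $A$ is invariant under all $(i,j)$-flows except those of type $(k+1,k+2)$ or $(k+2,k+1)$ (Lemma \ref{lem:AAinvariance}), that the only Reeb chords in $A$ are the $a^{-,-}_{m,n}$, and that only the lower portion of the $(k,k+2)$-cusp locus (below $P_2$) lies in $A$, so every output available to a PFT trapped in $A$ contributes $+1$ to $D(\Gamma)$.

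For the base case $N=0$, a single-edge PFT of type $(i,j) \notin \{(k+1,k+2),(k+2,k+1)\}$ remains in $A$ by Lemma \ref{lem:AAinvariance}, and must end at an $a^{-,-}$ Reeb chord or an $e$-vertex on the $(k,k+2)$-cusp locus, so $D(\Gamma)=1$. A single-edge PFT with initial flow type $(k+1,k+2)$ would, by Lemma \ref{lem:PFTreesk1k2}, be a subset of the $(k+1,k+2)$-switch GFT, whose initial edge lies in $R(\mathit{sw}_{k+1,k+2})$ by Lemma \ref{lem:Rswk1k2}; but $R(\mathit{sw}_{k+1,k+2})$ sits above the Swallowtail Barrier $P$ while $A$ lies below $P$, so no such PFT starts in $A$. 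A single-edge PFT with initial flow $(k+2,k+1)$ is, by Lemma \ref{lem:PFTrees}, either a Type (3) subset (which has at least one internal vertex, so not the base case) or a Type (1)/(2) PFT contained in a strip with $x_1 \geq \beta^D_{k+2,k+1} - \e$; taking $\e$ small enough that $\beta^D_{k+2,k+1}-\e > 1/2$, these strips are disjoint from $A$.

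For the inductive step, let $v$ be the first internal vertex, and let the initial edge be an $(i,j)$-flow. If $(i,j) \notin \{(k+1,k+2),(k+2,k+1)\}$, Lemma \ref{lem:AAinvariance} places $v \in A$. If $v$ is a $Y_0$ with outgoing $(i,m)$- and $(m,j)$-flows branching into PFTs $\Gamma_1,\Gamma_2$, one computes $D(\Gamma)=D(\Gamma_1)+D(\Gamma_2)$; since the indices prevent both outgoing flows from simultaneously equaling $(k+2,k+1)$ and a $(k+1,k+2)$-flow from a point of $A$ is impossible by the base-case argument, the inductive hypothesis forces $D(\Gamma_1)+D(\Gamma_2) > 0$. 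If $v$ is a $Y_1$, it must lie on the $(k,k+2)$-cusp locus (the only cusp in $A$), and the outgoing $(i,k+2)$- and $(k,j)$-flows cannot equal $(k+1,k+2)$ or $(k+2,k+1)$, so the inductive hypothesis yields $D(\Gamma_i)\geq 1$ and thus $D(\Gamma) \geq 2-1=1$. If $v$ is a switch, Property \ref{pr:switches} places every switch point above $P_2$ and to the left of $P_1$, hence outside $A$, contradicting $v \in A$. The case of initial flow type $(k+1,k+2)$ is excluded as in the base case, and the case $(k+2,k+1)$ reduces via Lemma \ref{lem:PFTrees} to a Type (3) subset of the $(k+2,k+1)$-switch GFT starting on the initial switch flow line, giving $D(\Gamma)=0-0+1-0-1=0$ and matching the equality clause.

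The main obstacle will be the $Y_0$-subcase of Case (I) in the inductive step: one outgoing edge may be a $(k+2,k+1)$-flow lying along the $(k+2,k+1)$-switch flow line (contributing $D(\Gamma_1)=0$), and one must then argue that the other outgoing branch starts with an $(i,m)$- or $(m,j)$-flow that is neither $(k+2,k+1)$ nor $(k+1,k+2)$, so its inductive $D \geq 1$ forces $D(\Gamma)>0$. A secondary delicate point is the geometric verification that $R(\mathit{sw}_{k+1,k+2}) \cap A = \emptyset$ and that all switch points lie outside $A$; both assertions require a careful parsing of the positions imposed by Properties \ref{pr:STBnew} and \ref{pr:switches}.
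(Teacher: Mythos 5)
Your proof is correct and follows essentially the same route as the paper: induction, the $A$-invariance of Lemma \ref{lem:AAinvariance}, the reduction of the exceptional $(k+1,k+2)$- and $(k+2,k+1)$-initial-edge cases to Lemmas \ref{lem:PFTreesk1k2} and \ref{lem:PFTrees}, and the observation that at most one branch of a $Y_0$ can realize equality. The only (harmless) deviation is that the paper proves a separate lemma (Lemma \ref{lem:NoY1swinA}, via Corollary \ref{cor:mnPQ}) showing no $Y_1$- or $\mathit{sw}$-vertex can occur in $A$, whereas you exclude switches by their explicit location in $O_1\cap\{|x_2|\le 1/32\}$ and absorb a hypothetical $Y_1$ into the degree count — both of which are valid.
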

\begin{proof}
 
Note that the cases of a PFT beginning with a $(i,j)$-flow in $A$ where $(i,j) = (k+2,k+1)$ or $(i,j) = (k+1,k+2)$ are covered by Lemmas \ref{lem:PFTrees} and \ref{lem:PFTreesk1k2}.  [Note that the PFTs identified in (1) and (2) of Lemma \ref{lem:PFTrees} are entirely disjoint from $A$, and the $(k+1,k+2)$-switch flow line is disjoint from $A$ via Lemma \ref{lem:Rswk1k2}.]

%$\{i,j\} \subset \{k,k+1,k+2\}$ is covered by Lemma \ref{lem:Aprime} since in this case the intersection of $A$ with the domain of $-\nabla F_{i,j}$ is contained in $A'$.  

Thus, we consider the case of a PFT, $\Gamma$, beginning with an $(i,j)$-flow starting in $A$ with $(i,j) \neq (k+1,k+2)$ and $(i,j) \neq (k+2,k+1)$, and  %$\{i,j\} \not \subset \{k,k+1,k+2\}$ 
we use induction on the number of $Y_0$-vertices
% and $Y_1$-
 in $\Gamma$.   
 
 The initial edge of $\Gamma$ remains in $A$ (by Lemma \ref{lem:AAinvariance}), so the first internal vertex of $\Gamma$ can only be a $Y_0$-vertex by the following:

\begin{lemma}  \label{lem:NoY1swinA} No $Y_1$ or switch vertex can have its image in $A.$
%\footnote{\ms{7/22/15: I might move Lemma \ref{lem:NoY1swinA} outside the proof of Lemma \ref{lem:RegionA} since it is used later} \dr{7/26: This could be done, but hasn't been yet.}}
\end{lemma}  
\begin{proof}
The only portion of the cusp locus that intersects $A$ is along the vertical segment $\{x_1=-3/8\}$ below the Swallowtail Barrier $P$.  For $i<k$ and $j=k+1$ or $j\geq k+3$, so that $S_i$ and $S_j$ are respectively above and below the $(k,k+2)$-cusp locus, Corollary \ref{cor:mnPQ} gives $-\grad_{x_1} F_{i,k}<0$ and $-\grad_{x_1} F_{k+2,j}<0$, so that no switches are possible.  Moreover, since $F_k$ and $F_{k+2}$ agree to first order along the $(k,k+2)$-cusp edge, we have 
 \[
 -\grad_{x_2}F_{i,j} = -\grad_{x_2}F_{i,k} -\grad_{x_2}F_{k+2,j} <0
 \]
 so that no $Y_1$ is possible.
%  Along this segment the $x_1$ component of $-\nabla F_{i,j}$ is negative by Property \ref{pr:monotonicityI} (which applies since $\{i,j\} \not \subset \{k,k+1,k+2\}$).\dr{WAS THE STATEMENT OF THIS PROPERTY ADJUSTED?}  Thus, the direction of $-\nabla F_{i,j}$ across the cusp locus forbids a switch or $Y_1$-vertex.  
\end{proof}

Now, in the base case where $\Gamma$ has no $Y_0$ vertices, we have that $\Gamma$ has no internal vertices at all and can only limit to an $(i,j)$-Reeb chord within $A$ or end at an $e$-vertex.  (If $i$ or $j$ is $k+1$, this Reeb chord will have an endpoint on $\widetilde{S}_k$.)   
The only $(i,j)$-Reeb chord in $A$ is $a^{-,-}_{i,j}$, so we have $D(\Gamma) = A- C + E - Y_1 - SW = 1$.  For the inductive step, assume $\Gamma$ has at least one $Y_0$-vertex, and let $\Gamma_1$ and $\Gamma_2$ denote the PFTs that begin with the two branches of $\Gamma$ that immediately follow the first vertex.  The image of this vertex must lie in $A$ by Lemma \ref{lem:AAinvariance}, so the inductive hypothesis applies to both $\Gamma_1$ and $\Gamma_2$.  Therefore, %using the notation $D(\Gamma_i) = A + K - sw - Y_1$, 
we have $D(\Gamma_i) \geq 0$ for $i=1,2$.  It is impossible that we have equality in both cases since at most one of the $\Gamma_i$ can begin with the $(k+2,k+1)$-switch flow line.
 %that %precede their respective switch points 
%would then have to cross one another at the location of the $Y_0$.  
%This is impossible because all $(k+1,k+2)$ flow lines are entirely above the crossing locus and all $(k+2,k+1)$ flow lines are entirely below the crossing locus.
  Therefore, $D(\Gamma) = D(\Gamma_1) + D(\Gamma_2) > 0$.
\end{proof}

\begin{lemma} \label{lem:3Q}
Let $i< m_1 \leq m_2 <j$.  Any PFT $\Gamma$ that begins with an $(m_2,j)$-flow line starting at a point $x \in N_\e Q3(i,m_1) \cap [1/2,1] \times [1/2,1]$ 
satisfies $D(\Gamma) > 0$.  
%Moreover, equality holds only for those PFTs that are subsets of the $(k+1,k+2)$- or $(k+2,k+1)$- switch GFT beginning at a point on the initial edge, i.e. beginning with part of the $(k+1,k+2)$- or $(k+2,k+1)$- switch flow line. 
%has $D(\Gamma) \geq 1$ unless $\Gamma$ is a subset of the GFT from identified in Lemma \ref{pr:PFTreesk1k2} that consists of the $(k+1,k+2)$-switch flow line followed by a switch and a $(k,k+2)$ edge that terminates at an $e$-vertex, in which case $D(\Gamma) = 0$.
\end{lemma}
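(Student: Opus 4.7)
The plan is to mirror the strategy of the Third Quadrant Lemma (Lemma \ref{lem:3rdQuad}) from Section \ref{sec:Comp2Cells}, substituting Lemma \ref{lem:RegionA} for the earlier Lemma \ref{lem:1412} and strengthening the conclusion to $D(\Gamma) > 0$. The argument will proceed by induction on the number of internal vertices of $\Gamma$.

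For the base case where $\Gamma$ has no internal vertices, $\Gamma$ is a single $(m_2, j)$-flow line issuing from $x$. I would apply Lemma \ref{lem:14121} (still valid in the Type (13) setting by Lemma \ref{lem:holdovers}) to force the flow line to cross into $[1/4,1/2] \times [1/4, 1/2] \subset A$, then invoke Lemma \ref{lem:AAinvariance} (using that $m_2 < j$ rules out $(m_2, j) = (k+1, k+2)$ and $(m_2, j) = (k+2, k+1)$) to keep the flow line in $A$ thereafter. The flow must then terminate at either the Reeb chord $a^{-,-}_{m_2,j}$, with Convention \ref{c:Sktilde} governing endpoints on $\tilde{S}_k$, or an $e$-vertex in the event $\{m_2, j\} = \{k, k+1\}$ or $\{k, k+2\}$. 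Either outcome yields $D(\Gamma) = 1 > 0$.

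For the inductive step, I would trace the initial edge $\gamma_0$ of $\Gamma$ from $x$. If $\gamma_0$ reaches $A$ before encountering any internal vertex, let $\Gamma'$ be the sub-PFT of $\Gamma$ starting at the first point $y \in A$; its initial edge is still an $(m_2, j)$-flow with $m_2 < j$, so it cannot coincide with the $(k+2, k+1)$-switch flow line, and Lemma \ref{lem:RegionA} forces $D(\Gamma) = D(\Gamma') \geq 1$. Otherwise, $\gamma_0$ terminates at an internal vertex $z$ lying in $N_\e Q3(i, m_1) \cap [1/2, 1] \times [1/2, 1]$; since this region is well separated from the cusp locus, $z$ cannot be a switch or $Y_1$-vertex and must be a $Y_0$ that splits $\gamma_0$ into an $(m_2, l)$- and an $(l, j)$-flow for some $m_2 < l < j$. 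Both resulting sub-PFTs begin at $z$, which still satisfies the hypothesis with parameters $(i, m_1, m_2, l)$ and $(i, m_1, l, j)$, so the inductive hypothesis yields $D(\Gamma_1), D(\Gamma_2) \geq 1$, hence $D(\Gamma) = D(\Gamma_1) + D(\Gamma_2) \geq 2$.

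The hard part will be extracting the strict inequality rather than merely $D(\Gamma) \geq 0$; this hinges on the fact that the equality case of Lemma \ref{lem:RegionA} is populated exclusively by PFTs beginning along the $(k+2, k+1)$-switch flow line, a scenario definitively excluded by the hypothesis $m_2 < j$. A secondary subtlety is the careful bookkeeping of sheet indices under Convention \ref{c:Sktilde} when the $(m_2, j)$-flow crosses the $(k, k+2)$-cusp locus; this affects which $a^{-,-}$ Reeb chord serves as a possible termination, but not the degree count.
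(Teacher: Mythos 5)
Your overall strategy is the right one -- funnel the tree through Lemma \ref{lem:14121} into the region $A$ and then invoke Lemma \ref{lem:RegionA}, excluding its equality case because an $(m_2,j)$-flow with $m_2<j$ cannot be part of the $(k+2,k+1)$-switch flow line -- but the inductive scaffolding has a genuine hole. In your second branch you assert that the first internal vertex $z$ of $\Gamma$ lies in $N_\e Q3(i,m_1)\cap[1/2,1]\times[1/2,1]$, and you then apply the inductive hypothesis to the two sub-PFTs rooted at $z$. Nothing forces $z$ into that region: Lemma \ref{lem:14121} only confines the portion of the tree preceding its entry into $[1/4,1/2]\times[1/4,1/2]$ to the larger hexagonal region $P\subset N_\e Q3(i,m_1)\cap[1/4,1]\times[1/4,1]$ (bounded by the barriers $B_1,B_2$ of Property \ref{pr:monotonicityIV}), and a $Y_0$ can perfectly well occur at a point of $P$ with $x_1<1/2$ or $x_2<1/2$. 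At such a point the hypothesis of the lemma is not satisfied, so your induction cannot close. The paper sidesteps this by not inducting on the lemma at all: it carves $\Gamma$ into $\gamma_0\sqcup\Gamma_1\sqcup\cdots\sqcup\Gamma_r$, where $\gamma_0$ is the output-free upper portion lying in $N_\e Q3(i,m_1)\cap[1/4,1]^2$ (hence carrying only $Y_0$'s and contributing $0$ to the degree) and each $\Gamma_i$ begins at the first point of its branch inside $[1/4,1/2]^2\subset A$; then $D(\Gamma)=\sum_i D(\Gamma_i)$ and Lemma \ref{lem:RegionA} gives $D(\Gamma_i)\geq 1$. If you insist on an induction, you must strengthen the inductive statement to admit starting points anywhere in that hexagonal region, reproving the Lemma \ref{lem:14121} step for them.

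A second, smaller error: you claim that $m_2<j$ rules out $(m_2,j)=(k+1,k+2)$, which is false ($k+1<k+2$), and the pair $(k+1,k+2)$ is precisely one of the two excluded from Lemma \ref{lem:AAinvariance}. So your base case does not cover a PFT beginning with a $(k+1,k+2)$-flow at $x$, which is allowed by the hypotheses (e.g.\ $i=k$, $m_1=m_2=k+1$, $j=k+2$). This case is in fact vacuous, but seeing that requires an argument you have not supplied: by Lemma \ref{lem:PFTreesk1k2} such a PFT, once its image meets $D(1/2)\cup L(1/2)$, must be a subset of the $(k+1,k+2)$-switch GFT, whose flow line is confined to $R(\mathit{sw}_{k+1,k+2})$ (Lemma \ref{lem:Rswk1k2}) and therefore, by the lexicographic separation of the $\beta$-intervals, never meets $N_\e Q3(i,m_1)$; the equality case $D=E-SW=0$ of the switch GFT would otherwise falsify the lemma outright. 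Routing the base case through Lemma \ref{lem:RegionA} (which already disposes of $(k+1,k+2)$-flows in $A$) rather than Lemma \ref{lem:AAinvariance} would repair this.
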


\begin{proof}

Applying Lemma \ref{lem:14121}, repeatedly if necessary, we can locate some number of points $x_1, \ldots, x_r$ in the domain of $\Gamma$ with the following properties.  
\begin{enumerate}
\item  For some $m_2 = s_0 < s_1 < \ldots < s_r= j$, the branch of the tree that $x_i$ sits on parametrizes an $(s_{i-1}, s_{i})$-flow.
\item The image of each $x_i$ is in $[1/4,1/2] \times [1/4,1/2]$.
\item We have $\Gamma = \gamma_0 \sqcup \Gamma_1 \sqcup \cdots \sqcup \Gamma_r$ where $\Gamma_i$ denotes the PFT consisting of the portion of $\Gamma$ starting at and below $x_i$ and $\gamma_0$ is some portion of the tree that does not have any output vertices and has image contained in $N_{\e}Q3(i,m_1) \cap [1/4,1]\times [1/4,1]$. 
\end{enumerate}
 Note in particular that $\gamma_0$ can only have $Y_0$ vertices because the cusp locus is disjoint from $[1/4,1]^2$.  See Figure \ref{fig:x1x2xn}. [One locates the $x_i$ as follows.  Suppose the initial branch of $\Gamma$ 
 %the tree below the puncture at $c_{r,s}$ 
 leaves $N_{\e}Q3(i,m_1)$.  Then, we apply Lemma \ref{lem:14121} to locate a suitable point $x_1$ with image in $[1/4,1/2]^2$, and take $r=1$.  If the initial branch of the tree does not leave $N_{\e}Q3(i,m_1)$, then it can only end at a $Y_0$-vertex.  In this case, we treat the two outgoing branches of the tree at the $Y_0$ inductively.]  

Since $x_i \in [1/4,1/2]\times [1/4,1/2] \subset A,$ 
 we can now apply Lemma  \ref{lem:RegionA} to each of the $\Gamma_i$ to conclude that for all $i$, $D(\Gamma_i) > 0$.  [None of the $\Gamma_i$ begins with the $(k+2,k+1)$-switch flow line since we are above the crossing locus.]
% Moreover, the Lemma tell us that the inequality must be strict except possibly in the case that the branch that contains $x_i$ is part of the $(k+1,k+2)$-switch flow line.  This can happen for at most one $i$ (by property (1) of the $x_i$).  The result follows immediately if $n=1$, and if $n \geq 2$ 
Thus, we have $D(\Gamma') = \sum_{i} D(\Gamma_i) \geq 1$ as desired.  
%Now, if $n = 2$, then $\gamma_0$ consists of the $(s,j)$ flowline that starts at $c_{r,s}$ followed by a $Y_0$.  According to 
%Proposition \ref{lem:k1k2No}
%Property \ref{pr:k1k2No} (1), it is impossible that either of the branches coming out of the $Y_0$ is part of the $(k+1,k+2)$-switch flow, so again we have $D(\Gamma') = D(\Gamma_1)+D(\Gamma_2) \geq 2$.  
%f $n=1$, then  $x_0$ simply sits on the initial edge of $\Gamma'$.  This edge cannot be part of the $(k+1,k+2)$ switch flow by Property \ref{pr:k1k2No} (1), so it follows that $D(\Gamma') = D(\Gamma_1) \geq 1$.  

\end{proof}

\begin{figure}
\labellist
\small
\pinlabel $\gamma_0$ [t] at 164 168
\pinlabel $x_1$ [b] at 26 46
\pinlabel $x_2$ [b] at 98 46
\pinlabel $x_3$ [b] at 192 46
\pinlabel $\Gamma_1$ [t] at 8 -2
\pinlabel $\Gamma_2$ [t] at 116 -2
\pinlabel $\Gamma_3$ [t] at 212 -2
\endlabellist
\centerline{ \includegraphics[scale=.6]{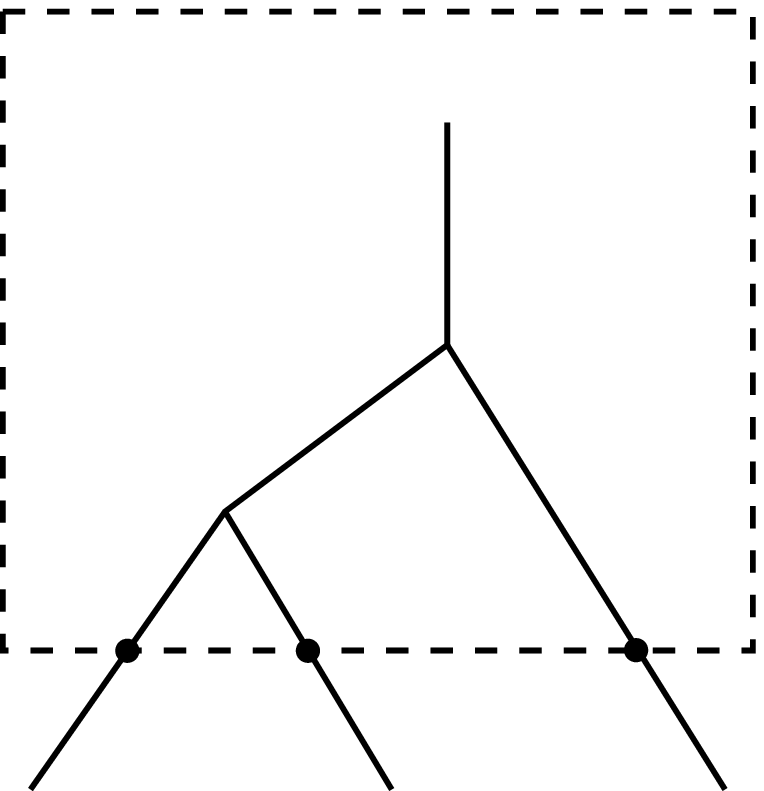} }
\caption{Decomposing the partial flow tree $\Gamma' = \gamma_0 \sqcup \Gamma_1 \sqcup \cdots \sqcup \Gamma_r$.}
%The arrows indicate the direction along edges of all $(i,j)$-flows (negative gradients) other than the $(k+2,k+1)$ flow.  
%(right) The partial flow trees in $A$ described in (2) of Lemma \ref{lem:RegionA}.  
\label{fig:x1x2xn}
\end{figure}

\begin{proof}[Conclusion of Proof of Theorem \ref{thm:dc}]  Let $\Gamma$ be a rigid GFT beginning at $c_{i,j}$ that contains at least one puncture at a Reeb chord of the form $c_{r,s}$.  (If $\Gamma$ instead contains a puncture at $\tilde{c}_{k+2,k+1}$, then its contribution to $\partial_c c_{i,j}$ is absorbed into the term $x$ from (\ref{eq:cterms})).  
By Lemma \ref{lem:aceyswST},
\[
D(\Gamma) = A- C +E- Y_1 -SW= 0.
\]
We will show that $\Gamma$ is one of the GFTs identified in Terms 1-5.

First, choose one of the punctures of $\Gamma$ at a Reeb chord of the form $c_{r,s}$ and consider the PFT, $\Gamma'$, consisting of the portion of $\Gamma$ beginning just below this puncture. 

\medskip

\noindent \textbf{Step 1.}  The PFT, $\Gamma'$, has $D(\Gamma') \geq 1$ with equality if and only if $\Gamma'$ agrees with the portion of one of the trees described in Terms 1-5 that follows the puncture.

\medskip

We verify Step 1 in two cases.

\medskip

\textbf{Case 1.}  For some $i<r<s$, an $(i,s)$-flow becomes an $(i,r)$-flow at the puncture.

\medskip

In this case, the $1$-st Quadrant Lemma applies (see Lemmas \ref{lem:holdovers} and \ref{lem:1stQuad}).  This lemma states that the image of $\Gamma'$  is contained in $N_\e Q1(r,s)$, so all internal vertices of $\Gamma'$ must be $Y_0$'s.  Thus, $D(\Gamma')$ agrees with the number of endpoints of $\Gamma'$.  If $\Gamma'$ has only one endpoint, then it is simply a portion of the $(i,r)$-flow line from $c_{r,s}$ to $a^{+,+}_{i,r}$.  In this case, $\Gamma'$ agrees with the post-puncture part of a tree from Term 1.  

\medskip

\textbf{Case 2.}  For some $r<s<j$, an $(r,j)$-flow becomes an $(s,j)$-flow at the puncture.

\medskip
Lemma \ref{lem:3Q} applies to show $D(\Gamma') \geq 1$.  

%%%%%% Moreover, the Lemma \ref{lem:RegionA} tells us that the inequality must be strict except possibly in the case that the branch that contains $x_i$ is part of the $(k+1,k+2)$-switch flow line.  This can happen for at most one $i$ (by property (1) of the $x_i$).  Thus, if $p \geq 3$ we must have $D(\Gamma') = \sum_{i} D(\Gamma_i) \geq 2$.  Now, if $p = 2$, then $\gamma_0$ consists of the $(s,j)$ flowline that starts at $c_{r,s}$ followed by a $Y_0$.  According to 
%Proposition \ref{lem:k1k2No}
%%%%%%Lemma \ref{lem:k1k2NoEtc} (1), it is impossible that either of the new branches that start at the $Y_0$ is part of the $(k+1,k+2)$-switch flow line, so again we have $D(\Gamma') = D(\Gamma_1)+D(\Gamma_2) \geq 2$.  Finally, if $p=1$, then  $x_1$ simply sits on the initial edge of $\Gamma'$.  This edge cannot be part of the $(k+1,k+2)$-switch flow line by Lemma \ref{lem:k1k2NoEtc} (1), so it follows that 

It remains to establish the claim about equality in Step 1.  Suppose that $D(\Gamma')=1$.  As in Proof of Lemma \ref{lem:3Q}, we can write $\Gamma' = \gamma_0 \sqcup \Gamma_1 \sqcup \cdots \sqcup \Gamma_p$, where the decomposition has the properties discussed in Proof of Lemma \ref{lem:3Q}.  
Again, we can now apply Lemma \ref{lem:RegionA} to each of the $\Gamma_i$ to conclude that for all $i$, $D(\Gamma_i) > 0$.  Thus, we must have  $p=1$ so the initial edge of $\Gamma'$ is in $[1/4,1/2]^2$ at the image of $x_1$.

First, {\bf suppose $\Gamma'$ has no internal vertices}.   Then, $\Gamma'$ is a single $(s,j)$-flow line.  It must be the case that $(s,j) \neq (k+1,k+2)$ (by Lemma \ref{lem:PFTreesk1k2}), so $\Gamma_1$ has image contained in $A$ (by Lemma \ref{lem:AAinvariance}).  Since this flow line, $\Gamma_1$, is an actual PFT, it must end at a Reeb chord or at an $e$-vertex.  

\begin{itemize}
\item If $\Gamma_1$ ends at a Reeb chord,  the only possible Reeb chord in $A$ is $a^{-,-}_{s,j}$.  Therefore, it must be that $\{s,j\} \cap \{k,k+2\}= \emptyset$ because otherwise $a^{-,-}_{s,j}$ does not exist. 
%the flowline would cease to exist at some point of the $(k,k+2)$-cusp locus.  
Thus, $\Gamma'$ agrees with the post-puncture part of a tree from Term 2 (as ordered in Section \ref{sec:listGFTs}).  

\item If $\Gamma_1$ ends at an $e$-vertex, then $\Gamma_1$ must be a $(k,k+2)$-flow line terminating somewhere along the $(k,k+2)$-cusp edge.  [The $(k,k+1)$-cusp locus is disjoint from $A$.] In this case, $\Gamma'$ agrees with the post-puncture part of a tree from Term 3.
\end{itemize}

%\begin{itemize}

%\item  If $\{s,j\} \not \subset \{k,k+1,k+2\}$, then $\Gamma_1$ has image contained in $A$ (by Lemma \ref{lem:AAinvariance}).  Since this flow line is an actual PFT, it must terminate at a Reeb chord, and the only possible Reeb chord in $A$ is $a^{-,-}_{s,j}$.  Therefore, it must be that $\{s,j\} \cap \{k,k+2\}= \emptyset$ because otherwise $a^{-,-}_{s,j}$ does not exist. 
%%%%%%the flowline would cease to exist at some point of the $(k,k+2)$-cusp locus.  
%Thus, $\Gamma'$ agrees with the post-puncture part of a tree from Term 2.  

%\item If instead it were the case that $\{s,j\} \subset \{k,k+1,k+2\}$, then $\Gamma'$ could only be either a single $(k,k+1)$-flow line terminating somewhere along the upper half of the cusp edge, or a $(k,k+2)$ flow line terminating somewhere along the lower half of the cusp edge.  In these cases, $\Gamma'$ agrees with the post-puncture part of trees discussed in Term 4, Case 1 (i) or Term 3 respectively.  [We cannot have $(s,j) = (k+1,k+2)$ since a PFT starting with a $(k+1,k+2)$-flow line in $A'$ must have an internal vertex by Proposition \ref{prop:PFTreesk1k2}.]

%\end{itemize}

Next, {\bf suppose that $\Gamma'$ has at least $1$ internal vertex}.  
The first internal vertex is located somewhere on the initial edge of  $\Gamma_1$ which is an $(s,j)$-flow line with $(s,j) \neq (k+1,k+2)$ (by Lemma \ref{lem:PFTreesk1k2}).  Thus,  the vertex is located somewhere in $A$ (by Lemma \ref{lem:AAinvariance}) and is a $Y_0$ (by Lemma \ref{lem:NoY1swinA}).

Consider the PFTs $\Gamma_a$ and $\Gamma_b$ that begin with the branches of $\Gamma'$ that follow this first $Y_0$ vertex, labeled so the lower sheet of $\Gamma_a$ agrees with the upper sheet of $\Gamma_b$.  By Lemma \ref{lem:RegionA}, it is the case that $D(\Gamma_a) \geq 0$ (resp. $D(\Gamma_b) \geq 0$)  with equality only if $\Gamma_a$ (resp. $\Gamma_b$) is a subset of the $(k+2,k+1)$-switch GFT starting on the $(k+2,k+1)$-switch flow line.  Since $1 = D(\Gamma') = D(\Gamma_a) + D(\Gamma_b)$, one of the following two cases holds:

\medskip

{\bf Case:  $\Gamma_a$ is a subset of the $(k+2,k+1)$-switch flow.}  Then, we have $s = k+2$ and $j>k+2$, and 
%Lemma \ref{lem:AAinvariance} shows that 
$\Gamma_b$ would begin with a $(k+1,j)$-flow line, starting in the part of $A$ that is below the crossing locus.    We claim that there cannot be any internal vertices in $\Gamma_b$.  [{\it Proof:} Suppose this is not the case and consider the first such vertex, $z$, which (due to the location of the image in $A$) must also be a $Y_0$.  
%Note that the image of $z$ cannot be at a point on the $(k+1,k+2)$-switch flow line, since Property \ref{pr:x1axisST} (2) the initial edge of $\Gamma_b$ from passing into the region where $F_{k+1} > F_{k+2}$.  
Now, Lemma \ref{lem:RegionA} shows that the PFTs, $\Gamma_{z,1}$ and $\Gamma_{z,2}$, beginning with the outgoing edges at $z$ both have $D(\Gamma_{z,i}) \geq 1$.  (Neither one can start with a $(k+2,k+1)$-flow line since $\Gamma_b$ began with a $(k+1,j)$-flow line.)  This implies 
\[
D(\Gamma') = D(\Gamma_a)+ D(\Gamma_b) = D(\Gamma_a)+ D(\Gamma_{z,1}) +D(\Gamma_{z,2}) \geq  0 + 1+1=2,
\]  which contradicts that $D(\Gamma') = 1$.] 
%prevents the $Y_0$ from being located above the line $x_2=0$, and the $(k+1,k+2)$-switch flow line is entirely above this line by \dr{improved version of} Property \ref{pr:k1k2No}.

%(***) \footnote{\dr{Why can't the $(k+1, j)$ flow cross to back above the crossing locus and then have another $Y_0$ at the $(k+1,k+2)$ switch line?  In $\tilde{L}$ this is prevented by $\partial F_{k+2,j}>0$ as shown in Claim 2 of no Reeb chords proof.}  Lemma {lem:x1axisST} shows that $d_{x_2}F_{i,j} >0$ along crossing locus for $(i,j)\neq(k,k+1)$.  Also, true in $\tilde{L}$ by construction section.  }

Therefore, $\Gamma_b$ must be a single flow line that can only end at $a^{-,-}_{k+1,j}$.  It follows that $\Gamma'$ agrees with the post-puncture portion of a flow tree from Term 5.

\medskip

{\bf Case:  $\Gamma_b$ is a subset of the $(k+2,k+1)$-switch flow.}  %In this case, it is impossible that $D(\Gamma_a)=1$.
Then, $\Gamma_a$ would have to begin with an $(s,k+2)$-flow line for $s \leq k$.  

First, {\bf suppose $\Gamma_a$ has no internal vertices.}  Then, by Lemma \ref{lem:AAinvariance} the $(s,k+2)$-flow line would have to end at the $(k,k+2)$-cusp edge, 
 and could only be part of a PFT if $s=k$.  Then, the initial branch of $\Gamma'$ that precedes $\Gamma_a$ and $\Gamma_b$ would be the $(k,k+1)$-flow line that starts at $c_{r,k}$.  Thus, $\Gamma'$ would agree with the post-puncture part of a GFT from Term 4.

%However, in the description of Term 4, it was shown that this flowline remains above the $(k+1,k+2)$-crossing locus.  This contradicts the assumption that $\Gamma_b$ is a subset of the $(k+2,k+1)$-switch flow.
%and it terminates on the lower half of the cusp edge.  In this case, $\Gamma'$ agrees with the post-puncture part of the GFT from \footnote{Actually, we now know this doesn't happen because of Lemma {lem:kk1}.} from Term 4, Option 2. 

Finally, {\bf suppose $\Gamma_a$ has at least one internal vertex.}  Applying Lemmas \ref{lem:AAinvariance} and \ref{lem:NoY1swinA}, we conclude that the first internal vertex of $\Gamma_a$ must be a $Y_0$ in $A$.  Neither outgoing branch of this $Y_0$ could begin with a $(k+2,k+1)$-flow line (since the incoming branch is an $(s,k+2)$-flow line), so Lemma \ref{lem:RegionA} would give $D(\Gamma') >1$, a contradiction.

% located at a point on the $(k+1,k+2)$-switch flow line, but Property \ref{pr:x1axisST} (2) prevents $\Gamma_a$ from passing into the region where $F_{k+1} > F_{k+2}$ as $t$ increases.  [Initially, $\Gamma_a$ is below the crossing locus since $\Gamma_b$ is a subset of the $(k+2,k+1)$-switch flow line.]

\medskip

\noindent \textbf{Step 2.}  Above the puncture at $c_{r,s}$, $\Gamma$ must consist of a single edge that limits to $c_{i,j}$.  

\medskip

%We have shown above that any $1$-st or $3$-rd quadrant lemma flow has degree $\geq 1$, so just apply this as in Step 2 from Type (1)-(12) proof.  

This is similar to Step 2 from the uniqueness proof of Theorem \ref{thm:SquareComp}, although the implementation is more involved.  

Consider the path, $\beta$, within the domain of $\Gamma$, that starts just above the puncture at $c_{r,s}$ and follows edges in a manner that is reverse to their orientation until the the input vertex (assumed to be at $c_{i,j}$) is reached.  In other words, $\beta$, consists of several consecutive edges $B_1, \ldots, B_N$ of $\Gamma$ so that, with respect to the orientation of $\Gamma$, the endpoint of $B_1$ is at $c_{r,s}$; the initial point of $B_i$ is the endpoint of $B_{i+1}$ for $1 \leq i \leq N-1$; and $B_N$ limits to $c_{i,j}$ as $t \rightarrow - \infty$.  Let $y_1, \ldots, y_{N-1}$ denote the internal vertices where $B_{i}$ and $B_{i+1}$ meet for $1 \leq i \leq N-1$.  See Figure \ref{fig:B1B2}.

\begin{figure}
\labellist
\small
\pinlabel $c_{r,s}$ [t] at 32 0
\pinlabel $B_1$ [br] at 56 43
\pinlabel $B_2$ [bl] at 57 119
\pinlabel $B_{N-1}$ [br] at 92 240
\pinlabel $B_N$ [l] at 117 278
\pinlabel $\Gamma_1$ [bl] at 105 64
\pinlabel $\Gamma_2$ [br] at 7 126
\pinlabel $\Gamma_{N-1}$ [bl] at 134 244
\pinlabel $c_{i,j}$ [br] at 112 304
\pinlabel $\vdots$ [t] at 113 40
\pinlabel $\vdots$ [t] at 0 102
\pinlabel $\vdots$ [t] at 137 231
\pinlabel $\vdots$ at 70 202
\endlabellist
\centerline{ \includegraphics[scale=.6]{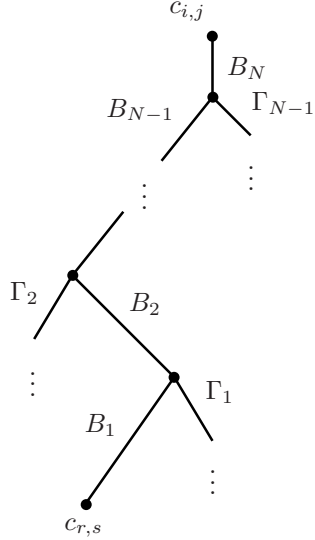} }
\caption{The sequence of edges $B_1, B_2, \ldots, B_N$ and the partial flow trees $\Gamma_1, \ldots, \Gamma_{N-1}$.}
%The arrows indicate the direction along edges of all $(i,j)$-flows (negative gradients) other than the $(k+2,k+1)$ flow.  
%(right) The partial flow trees in $A$ described in (2) of Lemma \ref{lem:RegionA}.  
\label{fig:B1B2}
\end{figure}

\begin{lemma} \label{lem:step2tail}
The following properties hold:
\begin{enumerate}
\item  The image of each $B_i$ is contained in $[1/2,3/4]\times[1/2,3/4]$.
\item  Each $y_i$ is a $Y_0$-vertex.
\item  At each $y_i$ for $1 \leq i \leq N-1$, let $\Gamma_i$ denote the PFT that begins with the edge of $\Gamma$ at $y_i$ that is not $B_{i}$ or $B_{i+1}$.  Then, $D(\Gamma_i) \geq 1$.
% unless $\Gamma_i$ is a subset of the $(k+1,k+2)$-switch GFT that begins with part of the $(k+1,k+2)$-switch flow line, in which case $D(\Gamma_i) = 0$.
\end{enumerate}
\end{lemma}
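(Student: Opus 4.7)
The plan is to prove (1), (2), (3) simultaneously by induction on $i$, closely mirroring the argument in Step 2 of the proof of Theorem \ref{thm:112ctrees}. Denote the indices of $B_i$ by $(p_i, q_i)$. Because $y_i$ is a $Y_0$-vertex where the incoming edge $B_{i+1}$ branches into $B_i$ and the initial edge of $\Gamma_i$, the intervals satisfy the strict nesting $[p_i, q_i] \subsetneq [p_{i+1}, q_{i+1}]$, with the initial edge of $\Gamma_i$ being either a $(q_i, q_{i+1})$-flow (if $p_i = p_{i+1}$) or a $(p_{i+1}, p_i)$-flow (if $q_i = q_{i+1}$). From Step 1, $(p_1, q_1)$ has the form $(\alpha, s)$ with $\alpha < r$ or $(r, \alpha)$ with $\alpha > s$, while $(p_N, q_N) = (i, j)$.

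For the base case $i = 1$, I would show that $-\nabla F_{p_1, q_1}$ points outward along the boundary of the rectangle $\mathit{Sq}(c_{\min}, c_{\max})$ whose corners are the lexicographic extremes of $\{c_{p_1, q_1}, c_{r,s}\}$, using Properties \ref{pr:monotonicityI} and \ref{pr:monotonicityIIST} exactly as in the $\mathit{Sq}(c_{i,m}, c_{i,j})$ argument of Theorem \ref{thm:112ctrees}. By Property \ref{pr:Location1}, all $\beta^U$ and $\beta^R$ coordinates lie in $[1/2, 3/4]$, so this rectangle sits inside $[1/2,3/4]^2$, and $B_1$ traced backward from $c_{r,s}$ remains inside it, establishing (1). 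For (2), Properties \ref{pr:14models} and \ref{pr:switches} place the Type (13) cusp locus and all switch points in $\{x_1 \leq -3/8\}$, so $y_i \in [1/2,3/4]^2$ rules out $Y_1$- and switch vertices and forces $y_i$ to be a $Y_0$.

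For (3), I would split into the two branching sub-cases at $y_i$. When the initial edge of $\Gamma_i$ is a $(q_i, q_{i+1})$-flow, the location of $y_i$ in the $\mathit{Sq}$-rectangle from the previous inductive step places it in an appropriate $N_\e Q3$ region whose subscripts are lexicographically $\succeq (p_i, q_i)$, and Lemma \ref{lem:3Q} yields $D(\Gamma_i) > 0$. When the initial edge of $\Gamma_i$ is a $(p_{i+1}, p_i)$-flow, $y_i$ lies in the $N_\e Q1$ region associated to the opposite corner of the rectangle, and Lemma \ref{lem:1stQuad} forces every endpoint of $\Gamma_i$ at an $a^{+,+}$ Reeb chord, giving a strictly positive contribution to $D(\Gamma_i)$. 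The inductive step is analogous to the base case: given $y_{i-1}$ in the rectangle from stage $i-1$, the strict nesting of index intervals allows me to pass to a larger $\mathit{Sq}$-type rectangle along whose boundary $-\nabla F_{p_i, q_i}$ points outward, confining $B_i$'s reverse trajectory while remaining inside $[1/2, 3/4]^2$.

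The main obstacle I anticipate is the bookkeeping of these evolving rectangles: because $[p_i, q_i]$ can extend in either direction relative to $[p_{i-1}, q_{i-1}]$, the rectangles do not nest uniformly, and at each stage one must check both the outward-gradient property along the new boundary and that $y_i$ sits in an $N_\e Q3$ or $N_\e Q1$ region with subscripts compatible with the hypotheses of Lemma \ref{lem:3Q} or \ref{lem:1stQuad}. Once Lemma \ref{lem:step2tail} is in hand, Step 2 follows by a degree balance: decomposing $\Gamma$ at all $Y_0$-vertices along $\beta$ and at the $c_{r,s}$-puncture, one obtains $D(\Gamma) = D(\Gamma') + \sum_{i=1}^{N-1} D(\Gamma_i) - 1$ (the $-1$ coming from the constant output at $c_{r,s}$), and substituting $D(\Gamma) = 0$, $D(\Gamma') \geq 1$ from Step 1, and $D(\Gamma_i) \geq 1$ from part (3) forces $N = 1$.
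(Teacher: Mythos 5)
Your proposal is correct and follows essentially the same route as the paper's proof: the paper formalizes your ``evolving rectangle'' as the inductive invariant that $B_i$ is an $(l,m)$-flow line with endpoint in $\mathit{Sq}(c_{l,l'},c_{m',m})$ for indices satisfying $l<l'\leq m$ and $l\leq m'<m$ (so that Property \ref{pr:monotonicityI} makes $-\nabla F_{l,m}$ point outward along all four sides, trapping $B_i$ inside as $t$ decreases), and it treats your two branching sub-cases exactly via Lemma \ref{lem:3Q} and Lemmas \ref{lem:1stQuad}/\ref{lem:holdovers}. The bookkeeping you flag as the main obstacle is in fact benign: at each step only one corner of the rectangle moves (the lower-left corner index decreases or the upper-right corner index increases lexicographically), so the rectangles nest monotonically and the hypotheses of the two quadrant lemmas are automatic.
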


\begin{proof}[Proof of Lemma \ref{lem:step2tail}]
Recall the notation
\[
\sq(c_{m_1,n_1}, c_{m_2,n_2}) = N_\e Q1(m_1,n_1) \cap N_\e Q3(m_2,n_2).
\]

We establish the following statement:

\medskip

Suppose that for some $1\leq i \leq n-1$, $B_i$ satisfies the following.

\begin{enumerate}
\item[(A)]  There exists $l,l',m,m'$ satisfying $l < l' \leq m$, and  $l\leq m' <m$ such that $B_i$ is an $(l,m)$-flow line and has its endpoint in $\sq(c_{l,l'},c_{m',m})$.
% or in $\sq(c_{l,q},c_{p,q})$.
\end{enumerate}
\medskip
Then, (1), (2), and (3) hold for $B_i$, $y_i$ and $\Gamma_i$, and $B_{i+1}$ also satisfies (A).

\medskip

%Since the endpoint of $B_1$ is at $c_{r,s}$ and $B_1$ is either an $(r_1,s)$-flow or a $(r,s_1)$ for some $r_1 < r$ or $s <s_1$, it is clear that $B_1$ satisfies (A).  

It is straightforward that $B_1$ satisfies (A). [The endpoint of $B_1$ is at $c_{r,s}$, and $B_1$ is either an $(r_1,s)$-flow or a $(r,s_1)$ for some $r_1 < r$ or $s <s_1$.  In the first case, put $l = r_1$, $m' =r$, and $l'=m =s$; in the second case, put $m'=l = r$, $l' =s$, and $m=s_1$.]
Thus we can apply the statement repeatedly to deduce the Lemma.  

To prove the statement, observe that $-\nabla F_{l,m}$ points outward along all four edges of $\sq(c_{l,l'},c_{m',m})$ %(resp. $\sq(c_{l,q},c_{p,q})$) 
by Property \ref{pr:monotonicityI} (since $(l,m)$ is between $(l,l')$ and $(m',m)$ with respect to lexicographic order).  Thus, if the endpoint of $B_i$ is in $\sq(c_{l,l'},c_{m',m})$, %(resp. $\sq(c_{l,q},c_{p,q})$), 
then $B_i$ remains there as $t$ decreases.  Thus, (1) holds since $\sq(c_{l,l'},c_{m',m}) \subset [1/2,3/4]^2$.   Next, (2) holds since the cusp locus is disjoint from $[1/2,3/4]^2$.  Now, consider cases
\begin{enumerate}
\item[Case 1.]  $\Gamma_i$ begins with a $(m,p)$-flow line for some $m < p$.   Since %both of the squares under consideration are contained in either $N_\e Q3(l,q)$ or 
$\sq(c_{l,l'},c_{m',m}) \subset N_\e Q3(m',m)$, we can apply Lemma \ref{lem:3Q} to deduce that $D(\Gamma_i) \geq 1$.
 %unless $\Gamma_i$ is part of the GFT that begins with the $(k+1,k+2)$-switch flow line.  
 Moreover, $B_{i+1}$ then begins with an $(l,p)$-flow line in $\sq(c_{l,l'}, c_{m',m}) \subset \sq(c_{l,l'}, c_{m',p})$, so $B_{i+1}$ satisfies (A). 
%or $\sq(c_{l,q}, c_{l,q_1})$ 
\item[Case 2.]  $\Gamma_i$ begins with a $(h,l)$-flow line for some $h < l$.   Since %both of the squares under consideration are contained in either $N_\e Q3(l,q)$ or 
$\sq(c_{l,l'},c_{m',m}) \subset N_\e Q1(l,l')$, we can apply Lemmas  \ref{lem:1stQuad} and \ref{lem:holdovers} to deduce that $D(\Gamma_i) \geq 1$ (all endpoints are at $a^{+,+}$ Reeb chords and, since the image of $D(\Gamma_i)$ is contained in a region that is disjoint from the cusp locus,
%in addition, the proof\footnote{\dr{2-25: Should add this into the statement of that Lemma.}} of Lemma \ref{lem:1stQuad} shows that 
all internal vertices of $\Gamma_i$ are $Y_0$'s).  Moreover, $B_{i+1}$ then begins with an $(h,m)$-flow line in $\sq(c_{l,l'}, c_{m',m}) \subset \sq(c_{h,l'}, c_{m',m})$, so $B_{i+1}$ satisfies (A).
\end{enumerate}

\end{proof}

We now deduce Step 2 from  Lemma \ref{lem:step2tail}.  
%From (1) and (2), we have that at most one of the $\Gamma_i$ can begin with a $(k+1,k+2)$-flow line.  [To verify, note that if $B_{i}$ is an $(r_i,s_i)$-flow, than for $1 \leq i \leq N-1$ we have either $r_{i+1}=r_i$ and $s_i<s_{i+1}$ or $r_{i+1}< r_i$ and $s_{i}=s_{i+1}$.  In particular, the $(r_i,s_i)$ are all distinct.]  Therefore, 
Using Step 1, we can compute
\begin{align*}
0= D(\Gamma) & = D(\Gamma') -1 + \sum_{i=1}^{N-1} D(\Gamma_i)\\
  & \geq 1-1 + (N-1), 
\end{align*}
Thus, we must have $N=1$ so that the edge $B_1$ runs from $c_{r,s}$ to its limit at $c_{i,j}$ without internal vertices.  This is Step 2.  Moreover, we see that $D(\Gamma') =1$ must hold, so that Step 1 and Step 2 together show that the portions of $\Gamma$ below and above the puncture at $c_{r,s}$ agree with a particular GFT from Terms 1-5.   This completes the proof of Theorem \ref{thm:dc}.

\end{proof}

\subsection{Flow trees without punctures at $c_{i,j}$'s} \label{ssec:Nocij}
With $\partial_c c_{i,j}$ understood, we turn now to the remaining terms in $\partial c_{i,j}$.  To this end, let 
\[
\partial_b c_{i,j} = \sum_{\Gamma} w(\Gamma)
\]
with the sum over rigid GFTs beginning at $c_{i,j}$ that do not have any outputs at Reeb chords of the form $c_{r,s}$ or $\tilde{c}_{k+2,k+1}$.  In the remainder of \ref{ssec:Nocij}, we work to obtain  a characterization of these GFTs that will allow computation of $\partial_b c_{i,j}$. 
%a more useful characterization of the GFTs that contribute to $\partial_b c_{i,j}$.  
See Proposition \ref{prop:dbc}.  The computation of $\partial_b c_{i,j}$ is then carried out in Section \ref{ssec:enumerbtrees}.

%Recall that in (\ref{eq:dcdb}) we have written $\partial c_{i,j}= \partial_c c_{i,j} + \partial_b c_{i,j} + \partial_x c_{i,j}$.  With the calculation of $\partial_c c_{i,j}$ now complete, we turn to the $\partial_b c_{i,j}$ term that is a sum over GFTs that begin at $c_{i,j}$ and do not have output vertices at other $c$ Reeb chords or at $\tilde{c}_{k+2,k+1}$ or $\tilde{b}^R_{k+2,k+1}$.  After some preliminary work, we obtain in Proposition \ref{prop:dbc}, below, a characterization of these GFTs that will allow computation of $\partial_b c_{i,j}$.  

\begin{lemma} \label{lem:swPFT}
No edge in a partial flow tree can have both of its endpoints at switch vertices.
%\item For $k+2<j$, no partial flow tree can have a switch at the $(k+1,j)$-switch point.  PROBABLY UNTRUE
%\item  Any partial flow tree, $\Gamma$, whose initial edge is the  $(k,j)$-flowline beginning at the $(k+1,j)$-switch point has $D(\Gamma) \geq 2$.  (We do not include the $(k+1,j)$-switch in the degree calculation.)   
%\item The $(k,k+2)$ (resp. $(k,k+1)$) flowline beginning at the $(k+1,k+2)$-switch (resp. $(k+2,k+1)$-switch) terminates along the $(k,k+2)$ (resp. $(k,k+1)$) cusp edge.  Any other partial flow tree, $\Gamma$, starting with this flowline has $D(\Gamma) \geq 2$.
\end{lemma}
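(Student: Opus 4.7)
The plan is a direct case analysis on the types of switch vertices available in the Type (13) square (the Type (14) case being identical after the symmetry in Remark~\ref{rem:about14}). First I would note that an edge $e$ with both endpoints at internal switch vertices is itself internal, so by condition (3) of Definition~\ref{def:GFT} it is non-constant. Hence $e$ is a non-constant $(l,m)$-flow line for a fixed pair of sheets $S_l, S_m$ along $e$, and the label $(l,m)$ is constrained in two ways: at the initial endpoint $v_1$ it must agree with the outgoing-edge label forced by the switch type at $v_1$, and at the terminal endpoint $v_2$ it must agree with the incoming-edge label forced by the switch type at $v_2$.

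Next, using Property~\ref{pr:switches}, I would enumerate the switch vertices in the square. Since the only cusp sheet pairs available are $(k,k+1)$ for $x_2 > 0$ and $(k,k+2)$ for $x_2 < 0$, the possible switches are exactly: (a) the $(i,k)$-switches with $i<k$, whose outgoing edge is an $(i,k+1)$-flow line; (b) the $(k+1,j)$-switches with $j\geq k+2$, whose outgoing edge is a $(k,j)$-flow line; and (c) the unique $(k+2,k+1)$-switch, whose outgoing edge is a $(k,k+1)$-flow line. Read in reverse, the admissible incoming-edge labels at $v_2$ are $(i',k)$ with $i'<k$, $(k+1,j')$ with $j'\geq k+2$, or $(k+2,k+1)$.

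The main step, which is really only bookkeeping, is then to check the resulting combinations of types at $v_1$ and $v_2$ and verify that in each case the sheet indices contradict each other. For example, if $v_1$ is of type (a), then $e$ is an $(i,k+1)$-flow with $i<k$; equating this label with the three admissible incoming forms forces $k+1=k$ (impossible), $i=k+1$ (contradicting $i<k$), or $i=k+2$ (again contradicting $i<k$). The remaining two possibilities for $v_1$, namely types (b) and (c), yield analogous absurdities. The structural reason the check always fails is that every cusp sheet pair in the square contains the index $k$, so outgoing labels at switches always take the form $(\star,k+1)$, $(k,\star)$, or $(k,k+1)$, and none of these match any admissible incoming label at another switch. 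This contradiction rules out edges between two switch vertices and proves the lemma.
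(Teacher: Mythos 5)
Your approach is the same as the paper's: enumerate the admissible incoming and outgoing edge labels at switch vertices via Property~\ref{pr:switches} and check that no outgoing label can equal an incoming label. The enumeration of switch types is complete (your case (b) with $j=k+2$ absorbs the $(k+1,k+2)$-switch, whose outgoing edge is a $(k,k+2)$-flow line). However, there is one step that, as written, relies on a false premise: you assert that $e$ is an $(l,m)$-flow line ``for a fixed pair of sheets $S_l,S_m$ along $e$.'' In a Type (13) square this is not automatic. By Convention~\ref{conv:stlabeling} the sheets $S_{k+1}$, $S_{k+2}$, and $\widetilde{S}_k$ are identified across portions of the cusp locus, so a single edge whose upper or lower lift passes through $\widetilde{S}_k$ can change its label, with an index $k+1$ becoming $k+2$ or vice versa in the course of the edge; the paper's proof explicitly flags this possibility before doing the index check. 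Consequently the label you read off at $v_1$ need not be the label you must match at $v_2$. Your argument survives this correction, and for essentially the structural reason you give at the end: the index $k$ (the top swallowtail sheet) never participates in such an interchange, every outgoing label either has $k$ as its upper index or has the form $(i,k+1)$ with $i<k$, and no incoming label has upper index $k$ or lower index in $\{k+1,k+2\}$ paired with an upper index $<k$ --- so even after allowing $k+1\leftrightarrow k+2$ swaps along the edge, no outgoing label can become an incoming one. You should state and dispose of this label-change issue explicitly rather than assuming the pair $(l,m)$ is constant.
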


\begin{proof}
Note that by Property \ref{pr:switches} edges that end at a switch vertex can only be either $(i,k)$-, $(k+1,j)$-, $(k+1,k+2)$-, or $(k+2,k+1)$-flow lines for some $i<k$ or $k+2<j$;  edges that begin at a switch point can only be $(i,k+1)$-, $(k,j)$-, $(k,k+2)$-, and $(k,k+1)$-flow lines.  Recall, that within a Type (13) square, a single edge of a PFT may change from being an $(i,j)$-flow line to an $(i',j')$-flow line if it has an endpoint on $\tilde{S}_k, S_{k+1},$ or $S_{k+2}$ and crosses the portion of the cusp locus where these sheets are identified.  Thus, an index that begins as $k+1$ or $k+2$ could possibly become  $k+2$ or $k+1$ during the course of the edge.  However, even with this point in mind, considering the indices of edges  that may begin or end with a switch vertex shows that no edge can both begin and end with a switch vertex.

%Thus now edge can both begin and end at a switch point.

%by invariance of $A$, it seems we just need to show the flow does not terminate at the upper branch of the cusp locus.  Extending the $\partial_{x_2} F_{i,j} >0$ to the horizontal line through the switch point would do it.  We already have this in $\tilde{L}$, so maybe the swallowtail barrier can be used to show that the line must pass into $LD$ where flow is monotonic decreasing in both coordinates (double check with properties of $\widehat{f}$.
%May be easier to show that $\partial_{x_1}>0$ in $\tilde{L} \setminus O_2$.

%would be nice to have a similar picture to Figure 5 for the $(i,k+1)$-flow.  Then, combine this with invariance of $A$. Or, maybe the swallowtail barrier can be used.

%and $(k+1,j)$ flows

\end{proof}

\begin{lemma} \label{lem:AprimeNew}
%\begin{enumerate}
%\item Any $(k,k+1)$- or $(k,k+2)$- flow line that begins in $A'$ remains in $A'$ as long as it is defined.  
 For $\{i,j\} \subset \{k,k+1,k+2\}$, any PFT, $\Gamma$, starting with an $(i,j)$-flow in $L(1/2)$  satisfies $D(\Gamma) \geq 0$.  Moreover, equality holds only for those PFTs that are subsets of the $(k+1,k+2)$- or $(k+2,k+1)$- switch GFT beginning at a point on the initial edge, i.e. beginning with part of the $(k+1,k+2)$- or $(k+2,k+1)$- switch flow line. 
 %and terminate at an $e$-vertex (as in (3) of Proposition \ref{prop:PFTrees}).
%\end{enumerate}
\end{lemma}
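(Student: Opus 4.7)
The plan is to induct on the number of $Y_0$-vertices of $\Gamma$, treating the four possible initial types $(i,j) \in \{(k,k+1), (k,k+2), (k+1,k+2), (k+2,k+1)\}$ separately.

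\emph{Base case (no $Y_0$-vertices), types $(k+1,k+2)$ and $(k+2,k+1)$.} These are classified by Lemmas \ref{lem:PFTreesk1k2} and \ref{lem:PFTrees}. The subtle point is that Lemma \ref{lem:PFTrees} also permits PFTs terminating at $b^D_{k+2,k+1}$, $\tilde{b}^R_{k+2,k+1}$, or $a^{+,-}_{k+2,k+1}$, which would violate the claimed equality characterization. I will exclude these by observing that the relevant stable manifolds lie in strips of width $2\e$ around $\beta^D_{k+2,k+1}$ or $\tilde{\beta}^R_{k+2,k+1}$, while Property \ref{pr:Location1} gives enough freedom to arrange $\beta_{i,j} - \e > 1/2$, making these strips disjoint from $L(1/2)$. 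What remains are subsets of the appropriate switch GFT starting on its initial edge, for which $D(\Gamma) = E - SW = 0$.

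\emph{Base case, types $(k,k+1)$ and $(k,k+2)$.} Along such a flow, I will first rule out all non-$Y_0$ internal vertices: switches require an incoming index combination from Property \ref{pr:switches} that neither pair matches, and a $Y_1$-vertex is impossible because the two flow-sheets are themselves the pair meeting at the relevant cusp. Thus $\Gamma$ is a single flow line, and monotonicity (Property \ref{pr:monotonicityIIST}) at $x_1 = 1/2$ keeps it inside $L(1/2)$. A direct enumeration, using Convention \ref{c:Sktilde} and the choice $\beta - \e > 1/2$, shows that no $(k,k+1)$- or $(k,k+2)$-Reeb chord lies in $L(1/2) \cap N(e^2_\alpha)$: every such chord involves the sheet $S_k$, which does not extend to the left of the cusp, and the right-half chords all lie outside $L(1/2)$. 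The only valid termination is therefore an $e$-vertex on the appropriate half of the cusp locus, yielding $D(\Gamma) = 1 > 0$.

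\emph{Inductive step.} The cases $(k+1,k+2)$ and $(k+2,k+1)$ are vacuous since the switch GFTs contain no $Y_0$. When $(i,j) = (k,k+1)$ (the case $(k,k+2)$ being symmetric), the first internal vertex along the initial edge is forced to be a $Y_0$; the intermediate sheet $S_m$ with $F_k > F_m > F_{k+1}$ can only be $S_{k+2}$, and this occurs solely in the region between the crossing and cusp loci, which Property \ref{pr:14models} places inside $L(1/2)$. The outgoing PFTs $\Gamma_a$ (type $(k,k+2)$) and $\Gamma_b$ (type $(k+2,k+1)$) then begin in $L(1/2)$ with strictly fewer $Y_0$-vertices, so induction gives $D(\Gamma) = D(\Gamma_a) + D(\Gamma_b) \geq 0$. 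Equality would force $D(\Gamma_a) = 0$, which by the inductive hypothesis would require $\Gamma_a$ to begin with a $(k+1,k+2)$- or $(k+2,k+1)$-flow, contradicting its $(k,k+2)$ type; hence $D(\Gamma) \geq 1$. The main delicate point throughout is the exclusion in the $(k+2,k+1)$ base case: making Lemma \ref{lem:PFTrees} yield only the switch-GFT alternative in $L(1/2)$ rests on choosing the constants $\beta_{i,j}$ and $\e$ compatibly with all the other demands placed on $\tilde{L}$.
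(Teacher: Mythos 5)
Your proof is correct and takes essentially the same route as the paper's: induction on the number of $Y_0$-vertices, with Lemmas \ref{lem:PFTrees} and \ref{lem:PFTreesk1k2} disposing of the $(k+1,k+2)$ and $(k+2,k+1)$ cases, a vertex-free $(k,k+1)$- or $(k,k+2)$-flow forced to terminate at an $e$-vertex (so $D=1$), and the first $Y_0$ spawning a $(k,k+2)$- (resp.\ $(k,k+1)$-) branch of strictly positive degree by the inductive characterization of degree-zero trees. One small correction: the first $Y_0$ lies in $L(1/2)$ because the initial edge cannot leave $L(1/2)$ (Property \ref{pr:monotonicityIIST}), not because the region where $S_{k+2}$ sits between $S_k$ and $S_{k+1}$ is contained in $L(1/2)$ --- that region extends to the lower-right part of the square and is not contained in $L(1/2)$.
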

\begin{proof}
%(1):  This follows immediately From Lemma \ref{lem:T13D12}.

We establish the result by induction on the number of $Y_0$-vertices in the flow tree.

For PFTs beginning with a $(k+1,k+2)$ or $(k+2,k+1)$-flow line starting in $L(1/2)$ the result follows from Lemmas \ref{lem:PFTrees} and \ref{lem:PFTreesk1k2}.  

Thus, we can consider a PFT, $\Gamma$, beginning with a $(k,k+1)$- or $(k,k+2)$- flow line in $L(1/2)$.    Note that along the initial edge no $Y_1$ or switch vertices are possible because the upper sheet of the flow is the upper sheet of the cusp locus.  In the base case where there are no $Y_0$'s, $\Gamma$ can only terminate at the cusp edge since its image is contained in $L(1/2)$ (by Property \ref{pr:monotonicityIIST}) and there are no $(k,k+1)$- or $(k,k+2)$- Reeb chords in $L(1/2)$.  
In this case, 
\[
D(\Gamma) = A-C+E-Y_1-SW = 0- 0 + 1 - 0-0 = 1.
\]  
For the inductive step, suppose $\Gamma$ has at least one $Y_0$.  At the first $Y_0$, which is located in $L(1/2)$ (by Property \ref{pr:monotonicityIIST}), the initial  $(k,k+1)$-flow line  (resp. $(k,k+2)$-flow line), splits into a $(k,k+2)$-flow line and a $(k+2,k+1)$-flow line (resp. a $(k,k+1)$-flow line and a $(k+1,k+2)$-flow line).  Let $\Gamma_1$ and $\Gamma_2$ denote the two PFTs consisting of the portion of $\Gamma$ that begins with these two branches of the tree.
%, and let $D(\Gamma_i)$ denote $A+E-SW-Y$.  
The inductive hypothesis gives $D(\Gamma_1) >0$, and 
%that the result is known for trees beginning with $(k+2,k+1)$ or $(k+1,k+2)$ flows lines gives $
$D(\Gamma_2) \geq 0$.  Thus, $D(\Gamma) = D(\Gamma_1) + D(\Gamma_2) > 0$.
\end{proof}

Let    
$B = D(1/2) \cap L(1/2)$.

\begin{lemma}  \label{lem:RegionB} Any PFT $\Gamma$ starting in $B$ satisfies $D(\Gamma) \geq 0$.  Moreover, equality holds only for trees with a single internal vertex at a switch.  Any such partial flow tree can be described in one of the following ways:
\begin{enumerate}
\item   A subset of the $(i,k)$-switch GFT beginning on the branch that precedes the switch.
%A portion of the unique $(i,k)$-flow line from $c_{i,k}$ to the $(i,k)$-switch point followed by the $(i,k+1)$-flow line from the switch point to $a^{-,-}_{i,k+1}$.
\item  A subset of the $(k+1,k+2)$-switch GFT beginning on the branch that precedes the switch.
%A portion of the  unique $(k+1,k+2)$-flow line from $c_{k+1,k+2}$ to the $(k+1,k+2)$-switch point followed by the $(k,k+2)$-flow line that starts at the switch point and terminates somewhere along the $(k,k+2)$-cusp edge.
\item  A subset of the $(k+2,k+1)$-switch GFT beginning on the branch that precedes the switch.
%A portion of the  unique $(k+2,k+1)$-flow from $\tilde{c}_{k+2,k+1}$ to the $(k+2,k+1)$-switch point followed by the $(k,k+1)$ flow line that starts at the switch point and terminates somewhere along the $(k,k+1)$ cusp edge.
\end{enumerate}

\end{lemma}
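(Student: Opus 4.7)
The plan is to prove Lemma \ref{lem:RegionB} by induction on the number of internal vertices of $\Gamma$, generalizing the arguments of Lemmas \ref{lem:RegionA} and \ref{lem:AprimeNew} to the larger region $B = D(1/2) \cap L(1/2)$, which properly contains $A$. The essential new feature is that $B \setminus A$ contains the cusp locus, the swallowtail point $Q$, and every switch point identified in Property \ref{pr:switches}; consequently $Y_1$- and switch-vertices can occur in PFTs starting in $B$ but not in ones that start in $A$. The three equality cases in the statement correspond exactly to the three switch GFTs constructed in Lemmas \ref{lem:ikGFT} and \ref{lem:k1k2switchflow}: for each, a subset-PFT starting on the pre-switch edge carries one switch vertex and terminates at a single output (either $a^{-,-}_{i,k+1}$ for the $(i,k)$-switch, or an $e$-vertex for the other two), yielding $D = 1 - 1 = 0$.

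First I would dispose of the sub-cases handled by prior lemmas. Since $B \subset L(1/2) \cap D(1/2)$, if the initial edge of $\Gamma$ is a $(k{+}1,k{+}2)$- or $(k{+}2,k{+}1)$-flow then Lemmas \ref{lem:PFTrees} and \ref{lem:PFTreesk1k2} apply and deliver equality cases (2) and (3). If the initial edge is a $(k,k{+}1)$- or $(k,k{+}2)$-flow, then Lemma \ref{lem:AprimeNew} applies with the strict inequality $D(\Gamma) \geq 1$, since the equality case in Lemma \ref{lem:AprimeNew} requires the initial edge to be a $(k{+}1,k{+}2)$- or $(k{+}2,k{+}1)$-flow. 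The substantive remaining case is $(i,j) \notin \{(k,k{+}1),(k,k{+}2),(k{+}1,k{+}2),(k{+}2,k{+}1)\}$ — typically with $i<k$ or $j>k+2$ — and this range of indices includes the equality case (1) when the initial edge lies on an $(i',k)$-switch flow line.

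For the base case of the induction in this remaining setting, $\Gamma$ is a single $(i,j)$-flow line beginning in $B$. Using the Swallowtail Barrier (Property \ref{pr:STBnew}) together with Properties \ref{pr:leftC}, \ref{pr:monoLtilde}, and \ref{pr:monotonicityIIST}, I would argue that the flow must either cross $P$ into $A$ (whereupon Lemma \ref{lem:RegionA}, applied after a decomposition as in Lemma \ref{lem:3Q}, gives $D \geq 1$), terminate at an $e$-vertex on the cusp ($D \geq 1$), or limit to $a^{-,-}_{i,j}$ ($D \geq 1$); in particular, no single-edge PFT of this type attains $D = 0$. For the inductive step I would classify the first internal vertex. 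A $Y_0$ splits $\Gamma$ into two post-$Y_0$ PFTs $\Gamma_1,\Gamma_2$ whose initial points lie in $B$ (by Lemma \ref{lem:AAinvariance} when the vertex lies in $A$, and by tracking the initial edge up to the vertex otherwise) and whose initial sheet pairs differ; the inductive hypothesis then yields $D(\Gamma_i) \geq 0$ with at most one attaining equality, so $D(\Gamma) \geq 1$. A $Y_1$-vertex is ruled out for the relevant index combinations by Property \ref{pr:switches} together with Corollary \ref{cor:mnPQ}, which forbid the transversality configuration required at such a vertex.

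The heart of the proof is the switch-vertex case: Property \ref{pr:switches} restricts the initial edge to an $(i',k)$-flow or a $(k{+}1,j')$-flow. I would show that no further internal vertex can occur on either the pre-switch or the post-switch edge, using the confining regions $R(\mathit{sw}_{i',k})$ and $R(\mathit{sw}_{k+1,k+2})$ from Lemmas \ref{lem:ikGFT} and \ref{lem:Rswk1k2} together with Property \ref{pr:SwitchBarriers}, exactly along the lines of the post-switch argument in the proof of Lemma \ref{lem:PFTrees}. This forces $\Gamma$ to be a subset of the corresponding switch GFT, giving equality cases (1)--(3). The main obstacle will be precisely this last step: ruling out hidden $Y_0$-vertices on the post-switch edge in full generality requires a cascade in which any hypothetical branch's outgoing sub-PFT either spawns a further switch (over-contributing to $SW$ and breaking the $D = 0$ budget) or leaves the feasible region, and this cascade must juggle Properties \ref{pr:switches}--\ref{pr:SwitchBarriers} simultaneously.
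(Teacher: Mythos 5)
Your overall architecture (peel off the $\{i,j\}\subset\{k,k+1,k+2\}$ cases via Lemmas \ref{lem:PFTrees}, \ref{lem:PFTreesk1k2}, \ref{lem:AprimeNew}, then induct with a case analysis on the first internal vertex) matches the paper's, but there are two genuine gaps. The first is your dismissal of $Y_1$-vertices: you assert that Property \ref{pr:switches} and Corollary \ref{cor:mnPQ} ``forbid the transversality configuration required at such a vertex,'' but Corollary \ref{cor:mnPQ} says the opposite of what you need. Between a switch point $P$ and the swallowtail point $Q$, $-\nabla F_{i,k}$ and $-\nabla F_{k+1,j}$ point transversally to the $(k,k+1)$-cusp locus \emph{toward the side with more sheets}, which is precisely the configuration a $Y_1$-vertex requires; since $\nabla F_{i,j}=\nabla F_{i,k}+\nabla F_{k+1,j}$ along the cusp, an incoming $(i,j)$-edge can indeed hit a $Y_1$ there. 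The region $B\setminus A$ contains this arc of the cusp locus, so $Y_1$'s cannot be excluded — they must be handled. The paper does so by bounding degrees: when no switch precedes the $Y_1$, the two outgoing branches (an $(i,k{+}1)$- or $(i,k{+}2)$-flow and a $(k,j)$-flow) each have degree $\geq 1$ by the inductive hypothesis, giving $D(\Gamma)\geq 1+1-1=1$; when an $(i,k)$-switch precedes it, one shows (the paper's Claims A and B) that the configuration cannot be completed to a PFT at all.

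The second gap is in your switch case. It is not true that ``no further internal vertex can occur on the post-switch edge'': after an $(i,k)$-switch the outgoing $(i,k{+}1)$-flow line stays in the region $C_{i,k}$ of Lemma \ref{lem:ikGFT}, and a $Y_0$ splitting it into $(i,m)$- and $(m,k{+}1)$-flows is geometrically possible. What one must prove is a degree inequality, not an impossibility, and the two borderline index values require separate arguments: for $m=k$ one uses monotone decrease of the sheet difference to rule out the branch landing back on the $(i,k)$-switch flow line (so both sub-PFTs have strictly positive degree), and for $m=k+2$ one needs a separate nonexistence claim for PFTs starting with an $(i,k{+}2)$-flow below the crossing locus near the cusp. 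Relatedly, Property \ref{pr:switches} also provides a $(k{+}1,j)$-switch point for every $j>k+2$; your classification into cases (1)--(3) silently assumes these never yield degree-zero trees, which must be argued explicitly (the post-switch $(k,j)$-flow line has no Reeb chord, cusp termination, or admissible internal vertex available in $B$, so such a tree with no $Y_0$'s or $Y_1$'s cannot exist).
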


(By the definition of PFT (see Definition \ref{def:GFT}), in (1)-(3) $\Gamma$ contains the entire portion of the appropriate switch GFT that follows the initial point of $\Gamma$, including the switch vertex and the last edge of the GFT.)

\begin{proof}
First, note that for PFTs beginning with an $(i,j)$-flow line with $\{i,j\} \subset \{k,k+1,k+2\}$, the result follows from Lemma \ref{lem:AprimeNew}.  
%This is because the intersection of $B$ with the domain of $F_{i,j}$ is contained in $A'$.  

We now assume $\Gamma$ begins  in $B$ with an $(i,j)$-flow line with $\{i,j\} \not \subset \{k,k+1,k+2\}$, and
%$(k+2,k+1)$ flow in $B$ the result follows from Proposition \ref{prop:PFTrees} since the trees described in (2) and (3) of that Lemma have images disjoint from $B$.  Also, for PFTs beginning with a $(k+1,k+2)$ flow in $B$ the result follows from Proposition \ref{prop:PFTreesk1k2}.  In addition, notice that, for $(i,j)$ different from $(k+1,k+2)$ and $(k+2,k+1)$, Lemma \ref{lem:T13D12}   \dr{As stated, the lemma fails for $i,j \in \{k,k+1,k+2\}$, but the result is already known for these flows, right?}  implies that $B$ is invariant for the $(i,j)$-flow.
prove the Lemma by induction on the sum of the number of $Y_0$ and $Y_1$ vertices in $\Gamma$.  (We do allow the possibility that $\Gamma$ begins with a flowline involving the sheet $\tilde{S}_k$.) 

For the {\bf base case}, 
%suppose $\Gamma$ begins in $B$ with an $(i,j)$-flow with $(i,j) \neq (k+1,k+2)$ and $(i,j) \neq (k+2,k+1)$, and 
suppose that $\Gamma$ has no $Y_0$'s and $Y_1$'s.  The tree must have precisely one output vertex possibly preceded by a switch vertex.  Since $\{i,j\} \not \subset \{k,k+1,k+2\}$, the Property \ref{pr:monotonicityIIST} shows that the entire image of $\Gamma$ is in the region $B$.  As $B$ does not contain $b_{i,j}$ or $c_{i,j}$ Reeb chords, the output must be at an $a^{-,-}$ Reeb chord or at an $e$-vertex, so we have
\[
1 = A-C+E.    
\]
Since there are no $Y_1$ vertices, and $\Gamma$ can have at most one switch by Lemma \ref{lem:swPFT}, the inequality
\[
D(\Gamma) = A-C+E-SW -Y \geq 0
\]
follows.  Those PFTs that contain exactly one switch will produce equality.   %Correspondingly, partial flow trees with a single switch at the $(i,k)$ switch point are listed in (1), (2), and (3) in the statement of the theorem. According to Proposition \ref{prop:SwPoints}, the only remaining possibility for a PFT with a single switch would be if the switch is at the $(k+1,j)$ switch point.  
We claim that without $Y_0$'s or $Y_1$'s, there are no such trees with a switch at the $(k+1,j)$-switch point for $k+2<j$.  Thus, in view of Property \ref{pr:switches}, trees without $Y_0$'s and $Y_1$'s and with $D(\Gamma) =0$ match the description given in (1)-(3) of the statement of Lemma \ref{lem:RegionB}.   [To verify the claim, note that the $(k,j)$-flowline that we are left with after a switch at the $(k+1,j)$-switch point must remain in $B$.  However,  there are no $(k,j)$ Reeb chords in $B$, and sheets $k$ and $j$ do not meet at any cusp edge.]   

%where we used that $\Gamma$ does not have $Y_1$ vertices.  Thus, $\Gamma$ has at least one switch, and (1) of Lemma \ref{lem:swPFT} shows that this is the only internal vertex of $\Gamma$.  The claim now follows from the remaining parts (2), (3), and (4) of Lemma \ref{lem:swPFT}.   
%(Actually, what about the other switch points, and did we need to mention them earlier?)

For the {\bf inductive step}, suppose that $\Gamma$ has at least one $Y_0$- or $Y_1$-vertex.  
 Consider the first $Y_0$- or $Y_1$-vertex, $x$, that is encountered when traveling away from the initial point of $\Gamma$.  A priori, the part of the tree, $\alpha \subset \Gamma$, that connects $x$ to the start of $\Gamma$ may contain switches.  However, Lemma \ref{lem:swPFT} implies that $\alpha$ can  have at most one such switch.

Let $\Gamma_a$ and $\Gamma_b$ denote the PFTs that begin with the two branches of $\Gamma$ that follow $x$, notated so that, for some $i,j, m, l$, (some of which may be equal to $\tilde{k}$), the incoming edge is a $(i,j)$-flow line at $x$, $\Gamma_a$ begins with an $(i,m)$-flow line,  and $\Gamma_b$ begins with an $(l,j)$-flow.  
Note that $i$ and $j$ used here may be different from their values at the initial point of $\Gamma$, due to $\alpha$  crossing the boundary between sheets $\widetilde{S}_k$ and one of $S_{k+1}$ or $S_{k+2}$, or due to a switch occurring along $\alpha$.  However, note that at all points of $\alpha$ we still  have $\{i,j\} \not \subset \{k,k+1,k+2\}$, so that the image of $\alpha$ is entirely contained in $B$ (by Property \ref{pr:monotonicityIIST}).  Thus, the inductive hypothesis applies to both $\Gamma_a$ and $\Gamma_b$.  See Figure \ref{fig:GammAB}.

%of these trees since, except possibly for $(k+1,k+2)$ and $(k+2,k+1)$-flow lines, any edge that  begins in $B$ remains in $B$. [In case a switch occurs before $x$, note that the possible indices for edges beginning with a switch vertex were listed in Proof of Lemma \ref{lem:swPFT}.  Here, it is only relevant that $(k+1,k+2)$ and $(k+2,k+1)$ do not occur.]  See Figure \ref{fig:GammAB}.

\begin{figure}
\labellist
\small
\pinlabel $\Gamma$ at 76 108
\pinlabel $\alpha$ [r] at 28 80
\pinlabel $x$ [bl] at 41 47
\pinlabel $\Gamma_b$ [bl] at 61 16
\pinlabel $\Gamma_a$ [br] at 5 16
%\pinlabel $\vdots$ [t] at 63 -2
%\pinlabel $\vdots$ [t] at 0 -2
\endlabellist
\centerline{ \includegraphics[scale=.6]{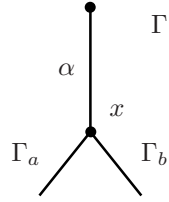} }
\caption{The PFT $\Gamma$ begins in $B$ and has its first $Y_0$- or $Y_1$-vertex at $x \in B$.  The segment $\alpha$ begins with the initial edge of $\Gamma$ and may possibly contain a $2$-valent switch vertex.}
%The arrows indicate the direction along edges of all $(i,j)$-flows (negative gradients) other than the $(k+2,k+1)$ flow.  
%(right) The partial flow trees in $A$ described in (2) of Lemma \ref{lem:RegionA}.  
\label{fig:GammAB}
\end{figure}

\medskip

%\begin{itemize}

%\item[Case 1:]

\noindent {\bf Case 1:}  The vertex $x$ is a $Y_1$.  
%Let $\Gamma_a$ and $\Gamma_b$ denote the PFTs that begin with the two branch of $\Gamma$ that follow $x$.  

\medskip

First, {\bf suppose that $\alpha$ does not contain a switch}.
Supposing the $Y_1$ is along the $(k,k+1)$-cusp edge, $\Gamma_a$ and $\Gamma_b$ would respectively begin with $(i,k+1)$- and $(k, j)$- flow lines, and we must have $i<k$ and $k+1<j$.  If instead the $Y_1$ occurs along the $(k,k+2)$-cusp edge, $\Gamma_a$ and $\Gamma_b$ would respectively begin with $(i,k+2)$- and $(k, j)$- flow lines with $i<k$ and $j \in \{k+1, k+3, \ldots\}$.    Note that none of the degree $0$  PFTs listed in (1), (2), (3) of the statement begin with an edge that is a $(i,k+1)$-, $(i,k+2)$-, or $(k,j)$-flow line for $i<k, j \geq k+1$.  As $\Gamma_a$ and $\Gamma_b$ both begin with edges of one of these types, the inductive hypothesis shows that $D(\Gamma_a)\geq 1$ and $D(\Gamma_b)\geq 1$, and we can estimate
\[
D(\Gamma) = D(\Gamma_a)+ D(\Gamma_b)- 1 \geq 1+1-1 =1.
\]

Next, {\bf suppose that $\alpha$ does contain a switch}.  We will show that this case cannot actually occur.  The only possibility is that the switch occurs at the $(i,k)$-switch point with the incoming edge an $(i,k)$-flow line and the outgoing edge initially an $(i,k+1)$-flow line.  [For the other possible switch vertices identified in Property \ref{pr:switches}, the outgoing edge must be either a $(k,k+1)$-, $(k,k+2)$-, or $(k,j)$-flow line for some $k+2<j$.  However, the internal vertex that occurs at the end of this edge was assumed to be the $Y_1$ at $x$.  This is impossible since the entire edge must have $S_k$ as the upper sheet, and the incoming edge at a $Y_1$ vertex must have its upper sheet above the sheets that meet at the cusp edge.]  

Thus, the outgoing edge at the switch is the $(i,k+1)$-flow line, $\nu$, that begins at the $(i,k)$-switch point.  
Recall that in the proof of Lemma \ref{lem:ikGFT}, a region $C_{i,k}$ was defined (see  Figures \ref{fig:Rswk1k2} and \ref{fig:RegionCC})
and shown to satisfy the following: 

%%%%%%%contain the entire flow line $\nu$.   
%%%%%%%%%both $-\nabla F_{i,k+1}$ and $-\nabla(F_i - \tilde{F}_k)$ pointing inward along $\partial C_{i,k}$.  Co
%%%%%%%%Note that the lower sheet of this flow line may become $\tilde{S}_k$ if the image crosses the $(k,k+2)$-cusp locus.  We will show that this edge must remain in the region, $C$, that consists of the lower left portion of $N(e^2_\alpha)$ that is bounded by 
%%%%%%%%%\begin{enumerate}
%%%%%%%%%\item $\{x_1 = -17/64\}$; 
%%%%%%%%%\item the horizontal line segment from $x_1=-17/64$ to the $(i,k)$-switch point;  
%\item the portion of the $(k,k+1)$-cusp locus that lies between the $(i,k)$-switch point and the swallowtail point; and
%\item the portion of $\{x_2 = 0\}$ between the swallowtail point and the left hand boundary of $N(e^2_\alpha)$.  
%\end{enumerate}

\begin{figure}
%\labellist
%\small
%\pinlabel $\Gamma$ at 76 108
%\pinlabel $\alpha$ [r] at 28 80
%\pinlabel $x$ [bl] at 41 47
%\pinlabel $\Gamma_b$ [bl] at 61 16
%\pinlabel $\Gamma_a$ [br] at 5 16
%\pinlabel $\vdots$ [t] at 63 -2
%\pinlabel $\vdots$ [t] at 0 -2
%\endlabellist
\centerline{ \includegraphics[scale=.6]{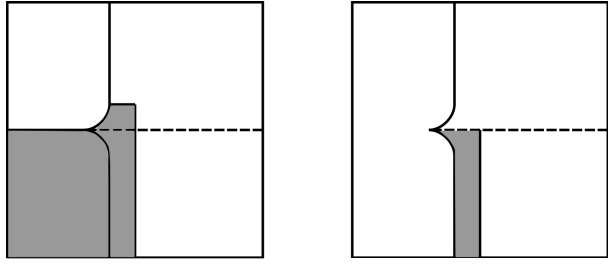} }
\caption{The regions $C_{i,k}$ and $C'$ from Claims A and B.}
%The arrows indicate the direction along edges of all $(i,j)$-flows (negative gradients) other than the $(k+2,k+1)$ flow.  
%(right) The partial flow trees in $A$ described in (2) of Lemma \ref{lem:RegionA}.  
\label{fig:RegionCC}
\end{figure}

\medskip

\noindent \textbf{Claim A.}  For $i<k$, the  $(i,k+1)$-flow line, $\nu$, beginning at the $(i,k)$-switch point  
%any edge that starts with an $(i,k)$- or $(i,k+1)$-flow line somewhere in $C$ 
has its entire image contained in $C_{i,k}$.  (The parts of $\nu$ that lie to the left of the $(k,k+2)$-cusp locus are $(i,\tilde{k})$-flow lines and are also contained in $C_{i,k}$.)

\medskip

%\noindent \textit{Proof of Claim A.}  Along (1) $-\nabla F_{i,k+1}$ and $-\nabla F_{i,k}$ point left by Property \ref{pr:leftC}; along (2) $-\nabla F_{i,k+1}$ and $-\nabla F_{i,k}$ point down by Property \ref{pr:monoLtilde}; along (3) $-\nabla F_{i,k+1}= \nabla F_{i,k}$ because (3) is a subset of the $(k,k+1)$-cusp edge, and they both point in the direction where the number of sheets increases by Corollary \ref{cor:mnPQ}; and along (4) $-\nabla(F_i - \tilde{F}_k)$ points down by Property \ref{pr:monoLtilde}.  In addition, an $(i,k)$-flow line ceases to exist if it reaches any portion of the cusp locus, and thus will not every reach (4).

%\medskip

Thus, the $Y_1$ vertex that occurs at $x$ could only be located somewhere on the $(k,k+2)$ cusp edge
 since the incoming edge must be either a $(i,k+1)$- or $(i,\tilde{k})$-flow line contained in $C_{i,k}$.  
 After the $Y_1$, the outgoing edges are an $(i,k+2)$-flow line and a $(k,k+1)$-flow line.  We show that the PFT beginning with the edge that is the $(i,k+2)$-flow line cannot exist.  (Our line of reasoning is summarized in Figure \ref{fig:GammAB2}.)

\begin{figure}
\labellist
\small
%\pinlabel $\Gamma$ at 76 108
\pinlabel $(i,k)$-switch~point [l] at 40 118
\pinlabel $Y_1$-vertex~in~$C'$ [l] at 78 64
\pinlabel $(k,k+1)$ [bl] at 58 16
\pinlabel $(i,k+2)$ [br] at 6 16
\pinlabel Does~not [t] at 0 -2
\pinlabel exist [t] at 0 -16
%\pinlabel $\vdots$ [t] at 63 -2
%\pinlabel $\vdots$ [t] at 0 -2
\endlabellist
\centerline{ \includegraphics[scale=.6]{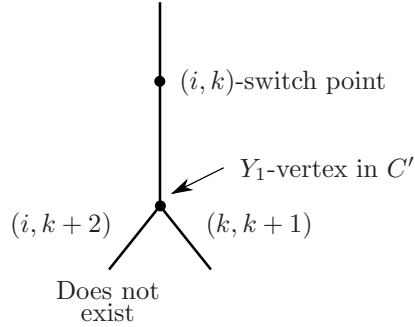} }

\quad

\quad

\quad

\caption{Summary of reasoning applied for the inductive step in Case 1 when the initial branch of $\Gamma$  ends at a switch.}
%The arrows indicate the direction along edges of all $(i,j)$-flows (negative gradients) other than the $(k+2,k+1)$ flow.  
%(right) The partial flow trees in $A$ described in (2) of Lemma \ref{lem:RegionA}.  
\label{fig:GammAB2}
\end{figure}

\medskip

\noindent \textbf{Claim B.}  For $i<k$, there are no PFTs that begin with an $(i,k+2)$-flow line starting in the region $C'$ that lies below the crossing locus; to the right of the $(k,k+2)$-cusp locus; and to the left of $x_1 = -17/64$.  

\medskip

%Note that $C'$ that is the subset of $C_{i,k}$ that lies below the crossing locus and to the left of the cusp locus.
 
\noindent \textit{Proof of Claim B.}   The region $C'$ is pictured in Figure \ref{fig:RegionCC}.
Suppose such a PFT does exist.  
%and consider the region $E$ bounded above by $x_2=0$ and on the right by $x_1=-17/64$ and on the left by the $(k,k+2)$ cusp locus.  
The $(i,k+2)$-flow line cannot leave $C'$ (by Properties \ref{pr:leftC} and \ref{pr:monoLtilde}, and the fact that the flow line would terminate if it reaches the $(k,k+2)$ cusp edge since $i<k$.)  Thus, since there are no $(i,k+2)$-Reeb chords in $C'$, and $S_i$ and $S_{k+2}$ do not meet at a cusp edge in $C'$, this edge must end at an internal  vertex which can only be a $Y_0$.  The outgoing edges will be a $(i,i')$-flow line and a $(i',k+2)$-flowline for some $i<i'\leq k$.  If $i' <k$, then we repeat this argument with the $(i',k+2)$-edge to locate another $Y_0$.  If $i' =k$, then the $(i,k)$-edge must remain in  $C'$ (by the same reasoning that shows the $(i,k+2)$-flow line cannot leave). Then, we can apply a similar argument to see that this edge must end at a $Y_0$.  (Note, that the edge cannot end at the $(i, k)$-switch point which does not belong to $C'$.) One of the outgoing edges at this $Y_0$ would be an $(i'', k)$-flow line with $i<i''<k$ beginning in $C'$, and so we could repeat the same argument to find yet another $Y_0$.  However, this process cannot go on indefinitely since there are only finitely many numbers between $i$ and $k$, so we reach a contradiction. 

\medskip

%  along this edge could only be a $Y_0$-vertex where ------------One of the bottom branches of the tree would have to be either an $(i',k)$ or $(i',k+2)$ flow beginning in this region and with a terminal point in this region.

%\dr{The case where the segment $\alpha$ does contain a switch seems to be omitted.  The only possibility is the $(i,k)$ switch point followed by an $(i,k+1)$ flow that then becomes a $\tilde{S}_k$ flow before a $Y_1$.  We use that the horizontal line through the swallowtail point cannot be crossed, so the only possibility is for the $Y_1$ to be located on the lower half of the cusp edge.  After the $Y_1$, the flows will be an $(i,k+2)$ flow and a $(k,k+1)$ flow.  We show that the $(i,k+2)$-flow cannot be completed to a partial flow tree.  For this, consider the region bounded above by $x_2=0$ and on the right by $x_1=-1/4$ (for instance).  One of the bottom branches of the tree would have to be either an $(i',k)$ or $(i',k+2)$ flow beginning in this region and with a terminal point in this region.}

%\item[Case 2:]  

\noindent {\bf Case 2:} The vertex $x$ is a $Y_0$.  

\medskip

First, {\bf suppose that $\alpha$ does not contain a switch}.  Then, 
\[
D(\Gamma) = D(\Gamma_a) + D(\Gamma_b).
\]
The inductive hypothesis gives $D(\Gamma_a) \geq 0$ and $D(\Gamma_b) \geq 0$. Note that because $\Gamma_a$ and $\Gamma_b$ must respectively start with an $(i,m)$-flow line and an $(m,j)$-flow line, it is impossible that they are both degree $0$ PFTs as described in (1), (2), (3) in the statement of this Lemma.  Thus, $D(\Gamma_a)> 0$ or $D(\Gamma_b)> 0$, so  $D(\Gamma) \geq 1$ holds.

Finally, {\bf suppose that $\alpha$ does contain a single switch}, $y$, so that 
\[
D(\Gamma) = D(\Gamma_a)+ D(\Gamma_b) -1.
\]
We do not need to consider the case where $y$ is at the $(k+1,k+2)$- or $(k+2,k+1)$-switch point (since $\Gamma$ does not begin with an $(i,j)$-flow line with $\{i,j\} \subset \{k,k+1,k+2\}$), so we are left to consider two cases.

\begin{itemize}
\item[Subcase A:]  The vertex $y$ is an $(i,k)$-switch point with $i<k$.  Then, the edge of the tree that directly follows $y$ and  precedes $x$ begins is the $(i,k+1)$-flow line, $\nu$, starting at the switch.

%Recall the region  $C \subset N(e^2_\alpha)$ defined above Claim A.
In view of Claim A, the incoming edge at $x$ is a flow line in $C_{i,k}$ with lower sheet either $\tilde{S}_k$ or $S_{k+1}$, so we see that either 
\begin{itemize}
\item[(a)] $\Gamma_a$ begins with an $(i,m)$-flow line and $\Gamma_b$ begins with an $(m,k+1)$-flow line  for some $m$ such that sheet $S_m$ lies between sheets $S_i$ and $S_{k+1}$ at the image of $x$, or  
\item[(b)] $\Gamma_a$ begins with an $(i,m)$-flow line and $\Gamma_b$ begins with an $(m, \tilde{k})$-flow line (i.e. a flow line for  
$-\nabla(F_{m}- \tilde{F}_k)$) for some $m$ such that sheet $S_m$ lies between sheets $S_i$ and $\widetilde{S}_k$ at the image of $x$.
\end{itemize}
Here, (a) occurs if $x$ is to the right of the cusp locus and (b) occurs if $x$ is to the left of the cusp locus.  By the description of degree $0$ PFTs found in (1)-(3)  of this Lemma, we know from the inductive hypothesis that both $\Gamma_a$ and $\Gamma_b$ have degree strictly greater than $0$ in all cases except possibly when (a) occurs and $m = k$ or $m=k+2$.  We address these exceptional cases separately.

\begin{itemize} 
\item If $m =k$, note that $x$ cannot occur at a point belonging to the $(i,k)$-switch flow line that precedes the $(i,k)$-switch point.  [A switch at the $(i,k)$ switch point, $y$, has already occured in the edge from $y$ to $x$, and the difference functions $F_{i,j}$ strictly decrease along all edges of a PFT and when passing a $Y_0$ vertex.]     Thus, $\Gamma_a$ and $\Gamma_b$ both have strictly positive degree so that $D(\Gamma) \geq 1$ holds.

\item The case $m=k+2$ cannot actually occur.  
%If $m=k+2$, we show that $D(\Gamma_a) \geq 2$, so that applying the inductive hypothesis to $\Gamma_b$ gives $D(\Gamma) = D(\Gamma_a) + D(\Gamma_b) -1 \geq 2 + 0 -1$.

As $F_{k+2,k+1}$ is only positive below the crossing locus, and $\Gamma_b$ begins with a $(k+2,k+1)$-flow, it must be the case that the image of $x$ which is in $C_{i,k}$ (by Claim A) lies below the crossing locus and right of the cusp locus, i.e. in the region $C'$ from Claim B.  However, the PFT $\Gamma_a$ then starts with an $(i,k+2)$-flow line in $C'$, with $i<k$, and cannot exist by Claim B.

%$\{x_2 < 0\}$.  In combination with the Claim, we see that  $\Gamma_a$ begins with an $(i,k+2)$ flow located in  $C'$ which we define to be the portion of $\{x_2 \leq 0 \} \cap C$ to the right of the cusp locus.  See Figure ???.   Note that $C'$ is invariant for $(i,k+2)$ flows in positive time.  [Using Claim 2 in proof of Theorem \ref{thm:NoReebChordsST}, $-\nabla F_{i,k+2}$ points down along the upper boundary of $C'$.  Lemma \ref{lem:leftC} shows that $-\nabla F_{i,k+2}$ points left along the right boundary of $C'$.  The remaining portions of $\partial(C')$ are made up by the $(k,k+2)$ cusp edge and part of $\partial ( [-1,1]^2)$. ]  There are no $(i,k+2)$ Reeb chords or cusp edges in $C'$.  Thus, in order to be completed to a partial flow tree, the $(i,k+2)$ flow must terminate at an internal vertex of $\Gamma_a$, that can only be a $Y_0$.  [Switches and $Y_1$ vertices are prohibited since $i<k$ and the only cusp edge in $C'$ is a $(k,k+2)$-cusp edge.]  The partial flow trees, $\Gamma_c$ and $\Gamma_d$, that begin immediately after this first $Y_0$ must have strictly positive degree.  This is because $\Gamma_c$ and $\Gamma_d$ respectively begin with $(i,l)$ and $(l,k+2)$ flows in $C'$ for some $l \leq k$.  By the inductive hypothesis, only the possibility for either $\Gamma_c$ or $\Gamma_d$ to have degree $0$ is for $\Gamma_c$ if $l =k$.  However, as in the previous case, the $(i,k)$ flow line that reaches the $(i,k)$ switch point cannot intersect $C'$.

\end{itemize}

\item[Subcase B:]  The vertex $y$ is a $(k+1,j)$-switch point with $k+2<j$.  The edge that follows $y$ and ends at $x$ is then a $(k,j)$-flow line.  Thus, $\Gamma_a$ and $\Gamma_b$ will begin respectively with $(k, l)$- and $(l,j)$-flow lines for some $k<l$.  The  characterization of degree $0$ PFTs from (1)-(3) applies to show that both $D(\Gamma_a) \geq 1$ and $D(\Gamma_b) \geq 1$, and $D(\Gamma) \geq 1$ follows.  

\end{itemize}

%\end{itemize}

\end{proof}

\begin{lemma} \label{lem:Btree} Let $\Gamma$ be a PFT with $D(\Gamma) = 0$ and without endpoints at Reeb chords of the form $c_{i,j}$ or $\tilde{c}_{k+2,k+1}$.  Then,
\begin{enumerate}
\item $\Gamma$ has no $Y_1$ vertices,  
\item any edge that starts with a switch vertex ends at a Reeb chord or $e$-vertex, and
\item all endpoints of $\Gamma$ occur either at $b$ Reeb chords or at the end of an edge that starts with a switch at either the $(i,k)$-, $(k+1,k+2)$-, or $(k+2,k+1)$-switch point.
%and ends at an $a$ Reeb chord or an  $e$-vertex.
\end{enumerate}
\end{lemma}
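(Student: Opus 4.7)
The plan is to prove (1), (2), (3) simultaneously by induction on the number of internal vertices of $\Gamma$, using the additivity of the degree $D$ together with the lower bound from Lemma \ref{lem:RegionB} applied to subtrees. The base case (no internal vertices) is immediate: $\Gamma$ consists of a single edge, and $D(\Gamma) = A + E - Y_1 - SW = 0$ with $A, E, Y_1, SW \geq 0$ and no switches allowed at both ends by Lemma \ref{lem:swPFT} (so $SW \leq 1$); if $SW = 0$ then $A+E = 0$ forces the unique endpoint to be a $b$ Reeb chord, and (1)--(3) hold vacuously, while if $SW = 1$ then $A+E = 1$ and the endpoint is paired with the switch, giving (2) and (3); Property \ref{pr:switches} restricts the switch to one of the four admissible types, and the $(k+1,j)$-switch with $k+2<j$ will be ruled out just as in Subcase B of the proof of Lemma \ref{lem:RegionB}.

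For the inductive step, consider the first internal vertex $v$ encountered along the initial edge of $\Gamma$ and split $\Gamma$ into subtrees $\Gamma_1$ (and $\Gamma_2$ when $v$ is trivalent). The key identity is
\begin{equation*}
D(\Gamma) = \sum_i D(\Gamma_i) - \delta_v,
\end{equation*}
where $\delta_v = 1$ when $v$ is a $Y_1$ or switch and $\delta_v = 0$ when $v$ is a $Y_0$. When $v$ is a $Y_0$, both subtrees inherit the no-$c$-endpoint hypothesis and $D(\Gamma_i) \geq 0$, forcing $D(\Gamma_i) = 0$ for each $i$, so the inductive hypothesis yields (1)--(3) on each $\Gamma_i$ and therefore on $\Gamma$.

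The heart of the argument is the case where $v$ is a $Y_1$- or switch-vertex. Here I would apply Lemma \ref{lem:RegionB} to the subtree $\Gamma'$ starting with the outgoing edge whose upper sheet is $S_k$ (in the $Y_1$ case) or with the outgoing edge of the switch (in the switch case): Properties \ref{pr:CuspTransversality}, \ref{pr:switches} and \ref{pr:monotonicityIIST} place the initial point of $\Gamma'$ in $B = D(1/2) \cap L(1/2)$, so $D(\Gamma') \geq 0$ with equality only when $\Gamma'$ is a switch-GFT fragment as listed in (1)--(3) of Lemma \ref{lem:RegionB}. To prove (1), I would check that the $S_k$-subtree emanating from a $Y_1$ starts with a $(k,j)$-flow on the cusp locus, which is not of the form allowed for the degree-zero fragments, so $D(\Gamma') \geq 1$ and $D(\Gamma) \geq 1$, a contradiction. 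To prove (2) when $v$ is one of the three admissible switch types, I would show that any internal vertex below the outgoing edge of $v$ produces a sub-subtree whose initial point lies in $B$ but whose initial edge is again not of a switch-fragment form, again forcing $D(\Gamma) \geq 1$. The $(k+1,j)$-switch with $k+2 < j$ is ruled out as in the base case.

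Finally, (3) is a bookkeeping consequence: every endpoint at an $a$-Reeb chord or $e$-vertex contributes $+1$ to $D$, and by (1) and (2) the only available $-1$ contributions come from switches of the three admissible types, each of which, by (2), terminates in a single edge ending at exactly one such endpoint. The equality $D(\Gamma) = 0$ then forces a bijection between non-$b$ endpoints and the edges emanating from these switches. The main obstacle will be the geometric verification that the subtrees beginning just below a $Y_1$-vertex or a switch-vertex indeed enter the invariant region $B$ and start with a flow line of a type that is excluded from the degree-zero list of Lemma \ref{lem:RegionB}; this amounts to a case-by-case reprise of the position arguments from Cases 1 and 2 of the proof of Lemma \ref{lem:RegionB}, assembled from Property \ref{pr:monotonicityIIST}, Corollary \ref{cor:mnPQ} and Claims A--B from Subcase A of that proof.
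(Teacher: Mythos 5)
Your core idea -- that the only negative contributions to $D$ come from $Y_1$- and $\mathit{sw}$-vertices, that these are confined to $B = D(1/2)\cap L(1/2)$, and that Lemma \ref{lem:RegionB} then controls the subtrees containing them -- is exactly the engine of the paper's proof. But the inductive packaging has a genuine gap at the $Y_0$ step: there you assert $D(\Gamma_i)\geq 0$ for the two subtrees below the $Y_0$ and then invoke the inductive hypothesis. The hypothesis you are inducting on only concerns PFTs with $D=0$, so it gives you nothing about $D(\Gamma_i)$; and the subtrees below a $Y_0$ need not start in $B$, so Lemma \ref{lem:RegionB} does not apply to them directly either. The non-negativity of $D$ for an arbitrary sub-PFT without $c$-endpoints is precisely the nontrivial content here, and it must either be built into a strengthened inductive statement (``$D\geq 0$, with equality implying (1)--(3)'') or obtained by the paper's non-inductive route: choose an outermost collection $y_1,\ldots,y_m$ of $Y_1$- and $\mathit{sw}$-vertices so that the PFTs $\Gamma_1,\ldots,\Gamma_m$ beginning \emph{just above} the $y_i$ are disjoint and contain all such vertices, and write $0 = D(\Gamma) = \sum_i D(\Gamma_i) + A' + E'$, where $A'$ and $E'$ count the $a$- and $e$-endpoints outside the $\Gamma_i$. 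Each $\Gamma_i$ starts in $B$, so every term is nonnegative and all must vanish.

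A second, related point: by cutting the tree \emph{below} each $Y_1$ or switch rather than above it, you only extract the inequality $D(\Gamma')\geq 1$ from Lemma \ref{lem:RegionB}, not its classification of the equality case. In the switch case you then need $D(\Gamma')=1$ to force $\Gamma'$ to be a single edge (otherwise conclusion (2) fails), and this does not follow from degree counting -- e.g.\ a $Y_0$ on the post-switch edge one of whose branches is a degree-zero switch fragment is consistent with $D(\Gamma')=1$. Ruling such configurations out requires re-running the geometric arguments (Claims A and B, the monotonicity of sheet differences) that the proof of Lemma \ref{lem:RegionB} already performs. You acknowledge this, but it means your route duplicates that work, whereas the paper's choice of subtrees beginning just above the $y_i$ lets the statement of Lemma \ref{lem:RegionB} deliver conclusions (1)--(3) immediately once all terms in the degree identity are shown to vanish.
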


\begin{proof}
Let $y_1, \ldots, y_m$ denote a collection of $Y_1$ and switch vertices of $\Gamma$ such that
\begin{enumerate}
\item  the PFTs $\Gamma_1,\ldots, \Gamma_m$ that begin just above the $y_i$ are disjoint from one another (as subsets of the domain of $\Gamma$),  and 
\item all $Y_1$ and switch vertices of $\Gamma$ are contained in the union of the $\Gamma_i$.
\end{enumerate} See Figure \ref{fig:yigammai}.  [To construct such a collection $y_1, \ldots, y_m$, note that if two PFTs intersect (in the domain of $\Gamma$) than one is contained in the other.  Begin by listing all the switches and $Y_1$ vertices as $z_1, \ldots, z_M$.  Start with $y^1_1 = z_1$, and build collections $\{y^i_1, \ldots, y^i_{m_i}\}$ inductively for $1 \leq i \leq M$ so that at the $i$-th step (1) holds and all vertices $z_1, \ldots, z_i$ are contained in the PFTs starting above $y^i_1, \ldots, y^i_{m_i}$.  For the inductive step, if $z_{i+1}$ is already contained in the union of PFTs starting at $y^i_1, \ldots, y^i_{m_i}$, then do not change the collection.  Else, add $z_{i+1}$ to the $y^i_1, \ldots, y^i_{m_i}$ and throw out all $y^i_j$ that are contained in the PFT starting at $z_{i+1}$.]

\begin{figure}
\labellist
\small
\pinlabel $y_1$ [r] at 34 80
\pinlabel $y_2$ [r] at 150 98
\pinlabel $y_3$ [l] at 392 120
\pinlabel $\Gamma_1$ [t] at 50 10
\pinlabel $\Gamma_2$ [t] at 162 -2
\pinlabel $\Gamma_3$ [t] at 378 60
\pinlabel $\Gamma$ at 240 236
\endlabellist
\centerline{ \includegraphics[scale=.6]{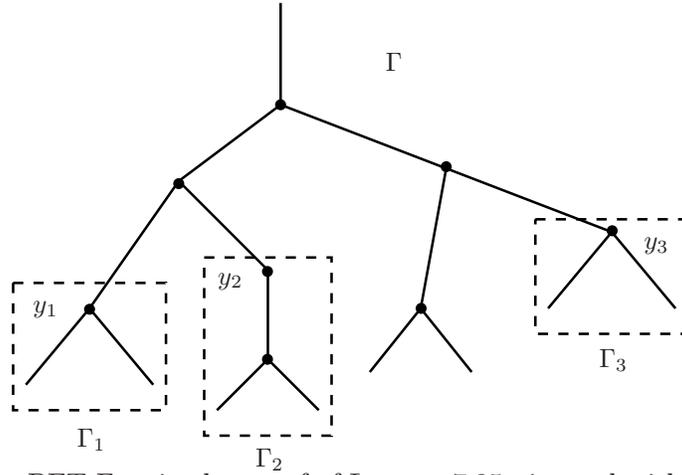} }
\caption{The PFT $\Gamma$ as in the proof of Lemma \ref{lem:Btree} pictured with $m=3$.   The vertices $y_1, \ldots, y_m$ are $Y_1$- and $\mathit{sw}$- vertices chosen so that all vertices outside of $\Gamma_1, \ldots, \Gamma_m$ are $Y_0$'s. }
%The arrows indicate the direction along edges of all $(i,j)$-flows (negative gradients) other than the $(k+2,k+1)$ flow.  
%(right) The partial flow trees in $A$ described in (2) of Lemma \ref{lem:RegionA}.  
\label{fig:yigammai}
\end{figure}

Since $\Gamma$ was assumed to not have punctures at $c_{i,j}$'s, we then have
\begin{equation} \label{eq:DGamma}
0 = D(\Gamma) = \sum_{i} D(\Gamma_i) + A' + E',
\end{equation}
where $A'$ and $E'$ respectively denote the number of output vertices at $a_{i,j}$'s and at $e$-vertices that are located in $\Gamma \setminus (\bigcup_{i} \Gamma_i)$.  

Each of the $\Gamma_i$  starts in $B$.  [Property \ref{pr:monotonicityI} shows that $Y_1$'s cannot occur where $x_{2} \geq 1/2$ since, for all $S_{i}$ and $S_{j}$ that are above and below the cusp edge,  $-\nabla F_{i,j}$ points to the side of the cusp locus with fewer sheets.  In addition, all switch points are located in $B$.]   Thus, by Lemma \ref{lem:RegionB} each $\Gamma_i$ has $D(\Gamma_i) \geq 0$.  Therefore, all terms on the right hand side of (\ref{eq:DGamma}) are non-negative, and we can conclude that they all vanish.  Consequently, all of the $y_i$ must be switches with the flow trees $\Gamma_i$ as described in (1)-(3) of Lemma \ref{lem:RegionB}.  It follows that $\Gamma$ has no $Y_1$-vertices, and there are no other switches in $\Gamma$.  Since $A'=E'=0$, all endpoints of $\Gamma$ that do not follow switches are at $b_{i,j}$ generators.
%can by $Y_1$-vertices   
%\[
%D(\Gamma) = \sum_{i} D(\Gamma_i) + A' + E' - SW' \geq A' + E' - SW'
%\]
%where $A'$, $E'$, and $SW'$ denote the number of output vertices at $a_{i,j}$'s, at cusp edges, and the number of switches that are located in $\Gamma \setminus (\bigcup_{i} \Gamma_i)$.  Now, let $\Lambda_1, \ldots, \Lambda_\ell$ denote PFTs starting at switches...  Better to take at the first step a collection of uppermost switches and $Y_0$'s  this gives the desired result almost immediately.  

%Do we need that any flow tree below a switch has deg $\geq 1$ with equality only if there are no internal vertices?

\end{proof}

\subsection{Enumeration of $b$-trees}  \label{ssec:enumerbtrees}

With Lemma \ref{lem:Btree} in hand, we can now compute $\partial_b c_{i,j}$ using a method that is parallel to that of Section \ref{ssec:112btrees}.  We begin by modifying some of the terminology from \ref{ssec:112btrees} to our current setting.

\begin{definition}  In a Type (13) square, a {\bf $b$-tree} is a PFT or GFT that satisfies the conclusions (1)-(3) of Lemma \ref{lem:Btree} and also  
\begin{enumerate}
\item[(4)]  does not have any endpoints at $\tilde{b}^R_{k+2,k+1}$.
\end{enumerate}
\end{definition}

Let
\begin{equation}  \label{eq:defdbST}
\partial_b c_{i,j} = \sum w(\Gamma) 
\end{equation}
where the summation is over $b$-trees that begin with a positive puncture at $c_{i,j}$.

The following is an immediate consequence of Lemma \ref{lem:Btree}.

\begin{proposition}  \label{prop:dbc}  For any $1 \leq i< j \leq n$,
%the term $\partial_b c_{i,j}$ from (\ref{eq:dcdb}) is given by
\[
\partial c_{i,j} = \partial_c c_{i,j} + \partial_b c_{i,j} + x
\]
where  $x$ denotes a term belonging to the two-sided ideal generated by $\tilde{b}_{k+2,k+1}^R.$
\end{proposition}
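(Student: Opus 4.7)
The plan is to decompose the set of rigid GFTs of $\tilde L$ that begin at $c_{i,j}$ into three disjoint classes that match the three terms on the right-hand side. By Theorem \ref{thm:EkholmMain} together with Corollary \ref{cor:SVK}, $\partial c_{i,j}$ equals the sum of $w(\Gamma)$ over all rigid GFTs $\Gamma$ in $N(e^2_\alpha)$ with a positive puncture at $c_{i,j}$, and Lemma \ref{lem:aceyswST} converts rigidity into the identity $D(\Gamma)=0$.

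First I would set aside those $\Gamma$ having at least one negative puncture at a Reeb chord of the form $c_{r,s}$ or $\tilde c_{k+2,k+1}$; by the definition used in Theorem \ref{thm:dc}, the sum of $w(\Gamma)$ over this class is exactly $\partial_c c_{i,j}$. For each remaining $\Gamma$, $D(\Gamma)=0$ and $\Gamma$ has no negative puncture at any $c_{r,s}$ or $\tilde c_{k+2,k+1}$, so the hypotheses of Lemma \ref{lem:Btree} are met. That lemma carries the genuine content of the proof, but it has already been established; applying it hands over conclusions (1)--(3) of its statement, so these $\Gamma$ automatically satisfy the first three conditions in the definition of a $b$-tree.

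Finally I would split this second class according to whether $\Gamma$ has an output vertex at $\tilde b^R_{k+2,k+1}$. If not, then $\Gamma$ also satisfies condition (4) in the definition of a $b$-tree, so $\Gamma$ contributes to $\partial_b c_{i,j}$ as in (\ref{eq:defdbST}). If so, then since $w(\Gamma)$ is the ordered product of the negative punctures of $\Gamma$ (as recalled in Section \ref{sec:GFTPFT}), $\tilde b^R_{k+2,k+1}$ appears as a literal factor in $w(\Gamma)$, so $w(\Gamma)$ lies in the two-sided ideal generated by $\tilde b^R_{k+2,k+1}$. Summing $w(\Gamma)$ across the three disjoint classes gives the proposition, with the only real obstacle, Lemma \ref{lem:Btree}, already in hand.
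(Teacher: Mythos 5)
Your proof is correct and is essentially the paper's argument: the paper disposes of this proposition with the single line that it is ``an immediate consequence of Lemma \ref{lem:Btree},'' and your three-way decomposition of the rigid GFTs (those with a $c_{r,s}$ or $\tilde c_{k+2,k+1}$ puncture, the $b$-trees, and those whose word contains $\tilde b^R_{k+2,k+1}$ and hence lies in the ideal) is exactly the reasoning being compressed there. No gaps.
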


Note that branches of a $b$-tree that follow switches must be as in the $(i,k)$-, $(k+1,k+2)$-, and $(k+2,k+1)$-switch GFTs, and in particular are uniquely determined by the switch point.  In these three cases the branch that follows the switch is the flow line that starts at the switch point and respectively    
limits to $a^{-,-}_{i,k+1}$, ends at an $e$-vertex on the $(k,k+2)$-cusp edge, or ends at an $e$-vertex on the $(k,k+1)$-cusp edge.  (See Lemmas \ref{lem:k1k2switchflow} and \ref{lem:ikGFT}.)
 When such a switch occurs in a $b$-tree, we will refer to the switch and the following edge as a {\bf switch output} at $\mathit{sw}_{i,k}, \mathit{sw}_{k+1,k+2}$, or $\mathit{sw}_{k+2,k+1}$.

We consider two types of $b$-trees as follows.  Define a {\bf $b^{LU}$-tree} to be a $b$-tree that has all outputs at Reeb chords of the form $b^L_{i,j}$, $b^U_{i,j}$ or at switch outputs of the form $\mathit{sw}_{i,k}, \mathit{sw}_{k+1,k+2}$.  Define a {\bf $b^{RD}$-tree} to be a $b$-tree that has all outputs at Reeb chords of the form $b^R_{i,j}$, $b^D_{i,j}$, or at switch outputs of the form $\mathit{sw}_{k+2,k+1}$.  We write $\partial_{b^{LU}} c_{i,j}$ and $\partial_{b^{RD}} c_{i,j}$ respectively for the portion of the sum (\ref{eq:defdbST}) arising from $b^{LU}$-trees and $b^{RD}$-trees.

 In the Type (13) square, the role of $b^{LU}$-trees and $b^{RD}$-trees are no longer entirely symmetric.  Consequently, we compute $\partial_{b^{LU}} c_{i,j}$ and $\partial_{b^{RD}} c_{i,j}$ separately in 
%in separate parts the contribution to $\partial_b c_{i,j}$ coming from $b^{LU}$-trees and $b^{RD}$-trees in 
\ref{sec:bLU13trees} and 
\ref{sec:bRDtrees}, respectively. 

The following observations follow from the definition of $b$-tree.
\begin{lemma} \label{lem:observations} 
\begin{enumerate}
\item[(i)] Aside from the switches at $\mathit{sw}_{i,j}$-outputs, all other internal vertices in a $b$-tree must be $Y_0$'s. 
\item[(ii)]  For any $Y_0$ in a $b$-tree, $b^{LU}$-tree, or $b^{RU}$-tree, both of the PFTs that start with the outgoing edges of the $Y_0$ are themselves $b$-trees, $b^{LU}$-trees, or $b^{RU}$-trees, respectively.
\end{enumerate}
\end{lemma}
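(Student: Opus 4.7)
The plan is to deduce both parts directly from the definitions, using Theorem \ref{thm:1regV} (which restricts internal vertices of rigid GFTs to $Y_0$, $Y_1$, or $\mathit{sw}$) together with conclusions (1)-(3) of Lemma \ref{lem:Btree}, and then observing that every defining condition of a $b$-tree is local/hereditary when passing to a subtree.

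For part (i), I would argue as follows. Let $\Gamma$ be a $b$-tree and let $v$ be an internal vertex of $\Gamma$. By Theorem \ref{thm:1regV} (applied via the $1$-regularity of $(\tilde{L},g)$ from Theorem \ref{thm:PropertiesofLtilde}), $v$ is a $Y_0$-, $Y_1$-, or $\mathit{sw}$-vertex. Conclusion (1) of Lemma \ref{lem:Btree} rules out $Y_1$-vertices. If $v$ is a switch, then by Lemma \ref{lem:swPFT} the outgoing edge at $v$ does not end at another switch, and by conclusion (2) of Lemma \ref{lem:Btree} it must end at a Reeb chord or $e$-vertex. Combined with conclusion (3) and the classification of switch points in Property \ref{pr:switches}, the only switches that can occur inside a $b$-tree are those belonging to an $\mathit{sw}_{i,k}$-, $\mathit{sw}_{k+1,k+2}$-, or $\mathit{sw}_{k+2,k+1}$-output, and the Reeb chord or $e$-vertex it terminates in is exactly the output of the switch output. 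Thus all other internal vertices are $Y_0$'s.

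For part (ii), I would simply check that each condition defining a $b$-tree passes to the sub-PFT beginning at an outgoing edge of a $Y_0$. Fix a $Y_0$ in $\Gamma$ and let $\Gamma_1,\Gamma_2$ be the PFTs starting with its outgoing edges. Each $\Gamma_s$ is a sub-PFT, so its internal vertices form a subset of those of $\Gamma$. Conclusion (1) (no $Y_1$'s) is preserved, and by part (i) every switch in $\Gamma_s$ is the switch of one of the three allowed switch outputs, and the following edge (which by Lemma \ref{lem:swPFT} and the characterization of $\mathit{sw}$-GFTs in Lemmas \ref{lem:k1k2switchflow} and \ref{lem:ikGFT} is entirely determined by the switch point) lies wholly inside $\Gamma_s$. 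Hence conditions (2) and (3) transfer to $\Gamma_s$ verbatim. Condition (4) (no endpoint at $\tilde b^R_{k+2,k+1}$) is inherited trivially. This shows $\Gamma_1,\Gamma_2$ are again $b$-trees. For the $b^{LU}$ and $b^{RD}$ refinements, the additional restriction is on the \emph{types} of outputs allowed, which is also an inherited condition: any output of $\Gamma_s$ is also an output of $\Gamma$, so it lies in the allowed list.

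I expect no serious obstacle here: the statement is essentially a consolidation of Lemma \ref{lem:Btree} and the observation that every condition defining a $b$-tree is closed under passing to sub-PFTs rooted at a $Y_0$. The only mildly delicate point is ensuring that when a switch output occurs, the \emph{entire} output edge (switch vertex plus following edge to the Reeb chord or $e$-vertex) lies in the same sub-PFT $\Gamma_s$; this is automatic because a PFT rooted at an outgoing edge of a $Y_0$ includes all descendants, and by Lemma \ref{lem:swPFT} such an edge cannot straddle two sub-PFTs. Thus the proof is essentially bookkeeping, and can be stated in a few lines.
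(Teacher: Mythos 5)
Your overall strategy is the same as the paper's: the authors give no proof at all, asserting that the lemma ``follows from the definition of $b$-tree,'' and your write-up is essentially that definitional unwinding made explicit. Part (ii) is correct and complete: every condition in the definition of a $b$-tree (no $Y_1$'s, switch edges terminating at outputs, the restriction on which outputs are allowed, no endpoint at $\tilde b^R_{k+2,k+1}$) is a condition on vertices and endpoints that are inherited verbatim by the sub-PFT rooted at an outgoing edge of a $Y_0$, and you correctly note that a switch together with its following edge cannot straddle two sub-PFTs.

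There is, however, one step in part (i) that does not follow from what you cite. You conclude from ``conclusion (3) and the classification of switch points in Property \ref{pr:switches}'' that the only switches occurring in a $b$-tree are those of the three allowed outputs. But Property \ref{pr:switches}(2) guarantees the existence of $(k+1,j)$-switch points for \emph{every} $j\geq k+2$, and for $j>k+2$ these are not among $\mathit{sw}_{i,k}$, $\mathit{sw}_{k+1,k+2}$, $\mathit{sw}_{k+2,k+1}$. Conditions (2) and (3) of Lemma \ref{lem:Btree} do not formally exclude such a switch: condition (2) only forces its outgoing $(k,j)$-edge to end at a Reeb chord or $e$-vertex, and condition (3) would still be satisfied if that endpoint happened to be a $b$ Reeb chord. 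What actually rules this out is the geometric claim established in the base case of the proof of Lemma \ref{lem:RegionB}: the $(k,j)$-flow line issuing from a $(k+1,j)$-switch point with $j>k+2$ is trapped (by Property \ref{pr:monotonicityIIST}) in the region $B=D(1/2)\cap L(1/2)$, which contains no $(k,j)$ Reeb chords, and $S_k$ and $S_j$ meet no cusp edge, so this edge cannot terminate as condition (2) demands. With that one citation added, your argument for (i) closes, and the rest stands.
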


\subsubsection{$b^{LU}$-trees}  \label{sec:bLU13trees}

For $1 \leq i < j \leq n$, we again define regions $B^{LU}(i,j)$.   
%Take the $B^{RD}(i,j)$ to be defined exactly 
The definition is as in Section \ref{ssec:112btrees}, except that 
%For the regions $B^{LU}(i,j)$, 
we \textbf{replace all uses of the segment $B_2$ that appear in the definition from Section \ref{ssec:112btrees} with the Swallowtail Barrier $P,$} see Figure \ref{fig:BLUij}.
Recall  $R(sw_{i,j})$ from Figure \ref{fig:Rswk1k2}.

\begin{lemma} \label{lem:BLUijST}   %The intersection of any edge, $\gamma$, of a $b^{LU}$-tree (resp. a $b^{RD}$-tree) with $\widehat{N}(e^2_\alpha)$ must be an $(i,j)$-flow line for some $i<j$, and must have its image contained in the region $B^{LU}(i,j)$ (resp. $B^{RD}(i,j)$).  
 %The intersection of A
 Any edge, $\gamma$, of a $b^{LU}$-tree that does not start with a switch must 
 \begin{enumerate}
 \item be an $(i,j)$-flow line for some $i<j$ and 
\item have the intersection of its image with $\widehat{N}(e^2_\alpha)$ contained in the region 
\[
\mbox{$B^{LU}(i,j)$, if $(i,j)$ does not have the form $(k+1,k+2)$ or $(i,k)$ with $i<k$;}
\]
\[ \mbox{$R(\mathit{sw}_{i,j})$, if $(i,j)=(k+1,k+2)$ or $(i,j) = (i,k)$ with $i<k$.}
\]   
\end{enumerate}
Moreover, the image of $\gamma$ is entirely in $\widehat{N}(e^2_\alpha)$ unless $\gamma$ is part of a $b^X_{i,j}$-line for $X \in \{L,U\}$.
\end{lemma}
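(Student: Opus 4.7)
The plan is to adapt the proof of Lemma \ref{lem:BLUij} to the swallowtail setting, replacing the role of the monotonicity barrier $B_2$ by the Swallowtail Barrier $P$ from Property \ref{pr:STBnew}, and absorbing the two new "exceptional" index pairs $(k+1,k+2)$ and $(i,k)$ (with $i<k$) into the argument via Lemmas \ref{lem:Rswk1k2} and \ref{lem:ikGFT}. As before, I would induct on the number $N$ of $Y_0$-vertices in the subtree of the $b^{LU}$-tree rooted at $\gamma$.

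For the base case ($N=0$), the edge $\gamma$ either terminates at a Reeb chord $b^L_{i,j}$ or $b^U_{i,j}$, or at one of the permissible switch vertices $\mathit{sw}_{i,k}$, $\mathit{sw}_{k+1,k+2}$. The switch cases are already handled: Lemma \ref{lem:ikGFT} places the $(i,k)$-switch flow line in $R(\mathit{sw}_{i,k})$, and Lemma \ref{lem:Rswk1k2} places the $(k+1,k+2)$-switch flow line in $R(\mathit{sw}_{k+1,k+2})$. For a $b^X_{i,j}$-endpoint with $(i,j)$ non-exceptional, I would mimic the original argument by introducing an auxiliary region $A^{LU}(i,j)\subset B^{LU}(i,j)$ (taking $A^{LU}(i,j)=B^{LU}(i,j)\cap\{x_1\ge 1/4\}$ if $S_i,S_j$ cross or cusp above $e^1_U$, $A^{LU}(i,j)=B^{LU}(i,j)\cap\{x_1\ge -3/8\}$ if exactly one of them ends at a cusp there, and $A^{LU}(i,j)=B^{LU}(i,j)$ otherwise) and checking that $-\nabla F_{i,j}$ points outward along every boundary segment. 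The checks on the left, top and right sides follow from Properties \ref{pr:0cells}, \ref{pr:1cells} and \ref{pr:monotonicityI}; the key new check is along the portion formed by $P$, where Property \ref{pr:STBnew} provides exactly what was previously supplied by $B_2$. Proposition \ref{prop:bUS} then confines the stable manifold to $A^{LU}(i,j)$, giving the final statement about $b^X_{i,j}$-lines.

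For the inductive step, suppose the edge $\gamma$ is an $(i,j)$-flow line ending at a $Y_0$-vertex whose outgoing edges are an $(i,m)$-flow $\gamma_1$ and an $(m,j)$-flow $\gamma_2$. By Lemma \ref{lem:observations}(ii) and the inductive hypothesis, each $\gamma_r$ lies in the region prescribed by the lemma, so the $Y_0$-image lies in the intersection of those two regions. I would then verify, by case analysis on whether $(i,m)$, $(m,j)$ are exceptional, that this intersection lies in the region prescribed for $(i,j)$. When none of the pairs involved are exceptional, the required containment $B^{LU}(i,m)\cap B^{LU}(m,j)\subset B^{LU}(i,j)$ (with the auxiliary $A^{LU}$-restrictions) goes through exactly as in the proof of Lemma \ref{lem:BLUij}, using the lexicographic ordering from Property \ref{pr:Location1}. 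The case $(i,j)=(k+1,k+2)$ is vacuous at the $Y_0$-step since no integer $m$ satisfies $k+1<m<k+2$. When one of $(i,m)$ or $(m,j)$ is $(k+1,k+2)$ or of the form $(i',k)$ with $i'<k$, I would use the inclusions $R(\mathit{sw}_{k+1,k+2})\subset B^{LU}(k+1,k+2)$ and $R(\mathit{sw}_{i',k})\subset B^{LU}(i',k)$ (which are direct from the definitions of these regions) to reduce back to the non-exceptional containment.

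The main obstacle will be the case $(i,j)=(i,k)$ with $i<k$ at the inductive step: here both outgoing pairs $(i,m)$ and $(m,k)$ are non-exceptional and I need $B^{LU}(i,m)\cap B^{LU}(m,k)\subset R(\mathit{sw}_{i,k})$, a strictly stronger conclusion than the $B^{LU}(i,k)$-containment produced by the standard argument. To get it I would observe that the extra boundary of $R(\mathit{sw}_{i,k})$ beyond that of $B^{LU}(i,k)$ consists of the upper half of the $(k,k+1)$-cusp locus, the horizontal segment through the $(i,k)$-switch point, and the vertical segment $\{x_1=-17/64\}$; along each of these Property \ref{pr:leftC}, Corollary \ref{cor:mnPQ} and the definition of the switch point force $-\nabla F_{i,k}$ outward, so that $B^{LU}(i,m)\cap B^{LU}(m,k)$, which already lies weakly above and to the right of $c_{i,k}$ by lexicographic ordering, cannot protrude past these new barriers. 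This is essentially a refinement of the argument in Lemma \ref{lem:ikGFT} and should close the induction.
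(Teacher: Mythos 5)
Your proposal follows essentially the same route as the paper's proof: induction on the number of $Y_0$-vertices below $\gamma$, auxiliary regions $A^{LU}(i,j)$ along whose boundaries $-\nabla F_{i,j}$ points outward (with the Swallowtail Barrier of Property \ref{pr:STBnew} replacing $B_2$, and with $A^{LU}(i,j)=R(\mathit{sw}_{i,j})$ for the pairs $(i,k)$ and $(k+1,k+2)$ via Lemmas \ref{lem:ikGFT} and \ref{lem:Rswk1k2}), and the key containment $A^{LU}(i,m)\cap A^{LU}(m,j)\subset A^{LU}(i,j)$ at the inductive step. One small inaccuracy: the inclusions $R(\mathit{sw}_{i',k})\subset B^{LU}(i',k)$ and $R(\mathit{sw}_{k+1,k+2})\subset B^{LU}(k+1,k+2)$ that you call "direct from the definitions" are false as stated, since the switch regions extend far below $x_2=1/2$ while the $B^{LU}$ regions do not; the paper avoids this by first observing that at most one of the two outgoing pairs at a $Y_0$ can be exceptional, so the intersection lies in $\{x_2\geq 1/2\}$, where the switch regions do coincide with the corresponding $B^{LU}$ regions cut off on the left at the cusp locus $x_1=-3/8$.
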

\begin{proof}
%Note from the definition of $b$-tree that: 
%\begin{enumerate}
%\item[(i)] Aside from the switches at $\mathit{sw}_{i,j}$-outputs, all other internal vertices in a $b$-tree must be $Y_0$'s. 
%\item[(ii)]  For any $Y_0$ in a $b$-tree, $b^{LU}$-tree, or $b^{RU}$-tree, both of the PFTs that start with the outgoing edges of the $Y_0$ are themselves $b$-trees, $b^{LU}$-trees, or $b^{RU}$-trees, respectively.
%\end{enumerate}

We consider slightly smaller regions $A^{LU}(i,j)$, defined by
\begin{enumerate}
\item $A^{LU}(i,j) = R(\mathit{sw}_{i,j})$, if $(i,j) = (i,k)$ or $(i,j) = (k+1,k+2)$;
\item $A^{LU}(i,j) = B^{LU}(i,j)\cap\{x_1\geq 1/4\}$, if $(i,j) = (k,k+1)$ or $(i,j) = (k,k+2)$; 
\item $A^{LU}(i,j) = B^{LU}(i,j)\cap\{x_1\geq -3/8\}$, if 
%precisely one of $i$ and $j$ belongs to $\{k,k+1\}$ and $\{i,j\} \not \subset \{k,k+1,k+2\}$.  (In other words, 
$\{i,j\} \cap \{k,k+1\} \neq \emptyset$,  and $(i,j)$ is not $(i,k), (k+1,k+2), (k,k+1)$, or $(k,k+2)$.
\item $A^{LU}(i,j) = B^{LU}(i,j)$, if $\{i,j\} \cap \{k,k+1\} = \emptyset$.
\end{enumerate}
Note that for any $i<j$, $-\nabla F_{i,j}$ points outwards along all parts of $\partial A^{LU}(i,j)$, with $-\nabla( F_{i}-\widetilde{F}_k)$ or $-\nabla( \widetilde{F}_k-F_j)$ pointing outward as well when $i=k+2$ or $j=k+2$.  [For $A^{LU}(i,j)$ as in (1), this is shown in Lemmas \ref{lem:Rswk1k2} and \ref{lem:ikGFT}; as in (2), use Properties \ref{pr:monotonicityI}, \ref{pr:STBnew}, and \ref{pr:1cells}; as in (4), add Property \ref{pr:monotonicityIIST} and \ref{pr:0cells}; as in (3), add Corollary \ref{cor:mnPQ}.]

We prove a slightly stronger version of the proposition, where the $B^{LU}(i,j)$ are replaced with the $A^{LU}(i,j)$, by induction on the number of $Y_0$'s in the PFT that starts with $\gamma$.  In the base case, Lemma \ref{lem:observations} (i) implies the only $b^{LU}$-trees without $Y_0$'s are the $b^L_{i,j}$-lines (here $\{i,j\} \cap \{k,k+1\} = \emptyset$); the $b^U_{i,j}$-lines, and the $(i,k)$- and $(k+1,k+2)$-switch GFTs.  
By Proposition \ref{prop:bUS}, we see that the $b^L_{i,j}$- and $b^U_{i,j}$-lines all enter $\widehat{N}(e^2_\alpha)$ along $\partial A^{LU}(i,j)$, and that the parts of their images that lie outside of $\widehat{N}(e^2_\alpha)$ are all pairwise disjoint.  Lemma \ref{lem:Rswk1k2} and Lemma \ref{lem:ikGFT} show that the branchs of the $(i,k)$- and $(k+1,k+2)$-switch GFTs that do not start with switches are contained in $A^{LU}(i,k)$ and $A^{LU}(k+1,k+2)$.

For the inductive step, Lemma \ref{lem:observations} implies that if the PFT starting with $\gamma$ contains at least $1$ $Y_0$, then $\gamma$ itself must end at a $Y_0$.  Moreover, the inductive hypothesis applies to the two outgoing edges at the $Y_0$, so, for some $i<m<j$, they must be $(i,m)$ and $(m,j)$-flow lines with the $Y_0$ point located in $A^{LU}(i,m) \cap A^{LU}(m,j)$.  Since we have seen that $-\nabla F_{i,j}$ points out along $\partial A^{LU}(i,j)$, the argument is completed by the following.

\medskip

\noindent {\bf Claim.}  For any $i<m<j$, $A^{LU}(i,m) \cap A^{LU}(m,j) \subset A^{LU}(i,j)$.

\medskip
To verify the claim, note that it is impossible that both $(i,m)$ and $(m,j)$ are of the forms $(i,k)$ or $(k+1,k+2)$.  Thus, $A^{LU}(i,m) \cap A^{LU}(m,j) \subset \{x_2 \geq 1/2\}$, so we show $\widehat{A}^{LU}(i,m) \cap \widehat{A}^{LU}(m,j) \subset  A^{LU}(i,j)$ where $\widehat{A}^{LU}(i,j) = A^{LU}(i,j) \cap \{x_2 \geq 1/2\}$.  Moreover, $\widehat{A}^{LU}(i,j) \cap \{x_1 \geq 1/4\} = B^{LU}(i,j) \cap \{x_1 \geq 1/4\}$, 
 and just as in Section \ref{ssec:112btrees}, the lexicographic ordering of the $\beta_{i,j}$ easily shows  $ B^{LU}(i,m) \cap B^{LU}(m,j) \subset B^{LU}(i,j)$.  (See Figure \ref{fig:BLUijPf}.)  It only remains to check that 
\begin{equation} \label{eq:ALUim}
\left(\widehat{A}^{LU}(i,m) \cap \widehat{A}^{LU}(m,j)\right) \cap \{x_1 < 1/4\} \subset  A^{LU}(i,j)\cap \{x_1 < 1/4\}.
\end{equation}
Note that each $\widehat{A}^{LU}(i,j) \cap \{x_1< 1/4\} = \{(x_1,x_2) \in \widehat{N}(e^2_\alpha)\, |\, \alpha_{i,j} \leq x_1 < 1/4, 1/2 \leq x_2\}$ where 
\[
\alpha_{i,j} = \left\{ \begin{array}{cl} -1, & \mbox{  if $\{i,j\}\cap\{k,k+1\}= \emptyset$,} \\
-3/8, & \mbox{  if  $\{i,j\}\cap\{k,k+1\} \neq \emptyset$ and $(i,j) \neq (k,k+1), (k,k+2)$,} \\
1/4, & \mbox{  if $(i,j)= (k,k+1)$ or $(i,j)= (k,k+2)$.}\end{array} \right.
\]
Thus, (\ref{eq:ALUim}) is equivalent to $\mathit{Max}\{ \alpha_{i,m}, \alpha_{m,j}\} \geq \alpha_{i,j}$ for all $i<m<j$.  This is easily verified by cases.  [It's clear when $ \alpha_{i,j} = -1$.  When $\alpha_{i,j} = -3/8$, since $\{i,j\}\cap\{k,k+1\} \neq \emptyset$, we also have $\{i,m,j\}\cap\{k,k+1\} \neq \emptyset$.  When $\alpha_{i,j} = 1/4$, we can only have $(i,m,j) = (k,k+1,k+2)$, and $\alpha_{k,k+1} = \alpha_{k,k+2} = 1/4$.]  
\end{proof}
 
Next, we define a disk datum $(D^{LU}, \{b^L_{i,j}, b^U_{i,j}, \mathit{sw}_{i,k}, \mathit{sw}_{k+1,k+2}\}, \{I_{i,j}\})$.   Let $R \subset\widehat{N}(e^2_\alpha)$ denote the region below $x_2=1/2$ that is  above and to the left of the Swallowtail Barrier $P$ and to the right of the cusp locus.  We define 
\[
D^{LU} = \left(\bigcup_{1\leq i < j \leq n}B^{LU}_{i,j} \bigcup R \right)\setminus \left( \bigcup_{1\leq i < j \leq n}\mathit{Sq}_{i,j} \right)  
\]
where the $\mathit{Sq}_{i,j} = (\beta^U_{i,j} -\e, \beta^U_{i,j} -\e) \times (\beta^R_{i,j} -\e, \beta^R_{i,j} -\e)$ are small squares containing $c_{i,j}$.  We use $b^L_{i,j}, b^U_{i,j}$ to denote (by slight abuse of notation) the unique intersection points of the $b^L_{i,j}$- and $b^U_{i,j}$-lines with $\partial D^{LU}$, and use $\mathit{sw}_{i,k}, \mathit{sw}_{k+1,k+2}$ to denote the $(i,k)$- and $(k+1,k+2)$-switch points which also lie on $\partial D$.  In all cases, the upper and lower indices agree with the subscripts.  The intervals $I_{i,j}$ are given by the upper and left edges of the closure $\overline{\mathit{Sq}}_{i,j}.$ 

There is a generalized $b$-manifold $A = \sqcup_r A_r$ given by paths $\gamma$ that are the intersection with $\widehat{N}(e^2_\alpha)$ of the top branch of a GFT that is a $b^{LU}$-tree beginning at some $c_{i,j}$.  (Note that except for the ends of the $b^{L}_{i,j}$- and $b^{R}_{i,j}$-lines all such top branches are in fact entirely contained in $D^{LU} \cup \mathit{Sq}_{i,j}$, by Lemma \ref{lem:BLUijST}.)  Upper and lower indices $i(\gamma)< j(\gamma)$ are assigned to each $\gamma \in A$ so that $\gamma$ is an $(i(\gamma), j(\gamma))$-flow line (or an $(i,\tilde{k})$- or $(\tilde{k},j)$-flow line if $j(\gamma)= k+2$ or $i(\gamma) = k+2$).  (Note that this definition of $(i(\gamma),j(\gamma))$ is well defined:  All $\gamma$ have there image in $D^{LU}$ and, in particular, may not cross the $(k,k+2)$-cusp locus.  Thus,  crossing the cusp locus \emph{cannot} cause, $\gamma$, to change from an $(i,\tilde{k})$-flow line to an $(i,k+1)$-flow line, or from a $(\tilde{k},j)$-flow line to a $(k+1,j)$-flow line.)

%We will define a generalized $b$-manifold for the disk datum $(D^{LU}, \{b^L_{i,j}, b^U_{i,j}, \mathit{sw}_{i,k}, \mathit{sw}_{k+1,k+2}\}, \{I_{i,j}\})$.  Suppose that  $\widehat{\gamma}$ is the top branch of a GFT that is a $b^{LU}$-tree beginning at some $c_{i,j}$, and let $\gamma = \widehat{\gamma} \cap D^{LU}$.  In addition, define upper and lower indices $1 \leq i(\gamma) < j(\gamma) \leq n$, so that $\gamma$ is a trajectory for $-\nabla F_{i(\gamma),j(\gamma)}$, i.e. $S_{i(\gamma)}$ and $S_{j(\gamma)}$ are the sheets of $\widetilde{L}$ that correspond to $\widehat{\gamma}$ in the GFT that it belongs to.   

\begin{lemma}  \label{lem:bLUgenb}
Let $A = \bigsqcup_{r} A_r$ denote the collection of such paths $\gamma$ with $i(\gamma)$ and $j(\gamma)$ as above.  Then, $A$ is a generalized $b$-manifold for the disk datum $(D^{LU}, \{b^L_{i,j}, b^U_{i,j}, \mathit{sw}_{i,k}, \mathit{sw}_{k+1,k+2}\}, \{I_{i,j}\})$.  Moreover, for $1 \leq i <j \leq n$,
\begin{equation}  \label{eq:bLUgenB} 
\varphi(\partial_A I_{i,j}) = \partial_{b^{LU}} c_{i,j}
\end{equation}
where the sum on the right is over $b^{LU}$-trees that start at $c_{i,j}$, and 
\[
\varphi: \Z/2\langle b^L_{i,j}, b^U_{i,j}, \mathit{sw}_{i,k}, \mathit{sw}_{k+1,k+2} \rangle \rightarrow \lchA(e^2_\alpha)
\]
 is the $\Z/2$-algebra homomorphism determined by
\[
\varphi(b^L_{i,j})=b^L_{i,j}; \quad  \varphi(b^U_{i,j})=b^U_{i,j}; \quad \varphi( \mathit{sw}_{i,k}) = a^{-,-}_{i,k+1}; \quad  \varphi(\mathit{sw}_{k+1,k+2}) =1.
\]
\end{lemma}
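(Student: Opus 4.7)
The plan is to follow the same strategy used for Proposition \ref{prop:MainGenB}, verifying each of the six axioms in Definition \ref{def:genbmfd} and then deducing the word formula. The two new complications are that $D^{LU}$ now includes the region $R$ in order to accommodate the $(k+1,k+2)$-switch flow line (see Lemma \ref{lem:Rswk1k2}), and that switch outputs at $\mathit{sw}_{i,k}$ and $\mathit{sw}_{k+1,k+2}$ play the role of Reeb chord punctures in the generalized $b$-manifold. Condition (1) is built into the assignment of indices, and condition (6) follows from $1$-regularity of $(\tilde{L},g)$ in Theorem \ref{thm:PropertiesofLtilde}.

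For condition (2), I would argue that for any $b^{LU}$-tree $\Gamma$ starting at $c_{p,q}$, the top branch $\widehat{\gamma}$ exits $\mathit{Sq}_{p,q}$ through a unique point of $I_{p,q}$: Property \ref{pr:monotonicityI} (applied to indices between $(p,q)$ and itself) shows $-\nabla F_{p,q}$ points outward along every side of $\partial \mathit{Sq}_{p,q}$, and Lemma \ref{lem:BLUijST} forces $\widehat{\gamma}$ to remain in $B^{LU}(p,q) \cup R(\mathit{sw}_{p,q}) \subset D^{LU}$, so uniqueness and the exit location follow. Conditions (3)–(5) follow by tracing $\gamma$ forward in $t$: Lemma \ref{lem:observations}(i) shows that the only interior vertices along the top branch are $Y_0$'s (which correspond to transverse intersections of two paths in $A$ having matching upper/lower indices and producing a new path of the prescribed indices in $A$, via Lemma \ref{lem:observations}(ii)), and the only ways $\widehat{\gamma}$ can terminate are at Reeb chords $b^L_{i(\gamma),j(\gamma)}$ or $b^U_{i(\gamma),j(\gamma)}$, or at the switch points $\mathit{sw}_{i,k}$ (Lemma \ref{lem:ikGFT}) or $\mathit{sw}_{k+1,k+2}$ (Lemma \ref{lem:Rswk1k2}). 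These are exactly the four kinds of marked points of the disk datum.

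The formula \eqref{eq:bLUgenB} is then a straightforward induction on the number of $Y_0$-vertices. In the base case, $\widehat{\gamma}$ ends at one of the four types of marked points, and the corresponding word on the right-hand side of the sum defining $\partial_{b^{LU}}c_{i,j}$ is precisely $\varphi$ applied to that marked point; the values $\varphi(\mathit{sw}_{i,k}) = a^{-,-}_{i,k+1}$ and $\varphi(\mathit{sw}_{k+1,k+2}) = 1$ are exactly the words contributed by the outgoing branches of the two switch GFTs identified in Lemmas \ref{lem:ikGFT} and \ref{lem:k1k2switchflow}. The inductive step uses that at a $Y_0$ the word of $\Gamma$ decomposes as a concatenation of the words of the two sub-trees, which matches the multiplicative behavior of $\partial_A$ at intersection points as in Definition \ref{def:genbmfd}.

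The main obstacle I anticipate is the careful verification that the top branch $\widehat{\gamma}$ of every $b^{LU}$-tree indeed stays in $D^{LU}$, especially near the region $R$ where $(k+1,k+2)$-flows could in principle wander: handling this requires combining Lemma \ref{lem:BLUijST} (which restricts $(k+1,k+2)$-flows to $R(\mathit{sw}_{k+1,k+2})$) with the observation that the Swallowtail Barrier $P$ from Property \ref{pr:STBnew} is exactly what makes $D^{LU}$ closed under these flows. A secondary subtlety is that edges with index $k+2$ may transition between $(i,k+2)$- and $(i,\tilde{k})$-flow lines (or symmetrically with the lower index), but since $D^{LU}$ is separated from the $(k,k+2)$-cusp locus by its construction, no such transition can occur within a top branch, so the indices $(i(\gamma),j(\gamma))$ are unambiguously defined throughout.
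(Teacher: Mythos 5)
Your proposal is correct and follows essentially the same route as the paper's proof: verify the axioms of Definition \ref{def:genbmfd} as in Proposition \ref{prop:MainGenB} with Lemma \ref{lem:BLUijST} replacing Lemma \ref{lem:BLUij}, use Lemma \ref{lem:observations} to classify how top branches terminate (including the two switch outputs), and establish \eqref{eq:bLUgenB} by induction on $Y_0$-vertices with the base case supplied by Lemmas \ref{lem:ikGFT} and \ref{lem:k1k2switchflow}. The two subtleties you flag (confinement to $D^{LU}$ and the $\tilde{k}$-relabeling issue) are handled in the paper exactly as you suggest, via Lemma \ref{lem:BLUijST} and the remark that paths in $D^{LU}$ cannot cross the $(k,k+2)$-cusp locus.
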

\begin{proof}
That $A = \sqcup A_r$ is indeed a generalized $b$-manifold is verified as in the proof of Proposition \ref{prop:MainGenB}.  Items (1) and (6) from Definition \ref{def:genbmfd} are verified exactly as in Proposition \ref{prop:MainGenB}, while in the verification of (2) we need to use Lemma \ref{lem:BLUijST} instead of Lemma \ref{lem:BLUij}.  Finally, to check items (3)-(5) concerning the endpoints of paths, we note that if $\widehat{\gamma}$ is the top most branch of a $b^{LU}$-tree, $\Gamma$, then Lemma \ref{lem:observations} shows that when $t \rightarrow +\infty$, $\widehat{\gamma}$ must either (a.) limit to some $b^L_{i,j}$ or $b^U_{i,j}$, (b.) end at a $Y_0$ such that the outgoing edges are themselves top branches of $b^{LU}$-trees, or (c.) end at a switch vertex at either $\mathit{sw}_{i,k}$ or $\mathit{sw}_{k+1,k+2}$.  Thus, paths in $A=\sqcup A_r$ indeed correspond to (i) the points $\{b^L_{i,j}, b^U_{i,j}, \mathit{sw}_{i,k}, \mathit{sw}_{k+1,k+2}\} \subset \partial D^{LU}$ and (ii) triples $(\gamma_1,\gamma_2,x)$ with $\gamma_1,\gamma_2 \in A$, $x \in \gamma_1 \cap \gamma_2$, and $j(\gamma_1) = i(\gamma_2)$.

To verify (\ref{eq:bLUgenB}), let $\gamma$ be the top of a $b^{LU}$-tree $\Gamma$ that starts at $c_{i,j}$.  Then, verify using induction on the number of $Y_0$'s in $\Gamma$ that
\[
\varphi(w(\gamma)) = w(\Gamma)
\]
where on the left $w(\gamma)$ is the word associated to $\gamma \in A$.  In the base case, Lemmas \ref{lem:k1k2switchflow} and \ref{lem:ikGFT} give that $w(\Gamma_{k+1,k+2}) = 1$ and $w(\Gamma_{i,k}) = a^{-,-}_{i,k+1}$ when $\Gamma_{k+1,k+2}$ and $\Gamma_{i,k}$ are respectively the $(k+1,k+2)$- and $(i,k)$-switch GFTs.  The inductive step is an immediate consequence of equation (\ref{eq:pbIij}) and the definitions of $w(\gamma)$ (defined right after Definition \ref{def:genbmfd}) and $w(\Gamma)$.
\end{proof}

%Let 
%\[
%\partial_{b^{LU}} c_{i,j} = \sum_{\Gamma} w(\Gamma)
%\]
%where the sum is over $b^{LU}$-trees beginning at $c_{i,j}$.
\begin{proposition}  \label{prop:bLUCompST}
For any $1 \leq i < j \leq n$, $\partial_{b^{LU}} c_{i,j}$ is equal to the $(i,j)$-entry of the matrix
\[
(I+B_U)(I+B_L)(I+A_{-,-}E_{k+1,k}+E_{k+1,k+2})
\]
where the matrices $B_U, B_L$ and $A_{-,-}$ are as in Theorem \ref{thm:SwallowComp}.
\end{proposition}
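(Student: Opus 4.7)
The plan is to mirror the strategy from the proof of Theorem \ref{thm:112btrees}. By Lemma \ref{lem:bLUgenb}, $\partial_{b^{LU}} c_{i,j}$ equals $\varphi(\partial_A I_{i,j})$ for any generalized $b$-manifold $A$ associated to the disk datum $(D^{LU}, \{b^L_{i,j}, b^U_{i,j}, \mathit{sw}_{i,k}, \mathit{sw}_{k+1,k+2}\}, \{I_{i,j}\})$, and by Proposition \ref{prop:dAind}, $\partial_A I_{i,j}$ is independent of the choice of $A$. Thus it suffices to construct a single convenient $A$ and verify that $\varphi(\partial_A I_{i,j})$ equals the $(i,j)$-entry of $(I+B_U)(I+B_L)(I+A_{-,-}E_{k+1,k}+E_{k+1,k+2})$.

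To build $A$, I first recycle the paths $\gamma^U_{i,j}$, $\gamma^L_{i,j}$, and $\gamma_{i,m,j}$ from the Claim at the end of the proof of Theorem \ref{thm:112btrees}: the vertical segments from $b^U_{i,j}$ to $I_{i,j}$, the piecewise-linear segments from $b^L_{i,j}$ (where defined) to $I_{i,j}$, and the straight intersection paths from $\gamma^U_{i,m}\cap\gamma^L_{m,j}$ to $I_{i,j}$. These are verified as in the original argument to form a generalized $b$-manifold for the sub-datum without the switch points, and they contribute $\sum_{i<m<j} b^U_{i,m} b^L_{m,j} + b^U_{i,j} + b^L_{i,j}$ to $\partial_A I_{i,j}$, i.e.\ the $(i,j)$-entry of $(I+B_U)(I+B_L)$, with the Type (13) conventions automatically recorded (entries like $b^L_{m,k+1}$ and $b^L_{m,k+2}$ are simply absent from the count because the corresponding $\gamma^L$ does not exist).

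Next I append two families of switch-related paths: for each $p<k$, a path $\gamma^{sw}_{p,k}$ joining $\mathit{sw}_{p,k}$ to $I_{p,k}$, and one path $\gamma^{sw}_{k+1,k+2}$ joining $\mathit{sw}_{k+1,k+2}$ to $I_{k+1,k+2}$. The path $\gamma^{sw}_{p,k}$ will be routed to cross each previously defined path of lower index $p$ (that is, $\gamma^U_{m,p}$, $\gamma^L_{m,p}$ when defined, and $\gamma_{m,r,p}$) transversally in exactly one point, and to be disjoint from all other paths whose $j$-index equals $p$; similarly $\gamma^{sw}_{k+1,k+2}$ is drawn to meet each $\gamma^U_{m,k+1}$ transversally in a single point and to avoid all other relevant paths (note that by the Type (13) conventions no $\gamma^L_{m,k+1}$ or $\gamma_{m,r,k+1}$ exists, so these are the only lower-index-$(k+1)$ paths). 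I then adjoin inductively the intersection-bookkeeping paths required by Definition \ref{def:genbmfd}(3)(ii), choosing each new path to exit on the appropriate interval $I_{i,k}$ or $I_{i,k+2}$ while its further intersections with existing paths are arranged to produce only products of already-accounted-for words.

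With this $A$ in hand, each term of $\partial_A I_{i,j}$ is a product $w\cdot s$, where $w$ is a word in $\{b^U_{*,*},b^L_{*,*}\}$ arising as an entry of $(I+B_U)(I+B_L)$ and $s\in\{1,\mathit{sw}_{p,k},\mathit{sw}_{k+1,k+2}\}$ is produced by the corresponding switch path (with $s=1$ corresponding to paths that never meet a $\gamma^{sw}_{*,*}$). Collecting all such contributions assembles the matrix product $(I+B_U)(I+B_L)(I+M)$ in $\Z/2\langle\cdots\rangle$, where $M$ is the formal matrix having $\mathit{sw}_{p,k}$ in position $(p,k)$ for each $p<k$ and $\mathit{sw}_{k+1,k+2}$ in position $(k+1,k+2)$. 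Applying $\varphi$ turns $M$ into $A_{-,-}E_{k+1,k}+E_{k+1,k+2}$ and yields the claimed formula. The main obstacle I expect is the combinatorial bookkeeping of the switch paths together with their induced intersection paths, so that the transverse-intersection count matches the algebraic product exactly; this is a straightforward but attentive lexicographic induction on indices, made tractable by the staircase positioning of the Reeb chords from Properties \ref{pr:Location1} and \ref{pr:Location2} and the restricted locations of switch points from Property \ref{pr:switches}.
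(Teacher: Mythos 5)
Your proposal is correct and follows essentially the same route as the paper: reduce via Lemma \ref{lem:bLUgenb} and Proposition \ref{prop:dAind} to exhibiting one explicit generalized $b$-manifold, and then read the matrix product off its intersection pattern. The only difference is organizational — the paper introduces the switch paths at ``Level 0'' alongside the $\gamma^U_{i,j}$ and $\gamma^L_{i,j}$ and obtains the length-three words from intersections of $\gamma^U_{i,h}$ with paths $\lambda_{h,m,k}$ ending at $\gamma^L_{h,m}\cap\gamma^{\mathit{sw}}_{m,k}$, whereas you obtain them from intersections of $\gamma_{i,m,p}$ with $\gamma^{\mathit{sw}}_{p,k}$; both parenthesizations give the same words, and the disjointness checks you defer at the end are precisely the lexicographic-ordering arguments the paper carries out.
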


\begin{proof}
From Lemma \ref{lem:bLUgenb} and Proposition \ref{prop:dAind}, we see that
\[
\partial_{b^{LU}} c_{i,j} = \varphi( \partial_A I_{i,j})
\]
where $A$ is \emph{any} generalized $b$-manifold associated to the disk datum $(D^{LU}, \{b^L_{i,j}, b^U_{i,j}, \mathit{sw}_{i,k}, \mathit{sw}_{k+1,k+2}\}, \{I_{i,j}\})$. 

We construct an explicit generalized $b$-manifold $A= \sqcup A_r$ in several steps that we call {\it levels}.  At Level 0, we will begin by choosing paths to connect each of the points $\{b^L_{i,j}, b^U_{i,j}, \mathit{sw}_{i,k}, \mathit{sw}_{k+1,k+2}\}$ to the appropriate $I_{i,j}$.  Then, at Level 1 we identify all intersections between paths that share a common upper and lower index, and choose paths to connect each of these intersection points to the required $I_{i,j}$. 
The construction is completed once we reach a Level where no new intersections arise.

We say that an intersection of paths $\gamma_1,\gamma_2 \in A$ is {\bf $A$-relevant} if $j(\gamma_1) = i(\gamma_2)$
and $A$-irrelevant otherwise.

\medskip

\noindent {\bf Level 0} 
\begin{enumerate}
\item Connect each $b^{U}_{i,j}$ to $I_{i,j}$ by a straight vertical segment, $\gamma^U_{i,j}$.
\item Connect each $b^{L}_{i,j}$ to $I_{i,j}$  by a piecewise linear path, $\gamma^L_{i,j}$, (smoothed at the corner) consisting of (i) the segment from $b^{L}_{i,j}$ to $(-1/2, \beta^R_{i,j})$, then (ii) the straight horizontal segment from $(-1/2, \beta^R_{i,j})$ to $I_{i,j}$.  Here, $\{i,j\} \cap \{k,k+1\} = \emptyset$.
\item For $(i,j) = (i,k)$ or $(i,j) = (k+1,k+2)$, connect $\mathit{sw}_{i,j}$ to $I_{i,j}$ by a piecewise linear path, $\gamma^{\mathit{sw}}_{i,j}$, (smoothed at the corners) consisting of (i) a horizontal segment from $\mathit{sw}_{i,j}$ to $x_1 = -1/4$, then (ii) the straight vertical segment from this point to $(-1/4, \beta^R_{i,j})$, then (iii) the straight horizontal segment from $(-1/4, \beta^R_{i,j})$ to $I_{i,j}$.  The second segment (ii) is identical for different values of $(i,j) = (i,k), (k+1,k+2)$, so we shift the segments slightly in the horizontal direction.  [The precise manner that this is done is irrelavent, since  intersections between any two $\gamma^{\mathit{sw}}_{i,j}$ are $A$-irrelevant.]
\end{enumerate}
Note that the contribution to $\varphi(\partial_A I_{i,j})$ from paths constructed at Level 0 is the $(i,j)$-entry of the matrix
\[
B_U+B_L+ A_{-,-}E_{k+1,k}+ E_{k+1,k+2}.
\]

\medskip

\noindent {\bf Level 1} 

The $A$-relevant intersections between paths defined at Level $0$ are as follows:
\begin{enumerate}
\item  For $i<m<j$, with $\{m,j\} \cap \{k,k+1\} = \emptyset$, $\gamma^U_{i,m}$ intersects $\gamma^L_{m,j}$ in an $\e$-neighborhood of $(\beta^U_{i,m},\beta^R_{m,j})$.
\item  For $i<m<j$, with $(m,j) = (m,k)$ or $(m,j) = (k+1,k+2)$, $\gamma^U_{i,m}$ intersects $\gamma^{\mathit{sw}}_{m,j}$ in an $\e$-neighborhood of $(\beta^U_{i,m},\beta^R_{m,j})$.
\item  For $i<m<j$, with $(m,j) = (m,k)$ or $(m,j) = (k+1,k+2)$, $\gamma^L_{i,m}$ intersects $\gamma^{\mathit{sw}}_{m,j}$ at $(-1/4,\beta^R_{i,m})$.
\end{enumerate}
All of these intersections are unique.

\begin{figure}

\quad

\quad

\labellist
\small
\pinlabel $\mathit{sw}_{m,k}$ [r] at 42 2
\pinlabel $b^{L}_{i,m}$ [r] at -4 82
\pinlabel $b^U_{i,m}$ [b] at 322 240
\pinlabel $c_{i,m}$ [t] at 322 78
\pinlabel $c_{i,k}$ [t] at 372 124
\pinlabel $c_{m,k}$ [l] at 424 178
\pinlabel $\lambda_{i,m,k}$ [b] at 206 134
\pinlabel $\gamma^{L}_{i,m}$ [b] at 206 86
\pinlabel $\gamma^{\mathit{sw}}_{m,k}$ [b] at 206 182
\pinlabel $\gamma_{i,m,k}^{\mathit{sw}}$ [l] at 354 160
\endlabellist
\centerline{ \includegraphics[scale=.6]{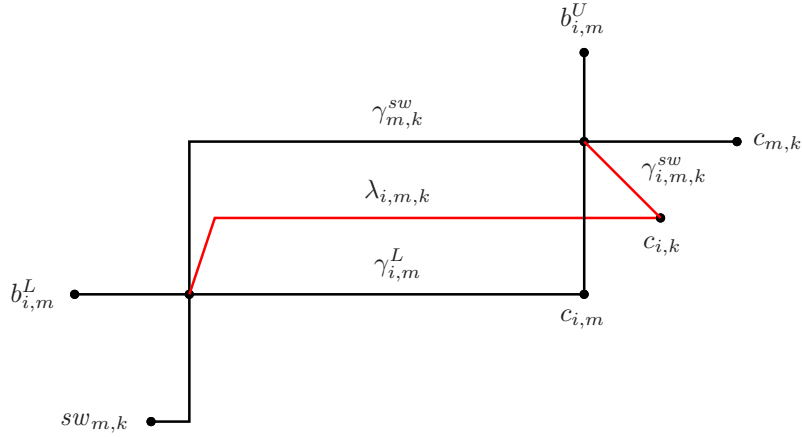} }

\quad

\caption{Several of the paths created at Levels 0 and 1.}
%The arrows indicate the direction along edges of all $(i,j)$-flows (negative gradients) other than the $(k+2,k+1)$ flow.  
%(right) The partial flow trees in $A$ described in (2) of Lemma \ref{lem:RegionA}.  
\label{fig:BLUA}
\end{figure}

Now, for each of the $A$-relevant intersection points we add a path to $A$ as follows:
\begin{enumerate}
\item  For $i<m<j$, with $\{m,j\} \cap \{k,k+1\} = \emptyset$, use a straight (negatively sloped) line segment from $x \in \gamma^U_{i,m} \cap \gamma^L_{m,j}$ to $I_{i,j}$.  Call this segment $\gamma^L_{i,m,j}$. 
\item  For $i<m<j$, with $(m,j) = (m,k)$ or $(m,j) = (k+1,k+2)$, also use a straight (negatively sloped) line segment from $x \in \gamma^U_{i,m} \cap \gamma^{sw}_{m,j}$ to $I_{i,j}$.  We  call these segments $\gamma^{\mathit{sw}}_{i,m,j}$.
\item  For $i<m<j$, with $(m,j) = (m,k)$ or $(m,j) = (k+1,k+2)$, we use a piece-wise linear path, $\lambda_{i,m,j}$, from $x \in \gamma^L_{i,m} \cap \gamma^{sw}_{m,j}$ to $I_{i,j}$.  Define $\lambda_{i,m,j}$ as (i) a vertical segment from $x$ up to $x_2 = \beta^R_{i,j}$ followed by  (ii) a horizontal segment to $I_{i,j}$.  Note that this agrees precisely with part of the previously defined $\gamma^{\mathit{sw}}_{i,j}$, so we can shift $\lambda_{i,m,j}$ slightly to make it disjoint from  $\gamma^{\mathit{sw}}_{i,j}$ except at its endpoint.  [This is not so important, as intersections between these two paths are $A$-irrelevant.]
\end{enumerate}
See Figure \ref{fig:BLUA}.

Note that since 
\[
w(\gamma^L_{i,m,j}) = b^U_{i,m} b^L_{m,j}; \quad w(\gamma^{\mathit{sw}}_{i,m,j}) = b^U_{i,m} \mathit{sw}_{m,j}; \quad \mbox{and} \quad w(\gamma^{\mathit{sw}}_{i,m,j}) = b^L_{i,m} \mathit{sw}_{m,j},
\]
    the contribution to $\varphi(\partial_A I_{i,j})$ from paths constructed at Level 1 is the $(i,j)$-entry of the matrix
\[
B_UB_L+ B_U(A_{-,-}E_{k+1,k}+ E_{k+1,k+2}) + B_L(A_{-,-}E_{k+1,k}+ E_{k+1,k+2}).
\]

\medskip

\noindent {\bf Level 2}
The only $A$-relevant intersections involving at least one path created at Level 1 are:
\begin{enumerate}
\item For $i<h<m<j$ with $(m,j) = (m,k)$ or $(m,j) = (k+1,k+2)$, $\lambda_{h,m,j}$ intersects $\gamma^U_{i,h}$ at a unique point in an $\e$-neighborhood of $(\beta^U_{i,j}, \beta^R_{h,j})$.
\end{enumerate} 
[The diagonal segments $\gamma^L_{i,m,j}$ and $\gamma^{\mathit{sw}}_{i,m,j}$ are seen to be disjoint from all other segments with lower index $i$ or upper index $j$ using the lexicographic ordering of the $c_{i,j}$ along $x_1=x_2$.  This is as in Proof of Theorem \ref{thm:112btrees}.]

We complete the construction of $A$ by adding, straight line segments, $\gamma_{i,h,m,j}$, from  $x \in \gamma^U_{i,h} \cap \lambda_{h,m,j}$ to $I_{i,j}$ for each such $i<h<m<j$.  These diagonal segments have no 
 $A$-relevant intersections with other paths in $A$ (again using the lexicographic ordering of the $c_{i,j}$), and they contribute the $(i,j)$-entry of the matrix
\[
B_UB_L(A_{-,-}E_{k+1,k}+ E_{k+1,k+2})
\]
to $\varphi(\partial_A I_{i,j})$.

To complete the proof, simply sum the contributions to $\varphi(\partial_A I_{i,j})$ that were identified at Levels 0-2.  The result is the $(i,j)$ entry of 
$(I+B_U)(I+B_L)(I+A_{-,-}E_{k+1,k}+E_{k+1,k+2})$.
\end{proof}

\subsubsection{$b^{RD}$-trees}  \label{sec:bRDtrees}

We turn now to $b^{RD}$-trees which were defined to be $b$-trees with all outputs at $b^R_{i,j}$ or $b^D_{i,j}$ Reeb chords (including possibly $b^D_{k+2,k+1}$, but not $\tilde{b}^R_{k+2,k+1}$) or at $\mathit{sw}_{k+2,k+1}$.  Due to the presence of $(k+2,k+1)$-flow lines as edges in $b^{RD}$-trees, we require a bit more preliminary work than usual before reaching a situation where we can apply Proposition \ref{prop:dAind}.

Recall the definition of regions $B^{RD}(i,j) \subset \widehat{N}(e^2_\alpha)$ for $1\leq i <j \leq n$ from Section \ref{ssec:112btrees} (as in Figure \ref{fig:BLUij}, but reflected across $x_1=x_2$).  We use the same definition here.  

Next, for $i<j$, with $(i,j) \neq (k+1,k+2)$, let 
\[
A^{RD}(i,j) = \left(B^{RD}(i,j) \cap \{x_2 \geq 1/4\} \right) \bigcup
\]
\[
 \left\{(x_1,x_2) \in \widehat{N}(e^2_\alpha) \, | \,\mathit{Min}(\beta^U_{i,j}, \beta^D_{i,j}) -\e \leq x_1 \leq \mathit{Max}(\beta^U_{i,j}, \beta^D_{i,j})+\e, \,  x_2 \leq 1/4\right\},
\]
while
\[
A^{RD}(k+1,k+2)= B^{RD}(k+1,k+2) \cap \{x_2 \geq 1/4\}.
\]
See Figure \ref{fig:BRDA}.

\begin{figure}

\quad

\labellist
\small
\pinlabel $x_1$ [b] at 282 16
\pinlabel $x_2$ [b] at 8 432
\pinlabel $1/2$ [t] at 32 -2
\pinlabel $\beta^D_{i,k+2}$ [t] at 128 -2
\pinlabel $\beta^D_{i,k+1}$ [tl] at 168 -2
\pinlabel $0$ [r] at -2 168
\pinlabel $1/4$ [r] at  -2 224
\pinlabel $\beta^R_{i,k+2}$ [r] at  -2 400 
\pinlabel $c_{i,k+2}$ [tl] at  174 394 
\pinlabel $B_1$ [r] at  80 300 
%\pinlabel $F_{k+1}=F_{k+2}$ [l] at  292 168 
\pinlabel $\partial\widehat{N}(e^2_\alpha)$ [l] at  278 322 
\endlabellist
\centerline{ \includegraphics[scale=.5]{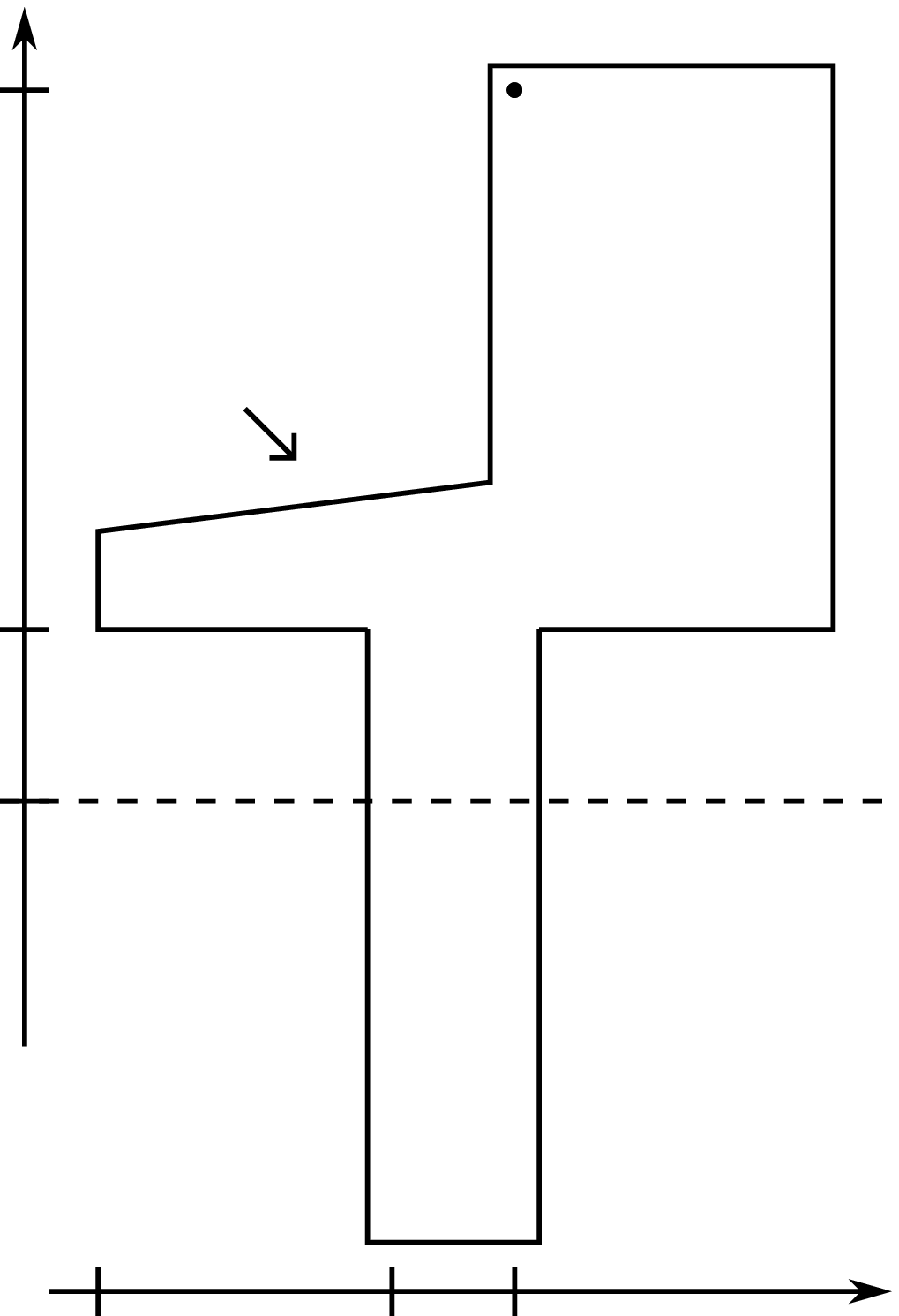} \quad \quad \quad \quad \quad
\labellist
\small
\pinlabel $x_1$ [l] at 326 46
\pinlabel $V$ [b] at 128 116
\pinlabel $\beta^D_{i,k+2}$ [t] at 40 36
\pinlabel $\beta^D_{i,k+1}$ [t] at 80 -2
\pinlabel $\beta^D_{k+2,k+1}$ [t] at 128 36
\pinlabel $\beta^D_{k+2,j}$ [t] at 176 -2
\pinlabel $\beta^D_{k+1,j}$ [t] at 272 36
\pinlabel $A^{RD}(i,k+1)$ [b] at 60 268
\pinlabel $A^{RD}(k+1,j)$ [b] at 224 268
\endlabellist
\includegraphics[scale=.5]{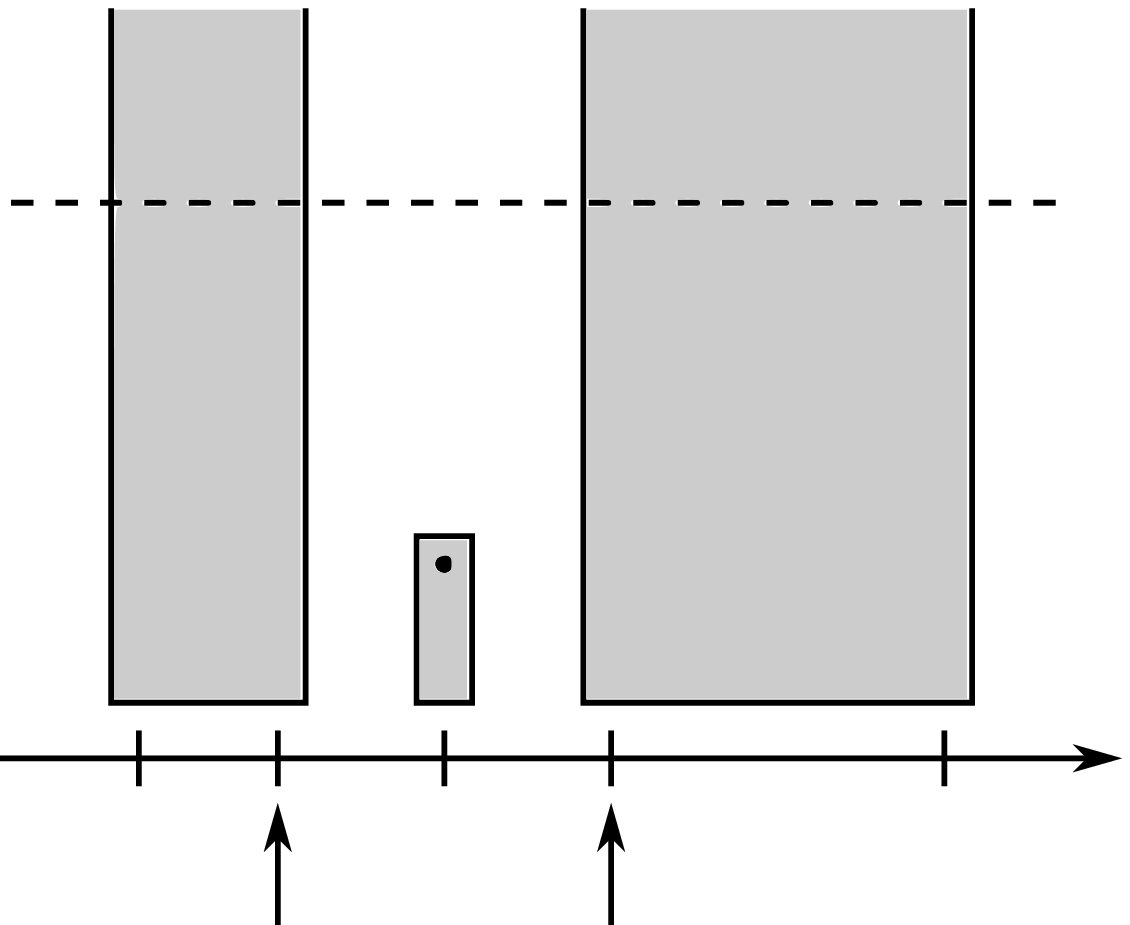} 
}

\quad

\caption{(left) The region $A^{RD}(i,k+2)$.  The $(k+1,k+2)$-crossing locus is pictured as a dotted line, and the segment $B_1$ is from Property \ref{pr:monotonicityIV}.  Note that $\beta^D_{i,k+2} = \beta^U_{i,k+1}$ and $\beta^D_{i,k+1} = \beta^U_{i,k+2}$ with the $\beta^U$ appearing in lexicographical order.  (right)  Below $x_2 = 1/4$, the regions $A^{RD}(i,k+1)$ and $A^{RD}(k+1,j)$ agree with $A^{RD}(i,k+2)$ and $A^{RD}(k+2,j)$, for $i<k+1$ and $k+2<j$.  These regions are disjoint from the strip $V$ that contains $\tilde{c}_{k+2,k+1}$.}
\label{fig:BRDA}
\end{figure}

\begin{lemma}  \label{lem:bRDijST}
Any edge, $\gamma$, of a $b^{RD}$-tree that does not begin with a switch either 
\begin{enumerate}
\item has $i<j$, and the intersection of its image with $\widehat{N}(e^2_\alpha)$ is in $A^{RD}(i,j)$, or 
\item is the $(k+2,k+1)$-switch flow line, or 
\item is the $b^D_{k+2,k+1}$-line which is contained in the strip 
\[
V = [\beta^D_{k+2,k+1}-\e,\beta^D_{k+2,k+1}+\e] \times [-1-1/32, \tilde{\beta}^R_{k+2,k+1} +\e].
\]
\end{enumerate}
Moreover, the image of $\gamma$ is entirely in $\widehat{N}(e^2_\alpha)$ unless $\gamma$ is part of a $b^X_{i,j}$-line for $X \in \{R,D\}$.
\end{lemma}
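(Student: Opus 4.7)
The plan is to mirror the inductive argument used to prove Lemma \ref{lem:BLUijST}, now accommodating the fact that $(k+2,k+1)$-flow lines may appear as edges in a $b^{RD}$-tree (giving rise to cases (2) and (3)). The induction is on the number of $Y_0$-vertices in the PFT starting with $\gamma$.

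For the base case, assume no $Y_0$'s appear below $\gamma$. By Lemma \ref{lem:observations}(i), the only internal vertices in a $b^{RD}$-tree besides switches are $Y_0$'s, and by Lemma \ref{lem:swPFT} consecutive switches along a single edge are impossible, so $\gamma$ continues uninterrupted to either an output or a single terminal switch vertex. Because the tree is of $b^{RD}$-type, the possible endings are $b^R_{i,j}$, $b^D_{i,j}$ (possibly with $(i,j)=(k+2,k+1)$), or a switch output at $\mathit{sw}_{k+2,k+1}$. When $\gamma$ ends at $b^R_{i,j}$ or $b^D_{i,j}$ with $i<j$, it lies on the corresponding stable flow line, which by Proposition \ref{prop:bUS} enters $\widehat{N}(e^2_\alpha)$ along $\partial A^{RD}(i,j)$; I would verify that $-\nabla F_{i,j}$ points into $A^{RD}(i,j)$ along every boundary segment, combining Properties \ref{pr:monotonicityI}, \ref{pr:monotonicityIIST}, \ref{pr:monotonicityIV}, \ref{pr:0cells}, and \ref{pr:1cells}, placing $\gamma$ in $A^{RD}(i,j)$ and giving case (1). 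The ``moreover'' clause (the image exits $\widehat{N}(e^2_\alpha)$ only for $b^R$- and $b^D$-lines) is part of Proposition \ref{prop:bUS}. The remaining two sub-cases are handled directly by Lemma \ref{lem:PFTrees}(1) and (3), which place $\gamma$ in the strip $V$ (case (3)) or identify it with (part of) the $(k+2,k+1)$-switch flow line (case (2)).

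For the inductive step, $\gamma$ ends at a $Y_0$-vertex $v$. Since $S_{k+1}$ and $S_{k+2}$ are adjacent both above and below the $(k+1,k+2)$-crossing, no $(k+1,k+2)$- or $(k+2,k+1)$-edge can contain a $Y_0$, so $\gamma$ must be an $(i,j)$-flow with $i<j$ and $(i,j)\neq(k+1,k+2)$. The two outgoing edges at $v$ are $(i,m)$- and $(m,j)$-flows (with $F_i(v)>F_m(v)>F_j(v)$), and by Lemma \ref{lem:observations}(ii) they begin $b^{RD}$-trees, so the inductive hypothesis applies. If both outgoing edges are of type (1), then $v\in A^{RD}(i,m)\cap A^{RD}(m,j)\subset A^{RD}(i,j)$. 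If one outgoing edge is of type (2) or (3), it is a $(k+2,k+1)$-flow, which forces $(i,m,j)=(k,k+1,k+2)$ (with $m$ read through the crossing), and a direct inspection using the location of $\tilde{c}_{k+2,k+1}$, the strip $V$, and the $(k+2,k+1)$-switch flow line shows $v\in A^{RD}(k,k+2)$. Combining this with the outward-pointing property of $-\nabla F_{i,j}$ along $\partial A^{RD}(i,j)$ confines $\gamma$ to $A^{RD}(i,j)$ and completes the step.

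The main obstacle will be the case analysis required to verify both the containment $A^{RD}(i,m)\cap A^{RD}(m,j)\subset A^{RD}(i,j)$ and the outward-pointing property of $-\nabla F_{i,j}$ along $\partial A^{RD}(i,j)$, particularly below $x_2=1/4$, where the horizontal extent is governed by $\mathrm{Min}(\beta^U_{i,j},\beta^D_{i,j})$ and $\mathrm{Max}(\beta^U_{i,j},\beta^D_{i,j})$. There the identity $\beta^D_{i,j}=\beta^U_{\sigma_D(i),\sigma_D(j)}$ (with $\sigma_D$ transposing $k+1$ and $k+2$) must be combined with the lexicographic orderings of the $\beta$'s used in the proof of Lemma \ref{lem:BLUijST}. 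The truncation of $A^{RD}(k+1,k+2)$ at $x_2=1/4$ and the exceptional containment when one outgoing edge is a $(k+2,k+1)$-flow each require separate bookkeeping.
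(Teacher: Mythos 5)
Your overall strategy (induction on the number of $Y_0$'s, confinement regions $A^{RD}(i,j)$ with the gradient controlled along their boundaries, the strip $V$, and special treatment of $(k+2,k+1)$-flows) is the same as the paper's. But the case of the inductive step in which one outgoing branch at the $Y_0$ is a $(k+2,k+1)$-flow is handled incorrectly. If the $(i,m)$-branch is the $(k+2,k+1)$-flow, then $i=k+2$, $m=k+1$, the other branch is a $(k+1,j)$-flow, and the incoming edge $\gamma$ is a $(k+2,j)$-flow with $j>k+2$; if instead the $(m,j)$-branch is the $(k+2,k+1)$-flow, then $\gamma$ is an $(i,k+1)$-flow with $i\le k$. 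In neither configuration is $(i,m,j)=(k,k+1,k+2)$ — that triple would correspond to a $(k+1,k+2)$-branch above the crossing locus, not a $(k+2,k+1)$-branch — so confining $\gamma$ to $A^{RD}(k,k+2)$ is confining it to the wrong region. Two facts are needed to close this case, and both are absent from your proposal: first, the $(k+2,k+1)$-branch cannot be (part of) the $b^D_{k+2,k+1}$-line, because that line lies in the strip $V$, which is disjoint from every $A^{RD}(i',j')$, while the other outgoing branch forces the $Y_0$ point into some $A^{RD}(i',j')$; second, since the $Y_0$ then lies on the $(k+2,k+1)$-switch flow line, hence below the crossing locus, one uses that $A^{RD}(k+1,j)$ and $A^{RD}(k+2,j)$ coincide there (because $\{\beta^U_{k+1,j},\beta^D_{k+1,j}\}=\{\beta^U_{k+2,j},\beta^D_{k+2,j}\}$) to conclude $\gamma\subset A^{RD}(k+2,j)$.

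Two smaller points. The gradient $-\nabla F_{i,j}$ must point \emph{out of} $A^{RD}(i,j)$ along all of its boundary, not into it: the portion of a $b^X_{i,j}$-line inside $\widehat{N}(e^2_\alpha)$ is traced in \emph{decreasing} $t$ from its entry point on $\partial A^{RD}(i,j)$, and it is the outward-pointing condition that traps the backward flow. Finally, the containment $A^{RD}(i,m)\cap A^{RD}(m,j)\subset A^{RD}(i,j)$ that you flag as the main obstacle is easier than you anticipate below $x_2=1/4$: there the two regions live in disjoint vertical strips $[\mathrm{Min}(\beta^U,\beta^D)-\e,\mathrm{Max}(\beta^U,\beta^D)+\e]$, so the intersection below $x_2=1/4$ is empty and only the $\{x_2\ge 1/4\}$ part (which reduces to the $B^{RD}$ computation) needs checking.
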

\begin{proof}
As a preliminary, note that the $b^D_{k+2,k+1}$-line is indeed contained in the strip $V$, since $-\nabla F_{k+2,k+1}$ points out along the left, right, and upper boundaries of $V$ (by Property \ref{pr:monotonicityI}).  Moreover, $V$ contains the portion of $N(e^1_D)$ where $b^{D}_{k+2,k+1}$ is located, and the $b^{D}_{k+2,k+1}$-line cannot cross the lower boundary of $\partial N(e^1_D)$ (by Proposition \ref{prop:bUS}).

In addition, note that for all $1\leq i<j \leq n$, $-\nabla F_{i,j}$ points out along all boundary segments of $A^{RD}(i,j)$.  [Along boundary segments shared with  $B^{RD}(i,j)\cap\{x_2 \geq 1/4\}$, this follows from Properties \ref{pr:1cells}, \ref{pr:monotonicityI}, and \ref{pr:monotonicityIV}.  Along the vertical lines $x_1 = \mathit{Min}(\beta^U_{i,j}, \beta^D_{i,j}) -\e$ and $x_1 = \mathit{Max}(\beta^U_{i,j}, \beta^D_{i,j})+\e$, this is from Property \ref{pr:monotonicityIIST}.]

Now, we prove the Proposition using induction on the number of $Y_0$'s in the PFT that begins with the edge $\gamma$.  In the base case, $\gamma$ must be either the $(k+2,k+1)$-switch flow line or one of the $b^R_{i,j}$- or $b^D_{i,j}$-lines.  In the latter case, the claimed bound on the location holds since, for $i<j$, as $t$ decreases from $+\infty$ these lines enter $\widehat{N}(e^2_\alpha)$ at a point on $\partial A^{RD}(i,j)$ (as specified by Proposition \ref{prop:bUS}).  Note that the segments of the $b^R_{i,j}$- or $b^D_{i,j}$-lines that lie outside of $\widehat{N}(e^2_\alpha)$ are all pairwise disjoint.

Suppose now that the PFT starting with $\gamma$ has at least one $Y_0$.  From Lemma \ref{lem:observations}, $\gamma$ must end with a $Y_0$, and the two PFTs starting with the outgoing branches are themselves $b^{RD}$-trees.  These outgoing branches, $\gamma_1$ and $\gamma_2$, are respectively $(i,m)$- and $(m,j)$-flow lines for some $i,m,j$.  We need to show that $i<j$ and that $\gamma \subset A^{RD}(i,j)$.  

\medskip

\noindent {\bf Case 1:} One of $(i,m)$ and $(m,j)$ is equal to $(k+2,k+1)$.  

\medskip

Consider the case where $(i,m) = (k+2,k+1)$; the other case is similar.  By the inductive hypothesis, $\gamma_1$ must be either the $(k+2,k+1)$-switch flow line or the $b^D_{k+2,k+1}$-line.   The latter case is impossible, since the strip $V$ is disjoint from all $A^{RD}(i,j)$.  [See Figure \ref{fig:BRDA} (right).]  Thus, $\gamma_1$ is the $(k+2,k+1)$-switch flow line.  Of course, $(m,j) = (k+1,k+2)$ is not possible, since all $(k+2,k+1)$- and $(k+1,k+2)$-flow lines are disjoint from one another.  Thus, $(m,j) = (k+1,j)$ with $k+2<j$, so $(i,j) = (k+2,j)$ satisfies $i<j$.  The $Y_0$ is located in $A^{RD}(k+1,j)$, below the crossing locus.  Note that below the crossing locus $A^{RD}(k+1,j)$ and $A^{RD}(k+2,j)$ coincide (because $\{\beta^U_{k+1,j}, \beta^D_{k+1,j}\} = \{\beta^D_{k+2,j}, \beta^U_{k+2,j}\}$), so the edge $\gamma$ has its end point in $A^{RD}(k+2,j)$ as required. [Since $-\nabla F_{k+2,j}$ points out along $\partial A^{RD}(k+2,j)$, $\gamma$ remains in $A^{RD}(i,j)$ as $t$ decreases.]

\medskip

\noindent {\bf Case 2:} Neither of $(i,m)$ or $(m,j)$ are equal to $(k+2,k+1)$.  

\medskip
Then, the inductive hypothesis shows that $i<m<j$, and the $Y_0$ at the end of $\gamma$ is in $A^{RD}(i,m) \cap A^{RD}(m,j)$.   To complete the proof we show that  $A^{RD}(i,m) \cap A^{RD}(m,j) \subset A^{RD}(i,j)$.  

First, we have
\[
A^{RD}(i,m) \cap A^{RD}(m,j) \cap\{x_2\geq 1/4\} = B^{RD}(i,m) \cap B^{RD}(m,j) \cap\{x_2\geq 1/4\} \subset 
\]
\[
B^{RD}(i,j) \cap\{x_2\geq 1/4\} = A^{RD}(i,j) \cap\{x_2\geq 1/4\}.
\]
To check that 
\[
A^{RD}(i,m) \cap A^{RD}(m,j) \cap\{x_2 <  1/4\} = \emptyset \subset A^{RD}(i,j) \cap\{x_2 < 1/4\},
\]
 note that for $(i,m), (m,j) \neq (k+1,k+2)$, the intervals $[\mathit{Min}(\beta^U_{i,m}, \beta^D_{i,m}) - \e, \mathit{Max}(\beta^U_{i,m}, \beta^D_{i,m})+\e]$ and 
$[\mathit{Min}(\beta^U_{m,j}, \beta^D_{m,j}) - \e, \mathit{Max}(\beta^U_{m,j}, \beta^D_{m,j})+\e]$  are disjoint.
\end{proof}

\begin{corollary}  \label{cor:btreebLUbRD}
Any $b$-tree is either a $b^{LU}$- or $b^{RD}$-tree.
\end{corollary}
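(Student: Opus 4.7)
The plan is to mirror the inductive proof of Proposition \ref{prop:bLUbRD}, adapted to accommodate the additional phenomena particular to the Type (13) square, namely the switch outputs at $\mathit{sw}_{i,k}$, $\mathit{sw}_{k+1,k+2}$, $\mathit{sw}_{k+2,k+1}$ and the anomalous $(k+2,k+1)$-flow edges (the $(k+2,k+1)$-switch flow line and the $b^D_{k+2,k+1}$-line) that may appear in $b^{RD}$-trees. I would argue by induction on the number of $Y_0$-vertices in a $b$-tree $\Gamma$.

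For the base case, $\Gamma$ has no $Y_0$-vertex. By Lemma \ref{lem:observations}(i) the only allowed internal vertices in a $b$-tree are $Y_0$'s and switches, and by Lemma \ref{lem:swPFT} no edge can both begin and end at switches. Hence $\Gamma$ has at most one internal vertex, which if present is a switch. If $\Gamma$ has no internal vertex, it is a single edge terminating at a $b$-Reeb chord other than $\tilde b^R_{k+2,k+1}$ (by condition (4) of the $b$-tree definition), giving a $b^{LU}$-tree (if the endpoint is a $b^L$ or $b^U$) or a $b^{RD}$-tree (if $b^R$ or $b^D$). If $\Gamma$ contains one switch, then by Lemmas \ref{lem:k1k2switchflow} and \ref{lem:ikGFT} it is the $(i,k)$-, $(k+1,k+2)$-, or $(k+2,k+1)$-switch GFT, and the first two are $b^{LU}$-trees while the third is a $b^{RD}$-tree.

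For the inductive step, take the topmost $Y_0$-vertex $v$ of $\Gamma$, and let $\Gamma_1$ and $\Gamma_2$ be the two PFTs starting immediately after $v$. By Lemma \ref{lem:observations}(ii), each $\Gamma_r$ is a $b$-tree with fewer $Y_0$-vertices, so by the inductive hypothesis each is a $b^{LU}$- or $b^{RD}$-tree. If both are of the same type, then $\Gamma$ is also of that type. The substantive task is thus to rule out the mixed case where, say, $\Gamma_1$ (starting with an $(i,m)$-flow) is a $b^{LU}$-tree and $\Gamma_2$ (starting with an $(m,j)$-flow) is a $b^{RD}$-tree.

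The key technical ingredient, exactly as in the proof of Proposition \ref{prop:bLUbRD}, is a disjointness claim for the regions in which such initial branches can lie. Specifically, using Lemma \ref{lem:BLUijST} the initial edge of $\Gamma_1$ lies in $B^{LU}(i,m)$ unless $(i,m)=(i',k)$ or $(i,m)=(k+1,k+2)$, in which cases it lies in $R(\mathit{sw}_{i',k})$ or $R(\mathit{sw}_{k+1,k+2})$; and by Lemma \ref{lem:bRDijST} the initial edge of $\Gamma_2$ lies in $A^{RD}(m,j)$ unless $(m,j)=(k+2,k+1)$, in which case it is the $(k+2,k+1)$-switch flow line or the $b^D_{k+2,k+1}$-line in the vertical strip $V$. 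I would verify, case by case, that each of the resulting possible intersections for $v$ is empty:
\[
B^{LU}(i,m)\cap A^{RD}(m,j)=\emptyset,\qquad R(\mathit{sw}_{i',k})\cap A^{RD}(m,j)=\emptyset,\qquad R(\mathit{sw}_{k+1,k+2})\cap A^{RD}(m,j)=\emptyset,
\]
together with the analogous disjointness for the $B^{LU}$ (or $R(\mathit{sw})$) regions against the $(k+2,k+1)$-switch flow line and against the strip $V$. The main obstacle is the first (generic) intersection: I would separate it using the lexicographic ordering of the $\beta^X_{i,j}$ (Property \ref{pr:Location1}), the structure of the Swallowtail Barrier that bounds $B^{LU}(i,m)$ from below (Property \ref{pr:STBnew}), and the explicit description of $A^{RD}(m,j)$ as lying in a vertical strip around $[\beta^D_{m,j}-\e,\beta^D_{m,j}+\e]$ below $x_2=1/4$ and inside $B^{RD}(m,j)\cap\{x_2\ge1/4\}$ above. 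The switch-flow subcases reduce to the $L_{k+1,k+2}$ and $L_{k+2,k+1}$ barriers from Property \ref{pr:SwitchBarriers} and to Lemmas \ref{lem:ikGFT} and \ref{lem:Rswk1k2}, which place $R(\mathit{sw}_{i,k})$ and $R(\mathit{sw}_{k+1,k+2})$ strictly away from the lower-right region where $A^{RD}$, the $(k+2,k+1)$-switch flow line, and the strip $V$ live. Granting the disjointness, the mixed case yields a contradiction, completing the induction.
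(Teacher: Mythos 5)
Your proposal is correct and is essentially the argument the paper gives: induction on the number of $Y_0$-vertices, with Lemmas \ref{lem:BLUijST} and \ref{lem:bRDijST} supplying the disjointness of the regions containing generic $b^{LU}$- and $b^{RD}$-edges, and the exceptional $(i,k)$-, $(k+1,k+2)$-, $(k+2,k+1)$-branches ruled out at a common $Y_0$ by the index-concatenation constraint. The paper's version is just terser, compressing your case-by-case disjointness check into a single appeal to those two lemmas.
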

\begin{proof}
This is verified using induction on the number of $Y_0$'s.  At the inductive step, Lemmas \ref{lem:BLUijST} and \ref{lem:bRDijST} show that except for possibly branches that are $(i,k)$- $(k+1,k+2)$- or $(k+2,k+1)$-flow lines, all edges of $b^{LU}$-trees are disjoint from all edges of $b^{RD}$-trees.  No $Y_0$ can occur between two flow lines with indices of the form $(i,k),(k+1,k+2),$ or $(k+2,k+1)$.
\end{proof}

Consider the set $X$ of ordered pairs $(x,\Gamma)$ where $\Gamma$ is a PFT that is a $b^{RD}$-tree starting on the line $x_2 =1/4$ with initial point $x = (x_1, 1/4)$.  
As we will see in Lemma \ref{lem:onlybRD} the elements $(x,\Gamma)$ of $X$ are described as follows:
\begin{enumerate}
\item For $i<j$, $(i,j) \neq (k+1,k+2)$, the portion of the $b^{D}_{i,j}$-line beginning at its unique intersection point with $x_2=1/4$.
\item For $i<k+1$, the $(k+2,k+1)$-switch flow line intersects the $b^D_{i,k+2}$-line in an \emph{odd} number of points, $q_1, \ldots, q_{2r+1}$.  [This is because the endpoints of the $(k+2,k+1)$-switch flow line are at $\tilde{c}_{k+2,k+1}$ and the $(k+2,k+1)$-switch flow line, and these lie on opposite sides of the portion of the $b^{D}_{i,k+2}$-line that is below the crossing locus.  See Figure \ref{fig:BRDtrees}.]  For each such intersection point $q_{s}$, there is a unique PFT $\Gamma^{s}_{i,k+2,k+1}$ whose top edge is the $(i,k+1)$-flow line starting at $x_{2} = 1/4$ and ending at $q_s$.  [The $(i,k+1)$-flow line through $q_s$ has a unique intersection with $x_2=1/4$, since it
 is contained in $A^{RD}(i,k+1)$ and,  in $A^{RD}(i,k+1)$, $-\nabla F_{i,k+1}$ points down along $x_2 = 1/4$.]  The PFT $\Gamma^{s}_{i,k+2,k+1}$ then has a $Y_0$ with the outgoing edges limiting to $b^{D}_{i,k+2}$, and following the remainder of the $(k+2,k+1)$-switch GFT. 
\item For $k+2<j$, the $(k+2,k+1)$-switch flow line intersects the $b^D_{k+1,j}$-line in an \emph{even} number of points, $q_1, \ldots, q_{2r}$, where it is possible that $r=0$.  [This is because the endpoints of the $(k+2,k+1)$-switch flow line both sit to the left of the portion of the $b^{D}_{k+1,j}$-line that is below the crossing locus.]  For each such intersection point $q_{s}$, there is a unique PFT $\Gamma^{s}_{k+2,k+1,j}$ whose top edge is the $(k+2,j)$-flow line starting at $x_{2} = 1/4$ and ending at $q_s$.  [This flow line has a unique intersection with $x_2=1/4$.]  The PFT $\Gamma^{s}_{k+2,k+1,j}$ then has a $Y_0$ with the outgoing edges limiting to $b^{D}_{k+1,j}$, and following the remainder of the $(k+2,k+1)$-switch GFT. 
\end{enumerate}

\begin{figure}

\quad

\quad

\labellist
\small
\pinlabel $\tilde{c}_{k+2,k+1}$ [t] at 320 72
\pinlabel $b^D_{i,k+2}$ [t] at 176 8
\pinlabel $b^D_{k+1,j}$ [t] at 432 8
\pinlabel $A^{RD}(i,k+2)$ [b] at 220 236
\pinlabel $A^{RD}(k+1,j)$ [b] at 400 236
\endlabellist
\centerline{ \includegraphics[scale=.5]{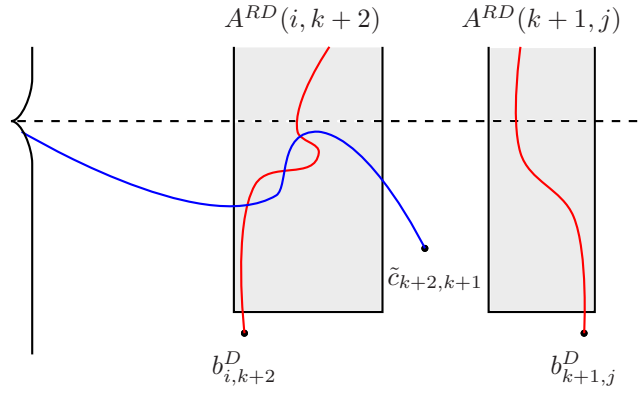} }

\quad

\caption{Topological considerations show that the $b^D_{i,k+2}$-line and the $b^D_{k+1,j}$-line (red) intersect the $(k+2,k+1)$-switch flow line (blue) in an odd and even number of points respectively.}
\label{fig:BRDtrees}
\end{figure}

\begin{lemma} \label{lem:onlybRD}
These are the only PFT $b^{RD}$-trees starting along $x_2 =1/4$.
\end{lemma}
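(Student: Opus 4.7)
The plan is to analyze $\Gamma$ edge-by-edge using Lemma \ref{lem:bRDijST} to classify edges and Lemma \ref{lem:observations} to constrain internal vertices. I begin by classifying the initial edge $\gamma_0$ of $\Gamma$, which starts at $(x_1,1/4)$, into the three types given by Lemma \ref{lem:bRDijST}. Two of these are immediately excluded: the $b^D_{k+2,k+1}$-line lies in the strip $V \subset \{x_2 \leq \tilde\beta^R_{k+2,k+1}+\e\}$, far below $x_2=1/4$; and the $(k+2,k+1)$-switch flow line lies in the region $\{F_{k+2,k+1}>0\}$, which is bounded above by the $(k+1,k+2)$-crossing locus. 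In a Type (13) square this crossing locus joins the swallowtail point (near $x_2=0$) to the crossing on $e^1_R$ at $x_2 \leq -1/2$, and hence lies entirely below $x_2=1/4$.

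Thus $\gamma_0$ must be an $(i,j)$-flow line with $i<j$ and image in $A^{RD}(i,j)$. At $(x_1,1/4)$, Property \ref{pr:monotonicityI} gives $-\grad_{x_2}F_{i,j}<0$ (since $1/4 \leq \beta^R_{i,j}-\e$), so $\gamma_0$ descends into $\{x_2<1/4\}$. Lemma \ref{lem:observations} forbids $Y_1$-vertices in $b^{RD}$-trees and restricts switches to $\mathit{sw}_{k+2,k+1}$, which requires a $(k+2,k+1)$-flow (not $\gamma_0$). So the next internal vertex along $\gamma_0$, if any, is a $Y_0$. If instead $\gamma_0$ runs to an output, the $b^R_{i,j}$-line is excluded because it stays in $\{x_2 \geq \beta^R_{i,j}-\e\}$ and never meets $x_2=1/4$; the only remaining option is $b^D_{i,j}$, and $\Gamma$ is then a portion of the $b^D_{i,j}$-line, which is exactly case (1).

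Suppose instead that $\gamma_0$ ends at a $Y_0$ splitting into an $(i,m)$-flow and an $(m,j)$-flow, so the $Y_0$'s image lies in $A^{RD}(i,m) \cap A^{RD}(m,j) \cap \{x_2<1/4\}$. I would then invoke the disjointness argument from Case~2 of the proof of Lemma \ref{lem:bRDijST}: the $\{x_2<1/4\}$-parts of the regions $A^{RD}(i',j')$ are pairwise disjoint vertical strips for distinct pairs with $(i',j') \neq (k+1,k+2),(k+2,k+1)$. Consequently, at least one of the outgoing edges must be one of the two exceptional types in Lemma \ref{lem:bRDijST}. The $b^D_{k+2,k+1}$-line lies in $V$, which is disjoint from every other $A^{RD}$-strip in the relevant $x_2$-range, so the only possibility is that one outgoing edge is the $(k+2,k+1)$-switch flow line. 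This forces either $(i,m)=(k+2,k+1)$ (so $i=k+2$, $j>k+2$, yielding case (3)) or $(m,j)=(k+2,k+1)$ (so $j=k+1$, $i<k+1$, yielding case (2)), and the $Y_0$ sits at the unique intersection of the $(k+2,k+1)$-switch flow line with either the $b^D_{k+1,j}$-line or the $b^D_{i,k+2}$-line.

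To complete the proof I would verify that the remaining edges of $\Gamma$ are forced: the branch following the $\mathit{sw}_{k+2,k+1}$-switch is the tail of the $(k+2,k+1)$-switch GFT by Lemmas \ref{lem:k1k2switchflow} and \ref{lem:PFTrees}, and the other branch runs along the relevant $b^D$-line without further $Y_0$'s by iterating the same strip-disjointness argument applied to the PFT below the $Y_0$. The main obstacle is the swallowtail geometry: verifying that the $(k+2,k+1)$-switch flow line truly stays below $x_2=1/4$, and that strip $V$ is disjoint from every $A^{RD}(i',j')$-strip that could host the competing outgoing edge at a $Y_0$. Both points reduce to careful bookkeeping of the locations specified in Properties \ref{pr:switches}, \ref{pr:Reeb1}, \ref{pr:Location1}, and \ref{pr:Reeb2}, together with the monotonicity provided by Properties \ref{pr:monotonicityI} and \ref{pr:monotonicityIIST}.
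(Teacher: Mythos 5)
Your proposal is correct and follows essentially the same route as the paper's proof, which packages the identical argument as an induction on the number of $Y_0$-vertices: both rest on Lemma \ref{lem:bRDijST} to classify edges, the pairwise disjointness below $x_2=1/4$ of the regions $A^{RD}(i,j)$ and of the strip $V$, and Lemma \ref{lem:PFTrees} to rule out further vertices after the switch. One small slip: the $(k+2,k+1)$-switch flow line may meet the relevant $b^D$-line in several points (an odd or even number, as enumerated in items (2)--(3) preceding the lemma), not a unique one, though this does not affect your exhaustiveness argument.
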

\begin{proof}
Verify by induction on, $N$, the number of $Y_0$'s in $\Gamma$ with $(x,\Gamma) \in X$. In the base case, the only $b^{RD}$-trees without $Y_0$'s are subsets of the  $b^{R}_{i,j}$-lines, $b^D_{i,j}$-lines, and the $(k+2,k+1)$-switch GFT.  Only the $b^D_{i,j}$ lines with $i<j$ intersect $x_2=1/4$, and each one of them does indeed intersect $x_2=1/4$
 in a unique point.  

Now, suppose that $(x,\Gamma) \in X$ is such that $\Gamma$ has $N \geq 1$ $Y_0$-vertices.   

Using Lemma \ref{lem:bRDijST}, we notice that for $(x, \Gamma) \in X$, the entire image of $\Gamma$ lies strictly below $x_2=1/4$, except for at the initial point, $x$.  [For all $i<j$, $-\nabla F_{i,j}$ points down along $\{x_2=1/4\} \cap A^{RD}_{i,j}$, and any $(k+2,k+1)$-branches of $\Gamma$ lie below the crossing locus.]  Also, note that any edge of $\Gamma$, that is not part of the $(k+2,k+1)$-switch GFT or the $b^D_{k+2,k+1}$-line would limit to $c_{i,j}$ if extended to allow $t \rightarrow -\infty$.   Thus, the PFTs $\Gamma_1$ and $\Gamma_2$ that begin with the outgoing edges at the first $Y_0$ of $\Gamma$ are themselves subsets of PFTs that are $b^{RD}$-trees starting along $x_2=1/4$, or are subsets of the $(k+2,k+1)$-switch GFT or the $b^{D}_{k+2,k+1}$-line. 
When $x_2< 1/4$, the regions $A^{RD}(i,m)$ and $A^{RD}(m,j)$ are all disjoint and are disjoint from the $b^{D}_{k+2,k+1}$-line.  Thus, the only possibility is that one of $\Gamma_1$ or $\Gamma_2$ begins with the  $(k+2,k+1)$-switch flow line and the other $\Gamma_i$ begins with either a $(i,k+2)$ or $(k+1,j)$-flow line
 and has $N-1$ $Y_0$ vertices.  [No PFT starting with the $(k+2,k+1)$-switch flow line can have any $Y_0$-vertices by Lemma \ref{lem:PFTrees}.]  By the inductive hypothesis, the other $\Gamma_i$ that begins with a $(i,k+2)$ or $(k+1,j)$-flow can only be (a subset of) one of the PFTs from $X$ as in (1)-(3) above.  It cannot be that the top branch of this $\Gamma_i$ is as in (2) or (3) since then the outgoing branches of the $Y_0$ would be either a $(k+2,k+1)$- and $(i,k+1)$-flow line or a $(k+2,k+1)$- and $(k+2,j)$-flow line.  Thus, $\Gamma_i$ must be as in (1), and so $\Gamma$ must be as in (2) or (3).
\end{proof}

Now, we have a disk datum $\left(D, \{b^R_{i,j}\} \cup \{x \, | \, (x, \Gamma) \in X\}, I_{i,j} \right)$ where $D$ is $\bigcup B^{RD}(i,j) \cap \{x_2 \geq 1/4\}$ with the squares $\mathit{Sq}_{i,j}$ that contain the $c_{i,j}$ removed, and the points $x$ with $(x,\Gamma) \in X$ are assigned upper and lower indices that agree so that the first branch of $\Gamma$ is an $(i(x), j(x))$-flow line.  As usual, the $I_{i,j}$ are the lower and right edges of $\overline{\mathit{Sq}_{i,j}}$.

We associate a generalized $b$-manifold, $A$, to this disk datum consisting of paths $\gamma$ that are the intersection with $D$ of the top branches of GFTs that are $b^{RD}$-trees.  
\begin{lemma}  \label{lem:genBRDmfd}
Defined as above, $A$ is a generalized $b$-manifold for $\left(D, \{b^R_{i,j}\} \cup \{x \, | \, (x, \Gamma) \in X\}, I_{i,j} \right)$.  Moreover,
\[
\varphi( \partial_A I_{i,j}) = \partial_{b^{RD}} c_{i,j}
\]
where $\varphi$ is the $\Z/2$-algebra homomorphism defined by $\varphi(b^{R}_{i,j}) = b^{R}_{i,j}$ and $\varphi(x) = w(\Gamma)$ for $(x,\Gamma) \in X$. 
\end{lemma}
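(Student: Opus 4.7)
The plan is to mirror the strategy of Proposition \ref{prop:MainGenB} and Lemma \ref{lem:bLUgenb}, with extra bookkeeping to account for the $(k+2,k+1)$-switch flow line and the portions of $b^D$-lines that leave the disk $D = \bigcup_{i<j} B^{RD}(i,j)\cap \{x_2 \geq 1/4\}$ through its lower edge. For each top branch $\widehat\gamma$ of a $b^{RD}$-tree $\Gamma$ starting at $c_{i(\widehat\gamma),j(\widehat\gamma)}$, I would set $\gamma = \widehat\gamma \cap D$ and assign upper and lower indices $i(\gamma) < j(\gamma)$ matching the sheets along $\widehat\gamma$. Lemma \ref{lem:bRDijST} shows that $\widehat\gamma$, as long as it remains in $D$, is an $(i(\gamma),j(\gamma))$-flow line contained in $A^{RD}(i(\gamma),j(\gamma))$, so $\gamma$ is a well-defined embedded path in $D$.

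Next, I would verify items (1)--(6) of Definition \ref{def:genbmfd}. Item (1) is immediate from the sheet labeling, and (6) follows from $1$-regularity of $\tilde{L}$. For items (2)--(5), I would analyze what happens to $\widehat\gamma$ as $t$ increases. By Lemma \ref{lem:observations} and the fact that $\Gamma$ is a $b^{RD}$-tree, exactly one of the following occurs: (a) $\widehat\gamma$ limits to a Reeb chord $b^R_{i(\gamma),j(\gamma)}$; (b) $\widehat\gamma$ ends at a $Y_0$ whose two outgoing edges are themselves top branches of $b^{RD}$-trees; or (c) $\widehat\gamma$ crosses the line $x_2 = 1/4$ and continues as a PFT below, which is then a $b^{RD}$-tree starting on $x_2 = 1/4$. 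In case (c), Lemma \ref{lem:onlybRD} classifies this tail as one of the PFTs in $X$, so $\gamma$ ends at the corresponding point $x$ with $(x,\Gamma_{\mathrm{tail}}) \in X$. Conversely, every point in $\{b^R_{i,j}\} \cup \{x \mid (x,\Gamma)\in X\}$ arises as the endpoint of a unique top branch in this way (the stable manifold half-line entering $D$, respectively the top branch whose tail realizes the prescribed PFT). This gives the bijection required in (3) and matches the endpoint and index data in (4) and (5).

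With the generalized $b$-manifold structure in place, the word identity $\varphi(\partial_A I_{i,j}) = \partial_{b^{RD}} c_{i,j}$ is proved by induction on the number of $Y_0$-vertices of the $b^{RD}$-tree $\Gamma$ with top branch $\widehat\gamma$: one shows $\varphi(w(\gamma)) = w(\Gamma)$. In the base case, $\gamma$ ends at some $b^R_{i,j}$, in which case both sides equal $b^R_{i,j}$, or $\gamma$ ends at some $x$ with $(x,\Gamma_{\mathrm{tail}})\in X$, in which case both sides equal $w(\Gamma_{\mathrm{tail}})$ by definition of $\varphi$ (here we use that $\varphi(b^D_{i,j}) = b^D_{i,j}$, $\varphi(\mathit{sw}_{k+2,k+1}) = 1$, and the PFTs in $X$ of types (2) and (3) contribute the expected $\tilde{c}_{k+2,k+1}$-free words after the switch). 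The inductive step follows directly from the multiplicativity of $w$ at $Y_0$-vertices and the recursive definition (\ref{eq:pbIij}) of $\partial_A$.

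The main obstacle is verifying (3)--(5) cleanly at points where the top branch dips below $x_2=1/4$ and re-enters interactions with the $(k+2,k+1)$-switch line or a $b^D$-line: concretely, ensuring that the enumeration of PFTs in $X$ produced by Lemma \ref{lem:onlybRD} is exhaustive and does not double-count, and that the parities identified in the remarks preceding Lemma \ref{lem:onlybRD} (odd for $b^D_{i,k+2}$ crossings, even for $b^D_{k+1,j}$ crossings) are exactly what Definition \ref{def:genbmfd}(3)(ii) and the bijection in (3)(i) demand. Once this is unraveled, the rest is the routine inductive bookkeeping from \ref{prop:MainGenB}.
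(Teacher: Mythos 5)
Your proposal is correct and follows essentially the same route as the paper: the paper's proof likewise observes that a top branch either ends at a $Y_0$ inside $D\cap\{x_2\geq 1/4\}$ or exits through one of the marked points $\{b^R_{i,j}\}\cup\{x\mid (x,\Gamma)\in X\}$ (with Lemma \ref{lem:onlybRD} supplying the classification of the tails), and then proves $\varphi(w(\gamma))=w(\Gamma)$ by induction on the $Y_0$'s with image in $D\cap\{x_2\geq 1/4\}$, the base case being arranged by the definition of $\varphi$. The only cosmetic difference is that your induction should be phrased over the $Y_0$'s lying above $x_2=1/4$ rather than all $Y_0$'s of $\Gamma$, exactly as you in effect do when you absorb the tail $\Gamma_{\mathrm{tail}}$ into the base case.
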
 

\begin{proof}
That $A$ is a generalized $b$-manifold follows as usual, since if the top edge, $\gamma$, of a $b^{RD}$-tree, $\Gamma$, intersects $D$, then it starts at some $c_{i,j}$ and either ends at a $Y_0$ within $D\cap \{x_2 \geq 1/4\}$ or leaves this region at one of the points in  $\{b^R_{i,j}\} \cup \{x \, | \, (x, \Gamma) \in X\}$.  That $\varphi(w(\gamma))= w(\Gamma)$ follows from induction on the number of $Y_0$'s that $\Gamma$ has with image in $D \cap \{x_2 \geq 1/4\}$, where the base case is arranged by the definition of $\varphi$.   
\end{proof}

\begin{proposition}  \label{prop:bRDcomp}
For all $1 \leq i<j \leq n$, $\partial_{b^{RD}}c_{i,j}$ agrees with the $(i,j)$-entry of the matrix
\[(I+B_R)(I+B_D + B_DE_{k+2,k+1})
\]
with $B_R$ and $B_D$ as in Theorem \ref{thm:SwallowComp}.
\end{proposition}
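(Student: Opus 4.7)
By Lemma \ref{lem:genBRDmfd} together with Proposition \ref{prop:dAind}, it suffices to exhibit \emph{any} convenient generalized $b$-manifold $A$ for the disk datum $(D,\{b^R_{i,j}\}\cup\{x\mid(x,\Gamma)\in X\},\{I_{i,j}\})$ and verify that $\varphi(\overline{\partial_AI_{i,j}})$ equals the $(i,j)$-entry of $(I+B_R)(I+B_D+B_DE_{k+2,k+1})$. I will build $A$ level by level, paralleling the construction in Proposition \ref{prop:bLUCompST} but reflected across $x_1=x_2$, taking care of the extra input points produced by the $(k+2,k+1)$-switch flow line.

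At Level $0$ I plan to add, for each $b^R_{i,j}$, a horizontal segment $\gamma^R_{i,j}$ from $b^R_{i,j}$ to $I_{i,j}$; for each intersection of a $b^D_{i,j}$-line with $x_2=1/4$ (with $(i,j)\neq(k+1,k+2)$), a piecewise-linear path $\gamma^D_{i,j}$ consisting of a vertical segment followed by a horizontal one reaching $I_{i,j}$; and analogously, piecewise-linear paths $\gamma^{\mathit{sw}}_{i,k+2,k+1}$ and $\gamma^{\mathit{sw}}_{k+2,k+1,j}$ for the $2r+1$ and $2r$ intersections described before Lemma \ref{lem:onlybRD}. The upper/lower indices of these latter paths are $(i,k+1)$ and $(k+2,j)$ respectively, and $\varphi$ sends them to $b^D_{i,k+2}$ and $b^D_{k+1,j}$. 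Summing over the $(2r+1)$ parity, the $\gamma^{\mathit{sw}}_{i,k+2,k+1}$ contribute $b^D_{i,k+2}$ to $\overline{\partial_A I_{i,k+1}}$ (exactly the $(i,k+1)$-entry of $B_DE_{k+2,k+1}$), while the $(2r)$ parity makes $\gamma^{\mathit{sw}}_{k+2,k+1,j}$ contribute $0$ mod $2$ to $\overline{\partial_A I_{k+2,j}}$; combined with the $\gamma^R$ and $\gamma^D$, this yields exactly the terms coming from $B_R$, $B_D$, and $B_DE_{k+2,k+1}$.

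At Level $1$ I will add diagonal segments $\gamma^R_{i,m,j}$ at each $A$-relevant intersection $\gamma^R_{i,m}\cap\gamma^D_{m,j}$, $\gamma^R_{i,m}\cap\gamma^{\mathit{sw}}_{m,k+2,k+1}$, or $\gamma^R_{i,k+2}\cap\gamma^{\mathit{sw}}_{k+2,k+1,j}$. Each such intersection point is unique (a straightforward check using Property \ref{pr:Location1} and the lexicographic ordering of the $\beta^{R/D}_{i,j}$, exactly as in Proposition \ref{prop:bLUCompST}), and the resulting words contribute the $(i,j)$-entry of $B_RB_D+B_RB_DE_{k+2,k+1}$ (again using the $(2r+1)$ vs.\ $(2r)$ parities). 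A lexicographic argument, identical to the one at the end of the proof of Proposition \ref{prop:bLUCompST}, shows that the diagonal Level $1$ segments carry no further $A$-relevant intersections, so the construction terminates. Summing the contributions from Levels $0$ and $1$ gives precisely the $(i,j)$-entry of $(I+B_R)(I+B_D+B_DE_{k+2,k+1})$.

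The main technical obstacle is the careful bookkeeping of the switch-output contributions, in particular tracking the index pairs $(i,k+1)$ and $(k+2,j)$ for the $\gamma^{\mathit{sw}}$ paths and checking that the $(k+1,k+2)$-entry of $B_D$ being zero is consistent with the absence of any $b^D_{k+1,k+2}$-line in $A$ (as it would be an exceptional generator, hence disallowed in $b^{RD}$-trees by definition). The remaining verification that all Level $0$ and Level $1$ intersections have been accounted for is a case check parallel to the one in Proposition \ref{prop:bLUCompST}, so I expect no novel difficulty beyond this parity bookkeeping.
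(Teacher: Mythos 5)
Your proposal is correct and follows essentially the same route as the paper's proof: invoke Lemma \ref{lem:genBRDmfd} and Proposition \ref{prop:dAind}, then construct an explicit two-level generalized $b$-manifold whose Level 0 paths give the $(i,j)$-entry of $B_R+B_D+B_DE_{k+2,k+1}$ (using the odd/even counts of intersections of the $(k+2,k+1)$-switch flow line with the $b^D_{i,k+2}$- and $b^D_{k+1,j}$-lines) and whose Level 1 diagonal segments give $B_R(B_D+B_DE_{k+2,k+1})$, with the lexicographic ordering ruling out further $A$-relevant intersections. One parenthetical in your final paragraph is slightly off: the $(k+1,k+2)$-entry of $B_D$ vanishes because no Reeb chord with upper sheet $S_{k+1}$ and lower sheet $S_{k+2}$ exists over $e^1_D$ at all (the chord in that position is $b^D_{k+2,k+1}$, which \emph{is} an allowed output of $b^{RD}$-trees but whose line, by Lemma \ref{lem:bRDijST}, lies in the strip $V$ and never appears in a tree rooted at a $c_{i,j}$), not because it is excluded from $b^{RD}$-trees by definition.
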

\begin{proof}
From Lemma \ref{lem:genBRDmfd} and Proposition \ref{prop:dAind}, we see that $\varphi( \partial_{A} I_{i,j}) = \partial_{b^{RD}} c_{i,j}$ for \emph{any} generalized $b$-manifold associated to the disk datum $\left(D, \{b^R_{i,j}\} \cup \{x \, | \, (x, \Gamma) \in X\}, I_{i,j}\right)$.  Construct a generalized $b$-manifold $A = \sqcup A_r$ as follows:

\medskip

\noindent {\bf Level 0} 
\begin{enumerate}
\item  Connect each $b^R_{i,j}$ to $I_{i,j}$ via a straight horizontal segment, $\gamma^R_{i,j}$.
\item  For each $(x,\Gamma) \in X$, connect $x$ to the appropriate $I_{i,j}$ by a path, $\gamma_x$, that is (i) a line segment from $x = (x_1, 1/4)$ to $(y, 3/8)$ with $y \in (\beta^U_{i(x),j(x)} - \e, \beta^U_{i(x),j(x)} + \e)$, then (ii) a straight vertical segment connecting $(y,3/8)$ to $I_{i(x),j(x)}$. 
\end{enumerate}
Notice that any intersections between the paths $\gamma_x$ are not $A$-relevant, since all endpoints of $\gamma_x$ have $x_1$-coordinate in the interval $J_{i,j}:=[\mathit{Min}(\beta^U_{i,j}, \beta^D_{i,j}) - \e, \mathit{Max}(\beta^U_{i,j}, \beta^D_{i,j})+\e]$ where $i=i(x)$ and $j=j(x)$;  as we have already observed, for $i<m<j$, $J_{i,m} \cap J_{m,j} = \emptyset$.

Recall that above Lemma \ref{lem:onlybRD} we divided all $(x,\Gamma) \in X$ into three families (1)-(3).  The paths corresponding to $(x,\Gamma) \in X$ as in (1) contribute a $b^{D}_{i,j}$ term to $\varphi(\partial_A I_{i,j})$; as in (2) contribute an \emph{odd} number of $b^D_{i,k+2} \cdot 1$ terms to $\varphi(\partial_A I_{i,k+1})$ for $i<k+1$; as in (3) contribute an \emph{even} number of $1 \cdot b^D_{k+1,j}$ terms to 
$\varphi(\partial_A I_{k+2,j})$ for $k+2<j$.  As we use $\Z/2$-coefficients, for $i<j$, the total contribution to $\varphi(\partial_A I_{i,j})$ from paths defined at Level 0 is the $(i,j)$-entry of the matrix
\[
B_R + B_D + B_D E_{k+2,k+1}.
\]

\medskip

\noindent {\bf Level 1}

For each $i<m<j$ with $(m,j) \neq (k+1,k+2)$, every path $\gamma_x$ with $(i(x),j(x)) = (m,j)$ intersects every $\gamma^R_{i,m}$ exactly once at a point in $[\beta^U_{m,j}-\e, \beta^U_{m,j}+\e]\times [\beta^R_{i,m}-\e, \beta^R_{i,m}+\e]$.  Connect each of these points to $I_{i,j}$ using a straight line segment.  An earlier argument (see the proof of Theorem \ref{thm:112btrees}) based on the lexicographic ordering of the $\beta_{i,j}$ shows that none of the segments constructed at Level 1 has any $A$-relevant intersections with other paths from $A$.  Thus, the construction of $A$ is complete.

Keeping in mind the $\Z/2$ coefficients, the contribution to $\varphi(\partial_A I_{i,j})$ from paths defined at Level 1 is the $(i,j)$-entry of the matrix
\[
B_R(B_D + B_DE_{k+2,k+1}).
\]
Thus, in total $\varphi(\partial_A I_{i,j})$ is the $(i,j)$-entry of the matrix $(I+B_R)(I+B_D +B_DE_{k+2,k+1})$.
\end{proof}

%A GFT that is a $b$-tree with $Y_0$-vertices is uniquely determined by the first $Y_0$ that occurs in the tree together with the PFTs (also $b$-trees) that follow the $Y_0$.  Thus, Proposition \ref{prop:N1} identifies all $b$-trees starting at some $c_{i,j}$ and having $N=1$ $Y_0$ vertices.

%\begin{proposition}  \label{prop:db}
%\[
%\partial_b C = [I+B_U][I+B_L][I+A_{-,-}E_{k+1,k} + E_{k+1,k+2}] + [I+B_R][I+B_D + B_{D} E_{k+2,k+1}].
%\]
%\end{proposition}

%\begin{proof}
%Multiply this out and cancel $I+I=0$.  The remaining terms are precisely those identified in Corollaries \ref{cor:N0}-\ref{cor:N2}. 
%\end{proof}

\begin{proof}[Proof of Theorem \ref{thm:SwallowComp}]
%The formula for $\partial C$ follows from equation (\ref{eq:dcdb}) together with Theorem \ref{thm:dc} and Proposition \ref{prop:db}.
Using Proposition \ref{prop:dbc} followed by Corollary \ref{cor:btreebLUbRD}, we compute
\[
\partial C = \partial_c C + \partial_b C +X = \partial_c C + \partial_{b^{LU}} C + \partial_{b^{RD}} C +X 
\]
where $X$ has entries in the ideal generated by $\tilde{c}_{k+2,k+1}$ and $\tilde{b}^R_{k+2,k+1}$.  Substituting the computations established in Theorem \ref{thm:dc} and   Propositions \ref{prop:bLUCompST} and \ref{prop:bRDcomp}, this becomes
\[
A_{+,+}C + C(I+E_{k+2,k+1})A_{-,-}(I+E_{k+2,k+1}) + 
\]
\[
(I+B_U)(I+B_L)(I+A_{-,-}E_{k+1,k}+E_{k+1,k+2}) + (I+B_R)(I+B_D +B_DE_{k+2,k+1}).
\]
[Note that the two $I$ terms cancel out, so that the entries on and below the main diagonal are all zero, just as in $\partial C$.]
This establishes the formula for $\partial C$ as stated in Theorem \ref{thm:SwallowComp}. 
\end{proof}

\section{Isomorphism between the cellular DGA and LCH}  \label{sec:Iso}

In the preceding three sections, we have computed, at least up to terms involving the exceptional generators, the sub-DGAs $(\lchA(e^d_{\alpha}), \partial)$ of $(\lchA, \partial)$ generated by Reeb chords located in the neighborhood of a single $0$, $1$, or $2$-cell, $e^d_\alpha$, of the square decomposition $\mathcal{E}_\pitchfork$.  In the present section, we combine these local computations to give a computation of $(\lchA,\partial)$ up to stable tame isomorphism (in Proposition \ref{prop:lchAI}), and then use this result to show that $(\lchA,\partial)$ is equivalent to the Cellular DGA of $L$ (in Proposition \ref{prop:IsoProof}).  

\subsubsection{Local vs. global computation of $(\lchA, \partial)$}

%In the preceding three sections, we have computed the sub-DGAs of $(\lchA, \partial)$ generated by Reeb chords located in the neighborhood of a single $0$, $1$, or $2$-cell of the transverse square decomposition $\mathcal{E}_\pitchfork$. 

In computing the sub-DGAs $(\lchA(e^d_\alpha),\partial)$, we have used different notations for the same Reeb chords depending on which cell was under consideration.  To address this ambiguity, we take the following formal approach when considering the entire DGA of $L$, $(\lchA,\partial)$.  

For a $d$-cell $e^d_\alpha$ of $\mathcal{E}_\pitchfork$, where $d=0,1,$ or $2$, we write
\[
(x, e^d_{\alpha})
\]
to indicate the Reeb chord that is notated by $x$ when considered in the context of the neighborhood $N(e^d_\alpha)$.  Here, $x$ has one of the forms $a_{i,j}$, $b_{i,j}$, or $c_{i,j}$ possibly decorated with super-scripts and/or a tilde.  Such a notation then specifies a well-defined generator of $(\lchA,\partial)$.  However, the same generator may have several different notations.  For example, suppose that a $1$-cell $e^1_\alpha$ has intial vertex $e^0_{\beta_0}$ and terminal vertex $e^0_{\beta_1}$, and that $\tilde{L}$ has a single crossing above $e^1_\alpha$, i.e. the $1$-cell is of type (1Cr), with the crossing involving sheets $S_k$ and $S_{k+1}$ as labelled above $x=+1$.  Then, for $i<k$, we have equalities
\[
(a_{i,k},e^0_{\beta_0}) = (a^-_{i,k+1},e^1_{\alpha});  \quad (a_{i,k},e^0_{\beta_1}) = (a^+_{i,k},e^1_{\alpha}); \quad \mbox{and} \quad (a_{k,k+1}, e^0_{\beta_0}) = (a^-_{k+1,k}, e^1_\alpha). 
\]
If a Type (13) $2$-cell, $e^2_\alpha$ has boundary $1$-cells $e^1_{X}$ with location indicated by $X \in \{L, D, R, U\}$, and boundary $0$-cells $e^0_{\pm,\pm}$, then, for $i<k$,
\[
(b^D_{i,k+1}, e^2_\alpha) = (b_{i,k+2}, e^1_{D}); \quad (b^L_{i,k+2}, e^2_\alpha) = (b_{i,k}, e^1_{L});  \quad \mbox{and} \quad (a^{-,-}_{i,k+1}, e^2_\alpha) = (a_{i,k}, e^0_{-,-}).
\]
[Here, we recall the Convention \ref{c:Sktilde}.]

Consequently, the generators of $\lchA$ should be viewed as equivalence classes of pairs $(x, e^d_{\alpha})$ with $x$ a generator for $\lchA(e^d_\alpha)$, such that two pairs are equivalent if they correspond to the same Reeb chord of $\tilde{L}$.  Often, the cell $e^d_{\alpha}$ that we consider a generator of $\lchA$ in will be clear from context, and then we simply write $x$ for $(x, e^d_\alpha)$.

\begin{remark} 
% \footnote{Mike:  Do you want to extend this remark, or have another remark in the Cellular DGA section in order to: \dr{Describe the pushout square situation, when we consider the union of two overlapping cells.  Reference Sivek and Harper-Sullivan.}? \ms{8/10/15: Does $A_{LCH}/I$ have push-out square structure? If so something should definelty be said in the short paper.} \dr{Yes.  The same remark applies to the cellular DGA since we have sub-DGAs for each cell with inclusions into the global DGA.}}
  In categorical language, for each cell $e^d_\alpha$ of $\mathcal{E}_\pitchfork$, we have an algebra $\mathcal{A}(e^d_\alpha)$ generated by symbols $(x, e^d_\alpha)$.  The cells of $\mathcal{E}_\pitchfork$ form a category $\mathcal{C}$ with a unique morphism $e^d_\beta \rightarrow e^{d'}_\alpha$ whenever $e^d_\beta \subset \overline{e^{d'}_\alpha}$.  Moreover, there is a functor from $\mathcal{C}$ to DGAs that assigns to morphisms the resulting inclusions $\mathcal{A}(e^d_\beta) \hookrightarrow \mathcal{A}(e^{d'}_\alpha)$ obtained from identifying the different notations for common Reeb chords.  We have thus constructed an inductive  system of DGAs.  As all of these DGAs include into $(\lchA, \partial)$, and any generator of $(\lchA,\partial)$ belongs to one of the $\mathcal{A}(e^d_\alpha)$, it is immediate that the direct limit of this system is the DGA of $\tilde{L}$, $(\lchA, \partial).$

This can be viewed as a refinement of the push-out square (Seifert-Van Kampen Theorem) proved for $\lchA$ for one-dimensional Legendrian knots \cite{Sivek11} and two-dimensional Legendrian surfaces \cite{HarperSullivan}.
\end{remark}

\subsection{The differential ideal $I$}
%\footnote{\ms{Maybe title subsection ``A simpler DGA" and then title sub subsection ``Differential of exceptional saddles" etc?}}  
\label{ssec:DiffExceptional}

Let $I$ denote the ideal generated by
\begin{itemize}
\item $(b^D_{k+2,k+1}; e^2_\alpha) +1$ for all Type (13) squares $e^2_\alpha$; 
\item $(b^D_{l-1,l-2}; e^2_\alpha) +1$ for all Type (14) squares $e^2_\alpha$; and
\item all other Reeb chords that are exceptional generators (for at least one $2$-cell). 
\end{itemize}
(Recall the definition of {\it exceptional generator} from Definition \ref{def:ex-gen}.)

%The quotient $\lchA/I$ has generating set given by all non-exceptional Reeb chords.

\begin{theorem}  \label{thm:Exceptional}
The ideal $I \subset \lchA$ satisfies $\partial (I) \subset I$ so that $(\lchA/I, \partial)$ is a semi-free DGA with generating set given by the non-exceptional Reeb chords of $\tilde{L}$.  Moreover, $(\lchA/I, \partial)$ is stable tame isomorphic to $(\lchA,\partial)$.
\end{theorem}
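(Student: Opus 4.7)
The plan is to establish the two conclusions separately: first that $\partial(I)\subset I$, and then that the quotient $(\lchA/I,\partial)$ is stable tame isomorphic to $(\lchA,\partial)$ by producing an explicit sequence of generator cancellations.

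For closure of $I$ under $\partial$, I would argue square-by-square. Each generator of $I$ is a Reeb chord sitting in the neighborhood of a $0$-, $1$-, or $2$-cell, so its differential can be read off from Proposition \ref{prop:LCH0comp}, Proposition \ref{prop:LCH1comp}, Theorem \ref{thm:SquareComp}, or Theorem \ref{thm:SwallowComp} (together with the modifications in the swallowtail cases). For an exceptional generator of the form $a^{\pm,\pm}_{i,j}$ or $b^X_{i,j}$ with the ``wrong'' pair of sheets — i.e.\ a pair that gets separated by a crossing arc before the flow reaches the cell where the chord lives — one checks directly that every term produced by those formulas again corresponds to an exceptional chord in some adjacent cell; the organization of Figures \ref{fig:EParallel} and \ref{fig:EParallel12} together with conditions (A2)--(A4) on $\mathcal{E}_\tra$ is exactly what makes this bookkeeping possible. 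For the $\tilde b$- and $\tilde c$-generators (the ones sitting in the $x_i \in [-3/4,-1/2]$ halves of edges and in the leftover regions of $2$-cells) the differentials have already been described in the template computations of Section \ref{sec:3.5} and their analogues in Sections \ref{sec:CompLCH}--\ref{sec:SwallowComp}; I would verify case-by-case, one square type at a time, that every monomial they produce contains at least one exceptional generator. The $b^D_{k+2,k+1}$ (resp.\ $b^D_{l-1,l-2}$) generators in Type (13) and (14) squares require the shift by $1$ because of the ``$b^D_{k+2,k+1}$ does not appear'' clause in Theorem \ref{thm:SwallowComp}: one checks that $\partial(b^D_{k+2,k+1}+1)$ produces only terms already in $I$.

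Once $I$ is established as a sub-DGA ideal, I would prove the stable tame isomorphism by an iterated cancellation argument based on \cite[Theorem 2.1]{RuSu1}. The exceptional generators come in natural cancellation pairs dictated by the local DGA formulas: in a (2Cr) edge or a Type (2) square, for instance, the pairs are $(\tilde b^U_{k+1,k}, a^{-,+}_{k+1,k})$, $(\tilde b^D_{k+1,k}, a^{-,-}_{k+1,k})$, $(\tilde c_{k+1,k}, b^L_{k+1,k})$, each pair $(x,y)$ having $\partial x = y + (\text{terms in the subalgebra generated by already-canceled exceptional generators})$. Analogous pairings can be written down for every square type (1)--(14) by reading off the local computations; the swallowtail squares require identifying pairs that involve $b^D_{k+2,k+1}+1$ and the $\tilde b^R, \tilde c$ chords near the crossing. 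I would order the pairs by a suitable ``complexity'' — for example, by the number of crossings between the pair's endpoints and the edges where the chord lives, or by the lexicographic position of the generator in the ambient subalgebra $\lchA(e^d_\alpha)$ — so that when cancellation is applied in this order, each step is a legitimate tame isomorphism (the leading term condition of \cite[Theorem 2.1]{RuSu1} is satisfied at each stage).

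The main obstacle, and the step requiring the most care, will be the global consistency of these cancellations. A chord that is exceptional in the $2$-cell on one side of a $1$-cell $e^1_\alpha$ may be non-exceptional when viewed from the other side, and similarly at $0$-cells several $2$-cells may meet with different designations of which chords are exceptional. This is exactly why (A2)--(A4) were imposed: Property (A2) limits how many crossing arcs can emanate from a single $0$-cell so that chords of the form $a_{i,j}$ with $i>j$ on the ``outgoing'' edges do not clash with the $a_{i,j}$ conventions on the ``incoming'' edges; (A3) ensures that the degenerate Type~(3) squares never share boundary, so the collapsed crossings do not interact; (A4) ensures that the only forced mergings of crossing arcs under the homotopy of $\pi_x(\Sigma)$ into the $1$-skeleton are the standard ones coming from Types (5), (6), (8), (12). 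I would organize the proof so that the cancellation pairs fall into two kinds: those entirely internal to a single $2$-cell $N(e^2_\alpha)$ (these use the $c$ and $\tilde c$ generators), and those involving chords on the boundary of $e^2_\alpha$ (these use $b^L$, $b^D$, $a^{\pm,\pm}$). The internal pairs can be canceled first, one $2$-cell at a time, with no interference between cells. The boundary pairs must then be canceled in a second pass where the order is forced by the orientations of $1$-cells from (A1) — so that ``incoming'' chords at a $0$-cell are canceled before ``outgoing'' ones. Verifying at each such step that the leading-term hypothesis of \cite[Theorem 2.1]{RuSu1} holds against all previously canceled pairs is the genuinely fiddly computation, and it is essentially the content of the entire argument.
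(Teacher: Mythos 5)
Your proposal is correct and follows essentially the same route as the paper: the exceptional generators are paired exactly as you describe (via the square-by-square differential computations recorded in Lemmas \ref{lem:Exc1} and \ref{lem:ExGen2}), the pairs are cancelled by iterated application of Theorem 2.1 of \cite{RuSu1}, and conditions (A2)--(A4) are invoked precisely to rule out a boundary chord being claimed as a cancellation partner from two different cells. The paper schedules the cancellations in a slightly different order than your two-pass plan — first all pairs in Type (3) squares, then the $\tilde b/a^-$ pairs along (1Cr) and (2Cr) $1$-cells, then the remaining pairs one $2$-cell at a time — but this is an organizational choice serving the same triangularity requirement you identify, and closure of $I$ under $\partial$ falls out of the cancellation procedure rather than being proved as a separate preliminary step.
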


%Our computation of LCH will be greatly simplified by the following Theorem.

%\begin{theorem} \label{thm:Exceptional} (Suppose $L$ does not have swallowtail\footnote{For each swallowtail point there is an exceptional generator that should be replaced by $1$ rather than $0$.  I think we should try to handle all parts of the argument for the swallowtail in a separated section. DR} points.)
%The LCH DGA of $L$ is stable tame isomorphic to a semi-free triangular DGA obtained by removing all exceptional generators from the generating set, and replacing all occurrences of exceptional generators in the differential of non-exceptional generators with $0$.
%\end{theorem}

The proof of Theorem \ref{thm:Exceptional} is given at the end of this subsection.  
In preparation for the proof, we examine the exceptional generators more closely (a square-by-square list appears below) and compute some of their
 differentials.  

\subsubsection{Differential of exceptional generators in the $1$-skeleton} \label{sssec:ExGen1}
First, we consider generators of the form $\tilde{b}_{i,j}$.  These generators are always exceptional, and they occur only in $1$-cells of type (1Cr) and (2Cr).
\begin{lemma}  \label{lem:Exc1}
\begin{itemize}
\item[(1Cr)]  
Let $e^1_\alpha$ be a $1$-cell of type (1Cr) so that sheets $S_k$ and $S_{k+1}$ cross above $N(e^1_\alpha)$.  In $\lchA$, we have
\[
\partial \tilde{b}_{k+1,k} = a^-_{k+1,k}.
\]

\item[(2Cr)] 
Let $e^1_\alpha$ be a $1$-cell of type (2Cr) so that sheet $S_{k+2}$ crosses $S_{k+1}$ and then $S_{k}$ as we follow $e^1_\alpha$ from its terminal point to its initial point.
The exceptional generators $\tilde{b}_{k+2,k}$ and $\tilde{b}_{k+2,k+1}$ satisfy   
\[\partial \tilde{b}_{k+2,k} = a^-_{k+2,k};  \quad \mbox{and} \] 
\begin{equation} \label{eq:dbtilde}
\partial \tilde{b}_{k+2,k+1} = a^-_{k+2,k+1}  +  X
\end{equation}
 where $X$ denotes a term belonging to the ideal generated by $\tilde{b}_{k+2,k}, a^-_{k+2,k}$.  
\end{itemize}
\end{lemma}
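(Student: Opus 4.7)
The plan is to count rigid GFTs beginning at each exceptional $\tilde b$ generator by imitating the strategy of Proposition \ref{prop:LCH1comp}. By Theorem \ref{thm:EkholmMain} and Lemma \ref{lem:N}, any such GFT stays inside $N(e^1_\alpha)$, and since $1$-cells of type (1Cr) or (2Cr) contain no cusp edges, Proposition \ref{prop:EY1SWFormula} collapses to the equation $m_{\min}-m_{\max}=1$, where $m_{\min}$ and $m_{\max}$ are the number of outputs at local minima and local maxima respectively. Combined with the fact that Property \ref{pr:Reeb0} and Property \ref{pr:Reeb1} allow only $a$'s (local min) and $b,\tilde b$'s (saddles) in $N(e^1_\alpha)$, one gets $m_{\max}=0$, $m_{\min}=1$, and so every rigid GFT has exactly one output at a local minimum and the rest at saddles; in particular the number of saddle outputs equals the number of $Y_0$-vertices.

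For $\tilde b_{k+1,k}$ in the (1Cr) case and $\tilde b_{k+2,k}$ in the (2Cr) case, I would first argue that no $Y_0$ can occur on the initial $(k{+}1,k)$- or $(k{+}2,k)$-edge. The relevant region is the half of $\widehat N(e^1_\alpha)$ to the left of the crossing locus, where the $z$-ordering of sheets places no sheet strictly between the upper and lower sheet of the chord (in (1Cr) because $S_k$ and $S_{k+1}$ are adjacent, in (2Cr) because to the left of both crossings the order is $S_{k+2},S_k,S_{k+1}$, so $S_{k+1}$ sits below and not between). Hence the GFT must be a single unstable flow line of $\tilde b$. The unstable manifold consists of two non-constant trajectories: Property \ref{pr:1cmono} shows one moves into the crossing locus (where $F_{i,j}$ would become non-positive, invalidating the GFT), while the other is driven leftward by negative $x_1$-monotonicity and can only limit to a Reeb chord at $N(e^0_-)$. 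The only chord with the correct upper/lower sheet ordering is $a^-_{k+1,k}$ (resp.\ $a^-_{k+2,k}$), giving the stated formulas.

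For $\tilde b_{k+2,k+1}$ in the (2Cr) case, the analogous no-$Y_0$ analysis shows that the single-edge trajectory contributes exactly $a^-_{k+2,k+1}$, via the leftward flow line of the unstable manifold. Any remaining rigid GFT must contain at least one $Y_0$, and this $Y_0$ must split the $(k{+}2,k{+}1)$-edge as $(k{+}2,k)$ plus $(k,k{+}1)$, since $S_k$ is the only sheet that can lie strictly between $S_{k+2}$ and $S_{k+1}$; moreover, by the $z$-orderings computed above, such a configuration is possible only in the leftmost region, beyond the $(k{+}2,k)$-crossing. I would then observe that the $(k{+}2,k)$-outgoing branch lives in a region that again has no sheet between $S_{k+2}$ and $S_k$ and contains no $(k{+}2,k)$-cusp edge, so it cannot branch further; thus it limits to one of the two $(k{+}2,k)$-Reeb chords available there, namely $\tilde b_{k+2,k}$ or $a^-_{k+2,k}$. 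Consequently the word produced by any such tree contains $\tilde b_{k+2,k}$ or $a^-_{k+2,k}$ as a factor, and the sum of these contributions lies in the two-sided ideal $(\tilde b_{k+2,k},a^-_{k+2,k})$.

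The main technical obstacle will be verifying carefully the two "left-going flow line limits to the expected $a^-$" statements at the start of each case, in the presence of the perturbation that realizes $1$-regularity. This is where the precise monotonicity of $-\grad_{x_1} F_{i,j}$ from Property \ref{pr:1cmono} (together with Properties \ref{pr:0cells} and \ref{pr:1cells} to rule out exits through the transverse boundary) must be invoked, exactly as in the proof of Proposition \ref{prop:bUS}. Once localized gradient behavior is established, the adjacency arguments blocking $Y_0$-vertices are purely combinatorial statements about sheet orderings in each of the three regions carved out by the two crossing arcs, and the enumeration of rigid GFTs follows.
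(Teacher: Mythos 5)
Your proposal is correct and follows essentially the same route as the paper: adjacency of the relevant sheets in the region where the difference function is positive rules out internal vertices on the initial edge, the two unstable trajectories are analyzed as in Proposition \ref{prop:bUS}, and for $\tilde{b}_{k+2,k+1}$ the only possible $Y_0$ splits off a $(k+2,k)$-branch that must end at $\tilde{b}_{k+2,k}$ or $a^-_{k+2,k}$. The only point to phrase carefully is that Property \ref{pr:1cmono} alone does not force the two unstable flow lines to exit in opposite directions; as you note at the end, that step requires the separation argument from Proposition \ref{prop:bUS}(1), which is exactly what the paper invokes.
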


\begin{proof}
In the (1Cr) case, note that the sheets $S_{k+1}$ and $S_k$ are adjacent above the portion of $N(e^1_\alpha)$ 
where $F_{k+1,k}>0$, so there is no possibility for a GFT starting at $\tilde{b}_{k+1,k}$ to have internal vertices.  Moreover, the $(k+1,k)$-descending manifold of $\tilde{b}_{k+1,k}$ has two non-constant flowlines.  One of them limits to $a^-_{k+1,k}$ while the other terminates at the crossing locus.  [This is verified as in the proof of Proposition \ref{prop:bUS} (1):  If the two flowlines in the unstable manifold of $\tilde{b}_{k+1,k}$ either both limit to $a^-_{k+1,k}$ or both reach the crossing locus, then one of the flow lines from the stable manifold would have to start at a local maximum of $F_{k+1,k}$ in $N(e^1_\alpha)$.] 
%and the other reaches the level set $F_{k+1,k}=0$ at the crossing.

In the (2Cr) case, the first formula is verified as in the (1Cr) case. 
To verify the second formula, note that the only opportunity for a $(k+2,k+1)$-flow starting at $\tilde{b}_{k+2,k+1}$ to have a $Y_0$-vertex is if it splits into a $(k+2,k)$-flow and a $(k,k+1)$-flow somewhere in the region where $S_k$ lies between $S_{k+2}$ and $S_{k+1}$.  The edge that is a $(k+2,k)$-flow must end either at $\tilde{b}_{k+2,k}$ or $a^-_{k+2,k}$ by considerations as in the (1Cr) case above.
\end{proof}

\subsubsection{Exceptional generators in $2$-cells} \label{sssec:ExGen2}

\begin{lemma} \label{lem:ExGen2} The exceptional generators in $N(e^2_\alpha)$ for a $2$-cell $e^2_\alpha$ of type (1)-(14) are as indicated in the following table.  Note that we use the notation $b_{k+2,\{k+1,k\}}$ to denote a pair of generators of the form $b_{k+2,k+1}$ and $b_{k+2,k}$.  Moreover, the differentials of exceptional generators of the form $\tilde{c}_{i,j}$ are as indicated.

\centerline{\begin{tabular}{c|c|c}  \mbox{Square Type} & \mbox{Exceptional Generators} & \mbox{Differentials} \\ & & \\
(1) & $\emptyset$ & N/A \\ & & \\
(2) & $\begin{array}{ccc}   \tilde{b}^U_{k+1,k} &  \tilde{b}^D_{k+1,k} & \tilde{c}_{k+1,k}  \\   a^{-,+}_{k+1,k} & a^{-,-}_{k+1,k} & b^L_{k+1,k}   \end{array}$   &  $\partial \tilde{c}_{k+1,k}= \tilde{b}^U_{k+1,k} + \tilde{b}^D_{k+1,k} + b^L_{k+1,k}$    \\ & & \\
(3) & $\begin{array}{cc}   \tilde{b}^L_{k+1,k} & \tilde{c}_{k+1,k}  \\   a^{-,-}_{k+1,k} & \tilde{b}^D_{k+1,k}      \end{array}$  & $\partial \tilde{c}_{k+1,k}= \tilde{b}^L_{k+1,k} + \tilde{b}^D_{k+1,k}$ \\ & & \\
(4) & $\begin{array}{ccc}  \tilde{b}^R_{k+1,k} &  \tilde{b}^D_{k,k+1} & \tilde{c}_{k+1,k} \\   a^{+,-}_{k+1,k} & a^{-,-}_{k,k+1} & b^D_{k+1,k}    \end{array}$ &  $\partial \tilde{c}_{k+1,k}= \tilde{b}^R_{k+1,k} + b^D_{k+1,k}$    \\ & & \\
(5) & $\begin{array}{ccc}  \tilde{b}^L_{k+2, \{k+1,k\}} & \tilde{b}^R_{k+2, \{k+1,k\}} & \tilde{c}_{k+2, \{k+1,k\}} \\
  a^{-,-}_{k+2, \{k+1,k\}} & a^{+,-}_{k+2, \{k+1,k\}} & b^D_{k+2, \{k+1,k\}} \end{array}$ &  $\begin{array}{c} \partial \tilde{c}_{k+2,k} = \tilde{b}^L_{k+2,k}+\tilde{b}^R_{k+2,k}+ b^D_{k+2,k}; \\
 \partial \tilde{c}_{k+2,k+1} = \tilde{b}^L_{k+2,k+1}+\tilde{b}^R_{k+2,k+1}+ b^D_{k+2,k+1} + \\ O(k+2,k); \end{array}$    \\ & & \\
(6)  & $\begin{array}{cccc}  \tilde{b}^R_{k+2, \{k+1,k\}} & \tilde{b}^L_{k+2, k+1} & \tilde{b}^D_{k, k+2} & \tilde{c}_{k+2, \{k+1,k\}}  \\
  a^{+,-}_{k+2, \{k+1,k\}} & a^{-,-}_{k+2, k+1} & a^{-,-}_{k, k+2} & b^D_{k+2, \{k+1,k\}} \end{array}$ &  $\begin{array}{c} \partial \tilde{c}_{k+2,k} = \tilde{b}^R_{k+2,k}+ b^D_{k+2,k}; \\
 \partial \tilde{c}_{k+2,k+1} = \tilde{b}^R_{k+2,k+1}+ b^D_{k+2,k+1} + \tilde{b}^L_{k+2,k+1} + \\ O(k+2,k); \end{array}$    \\ & & \\
(7) & [Similar to (2)] $\times 2$ & \\ & & \\
(8) & $\left\{\begin{array}{ccc} \tilde{b}^L_{k+2, \{k+1,k\}} & \tilde{b}^R_{k+2, \{k+1,k\}} & \tilde{b}^U_{k+1,k} \\
  a^{-,-}_{k+2, \{k+1,k\}} & a^{+,-}_{k+2, \{k+1,k\}}& a^{-,+}_{k+1,k} \\ & & \\
  \tilde{b}^D_{k+1,k} & \tilde{c}_{k+2, \{k+1,k\}} & \tilde{c}_{k+1,k}   \\  a^{-,-}_{k+1,k} & b^D_{k+2, \{k+1,k\}} & b^L_{k+1,k}  
 \end{array} \right\}$ &  $\begin{array}{c} \partial \tilde{c}_{k+2,k} = b^D_{k+2,k} + X; \\
 \partial \tilde{c}_{k+1,k} = b^L_{k+1,k} + Y \\ \partial \tilde{c}_{k+2,k+1} = b^D_{k+2,k+1} + Z; \end{array}$    \\ & & \\
(9) & $\emptyset$ & N/A \\ & & \\
(10) & Similar to (2) & \\ & & \\
(11) & $\emptyset$ & N/A \\ & & \\
(12) & $\begin{array}{ccc} \tilde{b}^R_{k+2, \{k+1,k\}}  & \tilde{c}_{k+2, \{k+1,k\}} \\
 a^{+,-}_{k+2, \{k+1,k\}}  & b^D_{k+2, \{k+1,k\}}   \end{array}$ &  $\begin{array}{c} \partial \tilde{c}_{k+2,k} =\tilde{b}^R_{k+2,k}+ b^D_{k+2,k}; \\
 \partial \tilde{c}_{k+2,k+1} = \tilde{b}^R_{k+2,k+1}+ b^D_{k+2,k+1} + O(k+2,k). \end{array}$    \\ & & \\
(13) & $\begin{array}{ccc} \tilde{b}^R_{k+2,k+1} &  \tilde{c}_{k+2,k+1} \\
 a^{+,-}_{k+2,k+1} &  b^D_{k+2,k+1} \end{array}$ &  $\begin{array}{c} \partial \tilde{c}_{k+2,k+1}= b^D_{k+2,k+1} + \tilde{b}^R_{k+2,k+1} + 1;\end{array}$    \\ & & \\
(14) & $\begin{array}{ccc} \tilde{b}^R_{l-1,l-2} &  \tilde{c}_{l-1,l-2} \\
 a^{+,-}_{l-1,l-2} &  b^D_{l-1,l-2} \end{array}$ &  $\begin{array}{c} \partial \tilde{c}_{l-1,l-2}= b^D_{l-1,l-2} + \tilde{b}^R_{l-1,l-2} + 1.\end{array}$    \\ & & \\
\end{tabular}}
In the squares (5), (6), and (12), $O(k+2,k)$ denotes an element of the ideal generated by those exceptional Reeb chords with upper endpoint on $S_{k+2}$ and lower endpoint on $S_k$.  In square (8), the terms $X, Y, Z$  respectively belong to the $2$-sided ideals generated by 
\[
\begin{array}{cl} E_X = & E\setminus \{ \tilde{c}_{k+2, \{k+1,k\}}, b^D_{k+2, \{k+1,k\}},
 \tilde{c}_{k+1,k}, b^L_{k+1,k} \},   \\
 E_Y = & E_X \cup \{ \tilde{c}_{k+2,k} \} \\
 E_Z = & E_Y \cup \{  b^D_{k+2,k} \}
\end{array}
\]
where $E$ denotes the set of all exceptional generators of the square.  
\end{lemma}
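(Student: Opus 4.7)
The lemma has two components: (i) cataloguing the exceptional generators in $N(e^2_\alpha)$ for each of the $14$ square types, and (ii) computing $\partial \tilde{c}_{i,j}$ for each exceptional local maximum. The plan is to handle (i) combinatorially from Definition~\ref{def:ex-gen}, and (ii) by a close adaptation of the enumeration strategy of Sections~\ref{sec:CompLCH}--\ref{sec:SwallowComp}, with Theorems~\ref{thm:SquareComp} and~\ref{thm:SwallowComp} as templates.

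For (i), I would simply walk through the $14$ square types. All Reeb chords of the form $\tilde{b}^X_{i,j}$ or $\tilde{c}_{i,j}$ are automatically exceptional by (1) of Definition~\ref{def:ex-gen}; their presence and location is dictated by Property~\ref{pr:Reeb1} (for edges of types (1Cr) and (2Cr)) and Properties~\ref{pr:Reeb2}--\ref{pr:Location2}. The generators appearing in (2) of Definition~\ref{def:ex-gen}, namely the $a^{\pm,\pm}_{i,j}$ and $b^X_{i,j}$ with ``wrong'' subscript order, are determined by inspecting, for each square type, which boundary $0$-cells and $1$-cells receive a crossing arc or cusp arc under the homotopy described in Section~\ref{sec:parallel}. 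The lists arise immediately by examining Figures~\ref{fig:generators}, \ref{fig:EParallel}, and~\ref{fig:EParallel12}; for instance, in a Type~(2) square the crossing arc is pushed onto $e^1_L$, forcing $b^L_{k+1,k}$, $a^{-,-}_{k+1,k}$, and $a^{-,+}_{k+1,k}$ to be exceptional, in addition to $\tilde{b}^U_{k+1,k}, \tilde{b}^D_{k+1,k}, \tilde{c}_{k+1,k}$. Each of the $14$ entries of the table is a direct case check of this form.

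For (ii), the main idea is that each $\tilde{c}_{i,j}$ is a local maximum of $F_i-F_j$ living in a small ``pocket'' of $\widehat{N}(e^2_\alpha)$ cut off by the crossing locus, analogous to how $c_{i,j}$ sits in $[1/2,3/4]^2$. I would first establish a localized analogue of Propositions~\ref{prop:EY1SWFormula}/\ref{prop:ACE112} applied at the relevant $\tilde{c}$, then split $\partial\tilde{c}_{i,j} = \partial_c \tilde{c}_{i,j} + \partial_b \tilde{c}_{i,j}$ as in Sections~\ref{ssec:112btrees} and~\ref{ssec:enumerbtrees}. In the simplest cases (Types (2), (3), (4), (10), and the two such pockets in (7)), only a very small set of sheets sits between the two endpoints of the $\tilde{c}$ in the relevant region, so there are no interior $Y_0$-vertices to consider and $\partial_c \tilde{c}_{i,j}=0$. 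Then $\partial_b \tilde{c}_{i,j}$ is computed by the generalized $b$-manifold argument: one lists the $b$-lines exiting the pocket through its boundary edges and reads off the sum. For Types (5), (6), (8), (12), additional sheets can produce $Y_0$-vertices whose outputs land on other exceptional generators; these contribute the $O(k+2,k)$ and $X,Y,Z$ correction ideals. For the swallowtail Types (13) and (14), the decisive new feature is that the $(k+2,k+1)$-switch GFT (Lemma~\ref{lem:k1k2switchflow}) begins at $\tilde{c}_{k+2,k+1}$ and ends at a single $e$-vertex, contributing the $+1$ term.

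The hard part will be controlling the correction terms in the non-simple cases --- showing, e.g., that every extra rigid GFT at $\tilde{c}_{k+2,k+1}$ in a Type~(8) square either contributes a term lying in the specified ideal $E_Z$ or cancels mod~$2$. This requires a $3$rd-Quadrant-style containment argument (cf.\ Lemma~\ref{lem:3rdQuad} and Lemma~\ref{lem:1412}) adapted to the local geometry around $\tilde{c}$: one shows that any non-constant branch emanating from $\tilde{c}_{i,j}$ must, after finitely many $Y_0$-vertices, either hit a specified $\tilde{b}$ or $b$ exit point, reach an $e$-vertex, run into the crossing/cusp locus (and so not be a valid GFT), or terminate at one of the listed exceptional generators. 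This is precisely the same pattern of invariant-region-plus-monotonicity reasoning used throughout Sections~\ref{sec:Comp2Cells}--\ref{sec:SwallowComp}, and the correction ideals $O(k+2,k)$, $E_X$, $E_Y$, $E_Z$ are engineered to absorb exactly the terms whose precise coefficients we never need to evaluate, since in Section~\ref{sec:Iso} we will quotient out the entire ideal $I$ anyway.
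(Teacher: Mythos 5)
Your catalogue of exceptional generators and your treatment of the easy cases are essentially right, though heavier than necessary: for Types (2)--(4), (7), (10) the paper does not invoke the $\partial_c+\partial_b$ split or the generalized $b$-manifold machinery at all. Since the two crossing sheets $S_{k+1},S_k$ are adjacent everywhere on the region where $F_{k+1,k}>0$, \emph{no} internal vertex of any kind can occur, so the rigid GFTs are exactly the $b$- and $\tilde b$-lines, which limit to $\tilde c_{k+1,k}$ by Lemma~\ref{lem:blimits}; the differential is read off directly. For (5), (6), (12) the same adjacency argument shows the only possible $Y_0$ splits a $(k+2,k+1)$-flow into a $(k+2,k)$-flow and a $(k,k+1)$-flow, and the $(k+2,k)$-branch (again by adjacency) must terminate at a $(k+2,k)$ Reeb chord, all of which are exceptional --- this is the entire content of the $O(k+2,k)$ term. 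Your account of the $+1$ in Types (13)/(14) via the $(k+2,k+1)$-switch GFT matches the paper, which cites Lemmas~\ref{lem:PFTrees} and~\ref{lem:aceyswST}.

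The genuine gap is in the Type (8) square, which you correctly identify as the hard case but for which your plan has no working mechanism. The paper's key idea there is combinatorial, not a quadrant-containment argument: given a rigid GFT starting at a $\tilde c$ with at least one $Y_0$, one traces a path down the domain tree along edges that are $(i,j)$-flow lines with $i>j$ --- such a continuation exists at every $Y_0$ because if the incoming indices are out of order then so are those of at least one outgoing edge --- and the endpoint of this path is therefore an exceptional generator. The refinement to the nested ideals $E_X\subset E_Y\subset E_Z$ then comes from case-by-case exclusions using the sheet orderings in the six sectors of the triple-point square (Figure~\ref{fig:TripPointOrder}) and the confinement of the $b^D_{k+2,k+1}$- and $b^L_{k+1,k}$-lines to corners where the needed $Y_0$ cannot occur (Property~\ref{pr:monotonicityI}). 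Your proposed Third-Quadrant-style argument does not obviously yield these specific exclusions. Relatedly, your closing remark that the correction ideals absorb ``terms whose precise coefficients we never need to evaluate, since we will quotient out $I$ anyway'' misreads the role of the lemma: the whole point of the nested ideals is the triangularity needed in Lemma~\ref{lem:local} so that the exceptional pairs can be cancelled one at a time by Theorem~2.1 of \cite{RuSu1}; if, say, $\partial\tilde c_{k+2,k}$ contained a $b^L_{k+1,k}$ term, the prescribed cancellation order would fail. Establishing membership in the \emph{correct} sub-ideal is precisely the content that must be proved.
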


\begin{proof}

The verification for the differentials of the $\tilde{c}$  depends on the square type.

\medskip

{\bf 1.} {\it For squares of type (2)-(4), (7) and (10).}

The sheets $S_{k+1}$ and $S_k$ that form the upper and lower endpoint of $\tilde{c}_{k+1,k}$ never have another sheet between them in the region where $F_{k+1,k} > 0$.  Therefore, no interval vertices are possible in a GFT that begins at $\tilde{c}_{k+1,k}$.    
A flow tree without internal vertices is rigid if and only if it is simply a flow line from $\tilde{c}_{k+1,k}$ to one of the $b_{k+1,k}$ or $\tilde{b}_{k+1,k}$  Reeb chords (by Proposition \ref{prop:ACE112}).  There is a unique such flow line for each $b_{k+1,k}$ and $\tilde{b}_{k+1,k}$ in $N(e^2_\alpha)$.  [In the terminology from \ref{sssec:blines}, these flow lines are the $b_{k+1,k}$- and $\tilde{b}_{k+1,k}$-lines.  That they indeed limit to $\tilde{c}_{k+1,k}$ as $t \rightarrow -\infty$ was established in Lemma \ref{lem:blimits}.]   
% [There are only two $F_{k+1,k}$ trajectories that limit to $b_{k+1,k}$; one of these lies in a different square.  The trajectory that does lie in the square must have $\tilde{c}_{k+1,k}$ as its other endpoint since $\tilde{c}_{k+1,k}$ the unique local maximum of $F_{k+1,k}$ in $R$.]  

\medskip

{\bf 2.} {\it For squares of type (5), (6), and (12).}

The formulas for $\partial \tilde{c}_{k+2,k}$ are derived as in 1., as sheets $S_{k+2}$ and $S_k$ are adjacent in the region where $F_{k+2,k} >0$.   
All terms besides $O(k+2,k)$ in $\partial \tilde{c}_{k+2,k+1}$ correspond to GFTs without internal vertices, and these correspond to the unique flow lines from $\partial \tilde{c}_{k+2,k+1}$ to each of the $b_{k+2,k+1}$ and $\tilde{b}_{k+2,k+1}$ Reeb chords in $N(e^2_\alpha)$.   
%The term $O(k+2,k)$ denotes an element of the ideal generated by those exceptional Reeb chords with upper endpoint on $S_{k+2}$ and lower endpoint on $S_k$.  
We claim that any other rigid GFT starting at $\tilde{c}_{k+2,k+1}$ must have an endpoint at a Reeb chords with upper sheet $S_{k+2}$ and lower sheet $S_k$, and thus is accounted for in the $O(k+2,k)$ term.  [Indeed, for a GFT starting at $\tilde{c}_{k+2,k+1}$, the first internal vertex must be a $Y_0$ (by Proposition \ref{prop:NoY1NoSW}) where the initial $(k+2,k+1)$-flow line splits into a $(k+2,k)$-flow line and a $(k,k+1)$-flow line (because $S_k$ is the only sheet that is ever between $S_{k+2}$ and $S_{k+1}$).  The $(k+2,k)$-branch has no possibility for further $Y_0$-vertices below it, and hence terminates at one of the $(k+2,k)$ Reeb chords.]

\medskip

{\bf 3.} {\it For the Type (8) square.}

%  Here, the terms $X, Y, Z$  respectively belong to the $2$-sided ideals generated by 
%\[
%\begin{array}{cl} E_X = & E\setminus \{ \tilde{c}_{k+2, \{k+1,k\}}, b^D_{k+2, \{k+1,k\}},
% \tilde{c}_{k+1,k}, b^L_{k+1,k} \},   \\
% E_Y = & E_X \cup \{ \tilde{c}_{k+2,k} \} \\
% E_Z = & E_Y \cup \{  b^D_{k+2,k} \}
%\end{array}
%\]
%where $E$ denotes the set of all exceptional generators of the square.  

First observe that the rigid GFTs without internal vertices that start at the $\tilde{c}$ account for the given term or belong to the appropriate ideal.  Next, assume that we have a rigid GFT $\Gamma$ starting at one of the $\tilde{c}$ with at least one $Y_0$-vertex.  We claim that we can find a path $\gamma$ within the domain of $\Gamma$ from the input at $\tilde{c}$ to an output Reeb chord such that all edges that appear along $\gamma$ are flow lines for some $F_{i,j}$ with $i >j$.  [To see this, start at $\tilde{c}$ and work down. When a $Y_0$-vertex is reached, if the edge oriented into the vertex is a flowline for $F_{i,j}$ with $i>j$, then the outputs must be flow lines for $F_{i,h}$ and $F_{h,j}$ for some $h$.  If it were the case that $i<h$ and $h<j$ then $i <j$ would hold also, so at least one of the output edges satisfies the desired inequality. Note that here, as usual, we consider punctures at $c$'s as being special cases of $Y_0$-vertices where one outgoing edge is a constant flowline.]

All Reeb chords for which the subscripts $i,j$ satisfy $i >j$ belong to the collection $E$.  Therefore, the endpoint of $\gamma$ must belong to $E$, and we just need to show that it belongs to the appropriate  one of $E_X, E_Y,$ or $E_Z$.  This follows from the following sequence of statements:

\begin{itemize}
\item The endpoint of $\gamma$ cannot be a puncture at $\tilde{c}_{k+1,k}$ or $\tilde{c}_{k+2,k+1}$.  This is  because of the ordering of sheets at these points: there is no way the branch of the path above the puncture could be a flow for $F_{i,j}$ with $i>j$.  See Figure  \ref{fig:TripPointOrder}.  

\item The endpoint of $\gamma$ cannot be at $b^D_{k+2, k+1}$ or $b^L_{k+1,k}$.   
In the first case, the branch of the flow tree immediately above such an endpoint would be a portion of the $b^D_{k+2,k+1}$-line from $b^D_{k+2,k+1}$ to $\tilde{c}_{k+2,k+1}$,  and this flow line is contained entirely in the corner $C_{R,D}$.  [The notation is as in Property \ref{pr:monotonicityI}.  By Property \ref{pr:monotonicityI}, the $b^D_{k+2,k+1}$-line is contained in a vertical strip of width $2 \e$ and centered at $x_1= \beta^D_{k+2,k+1}$ that lies below $x_2= \tilde{\beta}^R_{k+2,k+1}+\e$.]  In $C_{R,D}$, the $z$-coordinates of sheets satisfy $S_{k+2} > S_{k} > S_{k+1}$ so again a branch of the path above a $Y_0$-vertex along this flow line would violate the $i>j$ condition.  Similarly, the flow line from $b^L_{k+1,k}$ to $\tilde{c}_{k+1,k}$ belongs entirely to $C_{L,U}$ where $S_{k+1} >S_k > S_{k+2}$, so the $Y_0$-vertex preceding the edge would be impossible.

\item If the endpoint of $\gamma$ is a puncture at $\tilde{c}_{k+2,k}$, then the initial vertex of $\Gamma$ cannot also be $\tilde{c}_{k+2,k}$.  This is because the difference between $z$-coordinates of sheets decreases along all edges of a GFT.% the initial vertex cannot also be at $\tilde{c}_{k+2,k}$.

\item If the endpoint of $\gamma$ is at $b^D_{k+2,k}$, then the initial vertex would have to be at $\tilde{c}_{k+2,k+1}$.  To verify, note that the point $x$ on the $b^D_{k+2,k}$-line from $b^D_{k+2,k}$ to $\tilde{c}_{k+2,k}$ where the $Y_0$ above $b^D_{k+2,k}$ occurs must belong to $T = \{(x_1,x_2) \in \tilde{N}(e^2_\alpha) \, |  \, x_1 \geq 1/4, \,\,\mbox{and  } x_2 \leq -1/4\}$. [Since the entire $b^D_{k+2,k}$-line  remains in this region by Property \ref{pr:monotonicityI}.]    Thus, the edge in the path $\gamma$ that precedes this edge is a $(k+2,k+1)$-flow line.  Again, using Property \ref{pr:monotonicityI}, this flow line from $x$ to $\tilde{c}_{k+2,k+1}$ must remain in $T$ as $t$ decreases.   Above $T$, we have $S_{k+2} > S_{k} > S_{k+1}$, so another $Y_0$ is impossible since the edge above the $Y_0$ could not be an $(i,j)$-flow with $i>j$.  Thus, this  edge must limit to $\tilde{c}_{k+2,k+1}$, so the initial vertex of the tree is indeed $\tilde{c}_{k+2,k+1}$ as desired.

\end{itemize}

{\bf 4.} {\it For squares of type (13) and (14).}

The formula for $\partial \tilde{c}_{k+2,k+1}$ follows from Lemmas \ref{lem:PFTrees} and \ref{lem:aceyswST}.
%entire flowline $k+2,k+1$ flow from $x$ to $\tilde{c}_{k+2,k+1}$ is contained in the $1/6$-th of the square where $S_{k+2} > S_{k} > S_{k+1}$.}  Using the $i>j$ condition, the first $Y_0$-vertex above $b^D_{k+2,k}$ would occur at such an $x$ and the incoming flow to $x$ must be the $k+2,k+1$-flowline from $x$ to $\tilde{c}_{k+2,k+1}$.  The $i>j$ condition then prohibit any more $Y_0$-vertices from occuring before we reach  $\tilde{c}_{k+2,k+1}$.

\end{proof}

%\subsubsection{Cancelling pairs of exceptional generators in the $2$-skeleton}

\begin{proof}[Proof of Theorem \ref{thm:Exceptional}]

%The formulas for differentials found in \ref{sssec:ExGen1} and \ref{sssec:ExGen2}  allow us to see that within each closed $2$-cell, we can cancel all exceptional generators by a sequence of applications of Theorem \ref{thm:Alg}.  To see this,  
For each square type the exceptional generators are divided into pairs according to the way they are listed in columns of $2$ in the table from Lemma \ref{lem:ExGen2}.  We order these pairs of generators for squares of type (2)-(4), (7), and (10) as they appear from left to right.  For squares of type (5), (6), (8), and (12) the columns with subscripts of the form $k+2,\{k+1,k\}$ represent two distinct pairs of generators.  In this case, order pairs of generators by proceeding from left to right using $k+2,k$ subscripts everywhere, and then listing the remaining $k+2,k+1$ generators as they appear from left to right.  For example, in the neighborhood of a Type (6) $2$-cell there are $6$ pairs of exceptional generators ordered as:
\[
\begin{array}{cccccc}   \tilde{b}^R_{k+2,k}  &  \tilde{b}^L_{k+2,k+1} & \tilde{b}^D_{k,k+2} & \tilde{c}_{k+2,k} & \tilde{b}^R_{k+2,k+1} & \tilde{c}_{k+2,k+1} \\
a^{+,-}_{k+2,k} & a^{-,-}_{k+2,k+2} & a^{-,-}_{k,k+2} & b^D_{k+2,k} & a^{+,-}_{k+2,k+1} & b^D_{k+2,k+1} 
\end{array}
\]

\begin{lemma} \label{lem:local} Let $x$ and $y$ denote a pair of exceptional generators in $N(e^2_\alpha)$ with $x$ appearing above $y$ in the table from Lemma \ref{lem:ExGen2}.  Then, in a quotient of $(\A_{\mathit{LCH}}, \partial)$ in which all of the previous pairs of exceptional generators of the square (with respect to the ordering just described) have been set to zero we have $\partial x = y$, unless either $e^2_\alpha$ has type (13) and $x= \tilde{c}_{k+2,k+1}$ or $e^2_\alpha$ has type (14) $x= \tilde{c}_{l-1,l-2}$.  In these latter two cases, we have
\[
\partial \tilde{c}_{k+2,k+1} = b^D_{k+2,k+1} +1;  \mbox{    and    } \partial \tilde{c}_{l-1,l-2} = b^D_{l-1,l-2} +1. 
\]
\end{lemma}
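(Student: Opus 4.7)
The plan is to proceed by a case analysis on the square type of $e^2_\alpha$, and within each type by the position of the pair $(x,y)$ in the ordering specified above. The pairs naturally split into two kinds: those of the form $(\tilde{b}^X_{i,j},\, a^{*,*}_{i,j})$ supported along a single $1$-cell, and those of the form $(\tilde{c}_{i,j},\, b^Y_{i,j})$ or $(\tilde{c}_{i,j},\, \tilde{b}^{Y'}_{i,j})$ coming from the interior of the $2$-cell. For the $(13)$ and $(14)$ pairs $(\tilde{c}_{*,*},\,b^D_{*,*})$, the analysis will produce the constant term~$1$.

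For the first kind of pair, I will apply Lemma~\ref{lem:Exc1} after reinterpreting $\tilde{b}^X_{i,j}$, via the identifications at the start of Section~\ref{sec:Iso}, as a generator of the sub-DGA $\lchA(e^1_Y)$ for the appropriate $1$-cell $e^1_Y$. This gives $\partial \tilde{b}_{i,j} = a^-_{i,j} + X$, where $X$ is absent in the $(1\text{Cr})$ case and in the $(2\text{Cr})$ case lies in the ideal generated by the other $\tilde{b}$ on that $1$-cell. Translated back to the $2$-cell notation, this yields $\partial \tilde{b}^X_{i,j} = a^{*,*}_{i,j}$ modulo earlier pairs, since in the $(2\text{Cr})$-ordering the $(k+2,k)$ pair always comes before the $(k+2,k+1)$ pair.

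For the second kind of pair, I will simply read off the differential of $\tilde{c}_{i,j}$ from the third column of the table in Lemma~\ref{lem:ExGen2}. In every case the formula expresses $\partial \tilde{c}_{i,j}$ as a sum of several $\tilde{b}^X_{i,j}$'s, the intended $y$, and possibly an ideal term. The ordering of pairs was set up precisely so that every summand other than $y$ either equals one of the $\tilde{b}^X_{i,j}$'s already killed, or lies in an $O(k+2,k)$-type ideal that consists of exceptional generators from strictly earlier pairs (the $(k+2,k)$ pairs always precede the $(k+2,k+1)$ pairs in the prescribed ordering). For Types~$(13)$ and $(14)$, the formula $\partial \tilde{c}_{k+2,k+1} = b^D_{k+2,k+1} + \tilde{b}^R_{k+2,k+1} + 1$ from Lemma~\ref{lem:ExGen2} reduces modulo the preceding pair $(\tilde{b}^R_{k+2,k+1},\,a^{+,-}_{k+2,k+1})$ exactly to $b^D_{k+2,k+1} + 1$, which explains the exceptional wording in the statement.

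The main obstacle is the Type~$(8)$ square, where three $\tilde{c}$ generators appear and their differentials contain ideal terms $X,Y,Z$ with generating sets $E_X \subset E_Y \subset E_Z$ explicitly listed in Lemma~\ref{lem:ExGen2}. Here I will have to verify, by consulting my prescribed ordering on the six pairs of the Type~$(8)$ square, that every generator listed in $E_X$ (resp.\ $E_Y$, $E_Z$) already appears in a strictly earlier pair than the pair containing $\tilde{c}_{k+2,k}$ (resp.\ $\tilde{c}_{k+1,k}$, $\tilde{c}_{k+2,k+1}$). This is a finite bookkeeping check, but it is the only place where the specific choice of ordering is genuinely used, and so it is where care is needed. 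The analogous but simpler bookkeeping also handles the $O(k+2,k)$ terms in Types~$(5)$, $(6)$, and $(12)$.
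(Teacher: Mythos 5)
Your overall route is the paper's: the proof given there is a one‑sentence deferral to the formulas of Lemmas \ref{lem:Exc1} and \ref{lem:ExGen2}, and your case analysis is exactly the verification that sentence points at. Your treatment of the $\tilde b$-type pairs via Lemma \ref{lem:Exc1}, of the constant term $1$ in Types (13)/(14), and of the $O(k+2,k)$ terms in Types (5), (6), (12) all check out against the prescribed ordering.

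The step that would fail is your Type (8) check. A small symptom first: that square has nine pairs, not six (each of the three columns with $\{k+1,k\}$ subscripts contributes two pairs). More seriously, the containment you propose to verify is false. With the stated ordering the first pass consists of the pairs headed by $\tilde b^L_{k+2,k}$, $\tilde b^R_{k+2,k}$, $\tilde b^U_{k+1,k}$, $\tilde b^D_{k+1,k}$, $\tilde c_{k+2,k}$, $\tilde c_{k+1,k}$, and only afterwards come $\tilde b^L_{k+2,k+1}$, $\tilde b^R_{k+2,k+1}$, $\tilde c_{k+2,k+1}$. Hence when you reach the pair $(\tilde c_{k+2,k}, b^D_{k+2,k})$, the four generators $\tilde b^L_{k+2,k+1}$, $\tilde b^R_{k+2,k+1}$, $a^{-,-}_{k+2,k+1}$, $a^{+,-}_{k+2,k+1}$ --- all of which belong to $E_X$, and also to $E_Y$ --- have not yet been set to zero, so ``every generator of $E_X$ lies in a strictly earlier pair'' is simply not true, and the bare ideal membership $X \in (E_X)$ does not force $X=0$ in the quotient by pairs 1--4. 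To close this you must either (i) sharpen the Type (8) part of Lemma \ref{lem:ExGen2} by showing that no word of $\partial\tilde c_{k+2,k}$ or $\partial\tilde c_{k+1,k}$ contains one of those four $(k+2,k+1)$-chords without also containing a letter already killed in the earlier $\tilde b$-pairs --- which means going back to the GFT geometry rather than quoting the stated ideal memberships --- or (ii) note that in the application (Steps 2 and 3 of the proof of Theorem \ref{thm:Exceptional}) all $\tilde b$-type pairs of the square are cancelled before any $\tilde c$-type pair is touched, after which $E_X$, $E_Y$, $E_Z$ consist entirely of already-killed generators and the conclusion is immediate; but (ii) proves a reordered variant of the lemma, not the lemma with the ordering you (and the statement) fixed.
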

\begin{proof}
This is easily verified from the formulas for differentials established in Lemmas \ref{lem:Exc1} and \ref{lem:ExGen2}.  
\end{proof}

%This is possible as one easily verifies %(using the formulas from \ref{sssec:ExGen1} for the $\tilde{b}$) 
%that, as long as all previous exceptional generators from the list have been cancelled, the differential simply takes the top entry of each column to the bottom entry.  For squares of type (5), (6), (8), and (12) a similar process works except that the subscripts of the form $k+2,\{k+1,k\}$ represent two distinct generators.  For these terms, first cancel all columns using the $k+2,k$ subscripts and then repeat to cancel the generators with $k+2,k+1$ subscripts. 

Using Lemma \ref{lem:local}, we can apply  
%within cancel all of the exceptional generators within a given $2$-cell by repeat applications of 
Theorem \thmAlg from \cite{RuSu1}
%\ref{thm:Alg} 
multiple times to see that the quotient of $(\lchA, \partial)$ by the ideal generated by those generators of $I$ that lie in a single $N(e^2_\alpha)$ is stable tame isomorphic to $(\lchA,\partial)$.  [Apply Theorem \thmAlg from \cite{RuSu1} to cancel the exceptional generators of $N(e^2_\alpha)$ one  pair at a time, in the order previously specified.  
%Each application of Theorem \ref{thm:Alg} adds $x$ and $\partial x$ to  and $y$ from the table in Proposition \ref{prop:ExGen2} one at a time.  If we cancel pairs in the order that was previously described, then 
Lemma \ref{lem:local} shows that the result of this procedure is indeed $\lchA$ quotiented by the ideal generated by those generators of $I$ that lie in $N(e^2_\alpha)$.]   However, to see that the quotient of $(\lchA,\partial)$ by all of $I$ is stable tame isomorphic to $\lchA$ we need to be slightly more careful about the order in which we cancel exceptional generators in distinct squares for the following reason:   %the proof of Theorem \ref{thm:Exceptional} is slightly more involved for the following reason.  
After cancelling all exceptional generators in a given square, one could worry that when considering a neighboring square some of the generators from the $1$-skeleton that are necessary to carry out the cancellation process may be missing.  Conditions (A2)-(A4) in the construction of $\mathcal{E}_\pitchfork$ are designed to prevent this from happening.

The procedure to globally cancel all exceptional generators using repeated applications of Theorem \thmAlg from \cite{RuSu1} is as follows:

\medskip

\noindent {\bf Step 1.}  Cancel all exceptional generators in squares of type (3).  

\medskip

The property (A3) of Proposition \ref{prop:Etra} states that no two Type (3) squares of $\mathcal{E}_\pitchfork$ can border one another.  Therefore, there is no overlap among the generators from different squares that should be cancelled in Step 1.

\medskip

\noindent {\bf Step 2.}  For all (1Cr) $1$-cells $e^1_\alpha$ that do not border a Type (3) square, cancel $\tilde{b}_{k+1,k}$ with $a^-_{k+1,k}$; for all (2Cr) $1$-cells $e^1_\alpha$ cancel  $\tilde{b}_{k+2,k}$ with $a^-_{k+2,k}$ and then cancel $\tilde{b}_{k+2,k+1}$ with $a^-_{k+2,k+1}$.
(Note that Lemma \ref{lem:Exc1} shows that in the (2Cr) case, once $\tilde{b}_{k+2,k}$ and $a^-_{k+2,k}$ are equal to $0$, $\partial \tilde{b}_{k+2,k+1} = a^-_{k+2,k+1}$ so that Theorem \thmAlg from \cite{RuSu1} applies.)

\medskip

For carrying out Step 2, we need to choose some ordering of the (1Cr) and (2Cr) $1$-cells before proceeding.  This choice of order is irrelevant.  However, since a single  $0$-cell $e^0_\alpha$ can appear as the initial vertex of more than one $1$-cell, it is important to verify the cancelling procedure does not specify that a single Reeb chord $a_{i,j} \in N(e^0_\alpha)$ should be cancelled more than once.  This holds since the property (A2) of $\mathcal{E}_\pitchfork$ (from Proposition \ref{prop:Etra})  shows that the only way this can happen is when $e^0_\alpha$ is at the lower left corner of a Type (3) square, $e^2_\beta$, where sheets $k$ and $k+1$ cross, and the Reeb chord is $(a_{k,k+1}, e^0_\alpha) = (a^{-,-}_{k+1,k},e^2_\beta)$.  However, these Reeb chords and the $\tilde{b}$ Reeb chords that lie on the left and down edges of type (3) $2$-cells have already been cancelled in Step 1, and cancelling them is not part of the Step 2 procedure.  

%the only Reeb chords $(a_{i,j}, e^0_\alpha)$ that could equal $(a^-_{k+1,k}, e^1_\beta)$, $(a^-_{k+2,k}, e^1_\beta)$, or $(a^-_{k+2,k+1}, e^1_\beta)$ for more than one $1$-cell of Type (1Cr) or (2Cr) 

%satisfies (A4) from Proposition \ref{} that states that: 

%For any $0$-cell, $e^0_\alpha$, and pair of sheets $S_{i}$, $S_{j}$ above $e^0_\alpha$  consider the set of $1$-cells, 
%\[
%T(e^0_\alpha, S_i,S_j) =\{e^1_\beta \, | \,  \mbox{$e^0_\alpha$ is the initial vertex of $e^1_\beta$; and $S_i$ and $S_j$ intersect above $e^1_\beta$}\}.
%\]
%The cardinality of $T(e^0_\alpha, S_i,S_j)$ satisfies
%\[
%|T(e^0_\alpha, S_i,S_j)| \leq 2,
%\]
%and if $|T(e^0_\alpha, S_i,S_j)| = 2$ then the two $1$-cells in $T(e^0_\alpha, S_i,S_j)$ form the bottom and left edge of a single Type (3) square that has $e^0_\alpha$ as its lower left vertex.

%Those $\tilde{b}_{k+1,k}$ and $a^-_{k+1,k}$ Reeb chords that lie within neighborhoods of Type (3) $2$-cells were alreadry cancelled in Step 1.

%this set is $\leq 2$ with equality only when the two edges 

%, whose initial point is at $e^0_\alpha$.  The only time that the same two sheets above $e^0_\alpha$ cross above more than one of the $\{e^1_\beta\}$ is

\medskip

\noindent {\bf Step 3.}  Choose some ordering of the non-Type (3) squares and cancel all remaining exceptional generators one $2$-cell at a time.  

\medskip

At the conclusion of Step 2., all exceptional generators of the form $\tilde{b}$ together with the $a$ generators that appear immediately below them in Lemma \ref{lem:ExGen2} have already been cancelled.  [Note that in all cases 
%besides 
%the Type (3) square, 
the generator $y$ that appears below a $\tilde{b}$ Reeb chord in Lemma \ref{lem:ExGen2} is precisely the $a$ Reeb chord between the same two sheets and above the initial vertex of the edge of $\tilde{b}$.  These are precisely the Reeb chords that the $\tilde{b}$ were cancelled with in Step 2.]  Therefore, we can cancel the exceptional generators in a given $2$-cell by applying Theorem \thmAlg from \cite{RuSu1} repeatedly to the pairs of exceptional generators in $N(e^2_\alpha)$ in the order described above, and skipping those (already cancelled) pairs that begin with a $\tilde{b}$ generator.  

During Step 3., the only Reeb chords to be cancelled that belong to neighborhoods of more than one $2$-cell are Reeb chords of the form $b^X_{i,j}$ (without a tilde).  Such a Reeb chord is exceptional in $N(e^2_\alpha)$ if and only if a segment of the $(i,j)$-crossing locus of $e^2_\alpha$ is homotoped to sit above the boundary $1$-cell $e^X_1$ during the procedure defined in Section \ref{sec:parallel}.  Property (A4) from Proposition \ref{prop:Etra} states that no crossing arcs from distinct $2$-cells are homotoped to the same $1$-cell.  Thus, there is no overlap among Reeb chords in distinct $2$-cells that need to be cancelled at Step 3.

At this point, we have obtained $(\lchA/I, \partial)$ from $(\lchA,\partial)$ by repeated applications of Theorem \thmAlg from \cite{RuSu1}, so it follows that these DGAs are stable tame isomorphic.
\end{proof}

%\subsection{Global cancellation of exceptional generators}

%The formulas for differentials found in \ref{sec} and \ref{sec}  allow us to see thatWithin each closed $2$-cell, we can cancel all exceptional generators by a sequence of applications of Theorem \ref{thm:Alg}.  

%Computing the differential in the quotient.
%$0$-cells
%$1$-cells
%$2$-cells

\subsection{Isomorphism between $(\lchA/I, \partial)$ and the Cellular DGA}  

For easier comparison with the cellular DGA, we record here the presentation of $(\lchA/I, \partial)$ determined by the computations from Sections  \ref{sec:CompLCH}-\ref{sec:SwallowComp}.  Recall the procedure introduced in Section \ref{sec:parallel} for homotoping the cusp and crossing loci of $\tilde{L}$ into the $1$-skeleton of $\mathcal{E}_\tra$.

%Since we want to define a global isomorphism, we now provide a fixed notation for the Reeb chords of $\tilde{L}$.   

%\begin{convention}  In the remainder of this section, we use the fixed notation for Reeb chords
%\[
%a^\alpha_{i,j} := (a_{i,j}; e^0_\alpha);
%\]
%\[
%b^\alpha_{i,j} := (b_{i,j}; e^1_\alpha);
%\]
%\[
%c^\alpha_{i,j} := (c_{i,j}; e^2_\alpha).
%\]
%That is, for a Reeb chord that is a local minimum, saddle point, or local maximum we provide subscripts by using the ordering of sheets $S_1, \ldots, S_n$ with descending $z$-coordinates as they appear above the unique $0$-cell, $1$-cell, or $2$-cell, respectively, whose neighborhood contains the Reeb chord.  (Above $1$-cells or $2$-cells we require that $z$-coordinates descend above $x=+1$ or $(x_1,x_2) = (+1,+1)$ respectively.  Since only the notation of the $c_{i,j}$ Reeb chords is based on the labeling of sheets above a Type (13) or (14) square we do not have to consider the sheet that was earlier denoted $\tilde{S}_k$.]
%\end{convention}

%In the following description of the DGA $(\lchA/I, \partial)$, 
%Recall the procedure introduced in Section \ref{sec:parallel} for homotoping the cusp and crossing loci of $\tilde{L}$ into the $1$-skeleton of $\mathcal{E}_\tra$.  

\begin{proposition} \label{prop:lchAI} The DGA $(\lchA/I, \partial)$ has the following generators.  

%Totally order the sheets above each $d$-cell of $\epsilon_\tra$ $S_1(e^d_\alpha), \ldots, S_n(e^d_\alpha)$...
For each $0$-cell; $1$-cell; or $2$-cell, $e^d_\alpha$, of $\mathcal{E}_\tra$ we respectively have generators
\[
a^\alpha_{i,j},   \quad b^\alpha_{i,j}, \quad \mbox{or  } c^\alpha_{i,j}
\]
for each $i,j$ such that 
\begin{itemize}
\item $1 \leq i < j \leq n$ where $n$ is the number of sheets above $e^d_{\alpha}$, and 
\item the crossing locus between sheets $S_{i}$ and $S_{j}$ (as notated above $e^d_\alpha$) is not placed above (the interior of) $e^\alpha_d$ when it is homotoped to sit above the $1$-skeleton as in Section \ref{sec:parallel}.
\end{itemize}
Supposing $L$ is equipped with an $\Z/m(L)$-valued Maslov potential, $\mu$, the grading of generators is
\begin{equation} \label{eq:grading4}
|a^\alpha_{i,j}| = \mu(S_i)- \mu(S_j)-1;  \quad |b^\alpha_{i,j}| = \mu(S_i)-\mu(S_j); \quad |c^\alpha_{i,j}| = \mu(S_i)-\mu(S_j)+1.
\end{equation}

Moreover, with the exception of Type (13) and (14) squares, and some Type (9) squares, the differentials are computed using the process defined in Section 3.6 of \cite{RuSu1}:

\begin{itemize}
\item For a $0$-cell $e^0_\alpha$, form the strictly upper triangular matrix with $(i,j)$-entry $a^\alpha_{i,j}$, if it exists, and $0$ otherwise.  We have
\begin{equation} \label{eq:AdefProp}
\partial A = A^2.
\end{equation}

\item Suppose there are $n$ sheets above the $1$-cell $e^1_\alpha$ with $0$-cells $e^0_\gamma$ and $e^0_\beta$ at $x=-1$ and $x =+1$.  Form $n \times n$ matrices $B$, $A_-$, $A_+$ as in Section 3.6 of \cite{RuSu1}.  That is,  the columns and rows of $A_-$ and $A_+$ in which the $a^\gamma_{ij}$ and $a^\beta_{ij}$ appear are determined by identifying the sheets above $e^0_\beta$ and $e^0_\gamma$ with a subset of the sheets above $e^1_\alpha$.  All remaining entries are $0$ except when sheets $k$ and $k+1$ of $e^1_\alpha$ meet in a cusp in which case a $1$ is placed in the $(k,k+1)$ entry of $A_-$.

%The $(i,j)$-entries of $B, A_-$ and $A_+$ are respectively  $b^\alpha_{i,j}$, $a^\gamma_{\sigma(i),\sigma(j)}$, $a^\beta_{i,j}$ when these generators exist, where $\sigma(i)$ is such that the sheet labelled $S_{\sigma(i)}$ above $e^0_\gamma$ belongs to the closure of the sheet labelled $S_i$ above $e^1_\alpha$.  All remaining entries are $0$ except for the in the case when $e^1_\alpha$ has Type (Cu) with sheets $S_k$ and $S_{k+1}$ meeting at a cusp point.  In this case, the $(k,k+1)$-entry of $A_-$ is $1$ while all other entries are $0$.

We have
\begin{equation} \label{eq:BdefProp}
\partial B = A_+ (I+B) + (I+B) A_-.
\end{equation}

\item  Consider a $2$-cell $e^2_\alpha$ with $L, D, R, U$ edges given by the $1$-cells $e^1_{\beta_L}, e^1_{\beta_D}, e^1_{\beta_R}, e^1_{\beta_U}$ and $0$-cells $e^0_{\gamma_0}$ and $e^0_{\gamma_1}$ at $(x_1,x_2) = (-1,-1)$ and $(x_1,x_2) = (+1,+1)$.  Define upper triangular matrices $C$, $B_L, B_D, B_R, B_U$, $A_{-,-}$ and $A_{+,+}$ whose entries are respectively the Reeb chords $c^\alpha_{i,j}$, $b^{\beta_L}_{i,j}$, $b^{\beta_D}_{i,j}$, $b^{\beta_R}_{i,j}$, $b^{\beta_U}_{i,j}$, $a^{\gamma_0}_{i,j}$, and $a^{\gamma_1}_{i,j}$.  The placement of the $b_{i,j}$ and $a_{i,j}$ generators in rows and columns of the $B_X$ and $A_{\pm,\pm}$ is determined as in Section 3.6 of \cite{RuSu1} by identifying sheets above boundary cells with the sheets of $e^2_\alpha$ whose closure they belong to.  Remaining entries are $0$ except that the $(k,k+1)$ entry of $A_{-,-}$ is $1$ whenever sheets $k$ and $k+1$ of $e^2_\alpha$ meet at a cusp above $e^2_\alpha$.

We have
\begin{equation}  \label{eq:LCHdiffCcomp}
\partial C = A_{+,+} C + C A_{-,-} + (I+ B_U)(I+B_L) + (I+B_R)(I+B_D).
\end{equation}
\end{itemize}

The above holds for a square of type (9), $e^2_\alpha$, as well, with the following exception.  Suppose that after the singular set $\pi_x(\Sigma)$ is homotoped into the $1$-skeleton, a swallowtail point sits above the initial point of the edge $e^1_{\beta_X}$ and the crossing arc that ends at the swallowtail point sits above $e^1_{\beta_X}$ (as in Figure \ref{fig:IsoProof2} below).  Then, (\ref{eq:LCHdiffCcomp}) holds provided that we take the $(i,j)$-entry  of $B_X$ to be $1$ if sheets $S_{i}$ and $S_j$ (as labeled above $e^2_\alpha$) cross above $e^1_{\beta_X}$.

For squares of type (13) or (14) recall that we have decomposed those sheets above $e^2_\alpha$ that contain the swallowtail point in their closure into subsets $S_k, S_{k+1}, S_{k+2}$, and $\tilde{S}_k$ (resp. $S_l, S_{l-1}, S_{l-2}$, and $\tilde{S}_l$).  Form matrices $C, B_X$ and $A_{\pm,\pm}$  by making the convention that generators $b^L_{i,j}$ whose upper (resp. lower) sheet is a portion of $\tilde{S}_k$ (resp. $\tilde{S}_l$) are placed in row  $k+2$ (resp. column $l-2$) of $B_L$ while  generators $a^{-,-}_{i,j}$ whose upper (resp. lower) sheet is a portion of $\tilde{S}_k$ (resp. $\tilde{S}_l$) are placed in row $k+1$ (resp. column $l-1$) of $A_{-,-}$.  All remaining entries are $0$ except for the $(k,k+2)$-entry  (resp. column $(l-2,l)$-entry) of $A_{-,-}$ which is $1$.  

We have
\[
\begin{array}{rl}
\partial C = & A_{+,+} C + C (I+E_{k+2,k+1}) A_{-,-} (I+E_{k+2,k+1}) + \\ & (I+B_U)(I+B_L)(I+ A_{-,-}E_{k+1,k}+ E_{k+1,k+2}) + (I+B_R)(I+B_D + B_D E_{k+2,k+1}). 
\end{array}
\]

\end{proposition}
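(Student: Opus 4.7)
The plan is to verify Proposition \ref{prop:lchAI} by assembling, cell-by-cell, the local computations carried out in Sections \ref{sec:CompLCH}--\ref{sec:SwallowComp}, and then interpreting them globally in $(\lchA/I,\partial)$. First I would invoke Theorem \ref{thm:Exceptional}, which identifies the generators of $\lchA/I$ as exactly the non-exceptional Reeb chords of $\tilde L$. By Properties \ref{pr:Reeb0}, \ref{pr:Reeb1}, and \ref{pr:Reeb2}, these Reeb chords are in bijection with the triples $(e^d_\alpha, i, j)$ described in the statement: Reeb chords $a_{i,j}$ above $0$-cells, $b_{i,j}$ above $1$-cells, and $c_{i,j}$ above $2$-cells, omitting precisely those $(i,j)$ for which the $(i,j)$-crossing arc has been homotoped onto $e^d_\alpha$ (since these are, by Definition \ref{def:ex-gen}, exceptional). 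The grading formula (\ref{eq:grading4}) is then a direct calculation using (\ref{eq:gradingdef}): the Morse index contribution is $0$, $1$, or $2$ for $a$-, $b$-, and $c$-type chords respectively, and since the two endpoints of each such chord lie on sheets $S_i$ and $S_j$ with no intervening cusp crossings, the Maslov potential contribution is simply $\mu(S_i)-\mu(S_j)$.

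Next I would assemble the differential formulas. For $0$-cells, Proposition \ref{prop:LCH0comp} directly gives $\partial A = A^2$ in $\lchA(e^0_\alpha)$, and this formula passes unchanged to $\lchA/I$. For $1$-cells, Proposition \ref{prop:LCH1comp} computes $\partial B = A_+(I+B)+(I+B)A_- + X$ where $X$ lies in the ideal generated by the $\tilde b_{i,j}$; since all $\tilde b_{i,j}$ are exceptional, $X=0$ in $\lchA/I$, giving (\ref{eq:BdefProp}). One must carefully check that the placement of entries in $A_\pm$ correctly matches the convention from Section 3.6 of \cite{RuSu1}: the distinction between sheets labeled above $e^0_+$ vs.\ above the $1$-cell itself is exactly the reindexing carried out in that section, so the zero/one assignments arising from crossings and cusps in the $1$-cell agree with the matrix entries prescribed in Proposition \ref{prop:LCH1comp}. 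For $2$-cells of Types (1)--(12), Theorem \ref{thm:SquareComp} gives the formula (\ref{eq:LCHdiffCcomp}) modulo an ideal $X$ of exceptional generators, which again vanishes in $\lchA/I$. For Types (13) and (14), the formula follows directly from Theorem \ref{thm:SwallowComp} together with the observation that in the quotient by $I$, the generator $b^D_{k+2,k+1}$ (resp.\ $b^D_{l-1,l-2}$) is not exceptional and remains, while the term $\tilde b^R_{k+2,k+1}$ and similar exceptional chords drop out.

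The subtlest point, and what I expect to be the main obstacle, is the Type (9) special case. When a swallowtail point has been homotoped into the $1$-skeleton at a boundary vertex and the attached crossing arc sits above a boundary edge, the Reeb chord $b_{i,j}$ between the two swallowtail sheets on that edge is \emph{non-exceptional} in the neighboring square but \emph{exceptional} in $e^2_\alpha$ itself. Careful bookkeeping using the rules of Section \ref{sec:parallel} (how $\pi_x(\Sigma)$ shifts to the $1$-skeleton, together with Proposition \ref{prop:EparProp}) shows that in the matrix $B_X$ used for $\partial C$, the $(i,j)$-entry must be set to $1$ rather than $0$, reflecting the fact that the corresponding $b$-line in the Type (9) square limits to an $e$-vertex on the cusp locus sitting in the boundary, contributing a $+1$ to the count in $\partial C$ exactly as in the Type (11) cusp-on-edge case. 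Once this match-up is verified for each of the standard shifting configurations arising from squares of type (5), (6), (8), (12), we simply compare the resulting matrix formula to Theorem \ref{thm:SquareComp} and read off the stated result.

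Finally, I would check the consistency of the labelings across adjacent cells. A single Reeb chord may carry different subscripts depending on which $N(e^d_\alpha)$ it is viewed in, but the matrix identifications in Proposition \ref{prop:lchAI} are designed so that the entry placements dictated by the sheet enumerations above $e^0_\beta$, $e^1_\beta$, and $e^2_\alpha$ are all compatible with the sheet orderings used to state Theorems \ref{thm:SquareComp} and \ref{thm:SwallowComp}. This is essentially automatic for Type (1)--(12) squares but requires Convention \ref{c:Sktilde} for Types (13)--(14), and the matrix recipe in the final paragraph of the proposition is precisely what implements this convention. With these checks complete, equations (\ref{eq:AdefProp}), (\ref{eq:BdefProp}), (\ref{eq:LCHdiffCcomp}), and the swallowtail formula follow from the corresponding local results and the passage to the quotient.
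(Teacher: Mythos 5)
Your overall strategy is the same as the paper's: identify the generators of $\lchA/I$ via Theorem \ref{thm:Exceptional} and Definition \ref{def:ex-gen}, read off the gradings from (\ref{eq:gradingdef}) using the Morse indices $0,1,2$, and obtain the differential formulas by specializing Propositions \ref{prop:LCH0comp} and \ref{prop:LCH1comp} and Theorems \ref{thm:SquareComp} and \ref{thm:SwallowComp} to the quotient, where the error terms supported on exceptional generators vanish. That part is fine and is essentially what the paper does.

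However, your treatment of the swallowtail-adjacent bookkeeping --- which you correctly single out as the delicate point --- misidentifies the mechanism, and you also state its premises backwards. The chord $b^D_{k+2,k+1}$ of a Type (13) square \emph{is} an exceptional generator of that square (see the table in Lemma \ref{lem:ExGen2}) and does \emph{not} ``remain'' as a generator of $\lchA/I$: it is cancelled against $\tilde c_{k+2,k+1}$, whose differential in the relevant intermediate quotient is $b^D_{k+2,k+1}+1$ (Lemma \ref{lem:local}), which is exactly why $I$ contains the element $b^D_{k+2,k+1}+1$ rather than $b^D_{k+2,k+1}$ itself. Consequently the $1$ in the $(i,j)$-entry of $B_X$ for the adjacent Type (9) square is a purely algebraic consequence of the relation $b^D_{k+2,k+1}=1$ holding in $\lchA/I$: the Reeb chord $b^X_{i,j}$ of the Type (9) square genuinely exists, is \emph{not} exceptional there, and genuinely appears in the rigid GFT count of Theorem \ref{thm:SquareComp}; it only becomes $1$ upon passing to the quotient. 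Your proposed explanation --- that a $b$-line in the Type (9) square limits to an $e$-vertex ``as in the Type (11) cusp-on-edge case'' --- invokes a geometric mechanism that is not operating here; the $e$-vertex responsible for the $+1$ occurs in the $(k+2,k+1)$-switch GFT emanating from $\tilde c_{k+2,k+1}$ inside the Type (13) square, not in any tree counted for $\partial C$ in the Type (9) square. The conclusion you assert is correct, but the justification as written would not go through and should be replaced by the algebraic one.
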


\begin{proof}
By construction, the generating set of $(\lchA/I, \partial)$ is obtained from the generating set of $(\lchA, \partial)$ by removing all exceptional Reeb chords.  
%That is, we remove all Reeb chords of the form $\tilde{b}_{i,j}$ and $\tilde{c}_{i,j}$ as well as all Reeb chords above $0$-cells (resp. $1$-cells)  for which the sheets that form the lower and upper endpoint of the Reeb chord have their crossing locus homotoped to sit above the (entire) $0$-cell (resp. $1$-cell) when the crossing locus is homotoped as in  Section \ref{sec:parallel}.  (See Definition \ref{def:ex-gen}.) This leaves precisely those $a^\alpha_{i,j}$, $b^\alpha_{i,j}$, and $c^\alpha_{i,j}$ as in the statement.  
That the remaining  generators are as stated follows from the definition of exceptional generator (in Definition \ref{def:ex-gen}).  Note that in the statement of the Proposition, when providing subscripts for generators, we used the ordering of sheets above the unique $0$-cell, $e^0_\alpha$, containing $a^\alpha_{i,j}$, above the unique $1$-cell, $e^1_\alpha$, containing $b^\alpha_{i,j}$, and above the unique $2$-cell, $e^2_\alpha$, containing $c^\alpha_{i,j}$.  That is, 
\[
a^\alpha_{i,j}:= (a^\alpha_{i,j},e^0_\alpha);  \quad b^\alpha_{i,j} := (b^\alpha_{i,j}, e^1_\alpha); \quad \mbox{and} \quad c^\alpha_{i,j} := (c^\alpha_{i,j}, e^2_\alpha).
\] 
The mod $m(L)$ grading of generators is computed as in equation (\ref{eq:gradingdef}).

The formulas for differentials follow from Propositions \ref{prop:LCH0comp}, \ref{prop:LCH1comp}, Theorem \ref{thm:SquareComp}, and Theorem \ref{thm:SwallowComp}.

In elaborating, we first discuss cases other than Type (13) or Type (14) squares.  
%In comparing, the matrix formulas from the statement of the current Proposition \ref{prop:lchAI} with 
In the computations from Sections \ref{sec:CompLCH}-\ref{sec:SwallowComp}, when computing the sub-DGA generated by Reeb chords above $N(e^d_\alpha)$, subscripts were determined using the labelling of sheets above $e^d_\alpha$ (at the terminal vertex if $d=1$ and above the upper-right corner if $d=2$).  Moreover, when it is a Reeb chord, the $(i,j)$-entry for any of the ``boundary matrices'' $B_X$, $A_\pm$  or $A_{\pm,\pm}$ has subscripts $i$ and $j$.  That is, the Reeb chords in the boundary matrices are placed so that if the upper and lower sheets of the Reeb chord above the  boundary cell  belong to the closure of sheet $i$ and $j$ as labeled above $e^d_\alpha$, then that Reeb chord appears in row $i$ and column $j$.  This is precisely the way we place generators in the boundary matrices in the statement of the current Proposition \ref{prop:lchAI}.  Moreover, the appearance of $0$'s and $1$'s in these matrices is identical as well.  Indeed, by the definition of $I$, with a single exception in each Type (13) and (14) squares all exceptional generators equal $0$ in $\lchA/I$.  The single exceptional generator that is set to $1$ in Type (13) and (14) squares is reflected by the $1$ placed in the $(i,j)$ entry of $B_X$ whenever $e^1_{\beta_X}$ is an edge of a Type (9) square that sits under a  crossing arc between sheets $S_i$ and $S_j$ with swallowtail endpoint.  [This Reeb chord $b^X_{i,j}$ is equal to the Reeb chord notated $b^{D}_{k+2,k+1}$ or $b^D_{l-1,l-2}$ in the neighboring Type (13) or Type (14) square.]  Moreover, the extra terms, $x$ and $X$, that appear in Proposition \ref{prop:LCH1comp} and Theorem \ref{thm:SquareComp} vanish in $\lchA/I$, so that the formulas from the statement of Proposition \ref{prop:lchAI} agree precisely with the corresponding formulas from Propositions \ref{prop:LCH0comp}, \ref{prop:LCH1comp}, and Theorem \ref{thm:SquareComp}.  

Finally, we consider the differentials in a Type (13) square, $e^2_\alpha$, as similar considerations apply to the Type (14) squares.  The $X$ term in Theorem \ref{thm:SwallowComp} belongs to the ideal generated by exceptional generators of $e^2_\alpha$ other than $b^D_{k+2,k+1}$, and hence is $0$ in $\lchA/I$.  Moreover, the matrices $B_X$ and $A_{\pm,\pm}$ agree precisely with the matrices from Theorem \ref{thm:SwallowComp} since the manner in which generators are placed in the statement of Proposition \ref{prop:lchAI} is consistent with placement of generators in the matrices of Theorem \ref{thm:SwallowComp}.  In particular, the placement of generators whose upper or lower sheet is $\tilde{S}_k$ is consistent with Convention \ref{c:Sktilde}.

%if we place the Reeb chords above a boundary  

%By the definition of $I$, all exceptional generators are set to

% is a consequence of the way we labelled Reeb chords above $N(e^1_\alpha)$ in Section ...  There we used the labelling of sheets above the $1$-cell in the entire neighborhood, so that those sheets of $e^\pm_0$ were given the same labeling as the sheets that they correspond to under the injection used in defining the cellular DGA.  This is why the entries of matrix has subscript $(i,j)$ when it is non-zero.

%Perhaps it is useful to have a formula like
%$(a^{\pm,\pm}_{i,j}, e^2_\alpha) = a^\alpha_{\iota(i), \iota(j)}$
%in the absolute notation introduced here.
\end{proof}

\subsubsection{The cellular DGA $(\mathcal{A}_{||}, \partial)$}
The presentation of $(\lchA/I,\partial)$ from Proposition \ref{prop:lchAI} looks quite similar to the definition of the cellular DGA.  To realize an isomorphism between $(\lchA/I,\partial)$ and the cellular DGA of $L$, we make use of the $L$-compatible polygonal decomposition $\mathcal{E}_{||}$.

Recall the construction of $\mathcal{E}_{||}$ from Section \ref{sec:mathcalE} as well as its relation to $\mathcal{E}_\pitchfork$ as stated in Proposition \ref{prop:EparProp}.  The differential of the cellular DGA (as defined in Section 3 of \cite{RuSu1}) associated to $\mathcal{E}_{||}$ relies on a few choices which we make as follows:

\begin{itemize}
\item  Orient $1$-cells to agree with the orientation of edges of corresponding squares in $\mathcal{E}_\tra$.  There are also some extra $1$-cells added to squares of type (5), (6), (8),  and (12) that do not correspond to subsets of edges of squares in $\mathcal{E}_\tra$.  We orient these $1$-cells from left to right when viewed using the parametrizations of squares from $\mathcal{E}_\tra$.  
\item  For each $2$-cell of $\mathcal{E}_{\parallel}$ that is also a square of $\mathcal{E}_\tra$, choose the initial and terminal vertices $v_0$ and $v_1$ for the $2$-cell to be the lower left and upper right vertices of the square respectively.  Each Type (5), (6), (8), and (12) square of $\mathcal{E}_\tra$ contains a pair of $2$-cells of $\mathcal{E}_{\parallel}$.  (See Section \ref{sec:mathcalE} and Figure \ref{fig:IsoProof}.) For each of these $2$-cells use the lower left (or just the left vertex if the $2$-cell is a triangle) and upper right vertex for $v_0$ and $v_1$ respectively.
\item  For each swallowtail point, assign the decorations $S$ and $T$ so that the $S$ corresponds to a corner of a Type (13) or (14) square, and the $T$ is a corner of the Type (9) square that has the bottom edge of the Type (13) or (14) square as a border.  In the Type (9) squares containing $T$ decorations, we shift the initial vertex, $v_0$, so that it lies at the border of the $T$ edge and the left border of the square. (See Figure \ref{fig:IsoProof2} below.)
\end{itemize}

Denote the cellular DGA associated to $\mathcal{E}_{||}$ with the above choices by $(\mathcal{A}_{||}, \partial)$.  

\subsubsection{Outline of equivalence of $(\lchA/I, \partial)$ and $(\mathcal{A}_{||},\partial)$}

In comparing, $(\lchA/I, \partial)$ as presented in Proposition \ref{prop:lchAI} with the cellular DGA $(\mathcal{A}_{||}, \partial)$ we notice the following key differences:
\begin{enumerate}
\item  The cellular DGA has more generators associated to the Type (5), (6), (8),  and (12) squares of $\mathcal{E}_\tra$, since each of these squares is subdivided into two $2$-cells of $\mathcal{E}_{||}$.  In addition, in $(\lchA/I,\partial)$ the bottom edges of the Type (5), (6), (8), and (12) squares have matrices $B_D$ with two above diagonal entries equal to $0$, specifically the $(k+1,k+2)$ and $(k,k+2)$ entries.  This does not occur in any of the $B$ matrices used in defining $(\mathcal{A}_{||}, \partial)$.  
\item  The differentials in $(\mathcal{A}_{||}, \partial)$ of generators associated to the squares of $\mathcal{E}_{||}$ that contain the swallowtail decorations $S$ or $T$ appear somewhat different than the differentials of corresponding generators of $(\lchA/I, \partial)$.
\end{enumerate}

To address (1), we produce a stable tame isomorphic quotient $(\mathcal{A}_{||}/J, \partial)$ whose generators are in precise correspondence with those of $(\lchA/I, \partial)$.  We then give a (tame) DGA isomorphism between $(\mathcal{A}_{||}/J, \partial)$ and $(\lchA/I, \partial)$.  For most squares, the isomorphism simply identifies generators, but, as may be expected from (2), the isomorphism is more involved for squares that contain $S$ or $T$ decorations.

\begin{proposition}  \label{prop:IsoProof}
There exists a stable tame isomorphic quotient of $(\mathcal{A}_{||}, \partial)$ that is tame isomorphic to $(\lchA/I, \partial)$.  
\end{proposition}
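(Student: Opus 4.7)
The plan is to introduce an ideal $J \subset \mathcal{A}_{||}$ that kills the generators of $\mathcal{A}_{||}$ having no counterpart in $\lchA/I$, show that $\mathcal{A}_{||}/J$ is stable tame isomorphic to $\mathcal{A}_{||}$, and then build an explicit tame isomorphism $\varphi \colon \mathcal{A}_{||}/J \to \lchA/I$ square by square. By Proposition~\ref{prop:EparProp}, the decomposition $\mathcal{E}_{||}$ differs from $\mathcal{E}_\tra$ only above the Type (5), (6), (8), and (12) squares, where each such square $e^2_\alpha$ acquires one or two new $0$-cells, a new $1$-cell joining them, and a second $2$-cell. I would define $J$ as the two-sided ideal generated by all $a$-, $b$-, and $c$-generators attached to these new cells.

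Next I would verify that $\partial J \subset J$ and that quotienting by $J$ is realized by a stable tame isomorphism, by iterated application of Theorem \thmAlg of \cite{RuSu1}. In each affected square the extra generators form a short chain: a new $c$ cancels a new $b$, a new $b$ cancels a new $a$, etc., with the relevant $\partial x = y$ relations visible directly from the cellular matrix formulas. Because the new cells sit in the interior of each $e^2_\alpha$ and every new generator lies in only one $N(e^2_\alpha)$, these cancellations are local and can be carried out independently in each square, exactly as in the proof of Theorem~\ref{thm:Exceptional}.

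After the quotient, the generating sets of $\mathcal{A}_{||}/J$ and $\lchA/I$ are in a natural cell-by-cell bijection preserving the Maslov grading (\ref{eq:grading4}), since the cellular grading in \cite{RuSu1} is defined by the same formula. On $0$-cells, on $1$-cells, and on $2$-cells of type (1)--(8), (10)--(12), the cellular differentials from Section 3.6 of \cite{RuSu1} agree term-for-term with the formulas (\ref{eq:AdefProp}), (\ref{eq:BdefProp}), (\ref{eq:LCHdiffCcomp}) in Proposition \ref{prop:lchAI}, once one checks that the forced $0$'s and $1$'s in the boundary matrices match (they do, because the $1$'s record cusps and the $0$'s record crossings, and $J$ has been chosen so that the $B_D$ matrix of a Type~(5), (6), (8), (12) square in $\mathcal{A}_{||}/J$ has exactly the same pattern of missing entries as in $\lchA/I$). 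On these cells, $\varphi$ can be taken to be the identity map on generators.

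The main obstacle will be the swallowtail squares of type (13) and (14), together with their neighboring Type (9) square carrying the $T$-decoration, where the LCH formula from Theorem~\ref{thm:SwallowComp} involves conjugations and shifts by $E_{k+2,k+1}$, $E_{k+1,k}$, $E_{k+1,k+2}$, etc., whereas the cellular differential encodes the $S$/$T$ decorations via explicit row- and column-insertions prescribed in \cite[Section 3]{RuSu1}. Here $\varphi$ must be a genuine tame change of variables: I expect it to be built from left- and right-multiplication of the matrices $C$, $B_X$, and $A_{\pm,\pm}$ by elementary factors of the form $(I+E_{k+2,k+1})$ (respectively $(I+E_{l-1,l-2})$ for Type (14)) in the rows/columns indexed by the swallowtail sheets, chosen so as to interchange the ``$\tilde S_k$ convention'' used in Convention \ref{c:Sktilde} with the cellular convention that treats the swallowtail sheets symmetrically. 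A direct matrix computation will then reduce the cellular differential formula to the expression in Theorem~\ref{thm:SwallowComp}, modulo bookkeeping for how this change of variables propagates across the shared $1$-cell with the adjacent $T$-decorated Type~(9) square; checking compatibility at that shared edge, and verifying that the resulting map is upper-triangular and hence tame, is the technical core of the argument.
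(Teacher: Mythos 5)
Your overall strategy is the one the paper follows: cancel the generators attached to the cells of $\mathcal{E}_{||}$ having no counterpart in $\mathcal{E}_\pitchfork$ by iterated use of Theorem \thmAlg from \cite{RuSu1}, then give a tame isomorphism that is the identity away from the swallowtail squares and is given there by multiplying one boundary matrix by an elementary factor (in the paper, only the matrix $B_0$ on the edge shared by the $S$- and $T$-decorated squares changes, via $\Phi(B_0)=B_0[I+E_{k+1,k+2}]$). However, your definition of $J$ as ``the two-sided ideal generated by all $a$-, $b$-, and $c$-generators attached to these new cells'' does not work as stated: that ideal is not a differential ideal. In the lower half of a subdivided Type (8) square, for example, $\partial b^{R,-}_{i,j} = a^{R}_{i,j} + a^{+,-}_{i,j}$, and $a^{+,-}_{i,j}$ lives on the \emph{original} corner $0$-cell, outside your $J$. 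The correct quotient is by the ideal generated by the pairs $(x,\partial x)$, cancelled in a suitable order, and this necessarily imposes relations on the old generators: one gets $a^{+,-}_{i,j}=a^R_{i,j}$ and $b^{C}_{i,j}=b^{D}_{i,j}$ for most $(i,j)$, while $a^{+,-}_{k+1,k+2}=0$ and $b^{D}_{k+1,k+2}=0$ because the new generators $b^{R,-}_{k+1,k+2}$ and $c^{\alpha,-}_{k+1,k+2}$ have no new partner and must cancel against old generators. It is exactly these induced relations, not a choice of which generators to kill, that produce the zero pattern in $B_D$ and $A_{+,-}$ matching $\lchA/I$; you assert the patterns agree but the mechanism you describe cannot produce them.

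A second inaccuracy: the new cells do not all ``sit in the interior of each $e^2_\alpha$.'' The new $0$-cell $e^0_R$ and the $1$-cells $e^1_{R,\pm}$ (and $e^0_L$, $e^1_{L,\pm}$ for Type (6)) lie on boundary edges shared with neighboring squares, so cancelling $b^{R,-}_{i,j}$ alters the differential of the $c$-generators of the \emph{adjacent} square, whose $\partial C$ initially contains a factor $[I+B_{D,-}]$ that must be checked to become the identity in the quotient. Finally, for the swallowtail comparison you should note that only the single matrix on the $T$--$S$ shared edge needs to be modified, and that the shift of the initial vertex $v_0$ in the $T$-decorated Type (9) square (one of the choices made in setting up $(\mathcal{A}_{||},\partial)$) is what makes the remaining terms line up; without fixing those choices the term-by-term comparison you invoke does not hold.
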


\begin{proof}

{\bf Constructing the quotient $\mathcal{A}_{||}/J$.}

All cells of $\mathcal{E}_{||}$ were obtained from cells of $\mathcal{E}_\tra$ by applying a homeomorphism of the base surface $S$ and then subdividing some of the cells of $\mathcal{E}_\tra$.  Let us set notation for the cells that appear during the subdivision process (which is summarized in Proposition \ref{prop:EparProp}).

First, for each $\mathcal{E}_\tra$ square, $e^2_\alpha$, of type (5), (6), (8), and (12), the right edge of the square is subdivided into two $1$-cells by adding a new $0$-cell at the intersection of the (upper) crossing arc with the edge.  Denote the new $0$-cell by $e^0_R$ and the $1$-cells that constitute the upper and lower half of this edge by $e^1_{R,+}$ and $e^1_{R,-}$.  If $e^2_\alpha$ is a Type (6) square then we also subdivide the left edge to produce cells $e^0_L$, $e^1_{L,+}$ and $e^1_{L,-}$.  
(Note that some of these edges are shared with neighboring cells, in a manner that is visible in Figures \ref{fig:EParallel} and \ref{fig:EParallel12}.  We caution that the ``left'' and ``right'' edges that we refer to are with respect to the parametrizations by $[-1,1] \times[-1,1]$, and not with respect to the appearance in Figures \ref{fig:EParallel} and \ref{fig:EParallel12}.)

Secondly, a new $1$-cell is added that connects either $e^0_{-,-}$ to $e^0_R$, in the case of Type (5), (8), and (12) squares, or $e^0_{L}$ to $e^0_R$, in the case of a Type (6) square.  Denote this new cell by $e^1_{C}$, and the $2$-cells that respectively form the upper and lower half of $e^2_\alpha$ as $e^2_{\alpha,+}$ and $e^2_{\alpha,-}$.  See Figure \ref{fig:IsoProof}.

\begin{figure}

\quad

\centerline{   
\labellist
\small
\pinlabel $e^2_{\alpha,+}$  at 524 124
\pinlabel $e^1_C$ [br] at 536 65
\pinlabel $e^2_{\alpha,-}$   at 570 42
\pinlabel $e^1_{R,+}$  [l] at 620 144
\pinlabel $e^0_R$  [l] at 620 96
\pinlabel $e^1_{R,-}$  [l] at 620 38
\endlabellist
\includegraphics[scale=.6]{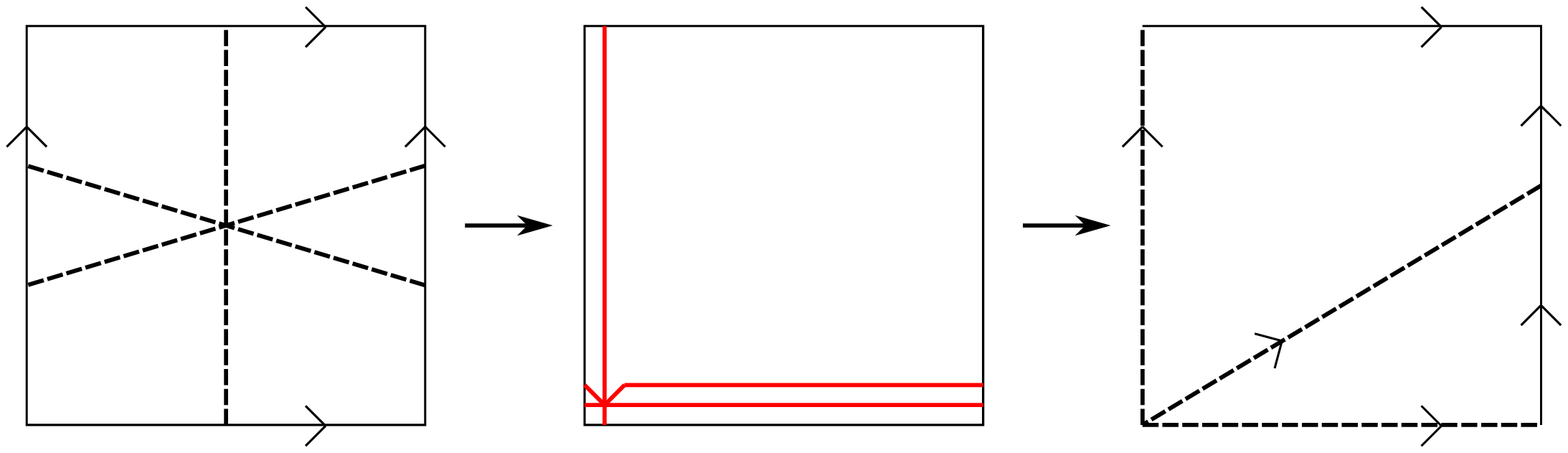}  }
%\centerline{ \includegraphics[scale=.8]{images/SubdivideSq} } %Dan had already commented out this line
\caption{Subdividing a Type (8) cell of $\mathcal{E}_\pitchfork$ to form $\mathcal{E}_{||}$.}
\label{fig:IsoProof}
\end{figure}

To form the quotient $\mathcal{A}_{||}/J$, we will apply Theorem \thmAlg from \cite{RuSu1} to cancel many generators in these squares.  The hypothesis of Theorem \thmAlg from \cite{RuSu1} requires an ordering of the generating set of $\mathcal{A}_{||}$ for which the differential is triangular.  Such orderings for the cellular DGA are discussed in Section 4.1 of \cite{RuSu1}.

\medskip

\noindent {\bf Cancellation 1.} Label sheets of all cells that belong to the closure of $e^2_{\alpha,-}$ in the order they appear above $e^2_{\alpha,-}$.  With generators associated to a cell placed into matrices with the same subscripts as the cell, we have
\[
\partial B_{R,-} = A_R[I+B_{R,-}] + [I+B_{R,-}] A_{+,-}.
\]
Where the $(k+1,k+2)$ entry of $A_R$ is $0$ and the $(k,k+1)$-entry of $A_{+,-}$ is $0$.  [The $k,k+1$, and $k+2$ sheets above $e^1_R$ in $\mathcal{E}_\tra$ appear as in  Figure \ref{fig:EdgeTypesB} (2Cr), so above $e^1_{R,-}$ in $\mathcal{E}_{||}$ they appear as in the region between the two crossings in Figure \ref{fig:EdgeTypesB} (2Cr).]  

First, since 
\[
\partial \framebox{$b^{R,-}_{k+1,k+2}$} = \framebox{$a^{+,-}_{k+1,k+2}$} 
\]
we can quotient by the $2$-sided ideal generated by $b^{R,-}_{k+1,k+2}$ and $\partial b^{R,-}_{k+1,k+2}$ which we notate as $I(b^{R,-}_{k+1,k+2}, \partial b^{R,-}_{k+1,k+2})$.  
Theorem \thmAlg from \cite{RuSu1} tells us that the quotient with the boxed generators removed from the generating set is stable tame isomorphic to $(\mathcal{A}_{||},\partial)$.

Next, quotient by $I(b^{R,-}_{i,j}, \partial b^{R,-}_{i,j})$ inductively for all remaining $i<j$, so that $|i-j|$ is non-increasing during the inductive process.  
When we quotient by $I(b^{R,-}_{i,j},\partial b^{R,-}_{i,j})$, we already have that $b^{R,-}_{i,k} = 0 = b^{R,-}_{k,j}$ for all $i<k<j$, so that
\[
\partial \framebox{$b^{R,-}_{i,j}$}= \framebox{$a^R_{i,j}$} + a^{+,-}_{i,j}
\]
for $(i,j) \neq (k,k+1)$, and 
\[
\partial \framebox{$b^{R,-}_{k,k+1}$}= \framebox{$a^R_{k,k+1}$}. 
\]
Thus, Theorem \thmAlg from \cite{RuSu1} implies that (removing boxed generators from the generating set) the result is stable tame isomorphic to $(\mathcal{A}_{||}, \partial)$.  

After the inductive process is complete, the new generating set contains no generators associated to $e^0_R$ or $e^1_{R,-}$ and has only the generators $a^{+,-}_{i,j}$ such that $S_{i}$ and $S_{j}$ do not cross above $e^1_R$; i.e. with $(i,j) \neq (k+1,k+2)$ and $(i,j) \neq (k,k+1)$.  We have relations
\begin{equation}  \label{eq:someRel1}
a^{+,-}_{i,j} = a^R_{i,j}  \quad \mbox{for $(i,j) \notin \{(k,k+1), (k+1,k+2)\}$};  \quad a^{+,-}_{k+1,k+2}=0;  \quad a^{R}_{k,k+1} = 0.
\end{equation}  

Perform this same process on the generators associated to the generators of $\mathcal{A}_{||}$ associated to $e^0_L$, $e^1_{L,+}$ and $e^1_{L,-}$ cells in Type (6) squares.

%The cell decompositions of closed squares of Types (5), (8), and (12) from $\mathcal{E}_\tra$ are subdivided by placing an extra $0$-cell in the interior of the edge of the square that corresponds to the right edge in Figure \ref{fig:generators} and then  connecting this new $0$-cell to the lower left corner of the square with a new $1$-cell.  

%\item  The cell decompositions of closed squares of Type (6) are subdivided by placing an extra $0$-cell on both the left and right edges and then connecting these new $0$-cells with a new $1$-cell.

\medskip

\noindent {\bf Cancellation 2.}  The generators associated to $e^2_{\alpha,-}$ satisfy
\begin{equation} \label{eq:can2}
\partial C \doteq A_{-,-} C + C A_R + [I+B_D] + [I+ B_C]
\end{equation}
where $\doteq$ denotes equality in the currently considered quotient.
[Note that in $(\mathcal{A}_{||}, \partial)$ there may be other factors of the form $[I+B_{L,-}]$ or $[I+B_{R,-}]$  in the final two terms of $\partial C$. However, $B_{L,-} \doteq B_{R,-} \doteq 0$ after the first cancellation procedure.]

First, cancel 
\[
\partial \framebox{$c^{\alpha,-}_{k+1,k+2}$} = \framebox{$b^{D}_{k+1,k+2}$}. 
\]
Then, inductively cancel the remaining $c^{\alpha,-}_{i,j}$ with the $b^{C}_{i,j}$ in an order so that $|i-j|$ is non-decreasing.  As we proceed, we can verify
\[
\partial \framebox{$c^{\alpha,-}_{i,j}$} \doteq \framebox{$b^C_{i,j}$} + b^{D}_{i,j}  \mbox{  for $(i,j) \notin \{(k,k+1),(k+1,k+2)\}$,  and $\partial \framebox{$c^{\alpha,-}_{k,k+1}$}= \framebox{$b^C_{k,k+1}$}$,  }  
\]
so that Theorem \thmAlg from \cite{RuSu1} implies the result is equivalent to $(\mathcal{A}_{||}, \partial)$.  [Any $a^{-,-}_{i,m} c_{m,j}$ or $c_{i,m}a^{R}_{m,j}$ terms that may appear in $\partial c_{i,j}$ are already equal to $0$ in the quotient, since $|m-j| < |i-j|$.]

After the inductive process is complete, the new generating set contains no generators associated to $e^1_C$ or $e^2_{\alpha,-}$ and has only the generators $b^D_{i,j}$ such that $S_{i}$ and $S_{j}$ do not cross above $e^1_D$ when the crossing locus of $e^2_\alpha$ is homotoped into the one skeleton; i.e. with $(i,j) \neq (k+1,k+2)$ and $(i,j) \neq (k,k+1)$.  We have relations
\begin{equation} \label{eq:someRel2}
b^{C}_{i,j} = b^D_{i,j}  \quad \mbox{for $(i,j) \notin \{(k,k+1), (k+1,k+2)\}$};  \quad b^{D}_{k+1,k+2}=0;  \quad b^{C}_{k,k+1} = 0.
\end{equation}  

Once Cancellation 2 is completed in all Type (5), (6), (8), and (12) squares, we let $J \subset \mathcal{A}_{||}$ denote the resulting ideal so that stable tame isomorphic quotient that we have constructed is $(\mathcal{A}_{||}/J, \partial)$. 

\medskip

\noindent {\bf Correspondence between generators.}  We have completely removed all generators associated to the cells of $\mathcal{E}_{||}$ that do not correspond to cells of $\mathcal{E}_\tra$ from the generating set.  
In fact, every remaining generator of $\mathcal{A}_{||}/J$ corresponds to a cell of $\mathcal{E}_{\tra}$ and a pair of sheets $S_{i}$ and $S_{j}$ above that cell that do not meet when the crossing locus is homotoped into the $1$-skeleton.  [In particular, it was noted during the construction of $J$ that this holds for the $0$- and $1$-cells along the bottom edges of Type (5), (6), (8), and (12) squares.]

In the remainder of the argument, we use this correspondence to identify the underlying algebras,
\[
\mathcal{A}_{||}/J \, \cong \, \lchA/I.
\]
Note that the grading of generators coincides; see (\ref{eq:grading4}) and  equation (1) in Section 3.4 of \cite{RuSu1}.

\medskip

\noindent {\bf Claim:}  Let $e^2_\alpha$ be a $2$-cell of $\mathcal{E}_{\tra}$ that is not one of those Type (13), (14) and (9) squares that correspond to a square of $\mathcal{E}_{||}$ containing an $S$ or $T$ decoration.  Then, the differentials from $\mathcal{A}_{||}/J$ and $\lchA/I$ agree when applied to generators associated to cells in the closure of $e^2_\alpha$.

\begin{proof}[Proof of Claim] For generators associated to $0$ and $1$-cells, note that the formulas 
%(\ref{eq:Adef}) and (\ref{eq:Bdef}) 
(2) and (3)
from \cite{RuSu1} used to define the differential on the Cellular DGA are identical to the formulas (\ref{eq:AdefProp}) and (\ref{eq:BdefProp}) and the formulas are identical.  
Moreover, the entries of matrices that appear in these formulas coincide.
 [Here, we note that in $(\mathcal{A}_{||}, \partial)$, for the matrices $A_{\pm}$ and $B$ used for the bottom edge of a Type (5), (6), (8) and (12) square the two entries that correspond to the sheets that cross are both replaced with $0$ in $\mathcal{A}_{||}/J$ due to the relations (\ref{eq:someRel1}) and (\ref{eq:someRel2}).]

For $2$-cells of $\mathcal{E}_{||}$ such that the cell decomposition of the entire $2$-cell coincides with the decomposition of an entire square of $\mathcal{E}_{\pitchfork}$, the formulas 
%(\ref{eq:Cdef}) 
(4) from \cite{RuSu1} 
and  (\ref{eq:LCHdiffCcomp}) give identical differentials.  This relies on our having chose the initial and terminal vertices in $\mathcal{E}_{||}$ to coincide with the lower left and upper right corners of squares from $\mathcal{E}_{\pitchfork}$.  

There may be some squares, $e^2_\alpha$, not of type  (5), (6), (8) or (12) that none-the-less have one of their boundary edges subdivided when we form $\mathcal{E}_{||}$ from $\mathcal{E}_{\pitchfork}$.  [Specifically, this can happen for squares that share an edge with a Type (5), (6), (8) or (12) square.]  We discuss the case where the edge $D$ is subdivided, although, similar considerations apply if other edges are subdivided.  Then, 
%(\ref{eq:Cdef}) 
(4) from \cite{RuSu1}
gives that in $(\mathcal{A}_{||},\partial)$ 
\[
\partial C = A_{+,+} C + C A_{-,-} + [I+ B_U] [I + B_L] + [I+ B_R] [ I + B_{D,+}] [I + B_{D, -}] 
\]
where $B_{D,\pm}$ contain generators associated to the two halfs of the subdivided edge.  When we formed the quotient $\mathcal{A}_{||}/J$, all generators associated to $B_{D,-}$ became $0$. Thus, in $\mathcal{A}_{||}/J$, the desired formula
\[
\partial C = A_{+,+} C + C A_{-,-} + [I+ B_U] [I + B_L] + [I+ B_R] [ I + B_{D,+}] 
\]
holds for $e^2_\alpha$.

Finally, consider a Type (5), (6), (8), or (12) square, $e^2_\alpha$.  For Type (5), (8), (12) the generators of $\mathcal{A}_{||}$ associated to $e^2_{\alpha,+}$ (which are the ones identified with generators of $e^2_\alpha$ under the isomorphism $\mathcal{A}_{||}/J \cong \lchA/I$) have their differential in $(\mathcal{A}_{||}, \partial)$ given by
\[
\partial C = A_{+,+} C + C A_{-,-} + [I+B_U][I+B_L] + [I+ B_{R,+}] [I + B_{C}].
\]
For Type (6), they satisfy
\[
\partial C = A_{+,+} C + C A_{L} + [I+B_U][I+B_{L,+}] + [I+ B_{R,+}] [I + B_{C}].
\]
In $\mathcal{A}_{||}/J$, the relations (\ref{eq:someRel1}) and (\ref{eq:someRel2}) identify the entries of the matrices $B_{C}$ and $B_D$, as well as $A_{L}$ and $A_{-,-}$ in the Type (6) case.  Note that the location of entries in the matrices $B_C$ and $A_{L}$ may be different here than in (\ref{eq:can2}) (in fact, the $k+1$ and $k+2$ rows and columns are transposed).  However, the relations (\ref{eq:someRel1}) and (\ref{eq:someRel2}) preserve the the upper and lower sheet associated to generators, thus the previous formula is indeed equivalent to 
\[
\partial C = A_{+,+} C + C A_{-,-} + [I+B_U][I+B_{L}] + [I+ B_{R}] [I + B_{D}]
\]
that holds in $(\lchA/I, \partial)$.  [Recall that it is the entries of $B_{R,+}$ and $B_{L,+}$ that are associated to the $e^1_R$ and $e^1_L$ generators of $\lchA/I$ under the isomorphism $\mathcal{A}_{||}/J \cong  \lchA/I$.]
\end{proof}

\noindent {\bf The isomorphism $\Phi: (\lchA/I, \partial) \rightarrow (\mathcal{A}_{||}/J, \partial)$.}

Adding subscripts $1$ and $2$ to distinguish the notation for differentials, we will define an isomorphism $\Phi: (\lchA/I, \partial_1) \rightarrow (\mathcal{A}_{||}/J, \partial_2)$.  On all generators belonging to the closure of a 
square without a swallowtail decoration $S$ or $T$, $\Phi$ simply identifies generators of $\lchA/I$ and $\mathcal{A}_{||}$.  In defining $\Phi$ on the $S$ and $T$ squares, we consider only the case of a  Type (13) square (upward swallowtail) such that the swallowtail involves sheets $k,k+1,$ and $k+2$, as the other case is similar. 

We fix matrices containing generators from cells belonging to the closure of squares that have $S$ and $T$ decorations.  The $S$ and $T$ squares, as they appear in $\mathcal{E}_\tra$ and $\mathcal{E}_{||}$, are pictured in Figure \ref{fig:IsoProof2} where  matrices associated to each cell are indicated.  Supposing there are $n$ sheets to the right of the cusp edge, all matrices are $n\times n$ except for $A$ which is $(n-2) \times (n-2)$.  The matrices $B_0, B_1, B_2, B_3, A_1, A_2,$ and $C_1$ always have generators placed so that the ordering of rows and columns corresponds to the ordering of sheets of $\tilde{L}$ in the $T$ square, and $B_1$ has all entries in rows and columns $k$ and $k+1$ equal to $0$.  Note that the $(k+1,k+2)$ entry of $B_0$ is $0$.  The matrices $B_4, B_5, B_6, A_3$, and $C_2$ use the ordering of sheets above $S$, and $B_4$ also has $0$'s in rows and columns $k$ and $k+1$.

\begin{figure}

\quad

\centerline{ \includegraphics[scale=.6]{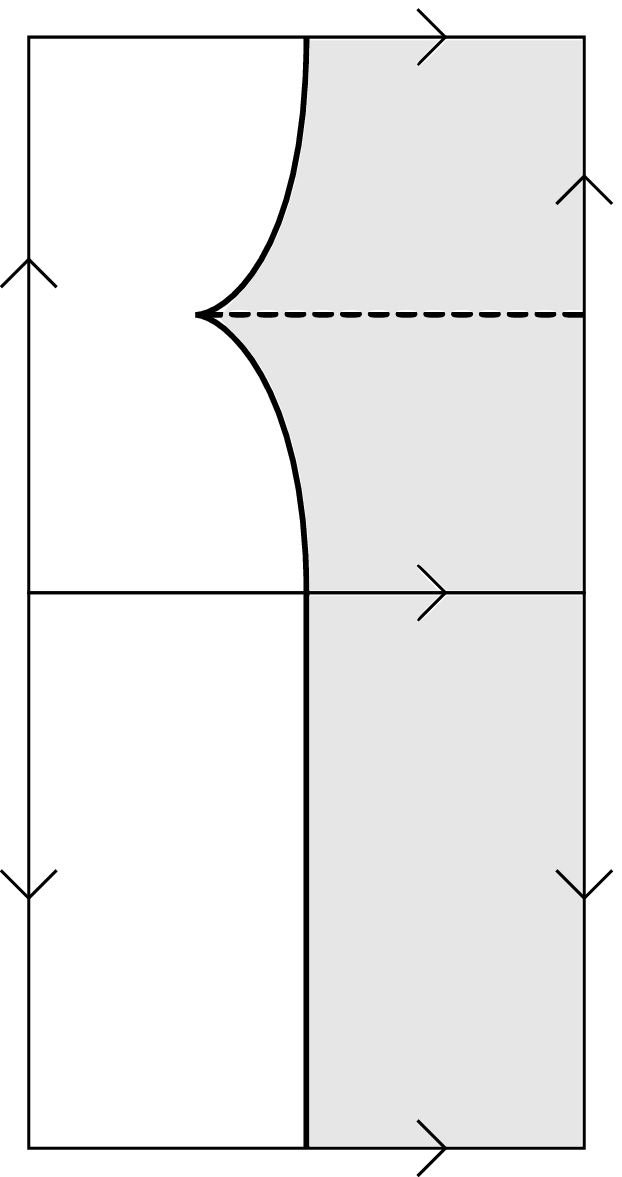} \quad \quad \quad \quad \quad \quad  
\labellist
\small
\pinlabel $C_1$  at 108 86
\pinlabel $C_2$  at 108 226
\pinlabel $A$  [r] at -2 152
\pinlabel $B_0$  [t] at 108 140
\pinlabel $B_2$  [t] at 98 0
\pinlabel $B_3$  [l] at 182 92
\pinlabel $B_1$  [r] at 26 39
\pinlabel $B_4$  [r] at 26 266
\pinlabel $B_5$  [b] at 96 302
\pinlabel $B_6$  [l] at 182 224
\pinlabel $T$  [br] at 47 125
\pinlabel $S$  [tr] at 45 185
\pinlabel $v_0$  [tr] at 26 80
\pinlabel $v_0$  [bl] at 68 159
\pinlabel $A_2$  [l] at 184 8
\pinlabel $A_3$  [l] at 184 298
\pinlabel $A_1$  [l] at 181 152
\pinlabel $v_1$  [tr] at 169 291
\pinlabel $v_1$  [br] at 169 14
\endlabellist
\includegraphics[scale=.6]{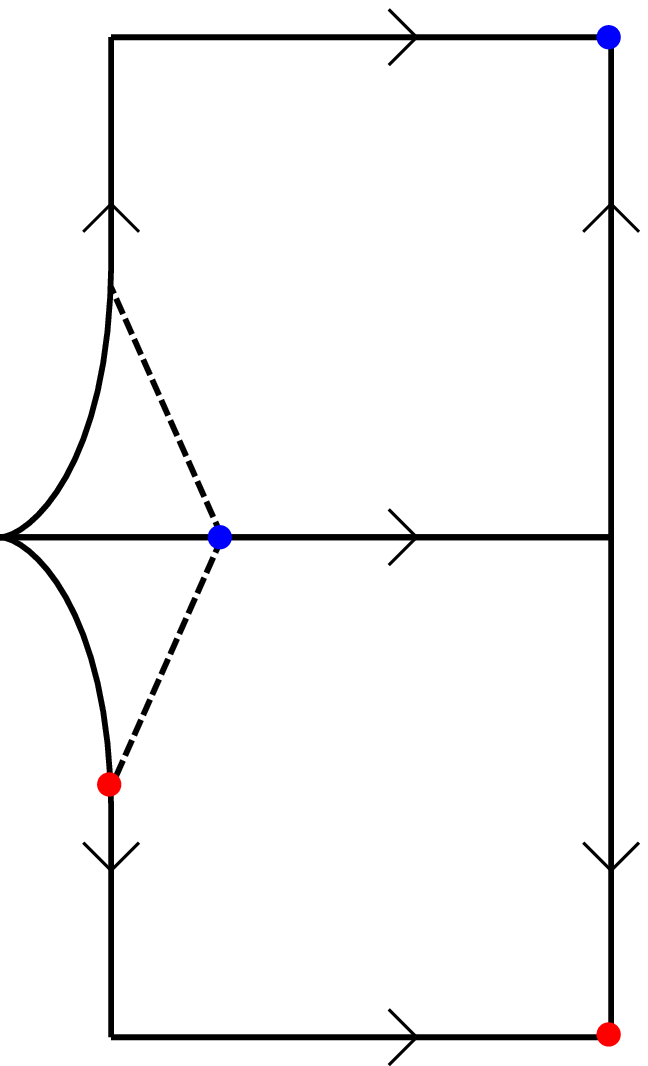}  }
%\centerline{ \includegraphics[scale=.8]{images/SubdivideSq} } %Dan had already commented out this line
\caption{Two squares of $\mathcal{E}_\pitchfork$ and $\mathcal{E}_{||}$ near a swallowtail point.  The dotted edges labelled $S$ and $T$ in the right are considered when computing the cellular differential, but are not $1$-cells of $\mathcal{E}_{||}$.  Initial and terminal vertices of the $C_1$ and $C_2$ squares are depicted in blue and red respectively.}
\label{fig:IsoProof2}
\end{figure}

As in \cite[Section 3.11 %\ref{sec:matrices}
]{RuSu1}, we let $\widehat{A}_{k,k+1}$ (resp. $\widehat{A}_{k,k+2}$) denote $A$ with two rows and columns added in positions $k$ and $k+1$ (resp. $k$ and $k+2$) with all entries $0$ except the $(k,k+1)$-entry (resp. the $(k,k+2)$-entry) which is $1$.  In addition, let $T = I+E_{k+1,k+2}$ and $S = I+ \widehat{A}_{k,k+1} E_{k+2,k} + E_{k+1,k+2},$ as in equation (7) from Section 3.11  of \cite{RuSu1}.

We complete the definition of $\Phi$ by setting
\[
\Phi(C_1) = C_1; \quad \Phi(C_2) = C_2; \mbox{    and}
\]
\[
\Phi(B_0) = B_0 [I+E_{k+1,k+2}].
\]
[All other generators belong to cells lying in the closure of squares without $S$ or $T$ decorations.]  
 Since $[I + E_{k+1,k+2}]$ is upper triangular, it is easy to verify that $\Phi$ is a {\it tame} isomorphism.  
 [For more details, compare with the proof of tameness given for the map $\psi$ from  
 % (\ref{def:psiIso})
  Theorem 4.5 from \cite{RuSu1}.]
 %\ref{thm:IndDecorate},
 Observe the formula
\begin{equation} \label{eq:PhiIB0}
\Phi(I+B_0+E_{k+1,k+2}) = (I+B_0)(I+E_{k+1,k+2}). 
\end{equation}

From Proposition \ref{prop:lchAI} and the definition of the Cellular DGA,
%\footnote{\ms{8/10/15:It would be good to offer more specific references. For example, lemma 3.5 (unlabeled), equation (7) (unlabeled), and equations ({eq:st1}), ({eq:st2}). Also, can you spell out $\partial_2 C_i$ a bit more?}}
  we have the following formulas:

\[
\begin{array}{rl} \partial_1 B_0  = & A_1[I+B_0+E_{k+1,k+2}] + [I+B_0+E_{k+1,k+2}] \widehat{A}_{k,k+1} \\
\partial_2 B_0  = & A_1[I+B_0] + [I+B_0][I+E_{k+1,k+2}] \widehat{A}_{k,k+1}[I+E_{k+1,k+2}] 
\end{array}
\]
[The $E_{k+1,k+2}$ terms appear in $\partial_1B_0$ because the $B_0$ Reeb chord between the $k+1,k+2$ sheets of the $T$ square is the exceptional generator  $b^D_{k+2,k+1}$ from the Type (13) square that satisfies $b^D_{k+2,k+1}=1$ in $\lchA/I$.]
\[
\begin{array}{rl} \partial_1 C_1  = & A_2 C_1 + C_1 \widehat{A}_{k,k+1} + [I+B_2][I+ B_1] + [I+B_3][I+B_0+E_{k+1,k+2}] \\
\partial_2 C_1  = & A_2 C_1 + C \widehat{A}_{k,k+1} + [I+B_2][I+ B_1] + [I+B_3][I+B_0] T 
\end{array}
\]
\[
\begin{array}{rl} \partial_1 C_2  = & A_3 C_2 + C_2(I+E_{k+2,k+1}) \widehat{A}_{k,k+2} (I + E_{k+2,k+1}) + 
\\  & [I+B_5][I+B_4] [I + \widehat{A}_{k,k+2} E_{k+1,k}+E_{k+1,k+2}] + [I+B_6][I+Q B_0 Q + QB_0Q E_{k+2,k+1}] \\
\partial_2 C_2  = & A_3 C_2 + C_2(I+E_{k+2,k+1}) \widehat{A}_{k,k+2} (I + E_{k+2,k+1}) + \\
  &  [I+B_5][I+B_4] S + [I+B_6][I+Q B_0Q]. 
\end{array}
\]
where $Q$ denotes the permutation matrix of the trasposition $(k+1 \,\, k+2)$.  [All occurences of $B_0$ are conjugated by $Q$ because the order of the $k+1$ and $k+2$ sheets is opposite above the $S$ square and the $T$ square.]

The verification of  $\Phi \circ \partial_1 = \partial_2 \circ \Phi$ on generators is as follows.  When applied to a generator not appearing in $B_1, C_1,$ or $C_2$, the equality follows from the Claim.   For the remaining generators, we compute using (\ref{eq:PhiIB0})
\begin{align*}
\Phi \circ \partial_1(B_0) = & A_1 \Phi(I+B_0+E_{k+1,k+2}) + \Phi(I+B_0+E_{k+1,k+2}) \widehat{A}_{k,k+1} =
\\
  &  \left(A_1[I+B_0] + [I+B_0][I+E_{k+1,k+2}] \widehat{A}_{k,k+1}[I+E_{k+1,k+2}]  \right) [I+E_{k+1,k+2}] = 
\\
  & \partial_2(B_0[I+E_{k+1,k+2}]) = \partial_2\circ \Phi(B_0);
\end{align*}
and
\[
\Phi \circ \partial_1(C_1) = A_2 C_1 + C_1 \widehat{A}_{k,k+1} + [I+B_2][I+ B_1] + [I+B_3] \Phi(I+B_0+E_{k+1,k+2}) = \partial_2 \circ \Phi(C_1).
\]
Finally, observe that $[I + \widehat{A}_{k,k+2} E_{k+1,k}+E_{k+1,k+2}] = [I + \widehat{A}_{k,k+1} E_{k+2,k}+E_{k+1,k+2}] =S$, so that all terms in $\partial_1 C_2$ and $\partial_2 C_2$ that do not involve $B_0$ are identical.  Thus, showing that $\Phi \circ \partial_1(C_2) = \partial_1\circ \Phi(C_2)$ reduces to the computation  
\[\Phi(I+Q B_0 Q + QB_0Q E_{k+2,k+1}) = Q\Phi(I+B_0[I + E_{k+1,k+2}])Q= Q(I+B_0)Q = I+ QB_0Q.
\]
\end{proof}

\begin{proof}[Proof of Theorem \ref{thm:CellularLCH}]
Stable tame isomorphism has the properties of an equivalence relation.  Thus, since $(\mathcal{A}_{||}/J, \partial)$ is stable tame isomorphic to the Cellular DGA of $L$, and $(\lchA/I, \partial)$ is stable tame isomorphic to the Legendrian contact homology DGA of $L$, it follow that the Cellular DGA is stable tame isomorphic to the LCH DGA.
\end{proof}

\section{Proof of Theorem \ref{thm:PropertiesofLtilde} Part 1: Construction of squares without swallowtails}
\label{sec:Constructions}

The remaining four sections of the paper concern the proof of Theorem \ref{thm:PropertiesofLtilde}.  We need to establish the existence of a Legendrian $\tilde{L}$ satisfying the (many) stated properties.  This task is carried out in two parts.  In Sections \ref{sec:Constructions}-\ref{sec:Properties}, we construct an initial approximation, $\tilde{L}_0$ ,to $\tilde{L}$ that satisfies all of the requirements of Theorem \ref{thm:PropertiesofLtilde} except that $\tilde{L}_0$ is not necessarily $1$-regular.  Then, in Section \ref{sec:ProofSetup} we show that by applying an appropriate perturbation to $\tilde{L}_0$ we may obtain the $1$-regular condition while preserving the other requirements of Theorem \ref{thm:PropertiesofLtilde}.  

The construction of $\tilde{L}_0$ is carried out in a fairly explicit manner using local coordinates on $S$ associated to the transverse square decomposition, $\mathcal{E}_\pitchfork$.  Recall that above each of the squares of $\mathcal{E}_\pitchfork$, the singular set of $L$ (topologically) matches one of the types (1)-(14),  and that above each $1$-cell the singular set has one of the four types (PV), (1Cr), (2Cr), and (Cu).    For each square type, we will produce a collection of  functions defined on (subsets of) $[-1,1]\times[-1,1]$ whose $1$-jets define $\tilde{L}_0$ above squares of that type.  These \emph{defining functions} have a standard form near 
%$\partial( [-1,1] \times [-1,1])$
%, so that the form of $\tilde{L}_0$ in a neighborhood of 
each boundary edge of $[-1,1]\times [-1,1]$ that only depends on the type of the edge, and this will allow us to verify that the pieces of $\tilde{L}_0$ fit together to produce a smooth Legendrian.

The construction of defining functions for squares of type (1)-(12) is done in a uniform manner and is carried out in the remainder of Section \ref{sec:Constructions}.  In Sections \ref{sec:1dim1234}-\ref{sec:1dim1} we construct functions over $[-1,1]$ that match the four possible $1$-cell types.
%models that appear as boundaries of the squares. 
%We  list in Propositions \ref{prop:PV1Cr2Cr} and\ref{lem:MainPropsCu} various features of the functions needed later.
Then, in Section \ref{ssec:Two-skeleton}  
we define two-variable functions for the Type (1)-(12) squares as  a suitable sum of interpolations between the one-variable functions associated to the boundary edges.  
%of the square.  of squares (1)-(12) to
The construction of the Type (13) and (14) squares is more involved due to the presence of swallowtail points, and is the topic of Section  \ref{sec:ConstructionsST}.   
The verification that the singular set of the resulting Legendrian has the proper form above each square is deferred until Section \ref{sec:Properties} where all of the properties of Theorem \ref{thm:PropertiesofLtilde} are verified, except for $1$-regularity.

%Section \ref{ssec:Two-skeleton} construction produces squares (1)-(12).

\subsubsection{Preliminaries}

Fix the Legendrian $L \subset J^1(S)$ and transverse square decomposition $\mathcal{E}_\pitchfork$ from the statement of Theorem \ref{thm:PropertiesofLtilde}.
Let $N$ denote the maximal number of sheets of the front projection of $L$ over any  point in $S$.

We can assume without loss of generality that above all squares of $\mathcal{E}_\pitchfork$ the lower most and upper most sheet of $L$ do not meet any other sheets at singular points, i.e. crossings, cusps, and swallowtail points.  If this is not the case, then we simply take the union of $L$ with the $1$-jets of a pair of constant functions with sufficiently large negative and positive value.  After constructing the resulting Legendrian, suitable defining functions for the original $L$ arise from discarding the two new components.

For the constructions, we  fix a constant $0 < \ea < 1$ such that
\begin{equation}
\label{eq:constants}
\ea< \left(\frac{1}{ 10N }\right)^{10}. 
\end{equation}
Later in the construction we will fix additional, even smaller constants $\ec$ and $\ed$.
%We also impose in the proof of Lemma \ref{lem:MainPropsPV} an additional upper bound (\ref{eq:epsilon3}) on $\ec$ not implied by (\ref{eq:constants}).

\begin{remark}
No explicit relation is assumed between $\ea, \ec, \ed$ and the constant $\e$ that appears in some of the Properties from  Sections \ref{sec:CompLCH}-\ref{sec:SwallowComp}, such as Property \ref{pr:Location1}.
\end{remark}

The possible (topological) appearance of the front projection of $L$ above any edge $e \cong [-1,1]$ of $\mathcal{E}_\pitchfork$ is determined by 
\begin{itemize}
\item[(i)] the number of sheets, $n$, of $L$ above $[-1,1]$;
\item[(ii)] the nature of the singular set above $[-1,1]$ which must be one of (PV), (1Cr), (2Cr), (Cu); and
\item[(iii)]  the location of crossing and cusp sheets which are specified in the (1Cr) and (Cu) case by a pair of consecutive integers $1 < k < k+1 < n$, and in the (2Cr) by $1< k< k+1<k+2< n$.
\end{itemize}
%Following the convention from Section \ref{ssec:Compute1cell}, when we label the sheets of the Legendrian above $[-1,1]$ as $S_1, \ldots, S_n$ with descending $z$-coordinate at $x=+1$.  
We refer to the data (i)-(iii) as an {\bf edge type}.
%In Sections ???, for all relevant $n \leq N,$ 
%\footnote{\ms{8/7/15: It would be better if $n +2 < N$ since we need to construct artificial sheets sometimes from above and below, e.g. in $f_{k-0.5}$ construction.}}

In Sections \ref{sec:1dim1234}-\ref{sec:1dim1}, we construct $1$-variable defining functions for $1$-dimensional Legendrians matching all possible edge types with $1 \leq n \leq N$.
%with $n$-sheets and whose singular sets match each of the $1$-cell Types (PV), (1Cr), (2Cr), (Cu), allowing for all possible locations of the crossing and cusp sheets.  We refer to the data of the number of sheets $n$ together with the specification of the singular set as matching one of (PV), (1Cr), (2Cr), (Cu) 
Following the convention from Section \ref{ssec:Compute1cell}, when considering a particular edge type we label the sheets of the Legendrian above $[-1,1]$ as $S_1, \ldots, S_n$ with descending $z$-coordinate at $x=+1$.  We will notate the corresponding defining functions  as $f_1, \ldots, f_n:[-1,1] \rightarrow \R$.  (For (Cu) edges, the defining functions $f_k$ and $f_{k+1}$ will have proper subsets of $[-1,1]$ as their domain.)  For $1 \leq i, j \leq n$, we notate \emph{difference functions} as
\[
f_{i,j} := f_i - f_j.
\]

Our constructions often make use of the following (probably standard) technical lemma.
\begin{lemma}  \label{lem:thetechlem}
Suppose we are given smooth functions $f$ and $g$ such that
$g: [a,b] \rightarrow \R$ satisfies $0< g'(x)$ for all $x \in [a,b]$,
 and for some $\delta >0$, 
$f: [a,a+\delta] \cup [b-\delta, b] \rightarrow \R$  satisfies
\[\begin{array}{ll}
0 < f'(x) < g'(x),  & \quad \quad \mbox{for all $x \in [a,a+\delta] \cup [b-\delta, b]$, and} \\
0 < f(b)-f(a) < g(b)-g(a). & \end{array}
\]
Then, there exists $0 < \delta' < \delta$ and  $\tilde{f}:[a,b] \rightarrow \R$ such that
\begin{equation} 
\begin{array}{ll}
0 < \tilde{f}'(x) < g'(x) &    \quad \quad \mbox{for all $x \in [a,b]$,  and} \\
\tilde{f}(x) = f(x) &  \quad \quad \mbox{for all $x\in [a,a+\delta'] \cup [b-\delta', b]$.}
\end{array}
\end{equation}
\end{lemma}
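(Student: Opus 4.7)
The plan is to obtain $\tilde{f}$ as an antiderivative of a suitably chosen smooth function $h:[a,b]\to\R$ playing the role of $\tilde{f}'$. I want $h$ to agree with $f'$ (to infinite order) on shrunken end-intervals, to satisfy $0 < h < g'$ pointwise, and to have total integral $f(b)-f(a)$; once such an $h$ is in hand, setting $\tilde{f}(x) := f(a) + \int_a^x h(t)\,dt$ finishes the proof by the fundamental theorem of calculus, since the endpoint-matching conditions then become automatic.

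First I would pick $\delta'\in(0,\delta/2)$ small enough that
\[
0 < f(b-\delta') - f(a+\delta') < g(b-\delta') - g(a+\delta'),
\]
which follows by continuity from the hypothesis $0 < f(b)-f(a) < g(b)-g(a)$. Call the middle quantity $M$; this is the integral that the restriction of $h$ to the interior $[a+\delta', b-\delta']$ must realize, so that the total integral of $h$ on $[a,b]$ equals $f(b)-f(a)$ once $h$ is set to $f'$ on the two collar intervals.

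Next I would build $h$ on $[a+\delta', b-\delta']$ by interpolating between $f'$ and a scaled copy of $g'$. Fix a small $\eta > 0$ and a smooth cutoff $\psi$ with $\psi\equiv 1$ on collars of width $\eta$ at each end and $\psi\equiv 0$ outside slightly larger collars of width $2\eta$. For a parameter $c\in(0,1)$, set
\[
h_c(x) := \psi(x)\, f'(x) + (1-\psi(x))\, c\, g'(x).
\]
Then $h_c$ is smooth, equals $f'$ to infinite order at $x=a+\delta'$ and $x=b-\delta'$, and at each point is a convex combination of $f'(x)\in(0,g'(x))$ and $cg'(x)\in(0,g'(x))$, so $0 < h_c < g'$ everywhere. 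As $c$ ranges over $(0,1)$, the integral $I(c) := \int_{a+\delta'}^{b-\delta'} h_c\,dx$ is continuous and monotone, and for $\eta$ sufficiently small its range fills an interval whose interior contains $M$; the intermediate value theorem then selects $c$ with $I(c) = M$.

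Finally, extending $h := h_c$ across the collars by $h := f'$ on $[a, a+\delta']\cup[b-\delta', b]$ yields a smooth function on all of $[a,b]$, because $h_c$ matches $f'$ to infinite order at the gluing points $a+\delta'$ and $b-\delta'$. Defining $\tilde{f}(x) := f(a) + \int_a^x h(t)\,dt$ produces a smooth function satisfying both conclusions: the derivative bound holds by construction, the identity $\tilde{f} = f$ on $[a, a+\delta']$ is immediate, and on $[b-\delta', b]$ it follows since $\tilde{f}(b) = f(a) + (f(a+\delta')-f(a)) + M + (f(b)-f(b-\delta')) = f(b)$ together with $\tilde{f}' = f'$ there. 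The only delicate point is the intermediate value step, i.e. verifying that for some small $\eta$ the range of $I(c)$ truly contains $M$; this is the main obstacle, but it is a routine continuity argument relying on the strict inequality $0 < M < g(b-\delta')-g(a+\delta')$ established in the first step.
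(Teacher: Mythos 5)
Your proof is correct and follows essentially the same route as the paper's: keep $f$ near the endpoints and use a scaled copy of $g'$ in the middle (scaling chosen so the total increment matches), which makes the bound $0<\tilde f'<g'$ automatic. The only difference is organizational — the paper writes the interpolant at the level of the function with an explicit constant $\alpha=(f(b')-f(a'))/(g(b')-g(a'))$ and smooths the two corners afterwards, whereas you smooth from the start via a cutoff on the derivatives and recover the constant by the intermediate value theorem.
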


\begin{proof}
By continuity, we can choose $a < a' < a+\delta$ and  $b-\delta < b' < b$ so that
\[
0 < f(b')-f(a') < g(b')-g(a')
\]
continues to hold.  Start by defining $\widehat{f}: [a, b] \rightarrow \R$ via
\[
\widehat{f}(x) = \left\{ \begin{array}{cr} f(x),  &  x \in [a,a']\cup [b',b], \\
f(a') + \alpha \int_{a'}^x d_x g(s) \, ds, &  x \in[a',b'], \\
 \end{array} \right.
\]
where 
\[
\alpha = \frac{f(b')-f(a')}{g(b')-g(a')}  \quad \quad \mbox{satisfies  $0 < \alpha < 1$.}
\]
Note that $\widehat{f}$ is continuous everywhere and is smooth at points other than $x=a'$ and $x=b'$.  It is easy to verify that
$0 < d_x \widehat{f}(x) < d_x g(x)$ holds for all $x \in [a,b]$ including for the right and left derivatives of $\widehat{f}$ when $x=a'$ and $x=b'$.  Therefore, we can produce $\tilde{f}$ by choosing sufficiently small neighborhoods of $a'$ and $b'$ and smoothing $\widehat{f}$ within these neighborhoods in a manner that retains the inequality $ 0 < d_x \tilde{f}(x) < d_x g(x)$. 
\end{proof}

In typical applications of Lemma \ref{lem:thetechlem}, with $g$ already defined, and $f$ defined on $[a,b] \cup [c,d]$ satisfying $0 < f'(x) < g'(x)$ and $0 < f(c)-f(b)< g(c)-g(b)$, we extend $f$ to $[a,d]$ in a manner that preserves the derivative inequalities.  This is accomplished with Lemma \ref{lem:thetechlem} by first choosing some extension of $f$ to $[a,b+\delta]$ and $[c-\delta,d]$ and taking $\delta$ small enough so that the hypothesis of the Lemma are satisfied.  We will often make such extensions without explicit reference to Lemma \ref{lem:thetechlem}.

\subsection{$1$-dimensional defining functions in $[1/2,3/4]$}  \label{sec:1dim1234}

\begin{proposition}  \label{prop:1234def}
There exists a collection of smooth functions, 
\[
h_{i} : [1/2-\ea, 3/4+ \ea] \rightarrow \R, \quad \mbox{for $i = 1, \ldots N$,}
\]
satisfying:
\begin{enumerate}
\item  For all $1 \leq i < j \leq N$, $h_i(1/2) = h_i(3/4) = 0$, and  $h_i(x) > h_j(x)$, for all $x \in (1/2, 3/4)$.
\item  For all $1 \leq i \leq N$, $||h_i||_{C^0} < \ea/6$.
\item  For all $1 \leq i < j \leq N$, $h_{i,j}$ has a unique critical point, $ \eta_{i,j} \in [1/2-\ea, 3/4+\ea]$, that is a non-degenerate local maximum in  $(1/2,3/4)$.  Moreover, the $\eta_{i,j}$ appear in lexicographic order:
For any other $i' <j',$
\begin{equation}
\label{eq:stairwell}
i' < i, \,\, \mbox{or} \,\, i'  =i \,\, \mbox{and}\,\, j'<j
\quad \Rightarrow \quad
\eta_{i',j'} < \eta_{i,j}.
\end{equation}
\end{enumerate}
\end{proposition}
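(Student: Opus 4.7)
The plan is to construct the functions $h_i$ on $[1/2, 3/4]$ by induction on $i$ in the order $h_N, h_{N-1}, \ldots, h_1$, and then to extend each to the larger interval $[1/2-\ea, 3/4+\ea]$ by smooth monotone tails via Lemma~\ref{lem:thetechlem}. I begin by fixing the lexicographically ordered points $\eta_{i,j}$ in advance: partition $(1/2, 3/4)$ into disjoint closed sub-intervals $J_1 < J_2 < \cdots < J_{N-1}$ (ordered from left to right) and place $\eta_{i, i+1} < \eta_{i, i+2} < \cdots < \eta_{i, N}$ inside $J_i$. The lexicographic condition then factors as an inter-interval order ($J_i$ lies entirely to the left of $J_{i'}$ for $i<i'$) plus an intra-interval order inside each $J_i$.

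The base case is $h_N \equiv 0$ on $[1/2, 3/4]$. In the inductive step, with $h_{i+1}, \ldots, h_N$ already constructed, I set $h_i = h_{i+1} + \phi_i$, where $\phi_i$ is a smooth bump vanishing at $1/2$ and $3/4$, strictly positive in between, with a unique non-degenerate maximum at $\eta_{i, i+1}$. I design $\phi_i$ by prescribing its derivative: $\phi_i' > 0$ on $(1/2, \eta_{i, i+1})$, $\phi_i'(\eta_{i, i+1}) = 0$, and on $(\eta_{i, i+1}, 3/4)$ the derivative $\phi_i'$ is strictly decreasing, interpolating the prescribed values
\[
\phi_i'(\eta_{i,j}) = -\,h_{i+1,j}'(\eta_{i,j}), \qquad j = i+2, \ldots, N,
\]
with $|\phi_i''|$ chosen large enough to dominate $|h_{i+1, j}''|$ throughout $(\eta_{i, i+1}, 3/4)$. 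Each prescribed value is negative because $\eta_{i,j} \in J_i$ lies strictly to the left of the critical point $\eta_{i+1, j}$, so $h_{i+1, j}$ is strictly increasing there. With this prescription, $\eta_{i, j}$ is the unique critical point of $h_{i,j} = \phi_i + h_{i+1, j}$ in $(\eta_{i, i+1}, 3/4)$, and it is a non-degenerate local maximum; on $(1/2, \eta_{i, i+1})$ the derivative $h_{i,j}'$ is strictly positive, since both $\phi_i'$ and $h_{i+1, j}'$ are.

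For the prescription of $\phi_i'$ to be consistent with strict monotonicity on $(\eta_{i, i+1}, 3/4)$, I need the sequence $h_{i+1, j}'(\eta_{i, j})$ to be strictly increasing in $j$. The key identity, from the telescoping $h_k = h_{k+1} + \phi_k$, is that on $J_i$,
\[
h_{i+1, j}' = \phi_{i+1}' + \phi_{i+2}' + \cdots + \phi_{j-1}',
\]
so that
\[
h_{i+1, j}'(\eta_{i, j}) - h_{i+1, j-1}'(\eta_{i, j-1}) = \phi_{j-1}'(\eta_{i, j-1}) + \sum_{k=i+1}^{j-2} \bigl( \phi_k'(\eta_{i,j}) - \phi_k'(\eta_{i,j-1}) \bigr).
\]
The first term is strictly positive because $\phi_{j-1}$ is strictly increasing on $J_i$ (its peak sits in $J_{j-1}$, to the right of $J_i$), and the sum is non-negative provided each $\phi_k$ with $k > i$ is convex on $J_i$. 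I therefore build into the induction the additional requirement that each $\phi_k$ be convex on every $J_{i'}$ with $i' < k$, which is easily arranged since we have freedom in shaping the increasing portion of $\phi_k$ before its peak.

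The $C^0$ bound $\|h_i\|_{C^0} < \ea/6$ is achieved by rescaling the entire construction by a single small factor. Finally, I extend each $h_i$ smoothly to $[1/2-\ea, 3/4+\ea]$ via Lemma~\ref{lem:thetechlem}, using monotone tails with pairwise distinct derivatives so that no additional critical points of the differences appear outside $(1/2, 3/4)$. The main obstacle is the $C^2$-bookkeeping: each $\phi_i$ must simultaneously satisfy its own critical-point prescription and remain convex on all sub-intervals $J_{i'}$ with $i' < i$ that will appear at later inductive stages. This is technical but poses no fundamental difficulty, since at each stage the required convexity is imposed on a union of sub-intervals lying in the already-unconstrained increasing portion of $\phi_i$.
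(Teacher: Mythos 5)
Your construction is correct in outline, but it takes a genuinely different route from the paper's. The paper builds the $h_i$ directly and explicitly: it chooses points $p_N,\dots,p_1$ in $(1/2,3/4)\times(0,\infty)$ with staggered $x$-coordinates and nested slopes, takes $\widehat h_i$ to be the piecewise-linear ``tent'' through $(1/2,0)$, $p_i$, $(3/4,0)$, smooths the single corner so that $d_x\tilde h_i$ is monotone with nonvanishing second derivative there, and rescales at the end. The unique maximum of $\tilde h_{i,j}$ then sits in the corner region of one of the two tents, where its derivative sweeps through the constant slope of the other tent's linear branch, so both the clustering of the $\eta_{i,j}$ by first index and the ordering within each cluster are read off from elementary plane geometry of the $p_i$ --- no induction and no control of second derivatives of the differences is needed. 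You instead induct on the consecutive differences $\phi_i=h_i-h_{i+1}$ and prescribe $\phi_i'$ at pre-chosen points; this is more systematic (one could prescribe essentially arbitrary positions for the $\eta_{i,j}$ this way), but it shifts the whole burden onto $C^2$ bookkeeping, which is exactly where your write-up overreaches.

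Concretely: inside $J_i$ the interpolation conditions $\phi_i'(\eta_{i,j})=-h_{i+1,j}'(\eta_{i,j})$ pin down the \emph{average} of $\phi_i''$ on each gap $[\eta_{i,j},\eta_{i,j+1}]$, so you are not free to take ``$|\phi_i''|$ large enough to dominate $|h_{i+1,j}''|$'' there. With the later bumps merely convex on $J_i$, the pointwise domination $\phi_i''\le -h_{i+1,N}''$ integrates to the requirement $\phi_j'(\eta_{i,j})\ge\sum_{k=j+1}^{N-1}\bigl(\phi_k'(\eta_{i,j+1})-\phi_k'(\eta_{i,j})\bigr)$, which convexity alone does not give. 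The fix is cheap --- take each $\phi_k$ \emph{affine}, not just convex, on the earlier sub-intervals $J_{i'}$ with $i'<k$, so that $h_{i+1,j}''\equiv 0$ on $J_i$ and the only constraint on $\phi_i''$ there is strict negativity --- and outside $J_i$ there are no interpolation constraints, so your argument goes through; but as stated the domination claim is inconsistent with the prescribed values. (Also, in your displayed telescoping identity the first term should be $\phi_{j-1}'(\eta_{i,j})$ rather than $\phi_{j-1}'(\eta_{i,j-1})$; both are positive, so nothing is lost. The final extension to $[1/2-\ea,3/4+\ea]$ by monotone tails is fine and does not really require Lemma~\ref{lem:thetechlem}.)
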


\begin{proof}
Inductively select points $p_N, p_{N-1}, \ldots, p_1$ in $(1/2,3/4)\times(0,+\infty)$ so that for all $1 \leq i < N$, we have
\begin{itemize}
\item[(i)] $x(p_i) > x(p_{i+1})$,  and
\item[(ii)] $p_i$ lies above the line through $(1/2,0)$ and $p_{i+1}$.
\end{itemize}
Define $\widehat{h}_i: [1/2-\ea, 3/4+\ea] \rightarrow \R$ to be piece-wise linear agreeing with the line $\ell_i$ through $(1/2,0)$ and $p_{i}$ on $[1/2-\ea, x(p_i)]$ and with the line $\ell_i'$ through $p_i$ and $(3/4,0)$ on $[x(p_i), 3/4 + \ea]$.  See Figure \ref{fig:LinStair}.  Next, produce $\widetilde{h}_i$ from $\widehat{h}_i$, by smoothing in a small neighborhood of $x= x(p_i)$ in a manner so that $d_x \tilde{h}_i$ decreases monotonically, and $d_x^2 \tilde{h}_i$ is non-zero where $\tilde{h}_i$ is non-linear.    It follows easily from (i) and (ii) that the critical points of $\tilde{h}_{i,j}$ are as required in item (3).  Finally, put $h_{i} = \alpha \tilde{h}_i$ where $\alpha >0$ is an overall constant chosen to be small enough so that item (2) holds.
\end{proof}

\begin{figure}

\labellist
\small
\pinlabel $\widehat{h}_1$ [bl] at 178 116
\pinlabel $\widehat{h}_2$ [bl] at 156 88
\pinlabel $\widehat{h}_3$ [bl] at 134 60
\pinlabel $\widehat{h}_4$ [tl] at 108 40
\endlabellist
\centerline{ \includegraphics[scale=.6]{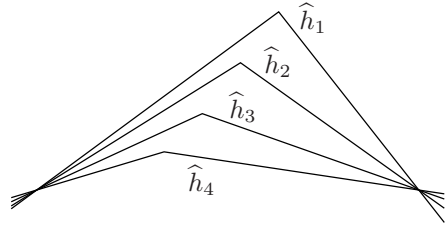} }

\caption{Graphs of the piecewise linear functions $\widehat{h}_{i}$.}
\label{fig:LinStair}
\end{figure}

 Let 
\begin{equation}  \label{eq:eeta}
{\e}_\eta = \frac{1}{3} \mbox{Min}\{ |\eta_{i,j}-\eta_{i',j'}|  \, :\, (i,j) \neq (i',j')\},
\end{equation} 
so that ${\e}_\eta$-balls centered around the $\eta_{i,j}$ are all disjoint. Let
\begin{equation} \label{eq:Mdef}
M = \mbox{Min}_{i<j}\left( \inf \{ |d_x{f}_{i,j}(x)| \, :\,  x \in [1/2,3/4] \setminus (\eta_{i,j}-{\e}_\eta, \eta_{i,j} + {\e}_\eta) \}\right) >0.
\end{equation}
Choose $\ec>0 $ such that
\begin{equation}
\label{eq:epsilon3}
\ec <  \min \left\{  \frac{ \ea^{10}}{(10N)^{10}}, \frac{4M\ea}{N} \right\}.
%\frac{\ec N}{M} < \eb.
\end{equation}

\subsection{$1$-dimensional defining functions}  \label{sec:1dim1}

For an arbitrary edge type with $1 \leq n \leq N$, we fix defining functions
%For $1 \leq n \leq N$, we consider each of the edge types (PV), (1Cr), (2Cr), (Cu) with $n$ sheets (allowing all possible locations of the crossing and cusping sheets not involving the upper-most and lower-most sheet).  For each of these cases, we will fix functions 
\[
f_1, \ldots, f_n: [-1,1] \rightarrow \R
\] 
%so that the union of their $1$-jets define a $1$-dimensional Legendrian whose front projection matches the given edge type.  
[Although the edge type is not indicated in the notation for the $f_i$, 
no relation is assumed between $f_i$ corresponding to distinct edge types.]  

%We fix some preliminary notations.

%Label the sheets of a given edge type as $S_1, \ldots, S_n$ with descending $z$-coordinate as they appear above $x=+1$.  
We let $\sigma_-(i)$ denote the ordering that sheet $S_i$ appears in above $x=-1$, where in the (Cu) case we put $\sigma_-(k) = \sigma_-(k+1) = k-0.5$ when $S_k$ and $S_{k+1}$ meet at the left cusp.  Explicitly, for edges of type (PV), (1Cr), and (2Cr), $\sigma_-$ is respectively given by permutations $id$,  $(k \,\,  k+1)$ and $(k \,\, k+1 \,\, k+2)$ while in the (Cu) case we have
\[
\sigma_-(i) = \left\{ \begin{array}{cr} i,  & i<k; \\  k-0.5, &  i = k, k+1;\\ i-2, & i >k+1. \end{array} \right.
\]
Note that $\sigma_-$ uniquely determines the edge type.  For $i<j$, sheets $S_{i}$ and $S_{j}$ cross if and only if $\sigma_-(i) > \sigma_-(j)$.  In addition, put $\sigma_+(i) := i$ for $1 \leq i \leq n$.

For each edge type, we fix  $y_{i,i+1}$,  $i = 1, \ldots, n-1$, that will correspond approximately to the maximum values of the $f_{i,i+1}$ as follows:

\medskip

\noindent (PV) and (Cu):  Set all $y_{i,i+1} =1$. 

\medskip

\noindent (1Cr):  Set $y_{i,i+1} = 1$, $i \neq k-1,k,k+1$, and 
\[
y_{k-1,k} = 1.5, \quad y_{k,k+1} = 0, \quad y_{k+1,k+2} = 1.5.
\]

\medskip

\noindent (2Cr):  Set $y_{i,i+1} = 1$, $i \neq k-1,k,k+1,k+2$, and 
\begin{eqnarray}
\label{eq:TwoCrossingConstants}
&y_{k-1,k}= 1.125, \quad y_{k,k+1} = 0.625,\quad y_{k+1,k+2} = 0.125,&\\
\notag
& y_{k+2,k+3} = (k+3) - (k-1)- \sum_{\eta=k-1}^{k+1} y _{\eta, \eta+1} = 2.125.&
\end{eqnarray}

\medskip

Finally, for $1 \leq i \leq j \leq n$, put $y_{i,j} = \sum_{l = i}^{j-1} y_{l,l+1}$.

\begin{proposition} \label{prop:PV1Cr2Cr} For any (PV),  (1Cr), or (2Cr) edge type, functions $f_i$,  $1 \leq i \leq n$, can be constructed so that for all $1 \leq i < j \leq n$ the  following properties hold:

\begin{enumerate}
\item  If $\sigma_-(i) < \sigma_-(j)$, then $f_{i,j}(x) > 0$, $\forall x\in [-1,1]$.  

If $\sigma_-(i) > \sigma_-(j)$, then $f_{i,j}$ has a unique non-degenerate $0$ located at $x=0$ in the (1Cr) case, and located in $[1/4, 1/4+\e_1]$ in the (2Cr) case.

\item  For all $x \in [1/2,3/4]$,  
\[
d_x f_{i,j}(x) = d_x h_{i,j}(x),
\]
and for all $x \in [1/4,3/4]$,
\[
|f_{i,j}(x) - y_{i,j}| < N \ea.
\]
  In particular, $f_{i,j}$ has a non-degenerate local maximum at $\eta_{i,j}$.  

 If $\sigma_-(i) < \sigma_-(j)$, then $\eta_{i,j}$ is the only critical point of $f_{i,j}$ in $(-1,1)$.  

If $\sigma_-(i) > \sigma_-(j)$, then the only critical points of $f_{i,j}$ in $(-1,1)$ are $\eta_{i,j}$ and a non-degenerate local minimum, $\tilde{\eta}_{j,i} \in [-3/4,-1/2]$.

\item For $x \in [-1,-1/4]$, 
\[
| f_{i,i+1}(x) | < \ea.
\]
%\dr{If $\sigma_-(i) < \sigma_-(i+1)$, then
%\[
%d_xf_{i,i+1}(x) \geq 1, \quad  \forall \, x \in [-3/8-\ec, -3/8+\ec].
%\]}

\item  For $|x - (\pm 1)| \le 1/8,$  
\begin{equation*}
%\label{eq:7/8}
f_{i,i+1}(x) = (\sigma_\pm(i+1) - \sigma_\pm(i)) Q_{\pm}(x)
\end{equation*}
where
\[
Q_{\pm}(x) := %\left(
\ec( x - (\pm 1))^2 +\ec
%\right)
.
\]
\item  The restriction of $f_{i,i+1}$ to $[-1/4,1/4]$ is linear with slope
\[
|d_x f_{i,i+1} - 2 y_{i,i+1} | < \ea.
\]

\item
For $x \in [1/3 - \ea, 1/3 + \ea]$, $d_xf_{i,i+1}(x) = \ec.$

%\item
%For \dr{$x \in [13/16-\ea, 13/16+\ea],$ $d_xf_{i,i+1}(x) \le -1.$}

\end{enumerate}

\end{proposition}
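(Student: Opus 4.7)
My plan is to construct the consecutive difference functions $g_i := f_{i,i+1}$ piecewise on $[-1,1]$, then recover the defining functions themselves by setting $f_n \equiv 0$ and $f_i := \sum_{\ell=i}^{n-1} g_\ell$ for $i<n$. Each $g_i$ will be specified explicitly on the prescribed subintervals where properties (2)--(6) dictate its behavior, and stitched together on the intervening transition regions via Lemma~\ref{lem:thetechlem}.

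First I would fix $g_i$ on the prescribed pieces. On $[-1,-7/8]$ and $[7/8,1]$, use the exact quadratic formulas from~(4). On $[-1/4,1/4]$, take $g_i$ affine with slope exactly $2y_{i,i+1}$, leaving the vertical intercept as a free parameter to be adjusted later. On a small interval around $1/3$, take $g_i$ affine with slope exactly $\ec$. On $[1/2-\ea,3/4+\ea]$, set $g_i(x) = h_{i,i+1}(x) + c_i$ for a constant $c_i$; by Proposition~\ref{prop:1234def}(2), $|h_{i,i+1}| < \ea/3$, so taking $c_i := y_{i,i+1}$ makes $|g_i(x) - y_{i,i+1}| < \ea/3$ on that interval and automatically enforces the derivative-matching clause of~(2).

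Next I would fill in the four transition intervals $(-7/8,-1/4)$, $(1/4, 1/3-\ea)$, $(1/3+\ea, 1/2-\ea)$, $(3/4+\ea, 7/8)$ by repeatedly applying Lemma~\ref{lem:thetechlem}. In each case the total change of $g_i$ across the gap is determined by the endpoint values of the prescribed pieces, and one only needs to supply a reference function $g$ whose integrated slope strictly exceeds this total change while being strictly positive pointwise; the bounds \eqref{eq:constants} and \eqref{eq:epsilon3} are tuned precisely so that such a reference exists in each case. The tightest case is $(-7/8,-1/4)$, where property~(3) demands $|g_i|<\ea$ throughout the full length of the interval; this is precisely why $\ec$ is chosen so small compared to $\ea$.

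Finally, properties~(1)--(6) must be verified for $f_{i,j} = \sum_\ell g_\ell$. Clauses (4)--(6) hold termwise by construction, the derivative part of (2) is linear in $\ell$ and the unique non-degenerate maximum $\eta_{i,j}$ on $[1/2,3/4]$ is inherited directly from $h_{i,j}$ via Proposition~\ref{prop:1234def}(3), and~(3) follows from the interpolation controls. The main obstacle I expect is~(1): the zero of $f_{i,j}$ must be pinned down to lie in the prescribed narrow window, and the local minimum $\tilde\eta_{j,i}$ must fall in $[-3/4,-1/2]$ when $\sigma_-(i)>\sigma_-(j)$. Since Lemma~\ref{lem:thetechlem} controls only derivatives, this calibration has to be carried out by adjusting the free intercept of the affine piece on $[-1/4,1/4]$: a one-parameter family of valid $g_i$ is thereby obtained, and the intermediate value theorem together with strict monotonicity on $[-1/2,3/4]$ pins down the unique shift that places the zero of $f_{i,j}$ in the prescribed window. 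The local minimum $\tilde\eta_{j,i}$ is produced by shaping the interpolation on $(-7/8,-1/4)$ to have one non-degenerate critical point, which is possible exactly when the slope of the quadratic near $-1$ has the opposite sign from the slope of the affine piece on $[-1/4,1/4]$, i.e.\ precisely when $\sigma_-(i)>\sigma_-(j)$.
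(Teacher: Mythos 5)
Your overall strategy --- build the consecutive differences $f_{i,i+1}$ piecewise on the prescribed subintervals, fill the gaps with Lemma~\ref{lem:thetechlem}, and recover all other $f_{i,j}$ by summation --- is exactly the paper's, but two of your calibration choices fail and one essential quantitative input is missing. First, taking the slope on $[-1/4,1/4]$ to be \emph{exactly} $2y_{i,i+1}$ breaks down for the crossing pair of a (1Cr) edge, where $y_{k,k+1}=0$: the function would then be constant on $[-1/4,1/4]$ and could not have a unique non-degenerate zero at $x=0$ as item~(1) demands (the paper uses the small positive slope $.2\ea$ there, which still satisfies item~(5)). Second, setting $c_i=y_{i,i+1}$ on the $h$-piece is incompatible with the absence of critical points on $(1/4,1/2)$: positivity forces $f_{i,i+1}(-1/4)>0$ for a non-crossing pair, so the affine piece gives $f_{i,i+1}(1/4)=f_{i,i+1}(-1/4)+y_{i,i+1}>y_{i,i+1}=f_{i,i+1}(1/2)$, and the hypothesis $0<f(b)-f(a)$ of Lemma~\ref{lem:thetechlem} fails on that gap. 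The paper inserts the offset $(2/3)\ea$ into the $h$-piece for precisely this reason.

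The more serious gap is that items~(1) and~(2) concern \emph{all} pairs $i<j$, and for non-consecutive pairs they do not ``follow from summation'' without a control you never impose. On $(-1,-1/2]$ the crossing differences have negative derivative somewhere, so for a non-crossing pair such as $(k,j)$ with $j>k+1$ in the (1Cr) case, $d_xf_{k,j}=d_xf_{k,k+1}+d_xf_{k+1,j}$ is a sum of terms of opposite signs and could vanish, producing forbidden critical points. The paper rules this out by constructing every consecutive difference against a single fixed increasing reference $g$ so that $d_xf_{\ell,\ell+1}(x)\geq(\sigma_-(\ell+1)-\sigma_-(\ell))\,d_xg(x)$ on $(-1,-1/2]$, whence $d_xf_{i,j}\geq(\sigma_-(j)-\sigma_-(i))\,d_xg>0$ whenever $\sigma_-(i)<\sigma_-(j)$. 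Dually, the non-consecutive crossing pair $(k,k+2)$ of a (2Cr) edge must itself acquire a unique non-degenerate minimum $\tilde{\eta}_{k+2,k}\in[-3/4,-1/2]$; since $f_{k,k+2}=f_{k,k+1}+f_{k+1,k+2}$ is determined once the consecutive pieces are fixed, this cannot be ``shaped'' afterwards, and the paper has to impose it as an explicit extra requirement (that $d_x(g+f_{k+1,k+2})$ have a unique zero in $(-3/4,-1/2)$) while building $f_{k+1,k+2}$. Without these two controls your verification of (1) and (2) for non-consecutive pairs does not go through.
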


\begin{proof}
We construct the $f_{i}$ by setting $f_n = 0$ and specifying $f_{i,i+1} = f_i - f_{i+1}$ for $1 \leq i \leq n-1$.  

To begin, fix a smooth function $g: [-1, -1/4-\ea] \rightarrow \R$ with the following properties:
%\begin{align}
%\tag{G1}  \forall x \in [-1,-7/8], \quad  &  g(x) = \ec( x - (\pm 1))^2 +\ec; \\
%\tag{G2}  \forall x \in (-1,-1/4-\ea], \quad & g'(x) > 0; \\
%\tag{G3}   & g(-3/4) = .06\ea \quad \mbox{and} \quad g(-1/4 -\ea) = .09 \ea; \\
%\tag{G4}  \forall x \in [-3/8 -\ec, -3/8+ \ec], \quad &  g'(x) = 1.
%\end{align}

\begin{enumerate}
\item[(G1)] For $x \in [-1,-7/8]$, 
$g(x) = Q_-(x)$.
\item[(G2)] For all $x \in (-1,-1/4-\ea]$, \quad $g'(x) > 0$.
\item[(G3)] We have $g(-3/4) = .06\ea$,  $g(-1/2) = .07 \ea$, $g(-3/8)= .08\ea$, and $g(-1/4 -\ea) = .09 \ea$.
\item[(G4)] For all $x \in [-3/8 -\ec, -3/8+ \ec]$,   $g'(x) = 1$.
\end{enumerate}
It is possible to construct such a function using Lemma \ref{lem:thetechlem} since
 \[
g( -7/8) = \ec 65/64 < g(-3/4) < g(-1/2) < g(-3/8-\ec) < g(-3/8+\ec)< g(-1/4-\ea).
\]
%and the total change in $g$ on the interval $[-3/8-\ec, -3/8+\ec]$ required by (G4) is $2 \ec$ which is much less than $g(-1/4 -\ea) - g(-3/4)$.
(We use the Lemma to verify that $g'(x)>0$ is attainable with no upper bound on $g'$ required.  Thus, we just need to check that $g(b)-g(a)>0$ holds on the intervals $[a,b]$ that we extend $g$ over.)

\medskip

\noindent {\bf Defining $f_{i,i+1}$ when $\sigma_-(i) < \sigma_-(i+1)$:}
In all cases where $\sigma_-(i) < \sigma_-(i+1)$, we define $f_{i,i+1}$ in the following steps.  

Let $\sigma(i,i+1) : = \sigma_-(i+1) - \sigma_-(i)$.

\medskip

\noindent {\bf Step 1.}  Begin by making the partial definition
\begin{equation} \label{eq:initialdef}
f_{i,i+1}(x) = \left \{\begin{array}{lcl}
\sigma(i,i+1) \cdot g, & \quad & x \in [-1,-1/4-\ea], \\
.1\ea \cdot \sigma(i,i+1) + 2 y_{i,i+1}(x+1/4), & \quad & x \in [-1/4, 1/4], \\
y_{i,i+1} + (2/3)\ea + h_{i,i+1}(x),  & \quad & x \in [1/2,3/4], \\
Q_+(x). & \quad & x \in [7/8,1].
\end{array} \right.
\end{equation}

\medskip

\noindent {\bf Step 2.}  On the remaining portions of $[-1,1]$, smoothly interpolate in a manner that produces the conditions
\begin{equation} \label{eq:remaining}
\begin{array}{lcl} f_{i,i+1}'(x) > 0, & \quad & x \in (-1,1/2]; \\
f_{i,i+1}'(x) < 0, & \quad & x \in [3/4,1); \mbox{  and} \\
f_{i,i+1}'(x) = \ec, & \quad & x \in [1/3 -\ea,1/3+\ea].  
%f_{i,i+1}'(x) = -1, & \quad & \dr{x \in [13/16-\ea, 13/16+\ea]}.
\end{array}
\end{equation}
[The first two conditions required in (\ref{eq:remaining}) are obtainable since the inequalities already hold on intervals where $f_{i,i+1}$ was defined in (\ref{eq:initialdef}), and (keeping in mind that $\sigma(i,i+1) \leq 3$)
\[
f_{i,i+1}(-1/4-\ea) = .09 \ea \cdot \sigma(i,i+1) < f_{i,i+1}(-1/4) = .1 \ea \cdot \sigma(i,i+1); 
\]
\[
f_{i,i+1}(1/4) = y_{i,i+1} + .1 \ea \cdot \sigma(i,i+1) < y_{i,i+1} + (2/3) \ea = f_{i,i+1}(1/2);
\]
\[
f_{i,i+1}(3/4) = y_{i,i+1} + (2/3) \ea > (65/64)\ec = f_{i,i+1}(7/8).
\]
The third condition is obtainable since the total change to $f_{i,i+1}$ that it requires on the interval $[1/3 -\ea,1/3+\ea]$ is much smaller in magnitude than $f_{i,i+1}(1/2)- f_{i,i+1}(1/4)$ from (\ref{eq:epsilon3}).]

Next, we define $f_{i,i+1}$ in the two special cases where $\sigma_-(i) > \sigma_-(i+1)$.

\medskip

\noindent {\bf Defining $f_{k,k+1}$ for a (1Cr) edge:} Set
\begin{equation}  \label{eq:initialdef2}
f_{k,k+1}(x) = \left\{\begin{array}{cr} \sigma(k,k+1) g(x) = (-1)g(x), & x \in [-1,-3/4]; \\
-.05 \ea + .2 \ea(x+1/4), & x \in [-1/4,1/4]; \\
(5/6)\ea + h_{k,k+1}(x), & x\in [1/2,3/4]; \\
%\ec( x - 1)^2 +\ec
Q_+(x),  & x \in [7/8,1],
\end{array} \right.
\end{equation}
 then (using Lemma \ref{lem:thetechlem}) complete 
 the definition by smoothly interpolating in a manner that arranges
 \begin{itemize}
\item $d_xf_{k,k+1}$ has a unique $0$ at some $\tilde{\eta}_{k+1,k} \in (-3/4,-1/2)$ and satisfies $d_xf_{k,k+1}(x) \geq -d_xg(x)$ for $x \in (-3/4,-1/2)$; 
\item $d_xf_{k,k+1} >0$ on $[-1/2, 1/2]$;  \quad $d_xf_{k,k+1} <0$ on $[3/4,7/8]$; and
\item $d_xf_{k,k+1} = \ec$ on $[1/3-\ea, 1/3+\ea]$.
\end{itemize}
[The first condition is obtained by choosing a small interval within $(-3/4,-1/2)$ to interpolate $d_xf_{k,k+1}$ between $-d_xg(x)$ and a small positive constant.  We can then arrange that $d_xf_{k,k+1}>0$ continues to hold up to $x=-1/4$ since the local minimum value will satisfy 
\[
f_{k,k+1}(\tilde{\eta}_{k+1,k}) < f_{k,k+1}(-3/4) = -g(-3/4) = -.06\ea <  -.05 \ea = f_{k,k+1}(-1/4).
\]
The 2-nd and 3-rd conditions are obtainable using Lemma \ref{lem:thetechlem} since
\[
f_{k,k+1}(1/4) = .05 \ea < (5/6)\ea =f_{k,k+1}(1/2),  \quad  f_{k,k+1}(3/4) =(5/6)\ea > (65/64)\ec = f_{k,k+1}(7/8),
\]
\[ \quad \mbox{and} 
 \quad f_{k,k+1}(1/2) -f_{k,k+1}(1/4) > 2\ea \ec.\mbox{]}
\]

\medskip

\noindent {\bf Defining $f_{k+1,k+2}$ for a (2Cr) edge:} Set
\begin{equation}  \label{eq:initialdef3}
f_{k+1,k+2}(x) = \left\{\begin{array}{cr} \sigma(k+1,k+2) g(x) = (-2)g(x), & x \in [-1,-3/4]; \\
-.11 \ea + 2 y_{k+1,k+2} (x+1/4), & x \in [-1/4,1/4]; \\
y_{k+1,k+2} + h_{k+1,k+2}(x), & x\in [1/2,3/4]; \\
%\ec( x - 1)^2 +\ec,  
Q_+(x), & x \in [7/8,1],
\end{array} \right.
\end{equation}
 then complete the definition by smoothly interpolating in a manner that arranges
\begin{itemize}
\item $d_xf_{k+1,k+2}$ and $d_x(g+f_{k+1,k+2})$ both have unique $0$'s in $(-3/4,-1/2)$ that we denote $\tilde{\eta}_{k+2,k+1}$ and $\tilde{\eta}_{k+2,k}$. In addition, $d_xf_{k+1,k+2}(x) \geq -2 d_xg(x)$ for $x \in (-3/4,-1/2)$; 
\item $d_xf_{k+1,k+2} >0$ on $[-1/2, 1/2]$;  \quad $d_xf_{k+1,k+2} <0$ on $[3/4,7/8]$; and
\item $d_xf_{k+1,k+2} = \ec$ on $[1/3-\ea, 1/3+\ea]$.
\end{itemize}
[The uniqueness of the $0$ of $d_x(g+f_{k+1,k+2})$ is arranged by taking the $2$-nd derivative of $f_{k+1,k+2}$ to be sufficiently large when we interpolate $d_x f_{k+1,k+2}$ between $-2 d_x g$ and a small positive constant in a small subinterval of $(1/2,3/4)$.  To see that $d_xf_{k+1,k+2}$ may remain positive in $(\tilde{\eta}_{k+2,k+1}, -1/4]$ and that the 2-nd and 3-rd conditions are obtainable (using Lemma \ref{lem:thetechlem}), 
 note that
\[
f_{k+1,k+2}(\tilde{\eta}_{k+2,k+1}) < f_{k+1,k+2}(-3/4) = -2g(-3/4) = -.12\ea <  -.11 \ea = f_{k+1,k+2}(-1/4),
\]
\[
f_{k+1,k+2}(1/4) = -.11 \ea + y_{k+1,k+2} < y_{k+1,k+2} =f_{k+1,k+2}(1/2), 
\]
\[
  f_{k+1,k+2}(3/4)=y_{k+1,k+2} > (65/64)\ec = f(7/8),
\]
\[ 
\quad \mbox{and} \quad f_{k+1,k+2}(1/2) -f_{k+1,k+2}(1/4) > 2\ea \ec.]
\]

\medskip

With the definition of all $f_{i,i+1}$ complete, we verify properties (1)-(6).
 
\smallskip

Items (3)-(6):  These items all follow from (\ref{eq:initialdef}), (\ref{eq:initialdef2}), and (\ref{eq:initialdef3}) together with the additional itemized requirements imposed during the interpolation process.  Item (3) follows from verifying the inequality at $x=-1$ and $x=-1/4$, and at the critical point $\tilde{\eta}_{i+1,i}$ 
if it exists.  [Keep in mind that in all cases, $|\sigma(i,i+1)| \leq 3$.]
Item (4) follows from (G1).  Item (5) is explicit in the definitions of $f_{i,i+1}$.  Item (6) was arranged during the interpolation step of the constructions of the $f_{i,i+1}$.  

\smallskip

Item (2):  For $1 \leq i < j \leq n$,  
\begin{equation} \label{eq:1leqij}
f_{i,j} = \sum_{l=i}^{j-1} f_{l,l+1}.
\end{equation}
Thus, for $x \in [1/2,3/4]$,  
\[
f_{i,j}(x) = \sum_{l=i}^{j-1} ( C_l + h_{l,l+1}(x))= \left(\sum_{l=i}^{j-1}  C_l\right) + h_{i,j}(x),
\]
where each $C_l$ is a non-negative constant satisfying $|C_l - y_{i,j}| \leq (5/6)\ea$. [See (\ref{eq:initialdef}), (\ref{eq:initialdef2}), and (\ref{eq:initialdef3}).]  That $d_x  f_{i,j}(x) = d_x h_{i,j}(x)$ follows, and using Proposition \ref{prop:1234def} (2), we have
\begin{equation} \label{eq:fy}
|f_{i,j}(x) - y_{i,j}| \leq \sum_{l=i}^{j-1}|C_i - y_{i,i+1}|  + |h_{i,j}(x)| < \left(\sum_{l=i}^{j-1} (5/6)\ea\right) + \ea/6 \leq N \ea.
\end{equation}
Note that $f_{i,j}(x)$ is increasing on $[1/4,1/2]$, so to verify that the inequality (\ref{eq:fy}) continues to hold on $[1/4,1/2]$ it is enough to check that $|f_{i,i+1}(x)-y_{i,i+1}| \leq \ea$ holds at $x=1/4$.  This is explicitly seen from (\ref{eq:initialdef}), (\ref{eq:initialdef2}), and (\ref{eq:initialdef3}).

%In particular, the stated estimate holds when $x = \eta_{i,j}$.

Within $[1/2,3/4]$ the only critical point of $f_{i,j}(x)$ is $\eta_{i,j}$ (from Proposition \ref{prop:1234def} (3)), and in $[-1/2,1/2]$ (resp. $[3/4, 1)$) we have $d_xf_{i,i+1} >0$ (resp. $d_xf_{i,i+1}<0$) for all $i$, so that $d_xf_{i,j}>0$ (resp. $d_xf_{i,j}<0$) follows from (\ref{eq:1leqij}).  
Thus, to establish the claim about the critical points of $f_{i,j}$ we are left to consider $x \in (-1,-1/2]$.

Note that when $x \in (-1,-1/2]$, the  constructions give
\[
\quad d_x f_{i,i+1}(x) \geq \sigma(i,i+1) d_xg(x),
\]
so we can estimate
\[
d_x f_{i,j}(x) = \sum_{l=i}^{j-1} d_xf_{l,l+1} \geq \sum_{l=i}^{j-1} \sigma(l,l+1) d_xg(x) = \left( \sigma_-(j) - \sigma_-(i) \right) d_xg(x). 
\]
Thus, when $\sigma_-(i) < \sigma_-(j)$,  
\[d_x f_{i,j}(x) >0,  \quad \mbox{for $x \in (-1,-1/2]$} 
\]
 follows from (G2), so $\eta_{i,j}$ is the only critical point of $f_{i,j}$ in $(-1,1)$.
In the three cases where $\sigma_-(i) > \sigma_-(j)$, the existence of a unique critical point in $(-1,-1/2)$, denoted $\tilde{\eta}_{j,i}$, follows from the construction.  [In the case of a (2Cr) edge with $(i,j) = (k,k+2)$,  $f_{k,k+2} = -g(x)$ holds in $(-1,-3/4]$;  in $[-3/4,-1/2]$, $f_{k,k+2} = g(x) + f_{k+1,k+2}$, and the existence of a unique $0$ of $d_x(g(x) + f_{k+1,k+2})$ in $[-3/4,-1/2]$ was required in the definition of $f_{k+1,k+2}$.] 

\smallskip

Item (1):  When $\sigma_-(i) < \sigma_-(j)$, $f_{i,j}>0$ follows since it holds when $x=\pm 1$  (by Item (4)), and the unique critical point of $f_{i,j}$ in $(-1,1)$ is a local maximum.

The three cases where $\sigma_-(i) > \sigma_-(j)$ are $f_{k,k+1}$ for a (1Cr) edge, and $f_{k,k+2}$ and $f_{k+1,k+2}$ for a (2Cr) edge. All three of these functions are negative at $-1$ and $-1/4$ with a single local min in $(-1,-1/4)$.  [For $f_{k,k+2}$ use Item (4), and compute
\[
f_{k,k+2}(-1/4) = .1 \ea + (-.11 \ea) = -.01 \ea <0.] 
\]
Moreover, all three are positive at $1/4$ and $1$ with a single local max in $(1/4,1)$.  We conclude that all three are negative (resp. positive) on $[-1,-1/4]$ (resp. $[1/4,1]$).  Each function has a unique $0$ on the interval $[-1/4,1/4]$ that can be explicitly found using (\ref{eq:initialdef}), (\ref{eq:initialdef2}), and (\ref{eq:initialdef3}) with the location as specified in the statement.

\end{proof}

With defining functions now fixed for all (PV), (1Cr), and (2Cr) edge types, we let
\[
K = \inf\{ d_xf^{(1Cr)}_{i,j}(x)\}.
\]
The infimum is taken over all (1Cr) difference functions where we allow for any number of sheets $n \leq N$ and all locations of the crossing sheets $k$ and $k+1$; in addition, we require $i<j$ and $-3/8 \leq x \leq \eta_{i,j}-\e_{\eta}$ (where $\e_\eta$ was defined in (\ref{eq:eeta})).  Note that since the collection of functions involved is finite, compactness gives $K>0$.

Next, fix $\ed >0$ to satisfy
\begin{equation} \label{eq:epsilon4}
\ed < \min \left\{ \ec, \frac{16 \ea}{9 N} \cdot \min\{K, 1/5\} \right\}.
\end{equation}

\begin{proposition}  \label{prop:CuDef}
For any (Cu) edge type, we can construct functions 
\[
f_i: [-1,1] \rightarrow \R,   \quad 1 \leq i\leq n, \,  i \notin \{k,k+1\}, \quad \mbox{and} \quad f_k,f_{k+1}: [-3/8,1] \rightarrow \R,
\]
so that for all $1 \leq i < j \leq n$ the items (1)-(5) of Proposition \ref{prop:PV1Cr2Cr} hold provided that we impose the additional hypothesis that $x$ belongs to the domain of the function under consideration.   In place of item (6), we have
\begin{itemize}
\item[(6')]  For $x \in [1/3- \ea, 1/3 +\ea]$,  $d_xf_{i,i+1} = \ed$.  
\end{itemize}

In addition, we have
\begin{itemize}
\item[(7')] The function $f_{k-1,k+2}$ satisfies
%\footnote{\dr{The second property is already there from (2).  The third property can be deduced from (2) and (3).  Maybe convenient to have a corollary where the $f_{i,j}$ versions of estimates in (3) and others are stated since they may be used later.} } 
\[
\begin{array}{ll} f_{k-1,k+2} = 
%(\sigma_-(k+2)-\sigma_-(k-1))(\ec(x+1)^2+ \ec) = 
%\ec(x+1)^2+ \ec
Q_-(x), \quad & \forall \, x \in [-1,-7/8]; \\
d_x f_{k-1,k+2} > 0, & \forall \, x \in (-1,-3/8];  \\
|f_{k-1,k+2}(x)| \leq \ea, &  \forall \, x \in [-1,-3/8].  
%d_x f_{k-1,k+2} =3, & \forall \, x \in [-3/8-\ec,-3/8+\ec]. 
\end{array}
\]
\item[(8')]  For $x \in [-3/8-\ec , -3/8+\ec]$ the functions $f_{i}$ with $i \notin\{k,k+1\}$ are linear with slopes satisfying
\[\begin{array}{l}
d_x f_{i,j}(x) = j-i,  \quad \quad \mbox{for $\{i,j\}\cap \{k,k+1\} =\emptyset.$} \\
d_x f_{k-1,k}(-3/8) = d_xf_{k+1,k+2}(-3/8) = 1.5.
\end{array}
\]
For $x \in [-3/8, -3/8+\ec]$, 
\[
\begin{array}{l}
f_{k}(x) = L(x) + (x+3/8)^{3/2}, \\
f_{k+1}(x) = L(x) - (x+3/8)^{3/2},
\end{array}
\]
where $L(x)$ is a linear function.
In particular, the sheets $k$ and $k+1$ meet in a semicubical cusp point at $x=-3/8$.  The function $f_{k,k+1}$ satisfies
\[
\begin{array}{lr}
%f_{k,k+1} = 2(x+3/8)^{3/2}, & x \in [-3/8, -3/8+ \ec], \\
f_{k,k+1}(x) >0,  &  x \in (-3/8,1],  
\end{array}
\]
and $\eta_{k,k+1}$ is the only critical point of $f_{k,k+1}$ in $(-3/8,1)$.
\end{itemize}

\end{proposition}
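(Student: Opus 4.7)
I would follow the template of Proposition \ref{prop:PV1Cr2Cr}, setting $f_n = 0$ and constructing the $n-1$ adjacent-sheet difference functions $f_{i,i+1}$ one at a time. For indices $i \notin \{k-1, k, k+1\}$, I would define $f_{i,i+1}$ exactly as in the (PV) case, using the same formula (\ref{eq:initialdef}) followed by the smooth interpolation step, except that on $[1/3-\ea, 1/3+\ea]$ I impose slope $\ed$ instead of $\ec$ in order to get (6'). Properties (1)--(5) follow by the same verification as in Proposition \ref{prop:PV1Cr2Cr}, and none of the cusp-specific properties (7'), (8') involve these $f_{i,i+1}$. The substance of the construction therefore lies in choosing $f_{k-1,k}$, $f_{k,k+1}$, $f_{k+1,k+2}$ in a coordinated way so that (7'), (8') hold alongside the restricted versions of (1)--(6').

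My plan for the cusp neighborhood is to start by fixing the pieces dictated by (8'). On a short interval $[-3/8, -3/8+\ec]$, declare $f_k(x) = L(x) + (x+3/8)^{3/2}$ and $f_{k+1}(x) = L(x) - (x+3/8)^{3/2}$ for a linear $L$, and take $f_{k-1}$, $f_{k+2}$ linear on $[-3/8-\ec, -3/8+\ec]$ with the slope relations prescribed in (8'). This fixes $d_xf_{k-1,k}(-3/8) = d_xf_{k+1,k+2}(-3/8) = 1.5$ and forces $f_{k,k+1}$ to vanish with a semicubical cusp, so the uniqueness of its critical point and the positivity on $(-3/8, 1]$ will only need to be arranged to the right of $-3/8 + \ec$. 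Extending $f_{k,k+1}$ to the right, I would set its value on $[1/2, 3/4]$ to be $\ea + h_{k,k+1}$ and linear with slope $2$ on $[-1/4, 1/4]$, and interpolate via Lemma \ref{lem:thetechlem}, checking that the cusp-induced value near $-3/8$ lies below $f_{k,k+1}(-1/4)$; this yields the unique maximum at $\eta_{k,k+1}$ claimed in (8'). The adjacent pairs $f_{k-1,k}$ and $f_{k+1,k+2}$ are handled identically to the generic (PV) construction on $[-1/4, 1]$ (with $y_{k-1,k} = y_{k+1,k+2} = 1$) and interpolated between their prescribed slope-$1.5$ values at $x=-3/8+\ec$ and their values at $x = -1/4$ by Lemma \ref{lem:thetechlem}.

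For the region $[-1, -3/8]$, where $f_k$ and $f_{k+1}$ are undefined, I would construct $f_{k-1}$ and $f_{k+2}$ by first choosing the sum $f_{k-1,k+2}$ directly and then prescribing $f_{k+2}$ so as to split the sum appropriately. I would take $f_{k-1,k+2}$ to equal $Q_-(x)$ on $[-1, -7/8]$ (matching the convention that the outer two sheets collide quadratically at $x=-1$), to satisfy the slope data from (8') at $-3/8$, and to be monotonically increasing with absolute value at most $\ea$ throughout $[-1, -3/8]$; the existence of such a function is an immediate application of Lemma \ref{lem:thetechlem} once one checks the boundary data are compatible. After this, $f_{k+2}$ can be chosen on $[-1, -1/4]$ exactly as in the (PV) construction; then $f_{k-1}$ is forced as $f_{k+2} + f_{k-1,k+2}$, and its adjacent differences $f_{k-1,k}$ at $x = -1/4$ and $f_{k+2, k+3}$ agree up to the small error budget $N \ea$, giving all remaining pieces of items (1)--(5).

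The main obstacle is the simultaneous bookkeeping required by (7'): the inequality $|f_{k-1,k+2}(x)| \leq \ea$ on $[-1, -3/8]$ combined with strict monotonicity leaves only a tiny window in which to interpolate, and once $f_{k-1,k+2}$ is fixed on this interval the values of the individual $f_{k-1,k}$ and $f_{k+1,k+2}$ at $x = -3/8$ are completely determined, which in turn constrains the interpolation on $[-3/8, -1/4]$. The explicit choice of $\ed$ in (\ref{eq:epsilon4}), namely $\ed < \tfrac{16\ea}{9N}\min\{K, 1/5\}$, is what guarantees that the total change of $f_{k-1,k}$ and $f_{k+1,k+2}$ imposed by the prescribed slopes $\ed$ on $[1/3 - \ea, 1/3 + \ea]$, and by the (2)-type bound $|f_{i,j} - y_{i,j}| < N \ea$, leaves enough slack to solve the interpolation problem via Lemma \ref{lem:thetechlem}; the lower bound $K$ on $(1Cr)$ gradients enters when verifying that the critical-point picture of $f_{k,k+1}$ is not disturbed when summing with neighboring difference functions.
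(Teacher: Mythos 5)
Your construction follows the same route as the paper's: build the adjacent differences $f_{i,i+1}$ one at a time as in Proposition \ref{prop:PV1Cr2Cr}, handle the region left of the cusp by prescribing the combined difference $f_{k-1,k+2}$ (equal to $Q_-$ near $x=-1$, increasing, of size $O(\ea)$) and splitting it consistently across $x=-3/8$, and impose the semicubical normal form of (8') on a short interval to the right of the cusp before interpolating with Lemma \ref{lem:thetechlem}.

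One step fails as written, though. For a (Cu) edge all $y_{i,i+1}$ equal $1$, including $y_{k,k+1}$, so item (2) forces $|f_{k,k+1}(x)-1|<N\ea$ on $[1/4,3/4]$; your plateau value $\ea+h_{k,k+1}$ on $[1/2,3/4]$ is off by roughly $1$. It is also incompatible with your own slope assignment: with $d_xf_{k,k+1}\approx 2$ on $[-1/4,1/4]$ and $f_{k,k+1}(-1/4)>0$, one gets $f_{k,k+1}(1/4)>1$, so descending to a value $\approx\ea$ at $x=1/2$ would require $d_xf_{k,k+1}<0$ somewhere in $(1/4,1/2)$, creating an extra critical point and violating both item (2) and the uniqueness claim in (8'). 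The correct plateau is $y_{k-1,k}+(2/3)\ea+h_{k,k+1}$; you appear to have imported the value from the (1Cr) crossing pair, where $y_{k,k+1}=0$. Separately, and harmlessly for this proposition, the constant $K$ and the precise bound (\ref{eq:epsilon4}) on $\ed$ play no role here: the interpolation only needs $\ed$ small enough that the change of order $2\ea\ed$ forced on $[1/3-\ea,1/3+\ea]$ is less than $f_{i,i+1}(1/2)-f_{i,i+1}(1/4)$, while the bound involving $K$ is consumed much later, in the Swallowtail Barrier estimate of Proposition \ref{prop:PropertySTBnew}.
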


\begin{proof}
For $i \neq k-1,k,k+1$, we define $f_{i,i+1}$ as in the proof of Proposition \ref{prop:PV1Cr2Cr} with the modification that on the interval $[1/3-\ea,1/3+\ea]$ we now require that $d_xf_{i,i+1} = \ed$.

To construct the $f_i$, we will define $f_{k-1,k}$, $f_{k,k+1}$, and $f_{k+1,k+2}$, which all have domain $[-3/8, +1]$.  
In addition, we will define $f_{k-1,k+2}$ in $[-1,-3/8+\ec]$.  Then, we set $f_n =0$, and 
\[
f_i :=  \sum_{l = i}^{n-1} f_{i,i+1}  \quad \mbox{for $k \leq i \leq n$}
\]
where when $i=k,k+1$ we restrict the domain of all functions in the summation to $[-3/8,1]$.  For $1 \leq i \leq k-1$, we set 
\begin{equation} \label{eq:deflessthan}
f_i(x) := \left\{ \begin{array}{lc} \displaystyle  \sum_{l = k+2}^{n-1} f_{l,l+1}(x) + f_{k-1,k+2}(x) + \sum_{l=k-2}^{i} f_{l,l+1}(x), & x \in [-1,-3/8 + \ec] \\
\displaystyle \sum_{l=i}^{n-1} f_{l,l+1}(x), & x \in [-3/8, +1],
\end{array} \right.
\end{equation}
and check that $f_{k-1,k+2} = f_{k-1,k}+f_{k,k+1}+f_{k+1,k+2}$ holds on $[-3/8,-3/8+\ec]$ to verify well-definedness.

Define $f_{k-1,k+2}|_{[-1,-3/8+\ec]}$ to have
\[
f_{k-1,k+2}(x) = \left\{ \begin{array}{lc} %\ec(x+1)^2+ \ec,
Q_-(x), & x \in[-1,-7/8], \\ 3 g(x), &  x \in [-3/4, -3/8+\ec],
\end{array} \right. 
\]
(where $g(x)$ is from the proof of Proposition \ref{prop:PV1Cr2Cr}), and to satisfy
\[
d_x f_{k-1,k+2}(x) > 0, \quad \mbox{for $x \in (-1,-3/8+\ec]$}.
\]
[This is possible since $f_{k-1,k+2}(-7/8) = (65/64)\ec < f_{k-1,k+2}(-3/4) = .18\ea$.]
%\footnote{\ms{3/8/16: How are there no other critical points between, say $S_{k-1}$ and $S_{k-2}?$} \dr{3/8:  $f_{k-1,k-2}$ is defined in the first line of the proof.  It is as constructed in the main case of the previous proposition, and one of its properties is that $d_xf_{i,i+1}$ has only one critical point that is a local max in $[1/2,3/4]$.  Maybe you are thinking back to an earlier version of the construction where the $f_i$ (Cu) are the same as $f_i$ (PV) except for the cusp sheets (and similar for (1Cr) and (2Cr)).  This was one major change in my revision.  Now, it is the $f_{i,i+1}$ that are the same.  The $f_i$ may be different, but we don't care.  (If you go the other way around, then you have to consider many more special cases in proofs.)}}

Next, define $f_{k+1,k+2}$, $f_{k,k+1}$ and $f_{k-1,k}$ on $[-3/8, -3/8 +\ec]$ by
\begin{equation} \label{eq:k1k2CuDef}
\begin{array}{lr}
f_{k+1,k+2}(x) = .12 \ea + 1.5 (x+ 3/8) -(x+3/8)^{3/2},  & 
f_{k,k+1}(x) = 2(x+3/8)^{3/2}, \quad \mbox{and}
\\ & \\
f_{k-1,k}(x) = f_{k-1,k+2}(x) - f_{k,k+1}(x) - f_{k+1,k+2}(x),  & \mbox{for $x \in [-3/8,-3/8+\ec]$.}
\end{array}
\end{equation}
Note that for $x \in [-3/8, -3/8+\ec]$,
%on this interval all three functions have positive derivative:
\begin{align*}
d_xf_{k+1,k+2}(x) & = 1.5-(3/2)(x+3/8)^{1/2} \geq 1.5-(3/2)\ec^{1/2} >0,
\\
d_xf_{k,k+1}(x) & = 3 (x+3/8)^{1/2}> 0, \quad \mbox{for $x \in (-3/8, -3/8+\ec]$}, \quad \mbox{and}
\\
d_xf_{k-1,k}(x) & = 3 d_x g(x) - d_x f_{k+1,k+2}(x) - d_x f_{k+1,k+2}(x) = 3 -1.5 -(3/2)(x+3/8)^{1/2} > 0
\end{align*}
where in the last equality we used the property (G4) of $g(x)$.
Moreover, 
\begin{equation} \label{eq:08ea}
f_{k+1,k+2}(-3/8) = .12 \ea, \quad f_{k,k+1}(-3/8) = 0, \quad \mbox{and} 
\end{equation}
\[
 f_{k-1,k} = 3 g(-3/8) - .12 \ea = .12\ea. 
\]
[We used properties (G2) and (G3) of $g(x)$.]  Note that $f_{k-1,k+2} = f_{k-1,k}+f_{k,k+1}+f_{k+1,k+2}$ holds on $[-3/8,-3/8+\ec]$ as required.

We now complete the definition of $f_{k+1,k+2}$, $f_{k,k+1}$ and $f_{k-1,k}$ by requiring, for $i = k-1,k,$ and $k+1$,
\[
f_{i,i+1}(x) = \left\{ \begin{array}{lc}   
.2 \ea + 2y_{k-1,k}(x+1/4), & x\in [-1/4,1/4], \\
y_{k-1,k} + (2/3)\ea + h_{i,i+1},  &  x \in [1/2,3/4], \\
Q_+(x),  & x \in [7/8,1];
\end{array} \right.
\] 
and by interpolating on the remaining intervals so that
\[
\mbox{$d_x f_{i,i+1} >0$ for $x \in (-3/8,1/2]$; \quad $d_x f_{i,i+1} < 0$ for $x \in [3/4, 1)$; and}
\]
\[
\mbox{$d_x f_{i,i+1} = \ed$, \quad for $x \in [1/3-\ea,1/3+\ea]$.}
\]
[
This is possible since we have verified above that $d_xf_{i,i+1} >0$ at $x= -3/8 +\ec$, and  $f_{i,i+1}(-3/8+\ec) < .2\ea$.  Moreover, $f_{i,i+1}(1/4) < f_{i,i+1}(1/2)$, $f_{i,i+1}(3/4) > f_{i,i+1}(7/8)$, and $f_{i,i+1}(1/2) -f_{i,i+1}(1/4) > 2\ea\ed$ are all easily verified.
]

With the definitions complete we verify items (1)-(5) from Proposition \ref{prop:PV1Cr2Cr} and (6')-(8').

\smallskip

The properties stated in items (3)-(5) and (6')-(7') and most of the conditions of (8') all follow immediately from the construction.  To verify, that in $[-3/8-\ec,-3/8+\ec]$ the $f_{i}$ with $i \neq k,k+1$ are linear use induction.  By definition, $f_n =0$; at the inductive step we use that (G4) states that $g(x)$ is linear with slope $1$ in $[-3/8-\ec, -3/8+\ec]$, and observe that $f_{i,i+1} = g(x)$ for $i\geq k+2$ and $i \leq k-2$, and $f_{k-1,k+2}= 3 g(x)$ hold in $[-3/8-\ec, -3/8 +\ec]$.  That $f_k$ and $f_{k+1}$ have the required form in $[-3/8-\ec,-3/8+\ec]$ then follows from the definition of $f_{k+1,k+2}$ and $f_{k,k+1}$. 

Item (2):  Verify as in the proof of Proposition \ref{prop:PV1Cr2Cr}.  Note that when checking $d_{x}f_{i,j} >0$ in $(-1, 1/2]$ 
 it is important that this property holds for $f_{k-1,k+2}$ and $f_{i,i+1}$, $i=k-1,k,k+1$ (aside from $d_xf_{k,k+1}(-3/8) =0$).

Item (1):  Follows from from the corresponding statement for $(i,j) = (i,i+1)$ or $(k-1,k+2)$.  In all cases, we check positivity (strict except when $(i,j) = (k,k+1)$) at the endpoints of the interval of definition.  This follows from items (4) and (7') together with equation (\ref{eq:08ea}).   Checking endpoints suffices since, in the interior of its interval of definition, each $f_{i,j}$ has only a single critical point that is a local maximum by (2). 

\end{proof}

\begin{corollary}  \label{cor:summary}
For any edge type, and any $1 \leq i < j \leq n$, the following properties hold for all $x$ belonging to the domain of $f_{i,j}$.  (The numbering follows Proposition \ref{prop:PV1Cr2Cr}.)

\begin{itemize}
\item[(3)]  If $x \in [-1, -1/4]$, then
\[
|f_{i,j}(x)| < N \ea.
\]
\item[(4)]  If $|x -(\pm1)| \leq 1/8$, then
\[
f_{i,j}(x) = \left( \sigma_{\pm}(j)- \sigma_{\pm}(i) \right) Q_{\pm}(x),
\]
and
\[
f_{i}(x) = (n_\pm - \sigma_{\pm}(j)) Q_{\pm}(x)
\] 
where $n_\pm$ is the number of sheets defined above $x= \pm1$.  
\item[(5)]  The restriction of $f_{i,j}$ to $[-1/4,1/4]$ is linear with slope
\[
|d_xf_{i,j} -2y_{i,j}| < N \ea.
\]
\item[(6)]  If $x \in [1/3-\ea, 1/3+\ea]$, then 
\[
0 < d_x f_{i,j}(x) \leq N \ec   \quad \quad \mbox{(resp.  $0 < d_x f_{i,j}(x) \leq N \ed$)}
\]
for (PV), (1Cr), and (2Cr) edge types (resp. for (Cu) edge types).
\end{itemize}

\end{corollary}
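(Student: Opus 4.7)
The plan is to derive each of the four bounds from the corresponding property for consecutive sheets established in Propositions~\ref{prop:PV1Cr2Cr} and~\ref{prop:CuDef}, using the telescoping identity
\[
f_{i,j} \;=\; \sum_{l=i}^{j-1} f_{l,l+1}
\]
whenever the right-hand side makes sense, and the triangle inequality. Since $j-i \le n-1 < N$, each sum has fewer than $N$ terms, which supplies the factor of $N$ in the stated bounds.

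For the (PV), (1Cr), and (2Cr) cases the decomposition is valid on all of $[-1,1]$, so items (3), (5), and (6) follow immediately: sum the consecutive-sheet bounds $|f_{l,l+1}|<\ea$, $|d_xf_{l,l+1}-2y_{l,l+1}|<\ea$, and $d_xf_{l,l+1}=\ec$ (respectively) and observe that the lengths telescope or dominate by $N$. For (4), at points $|x\mp 1|\le 1/8$, each consecutive piece is a rescaling of $Q_\pm$, so the sum telescopes: $f_{i,j}=\bigl(\sigma_\pm(j)-\sigma_\pm(i)\bigr)Q_\pm$, and the analogous telescoping applied to $f_i=\sum_{l=i}^{n-1}f_{l,l+1}$ gives $f_i=\bigl(\sigma_\pm(n)-\sigma_\pm(i)\bigr)Q_\pm=(n_\pm-\sigma_\pm(i))\,Q_\pm$ since $\sigma_\pm(n)=n_\pm$ under our assumption that the top and bottom sheets are not involved in any singularity.

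The main technical point is the (Cu) case, where $f_k$ and $f_{k+1}$ are undefined on $[-1,-3/8)$, so the naive decomposition of $f_{i,j}$ for indices spanning the cusp breaks down on this interval. The fix is already built into the construction: by definition (\ref{eq:deflessthan}), for $i\le k-1$ and $j\ge k+2$, on $[-1,-3/8+\ec]$ the difference $f_{i,j}$ decomposes as
\[
f_{i,j} \;=\; \sum_{l=i}^{k-2} f_{l,l+1} \;+\; f_{k-1,k+2} \;+\; \sum_{l=k+2}^{j-1} f_{l,l+1},
\]
with each $f_{l,l+1}$ in the two flanking sums bounded by $\ea$ by Proposition~\ref{prop:PV1Cr2Cr}(3) and the middle term bounded by $\ea$ by Proposition~\ref{prop:CuDef}(7'). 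Summing yields the bound (3) on $[-1,-3/8]$; on the remaining slice $[-3/8,-1/4]$ the ordinary telescoping $f_{i,j}=\sum f_{l,l+1}$ applies, with each consecutive term bounded by $\ea$ per Proposition~\ref{prop:CuDef} (which retains items (1)-(5) of the preceding proposition on the domain of definition). Items (4) for $x\in[-1,-7/8]$ similarly use the $f_{k-1,k+2}$ piece: since $f_{k-1,k+2}=Q_-$ there by Proposition~\ref{prop:CuDef}(7'), the same telescoping argument produces $f_i=(n_--\sigma_-(i))Q_-$ with $n_-=n-2$; for $x\in[7/8,1]$ all sheets are defined and the argument is as before. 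Items (5) and (6) in the (Cu) case require no extra work because $[-1/4,1/4]$ and $[1/3-\ea,1/3+\ea]$ both lie inside the common domain $[-3/8,1]$ of all $f_{l,l+1}$, so the ordinary telescoping sum applies directly and item (6') supplies the $N\ed$ bound.

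I do not expect a serious obstacle: the whole argument is a bookkeeping exercise around the one nontrivial observation that the telescoping $f_{i,j}=\sum f_{l,l+1}$ may be rerouted through $f_{k-1,k+2}$ in the (Cu) case to avoid the undefined cusp pieces. The only care needed is to patch together the two decomposition formulas from (\ref{eq:deflessthan}) along the overlap $[-3/8,-3/8+\ec]$ when verifying item (3) on the interval $[-1,-1/4]$, and to note that $\sigma_\pm(n)=n_\pm$ (so the telescoping for $f_i$ in item (4) truly hits the claimed endpoint), which is why we assumed from the start that the bottom sheet does not participate in any singularity.
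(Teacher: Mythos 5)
Your proposal is correct and follows exactly the same route as the paper's (very terse) proof: telescope $f_{i,j}=\sum_{l=i}^{j-1}f_{l,l+1}$ plus the triangle inequality for the (PV), (1Cr), (2Cr) cases, reroute the decomposition through $f_{k-1,k+2}$ via (\ref{eq:deflessthan}) and Proposition~\ref{prop:CuDef}(7') for the (Cu) case, and obtain the second identity in (4) from the first using $f_n=0$. Your write-up simply spells out the bookkeeping that the paper leaves implicit.
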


\begin{proof}
For (PV), (1Cr), and (2Cr) edge types, we have $f_{i,j} = \sum_{l=i}^{j-1} f_{l,l+1}$.  Thus, (3)-(6) follow from the corresponding properties in Proposition \ref{prop:PV1Cr2Cr} using induction and the triangle inequality.  For (Cu), $f_{i,j}$ is determined via the $f_{i,i+1}$ and $f_{k-1,k+2}$ as in  (\ref{eq:deflessthan}), so to apply induction we use Proposition \ref{prop:CuDef} including item (7').  The second equation in (4) follows from the first since $f_n = 0$.
\end{proof}

\begin{figure}

\quad

\quad

\labellist
\small
\pinlabel $C^0$-small~in~$[-1,-1/4]$ [bl] at 8 14
\pinlabel Linear~in~$[-1/4,1/4]$ [bl] at 144 104
\pinlabel slope$\,\approx\,2y_{i,j}$ [bl] at 154 76
\pinlabel Local~max~at~$\eta_{i,j}$ [tl] at 298 120
\pinlabel $f_{i,j}|_{[1/4,3/4]}\approx\,y_{i,j}$ [bl] at 298 88
\endlabellist
\centerline{ \includegraphics[scale=.9]{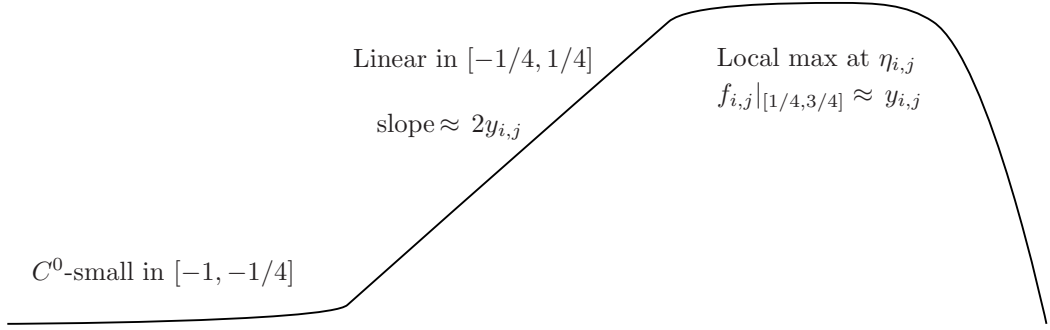} }

\caption{The key features of the difference functions $f_{i,j}$.}
\label{fig:fijfeatures}
\end{figure}

A summary of the properties of the difference functions $f_{i,j}$ appears in Figure \ref{fig:fijfeatures}.

\subsection{Two-skeleton}
\label{ssec:Two-skeleton}
We now turn to constructing functions over $[-1,1]\times[-1,1]$ for squares with type (1)-(12) as numbered in Section \ref{sec:transverse}.  In this section, when we refer to a Legendrian {\it square type} we mean a specification of the singular set as one of the squares (1)-(12), together with the number of sheets $n$ with $1 < n \leq N$ and the location of sheets that correspond to the  crossing arcs and cusp edges.  Fixing a square type specifies edge types for the edges $U, R, D, L$ of $[-1,1]\times[-1,1]$ which are respectively $[-1,1] \times \{1\}, \{1\} \times [-1,1],  [-1,1] \times \{- 1\},$ and $\{-1\} \times [-1,1]$, and we note that this (ordered) collection of edge types uniquely determines the square type.
  Denote the $1$-dimensional functions associated to each of the edge types as $f^U_i$, $f^R_i$, $f^D_i$, and $f^L_i$.

%Since we have not actually exhibited the constructions for the 12 squares yet, we think of ``square ($n$)", $1 \le n \le 12,$ as the (ordered) choice of one-cell models on the square's four boundaries.  
%\footnote{
%\dr{I'm not quite sure what the meaning/intent of this paragraph is.  When the different square types are introduced, they are referring to the form of the singular set above the square.}
%\ms{8/6/15: I mean that now square (n) is defined by the loci intersected at the boundary, indirectly by the choice of 1-dim models. Only later it is shown that the implied loci in the interior agrees with section 6 def of square (n).}
%}

%\dr{Mike:  (i)  It is extremely likely that the $\sigma_L, \sigma_D$ notation will not be used at all in computation sections.  (ii)  For proving the Properties, it may be convenient to observe that $\eta_{\sigma_L(i), \sigma_L(j)} = \beta^L_{i,j}$, at least when $i$ and $j$ are not cusp sheets.  Or, does your notation require a superscript too?}

In (\ref{eq:interpolating}), we construct a defining function $F_i: [-1,1]\times[-1,1] \rightarrow \R$ for each sheet $S_i$ of a given square type, as a  sum of interpolations of $1$-dimensional functions determined by the location of $S_i$ above each of the four boundary edges.  
%In the case that $S_i$ terminates at a cusp edge above $U$ (resp. above $R$), 
However, due to the presence of cusp edges, some sheets may not exist above all four edges of $[-1,1]\times[-1,1]$.  In response, we create, in the next Lemma, some `artificial' $1$-dimensional functions $f^L_{k-.5}$ (resp. $f^D_{k-.5}$) for the cases where $S_i$ terminates at a cusp edge above $U$ (resp. above $R$).  
They will possess properties similar to those of Proposition \ref{prop:PV1Cr2Cr}.
We focus our notation on the construction of $f^L_{k-.5}$ functions, while the construction of $f^D_{k-.5}$ is carried out in an identical manner.

%$f^U_i$ and $f^D_{\sigma_D(i)}$ as well as between $f^R_i$ and $f^L_{\sigma_L(i)}$ 

%If the sheets  exist over some but not all four edges we create an `artificial' function in Lemma \ref{lem:MainProps0.5}.
%We must do this in a manner that is consistent with other adjacent squares which may not have the same
%crossing and/or cusp loci (see Lemma \ref{lem:MainProps0.5} item (4) below).
%Suppose  $S_k,S_{k+1}$ exist at the up right corner but not at the up left corner due to a cusp, as in squares (9)-(12).
%(The discussion when the sheet does not exist at the down right corner is similar.)
%So $\sigma_L(i) = i$ for $i \le k-1$ and $\sigma_L(i) =i-2$ for $i \ge k+2.$
%Let $\sigma_L(k) =\sigma_L(k+1) =k - 0.5.$ 

Label sheets above $[-1,1]\times[-1,1]$ as $S_1, \ldots, S_n$ with descending $z$-coordinate at $(+1,+1)$.
For integers $i,j$ let $y^L_{i,j}$ 
denote $y_{i,j}$ as defined above Proposition \ref{prop:PV1Cr2Cr} based on whether edge $L$ is (PV) as in squares (9) or (12), (1Cr) as in square (10) or (Cu) as in square (11). 
Edge $L$ will never be (2Cr).
%Let $y^L_{k-0.5,j} = \frac{y^L_{\sigma_L(k-1),j} +y^L_{\sigma_L(k+2),j}}{2}$ 
%and $y^L_{i,k-0.5} = \frac{y^L_{i,\sigma_L(k-1)} +y^L_{i,\sigma_L(k+2)}}{2}.$

\begin{lemma}
\label{lem:MainProps0.5}
%\footnote{\ms{8/5/15: As structured, i need to refer to the proof of this lemma to prove the switch point lies above crossing locus for square (12).}}
Consider a (9)-(12) square type such that $S_k$ and $S_{k+1}$ meet at a cusp above the $U$ edge.  There exists $f^L_{k-.5} \in C^\infty([-1,1])$ such that for all $1 \leq i < k-.5 < j \leq n-2$ the following properties hold (with the hypothesis that $x$ belongs to the domain of the functions in question always imposed).
%These are analogous to (parts of) properties in Lemma \ref{lem:MainPropsPV}, \ref{lem:MainPropsCr} or \ref{lem:MainPropsCu}. 
%(We enumerate these  to match the corresponding ones in Lemmas \ref{lem:MainPropsPV}. 
%When both $i$ and $j$ are integers, these properties are simply restating those in Lemmas \ref{lem:MainPropsPV},
%\ref{lem:MainPropsCr} or \ref{lem:MainPropsCu}.)
\begin{enumerate}

\item
For all $x \in [-1,1]$, $f^L_{i,k-.5}(x)> 0$ and $f^L_{k-.5,j}(x)> 0$, except in the case of a Type (12) square when $j = k$.  For the Type (12) square,  $f^L_{k-.5,k}$ has a unique non-degenerate 0 in $(-1/4,0).$
%at $0.$ 

\item
For all $x \in (-1, 1/2]$,  $d_x f_{i,k-.5} >0$ and $d_x f_{k-.5,j} >0$,  except in the case of a Type (12) square when $j = k$.  For the Type (12) square, $f^L_{k-..5,k}$ has a unique critical point in $(-1,1/2]$ that is a local minimum located in $(-3/4,-1/2)$.

For all $x \in [3/4,1)$, $d_x f_{i,k-.5} <0$ and $d_x f_{k-.5,j} <0$.

For all $x \in [1/4,3/4]$,  
\[
|f_{i,k-.5}(x) - C_{i,k-.5}| < N \ea, \quad \mbox{and} \quad |f_{k-.5,j}(x) - C_{k-.5,j}| < N \ea
\]
for constants $C_{i,k-.5} \in \{y^L_{i,k-1}, y^L_{i,k}\}$ and $C_{k-.5,j} \in \{y^L_{k-1,j}, y^L_{k,j} \}$.

%$f^L_{i,j}$ has a maximum at $1/2 < \eta^L_{i,j} < 3/4$ with $|f^L_{i,j}(\eta^L_{i,j})- y^L_{i,j}| < \ea.$
%%%\footnote{\ms{8/5/15: Don't think i use ``artificially extended" staircase ordering in property proofs.}}.
%None of the $f^L_{i,j}$ have other interior critical points except
%in Square (10) where (if sheets $S_l,S_{l+1}$ are the crossing pair) $f_{l,l+1}$ has a critical point at $\tilde{\eta}_{l+1,l} \in [-3/4,-1/2]$
%and in Square (12) where $f_{k-0.5, k}$ has a critical point at $\tilde{\eta}_{k,k-0.5} \in [-3/4,-1/2].$

\item For $x \in [-1,-1/4]$,
\[
|f^L_{i,k-.5}(x)| < N \ea,  \quad \mbox{and} \quad |f^L_{k-.5,j}(x)| < N\ea.
\]

\item
Let  $\sigma_+$ and $\sigma_-$ be associated to the functions $f^L_i$, so that $\sigma_\pm(i)$ specifies the ordering of the $f^L_i(\pm1)$ with descending $z$-coordinate.\footnote{Here, we can take $\sigma_-(l)$ to be undefined if $-1$ is not in the domain of $f^L_i$ as may occur for an (11) square type.}  In addition, set $\sigma_+(k-.5) = k-.5$, and $\sigma_-(k-.5) = l-.5$ where along $D$ sheets $S_k$ and $S_{k+1}$ meet at a cusp between the sheets that appear in positions $l-1$ and $l$ above $(-1,-1)$.     
%For $i \ne k-0.5,$ let  $\sigma_-(i)$ denote the order of $S_i$ as it appears over $(x_1,x_2) = (-1,- 1).$
%Let $\sigma_-(k-0.5) = \frac{\sigma_-(\kappa)+\sigma_-(\lambda)}{2}$ where $S_\kappa$ and $S_\lambda$ are the two sheets immediately above and below the cusping sheets $S_k,S_{k+1}$ along edge $D.$
For $|x- (\pm 1)| \le 1/16,$
\begin{equation} \label{eq:altv4}
f^L_{k-.5,j}(x) = (\sigma_\pm(j)-\sigma_\pm(k-.5)  )Q_{\pm}(x),  \quad \mbox{and} \quad f^L_{i,k-.5}(x) = (\sigma_\pm(k-.5) -\sigma_\pm(i)  )Q_{\pm}(x).
\end{equation}

%Let  $\sigma_+(i) = \sigma_L(i).$ 
%For $i \ne k-0.5,$ let  $\sigma_-(i)$ denote the order of $S_i$ as it appears over $(x_1,x_2) = (-1,- 1).$
%Let $\sigma_-(k-0.5) = \frac{\sigma_-(\kappa)+\sigma_-(\lambda)}{2}$ where $S_\kappa$ and $S_\lambda$ are the two sheets immediately above and below the cusping sheets $S_k,S_{k+1}$ along edge $D.$
%For $|x- (\pm 1)| \le 1/16,$  
% $f^L_{i,j}(x) = (\sigma_\pm(j)- \sigma_\pm(i) )(\ec (x - (\pm 1))^2 + \ec).$

\item
$f_{i,k-.5}^L$ and $f_{k-.5,j}$ are linear when restricted to $[-1/4,1/4]$ with slopes
\[
 |df^L_{i,k-.5} - 2C_{i,k-.5}| < N \ea, \quad \mbox{and} \quad |df^L_{k-.5,j} - 2C_{k-.5,j}| < N \ea.
\]
\end{enumerate}

\end{lemma}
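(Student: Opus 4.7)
The construction of $f^L_{k-.5}$ parallels that of Propositions \ref{prop:PV1Cr2Cr} and \ref{prop:CuDef}: we first fix the values of $f^L_{k-.5}$ on the standard sub-intervals $[-1,-7/8]$, $[-1/4,1/4]$, $[1/2,3/4]$, $[7/8,1]$, and then smoothly extend across the remaining gaps via Lemma \ref{lem:thetechlem}. The guiding idea is that $f^L_{k-.5}$ should behave like a ``half-step'' interpolation between the existing $f^L_{k-1}$ and $f^L_k$, so that each difference $f^L_{i,k-.5}$ and $f^L_{k-.5,j}$ inherits the properties (1)--(5) already established for the $f^L_{i,j}$ by the appropriate edge-type proposition, with the constants in (2) and (5) coming out to one of the permitted values $y^L_{i,k-1}, y^L_{i,k}$ or $y^L_{k-1,j}, y^L_{k,j}$.

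The first step is to pin down the two quadratic endpoints. On $[-1,-7/8]$ and $[7/8,1]$, we set $f^L_{k-.5}(x)$ so that for every surviving $f^L_j$ we recover $(\sigma_\pm(j)-\sigma_\pm(k-.5))Q_\pm(x)$; this is achievable because $\sigma_+(k-.5) = k-.5$ lies exactly halfway between $\sigma_+(k-1)=k-1$ and $\sigma_+(k)=k$, and $\sigma_-(k-.5) = l-.5$ lies halfway between $\sigma_-(k-1)$ and $\sigma_-(k)$ in the ordering at $(-1,-1)$ (which is forced by the $D$-cusp hypothesis). On $[-1/4,1/4]$ we take $f^L_{k-.5}$ to be linear with slope chosen so that $|\,d_xf^L_{i,k-.5}-2C_{i,k-.5}\,|<N\ea$ and analogously for $f^L_{k-.5,j}$. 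On $[1/2,3/4]$ we use an expression of the form $C+h_{l_1,l_2}(x)$ from Proposition \ref{prop:1234def} (with indices $l_1<l_2$ chosen to respect the lexicographic staircase), picking the additive constant $C$ so that (2) holds.

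The gaps $[-7/8,-1/4]$, $[1/4,1/2]$, $[3/4,7/8]$ are filled using Lemma \ref{lem:thetechlem}: on each gap the prescribed boundary derivatives and total increments are small ($O(N\ea)$) and compatible in sign with the monotonicity inequalities in (2) and (3), so the Lemma provides a smooth interpolant with strictly signed derivative throughout. In particular, the constants $C_{i,k-.5}$ and $C_{k-.5,j}$ emerge automatically as one of $y^L_{i,k-1}, y^L_{i,k}$ or $y^L_{k-1,j}, y^L_{k,j}$, with an error bounded by $N\ea$ coming from the added $h$-term and the mild discrepancies introduced by smoothing.

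The main obstacle, as in the analogous cases of Proposition \ref{prop:PV1Cr2Cr}, is the Type (12) square with $j=k$: here the ordering $\sigma_-(k-.5)>\sigma_-(k)$ forces $f^L_{k-.5,k}$ to be negative at $x=-1$ and positive at $x=+1$, so a local minimum in $(-3/4,-1/2)$ and a non-degenerate zero in $(-1/4,0)$ must appear. To arrange this we imitate the crossing construction (\ref{eq:initialdef2}): pick the values of $f^L_{k-.5}$ at $-3/4$, $-1/4$ and in $[1/2,3/4]$ so that $f^L_{k-.5,k}$ satisfies analogues of the inequalities used for $f_{k,k+1}$ in the (1Cr) case; this forces a sign change in $d_x f^L_{k-.5,k}$ in $(-3/4,-1/2)$ and a unique zero of $f^L_{k-.5,k}$ on $[-1/4,1/4]$ by the intermediate value theorem together with monotonicity. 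The remaining properties (1)--(5) are then verified exactly as in the proofs of Propositions \ref{prop:PV1Cr2Cr} and \ref{prop:CuDef}, using additivity $f^L_{i,j} = f^L_{i,k-.5}+f^L_{k-.5,j}$ and the triangle inequality to propagate the $N\ea$-control on constants.
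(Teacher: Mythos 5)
Your overall strategy (prescribe $f^L_{k-.5}$ on the standard sub-intervals and glue with Lemma \ref{lem:thetechlem}, in the style of Propositions \ref{prop:PV1Cr2Cr} and \ref{prop:CuDef}) is a genuinely different route from the paper's. The paper does not build $f^L_{k-.5}$ from scratch: for squares (9)--(11) it sets $d_xf^L_{k-.5} = d_xf^L_{k-1} \mp \beta\, d_xf^{PV}_{1,2}$ for an \emph{even} cutoff $\beta$ that equals $1/2$ near $\pm1$ and a tiny $\delta$ in the middle, basing the perturbation on whichever of the two sheets adjacent to the cusping pair is disjoint from all crossings and cusps in the square. The evenness of $\beta$ makes $\int_{-1}^{1}\beta\,d_xf^{PV}_{1,2}=0$, which is how the paper gets the prescribed quadratic form (item (4)) at \emph{both} endpoints simultaneously, and the smallness of $\delta$ gives all the derivative-sign conditions by domination. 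Your Type (12) treatment (imitating the (1Cr) construction and sandwiching $0<d_xf^L_{k-.5,k}<d_xf^L_{k-1,k}$) does match the paper's.

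However, there is a genuine gap in your argument for the (10) and (11) square types. For item (5) to hold for all $i,j$ simultaneously, the slope of $f^L_{k-.5}$ on $[-1/4,1/4]$ is forced (up to $N\ea$) to track either $d_xf^L_{k-1}$ or $d_xf^L_{k}$ — i.e.\ the constants must be either $C_{i,k-.5}=y^L_{i,k-1}$ throughout or $C_{i,k-.5}=y^L_{i,k}$ throughout; they do not ``emerge automatically'' independently of one another. But in a Type (10) or (11) square one of the two reference sheets adjacent to the cusping pair may itself cross (or cusp with) a further sheet over the square. If, say, the left-edge positions $k-2$ and $k-1$ cross and you track $f^L_{k-1}$, then $f^L_{k-2,k-.5}=f^L_{k-2,k-1}+f^L_{k-1,k-.5}$ has $d_xf^L_{k-2,k-.5}\approx d_xf^L_{k-2,k-1}<0$ on $(-1,\tilde{\eta})$ near the local minimum of the crossing difference, violating item (2). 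The construction therefore must select the ``clean'' neighbor (the paper's Case 1/Case 2 dichotomy, justified by the observation that at least one of $S_{k-1}$, $S_{k+2}$ meets no other sheet anywhere over the square), and your proof never makes or justifies this selection. Without it, the halfway interpolation you describe fails for some admissible configurations.
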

%We could construct $f^L_{k-0.5}$ so that other properties related to Lemma \ref{lem:MainPropsPV} hold; however,
%we only list those needed in subsequent sections.

\begin{proof}
First we construct $f^L_{k-.5}$ for {\bf squares of type (9)-(11)}.  Consider the sheets $S_{k-1}$ and $S_{k+2}$ that appear directly above or below the cusping sheets along edge $U$.  For any square of type (9)-(11) it is the case that at least one of $S_{k-1}$ or $S_{k+2}$ does not meet another sheet at a crossing or cusp anywhere above $[-1,1] \times[-1,1]$ which leads to cases:

\medskip

\noindent{ \bf Case 1.} $S_{k-1}$ is disjoint from all other sheets above $[-1,1]\times[-1,1]$.  
Note that since $S_{k-1}$ is directly above $S_k$ and $S_{k+1}$ above both the $U$ and $D$ edge, we have
\begin{equation} \label{eq:sigma55}
\sigma_+(k-.5) = k-.5 = \sigma_+(k-1) + .5, \quad \mbox{and} \quad \sigma_-(k-.5) = \sigma_-(k-1) + .5.
\end{equation}

Let $f^{PV}_{1,2}$ denote the difference function for the (PV) edge type with $2$ sheets.  (Actually, taking any $f_{i,i+1}$ of any (PV) edge type would work, but we consider $f^{PV}_{1,2}$ for concreteness.)
We seek to define $f^L_{k-.5}$ by requiring that 
\begin{equation}  \label{eq:flk51}
f^L_{k-.5}(-1) 
%= (n-\sigma_-(k-.5))\ec 
= f^L_{k-1}(-1) - .5 f^{PV}_{1,2}(-1), 
\end{equation}
and that
\begin{equation}  \label{eq:jj1}
d_xf^L_{k-.5} = d_xf^L_{k-1} - \beta d_x f^{PV}_{1,2}
\end{equation} 
where $\beta: [-1,1] \rightarrow \R$ is an even function satisfying 
\[
0 <\delta \leq \beta(x) \leq 1/2; \quad \beta(x)= 1/2, \, \mbox{when $|x-(\pm 1)| \leq 1/16$}; \quad \mbox{and} \quad \beta(x) = \delta, \, \mbox{when $x \in [-7/8,7/8]$}.   
\]

\medskip

\noindent {\bf Case 2:} $S_{k-1}$ is not disjoint from all other sheets.  Then, $S_{k+2}$ must be disjoint from the others.  In this case we define $f^L_{k-.5}$ to satisfy 
\[
f^L_{k-.5}(-1) 
%= (n-\sigma_-(k-.5))\ec 
= f^L_{k}(-1) + .5 f^{PV}_{1,2}(-1), 
\]
and 
\begin{equation} \label{eq:jj2}
d_xf^L_{k-.5} = d_xf^L_{k} + \beta d_x f^{PV}_{1,2}.
\end{equation}

\medskip

We show that the required conditions (1)-(5) are satisfied provided that $\delta >0$ is chosen sufficiently small.  We present the proof when $f^L_{k-.5}$ is defined as in Case 1.  With obvious modifications, the proof applies equally well in Case 2.\footnote{The change from $-$ to $+$ that occurs between equations (\ref{eq:jj1}) and (\ref{eq:jj2}) is to arrange that in Case 1 $d_x f^L_{k-1,k-.5} = \beta d_x f^{PV}_{1,2}$ and in Case 2 $d_xf^L_{k-.5,k} = \beta d_x f^{PV}_{1,2}$ so that these derivatives share their sign with $d_xf^{PV}_{1,2}$.  In verifying properties of derivatives for the other $f^L_{i,k-.5}$ and $f^L_{k-.5,j}$, the $\beta d_x f^{PV}_{1,2}$ terms are dominated in absolute value by remaining terms, so that the difference between the $-$ and the $+$ is irrelevant.} 

\begin{lemma} \label{lem:c0deltae1}  For all sufficiently small $\delta >0$,  
\[
|| f^L_{k-1} - f^L_{k-.5}||_{C^0([-1,1])} < 2\ec.
\]
\end{lemma}
\begin{proof}
Note that the combination of (\ref{eq:flk51}) and 
%$f^L_{k-.5}(-1) = f^L_{k-1}(-1) - .5 f^{PV}_{1,2}(-1)$, so 
(\ref{eq:jj1}) gives 
\[
f^L_{k-.5}(x) = f^L_{k-1}(x) -.5f^{PV}_{1,2}(-1) - \int_{-1}^x \beta(x) \cdot d_xf^{PV}_{1,2}(x) \, dx.
\]
As $-.5f^{PV}_{1,2}(-1)= -.5 \ec$, this allows us to estimate for $x \in [-1,1]$
\begin{equation} \label{eq:dxF12Est}
|f^L_{k-1}(x)-f^L_{k-.5}(x) | \leq .5\ec + |\int_{-1}^x \beta(x) \cdot d_xf^{PV}_{1,2}(x) \, dx| \leq .5 \ec + \int_{-1}^1 \beta(x) |d_xf^{PV}_{1,2}(x)| \, dx. 
\end{equation}
Now, since $\beta \leq 1/2$ is even and $d_xf^{PV}_{1,2}(-x) = -d_xf^{PV}_{1,2}(x)$ for $x \in [-1,-7/8]$, we have 
\[
\int^{1}_{7/8} \beta(x) |d_x f^{PV}_{1,2}(x) | \, dx = \int_{-1}^{-7/8} \beta(x) |d_x f^{PV}_{1,2}(x) | \, dx \leq
\]
\[ 
.5 \int_{-1}^{-7/8} d_x f^{PV}_{1,2}(x) \, dx = .5[f^{PV}_{1,2}(-7/8) - f^{PV}_{1,2}(-1)] = \ec/128. 
\]
Thus, for small enough $\delta >0$, we have 
\[
\int_{-1}^1 \beta(x) |d_xf^{PV}_{1,2}(x)| \, dx < 2(\ec/128) + \delta \int_{-7/8}^{7/8} |d_xf^{PV}_{1,2}(x)| \, dx < \ec,
\]
In conjunction with (\ref{eq:dxF12Est}), this results in
\[
|f^L_{k-1}-f^L_{k-.5}(x) | \leq .5 \ec + \ec < 2\ec.
\] 
\end{proof}

\smallskip

Item (4):  %Let $Q_{\pm}(x) = \ec (x- (\pm1))^2 + \ec$ so that item (4) requires that
%\begin{equation}  \label{eq:altv4}
%f^L_{k-.5}(x) = (n-\sigma_{\pm}(k-.5)) Q_{\pm}(x), \quad \mbox{when } |x-(\pm1)| \leq 1/16.
%\end{equation}
Note that in view of Corollary \ref{cor:summary} (4), since $f^L_{i,k-.5} = f^L_{i,k-1}+f^L_{k-1,k-.5}$ and $f^L_{k-.5,j} = f^L_{k-1,j}-f^L_{k-1,k-.5}$, it suffices to establish the case of $f^L_{k-1,k-.5}$.  
%???\footnote{Statement of quadratic form near $\pm1$ for $f^L_{k}$ as opposed to for difference functions.}, 
We have that for $|x-(\pm1)| \leq 1/16$,
%$x\in [-1,-15/16]$,
\begin{equation} \label{eq:Qpm}
d_x f^L_{k-1,k-.5}(x) = .5 d_x f^{PV}_{1,2}(x) = .5 d_xQ_\pm(x) = d_x\left[(\sigma_-(k-.5) - \sigma_-(k-1))Q_{\pm}(x)\right] 
\end{equation}
%\begin{equation} \label{eq:Qpm}
%d_x\left((n- \sigma_-(k-.5)) Q_{\pm}(x)\right).
%\end{equation}
where we used (\ref{eq:sigma55}).
In addition, we have
\[
f^L_{k-1,k-.5}(-1) = .5 f^{PV}_{1,2}(-1) = (\sigma_-(k-.5) - \sigma_-(k-1)) Q_{-}(-1),
%(\ec (x+1)^2 + \ec)|_{x=-1},
\]
so it follows that (\ref{eq:altv4}) holds when $x \in [-1, -15/16]$.
%A similar computation gives for $x \in [15/16]$,
%\begin{equation} \label{eq:nsigmak55}
%d_x f^L_{k-.5} = d_x\left((n- \sigma_+(k-.5))(\ec (x+1)^2 + \ec)\right),
%\end{equation}
At $x=1$ we have 
\begin{equation} \label{eq:betaintpv}
f^L_{k-1,k-.5}(1) = f^L_{k-1,k-.5}(-1) + \int_{-1}^1 \beta d_x f^{PV}_{1,2} \, dx 
%= \left(f^L_{k-.5}(-1) +  f^L_{k-1}(1) -f^L_{k-1}(-1) \right) - \int_{-1}^1\beta d_x f^{PV}_{1,2} \, dx.
%f^L_{k-.5}(1) = f^L_{k-.5}(-1) + \int_{-1}^1d_xf^L_{k-1} - \beta d_x f^{PV}_{1,2} \, dx = \left(f^L_{k-.5}(-1) +  f^L_{k-1}(1) -f^L_{k-1}(-1) \right) - \int_{-1}^1\beta d_x f^{PV}_{1,2} \, dx.
\end{equation}
Note that using Proposition \ref{prop:PV1Cr2Cr} (4), we have
\[
\int_{-1}^1\beta d_x f^{PV}_{1,2} \, dx = \int^{-7/8}_{-1}\beta d_x f^{PV}_{1,2} \, dx + \int_{7/8}^1 \beta d_xf^{PV}_{1,2} \, dx + \int_{-7/8}^{7/8} \delta d_x f^{PV}_{1,2} \, dx =
\]
\[
\int_{-1}^{-7/8} \beta(x)\cdot \ec 2(x+1)\, dx + \int_{7/8}^1\beta(x)\cdot \ec 2(x-1)\, dx + \delta \left(f^{PV}_{1,2}(7/8)- f^{PV}_{1,2}(-7/8)\right) = 
\]
\[
\int_{1}^{7/8} \beta(-x) \cdot \ec2(-x+1) \, (-1)dx + \int_{7/8}^1\beta(x)\cdot \ec 2(x-1)\, dx + 0 = 0.
\]
(In the last two equalities, we substituted $u = -x$, and used that $\beta$ is even.)
Thus, (\ref{eq:betaintpv}) becomes
\[
f^L_{k-1,k-.5}(1) = f^L_{k-1,k-.5}(-1) = .5 Q_-(-1) = .5 Q_+(-1) = (\sigma_+(k-.5) - \sigma_+(k-1))Q_{+}(-1)
%f^L_{k-.5}(1) =f^L_{k-.5}(-1) +  f^L_{k-1}(1) -f^L_{k-1}(-1) =  (n-\sigma_-(k-.5) + n - \sigma_+(k-1) - n + \sigma_-(k-1))\ec =
\]
%\[
%(n-\sigma_+(k-1) + .5) \ec = (n- \sigma_+(k-.5)) \ec,
%\]
where we used (\ref{eq:sigma55}).  When combined with (\ref{eq:Qpm}), this shows that item (4) holds for $x \in [15/16,1]$.

\smallskip

In the remaining items note that for $i<k-.5 < j$, we set $C_{i,k-.5} = y^L_{i,k-1}$ and $C_{k-.5,j} = y^L_{k-1,j}$.  

\smallskip

Item (2):  
For $x\in [1/4,3/4]$, using Lemma \ref{lem:c0deltae1} and item (2) of Propositions \ref{prop:PV1Cr2Cr} and \ref{prop:CuDef}  we have
\[
|f^L_{i,k-.5}(x)- C_{i,k-.5}| \leq |f^L_{k-1,k-.5}(x)| + |f^L_{i,k-1}(x)-y^L_{i,k-1}| < 2 \ec + (k-1-i)\ea  < N \ea,
\]
and $|f^L_{k-.5,j}(x)- C_{k-.5,j}| < N \ea$ holds similarly.

To address the remaining inequalities required in (2), 
note that
\[
d_xf^L_{i,k-.5} = d_xf^L_{i,k-1} + \beta d_x f^{PV}_{1,2}, \quad \mbox{and} \quad d_xf^L_{k-.5,j} = d_xf^L_{k-1,j} - \beta d_x f^{PV}_{1,2}.
\]

When $i =k-1$ the inequalities required of $d_xf^L_{k-1,k-.5}= \beta d_x f^{PV}_{1,2}$ follow from the properties of $d_{x} f^{PV}_{1,2}$ (as in Proposition \ref{prop:PV1Cr2Cr} (2)).    Thus, we may assume $i<k-1$.
For 
%$x \in [-1,-7/8] \cup [7/8,1]$ 
$0 < | x - (\pm 1)| \leq 1/8$
we use the explicit form (as in Corollary \ref{cor:summary} (4)) of the $f^L_{i,k-1}$, $f^L_{k-1,j}$, and $f^{PV}_{1,2}(x)$ to verify 
\begin{equation}  \label{eq:betaxright}
d_xf^L_{i,k-.5}(x) = \left[ \sigma_\pm(k-1) - \sigma_\pm(i) + \beta(x) \right] d_x Q_\pm(x)  \quad \mbox{and}
\end{equation}
\[
d_xf^L_{k-.5,j}(x) = \left[ \sigma_\pm(j) - \sigma_\pm(k-1) - \beta(x) \right] d_x Q_\pm(x).
\] 
Since sheet $S_{k-1}$ does not meet any crossing or cusp above the edge $L$, and $i<k-1<j$, we have
\[
\sigma_\pm(k-1) - \sigma_\pm(i) \geq 1, \quad  \mbox{and} \quad \sigma_\pm(j) - \sigma_\pm(k-1) \geq 1.
\]
As $0 <\beta \leq 1/2$, we see from (\ref{eq:betaxright}) that $d_xf^L_{i,k-.5}(x)$ and $d_xf^L_{k-.5,j}(x)$ have the same sign as $d_xQ_\pm(x)$ as required.

Now for $x \in [-7/8,1/2] \cup [3/4,7/8]$, we have
\[
d_xf^L_{i,k-.5} = d_xf^L_{i,k-1} + \delta d_x f_{1,2}^{PV}(x), \quad \mbox{and} \quad 
d_xf^L_{k-.5,j} = d_xf^L_{k-1,j} - \delta d_x f_{1,2}^{PV}(x).
\]
Thus, as long as $\delta>0$ is small enough so that for all $i< k-1<j$, 
\[
\delta \, || d_xf^{PV}_{1,2}||_{C^0} <  \inf_{x \in [-7/8,1/2] \cup [3/4,7/8]} \{ \, |d_xf^L_{i,k-1}(x)|\, ,\, |d_xf^L_{k-1,j}(x)|\,\},
\]
it follows that $d_xf^L_{i,k-.5}$ and $d_x f^L_{k-.5,j}$ have the same sign as $d_xf^L_{i,k-1}$ and $d_x f^L_{k-1,j}$.  [The infimum is positive since sheet $S_{k-1}$ does not meet other sheets at crossings or cusps, so that $f^L_{i,k-1}$ and $f^L_{k-1,j}$ have no critical points in $[-7/8,1/2] \cup [3/4,7/8]$ by Proposition \ref{prop:PV1Cr2Cr} (2).] 

\smallskip

Item (3):  For $x \in [-1,1/4]$ we have 
\[
|f^L_{i,k-.5}(x)|  \leq |f^L_{i,k-1}(x)| + ||f^L_{k-1,k-.5}||_{C^0} \leq (k-1-i)\ea + 2\ec \leq  N \ea.
\]
[We used Lemma \ref{lem:c0deltae1} and Proposition \ref{prop:PV1Cr2Cr} (3).]  A similar argument applies for $f^L_{k-.5,j}$.

\smallskip

Item (5):  For $x \in [-1/4, 1/4]$, $d_xf^L_{i,k-.5} = d_xf^L_{i,k-1} + \delta d_xf^{PV}_{1,2}$.  Both terms are constant, and we have 
\[
|d_xf^L_{i,k-.5} - 2 C_{i,k-.5}| \leq |d_xf^L_{i,k-1} - 2y^L_{i,k-1}| + \delta |d_xf^{PV}_{1,2}| < (k-1-i)\ea + \delta(2 +\ea) \leq  N \ea   
\]
where the 2nd inequality used Proposition \ref{prop:PV1Cr2Cr} (5) and the 3rd requires us to take $\delta < \ea/4$. 

\smallskip

Item (1):  To verify $f^L_{i,k-.5} >0$ and $f^L_{k-.5,j} >0$, first check that the inequality holds at the endpoints of the domain of definition of these functions.  [This follows from item (4) in all cases except when edge $L$ is a (Cu) edge.  In this case, we use that from Lemma \ref{lem:c0deltae1} $f^L_{k-.5}(-3/8)$ is within $2\ec$ from $f^L_{k-1}(-3/8)$ and this is much smaller than the difference in values between the functions that define the cusp sheets and either of the adjacent sheets--in (\ref{eq:08ea}) that distance is computed to be $.12\ea$.]  Then, (2) shows that these functions remain positive on $[-1,1/2]$ and $[3/4,1]$.  In all the cases where $C_{i,k-.5}$ (resp. $C_{k-.5,j}$) is non-zero the $C^0$ bound from (2) shows that $f^L_{i,k-.5}$ (resp. $f^L_{k-.5,j}$) cannot vanish in $[1/2,3/4]$ and hence must remain positive there.  The only case where this argument does not apply is when $C_{i,k-.5} = C_{k-1,k-.5}=0$.  Then, $d_x f^L_{k-1,k-.5} = \delta d_x f_{1,2}^{PV}$ in $[1/2,3/4]$ which has a unique critical point in $[1/2,3/4]$ that is local max, and since $f^L_{k-1,k-.5}$ has already been shown to be positive at $1/2$ and $3/4$ it follows that positivity must hold on $[1/2,3/4]$ as well.

\medskip

\noindent {\bf The Type (12) square:}  

Recall that during the proof of Proposition  \ref{prop:PV1Cr2Cr}, for the (1Cr) edge types the difference functions for the crossing sheets $f_{k,k+1}$ were constructed in a manner that is independent of $n$ and $k$ on $[-1, 1/4]$.  Denote the restriction of these difference functions to $[-1, 1/4]$ by $f^{(1Cr)}$.  Recall that $f^{(1Cr)}$ has a unique critical point in $(-1,1/4)$ that is a local minimum located at some $\tilde{\eta} \in (-3/4,-1/2)$. 

Choose $\alpha$ with $\tilde{\eta}< \alpha< -1/2$ and to be close enough to $\tilde{\eta}$ so that
\begin{equation} \label{eq:1CralphaUp}
.5 f^{(1Cr)}(\alpha) < .5 f^{(1Cr)}(-3/4) = -.03 \ea
\end{equation}
and 
\begin{equation} \label{eq:1CralphaUpderiv}
0< .5 d_xf^{(1Cr)}(x) < d_x f^L_{k-1,k}(x), \quad \mbox{for $x \in (\tilde{\eta}, \alpha]$.}
\end{equation}

Note also that since $-g(x) \leq f^{(1Cr)}$ holds in $[-1,-1/2]$ with $g$ increasing (here $g$ is the function fixed in the proof of Proposition \ref{prop:PV1Cr2Cr}, specifically (G1)-(G4)), we have the lower bound
\begin{equation}  \label{eq:1CralphaLower}
 -.035 \ea = -.5 g(-1/2)  < -.5 g(\alpha) \leq .5 f^{(1Cr)}(\alpha).
\end{equation}
Recall $h_i$ defined in Proposition \ref{prop:1234def}. Define 
\[
f^L_{k-.5,k}(x) = \left\{ \begin{array}{lc}  .5 f^{(1Cr)}(x), &  x \in [-1,\alpha], \\
-.02 \ea + \ea(x + 1/4), & x \in [-1/4,1/4], \\
.5 \ea + .5 h_{k-1,k},  & x \in[1/2,3/4], \\
.5 Q_+(x), & x \in[7/8,1],
\end{array} \right.
\]
and then extend $f^L_{k-.5,k}$ to satisfy
\begin{equation} \label{eq:type123478}
\begin{array}{ll}
0 < d_x f^L_{k-.5,k}(x) < d_x f^L_{k-1,k}(x),  & \quad x \in (\tilde{\eta}, 1/2], \\
d_x f^L_{k-1,k}(x) < d_x f^L_{k-.5,k}(x) < 0,  & \quad x \in [3/4,1).
\end{array}
\end{equation}
To verify that such an extension is possible using Lemma \ref{lem:thetechlem}, note that the $L$ edge is (PV), so $f^L_{k-1,k}$ is defined as in (\ref{eq:initialdef}). Explicit comparison of $d_x f_{k-.5,k}$ with $d_x f_{k-1,k}$ in $[-1/4,1/4]$ and $[7/8,1]$ together with the inequality (\ref{eq:1CralphaUpderiv}) in $(-\tilde{\eta}, \alpha]$, show that the inequalities already hold in these regions.  %Moreover, for $x \in (\tilde{\eta}, \alpha]$, (\ref{eq:1CralphaUpderiv}) gives that
%$0< d_x f_{k-.5,k} < d_x f^L_{k-1,k}.
 Thus, existence of such an extension follows from a check of the inequalities
\[
0 < f^L_{k-.5,k}(-1/4) - f^L_{k-.5,k}(\alpha) < f^L_{k-1,k}(-1/4) - f^L_{k-1,k}(\alpha), 
\]
\[
0 < f^L_{k-.5,k}(1/2) - f^L_{k-.5,k}(1/4) < f^L_{k-1,k}(1/2) - f^L_{k-1,k}(1/4), 
\]
\[
f^L_{k-1,k}(7/8) - f^L_{k-1,k}(3/4) < f^L_{k-.5,k}(7/8) - f^L_{k-.5,k}(3/4) < 0,
\]
by consulting the following table of values obtained from (\ref{eq:1CralphaUp}) and (\ref{eq:1CralphaLower}) and the definitions of $f^L_{k-1,k}$ and $f^L_{k-.5,k}$.  
\begin{center}
\begin{tabular}{c | c | c}
$x$ & $f^L_{k-.5,k}(x)$ & $f^L_{k-1,k}$  \\ \hline
$\alpha$ & $-.035 \ea < f^L_{k-.5,k}(\alpha) < -.03\ea$ & $.06 \ea < f^L_{k-1,k}(\alpha) < .07 \ea$ \\
$-1/4$ & $ -.02 \ea $ & $ .1 \ea$ \\
$1/4$ & $.48 \ea $ & $1+.1\ea$ \\
$1/2$ & $.5 \ea$ & $1 + (2/3) \ea$ \\
$3/4$ & $.5 \ea$ & $1 + (2/3) \ea$ \\
$7/8$ & $.5 (65/64) \ec$ & $ (65/64) \ec$ 
\end{tabular}
\end{center}
With $f^L_{k-.5,k}$ defined we set $f^L_{k-.5} = f^L_{k-.5,k} + f^L_{k}$.   

To conclude the proof we check items (1)-(5), taking $C_{i,k-.5} = y_{i,k}$ and $C_{k-.5,j} = y_{k,j}$.

\smallskip

Item (2):  In $[1/4,3/4]$, $f^L_{k-.5}(x)$ is within $.5\ea + .5 || h_{k,k+1}||_{C^0([1/2,3/4])}< \ea$ of $f^L_{k}$ (see Proposition \ref{prop:1234def} (2)), and it easily follows that $|f^L_{i,k-.5}(x) -C_{i,k-.5}|$ and $|f^L_{k-.5,j}(x) -C_{k-.5,j}|$ satisfy the required bound.

The inequalities (\ref{eq:type123478}), imply
\[
 d_xf^L_{k-.5,k} >0 , d_xf^L_{k-1,k-.5} >0  \quad \mbox{on $(\tilde{\eta},1/2]$, and} \quad d_xf^L_{k-.5,k} <0 , d_xf^L_{k-1,k-.5} <0  \quad \mbox{on $[3/4,1)$,}
\]
and it follows (using Proposition \ref{prop:PV1Cr2Cr} (2)) that for any $i \leq k-1$ and $k \leq j$, the required inequalites hold for $d_xf^L_{i,k-.5}= d_xf^L_{i,k-1} + d_xf^L_{k-1,k-.5}$ and $d_xf^L_{k-.5,j}= d_xf^L_{k,j}+ d_xf^L_{k-.5,k}$ on these intervals.  
Finally (using properties of $f^{(1Cr)}$), the unique critical point for $f^L_{k-.5,k}$ in $(-1,1/2]$ is the local minimum $\tilde{\eta}$, and for $x \in (-1, \tilde{\eta}]$ we can estimate
\[
d_xf^L_{k-.5, k+1}(x) = d_xf^L_{k-.5, k}(x) + d_xf^L_{k,k+1}(x) \geq -.5 d_xg(x) + d_xg(x) > 0,
\]
and (using that $d_xf^L_{k-.5,k}$ is non-positive in $(-1,\tilde{\eta})$)
\[
d_x f^L_{k-1,k-.5}(x) = d_xf^L_{k-1,k}(x) - d_x f^L_{k-.5,k}(x) > 0.
\]
It follows that $d_xf^L_{i,k-.5}(x) >0$ and $d_xf^L_{k-.5,j}$ holds in $(-1,1/2]$ for any $i\leq k-1$, $k+1\leq j$.

\smallskip

Item (1):  The unique non-degenerate $0$ of $f^L_{k-.5,k}$ in $(-1/4,0)$ is clear from the construction.  Positivity of remaining $f^L_{i,k-.5}$ and $f^L_{k-.5,j}$ follows as usual from item (2) and from checking positivity at endpoints.

\smallskip

Item (3):  Follows, since $|f^L_{k-.5,k}(x)| < \ea$ in $[-1,-1/4]$.

\smallskip

Item (4):  Above the edge $U$ (resp. $D$) the cusping sheets $S_k$ and $S_{k+1}$ are directly above (resp. below) $S_{k+2}$.  Therefore, we have 
\[
\sigma_+(k-.5) = k-.5, \quad \mbox{and} \quad \sigma_-(k-.5) = k+.5.
\]
For $|x-(-1)| < 1/8$, we have
\[
f^L_{k-.5,k}(x) = .5 f^{(1Cr)}(x) = -.5 Q_{-}(x),
\]
and for $|x-1| < 1/8$ we have $f^L_{k-.5,k}(x) = .5 Q_+(x)$.  Thus, the result follows.

\smallskip

Item (5):  Note that $f^L_{k-.5,k}$ is linear with slope $\ea$ in $[-1/4,1/4]$.  Since $C_{i,k-.5} = y_{i,k}$ and $C_{k-.5,j} = y_{k,j}$, the result follows easily.

\end{proof}

Given any (1)-(12) square type, recall we impose the global labeling of sheets over the square $[-1,1]_{x_1} \times [-1,1]_{x_2}$ consistent with the labeling of sheets over the top right corner $\{+1\} \times \{+1\}$, and use $f^U_i, f^R_i, f^D_i, f^L_i$ for functions associated to the  edge types along the boundary.

%Recall in Remark \ref{rem:betanotate} that 
Let $\sigma_L(i)$ denote the  ordering of $S_i$ as it appears over the up left corner 
$\{-1\} \times \{+1\},$ and $\sigma_D(i)$ denotes its ordering as it appears over the down right corner 
$\{+1\} \times \{-1\}$.  If sheets $S_k$ and $S_{k+1}$ meet at a cusp edge above $U$ (resp. $R$) then we set $\sigma_L(k) = \sigma_L(k+1) = k-.5$  (resp. $\sigma_D(k) = \sigma_D(k+1) = k-.5$). 
%Let
%$ f_i^U,  f_i^R,  f_{\sigma_D(i)}^D$ and $f_{\sigma_L(i)}^L$ denote the functions associated to  the edges
%$[-1,1] \times \{1\}, \{1\} \times [-1,1],  [-1,1] \times \{- 1\},$ and $\{-1\} \times [-1,1],$
%respectively.  
%\footnote{
%\dr{Clarify meaning of $f_{i}^U$, etc.}
%\ms{8/6/15: what do you mean? state that U stands for Up, etc?}
%}

Let $\phi:[-1,1] \rightarrow [0,1]$ be a smooth non-decreasing function such that:
\begin{eqnarray}
\label{eq:InterpolatePhi}
&\mbox{for}\,\, -1/4 + \ea < x < 1/4 - \ea, \,\,\,
\phi(x) = 2(x+1/4);&\\
\notag
&\mbox{for}\,\,-1/4 < x < 1/4, \,\,\,0< d_x\phi(x)<2 + \ec;&\\
\notag
&\phi^{-1}(0)= [-1, -1/4]; \,\,\phi^{-1}(1)= [1/4,1];&\\
\notag
&\mbox{for} \,\, 1/4-\ea \leq x \leq 1/4,\,\,\, \phi(x) \geq 2(x+1/4);& \\
\notag
&  \,\,\, \mbox{and} \,\, \phi(x) -1/2 \,\, \mbox{is an odd function}.&
\end{eqnarray}

\begin{figure}
\labellist
\small
\pinlabel $y$ [l] at 120 200
\pinlabel $x$ [l] at 232 22
\pinlabel $y$ [l] at 440 200
\pinlabel $x$ [l] at 552 22
\pinlabel $1$ [r] at 98 184
\pinlabel $-1/4$ [u] at 32 8
\pinlabel $1/4$ [u] at 192 8
\pinlabel $y=\phi(x)$ [l] at 230 184
\pinlabel $y=\psi(x)$ [l] at 474 84
\endlabellist
\centerline{ \includegraphics[scale=.6]{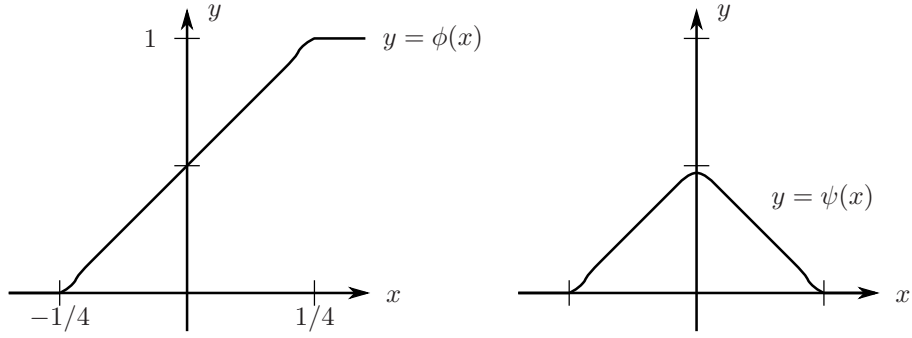} }
%\centerline{ \includegraphics[scale=.8]{images/SubdivideSq} }
\caption{Schematic graphs of $\phi$ and $\psi$ (defined in Section \ref{sec:funcO2}).  The smoothings at $-1/4$ and $1/4$ are actually carried out in the very small intervals $[-1/4, -1/4+\ea]$ and $[1/4-\ea,1/4]$.}
\label{fig:phi}
\end{figure}

See Figure \ref{fig:phi}. 
Define
\begin{eqnarray}
\label{eq:interpolating}
\notag
F_i(x_1,x_2) &= &\phi(x_2) f_i^U(x_1)+ (1-\phi(x_2)) f_{\sigma_D(i)}^D(x_1)\\
\notag
&+&\phi(x_1)f_i^R(x_2)+(1-\phi(x_1)) f_{\sigma_L(i)}^L(x_2). \\
F_{i,j} = F_i - F_j &= &\phi(x_2) f_{i,j}^U(x_1)+ (1-\phi(x_2)) f_{\sigma_D(i), \sigma_D(j)}^D(x_1)\\
\notag
&+&\phi(x_1)f_{i,j}^R(x_2)+(1-\phi(x_1)) f_{\sigma_L(i), \sigma_L(j)}^L(x_2).
\end{eqnarray}

%%%%%%%%%%%%%%%%%%%%%%%%%%%%%%%%%%%%%%%%%%%%%%%%%%%
%%%%%%%%%%%%%%%%%%%%%%%%%%%%%%%%%%%%%%%%%%%%%%%%%%%
%%%%%%%%%%%%%%%%%%%%%%%%%%%%%%%%%%%%%%%%%%%%%%%%%%%

\section{Proof of Theorem \ref{thm:PropertiesofLtilde} Part 2: Construction of squares with swallowtails}
\label{sec:ConstructionsST}

For the construction of the (13) and (14) square types, we fix concentric, closed disks $O_1$ and $O_2$ centered at $(-3/8, 0)$ with radii $R_1 = 1/16$ and $R_2 = 3/32$.  The construction then has a different character within each of the $3$ regions $O_1$, $O_2\setminus O_1$, and $[-1,1]^2 \setminus O_2$.    Within $O_1$ a suitably scaled and shifted version of a standard coordinate model for the swallowtail is used;  see Section \ref{sec:funcO1} below.  Outside of the disk $O_2$  defining functions of a
%built out of $1$-dimensional restrictions 
similar nature to those  of squares (1)-(12) are employed;  see Section \ref{sec:funcO2}.  
%To complete the construction, an interpolation is carried out in the annulus between $O_1$ and $O_2$.  
Section \ref{ssec:FormulaST} combines the models in a single formula %(\ref{eq:FHCAG}) 
that carries out an interpolation in the annulus between $O_1$ and $O_2$.

As in Section \ref{sec:SwallowComp} we consider only the case of a Type (13) (upward) swallowtail in detail.  A similar construction applies for the (14) square types that contain downward swallotail points.   

%\subsection{Upward and downward swallowtails}
%\dr{Recall from ??? that there are two types of swallowtail points that we have referred to as upward and downward swallowtails;  they are distinguished by the location of the third sheet of the swallowtail which respectively appears above or below the two sheets that cross.  To simplify exposition, we consider only the case of an upward swallowtail in detail.  An analogous construction and computation holds for a downward swallowtail.  (See Remark ??? for further discussion\footnote{Alternatively, delete this sentence.} of the downward swallowtail.)}

%\dr{THE PREVIOUS PARAGRAPH IS NOW MOVED TO THE START OF SECTION 9. Probably just start by recalling that we carry out our computation/construction for upward swallowtails only.}

%Therefore, in the following, we will construct a Legendrian above $[-1,1]\times [-1,1]$ whose front projection represents the  upward swallowtail as in a Type (13) square with sheets matching the required standard form (see Section \ref{ssec:Fitting}) near the boundary of the square.  

%We then conclude with a computation of Legendrian contact homology as stated in Theorem ???.   

We maintain Convention \ref{conv:stlabeling}, and label the sheets of a (13) square type as $S_1, \ldots, S_n, \widetilde{S}_k$ where the sheets $S_{k}, S_{k+1}, S_{k+2}, \widetilde{S}_k$ all correspond to portions of the sheets that meet at the swallowtail point.  At $(+1,+1)$, sheet $S_k$ is the top sheet of the swallow tail, and sheets $S_{k+1}$ and $S_{k+2}$ cross to the right of the swallow tail point; $\widetilde{S}_k$ lies above the closed subset of $[-1,1]\times[-1,1]$ that sits to the left of the cusp locus.

%\subsubsection{Convention for labeling sheets}
%For the Type (13) square, it is awkward to use a global labeling for sheets since the two sheets that cross  correspond to the same sheet on the other side of the cusp locus.  Correspondingly, when refering to sheets numerically, we are careful to state which portion of the square is under consideration.  Typically, to the right (resp. left) of the cusp locus we label sheets as they appear at $(+1,+1)$ (resp. $(-1,+1)$).

%Let $n$ denote the number of sheets at $(+1,+1)$, so that there are $n-2$ sheets at $(-1,-1)$.    
%Furthermore, we suppose that at $(+1,+1)$ the three swallow tail sheets are $k, k+1, k+2$ where sheet $k$ is the top of the swallow tail, and sheets $k+1$ and $k+2$ cross to the right of the swallow tail.  

\subsection{Coordinate model near a swallowtail point}    \label{sec:funcO1}    We begin by giving an explicit coordinate description of the front projection of a Legendrian containing a single swallowtail point.  Although only sheets involved with the swallowtail point are present in this model, we will still refer to them as sheets $S_k, S_{k+1}$, $S_{k+2}$, and $\widetilde{S}_k$ since we will later integrate this coordinate model with the remaining sheets of a Type (13) square.

%\footnote{The generating family for the swallowtail is $F(x_1,x_2; e) = e^4-x_1 e^2+x_2e$, where I have replaced $x_1$ in the reference with $-x_1$ here so that the swallow tail opens to the right.} 

The following description of the swallowtail is taken from\footnote{We have replaced $x_1$ in the reference with $-x_1$ here so that the swallow tail opens to the right rather than to the left.}\cite[p.47]{ArnoldGuseinZadeVarchenko}.
 %pg. 47 of ``Singularities of Differentiable Maps, Volume I''.  
 Consider the function
\[
F: \R^2 \times \R \rightarrow \R, \quad F(x_1,x_2; e) = e^4-x_1 e^2+x_2e
\]
which we view as a $2$-parameter family of functions on $\R$ denoted $f_{x_1,x_2}(e) = F(x_1,x_2; e)$.  The front projection of a Legendrian $L_{st} \subset J^1(\R^2)$ with a single swallowtail point at $(0,0)$ is given by
\[
\pi_{xz}(L_{st}) = \{  \left(x_1,x_2, f_{x_1,x_2}(e)\right) \, |\, f_{x_1,x_2}'(e)=0\},
\]
i.e.  it is the collection of critical values of the map $\R^3\rightarrow \R^3$ given by $(x_1,x_2,e) \mapsto (x_1,x_2, f_{x_1,x_2}(e))$.

\begin{remark}
The function $F$ is called a generating family for the Legendrian $L_{st}$.  The $y_1$ and $y_2$-coordinates of $L_{st}$ are given by $y_i = \partial F/\partial x_i(x_1,x_2;e)$.
\end{remark}

The number of sheets of %$\pi_{xz}(L_{st})$ 
$L_{st}$ above a point $(x_1,x_2)$ is determined by the number of real roots of 
\[
f_{x_1,x_2}'(e)= 4 e^3 - 2 x_1 e + x_2.
\]  
The projection of $L_{st}$ to the base $\R^2$ is $3$ sheeted where $f_{x_1,x_2}'$ has $3$ distinct real roots, and $1$-to-$1$ where $f_{x_1,x_2}'$ has a real root and $2$ distinct complex conjugate roots.  
Cusp edge points $(x_1,x_2)$ correspond to those points  $\left(x_1,x_2, f_{x_1,x_2}(e)\right)$ where $e$ is a multiplicity $2$ root of $f_{x_1,x_2}'$.  The swallowtail point itself is the unique value of $(x_1,x_2)$ where $f_{x_1,x_2}'$ has a root of multiplicity $3$.

\begin{proposition} \label{prop:stCompute}
Let $S$ denote the closure of the portion of $\pi_{xz}(L_{st})$ that projects in a $3$-to-$1$ manner to the $x_1x_2$-plane.  The formulas
\begin{align}  \label{eq:stParam}
x_1 = 2[r^2+s^2+rs] \\
x_2 = 4 r s [r+s] \nonumber \\
z = -r^4 +2r^3s + 2 r^2s^2 \nonumber
\end{align}
parametrize $S$ in a manner  that  is $2$-to-$1$ except along $\partial S$ where the mapping is $1$-to-$1$, provided that we view $S$ as the subset $\pi_{xz}^{-1}(S) \subset L_{st}$.
%\footnote{Minor point here.  Along the crossing locus the parametrization OF THE FRONT PROJECTION is 4 to 1.  Note that the crossing locus is $z(r,s) = z(s, r)$ union $z(r,s) = z(-s-r,-s)$...probably.}  
The inverse image of the cusp edge is
\[
\{s=r\}\cup\{s=-2r\};
\]
the inverse image of $\partial S$ is 
\[
\{s=-\frac{1}{2}r\}
\]
with the swallow tail point itself at $(r,s)=(0,0)$.

 In a neighborhood of a point not belonging to the cusp edge, $S$ is given by the graph of a locally defined function $z = z(x_1,x_2)$.  The Euclidean gradients $\nabla z = (\partial_{x_1}z,\partial_{x_2}z)$ are given by
\begin{equation} \label{eq:nablaz}
\nabla z(r,s) = \frac{J(r,s)}{J(r,s)}(-r^2, r)
\end{equation}
where $J(r,s) = 8(r-s)(2r+s)(r+2s).$
%\footnote{\ms{2/18/15: Why write $J/J$ instead of 1? Also done in proof at chain rule application.}\dr{10/12/15:  The inverse function theorem does not imply when $J =0$, so the first sentence fails, and I think it is appropriate to leave the $J$ in the denominator to indicate these values do not belong to the domain of $\nabla z$.  Of course, the continuous extension of the gradient field to the cusp edge is deduced from cancelling $J/J = 1$.}}
\end{proposition}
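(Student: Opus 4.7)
The plan is to derive everything from the generating family $F(x_1,x_2;e) = e^4 - x_1 e^2 + x_2 e$ and elementary Vieta identities for the cubic $f'_{x_1,x_2}(e) = 4e^3 - 2x_1 e + x_2$. In the $3$-sheeted region the cubic has real roots $e_1,e_2,e_3$ summing to zero (the $e^2$-coefficient vanishes), so Vieta gives
\[
e_1 e_2 + e_1 e_3 + e_2 e_3 = -x_1/2, \qquad e_1 e_2 e_3 = -x_2/4.
\]
A point of $S$ over $(x_1,x_2)$ corresponds to a choice of distinguished root; naming this root $r$ and one of the other two $s$, so that the third is $-r-s$, these symmetric function identities immediately yield $x_1 = 2(r^2 + rs + s^2)$ and $x_2 = 4rs(r+s)$. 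Substituting into $z = F(x_1,x_2;r) = r^4 - x_1 r^2 + x_2 r$ and simplifying produces $z = -r^4 + 2r^3 s + 2r^2 s^2$. So the parametrization $(r,s)\mapsto S$ is nothing more than ``pick two of the three critical points of $f_{x_1,x_2}$,'' and once this is recognized the rest of the proposition becomes bookkeeping.

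From this perspective the fold structure is read off directly. The map is $2$-to-$1$ because the two preimages of a point on the sheet with $e=r$ are the two possible choices of $s \in \{e_2,e_3\}$; the parametrization collapses to $1$-to-$1$ exactly where $e_2 = e_3$, i.e., where $s = -r/2$, which identifies $\partial S$. A cusp on the sheet $e=r$ is by definition a base point where $r$ becomes a multiple root of $f'_{x_1,x_2}$, equivalently where $r$ coincides with one of the other roots; this gives $r = s$ or $r = -r-s$, i.e., $\{s = r\}\cup \{s = -2r\}$, with the swallowtail point at $r = s = 0$ where all three roots coalesce.

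The gradient formula is essentially a tautology via the generating-family description. Since $S \subset L_{st}$ is Legendrian and graphical away from the cusp edge we have $y_i = \partial z/\partial x_i$, while the generating family identity gives $y_i = (\partial F/\partial x_i)\bigl|_{e=r}$. A one-line computation yields
\[
\frac{\partial F}{\partial x_1} = -e^2 = -r^2, \qquad \frac{\partial F}{\partial x_2} = e = r,
\]
and so $\nabla z = (-r^2,r)$ immediately. The curious ratio $J(r,s)/J(r,s)$ in the statement reflects the parallel derivation by the chain rule applied to $\Phi(r,s) = (x_1,x_2,z)$: inverting $\nabla_{(r,s)}z = (D\Phi_{\text{base}})^T \nabla_{(x_1,x_2)}z$ via Cramer's rule produces cofactor expressions whose common factor is exactly $J = 8(r-s)(2r+s)(r+2s)$, which simultaneously verifies the claimed factorization of the base Jacobian as a byproduct. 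No step here presents a real obstacle; the only care required is sign tracking in the algebraic substitutions for $x_1$, $x_2$, and $z$.
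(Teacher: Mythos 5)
Your proposal is correct, and for the parametrization, the $2$-to-$1$/$1$-to-$1$ structure, the cusp-edge preimage, and $\partial S$ it is essentially the paper's argument: your Vieta identities are exactly the coefficient comparison the paper performs after writing $4e^3-2x_1e+x_2 = 4(e-r)(e-s)(e+r+s)$, and your identification of the two preimages with the two choices of $s\in\{e_2,e_3\}$ is the paper's symmetry count. The genuine divergence is the gradient formula. The paper computes $d(x_1,x_2)/d(r,s)$, inverts it by Cramer's rule (this is where $J=8(r-s)(2r+s)(r+2s)$ arises), and applies the chain rule to $z(r,s)$. You instead invoke the generating-family identity $y_i=\partial F/\partial x_i$ on the critical locus $\{\partial_eF=0\}$ together with $y_i=\partial_{x_i}z$ for a graphical Legendrian, so that $\nabla z = (-e^2,e)\bigl|_{e=r}=(-r^2,r)$ in one line. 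This is cleaner and slightly more robust: it needs only that $r$ is a simple root of $f'_{x_1,x_2}$, so it remains valid along $\{s=-r/2\}=\partial S$, where $J=0$ and the paper's quotient $J/J$ is formally undefined even though the sheet is still graphical there. What your route does not produce by itself is the factorization of $J$, which is part of the statement and is what certifies that the base map $(r,s)\mapsto(x_1,x_2)$ is a local diffeomorphism away from the three lines; you correctly note that running the Cramer's-rule computation in parallel recovers this, so nothing is missing, but that parallel computation is in effect the paper's proof.
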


\begin{proof}
As discussed in the previous paragraph, $S$ is the subset of $L_{st}$ for which $f_{x_1,x_2}'$ has $3$ real roots (with possible repetitions).  For such values of $(x_1,x_2)$, we can write
\begin{equation}  \label{eq:fRoots}
f_{x_1,x_2}'(e)= 4 e^3 - 2 x_1 e + x_2 e = 4(e-r_1)(e-r_2)(e-r_3).
\end{equation}
Putting $r_1=r$ and $r_2=s$, the vanishing of the quadratic term in $f_{x_1,x_2}'$ forces $r_3 = -r-s$.  Making these substitutions and expanding the right hand side gives
\[
4 e^3 - 2 x_1 e + x_2 e = 4 e^3 - 4[r^2+s^2+rs]e +4[rs(r+s)].
\]
Equating coefficients, and setting $z= f_{x_1,x_2}(r)$ gives the parametrization (\ref{eq:stParam}).

Next we check that the parametrization is $2$-to-$1$ except along $\partial S$.   Note that replacing $r$ and $s$ with any $2$ of the $3$ roots, $r, s, -r-s$, of $f'_{x_1,x_2}$ will lead to the same product in (\ref{eq:fRoots}).  Thus, substituting any of 
\begin{equation} \label{eq:Symmetries}
(r,s), (r, -r-s), (s,r), (s,-r-s), (-r-s, r), (-r-s,s)
\end{equation}
 for $(r,s)$ in the parametrization leads to a point on the swallowtail with the same $x_1$ and $x_2$ coordinates as $(x_1,x_2,z)(r,s)$.  [Moreover, only these substitutions have this property, since a polynomial with leading coefficient $4$ is uniquely determined by its roots.]  By construction, we will have 
\begin{equation}  \label{eq:symmetry}
z(r,s)=z(r,-r-s); \quad z(s,r) = z(s,-r-s); \quad z(-r-s,r) = z(-r-s,s),
\end{equation}
and assuming the three roots of $f'_{x_1,x_2}$ are distinct and take distinct values when $f_{x_1,x_2}$ is applied, these $3$-values will all be distinct.  This shows that for values of $x_1,x_2$ in the complement of the cusp and crossing locus each of the $3$ sheets above $(x_1,x_2)$ has $2$ preimages in the $(r,s)$-plane.     
%\dr{[EDIT] The crossing locus is given by ???, note that these $2$ sheets are distinct in $L_{st}$ since...}

%[When the $3$ roots are distinct, but the value $f_{x_1,x_2}(r) = f_{x_1,x_2}(s)$ or $f_{x_1,x_2}(r) = f_{}$]  

Next, note that $f'_{x_1,x_2}$ has a multiplicity $2$ root when $r=s$, $r=-r-s$, or $s=-r-s$.  As claimed, the first two lines both parametrize the cusp edge (since $r$ is the multiplicity $2$ root).  The third line parametrizes $\partial S$ (since $r$ is the other root), and the equality of $s$ and $-r-s$ gives a $1$-to-$1$ parametrization of $\partial S$. 

To compute $\nabla z = (\partial_{x_1}z,\partial_{x_2}z)$, we use the inverse function theorem.  The differential of $(r,s) \mapsto (x_1,x_2)$ is given in matrix form by
\[
\left[ \begin{array}{cc} \displaystyle  \frac{\partial x_1}{\partial r} &  \displaystyle  \frac{\partial x_1}{\partial s} \\  \displaystyle   \frac{\partial x_2}{\partial r} &  \displaystyle   \frac{\partial x_2}{\partial s} \end{array} \right] = \left[ \begin{array}{cc} 2s+ 4r & 2 r + 4s \\ 8 r s + 4 s^2 &  8 r s + 4 r^2 \end{array} \right].
\]
When the determinant $J(r,s) = 8(r-s)(2r+s)(r+2s)$ is non-zero, we can locally write $(r,s) = (r(x_1,x_2), s(x_1,x_2))$ and compute
\[
\left[ \begin{array}{cc} \displaystyle \frac{\partial r}{\partial x_1} & \displaystyle \frac{\partial r}{\partial x_2} \\ \displaystyle \frac{\partial s}{\partial x_1} & \displaystyle \frac{\partial s}{\partial x_2} \end{array} \right]= \frac{1}{J} \left[ \begin{array}{cc} 8 r s + 4 r^2 & -[2r+4s] \\ -[8 r s + 4 s^2] & 2 s + 4 r \end{array} \right].
\]
Finally, the chain rule gives
\[
\partial_{x_1}z = \frac{\partial z}{\partial r}\frac{\partial r}{\partial x_1}+ \frac{\partial z}{\partial s}\frac{\partial s}{\partial x_1}= \frac{J}{J}\cdot(-r^2),
\]
\[
\partial_{x_2}z  = \frac{\partial z}{\partial r}\frac{\partial r}{\partial x_2}+ \frac{\partial z}{\partial s}\frac{\partial s}{\partial x_2}= \frac{J}{J}\cdot (r).
\]

%The Legendrian swallowtail is the image of the fiber critical set of $F$ which is given by $0 = 4e^3-2 x_1 e+x_2$.  Here, we can think of $x_1$ and $x_2$ as parametrizing the space of cubic polynomials with leading coefficient $4$ and no quadratic term, while $e$ takes on the values of roots.  The subset of this space corresponding to those $x_1$ and $x_2$ where the cubic has all real roots is naturally parametrized by taking $-r$ and $-s$ to be two of the roots.  (The third root is forced to be $r+s$ by the vanishing of the quadratic term.)  That is where the parametrization comes from.  The rest is just computing.

\end{proof}

Note that it follows from Proposition \ref{prop:stCompute} that the projection of the cusp edge to the $(x_1,x_2)$ plane is the subset
\begin{equation} \label{eq:6r28r3}
\Sigma_{\mathit{st}}= \{(6r^2, 8r^3) \, |\, r \in \R\} = \{(x_1,x_2) \, |\, x_1 = \frac{3}{2} (x_2)^{\frac{2}{3}}\}.
\end{equation}
Let 
\[
R_+= \{(x_1,x_2) \, |\, x_1 \geq \frac{3}{2} (x_2)^{\frac{2}{3}}\}  \quad \mbox{and} \quad R_-=\{(x_1,x_2) \, |\, x_1 \leq \frac{3}{2} (x_2)^{\frac{2}{3}}\}
\]
 denote the closed regions to the right and left of $\Sigma_{\mathit{st}}$.  Figure \ref{fig:STParam} illustrates some features of the parametrization (\ref{eq:stParam}).

\begin{figure}
\labellist
\small
\pinlabel $r$ [l] at 196 96
\pinlabel $s$ [b] at 96 194
\pinlabel $W$ [b] at 136 194
\pinlabel $(x_1,x_2)$ [b] at 246 100
\pinlabel $\Sigma_{\mathit{st}}$ [r] at 396 176
\pinlabel $R_+$ [c] at 432 140
\pinlabel $R_-$ [c] at 350 140
\endlabellist
\centerline{ \includegraphics[scale=.6]{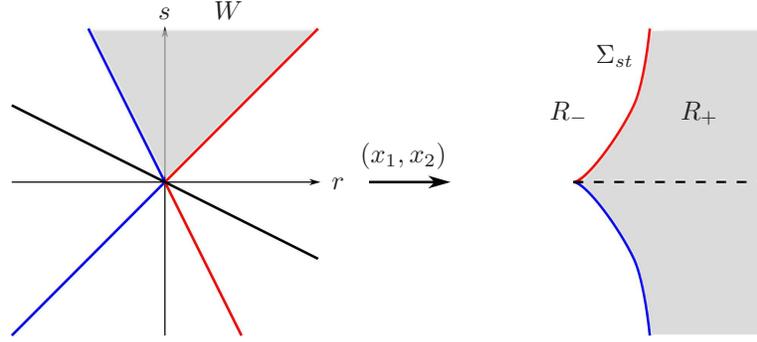} }
%\centerline{ \includegraphics[scale=.8]{images/SubdivideSq} }
\caption{The parametrization (1) with the base projection of $S$ pictured (schematically).  The pictured lines, $\{r=s\}, \{2r+s=0\}$, and $\{r+2s=0\}$ each project to the cusp locus $\Sigma_{\mathit{st}}$ in the $(x_1,x_2)$-plane.  The first two lines parametrize the cusp edge in $S$ with the direction of parametrization indicated by coloring.   The $3$ lines divide the $rs$-plane into $6$ wedges with the wedge $W$ shaded.  Any of these (closed) wedges homeomorphically parametrizes the base projection of $S$ which is the shaded region $R_+$ in the $x_1x_2$-plane.}  
\label{fig:STParam}
\end{figure}

\begin{lemma} \label{lem:crossingx2}
The crossing locus of $L_{st}$ (in the base projection) is the ray consisting of the positive $x_1$-axis.  The two arcs in $L_{st}$ that cross one another in the front projection are the image of $\{s=0\}\cup \{s+r =0\}$ under the parametrization (\ref{eq:stParam}).
\end{lemma}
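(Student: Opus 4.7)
The plan is to characterize the crossing locus as the set of $(x_1,x_2)$ where two of the three sheets of $L_{st}$ share a $z$-value, and then read off those parameters from \eqref{eq:stParam} by a direct computation. Above any $(x_1,x_2)$ in the interior of $R_+$, the three roots of $f'_{x_1,x_2}$ are $r,s,-r-s$, and by Proposition \ref{prop:stCompute} the three sheets have $z$-values $z(r,s)$, $z(s,r)$, and $z(-r-s,r)$ (well-defined as functions of a chosen root thanks to the symmetries \eqref{eq:symmetry}). The crossing locus is therefore the vanishing locus of the three pairwise differences, away from the cusp.

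The main step is to factor these differences. A direct substitution into $z=-r^4+2r^3s+2r^2s^2$ yields
\[
z(r,s)-z(s,r)=-(r-s)^3(r+s),\qquad z(r,s)-z(-r-s,r)=s(2r+s)^3,
\]
and by the $r\leftrightarrow s$ symmetry together with $z(-r-s,r)=z(-r-s,s)$,
\[
z(s,r)-z(-r-s,r)=r(r+2s)^3.
\]
The factors $(r-s)$, $(2r+s)$, $(r+2s)$ recover exactly the cusp-edge parametrization of Proposition \ref{prop:stCompute}; the genuinely new information is in the factors $(r+s)$, $s$, $r$. So in the $(r,s)$-plane the crossing locus is $\{r+s=0\}\cup\{s=0\}\cup\{r=0\}$. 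Substituting each of these into \eqref{eq:stParam} gives $x_2=4rs(r+s)=0$ and $x_1=2(r^2+s^2+rs)\geq 0$, and removing the swallowtail at $(0,0)$ leaves the positive $x_1$-axis as the crossing locus in the base.

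Finally I would identify which of these lines parametrizes the two crossing arcs in $L_{st}$. At a point $(x_0,0)$ with $x_0>0$, the roots of $f'_{x_0,0}=2e(2e^2-x_0)$ are $\pm\sqrt{x_0/2}$ and $0$; the sheets at $\pm\sqrt{x_0/2}$ share $z$-value $-x_0^2/4$ and cross each other, while the sheet at root $0$ sits at $z=0$ and does not. Using \eqref{eq:symmetry} to list the four preimages of the two crossing sheets above $(x_0,0)$, one gets $(\pm\sqrt{x_0/2},0)$ and $(\pm\sqrt{x_0/2},\mp\sqrt{x_0/2})$, which lie precisely on $\{s=0\}\cup\{s+r=0\}$; the remaining line $\{r=0\}$ parametrizes only the non-crossing third sheet. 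Transversality of the crossing follows from \eqref{eq:nablaz}: at $(\pm\sqrt{x_0/2},0)$ the gradients of the two sheets are $(-x_0/2,\pm\sqrt{x_0/2})$, whose second components differ. I do not expect any serious obstacle; the only subtlety is in bookkeeping the $2$-to-$1$ symmetry of the parametrization carefully enough to separate the two crossing arcs from the third non-crossing sheet.
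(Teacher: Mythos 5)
Your argument is correct and follows essentially the same route as the paper: both compute and factor the three pairwise height differences, $z(r,s)-z(s,r)=(s+r)(s-r)^3$, $z(r,s)-z(-r-s,s)=s(2r+s)^3$, $z(s,r)-z(-r-s,s)=r(r+2s)^3$, strip off the cusp factors, conclude the crossing locus is $\{x_2=0\}\cap R_+$ minus the swallowtail point, and then exclude $\{r=0\}$ as the parametrization of the non-crossing sheet. Your added root computation at $(x_0,0)$ and the transversality remark via \eqref{eq:nablaz} are harmless extras; the paper instead records the signs of the differences on the fundamental wedge $W$, which it reuses later, but the substance is the same.
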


\begin{proof}
The wedge $W = \{s \geq -2r\}\cap \{s\geq r\}$ homeomorphically parametrizes the base projection $\pi_x(S)$ (using $x_1 = x_1(r,s)$ and $x_2= x_2(r,s)$ from (\ref{eq:stParam})).  Indeed, those points $(u,v)$ such that $(x_1,x_2)(u,v)=(x_1,x_2)(r,s)$ were listed in (\ref{eq:Symmetries}), and for any $(r,s)$ the list provides exactly one point in $W$.  In fancier language, $W$ is a fundamental domain for the action of the group of invariants of $(x_1,x_2)$ on $\R^2$.

Following the discussion surrounding (\ref{eq:symmetry}), the $3$ sheets, $S_k,S_{k+1}$, and $S_{k+2}$ above $R_+$ are homeomorphically parametrized by
\[
W \stackrel{\cong}{\rightarrow} S_l, \quad (r,s) \mapsto (x_1(r,s), x_2(r,s), z_l(r,s)), 
\]
for $l = k,k+1,k+2$ where
\[
z_k(r,s) = z(r,s), \quad z_{k+1}(r,s) = z(s,r), \quad z_{k+2}(r,s) = z(-r-s,s)
\]
with $z(r,s)$ as in (\ref{eq:stParam}).  

A computation shows
\[
\begin{array}{ccr}
z_{k}-z_{k+1} & = & (s+r)(s-r)^3; \\
z_{k}-z_{k+2} & = & s(2r+s)^3; \\
z_{k+1}-z_{k+2} & = & r(r+2s)^3.
\end{array}
\]
It follows that for any $(r,s) \in W$, and hence any $(x_1,x_2) \in R_+$, $z_k(r,s) \geq z_{k+1}(r,s)$ (resp. $z_k(r,s) \geq z_{k+2}(r,s)$) with equality only when $s=r$ (resp. $s=-2r$) which is the upper (resp. lower) half of the cusp edge of $L_{st}$.  In addition, 
\begin{equation} \label{eq:zk1zk2}
\sgn(z_{k+1}-z_{k+2}) = \sgn(r) = \sgn(x_2),
\end{equation}
 so as claimed the crossing locus of $L_{st}$ is $\{x_2=0\} \cap R_+$.     For the statement about the preimage of the crossing sheets, note that $0=x_2 = 4 r s [r+s]$ only when $r=0$, $s=0$, or $r+s=0$.  The line $r=0$ parametrizes a portion of $S_k$ while the other two lines parametrize the crossing arc.
\end{proof}

\subsubsection{Local defining functions}

  Let $a_{k}, a_{k+1}, a_{k+2}$ denote defining functions with domain $R_+$ whose graphs are the $3$ swallowtail sheets, $S_k, S_{k+1}$, and $S_{k+2}$,  above $R_+$ so that $a_{k}(x_1,x_2) \geq a_{k+1}(x_1,x_2) \geq a_{k+2}(x_1,x_2)$ when $x_2 >0$ and $a_{k}(x_1,x_2) \geq a_{k+2}(x_1,x_2) \geq a_{k+1}(x_1,x_2)$ when $x_2 <0$.  In addition, let $b_k$ denote the defining function in $R_-$ for $\widetilde{S}_k$.   Note that along $\Sigma_{\mathit{st}} \cap \{x_2 \geq 0\}$ (resp. $\Sigma_{\mathit{st}} \cap \{x_2 \leq 0\}$) $a_k = a_{k+1}$ and $b_k= a_{k+2}$  (resp. $a_k = a_{k+2}$ and $b_k = a_{k+1}$).

As a corollary of Proposition \ref{prop:stCompute}, we can compute the gradients of local difference functions,
\[ \nabla a_{i,j} = \nabla( a_i - a_j), \quad \mbox{for $i, j \in \{k,k+1,k+2\}$}.  
\]
Recall that, using the first two coordinates from (\ref{eq:stParam}), the wedge  $W =\{s \geq -2r\}\cap \{s\geq r\}$ homeomorphically parametrizes $R_+$ in an orientation reversing manner.

%Using (\ref{eq:stParam}), the wedge, $W$, in the $(r,s)$-plane given by 
%\[
%W =\{s \geq -2r\}\cap \{s\geq r\}
%\] homeomorphically parametrizes the top sheet of the swallow tail, and hence projects in a homeomorphic manner to $R_+$.  This parametrization of $R_+$ is orientation reversing:  The bounding rays, $s = -2r$ and $s=r$, parametrize the lower and upper halfs of the cusp locus respectively.  In addition, the ray $\{s \geq 0\} \cap \{r= 0\}$ projects to the crossing locus.  See Figure ???. 
% Labeling sheets in the order they appear above $x_2=0$, we have that for $(r,s)$ belonging to $W$ 
\begin{corollary} \label{cor:aijGrad} For $(r,s) \in W$, we have
\begin{align}
 \nabla a_{k,k+1}(x_1(r,s),x_2(r,s))  =   (s-r) (s+r, -1)  \\
 \nabla a_{k,k+2}(x_1(r,s),x_2(r,s)) =  (2r+s) (s, 1)  \\
 \nabla a_{k+1,k+2}(x_1(r,s),x_2(r,s)) =   (r+ 2 s) (r, 1). 
\end{align}
\end{corollary}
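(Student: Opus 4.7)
The plan is to combine the gradient formula (\ref{eq:nablaz}) with the three parametrizations of $S_k, S_{k+1}, S_{k+2}$ found in the proof of Lemma \ref{lem:crossingx2}. Recall that the wedge $W = \{s \geq r\} \cap \{s \geq -2r\}$ parametrizes $R_+$ homeomorphically via $(r,s) \mapsto (x_1(r,s), x_2(r,s))$, and the three sheets are parametrized by
\[
z_k(r,s) = z(r,s), \quad z_{k+1}(r,s) = z(s,r), \quad z_{k+2}(r,s) = z(-r-s,s).
\]
The key observation is that under each of the substitutions $(r,s) \mapsto (s,r)$ and $(r,s) \mapsto (-r-s, s)$, the pair $(x_1, x_2)$ is unchanged (this is precisely the symmetry used to build the parametrization, since $x_1, x_2$ are symmetric functions of the three roots $r, s, -r-s$ of $f'_{x_1,x_2}$). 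So each sheet is given, in terms of the common base point $(x_1(r,s), x_2(r,s))$, by applying $z(\cdot, \cdot)$ at a permuted pair.

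First, I would note that formula (\ref{eq:nablaz}) — once one reads its scalar factor $J/J$ as $1$ away from the cusp locus — states precisely that
\[
\nabla a_k(x_1(r,s), x_2(r,s)) = (-r^2, r).
\]
Applying the same formula after the substitutions in step 1, I would immediately obtain
\[
\nabla a_{k+1}(x_1(r,s), x_2(r,s)) = (-s^2, s), \qquad \nabla a_{k+2}(x_1(r,s), x_2(r,s)) = (-(r+s)^2, -(r+s)).
\]

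With all three gradients in hand, the corollary becomes pure algebra. Compute
\begin{align*}
\nabla a_{k,k+1} &= (-r^2 + s^2,\; r-s) = (s-r)(s+r, -1), \\
\nabla a_{k,k+2} &= (-r^2 + (r+s)^2,\; r+(r+s)) = (2r+s)(s, 1), \\
\nabla a_{k+1,k+2} &= (-s^2 + (r+s)^2,\; s+(r+s)) = (r+2s)(r, 1),
\end{align*}
using the factorizations $(r+s)^2 - r^2 = s(2r+s)$ and $(r+s)^2 - s^2 = r(r+2s)$.

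There is no serious obstacle here: the only subtlety is making sure that formula (\ref{eq:nablaz}) is being read correctly (i.e., that the $J/J$ prefactor should be the scalar $1$ on the open set where the parametrization is a local diffeomorphism, and that the statement of the corollary implicitly restricts to the interior of $W$ where $J(r,s) \neq 0$, namely away from the cusp edge and the boundary ray $\{s = -\tfrac{1}{2}r\}$ where the sheets would coalesce or leave $R_+$). Once that is clarified, applying $(r,s) \mapsto (s,r)$ and $(r,s) \mapsto (-r-s,s)$ to (\ref{eq:nablaz}) and subtracting yields the three asserted expressions.
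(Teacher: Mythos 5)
Your proposal is correct and follows essentially the same route as the paper: identify the three sheets over $(x_1(r,s),x_2(r,s))$ with the parameter points $(r,s)$, $(s,r)$, $(-r-s,s)$ (as in the proof of Lemma \ref{lem:crossingx2}), apply (\ref{eq:nablaz}) to each, and subtract. The algebraic factorizations check out, and your reading of the $J/J$ prefactor as the scalar $1$ away from the degeneracy locus matches the intended meaning of (\ref{eq:nablaz}).
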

\begin{proof}
%Using the parametrization (\ref{eq:stParam}) and the discussion surrounding equation (\ref{eq:symmetry}), 
As in the proof of Lemma \ref{lem:crossingx2},  we see that for $(r,s) \in W$, the $S_k$, $S_{k+1}$, and $S_{k+2}$ sheets of $L_{st}$ above $(x_1(r,s),x_2(r,s))$ are respectively the images of $(r,s)$, $(s,r)$,  and $(-r-s,s)$.  [That $(s,r)$ and $(-r-s,s)$ corresponds to the $S_{k+1}$ and $S_{k+2}$ sheets respectively, as ordered when $x_2 >0$, follows from (\ref{eq:zk1zk2}).]
%note that they agree with the $k$-th sheet $(r,s)$ along $W\cap \{r=s\}$ and $W\cap \{s=-2r\}$.  These two portions of $\partial W$ parametrize the upper and lower half of $\Sigma$ respectively.]
  The formulas then follow from  (\ref{eq:nablaz}).
\end{proof}

\subsubsection{Straightening the cusp edge} \label{sec:Straight} In this subsection, we make two changes to the model $L_{st}$.  Specifically, we place the swallowtail within $O_1$ and straighten the cusp locus in $O_2\setminus O_1$ according to the following specifications.  

Consider a diffeomorphism of the plane $S: \R^2 \rightarrow \R^2$ of the form $S(x_1,x_2) = (M x_1 + X, M x_2)$, i.e. the composition of a dialation and a horizontal translation.  We can (and do) choose the constants $X$ and $M$ so that after applying the contactomorphism of $J^1(\R^2)$ induced by $S$ to $L_{st}$
%\footnote{[OLD NOTE DELETED]... the cusp locus will bend (\ms{2/26/15: How? Why does it bend?}) back a bit before it becomes straight.  (And perhaps this can't be avoided if $b$ is to be non-decreasing and positive.)  \dr{10/12:  It is the application of $T$ that causes the cusp locus to bend back.}} we obtain a Legendrian, $L_{st}' \subset J^1(\R^2)$, whose front satisfies the properties:
\begin{enumerate}
\item[(P1)]  The swallowtail point is on the $x_1$-axis within $O_1$ and left of the center of $O_1$, $(-3/8,0)$.
\item[(P2)] The cusp locus intersects the points $(-3/8, \pm 1/32)$, and is contained in the strip where $-3/8-\ec \leq x_1 \leq -3/8 + \ec$ when $|x_2| \leq 1/16.$
%\footnote{\ms{2/20/15: In the proof of Proposition \ref{prop:SWPoints}, equation (\ref{eq:121eqx2}), $\epsilon_\bullet \le \epsilon_1 - \epsilon_3.$ But see no other determination of it.}}  
% $\partial O_1$ in two points where \footnote{The $\epsilon_1$ is unimportant here, so perhaps its presence will be misleading.  Could change to $1/100$ maybe?} $x_1 =3/8+1/100$.
\item[(P3)] When $|x_2| \geq 1/32$, 
the slope of the cusp locus is bounded below in magnitude by $9(2N+1.1)$. 
\item[(P4)]  We have  $M > (2 (2N+1) 480)^3$.
\end{enumerate}
These  conditions are obtainable by choosing $M$ suitably large since the slopes of the two halves of the cusp locus, $\Sigma_{\mathit{st}}$, go to $+ \infty$ and $- \infty$ as $x_1 \rightarrow +\infty$.  See Figure \ref{fig:STPlacement}.

\begin{figure}
\labellist
\small
\pinlabel $O_1$ [tl] at 54 174
\pinlabel $O_2$ [br] at 42 208
\pinlabel $b(x)$ [t] at 372 148
\endlabellist
\centerline{ \includegraphics[scale=.6]{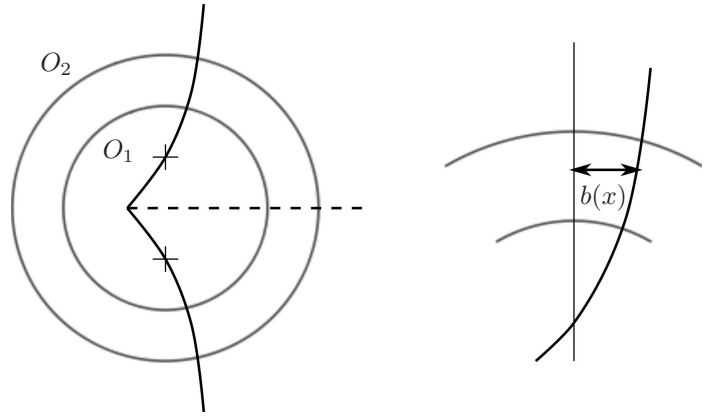} }
%\centerline{ \includegraphics[scale=.8]{images/SubdivideSq} }
\caption{(left) The placement of the cusp and crossing locus within $O_1$ after applying $S$.  The marked points are $(-3/8, \pm1/32)$.  (right) The function $b(x)$ when $x \in [1/16,+\infty)$.}
\label{fig:STPlacement}
\end{figure}

Next, we apply a further diffeomorphism $T: \R^2 \rightarrow \R^2$ to $L'_{st}$ to straighten the cusp locus within $O_2\setminus O_1$. For the purpose of defining $T$, choose a function $b: [0, +\infty) \rightarrow \R$ satisfying
\begin{itemize}
\item $0 \leq b(x)$ with $b(x) = 0$ for $x \in [0,1/32]$;
\item $b(x) \leq c(x) - (-3/8)$ for $x \in [1/32, +\infty)$ with equality when $x\geq 1/16$ where $c(x)$ is the $x_1$ coordinate of the cusp locus of $L'_{st}$ when $x=x_2$; 
\item $b$ is non-decreasing; and
\item $b'(x) < 1/(3\times (2N+1.1))$.
\end{itemize}

\begin{lemma} Such a function $b$ exists.
\end{lemma}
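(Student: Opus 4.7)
The plan is to construct $b$ piecewise: set $b \equiv 0$ on $[0, 1/32]$, set $b(x) = c(x) + 3/8$ on $[1/16, +\infty)$, and interpolate smoothly across the gap $[1/32, 1/16]$. The three requirements other than the derivative bound hold trivially on the two outer pieces, so essentially all of the work will be on the transition interval. I would avoid calling on Lemma \ref{lem:thetechlem} directly, since its hypothesis demands $f' > 0$ at the ends, whereas here I genuinely want $b' = 0$ throughout $[0,1/32]$; instead I would use a cutoff multiplier.

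The decisive quantitative inputs come from (P2) and (P3). By (P2) the cusp locus passes through $(-3/8, \pm 1/32)$, so $c(1/32) = -3/8$, i.e., $c(1/32) + 3/8 = 0$; this makes the left piece agree with $c + 3/8$ at the left end of the gap. By (P3) the slope $dx_2/dx_1$ of the cusp locus has magnitude at least $9(2N+1.1)$ when $|x_2| \geq 1/32$, so $|c'(x)| = |dx_1/dx_2| \leq 1/(9(2N+1.1))$, already safely below the target $1/(3(2N+1.1))$; integrating over $[1/32, 1/16]$ yields the second cornerstone
\[
0 \;\leq\; c(1/16) + 3/8 \;\leq\; \tfrac{1/32}{9(2N+1.1)} \;=\; \tfrac{1}{288(2N+1.1)}.
\]

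For the interpolation I would fix once and for all a smooth non-decreasing cutoff $\eta: \R \to [0,1]$ with $\eta \equiv 0$ on $(-\infty, 1/32]$, $\eta \equiv 1$ on $[1/16, +\infty)$, and $\|\eta'\|_\infty \leq 40$ (easily arranged, since one only needs a rise of $1$ across an interval of length $1/32$, and $32 < 40$). Then I would set
\[
b(x) \;=\; \begin{cases} 0, & x \in [0, 1/32], \\ \eta(x)\bigl(c(x) + 3/8\bigr), & x \in [1/32, +\infty), \end{cases}
\]
which glues smoothly at $x = 1/32$ because $\eta$ is flat there. On $[1/32, +\infty)$ all four of $\eta, \eta', c + 3/8, c'$ are non-negative, so the product-rule identity $b' = \eta'(c + 3/8) + \eta\, c'$ will give $b' \geq 0$ immediately, and the derivative bound will follow from
\[
b'(x) \;\leq\; 40 \cdot \tfrac{1}{288(2N+1.1)} + \tfrac{1}{9(2N+1.1)} \;=\; \tfrac{5/36 + 4/36}{2N+1.1} \;=\; \tfrac{1}{4(2N+1.1)} \;<\; \tfrac{1}{3(2N+1.1)}.
\]

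The only real obstacle in this construction is the tension between moving $b$ from $0$ up to $c + 3/8$ on the short window $[1/32, 1/16]$ and respecting the tight slope bound. This tension is resolved precisely by the placement conditions (P2) and (P3): (P2) kills the initial offset $c(1/32) + 3/8$, and (P3) forces the target value $c(1/16) + 3/8$ to be much smaller than the product of the window length $1/32$ and the allowed slope $1/(3(2N+1.1))$, leaving ample slack for a smooth cutoff.
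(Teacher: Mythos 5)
Your construction is correct and is essentially the paper's own proof: both define $b$ as a smooth non-decreasing cutoff (supported on $[1/32,\infty)$, equal to $1$ on $[1/16,\infty)$, with a uniform derivative bound) multiplied by $c(x)+3/8$, and both verify the slope bound via the product rule using exactly the two inputs you isolate from (P2) and (P3). The only differences are cosmetic (the paper takes $\|\beta'\|_\infty\leq 33$ where you take $40$, yielding the slightly different but equally sufficient constants).
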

\begin{proof}
Let $\beta: [0,+\infty)\rightarrow \R$ be a smooth cutoff function with $\beta(x) = 0$ for $x \in[0, 1/32]$; $\beta(x) =1$ for $x \in [1/16, +\infty)$; and $ 0 \leq \beta'(x) \leq 33$ for all $x$.  Set $b(x) = \beta(x) (c(x)+ 3/8)$.  To verify the third and fourth items, compute
\begin{align*}
0 \leq b'(x) = &  \quad \beta'(x)( c(x)+3/8) + \beta(x) c'(x) \leq \\
 & 33 (1/32)(1/(9[2N+1.1])) + 1 (1/(9[2N+1.1])) < 1/(3\times (2N+1.1)).
\end{align*}
For the 1st inequality, note that for $x >0$, $c(x)$ is increasing, and $(c(x)+3/8) \geq 0$ when $x \geq 1/32$ by (P2).   For the 2nd inequality, use (P2) and (P3)
 to estimate $(c(x)+3/8)$ when $1/32 \leq x \leq 1/16$ and (P3) for $c'(x)$ when $1/32 \leq x$.
\end{proof}

Now, define $T(x_1,x_2) = (x_1 + b(|x_2|) , x_2)$.  Let $L''_{st} \subset J^1(\R^2)$ denote the Legendrian obtained from $L'_{st}$ by applying the contactomorphism induced by $T$.  
We denote the local defining functions for the front projection of $L''_{st}$ as $A_{k}, A_{k+1}, A_{k+2}$, and $B$.  They are related to the local defining functions of $L_{st}$ via 
\begin{equation}  \label{eq:Aiai}
A_i = a_{i} \circ S \circ T,  \mbox{ for $i = k,k+1,k+2$},  \quad \mbox{and} \quad B = b \circ S \circ T.
\end{equation}
The respective domains of the $A_i$ and $B$ are $T^{-1}(S^{-1}(R_+))$ and $T^{-1}(S^{-1}(R_-))$.  The cusp locus of $L''_{st}$ is 
\begin{equation} \label{eq:SigmaT1S1}
\Sigma = T^{-1}(S^{-1}(\Sigma_{st})).
\end{equation}

%We denote the local defining functions for the front projection of $L'_{st}$ as $a_{k}, a_{k+1}, a_{k+2}$, and $\tilde{a}_k$.  Here, $a_{k}, a_{k+1}, a_{k+2}$ are defined in the closure of the region of the $x_1x_2$-plane that lies to the right of the cusp locus with the subscripts denoting the ordering of the sheets when $x_2$ is above the crossing locus; $\tilde{a}_k$ is defined on the closure of the region to the left of the cusp locus.

\subsubsection{Properties of model}  \label{sec:PropertiesA}

We record and establish several properties of $L''_{st}$ for later use.  For $i,j \in \{k,k+1,k+2\}$, denote difference functions by $A_{i,j} = A_{i}-A_j$.

\begin{lemma}
\label{lem:Aprops}
The following properties of  $L''_{st}$ hold.
\begin{enumerate}
\item[A1] 
Within $O_2$, the cusp locus is contained in $\{ -3/8-\ec \leq x_1 \leq -3/8+\ec \}$, and in $O_2\setminus O_1$ it agrees with the vertical line segment $x_1 = -3/8$.  Furthermore, when $1/32 \leq |x_2| \leq 1/16$ the absolute value of the slope of the cusp locus is bounded below by $2N+1.1$.

\item[A2]
The crossing locus is the horizontal ray extending to the right from the swallowtail point.

\item[A3] For all $(x_1,x_2) \in T^{-1}(S^{-1}(R_+))$,
%\in (O_2\setminus O_1) \cap \{x_1 > -3/8\}$,
\[
\partial_{x_1} A_{k,k+1}(x_1,x_2) \geq  0;  \quad  \partial_{x_1} A_{k,k+2}(x_1,x_2) \geq 0; \quad \mbox{and  } \mbox{sgn}(\partial_{x_1} A_{k+1,k+2}(x_1,x_2)) = \mbox{sgn}(x_2).
\] 
Moreover, in the first (resp. second) inequality, we have equality only when $(x_1,x_2)$ belongs to the upper (resp. lower) half of the cusp locus.
\item[A4]

 For points $( x_1, 0)$ belonging to the crossing locus
\[
\partial_{x_2} A_{k+1,k+2}(x_1,0) \geq 0
\]
with equality only at the swallow tail point itself.

Moreover, for $(x_1,x_2) \in O_1$ with $|x_2| \leq 1/32$,
\[
\partial_{x_2} A_{k+1,k+2}(x_1,x_2) \geq 0, \quad \partial_{x_2} A_{k,k+2}(x_1,x_2) \geq 0, \quad \partial_{x_2} A_{k,k+1}(x_1,x_2) \leq 0.
\]

\item[A5]  
In $(O_2\setminus O_1) \cap \{x_2 \geq 1/32\}$,  the gradient  $\nabla A_{k,k+1}$ points into the wedge $\{x_1 \geq 0\} \cap \{x_1 \geq x_2\}$.  In $(O_2\setminus O_1) \cap \{x_2 \leq -1/32\}$,  the gradient  $\nabla A_{k,k+2}$ points into the wedge $\{x_1 \geq 0\} \cap \{x_1 \geq -x_2\}$.  %See Figure ???.
\end{enumerate}
\end{lemma}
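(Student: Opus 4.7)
\smallskip

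\noindent\textbf{Proof plan for Lemma \ref{lem:Aprops}.} The whole lemma will follow from the chain rule applied to $A_{i,j}=a_{i,j}\circ S\circ T$, combined with the explicit gradient formulas of Corollary \ref{cor:aijGrad}, plus an analysis of signs using the defining inequalities of the fundamental wedge $W=\{s\geq r\}\cap\{s\geq -2r\}$. Since $S(x_1,x_2)=(Mx_1+X,Mx_2)$ and $T(x_1,x_2)=(x_1+b(|x_2|),x_2)$, the chain rule gives, at any point where $(r,s)\in W$ parametrizes $S(T(x_1,x_2))$,
\begin{align*}
\partial_{x_1}A_{i,j}(x_1,x_2)&=M\,(\partial_{x_1}a_{i,j})(r,s),\\
\partial_{x_2}A_{i,j}(x_1,x_2)&=M\,b'(|x_2|)\sgn(x_2)\,(\partial_{x_1}a_{i,j})(r,s)+M\,(\partial_{x_2}a_{i,j})(r,s).
\end{align*}
A short case analysis of $W$ shows $s\geq 0$, $s+r\geq 0$, $s-r\geq 0$, $2r+s\geq 0$, $r+2s\geq 0$, each with equality only along a boundary ray of $W$ (the preimages of the two halves of the cusp locus) or at the origin, and moreover that $\sgn(x_2)=\sgn(r)$ since $x_2=4rs(r+s)$ with $s,s+r\geq 0$.

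\smallskip

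First I would dispatch (A1) and (A2). For (A1), the cusp locus $\Sigma=T^{-1}(S^{-1}(\Sigma_{\mathit{st}}))$ is parametrized by $x_1=c(y)-b(|y|)$, where $c(y)$ is the $x_1$--coordinate of the cusp locus of $L'_{\mathit{st}}$ at height $y$. For $|y|\leq 1/32$ we have $b\equiv 0$, so (P2) places the cusp locus in the required strip; for $|y|\geq 1/16$ we have $b(|y|)=c(|y|)+3/8$, so $x_1\equiv-3/8$; and for $1/32\leq|y|\leq 1/16$ interpolation between these endpoints together with the monotonicity of $b$ keeps $x_1\in[-3/8-\ec,-3/8+\ec]$. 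The slope bound follows from $dx_1/dy=c'(y)-b'(|y|)\sgn(y)$: (P3) gives $c'\leq 1/(9(2N+1.1))$ while the construction of $b$ gives $|b'|<1/(3(2N+1.1))$, so $|dx_1/dy|\leq 1/(2N+1.1)$, i.e.\ slope magnitude $\geq 2N+1.1$. For (A2) the crossing locus of $L_{\mathit{st}}$ is the positive $x_1$--axis by Lemma \ref{lem:crossingx2}, $S$ maps this to a horizontal ray, and $T(x_1,0)=(x_1,0)$ because $b(0)=0$, so the image is preserved.

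\smallskip

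For (A3) and the $|x_2|\leq 1/32$ half of (A4), I would note that $b\equiv 0$ on $[0,1/32]$ forces $T$ to be the identity there, so the correction term in $\partial_{x_2}A_{i,j}$ vanishes and one simply reads off
\[
\partial_{x_1}a_{k,k+1}=(s-r)(s+r),\quad \partial_{x_1}a_{k,k+2}=s(2r+s),\quad \partial_{x_1}a_{k+1,k+2}=r(r+2s),
\]
and the corresponding $\partial_{x_2}a_{i,j}=-(s-r),\,2r+s,\,r+2s$ from Corollary \ref{cor:aijGrad}. The inequalities on $W$ listed above immediately yield the stated signs; the equality cases occur precisely on the preimages $\{s=r\}$ (upper cusp) and $\{s=-2r\}$ (lower cusp), and $\sgn(\partial_{x_1}A_{k+1,k+2})=\sgn(r)=\sgn(x_2)$ as required.

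\smallskip

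The only genuinely delicate step is (A5), where the $T$-correction to $\partial_{x_2}A$ does \emph{not} vanish. Here I would use the bound $b'(|x_2|)<1/(3(2N+1.1))<1$ from the construction of $b$ and compute, for $x_2\geq 1/32$,
\[
\partial_{x_1}A_{k,k+1}-\partial_{x_2}A_{k,k+1}=M(s-r)\bigl[(s+r)(1-b'(x_2))+1\bigr]\geq 0,
\]
and for $x_2\leq -1/32$,
\[
\partial_{x_1}A_{k,k+2}+\partial_{x_2}A_{k,k+2}=M(2r+s)\bigl[s(1-b'(|x_2|))+1\bigr]\geq 0.
\]
Since $\partial_{x_1}A_{k,k+1}\geq 0$ and $\partial_{x_1}A_{k,k+2}\geq 0$ by (A3), these inequalities exactly say that $\nabla A_{k,k+1}$ lies in the wedge $\{x_1\geq 0\}\cap\{x_1\geq x_2\}$ and $\nabla A_{k,k+2}$ lies in $\{x_1\geq 0\}\cap\{x_1\geq -x_2\}$. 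The main obstacle is thus the bookkeeping required to see that the $b'$--correction cannot overwhelm the leading $\partial_{x_2}a$ term; this is precisely why (P3) and the construction of $b$ were chosen to force $b'<1$ (in fact much smaller).
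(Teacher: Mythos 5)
Your proposal is correct and follows essentially the same route as the paper: the chain rule through $S$ and $T$, the explicit gradients from Corollary \ref{cor:aijGrad}, the sign analysis on the wedge $W$ (with $\sgn(x_2)=\sgn(r)$ and the equality loci $\{s=r\}$, $\{s=-2r\}$), and for (A5) the observation that $b'<1$ keeps the $T$-correction from overwhelming the $\partial_{x_2}a$ term. Your factored inequalities $M(s-r)[(s+r)(1-b')+1]\geq 0$ and $M(2r+s)[s(1-b')+1]\geq 0$ are just a rearrangement of the paper's estimates, so no substantive difference.
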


\begin{proof} {\it A1.}  Denote the cusp locus of $L'_{st}$ by $\Sigma'_{\mathit{st}} = \{(c(x_2),x_2)\}$.  The cusp locus of $L''_{st}$ is $T^{-1}(\Sigma'_{\mathit{st}})$, and for $(c(x_2),x_2) \in \Sigma'_{\mathit{st}}$ with $|x_2| \geq 1/16$ we
 compute
\[
T^{-1}(c(x_2),x_2) = (c(x_2)-b(|x_2|), x_2).
\]
From the definition of $b$, and (P2) we have
\begin{align*}
& -3/8-\ec \leq c(0) \leq c(x_2) = c(x_2)-b(|x_2|) \leq -3/8, & \quad  \mbox{for $|x_2| \leq 1/32$;} \\
& -3/8 = c(x_2) - (c(x_2) +3/8) \leq c(x_2)-b(|x_2|) \leq c(x_2) \leq -3/8 + \ec, & \quad  \mbox{for $1/32 \leq |x_2|$}, 
\end{align*} 
with $c(x_2) - b(|x_2|) = -3/8$ when $|x_2| \geq 1/16$.

To prove the slope bound, note that the differential of $T^{-1}$ is 
\[
\left[ \begin{array}{cc} 1 & -b'(|x_2|) \cdot \sgn(x_2) \\ 0 & 1  \end{array}  \right].
\]
By property (P3), for a point $T^{-1}(a) \in T^{-1}(\Sigma'_{\mathit{st}})$ in the cusp locus of $L''_{st}$ with $1/32 \leq |x_2| \leq 1/16$, we have that the tangent to $\Sigma'_{\mathit{st}}$ at $a$ is given by a vector of the form $\left[ \begin{array}{c} 1 \\ y  \end{array}  \right]$ with $|y| > 9 (2 N + 1.1)$.  Thus, a tangent vector to $T^{-1}(\Sigma'_{\mathit{st}})$ at $T^{-1}(a)$ is given by $\left[ \begin{array}{c} 1   -b'(|x_2|) \cdot \sgn(x_2) \cdot y \\ y  \end{array}  \right]$.  The slope therefore satisfies
\[
|\mbox{slope}| = \left| \frac{y }{1  -b'(|x_2|) \cdot \sgn(x_2) \cdot y}\right| \geq \frac{|y| }{1  +|b'(|x_2|)| \cdot | y|} = \frac{1}{1/|y|+|b'(|x_2|)|}.
\]
Again using (P3) and a defining property of $b$, we have
\[
{1/|y|+|b'(|x_2|)|} \leq \frac{1}{9(2N+1.1)} + \frac{1}{3(2N+1.1)} = \frac{4}{9(2N+1.1)},
\]
so it follows that $|\mbox{slope}| \geq \frac{9}{4}(2N+1.1)> 2N+1.1 $.

{\it A2.}  This statement is clearly true for $L'_{st}$ (using Lemma \ref{lem:crossingx2}), and $T(x_1,x_2) = (x_1,x_2)$ when $ |x_2| < 1/32$.

{\it A3.}  The differentials of $S$ and $T$ are given by
\[
dS =  \left[ \begin{array}{cc} M & 0 \\ 0 &  M \end{array} \right] \quad \mbox{and} \quad dT= \left[ \begin{array}{cc} 1 & b'(|x_2|) \cdot \mbox{sgn}(x_2) \\ 0 & 1 \end{array} \right].
\]
Using the chain rule we compute the differential of $A_{i,j}= a_{i,j} \circ S \circ T$ to be
\[
[\partial_{x_1} A_{i,j}, \partial_{x_2} A_{i,j}] = d A_{i,j}=  d a_{i,j} \cdot dS \cdot dT,\]
and carrying out the matrix multiplication gives
\begin{equation} \label{eq:Dx1Aij}
\partial_{x_1}A_{i,j} =  M \cdot \partial_{x_1} a_{i,j}( S\circ T(x_1,x_2));   
\end{equation}
\begin{equation} \label{eq:Dx2Aij}
\partial_{x_2}A_{i,j} = M\cdot \left[\mbox{sgn}(x_2) \cdot b'(|x_2|) \cdot \partial_{x_1} a_{i,j}( S\circ T(x_1,x_2)) + \partial_{x_2} a_{i,j}( S\circ T(x_1,x_2)) \right].
\end{equation}

Using Corollary \ref{cor:aijGrad} and its surrounding notation,  we have that for any $(r,s) \in W=\{s \geq -2r\}\cap \{s\geq r\}$ (and hence any $(x_1,x_2) \in R_+$) 
\begin{itemize}
\item $\partial_{x_1} a_{k,k+1} = (s-r) (s+r) \geq 0$, with equality only when $r=s$;
\item  $\partial_{x_1} a_{k,k+2}= (2r+s)\cdot s \geq 0$, with equality only when $s= -2r$; and
\item $\partial_{x_1} a_{k+1,k+2}= (r+2s) \cdot r$ which shows that $\sgn(\partial_{x_1} a_{k+1,k+2}) = \sgn(r)$.
\end{itemize}
Since the images of $\{r=s\}\cap W$ and $\{s=-2r\}\cap W$ are respectively the upper and lower halves of the cusp locus the statements from {\it A3} concerning $A_{k,k+1}$ and $A_{k,k+2}$ both follow.  
To deduce the claim about $A_{k+1,k+2}$, note that $\sgn(r) = \sgn(x_2)$ and that the diffeomorphism $ S \circ T$ preserves the sign of the $x_2$ coordinate.

{\it A4.}  When $|x_2|\leq1/32$, (\ref{eq:Dx2Aij}) simplifies to 
\[
\partial_{x_2}A_{k+1,k+2}(x_1,x_2)= M \cdot \partial_{x_2} a_{k+1,k+2}( S\circ T(x_1,x_2)),
\]
and, as $r+2s$ is positive everywhere in $W$ except for $(0,0$), Corollary \ref{cor:aijGrad} shows that $\partial_{x_2} a_{k+1,k+2}$ is positive everywhere except for the swallowtail point itself.  Similar computations using Corollary \ref{cor:aijGrad} verify the signs of $\partial_{x_2} A_{k,k+1}$ and $\partial_{x_2} A_{k,k+2}$.

{\it A5.}  With  {\it A3} established we need only show that $\partial_{x_2} A_{k,k+1} \leq  \partial_{x_1} A_{k,k+1}$ when $ x_2\geq 1/32$ and $-\partial_{x_2} A_{k,k+2} \leq  \partial_{x_1} A_{k,k+2}$ 
when $x_2 \leq -1/32$.
For the first inequality, note that when $x_2 \geq 1/32$,
\[
\sgn(x_2) b'(|x_2|) \partial_{x_1} a_{k,k+1}(S\circ T(x_1,x_2)) \leq 1 \cdot \partial_{x_1} a_{k,k+1}(S\circ T(x_1,x_2))
\]
and
\[
\partial_{x_2} a_{k,k+1}(S\circ T(x_1,x_2)) \leq 0
\]
where we have used the properties of $b$ and Corollary \ref{cor:aijGrad} respectively.  In view of (\ref{eq:Dx1Aij}) and (\ref{eq:Dx2Aij}), adding these inequalities and multiplying both sides by $M$ gives $\partial_{x_2} A_{k,k+1} \leq  \partial_{x_1} A_{k,k+1}$ as desired.  When $x_2 \leq -1/32$, note that Corollary \ref{cor:aijGrad} gives $\partial_{x_2} a_{k,k+2}(S\circ T(x_1,x_2)) \geq 0$, and then estimate
\[
-\partial_{x_2} A_{k,k+2}= M[-\sgn(x_2) b'(|x_2|) \partial_{x_1} a_{k,k+2}(S\circ T(x_1,x_2))-\partial_{x_2} a_{k,k+2}(S\circ T(x_1,x_2))] \leq 
\]
\[
M[ \partial_{x_1} a_{k,k+2}(S\circ T(x_1,x_2))+0] = \partial_{x_1} A_{k,k+2}.
\]

\end{proof}

%be chosen so that Choose the constant $X$  to place ;  choose $M$ so that  the cusp locus intersects the boundary circle $\partial O_1$ in two points where \footnote{The $\epsilon_1$ is unimportant here, so perhaps its presence will be misleading.  Could change to $1/100$ maybe?} $x_1 =3/8+\epsilon_1$ with the slope of the cusp locus bounded below in magnitude by $10$ in the annulus $O_2\setminus O_1$.  

\subsection{Defining functions away from the swallowtail point} \label{sec:funcO2}

The edges of the Type (13) square have the following types:  The $L$ edge is (PV) with $n-2$ sheets.  Edges $U$ and $D$ are (Cu) with  $n$ sheets and a left cusp between sheets $k$ and $k+1$.  The $R$ edge is (1Cr) with $n$ sheets and a single crossing between the sheets $k+1$ and $k+2$.  We let $f^L_i$, $f^U_i$, $f^D_i$,  $f^R_{i}$ denote the $1$-skeleton functions associated to each of these types of edges as constructed in Section \ref{sec:Constructions}.   
%for $1$-cells of plain vanilla type; left cusp between $k,k+1$; and single crossing between $k+1, k+2$ respectively.  

In constructing the swallowtail square we have occasion to introduce $5$ new $1$-variable functions which we denote by $f^{ST}_{k}, f^{ST}_{k+1}, f^{ST}_{k+2}, \widehat{f}_{k+1}$, and $\widehat{f}_{k+2}$.  The three $f^{ST}$ will serve as substitutes for the functions $f^{L}_{k-0.5}$ that were used in defining the Type (9)-(12) squares (see Section \ref{ssec:Two-skeleton}) while $\widehat{f}_{k+1}$ and $\widehat{f}_{k+2}$  will be used in combination with $f^U_{k+1}= f^D_{k+1}$ and $f^U_{k+2} = f^D_{k+2}.$

\subsubsection{Definition of $\widehat{f}_{k+1}$ and $\widehat{f}_{k+2}$}
We construct functions $\widehat{f}_{k+1} : [-1,1] \rightarrow \R$ and $\widehat{f}_{k+2}: [-3/8,1] \rightarrow \R$ to satisfy 
\begin{equation} \label{eq:hatk1}
 \widehat{f}_{k+1}(x) = \left\{ \begin{array}{cr} f^U_{k+2}(x) & \mbox{for $x \in [-1, -7/8]$}, \\ f^U_{k+1}(x) & \mbox{for $x \in [-3/8+R_1/2,1]$,} \end{array} \right.
\end{equation}
and
\begin{equation} \label{eq:hatk2}
 \widehat{f}_{k+2}(x) = \left\{ \begin{array}{cr} f^U_{k+1}(x) + C & \mbox{near $x=-3/8$}, \\ f^U_{k+2}(x) & \mbox{for $x \in [-3/8+R_1/2,1]$,} \end{array} \right.
\end{equation}
where $C$ is some additive constant, along with several technical requirements.
A schematic depiction of the constructions appears in Figure \ref{fig:fHat}.

\begin{lemma} \label{lem:fhatexist} There exist functions $\widehat{f}_{k+1}$ and $\widehat{f}_{k+2}$ satisfying (\ref{eq:hatk1}) and (\ref{eq:hatk2}) as well as the following conditions:

\begin{itemize}
\item For $x \in [-3/8, -1/4]$, 
\begin{equation} \label{eq:Est21} d_x\widehat{f}_{k+1} < d_x f_k^U    \quad \mbox{and} \quad d_x\widehat{f}_{k+2} \leq d_xf^U_{k+1}.
\end{equation}

\item For $x \in [-3/8,1]$, 
\begin{equation} \label{eq:estfk1hat}  
f^U_{k+2}(x) \leq \widehat{f}_{k+2}(x) <  f^U_{k+1}(x) \quad \mbox{and} \quad f^U_{k+2}(x) <\widehat{f}_{k+1}(x) \leq f^U_{k+1}(x).
\end{equation}
with all inequalities strict at $x= -3/8$.
\item For $x \in [-1, -3/8]$,
\begin{equation} \label{eq:estfk1hatnext}
f^U_{k+2}(x) \leq  \widehat{f}_{k+1}(x) < f^U_{k-1}(x).
\end{equation}
\item For $x \in [ -3/8- \ec, -3/8+\ec]$,
\begin{equation} \label{eq:linearfk1hat}  \widehat{f}_{k+1} \quad \mbox{is linear and} \quad  d_x f^U_{k+2}(x) < d_x \widehat{f}_{k+1}(x) < d_x f^U_{k+1}(-3/8).
\end{equation}
\item For $(x_1,x_2) \in (\{-3/8 \leq x_1 \leq -1/4\}\cap \{x_2>0\})\setminus O_1$, 
\begin{equation} \label{eq:phix2}
\phi(x_2) d_x(f^U_{k+1} - f^U_{k+2})(x_1) \geq [1 - \phi(x_2)] d_x (\widehat{f}_{k+1} - \widehat{f}_{k+2})(x_1).
\end{equation}
\end{itemize}
\end{lemma}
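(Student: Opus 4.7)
The plan is to construct $\widehat{f}_{k+2}$ first (its domain is the smaller interval $[-3/8,1]$) and then $\widehat{f}_{k+1}$, with free parameters chosen so that all six displayed conditions hold simultaneously. Select a constant $C$ with $-0.12\ea < C < 0$; by equation (\ref{eq:08ea}) the value $f^U_{k+1}(-3/8)+C$ then lies strictly between $f^U_{k+2}(-3/8)$ and $f^U_{k+1}(-3/8)$. Declare $\widehat{f}_{k+2} = f^U_{k+1}+C$ on $[-3/8,-3/8+\ec]$ and $\widehat{f}_{k+2} = f^U_{k+2}$ on $[-3/8+R_1/2,1]$, then interpolate on the connecting interval by applying Lemma \ref{lem:thetechlem} to enforce both $0 < d_x\widehat{f}_{k+2} \le d_xf^U_{k+1}$ and $f^U_{k+2} \le \widehat{f}_{k+2} < f^U_{k+1}$. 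Feasibility holds because the increase of $\widehat{f}_{k+2}$ needed across this interval is strictly positive and much smaller than the corresponding increase of $f^U_{k+1}$.

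For $\widehat{f}_{k+1}$, fix $\widehat{f}_{k+1}=f^U_{k+2}$ on $[-1,-7/8]$ and $\widehat{f}_{k+1}=f^U_{k+1}$ on $[-3/8+R_1/2,1]$, and on the short interval $[-3/8-\ec,-3/8+\ec]$ take $\widehat{f}_{k+1}$ linear with slope $s \in (n-k-2,\, n-k-2+1.5)$; by item (8') of Proposition \ref{prop:CuDef} this range equals $(d_xf^U_{k+2}(-3/8),\, d_xf^U_{k+1}(-3/8))$, and $d_xf^U_{k+2}$ is constant on the short interval, giving (\ref{eq:linearfk1hat}). Choose $\widehat{f}_{k+1}(-3/8) = f^U_{k+1}(-3/8)-\delta_1$ with $\delta_1>0$ small enough that $\widehat{f}_{k+1}(-3/8)$ lies in $(\widehat{f}_{k+2}(-3/8), f^U_{k-1}(-3/8))$, and extend across $[-7/8,-3/8-\ec]$ and $[-3/8+\ec,-3/8+R_1/2]$ via Lemma \ref{lem:thetechlem}, arranging the value orderings (\ref{eq:estfk1hatnext}) and (\ref{eq:estfk1hat}) respectively. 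The derivative bound $d_x\widehat{f}_{k+1} < d_xf^U_k$ from (\ref{eq:Est21}) is preserved because $d_xf^U_k > d_xf^U_{k+1}$ on $(-3/8,-1/4]$ by monotonicity of $f^U_{k,k+1}$, while on the linear piece $s<n-k-2+1.5=d_xf^U_k(-3/8)\le d_xf^U_k(x)$ on $[-3/8,-3/8+\ec]$ by item (8').

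The step that I expect to be the main obstacle is verifying the coupled inequality (\ref{eq:phix2}). On the subregion $\{-3/8+R_1/2\le x_1\le -1/4\}\cap\{x_2>0\}$, the identifications $\widehat{f}_{k+j}=f^U_{k+j}$ reduce (\ref{eq:phix2}) to $\phi(x_2)\ge 1-\phi(x_2)$ after dividing by the positive quantity $d_xf^U_{k+1,k+2}(x_1)$, which holds for $x_2\ge 0$ by (\ref{eq:InterpolatePhi}). On the complementary subregion $\{-3/8\le x_1<-3/8+R_1/2\}\cap\{x_2>0\}\setminus O_1$, the condition $(x_1,x_2)\notin O_1$ forces $x_2\ge \sqrt{R_1^2-(x_1+3/8)^2}\ge R_1\sqrt{3}/2$, so $\phi(x_2)\ge\phi_0$ for an explicit constant $\phi_0>1/2$, giving $\phi_0/(1-\phi_0)>1$. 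It is then enough to arrange $d_x(\widehat{f}_{k+1}-\widehat{f}_{k+2})\le (\phi_0/(1-\phi_0))\cdot d_xf^U_{k+1,k+2}$ on $[-3/8+\ec,-3/8+R_1/2]$, which I achieve by choosing the interpolations so that $d_x\widehat{f}_{k+2}$ stays close to $d_xf^U_{k+2}$ and $d_x\widehat{f}_{k+1}$ stays close to $d_xf^U_{k+1}$ throughout the interpolation region, well within the slack provided by the factor $\phi_0/(1-\phi_0)$ strictly exceeding $1$. A final smoothing via Lemma \ref{lem:thetechlem} preserves all derivative inequalities.
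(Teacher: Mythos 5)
Your overall strategy is the same as the paper's: impose the boundary data (\ref{eq:hatk1})--(\ref{eq:hatk2}), handle $x_1\ge -3/8+R_1/2$ in (\ref{eq:phix2}) via $\phi(x_2)\ge 1-\phi(x_2)$, and on $-3/8\le x_1<-3/8+R_1/2$ use $x_2\ge \sqrt{3}R_1/2$ to reduce (\ref{eq:phix2}) to the derivative inequality with the ratio $\phi_0/(1-\phi_0)>1$. The difference is organizational: you prescribe $C$ and $\widehat{f}_{k+1}(-3/8)$ in advance and interpolate over all of $[-3/8+\ec,-3/8+R_1/2]$, whereas the paper works right-to-left, creating a bump of arbitrarily small size $\alpha$ near $-3/8+R_1/2$ and confining essentially the entire discrepancy $\widehat{f}_{k+2}-f^U_{k+2}$ to $[-3/8,-3/8+\ec]$, where the cusp normal form of Proposition \ref{prop:CuDef}(8') makes the estimates explicit.

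The gap is in your verification of (\ref{eq:phix2}) on the interpolation region. You cannot arrange that $d_x\widehat{f}_{k+1}$ and $d_x\widehat{f}_{k+2}$ both ``stay close'' to their targets in the sense needed, because
\[
\int_{-3/8}^{-3/8+R_1/2}\bigl[d_x(\widehat{f}_{k+1}-\widehat{f}_{k+2})-d_x(f^U_{k+1}-f^U_{k+2})\bigr]\,dx \;=\; \delta_1+\bigl(0.12\,\ea-|C|\bigr),
\]
which is strictly positive for \emph{any} admissible choice ($\delta_1>0$ and $|C|<0.12\ea$ are forced by the strict inequalities at $x=-3/8$ in (\ref{eq:estfk1hat})). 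So the excess $d_x(\widehat{f}_{k+1}-\widehat{f}_{k+2})-d_xf^U_{k+1,k+2}$ must be positive on a set of positive measure, and (\ref{eq:phix2}) demands it be bounded pointwise by $(\phi_0/(1-\phi_0)-1)\,d_xf^U_{k+1,k+2}(x_1)$ --- a quantity for which no positive lower bound is available on $[-3/8+\ec,-3/8+R_1/2]$, where one only knows $d_xf^U_{k+1,k+2}>0$. Closing this requires a quantitative step you have not supplied: either tie $|C|$ (near $0.12\ea$) and $\delta_1$ to the total increase of $f^U_{k+1,k+2}$ over that interval and concentrate the excess where $d_xf^U_{k+1,k+2}$ is bounded below, or, as in the paper, route the whole jump through $[-3/8,-3/8+\ec]$, where the excess is at most $(3/2)\ec^{1/2}$ while $d_xf^U_{k+1,k+2}\ge 1.5-(3/2)\ec^{1/2}$. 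Two smaller defects: you omit $[-3/8,-3/8+\ec]$ from the (\ref{eq:phix2}) check (it holds there, but only by the computation just indicated); and the constraint $d_x\widehat{f}_{k+2}>0$ is not required by the lemma and renders your $\widehat{f}_{k+2}$-interpolation infeasible for generic $C\in(-0.12\ea,0)$, since the required net increase $f^U_{k+2}(-3/8+R_1/2)-f^U_{k+1}(-3/8+\ec)-C$ need not be positive.
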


\begin{proof}
Note that for $-3/8 +R_1/2 \leq x_1$ (\ref{eq:hatk1}) and (\ref{eq:hatk2}) will imply (\ref{eq:phix2}) since $\phi(x_2) \ge1-\phi(x_2)$ when $x_2 >0$. For $(x_1,x_2)$ where (\ref{eq:phix2}) is required to hold, when $-3/8 \leq  x_1\leq  -3/8+R_1/2$ we will have $x_2 \geq \sqrt{3}R_1/2= \sqrt{3}/32$ by trigonometry.  Thus, (\ref{eq:phix2}) will follow provided we have (\ref{eq:hatk1}), (\ref{eq:hatk2}) and the condition
\begin{equation} \label{eq:Newphix2}
\left[\frac{\phi(\sqrt{3}R_1/2)}{1 - \phi(\sqrt{3}R_1/2)}\right] \cdot d_x(f_{k+1}^U-f^U_{k+2})(x_1) \geq d_x(\widehat{f}_{k+1} - \widehat{f}_{k+2})(x_1)
\end{equation}
holds for $-3/8 \leq x_1 \leq -3/8+R_1/2$.  

Therefore, it suffices to obtain $\widehat{f}_{k+1}$ and $\widehat{f}_{k+2}$ satisfying  (\ref{eq:hatk1})-(\ref{eq:linearfk1hat}) and (\ref{eq:Newphix2}).    
Begin by setting $\widehat{f}_{k+1}=f^U_{k+1}$ and $\widehat{f}_{k+2}=f^U_{k+2}$ for $x \geq -3/8 + R_1/2$.  
Then, proceeding from right to left, we modify the derivatives of $\widehat{f}_{k+1}$ and $\widehat{f}_{k+2}$ in a small interval to the left of $-3/8 + R_1/2$ but to the right of $-3/8-\ec$ to momentarily make $d_x \widehat{f}_{k+1}$ slightly larger than $d_x f^U_{k+1}$ and $d_x \widehat{f}_{k+2}$ slightly smaller than $d_x f^U_{k+2}$ in such a way that (\ref{eq:Est21}) and (\ref{eq:Newphix2}) continue to hold.  
The latter is achievable because  \begin{equation} \label{eq:phiand15} 
\displaystyle\left[\frac{\phi(\sqrt{3}R_1/2)}{1 - \phi(\sqrt{3}R_1/2)}\right]= \frac{8 +\sqrt{3}}{8-\sqrt{3}} >1.5,
\end{equation}
 where we used (\ref{eq:InterpolatePhi}) to evaluate $\phi$. 
 Next, return the derivatives to equality, $d_x \widehat{f}_{l}= d_x f^U_{l}$, $l=k+1,k+2$. 
%so that, continuing from right to left, the graphs of $\widehat{f}_{k+1}$ and $\widehat{f}_{k+2}$ are parallel to $f^U_{k+1}$ and $f^U_{k+2}$ but are respectively shifted slightly up and slightly down.  
Continuing from right to left, we now have an interval where $\widehat{f}_{k+1} = f_{k+1}^U - \alpha_1$ and $\widehat{f}_{k+2}= f^U_{k+2}+\alpha_2$ for positive constants $\alpha_1$ and $\alpha_2$.  Moreover, the previous construction can be arranged so that $\alpha_1=\alpha_2 =\alpha$ with $0 < \alpha <<1$ as small as desired.
 % are parallel to $f^U_{k+1}$ and $f^U_{k+2}$ but are respectively shifted slightly up and slightly down.  

To complete the definition of $\widehat{f}_{k+2}$, just prior to $-3/8$, interpolate the derivative $d_x \widehat{f}_{k+2}$ from  $d_x f^U_{k+2}$ to $d_x f^U_{k+1}$  to arrange that $\widehat{f}_{k+2} = f^U_{k+1}(x) + C$ in a interval to the right of and including  $x=-3/8$.  This can be done to maintain the estimates for $d_x\widehat{f}_{k+2}$ and $\widehat{f}_{k+2}$ from (\ref{eq:Est21}) and (\ref{eq:estfk1hat}) provided the interpolation is carried out in a small enough neighborhood of $-3/8$ and $\alpha$ was taken to be suitably small.
%  Notice that $d_x\widehat{f}_{k+2} \geq d_xf^U_{k+2}$ holds during this portion....}

To complete the definition of $\widehat{f}_{k+1}$, just to the right of $x_1=-3/8+\ec$ interpolate  $d_x\widehat{f}_{k+1}$ from being equal to $d_xf^U_{k+1}$ to being constant with 
\[
d_x\widehat{f}_{k+1}(x) = d_xf^U_{k+1}(-3/8) - \delta 
\]
with $\delta >0$ small,  and then continue the definition of $\widehat{f}_{k+1}$ by interpolating between the linear function and $f^U_{k+2}$ somewhere on the interval $[-7/8,-3/8-\ec)$.

We now verify the required inequalities  for $\widehat{f}_{k+1}$ from (\ref{eq:Est21})-(\ref{eq:linearfk1hat}).  
Note that (\ref{eq:Est21}) continues to hold since
\[
d_x\widehat{f}_{k+1}(x) \leq d_xf^U_{k+1}(-3/8) - \delta = d_xf^U_k(-3/8) -\delta < d_xf^U_{k}(x), \quad \quad \mbox{for $x \in [-3/8, -3/8+\ec]$} 
\]
where the 2nd inequality follows from Proposition \ref{prop:CuDef} (8').  In addition, $\widehat{f}_{k+1}(x) \leq f^U_{k+1}(x)$ continues to hold (part of (\ref{eq:estfk1hat})) provided $\delta$ is chosen small enough, since $f^U_{k+1}> \widehat{f}_{k+1}(x)$ holds to the right of $-3/8+\ec$, and for $x \in[-3/8,-3/8+\ec]$
\[
d_x(f^U_{k+1}- \widehat{f}_{k+1})(x) < d_xf^{U}_{k+1}(-3/8) - (d_xf^{U}_{k+1}(-3/8) - \delta) = \delta.
\]
[When working from right to left $f^U_{k+1}-\widehat{f}_{k+1}$ could conceivably decrease, but by taking $\delta$ sufficiently small, we guarantee that the total decrease from $x=-3/8+\ec$ to $x=-3/8$ is less than the value of the difference at $-3/8+\ec$.]  To verify the bound from (\ref{eq:linearfk1hat}) for $x \in [-3/8-\ec,-3/8+\ec]$, we note that $d_xf^U_{k+2}$ is also constant (by Proposition \ref{prop:CuDef} (8'))  with
\[
d_xf^U_{k+2} < d_xf^{U}_{k+1}(-3/8).
\]
Next, note that to the right of $x=-3/8 +\ec$, 
\begin{align*}
(f^U_{k+1}-f^U_{k+2})(x) \geq (f^U_{k+1}-f^U_{k+2})(-3/8) &= .08 \ea, \quad \mbox{and} \\
(f^U_{k-1}-f^U_{k+1})(x) \geq (f^U_{k-1}-f^U_{k+1})(-3/8) & \geq .08 \ea
\end{align*}
(evaluated using (\ref{eq:k1k2CuDef})), and
$\widehat{f}_{k+1}$ is approximately equal to $f^U_{k+1}$.  Since 
\begin{align*} 
d_x(\widehat{f}_{k+1}- f^U_{k+2}) \leq d_xf_{k-1,k+2} &= 3 \\
d_x(f^U_{k-1}-\widehat{f}_{k+1}) \leq d_xf_{k-1,k+2} &= 3
\end{align*}
  on $[-3/8-\ec,-3/8+\ec]$, the total change in magnitude to both $(\widehat{f}_{k+1}-f^U_{k+2})(x)$ and $(f^U_{k-1}-\widehat{f}_{k+1})(x)$ on this interval is less than $6\ec$ which is much smaller than $.08 \ea$.  It follows that  
\[
f^U_{k+2}(x) \leq  \widehat{f}_{k+1}(x) < f^U_{k-1}(x) 
\]
holds on $[-3/8-\ec,-3/8+\ec]$ (and this gives the other part of (\ref{eq:estfk1hat})).  As long as the interpolation from the linear function defining $\widehat{f}_{k+1}(x)$ near $-3/8-\ec$ to $f^U_{k+2}(x)$ is done close enough to $-3/8-\ec$, the previous inequality remains valid on all of $[-1,-3/8]$ as required by (\ref{eq:estfk1hatnext}).

Finally,  we check (\ref{eq:phix2}), for $x \in [-3/8, -3/8+\ec]$.  We have
\[
d_x(\widehat{f}_{k+1}-\widehat{f}_{k+2})(x) \leq d_xf^U_{k+1}(-3/8)-\delta -d_xf^U_{k+2}(x) = 
\]
\[
d_x(f_{k+1}^U)(-3/8+\ec) + (3/2)(\ec)^{1/2} - \delta - d_xf^U_{k+2}(x)  \leq
d_x(f^U_{k+1}- f^U_{k+2})(x) + (3/2)(\ec)^{1/2}.
\]
[Use the definition of $d_x\widehat{f}_{k+1}$ and $d_x\widehat{f}_{k+2}$; then, the explicit formula for $d_xf^U_{k+1}$ in $[-3/8,-3/8+\ec]$ from Proposition \ref{prop:CuDef} (8'); then, that $d_xf^U_{k+1}$ decreases on $[-3/8,-3/8+\ec]$.] 
Also, using  (\ref{eq:phiand15}) and $d_x(f^U_{k+1})(x)- d_xf^U_{k+2}(x) \geq 1.5-(3/2)(\ec)^{1/2}$ (by the explicit formula for $d_xf^U_{k+1,k+2}(-3/8) = 1.5$ found in  equation (\ref{eq:k1k2CuDef})), we get 
\[
\left[\frac{\phi(\sqrt{3}R_1/2)}{1 - \phi(\sqrt{3}R_1/2)}\right] \cdot d_x(f_{k+1}^U-f^U_{k+2})(x) \geq 
(3/2) d_x(f_{k+1}^U-f^U_{k+2})(x) \geq 
\]
\[
d_x(f_{k+1}^U-f^U_{k+2})(x)+ .5 \left(1.5-(3/2)(\ec)^{1/2}\right) \geq d_x(f^U_{k+1}- f^U_{k+2})(x) + (3/2)(\ec)^{1/2}. 
\]
Combining these inequalites shows that (\ref{eq:Newphix2}) holds.

%  slope slightly less than that of $f^U_{k+1}$ prior to $-3/8,$ ensuring (\ref{eq:linearfk1hat}) \dr{and (\ref{eq:estfk1hat})} holds , and then continue the definition of $\widehat{f}_{k+1}$ by interpolating between the linear function and $f^U_{k+2}$ somewhere on the interval $[-7/8,-3/8).
%$
%Both of these modifications only decrease the right hand side of (\ref{eq:Newphix2}), and allow for (\ref{eq:Est21}) and (\ref{eq:estfk1hat}) provided the final interpolations that occur before $-3/8$ are carried out in a small enough neighborhood of $-3/8$.
\end{proof}

%\footnote{\ms{3/9/15, 6/19/15: I reduced interpolation support  from $[-1,-3/8)$ and added criterion (\ref{eq:linearfk1hat}) here, OK?}  \dr{2-5-16: The linear condition that you added in  (\ref{eq:linearfk1hat}) does appear to be used later.  When I revised this section, it led to a lengthening of the proof.}}

We will make use of the estimates
\begin{equation}  \label{eq:fhat14est}
|\widehat{f}_{k+1}(x)| < N \ea, \quad |\widehat{f}_{k+2}(x)| < N \ea,  \quad \mbox{for $x\in [-1,-1/4]$}
\end{equation}
that follow from (\ref{eq:estfk1hat}), (\ref{eq:estfk1hatnext}), and the corresponding estimate for the $f^U$ from (3) of Corollary \ref{cor:summary}.

\begin{figure}
\labellist
\small
\pinlabel $f^U_k$ [l] at 242 272
\pinlabel $f^U_{k+1}$ [l] at 242 192
\pinlabel $f^U_{k+2}$ [l] at 242 0
\pinlabel $\widehat{f}_{k+1}$ [br] at 32 42
\pinlabel $\widehat{f}_{k+2}$ [bl] at 162 34
\endlabellist
\centerline{ \includegraphics[scale=.6]{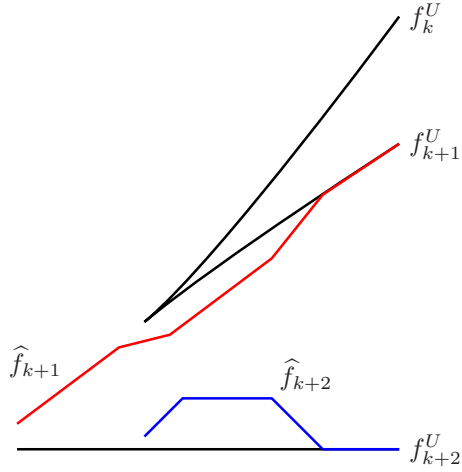} }
%\centerline{ \includegraphics[scale=.8]{images/SubdivideSq} }
\caption{Schematic graphs of $f^U_k$, $f^U_{k+1}$, $\widehat{f}_{k+1}$ (red), $\widehat{f}_{k+2}$ (blue), and $f_{k+2}$ with $f_{k+2}$ pictured as $0$.}
\label{fig:fHat}
\end{figure}

\subsubsection{Definition of the $f^{ST}$}

To begin, fix a bump function $\psi:[-1,1] \rightarrow [0,1]$  related to the cutoff function $\phi$ from Section \ref{ssec:Two-skeleton} by
\[
\psi(x)= \left\{ \begin{array}{cr} \phi(x), & x \leq-R_1/2 \\ 1-\phi(x), & x \geq +R_1/2  \end{array} \right.
\]
 and with a single local maximum on $[-1/4,1/4]$ at $x=0$.  In addition, we require that $\psi$ is an even function satisfying the  bounds
 \begin{equation}
\label{eq:psibounds}
0 \leq \psi(x) \leq \mbox{Min}\{\phi(x), 1-\phi(x)\}, \quad |\psi'(x)| <3,  \quad \mbox{and} \quad |\psi''_{[-1/16,1/16]}| <65.
\end{equation}
See Figure \ref{fig:phi}.  [To obtain $\psi(x)$, set $\psi(x) = \phi(x)$ for $x \leq-R_1/2 = -1/32$.  Then, interpolate the derivative of $\psi(x)$, which is $2$ at $x= -1/32$, to agree with $-x$
 in a neighborhood of $x=0$.  If the interpolation is done in a manner that is roughly linear, then the bound $|\psi''(x)| <65$ will hold.  Finally, extend $\psi(x)$ to be an even function and note that $\psi(x) =1-\phi(x)$ for $ x \geq R_1/2$ follows from (\ref{eq:InterpolatePhi}).]

\begin{lemma} \label{lem:flst}
There exists a function  $f^{ST}_k: [-1,1] \rightarrow \R$ satisfying
\begin{align}
& f^{ST}_k(x) - f^L_{k+1}(x) = 1.5 Q_{\pm}(x),   & \mbox{  for $|x-(\pm1)| \leq 1/16$,} \label{eq:STk1111} \\
& f^{ST}_{k}(x) := f^{L}_k(x)+ \psi(x)[ \widehat{f}_{k+2}(-3/8) - f^U_{k+1}(-3/8)]  & \mbox{  on $[-3/8,3/4]$}; \label{eq:STk1434}  
\end{align}
with
\begin{equation} \label{eq:flstc0}
\forall x \in [-1,-1/4] \cup[3/4,1],  \quad f^L_k(x) \leq f^{ST}_k(x) \leq f^L_{k-1}(x);
\end{equation}
\begin{equation} \label{eq:propflst}
\forall x \in (-1,-1/4], \quad (f^L_{k-1} -f^{ST}_k)'(x) >0, \quad \mbox{and  } (f^{ST}_{k}-f^L_{k+1})'(x) >0; 
\end{equation}
\begin{equation}  \label{eq:propflst2}
\forall x \in [3/4,1), \quad (f^L_{k-1} -f^{ST}_k)'(x) <0, \quad \mbox{and  } (f^{ST}_{k}-f^L_{k+1})'(x) <0;
\end{equation}
\begin{equation} \label{eq:C0flst}
||f^{ST}_{k}- f^L_k||_{C^0([-1,-1/4])} < 2 N \ea, \quad \mbox{and} \quad ||f^{ST}_{k}- f^L_k||_{C^0([3/4,1])} < 2 N \ea.
\end{equation}
 \end{lemma}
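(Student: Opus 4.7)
The plan is to build $f^{ST}_k$ in three pieces. Near the corners $x = \pm 1$, formula (\ref{eq:STk1111}) forces $f^{ST}_k = f^L_{k+1} + 1.5 Q_{\pm}$; I will take this as the definition on the collars $[-1,-15/16]$ and $[15/16,1]$. On the middle interval $[-3/8, 3/4]$, formula (\ref{eq:STk1434}) gives $f^{ST}_k$ directly. What remains is to extend smoothly across the two transition intervals $[-15/16,-3/8]$ and $[3/4, 15/16]$ in a way that preserves the monotonicity of $f^{ST}_k - f^L_{k+1}$ and $f^L_{k-1}-f^{ST}_k$ demanded by (\ref{eq:propflst})--(\ref{eq:propflst2}).

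Before interpolating, I record some compatibility checks. Since $\psi(-3/8) = \phi(-3/8) = 0$ and $\psi(3/4) = 1-\phi(3/4)=0$ by (\ref{eq:InterpolatePhi}) and the definition of $\psi$, formula (\ref{eq:STk1434}) yields $f^{ST}_k(-3/8)=f^L_k(-3/8)$ and $f^{ST}_k(3/4) = f^L_k(3/4)$; elsewhere on $[-3/8,3/4]$ the formula gives $f^{ST}_k - f^L_k = \psi \cdot C$, where $C := \widehat{f}_{k+2}(-3/8)-f^U_{k+1}(-3/8)$ satisfies $|C|<2N\ea$ by (\ref{eq:fhat14est}) together with Corollary \ref{cor:summary}(3); so this piece is manifestly within $2N\ea$ of $f^L_k$. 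On the corner collars, the identity $f^L_k - f^L_{k+1} = Q_\pm$ from Corollary \ref{cor:summary}(4) (valid since the $L$ edge is (PV)) combined with (\ref{eq:STk1111}) gives $f^{ST}_k - f^L_k = 0.5 Q_\pm$, a nonnegative quantity bounded above by $(65/128)\ec$, which is much less than $\ea$; this confirms (\ref{eq:flstc0}) and (\ref{eq:C0flst}) on the already-defined pieces.

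For the interpolation I plan a single application of Lemma \ref{lem:thetechlem} on $[-1,-3/8]$ with $f = f^{ST}_k - f^L_{k+1}$ and $g = f^L_{k-1} - f^L_{k+1}$. The key observation is that $g'(x) > 0$ on $(-1, -1/4]$ by Proposition \ref{prop:PV1Cr2Cr}(2), and the derivative sandwich $0 < f' < g'$ is \emph{exactly equivalent} to the two inequalities of (\ref{eq:propflst}). On the collar $[-1,-15/16]$ where $f$ is already defined, the inequality $0 < f' < g'$ is checked directly from the explicit formulas. The $C^0$ hypothesis of the lemma, $0 < f(-3/8) - f(-1) < g(-3/8) - g(-1)$, reduces to the comparison of $f^L_{k,k+1}(-3/8) - 1.5\ec$ with $f^L_{k-1,k+1}(-3/8) - 2\ec$, both of which are controlled by the scale separation $\ec \ll \ea$ imposed in (\ref{eq:epsilon3}) and positivity of the $L$-edge differences. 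An analogous application on $[3/4, 1]$, with $g$ replaced by $-(f^L_{k-1} - f^L_{k+1})$ so that the lemma's positivity hypotheses apply to the orientation-reversed problem, delivers (\ref{eq:propflst2}).

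Once interpolation is complete, the remaining assertion (\ref{eq:C0flst}) on $[-15/16,-3/8]$ and $[3/4, 15/16]$ follows because $f^L_{k+1} < f^{ST}_k < f^L_{k-1}$ forces $|f^{ST}_k - f^L_k| \leq \max(|f^L_{k-1,k}|, |f^L_{k,k+1}|)$, and both differences are bounded by $N\ea$ on $[-1,-1/4]\cup[3/4,1]$ via Corollary \ref{cor:summary}(3). The main subtlety I anticipate is the orientation reversal on $[3/4, 1]$: Lemma \ref{lem:thetechlem} is stated only with $g' > 0$, so to obtain monotonicity with reversed signs I will apply the lemma to a reparametrization (e.g.\ $x \mapsto -x$ composed with a translation) and then transport the result back. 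The remaining items --- strict inequality at $x=-3/8$ in (\ref{eq:flstc0}), absence of interior critical points of $f^{ST}_k - f^L_{k\pm 1}$ in the transition regions, and so on --- are by-products of the strict inequalities enforced at the endpoints together with the derivative bounds inherited from the lemma.
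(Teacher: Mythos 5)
There is a genuine gap: your interpolation controls $f^{ST}_k$ only relative to $f^L_{k+1}$ and $f^L_{k-1}$, and that is too loose for two of the five conclusions. Sandwiching $f = f^{ST}_k - f^L_{k+1}$ by $0 < f' < g'$ with $g = f^L_{k-1}-f^L_{k+1}$ does deliver (\ref{eq:propflst})--(\ref{eq:propflst2}) and the bound $f^L_{k+1} < f^{ST}_k < f^L_{k-1}$, but it does \emph{not} give the lower bound $f^L_k \leq f^{ST}_k$ required in (\ref{eq:flstc0}), nor the estimate (\ref{eq:C0flst}) on the right-hand transition interval. Indeed, on $[3/4,7/8]$ the derivative constraint $g' < f' < 0$ leaves room for $f'$ to differ from $d_x f^L_{k,k+1}$ in either direction, so $f^{ST}_k - f^L_k = f - f^L_{k,k+1}$ can dip negative or grow to order $1$: at $x=3/4$ one has $f(3/4) = f^L_{k,k+1}(3/4) \approx 1$ while $f(15/16) \approx 1.5\ec$, and nothing in your scheme prevents $f$ from staying near $1$ until close to $15/16$, which would make $\|f^{ST}_k - f^L_k\|_{C^0([3/4,1])}$ of order $1 \gg 2N\ea$. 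Your stated justification for (\ref{eq:C0flst}) is also incorrect as written: Corollary \ref{cor:summary}(3) bounds $|f^L_{k-1,k}|$ and $|f^L_{k,k+1}|$ by $N\ea$ only on $[-1,-1/4]$, whereas on $[3/4,1]$ these differences are approximately $y_{i,j}\approx 1$ near $x=3/4$ (they only shrink to size $\ec$ near $x=1$). The left-hand transition interval is unaffected by this objection, since there the corollary does apply.

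The repair is to interpolate the difference with $f^L_k$ itself rather than with $f^L_{k+1}$, which is what the paper does: on $[3/4,7/8]$ extend $f^{ST}_k - f^L_k$ from its value $0$ at $3/4$ subject to $0 \leq (f^{ST}_k-f^L_k)' < (f^L_{k+1}-f^L_k)'$, and on $[-7/8,-3/8]$ extend $f^L_k - f^{ST}_k$ subject to $0 \leq (f^L_k-f^{ST}_k)' < d_x f^L_{k,k+1}$. These sandwiches pin $f^{ST}_k - f^L_k$ into $[0, O(\ec)]$ on the transition intervals (giving both the lower bound of (\ref{eq:flstc0}) and (\ref{eq:C0flst})), and adding $d_x f^L_k$, respectively $d_x f^L_{k-1,k}$, to the derivative inequalities recovers (\ref{eq:propflst}) and (\ref{eq:propflst2}). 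Your treatment of the collars, the $C^0$ hypothesis of Lemma \ref{lem:thetechlem} via the scale separation $\ec \ll \ea$, and the orientation reversal on $[3/4,1]$ are all fine and carry over to the corrected choice of difference.
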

\begin{proof}
We need to show that the definition of $f^{ST}_k$ resulting from (\ref{eq:STk1111}) and (\ref{eq:STk1434}) can be extended over $[-15/16,-3/8] \cup [3/4,15/16]$ while retaining the properties (\ref{eq:flstc0})-(\ref{eq:C0flst}).
%\footnote{\ms{3/7/16: Should be (\ref{eq:flstc0})-(\ref{eq:C0flst})}}

\medskip

%\noindent {\bf Extending over $[-15/16,-1/4]$:}   We construct $f^{ST}_k - f^L_{k+1}$, and then set $f^{ST}_k = f^L_{k+1} + (f^{ST}_k - f^L_{k+1})$.  
%Notice that the inequalities in (\ref{eq:propflst}) are equivalent to the inequality
%\[
%0 < d_x(f^{ST}_k - f^L_{k+1})(x) < d_x(f^L_{k-1}(x) - f^L_{k+1}(x)).
%\]
%Thus, to extend the definition of $f^{ST}_k - f^L_{k+1}$ along $[-15/16,-1/4]$ to obtain (\ref{eq:propflst}) via Lemma \ref{lem:thetechlem} we just need to observe that
%\[
%0 < (f^{ST}_k - f^L_{k+1})(-1/4) - (f^{ST}_k - f^L_{k+1})(-15/16) = f^L_{k,k+1}(-1/4) - 1.5 Q_-(-15/16) = .1\ea - 1.5(257/256) \ec
%\]
%and the latter is smaller than
%\[
%f^L_{k-1,k+1}(-1/4)- f^L_{k-1,k+1}(-15/16) = .2 \ea - 2(257/256) \ec
%\]
%(since $\ec << \ea$ as in \dr{???}).
%To verify the first inequality of (\ref{eq:C0flst}), we note that 
%\[
%f^L_{k+1}(-1) < f^{ST}_k(-1) < f^L_{k-1}(-1) \quad \mbox{and} \quad \forall x \in (-1, -1/4], \,\, d_xf^L_{k+1}(x) < d_xf^{ST}_k(x) < d_xf^L_{k-1}(x)
%\]
%hold via Corollary \ref{cor:summary} (4) and (\ref{eq:propflst}).  Thus, 
%\[
%\forall x \in (-1, -1/4], \,\, f^L_{k+1}(x) < f^{ST}_k(x) < f^L_{k-1}(x),
%\]
%so that for any $x \in [-1,1/4]$ we have
%\[
%|f^{ST}_{k}(x)- f^L_k(x)| \leq |f^{ST}_k(x)| + |f^L_k(x)|\leq \mbox{Max}\left(|f^{L}_{k-1}(x)|,|f^{L}_{k+1}(x)|\right) + |f^L_k(x)| \leq N\ea + N\ea. 
%\]
%[The last inequality is from Corollary \ref{cor:summary} (3).]

\noindent {\bf Extending over $[3/4,15/16]$:}
%To arrange (\ref{eq:C0flst}) in $[3/4,1]$ requires slightly more care.  
For small $\delta >0$, fix  $\beta:[7/8,1]\rightarrow \R$ with 
\[
\beta \equiv -\delta \,\, \mbox{near $7/8$}, \quad \beta \equiv .5 \,\, \mbox{on $[15/16,1]$},
\] 
\[
\beta' \geq 0, \quad ||\beta'||_{C^0} < 9. 
\] 
In $[3/4,1]$, we will define $f^{ST}_k-f^L_{k}$.  Start by setting
\begin{equation} \label{eq:fSTbeta78}
(f^{ST}_k-f^L_{k})(x) = \beta(x) \ec (x-1)^2+.5 \ec,  \quad \forall x\in [7/8,1]. 
\end{equation} 
Notice that, since $Q_+(x) = \ec (x-1)^2 + \ec$, (\ref{eq:STk1111}) holds.  Moreover, for $x\in[7/8,1]$ we can verify (\ref{eq:propflst2}) by using Corollary \ref{cor:summary} (4) to compute
\begin{align*}
(f^L_{k-1}-f_k^{ST})(x) & = (1-\beta(x))\ec(x-1)^2 +.5 \ec; \\
(f_k^{ST}-f^L_{k+1})(x) & = (1+\beta(x)) \ec(x-1)^2+1.5 \ec.
\end{align*}
Then, estimate for $x \in [7/8,1)$,
\[
(f^L_{k-1}-f_k^{ST})'(x) = -\beta'(x) \ec(x-1)^2 + (1-\beta(x)) 2 \ec(x-1) < 0 + 0;
\]
\[
(f_k^{ST}-f^L_{k+1})'(x) = \beta'(x)\ec(x-1)^2 + (1+\beta(x))2 \ec(x-1) \leq ||\beta'||_{C^0} \ec (x-1)^2 + (1-\delta)2 \ec(x-1) \leq
\]
\[
(x-1)\ec \left(9 (x-1) + (1-\delta)2\right) <0 
\]
where the last inequality requires that $\delta < 7/16$.

With $f^{ST}_k-f^L_{k}$ now defined on $[7/8,1]$, we extend it to $[1/2,1]$ using  Lemma \ref{lem:thetechlem} to agree with $0$ on $[1/2,3/4]$ and to meet the requirements
\begin{equation} \label{eq:streq}
0 \leq (f^{ST}_k-f^L_k)'(x) < (f^L_{k+1}-f^L_{k})'(x).
\end{equation}
 To apply Lemma \ref{lem:thetechlem}, first extend $f^{ST}_k-f^L_{k}$ slightly past $3/4$ to make the first inequality strict, and then verify the two requirements:
\begin{enumerate}
\item  That (\ref{eq:streq}) already holds for $x$ near $3/4$ and $7/8$.  
This is straightforward since  $d_xf^L_{k,k+1} < 0$ near $3/4$ (by Proposition \ref{prop:PV1Cr2Cr}) 
and near $7/8$ Corollary \ref{cor:summary} (4) and (\ref{eq:fSTbeta78}) give explicitly
\[
0 < (f^{ST}_k-f^L_k)'(x) = -2 \delta \ec(x-1) < -2 \ec(x-1) = (f^L_{k+1}-f^L_{k})'(x).
\]
\item  The other required inequality is
\[
0 <  (f^{ST}_k-f^L_k)(7/8)- (f^{ST}_k-f^L_k)(3/4) < (f^L_{k+1}-f^L_{k})(7/8)-(f^L_{k+1}-f^L_{k})(3/4). 
\]
We have
\begin{equation} \label{eq:otherreq}
(f^{ST}_k-f^L_k)(7/8)- (f^{ST}_k-f^L_k)(3/4) = -\delta \ec(-1/8)^2+ .5\ec
\end{equation}
which is positive provided $\delta$ is sufficiently small and
\[
(f^L_{k+1}-f^L_{k})(7/8)-(f^L_{k+1}-f^L_{k})(3/4)   \geq -(65/64)\ec +y^L_{k,k+1} - N\ea > .5 
\]
where we bounded below the second term using Proposition \ref{prop:PV1Cr2Cr} (2).
\end{enumerate}

With the definition now complete, we note that (\ref{eq:streq}) implies both inequalities from (\ref{eq:propflst2})
 for $x \in [3/4,7/8]$.  [Adding $f^L_k$ everywhere gives
\[
d_xf^L_k(x) \leq d_xf^{ST}_k(x) < d_xf^L_{k+1}(x).
\]
Multiplying the first inequality by $-1$ and adding $d_xf^L_{k-1}$ gives
\[
(f^L_{k-1}-f^{ST}_k)'(x) \leq d_xf^L_{k-1,k}(x) < 0,
\]
and subtracting $d_xf^L_{k+1}(x)$ from the second inequality gives $(f^{ST}_k-f^L_{k+1})'(x) < 0$.]
To check the second inequality of (\ref{eq:C0flst}) note that, by (\ref{eq:streq}), $(f^{ST}_k-f^L_k)|_{[3/4,7/8]}$ has its maximum absolute value at $7/8$ and the required bound holds in $[7/8,1]$ by (\ref{eq:fSTbeta78}).  

Finally, we check (\ref{eq:flstc0}).  For small enough $\delta > 0$, check (\ref{eq:flstc0}) directly in $[7/8,1]$ using  (\ref{eq:fSTbeta78}) and the equation that immediately follows it.  To verify that $f_k^L \leq f^{ST}_k$ holds on  $[3/4,7/8]$ use the first inequality from (\ref{eq:streq}) together with $(f^{ST}_k-f^L_k)(3/4) = 0$.  To verify that $f^{ST}_k \leq f^L_{k-1}$ holds on $[3/4,7/8]$, use that $(f^L_{k-1}-f^{ST}_k)' <0$ and that the inequality holds at $x=7/8$.

\medskip

\noindent {\bf Extending over $[-15/16, -3/8]$:} Using a similar procedure, set
\[
(f^{ST}_k-f^L_{k})(x) = \beta(-x) \ec (x+1)^2+.5 \ec,  \quad \forall x\in [-1,-7/8]. 
\]
Then, extend for $x \in [-7/8,-3/8]$ in a manner that satisfies
\[
0 \leq (f^L_k-f^{ST}_k)'(x) < d_xf^L_{k,k+1}(x).
\]
The verification that this can be done and that the required properties of $f^{ST}_k$ all hold is similar to the above.
Here, we only mention that the definition of $f^L_{k,k+1}$ in (\ref{eq:initialdef}) may be consulted to obtain the inequality
\[
f^L_{k,k+1}(-3/8)-f^L_{k,k+1}(-7/8) > [f^L_{k}-f^{ST}_k](-3/8) - [f^L_{k}-f^{ST}_k](-7/8) >0
\]
that is required for the application of Lemma \ref{lem:thetechlem}.

%The extension of $f^{ST}_k - f^L_{k+1}$ to $[3/4,15/16]$ is handled in a similar manner:  It suffices to arrange
%\[
%0 > d_x(f^{ST}_k - f^L_{k+1})(x) > d_x(f^L_{k-1}(x) - f^L_{k+1}(x)).
%\]
%To use Lemma \ref{lem:thetechlem}, we check
%\[
%0 < (f^{ST}_k - f^L_{k+1})(15/16) - (f^{ST}_k - f^L_{k+1})(3/4) =  1.5 Q_+(15/16)-f^L_{k,k+1}(3/4)  = 1.5(257/256) \ec - (1+(2/3)\ea)
%\] 
%and the latter is less negative than
%\[
% f^L_{k-1,k+1}(15/16)- f^L_{k-1,k+1}(3/4) = 2(257/256) \ec - 2(1+(2/3)\ea).
%\]
\end{proof}

With $f^{ST}_k$ defined as in Lemma \ref{lem:flst}, we complete the definition of the $f^{ST}$ by setting
\begin{align}
%& f^{ST}_{k}(x) := f^{L}_k(x)+ \psi(x)[ \widehat{f}_{k+2}(-3/8) - f^U_{k+1}(-3/8)]  & \mbox{  on $[-1/4,3/4]$}; \label{eq:STk1434}\\
& f^{ST}_{k+2} := f^{ST}_k & \mbox{  on $[-1,-1/4]$}; \label{eq:STk2114}\\
& f^{ST}_{k+2} := f^{L}_k & \mbox{  on $[-1/4,1]$}; \label{eq:STk214} \\
& f^{ST}_{k+1} := f^{L}_k & \mbox{  on $[-1,3/4]$; and} \label{eq:STk134} \\
& f^{ST}_{k+1} := f^{ST}_k & \mbox{  on $[3/4,1].$} \label{eq:STk1341}
\end{align}
Note that $\psi$ vanishes near outside of  $(-1/4,1/4)$ so that the definitions of $f^{ST}_{k+1}$ and $f^{ST}_{k+2}$ piece together smoothly.

\subsubsection{Definition of $2$-variable defining functions in $I^2\setminus O_1$}  
With these $1$-variable functions in hand, we now give $2$-variable functions 
\[\begin{array}{rl}
G_k,G_{k+1},G_{k+2}: & \{x_2 \geq -3/8\} \setminus \mbox{Int}(O_1) \rightarrow \R,\\
  H: & \{x_2 \leq -3/8\} \setminus \mbox{Int}(O_1) \rightarrow \R,
  \end{array}
\] whose graphs will together form the swallow tail sheets outside of $O_2$.  The definitions are as follows:
%\footnote{\ms{2/26/15: How are $G_{k+1},G_{k+2}$ well-defined at $x_2 = 0?$ For example, in $G_{k+1},$
%why does $\widehat{f}_{k+2}(x_1) + f^U_{k+1}(x_1) = f^D_{k+2}(x_1)+ \widehat{f}_{k+1}(x_1)?$} \dr{10/17:  Changed to state the domain of the $G_l$ and $H$ up front.}}

\begin{align} \label{eq:Gkdef}
G_k(x_1,x_2) = (1-\phi(x_1)) f^{ST}_{k}(x_2)+ \phi(x_1) f^R_{k}(x_2) + f^U_k(x_1).
\end{align}
\begin{align} \label{eq:Gk1def}
G_{k+1}(x_1,x_2) = & (1- \phi(x_1)) f^{ST}_{k+1}(x_2) + \phi(x_1) f^R_{k+1}(x_2) + \\
 &  (1-\phi(x_2))\left\{ \begin{array}{cr}  \widehat{f}_{k+2}(x_1), & \mbox{if $x_2 \geq 0$} \\ f^D_{k+2}(x_1), & \mbox{if $x_2 \leq 0$}, \end{array} \right. + \phi(x_2) \left\{ \begin{array}{cr}  f^U_{k+1}(x_1), & \mbox{if $x_2 \geq 0$} \\ \widehat{f}_{k+1}(x_1), & \mbox{if $x_2 \leq 0$}. \end{array} \right. \nonumber
\end{align}
\begin{align} \label{eq:Gk2def}
G_{k+2}(x_1,x_2) = & (1- \phi(x_1)) f^{ST}_{k+2}(x_2) + \phi(x_1) f^R_{k+2}(x_2) + \\
 &  (1-\phi(x_2)) \left\{ \begin{array}{cr} \widehat{f}_{k+1}(x_1), & \mbox{if $x_2 \geq 0$} \\ f^D_{k+1}(x_1), & \mbox{if $x_2 \leq 0$} \end{array} \right. + \phi(x_2) \left\{ \begin{array}{cr} f^U_{k+2}(x_1), & \mbox{if $x_2 \geq 0$} \\ \widehat{f}_{k+2}(x_1), & \mbox{if $x_2 \leq 0$}. \end{array} \right.  \nonumber
\end{align}
\begin{equation}
\label{eq:Hdef}
H(x_1,x_2) = (1- \psi(x_2))f^U_{k+2}(x_1) + \psi(x_2) \widehat{f}_{k+1}(x_1) + f^L_{k}(x_2).
\end{equation}
%where $\psi(x) = \left\{ \begin{array}{cr} \phi(x), & x \leq-\beta \\ 1-\phi(x), & x \geq +\beta  \end{array} \right.$ and has a single local maximum on $[-1/4,1/4]$ at $x=0$.  (Visualize $\psi$ as an even bump function that is $0$ outside of $[-1/4,1/4]$ and about $1/2$ at $0$.)
%%We form the front projection defined above $I^2\setminus O_1$ by taking the union of the graphs of
%%\begin{itemize}
%%\item $G_{k}, G_{k+1}, G_{k+2}$ in $\{x_2 \geq -3/8\} \setminus O_1$ and
%%\item $H$ in $\{x_2 \leq -3/8\} \setminus O_1$.
%%\end{itemize}
(Note that $G_{k+1}$ and $G_{k+2}$ are well defined and smooth along $x_2=0$, 
since $\widehat{f}_{k+1}(x_1) = f^U_{k+1}(x_1) = f^D_{k+1}(x_1)$ and $\widehat{f}_{k+2}(x_1) = f^U_{k+2}(x_1) = f^D_{k+2}(x_1)$ for $x_1 \geq -3/8 + R_1$.  Also, the definition of $H$ makes sense for all $(x_1,x_2) \in [-1,1]^2$.)  We follow our usual notation by setting $G_{i,j} = G_i-G_j$ for $i,j \in \{k,k+1,k+2\}$.

The graphs of the $G_l$ and $H$ fit together to form the front projection of a smooth Legendrian above $[-1,1]\setminus \mbox{Int}(O_1)$ with features described in the following Proposition.  See Figure \ref{fig:GDomains}.

% The respective domains of $F_k, F_{k+1}, F_{k+2},$ and $G$.  The circle denotes $\partial O_1$.  The vertical solid lines at $x_1= 3/8$ denote cusp edges, with the $+$ or $-$ indicating whether the function is the upper or lower sheet.  The dotted horizontal lines $A$ and $B$ are at $x_2 = + \beta$ and $x_2 = -\beta$ where $0< \beta < R_1$ can be specified at some point.  The two functions defined at $A$ should agree in a neighborhood of $A$; the same statement applies to $B$.
%COMMENT ON DOMAINS The domains as in Figure \ref{fig:Domains}, according to the formulas:

\begin{figure}
\labellist
\small
\pinlabel $G_k=G_{k+1}$ [r] at -2 104
\pinlabel $G_k=G_{k+2}$ [r] at -2 24
\pinlabel $G_{k+1}=G_{k+2}$ [b] at 104 66
\pinlabel $H=G_{k+2}$ [l] at 234 104
\pinlabel $H=G_{k+1}$ [l] at 234 24
\endlabellist
\centerline{ \includegraphics[scale=.8]{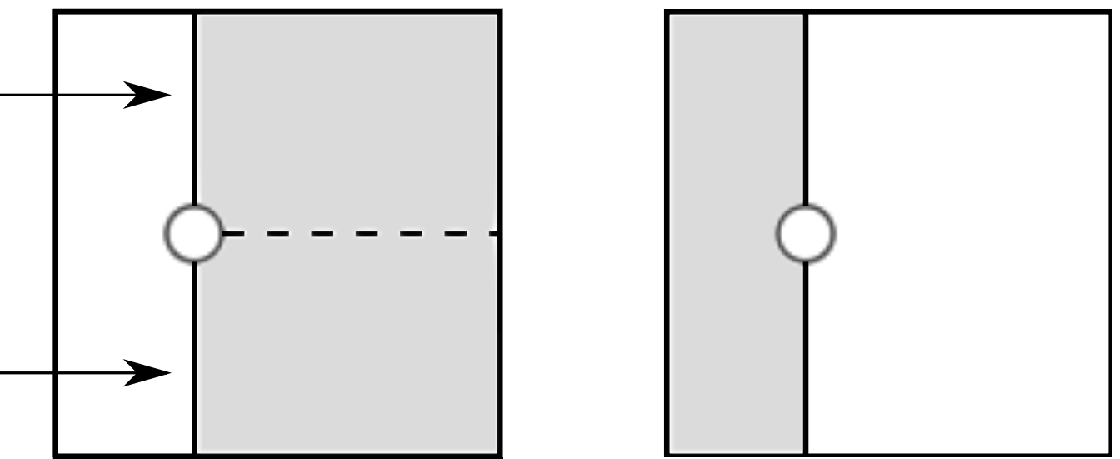} }
%\centerline{ \includegraphics[scale=.8]{images/SubdivideSq} }
\caption{The shaded region in the left (resp. right) square indicates the portion of $[-1,1]^2$ where the graphs of $G_{k},G_{k+1},$ and $G_{k+2}$ (resp. $H$) are used.  The agreement of various functions along parts of the lines $x_1=-3/8$ and $x_2=0$ is indicated.  }
\label{fig:GDomains}
\end{figure}

\begin{lemma}  \label{lem:GkGk1}
With $G_k,G_{k+1},G_{k+2},H$ defined as above,
\begin{itemize}
\item[(1)] The sheets defined by $G_{k}$ and $G_{k+1}$ (resp. $G_{k}$ and $G_{k+2}$)  line up at a cusp edge along $\{x_1=-3/8\} \cap \{x_2 \geq R_1\}$ (resp. along $\{x_1=-3/8\} \cap \{x_2 \leq -R_1\}$).  Moreover, for $x_1 \leq -1/4$ and $|x_2| \leq 1/4$, these are the only points (in the  domain of definition) where $G_{k,k+1}=0$ (resp. $G_{k,k+2}=0$).  
\item[(2)] The sheets defined by $H$ and $G_{k+1}$ (resp. $H$ and $G_{k+2}$) fit together smoothly along  $\{x_1=-3/8\} \cap \{x_2 \leq -R_1\}$ (resp. along $\{x_1=-3/8\} \cap \{x_2 \geq R_1\}$).
\item[(3)] The sheets defined by $G_{k+1}$ and $G_{k+2}$ cross along $\{x_1 \geq R_1\} \cap \{x_2=0\}$.  Moreover, for $x_1 \leq -1/4$, these are the only points (in the  domain of definition) where $G_{k+1,k+2}=0$.
%\item[(4)]  Along $\{x_1 = -3/8\} \cap \{x_2 \geq R_1\}$  (resp. $\{x_1 = -3/8\} \cap \{x_2 \leq -R_1\}$ )  $G_{k,k+2} = G_{k+1,k+2} >0$ (resp. $G_{k,k+1}= G_{k+2,k+1}>0$).
\end{itemize}
\end{lemma}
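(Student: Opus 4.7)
The plan is to verify each of (1), (2), (3) by direct substitution into the formulas (\ref{eq:Gkdef})--(\ref{eq:Hdef}), exploiting two structural features: first, $\phi \equiv 0$ on $[-1,-1/4]$ so $\phi(-3/8) = 0$ and all $x_1$-derivatives of $\phi$ vanish at $x_1 = -3/8$; second, $\psi(x_2) = 1 - \phi(x_2)$ whenever $|x_2| \geq R_1/2$, which causes the interpolation weights in $H$ and in $G_{k+1}, G_{k+2}$ to line up on the relevant portions of the cusp line.

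For (1), I would first evaluate $G_{k}(-3/8, x_2) - G_{k+1}(-3/8, x_2)$ for $x_2 \geq R_1$. Using $\phi(-3/8) = 0$, the identity $f^{ST}_{k+1} \equiv f^L_k$ on $[-1,3/4]$ from (\ref{eq:STk134}), the explicit formula (\ref{eq:STk1434}) for $f^{ST}_k$, and $\psi(x_2) = 1-\phi(x_2)$, all $x_2$-dependent terms cancel and the difference collapses to $f^U_k(-3/8) - f^U_{k+1}(-3/8) = 0$ by the cusp identity in Proposition \ref{prop:CuDef} (8'). The semicubical cusp structure follows because on a neighborhood of $\{x_1 = -3/8, x_2 \geq R_1\}$ the $x_1$-dependence of $G_k$ and $G_{k+1}$ reduces to $f^U_k(x_1)$ and $f^U_{k+1}(x_1)$, which are $L(x_1) \pm (x_1+3/8)^{3/2}$ by Proposition \ref{prop:CuDef} (8'). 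The $G_{k,k+2}$ case for $x_2 \leq -R_1$ is parallel, using the $x_2 \leq 0$ branches of (\ref{eq:Gk2def}). For the uniqueness claim on $\{x_1 \leq -1/4, |x_2| \leq 1/4\}$, I would write out $G_{k,k+1}(x_1,x_2)$ explicitly (using $\phi(x_1) = 0$) and differentiate in $x_1$, then combine the cubical cusp expansion of $f^U_{k,k+1}$ with the $C^0$ bound $\|f^{ST}_k - f^L_k\|_{C^0} < 2N\varepsilon_1$ from (\ref{eq:C0flst}) and the estimates (\ref{eq:estfk1hat})--(\ref{eq:estfk1hatnext}) to show $G_{k,k+1}$ has the same zero set as $f^U_{k,k+1}$ in this region.

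For (2), I would check that $G_{k+2}(-3/8, x_2) = H(-3/8, x_2)$ and $\partial_{x_1} G_{k+2}(-3/8, x_2) = \partial_{x_1} H(-3/8, x_2)$ for $x_2 \geq R_1$ (and the symmetric pair on $x_2 \leq -R_1$). After substitution the first uses $\phi(-3/8) = 0$, $\psi(x_2) = 1-\phi(x_2)$, and $f^{ST}_{k+2} \equiv f^L_k$ on $[-1/4,1]$ from (\ref{eq:STk214}); the second additionally uses $\phi'(-3/8) = 0$ so that the $\phi'$ contributions in $\partial_{x_1}G_{k+2}$ vanish and only the $(1-\phi(x_2))\widehat{f}'_{k+1} + \phi(x_2)(f^U_{k+2})'$ contribution survives, matching $\partial_{x_1} H$ exactly. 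Smoothness to all orders in $x_1$ at $x_1 = -3/8$ follows because $\phi$ is identically $0$ on a neighborhood of $x_1 = -3/8$, so the two interpolation formulas agree to all orders along $\{x_1 = -3/8\}$.

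For (3), the equality $G_{k+1}(x_1,0) = G_{k+2}(x_1,0)$ reduces, using $\phi(0) = 1/2$ and $\widehat{f}_{k+j}(x_1) = f^U_{k+j}(x_1)$ for $x_1 \geq -3/8 + R_1/2$ from (\ref{eq:hatk1})--(\ref{eq:hatk2}), to the two facts $f^{ST}_{k+1}(0) = f^{ST}_{k+2}(0)$ (both equal $f^L_k(0)$ by (\ref{eq:STk214}), (\ref{eq:STk134})) and $f^R_{k+1}(0) = f^R_{k+2}(0)$ (the unique $(1\mathrm{Cr})$ crossing at $x = 0$ from Proposition \ref{prop:PV1Cr2Cr} (1)). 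Transversality of the crossing will follow from computing $\partial_{x_2}(G_{k+1} - G_{k+2})(x_1, 0)$ and reducing it to $d_x f^R_{k+1,k+2}(0)$ which is nonzero at the transverse crossing. For uniqueness of zeros on $x_1 \leq -1/4$, I expect this to be the main obstacle: write $G_{k+1,k+2}$ on $\{-3/8 \leq x_1 \leq -1/4\} \times \{0 \leq x_2 \leq 1/4\}$ explicitly and show $\partial_{x_1} G_{k+1,k+2} \geq 0$ using the delicate inequality (\ref{eq:phix2}) supplied by Lemma \ref{lem:fhatexist}, then show $G_{k+1,k+2}(-3/8, x_2) > 0$ using the strict inequalities (\ref{eq:estfk1hat}) together with the cusp identity $f^U_{k+1}(-3/8) = f^U_{k+2}(-3/8) + \text{(positive)}$. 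The case $-1/4 \leq x_2 \leq 0$ is handled by the symmetric argument with $\widehat{f}_{k+1}$ and $\widehat{f}_{k+2}$ exchanged in the roles they play. The hard part is that the two $\widehat{f}$ functions were constructed asymmetrically in Lemma \ref{lem:fhatexist}, so one must be careful not to conflate the $x_2 \geq 0$ and $x_2 \leq 0$ arguments.
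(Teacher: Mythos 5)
Your verification of part (2) and of the ``existence'' halves of (1) and (3) -- the cancellation at $x_1=-3/8$ coming from $\phi(-3/8)=0$ and $\psi=1-\phi$ for $x_2\geq R_1/2$, the reduction of the $x_1$-dependence of $G_{k+1}$ to $f^U_{k+1}(x_1)$ near the cusp via (\ref{eq:hatk2}), and the crossing identity at $x_2=0$ from $\phi(0)=1/2$ and the (1Cr) crossing of $f^R_{k+1,k+2}$ -- is correct and is essentially the paper's argument. The problems are in the uniqueness halves, which is where the real content of the lemma lies.

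For (1), the target ``$G_{k,k+1}$ has the same zero set as $f^U_{k,k+1}$'' is wrong on $x_2\leq 0$: there the branches of (\ref{eq:Gk1def}) replace $f^U_{k+1}$ by $f^D_{k+2}=f^U_{k+2}$ and $\widehat{f}_{k+1}$, so $G_{k,k+1}$ is built from $f^U_{k,k+2}$ and $f^U_k-\widehat{f}_{k+1}$ and must have \emph{no} zeros along $\{x_1=-3/8,\ x_2\leq -R_1\}$, whereas $f^U_{k,k+1}$ vanishes on all of $\{x_1=-3/8\}$. Moreover a monotonicity argument anchored at $x_1=-3/8$ needs the value there, and the naive pairing of $f^{ST}_{k,k+1}(x_2)=\psi(x_2)C$ (with $C=\widehat{f}_{k+2}(-3/8)-f^U_{k+1}(-3/8)<0$) with the $\phi(x_2)$-branch produces $\phi(x_2)\bigl[\widehat{f}_{k+2}(-3/8)-\widehat{f}_{k+1}(-3/8)\bigr]$, whose sign is \emph{not} determined by (\ref{eq:estfk1hat}). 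The paper's proof is a pointwise sign estimate with a different regrouping: from $\psi\leq 1-\phi$ and $C<0$ one gets $f^{ST}_{k,k+1}(x_2)\geq(1-\phi(x_2))C$, whence $G_{k,k+1}$ is bounded below by a convex combination of $(f^U_k-\widehat{f}_{k+2})+C$ and $f^U_{k,k+1}$ for $x_2\geq 0$ (each $\geq 0$, vanishing only at $x_1=-3/8$, by (\ref{eq:Est21})) and of $f^U_{k,k+2}+C$ and $f^U_k-\widehat{f}_{k+1}$ for $x_2\leq 0$ (each strictly positive, by (\ref{eq:estfk1hat})); the symmetry $G_{k,k+2}(x_1,-x_2)=G_{k,k+1}(x_1,x_2)$ then disposes of the other cusp. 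For (3), your derivative argument based on (\ref{eq:phix2}) anchored at $\{x_1=-3/8\}$ breaks down for $0<|x_2|<R_1$, where that segment lies in $\mathrm{Int}(O_1)$ and hence outside the domain of $G_{k+1},G_{k+2}$; you would have to anchor on $\partial O_1$ or note that the defining formulas extend over $O_1$. The paper again avoids this with a pointwise value estimate: for $x_2>0$, $f^{ST}_{k+1,k+2}(x_2)\geq 0$ and, by (\ref{eq:estfk1hat}),
\[
(1-\phi(x_2))\bigl(\widehat{f}_{k+2}-\widehat{f}_{k+1}\bigr)(x_1)+\phi(x_2)\bigl(f^U_{k+1}-f^U_{k+2}\bigr)(x_1)\;\geq\;\bigl(2\phi(x_2)-1\bigr)\,f^U_{k+1,k+2}(x_1)\;>\;0,
\]
with the symmetric estimate for $x_2<0$. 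I recommend replacing your uniqueness steps with these value estimates.
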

\begin{proof}  (1):  % We prove only the statement about $G_k$ and $G_{k+1}$ as an analogous proof applies for $G_{k}$ and $G_{k+2}$.  
 Consider $(x_1,x_2)$ where $G_{k,k+1}$ is defined and satisfying $x_1 \leq -1/4$.  Letting $f^{ST}_{k,k+1} = f^{ST}_k-f^{ST}_{k+1}$,  
  we have
\begin{align*}
G_{k,k+1}(x_1,x_2) =  & f^{ST}_{k,k+1}(x_2)  +\\
 & (1-\phi(x_2)) \left\{ \begin{array}{cr} (f^U_k-\widehat{f}_{k+2})(x_1),  &  x_2 \geq 0 \\ f^U_{k,k+2}(x_1), & x_2 \leq 0,  \end{array}\right. +\phi(x_2) \left\{ \begin{array}{cr} f^U_{k,k+1}(x_1),  &  x_2 \geq 0 \\ (f^U_k-\widehat{f}_{k+1})(x_1), & x_2 \leq 0. \end{array}\right.
\end{align*}
Letting $C= \widehat{f}_{k+2}(-3/8)  -f^U_{k+1}(-3/8) < 0$, using (\ref{eq:psibounds}) we have for $x_2\geq -1/4$ 
\begin{equation} \label{eq:evenness}
f^{ST}_{k,k+1}(x_2) = \psi(x_2) C \geq (1-\phi(x_2)) C
\end{equation}
with equality for $x_2 \geq R_1/2$.  From the evenness of $\psi$, and the symmetry in the formulas defining $G_{k+1}$ and $G_{k+2}$, we observe that $G_{k,k+2}(x_1,-x_2) = G_{k,k+1}(x_1,x_2)$, so it suffices to establish (1) for $G_{k,k+1}$.

The inequality (\ref{eq:evenness}) leads to 
\[
G_{k,k+1} \geq (1-\phi(x_2)) \left\{ \begin{array}{cr} (f^U_k-\widehat{f}_{k+2})(x_1) + C ,  &  x_2 \geq 0 \\ f^U_{k,k+2}(x_1) +C , & x_2 \leq 0,  \end{array}\right. 
  +\phi(x_2) \left\{ \begin{array}{cr} f^U_{k,k+1}(x_1),  &  x_2 \geq 0 \\ (f^U_k-\widehat{f}_{k+1})(x_1), & x_2 \leq 0. \end{array} \right.
\]
Note that the terms that appear when $x_2 \leq 0$ are both strictly positive.  [The left term becomes $\widehat{f}_{k+2}(-3/8)-f^U_{k+2}(-3/8) >0 $ 
 when $x_1=-3/8$ by (\ref{eq:estfk1hat}), and remains positive as $x_1$ increases since $d_xf^U_{k,k+2}(x_1) > 0$.  Positivity of the right term also follows from (\ref{eq:estfk1hat}).]  In addition, those terms that appear when $x_2 \geq 0$ are non-negative and vanish only when $x_1 = -3/8$. [The left term is $0$ when $x_1=-3/8$, and then increases by (\ref{eq:Est21}).] Thus, the only possible place where $G_{k,k+1}=0$ is along $\{x_1=-3/8\} \cap \{x_2 \geq R_1\}$. Moreover,  equality holds when $x_2 \geq R_1/2$, and both terms agree with $f^U_{k,k+1}(x_1)$ when $x_1$ is in a neighborhood of $-3/8$ by (\ref{eq:hatk2}).  
Thus, we see that the sheets indeed meet at a cusp edge at the stated location.

%For $x_2 \geq R_1$ and $x_1 \geq -3/8$ with $x_1$ near $-3/8$ we have, by definition, 
%\[
%\psi(x_2) = 1-\phi(x_2);  \quad \phi(x_1)=0; \quad  \widehat{f}_{k+2}(x_1) = f^U_{k+1}(x_1) + C; \quad \mbox{and} \quad f^{ST}_k(x_2)- f^{ST}_{k+1}(x_2)= \psi(x_2) C. 
%\]
%Using these observations, we compute 
%\begin{eqnarray*}
%G_{k}(x_1,x_2)-G_{k+1}(x_1,x_2) &= & f^{ST}_k(x_2)- f^{ST}_{k+1}(x_2) + f^U_{k}(x_2) -\left[ \phi(x_2) f^U_{k+1}(x_1) + (1-\phi(x_2)) \widehat{f}_{k+2}(x_1)\right]\\
%& =&
%\psi(x_2) C + f^U_k(x_1) - \phi(x_2) f^U_{k+1}(x_1)-\psi(x_2) [f^U_{k+1}(x_1) + C]\\
%&  = &f^U_{k}(x_1) -f^U_{k+1}(x_1).
%\end{eqnarray*}
%Since there is a left cusp between sheets $S_k$ and $S_{k+1}$ on the $U$ edge, the result follows.

(2):  When $x_2 \geq R_1/2$ and $x_1 \leq -1/4$, recall that $f_{k+2}^{ST}(x_2) = f^L_{k}(x_2)$ and $\psi(x_2) = 1-\phi(x_2)$ to easily verify that the formulas for $G_{k+2}$ and $H$ agree.  Similarly, check that $G_{k+1} =H$ when $x_{2} \leq -R_1/2$ and $ x_1 \leq -1/4$.

(3):  For $x_1 \geq R_1$, $\widehat{f}_{i}(x_1) = f^U_{i}(x_1) = f^D_{i}(x_1)$ for $i=k+1$ or $k+2$.  This allows us to evaluate
\[
G_{k+1}(x_1,0) = (1-\phi(x_1)) f^{ST}_{k+1}(0) + \phi(x_1) f^R_{k+1}(0) + \frac{1}{2}f^D_{k+2}(x_1) + \frac{1}{2}f^U_{k+1}(x_1)
\] 
and
\[
G_{k+2}(x_1,0) = (1-\phi(x_1)) f^{ST}_{k+2}(0) + \phi(x_1) f^R_{k+2}(0) + \frac{1}{2}f^D_{k+1}(x_1) + \frac{1}{2}f^U_{k+2}(x_1).
\]
Since there is the $R$ edge is (1Cr) with sheets $S_{k+1}$ and $S_{k+2}$ crossing, we have $f^R_{k+1}(0)=f^R_{k+2}(0)$ (as in Proposition \ref{prop:PV1Cr2Cr} (1)), and by definition $f^{ST}_{k+1}(0) = f^{ST}_{k+2}(0)= f^L_k(0)$.  In addition, $U$ and $D$ have the same edge type, so the result follows.

To see that $G_{k+1,k+2}(x_1,x_2) \neq 0$ when $x_2 \neq 0$ and $x_1 \leq -1/4$, we compute
\begin{align*}
G_{k+1,k+2}=  & f^{ST}_{k+1,k+2}(x_2)+  \\ 
  & (1-\phi(x_2))\left\{\begin{array}{cr} \widehat{f}_{k+2}(x_1) - \widehat{f}_{k+1}(x_1), & x_2 \geq 0 \\
f^U_{k+2}(x_1)- f^U_{k+1}(x_1), & x_2 \leq 0,  \end{array} \right.  + \phi(x_2) 
\left\{\begin{array}{cr} f^U_{k+1}(x_1) - f^U_{k+2}(x_1), & x_2 \geq 0 \\
\widehat{f}_{k+1}(x_1)- \widehat{f}_{k+2}(x_1), & x_2 \leq 0,  \end{array} \right.
\end{align*}
%We show that when $x_2>0$, with a similar argument applying to show $G_{k+1,k+2} <0$ for $x_2<0$.   since 
When $x_2 >0$,  we have $f^{ST}_{k+1,k+2}(x_2) \geq 0$  by (\ref{eq:STk2114})-(\ref{eq:STk1341}) and (\ref{eq:flstc0}); 
 $\phi(x_2)>1-\phi(x_2)$; and $f^U_{k+1}(x_1) -f^U_{k+2}(x_1) \geq \widehat{f}_{k+1}(x_1) - \widehat{f}_{k+2}(x_1)$ by (\ref{eq:estfk1hat}).  This shows that $G_{k+1,k+2}(x_1,x_2) >0$.  
When $x_2< 0$, similar reasoning gives $G_{k+1,k+2}(x_1,x_2) <0$.
\end{proof}

\subsubsection{Properties of defining functions} \label{sec:PropGk} We record some properties of $G_{k}, G_{k+1}, G_{k+2}$, $H$ and their differences for later use.    In the following, we let $K$ denote the closed subset
\[
K = \{(x_1,x_2) \, | \, -3/8  \leq x_1 \leq -1/4,  \, -1/4 \leq x_2 \leq 1/4 \} \setminus \mbox{Int}(O_1).
\]
%denote the closed subset of $\widetilde{L}$ where the functions $G_l$ are defined.
%\medskip

\begin{lemma}
\label{lem:Bprops}
The following properties of $G_{k}, G_{k+1}, G_{k+2}$, $H$ and their differences hold.

\begin{enumerate}

\item[B1]  For any $l \in \{k,k+1,k+2\}$, we have
\[
\| H - G_l \|_{C^0(K)} \leq 6 N \epsilon_1
\]
and
\[
\|\partial_{x_2}( H - G_l) \|_{C^0(K)} \leq 18 N \epsilon_1.
\]

\item[B2] For all $(x_1,x_2) \in K$,
%\{x_1 \geq -3/8\}\setminus O_1$, 
\[
\partial_{x_1} G_{k,k+1}(x_1,x_2) \geq  0;  \quad  \partial_{x_1} G_{k,k+2}(x_1,x_2) \geq 0; \quad \mbox{and  } \mbox{sgn}(\partial_{x_1} G_{k+1,k+2}(x_1,x_2)) = \mbox{sgn}(x_2).
\] 
Moreover, in the first (resp. second) inequality, we have equality only when $(x_1,x_2)$ belongs to the upper (resp. lower) half of the cusp locus.

\item[B3]   Along the crossing locus, $\{(x_1,0) \, |\, -3/8 + R_1 \leq x_1 \leq -1/4 \}$,
\[
\partial_{x_2} G_{k+1,k+2}(x_1,0) >0.
\]

\item[B4]  For $(x_1,x_2) \in K$ with $x_2 \geq R_1/2$, $\partial_{x_2} G_{k,k+1}(x_1,x_2) \leq 0$; and  \\ for $(x_1,x_2) \in K$ with $x_2 \leq -R_1/2$,  $\partial_{x_2} G_{k,k+2}(x_1,x_2) \geq 0$.
\end{enumerate}
\end{lemma}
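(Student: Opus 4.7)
\medskip

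\noindent\textbf{Proof Proposal for Lemma \ref{lem:Bprops}.}

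The plan is to reduce each of (B1)--(B4) to a direct computation of $\partial_{x_1}$ and $\partial_{x_2}$ of $G_k,G_{k+1},G_{k+2},H$ on $K$, exploiting the fact that on $K$ we have $\phi(x_1)=0$ (since $x_1\leq -1/4$), so the $f^R$ terms in (\ref{eq:Gkdef})--(\ref{eq:Gk2def}) drop out entirely. Thus, throughout $K$,
\[
G_k = f^{ST}_k(x_2)+f^U_k(x_1),\quad G_{k+1} = f^{ST}_{k+1}(x_2)+(1-\phi(x_2))\widetilde{f}^{(+)}_{k+2}+\phi(x_2)\widetilde{f}^{(-)}_{k+1},
\]
and analogously for $G_{k+2}$, where $\widetilde{f}^{(\pm)}$ is the branch of the Heaviside-like definition (using $\widehat{f}$ or $f^{U}=f^{D}$) appropriate to the sign of $x_2$. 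All of the $f^{L}_l,f^{U}_l,f^{D}_l,\widehat{f}_l,f^{ST}_l$ arguments lie in $[-1,-1/4]$, so $|f^L_l|,|f^U_l|<N\varepsilon_1$ by Corollary \ref{cor:summary}(3), and $|\widehat{f}_{k+1}|,|\widehat{f}_{k+2}|<N\varepsilon_1$ by (\ref{eq:fhat14est}), while $\|f^{ST}_k-f^L_k\|_{C^0}<2N\varepsilon_1$ by (\ref{eq:C0flst}).

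For (B1), these uniform $C^0$-bounds show each of $G_l$ and $H$ lies within $6N\varepsilon_1$ of, say, $f^L_k(x_2)$, giving the first estimate. The second bound will follow from differentiating (\ref{eq:Gk1def})--(\ref{eq:Hdef}) in $x_2$ and using $|\phi'|,|\psi'|<3$ together with the uniform bounds on the $f^U,\widehat{f}$ factors; the $f^{ST}_l$ derivative contribution is controlled via (\ref{eq:C0flst}) and Lemma \ref{lem:thetechlem}-type arguments from the construction of $f^{ST}_k$, yielding the claimed $18N\varepsilon_1$. For (B2), I compute (for $x_2\geq 0$, the other case being symmetric)
\[
\partial_{x_1}G_{k,k+1} = (1-\phi(x_2))\bigl[d_x f^U_k-d_x\widehat{f}_{k+2}\bigr]+\phi(x_2)\,d_x f^U_{k,k+1},
\]
and each bracketed term is nonnegative on $[-3/8,-1/4]$: the second by Proposition \ref{prop:CuDef}(2), the first since (\ref{eq:Est21}) gives $d_x\widehat{f}_{k+2}\leq d_x f^U_{k+1}<d_x f^U_k$. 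The $(k,k+2)$ and $(k+1,k+2)$ derivatives are handled by the exact same expansion; for $\partial_{x_1}G_{k+1,k+2}$, the sign of the whole expression is governed by the sign of $x_2$ since the roles of $\widehat{f}_{k+1}$ and $\widehat{f}_{k+2}$ swap with the sign of $x_2$, and (\ref{eq:estfk1hat}) provides the key inequality $\widehat{f}_{k+1}-\widehat{f}_{k+2}$ and $f^U_{k+1}-f^U_{k+2}$ have the right ordering.

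For (B3), at points on the crossing ray $\{x_2=0,\,x_1\geq-3/8+R_1\}$, the hats coincide with their $f^U$ counterparts by (\ref{eq:hatk1})--(\ref{eq:hatk2}), and $f^{ST}_{k+1}=f^{ST}_{k+2}=f^L_k$ near $x_2=0$ by (\ref{eq:STk214}),(\ref{eq:STk134}); direct computation then gives $\partial_{x_2}G_{k+1,k+2}(x_1,0)=2\phi'(0)\,f^U_{k+1,k+2}(x_1)$, which is strictly positive by Proposition \ref{prop:CuDef}(1)/(8'). Finally (B4) is the only step that uses the special shape of $f^{ST}_k$: for $x_2\in[R_1/2,1/4]$, (\ref{eq:STk1434}) and $\psi=1-\phi$ on that interval give $f^{ST}_k-f^L_k=(1-\phi(x_2))\,C$ with $C=\widehat{f}_{k+2}(-3/8)-f^U_{k+1}(-3/8)<0$, while $f^{ST}_{k+1}=f^L_k$, so a one-line computation yields
\[
\partial_{x_2}G_{k,k+1}(x_1,x_2)=\phi'(x_2)\Bigl\{\bigl[f^U_{k+1}(-3/8)-\widehat{f}_{k+2}(-3/8)\bigr]-\bigl[f^U_{k+1}(x_1)-\widehat{f}_{k+2}(x_1)\bigr]\Bigr\}\leq 0,
\]
where the sign follows because $\phi'\geq 0$ and $x\mapsto f^U_{k+1}(x)-\widehat{f}_{k+2}(x)$ is non-decreasing on $[-3/8,-1/4]$ by (\ref{eq:Est21}). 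The $G_{k,k+2}$ case for $x_2\leq -R_1/2$ is identical after the $x_2\mapsto -x_2$ symmetry built into $\psi$ and into the swapped roles of $\widehat{f}_{k+1},\widehat{f}_{k+2}$. The main technical care (though not a conceptual obstacle) is in (B4), where one must verify that the specific interpolation constant $C$ and the derivative inequality (\ref{eq:Est21}) conspire to give the right sign; everything else is essentially a matter of organizing the $\phi,\psi$ expansions and invoking the already established $1$-variable properties.
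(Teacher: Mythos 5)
Your overall strategy — expand $G_k,G_{k+1},G_{k+2},H$ on $K$ using $\phi(x_1)=0$, and reduce everything to the one-variable bounds — is exactly the paper's, and your computations for (B3) and (B4) match the paper's line for line. Two points need repair, one cosmetic and one substantive.

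First, in (B1) your assembly "each of $G_l$ and $H$ lies within $6N\epsilon_1$ of $f^L_k(x_2)$" only yields $\|H-G_l\|_{C^0(K)}\leq 12N\epsilon_1$ by the triangle inequality, not the claimed $6N\epsilon_1$. The paper instead writes out $H-G_l$ directly: the four interpolated one-variable terms contribute at most $4N\epsilon_1$ (each factor $\psi$, $1-\psi$, $\phi$, $1-\phi$ is bounded by $1$, each function by $N\epsilon_1$), and the remaining term $f^L_k-f^{ST}_l$ equals $\psi(x_2)[\widehat{f}_{k+2}(-3/8)-f^U_{k+1}(-3/8)]$ when $l=k$ and vanishes when $l=k+1,k+2$, contributing at most $2N\epsilon_1$. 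This is easy to fix but matters only because the constant $6N\epsilon_1$ is reused verbatim later (in the switch-point estimate).

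Second, and more seriously: for $\mathrm{sgn}(\partial_{x_1}G_{k+1,k+2})=\mathrm{sgn}(x_2)$ you need the positivity of
\[
(1-\phi(x_2))\,d_x(\widehat{f}_{k+2}-\widehat{f}_{k+1})(x_1)+\phi(x_2)\,d_x(f^U_{k+1}-f^U_{k+2})(x_1)
\]
on $K\cap\{x_2>0\}$, and you cite (\ref{eq:estfk1hat}) for this. That is a $C^0$ sandwich ($f^U_{k+2}\leq\widehat{f}_{k+2}<f^U_{k+1}$, etc.) and does not control the derivatives: by construction $d_x\widehat{f}_{k+1}$ momentarily exceeds $d_xf^U_{k+1}$ and $d_x\widehat{f}_{k+2}$ dips below $d_xf^U_{k+2}$, so the first summand can be negative and more so than $-d_x(f^U_{k+1}-f^U_{k+2})$. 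The correct input is the derivative inequality (\ref{eq:phix2}) of Lemma \ref{lem:fhatexist}, which was engineered precisely so that the $\phi$-weighting wins on the relevant region (points of $K$ with $-3/8\leq x_1\leq -3/8+R_1/2$ automatically have $x_2\geq\sqrt{3}R_1/2$, where $\phi(x_2)/(1-\phi(x_2))$ is large enough). Without invoking (\ref{eq:phix2}) the sign claim in (B2) is not established.
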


\begin{proof} (B1) For $l \in \{k,k+1,k+2\}$ and $(x_1,x_2) \in K$, we have 
\[
H(x_1,x_2) - G_l(x_1,x_2) = 
\]
\[
f^L_k(x_2) - f^{ST}_l(x_2) + \psi(x_2) \widehat{f}_{k+1}(x_1) + [1-\psi(x_2)]f^U_{k+2}(x_1) - [1-\phi(x_2)]a_1(x_1) - \phi(x_2) a_2(x_1)
\]
where $a_1$ and $a_2$ are each one of $f^U_{k}, f^U_{k+1}, f^U_{k+2}, \widehat{f}_{k+1},$ or $\widehat{f}_{k+2}$ depending on $l$ and the location of $(x_1,x_2)$.  
Corollary \ref{cor:summary} (3) and the definitions of $\widehat{f}_{k+1}, \widehat{f}_{k+2}$  imply that all five of these functions are bounded in absolute value by $N \epsilon_1$ when $ -1 \leq x_1 \leq -1/4.$  
Moreover, each of $\psi$, $1-\psi$, $\phi$, and $1-\phi$ are bounded by $1$.  
Thus, the sum of the last $4$ terms is bounded by $4N \epsilon_1$.  
In addition, (\ref{eq:STk1434}),  (\ref{eq:STk214}), and (\ref{eq:STk134})
imply that
$f^L_k(x_2) -f^{ST}_l(x_2) = \psi(x_2) [ \widehat{f}_{k+2}(-3/8) - f^U_{k+1}(-3/8)]$ if $l =k$ and is $0$ when $l=k+1$ or $k+2$.  
Thus, $|f^L_k(x_2) - f^{ST}_l(x_2)| \leq 1 \cdot[ N \epsilon_1 + N\epsilon_1]$, so the triangle inequality gives
\[
|H(x_1,x_2) - G_l(x_1,x_2)| \leq 6N \epsilon_1.
\]  

To obtain the second inequality, compute the derivative and estimate
\[
|\partial_{x_2}(H-G_l)(x_1,x_2)| \leq 
\]
\[
|d_x(f^L_k-f^{ST}_l)(x_2)|  + |d_x\psi(x_2)|\cdot \left(|\widehat{f}_{k+1}(x_1)|+ |f^U_{k+2}(x_1)|\right) + |d_x\phi(x_2)|\cdot \left(|a_1(x_1)| + |a_2(x_1)|\right) \leq 
\]
\[
|d_x\psi(x_2)|\cdot\left(|\widehat{f}_{k+2}(-3/8)| +|f^U_{k+1}(-3/8)|\right) + 3(2N \epsilon_1) + 3 (2N \epsilon_1) \leq 18N\ea,
\]
where we used that $|d_x\psi(x_2)|, |d_x\phi(x_2)| \leq 3,$ see (\ref{eq:InterpolatePhi}), (\ref{eq:psibounds}). 

(B2)  In $K$,
\[
G_{k,k+1}(x_1,x_2) = f^{ST}_k(x_2) - f^{ST}_{k+1}(x_2) + 
\]
\[
f^U_k(x_1) - (1-\phi(x_2)) \left\{ \begin{array}{cr} \widehat{f}_{k+2}(x_1), & \mbox{if $x_2 \geq 0$}  \\ f^D_{k+2}(x_1), & \mbox{if $x_2 \leq 0$}  \end{array} \right. - \phi(x_2) \left\{ \begin{array}{cr} f^U_{k+1}(x_1), & \mbox{if $x_2 \geq 0$}  \\ \widehat{f}_{k+1}(x_1), & \mbox{if $x_2 \leq 0$} .  \end{array} \right.
\]
Thus,
\[
\partial_{x_1}G_{k,k+1}=(1-\phi(x_2))\left\{ \begin{array}{cr} d_x(f^U_k-\widehat{f}_{k+2})(x_1), & \mbox{if $x_2 \geq 0$} \\ d_x(f^U_k-f^D_{k+2})(x_1), & \mbox{if $x_2 \leq 0$} \end{array} \right. + \phi(x_2) \left\{ \begin{array}{cr} d_x(f^U_k-f^U_{k+1})(x_1), & \mbox{if $x_2 \geq 0$} \\ d_x(f^U_k-\widehat{f}_{k+1})(x_1), & \mbox{if $x_2 \leq 0$}. \end{array} \right.
\]
By %the construction of the $f^U_i= f^D_i$ 
Proposition \ref{prop:CuDef} applied to the $f^{U}_i=f^{D}_i$ and 
(\ref{eq:Est21}), all $4$ of the derivatives that appear in the $2$ piecewise defined terms of $\partial_{x_1} G_{k,k+1}$ are non-negative when $-3/8 \leq x_1 \leq -1/4$.  Moreover, of these $4$ terms only the $2$ that are used when $x_2 \geq 0$  can vanish, and this happens precisely when $x_1 = -3/8$.  The statement concerning $G_{k,k+1}$ follows.

The statement about $G_{k,k+2}$ is established in a similar manner.  In fact, $\partial_{x_1} G_{k,k+2}$ is nearly identical to $G_{k,k+1}$  except that the upper and lower rows in the piecewise definitions are interchanged.

Next, observe that $G_{k+1,k+2}(x_1,-x_2) = -G_{k+1,k+2}(x_1,x_2)$.  Thus to show $\sgn(\partial_{x_1}G_{k+1,k+2}) = \sgn(x_2)$, it suffices to show that $\partial_{x_1}G_{k+1,k+2}(x_1,x_2) >0$ when $x_2>0$.  To this end, for $x_2 >0$ we compute
\[
\partial_{x_1} G_{k+1,k+2} = (1-\phi(x_2)) d_x(\widehat{f}_{k+2}(x_1) - \widehat{f}_{k+1}(x_1))(x_1) + \phi(x_2)d_x(f^{U}_{k+1}-f^U_{k+2})(x_1)
\]
which is positive on $K \cap \{x_2>0\}$ by (\ref{eq:phix2}).
% from the definition of $\widehat{f}_{k+1}$ and $\widehat{f}_{k+2}$.

(B3)  When $-3/8 + R_1 \leq x_1 \leq -1/4$, $\widehat{f}_{k+1}(x_1) = f^U_{k+1}(x_1)$ and $\widehat{f}_{k+2}(x_1) = f^U_{k+2}(x_1).$
Near $x_2 = 0,$ $f^{ST}_{k+1}(x_2) = f^{ST}_{k+2}(x_2).$ So we compute
\[
\partial_{x_2}G_{k+1,k+2}(x_1,0) = \phi'(0)\cdot 2(f^U_{k+1}(x_1)-f^U_{k+2}(x_1)) >0.
\]

(B4)  We prove the first statement with the second statement proved by similar considerations.  Note that when $x_2 \geq R_1/2$ we have $\psi(x_2) = 1-\phi(x_2)$, and $f^{ST}_k(x_2) -f^{ST}_{k+1}(x_2)= \psi(x_2)[\widehat{f}_{k+2}(-3/8) -f^U_{k+1}(-3/8)]$.  These observations allow us to compute
\[
\partial_{x_2} G_{k,k+1} = \phi'(x_2)\left[\widehat{f}_{k+2}(x_1)- f^U_{k+1}(x_1) -(\widehat{f}_{k+2}(-3/8)- f^U_{k+1}(-3/8))  \right] \leq 0.
\] 
The inequality is deduced as follows: The multiplier $\phi'(x_2)$ is non-negative; the term in brackets is $0$ when $x_1 = -3/8$, so using (\ref{eq:Est21}) it follows that this term is non-positive for all $-3/8 \leq x_1 \leq -1/4$. 
\end{proof}

\subsection{The formula over the square}
\label{ssec:FormulaST}
 
We now complete the construction of the Type (13) squares.  
The sheets $S_k, S_{k+1}, S_{k+2}$, and $\tilde{S}_k$ that meet at the swallowtail point are to be defined as the union of graphs of functions $F_{k},F_{k+1}, F_{k+2}, \tilde{F}_k$  that will be obtained by interpolating between the defining functions of Section \ref{sec:funcO2}  and a suitable constant multiple of  those from Section \ref{sec:funcO1}.

Using polar coordinates, $(r,\theta)$, centered at $(-3/8,0)$, consider the subsets of the annulus $A=O_2 \setminus O_1$, 
\begin{eqnarray}
\label{eq:V1V2}
V_1 &= &\{(r,\theta) \, |\,  R_1 \leq r \leq R_2, \,\, \pi/3 \leq\theta \leq \pi/2 \}, \\
\notag
 V_2 &= &\{(r,\theta) \, |\,  R_1 \leq r \leq R_2, \,\, -\pi/2 \leq\theta \leq -\pi/3 \},
\end{eqnarray}
as in Figure \ref{fig:V1V2}.
Note that the closure of $A \setminus V_1$ (resp. $A \setminus V_2$) in the domain of $G_{k,k+1}$ (resp. $G_{k,k+2}$) is a compact set where $G_{k,k+1}$ (resp. $G_{k,k+2}$) is positive by Lemma \ref{lem:GkGk1} (1) (because we are away from the cusp edge). 
Thus by compactness, there exist constants $C_1,C_2 >0$ such that
\begin{equation} \label{eq:AV1Gk}
 \forall (x_1,x_2) \in A \setminus V_1, \quad  G_{k,k+1}(x_1,x_2) >C_1, \quad  \mbox{ and } \quad \forall (x_1,x_2) \in A \setminus V_2, \quad G_{k,k+2}(x_1,x_2)> C_2.
\end{equation}
Next, using Lemma \ref{lem:Aprops}.A4 and Lemma \ref{lem:Bprops}.B3 and continuity, we can find a neighborhood, $V_3$, of the crossing locus in $O_2\setminus O_1$ such that the directional derivatives in the $\theta$ direction satisfy
\begin{equation}  \label{eq:x1x2V3}
\forall (x_1,x_2) \in V_3,  \quad \partial_{\theta} A_{k+1,k+2}(x_1,x_2) > 0,  \quad \mbox{and} \quad \partial_{\theta} G_{k+1,k+2}(x_1,x_2) > 0.
\end{equation}
Within $A$, $G_{k+1,k+2}$ vanishes only at the crossing locus by Lemma \ref{lem:GkGk1} (3), so we can find $C_3 >0$ such that 
\begin{equation} \label{eq:AV3}
\forall (x_1,x_2) \in A \setminus V_3,  \quad |G_{k+1,k+2}(x_1,x_2)| > C_3. 
\end{equation}

\begin{figure}

\labellist
\small
%\pinlabel $1/4$ [t] at 338 21
\pinlabel $V_1$ [tr] at 188 228
\pinlabel $V_2$ [br] at 188 94
\pinlabel $V_3$ [l] at 330 162
\endlabellist
\centerline{ \includegraphics[scale=.3]{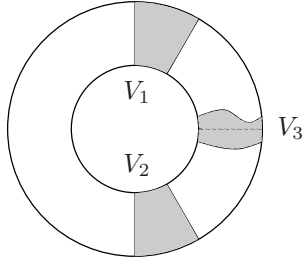} }

%%\centerline{ \includegraphics[scale=.8]{images/SubdivideSq} } %Dan had already commented out this line

\caption{The closed subsets $V_1$, $V_2$, and $V_3$.
}
\label{fig:V1V2}
\end{figure}

Using compactness of $O_2$, we can now fix $C>0$ such that for $(i,j) = (k,k+1), (k,k+2)$, or $(k+1,k+2)$ and $l = k,k+1$, or $k+2$, we have
\begin{equation} \label{eq:CAijCO2}
C\|A_{i,j}\|_{C^0(O_2)} \leq \mbox{Min}\{C_1,C_2,C_3\};
\end{equation}
\begin{equation} \label{eq:CACO2}
   C\|A_l\|_{C^0(O_2)}, C\|B\|_{C^0(O_2)} < \epsilon_1; \quad \mbox{and} \quad \mbox{Max}\{C\|\partial_{x_m}A_l\|_{C^0(O_2)}, C\|\partial_{x_m}B\|_{C^0(O_2)} \, | \, m=1,2 \} < \epsilon_1;  
\end{equation}
\begin{equation} \label{eq:CACO3}
C < \frac{1}{2(2N+1)(M+M^{4/3})}, \quad \quad \mbox{where $M$ is from Section \ref{sec:Straight}.}
\end{equation}
(In the preceding formulas, by abuse of notation,  $\| \cdot \|_{C^0(O_2)}$ indicates the sup norm of the functions on the intersection of their domain with $O_2$.)  

  In addition, we fix a non-decreasing cutoff function $\alpha: [0,+\infty) \rightarrow \R$ such that $\alpha(r) = 0$ for $r\leq R_1$ and $\alpha(r) =1$ for $r \geq R_2$ such that 
	\begin{equation} \label{eq:alphabound}
	0 \leq \alpha'(r) \leq 33.
\end{equation}

With these preparations complete, we make the definitions for $l \in \{k,k+1,k+2\}$,
\begin{eqnarray}
\label{eq:FHCAG}
F_l(x_1,x_2) & = & (1-\alpha(r))\cdot \left( H(x_1,x_2) + C\,A_l(x_1,x_2)\right) + \alpha(r) \cdot G_{l}(x_1,x_2), \\
\notag
\widetilde{F}_k(x_1,x_2) & = & (1-\alpha(r))\cdot \left(H(x_1,x_2) + C\,B(x_1,x_2)\right) + \alpha(r) \cdot H(x_1,x_2).
\end{eqnarray}
Recall $A_l, B, G_l, H$ are defined in (\ref{eq:Aiai}), (\ref{eq:Gkdef})-(\ref{eq:Hdef}).
For $i \notin \{k,k+1,k+2\}$ we define
\begin{equation}  \label{eq:FifLfR}
F_i(x_1,x_2) = f^U_i(x_1) + (1-\phi(x_1)) f^L_{\sigma_L(i)}(x_2)+ \phi(x_1) f^R_{i}(x_2). 
\end{equation}

We form the front projection of a Legendrian to be used for the Type (13) square, as follows:  Take the union of the graphs of $F_{k}, F_{k+1}, F_{k+2}$ in the region to the right of the cusp locus of $L''_{st}$   with the graph of $\widetilde{F}_k$ to the left of the cusp locus of $L''_{st}.$ 

\begin{lemma} \label{lem:smoothST}
The graphs of the $F_i$ and $\widetilde{F}_k$ fit together to form the front projection of a smooth Legendrian with a single swallowtail point.  The base projection of the cusp locus agrees with that of the Legendrian $L''_{st}$ from Section \ref{sec:funcO1}.
\end{lemma}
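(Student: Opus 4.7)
The plan is to verify the claim by dividing $[-1,1]^2$ into the three regions $O_1$, $O_2\setminus O_1$, and $[-1,1]^2\setminus O_2$, establishing on each that the relevant graphs assemble into a smooth Legendrian with the correct singular set, and then checking smoothness across the boundaries $r=R_1$ and $r=R_2$. Since $\alpha(r)\equiv 0$ on $O_1$ and $\alpha(r)\equiv 1$ outside $O_2$, the transitions at $r=R_1,R_2$ are automatic from smoothness of $\alpha$. So the real content is the behavior inside each of the three regions, and especially in the annulus.

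Inside $O_1$, the formulas (\ref{eq:FHCAG}) reduce to $F_l=H+CA_l$ and $\widetilde F_k=H+CB$. The contactomorphism of $J^1([-1,1]^2)$ given by $(x,y,z)\mapsto(x,Cy+dH(x),Cz+H(x))$ takes any Legendrian with defining function $f$ to the one with defining function $Cf+H$. Hence the graphs of $\{F_k,F_{k+1},F_{k+2},\widetilde F_k\}$ over $O_1$ form a contactomorphic image of $L''_{st}|_{O_1}$, which by the construction of Section~\ref{sec:funcO1} and Lemma~\ref{lem:Aprops}.A1 is a smooth Legendrian containing a single swallowtail point (placed in $O_1$ by (P1)) together with the cusp and crossing loci of $L''_{st}$. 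Outside $O_2$ we have $F_l=G_l$ and $\widetilde F_k=H$, and all required structure---smooth fit, cusp edges along $\{x_1=-3/8,\ |x_2|\geq R_1\}$, and the crossing along $\{x_2=0,\ x_1\geq R_1\}$---is precisely the content of Lemma~\ref{lem:GkGk1}.

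The annulus is where the work happens. First, each $F_l$ and $\widetilde F_k$ is manifestly smooth on $O_2\setminus O_1$. Second, the only part of $\{x_1=-3/8\}$ lying in the annulus is the arc with $|x_2|\in[R_1,R_2]$; on this arc Lemma~\ref{lem:Aprops}.A1 gives $A_{k,k+1}=0$ (upper) and $A_{k,k+2}=0$ (lower), while Lemma~\ref{lem:GkGk1}(1) gives $G_{k,k+1}=0$ and $G_{k,k+2}=0$ on the same arcs. Since $F_{k,k+1}=(1-\alpha)CA_{k,k+1}+\alpha G_{k,k+1}$, the vanishing locus of $F_{k,k+1}$ on the upper half of the annulus equals $\{x_1=-3/8\}$: Lemmas~\ref{lem:Aprops}.A3 and~\ref{lem:Bprops}.B2 give that both $A_{k,k+1}$ and $G_{k,k+1}$ are nonnegative there, vanishing only on $\{x_1=-3/8\}$, so the convex combination $F_{k,k+1}$ does too (the two weights are never simultaneously zero on the interior of the annulus). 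A symmetric argument handles $F_{k,k+2}$ on the lower half, and the crossing locus is identified along $\{x_2=0\}$ using A2 together with Lemma~\ref{lem:GkGk1}(3).

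The final and most delicate step is to show that the combined front is a smooth Legendrian across the cusp locus in the annulus. I would argue that the third sheet glues smoothly: both $A_{k+2}$ and $B$ are restrictions of the single smooth Legendrian sheet of $L''_{st}$ corresponding (under Convention~\ref{conv:stlabeling}) to $S_{k+2}\cup\widetilde S_k$; likewise $G_{k+2}$ and $H$ share a smooth extension by Lemma~\ref{lem:GkGk1}(2). The formula $(1-\alpha)(H+C\cdot\bullet)+\alpha\cdot\bullet$ applied to these common smooth extensions produces a single smooth function defined in a neighborhood of the upper cusp locus, whose restrictions to the two sides equal $F_{k+2}$ and $\widetilde F_k$ respectively. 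An analogous statement handles $F_{k+1}$ and $\widetilde F_k$ across the lower cusp. That $F_k$ and $F_{k+1}$ meet at a genuine semicubical cusp along $\{x_1=-3/8\}$ then follows because $F_{k,k+1}$ and $\nabla F_{k,k+1}$ both vanish there (the gradient vanishing is inherited from the analogous vanishing of $\nabla A_{k,k+1}$ and $\nabla G_{k,k+1}$ along the respective cusp loci, which is itself a consequence of Legendrianness of $L''_{st}$ and of the $G$-Legendrian).

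I expect the main obstacle to be the verification of the cusp \emph{type} (semicubical, rather than a higher-order tangency or a fold) in the annulus. The cleanest way I see to finish this is to combine the ``smooth third sheet'' argument above with the observation that $F_{k+2}-F_{k+1}$ and $F_k-F_{k+2}$ can be expressed as convex combinations of the corresponding $A$- and $G$-differences, which already have semicubical cusp singularities at $\{x_1=-3/8\}$ by Lemma~\ref{lem:Aprops} and Lemma~\ref{lem:GkGk1}; since this property is open in a suitable $C^\infty$ sense on the space of pairs of functions vanishing to appropriate order along a curve, the interpolation preserves it. With that in hand, the final clause---that the base projection of the cusp locus agrees with that of $L''_{st}$---is immediate since we have verified this pointwise in all three regions, and uniqueness of the swallowtail point follows from its confinement to $O_1$ together with the contactomorphism there.
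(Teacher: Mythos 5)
Your proposal is correct and follows essentially the same route as the paper: the paper's proof simply observes that the $A_l,B$ and the $G_l,H$ each form smooth Legendrian fronts with a common cusp locus, so the interpolation does too, and that inside $O_1$ the result is a scaled, shifted copy of $L''_{st}$ and hence has exactly one swallowtail point. Your additional verification in the annulus---the identification of the vanishing loci of $F_{k,k+1}$, $F_{k,k+2}$ and the check that the cusp remains genuinely semicubical under convex combination---is a more careful treatment of a step the paper's three-sentence proof takes for granted, not a different argument.
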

\begin{proof}
 The graphs of the $A_l$ and $B$ as well as the $G_l$ and $H$ fit together to form front projections of smooth Legendrians (by construction of the $A_l$ and by Lemma \ref{lem:GkGk1}).  In addition, these two collections of defining functions have the same cusp locus (in the base projection) in their common domain.  Thus, when we interpolate to form the $F_l$ and $\widetilde{F}_k$, the result is again a smooth Legendrian with the same cusp locus. 
 %  In addition, we include the graphs of the $F_i$ with $i \notin \{k,k+1,k+2\}$ as the remaining sheets of $L_{(13)}$.
Moreover, up to scaling and the addition of a common function, the sheets defined by $F_l$, $l=k,k+1,k+2$ and $\widetilde{F}_k$ agree with $L''_{st}$  within  $O_1$ (where $\alpha(r) =0$), and hence have a single swallowtail point.  
\end{proof}

\section{Proof of Theorem \ref{thm:PropertiesofLtilde} Part 3: Verification of Properties}
\label{sec:Properties}

Let us recall where we are.
We wish to construct $\tilde{L}_0$ satisfying all of the properties of Theorem \ref{thm:PropertiesofLtilde} except possibly the 1-regular condition.  
Over each square of $\mathcal{E}_\pitchfork$, we take the front projection of $\tilde{L}_0$ to be the union 
 of graphs of the functions defined using equation (\ref{eq:interpolating})  for (1)-(12) square types, or using equation (\ref{eq:FHCAG}) for square (13) (and analogous for (14)).

In Section \ref{ssec:gluing}, we 
%confirm  Properties ??? and in the process 
verify 
that $\tilde{L}_0$ pieces together over adjacent squares to form a smooth, globally defined Legendrian, and fix a metric from Construction \ref{construct:metric}  for computation of gradient vector fields. 
%and prove a lemma reducing some of the (13)-(14) case to the (1)-(12) case.
In Section \ref{ssec:Fitting}, we confirm that the singular set of $\tilde{L}_0$ above each square is as prescribed by the square types (1)-(14), and verify properties of the Reeb chords.  Properties 1, 4, 5, 6, 8, and 9 are proved in this section.
In Section \ref{ssec:PropertiesGFTs}, we verify the remaining Properties 2, 3, 7, and  10-19, which concern gradient vector fields of local difference functions.
%offer descriptions of the GFTs. 
%The proofs in Sections \ref{ssec:Fitting} and  \ref{ssec:PropertiesGFTs} are local, focusing on one square at that time. 

\subsection{Gluing the square models together}
\label{ssec:gluing}

For each square type, the defining functions correspond in an obvious way to the sheets of the Legendrian.  Explicitly, for squares with types (1)-(12) the function $F_i$ corresponds to sheets $S_i$ that are labeled in descending order as they appear above $(+1,+1)$;  for the Type (13) square functions $F_{i}$ correspond to sheets $S_i$ which, when $i=k,k+1,k+2$, are only defined to the right of the cusp locus, while the function $\widetilde{F}_k$ corresponds to the sheet $\tilde{S}_k$ defined to the left of the cusp locus.

\begin{proposition}  \label{prop:cornerC}
Fix any square type.

\begin{enumerate}

\item Assume there are $n_c$ sheets defined above the corner $c = ({c_1}, {c_2})$.  (Here, $c_i \in \{\pm 1\}$).  Suppose that for some $1 \leq j \leq n_c$, the defining function $F$ corresponds to the sheet that appears in position $j$ above $c$.  Then, for $|x_i -  c_i| \leq 1/16$, 
\begin{equation} \label{eq:cornerC}
F(x_1,x_2) = (n_c-j) \ec((x_1-c_1)^2+ (x_2-c_2)^2 + 2).
\end{equation}

\item Assume that along the edge $E$ where $x_2 = e$ equals $+1$ or $-1$, 
$n_\pm$ sheets exist above $(\pm1, e)$.  Suppose that for some $1 \leq j \leq n_+$, the defining function $F$ corresponds to the sheet above $E$ that appears in position $j$ (resp. $\sigma_-(j)$) at $(+1, e)$ (resp. at $(-1,e)$).  Then, for $|x_2 - e| \leq 1/16$ we have 
\begin{equation}  \label{eq:edgeE}
F(x_1,x_2) = (1-\phi(x_1))(n_- - \sigma_-(j))\ec ( (x_2-e)^2 +1) + \phi(x_1)(n_+ - j)\ec ( (x_2-e)^2 +1) + f_j(x_1)
\end{equation}
with $f_j$ the $1$-dimensional function associated to the edge type of $E$.  A similar formula holds along edges of the form $x_1 = e$ equals $+1$ or $-1$.

\end{enumerate}   
\end{proposition}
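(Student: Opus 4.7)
The plan is to reduce both statements to a direct computation using formula (\ref{eq:interpolating}) for squares of type (1)--(12), formula (\ref{eq:FifLfR}) for non-swallowtail sheets in Type (13) and (14) squares, and formulas (\ref{eq:Gkdef})--(\ref{eq:Gk2def}) for the swallowtail sheets, combined with the explicit form of the $1$-dimensional defining functions near the boundary points $x=\pm 1$ given in Corollary \ref{cor:summary} (4).

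For part (2), fix an edge $E = \{x_2 = e\}$ with $e = \pm 1$, and consider $|x_2 - e| \leq 1/16$. Because of the convention $\phi^{-1}(1) = [1/4,1]$ and $\phi^{-1}(0) = [-1,-1/4]$ (see (\ref{eq:InterpolatePhi})), in this range the value of $\phi(x_2)$ is either $0$ or $1$, so exactly one of the two terms $\phi(x_2) f_i^U(x_1)$ and $(1-\phi(x_2)) f^D_{\sigma_D(i)}(x_1)$ in (\ref{eq:interpolating}) survives; this gives precisely the term $f_j(x_1)$ appearing in (\ref{eq:edgeE}). For the two remaining terms, one applies Corollary \ref{cor:summary} (4) to $f^L_{\sigma_L(i)}(x_2)$ and $f^R_i(x_2)$ near $x_2 = e$: each of these is of the form $(n^{L/R}_\pm - \sigma^{L/R}_\pm(\cdot))Q_\pm(x_2) = (n^{L/R}_\pm - \sigma^{L/R}_\pm(\cdot))\epsilon_1((x_2-e)^2 + 1)$. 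The key bookkeeping is that the number of sheets and the position index at the corner $(\pm 1, e)$ computed from the perspective of the left/right edge agree with $n_\pm$ and $\sigma_\pm(j)$ (resp.\ $j$) in the statement, because both ultimately refer to the $z$-ordering of sheets above that corner of the square. Combining these contributions yields exactly (\ref{eq:edgeE}).

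For part (1), a similar but stronger simplification occurs. Near a corner $c = (c_1, c_2)$ with $|x_i - c_i| \leq 1/16$, both $\phi(x_1)$ and $\phi(x_2)$ are constant (equal to $0$ or $1$ depending on the sign of $c_i$), so only two of the four interpolating terms in (\ref{eq:interpolating}) survive. One of these is a function of $x_1$, the other a function of $x_2$, and both are $1$-dimensional defining functions evaluated on their $1/8$-neighborhood of an endpoint. Applying Corollary \ref{cor:summary} (4) to each surviving term, and using the matching of $\sigma$'s already verified in part (2) at the two edges meeting at $c$, each term takes the form $(n_c - j)Q_{\pm}(x_i) = (n_c - j)\epsilon_1((x_i - c_i)^2 + 1)$ with the \emph{same} coefficient $(n_c-j)$. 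Summing gives (\ref{eq:cornerC}).

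For squares of type (13) and (14), the argument is essentially identical for non-swallowtail sheets, since the formula (\ref{eq:FifLfR}) has the same structure as (\ref{eq:interpolating}) except that the $U$-contribution is not interpolated with a $D$-contribution; near a corner with $|x_2 - c_2| \leq 1/16$ this distinction disappears since the $x_2$-dependence again collapses through $\phi$. For the swallowtail sheets $F_k, F_{k+1}, F_{k+2}, \widetilde{F}_k$, one uses that $O_2$ is contained in a small disk around $(-3/8, 0)$ and thus is disjoint from the $1/16$-neighborhoods of all corners and from the regions $|x_2 \mp 1| \leq 1/16$; hence $\alpha(r) \equiv 1$ there and $F_l = G_l$ as given by (\ref{eq:Gkdef})--(\ref{eq:Gk2def}), so the same reduction applies. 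The main obstacle in the whole argument is purely notational: one must systematically track the permutations $\sigma_L, \sigma_D$ and their interaction with the $\sigma^{U/D/L/R}_\pm$ built into the $1$-dimensional models, especially at corners near a crossing locus above an edge; for the swallowtail squares the additional labelling convention for $\tilde{S}_k$ requires some care, but since corners lie far from the cusp and crossing loci, no essential new issue arises.
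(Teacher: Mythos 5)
Your proposal is correct and follows essentially the same route as the paper: evaluate $\phi$ (and $\alpha(r)$, $\psi$ in the swallowtail case) on the relevant boundary collar so that the interpolation formulas collapse, then substitute the standard endpoint form $Q_\pm$ of the one-dimensional functions from Corollary \ref{cor:summary}(4) and its variants for $f^{ST}_l$ and $\widehat{f}_{k\pm1}$. The only slip is notational: the coefficient in $Q_\pm$ is $\epsilon_2$, not $\epsilon_1$.
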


Note that in (2), if $E$ is a $(Cu)$ edge type with $j=k$ or $k+1$, then we take $\sigma_-(k) = \sigma_-(k+1) = k-0.5$ as usual.

\begin{proof}
Both statements follow from the formulas (\ref{eq:interpolating}) and (\ref{eq:FHCAG}), together with the standard form of the $1$-dimensional functions near $x = \pm1$ stated in Corollary \ref{cor:summary} (4) (and variants for the functions $f_{k-0.5}$, $f^{ST}_l$, and $\widehat{f}_{k+1}$). 

In more detail, consider the $U$ edge of a (1)-(12) square type, and suppose $|x_2-1| \leq 1/16$.  Then, (\ref{eq:interpolating}) simplifies to 
\[
F_j(x_1,x_2) = f^U_j(x_1) + (1-\phi(x_1)) f^L_{\sigma_L(j)}(x_2) + \phi(x_1) f^R_j(x_2).
\]   
Note that $\sigma_L$ agrees with $\sigma_-$ as associated to the edge type of $U$, so the formula follows from Corollary \ref{cor:summary} (4)  (and Lemma \ref{lem:MainProps0.5} (4) in the case that $U$ is a (Cu) edge and $j =k$ or $k+1$).  For the $D$ edge, when $|x_2-(-1)| \leq 1/16$,
\[
F_i(x_1,x_2) = f^D_{\sigma_D(i)}(x_1) + (1-\phi(x_1)) f^L_{\sigma_L(i)}(x_2) + \phi(x_1) f^R_i(x_2).
\]
Since $F_i$ corresponds to the sheet that appears in position $\sigma_D(i)$ above $D$ the $f^D_{\sigma_D(i)}(x_1)$ term is consistent with (\ref{eq:edgeE}).  The remaining two terms are seen to have the desired form via Corollary \ref{cor:summary} (4)  (and Lemma \ref{lem:MainProps0.5} (4) if necessary).

If in addition, $|x_1 - (\pm 1)| \leq 1/16$, then (\ref{eq:edgeE}) reduces to (\ref{eq:cornerC}) after another application of Corollary \ref{cor:summary} (4).

To conclude the proof we consider the (13) square type where the functions $F_l$, $l=k,k+1,k+2$ and $\widetilde{F}_k$ are defined by (\ref{eq:FHCAG}).  When $|x_i -(\pm1)| \leq 1/16$ for $i =1$ or $2$, 
 $\alpha(r) =1$ in (\ref{eq:FHCAG}), so for $l = k,k+1,k+2$, we have $F_{l} = G_l$ and $\widetilde{F}_k= H$. 
Observe that Lemma \ref{lem:flst} together with (\ref{eq:STk2114})-(\ref{eq:STk1341}) shows that, letting $n_L$ be the number of sheets to the left of the cusp locus,  for $|x_2- 1| \leq 1/16$, 
 \[
 \mbox{$f^{ST}_k(x_2) = f^{ST}_{k+1}(x_2) = (n_L - (k-0.5)) Q_+(x_2)$,    and    $f^{ST}_{k+2}(x_2) = (n_L - k) Q_+(x_2)$;}
 \]
 and for $|x_2-(-1)| \leq 1/16$,
 \[
 \mbox{$f^{ST}_k(x_2) = f^{ST}_{k+2}(x_2) = (n_L - (k-0.5)) Q_-(x_2)$,    and    $f^{ST}_{k+1}(x_2) = (n_L - k) Q_-(x_2)$.}
 \]
 Moreover, in (\ref{eq:Gkdef})-(\ref{eq:Gk2def}) and (\ref{eq:Hdef}) when $|x_2 - (\pm1)| \leq 1/16$ all appearances of the functions $\widehat{f}_{k+1}$ and $\widehat{f}_{k+2}$ are with $0$ coefficients.  It follows that $F_{l}$ and $\widetilde{F}_k$ have the required form near the edges $U$ and $D$.   

Near the $R$ edge, when $|x_1-1| \leq 1/16$,   in (\ref{eq:Gkdef})-(\ref{eq:Gk2def}), we have $\phi(x_1)=1$; and  $\widehat{f}_{k'}(x_1) = f^U_{k'}(x_1)= f^D_{k'}(x_1)$ for $k' = k+1, k+2$ so that $F_l(x_1, x_2) = f^R_{l}(x_2) + (1-\phi(x_2)) f^D_{\sigma_D(l)}(x_2) + \phi(x_2) f^U_{l}(x_1)$ as with all the other square types.  
Finally, near the $L$ edge when $|x_1-(-1)| \leq 1/16$, we have $\widehat{f}_{k+1}(x_1) = f^U_{k+2}(x_1)$ by (\ref{eq:hatk1}), so that 
\[
\widetilde{F}_k(x_1,x_2) = H(x_1,x_2) = f^U_{k+2}(x_1) + f^L_k(x_2) = f^D_{k+2}(x_1) + f^L_k(x_2).
\]

\end{proof}

Recall from Section \ref{sssec:FlowLinesBoundary} that we have for any $0$-cell a neighborhood, $N(e^0_\alpha)$.  Using polar coordinates in a smooth chart centered at $e^0_\alpha$, $N(e^0_\alpha)$ is the subset $\{(r,\theta) \, | \, 0 \leq r \leq 1/16\}$.

\begin{proposition} \label{prop:smooth0}
Suppose that $n_c$ sheets exist above $e^0_\alpha$.  Then, in coordinates on $N(e^0_\alpha)$, the local defining functions of $\tilde{L}_0$ are given by
\begin{equation}  \label{eq:Fmrtheta}
F_i(r, \theta) = (n_c-i) \ec(r^2 + 2),  \quad \mbox{for $i= 1, \ldots, n_c$}
\end{equation}
\end{proposition}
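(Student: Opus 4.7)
The plan is to derive the polar-coordinate formula (\ref{eq:Fmrtheta}) on $N(e^0_\alpha)$ by combining the corner normal form from Proposition \ref{prop:cornerC}~(1) with the angular-rescaling regularity condition (2) of Section \ref{sec:RegReq}, then checking that the resulting formulas glue consistently across the $2$-cells meeting at $e^0_\alpha$.

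First I would fix a $2$-cell $e^2_\beta$ with $e^0_\alpha$ as a corner, and observe that $N(e^0_\alpha) \cap \overline{e^2_\beta}$ is contained in the region $\{0 \le r \le 1/8\}$ covered by regularity condition (2). In the $[-1,1]^2$-coordinates on $e^2_\beta$, Proposition \ref{prop:cornerC}~(1) applies (since $1/16 < 1/8$ in the relevant variable), so the defining function corresponding to the sheet in position $j$ above the corner takes the form
\[
F(x_1,x_2) = (n_c - j)\,\ec\bigl((x_1-c_1)^2 + (x_2-c_2)^2 + 2\bigr).
\]
The crucial observation is that this expression depends on $(x_1,x_2)$ only through the squared Euclidean distance $\rho^2 := (x_1-c_1)^2 + (x_2-c_2)^2$ to the corner. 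Under the characteristic map $c(r,\theta) = (r,\pm \tfrac{4}{N}\theta + 2\pi m/N)$ prescribed by regularity condition (2) of Section \ref{sec:RegReq}, we have $\rho = r$, so after pulling back to the polar-coordinate disk $N(e^0_\alpha)$ the formula becomes $(n_c-j)\,\ec(r^2+2)$, independent of $\theta$.

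Next I would verify consistency of the sheet labeling across the $2$-cells adjacent to $e^0_\alpha$. Because the crossing locus is contained in $\{|x_1|<1/4\}\cup\{|x_2|<1/4\}$ and the cusp locus is supported in $\{x_1=-3/8\}\cup\{x_2=-3/8\}$ (Property \ref{pr:14models}), both are disjoint from a neighborhood of each corner. Hence above $N(e^0_\alpha)$ the sheets of $\tilde{L}_0$ are pairwise disjoint graphs, and the ordering of sheets by descending $z$-coordinate agrees at every point of $N(e^0_\alpha)$, including along the $1$-cells and across into each adjacent $2$-cell. The integer $j$ assigned to a given sheet in each $e^2_\beta$-coordinate patch is therefore the same global index $i \in \{1,\ldots,n_c\}$.

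Finally, since the formula $(n_c-i)\ec(r^2+2)$ depends neither on $\theta$ nor on the choice of adjacent face $e^2_\beta$, the local defining functions obtained on the various angular sectors $\{2\pi m/N \le \theta \le 2\pi(m+1)/N\}$ assemble into a single smooth function on the disk $N(e^0_\alpha)$, giving (\ref{eq:Fmrtheta}). The main (and essentially only) substantive point is the matching condition between the Cartesian coordinates of each square and the polar coordinates at $e^0_\alpha$, which is exactly what regularity requirement (2) was designed to provide; once this is in hand the rotational symmetry of the corner normal form from Proposition \ref{prop:cornerC}~(1) makes the global assembly automatic.
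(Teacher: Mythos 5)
Your proof is correct and follows the same route as the paper: the paper's own argument is simply that $N(e^0_\alpha)$ intersects each adjacent square in the ball of radius $1/16$ about the corner, that the distance to the corner equals the coordinate $r$ by regularity requirement (2), and that the formula is then immediate from Proposition \ref{prop:cornerC}~(1). Your additional check that the position index $j$ of a sheet above the corner is independent of the adjacent face is a reasonable point to make explicit, though the paper leaves it implicit.
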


\begin{proof}
The intersection of $N(e^0_\alpha)$ with a square containing $e^0_\alpha$ is a ball of radius $1/16$ centered at the corner of $[-1,1] \times [-1,1]$ corresponding to $e^0_\alpha$, and the distance from the corner agrees with the coordinate $r$ on $N(e^0_\alpha)$.  (See Section \ref{sec:RegReq} (2).)  Thus,  (\ref{eq:Fmrtheta}) follows from Proposition \ref{prop:cornerC} (1).
\end{proof}

Any $1$-cell, $e^1_\alpha$, appears as an edge of precisely two $2$-cells of $\mathcal{E}_\pitchfork$.  Recall that the regularity requirement (3) from Section \ref{sec:RegReq} allows us to combine the coordinates of these bordering $2$-cells to smoothly parametrize a neighborhood of $e^1_\alpha$ by $(-1,1) \times (-1/16,1/16)$.  (This neighborhood consists of the portion of the interior of the bordering $2$-cells that lies within $1/16$ from $e^1_\alpha$.) 

\begin{proposition}  \label{prop:smooth1}
Let $f_i$ be the one dimensional defining functions associated to the edge type of $e^1_\alpha$, and let $n_\pm$ and $\sigma_-$ be as in Proposition \ref{prop:cornerC}.   Above the subset $(-1,1) \times (-1/16,1/16)$ the defining functions for $\tilde{L}_0$ have the form
\begin{equation} \label{eq:smooth1eq}
F_i(x_1,x_2) = f_i(x_1) + \big[(1-\phi(x_1))(n_- - \sigma_-(i))+ \phi(x_1)(n_+ - i) \big]\ec ( x_2^2 +1) 
\end{equation} 
for $1 \leq i \leq n_+$.

Moreover, the critical points of $F_{i,j}$ located along $(-1,1)\times \{0\}$ are precisely the points $(x_1,0)$ with $x_1$ a critical point of $f_{i,j}$.
\end{proposition}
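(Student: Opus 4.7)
The plan is to apply Proposition \ref{prop:cornerC}(2) to each of the two $2$-cells bordering $e^1_\alpha$ and show that the two formulas agree, then use the resulting explicit expression to analyze critical points along $x_2=0$. Let $e^2_+$ and $e^2_-$ denote the two bordering $2$-cells, with their own coordinates $(X_1, X_2) \in [-1,1]^2$, chosen so that $e^1_\alpha$ appears respectively as the $D$ edge of $e^2_+$ (i.e.\ $X_2 = -1$) and the $U$ edge of $e^2_-$ (i.e.\ $X_2 = +1$). By regularity requirement (3) of Section \ref{sec:RegReq}, the substitutions $X_2 = x_2 - 1$ in $e^2_+$ and $X_2 = x_2 + 1$ in $e^2_-$ produce a smooth combined chart $(-1,1) \times (-1/16, 1/16)$ on a neighborhood of $e^1_\alpha$, with $e^1_\alpha$ at $x_2 = 0$ and $X_1 = x_1$ throughout.

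First I would apply Proposition \ref{prop:cornerC}(2) inside $e^2_+$ (with $e=-1$) to get, for $0 \leq x_2 \leq 1/16$,
\[
F_i(x_1, x_2) = f_i(x_1) + \big[(1-\phi(x_1))(n_- - \sigma_-(i)) + \phi(x_1)(n_+ - i)\big]\ec\big((x_2-0)^2 + 1\big),
\]
since $(X_2+1)^2 = x_2^2$ under the substitution. The analogous computation inside $e^2_-$ (with $e=+1$) yields precisely the same formula for $-1/16 \leq x_2 \leq 0$, because $(X_2-1)^2 = x_2^2$ and because the edge type (and hence the $1$-dimensional functions $f_i$, the numbers $n_\pm$, and the permutation $\sigma_-$) is intrinsic to $e^1_\alpha$ and does not depend on which $2$-cell we approach from. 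The two pieces agree along $x_2 = 0$ and produce the single formula (\ref{eq:smooth1eq}).

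For the critical-point statement I would compute
\[
\partial_{x_2} F_{i,j}(x_1, x_2) = 2x_2\,\ec\big[(1-\phi(x_1))(\sigma_-(j)-\sigma_-(i)) + \phi(x_1)(j-i)\big],
\]
which vanishes identically on $(-1,1) \times \{0\}$, and
\[
\partial_{x_1} F_{i,j}(x_1, 0) = d_x f_{i,j}(x_1) + d_x\phi(x_1)\big[(j-i)-(\sigma_-(j)-\sigma_-(i))\big]\ec.
\]
Since $\phi$ is constant outside $(-1/4,1/4)$ by (\ref{eq:InterpolatePhi}), the second term vanishes whenever $x_1 \notin (-1/4,1/4)$, so there the critical-point condition reduces to $d_x f_{i,j}(x_1) = 0$. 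Conversely, all critical points of $f_{i,j}$ lie outside $(-1/4,1/4)$ by Corollary \ref{cor:summary}(5)-(6), so every critical point of $f_{i,j}$ produces a critical point of $F_{i,j}$ on the line $x_2=0$.

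The only mildly delicate point, and the one I expect to need the most care, is ruling out critical points of $F_{i,j}$ on $(-1/4,1/4) \times \{0\}$. By Corollary \ref{cor:summary}(5) the slope $d_x f_{i,j}$ is within $N\ea$ of $2y_{i,j}$ on $[-1/4,1/4]$; even in the borderline case $y_{i,j}=0$ (which only arises as $y_{k,k+1}$ in a (1Cr) edge) the construction in Proposition \ref{prop:PV1Cr2Cr} shows $|d_x f_{i,j}| \geq .2\ea$ there, whereas $|d_x\phi \cdot [(j-i)-(\sigma_-(j)-\sigma_-(i))] \ec| \leq (2+\ec)(2N)\ec$, and the choice of $\ec$ in (\ref{eq:epsilon3}) (specifically $\ec < \ea^{10}/(10N)^{10}$) forces this correction to be orders of magnitude smaller than $d_x f_{i,j}$. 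Hence $\partial_{x_1}F_{i,j}(x_1,0) \neq 0$ on $(-1/4,1/4)$, completing the identification of critical points.
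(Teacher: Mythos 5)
Your proposal is correct and follows essentially the same route as the paper: glue the two applications of Proposition \ref{prop:cornerC}(2), observe $\partial_{x_2}F_{i,j}\equiv 0$ on $x_2=0$, and compare $\partial_{x_1}F_{i,j}(x_1,0)$ with $d_xf_{i,j}(x_1)$, with the only content being the non-vanishing on $(-1/4,1/4)$. The one cosmetic difference is that for the (1Cr) crossing pair the paper notes the two terms of $\partial_{x_1}F_{i,j}$ share a sign, while you dominate the $d_x\phi\cdot\ec$ correction by $.2\ea$ using (\ref{eq:epsilon3}); both work.
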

\begin{proof}
When parametrizing by $(-1,1) \times(-1/16,1/16)$, the $[-1,1]\times [-1,1]$ parametrizations of the two bordering squares are shifted (and possibly reflected) so that the edge corresponding to $e^1_\alpha$ is at $x_2=0$.  Thus,  $x_2^2$ in the current formula corresponds to $(x_2-e)^2$ in  (\ref{eq:edgeE}), and (\ref{eq:smooth1eq}) follows from Proposition \ref{prop:cornerC} (2).

For the statement concerning critical points of $F_{i,j}$, note that 
\[
\partial_{x_2} F_{i,j}(x_1,0) \equiv 0 \quad \mbox{and} \quad (\partial_{x_1}F_{i,j})|_{[-1,-1/4]\cup[1/4,1]} = d_x f_{i,j}(x_1),
\]
while for $x_1 \in [-1/4,1/4]$ both $\partial_{x_1}F_{i,j}(x_1,0)$ and $d_x f_{i,j}(x_1)$ are non-vanishing.  [For $d_x f_{i,j}(x_1)$, this follows from item (2) of Propositions \ref{prop:PV1Cr2Cr} and \ref{prop:CuDef}.  For $\partial_{x_1} F_{i,j}(x_1,0)$,  
using item (5) of the same propositions (with the  values of the $y_{i,j}$ found above Proposition \ref{prop:PV1Cr2Cr}) we see that either:  
\begin{itemize}
\item[(a.)]  $d_x f_{i,j}(x_1) > 1/8$ and therefore dominates the other term of 
\begin{equation} \label{eq:Fijijijj}
\partial_{x_1}F_{i,j}(x_1,0) = d_xf_{i,j}(x_1) + d_x\phi(x_1)\left[( j - i)-(\sigma_-(j) - \sigma_-(i))\right] \ec
\end{equation}
 in absolute value.  (Using (\ref{eq:InterpolatePhi}) the second term is bounded by $3\cdot2N\ec$.) 
\item[(b.)] Or, $e^1_\alpha$ is a (1Cr) edge with $S_i$ and $S_j$ crossing above $L$.  In this case, the second term becomes $d_x\phi(x_1)2\ec$, so that both terms in   (\ref{eq:Fijijijj}) have the same  sign.
\end{itemize}
\end{proof}

\begin{corollary}
The Legendrian $\tilde{L}_0$ formed by piecing together the local defining functions assigned to the square types (1)-(14) above the transverse square decomposition $\mathcal{E}_\pitchfork$ is a smooth Legendrian submanifold of $J^1(S)$.  Moreover, there exists a Riemannian metric on $S$ such that the gradients of all local difference functions agree with the Euclidean gradients computed in $[-1,1]\times[-1,1]$ coordinates on each square.
\end{corollary}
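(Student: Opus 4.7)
The plan is to decompose the verification into three cases based on where a point of $S$ lies---interior of a $2$-cell, interior of a $1$-cell, or at a $0$-cell---and then invoke Construction \ref{construct:metric} for the metric. Above the interior of each $2$-cell, smoothness of $\tilde{L}_0$ is immediate: for types (1)--(12) the functions (\ref{eq:interpolating}) are smooth combinations of smooth $1$-dimensional functions and the cutoff $\phi$, while for types (13)--(14) Lemma \ref{lem:smoothST} already asserts that the graphs of $F_k,F_{k+1},F_{k+2},\widetilde F_k$ fit together to form a smooth Legendrian front with a single swallowtail.

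The heart of the argument is the matching across $1$-skeleton, and this is exactly what Propositions \ref{prop:smooth0} and \ref{prop:smooth1} are designed for. First, along a $1$-cell $e^1_\alpha$, I would use regularity requirement (3) from Section \ref{sec:RegReq} to glue the parametrizations of the two adjacent squares into a single chart $(-1,1)\times(-1/16,1/16)$ with $e^1_\alpha$ at $x_2=0$. In this combined chart, Proposition \ref{prop:smooth1} gives a single closed formula (\ref{eq:smooth1eq}) for each defining function $F_i$ that is manifestly smooth and depends only on the common edge type data $f_i$, $n_\pm$, $\sigma_-$; since the formula is the same whether approached from the ``upper'' or ``lower'' square, the pieces agree to all orders across $\{x_2=0\}$. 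At a $0$-cell $e^0_\alpha$, I would use regularity requirement (2) of Section \ref{sec:RegReq}, which forces the $N$ squares meeting at $e^0_\alpha$ to parametrize angular sectors of a common disk $U\cong D^2$ by rescaling the angle $\theta$. Proposition \ref{prop:smooth0} then says each defining function reduces to $F_i(r,\theta)=(n_c-i)\varepsilon_2(r^2+2)$, which is radially symmetric and therefore smooth on all of $U$ regardless of the angular sectoring.

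The Legendrian property is automatic: each local piece of $\tilde{L}_0$ is by construction the $1$-jet of a smooth function (on its domain, except along cusp edges where two sheets merge tangentially in the prescribed semicubical form), and $1$-jet lifts of smooth defining functions are Legendrian in $J^1(S)$. Combining this with the smoothness across $1$-skeleton just established, and with Lemma \ref{lem:smoothST} at swallowtail points, produces a smooth Legendrian submanifold of $J^1(S)$.

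For the metric, the form $F_i(r,\theta)=(n_c-i)\varepsilon_2(r^2+2)=ar^2+b$ near $0$-cells is precisely the hypothesis of Construction \ref{construct:metric}, so that construction applies verbatim and yields a Riemannian metric $g$ on $S$ for which the gradient of each $F_i$ (and hence each $F_{i,j}$) coincides with the Euclidean gradient computed in the square coordinates. The only real obstacle anywhere in this argument is checking the formula at $0$-cells, since one might worry that different neighboring squares could force different coefficients $a,b$; but $a=(n_c-i)\varepsilon_2$ and $b=2(n_c-i)\varepsilon_2$ depend only on $n_c$ and the position $i$ of the sheet, which are intrinsic to $e^0_\alpha$ and not to the square chosen, so no inconsistency arises.
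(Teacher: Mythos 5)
Your proof is correct and follows essentially the same route as the paper: it covers $S$ by the neighborhoods of $0$-cells, $1$-cells, and the $2$-cell interiors, invokes Propositions \ref{prop:smooth0} and \ref{prop:smooth1} (and Lemma \ref{lem:smoothST} for the swallowtail squares) for smoothness, and applies Construction \ref{construct:metric} via the $ar^2+b$ form at $0$-cells for the metric. The extra consistency check on the coefficients $a,b$ at $0$-cells is a fine elaboration of what Proposition \ref{prop:smooth0} already guarantees.
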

\begin{proof}
Smoothness follows from Propositions \ref{prop:smooth0} and \ref{prop:smooth1}, since the interiors of the given neighborhoods of $0$-cells and $1$-cells, together with the $2$-cells themselves form an open cover of $S$.  In each of these regions, the coordinate formulas show that the front projection of $\tilde{L}_0$ is smooth except at cusp edges and swallowtail points which have been arranged to be standard so that the Legendrian $\tilde{L}_0$ is smooth in $J^1(S)$.  (Smoothness in the interior of Type (13)-(14) squares was established in Lemma \ref{lem:smoothST}.)

Proposition \ref{prop:smooth0} shows that Construction \ref{construct:metric} may be applied to produce the required metric on $S$. 
\end{proof}

\subsection{Properties involving Reeb chords, cusp loci and crossing loci.}
\label{ssec:Fitting}

%\dr{Consider the 14 possible square. Combinatorially, they are uniquely characterized by what the sheets are suppose to do (cross, cusp, etc) on the four edges.} 
%In particular, the four collections $\{f^X_i\}_i$ for $X = U,D, R,L$ contain this combinatorial data.

\begin{proposition}
\label{prop:interpolating}

Consider the functions $F_i: [-1,1] \times [-1,1] \rightarrow \R$  defined in (\ref{eq:interpolating}) (for (1)-(12) square types) or (\ref{eq:FHCAG}) (for (13)-(14) square types).  In each case, the singular set of the Legendrian front defined by the $F_i$ (and $\widetilde{F}_k$ in the (13)-(14) case) topologically agrees with the singular set prescribed by the square type.
Moreover the crossing and/or cusp loci (if they exist) satisfy Property \ref{pr:14models}.

\end{proposition}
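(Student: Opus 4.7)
The plan is to split the verification by square type and, within each, check three things separately: (a) topological agreement of the singular set, (b) location of the cusp loci, and (c) location of the crossing loci. Results already established in Sections~\ref{sec:Constructions} and \ref{sec:ConstructionsST} do essentially all of the bookkeeping, so the proof is mostly a matter of assembling them and ruling out any spurious additional singularities introduced by the interpolations.

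For squares of type (1)--(12), I would work directly from (\ref{eq:interpolating}). Cusp loci only arise from (Cu) boundary edges, and a cusp on an edge occurs at $x = -3/8$ by Proposition~\ref{prop:CuDef}~(8'). When the cusp is between sheets $S_k$ and $S_{k+1}$ along, say, the $U$-edge, the key point is that $F_k$ and $F_{k+1}$ share their $L$-data via the auxiliary function $f^L_{k-.5}$ of Lemma~\ref{lem:MainProps0.5}, so $F_k-F_{k+1}$ reduces to $\phi(x_2)\,(f^U_k-f^U_{k+1})(x_1)$ plus a contribution that vanishes near $x_1=-3/8$ because $\phi(x_1)=0$ there; combined with the semicubical form of $f^U_k-f^U_{k+1}$ this gives a cusp edge along the line $\{x_1=-3/8\}$ for all $x_2$, in agreement with Property~\ref{pr:14models}. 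For crossing loci, the 1-dimensional functions $f^{U,D,L,R}_{i,j}$ vanish exactly once (at $x=0$ for (1Cr), in $[1/4,1/4+\e_1]$ for (2Cr)) and are strictly monotone on $[-1,-1/4]$ and $[1/4,1]$ by Proposition~\ref{prop:PV1Cr2Cr}~(2). Since outside the bands $\{|x_1|<1/4\}$ and $\{|x_2|<1/4\}$ at least one of $\phi(x_i),1-\phi(x_i)$ equals $1$ and the corresponding $1$-d difference has a definite sign, $F_{i,j}$ is definitely signed there, so every crossing is confined to the claimed region. The triple point of the (8) square, the cusp/crossing coincidences of (9)--(12), and the case of squares with two crossings along a single edge follow from the same sign-tracking once the $\sigma_{L,D}$ permutations are properly accounted for.

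For the (13)--(14) squares I would work region by region using the three pieces of the construction. Inside $O_1$, the Legendrian is (up to rescaling by $S$) a copy of $L''_{st}$, so its singular set is exactly one swallowtail with its crossing/cusp data as in Section~\ref{sec:funcO1}; since $O_1$ has radius $1/16 < 3/32$, this fits the swallowtail exception in Property~\ref{pr:14models}. Outside $O_2$, the only new terms come from $G_{k},G_{k+1},G_{k+2},H$, and Lemma~\ref{lem:GkGk1} already says that their cusps lie on $\{x_1=-3/8\}$ and their crossing on $\{x_2=0\}$. In the annulus $O_2\setminus O_1$ the relevant cusps of both $G_l$ and $A_l$ agree with the straight segment $\{x_1=-3/8\}$ (by Lemma~\ref{lem:Bprops} together with the straightening in Section~\ref{sec:Straight} and Lemma~\ref{lem:Aprops}~A1), so the interpolation (\ref{eq:FHCAG}) preserves this cusp locus; similarly the crossings of $H+CA_l$ and of $G_l$ both lie on $\{x_2=0\}$, so the crossing of the interpolation does as well.

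I expect the main obstacle to be the verification in the annulus $O_2\setminus O_1$ for the swallowtail squares, where $F_l$ is a genuine convex combination of $H+CA_l$ and $G_l$ and one needs to rule out ``spurious'' zeros of the differences $F_{i,j}$ away from the prescribed locus. The idea is to compare $|G_{i,j}|$ on $(O_2\setminus O_1)\setminus V_m$ with $C\|A_{i,j}\|_{C^0(O_2)}$ using the bound (\ref{eq:CAijCO2}), so that the interpolated difference $(1-\alpha)(G_{i,j}+CA_{i,j})+\alpha G_{i,j}$ is dominated by $G_{i,j}$ there and inherits its sign; within the designated neighborhoods $V_1,V_2$ (cusp) and $V_3$ (crossing), one uses the directional derivative estimates (\ref{eq:x1x2V3}) (together with the analogous gradient transversality along the straightened cusp from Lemma~\ref{lem:Aprops}~A3, A5 and Lemma~\ref{lem:Bprops}~B2) to apply the implicit function theorem and conclude that the loci remain (connected) graphs matching the loci from inside $O_1$ and from outside $O_2$. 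Once this annular analysis is carried out, the three pieces glue to confirm that the entire singular set has the form prescribed by the swallowtail square model and satisfies the containment requirements of Property~\ref{pr:14models}.
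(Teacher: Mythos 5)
Your overall decomposition (cusp location via Proposition~\ref{prop:CuDef}~(8'), confinement of crossings to the central bands by sign considerations, and the three-region analysis $O_1$, $O_2\setminus O_1$, outside $O_2$ for the swallowtail squares with the dominance estimate from (\ref{eq:CAijCO2})) matches the paper's strategy. But there is a genuine gap in how you pass from \emph{locating} the crossing locus to identifying its \emph{topology}. Knowing that $F_{i,j}$ has a definite sign outside the region $\Phi=\{|x_1|<1/4\}\cup\{|x_2|<1/4\}$ and that the $1$-dimensional boundary functions have unique, monotone zeros only tells you where $F_{i,j}^{-1}(0)$ can live and how it meets $\partial([-1,1]^2)$. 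It does not tell you that $0$ is a regular value of $F_{i,j}$ inside $\Phi$, that the zero set has no closed components, or that distinct crossing arcs meet transversally in the unique configurations of Figure~\ref{fig:generators}. The paper supplies all of this through Lemma~\ref{lem:quarter-cross} (no critical points of any $F_{i,j}$ in $\Phi$), whose proof is a lengthy case analysis (Lemmas~\ref{lem:18est}, \ref{lem:tildeU}, \ref{lem:tildeD}, \ref{lem:Psi} and their swallowtail versions). A closed component of $F_{i,j}^{-1}(0)$ would force an interior extremum of $|F_{i,j}|$, i.e.\ a critical point in $\Phi$; without excluding these, your ``sign-tracking'' cannot rule out extra circles of crossing locus or a non-manifold zero set. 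Similarly, uniqueness of the triple point in the Type~(8) square is obtained in the paper by showing the $(k,k+1)$-crossing is the straight line $\{x_1=0\}$ and then checking $\partial_{x_2}F_{k,k+2}(0,x_2)>0$ on $[-1/4,1/4]$ — a monotonicity computation along the locus, not a boundary sign count; and the statement that a third sheet meets a cusp edge only in the Type~(12) square (at a single point) needs its own argument. Your closing sentence that these ``follow from the same sign-tracking'' is where the proof actually has to do most of its work.

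For the (13)--(14) squares your annulus argument is essentially the paper's, but the same issue recurs: after the dominance and implicit-function-theorem steps you still need the swallowtail analogue of Lemma~\ref{lem:quarter-cross} (established in the paper via Lemmas~\ref{lem:PsiST}--\ref{lem:tildeLST}) to exclude closed components of the crossing locus and spurious intersections with the cusp edge away from $O_1$; the paper explicitly finishes the swallowtail case by reducing to the Steps~1--5 argument once that lemma is in hand.
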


Note that this proves the first itemized claim of Theorem \ref{thm:PropertiesofLtilde}.

Before we prove this proposition, we spend some time confirming that there are no unintentional Reeb chords.  Locations of Reeb chords along the boundary of a square $I^2$ are determined by Propositions \ref{prop:smooth0} and \ref{prop:smooth1}.  In Lemmas \ref{lem:c-chords} and \ref{lem:quarter-cross}, we identify all Reeb chords in the interior, $\mathit{Int}(I^2)$.
%We then complete the proof in Section \ref{ssec:InterpolatingProof}.

Let 
\begin{eqnarray}
\label{eq:Phi}
\Phi& :=  & \{(x_1,x_2) \,\,|\,\, |x_i| < 1/4\,\, \mbox{for some}\,\, i\} \cap \mathit{Int}(I^2), \\
\label{eq:Psi}
\Psi & := & \{(x_1,x_2) \,\,|\,\, |x_i| < 1/4\,\, \mbox{for both}\,\, i\}.
\end{eqnarray}
Label the four components of $Int(I^2) \setminus \Phi$ as $XY$ where $X \in \{U, D\}$ and $Y \in \{R, L\}$ according to which two edges (up, right, down, left)  of $\partial I^2$ intersect the closure of $XY.$
Label each of the four components of $\Phi \setminus \Psi$ as $\overline{U}, \overline{R}, \overline{D}, \overline{L}$ based on which boundary sits
in the component's closure. See Figure \ref{fig:PsiPhi}.  

\begin{figure}

\quad

\labellist
\small
\pinlabel $\Phi$ at 96 96
\pinlabel $UR$  at 156 156
\pinlabel $DR$  at 156 36
\pinlabel $UL$  at 36 156
\pinlabel $DL$  at 36 36
\pinlabel $\Psi$  at 360 96
\pinlabel $\overline{U}$  at 360 156
\pinlabel $\overline{D}$  at 360 36
\pinlabel $\overline{R}$  at 420 96
\pinlabel $\overline{L}$  at 300 96
\endlabellist
\centerline{ \includegraphics[scale=.6]{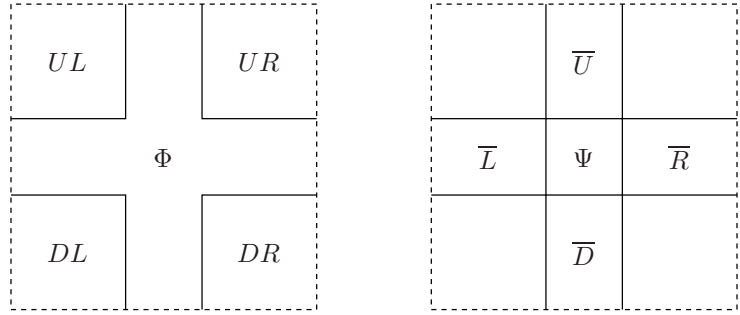} }

\caption{Notations used for different subsets of $[-1,1] \times [-1,1]$.   
%pictured with flow lines from the stable manifolds of $b^U_{i,j}$ and $b^L_{i,j}$.
}
\label{fig:PsiPhi}
\end{figure}

\begin{lemma}
\label{lem:c-chords}
The difference function $F_{i,j}$ for two sheets has a critical point at $(x_1,x_2)$ in the interior $XY$ if and only if $F_{i,j}$ has a critical point on edge $X$  at $(x_1,\pm1)$ and a critical point on edge $Y$ at $(\pm 1, x_2)$.  (This includes the statement that if $F_{i,j}$ is not defined above $X$ or $Y$ due to a cusp edge, then $F_{i,j}$ has no critical points in $XY$.)
\end{lemma}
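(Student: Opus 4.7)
\emph{Plan.} The approach is to exploit the fact that in each corner region $XY$, both cutoff functions $\phi(x_1)$ and $\phi(x_2)$ are constant and take values in $\{0,1\}$; consequently, the defining functions $F_i$ decouple into a sum of a function of $x_1$ alone and a function of $x_2$ alone, and these summands agree (up to additive constants) with the restrictions of $F_i$ to the two adjacent edges. Taking differences inherits the decoupling for $F_{i,j}$, so the critical-point equation $\nabla F_{i,j}=0$ separates into two independent one-variable equations; by Proposition~\ref{prop:smooth1} these one-variable equations hold precisely when $F_{i,j}$ has critical points on edges $X$ and $Y$ at $(x_1,\pm 1)$ and $(\pm 1,x_2)$ respectively. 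The lemma follows.

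For squares of type (1)--(12), the separation is read off directly from equation~(\ref{eq:interpolating}) once one substitutes the constant values of $\phi(x_1)$ and $\phi(x_2)$ appropriate to the chosen corner; the surviving pair of terms is exactly one of $f_{i,j}^U, f_{i,j}^D, f^D_{\sigma_D(i),\sigma_D(j)}$ in the $x_1$-variable plus one of the analogous $L,R$ expressions in the $x_2$-variable. The ``if not defined'' clause is then automatic: if a cusp edge incident to the $X$-edge renders $f^X_{i,j}$ undefined at $x_1$, the same cusp arc meets the square along $\{x_1=-3/8\}$ (Property~\ref{pr:14models}) and separates $(x_1,x_2)$ from one of $S_i,S_j$, so $F_{i,j}$ itself fails to exist at $(x_1,x_2)$.

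The main (though not conceptually difficult) step will be handling the Type~(13) and (14) squares. Here I would first observe that $O_2$ lies in the strip $\{|x_1+3/8|\le 3/32\}\cap\{|x_2|\le 3/32\}\subset\Phi$, so $XY\cap O_2=\emptyset$ and hence $\alpha(r)\equiv 1$ in $XY$; formula~(\ref{eq:FHCAG}) then collapses to $F_l = G_l$ (resp.\ $\widetilde F_k = H$). Next I would inspect (\ref{eq:Gkdef})--(\ref{eq:Hdef}) and (\ref{eq:FifLfR}) in each $XY$: the relevant corner values of $\phi$ are $0$ or $1$, $\psi(x_2)$ also vanishes or equals $1-\phi(x_2)$ once $|x_2|\ge R_1/2$, and the $\widehat f_{k+1},\widehat f_{k+2}$ contributions drop out (by (\ref{eq:hatk1})--(\ref{eq:hatk2})) wherever $|x_1|\ge 1/4$ makes them equal to the corresponding $f^U_{k+1},f^U_{k+2}$. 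In every case, what remains is a sum of a function of $x_1$ alone and a function of $x_2$ alone matching the edge restrictions, and the argument of the previous paragraph applies verbatim. The pieces where $H$ and $G_l$ agree along portions of $\{x_1=-3/8\}$ (Lemma~\ref{lem:GkGk1}(2)) ensure that there is no ambiguity when one endpoint of the Reeb chord lies on $\widetilde S_k$.

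No serious obstacle is expected; the content is essentially that the construction in Section~\ref{ssec:Two-skeleton} and Section~\ref{ssec:FormulaST} was designed so the defining functions separate near the corners, and the bookkeeping is routine once one catalogs the values of $\phi$, $\psi$, and $\alpha$ in each of the four corner regions.
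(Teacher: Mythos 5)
Your overall strategy coincides with the paper's: in each corner region $XY$ both cutoffs are constant with values in $\{0,1\}$, the defining functions separate into a function of $x_1$ plus a function of $x_2$, and the critical-point equation decouples; for pairs of sheets defined over both adjacent edges this, together with Proposition~\ref{prop:smooth1}, gives the biconditional, and your reduction of the Type (13)/(14) squares to the same form via $\alpha(r)\equiv 1$ on $XY$ is also how the paper proceeds.

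The gap is in the parenthetical clause, which you declare ``automatic.'' Your argument only covers points of $XY$ lying on the empty side of the cusp locus, where $F_{i,j}$ itself is undefined. But the cusp locus sits at $\{x_1=-3/8\}$ (or $\{x_2=-3/8\}$), strictly inside $XY$, so $XY$ also contains a strip, e.g.\ $\{-3/8\le x_1\le -1/4,\ x_2\ge 1/4\}\subset UL$, on which $F_{i,j}$ \emph{is} defined even though it has no restriction to the far edge $L$. On that strip the decoupled formula is, say, $F_{i,k}=f^U_{i,k}(x_1)+f^L_{i,k-.5}(x_2)$, and the second summand --- the artificial function of Lemma~\ref{lem:MainProps0.5} (or $f^{ST}_{i,j}$ in the swallowtail squares) --- genuinely has a critical point with $x_2\in[1/2,3/4]$, which lies in $UL$; so $\partial_{x_2}F_{i,k}$ can vanish there. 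The absence of critical points must therefore come from the other factor: one needs $d_xf^U_{i,k}(x_1)\neq 0$ for all $x_1\in[-3/8,-1/4]$, i.e., that the (Cu)-type one-variable difference functions have no critical points between the cusp point and the boundary of $\Phi$. This is item (2) of Proposition~\ref{prop:CuDef} (supplemented by Lemma~\ref{lem:MainProps0.5} for the Type (11) square), and it is precisely the step the paper supplies and your proposal omits; without it the ``if and only if'' is not established in the cusp case.
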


\begin{proof}
For squares (1)-(12), since $|x_1| \ge  1/4$ and $|x_2| \ge 1/4,$
\begin{equation} \label{eq:XYform}
F_{i,j}(x_1, x_2) = (f^X_{i'}-f^X_{j'})(x_1) + (f^Y_{i''}-f^Y_{j''})(x_2)
\end{equation}
for appropriate $i',j'$ and $i'',j''$.  If sheets $S_i$ and $S_j$ exist above $X$ and $Y$, then the result follows from (\ref{eq:XYform}) and Proposition \ref{prop:smooth1}.   If either $S_i$ or $S_j$ does not exist over $X$, then (i) $Y$ must be a (Cu) edge and (ii) within $XY$ we have $-1 < x_2\leq -1/4$.  From item (2) of Proposition \ref{prop:CuDef} (and Lemma \ref{lem:MainProps0.5}, in the case of the (11) square), it follows that
\[
\partial_{x_2}F_{i,j}(x_1,x_2) = d_x(f^Y_{i''} - f^Y_{j''})(x_2)
\]
is non-zero.  A similar argument applies if $S_i$ or $S_j$ does not exist over $Y$.

For squares (13) and (14), recall $\alpha(r) = 1$ in (\ref{eq:FHCAG}). 
So (\ref{eq:XYform}) remains valid if we allow for the possibility that one or more of the $f$'s above could be 
of type 
%$\widehat{f}$ or 
$f^{ST},$ 
see (\ref{eq:Gkdef})-(\ref{eq:Hdef}).  Above $UL$ and $DL$ where some sheets are not defined and the $f^{ST}$ functions may appear, the $f^U$ and $f^D$ functions are (Cu) functions, so that $\partial_{x_2}F_{i,j}(x_1,x_2) \neq 0$ as above. 

\end{proof}

\begin{lemma}
\label{lem:quarter-cross}  
For any pairs of sheets  $(i,j)$ in any of the 14 squares,  
$F_{i,j}$ has no critical points in $\Phi.$
\end{lemma}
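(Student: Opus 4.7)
The argument breaks naturally into the five subregions of $\Phi$: the four edge-regions $\overline{U}, \overline{D}, \overline{L}, \overline{R}$ and the central region $\Psi$. By the symmetry of the interpolation formula (\ref{eq:interpolating}), the four edge-regions may be handled identically, so I describe $\overline{U}$ in detail. In that region $\phi(x_2) = 1$, so
\[
F_{i,j}(x_1,x_2) = f^U_{i,j}(x_1) + \phi(x_1) f^R_{i,j}(x_2) + (1-\phi(x_1)) f^L_{\sigma_L(i),\sigma_L(j)}(x_2),
\]
and the proof proceeds by splitting on the $x_2$-coordinate. For $x_2 \in [1/4, 1/2]$, both $d_xf^R_{i,j}(x_2)$ and $d_xf^L_{\sigma_L(i),\sigma_L(j)}(x_2)$ are strictly positive by item~(2) of Propositions~\ref{prop:PV1Cr2Cr}--\ref{prop:CuDef} (together with Lemma~\ref{lem:MainProps0.5} for the $f^L_{k-.5}$ variants), forcing $\partial_{x_2}F_{i,j} > 0$. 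For $x_2 \in [3/4, 1)$ the same two derivatives are negative, giving $\partial_{x_2}F_{i,j} < 0$. Only the middle range $x_2 \in [1/2, 3/4]$ requires a different argument, since there $\partial_{x_2}F_{i,j}$ may vanish; I instead show $\partial_{x_1}F_{i,j} \neq 0$ using the fact that on $(-1/4,1/4)$ the function $f^U_{i,j}$ is linear with slope $2y^U_{i,j} + O(N\epsilon_1)$ (item~(5)), while $f^R_{i,j}(x_2)$ and $f^L_{\sigma_L(i),\sigma_L(j)}(x_2)$ differ from $y^R_{i,j}$ and $y^L_{\sigma_L(i),\sigma_L(j)}$ by at most $N\epsilon_1$ (item~(2)), and $0 \leq \phi'(x_1) \leq 2 + \epsilon_3$. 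A coefficient comparison using the explicit values of the $y^Y_{l,l+1}$ fixed before Proposition~\ref{prop:PV1Cr2Cr} then rules out the vanishing.

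For the central region $\Psi$, every factor of $\phi$ in (\ref{eq:interpolating}) is linear with derivative $2$ away from the $\epsilon_1$-buffer intervals, and each $f^Y_{i,j}$ is linear on $[-1/4,1/4]$ with slope $2y^Y + O(N\epsilon_1)$ and with $|f^Y_{i,j}(-1/4)| < N\epsilon_1$ (the last by item~(3)). Substituting these expressions into (\ref{eq:interpolating}) with $u = x_1 + 1/4$, $v = x_2 + 1/4$ yields an essentially bilinear model
\[
F_{i,j}(u,v) = A + Bu + Cv + Duv + O(N\epsilon_1),
\]
with $B \approx 2y^D_{\sigma_D(i),\sigma_D(j)}$, $C \approx 2y^L_{\sigma_L(i),\sigma_L(j)}$, and $D \approx 4(y^U_{i,j} + y^R_{i,j} - y^D_{\sigma_D(i),\sigma_D(j)} - y^L_{\sigma_L(i),\sigma_L(j)})$. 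A critical point would force $(u,v) = (-C/D,-B/D) \in (0,1/2)^2$; a square-by-square check on the 14 types shows that in every case either $D = 0$ (in which case $B$ and $C$ cannot both vanish since $i < j$) or the ratios $-C/D$, $-B/D$ fall outside $(0, 1/2)$.

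The swallowtail squares (13) and (14) require additional work only in $\overline{L}$, which contains the swallowtail disc $O_1 \subset O_2$. In the other three edge-regions and in $\Psi$, the arguments above go through with $f^{ST}_l$, $\widehat{f}_{k+1}$, $\widehat{f}_{k+2}$ replacing the usual $f^L$ and $f^{U/D}$ entries; the required monotonicity properties are supplied by Lemmas~\ref{lem:fhatexist} and~\ref{lem:flst}. In $\overline{L}$ itself, outside of $O_2$ the $F_l$ (resp.\ $\widetilde{F}_k$) agree with $G_l$ (resp.\ $H$), whose gradients are controlled by Lemma~\ref{lem:Bprops}~(B1)--(B4); inside $O_1$ the $F_l$ pull back the explicit coordinate model $L''_{\mathit{st}}$ with gradients controlled by Lemma~\ref{lem:Aprops}~(A3)--(A5). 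The interpolation constants $C$ and the cutoff $\alpha$ were chosen precisely so that both sets of gradient estimates survive the convex combination on the annulus $O_2 \setminus O_1$; this is the purpose of the bounds (\ref{eq:CAijCO2})--(\ref{eq:CACO3}) together with the derivative bound (\ref{eq:alphabound}).

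The principal obstacle is the coefficient comparison in $\Psi$ and in the subrange $x_2 \in [1/2,3/4]$ of the edge-regions. Neither step is conceptually deep --- each reduces to checking that an explicit linear combination of the constants $y^Y_{l,l+1}$ does not vanish modulo an $O(N\epsilon_1)$ error --- but the bookkeeping requires inspecting all 14 square types and carefully tracking how the permutations $\sigma_L, \sigma_D$ encoding the crossing arcs match indices between opposite edges. The $(2\mathrm{Cr})$ edge type, where $y^Y_{l,l+1}$ takes the non-integer values recorded in (\ref{eq:TwoCrossingConstants}), will be the most delicate; in particular the degenerate cases where the relevant $y^Y$ is $0$ (for instance $y^Y_{k,k+1} = 0$ on a $(1\mathrm{Cr})$ edge) must be checked by hand, using the second partial rather than the reduced $(u,v) \mapsto (-C/D,-B/D)$ formula.
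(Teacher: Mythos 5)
Your decomposition of $\Phi$ and your bilinear model in $\Psi$ (which is the paper's inequality $y^U_{i,j}+y^R_{i,j}-y^L_{\sigma_L(i),\sigma_L(j)}\geq 1/8$ of Lemma \ref{lem:18est} in disguise) are on the right track, but the edge-region argument has a real hole. The four regions $\overline{U},\overline{D},\overline{L},\overline{R}$ are \emph{not} interchangeable: the singular set meets the $D$ and $L$ edges through the nontrivial permutations $\sigma_D,\sigma_L$, so in $\overline{D}$ the term $d_xf^D_{\sigma_D(i),\sigma_D(j)}(x_1)$ can be negative, and a separate case analysis is needed there (crossing on $D$, cusp on $R$, generic); the only symmetry actually available is reflection across $x_1=x_2$, which exchanges $\overline{U}\leftrightarrow\overline{R}$ and $\overline{D}\leftrightarrow\overline{L}$ at the price of re-verifying the coefficient inequality for all reflected square types. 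More concretely, even in $\overline{U}$ your claim that for $x_2\in[1/4,1/2]$ both $d_xf^R_{i,j}(x_2)$ and $d_xf^L_{\sigma_L(i),\sigma_L(j)}(x_2)$ are strictly positive fails exactly for the pairs that cross or cusp above $U$: there $\sigma_L(i)>\sigma_L(j)$, so $f^L_{\sigma_L(i),\sigma_L(j)}=-f^L_{\sigma_L(j),\sigma_L(i)}$ has \emph{negative} derivative on $[1/4,1/2]$, and $\partial_{x_2}F_{i,j}$ is a convex combination of terms of opposite sign. The correct move in that case is to use $\partial_{x_1}F_{i,j}=d_xf^U_{i,j}(x_1)+d_x\phi(x_1)\bigl(f^R_{i,j}(x_2)-f^L_{\sigma_L(i),\sigma_L(j)}(x_2)\bigr)$, all of whose terms are nonnegative with the first positive; the primary case split is on the sign of $\sigma_L(j)-\sigma_L(i)$, not on the $x_2$-range.

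Two smaller points. In $\Psi$ the reduced critical point $(-C/D,-B/D)$ is only meaningful where the linear terms dominate the $O(N\epsilon_1)$ errors; in the strip $-1/4\le x_1\le -1/4+\tfrac{1}{1000N}$ the term $Bu$ is smaller than the constant errors $f^Y(-1/4)=O(N\epsilon_1)$, and one needs a separate estimate there using the uniform lower bound $d_xf^L\ge 1/10$. This is also where the exceptional configurations (the Type (3) and (4) crossing pairs, the Type (11) adjacent double cusp, the Type (12) pair involving $S_{k+2}$) escape the generic coefficient comparison and must be handled by direct sign inspection rather than by the bilinear formula. Finally, in $\overline{L}$ of the swallowtail square the convex combination on $O_2\setminus O_1$ does not by itself preserve the sign of $\partial_{x_1}F_{k,k+1}$, because the extra term $\partial_{x_1}(\alpha(r))\cdot\bigl(G_{k,k+1}-C\,A_{k,k+1}\bigr)$ is only controlled away from the sectors $V_1,V_2$ flanking the cusp locus; inside those sectors one must switch to the angular derivative $\partial_\theta$, which kills the $\alpha'(r)$ term.
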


\begin{proof}
Sections \ref{sssec:quarter-cross} and \ref{sssec:quarter-crossST} constitute the proof of this lemma, in which we treat the non-swallowtail and swallowtail cases separately. 

For squares (1)-(12), we show in Lemmas \ref{lem:tildeU},  \ref{lem:tildeD} and \ref{lem:Psi}
 that $F_{i,j},$ for $i < j,$ has no critical points in $\overline{U},$ 
$\overline{D},$ and $\Psi.$
The arguments for $\overline{L}$ and $\overline{R}$ are similar, provided we consider all reflections in the $x_1=x_2$ line of the square types (1)-(12) .
To indicate this, let $(n')$ label the reflected square type $(n).$

We then show in Lemmas \ref{lem:tildeUtildeDtildeRST}  and \ref{lem:PsiST} how slight modifications apply to square (13) 
for $\overline{U}, \overline{R}, \overline{D}, \Psi$. The argument for $\overline{L}$ given in Lemma \ref{lem:tildeLST} is more substantial.
Square (14) is analogous to (13).

\end{proof}

\subsubsection{Proof of Lemma \ref{lem:quarter-cross} for squares (1)-(12)}
\label{sssec:quarter-cross}

First we prove a technical Lemma which will be used to prove Lemma \ref{lem:tildeU}.

\begin{lemma} 
\label{lem:18est}
Fix  $i<j$ and a square (1)-(12).  Then,
\begin{eqnarray*}
\sigma_{L}(i) < \sigma_{L}(j) &\longrightarrow &
\left.\partial_{x_1} F_{i,j}\right|_{[-1/4, 1/4]\times[1/4,3/4]} > 1/5 \\
\sigma_{D}(i) < \sigma_{D}(j) &\longrightarrow &
\left.\partial_{x_2} F_{i,j}\right|_{[1/4, 3/4]\times[-1/4,1/4]} > 1/5,
\end{eqnarray*}
with the single exception of a Type (4) square with $(i,j) = (k,k+1)$ in which case the first inequality fails.
\end{lemma}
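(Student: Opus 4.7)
The plan is to expand $\partial_{x_1} F_{i,j}$ directly from the interpolation formula (\ref{eq:interpolating}), replace each 1-dimensional ingredient by its approximate value from the propositions of Section \ref{sec:1dim1}, and then dispatch the 12 square types in a case analysis driven only by the numbers $y_{i,j}^X$.

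Applying $\partial_{x_1}$ to (\ref{eq:interpolating}) gives
\[
\partial_{x_1}F_{i,j}(x_1,x_2) = \phi(x_2)\, d_xf^U_{i,j}(x_1) + (1-\phi(x_2))\, d_xf^D_{\sigma_D(i),\sigma_D(j)}(x_1) + \phi'(x_1)\bigl[f^R_{i,j}(x_2) - f^L_{\sigma_L(i),\sigma_L(j)}(x_2)\bigr].
\]
On the region $[-1/4,1/4]\times[1/4,3/4]$ the linearity statement (item (5)) of Propositions \ref{prop:PV1Cr2Cr}, \ref{prop:CuDef} and Lemma \ref{lem:MainProps0.5} gives $d_xf^U_{i,j}(x_1)=2y^U_{i,j}+O(N\ea)$ and $d_xf^D_{\sigma_D(i),\sigma_D(j)}(x_1)=2y^D_{\sigma_D(i),\sigma_D(j)}+O(N\ea)$; the $C^0$-estimate (item (2)) of the same results gives $f^R_{i,j}(x_2)=y^R_{i,j}+O(N\ea)$ and $f^L_{\sigma_L(i),\sigma_L(j)}(x_2)=y^L_{\sigma_L(i),\sigma_L(j)}+O(N\ea)$. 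Since $|\phi'|\le 2+\ec$, I obtain the master estimate
\[
\partial_{x_1}F_{i,j} = 2\phi(x_2)\,y^U_{i,j} + 2(1-\phi(x_2))\,y^D_{\sigma_D(i),\sigma_D(j)} + \phi'(x_1)\bigl[y^R_{i,j} - y^L_{\sigma_L(i),\sigma_L(j)}\bigr] + O(N\ea),
\]
where, by the choice of $\ea$ in (\ref{eq:constants}), the error term is far below $1/5$ and can be ignored in the sequel.

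The hypothesis $\sigma_L(i)<\sigma_L(j)$ ensures $y^L_{\sigma_L(i),\sigma_L(j)}\ge 0$; moreover every $y_{i,j}^X$ is $\ge 0$, and $y_{i,j}^X$ fails to be at least $1$ only along the edges carrying a crossing arc between sheets $S_i,S_j$ (when it may be as small as $0$) or the $(2\mathrm{Cr})$ partial values $0.125$ or $0.625$ listed in (\ref{eq:TwoCrossingConstants}). Consequently the first two terms of the master estimate always supply at least $2\min(y^U_{i,j},y^D_{\sigma_D(i),\sigma_D(j)})$, while the bracket is dangerous only when $y^R_{i,j}$ is strictly smaller than $y^L_{\sigma_L(i),\sigma_L(j)}$, i.e.\ when the crossing arc that annihilates $y^R_{i,j}$ has no counterpart along $L$. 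This singles out a short list of potentially problematic $((i,j),\text{square type})$ pairs: those with a crossing of $S_i,S_j$ on $R$ but not on $L,U,D$.

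The main step is then to run through the square types (1)--(12) on this short list:
\begin{itemize}
\item In types (1), (9), (11), and (13)--(14) no crossing of $S_i,S_j$ occurs, so $y^R_{i,j}\ge y^L_{\sigma_L(i),\sigma_L(j)}$ and both leading terms are $\ge 2(j-i)\ge 2$, giving $\partial_{x_1}F_{i,j}>1$;
\item in types (3), (10) the crossing lies on $L$ (possibly also $D$), hence $y^R_{i,j}\ge y^L_{\sigma_L(i),\sigma_L(j)}$ and the bracket is non-negative;
\item in types (2), (6), (7), (8), (12) where crossings appear on $R$, for any $(i,j)$ with $\sigma_L(i)<\sigma_L(j)$ a direct tabulation using (\ref{eq:TwoCrossingConstants}) produces $2\min(y^U_{i,j},y^D_{\sigma_D(i),\sigma_D(j)}) - (2+\ec)\max(0,y^L_{\sigma_L(i),\sigma_L(j)}-y^R_{i,j})>1/5$;
\item for type (5) the only potentially tight case is $(i,j)=(k,k+2)$, where $y^R=0.75$, $y^L=2$, $y^U=y^D=2$, giving $4-(2+\ec)(1.25)>1/5$;
\item type (4) with $(i,j)=(k,k+1)$ gives exactly $y^U=y^D=y^L=1,\ y^R=0$, so the master estimate becomes $2-\phi'(x_1)+O(N\ea)$, which is negative for $x_1$ near $0$ since $\phi'(0)=2+\ec$; this is the stated exception.
\end{itemize}

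The hard part will be the bookkeeping in this case analysis: one must correctly read off $y^U, y^D, y^R$ and the (possibly permuted) value $y^L_{\sigma_L(i),\sigma_L(j)}$ for each of the 12 square models and each admissible $(i,j)$, especially in the (2Cr) squares where the fractional values (\ref{eq:TwoCrossingConstants}) produce the tightest estimates. The second inequality (on $[1/4,3/4]\times[-1/4,1/4]$) is then obtained by swapping the roles of $x_1\leftrightarrow x_2$, $U\leftrightarrow R$, $D\leftrightarrow L$, and running the identical argument; in that direction the roles of $L$ and $D$ are exchanged, no square reproduces the obstruction of type (4), and the lower bound $1/5$ holds without exception.
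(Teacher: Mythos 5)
Your overall strategy is the same as the paper's: expand $\partial_{x_1}F_{i,j}$ from (\ref{eq:interpolating}), replace each one-dimensional ingredient by its $y$-value up to $O(N\ea)$ using items (2) and (5) of Propositions \ref{prop:PV1Cr2Cr}, \ref{prop:CuDef} and Lemma \ref{lem:MainProps0.5}, and reduce the lemma to a combinatorial inequality among the $y^X_{i,j}$. (Note that on $[-1/4,1/4]\times[1/4,3/4]$ one has $\phi(x_2)=1$, so the $D$-term drops out entirely and only $y^U_{i,j}$ matters; the paper exploits this and works with the single inequality $y^U_{i,j}+y^R_{i,j}-y^L_{\sigma_L(i),\sigma_L(j)}\ge 1/8$, which it reduces to consecutive pairs $(i,i+1)$ via the additivity $L_{i,m}+L_{m,j}=L_{i,j}$ and then verifies by an exhaustive table over all twelve types \emph{and their reflections}.)

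The gap is in your case analysis. Your filtering criterion --- that the bracket $y^R_{i,j}-y^L_{\sigma_L(i),\sigma_L(j)}$ is dangerous ``only when the crossing arc that annihilates $y^R_{i,j}$ has no counterpart along $L$'' --- is false. Because the (2Cr) constants (\ref{eq:TwoCrossingConstants}) are fractional and the permutations $\sigma_L,\sigma_D$ reshuffle indices, one can have $y^L_{\sigma_L(i),\sigma_L(j)}>y^R_{i,j}$ for pairs of sheets that never cross at all; for instance in the reflected square (6$'$) the pair $(S_{k+2},S_{k+3})$ has $y^L=a+z+1=2.5$ against $y^R=1$, and the margin there is only $y^U+y^R-y^L=0.625$. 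Your short list silently discards such cases without verifying them, so the ``direct tabulation'' you defer to would not, as organized, cover everything. Several of the per-square assertions are also incorrect: types (13)--(14) are outside the lemma's scope; type (10) \emph{does} have a singular arc meeting the $R$ (or $U$) edge, since it contains both a vertical and a horizontal crossing/cusp locus; type (9) has a cusp, for which $y^L_{\sigma_L(k),\sigma_L(k+1)}=0$ must be used; and in the exceptional type-(4) case the derivative is not negative --- the paper shows in Lemma \ref{lem:tildeU} that it equals $2+d_x\phi(x_1)(-1+(1/6)\ea)\ge(1/4)\ea>0$ --- it is merely too small to satisfy the $1/5$ bound. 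Finally, without the reduction to consecutive pairs your tabulation must range over all $(i,j)$, not just adjacent sheets, which substantially enlarges the check you have left undone.
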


\begin{proof}
Say $\sigma_{L}(i) < \sigma_{L}(j)$ and $(x_1, x_2) \in [-1/4, 1/4]\times[1/4,3/4].$
We prove
\[
\partial_{x_1} F_{i,j}(x_1,x_2) = d_x f_{i,j}^U(x_1) +
d_x \phi(x_1)( f_{i,j}^R(x_2) - f_{\sigma_L(i), \sigma_L(j)}^L(x_2)) > 1/5.
\]
We prove the second implication at the same time by  considering the reflected square types ($n'$) as well.

For $X = U,D,R,L,$ let $y^X_{i,j}$ denote the $y_{i,j}$ value which satisfies 
\[
|f^X_{i,j}(x)-y_{i,j}| <  N \ea,  \quad \mbox{for $x\in [1/4,3/4]$}
\] 
as stated in (2) of Propositions \ref{prop:PV1Cr2Cr} and \ref{prop:CuDef}. [The values of $y_{i,j}$ are listed above Proposition \ref{prop:PV1Cr2Cr}.] If $U$ or $R$ is a (Cu) edge we need to allow $i$ and/or $j$ to have the form $k-0.5$ when $X=L$ or $D$.  In these cases, we let $y^X_{i,j}$ denote the appropriate constant $C_{i,k-.5}$ or $C_{k-.5,j}$ from Lemma \ref{lem:MainProps0.5} (2), or $0$ if $i=j = k-0.5$.  
Since $-1/4\le x_1 \le 1/4$ and $1/4 \le x_2 \le 3/4,$ the previous inequality and item (5) of Corollary \ref{cor:summary} and Lemma \ref{lem:MainProps0.5} give
%(and analogous statements in Lemmas  \ref{lem:MainPropsCr}-\ref{lem:MainProps0.5}) imply
\begin{eqnarray*}
&|d_x f^U_{i,j}(x_1)- 2y^U_{i,j}| <N \ea , \quad  |f^R_{i,j}(x_2) - y^R_{i,j}| <  N\ea, &\\
&|f^L_{\sigma_L(i),\sigma_L(j)}(x_2) - y^L_{\sigma_L(i),\sigma_L(j)}| < N \ea.&
\end{eqnarray*}
We have that $\sigma_L(i) < \sigma_L(j)$ implies $y_{i,j}^U \ge 1/8.$  [The only case in which $y_{i,j} < 1/8$ is when sheets $i$ and $j$ cross along a $1$-crossing edge.]
So if $f_{i,j}^R(x_2) - f_{\sigma_L(i), \sigma_L(j)}^L(x_2) \ge 0,$ then the lower bound of $1/5$ holds.
If $f_{i,j}^R(x_2) - f_{\sigma_L(i), \sigma_L(j)}^L(x_2) \le 0,$ then (using also (\ref{eq:InterpolatePhi}))
\begin{eqnarray*}
d_x f_{i,j}^U(x_1) + d_x \phi(x_1)( f_{i,j}^R(x_2) - f_{\sigma_L(i), \sigma_L(j)}^L(x_2)) &\ge&
2y_{i,j}^U - N \ea + (2+\ec)((y^R_{i,j} - N\ea) - (y^L_{\sigma_L(i),\sigma_L(j)} + N\ea))
\\
&\ge& 2(y_{i,j}^U + y^R_{i,j} - y^L_{\sigma_L(i),\sigma_L(j)}) -20 N \ea. 
\end{eqnarray*}
Thus, the lemma follows once we verify
\begin{equation}
\label{eq:URL}
y^U_{i,j} + y^R_{i,j} -y^L_{\sigma_L(i),\sigma_L(j)} \ge 1/8.
\end{equation}

It suffices to show equation (\ref{eq:URL}) for all consecutive pairs, $j=i+1$ since the other cases $j > i+1$ follow.  [If $L_{i,j}$ denotes the left hand side of the inequality, then we have $L_{i,m} + L_{m,j} = L_{i,j}$.]
Moreover,  $y^U_{i,i+1} = y^R_{i,i+1} = y^L_{i,i+1}= 1$ if neither $i$ or $i+1$ is a crossing or cusp sheet, so in this case the left hand side of (\ref{eq:URL}) is $1$.  

For squares (2)-(6), (8)-(9) and (12), suppose the single crossing/cusp edges involve sheets $(S_k,S_{k+1})$ and/or $(S_{k+1},S_{k+2}),$ and the double crossing
involves sheets $(S_k,S_{k+1},S_{k+2})$.  Then, in squares (2), (3), (4), (9) (and their reflections) we need only show equation (\ref{eq:URL}) for pairs $(S_{k-1},S_k)$, $(S_k,S_{k+1})$, $(S_{k+1},S_{k+2})$, and in squares (5), (6), (8), (12) (and their reflections) we need only consider  pairs  $(S_{k-1},S_k)$, $(S_k,S_{k+1})$, $(S_{k+1},S_{k+2})$, $(S_{k+2},S_{k+3})$.

For squares (7), (10), (11) (and their reflections), assume the crossing/cusp involves two pairs of sheets, $S_k,S_{k+1}$ (vertical locus) and $S_l, S_{l+1}$ (horizontal locus). 
By interchanging the role of ($n$) and ($n'$) if necessary,   
%in the one-skeleton \dr{meaning? I'm thinking we can flip everything upside down is the reason... }, 
we may assume without loss of generality that 
$k+1 < l.$  However, this assumption requires us to consider two versions of square (10), denoted (10a) and (10b) where the vertical locus is respectively a cusp locus and a crossing locus.
Moreover, we only consider the case $k+1 = l-1,$ since for all other cases the crossing and cusp sheets are not adjacent so the calculation follows from squares (2) and (9).
%, the corresponding $y^U_{i,j}$ increases while $y^R_{i,j} -y^L_{\sigma_L(i),\sigma_L(j)}$ remains the same.
Thus for figures  (7), (10), (11) (and their reflections) we need to consider pairs of sheets 
$(S_{k-1},S_k), (S_k,S_{k+1}), (S_{k+1},S_{k+2}), (S_{k+2},S_{k+3}), (S_{k+3},S_{k+4}).$

To avoid excessive sub/superscripts, we introduce notation (in the remainder of this proof only) to represent the different $y_{i,i+1}$ values as defined above  Proposition \ref{prop:PV1Cr2Cr}.
\[
a = 0, \,\, z = 1.5, \,\, y_0 = 1.125, \,\, y_1 = 0.625, \,\, y_2 = 0.125, \,\, y_3 =2.125.
\]
The below tables list the left-hand side of equation
(\ref{eq:URL}) for different sheet pairs and different squares.  In all cases where $y^L_{\sigma_L(j), \sigma_L(j)}$ is of the form $C_{i,k-.5}$ or $C_{k-.5,j}$ we use the larger of the two possible values, and marked this term with *.  (We have used $a$ for the $0$'s that  represent
$y_{i,i+1}$ from a (1Cr) edge where $i$ crosses $i+1.$ 
Other $0$'s also appear in $y^L_{\sigma_L(k),\sigma_L(k+1)}$ when sheets $k$ and $k+1$ meet at a cusp above $U$ as in squares (9)-(12).)

$$
\begin{matrix}
\mbox{Square} \setminus \mbox{Sheet Pair} & (S_{k-1},S_k) & (S_k,S_{k+1}) & (S_{k+1},S_{k+2}) \cr
(2) & z + 1-2 & a+1 --1& z + 1-2  \cr
(2') & 1 + z -z & 1 + a - a & 1+z-z  \cr
(3)  & 1+1-z & 1+1-a & 1+1 - z  \cr
(4) & 1+z - 1 & 1+a -1 & 1+z -1  \cr
(4') & z + 1- (z + a) & a+1-({-a}) & z + 1- (a+z)  \cr
(9) & 1+1-1^* & 1+1-0 & 1 + 1 - 1^* \cr
(9') & 1+1 -1 & 1+ 1 -1 & 1+ 1 -1 
\end{matrix}
$$
$$
\begin{matrix}
& (S_{k-1},S_k) & (S_k,S_{k+1}) & (S_{k+1},S_{k+2}) & (S_{k+2},S_{k+3}) \cr
(5) & 1 + y_0 -y_0 & 1 + y_1-y_1 & 1+y_2 -y_2 & 1 + y_3 - y_3 \cr
(5') & y_0 + 1-2 & y_1 + 1 - 1 & y_2+1-({-2}) & y_3 + 1-3 \cr
(6) & 1+y_0 -1 & 1+y_1 - z & 1 + y_2 -a & 1+y_3 -z \cr
(6') & y_0 + 1 - (z+a) & y_1+1 - z & y_2+1 - ({-a-z}) & y_3 +1  - ({a+z+1}) \cr
(8) & z + y_0 - (y_0 + y_1) & a + y_1 --y_1 & z + y_2 - (y_1+y_2) & 1+y_3 - y_3 \cr
(8') & y_0 + z - (1+z) & y_1 + a - a & y_2 +z -(-z-a) & y_3 + 1-3  \cr
(12) & 1+ y_0 - 1^* & 1+ y_1 - 0 & 1 + y_2 - 1^* & 1 + y_3 - 1 \cr
(12') & y_0 + 1 - (1+1) & y_1+1-1& y_2+1 - (-1-1) & y_3 +1 - 3
\end{matrix}
$$
$$
\begin{matrix}
 & (S_{k-1},S_k) & (S_k,S_{k+1}) & (S_{k+1},S_{k+2}) & (S_{k+2},S_{k+3}) & (S_{k+3},S_{k+4}) \cr
(7) & z + 1 - 2 & a +1 - (-1) & z + z - ({z+1}) & 1 + a -a & 1 + z - z \cr
(7') & 1 + z - z & 1 +a - a & z + z - (z+1) & a + 1 -(-1) & z + 1 - 2 \cr
(10a) & 1 + 1 - z^* & 1+1-0 & 1 + z - z^* & 1 + a -a  & 1 + z - z \cr
(10a') & 1 + 1 - 1 & 1 + 1 -1 & z + 1 - 2 & a + 1 - (-1) & z + 1 -  2  \cr
(10b) & z + 1 - 2 & a + 1 - (-1) & z + 1 -  2 & 1 + 1 - 1 & 1 + 1 -1  \cr
(10b') & 1 + z - z & 1 + a - a & 1 + z -  z^* & 1 + 1 - 0 & 1 + 1 -z^* \cr
(11) & 1 + 1 - 1^* & 1 + 1 - 0 & 1 + 1- 1^* & 1 + 1 - 1 & 1 + 1 - 1 \cr
(11') & 1 + 1 - 1 & 1 + 1 - 1 & 1 + 1- 1^* & 1 + 1 - 0 & 1 + 1 - 1^*
\end{matrix}
$$

Plug in the values of $z,y_0,y_1,y_2,y_3$  to get that, with the exception of the $(S_{k},S_{k+1})$ entry for the Type (4) square, each table entry is greater than the right-hand side of equation (\ref{eq:URL}). 
$$
\begin{matrix}
& (S_{k-1},S_k) & (S_k,S_{k+1}) & (S_{k+1},S_{k+2}) & (S_{k+2},S_{k+3}) & (S_{k+3}, S_{k+4}) \cr
(2) & 0.5 & 2 & 0.5 & \mbox{N/A} & \mbox{N/A}   \cr
(2') & 1 & 1  & 1 & \mbox{N/A}   & \mbox{N/A} \cr
(3)  & 0.5 & 2 & 0.5 & \mbox{N/A}   & \mbox{N/A} \cr
(4) & 1.5 & {0} & {1.5} & \mbox{N/A}   & \mbox{N/A} \cr
(4') & 1 & {1} & {1} & \mbox{N/A}   & \mbox{N/A} \cr
(5) & 1 & 1  & 1 & 1  & \mbox{N/A} \cr
(5') & 0.125 & 0.625 & {3.125} & 0.125 & \mbox{N/A} \cr
(6) & 1.125 & 0.125 & 1.125 & 1.625 & \mbox{N/A} \cr
(6') & 0.625 & {0.125} & {2.625} & {.625} & \mbox{N/A} \cr
(7) & 0.5 & 2 & {0.5} & 1 & 1 \cr
(7') & 1 & 1 & {0.5} & 2 & 0.5 \cr
(8) & 0.875 & 1.25  & 0.875 & 1 & \mbox{N/A} \cr
(8') & 0.125 & 0.625 & 3.125 & 0.125 & \mbox{N/A} \cr
(9) & 1 & 2 & 1 & \mbox{N/A} & \mbox{N/A} \cr
(9') & 1 & 1 & 1 & \mbox{N/A} & \mbox{N/A} \cr
(10a) & 0.5 & 2 &1 &1 & 1 \cr
(10a') & 1 & 1 & 0.5 & 2 & 0.5 \cr
(10b) & 0.5 & 2 & 0.5 &1 & 1 \cr
(10b') & 1 & 1 & 1 & 2 & 0.5 \cr
(11) & 1 & 2 & 1 & 1 & 1 \cr
(11') & 1 & 1 & 1 & 2 & 1 \cr
(12) & 1.125 & 1.625 & 0.125 & 2.125 & \mbox{N/A} \cr
(12') & 0.125 & 0.625 & 3.125 & 0.125 & \mbox{N/A}
\end{matrix}
$$

\end{proof}

\begin{lemma}
\label{lem:tildeU}  
Fix  $i<j$ and a square (1)-(12).
$F_{i,j}$ has no critical points in $\overline{U}$ or $\overline{R}$.
\end{lemma}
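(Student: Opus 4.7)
My plan is to exploit the fact that in $\overline U$ the cutoff $\phi(x_2)$ equals $1$ identically, so that formula (\ref{eq:interpolating}) simplifies to
$$F_{i,j}(x_1,x_2) = f^U_{i,j}(x_1) + \phi(x_1) f^R_{i,j}(x_2) + (1-\phi(x_1)) f^L_{\sigma_L(i),\sigma_L(j)}(x_2),$$
with partials
\begin{align*}
\partial_{x_1} F_{i,j} &= d_x f^U_{i,j}(x_1) + d_x\phi(x_1)\bigl[f^R_{i,j}(x_2) - f^L_{\sigma_L(i),\sigma_L(j)}(x_2)\bigr],\\
\partial_{x_2} F_{i,j} &= \phi(x_1) d_x f^R_{i,j}(x_2) + (1-\phi(x_1)) d_x f^L_{\sigma_L(i),\sigma_L(j)}(x_2).
\end{align*}
At each point of $\overline U$ I will show one of these partials is nonzero. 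The statement for $\overline R$ follows by the same argument after interchanging $x_1\leftrightarrow x_2$ and the roles of $U,D$ with $L,R$.

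I partition $\overline U$ according to the value of $x_2$. On the upper strip $x_2 \in [3/4, 1)$, by item~(3) of Proposition~\ref{prop:1234def} and item~(2) of Propositions~\ref{prop:PV1Cr2Cr}, \ref{prop:CuDef}, all critical points of the $1$-dimensional difference functions lie in $[1/2,3/4]$, and near $x_2=1$ the explicit form $(j-i)\ec\cdot 2(x_2-1)$ from Corollary~\ref{cor:summary}(4) dictates the sign. In the case $\sigma_L(i)<\sigma_L(j)$, both $d_x f^R_{i,j}$ and $d_x f^L_{\sigma_L(i),\sigma_L(j)}$ are strictly negative on $[3/4,1)$, so their convex combination $\partial_{x_2}F_{i,j}$ is strictly negative; if $\sigma_L(i)>\sigma_L(j)$ the $L$-term reverses sign but the quadratic form dominates, keeping $\partial_{x_2}F_{i,j}$ nonzero. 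On the middle strip $x_2 \in [1/4, 3/4]$ with $\sigma_L(i)<\sigma_L(j)$, Lemma~\ref{lem:18est} gives $\partial_{x_1}F_{i,j}>1/5$ directly, with the sole exception of the Type~(4) square at $(i,j)=(k,k+1)$.

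Two remaining cases require separate treatment. First, if $\sigma_L(i)>\sigma_L(j)$, then sheets $S_i,S_j$ swap order somewhere above the $U$ edge, so $U$ must be of type (1Cr) or (2Cr) and $(i,j)$ is a crossing pair; the finite list of squares where this can happen is readily enumerated, and in each one $d_x f^R_{i,j}(x_2)$ and the signed derivative $-d_xf^L_{\sigma_L(j),\sigma_L(i)}(x_2)$ can be compared directly on $[1/4,3/4]$ to verify that $\partial_{x_2}F_{i,j}$ has a definite sign. Second, and this is the main obstacle, for the Type~(4) exception one exploits the rigid structure of the construction: on $[1/2,3/4]$ the formulas (\ref{eq:initialdef}) and (\ref{eq:initialdef2}) give $d_xf^R_{k,k+1}(x_2)=d_xh_{k,k+1}(x_2)=d_xf^L_{k,k+1}(x_2)$, so that $\partial_{x_2}F_{k,k+1}=d_xh_{k,k+1}(x_2)$ is \emph{independent of $x_1$} on this strip and vanishes only at $x_2=\eta_{k,k+1}$; on $[1/4,1/2]$ both derivatives are strictly positive by item~(2) of Proposition~\ref{prop:PV1Cr2Cr}, so $\partial_{x_2}F_{k,k+1}>0$.

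Only the candidate locus $\{(x_1,\eta_{k,k+1}):|x_1|<1/4\}$ then remains; at such a point the exact linearity of $f^U_{k,k+1}$ on $[-1/4,1/4]$, $d_xf^U_{k,k+1}\equiv 2$, together with the constant value $f^R_{k,k+1}(\eta_{k,k+1})-f^L_{k,k+1}(\eta_{k,k+1})=(1/6)\ea-1$ read off from (\ref{eq:initialdef}) and (\ref{eq:initialdef2}), gives
$$\partial_{x_1}F_{k,k+1}(x_1,\eta_{k,k+1}) = 2 + d_x\phi(x_1)\bigl[(1/6)\ea - 1\bigr].$$
For $x_1\in[-1/4+\ea,1/4-\ea]$ we have $d_x\phi(x_1)=2$ exactly by (\ref{eq:InterpolatePhi}), yielding $\partial_{x_1}F_{k,k+1}=(1/3)\ea>0$; in the two smoothing intervals $d_x\phi\in[0,2+\ec]$ and, using $\ec<\ea^{10}$ from (\ref{eq:epsilon3}), the expression remains positive. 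The key delicacy here is that the naive bound $|d_xf^U_{k,k+1}-2|<N\ea$ from Corollary~\ref{cor:summary} is too weak to close this case; what rescues the argument is that the (PV) construction is literally affine on $[-1/4,1/4]$ with integer slope, and that the asymmetry between the additive constants $(5/6)\ea$ for the (1Cr) crossing function and $1+(2/3)\ea$ for the (PV) function produces a deliberate positive offset of size $\ea/3$.
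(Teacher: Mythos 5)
Most of your argument tracks the paper's proof closely: the reduction to $\overline U$, the use of Lemma \ref{lem:18est} on the middle strip, the sign analysis of $\partial_{x_2}F_{i,j}$ on $[-1/4,1/4]\times([1/4,1/2]\cup[3/4,1))$, and your Type (4) computation $\partial_{x_1}F_{k,k+1}=2+d_x\phi(x_1)((1/6)\ea-1)>0$ is exactly the paper's. The genuine gap is your treatment of the case $\sigma_L(i)>\sigma_L(j)$ (a crossing pair above $U$). There you propose to show $\partial_{x_2}F_{i,j}=\phi(x_1)\,d_xf^R_{i,j}(x_2)+(1-\phi(x_1))\,d_xf^L_{\sigma_L(i),\sigma_L(j)}(x_2)$ has a definite sign by ``comparing'' the two derivatives. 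But when the sheets cross above $U$, these two terms have \emph{opposite} signs on $[1/4,1/2]$ (and again on $[3/4,1)$): $d_xf^R_{i,j}>0$ while $d_xf^L_{\sigma_L(i),\sigma_L(j)}=-d_xf^L_{\sigma_L(j),\sigma_L(i)}<0$. A convex combination $\phi\cdot a+(1-\phi)\cdot(-b)$ with $a,b>0$ vanishes at $\phi=b/(a+b)\in(0,1)$, so no pointwise comparison of $a$ and $b$ can save the argument. Concretely, in a Type (2) square with $(i,j)=(k,k+1)$ the edges $L$ and $R$ are both (PV) with identical data, so $f^R_{k,k+1}=f^L_{k,k+1}$ and $\partial_{x_2}F_{k,k+1}(0,x_2)=(2\phi(0)-1)\,d_xf^R_{k,k+1}(x_2)=0$ identically along the segment $\{0\}\times[1/4,1)$; the same cancellation defeats your ``quadratic form dominates'' claim near $x_2=1$.

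The repair is the paper's first case, which you should substitute for yours: when $\sigma_L(i)\ge\sigma_L(j)$ one uses $\partial_{x_1}$ throughout $\overline U$. Indeed $d_xf^U_{i,j}(x_1)>0$ on $(-1/4,1/4)$ even for a crossing pair (the slope is a small positive constant, e.g.\ $.2\ea$ from (\ref{eq:initialdef2})), while the remaining term $d_x\phi(x_1)\bigl(f^R_{i,j}(x_2)-f^L_{\sigma_L(i),\sigma_L(j)}(x_2)\bigr)$ is non-negative because $f^R_{i,j}>0$, $f^L_{\sigma_L(i),\sigma_L(j)}\le 0$, and $d_x\phi\ge 0$. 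Note this also covers the case $\sigma_L(i)=\sigma_L(j)$ (sheets cusping above $U$), which your case split omits entirely and for which $\partial_{x_2}F_{i,j}=\phi(x_1)\,d_xf^R_{i,j}(x_2)$ vanishes at $x_2=\eta^R_{i,j}$.
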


\begin{proof}
By considering the reflected squares (1')-(12') as well, we only need to show there are no critical points in $\overline{U}$.

If $\sigma_L(i) \ge \sigma_L(j)$ and
$(x_1,x_2) \in \overline{U},$ then
the first two terms in
\begin{equation}
\notag
%\label{eq:key}
\partial_{x_1} F_{i,j}(x_1,x_2) = d_x f_{i,j}^U(x_1) +
d_x \phi(x_1)( f_{i,j}^R(x_2) - f_{\sigma_L(i), \sigma_L(j)}^L(x_2)) 
\end{equation}
 are strictly positive and 
the third term is non-negative.

If $\sigma_L(i) < \sigma_L(j)$ and $(x_1,x_2) \in  [-1/4,1/4] \times [1/2,3/4],$ then  
the result follows from Lemma \ref{lem:18est}, except when $(i,j) = (k,k+1)$ and the square is Type (4).  In this exceptional case, 
$f_{i,j}^U = f_{\sigma_L(i), \sigma_L(j)}^L = f^{PV}_{k,k+1}$ and $f_{i,j}^R(x_2) = f^{1Cr}_{k,k+1}$ where $f^{PV}_{k,k+1}$ and $f^{1Cr}_{k,k+1}$ are the $1$-dimensional functions defined in (\ref{eq:initialdef}) and (\ref{eq:initialdef2}) during the proof of Proposition \ref{prop:PV1Cr2Cr}.  Using (\ref{eq:initialdef}) and (\ref{eq:initialdef2}) we explicitly evaluate
\[
\partial_{x_1} F_{i,j}(x_1,x_2) = 2 +
d_x \phi(x_1)( -1 + (1/6)\ea),  
\]
and using (\ref{eq:InterpolatePhi}) 
\[
\partial_{x_1} F_{i,j}(x_1,x_2) > 2 + (2 +\ec)(-1 +(1/6)\ea) = (1/3)\ea - \ec + (1/6)\ea\ec \geq (1/4)\ea >0.
\]

If  $\sigma_L(i) < \sigma_L(j)$ and
$(x_1,x_2) \in \overline{U} \setminus [-1/4,1/4] \times [1/2,3/4],$
 then
\begin{equation}
%\label{eq:d_{x_2}F_{i,j}}
\notag
\partial_{x_2} F_{i,j}(x_1,x_2) = \phi(x_1) d_x f_{i,j}^R(x_2) + (1-\phi(x_1))
d_x f_{\sigma_L(i), \sigma_L(j)}^L(x_2)
\end{equation}
is a convex linear combination of strictly positive terms on $[-1/4,1/4] \times [1/4,1/2]$
and a convex linear combination of strictly negative terms on $[-1/4,1/4] \times [3/4,1).$

\end{proof}

\begin{lemma}
\label{lem:tildeD}  
Fix  $i<j$ and a square (1)-(12).
$F_{i,j}$ has no critical points in $\overline{D}$ or $\overline{L}$.
\end{lemma}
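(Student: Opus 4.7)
The strategy mirrors that of Lemma \ref{lem:tildeU}: by considering the reflected squares (1')--(12') in tandem with (1)--(12), the reflection $x_1 \leftrightarrow x_2$ swaps $\overline{D}$ with $\overline{L}$, so it suffices to prove there are no critical points of $F_{i,j}$ in $\overline{L}=[-1,-1/4]\times[-1/4,1/4]$. Throughout $\overline{L}$ we have $\phi(x_1)\equiv 0$ and $d_x\phi(x_1)\equiv 0$, which simplifies the partial derivatives to
\[
\partial_{x_1} F_{i,j}|_{\overline{L}} = \phi(x_2)\,d_x f^U_{i,j}(x_1)+(1-\phi(x_2))\,d_x f^D_{\sigma_D(i),\sigma_D(j)}(x_1),
\]
\[
\partial_{x_2} F_{i,j}|_{\overline{L}} = d_x\phi(x_2)\,[f^U_{i,j}(x_1)-f^D_{\sigma_D(i),\sigma_D(j)}(x_1)] + d_x f^L_{\sigma_L(i),\sigma_L(j)}(x_2).
\]

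The key observation, which replaces the role of Lemma \ref{lem:18est} in the analysis of $\overline{U}$, is that for $x_1\in[-1,-1/4]$ both $|f^U_{i,j}(x_1)|$ and $|f^D_{\sigma_D(i),\sigma_D(j)}(x_1)|$ are bounded by $N\ea$ (by item (3) of Corollary \ref{cor:summary}, and Lemma \ref{lem:MainProps0.5}(3) if a cusp is present). Consequently the bracketed term in $\partial_{x_2}F_{i,j}|_{\overline{L}}$ contributes at most $(2+\ec)\cdot 2N\ea\le 5N\ea$ in absolute value, so on $[-3/4,-1/4]\times[-1/4,1/4]$ the sign of $\partial_{x_2}F_{i,j}$ is dictated by $d_x f^L_{\sigma_L(i),\sigma_L(j)}(x_2)$, which by item (5) of Propositions \ref{prop:PV1Cr2Cr} and \ref{prop:CuDef} and Lemma \ref{lem:MainProps0.5}(5) is linear of slope approximately $2y^L_{\sigma_L(i),\sigma_L(j)}$ on $[-1/4,1/4]$. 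So in the generic case $\sigma_L(i)<\sigma_L(j)$ with $(i,j)$ not a pair of crossing sheets on a (1Cr) $L$-edge, the minimum non-trivial $y^L$ value is $1/8$ (from the $y_2$ value in the (2Cr) list), giving $|d_x f^L_{\sigma_L(i),\sigma_L(j)}(x_2)|\ge 1/4-N\ea\gg 5N\ea$ and hence $\partial_{x_2}F_{i,j}\ne 0$.

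The remaining degenerate cases where the $d_x f^L$ term is not dominant are (a) a cusp on $L$ between $i$ and $j$, forcing $(i,j)=(k,k+1)$ and $f^L_{\sigma_L(i),\sigma_L(j)}\equiv 0$; and (b) a crossing on $L$ for which $(i,j)$ is the crossing pair, giving $|d_xf^L_{\sigma_L(i),\sigma_L(j)}|=O(\ea)$ on $[-1/4,1/4]$ by the explicit construction in (\ref{eq:initialdef2}). In both cases $i$ and $j$ are adjacent sheets near $L$, so one switches to $\partial_{x_1} F_{i,j}|_{\overline{L}}$ and argues that $d_x f^U_{k,k+1}(x_1)$ and $d_x f^D_{\sigma_D(k),\sigma_D(k+1)}(x_1)$ are both strictly positive on $(-3/8,-1/4]$ (or $[-1,-1/4]$, depending on whether the sheets exist there) by item (2) of Propositions \ref{prop:PV1Cr2Cr} and \ref{prop:CuDef}, provided sheets $k$ and $k+1$ do not cross on $U$ or $D$ within this range. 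Since $\partial_{x_1}F_{i,j}|_{\overline{L}}$ is a convex combination of these two derivatives, it is strictly positive.

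The main obstacle will be handling the squares in which sheets $k,k+1$ simultaneously cross or cusp on $L$ and on $U$ or $D$---notably square (3), where crossings appear on both $L$ and $D$ for the same pair, and squares (6) and (11) where crossings or cusp loci may interact with the $L$ edge combined with crossings or cusps on $U$ or $D$. In those settings neither the $\partial_{x_2}$ nor the $\partial_{x_1}$ argument works uniformly, and one must sub-divide $\overline{L}$ into a near-corner strip ($x_2\in[-1/4,-1/4+\delta]\cup[1/4-\delta,1/4]$) and a middle strip, applying the explicit slope estimates of items (5)--(6) of Corollary \ref{cor:summary} on one region and the $C^0$ bounds on the other, to rule out simultaneous vanishing of both partials. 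A short tabulation, parallel to the one in the proof of Lemma \ref{lem:18est}, finishes the argument.
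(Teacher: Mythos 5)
Your reduction to $\overline{L}$ (rather than the paper's $\overline{D}$) is immaterial, and your main case is exactly the paper's generic case: $|d_x f^L_{\sigma_L(i),\sigma_L(j)}|\ge 2y^L-N\ea\ge 1/4-N\ea$ dominates the correction $|d_x\phi(x_2)(f^U_{i,j}-f^D_{\sigma_D(i),\sigma_D(j)})|\le 6N\ea$. The vertical-cusp case (where $f^L_{\sigma_L(i),\sigma_L(j)}\equiv 0$ because $\sigma_L(i)=\sigma_L(j)=k-0.5$ — note this is a cusp above $U$ and $D$, not "on $L$" as you wrote; a genuine cusp on $L$ makes $L$ a (Cu) edge with $d_xf^L_{k,k+1}\approx 2$ and falls under your main case) is also fine via $\partial_{x_1}$ as a convex combination of positive (Cu)-derivatives on $(-3/8,-1/4]$.

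The genuine gap is your case (b) together with the "main obstacle" paragraph. When $S_i,S_j$ cross above $L$ \emph{and} the crossing arc also terminates on $U$ or $D$ (square (3), the reflected (4), several arcs in (5)--(8), (10), (12) and their reflections), your $\partial_{x_1}$ argument fails outright: e.g.\ in square (3) the pair also crosses above $D$, so $f^D_{\sigma_D(k),\sigma_D(k+1)}$ is a (1Cr) function whose derivative changes sign at $\tilde\eta\in(-3/4,-1/2)$, and the convex combination $\phi(x_2)d_xf^U+(1-\phi(x_2))d_xf^D$ does vanish in $\overline{L}$. Your proposed repair — subdividing $\overline{L}$ into strips and comparing magnitudes — cannot work, because in the middle of $\overline{L}$ the correction term in $\partial_{x_2}$ is of size up to $\approx 6\ea$ while $d_xf^L_{k,k+1}=0.2\ea$; no magnitude estimate saves you. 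What is needed (and what the paper does in its Case 2) is a \emph{sign} observation: whenever the sheets cross above $L$, the quantities $d_xf^L_{\sigma_L(i),\sigma_L(j)}(x_2)$ and $f^U_{i,j}(x_1)-f^D_{\sigma_D(i),\sigma_D(j)}(x_1)$ automatically have the same sign throughout $\overline{L}$ (the crossing arc separates exactly one corner, which pins down the signs of $f^U$, $f^D$ and of the slope of $f^L$ simultaneously), so $\partial_{x_2}F_{i,j}$ is a sum of like-signed terms, the first nonzero. Until that observation (or an equivalent one) is supplied, the proof is incomplete for those square types.
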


\begin{proof}
By considering the reflected squares (1')-(12') as well, we only show that there are no critical points in $\overline{D}$.  Note that if  $(x_1,x_2) \in \overline{D}$ then
\begin{equation}
\label{eq:d_1F^D} 
\partial_{x_1} F_{i,j}(x_1,x_2) = d_x f_{\sigma_D(i), \sigma_D(j)}^D(x_1) +
d_x \phi(x_1)( f_{i,j}^R(x_2) - f_{\sigma_L(i), \sigma_L(j)}^L(x_2)),
\end{equation}
\begin{equation}
\label{eq:d_2F^D} 
\partial_{x_2} F_{i,j}(x_1,x_2) = \phi(x_1) d_xf_{i,j}^R(x_2) + (1- \phi(x_1))d_xf_{\sigma_L(i), \sigma_L(j)}^L(x_2).
\end{equation}
\medskip

\noindent {\bf Case 1:}  $R$ is a (Cu) edge.  

In this case, for $x_2$ in the domain of $f^R_{i,j}$, we always have
\[
f^R_{i,j}(x_2) \geq 0, \quad \mbox{and} \quad d_x f^R_{i,j}(x_2) \geq 0
\]
with equality only when $(i,j) = (k,k+1)$ and $x_2 = -3/8$, by (1), (2), and (8') of Proposition \ref{prop:CuDef}, and
\[
\begin{array}{lr}
d_xf^D_{\sigma_D(i),\sigma_D(j)}(x_1) > 0,  & \quad \mbox{for $(i,j) \neq (k,k+1)$,} \\
d_xf^D_{\sigma_D(i), \sigma_D(j)}(x_1) = 0, & \quad \mbox{for $(i,j) = (k,k+1)$.}
\end{array}
\]
[Here, in addition to Proposition \ref{prop:PV1Cr2Cr} (2) and Proposition \ref{prop:CuDef} (2) and (8'), we use Lemma \ref{lem:MainProps0.5} (2).]

\medskip

\noindent {\bf SubCase 1:}  $d_x f^L_{\sigma_L(i), \sigma_L(j)}(x_2) > 0$.  Note that this includes the case where $(i,j) = (k,k+1)$ for $x_2 \neq 3/8$, since sheets $S_k$ and $S_{k+1}$ must meet above a cusp edge along edge $L$ as well.    

In this case, $\partial_{x_2}F_{i,j}(x_1,x_2)>0$ is a convex combination of two strictly positive values.  

\medskip

\noindent {\bf SubCase 2:}  $d_x f^L_{\sigma_L(i), \sigma_L(j)}(x_2) \leq 0$.  Since $L$ is a (Cu) edge, it must be the case that $f^L_{\sigma_L(i), \sigma_L(j)}(x_2) \leq 0$ holds as well.  As long as, $(i,j) \neq (k,k+1)$, then $\partial_{x_1}F_{i,j}(x_1,x_2) >0$ since  the first term in (\ref{eq:d_1F^D}) is strictly positive and the second is non-negative. Finally,  if $(i,j) = (k,k+1)$, then $x_2 = -3/8$ and the sheets in question meet at a cusp edge above $(x_1,x_2)$ so that there is nothing to prove. 

\medskip

\noindent {\bf Case 2:}  Sheets $S_k$ and $S_{k+1}$ cross along $D$.  
Then, $\mbox{sgn}(f_{i,j}^R(x_2))  = -\mbox{sgn}(f_{\sigma_L(i), \sigma_L(j)}^L(x_2)),$  and since 
\begin{equation}
\notag
%\label{eq:RDimplications}
f_{i,j}^R(x_2) <  0 \longleftrightarrow \sigma_D(i) > \sigma_D(j) \longleftrightarrow d_x f_{\sigma_D(i), \sigma_D(j)}^D(x_1) < 0
\end{equation}
all terms in equation (\ref{eq:d_1F^D}) are of the same sign.

\medskip

\noindent {\bf Case 3:} All remaining cases.  Since $\sigma_D(i) \neq \sigma_D(j)$; neither of these values are half integers;  and sheets $S_i$ and $S_j$ do not cross above $D$, we see from Corollary \ref{cor:summary} (5) that
\[
\left|d_x f_{\sigma_D(i), \sigma_D(j)}^D(x_1)\right| >  2y^D_{i,j}- N\ea \ge 2/8 - N\ea.
\]
In addition, (\ref{eq:InterpolatePhi}) together with Corollary \ref{cor:summary} (3) and (if necessary) Lemma \ref{lem:MainProps0.5} (3)  give
\[
|d_x \phi(x_1)| (|f_{i,j}^R(x_2)| + |f_{\sigma_L(i), \sigma_L(j)}^L(x_2)|) \le (2+\ec)( N\ea + N \ea) \leq 6N\ea.
\]
Combining these inequalities, we have
\[
|\partial_{x_1} F_{i,j}(x_1,x+2)| > 2/8- N\ea - 6N\ea > 0.
\]

\end{proof}

\begin{lemma}
\label{lem:Psi}  
Fix  $i<j$ and a square (1)-(12).  
$F_{i,j}$ has no critical points in $\Psi.$
\end{lemma}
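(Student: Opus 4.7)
The plan is to compute $\partial_{x_1} F_{i,j}$ and $\partial_{x_2}F_{i,j}$ explicitly on $\Psi$ using (\ref{eq:interpolating}), and then argue by cases that at least one is nonzero. On $\Psi$ all four of the one-dimensional difference functions $f^U_{i,j}, f^D_{\sigma_D(i),\sigma_D(j)}, f^R_{i,j}, f^L_{\sigma_L(i),\sigma_L(j)}$ are restricted to $[-1/4,1/4]$, so by item (5) of Proposition \ref{prop:PV1Cr2Cr}, Proposition \ref{prop:CuDef}, and Lemma \ref{lem:MainProps0.5}, they are linear with slopes $\kappa^X_{i,j}$ satisfying $|\kappa^X_{i,j} - 2y^X_{i,j}|< N\ea$, and their values satisfy $|f^X(x)-y^X_{i,j}| < N\ea + 2y^X_{i,j} \cdot 1/4$ on this interval. (For (Cu) edges the $y^X$ with half-integer indices denote the constants $C$ from Lemma \ref{lem:MainProps0.5}.) This will reduce everything to bounds on finitely many absolute constants.

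Next I would split into cases according to the edge-behavior of the pair $(i,j)$. In the \emph{generic case}, where sheets $S_i$ and $S_j$ neither cross nor meet at a cusp above any of the four edges, all four slopes $\kappa^X_{i,j}$ are positive and within $N\ea$ of $2y^X_{i,j}\ge 2$, while the $d_x\phi$-term correction is bounded by $(2+\ec)\cdot 2(N\ea + 3/4\cdot 2N)$-independent constants that are small relative to the linear term on $\Psi\subset [-1/4,1/4]^2$. A direct estimate then forces both $\partial_{x_{m}} F_{i,j}>0$ on $\Psi$, analogously to Case 3 in the proof of Lemma \ref{lem:tildeD}. The same kind of dominance argument handles $(i,j)$ pairs where the only non-generic behavior is the presence of a cusp along an edge involving sheets other than $S_i, S_j$, since that only changes one of the $y^X_{i,j}$ by at most $1/2$ while leaving it at least $1/2$.

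The \emph{non-generic cases} are those in which $\sigma_D(i)>\sigma_D(j)$ or $\sigma_L(i)>\sigma_L(j)$ (a crossing involving both sheets sits on $D$ or $L$), or $\{i,j\}=\{k,k+1\}$ or $\{k+1,k+2\}$ for a pair that crosses on $U$ or $R$. Here my strategy is to exploit sign agreement instead of dominance. For example if sheets $i,j$ cross along $D$ (as in Type (3), (5), (6), (8), (12)), then $\kappa^D_{\sigma_D(i),\sigma_D(j)}<0$, $\kappa^U_{i,j}>0$, $f^R_{i,j}(x_2)>0$ and $f^L_{\sigma_L(i),\sigma_L(j)}(x_2)<0$ throughout $\Psi$; hence in
\[
\partial_{x_1}F_{i,j}=\phi(x_2)\kappa^U_{i,j}+(1-\phi(x_2))\kappa^D_{\sigma_D(i),\sigma_D(j)} + d_x\phi(x_1)[f^R_{i,j}(x_2)-f^L_{\sigma_L(i),\sigma_L(j)}(x_2)]
\]
the $d_x\phi$ term has the sign of $\kappa^U_{i,j}$, so the two positive terms dominate. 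Symmetric reasoning (reflecting across $x_1=x_2$) handles crossings along $L$, and a completely parallel computation (interchanging the roles of the $x_1$- and $x_2$-partial derivatives) covers crossings along $U$ and $R$. The cusp cases $(i,j)\in\{(k,k+1)\}$ where $S_k,S_{k+1}$ meet at a cusp along $U$ or $R$ (squares (9)--(12)) are handled analogously, using that Proposition \ref{prop:CuDef}(7') and Lemma \ref{lem:MainProps0.5}(1) force $f^L_{\sigma_L(i),\sigma_L(j)}$ or $f^D_{\sigma_D(i),\sigma_D(j)}$ to have a definite positive sign while the remaining slope is a genuine positive constant.

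The main obstacle I anticipate is bookkeeping: the square-by-square enumeration must cover all 12 topological types together with the reflections $(n')$ across $x_1=x_2$, and within each type the special pairs $(k-1,k),(k,k+1),(k+1,k+2),(k+2,k+3)$ and (for triple-point or two-crossing cases) also $(k,k+2),(k+1,k+2),(k,k+1)$ must be checked. The only genuine algebraic subtlety is the analogue of the Type (4), $(k,k+1)$ exception encountered in Lemma \ref{lem:tildeU}, where the naive dominance fails and one must compute $\partial_{x_1}F_{k,k+1}$ using the explicit formulas (\ref{eq:initialdef}) and (\ref{eq:initialdef2}) to get a lower bound of order $\ea$. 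I expect at most a handful of such exceptional pairs and each one can be pinned down by the same explicit evaluation against $\ec,\ea$. Once all cases are checked $F_{i,j}$ has $(\partial_{x_1}F_{i,j})^2+(\partial_{x_2}F_{i,j})^2>0$ throughout $\Psi$, proving the lemma.
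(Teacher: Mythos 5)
Your overall strategy (compute both partials of $F_{i,j}$ on $\Psi$ from (\ref{eq:interpolating}), then case-split on the edge behavior of the pair $(i,j)$) matches the paper's, and your treatment of several non-generic cases by sign agreement is in the right spirit. But your ``generic case'' argument has a genuine gap. You claim the cross term $d_x\phi(x_2)\bigl(f^U_{i,j}(x_1)-f^D_{\sigma_D(i),\sigma_D(j)}(x_1)\bigr)$ is a small correction dominated by the linear terms. It is not: on $[-1/4,1/4]$ the values $f^U_{i,j}(x_1)$ and $f^D_{\sigma_D(i),\sigma_D(j)}(x_1)$ grow linearly to roughly $y^U_{i,j}$ and $y^D_{\sigma_D(i),\sigma_D(j)}$, and whenever a crossing or cusp on $L$ or $R$ (or the presence of a (1Cr)/(2Cr) edge type) makes $y^U_{i,j}\neq y^D_{\sigma_D(i),\sigma_D(j)}$ — which happens even for pairs $(i,j)$ that themselves never cross or cusp, e.g.\ $(S_{k-1},S_k)$ in a Type (5$'$) square — this term is of the same order as $\phi(x_1)d_xf^R_{i,j}(x_2)$. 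The proof actually hinges on the combinatorial inequality $y^R_{i,j}+y^U_{i,j}-y^D_{\sigma_D(i),\sigma_D(j)}\ge 1/8$, which is the content of the tables in Lemma \ref{lem:18est} (with the roles of $U,D$ and $R,L$ interchanged); the margin there is exactly $1/8$ in several squares, so no crude dominance estimate can replace it. Your proposal never identifies this inequality.

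A second missing ingredient is the structural observation that makes the comparison possible at all: both the dominant positive term $\phi(x_1)\,d_xf^R_{i,j}(x_2)$ and the problematic cross term are (up to $O(N\ea)$ errors) proportional to $(x_1+1/4)$, since $\phi(x_1)\approx 2(x_1+1/4)$ and $f^U_{i,j}(x_1)-f^D_{\sigma_D(i),\sigma_D(j)}(x_1)\approx 2\bigl(y^U_{i,j}-y^D_{\sigma_D(i),\sigma_D(j)}\bigr)(x_1+1/4)$; this is what reduces positivity of $\partial_{x_2}F_{i,j}$ on $\{x_1\ge -1/4+\frac{1}{1000N}\}$ to the inequality above. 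Near $x_1=-1/4$ that ratio argument degenerates and one must instead lean on $(1-\phi(x_1))\,d_xf^L_{\sigma_L(i),\sigma_L(j)}(x_2)\ge 1/10$, which is precisely why the genuine exceptional cases are those where $d_xf^L$ fails this bound (crossings on $L$, the half-integer index situation in Type (11), the Type (4) crossing pair, and the Type (12) configuration) — a list that only partially overlaps the exceptions you flag. Your minor value bound $|f^X(x)-y^X_{i,j}|<N\ea+y^X_{i,j}/2$ is also off (the correct bound is $y^X_{i,j}+N\ea$, attained near $x=-1/4$), but that is incidental to the main gap.
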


\begin{proof}  
\noindent {\bf Main case:}  After possibly reflecting across $x_1=x_2$, suppose that  
\begin{enumerate}
\item $S_i$ and $S_j$ do not meet one another at a crossing or cusp along $U$; 
%\item $S_i$ and $S_j$ do not meet each other at a crossing above any (1Cr) edge; and
\item   $d_xf^L_{\sigma_L(i), \sigma_L(j)}(x) \geq 1/10$ for $-1/4 \leq x \leq 1/4$; and  
\item  $S_i$ and $S_j$ are not the crossing sheets in a Type (4) square.
\end{enumerate}

%or some $(X,Y) \in \{(U,L), (R,D)\}$, suppose that  
%\begin{enumerate}
%\item $S_i$ and $S_j$ do not meet one another at a crossing or cusp along $X$; 
%%\item $S_i$ and $S_j$ do not meet each other at a crossing above any (1Cr) edge; and
%\item   $d_xf^Y_{\sigma_Y(i), \sigma_Y(j)}(x) \geq 1/10$ for $-1/4 \leq x \leq 1/4$; and  
%\item  $S_i$ and $S_j$ are not the crossing sheets in a Type (4) square.
%\end{enumerate}

%We present the case when $(X,Y) =(U,L)$, while $(X,Y)=(R,D)$ is treated by interchanging the role of $x_1$ and $x_2$.  
We will show that for  $(x_1,x_2) \in \Psi,$
\begin{eqnarray}
\label{eq:dx2F}
\partial_{x_{2}} F_{i,j} & = & \phi(x_{1}) d_x f_{i,j}^\Are(x_{2}) +(1 - \phi(x_{1})) d_x f_{\sigma_\El(i), \sigma_\El(j)}^\El(x_{2}) \\
\notag
& + & d_x \phi(x_{2})( f_{i,j}^\You(x_{1}) - f_{\sigma_\Dee(i), \sigma_\Dee(j)}^\Dee(x_{1})) 
\end{eqnarray}
is positive.

Our assumption implies  
\begin{equation}
\label{eq:posDfRandfB}
f_{i,j}^U(x_{1}) > 0, \quad d_x f_{\sigma_L(i), \sigma_L(j)}^L(x_{2}) \ge 1/10.
\end{equation}
In addition, since the case of a Type (4) square with $S_i$ and $S_j$ crossing is prohibited, we see from the estimate (\ref{eq:URL}) proved in Lemma \ref{lem:18est} (with $U,D$ replacing $R,L$ and vice versa, which amounts to interchanging the estimates for the Type ($n$) and ($n'$) squares) that
 \begin{equation}
 \label{eq:new18est}
% y^L_{\sigma_L(i), \sigma_L(j)} \ge1/8, \quad 
  y^R_{i,j} + y^U_{i,j} - y^D_{\sigma_L(i),\sigma_L(j)} \ge 1/8.
  \end{equation}	
%So assume that $\sigma_\El(i) \ne \sigma_\El(j)$ and edge $\El$ is not the single-crossing model with 
% $S_i,S_j$ crossing; in particular,
% $y^\El_{\sigma_\El(i), \sigma_\El(j)} \ge1/8.$
% (Note this includes the case when $S_i$ or $S_j,$ but not both, ends at a vertical cusp locus.)

If  $-1/4 \le x_{1} \le -1/4 + \frac{1}{1000N}$ (recall $\ea < \frac{1}{1000N}$), then we estimate
 \begin{eqnarray*}
\partial_{x_{2}} F_{i,j}& \ge & (1 - \phi(x_{1}))d_x f_{\sigma_\El(i), \sigma_\El(j)}^\El(x_{2}) -
d_x \phi(x_{2})  f_{\sigma_\Dee(i), \sigma_\Dee(j)}^\Dee(x_{1}) \\
& \ge &
\left(1-2((-1/4 +\frac{1}{1000N}) +1/4)\right)
d_x f_{\sigma_\El(i), \sigma_\El(j)}^\El(x_{2}) - (2+\ec) 
\left|f_{\sigma_\Dee(i), \sigma_\Dee(j)}^\Dee(x_{1})\right|  \\
& \ge & \left(1-\frac{2}{1000N}\right) (1/10)
 - (3) \left( \frac{1}{1000N}(2y^\Dee_{\sigma_\Dee(i), \sigma_\Dee(j)}+N\ea)+ N \ea\right)\\
& \ge & 1/10 -(\frac{2}{10000N} +\frac{6}{1000} + 6 N\ea) > 0.
\end{eqnarray*}
[At the 1st inequality we used (\ref{eq:posDfRandfB}); at the 2nd inequality we used (\ref{eq:InterpolatePhi}); at the 3rd inequality we used that $f_{\sigma_\Dee(i), \sigma_\Dee(j)}^\Dee(x_{1}) = (x_{1}+1/4) d_xf_{\sigma_\Dee(i), \sigma_\Dee(j)}^\Dee(-1/4) + f_{\sigma_\Dee(i), \sigma_\Dee(j)}^\Dee(-1/4)$ and applied estimates from items (3) and (5) of Corollary \ref{cor:summary} and Lemma \ref{lem:MainProps0.5}; 
 at the 4th inequality, we used $y_{l,m} \leq N$.] 
So the lemma holds in this region.

Finally, consider the region $-1/4+ \frac{1}{1000N} < x_{1} \le 1/4.$ 
Assume $f_{i,j}^\You(x_{1}) - f_{\sigma_\Dee(i), \sigma_\Dee(j)}^\Dee(x_{1}) < 0$ since otherwise the left hand side of equation  (\ref{eq:dx2F}) is positive because of equation (\ref{eq:posDfRandfB}).  We have
%If $\Are$ is the one crossing model, and  $S_i,S_j$ cross there, let 
%$\delta = 4\sqrt{\epsilon_2} = d_xf^\Are_{i,j}|_{[-1/4,1/4]}$ (Lemma \ref{lem:MainPropsCr} item (2a)). Otherwise, let $\delta = 0.$
%By equations (\ref{eq:InterpolatePhi}), 
%(\ref{eq:posDfRandfB}), 
%\ms{(\ref{eq:new18est})},
%as well as items (2), (3), (5) of  Lemma \ref{lem:MainPropsPV} (and analogous results in Lemmas \ref{lem:MainPropsCr}-\ref{lem:MainPropsCu}),
 \begin{eqnarray*}
\partial_{x_{2}} F_{i,j} & \ge & \phi(x_{1}) d_x f_{i,j}^\Are(x_{2}) +  d_x \phi(x_{2})( f_{i,j}^\You(x_{1}) - f_{\sigma_\Dee(i), \sigma_\Dee(j)}^\Dee(x_{1})) \\
&\ge& 2(x_{1} + 1/4)(2y^\Are_{i,j} - N\ea)
+(2+\ec) ( f_{i,j}^\You(x_{1}) - f_{\sigma_\Dee(i), \sigma_\Dee(j)}^\Dee(x_{1})) \\
& \ge &  2(x_{1} + 1/4)(2y^\Are_{i,j} - N\ea)\\
& +& (2+\ec) (2y^\You_{i,j} - 2y^\Dee_{\sigma_\Dee(i), \sigma_\Dee(j)} -2N\ea) (x_{1} + 1/4)\\
& - &
(2+\ec)(|f_{i,j}^\You(-1/4)| + |f_{\sigma_\Dee(i), \sigma_\Dee(j)}(-1/4)|)\\
&=&4(x_{1} + 1/4)\left(y^{R}_{i,j} +y^\You_{i,j} - y^\Dee_{\sigma_\Dee(i), \sigma_\Dee(j)} \right) \\
&+ &(x_{1} + 1/4) \left( -2N \ea + 2y^U_{i,j} \ec - 2 y^D_{\sigma_D(i),\sigma_D(j)} \ec- (2 +\ec)(2N \ea)\right)\\
&-& (2+\ec)(|f_{i,j}^\You(-1/4)| + |f_{\sigma_\Dee(i), \sigma_\Dee(j)}(-1/4)|)\\
&\ge& 4\left(\frac{1}{1000N}\right)(1/8)  - (1/2)(10 N \ea) - (3)(2N\ea) >0.
\end{eqnarray*}
[We used in the 1st inequality (\ref{eq:posDfRandfB}); in the 2nd inequality item (5) of Corollary \ref{cor:summary} and properties of $\phi$ from (\ref{eq:InterpolatePhi}) including the estimate $\phi(x_{1}) \ge 2(x_{1} + 1/4)$ which holds since $x_1 \geq \frac{1}{1000N}$;  and in the 3rd inequality item (5) from Corollary \ref{cor:summary} and Lemma \ref{lem:MainProps0.5}.  In the 4th inequality, we used that $-1/4+ \frac{1}{1000N} < x_{1} \le 1/4$, that $|2y_{l,m} \ec| \leq N \ea$ and $2+\ec < 3$,  and applied (\ref{eq:new18est}) as well as item (3) from Corollary \ref{cor:summary} and Lemma \ref{lem:MainProps0.5}.]

\medskip

\noindent {\bf Remaining cases:}  First, we determine those cases where the requirements (1)-(3) imposed in the main case can fail.

\begin{lemma} After possibly reflecting across $x_1=x_2$, we can arrange that either $S_i$ and $S_j$ satisfy requirements (1)-(3) or one of the following conditions is satisfied:
\begin{enumerate}
\item[(A)] $S_i$ and $S_j$ cross above $L$ and $R$.
\item[(B)] $S_i$ and $S_j$ cross above $L$ and $D$.
\item[(C)] $S_i$ and $S_j$ are the crossing sheets of a Type (4) square. 
\item[(D)] $S_i$ and $S_j$ belong to a Type (11) square with a cusp edge between sheets $S_k$ and $S_{k+1}$ above $R$ and a cusp edge between $S_{k+2}$ and $S_{k+3}$ above $U$, and we have $i \in \{k,k+1\}$ and $j \in \{k+2,k+3\}$. 
\item[(E)] $S_i$ and $S_j$ belong to a Type (12) square with $j = k+2$ and $i \in \{k,k+1\}$.
\end{enumerate}
\end{lemma}

\begin{proof}
The only time when condition (3) fails is as in (C). 
 Notice that for any of the square types, it is the case that no pair of sheets meets (at a crossing or cusp) above both $R$ and $U$.  Therefore, reflecting across $x_1=x_2$ if necessary, we may assume that $S_i$ and $S_j$ do not meet above $U$, so that (1) holds.  Before considering condition (2), if either of $S_i$ or $S_j$ ends at a cusp edge, if necessary we attempt to reflect so that neither $S_i$ or $S_j$ meets a cusp edge above $U$.   Such a reflection is possible except when we have
\begin{itemize}
\item[(D')]  a Type (11) square with the pairs of sheets $S_{k}, S_{k+1}$ and $S_l, S_{l+1}$
 meeting at cusp edges and $i \in \{k,k+1\}$ and $j \in \{l,l+1\}$.
\end{itemize} 
Moreover, condition (1) will still hold after this reflection except when $S_i$ and $S_j$ are as in (E), since this is the only instance where cusp sheets have crossings with another sheet. 

Now, item (5) from Corollary \ref{cor:summary} and Lemma \ref{lem:MainProps0.5},  gives 
\[
d_xf^L_{\sigma_L(i), \sigma_L(j)} > y^L_{\sigma_L(i),\sigma_L(j)} -  N \ea.
\]
Since we have arranged that $S_i$ and $S_j$ do not meet above $U$ so that $\sigma_L(i) < \sigma_L(j)$, we have $y^L_{\sigma_L(i),\sigma_L(j)} -  N \ea  > 1/10$ as required by (2) except possibly when
\begin{itemize}
\item $L$ is a (1Cr) edge with $S_i$ and $S_j$ crossing above $L$; or 
\item $\sigma_L(i) - \sigma_L(j) = 0.5$.
\end{itemize}
In the first case, either (A) or (B) holds.  The second case, can only happen when at least one of $S_i$ or $S_j$ ends at a cusp edge above $U$, and as arranged above, this can only be when (D') is satisfied.  In order for $\sigma_L(i) -\sigma_L(j) = .5$ it must be the case that the two pairs of cusping sheets are adjacent so that $(l,l+1) = (k+2,k+3)$.
\end{proof}

We now prove Lemma \ref{lem:Psi} for each of the cases (A)-(E).

\medskip

\noindent {\bf (A):}  In this case, $d_xf^L_{\sigma_L(i),\sigma_L(j)}(x_2), d_xf^R_{i,j}(x_2), f^U_{i,j}(x_1),$ and $-f^D_{\sigma_D(i),\sigma_D(j)}(x_1)$ are all positive, so that $\partial_{x_2} F_{i,j} > 0$ follows from (\ref{eq:dx2F}).

\medskip

\noindent {\bf (B):}  This only occurs in the Type (3) square with $i =k$, and $j = k+1$.  Since edge $D$ is (1Cr),  $\mathit{sgn} \left( f^D_{\sigma_D(k), \sigma_D(k+1)}(x_1)\right) = \mathit{sgn}(x_1)$ by item (1) of Proposition \ref{prop:PV1Cr2Cr}.  Thus, when $x_1<0$, the argument from (A) applies, while when $x_1\geq 0$ (in fact when $x_1 \geq -1/4 + 1/(1000N)$) the final estimate used in the Main Case applies.  

\medskip

\noindent {\bf (C):}  For the (4) square, with $(i,j) = (k,k+1)$, the $1$-dimensional functions satisfy
\[
d_xf^L_{\sigma_L(k),\sigma_L(k+1)}(x_2) \geq 2- N \ea; \quad d_xf^R_{k,k+1}(x_2) > 0; \quad f_{i,j}^U(x_1) > 0;
\]
\[
\mathit{sgn} \left( f^D_{\sigma_D(k), \sigma_D(k+1)}(x_1)\right) = \mathit{sgn} \left( f^D_{k+1, k}(x_1)\right) = -\mathit{sgn}(x_1);   \quad \mbox{and} \quad |f^D_{\sigma_D(k), \sigma_D(k+1)}(x_1)| < N\ea.
\]
(The estimates are from items (2), (3), and (5) of Proposition \ref{prop:PV1Cr2Cr}.)  When $x_1 >0$, all terms in (\ref{eq:dx2F}) are positive.  When $x_1 \leq 0$,
\[
(1-\phi(x_1))d_xf^L_{\sigma_L(k),\sigma_L(k+1)}(x_2) \geq (1/2)(2- N \ea)
\] 
and this dominates the only negative term $-d_x\phi(x_2)  f^D_{\sigma_D(k), \sigma_D(k+1)}(x_1)$ which is bounded as
\[
|-d_x\phi(x_2)  f^D_{\sigma_D(k), \sigma_D(k+1)}(x_1)| < 3 N \ea
\]
using (\ref{eq:InterpolatePhi}).

\medskip

\noindent {\bf (D):}  In this case, $\sigma_L(i) = i \in \{k,k+1\}$ and $\sigma_L(j) = k+1.5$.  The statement of Lemma \ref{lem:MainProps0.5} only explicitly gives that $d_xf^L_{i,k+1.5} > 2C_{i,k+1.5} - N\ea$ with $C_{i,k+1.5} \in \{ y^L_{i,k+1}, y^L_{i,k+2}\}$.  However, in examining the proof, since it is the sheet in position $k+2$ above edge $L$ that is appears adjacent to the $S_{k+2}$ and $S_{k+3}$ cusp sheets above both $U$ and $D$, the construction produces $C_{i,k+1.5} = y^L_{i,k+2}$, so that for $i \in \{k,k+1\}$ the estimate
$d_xf^L_{i,k+1.5} > 1/10$ holds and the Main Case applies. 

\medskip

\noindent {\bf (E):}  For the Type (12) square (as pictured, and not reflected) with $i \in \{k,k+1\}$ and $j = k+2$, the $1$-dimensional functions satisfy
\[
d_xf^L_{\sigma_L(i), \sigma_L(j)}(x_2) = d_xf^L_{k-.5,k}(x_2) > 0, \quad d_xf^R_{i,j}(x_2) > 0,
\]
\[
f^U_{i,k+2}(x_1) > 0, \quad \mbox{and} \quad -f^D_{\sigma_D(i), \sigma_D(j)}(x_1) > 0
\]
since $\sigma_D(i) > \sigma_D(j) = k$.  Thus, all terms in (\ref{eq:dx2F}) are positive.

\end{proof}

\subsubsection{Proof of Lemma \ref{lem:quarter-cross} for squares (13) and (14)}
\label{sssec:quarter-crossST}

We focus on square (13) as the other is similar.
In Lemmas \ref{lem:URLST}-\ref{lem:tildeUtildeDtildeRST},  we prove there are no Reeb chords in $\Psi, \overline{U}, \overline{D}, \overline{R}$ by showing that for $(x_1,x_2)$ in each of the various regions considered one of the $\partial_{x_{i}} F_{i,j}$ does not vanish.  That there are no Reeb chords in $\overline{L}$ is proved in Lemma \ref{lem:tildeLST}.

Outside of $\overline{L},$ we have $F_l = G_l$ in (\ref{eq:FHCAG}),  $\widehat{f}_{k+1} = f^U_{k+1}$ in (\ref{eq:hatk1}), 
and $\widehat{f}_{k+2} = f^U_{k+2}$ in (\ref{eq:hatk2}).
Plugging these into (\ref{eq:Gkdef})-(\ref{eq:Gk2def}), we get
\begin{eqnarray}
\label{eq:ReebST}
\partial_{x_{1}} F_{i,j} & = & \phi(x_{2}) d_x f_{i,j}^U(x_{1}) +(1 - \phi(x_{2})) d_xf_{\sigma_D(i), \sigma_D(j)}^D(x_{1})\\ 
\notag
& +&   d_x \phi(x_{1})( f_{i,j}^R(x_{2}) - f^{ST}_{i,j}(x_{2})) \\
\notag
\partial_{x_{2}} F_{i,j} & = & \phi(x_{1}) d_x f_{i,j}^R(x_{2}) +(1 - \phi(x_{1})) d_x f^{ST}_{i,j}(x_2)\\
\notag
&  + &  d_x \phi(x_{2})( f_{i,j}^U(x_{1}) - f_{\sigma_D(i), \sigma_D(j)}^D(x_{1})).
\end{eqnarray}
 Here $f^{ST}_{i,j}:= f^{ST}_i - f^{ST}_j,$ and $f^{ST}_i(x)$ is defined as $f^L_{\sigma_L(i)}(x)$ if $i \ne k,k+1,k+2,$
and by (\ref{eq:STk1111})-(\ref{eq:STk1341}) otherwise.
Set $y^{ST}_{k-1,k} = 1,$  $y^{ST}_{k,k+1}=0= y^{ST}_{k+1,k+2},$ $y^{ST}_{k+2,k+3}=1$
and all other $y^{ST}_{i,i+1} = 1.$ For $i<j$, make $y^{ST}_{i,j}$ the summation as done above Proposition \ref{prop:PV1Cr2Cr}.
Note for $\{i,j \} \cap\{k,k+1,k+2\} =  \emptyset,$  $y^{ST}_{i,j} = y^L_{\sigma_L(i), \sigma_L(j)}.$

\begin{lemma}
\label{lem:URLST}
The $y^{ST}_{i,j} \ge 0$ satisfy properties analogous to  Proposition \ref{prop:PV1Cr2Cr} (2) and (5), and  
equation (\ref{eq:URL}) holds. 
Namely, 
\begin{enumerate}
\item
$|f^{ST}_{i,j}(x)| \le y^{ST}_{i,j} + N\ea$ if  $\{i,j\} \not \subset \{k,k+1,k+2\},$
\item
$|d_xf^{ST}_{i,j}|_{[-1/4,1/4]} -   2 y^{ST}_{i,j}| \le 7N \ea$ for all $i <j,$ 
\item
$y^R_{i,j} + y^U_{i,j} - y^{D}_{\sigma_D(i), \sigma_D(j)} \ge 1/2 \ge 1/8$ for all $i <j,$
\item
 $y^U_{i,j} + y^R_{i,j} - y^{ST}_{i,j} \ge 1 \ge 1/8$ for all $i <j.$
\end{enumerate}

\end{lemma}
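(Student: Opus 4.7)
The plan is to treat the four assertions separately, reducing each to statements about the one-dimensional functions $f^{ST}_i$ which are controlled by Lemma~\ref{lem:flst} and the definitions (\ref{eq:STk1111})--(\ref{eq:STk1341}). For (1) and (2), the easy case is $\{i,j\}\cap\{k,k+1,k+2\}=\emptyset$: here $f^{ST}_i = f^L_{\sigma_L(i)}$, $L$ is a $(PV)$ edge, and one checks $y^{ST}_{i,j} = y^L_{\sigma_L(i),\sigma_L(j)}$, so the bounds follow directly from items (2) and (5) of Proposition~\ref{prop:PV1Cr2Cr}.

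For the remaining case of (1) and (2), where exactly one of $i,j$ lies in $\{k,k+1,k+2\}$, I would write $f^{ST}_{i,j} = f^L_{\sigma_L(i),\sigma_L(j)} + (f^L - f^{ST})$-correction, where the correction is supported either on $[-1,-1/4]\cup[3/4,1]$ (bounded by $2N\ea$ via (\ref{eq:C0flst})) or on $[-3/8,3/4]$ (where $f^{ST}_k - f^L_k = \psi(x)[\widehat{f}_{k+2}(-3/8)-f^U_{k+1}(-3/8)]$, and the bracketed constant is $O(N\ea)$ by (\ref{eq:fhat14est}) and Corollary~\ref{cor:summary}(3)). Combined with $|f^L_{\sigma_L(i),\sigma_L(j)}(x)|\le y^L_{\sigma_L(i),\sigma_L(j)} + N\ea$ and the identity $y^{ST}_{i,j} = y^L_{\sigma_L(i),\sigma_L(j)}$, assertion (1) follows (possibly after absorbing the small additive constants by replacing $N$ with a slightly larger universal multiple of $N$). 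For (2), on $[-1/4,1/4]$ the function $f^L_{\sigma_L(i),\sigma_L(j)}$ is linear with slope within $N\ea$ of $2y^L_{\sigma_L(i),\sigma_L(j)}$, and the only new contribution to $d_xf^{ST}_{i,j}$ is $d_x\psi(x)\cdot[\widehat{f}_{k+2}(-3/8)-f^U_{k+1}(-3/8)]$, bounded by $3\cdot 2N\ea$; together these fit into the $7N\ea$ slack.

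For (3) and (4), both inequalities are additive in the following sense: writing $Y^X_m = \sum_{l=1}^{m-1} y^X_{l,l+1}$ for the cumulative sum on each edge, one uses the signed convention $y^D_{a,b} := Y^D_b - Y^D_a$ (which matches how $d_xf^D_{\sigma_D(i),\sigma_D(j)}$ enters (\ref{eq:ReebST})). Then the left sides of (3) and (4) telescope as $\sum_{l=i}^{j-1}$ of the corresponding consecutive expression, so it suffices to verify the consecutive case $(l,l+1)$. Since $y^U = y^D = 1$ on both $(Cu)$ edges (with $\sigma_D$ swapping $k+1\leftrightarrow k+2$) and the values of $y^R$ and $y^{ST}$ near the crossing and swallowtail are prescribed explicitly, a direct enumeration of the five relevant values of $l \in\{k-1,k,k+1,k+2\}$ plus the generic value $l\notin\{k-1,k,k+1,k+2\}$ yields the consecutive contribution $\ge 1/2$ for (3) and $\ge 1$ for (4) in every case; all other pairs contribute $1$ and $1$ respectively, so the total bound follows.

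The main obstacle is bookkeeping rather than analytic depth. In (3) one must be careful that the sign convention is the one consistent with (\ref{eq:ReebST}), because for $(i,j)=(k+1,k+2)$ we have $\sigma_D(k+1)>\sigma_D(k+2)$ and the naive positive reading would fail; the signed interpretation converts $y^D_{k+2,k+1}$ to $-1$ and produces $0+1-(-1)=2\ge 1/2$. In (2), the $\psi$-derivative correction in $f^{ST}_k$ on $[-1/4,1/4]$ is the reason the slack must be taken as $7N\ea$ rather than the $N\ea$ that Proposition~\ref{prop:PV1Cr2Cr}(5) provides in the non-swallowtail case.
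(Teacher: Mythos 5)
Your argument for items (2)--(4) matches the paper's: item (2) is exactly the decomposition into the (PV) slope estimate plus the $d_x\psi\cdot[\widehat{f}_{k+2}(-3/8)-f^U_{k+1}(-3/8)]$ contribution (giving $N\ea + 3\cdot 2N\ea = 7N\ea$), and items (3)--(4) are verified by the same reduction to consecutive pairs, with the same signed reading $y^D_{k+2,k+1}=-1$, followed by the same case enumeration. The one place you genuinely diverge is item (1): you bound $f^{ST}_{i,j}$ by writing it as $f^L_{\sigma'(i),\sigma'(j)}$ plus a correction controlled by (\ref{eq:C0flst}) and (\ref{eq:fhat14est}), which yields $y^{ST}_{i,j}+3N\ea$ rather than the stated $y^{ST}_{i,j}+N\ea$ --- hence your caveat about enlarging the constant. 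The paper avoids this loss with a monotonicity argument: by (\ref{eq:propflst}) and the definitions (\ref{eq:STk2114})--(\ref{eq:STk1341}), each relevant difference $f^{ST}_{i,l}$ and $f^{ST}_{l,j}$ has positive derivative on $(-1,1/2]$ and negative derivative on $[3/4,1)$, and on $[1/2,3/4]$ every $f^{ST}_l$ with $l\in\{k,k+1,k+2\}$ coincides exactly with $f^L_k$ (the $\psi$-term vanishes there); so the global maximum of $f^{ST}_{i,j}$ is attained in $[1/2,3/4]$ and equals the maximum of $f^L_{\sigma'(i),\sigma'(j)}$, which is within $N\ea$ of $y^{ST}_{i,j}$ by Proposition \ref{prop:PV1Cr2Cr} (2). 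Since the downstream uses of item (1) pit $O(N\ea)$ against slack of order $1/8$, your weaker constant would still suffice for the rest of the argument, but the monotonicity route is what delivers the statement as written.
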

\begin{proof}
(2):  If we set 
\begin{equation} \label{eq:sigmaprime}
\sigma'(i) = \left\{ \begin{array}{cr} i, & i \leq k-1, \\ k, &  i = k,k+1,k+2, \\ i-2,  & i \geq k+3, \end{array} \right.
\end{equation}
 then for all $1 \leq i \leq n$ on $[-1/4,1/4]$ we have
\[
f^{ST}_{i}(x) = f^L_{\sigma'(i)}(x) + \delta_{i,k}\cdot \psi(x)(\widehat{f}_{k+2}(-3/8)- {f}_{k+1}^U(\-3/8)), \quad \mbox{and} \quad y^{ST}_{i,j} = y^L_{\sigma'(i),\sigma'(j)}.
\]
Thus, using Corollary \ref{cor:summary} (5), (\ref{eq:psibounds}), and (\ref{eq:fhat14est}), we have
\begin{align*}
|d_xf^{ST}_{i,j}(x) - 2y^{ST}_{i,j}| & \leq |d_xf^L_{\sigma'(i),\sigma'(j)}(x)-2y^{L}_{\sigma'(i),\sigma'(j)}|  + |d_x\psi(x)(\widehat{f}_{k+2}(-3/8)- {f}_{k+1}^U(\-3/8))| \\ & \leq N\ea + 3 \cdot(N\ea + N\ea) = 7 N\ea.
\end{align*}

(1):  For $i<k$, $k+2 < j$, and $l \in \{k,k+1,k+2\}$, since locally $f^{ST}_l$ agrees with $f^L_k$ or $f^{ST}_k$, the combination of (\ref{eq:propflst}) and item (2) of Proposition \ref{prop:PV1Cr2Cr} show that $f^{ST}_{i,l}$ and $f^{ST}_{l,j}$ have positive (resp. negative) derivative on $(-1,1/2]$ (resp. on $[3/4, 1)$).  Within $[1/2,3/4]$ $f^{ST}_{l} = f^L_{k}$, so we see that $f^{ST}_{i,l}$, $f^{ST}_{l,j}$ and $f^{ST}_{i,j}$ all have a unique local maximum in $[1/2,3/4]$.  This maximum agrees with the maximum of $f^L_{\sigma'(i),\sigma'(j)}$ which is within $N\ea$ of $y^{ST}_{i,j} = y^L_{\sigma'(i),\sigma'(j)}$ by Proposition \ref{prop:PV1Cr2Cr} (2).  The result follows.

%The first two items follow from Proposition \ref{prop:flst} and equations (\ref{eq:STk1111})-(\ref{eq:STk1341}).
%For example, on $[-1/4,1/4],$ 
%$d_xf^{ST}_{k,k+1} - d_x\psi(x)(\widehat{f}_{k+2}(-3/8)- {f}_{k+1}^U(\-3/8))  = 0 = d_xf^{ST}_{k+1,k+2}$
%and $|d_x\psi(x)(\widehat{f}_{k+2}(-3/8)- {f}_{k+1}^U(-3/8))| < 3 \epsilon_1$ (which gets absorbed 
%into the $3N \epsilon_1$ error, see proof of Lemma \ref{lem:MainPropsPV} (5)).

(3) and (4):  We compute $y^R_{i,i+1} + y^U_{i,i+1} - y^{D}_{\sigma_D(i), \sigma_D(i+1)}.$  The general case follows.
$$
\begin{matrix}
 (i\le k-1,i+1) & (k,k+1) & (k+1, k+2) & (k+2,k+3) & (i\ge k+3, i+1) \cr
 1+1-1 & 1.5+1-2 & 0+1-(-1) & 1.5+1-2 & 1+1-1
\end{matrix}
$$
  Similarly,  $y^U_{i,i+1} + y^R_{i,i+1} - y^{ST}_{i,i+1}$ equals   
$$
\begin{matrix}
 (i\le k-1,i+1) & (k,k+1) & (k+1, k+2) & (k+2,k+3) & (i\ge k+3, i+1) \cr
 1+1-1 & 1+1.5-0 & 1+0-0 & 1+1.5-1 & 1+1-1
\end{matrix}
$$

\end{proof}

\begin{lemma}
\label{lem:PsiST}  
For any pairs of sheets  $(i,j)$ in either square (13) or (14),  
$F_{i,j}$ has no critical points in $\Psi.$
\end{lemma}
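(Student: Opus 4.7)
The plan is to mirror the proof of Lemma \ref{lem:Psi} for squares (1)--(12), using Lemma \ref{lem:URLST} as the substitute for the corresponding one-variable estimates (items (2), (5) of Corollary \ref{cor:summary} and equation (\ref{eq:URL})). The first observation is that $\Psi$ is disjoint from the disk $O_1$: since $O_1$ is the closed ball of radius $R_1=1/16$ centered at $(-3/8,0)$, every point of $O_1$ has $x_1 \le -3/8+1/16 = -5/16 < -1/4$. Consequently $\alpha(r)\equiv 1$ on $\Psi$, the formulas (\ref{eq:FHCAG}) reduce to $F_l = G_l$, and the partial derivatives of $F_{i,j}$ on $\Psi$ are given by (\ref{eq:ReebST}).

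Next I would treat a main case structurally identical to the one in Lemma \ref{lem:Psi}: after possibly reflecting across $x_1=x_2$ (square (13)/(14) reflect to the same class of swallowtail squares, for which Lemma \ref{lem:URLST} applies verbatim), assume that $S_i$ and $S_j$ do not meet at a crossing or cusp above $U$ and that $d_x f^{ST}_{i,j}(x_2) \geq 1/10$ on $[-1/4,1/4]$. Under these hypotheses, I would split $\Psi$ into the two vertical strips $-1/4 \le x_1 \le -1/4 + 1/(1000N)$ and $-1/4 + 1/(1000N) < x_1 \le 1/4$, and on each bound $\partial_{x_2} F_{i,j}$ from below by a strictly positive constant by the same chain of inequalities used in the main case of Lemma \ref{lem:Psi}, quoting Lemma \ref{lem:URLST}(1), (2), (4) in place of the Corollary \ref{cor:summary} items and (\ref{eq:URL}), and using (\ref{eq:InterpolatePhi}) to control $\phi$ and $d_x\phi$.

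The hard part will be enumerating and dispatching the remaining cases, i.e. the pairs $(i,j)$ for which one of the two main-case hypotheses fails. Because the crossing locus of a (13) square lies on $R$ and the cusp locus runs essentially vertically near $x_1=-3/8$, the first hypothesis can only fail (in either the original or reflected square) when one of $S_i,S_j$ is among the swallowtail sheets $\{S_k,S_{k+1},S_{k+2}\}$ meeting the edge $U$ at a cusp or in a crossing. Similarly, inspection of the $y^{ST}_{i,j}$ table in the proof of Lemma \ref{lem:URLST} shows that the $1/10$ lower bound on $d_x f^{ST}_{i,j}$ fails only when $\{i,j\} \subset \{k,k+1,k+2\}$, i.e.\ for $(i,j)\in\{(k,k+1),(k,k+2),(k+1,k+2)\}$. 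For each such exceptional pair I would read off from (\ref{eq:ReebST}) that all summands of $\partial_{x_1}F_{i,j}$ (or $\partial_{x_2}F_{i,j}$) carry a common sign, the $R$ summand being controlled by Proposition \ref{prop:PV1Cr2Cr}(2) applied to the (1Cr) edge $R$ and the $U,D$ summands by Proposition \ref{prop:CuDef}(2).

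The genuinely delicate sub-case is $(i,j)=(k+1,k+2)$, where $f^R_{k+1,k+2}(x_2)$ changes sign across the crossing and $f^{ST}_{k+1,k+2}\equiv 0$ on $[-1/4,1/4]$. Here I would argue in the spirit of case (A) from Lemma \ref{lem:Psi}: along $\{x_2\ne 0\}$ all three terms of $\partial_{x_1}F_{k+1,k+2}$ share the sign of $x_2$ (invoke $\operatorname{sgn}(f^R_{k+1,k+2}(x_2))=\operatorname{sgn}(x_2)$ and Lemma \ref{lem:Bprops}(B2) applied to $G_{k+1,k+2}=F_{k+1,k+2}$), and along $\{x_2=0\}\cap\Psi$ one has $\partial_{x_2}F_{k+1,k+2}>0$ by Lemma \ref{lem:Bprops}(B3) together with positivity of $d_x\phi(x_2)\cdot f^U_{k+1,k+2}(x_1)$. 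This rules out critical points in all exceptional cases and completes the argument.
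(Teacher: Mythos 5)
Your overall strategy is the same as the paper's: run the Main Case of Lemma \ref{lem:Psi} with $f^{ST}_{i,j}$ and Lemma \ref{lem:URLST} standing in for $f^L_{\sigma_L(i),\sigma_L(j)}$ and (\ref{eq:URL}), and handle the pairs $(k,k+1)$, $(k,k+2)$, $(k+1,k+2)$ separately; your identification of exactly these as the exceptional pairs is correct. Two remarks. First, the reflection across $x_1=x_2$ is not actually available here (the reflected square has its (1Cr) edge on $U$ and its (Cu) edges on $L,R$, which is not a configuration Lemma \ref{lem:URLST} covers), but it is also never needed: for $\{i,j\}\not\subset\{k,k+1,k+2\}$ the sheets never meet above $U$ in square (13), so the Main Case hypotheses hold in the unreflected square, which is how the paper proceeds. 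Your sign-based treatment of $(k,k+1)$ and $(k,k+2)$ differs from the paper's, which instead bounds the $\phi(x_2)d_xf^U_{i,j}+(1-\phi(x_2))d_xf^D_{\sigma_D(i),\sigma_D(j)}$ part below by $2-N\ea$ and the $d_x\phi(x_1)(f^R_{i,j}-f^{ST}_{i,j})$ part below by $-6N\ea$; your version works too, but only because $f^{ST}_{k,k+1}=f^{ST}_{k,k+2}=\psi(x_2)[\widehat{f}_{k+2}(-3/8)-f^U_{k+1}(-3/8)]\le 0$ on $[-1/4,1/4]$ --- you should say this explicitly, since it is the only reason the third summand is nonnegative.

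The one concrete error is in the $(k+1,k+2)$ case: Lemma \ref{lem:Bprops} items B2 and B3 are established only on $K=\{-3/8\le x_1\le -1/4,\ -1/4\le x_2\le 1/4\}\setminus \mathrm{Int}(O_1)$, which is disjoint from $\Psi$, so they cannot be quoted here. Moreover the individual summands $\phi(x_2)d_xf^U_{k+1,k+2}(x_1)$ and $(1-\phi(x_2))d_xf^D_{k+2,k+1}(x_1)$ do \emph{not} each carry the sign of $x_2$; only their sum $(2\phi(x_2)-1)d_xf^U_{k+1,k+2}(x_1)$ does (using $f^D_{k+2,k+1}=-f^U_{k+1,k+2}$ and that $\phi-1/2$ is odd). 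The fix is the paper's direct computation on $\Psi$: since $f^{ST}_{k+1,k+2}\equiv 0$ on $[-1/4,3/4]$, one gets $F_{k+1,k+2}=\phi(x_1)f^R_{k+1,k+2}(x_2)+(2\phi(x_2)-1)f^U_{k+1,k+2}(x_1)$, whence $\sgn(\partial_{x_1}F_{k+1,k+2})=\sgn(x_2)$ for $x_2\neq 0$, and $\partial_{x_2}F_{k+1,k+2}(x_1,0)\ge 4f^U_{k+1,k+2}(x_1)>0$. With those repairs your argument goes through.
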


\begin{proof}

\noindent {\bf Case 1:} $(i,j) = (k+1,k+2).$   For $(x_1,x_2) \in \Psi$ we have
\begin{align*}
F_{k+1,k+2}(x_1,x_2) = & \phi(x_1) f^R_{k+1,k+2}(x_2) + (1-\phi(x_1)) f^{ST}_{k+1,k+2}(x_2) \\
& + \phi(x_2)f^U_{k+1,k+2}(x_1) + (1-\phi(x_2)) f^{D}_{k+2,k+1}(x_1)  \\ 
=& \phi(x_1) f^R_{k+1,k+2}(x_2) + (2 \phi(x_2)-1) f^U_{k+1,k+2}(x_1); \\
& \\
\partial_{x_1}F_{k+1,k+2}  = & d_x\phi(x_1)f^R_{k+1,k+2}(x_2) + (2 \phi(x_2)-1) d_xf^U_{k+1,k+2}(x_1); \\
\partial_{x_2}F_{k+1,k+2}  = & \phi(x_1) d_xf^R_{k+1,k+2}(x_2) + 2 d_x \phi(x_2) f^U_{k+1,k+2}(x_1).
\end{align*}
Since $R$ is a (1Cr) edge with sheets $k+1$ and $k+2$ crossing, we get using item (1) of Proposition \ref{prop:PV1Cr2Cr} that $\sgn \left( \partial_{x_1}F_{k+1,k+2}(x_1,x_2) \right) = \sgn (x_2)$.  Thus,  $\partial_{x_1}F_{k+1,k+2}$ vanishes only when $x_2=0$ in which case
\[
\partial_{x_2}F_{k+1,k+2}(x_1,0) \geq 2 d_x\phi(0) f^U_{k+1,k+2}(x_1) = 4 f^U_{k+1,k+2}(x_1) >0.
\]  

%Then, $- f_{\sigma_D(i), \sigma_D(j)}^D(x_{1}) >0,$ $d_xf^{ST}_{i,j}(x_2) = 0,$
% $f_{i,j}^U(x_{1})>0$ and $d_x f_{i,j}^R(x_2) > 0;$ so  $\partial_{x_{2}} F_{i,j} > 0.$

\medskip

\noindent {\bf Case 2:} $i = k$ and $j \in \{k+1,k+2\}$.   From equations (\ref{eq:InterpolatePhi}), (\ref{eq:STk2114})-(\ref{eq:STk1341}), as well as Corollary \ref{cor:summary} (5) we have
\begin{align}
\label{eq:PsiST1}
&{\phi(x_{2}) d_x f_{i,j}^U(x_{1}) +(1 - \phi(x_{2})) d_xf_{\sigma_D(i), \sigma_D(j)}(x_1)} \ge  2\min\{y^U_{i,j}, y^D_{\sigma_D(i), \sigma_D(j)}\} - N \ea =  2 -  N \ea; &\\
\label{eq:PsiST2}
&d_x \phi(x_{1}) (f_{i,j}^R(x_{2}) - f^{ST}_{i,j}(x_{2})) \ge 
-d_x \phi(x_{1})f^{ST}_{i,j}(x_{2})  \ge -(2+\ec)(\psi(x) 2 N \ea)  \ge -6N \ea.&
\end{align}
So $\partial_{x_{1}} F_{i,j} > 0.$

\medskip

\noindent {\bf Case 3:} $\{i,j\} \not \subset \{k,k+1,k+2\}$.   
We use an argument almost identical to the one from the Main Case of the proof of Lemma \ref{lem:Psi}, where
$f^{ST}_{i,j}$ replaces $f^L_{\sigma_L(i), \sigma_L(j)}$ throughout.
Recall that we split the proof into two regions: $\{ -1/4 \le x_1 \le -1/4+ \frac{1}{1000N}\}$
and $\{ -1/4+ \frac{1}{1000N} \le x_1 \le 1/4\}.$

In the first region, we 
used that $d_xf^L_{\sigma_L(i),\sigma_L(j)} \ge 1/10$  which still holds by Lemma \ref{lem:URLST} (2) along with properties of $\phi$ and the $f^D, f^U,$ and $f^R$ that remain true in square (13).  The argument applies as written.
%Note that $y_{i,j}^{ST},$
%is also bounded from below by $1/8,$ so 
%it suffices to repeat the proof substituting
%$y^L_{\sigma_L(i), \sigma_L(j)}$ with $y_{i,j}^{ST}.$
%There is only one extra lower order term ($||d_x\psi||_{C^0} N \epsilon_1 \le 3 N \epsilon_1$) which may appear when $k \in \{i,j\}.$ 

%The argument is then as follows:

In the second region, $f^L_{\sigma_L(i), \sigma_L(j)}$ is never used other than in the first step which assumes $d_xf^L_{\sigma_L(i), \sigma_L(j)}(x_2) \ge 0$.  The proof still applies with the estimates from Lemma \ref{lem:URLST} (3) and (4) used in place of (\ref{eq:URL}).

% \begin{eqnarray*}
%\partial_{x_{2}} F_{i,j}& \ge & (1 - \phi(x_{1}))d_x f^{ST}_{i,j}(x_{2}) -
%d_x \phi(x_{2})  f_{\sigma_D(i), \sigma_D(j)}^D(x_{1}) \\
%& \ge &
%\left(1-2((-1/4 +\frac{1}{1000N}) +1/4)\right)
%d_x f^{ST}_{i,j}(x_{2}) - (2+\ec) 
%\left|f_{\sigma_D(i), \sigma_D(j)}^D(x_{1})\right|  \\
%& \ge & \left(1-\frac{2}{1000N}\right) (2y_{i,j}^{ST} - 7N\epsilon_1 - ||d_x\psi||_{C^0} N \epsilon_1)) \\
%&  - & (2+\epsilon_1) \left( \frac{1}{1000N}(2y^D_{\sigma_D(i), \sigma_D(j)}+N\epsilon_1)+ N \epsilon_1\right)\\
%& \ge & 2y^L_{\sigma_L(i), \sigma_L(j)} - (3+||d_x\psi||_{C^0}+\epsilon_1)N\epsilon_1 - \frac{13 + 2||d_x\psi||_{C^0}}{1000}\\
%& > & 0.
%\end{eqnarray*}
%In the second term of the second to last inequality, we estimate $f_{\sigma_D(i), \sigma_D(j)}^D(x_{1})$ with $(x_{1}+1/4) d_xf_{\sigma_D(i), \sigma_D(j)}^D(-1/4) + f_{\sigma_D(i), \sigma_D(j)}^D(-1/4).$ 
% In the last inequality, we use $y_{l,m} \le N.$
%So the lemma holds in this region.

\end{proof}

\begin{lemma}
\label{lem:tildeUtildeDtildeRST}  
For any pairs of sheets  $(i,j)$  in either square (13) or (14),  
$F_{i,j}$ has no critical points in $\overline{U}, \overline{D}, \overline{R}.$
\end{lemma}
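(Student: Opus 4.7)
The plan is to adapt the arguments of Lemmas \ref{lem:tildeU} and \ref{lem:tildeD} to the swallowtail setting, using the fact that outside $\overline{L}$ the partial derivatives of $F_{i,j}$ are given by (\ref{eq:ReebST}), which has exactly the same shape as the non-swallowtail formulas except that $f^{ST}_{i,j}$ plays the role of $f^L_{\sigma_L(i),\sigma_L(j)}$. Thus the entire machinery of Lemmas \ref{lem:tildeU}--\ref{lem:tildeD} transfers once the analogous $C^0$, derivative, and ``$y$-sum'' bounds on $f^{ST}_{i,j}$ are invoked; these are precisely the four items of Lemma \ref{lem:URLST}.

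First I would handle $\overline{U}$. Split into the two cases $f^R_{i,j}(x_2) - f^{ST}_{i,j}(x_2)\ge 0$ and $<0$, exactly as in the proof of Lemma \ref{lem:tildeU}. In the first case the middle term in $\partial_{x_1} F_{i,j}$ is nonnegative and $d_xf^U_{i,j}(x_1)>0$ in the relevant range (using Proposition \ref{prop:CuDef}(2) for the cusp edge $U$ when $\{i,j\}$ does not straddle the cusp, and noting that $F_{i,j}$ is undefined where both sheets cusp away). In the second case one passes to $\partial_{x_2}F_{i,j}$ on $[-1/4,1/4]\times[1/4,1/2]\cup[-1/4,1/4]\times[3/4,1)$, where $d_xf^R_{i,j}$ and $d_xf^{ST}_{i,j}$ have the requisite sign by Proposition \ref{prop:PV1Cr2Cr}(2) and by the sign part of Lemma \ref{lem:URLST}. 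The key quantitative step is the analogue of Lemma \ref{lem:18est}, which now asserts $\partial_{x_1}F_{i,j}\ge 1/5$ on $[-1/4,1/4]\times[1/2,3/4]$ under $\sigma_L(i)<\sigma_L(j)$; this follows from Lemma \ref{lem:URLST}(4) (which replaces (\ref{eq:URL})) together with items (1)--(2) of Lemma \ref{lem:URLST} and the derivative estimates of Corollary \ref{cor:summary}(5). The only potentially exceptional pair is $(i,j)=(k+1,k+2)$ where $y^{ST}_{k+1,k+2}=0$; for this pair I would argue directly as in Case 1 of the proof of Lemma \ref{lem:PsiST}: use that $\mathrm{sgn}(\partial_{x_1}F_{k+1,k+2})=\mathrm{sgn}(x_2)$ from property \textbf{A3}/\textbf{B2} and Proposition \ref{prop:PV1Cr2Cr}(1) for the $(1Cr)$ edge $R$, so $\partial_{x_1}F_{k+1,k+2}$ vanishes only on $\{x_2=0\}$, and then $\partial_{x_2}F_{k+1,k+2}(x_1,0)>0$ by the formula in Case 1 of Lemma \ref{lem:PsiST}.

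For $\overline{R}$, I would use the $\partial_{x_2}F_{i,j}$ formula from (\ref{eq:ReebST}) and run the same two-sign dichotomy on $f^U_{i,j}(x_1)-f^D_{\sigma_D(i),\sigma_D(j)}(x_1)$, this time invoking Lemma \ref{lem:URLST}(3) (the analogue $y^R_{i,j}+y^U_{i,j}-y^D_{\sigma_D(i),\sigma_D(j)}\ge 1/2$) in place of the $UR{-}L$ bound. Again the only troublesome pair is $(k+1,k+2)$, which is handled by the same Case 1 argument already cited. For $\overline{D}$, I would repeat the case analysis of Lemma \ref{lem:tildeD} verbatim, replacing $f^L_{\sigma_L(i),\sigma_L(j)}$ by $f^{ST}_{i,j}$ throughout. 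The three cases that structured the original argument (the adjacent edge is $(Cu)$, sheets $i,j$ cross along $D$, and the generic remainder) all persist here: the $(Cu)$ edges are $U$ and $D$ themselves, the $(1Cr)$ edge is $R$, and the generic case uses the derivative estimate $|d_xf^D_{\sigma_D(i),\sigma_D(j)}|>2y^D-N\ea\ge 1/4-N\ea$ from Corollary \ref{cor:summary}(5), dominating the $|d_x\phi|\cdot(|f^R_{i,j}|+|f^{ST}_{i,j}|)$ term which is still $O(N\ea)$ by Lemma \ref{lem:URLST}(1) and Corollary \ref{cor:summary}(3).

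The hard part will be the pairs that involve the cusping sheets $S_k,S_{k+1}$ (or $S_{k+2}$ via the swallowtail) in $\overline{D}$ when $f^{ST}_{i,j}$ does \emph{not} satisfy the pointwise bound of Lemma \ref{lem:URLST}(1) --- namely $\{i,j\}\subset\{k,k+1,k+2\}$. For $(i,j)=(k+1,k+2)$ the Case 1 argument above still works since $\overline{D}\subset\Psi^c$ does not meet the line $x_2=0$ where $\partial_{x_1}F_{k+1,k+2}$ vanishes, so $\partial_{x_1}F_{k+1,k+2}$ is of one sign on $\overline{D}$. For $(i,j)=(k,k+1)$ or $(k,k+2)$, I would imitate Case 2 of Lemma \ref{lem:PsiST}: the estimate (\ref{eq:PsiST1}) gives $\phi(x_2)d_xf^U_{i,j}+(1-\phi(x_2))d_xf^D_{\sigma_D(i),\sigma_D(j)}\ge 2-N\ea$, while (\ref{eq:PsiST2}) bounds the correction term $d_x\phi(x_1)(f^R_{i,j}-f^{ST}_{i,j})$ by $6N\ea$ in absolute value using that $f^{ST}_{i,j}$ is controlled by $\psi$ times a small constant. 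This closes out every case.
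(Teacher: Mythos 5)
Your proposal is correct and follows essentially the same route as the paper's proof: the paper likewise dispatches $\overline{R}$ by reducing to the (2$'$)-square argument, treats the generic pairs $\{i,j\}\not\subset\{k,k+1,k+2\}$ in $\overline{U},\overline{D}$ by rerunning Lemmas \ref{lem:18est}--\ref{lem:tildeD} with $f^{ST}_{i,j}$, $y^{ST}_{i,j}$ and Lemma \ref{lem:URLST}(3)--(4) substituted for $f^L$, $y^L$ and (\ref{eq:URL}), handles $(k,k+1)$ and $(k,k+2)$ via (\ref{eq:PsiST1})--(\ref{eq:PsiST2}) together with the bound $|f^{ST}_{i,j}|\le 2N\ea$ from (\ref{eq:C0flst}), and settles $(k+1,k+2)$ by a direct estimate of $\partial_{x_1}F_{k+1,k+2}$. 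Two small repairs: the pairs $(k,k+1),(k,k+2)$ need that special treatment in $\overline{U}$ as well as in $\overline{D}$ (your argument covers this, but you present it as a $\overline{D}$-only issue), and the sign fact for $\partial_{x_1}F_{k+1,k+2}$ on these regions should be read off from (\ref{eq:ReebST}) and Corollary \ref{cor:summary} rather than from Lemmas \ref{lem:Aprops}.A3 and \ref{lem:Bprops}.B2, which apply only near the swallowtail point.
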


\begin{proof}
Note that in $\overline{R}$, the $f^{ST}_{l}$ functions are not used and, since $x_1 \geq 1/4$, the $f^U$ and $f^D$ functions share all of  the properties of the (PV) $1$-dimensional functions.  Thus, in this region, the result follows from the case of the (2') square. 

We are left to consider the regions $\overline{U}$ and $\overline{D}$.

\medskip

\noindent {\bf Case 1:}  $(i,j) = (k+1,k+2)$.  We have
\[
\forall (x_1,x_2) \in \overline{U}, \quad \partial_{x_1} F_{k+1,k+2}(x_1,x_2) = d_xf^U_{k+1,k+2}(x_1) + d_x\phi(x_1) ( f^R_{k+1,k+2}(x_2) -f^{ST}_{k+1,k+2}(x_2));
\]
\[
\forall (x_1,x_2) \in \overline{D}, \quad \partial_{x_1} F_{k+1,k+2}(x_1,x_2) = d_xf^D_{k+2,k+1}(x_1) + d_x\phi(x_1) ( f^R_{k+1,k+2}(x_2) -f^{ST}_{k+1,k+2}(x_2)).
\]
For $x_1 \in [-1/4,1/4]$, Corollary \ref{cor:summary} (5) gives
\[
|d_xf^U_{k+1,k+2}(x_1)| = |d_xf^D_{k+2,k+1}(x_1)| \geq 2-N \ea > 1.
\]
Since sheets $k+1$ and $k+2$ cross along $R$ which is a (1Cr) edge, Proposition \ref{prop:PV1Cr2Cr} gives
\[
|f^R_{k+1,k+2}(x_2)| < N\ea,  \quad  \forall x_2 \in [-1,1].
\]
From (\ref{eq:STk2114})-(\ref{eq:STk1341}) we see that 
\[
|f^{ST}_{k+1,k+2}(x_2)| = \left\{ \begin{array}{cr} |f^{ST}_{k}(x_2)-f^L_k(x_2)|, &  x_2\in [-1,-1/4]\cup[3/4,1], \\
0 & x_2 \in [-1/4,3/4],
\end{array} \right. 
\] 
so that (\ref{eq:C0flst}) gives 
\[
|f^{ST}_{k+1,k+2}(x_2)| < 2N \ea, \quad \forall x_2 \in [-1,1].
\]
Thus, for $(x_1,x_2) \in \overline{U} \cup \overline{D}$ we have
\[
|\partial_{x_1} F_{k+1,k+2}(x_1,x_2)| > 1 - 3( N \ea + 2 N \ea) > 0.
\]

\medskip

\noindent {\bf Case 2:}  $i = k$ and $j \in \{k+1,k+2\}$.
For $(x_1,x_2) \in \overline{U} \cup \overline{D}$, the inequality (\ref{eq:PsiST1}) still holds, and since 
\[
|f^{ST}_{i,j}(x_2)| \leq |f^{ST}_k(x_2)-f^L_k(x_2)| \leq 2N\ea
\]
holds by (\ref{eq:C0flst}) the inequality (\ref{eq:PsiST2}) remains valid as well.  Thus, $\partial_{x_1}F_{i,j} >0$. 
%If $i=k$ and $j\in \{k+1,k+2\},$  or if $(i,j) = (k+1,k+2),$ the arguments in Lemma
%\ref{lem:PsiST} apply verbatim. (For example, equations (\ref{eq:PsiST1}) and (\ref{eq:PsiST2}) still hold.)

\medskip 

\noindent {\bf Case 3:}   $\{i,j\} \not \subset \{k,k+1,k+2\}$.  
 We follow an argument almost identical to the proofs of Lemmas \ref{lem:18est}, \ref{lem:tildeU} and \ref{lem:tildeD}, mindful of the possible modifications done in the  proof of Lemma \ref{lem:PsiST}:
(i)  $f^{ST}_{i,j}$ and $y^{ST}_{i,j}$ replace $f^{L}_{\sigma_L(i), \sigma_L(j)}$ and $y^{L}_{\sigma_L(i), \sigma_L(j)};$ 
%(2) an occurrence of $d_x\psi(x_2)$ appears in $\partial_{x_{2}} F_{i,j}$ when $k \in \{i,j\};$ 
(ii) equation (\ref{eq:URL}) is replaced by (3) and (4) from Lemma \ref{lem:URLST}.
%We perform the modifications below, or argue why they do not apply.

For $(x_1,x_2) \in [-1/4,1/4] \times [1/2,3/4] \subset \overline{U},$ we see by applying modifications (i) and (ii)  to  the proof of Lemma \ref{lem:18est}   that
\begin{eqnarray*}
\partial_{x_{1}} F_{i,j} & = &d_x f_{i,j}^U(x_{1}) + d_x \phi(x_{1})( f_{i,j}^R(x_{2}) - f^{ST}_{i,j}(x_{2})) > 1/5.
\end{eqnarray*}
(Here, Lemma \ref{lem:URLST} (1) is used to bound $|f^{ST}_{i,j}(x_2)|$ by $y^{ST}_{i,j} + N\ea$.)
For $(x_1,x_2) \in  \overline{U} \setminus [-1/4,1/4] \times [1/2,3/4],$
\begin{eqnarray*}
\partial_{x_{2}} F_{i,j} & = & \phi(x_{1}) d_x f_{i,j}^R(x_{2}) +(1 - \phi(x_{1})) d_x f^{ST}_{i,j}(x_2)
\end{eqnarray*}
is a convex linear combination of terms of the same sign on the two components of $\overline{U} \setminus [-1/4,1/4] \times [1/2,3/4],$ 
see Lemma \ref{lem:flst} and (\ref{eq:STk1434})-(\ref{eq:STk1341}), with (\ref{eq:propflst2}) used to check that $d_xf^{ST}_{i,j}(x_2)$ is negative for $x_2 \geq 3/4$.

%\dr{Here we note  that $d_x \psi(x_2) =0$ so modification (2) does not apply to the proof of Lemma \ref{lem:tildeU}.}

%\footnote{\ms{3/8/15:  $f^{ST}_k$ is not explicitly defined for $x_2 \ge 3/4$ in Proposition \ref{prop:flst}. This makes the argument in 
%$\overline{U}$ harder. Unless Lemma \ref{lem:PreStandard} has big enough range.}}

Next, we apply modification (i) to the proof of Lemma \ref{lem:tildeD}.
For $(x_1,x_2) \in  \overline{D},$
\begin{eqnarray*}
\partial_{x_{1}} F_{i,j} & = &d_x f_{\sigma_D(i), \sigma_D(j)}^D(x_{1}) + d_x \phi(x_{1})( f_{i,j}^R(x_{2}) - f^{ST}_{i,j}(x_{2})).
\end{eqnarray*}
The right-hand side is non-zero from Corollary \ref{cor:summary} (3) and (5), plus an application of (\ref{eq:C0flst}) to bound $|f^{ST}_{i,j}(x_2)|$,
\begin{eqnarray*}
& \left|d_x f_{\sigma_D(i), \sigma_D(j)}^D(x_1)\right| >  2y^D_{i,j}- N\ea \ge 2 - N\ea,&
\\&
|d_x \phi(x_1)| (|f_{i,j}^R(x_2)| + |f^{ST}_{i,j}(x_2)|) \le (2+\ec)( N\ea + 3N \ea).&
\end{eqnarray*}

%Next, consider $(x_1,x_2) \in [1/2,3/4] \times [-1/4,1/4]  \subset \overline{R}.$
%Showing 
%\begin{eqnarray*}
%\partial_{x_{2}} F_{i,j} & = & d_x f_{i,j}^R(x_{2}) +d_x \phi(x_{2})( f_{i,j}^U(x_{1}) - f_{\sigma_D(i), \sigma_D(j)}^D(x_{1})) > 1/5
%\end{eqnarray*}
% is analogous to the $[-1/4,1/4] \times [1/2,3/4] \subset \overline{U}$ argument, with no analogous modifications necessary
%since $f^{ST}_{i,j}$ does not appear.
%Finally, if $(x_1,x_2) \in  \overline{R} \setminus  [1/2,3/4] \times [-1/4,1/4],$ then
%\begin{eqnarray*}
%\partial_{x_{2}} F_{i,j} & = & \phi(x_{2}) d_x f_{i,j}^U(x_{1}) +(1 - \phi(x_{2})) d_xf_{\sigma_D(i), \sigma_D(j)}^D(x_{1})
%\end{eqnarray*}
%is a convex linear combination of terms of the same sign on the two components (again, with no modifications to note).

\end{proof}

%Recall our notation for various portions of $[-1,1]^2$ from Section ???.  For $X= L$ or $R$ and $Y= D$ or $U$, in any of the corners $XY$, the difference functions $F_{i,j}$ have the form $a(x_1)+ b(x_2)$  where $a$ and $b$ denote the $1$-variable difference functions associated to sheets of the $1$-dimensional Legendrians that sit about the $X$ and $Y$ edges. (In the case that $X=L$, we may have $b(x_2) = f^{ST}_l(x_2)$ in some cases.)   Thus,  Reeb chords of $F_{i,j}$  in $XY$ are in bijection with pairs of Reeb chords of the $i$ and $j$ sheets above the corresponding portions of the $1$-skeleton.  This leads to Reeb chords $c_{i,j}$ in $RU$ for each $1 \leq i < j \leq n$ that, as in all other cases, have both coordinates ordered lexigraphically by Theorem (STAIRCASE).  In addition, because of the Reeb chord $\tilde{b}^R_{k+2,k+1}$  that appears below the crossing on the $R$ edge, we have one additional Reeb chord $\tilde{c}_{k+2,k+1}$ located in $RD$.

\begin{lemma}
\label{lem:tildeLST}  
For any pairs of sheets  $(i,j)$  in either square (13) or (14),  
$F_{i,j}$ has no critical points in $\overline{L}.$
\end{lemma}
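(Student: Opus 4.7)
The plan is to partition $\overline{L}$ into three subregions based on the radial distance $r$ from the swallowtail center $(-3/8,0)$: the exterior $E_{\mathrm{out}} = \overline{L}\setminus O_2$, the annulus $E_{\mathrm{ann}} = \overline{L}\cap(O_2\setminus O_1)$, and the interior $E_{\mathrm{in}} = \overline{L}\cap O_1$. In each subregion I split on whether the sheets $i,j$ lie in the swallowtail set $\{k,k+1,k+2,\tilde k\}$ or among the ordinary sheets.

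When both $i,j$ are ordinary sheets, $F_i$ and $F_j$ are both given by (\ref{eq:FifLfR}), so $F_{i,j}$ has exactly the form of (\ref{eq:interpolating}). The non-swallowtail argument (namely the reflection of Lemmas \ref{lem:tildeD} and \ref{lem:18est} across $x_1=x_2$) then applies verbatim to rule out critical points in $\overline{L}$, since none of the estimates used there depend on the behavior of the front at the swallowtail.

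For pairs lying entirely in the swallowtail set, the three regions are handled using different inputs. In $E_{\mathrm{in}}$ we have $\alpha(r)=0$, so $F_l-F_{l'}=CA_{l,l'}$ and $F_l-\widetilde F_k=C(A_l-B)$; nonvanishing of the gradients then follows directly from properties A3--A5 of Lemma \ref{lem:Aprops}, except precisely at the cusp and crossing loci. In $E_{\mathrm{out}}$ we have $F_l=G_l$ and $\widetilde F_k=H$, and the nonvanishing is read off from properties B2--B4 of Lemma \ref{lem:Bprops} together with Lemma \ref{lem:GkGk1}, which isolates where $G_{l,l'}$ and $G_l-H$ vanish. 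In $E_{\mathrm{ann}}$ the gradient of $F_{l,l'}=(1-\alpha)CA_{l,l'}+\alpha G_{l,l'}$ decomposes as a convex combination of $C\nabla A_{l,l'}$ and $\nabla G_{l,l'}$, plus a radial error $\alpha'(r)\hat r\cdot(G_{l,l'}-CA_{l,l'})$. The constants $C,C_1,C_2,C_3$ and auxiliary regions $V_1,V_2,V_3$ in (\ref{eq:V1V2})--(\ref{eq:CACO3}) were set up precisely for this: off $V_i$ the main convex piece is bounded below in absolute value by the constants $C_i$ in (\ref{eq:AV1Gk}) and (\ref{eq:AV3}), which dominate the radial error (controlled by (\ref{eq:CAijCO2}) and $\|\alpha'\|_{C^0}\le 33$); inside $V_i$ the signs of $\partial_\theta A_{l,l'}$ and $\partial_\theta G_{l,l'}$ agree by A4, B3, and (\ref{eq:x1x2V3}), so the radial error is tangential and cannot produce a critical point.

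The main obstacle is the mixed pairs where exactly one sheet is a swallowtail sheet, since for these $F_{i,j}$ cannot be simplified algebraically in $E_{\mathrm{ann}}$. My strategy is to exploit that the scaling constant $C$ is small: by (\ref{eq:CACO2}) both $\|CA_l\|_{C^1(O_2)}$ and $\|CB\|_{C^1(O_2)}$ are bounded by $\ea$, and by B1 we also have $\|H-G_l\|_{C^1(K)}=O(N\ea)$. Thus on all of $E_{\mathrm{ann}}\cup E_{\mathrm{in}}$, the function $F_l$ (respectively $\widetilde F_k$) differs from $G_l$ (respectively $H$) in $C^1$ by at most $O(N\ea)$. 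This reduces the problem to nonvanishing of $\nabla(F_i-G_l)$ and $\nabla(F_i-H)$ up to an $O(N\ea)$ error, which in turn reduces to the interpolation-type estimates of Lemmas \ref{lem:tildeD} and \ref{lem:Psi} applied to the reflected setup, with $f^{ST}_l$, $\widehat f_{k+1}$ and $\widehat f_{k+2}$ replacing the corresponding $f^L$-functions. The key technical step is to verify, via a short case-by-case table in the style of Lemma \ref{lem:18est}, that the analogue of (\ref{eq:URL}) with $y^{ST}_{\sigma'(i),\sigma'(j)}$ in place of $y^L_{\sigma_L(i),\sigma_L(j)}$ holds with a definite lower bound, so that all $O(N\ea)$ errors (from the $CA_l$, $CB$, $G_l-H$ corrections, the radial derivative $\alpha'(r)$, and from Lemma \ref{lem:URLST}) are absorbed. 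I anticipate that the inequalities listed in Lemma \ref{lem:URLST}(3)-(4) together with (\ref{eq:estfk1hat})-(\ref{eq:phix2}) will provide exactly the margin required to close the argument.
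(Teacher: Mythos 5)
Your decomposition into $O_1$, the annulus $O_2\setminus O_1$, and the exterior, with pure swallowtail pairs handled by A3--A5, B2--B4 and Lemma \ref{lem:GkGk1} plus the convex-combination-with-radial-error argument over the $V_i$, is essentially the paper's Claims 3--5, and your treatment of ordinary pairs is a (somewhat roundabout) version of the paper's Claim 1 — in $\overline{L}$ one has $\phi(x_1)\equiv 0$, so $F_{i,j}=f^U_{i,j}(x_1)+f^L_{\sigma_L(i),\sigma_L(j)}(x_2)$ and $\partial_{x_2}F_{i,j}>0$ directly, with no need to re-run Lemmas \ref{lem:tildeD} and \ref{lem:18est}. (Minor citation slip: inside $V_1,V_2$ the angular nonvanishing comes from A5 together with B2/B4, not from A4 and B3, which govern $V_3$.)

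The genuine gap is in the mixed pairs. You assert $\|H-G_l\|_{C^1(K)}=O(N\ea)$ and conclude that $F_l$ is $C^1$-close to $G_l$ on $O_2$, reducing everything to interpolation estimates for $\nabla(F_i-G_l)$. But B1 only controls $H-G_l$ in $C^0$ and its $\partial_{x_2}$-derivative; the $\partial_{x_1}$-derivative is \emph{not} small. Indeed (see equation (\ref{eq:HGkpsi}) in the proof of Proposition \ref{prop:Switches}), along the cusp locus
\[
\partial_{x_1}(H-G_k)(-3/8,x_2)=(1-\psi(x_2))\,d_xf^U_{k+2,k}(-3/8)+\psi(x_2)\,d_x(\widehat f_{k+1}-f^U_k)(-3/8),
\]
and $d_xf^U_{k+2,k}(-3/8)=-1.5$ by Proposition \ref{prop:CuDef}(8'), so $|\partial_{x_1}(H-G_l)|$ is of order $1$, not $O(N\ea)$. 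As written, your reduction therefore fails near the cusp edge in $\overline{L}\cap O_2$. The repair is exactly the paper's Claim 2: work only with the $x_2$-component. One computes $\partial_{x_2}(F_i-H)\geq 1-6N\ea$ directly from (\ref{eq:Hdef}) (the dominant term is $d_xf^L_{i,k}(x_2)$), and then bounds $|\partial_{x_2}(H-w)|\leq 250N\ea$ for $w\in\{F_k,F_{k+1},F_{k+2},\widetilde F_k\}$ using the $\partial_{x_2}$-part of B1, (\ref{eq:CACO2}), and $\|\alpha'\|_{C^0}\leq 33$; no analogue of (\ref{eq:URL}) or of Lemma \ref{lem:Psi} is needed in $\overline{L}$, since the $d_x\phi(x_2)$-terms there multiply quantities bounded by $N\ea$ (Corollary \ref{cor:summary}(3)).
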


\begin{proof}
This statement is established by the following Claims 1-5.

\medskip

\noindent {\bf Claim 1.}  For $i<j$ with   $\{i,j\} \cap \{k,k+1,k+2\} = \emptyset$, we have $\partial_{x_1}F_{i,j}(x_1,x_2)>0$ and $\partial_{x_2}F_{i,j}(x_1,x_2)>0$ for all $(x_1,x_2) \in \overline{L}$.

\medskip

  In $\overline{L}$,  (\ref{eq:FifLfR}) leads to $F_{i,j}(x_1,x_2)= f^L_{\sigma_L(i),\sigma_L(j)}(x_2) +  f^U_{i,j}(x_1)$ (since $f^D_{\sigma_D(i),\sigma_D(j)} = f^U_{i,j}$).  As $\sigma_L(i) < \sigma_L(j)$, $\partial_{x_2}F_{i,j} > 0$ holds by Corollary \ref{cor:summary} (5).

\medskip

\noindent {\bf Claim 2.}  For $i< k$ and $j>k+2 $, we have  $\partial_{x_2}(F_i -w)(x_1,x_2) >0$ and $\partial_{x_2}(w-F_j)(x_1,x_2) >0$ for all $(x_1,x_2) \in (\mbox{domain of $w$})\cap\overline{L}$  where $w$ is any of $F_k,F_{k+1},F_{k+2}$ or $\widetilde{F}_k$.

\medskip

We prove the statement about $F_i$ as  a similar argument establishes the  statement about $F_j$.  

To begin, we compute and estimate $\partial_{x_2}(F_i- H)$.  In $\overline{L}$, (\ref{eq:interpolating}) and (\ref{eq:Hdef})
give 
\[
F_i-H = f^L_{i}(x_2) + f^U_{i}(x_1) -f^L_k(x_2) - (1- \psi(x_2))f^U_{k+2}(x_1) - \psi(x_2) \widehat{f}_{k+1}(x_1),
\]
 so we have 
\begin{equation} \label{eq:parx2Fi}
\partial_{x_2}( F_i(x_1,x_2)- H(x_1,x_2)) = d_x f^L_{i,k}(x_2) + d_x\psi(x_2)\cdot\left(f^U_{k+2}(x_1) - \widehat{f}_{k+1}(x_1)\right)  \geq
\end{equation}
\[
1- |\psi'(x_2)|\cdot( |f^U_{k+2}(x_1)| + |\widehat{f}_{k+1}(x_1)|)  \geq 1 -3 (N \ea+ N\ea).
\] 
The bounds of $3$ for $|\psi'(x_2)|$ and $N \epsilon_1$  for $|f^U_{k+2}|$ and $|\widehat{f}_{k+1}|$ on $[-1,-1/4]$ follow from (\ref{eq:psibounds}),  (\ref{eq:fhat14est}) and item (3) of Corollary \ref{cor:summary}.
Next, (\ref{eq:FHCAG}) implies
\[
H(x_1,x_2)-w(x_1,x_2) = -(1-\alpha(r)) u(x_1,x_2) + \alpha(r)\cdot \left(H(x_1,x_2)-v(x_1,x_2)\right)
\]
 where $u$ is one of $C\,A_{l}$ or $C\,B$, and $v$ is one of $G_l$ or $H$ with $l \in \{k,k+1,k+2\}.$  (Here, $(r,\theta)$ is a polar coordinate centered at $(-3/8,0)$.)
   Thus, using that 
	\[
		|\partial_{x_2} (\alpha(r))| = \left|\alpha'(r) \cdot \frac{x_2}{\sqrt{(x_1+3/8)^2+ x_2^2}} \right| \leq |\alpha'(r)|, 
		\]
		we have
\begin{equation} \label{eq:x2HFi}
\left|\partial_{x_2}( H-w)(x_1,x_2)\right| \leq |\alpha'(r)| \cdot|u(x_1,x_2)| + (1-\alpha(r))|\partial_{x_2}u(x_1,x_2)| + 
\end{equation}
\[
|\alpha'(r)|\cdot |H(x_1,x_2)- v(x_1,x_2)| + \alpha(r)|\partial_{x_2}(H-v)(x_1,x_2)| \leq
\]
\[
33 \|u\|_{C^0(O_2)} + \|\partial_{x_2} u\|_{C^0(O_2)} + 33 \|H- v\|_{C^0(K)} + \|\partial_{x_2}(H-v)\|_{C^0(K)} \leq 
\]
\[
33\epsilon_1 + \epsilon_1 + 33(6 N \epsilon_1) + 18 N \epsilon_1 =  250 N \epsilon_1.
\]
In the second to last line, the notation $K$ is from Section \ref{sec:PropGk} and we used (\ref{eq:alphabound}).  In the last line, we used  (\ref{eq:CACO2}) and Lemma \ref{lem:Bprops}.B1.

Now, combining (\ref{eq:parx2Fi}) and (\ref{eq:x2HFi}) gives the desired inequality:
\begin{equation}
\label{eq:dx2Fiw}
\partial_{x_2}(F_i -w)(x_1,x_2)  \geq  \partial_{x_2}(F_i-H)(x_1,x_2) - |\partial_{x_2}(H-w)(x_1,x_2)| \geq 1-6N\epsilon_1 - 250N \epsilon_1 > 0.
\end{equation}

\medskip

\noindent {\bf Claim 3.}  The only critical points of $F_{k,k+1}$ in the intersection of $\bar{L}$ with the domain of definition of $F_{k,k+1}$ 
 are along the upper half of the cusp locus (where sheets $k$ and $k+1$ meet at the cusp edge).

\medskip

First, consider $(x_1,x_2)  \notin V_1$ where $V_1$ is defined in (\ref{eq:V1V2}).  
We have
\begin{eqnarray} 
\label{dx1Fkk1}
\partial_{x_1} F_{k,k+1}(x_1,x_2) & =&  
(1-\alpha(r))C  \partial_{x_1}A_{k,k+1}(x_1,x_2) + \alpha(r)\cdot \partial_{x_1}G_{k,k+1}(x_1,x_2)\\
\notag
&+& [\partial_{x_1}(\alpha(r))]\cdot(G_{k,k+1}(x_1,x_2) - CA_{k,k+1}(x_1,x_2)).
\end{eqnarray}
The combination of Lemmas \ref{lem:Aprops}.A3 and \ref{lem:Bprops}.B2 show that the sum of the first two terms is non-negative and, in fact, strictly positive everywhere except for the upper half of the cusp locus.  The values of $(x_1,x_2)$ where $\alpha'(r)$ is non-zero and $F_{k,k+1}$ is defined consist of only the right half of the annulus $A= O_2 \setminus O_1$, and in this half the chain rule gives $\partial_{x_1}(\alpha(r)) \geq 0$.  In addition, since $(x_1,x_2) \notin V_1$, (\ref{eq:AV1Gk}) and (\ref{eq:CAijCO2}) give $G_{k,k+1}(x_1,x_2) - A_{k,k+1}(x_1,x_2)>0$.  Thus, for such $(x_1,x_2)$ we have shown
\[
\partial_{x_1} F_{k,k+1}(x_1,x_2) \geq 0
\]
with equality only along the upper half of the cusp locus.

In the remaining case of $(x_1,x_2) \in V_1$ we show that  $\partial_\theta F_{k,k+1}(x_1,x_2) < 0$ unless $(x_1,x_2)$ belongs to the cusp locus.  Note that since $\partial_{\theta}(\alpha(r))=0$, we simply have
\begin{equation} \label{eq:PartialTheta}
\partial_\theta F_{k,k+1}(x_1,x_2) = (1-\alpha(r))\cdot  \partial_{\theta}A_{k,k+1}(x_1,x_2) + \alpha(r)\cdot \partial_{\theta}G_{k,k+1}(x_1,x_2).
\end{equation} 
The definition of $V_1$ and trigonometry show that $x_2 \geq \sqrt3 R_1/2 > R_1/2$, so that both 
Lemmas \ref{lem:Aprops}.A5 and \ref{lem:Bprops}.B4  apply.  Lemma \ref{lem:Bprops}.B2 and B4
   place $\nabla G_{k,k+1}(x_1,x_2)$ in the $4$-th quadrant where the angle from the positive $x$-axis is between $-\pi/2$ and $0$, while Lemmas \ref{lem:Aprops}.A5 has the angle between $\nabla A_{k,k+1}(x_1,x_2)$ and the positive $x$-axis bounded between $-\pi/2$ and $+\pi/4$.  On the other hand, in $V_1$ $\partial/\partial \theta$ has angle between $5\pi/6$ and $\pi$ and equal to $\pi$ only along the cusp edge.  Thus, except at the cusp edge the angle between $\partial/\partial \theta$ and either gradient is strictly larger than $\pi/2$, so geometric properties of the dot product give that both 
\[
\partial_{\theta}A_{k,k+1}(x_1,x_2) = \frac{\partial}{\partial \theta} \bullet \nabla A_{k,k+1}(x_1,x_2) \leq 0 \quad \mbox{  and  } \quad \partial_{\theta}G_{k,k+1}(x_1,x_2) = \frac{\partial}{\partial \theta} \bullet \nabla G_{k,k+1}(x_1,x_2) \leq 0
\]
where in both cases equality holds only along the upper half of the cusp locus.  At other points the gradient vectors are non-zero by 
Lemmas \ref{lem:Aprops}.A3 and \ref{lem:Bprops}.B2.

\medskip

\noindent {\bf Claim 4.}  The only critical points of $F_{k,k+2}$ in its domain of definition are along the lower half of the cusp locus (where sheets $k$ and $k+2$ meet at the cusp edge).

\medskip  

Arguments parallel to those used in Claim 3 show that $\partial_{x_1} F_{k,k+2}(x_1,x_2) >0$  everywhere except $V_2$ and the lower half of the cusp locus, while, in $V_2$, $\partial_\theta F_{k,k+2}(x_1,x_2) > 0$ except at the cusp locus.

\medskip

\noindent {\bf Claim 5.}  The only critical point of $F_{k+1,k+2}$ in its domain of definition is at the swallowtail point  (where sheets $k+1$ and $k+2$ merge together).

\medskip

For $(x_1,x_2) \notin V_3$ (see (\ref{eq:x1x2V3})), we check that $\sgn(\partial_{x_1} F_{k+1,k+2}(x_1,x_2)) = \sgn(x_2)$ as follows.
   The computation of $\partial_{x_1} F_{k+1,k+2}(x_1,x_2)$ is as in (\ref{dx1Fkk1}) with subscripts $k,k+1$ replaced with $k+1,k+2$.  
Lemmas \ref{lem:Aprops}.A3 and \ref{lem:Bprops}.B2 give that 
\[
\sgn\left((1-\alpha(r))\cdot  \partial_{x_2}A_{k+1,k+2}(x_1,x_2) + \alpha(r)\cdot \partial_{x_2}G_{k+1,k+2}(x_1,x_2) \right) = \sgn(x_2),
\]
while (\ref{eq:AV3}) and (\ref{eq:CAijCO2}) together with the fact that $\sgn(A_{k+1,k+2}) = \sgn(G_{k+1,k+2}) = \sgn(x_2)$ (from the location of the crossing locus) show that 
$(\partial_{x_1}(\alpha(r)))\,(G_{k+1,k+2}(x_1,x_2) - A_{k+1,k+2}(x_1,x_2))$ is greater than (resp. less  than) or equal to $0$  when $x_2>0$ (resp. when $x_2<0$).    Putting these observations together completes the check.

Finally, if $(x_1,x_2) \in V_3,$  then the defining property (\ref{eq:x1x2V3}) of $V_3$ shows that $\partial_{\theta} F_{k+1,k+2} >0$ since the computation of $\partial_{\theta} F_{k+1,k+2}$ is as in (\ref{eq:PartialTheta}).  In addition, for points not in $V_3$ but belonging to the crossing locus (where $x_2=0$), Lemma \ref{lem:Aprops}.A4 and Lemma \ref{lem:Bprops}.B3 show that $\partial_{\theta} F_{k+1,k+2} >0$.

%\medskip
%
%\noindent {\it No critical points in $\widetilde{D}\cup \Psi \cup \widetilde{U}$:}
%In this region, the only difference between the swallowtail square and the Type (2)' square (single crossing running horizontally through the square) is in that $\sigma_L$ is not the identity, and some of the functions $f^L_{\sigma_L(i)}$ are replaced with $f^{ST}_{l}$.  The proofs follow the same overall outline as in Theorem ???, but are presented here for the sake of completeness.
%
%...
%
%\noindent {\it No critical points in $\widetilde{R}$:}
%This follows directly from Theorem ??? since the defining functions for $L_{(13)}$ agree with the defining function of the  Type (2)' square (single crossing running horizontally through the square) in $\widetilde{R}$.  [To see this note, that the $1$-skeleton defining functions for  left cusp and plain vanilla edges agree when $x \in [1/4,1]$ ref????.]

\end{proof}

%\subsubsection{Proof of Proposition \ref{prop:interpolating}}
%\label{ssec:InterpolatingProof}

We now prove Proposition \ref{prop:interpolating}.

\begin{proof}

%Consider square (13) or (14).
%Lemma \ref{lem:Aprops}.A2 and Proposition \ref{prop:GkGk1} (3) imply the $(k+1,k+2)$-crossing locus is the $x_2$-axis.
%Since there are no unaccounted Reeb chords, all crossing loci must end at the cusp locus or the boundary. So the explicit models there prevent all other such crossing possibilities. We turn to squares (1)-(12).\footnote{\dr{Explicit models?  How exactly do they prevent crossings ending at the cusp locus? Will move this part of the proof to the end.}} 

For squares (1)-(12):

\medskip

\noindent {\bf Step 1:}  The cusp loci are as described in Property \ref{pr:14models}, i.e. vertical (resp. horizontal) cusp loci agree with $\{x_1= -3/8\}$  (resp. $\{x_2= -3/8\}$).

Suppose a cusp locus occurs on edges $U$ and $D$ involving  $S_k,S_{k+1},$ as in squares (9)-(12).  (The case of a cusp locus connecting edges $L$ and $R$, which only occurs in square (11), is similar.)  Then, $f_{\sigma_L(k),\sigma_L(k+1)}^L(x_2) = 0$, and edges $U$ and $D$ are (Cu) edges with cusps between $k,k+1$ and $\sigma_D(k), \sigma_D(k+1)$ respectively.  Therefore, using Proposition \ref{prop:CuDef} (8'), for $x_1 \in [-3/8, -3/8 +\ec]$, the formula for $F_{k,k+1}(x_1, x_2)$ reduces to 
\[
F_{k+1,k+2}(x_1,x_2) = 2(x_1+3/8)^{3/2},
\]
so that the sheets meet at a cusp edge along $x_1 = -3/8$.  

\medskip

\noindent{\bf Step 2:}  All crossing loci are as in Property \ref{pr:14models}, i.e. they must sit inside $\Phi.$

%Recall from the proof of Lemma \ref{lem:c-chords} that 
For $X \in \{U,D\},  Y \in \{R,L\}$, we need to show sheets $S_i$ and $S_j$ do not cross in the region $XY$ (from Figure \ref{fig:PsiPhi}).  For squares (1)-(12), we have 
\begin{equation} \label{eq:ijiijj}
F_{i,j}|_{XY}(x_1,x_2) = f_{i',j'}^X(x_1) + f_{i'',j''}^Y(x_2)
\end{equation}
 for some $i',j', i'', j''$ (some of which may be half integers).  Here, $i'=j'$ (resp. $i''=j''$) occurs only when $X= D$ (resp.  $Y = L$) and sheets $S_i$ and $S_j$ meet at a cusp  above $R$ and $L$ (resp. $U$ and $D$).  These conditions cannot simultaneously occur.  In the case $i'=j'$ (the case $i''=j''$ is similar), we get that $F_{i,j}|_{XY}(x_1,x_2)= f^Y_{i'',j''}(x_2)$ and $f^Y_{i'',j''}$ is a function for a (Cu) edge type with $i''$ and $j''$ the cusp sheets.  From Proposition \ref{prop:CuDef} (8'), we see that $(f^Y_{i'',j''})^{-1}(0) = \{-3/8\}$, so the only place that sheets $S_i$ and $S_j$ meet within $XY$ is at their cusp edge.

We are left to consider the case when $i'\neq j'$ and $i''\neq j''$.  Note that: 
\begin{enumerate}
\item[(i)] By item (1) from Propositions \ref{prop:PV1Cr2Cr} and \ref{prop:CuDef} and item (1) from Lemma \ref{lem:MainProps0.5}, we have 
 (possibly empty)
\[
(f_{i',j'}^X)^{-1}(0), (f_{i'',j''}^Y)^{-1}(0) \subset [-1/4, 1/4].
\]
\item[(ii)] For $(x_1, x_2) \in XY$,  $\mbox{sgn}(f^X_{i',j'}(x_1)) = \mbox{sgn}(f^Y_{i'',j''}(x_2))$.   [This is because $\mbox{sgn}(f^X_{i',j'}(x_1))$ (resp. $\mbox{sgn}(f^Y_{i'',j''}(x_2))$) is determined by whether $S_i$ appears above or below $S_j$ along the portion of edge $X$ (resp. edge $Y$) that borders $XY$. But, edges $X$ and $Y$ meet at the corner of $XY$, so these signs are the same.] 
\end{enumerate} 
Combining (i) and (ii) shows that (\ref{eq:ijiijj}) is never $0$ in $XY$.

%Since\footnote{\dr{Untrue. If $i,j$ meet at a cusp, then $f^L$ can be identically $0$.  Probably this is the only issue.}} (possibly empty) $(f_{i',j'}^X)^{-1}(0), (f_{i'',j''}^Y)^{-1}(0) \subset [-1/4, 1/4]$ (see Lemmas \ref{lem:MainPropsCr} item (2b), \ref{lem:MainProps2Cr} item (2a) and \ref{lem:MainProps0.5} item (1)) and $\mbox{sign}(f^X_{i',j'}(x_1)) = \mbox{sign}(f^Y_{i'',j''}(x_2))$ for $(x_1, x_2) \in XY,$ this implies that $F_{i,j}^{-1}(0) \subset \Phi$ for any of the squares we consider. So all crossing and cusp loci must sit inside $\Phi.$

\medskip

%\noindent {\bf Step 3:}  If sheets $S_i$ and $S_j$ do\footnote{\dr{Possibly all we need at this step is that cusp sheets do not have unexpected crossings in $L$ or $D$...}} not cross above $U$ (resp. $R$), then they do not cross anywhere in $\overline{U}$ (resp. $\overline{D}$).

\noindent {\bf Step 3:}  Along a cusp edge, the only time that a third sheet (not one of the cusp edge sheets)  intersects the cusp edge is in the Type (12) square where $S_{k+2}$ crosses the cusp edge of $S_k$ and $S_{k+1}$ at a single point $(-3/8,x_2)$ with $x_2 \in[-1/4,1/4]$.

In verifying, we restrict to the case of a vertical cusp edge between sheets $S_k$ and $S_{k+1}$, and make use of items (1) and (2) from Lemma \ref{lem:MainProps0.5}.  For $i \notin \{k,k+1\}$, and, in the case of square (12), $i \neq k+2$, the three terms $f^L_{\sigma_L(i),\sigma_L(k)}(x_2)$, $f^U_{i,k}(-3/8)$, and $f^D_{\sigma_D(i),\sigma_D(k)}(-3/8)$ all share the same (strict) sign which is positive if $i<k$ and negative if $k+1<i$.  Thus, 
\[
F_{i,k}(-3/8, x_2) = f^L_{\sigma_L(i),\sigma_L(k)}(x_2) + \phi(x_2) f^U_{i,k}(-3/8)+ (1-\phi(x_2)) f^D_{\sigma_D(i),\sigma_D(k)}(-3/8)
\]
is non-zero everywhere.

For square (12) with $i = k+2$, we have
\begin{equation} \label{eq:kk212}
F_{k,k+2}(-3/8,x_2) = f^L_{k-.5,k}(x_2) + \phi(x_2) f^U_{k,k+2}(-3/8) + (1-\phi(x_2))f^D_{k+1,k}(-3/8).
\end{equation}
Note that $f^U_{k,k+2}(-3/8)>0$; $f^D_{k+1,k}(-3/8)< 0$;  $f^L_{k-.5,k}(x_2)$ is positive (resp. negative) for $x_2 \geq 1/4$ (resp. $x_2 \leq -1/4$); and $d_xf^L_{k-.5,k}(x_2) >0$ for $-1/4 \leq x_2 \leq 1/4$. It follows that $F_{k,k+2}(-3/8,x_2)$ is also positive for $x_2 \geq 1/4$, negative for $x_2 \leq -1/4$, and satisfies $\partial_{x_2}F_{k,k+2}(-3/8,x_2) >0$ for $-1/4 \leq x_2 \leq 1/4$.  Thus, $F_{k,k+2}(-3/8,x_2)=0$ occurs for a unique $x_2$, and this value of $x_2$ has $x_2 \in [-1/4,1/4]$.

%Suppose $i < j$ and the sheets do not cross along edge $U,$ then for $(x_1,x_2) \in \overline{U}$,  
%both $f_{i,j}^U(x_1)$ and $f_{i,j}^R(x_2)$ are strictly positive while $f_{\sigma_L(i), \sigma_L(j)}^L(x_2)$ vanishes identically if $\sigma_L(i) = \sigma_L(j)$ and is strictly positive otherwise.  
%Thus,
%\[
%F_{i,j} |_{\overline{U}} = f_{i,j}^U(x_1)+ \phi(x_1)f_{i,j}^R(x_2)+(1-\phi(x_1)) f_{\sigma_L(i), \sigma_L(j)}^L(x_2) >0
%\]

%Since $\mbox{sign}(j-i) = \mbox{sign}(\sigma_L(j)-\sigma_L(i))$ and neither
%$f_{i,j}^R(x_2)$ nor $f_{\sigma_L(i), \sigma_L(j)}^L(x_2)$ intersect zero if $x_2 > 1/4,$ the right-hand side of the above equation
%is a (positive) linear combination of terms of the same sign.
%This implies  $S_i,S_j$ cannot cross/cusp anywhere in the component $\overline{U}$ of $\Phi \setminus \Psi.$

%A similar argument holds for each of the other 3 components, assuming the sheets do not cross/cusp on the corresponding edge.
%This argument also shows that if two sheets do not cross/cusp on any of the four edges, they do not cross/cusp anywhere in $\Phi,$ including $\Psi.$  \dr{Or, does this rely on also knowing no closed crossing loci, so should be in the next paragraph not here?}

%\footnote{\ms{8/8/15: Dan you had some Skype objection/question around this part of the proof. But I can't remember what. Do you?}}

\medskip

\noindent {\bf Step 4:}  For any pair of sheets $S_i$ and $S_j$ with $i<j$, the $(i,j)$-crossing locus is topologically as pictured in Figure \ref{fig:generators}, i.e. it is either empty or is a single arc with endpoints on the specified edges of $[-1,1]\times[-1,1]$, or in the case of square (12) on the cusp locus.   

Step 2 places $F_{i,j}^{-1}(0)$ in $\Phi$, and Lemma \ref{lem:quarter-cross} shows that $F_{i,j}$ has no critical points in $\Phi$.  Thus, $0$ is a regular value for $F_{i,j}$, so that $F_{i,j}^{-1}(0)$ is a $1$-manifold with boundary whose boundary points consist of the intersection of $F_{i,j}^{-1}(0)$ with the boundary of the domain of definition of $F_{i,j}$ in $[-1,1]\times[-1,1]$.  A priori, $F_{i,j}^{-1}(0)$ may have end points on the cusp edge, but we have shown in Step 3 that the only case in which this occurs is the $(k,k+2)$- and $(k+1,k+2)$-crossing locus in square (12).  Moreover, $F_{i,j}$ has a (unique) crossing on an edge $X \in \{U, R, L ,D \}$ if and only if the $1$-dimensional difference function $f^X_{\sigma_X(i),\sigma_X(j)}$ that corresponds to sheets $S_i$ and $S_j$ above $X$ has a crossing.  

[To verify the previous sentence, consider the case $X=L$ where, using item (4) of Corollary \ref{cor:summary}, 
\[
F_{i,j}(-1,x_2) = f^L_{\sigma_L(i),\sigma_L(j)}(x_2) + \phi(x_2)(\sigma_+(j)-\sigma_+(i))\ec + (1-\phi(x_2))(\sigma_-(j)-\sigma_-(i))\ec
\]
where $\sigma_+(i)$ and $\sigma_-(i)$ denote the order that $S_i$ appears above $(-1,+1)$ and $(-1,-1)$ respectively.  Note that
\begin{align*}
\forall x_2 \in [1/4,1], \quad &\sgn(f^L_{\sigma_L(i),\sigma_L(j)}(x_2)) = \sgn(\sigma_+(j)-\sigma_+(i)); \\
\forall x_2 \in [-1,-1/4], \quad & \sgn(f^L_{\sigma_L(i),\sigma_L(j)}(x_2)) = \sgn(\sigma_-(j)-\sigma_-(i));
\end{align*}
and $F_{i,j}(-1,x_2)$ is strictly monotonic for $x_2 \in [-1/4,1/4]$ because either  
\begin{itemize}
\item[(a.)]  by Corollary \ref{cor:summary} item (5) $d_x f^L_{\sigma_L(i),\sigma_L(j)}(x_2) > 1/8$ 
and therefore dominates the other term of 
\begin{equation} \label{eq:Fijijij}
\partial_{x_2}F_{i,j}(-1,x_2) = d_xf^L_{\sigma_L(i),\sigma_L(j)}(x_2) + d_x\phi(x_2)\ec((\sigma_+(j)-\sigma_+(i))-(\sigma_-(j)-\sigma_-(i)))
\end{equation}
 in absolute value, or 
\item[(b.)]  edge $L$ is (1Cr) with $S_i$ and $S_j$ crossing above $L$.  In this case, both terms in   (\ref{eq:Fijijij}) have the same  sign.
\end{itemize}
Therefore, $F_{i,j}$ has a unique $0$ on $L$ if and only if $f^L_{\sigma_L(i),\sigma_L(j)}$ does by Intermediate and Mean Value Theorems.]

At this point, since the $1$-dimensional functions have crossings as pictured in Figure \ref{fig:EdgeTypesB} (by (1) of Propositions \ref{prop:PV1Cr2Cr} and \ref{prop:CuDef}) we see that the non-closed components of the $(i,j)$-crossing locus, $F_{i,j}^{-1}(0)$, have their boundary points as specified in (1)-(12) of Figure \ref{fig:generators}.  To complete Step 4, we need only rule out closed components of $F_{i,j}^{-1}(0)$.  Within such a closed component, $|F_{i,j}|$ would attain a maximum value somewhere, but this is impossible since  in Lemma \ref{lem:quarter-cross} $F_{i,j}$ was shown to have no critical points in $\Phi$.

\medskip

\noindent {\bf Step 5:}  Up to ambient isotopy of $[-1,1]\times[-1,1]$, the base projections of crossing and cusp arcs appear as in Figure \ref{fig:generators}.

We need to check the following:
\begin{enumerate}
\item[(A)] For squares (5), (6), (12), the 2 crossing arcs do not intersect (in the base projection), except at their common endpoint in square (12).
\item[(B)] For squares (7), (10), (11), the intersection point of the two relevant arcs is transverse and unique.
\item[(C)] For square (8), there is a unique triple point and it is the only point in the projection of more than one crossing arc.
\end{enumerate}

Observe that for any $i < j < m$, $F_{i,j} + F_{j,m} = F_{i,m}.$  Therefore, if two of the three are zero at some $(x_1,x_2),$ then the third must also be zero at $(x_1,x_2).$  This implies (A), since the two pictured arcs are $(k+1,k+2)$- and $(k,k+2)$-crossing arcs, and they cannot intersect since there is no $(k,k+1)$-crossing arc.

Consider now a crossing locus between consecutive sheets $k,k+1$ which connects two opposite (1Cr) edges, say $U$ and $D$ (the $R,L$ case is similar), as in squares (7), (10), and (8).
In these
%\footnote{\dr{So the cases under consideration are (2), (7), (8), (10').  I'm not sure if I think the equality $f_{k,k+1}^U(x_1) = f_{\sigma_D(k),\sigma_D(k+1)}^D(x_1)$ holds exactly in cases (8) and (10').  It is true that along both the top and bottom edges we have a difference function  between the two strands that cross. However, these two strands may be numbered differently.  The construction then leads to some minor differences--for instance, we move the location of the local maximum of $f_{i,i+1}$ to the left as $i$ increases.  However, I don't think this is a problem.  In all four cases, I do believe the next equality, $f_{k,k+1}^R(x_2) = -f_{\sigma_L(k),\sigma_L(k+1)}^L(x_2)$, is exactly correct.  Moreover, for the computation of $F_{k,k+1}(0,x_2)$ all you need to know about $f_{k,k+1}^U(x_1)$ and $f_{\sigma_D(k),\sigma_D(k+1)}^D(x_1)$ is that $f_{k,k+1}^U(0) = f_{\sigma_D(k),\sigma_D(k+1)}^D(0)=0$. }} 
cases, item (1) of Proposition \ref{prop:PV1Cr2Cr} states that 
\[
f_{k,k+1}^U(0) =  0 = f_{\sigma_D(k),\sigma_D(k+1)}^D(0).
\]
Moreover, our assumption implies that the edge types of $R$ and $L$ are the same (including the number of strands and location of crossing or cusp sheets); 
thus, $f_{k,k+1}^R(x_2) = -f_{\sigma_L(k),\sigma_L(k+1)}^L(x_2).$ 
By equation (\ref{eq:InterpolatePhi}), $\phi(0) = 1/2.$ 
So we compute
\begin{eqnarray*}
F_{k,k+1}(0, x_2) & = &\phi(x_2)f_{k,k+1}^U(0) + (1 - \phi(x_2))f_{\sigma_D(k),\sigma_D(k+1)}^D(0) \\
& +& 1/2(  f_{k,k+1}^R(x_2) + f_{\sigma_L(k),\sigma_L(k+1)}^L(x_2))\\
&= & 1 \times 0 + 1/2 \times 0.
\end{eqnarray*}
Thus, the crossing loci in squares (7), (10), as well as the vertical crossing locus in square (8) are all straight lines.  Therefore, (B) follows with the unique intersection actually orthogonal.

Finally, we need to exclude multiple triple points in square (8).
All such points must lie in the $(k,k+1)$-crossing locus which is the line $\{x_1 = 0\}.$
Computing
\begin{eqnarray*}
 F_{k,k+2}(0,x_2) & = &  \phi(x_2) f_{k,k+2}^U(0) + (1-\phi(x_2))f_{k+1,k}^D(0) \\
& + & (1/2)\left( f_{k,k+2}^R(x_2) +  f_{k+1,k+2}^L(x_2)\right)
\end{eqnarray*}
and keeping in mind that the $D$ edge is (1Cr) with the crossing between the sheets in position $k+1$ and $k+2$ above $D$, we see that
 $F_{k,k+2}(0,x_2) >0$ for $x_2 \geq 1/4$, and $F_{k,k+2}(0,x_2) <0$ for $x_2 \leq 1/4$.  In addition,
\begin{eqnarray*}
\partial_{x_2} F_{k,k+2}(0,x_2) & = & d_x \phi(x_2) (f_{k,k+2}^U(0) - f_{k+1,k}^D(0)) \\
& + & \phi(0) d_x f_{k,k+2}^R(x_2) + (1-\phi(0)) d_x f_{k+1,k+2}^L(x_2),
\end{eqnarray*}
and for $x_2 \in [-1/4,1/4]$ this is a sum of four positive terms.  It follows that $F_{k,k+2}|_{\{x_1 = 0\}}$ must have exactly one zero. 

\medskip

For squares (13) and (14):  We follow the same steps.  

At Step 1, the cusp locus was already identified in Lemma \ref{lem:smoothST}, and is as in Property \ref{pr:14models}.

%we consider only the $(k,k+1)$-cusp locus with symmetric considerations applying to the $(k,k+2)$-cusp locus.   From (\ref{eq:FHCAG}),
%\begin{equation} \label{eq:radialkk1}
%F_{k,k+1} = (1-\alpha(r))(C A_{k,k+1}) + \alpha(r) G_{k,k+1},
%\end{equation}
%where $r$ is a radial coordinate centered at $(-3/8,0)$, and $0 \leq \alpha(r) \leq 1$ is such that $F_{k,k+1} = CA_{k,k+1}$ holds in $O_1$, $F_{k,k+1}= G_{k,k+1}$ holds outside of $O_2$, and in the annulus $O_2\setminus O_1$ we interpolate between these two formulas.  (Here, $O_1$ and $O_2$ were disks centered at $(-3/8,0)$ with radii $1/16$ and $3/32$.)  The cusp locus of $A_{k,k+1}$ starts at the swallowtail point, and agrees with $x_1=-3/8$ within $O_2\setminus O_1$ by Lemma \ref{lem:Aprops}.A1.  In addition, the cusp locus of $G_{k,k+1}$ agrees with $\{x_1=-3/8\}\cap \{x_2 \geq 1/16\}$ by Proposition \ref{prop:GkGk1} (1).  Thus, the location of the cusp locus is as specified in Property \ref{pr:14models}.  
 
At Step 2, when considering (\ref{eq:ijiijj}), the $f^{ST}_{i,j}$ functions appear if $Y=L$ as  
\[
F_{i,j}|_{UL} = f^U_{i,j}(x_1) + f^{ST}_{i,j}(x_2);  \quad \quad F_{i,j}|_{DL} = f^D_{\sigma_D(i),\sigma_D(j)}(x_1) + f^{ST}_{i,j}(x_2).
\]
For $i<j$, away from the cusp edge $f^U_{i,j}(x_1)$ is strictly positive, as is $f^D_{\sigma_D(i),\sigma_D(j)}(x_1)$ except for $(i,j)= (k+1,k+2)$ in which case $f^D_{\sigma_D(i),\sigma_D(j)}= f^D_{k+2,k+1}$ is negative.  Using (\ref{eq:flstc0}), we see that $f^{ST}_{i,j}(x_2)$ has the same sign, but in a non-strict manner.  For instance, 
\[
f^{ST}_{k+1,k+2}(x_2) = \left\{ \begin{array}{cr} f^{ST}_k(x_2)-f^L_k(x_2), & \mbox{for $x_2 \geq 1/4$}, \\
f^L_{k}(x_2)-f^{ST}_k(x_2), & \mbox{for $x_2 \leq -1/4$}, \end{array} \right. 
\]
and this is greater (resp. less) than or equal to $0$ when $x_2 \geq 1/4$ (resp. $x_2 \leq -1/4$) by (\ref{eq:flstc0}) together with (\ref{eq:STk2114})-(\ref{eq:STk1341}).

We verify a slightly strengthened Step 3. in three parts:
\begin{enumerate}
\item $S_{k+2}$ does not intersect $S_k$ or $S_{k+1}$ along the $(k,k+1)$-cusp edge.  

The formula for $F_{k,k+2}$ from (\ref{eq:FHCAG}) is 
%as in (\ref{eq:radialkk1}) with $k+2$ replacing all occurences of $k+1$.  
\begin{equation} \label{eq:radialkk1}
F_{k,k+2} = (1-\alpha(r))(C A_{k,k+2}) + \alpha(r) G_{k,k+2}.
\end{equation}
That $A_{k,k+2}(x_1,x_2) \geq 0$ holds with equality only along the $(k,k+2)$-cusp edge follows from Lemma \ref{lem:Aprops}.A2.  The corresponding statement for $G_{k,k+2}(x_1,x_2)$ follows from Lemma \ref{lem:GkGk1} (3).  Thus, above the $(k,k+1)$-cusp locus, $F_{k,k+2}(x_1,x_2)$ is an interpolation of positive terms.

\item $S_{k+1}$ does not intersect  $S_k$ or $S_{k+2}$ along the $(k,k+2)$-cusp edge.  

This is similar to (1).

\item For $i \notin \{k,k+1,k+2\}$, $S_i$ does not intersect any of $S_k, S_{k+1},$ or $S_{k+2}$ above the $(k,k+1)$- or $(k,k+2)$-cusp locus.    

From (\ref{eq:FHCAG}), for $l \in \{k,k+1,k+2\}$,
\begin{equation} 
F_{i,l} = (1-\alpha(r))(F_i-H - C A_{l}) + \alpha(r)(F_i-G_l).
\end{equation}
Using the definition of $H$ and $G_l$ in (\ref{eq:Gkdef})-(\ref{eq:Hdef}), and the observation that $f^D_{\sigma_D(i)} = f^D_i = f^U_i$,
\[
F_i-H = (1-\psi(x_2))f^U_{i,k+2}(x_1) + \psi(x_2)(f^U_i-\widehat{f}_{k+1})(x_1) + f^L_{\sigma_L(i),k}(x_2)
\]
all of these terms have the same sign (for the middle term apply  (\ref{eq:estfk1hatnext})), and $|f^L_{\sigma_L(i),k}(x_2)| \geq 1/4$ (by item (5) of Proposition \ref{prop:PV1Cr2Cr}) dominates $||C A_l||_{C^0(O_2)} < \ea$ (by (\ref{eq:CACO2})).  Thus, where $(1-\alpha(r)) \neq 0$, $(F_i-H - C A_{l})>0$ holds if $i<k$, while $(F_i-H - C A_{l})<0$ holds if $k+2<i$.  

Moreover, 
\[
F_i-G_l= f^{ST}_{i,l}(x_2) +(1-\phi(x_2))(f^U_i - u)(x_1) + \phi(x_2) (f^U_{i}-v)(x_1)
\]
for appropriate functions $u,v \in \{f^U_k, f^U_{k+1}, f^U_{k+2}, \widehat{f}_{k+1}, \widehat{f}_{k+2}\}$.  For $i <k$ (resp. $k+2<i$) all of these terms are positive (resp. negative).  [This is verified for the $\widehat{f}$ functions using (\ref{eq:estfk1hat}) and (\ref{eq:estfk1hatnext}). 
\end{enumerate}

With Steps 1-3 in place the argument is completed as in the case of squares (1)-(12).  Note that we use the strengthened statement of Step 3 to ensure that crossings between sheets $S_{k+1}$ or $S_{k+2}$ and another sheet $S_i$ do not cross the border where these sheets merge with $\tilde{S}_k$.  This allows us to see that closed components of the crossing locus must be in the zero set of a single difference function of the form $F_{i,j}$ or possibly $F_i - \tilde{F}_k$, and therefore would produce a critical point that is forbidden by Lemma \ref{lem:quarter-cross}.

\end{proof}

%\subsubsection{Properties of Reeb chords}

Having eliminated any unwanted Reeb chords, we can now prove that Reeb chords exist in the desired places.

\begin{proposition}
\label{prop:PropertiesRCstaircase}
Properties \ref{pr:Reeb0}, \ref{pr:Reeb1}, \ref{pr:Location1}, \ref{pr:Reeb2}, \ref{pr:Location2}, as well as their swallowtail counterparts (see Section \ref{sec:61}) hold. 
We set the constants $\beta_{i,j}$, $\tilde{\beta}_{j,i}$, and $\epsilon$  that appear in Property \ref{pr:Location1} to be 
\[
\beta_{i,j} = \eta_{i,j}, \quad \tilde{\beta}_{j,i}= \tilde{\eta}_{j,i}, \quad \epsilon = \epsilon_\eta
\]
where $\epsilon_\eta$ was defined 
in equation (\ref{eq:eeta}).
%in the proof of Lemma \ref{lem:MainPropsPV}.
\end{proposition}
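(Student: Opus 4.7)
The plan is to leverage the structural results already established, especially Propositions \ref{prop:smooth0}, \ref{prop:smooth1} and Lemmas \ref{lem:c-chords}, \ref{lem:quarter-cross}, to identify all critical points of the local difference functions $F_{i,j}$ and verify they have the prescribed locations, Morse indices, and non-degeneracy. The proof naturally splits into three layers corresponding to the dimension of the cell.

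First, for Property \ref{pr:Reeb0}, Proposition \ref{prop:smooth0} gives $F_{i,j}(r,\theta) = (j-i)\ec(r^2+2)$ in polar coordinates on $N(e^0_\alpha)$. This has a unique critical point at $r=0$, which is a non-degenerate global minimum with positive critical value, giving exactly one Reeb chord $a_{i,j}$ per pair $i<j$. For Properties \ref{pr:Reeb1} and \ref{pr:Location1}, Proposition \ref{prop:smooth1} reduces the search for critical points of $F_{i,j}$ in the $1$-cell neighborhood to finding critical points of $f_{i,j}$ (by the last sentence of that proposition), located along $x_2 = 0$ (since $\partial_{x_2} F_{i,j}$ vanishes precisely at $x_2=0$). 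Items (2) and (1) of Propositions \ref{prop:PV1Cr2Cr} and \ref{prop:CuDef} then provide the exact critical points: the local maximum $\eta_{i,j} \in [1/2, 3/4]$ (giving $b_{i,j}$), and when sheets cross, the additional local minimum $\tilde{\eta}_{j,i} \in [-3/4, -1/2]$ (giving $\tilde{b}_{j,i}$). Non-degeneracy transfers from the $1$-dimensional functions, and the Morse index computation (saddle for $b$, minimum for $\tilde{b}$) follows from the sign of $\partial^2_{x_1}$ combined with the positive curvature in $x_2$. Setting $\beta_{i,j} = \eta_{i,j}$, $\tilde\beta_{j,i} = \tilde\eta_{j,i}$, the lexicographic ordering is inherited from Proposition \ref{prop:1234def}(3), and disjointness of $\epsilon_\eta$-neighborhoods follows from (\ref{eq:eeta}).

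For $2$-cells of types (1)--(12), Lemma \ref{lem:quarter-cross} rules out critical points in $\Phi$ (in particular in the full boundary strip), so by Lemma \ref{lem:c-chords} all critical points of $F_{i,j}$ in the interior lie in the corners $XY$ and are precisely pairs of critical points of the $1$-dimensional boundary difference functions. By Corollary \ref{cor:summary}(2) and item (1) of Propositions \ref{prop:PV1Cr2Cr}--\ref{prop:CuDef}, the $1$-dimensional difference functions have their interior critical points located at the $\eta$ and $\tilde\eta$ values in $[1/2, 3/4]$ and $[-3/4, -1/2]$ respectively. Pairing these gives exactly one interior critical point in the corner $UR$ for each $i<j$ (a non-degenerate local maximum $c_{i,j}$ at $(\eta^U_{i,j}, \eta^R_{i,j})$, giving Property \ref{pr:Location2}), and additional pairings when crossings occur: an $(\eta^D, \tilde\eta^R)$ pairing in $DR$ for (12)-type crossings yielding $\tilde c_{j,i}$, a $(\tilde\eta^U, \eta^L)$ pairing in $UL$, and a $(\tilde\eta^D, \tilde\eta^L)$ pairing in $DL$ for Type (3) squares. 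The Morse index (all maxima for $c$ and $\tilde c$) follows since both factor functions have maxima or inverted minima at these critical points, confirming Property \ref{pr:Reeb2}. The precise $\epsilon$-neighborhoods in Properties \ref{pr:Location1}, \ref{pr:Reeb2}, \ref{pr:Location2} are then taken to be those coming from $\epsilon_\eta$.

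For the swallowtail squares (13) and (14), the same strategy applies: Lemma \ref{lem:quarter-cross} (via its swallowtail subsection) still rules out critical points in $\Phi$, so critical points in the interior lie in the corners $UR, DR, UL, DL$, and within each corner the defining functions again decompose (away from $O_2$) as sums involving the $f^U, f^D, f^R, f^{ST}$ functions. Corner $UR$ gives the local maxima $c_{i,j}$ as before, and the $(k+1,k+2)$-crossing on the $R$ edge produces the Reeb chord $\tilde c_{k+2,k+1}$ in $DR$ at the paired location $(\beta^D_{k+2,k+1}, \tilde\beta^R_{k+2,k+1})$. The main technical point is verifying that no critical points of $F_{i,j}$ for the swallowtail sheets arise from the interpolation inside $O_2$ itself, but this has already been handled by Lemma \ref{lem:quarter-cross} via Claims 3--5 in Lemma \ref{lem:tildeLST}, which isolate critical behavior to the cusp locus and the swallowtail point. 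The main obstacle I anticipate is organizing the case analysis concisely for the several exceptional crossing configurations in squares (3), (5), (6), (8), (12), (13), (14), because the $\tilde c$ chords must be matched to the correct paired $(\tilde\beta, \beta)$ or $(\beta, \tilde\beta)$ coordinates consistent with the orientation of the crossing arc, and the Morse index verification for the swallowtail square requires combining the monotonicity statements in Lemma \ref{lem:Aprops} and Lemma \ref{lem:Bprops} to rule out degenerate critical behavior across the interpolation annulus.
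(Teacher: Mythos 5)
Your proposal is correct and follows essentially the same route as the paper: Property \ref{pr:Reeb0} via Proposition \ref{prop:smooth0}, Properties \ref{pr:Reeb1}--\ref{pr:Location1} via Proposition \ref{prop:smooth1} together with the properties of the one-dimensional functions, and Properties \ref{pr:Reeb2}--\ref{pr:Location2} via Lemmas \ref{lem:c-chords} and \ref{lem:quarter-cross}. The only detail worth flagging is that the ordering $\tilde{\beta}_{k+2,k} < \tilde{\beta}_{k+2,k+1}$ required by Property \ref{pr:Location1} in the (2Cr) case is not covered by Proposition \ref{prop:1234def}(3) (which orders only the $\eta_{i,j}$); it needs the one-line observation that $d_x f_{k,k+2} = d_x f_{k,k+1} + d_x f_{k+1,k+2} > 0$ at $\tilde{\eta}_{k+2,k+1}$.
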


\begin{proof}
Property \ref{pr:Reeb0} follows from Proposition \ref{prop:smooth0} which shows that within $N(e^0_\alpha)$ each $F_{i,j}$ has a unique critical point at $e^0_\alpha$ itself.  

With $(-1,1)\times(-1/16,1/16)$ coordinates obtained from gluing two adjacent squares used in a neighborhood of $e^1_\alpha$, Proposition \ref{prop:smooth1} identifies the critical points of $F_{i,j}$ that appear above the $1$-cell itself as being located at points $(x_1,0)$ with $x_1$ a critical point of the $1$-dimensional function $f_{i,j}$ associated to the edge type of $e^1_\alpha$.  That these critical points match the description from Properties \ref{pr:Reeb1} and \ref{pr:Location1} follows from item (2) of Propositions \ref{prop:PV1Cr2Cr} and \ref{prop:CuDef} with Proposition \ref{prop:1234def} (3) used to determine the relative location of the $\beta_{i,j}$.  In the (2Cr) case, that $\tilde{\beta}_{k,k+2} < \tilde{\beta}_{k+1,k+2}$ follows since $d_x f_{k,k+2} = d_xf_{k,k+1} + d_xf_{k+1,k+2}$ will be positive at $\tilde{\beta}_{k+1,k+2}$.  [That these are the only critical points in $\widehat{N}(e^1_\alpha)$ follows from Lemmas \ref{lem:c-chords} and \ref{lem:quarter-cross}.]

Finally, Properties \ref{pr:Reeb2} and \ref{pr:Location2} follow from Lemmas \ref{lem:c-chords} and \ref{lem:quarter-cross} together with the location of critical points along $\partial I^2$.

 % That the locations of the Reeb chords $b_{i,j}$ are as specified in Prop.............  Expository issue:  The proof of no Reeb chords in $\Phi$ does not apply on boundary of $I^2$.  Also, the lemma about reeb chords in $XY$ needs to include the case where $L$ doesn't exist over $X$ or $Y$.  

%Let $\{m,\bar{m}\} = \{1,2\},$ and $\{x_m = \pm1\}$ denote the edge $X$ of the square.
%For points in $X \setminus \Phi,$  $\partial_{x_m} F_{i,j} = 0$ and $\partial_{x_{\bar{m}}} F_{i,j} = d_x f^X_{i',j'}$ for some $i',j'.$
%On this edge then, $F_{i,j}$ has critical points at $x_m  = -1,\eta^X_{i',j'},1$ (and $\tilde{\eta}^X_{i',j'}$ if it exists).
%The proof then follows from Lemmas \ref{lem:c-chords} and \ref{lem:quarter-cross}, as well as from the critical point locations of $f_{i,j}$ specified in Lemmas \ref{lem:MainPropsPV}-\ref{lem:MainPropsCu}. 
%[Lemma \ref{lem:MainProps0.5} is not needed here since the Properties do not make claims about Reeb chords involving ``artificial" sheets.]
 
%Recall the  $\epsilon$ in Properties \ref{pr:Location1}, \ref{pr:Reeb2} and \ref{pr:Location2} is set in Section \ref{ssec:epsilon}.
\end{proof}

\begin{remark}
\label{rem:etanotate}
As mentioned in Remark  \ref{rem:betanotate}, there is some potentially confusing notation.  The $\beta^X_{i,j}$ defined above Property \ref{pr:Reeb2} have $\beta^X_{i,j} = \beta_{\sigma_X(i), \sigma_X(j)}$, so that $\beta^X_{i,j} \neq \beta_{i,j}$ can occur when $X \in \{D,L\}.$
To minimize this potential confusion, we denote critical points for $f_{i,j}^X$ by $\eta^X_{i,j}$. So in particular, 
$\beta^X_{i,j} = \eta^X_{\sigma_X(i), \sigma_X(j)}$ is the maximum for $f_{\sigma_X(i), \sigma_X(j)}^X$ when $X \in \{D,L\}.$
\end{remark}

\subsection{Properties of GFTs}
\label{ssec:PropertiesGFTs}

\begin{proposition}
\label{prop:Properties0cells}
Property \ref{pr:0cells} holds.
\end{proposition}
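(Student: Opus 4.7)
The proof of Property \ref{pr:0cells} is essentially immediate from the coordinate computation in Proposition \ref{prop:smooth0}, so I would handle it very briefly. The plan is to pull back everything to the disk coordinate neighborhood $N(e^0_\alpha) \cong \{(r,\theta) : 0 \leq r \leq 1/16\}$ of the $0$-cell, where Proposition \ref{prop:smooth0} gives the explicit form
\[
F_i(r,\theta) = (n_c - i)\epsilon_1(r^2 + 2).
\]
For any two sheets with $F_i > F_j$ over $N(e^0_\alpha)$, the ordering at $(r,\theta)=(0,0)$ forces $n_c-i > n_c-j$, so
\[
F_{i,j}(r,\theta) = F_i(r,\theta) - F_j(r,\theta) = \bigl[(n_c-i)-(n_c-j)\bigr]\,\epsilon_1\,(r^2+2),
\]
with positive coefficient $(j-i)\epsilon_1 > 0$. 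In particular $F_{i,j}$ depends only on $r$.

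Next, I would invoke the metric supplied by Construction \ref{construct:metric}: in this $D^2$ coordinate neighborhood the metric has been arranged to take the form $dr^2 + \lambda(r)^2\,d\theta^2$ for some positive function $\lambda(r)$, precisely so that gradients of functions of $r$ alone are computed by $\partial_r$. Hence
\[
\nabla F_{i,j}(r,\theta) = \partial_r F_{i,j}(r,\theta)\,\partial_r = 2(j-i)\epsilon_1\, r\,\partial_r,
\]
so $-\nabla F_{i,j}$ points in the direction $-\partial_r$ for all $r>0$, i.e. radially inward toward $e^0_\alpha$.

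Finally, the boundary $\partial N(e^0_\alpha)$ is the circle $\{r = 1/16\}$, which is smooth by the regularity requirement (2) of Section \ref{sec:RegReq}, and the inward-pointing normal along this circle is exactly $-\partial_r$. Thus $-\nabla F_{i,j}$ points strictly inward along $\partial N(e^0_\alpha)$, establishing Property \ref{pr:0cells}. There is no real obstacle here; the entire content is that the normal forms of Section \ref{sec:RegReq}(2) together with Construction \ref{construct:metric} make all local difference functions radially symmetric and radially increasing near each $0$-cell.
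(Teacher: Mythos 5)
Your argument is correct and is essentially the paper's own proof: the paper likewise reads off $F_{i,j}=(j-i)\epsilon_2(r^2+2)$ from Proposition \ref{prop:smooth0} and concludes that $\partial N(e^0_\alpha)=\{r=1/16\}$ is a level set of $F_{i,j}$ along which $-\nabla F_{i,j}$ points orthogonally inward (your extra appeal to Construction \ref{construct:metric} just spells out why the gradient is radial). The only slip is notational: the constant in Proposition \ref{prop:smooth0} is $\epsilon_2$, not $\epsilon_1$, which of course does not affect the argument.
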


\begin{proof} 

From Proposition \ref{prop:smooth0}, we have that, for $i<j$, above $N(e^0_\alpha)$, $F_{i,j} = (j-i)\ec(r^2+2)$.  Thus, $\partial N(e^0_\alpha) = \{r = 1/16\}$ is a level set of $F_{i,j}$ along which $-\nabla F_{i,j}$ points orthogonally into  $N(e^0_\alpha)$.

%If $|x_1|, |x_2| \ge 15/16,$ then $-\nabla F_{i,j}(x_1,x_2) = \langle -d_xf_{i',j'}(x_1), -d_xf_{i'',j''}(x_2) \rangle$
%(for some pairs $(i',j'), (i'',j'')$) has non-vanishing coordinates, whose\footnote{\dr{Proof doesn't seem to take in to account that the metric is non-Euclidean near corners.}} absolute values are bounded away from 0 by 
%$2 \epsilon_3(1- |x_1|)$ and $2 \epsilon_3(1- |x_2|).$ 
%[See Lemma \ref{lem:PreStandard},  item (4) of Lemma \ref{lem:MainPropsPV}, and analogous Lemma \ref{lem:MainPropsCr}-\ref{lem:MainPropsCu} statements. Lemma \ref{lem:MainProps0.5} is not needed near the 0-cells.]
%Checking the signs, we see that these non-zero bounds imply Property \ref{pr:0cells}.

\end{proof}

\begin{proposition}
\label{prop:Properties1cells}
Property \ref{pr:1cells} holds.
\end{proposition}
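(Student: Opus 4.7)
The plan is to construct the neighborhood $N(e^1_\alpha)$ explicitly using the coordinates and formula already provided by Proposition~\ref{prop:smooth1}, then verify (1)--(4) in turn by direct computation with the gradient of the interpolated defining functions.

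First, pass to the coordinates $(x_1,x_2)\in(-1,1)\times(-1/16,1/16)$ on a neighborhood of $e^1_\alpha$ coming from the regularity requirement (3) of Section~\ref{sec:RegReq}, so that by Proposition~\ref{prop:smooth1} the local defining functions satisfy
\[
F_i(x_1,x_2)=f_i(x_1)+\bigl[(1-\phi(x_1))(n_--\sigma_-(i))+\phi(x_1)(n_+-i)\bigr]\ec(x_2^2+1),
\]
with $f_i$ the one-dimensional functions associated to the edge type of $e^1_\alpha$. Writing $C_{i,j}(x_1):=(1-\phi(x_1))(\sigma_-(j)-\sigma_-(i))+\phi(x_1)(j-i)$, the difference functions take the form $F_{i,j}(x_1,x_2)=f_{i,j}(x_1)+C_{i,j}(x_1)\ec(x_2^2+1)$, so
\[
\partial_{x_1}F_{i,j}(x_1,x_2)=d_xf_{i,j}(x_1)+d_x\phi(x_1)\bigl[(j-i)-(\sigma_-(j)-\sigma_-(i))\bigr]\ec(x_2^2+1),\quad \partial_{x_2}F_{i,j}(x_1,x_2)=2C_{i,j}(x_1)\ec\,x_2.
\]

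Next, I would construct $P_1$ and $P_2$ as piecewise-linear graphs of functions of $x_1$ with image within distance $1/32$ of $e^1_\alpha$, exactly horizontal on $[1/2,3/4]$ at heights $\pm\delta$ for some $0<\delta<1/32$, and with a single corner placed at the $x_1$-coordinate of each crossing arc that meets $e^1_\alpha$. Away from these corners $P_l$ has arbitrarily small slope, and the corner is inserted only to accommodate (4); at each crossing point the two segments leave the corner going up-and-out and down-and-out (into the respective $2$-cell). Using the neighborhoods $N(e^0_{\pm})$ already supplied by Property~\ref{pr:0cells}, one then attaches arcs of $\partial N(e^0_\pm)$ to the endpoints of $P_1$ and $P_2$ to close up $\partial N(e^1_\alpha)$; this gives (1) and (2).

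For (3), I would establish the following key sign lemma: \emph{at every $(x_1,x_2)\in P_l$ with $F_{i,j}(x_1,x_2)>0$, one has $C_{i,j}(x_1)>0$.} This uses that $C_{i,j}$ is a convex combination with at most one negative coefficient (and that coefficient arises only across a crossing of $S_i,S_j$ on $e^1_\alpha$), together with the explicit form and sign behavior of $f_{i,j}$ given by Propositions~\ref{prop:PV1Cr2Cr}--\ref{prop:CuDef}: if $C_{i,j}(x_1)\le 0$ then $f_{i,j}(x_1)$ is also non-positive (one checks this case-by-case over (PV), (1Cr), (2Cr), (Cu)), and the small correction $C_{i,j}\ec(x_2^2+1)$ cannot make $F_{i,j}$ positive on $P_l$. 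With this in hand, on any horizontal segment of $P_l$ at height $\pm\delta$ the $x_2$-component $-\partial_{x_2}F_{i,j}=-2C_{i,j}(x_1)\ec\,x_2$ points toward $e^1_\alpha$; since this segment has tiny slope one still has transverse inward-pointing behavior after a standard dominance estimate of the gradient against the tangent of $P_l$ (comparing $|\partial_{x_2}F_{i,j}|\sim\ec\delta$ against the product of slope of $P_l$ and $|\partial_{x_1}F_{i,j}|$, made possible by choosing the slope of $P_l$ sufficiently small relative to $\ec\delta$).

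The main obstacle, and the content of (4), is handling the corners of $P_l$ at crossing points. Near such a corner $(x_0,\pm\delta)$ the functions $F_{i,j}$ and $F_{j,i}$ change roles: $F_{i,j}>0$ on one segment $Q_1$ of $P_l$ and $F_{j,i}>0$ on the other segment $Q_2$. I would choose the angle of the corner so that the two outgoing directions are symmetric about the vertical through $(x_0,\pm\delta)$, and show using the above sign lemma (applied separately to $F_{i,j}$ on $Q_1$ and to $F_{j,i}$ on $Q_2$) together with the estimates on $d_xf_{i,j}$ near its zero from Proposition~\ref{prop:PV1Cr2Cr}~(2) that each of the two negative gradients is transverse to its segment and points to the side of that segment which borders $N(e^1_\alpha)$. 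Because the relevant gradient on each segment has dominant $x_2$-component of the correct sign by the sign lemma, a sufficiently shallow corner satisfies (4); the precise opening angle is determined by a quantitative comparison between $|d_xf_{i,j}|$ near the crossing and $|\partial_{x_2}F_{i,j}|=2|C_{i,j}|\ec\delta$, which one can arrange uniformly since there are only finitely many edge types.
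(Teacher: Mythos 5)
Your setup (the coordinates and formula from Proposition~\ref{prop:smooth1}, the decomposition $F_{i,j}=f_{i,j}(x_1)+C_{i,j}(x_1)\ec(x_2^2+1)$, and piecewise-linear paths $P_l$ treated edge type by edge type) matches the paper's proof. However, the ``key sign lemma'' on which your verification of (3) rests is false for a (2Cr) edge, and that is exactly where the real work lies. Take $(i,j)=(k,k+2)$: there $\sigma_-(k+2)-\sigma_-(k)=-1$ while $j-i=2$, so $C_{k,k+2}(x_1)=3\phi(x_1)-1$, which is negative for all $x_1<-1/12$; on the other hand the zero of $f_{k,k+2}$ sits at $x_1=-1/4+O(\ea)$ (on $[-1/4,1/4]$ one computes $f_{k,k+2}(x)=-.01\ea+1.5(x+1/4)$ from the construction). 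Hence on the whole stretch $-1/4+O(\ea)<x_1<-1/12$ one has $F_{k,k+2}>0$ but $C_{k,k+2}<0$; the analogous failure for $(k+1,k+2)$ persists up to $x_1=1/12$. On these stretches $-\partial_{x_2}F_{i,j}$ points \emph{away} from $e^1_\alpha$, so your strategy of taking the slope of $P_l$ ``sufficiently small relative to $\ec\delta$'' so that the vertical component dominates goes in precisely the wrong direction: what saves the situation is that $|\partial_{x_1}F_{i,j}|$ is of order $1$ while $|\partial_{x_2}F_{i,j}|=O(\ec)$, so one needs a slope of the correct sign bounded \emph{below} by $|\partial_{x_2}F_{i,j}/\partial_{x_1}F_{i,j}|$.

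The situation is worst between the two crossing loci of a (2Cr) edge, where $F_{k,k+2}$ and $F_{k+2,k+1}$ are simultaneously positive and their negative gradients point into opposite quadrants (lower-left and upper-right, respectively, on the lower path $P_1$); no ``shallow corner'' can make both transverse and inward. The slope $\gamma$ of $P_1$ there must be sandwiched, $\left|\partial_{x_2}F_{k,k+2}/\partial_{x_1}F_{k,k+2}\right|<\gamma<\left|\partial_{x_2}F_{k+2,k+1}/\partial_{x_1}F_{k+2,k+1}\right|$, and one must verify this window is nonempty using the explicit constants (the slope $2y_{k,k+2}=1.5$ of $f_{k,k+2}$ versus $2y_{k+1,k+2}=0.25$ of $f_{k+1,k+2}$, and the coefficients $3\phi-1$ versus $3\phi-2$). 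Your (PV), (Cu) and (1Cr) cases are essentially fine --- for (1Cr) the zeros of $C_{k,k+1}=2\phi-1$ and of $f_{k,k+1}$ happen to coincide at $x_1=0$, so your sign claim holds there --- but the (2Cr) case requires this separate quantitative sandwich argument, which your plan does not contain.
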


\begin{proof}

We use $(-1,1) \times (-1/16, 1/16)$ coordinates as above in a neighborhood of $e^1_\alpha \cong (-1,1) \times \{0\}$, and we denote by $e^0_\pm$ the $0$-cells that bound $e^1_\alpha$ at $x_1 = \pm1$.  We will construct the piecewise linear path $P_1$ of Property \ref{pr:1cells} in $(-1,1) \times[-1/32, 0)$, and then take $P_2$ to be the reflection of $P_1$ across the $x_1$-axis.  

To define $P_1$ we start at the point on $\partial N(e^0_+)$ with $x_2 = -1/32 + .01$ and then specify the slope of the various segments of $P_1$.  All slopes will belong to $(-.001,.001)$ so that $P_1 \subset (-1,1)\times[-1/32,-1/32+.02]$ will hold, and it follows that the neighborhood $N(e^1_\alpha)$ (bounded below and above by $P_1$ and $P_2$ and on the left and right end by portions of $\partial N(e^0_-)$ and $\partial N(e^0_+)$ will meet the requirements (1) and (2) of Property \ref{pr:1cells}.  

%To simplify notation, we construct the piecewise linear path $P_1$ of Property \ref{pr:1cells} when the $1$-cell $e^1_\alpha$ is the upper edge ($X=U$) of its adjacent square containing $P_1.$ 
%So $e^0_\pm = (\pm 1, 1).$ 
%The other cases are similar, even for Squares (13) and (14) (see Lemma \ref{lem:PreStandard}).

Before specifying $P_1$, we record the signs of components of the $-\nabla F_{i,j} = (-\partial_{x_1} F_{i,j}, -\partial_{x_2} F_{i,j})$ in $(-1,1)\times(-1/32,0)$.  Recall that for a given edge type (PV), (Cu), (1Cr), (2Cr) with $n \leq N$ sheets and $1$-dimensional functions $f_1, \ldots, f_n$, we label sheets as they appear above $x_1 = 1$,  with $S_k$, $S_{k+1}$ crossing or meeting at a cusp for a (1Cr) and (Cu) edge, and $S_{k+2}$ crossing sheets $S_{k+1}$ and $S_{k}$ for a (2Cr) edge.  Moreover, $\sigma_-(i)$ denotes the position of $S_i$ above $x_1=-1$ with $\sigma_-(k)=\sigma_-(k+1) = k-.5$ in the (Cu) case.  Proposition \ref{prop:smooth1}, gives for $(x_1,x_2) \in (-1,1)\times(-1/32,1/32)$
\[
F_{i,j}(x_1,x_2) = f_{i,j}(x_1) + \left[ (1-\phi(x_1))(\sigma_-(j)-\sigma_-(i)) + \phi(x_1)(j-i)\right] \ec(x_2^2+1)
\]
leading to
\begin{align}
\partial_{x_1}F_{i,j}(x_1,x_2) & = d_x f_{i,j}(x_1) + d_x\phi(x_1) \left[(j-i)- (\sigma_-(j)-\sigma_-(i))\right] \ec(x_2^2+1); \label{eq:ecx21} \\
\partial_{x_2}F_{i,j}(x_1,x_2) & = \left[ (1-\phi(x_1))(\sigma_-(j)-\sigma_-(i)) + \phi(x_1)(j-i)\right] \ec(2x_2). \label{eq:ecx22}
\end{align}

Let $i<j$.  For $(x_1,x_2) \in (-1,1) \times (-1/32,0)$ and belonging to the domain of $F_{i,j}$, we observe the following sign behavior with the interval restricting the value of $x_1$:
\[
\begin{array}{lccc}
\mbox{If $S_i$ and $S_j$ do not cross}, &   &  (-1,\eta_{i,j}) & (\eta_{i,j},1) \\
 &  & -\partial_{x_1}F_{i,j} < 0 & -\partial_{x_1}F_{i,j}>0 \\
 & & & \\
\mbox{If $S_i$ and $S_j$ do cross}, & (-1, \tilde{\eta}_{j,i}) & (\tilde{\eta}_{j,i}, \eta_{i,j}) & (\eta_{i,j},1) \\
 & -\partial_{x_1}F_{i,j} >0 & -\partial_{x_1}F_{i,j} <0 & -\partial_{x_1}F_{i,j} >0  \\
 & & & \\
\mbox{If $\sigma_-(i) < \sigma_-(j)$},  &  &  &  (-1,1)  \\
 & & &  -\partial_{x_2}F_{i,j} > 0 \\
 & & & \\
\mbox{(Cu) with $(i,j) = (k,k+1)$},  &  &  (-3/8,1/4)  & [1/4,1) \\
  & & -\partial_{x_2}F_{k,k+1} \geq 0 & -\partial_{x_2}F_{k,k+1} > 0 \\
& & & \\
\mbox{(1Cr) with $(i,j) = (k,k+1)$},  &  &  (-1,0)  & (0,1) \\
  & & -\partial_{x_2}F_{k,k+1} < 0 & -\partial_{x_2}F_{k,k+1} > 0 \\
& & & \\
\mbox{(2Cr) with $(i,j) = (k+1,k+2)$},  &  &  (-1,1/12)  & (1/12,1) \\
  & & -\partial_{x_2}F_{k+1,k+2} < 0 & -\partial_{x_2}F_{k+1,k+2} > 0 \\
& & & \\
\mbox{(2Cr) with $(i,j) = (k,k+2)$},  &  &  (-1,-1/12)  & (-1/12,1) \\
  & & -\partial_{x_2}F_{k,k+2} < 0 & -\partial_{x_2}F_{k,k+2} > 0 \\
& & & \\
\end{array}
\]
[In verifying the inequalities for $\partial_{x_1}F_{i,j}$, note that $\partial_{x_1}F_{i,j}$ is non-vanishing for $x_1 \in [-1/4,1/4]$ since $d_xf_{i,j}(x_1)$ dominates the second term of (\ref{eq:ecx21}) in all cases besides (1Cr) with $(i,j) = (k,k+1)$, while in this latter case both terms have the same sign.  To verify the inequalites for $\partial_{x_2}F_{i,j}$, in the special cases where $\sigma_-(i) > \sigma_-(j)$,  substitute the values of $\sigma_-(i)$ and $\sigma_-(j)$ into (\ref{eq:ecx22}) and use that $\phi(x_1) = 2(x_1+1/4)$ in $(-1/4+\ea, 1/4-\ea)$ to explicitly locate where $\partial_{x_2}F_{i,j} = 0$.]

We now construct $P_1$ and verify that (3) and (4) of Property \ref{pr:1cells} hold.  We use the notation $\varphi(x_1,x_2)$ for the slope of $P_1$ at $(x_1,x_2) \in P_1$, noting that $\varphi$ is multivalued at non-smooth points.  In all cases, we set $\varphi(x_1,x_2) = 0$ for $1/4\leq x_1$.  Note that $-\nabla F_{i,j}$ points transversally into the region above $P_1$ 
when $1/4 \le x_1$
since $-\partial_{x_2}F_{i,j}(x_1,x_2)>0$, so (3) holds.  For $x_1 \leq 1/4$, the definition of $\varphi(x_1,x_2)$ depends on the edge type.

\medskip

\noindent {\bf (PV)}:  Take $ \phi(x_1,x_2) =0$ for $x_1 \leq 1/4$;  (3) follows since $-\partial_{x_2}F_{i,j}> 0$ everywhere.

\medskip

\noindent {\bf (Cu)}:  Take $\phi(x_1,x_2) = .001$ for $x_1 \leq 1/4$;  (3) holds since all $-\nabla F_{i,j}$ belong belong to the closed upper left quadrant of the plane.  

\medskip

\noindent {\bf (1Cr)}: For small $0 < \delta \ll .001$, take 
\[
\phi(x_1,x_2) = \left\{ \begin{array}{cr} 0 & \mbox{for $|x_1| \geq .1$}, \\ 
-\delta & \mbox{for $-.1 \leq x \leq 0$}, \\
\delta & \mbox{for $0 \leq x \leq .1$}.
\end{array}\right.
\]
See Figure \ref{fig:Slopes}.
Note that by compactness and the computation of the sign of $\partial_{x_l} F_{i,j}$ above, we can choose $\delta$ so that for all $i<j$ with $(i,j) \neq (k,k+1)$, 
\[
 0 < \delta < \left|\frac{\partial_{x_2} F_{i,j}}{\partial_{x_2} F_{i,j}}\right|,  \quad \forall (x_1,x_2) \in [-1/4,1/4] \times [-1/32, -1/32+.02].
\]

We verify that (3) holds as follows.  For $(i,j) \neq (k,k+1)$, along segments with non-negative slope, this is because $-\nabla F_{i,j}$ points into the open upper left quadrant; along the segment with negative slope the choice of $\delta$ has $-\nabla F_{i,j}$ pointing transversally above $P_1$.  For $(i,j) = (k,k+1)$, $-\nabla F_{k,k+1}$ points into the open upper left quadrant for $x_1 >0$ (verifying (3)) and along the negative $x_1$-axis for $x_1=0$ (verifying (4)), while $-\nabla F_{k+1,k}$ points into the open upper right quadrant for $x_1<0$ and along the positive $x_1$-axis for $x_1=0$.

\medskip

\noindent {\bf (2Cr)}:  Define  $P_1$ for $x_1 \leq 1/4$, so that their are two non-smooth points which are located at the intersection of $P_1$ with the $(k,k+2)$ and $(k+1,k+2)$ crossing locus, and so that working from left to right $\varphi(x_1,x_2)$ takes the values $0$, $\gamma$, and $.001$ where the value $0 < \gamma < .001$ will be specified momentarily.  Note that for $(i,j) \notin \{(k,k+1), (k,k+2)\}$, $-\nabla F_{i,j}$ points into the open upper left quadrant, so (3) holds in this case.  

Note that both crossing loci (the $(k,k+2)$ locus is to the left of the $(k+1,k+2)$ locus) will intersect $P_1$ in the region where $-1/4 < x_1 < -1/4+10N\ea$.   [This is because for $(x_1,x_2) \in [-1/4,1/4] \times [-1/32, -1/32+.02]$, since $d_xf_{i,j}(x_1) \geq y_{i,j} - N\ea \geq  1/5$ and $f_{i,j}(-1/4) \geq -N\ea$
 (using item (3) and (5) of Corollary \ref{cor:summary})
\[
F_{i,j}(x_1,x_2) \geq f_{i,j}(x_1) - 2N\ec((1/32)^2+1) \geq \left(-N\ea + 1/5(x_1+1/4)\right) - N\ea
\]
which is positive for $x_1 > -1/4+10N\ea$.]

For $(i,j) \in \{(k,k+1), (k,k+2)\}$, although $-\partial_{x_2}F_{i,j}$ becomes negative (well to the right of the crossing loci) so that $-\partial_{x_2}F_{i,j}$ points into the lower left quadrant, from (\ref{eq:ecx21}) and (\ref{eq:ecx22}) we have the slope bound
\[
\left|\frac{\partial_{x_2} F_{i,j}}{\partial_{x_1}F_{i,j}}\right| < \frac{(N)(\ec)(2/32)}{(1/5)- (3)(2N)(\ec)(2)} < .001
\]
so in the region where $\varphi = .001$ (3) and (4) hold.  Also, to the left of both crossing loci where $F_{k+2,k}, F_{k+2,k+1} >0$, we have   $-\partial_{x_2}F_{k+2,k}, -\partial_{x_2}F_{k+2,k+1} >0$, so that (3) and (4) hold when $\varphi = 0$.  

Finally, between the two crossing loci where $\varphi= \gamma$, $F_{k+2,k+1}> 0$ and $F_{k,k+2} > 0$.  In this region,  $-\nabla F_{k+2,k+1}$ (resp. $-\nabla F_{k,k+2}$) points into the upper right (resp. lower left) quadrant so that for (3) and (4) to hold we  choose $0 < \gamma< .001$ such that for all such $(x_1,x_2)$, 
\[
\left|\frac{\partial_{x_2} F_{k,k+2}}{\partial_{x_1} F_{k,k+2}}\right| < \gamma < \left|\frac{\partial_{x_2} F_{k+2,k+1}}{\partial_{x_1} F_{k+2,k+1}}\right|. 
\]
To see that this is possible, use (\ref{eq:ecx21}) and (\ref{eq:ecx22}) to compute the following bounds when $(x_1,x_2) \in [-1/4,-1/4+10N\ea] \times [-1/32, -1/32+.02]$,
\begin{align*}
| \partial_{x_1}F_{k,k+2}| & \geq (2 y_{k,k+2} - N \ea) - (3)(2N)(\ec)(2) \geq 2(.625+.125) - .001 \\
|\partial_{x_2}F_{k,k+2}| & = 2 \ec|x_2| \cdot \left|-1 + 3 \phi(x_1)\right| \leq \frac{2\ec}{32}\cdot 1 \\ 
| \partial_{x_1}F_{k+2,k+1}| & = | \partial_{x_1}F_{k+1,k+2}| \leq (2 y_{k+1,k+2} +N\ea) + (3)(2N)(\ec)(2)(.125) +.001 \\
| \partial_{x_2}F_{k+2,k+1}| & = 2 \ec |x_2| \cdot \left|-2 + 3 \phi(x_1)\right| \geq 2 \ec (1/32 -.02) 1.999
\end{align*}

  \end{proof}

\begin{figure}
\labellist
\small
\pinlabel $F_{k,k+1}=0$ [t] at 96 -2
\pinlabel $F_{k,k+2}=0$ [t] at 296 -2
\pinlabel $F_{k+1,k+2}=0$ [t] at 400 -2
%\pinlabel $x$ [l] at 552 22
%\pinlabel $1$ [r] at 98 184
%\pinlabel $-1/4$ [u] at 32 8
%\pinlabel $1/4$ [u] at 192 8
%\pinlabel $y=\phi(x)$ [l] at 230 184
%\pinlabel $y=\psi(x)$ [l] at 474 84
\endlabellist
\centerline{ \includegraphics[scale=.6]{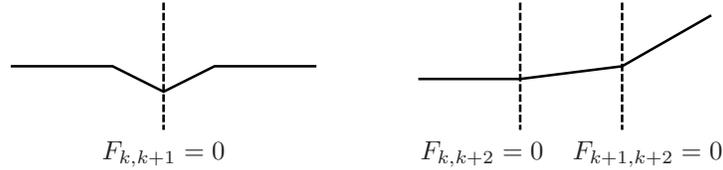} }
%\centerline{ \includegraphics[scale=.8]{images/SubdivideSq} }

\quad

\caption{The path $P_1$ pictured schematically (slopes are exaggerated) near the crossing locus along  a (1Cr) edge (left) and a (2Cr) edge (right).}
\label{fig:Slopes}
\end{figure}

 \begin{proposition}
\label{prop:1cmono}
Property \ref{pr:1cmono} holds.
%\footnote{\ms{5/28/15: Commented out below is the short proof of former(?) Property \ref{pr:IntersectB} which discussed the  stable and unstable manifolds of $b_{i,j}$ in $\hat{N}(e^1_\alpha).$}}
\end{proposition}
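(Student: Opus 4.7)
The plan is to reduce Property \ref{pr:1cmono} immediately to the one-dimensional monotonicity properties of the difference functions $f_{i,j}$ already established in item (2) of Propositions \ref{prop:PV1Cr2Cr} and \ref{prop:CuDef}. First I would invoke Proposition \ref{prop:smooth1}: for $(x_1,x_2) \in \widehat{N}(e^1_\alpha) \subset (-1,1)\times[-1/32,1/32]$, differentiating the explicit formula (\ref{eq:smooth1eq}) gives
\[
\partial_{x_1}F_{i,j}(x_1,x_2) \;=\; d_xf_{i,j}(x_1) \;+\; d_x\phi(x_1)\bigl[(j-i)-(\sigma_-(j)-\sigma_-(i))\bigr]\,\epsilon_3(x_2^2+1).
\]
The key observation is then that by the defining properties of $\phi$ recorded in (\ref{eq:InterpolatePhi}), $\phi$ is constant on each of the intervals $[-1,-1/4]$ and $[1/4,1]$, so $d_x\phi(x_1)=0$ whenever $x_1\in [-1,-1/4]\cup[1/4,1]$. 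Since $\beta_{i,j}\in[1/2,3/4]$ and $\tilde{\beta}_{i,j}\in[-3/4,-1/2]$ by Property \ref{pr:Location1}, every value of $x_1$ appearing in any of the four sub-cases of Property \ref{pr:1cmono} lies in one of these two intervals. Hence in all cases the formula above collapses to
\[
\partial_{x_1}F_{i,j}(x_1,x_2) \;=\; d_xf_{i,j}(x_1),
\]
and the property becomes a purely one-dimensional statement about the sign of $d_xf_{i,j}$.

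Second, I would read the sign of $d_xf_{i,j}(x_1)$ off of Propositions \ref{prop:PV1Cr2Cr}(1)--(2), \ref{prop:CuDef}(1)--(2), and Proposition \ref{prop:1234def}(3) in each of the four cases, using $\beta_{i,j}=\eta_{i,j}$ and $\tilde{\beta}_{i,j}=\tilde{\eta}_{i,j}$ from Proposition \ref{prop:PropertiesRCstaircase}. On $[1/4,1]$, the only critical point of $f_{i,j}$ is the non-degenerate local maximum $\beta_{i,j}$, so $d_xf_{i,j}>0$ on $[1/4,\beta_{i,j}-\epsilon]$ and $d_xf_{i,j}<0$ on $[\beta_{i,j}+\epsilon,1]$. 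When $S_i$ and $S_j$ do not cross, $f_{i,j}$ has no critical points on $[-1,-1/4]$ and is strictly increasing there, giving $d_xf_{i,j}(x_1)>0$. When they do cross, the unique critical point on $[-1,-1/4]$ is the local maximum $\tilde{\beta}_{i,j}$, giving the two sign statements in that sub-case exactly as in the right-hand range.

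The argument is essentially immediate and I do not expect any substantive obstacle. The only point requiring care is a notational one: in the crossing case, Property \ref{pr:1cmono} labels $(i,j)$ so that $F_i>F_j$ on the left, whereas Proposition \ref{prop:PV1Cr2Cr} labels sheets by descending $z$-coordinate at $x_1=+1$. I would dispense with this at the outset by setting $i'=j,\,j'=i$, so that $f_{i',j'}$ in the convention of Proposition \ref{prop:PV1Cr2Cr} has a local minimum $\tilde{\eta}_{j',i'}$ on $[-3/4,-1/2]$; equivalently, $f_{i,j}=-f_{i',j'}$ has a local maximum $\tilde{\beta}_{i,j}=\tilde{\eta}_{i,j}$ there, which is precisely what the preceding argument requires.
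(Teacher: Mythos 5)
Your proposal is correct and follows exactly the paper's argument: invoke Proposition \ref{prop:smooth1} to reduce $\partial_{x_1}F_{i,j}$ to $d_xf_{i,j}(x_1)$ on $(-1,-1/4]\cup[1/4,1)$ (where $d_x\phi$ vanishes), then read off the signs from Propositions \ref{prop:PV1Cr2Cr} and \ref{prop:CuDef} with $\beta_{i,j}=\eta_{i,j}$. Your closing remark about the subscript convention in the crossing case (local minimum of $f_{k,k+1}$ versus local maximum of $f_{k+1,k}$) is a point the paper leaves implicit, and you handle it correctly.
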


\begin{proof}
%From the defining equation (\ref{eq:interpolating}) for squares (1)-(12), and from Lemma \ref{lem:PreStandard}
%for squares (13)-(14), we have that if $(x_1, x_2) \in \hat{N}(e^1_\alpha),$
%\begin{eqnarray*} F_{i,j}(x_1,x_2)  & = & \phi(x_2) f^U_{i,j}(x_1) + (1 -\phi(x_2)) f_{\sigma_D(i), \sigma_D(j)}^D(x_{1}) + 
% \phi(x_1) f^R_{i,j}(x_2) + (1 -\phi(x_1)) f_{\sigma_L(i, \cdot), \sigma_L(j, \cdot)}^L(x_{1})
% \end{eqnarray*}
%where $\sigma_L(j, \cdot)$ is the appropriate one of $\sigma_L(j), \sigma_L(j, +), \sigma_L(j, -)$ (see Lemma \ref{lem:PreStandard}).

From Proposition \ref{prop:smooth1}, we have that for $(x_1,x_2)  \in \widehat{N}(e^1_\alpha) \subset (-1,1) \times [-1/32,1/32]$ with $x_1 \in (-1,-1/4] \cup [1/4,1)$,
\[
\partial_{x_1}F_{i,j}(x_1,x_2) = d_xf_{i,j}(x_1)
\]
where $f_{i,j}$ is the $1$-dimensional difference function associated to the edge type of $e^1_\alpha$.  Therefore, (keeping in mind that $\beta_{i,j} = \eta_{i,j}$ here)  Property \ref{pr:1cmono} follows immediately from  Propositions \ref{prop:PV1Cr2Cr} and \ref{prop:CuDef} which specify the sign of $d_xf_{i,j}$ everywhere on $[-1,1]$. 

%For $(x_1,x_2) \in \hat{N}(e^1_\alpha) \cap ({\{1/4 \le x_1 \le 1\}} \cup {\{-1 \le x_1 \le -1/4\}}),$
% $\partial_{x_1} F_{i,j}(x_1,x_2) = d_x f^X_{i',j'}(x_1)$  where $i',j'$ is the ordering of sheets $S_i,S_j$ as they appear over 
% $x_1 = 1,$ and $X \in \{PV, 1Cr, 2Cr, Cu\}$ is determined by which model the sheets follow over the $1$-cell $e^1_\alpha.$
% Property \ref{pr:1cmono}  follows from one of Lemmas \ref{lem:MainPropsPV} - \ref{lem:MainPropsCu}, where $X$ determines which Lemma to apply. [Lemma \ref{lem:MainProps0.5} is not used near the 1-cells.]

%The local formulation above implies the stable and unstable manifolds of $b_{i,j},$ intersected with $\hat{N}(e^1_\alpha),$ 
%are respectively, $\{x_1 = \eta_{i,j}\}$ and $\{x_2 = 0\}.$ Thus Property \ref{pr:IntersectB} holds in the the unperturbed case.
%The general case follows from the transversality of the intersection.

\end{proof}

%\subsection{Properties of GFTs}
%\label{ssec:PropertiesGFTs}

%\dr{Mike:  In case it is useful (possibly not), proofs of SOME of the monotonicity statements for the swallowtail case are contained in proofs of  Lemmas 2.4 and 2.7 in SwallowTailSection(6).pdf}

%Although Section \ref{sec:Comp2Cells} is for squares (1)-(12), many of its properties are also needed for squares (13)-(14) as stated in Section \ref{sec:SwallowComp}. In particular, Properties \ref{pr:monotonicityI},  
%\ms{\ref{pr:monotonicityII}}\ and \ref{pr:monotonicityIV} are needed for (1)-(14).
%So we will prove these for all squares, and Property \ref{pr:CuspTransversality} for just (1)-(12).

\begin{proposition}
\label{prop:PropertiesMonotonicityIandII}
Properties \ref{pr:monotonicityI} holds, including its extensions to the swallowtail case.
\end{proposition}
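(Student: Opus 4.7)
The plan is to exploit the fact that in every corner $C_{X,Y}$ the interpolating cutoff $\phi$ takes only its extremal values, so the defining functions decouple into a sum of the one-variable functions $f^{X'}$ and $f^{Y'}$ associated to the adjacent edges. The claimed monotonicity will then reduce directly to the one-dimensional statements already proved in Propositions \ref{prop:PV1Cr2Cr} and \ref{prop:CuDef}, together with Lemma \ref{lem:MainProps0.5} in the cusp case.

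For squares of type $(1)$--$(12)$: observe that if $(x_1,x_2)\in C_{X,Y}$ then $|x_1|,|x_2|\geq 1/4$, so $\phi(x_1)\in\{0,1\}$ and $\phi(x_2)\in\{0,1\}$ according to which pair $(X,Y)$ we are in. Plugging into (\ref{eq:interpolating}) gives the clean splitting
\[
F_{i,j}(x_1,x_2)=g^{Y}_{i,j}(x_1)+g^{X}_{i,j}(x_2),
\]
where $g^Y$ is the relevant $1$-dimensional difference function along the edge $Y$ (either $f^U_{i,j}$ or $f^D_{\sigma_D(i),\sigma_D(j)}$), and similarly for $g^X$. Consequently $\partial_{x_2}F_{i,j}(x_1,x_2)=d_x g^X(x_2)$ depends only on $x_2$, and the sign assertions amount to the one-dimensional monotonicity statements. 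Using the conventions $\beta^X_{i,j}=\eta^X_{\sigma_X(i),\sigma_X(j)}$ and $\tilde\beta^X_{i,j}=\tilde\eta^X_{\sigma_X(i),\sigma_X(j)}$ from Remarks \ref{rem:betanotate} and \ref{rem:etanotate}, the critical points of $g^X$ are precisely the values $\alpha^X_{i,j}$, and item (2) of Propositions \ref{prop:PV1Cr2Cr} and \ref{prop:CuDef} (and Lemma \ref{lem:MainProps0.5} when the edge is a cusp edge) gives the required sign of $d_xg^X$ to the right and left of each critical point. In the no-Reeb-chord case, $g^X$ has no critical points and $d_x g^X>0$ throughout, giving $-\partial_{x_2}F_{i,j}<0$. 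The analogous analysis in the $x_1$ variable handles the other half of the statement.

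For the swallowtail squares $(13)$--$(14)$: in each corner $C_{X,Y}$ we have $\alpha(r)=1$, so by (\ref{eq:FHCAG}), $F_l=G_l$ for $l\in\{k,k+1,k+2\}$ and $\widetilde{F}_k=H$. Moreover in the corners we have $|x_2|\geq 1/4\geq R_1/2$, so $\psi(x_2)$ coincides with $1-\phi(x_2)$ or $\phi(x_2)$ according to sign of $x_2$. Combined with the fact that in the corner $x_1$-range we may replace each $\widehat f_l$ by the appropriate $f^U_l$ or $f^D_l$ (by (\ref{eq:hatk1}), (\ref{eq:hatk2})), the formulas (\ref{eq:Gkdef})--(\ref{eq:Hdef}) collapse to the same decoupled form
\[
F_{i,j}(x_1,x_2)=g^Y_{i,j}(x_1)+g^X_{i,j}(x_2),
\]
where now $g^X$ may be either a standard edge function $f^R_{i,j}$ or one of the substitute functions $f^{ST}_{i,j}$ introduced in Lemma \ref{lem:flst} and (\ref{eq:STk2114})--(\ref{eq:STk1341}). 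The monotonicity of the $f^{ST}$ functions is read off directly from (\ref{eq:propflst})--(\ref{eq:propflst2}) and (\ref{eq:STk1434}), together with Proposition \ref{prop:PV1Cr2Cr}(2) applied to $f^L_k$ (with which $f^{ST}_l$ agrees away from the Reeb chord neighborhoods).

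The analysis is almost entirely bookkeeping. The only real subtlety is tracking which permutation $\sigma_X$ and which substitute function $f^{ST}$ or $\widehat f$ appears in each of the four corners, and matching the critical points of the one-dimensional function to the named Reeb chords $b^X_{i,j}$ or $\tilde b^X_{i,j}$ via Remarks \ref{rem:betanotate} and \ref{rem:etanotate}. In particular, the existence (or not) of a Reeb chord of a given type in $\widehat N(e^1_X)$ between sheets $S_i$ and $S_j$ corresponds exactly to the existence (or not) of a critical point of $g^X$ at the corresponding $\alpha^X_{i,j}\in[1/2,3/4]$ or $\tilde\alpha^X_{i,j}\in[-3/4,-1/2]$, so the statement in the corner is a transcription of the edge statement once the naming conventions are sorted out. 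Since these conventions were already verified in the proof of Proposition \ref{prop:PropertiesRCstaircase}, no further work is required beyond the case-by-case transcription.
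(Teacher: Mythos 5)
Your proposal is correct and follows essentially the same route as the paper: in each corner $C_{X,Y}$ the cutoff $\phi$ is locked at $0$ or $1$, so $F_{i,j}$ splits as a sum of one-variable edge functions and the sign claims reduce to item (2) of Propositions \ref{prop:PV1Cr2Cr} and \ref{prop:CuDef} (with the same treatment of the swallowtail corners via $\alpha(r)=1$ and the agreement of the $f^{ST}_l$ with $f^L_k$ on the relevant range). The only cosmetic difference is that your appeal to Lemma \ref{lem:MainProps0.5} is unnecessary: the hypothesis of Property \ref{pr:monotonicityI} that neither sheet meets a cusp edge in $C_{X,Y}$ already rules out the half-integer substitute functions, as the paper notes.
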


\begin{proof}
%This is similar to the proof of Lemma \ref{lem:c-chords}.
Recall from Remark \ref{rem:etanotate} the notation comparison:   $\eta^L_{\sigma_{L}(i),\sigma_{L}(j)}, \eta^D_{\sigma_{D}(i),\sigma_{D}(j)}$ in this section are  $\beta^L_{i,j}, \beta^D_{i,j}$ in the property statements.

For $X \in \{U, D\},  Y \in \{R,L\}$, the regions $C_{X,Y}$ appearing in Property \ref{pr:monotonicityI} satisfy $C_{X,Y} \subset XY$ (with $XY$ as in Figure \ref{fig:PsiPhi}).  For any Type (1)-(12) square and $i<j$, within $XY$,  
\[
F_{i,j}(x_1,x_2) = f^X_{i',j'}(x_1) + f^Y_{i'',j''}(x_2), 
\] for some $i',j',i'',j''$ which, because of the the assumption that neither $S_i$ or $S_j$ meet a cusp edge above $XY$, are both integers (i.e., the functions $f^L_{k-.5}$ and $f^D_{k-.5}$ of Lemma \ref{lem:MainProps0.5} do not appear here).  
In the case of the (13) square, the same considerations apply to the functions $F_{i,j}$ as well as $F_{i}-\widetilde{F}_k$ and $\widetilde{F}_k-F_j$.  The key observation is that for the sheets that do not meet a cusp edge in $C_{L,U}$ and $C_{D,U}$ (so, for $F_{k+2}$ and $\widetilde{F}_k$ in $C_{L,U}$ and $F_{k+1}$ and $\widetilde{F}_k$ in $C_{L,D}$), the definition of the $f^{ST}_l$ is such that the $f^Y(x_2)$ term arising from any of these functions will be $f^L_k(x_2)$.  [See (\ref{eq:FHCAG}) where $\alpha(r) =1$, then (\ref{eq:Gk1def})-(\ref{eq:Hdef}), then (\ref{eq:STk214})-(\ref{eq:STk134}).]

 Thus, the signs of the $\partial_{x_1}$ and $\partial_{x_2}$ partial derivatives respectively agree with the signs of the $1$-dimensional functions $d_xf^X_{i',j'}(x_1)$ and $d_xf^Y_{i'',j''}(x_2)$.  In particular, by Propositions \ref{prop:PV1Cr2Cr} and \ref{prop:CuDef}, the sign of $\partial_{x_1} F_{i,j}$ (and also $\partial_{x_2}F_{i,j}$) is as specified in Property \ref{pr:monotonicityI} with changes in sign of $\partial_{x_1}F_{i,j}$  occurring only when $x_1= \eta^X_{i',j'}= \beta^X_{i,j}$ or $x_1= \tilde{\beta}^X_{i,j}$.  

\end{proof}

%In the case of the (13) square, the same considerations apply to the functions $F_{i,j}$ and also $F_{i}-\widetilde{F}_k$ and $\widetilde{F}_k-F_j$.  The key observation is that for the sheets that do not meet at a cusp edge in $C_{L,U}$ and $C_{D,U}$ (so, for $F_{k+2}$ and $\widetilde{F}_k$ in $C_{L,U}$ and $F_{k+1}$ and $\widetilde{F}_k$ in $C_{L,D}$), the definition of the $f^{ST}$ is such that they equal $f^L_k(x_2) + f^U_{k+2}(x_1)$. 

\begin{proposition}
 Properties \ref{pr:monotonicityII} and \ref{pr:monotonicityIIST} hold.
\end{proposition}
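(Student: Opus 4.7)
The plan is to exploit the fact that at $x_2=1/2$ (respectively $x_1=1/2$) we have $\phi(x_2)=1$ and $d_x\phi(1/2)=0$, and similarly $\psi(1/2)=0$ and $d_x\psi(1/2)=0$ (since $\psi$ is supported near $0$ by (\ref{eq:psibounds})). These vanishings collapse the interpolation formulas (\ref{eq:interpolating}) and (\ref{eq:FHCAG}) into a convex combination of derivatives of one-dimensional functions whose signs were already pinned down in Sections \ref{sec:1dim1234}--\ref{sec:1dim1}. For the strip statements in Property \ref{pr:monotonicityIIST}, the region $x_1\in[1/4,3/4]$ kills the cross-term involving $d_x\phi(x_1)$, leaving a convex combination in the orthogonal direction.

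For Property \ref{pr:monotonicityII}, I would directly differentiate (\ref{eq:interpolating}) in $x_2$ and evaluate at $x_2=1/2$, obtaining
\[
\partial_{x_2}F_{i,j}(x_1,1/2)=\phi(x_1)\,d_xf^R_{i,j}(1/2)+(1-\phi(x_1))\,d_xf^L_{\sigma_L(i),\sigma_L(j)}(1/2).
\]
Since $S_i,S_j$ do not cross above $e^1_U$ we have $\sigma_L(i)<\sigma_L(j)$, and at most one ends at a cusp above $e^1_U$, so both one-variable difference functions are non-constant with $1/2<\eta^R_{i,j},\eta^L_{\sigma_L(i),\sigma_L(j)}<3/4$. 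By item (2) of Propositions \ref{prop:PV1Cr2Cr} and \ref{prop:CuDef} both derivatives at $x=1/2$ are strictly positive, making the convex combination positive; hence $-\partial_{x_2}F_{i,j}(x_1,1/2)<0$. The $x_1=1/2$ statement follows by the symmetric computation (with $U,D$ in place of $R,L$).

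For Property \ref{pr:monotonicityIIST} in a (13) or (14) square, I would first handle $F_{i,j}$ with $\{i,j\}\not\subset\{k,k+1,k+2\}$. At $x_2=1/2$ we are well outside $O_2$, so $\alpha(r)=1$ and $F_l=G_l$, whence the same convex-combination formula holds with $f^L$ replaced by $f^{ST}$. Because $\psi$ and $d_x\psi$ vanish at $1/2$, equations (\ref{eq:STk1434})--(\ref{eq:STk1341}) give $d_xf^{ST}_{i,j}(1/2)=d_xf^L_{\sigma'(i),\sigma'(j)}(1/2)$ with $\sigma'$ from (\ref{eq:sigmaprime}); the restriction $\{i,j\}\not\subset\{k,k+1,k+2\}$ ensures $\sigma'(i)<\sigma'(j)$, so this derivative is positive as before. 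The mixed statements $-\partial_{x_2}(F_i-\widetilde F_k)<0$ and $-\partial_{x_2}(\widetilde F_k-F_j)<0$ reduce to similar calculations: on the left of the cusp ($x_1\le -3/8\le -1/4$) we have $\phi(x_1)=0$, and a short calculation using (\ref{eq:FifLfR}) and (\ref{eq:Hdef}) together with $d_x\psi(1/2)=0$ yields $\partial_{x_2}(F_i-H)(x_1,1/2)=d_xf^L_{\sigma_L(i),k}(1/2)>0$ since $i<k$ forces $\sigma_L(i)<k$ on the (PV) edge $L$.

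For the strip statement, I would use that $x_1\in[1/4,3/4]$ implies $d_x\phi(x_1)=0$, so the general formula (\ref{eq:ReebST}) reduces to
\[
\partial_{x_1}F_{i,j}(x_1,x_2)=\phi(x_2)\,d_xf^U_{i,j}(x_1)+(1-\phi(x_2))\,d_xf^D_{\sigma_D(i),\sigma_D(j)}(x_1).
\]
Since $(i,j)\ne(k+1,k+2)$, sheets $S_i,S_j$ do not cross above $R$ (they could only exchange position via the swallowtail), so both $f^U_{i,j}$ and $f^D_{\sigma_D(i),\sigma_D(j)}$ are genuine difference functions with non-degenerate local maxima $\beta^U_{i,j}$ and $\beta^D_{i,j}$ in $(1/2,3/4)$. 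Proposition \ref{prop:PV1Cr2Cr}(2) then gives $d_xf^U_{i,j}(x_1)>0$ on $[1/4,\beta^U_{i,j})$ and $d_xf^D_{\sigma_D(i),\sigma_D(j)}(x_1)>0$ on $[1/4,\beta^D_{i,j})$, so on the intersection $[1/4,\mathrm{Min}\{\beta^U_{i,j},\beta^D_{i,j}\}-\epsilon]$ the convex combination is positive, giving $-\partial_{x_1}F_{i,j}<0$. The symmetric argument on $[\mathrm{Max}\{\beta^U_{i,j},\beta^D_{i,j}\}+\epsilon,3/4]$ reverses all signs.

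The main obstacle is not computational but bookkeeping: keeping the various labelings $\sigma_L,\sigma_D,\sigma'$ straight across the (13) and (14) squares and verifying that the cusp-related labeling conventions of Section \ref{sec:SwallowComp} (particularly Convention \ref{c:Sktilde}) line up with the definitions of $f^{ST}_l$, $\widehat f_{k+1}$, $\widehat f_{k+2}$, and $H$ used in Section \ref{sec:ConstructionsST}. Once these identifications are made, every inequality reduces mechanically to a one-variable statement from Propositions \ref{prop:PV1Cr2Cr}, \ref{prop:CuDef}, or Lemma \ref{lem:flst}.
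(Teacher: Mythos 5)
Your proposal is correct and follows essentially the same route as the paper: evaluate the interpolation formulas on the slices $x_2=1/2$, $x_1=1/2$, and the strip $x_1\in[1/4,3/4]$, use the vanishing of $d_x\phi$ (and $\psi$, $d_x\psi$) there to collapse everything to a convex combination of one-dimensional derivatives, and read off the signs from item (2) of Propositions \ref{prop:PV1Cr2Cr} and \ref{prop:CuDef} together with the labeling conventions $\sigma_L$, $\sigma_D$, $\sigma'$. Your treatment of the $\widetilde F_k$ case via $\partial_{x_2}(F_i-H)$ is slightly more explicit than the paper's appeal to the fact that the $f^{ST}_l$ agree with $f^L_k$ near $x_2=1/2$, but it is the same argument.
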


\begin{proof}
For Property \ref{pr:monotonicityII} we establish the statement about $\partial_{x_2}F_{i,j}$ as the statement about $\partial_{x_1}F_{i,j}$ is similar.  For $i<j$ and $(x_1,x_2) \in \widehat{N}(e^2_\alpha) \subset [-1,1]\times[-1,1]$, when $x_2 =1/2$ we have 
\begin{equation} \label{eq:proofpropII}
\partial_{x_2}F_{i,j} = \phi(x_1)d_xf^R_{i,j}(x_2) + (1-\phi(x_1))d_xf^L_{\sigma_L(i),\sigma_L(j)}(x_2).
\end{equation}
The assumption that $S_i$ and $S_j$ do not cross or meet at a cusp edge above $U$ implies that $\sigma_L(i) < \sigma_L(j)$, so that both terms are positive and $-\partial_{x_2}F_{i,j}(x_1,1/2)<0$ follows.

For the first statement of Property \ref{pr:monotonicityIIST}, note that for any of the difference functions $F_{i,j}, F_i-\widetilde{F}_k, \widetilde{F}_k-F_j$ considered there, since all $f^{ST}_{l}$ functions agree with $f^L_k$ in a neighborhood of $x_2=1/2$ (see (\ref{eq:STk1434}) and (\ref{eq:STk214})-(\ref{eq:STk134})), (\ref{eq:proofpropII}) holds with $\sigma_L$ redefined as 
\[
\sigma_L(i) = \left\{ \begin{array}{cr} i, & i<k \\ k, & i = k,k+1,k+2, \\ i-2,& i>k+2. \end{array} \right.
\] 
From the restrictions placed on $i,j$ in the statement of Property \ref{pr:monotonicityIIST}, (\ref{eq:proofpropII}) is still a convex combination of positive terms.

To prove  the second statement of Property \ref{pr:monotonicityIIST}, note that for $1/4 \leq x_1 \leq 3/4$,
\[
\partial_{x_1}F_{i,j}(x_1,x_2) = \phi(x_2)d_xf^U_{i,j}(x_1) + (1-\phi(x_2))d_xf^D_{\sigma_D(i),\sigma_D(j)}(x_1).
\]
For $(i,j)\neq (k+1,k+2)$, we have $\sigma_D(i)<\sigma_D(j)$, so 
\[
\begin{array}{llc}
 \mbox{$d_xf^U_{i,j}(x_1) >0$ (resp. $d_xf^D_{\sigma_D(i),\sigma_D(j)}(x_1) >0$) } & \mbox{ for $x_1 <\eta^U_{i,j}$ (resp. $x_1 <\eta^D_{\sigma_D(i),\sigma_D(j)}$) } & \mbox{ and } \\ 
\mbox{$d_xf^U_{i,j}(x_1) <0$ (resp. $d_xf^D_{\sigma_D(i),\sigma_D(j)}(x_1) <0$) } & \mbox{ for $x_1 >\eta^U_{i,j}$ (resp. $x_1 >\eta^D_{\sigma_D(i),\sigma_D(j)}$). } & 
\end{array}
\]

\end{proof}

\begin{proposition}
\label{prop:PropertiesMonotonicityIV}
Property \ref{pr:monotonicityIV} holds.
\end{proposition}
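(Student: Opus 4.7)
The plan is to take $B_1$ to be a straight line segment from $(1/2, 1/3 - \frac{3}{4}\ea)$ to $(3/4, 1/3 + \frac{3}{4}\ea)$, which has positive slope $s = 6\ea$ and lies entirely within the horizontal strip $[1/2, 3/4] \times [1/3 - \ea, 1/3 + \ea] \subset [1/2, 3/4] \times (1/4, 1/2)$. The segment $B_2$ will be defined as the reflection of $B_1$ across $x_1 = x_2$, and the analogous claim for $B_2$ will follow by an identical argument after swapping the roles of $x_1$ and $x_2$.

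The key observation is that on $[1/4, 3/4] \times [1/4, 3/4]$ both $\phi(x_1) = \phi(x_2) = 1$, so formula (\ref{eq:interpolating}) reduces to the separable form $F_{i,j}(x_1, x_2) = f^U_{i,j}(x_1) + f^R_{i,j}(x_2)$, giving $\nabla F_{i,j} = (d_x f^U_{i,j}(x_1),\, d_x f^R_{i,j}(x_2))$. For $(x_1, x_2) \in B_1$ with $x_1 \leq \beta^U_{i,j} - \e$, Proposition \ref{prop:PV1Cr2Cr}(2) gives $d_x f^U_{i,j}(x_1) \geq M$ where $M$ is the positive constant of equation (\ref{eq:Mdef}), while from $x_2 < 1/2 \leq \beta^R_{i,j}$ we get $d_x f^R_{i,j}(x_2) > 0$. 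Thus $-\nabla F_{i,j}$ is in the open third quadrant, and with upward unit normal $(-s, 1)/\sqrt{1+s^2}$ to $B_1$, the transversality and direction condition
\[
(-\nabla F_{i,j}) \cdot (-s, 1) = s\, d_x f^U_{i,j}(x_1) - d_x f^R_{i,j}(x_2) > 0
\]
reduces to the ratio estimate $s > d_x f^R_{i,j}(x_2)/d_x f^U_{i,j}(x_1)$.

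The main step will be to bound this ratio uniformly. Since $x_2 \in [1/3 - \ea, 1/3 + \ea]$ throughout $B_1$, Proposition \ref{prop:PV1Cr2Cr}(6) gives $d_x f^R_{\ell, \ell+1}(x_2) = \ec$ for each consecutive pair, and by telescoping $f^R_{i,j} = \sum_{\ell=i}^{j-1} f^R_{\ell,\ell+1}$ we get $d_x f^R_{i,j}(x_2) = (j-i)\ec \leq N\ec$. Combined with $d_x f^U_{i,j}(x_1) \geq M$ and the defining inequality $\ec < 4M\ea/N$ from (\ref{eq:epsilon3}), the ratio is bounded above by $N\ec/M < 4\ea < s = 6\ea$, so the inequality holds strictly and uniformly in $(i,j)$ and in $(x_1,x_2) \in B_1$.

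The main obstacle will be the extension to squares (13) and (14), which contain swallowtails; however, the region $[1/2, 3/4] \times (1/4, 1/2)$ is disjoint from the neighborhood $O_2$ of the swallowtail point at $(-3/8, 0)$, so $\alpha(r) \equiv 1$ in (\ref{eq:FHCAG}). In this region $\phi(x_1) = \phi(x_2) = 1$ and $\psi(x_2) = 0$, so equations (\ref{eq:Gkdef})--(\ref{eq:Gk2def}) and (\ref{eq:FifLfR}) all reduce to $F_{i}(x_1,x_2) = f^U_{i}(x_1) + f^R_{i}(x_2)$ as in the non-swallowtail case (noting that $\widetilde{F}_k$ need not be considered since $\widetilde{S}_k$ has base projection disjoint from this region). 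The only subtlety is that $f^U$ is now a (Cu) function family and $f^R$ a (1Cr) family, so we must invoke Proposition \ref{prop:CuDef}(6') in place of Proposition \ref{prop:PV1Cr2Cr}(6), replace $\ec$ with $\max(\ec, \ed)$, and use (\ref{eq:epsilon4}) in place of (\ref{eq:epsilon3}) to absorb the (Cu) case. The uniform lower bound $M > 0$ across all edge types (including (Cu) and (1Cr)) follows by compactness of the finite collection of one-dimensional difference functions, since all critical points in $[1/2, 3/4]$ are isolated and non-degenerate.
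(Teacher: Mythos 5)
Your proof is correct and follows essentially the same route as the paper's: both exploit the separability $F_{i,j}(x_1,x_2)=f^U_{i,j}(x_1)+f^R_{i,j}(x_2)$ on $[1/4,3/4]^2$ (with $\alpha(r)\equiv 1$ handling squares (13)--(14)), the lower bound $M$ from (\ref{eq:Mdef}) on the component transverse to the strip, item (6)/(6') of Propositions \ref{prop:PV1Cr2Cr} and \ref{prop:CuDef} for the $N\ec$ (resp. $N\ed$) bound near $x=1/3$, and the constraint $\ec<4M\ea/N$ from (\ref{eq:epsilon3}). The only cosmetic differences are that the paper constructs $B_2$ explicitly (slope $1/(4\ea)$ in the reflected coordinates) and obtains $B_1$ by reflection, whereas you build $B_1$ directly with slope $6\ea$.
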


\begin{proof}
Recall that the constants $\beta^R_{i,j}$, $\beta^U_{i,j}$, and $\e$ that appear in the statement of Property \ref{pr:monotonicityIV} have been defined as $\beta^R_{i,j} = \eta^R_{i,j}$, $\beta^U_{i,j} = \eta^U_{i,j}$, and $\e =\e_\eta$.

For $x_1,x_2 \ge 1/4$, $F_{i,j}(x_1, x_2) = f_{i,j}^U(x_1) + f_{i,j}^R(x_2).$ 
(For squares (13) and (14), $\alpha(r) = 1$ in (\ref{eq:FHCAG}), so the equation follows from (\ref{eq:Gkdef})-(\ref{eq:Gk2def}).)

Let $B_2$ be the line segment with endpoints $(1/3 - \ea/2,1/2)$ and $(1/3 + \ea/2 , 3/4).$
 Item (6) from Propositions \ref{prop:PV1Cr2Cr} and \ref{prop:CuDef}, 
%(since we are in the up right corner, we do not need Lemma \ref{lem:MainProps0.5}), 
as well as the definition of $M$ in equation (\ref{eq:Mdef}) imply for 
$\{(x_1,x_2) \in B_2\,\, | \,\, x_2 \le \eta^R_{i,j} -\epsilon\},$
\begin{eqnarray*}
\partial_{x_1} F_{i,j}(x_1,x_2) & = & \left\{\begin{array}{cr}(j-i)\ec, & \mbox{if $U$ is (PV), (1Cr), or (2Cr),} \\ (j-i)\ed, & \mbox{if $U$ is (Cu),} \end{array} \right. 
\\
\partial_{x_2} F_{i,j}(x_1,x_2) & \ge & M.
\end{eqnarray*}
The definitions of $\ec$ and $\ed$ from equations (\ref{eq:epsilon3}) and (\ref{eq:epsilon4}) then give  
$$
\partial_{x_2} F_{i,j}/ \partial_{x_1} F_{i,j} \ge \frac{M}{N\ec} > \frac{1}{4\ea}
= \frac{3/4-1/2}{\ea}
$$ 
which is the slope of $B_2.$
As both components of $-\nabla F_{i,j}$ are negative along $B_2$, the inequality shows that $-\nabla F_{i,j}$ points transversally into the region to the right of $B_2.$

Let $B_1$ be the reflection of $B_2$ through $\{x_1 = x_2\}.$ The calculation is similar.
\end{proof}

\begin{proposition}
\label{prop:CuspTransversality}
Property \ref{pr:CuspTransversality} holds.
\end{proposition}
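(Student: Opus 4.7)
The plan is to reduce the verification of transversality and direction along the cusp locus to explicit one-dimensional computations at $x_1=-3/8$, using the interpolation formula (\ref{eq:interpolating}) together with the standard form of the one-dimensional cusp defining functions recorded in item (8') of Proposition \ref{prop:CuDef}.

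First, for squares (1)--(11), by Property \ref{pr:14models} the cusp locus between sheets $S_k,S_{k+1}$ is the straight segment $\{x_1=-3/8\}$ (the horizontal case $\{x_2=-3/8\}$ is entirely symmetric). At any point $(-3/8,x_2)$ on the cusp, $\phi(-3/8)=0$ and $d_x\phi(-3/8)=0$ by (\ref{eq:InterpolatePhi}), so (\ref{eq:interpolating}) collapses to
\[
\partial_{x_1}F_{i,j}(-3/8,x_2)=\phi(x_2)\,d_x f^U_{i,j}(-3/8)+(1-\phi(x_2))\,d_x f^D_{\sigma_D(i),\sigma_D(j)}(-3/8).
\]
I would then argue case-by-case on the three pairs $(i,k+1)$, $(k,j)$, $(i,j)$ with $i\le k-1$, $j\ge k+2$, showing each of $d_x f^U(-3/8)$ and $d_x f^D(-3/8)$ is strictly positive. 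This uses the explicit values $d_x f_{a,b}(-3/8)=b-a$ for pairs disjoint from the cusp sheets, together with the exceptional values $d_x f_{k-1,k}(-3/8)=d_x f_{k+1,k+2}(-3/8)=1.5$ and $d_x f_{k,k+1}(-3/8)=0$ from Proposition \ref{prop:CuDef}\,(8'), plus Lemma \ref{lem:MainProps0.5} for squares (9)--(11) where a half-integer index appears on $L$ or $D$. The convex combination is therefore strictly positive, confirming transversality and pointing in the $-x_1$ direction, i.e.\ toward the side with fewer sheets.

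For Type (12), the new feature is that the (1Cr) pattern along $R$ forces $\sigma_D(k+1)>\sigma_D(k+2)$, so $d_x f^D_{\sigma_D(k+1),\sigma_D(k+2)}(-3/8)=-B<0$ while $d_x f^U_{k+1,k+2}(-3/8)=A=1.5>0$. I would rewrite
\[
\partial_{x_1}F_{k+1,k+2}(-3/8,x_2)=\phi(x_2)(A+B)-B,
\]
so that the strict monotonicity of $\phi$ on $(-1/4,1/4)$ from (\ref{eq:InterpolatePhi}) yields a unique zero $x_2^*\in(-1/4,1/4)$ with $\phi(x_2^*)=B/(A+B)$, giving the $(k+1,k+2)$-switch point $P=(-3/8,x_2^*)$ with $x_2^*<1/2$; non-degeneracy is immediate from $\partial_{x_2}\partial_{x_1}F_{k+1,k+2}(-3/8,x_2^*)=\phi'(x_2^*)(A+B)>0$. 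For the remaining pairs $(i,k+1)$, $(k,j)$, $(i,j)$ in Type (12) the (1)--(11) argument applies verbatim, so no additional switch points exist. To place $P$ above $Q$ I would use the characterization of $Q$ from Step 3 in the proof of Proposition \ref{prop:interpolating} as the unique zero of $F_{k,k+2}(-3/8,\cdot)$, together with the explicit formula (\ref{eq:kk212}), to compare the two $x_2$-values.

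The hard part will be the Type (12) comparison $x_2^Q<x_2^*$: unlike the clean sign-based arguments for the convex combinations, this is a genuine numerical inequality between two implicit quantities, one defined by $F_{k,k+2}(-3/8,x_2^Q)=0$ and the other by $\phi(x_2^*)=B/(A+B)$. I expect to handle it by using the explicit affine form of $\phi$ on $(-1/4+\ea,1/4-\ea)$, combined with item (5) of Propositions \ref{prop:PV1Cr2Cr} and \ref{prop:CuDef} and Lemma \ref{lem:MainProps0.5}, to turn both equations into affine relations in $\phi(x_2)$, reducing the comparison to an inequality between explicit constants built from $A,B$, $f^U_{k,k+2}(-3/8)$, $f^D_{k+1,k}(-3/8)$, and the value $y^L_{k-.5,k}$ fixed in Lemma \ref{lem:MainProps0.5}.
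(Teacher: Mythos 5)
Your proposal is correct and follows essentially the same route as the paper: reduce $\partial_{x_1}F_{i,j}$ along $\{x_1=-3/8\}$ to the convex combination $\phi(x_2)\,d_xf^U(-3/8)+(1-\phi(x_2))\,d_xf^D(-3/8)$, observe both terms are positive except for the Type (12) pair involving $S_{k+2}$, and locate the switch point where the combination vanishes. The final comparison $Q$ below $P$ is easier than you anticipate: since $A=B=1.5$ forces $P=(-3/8,0)$ and $f^U_{k,k+2}(-3/8)=-f^D_{k+1,k}(-3/8)$ with $\phi(0)=1/2$, the two interpolated terms cancel exactly and $F_{k,k+2}(-3/8,0)=f^L_{k-.5,k}(0)>0$ follows directly from Lemma \ref{lem:MainProps0.5}(1).
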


\begin{proof}
Suppose $S_k,S_{k+1}$ cusp along $x_1 = -3/8$ for squares (1)-(12). 
The $x_2 = -3/8$ cusp locus case is similar. 
Fix distinct sheets $S_i,S_j$ with $i \le  k$, \, $k+1 \le j$, and $(i,j) \ne (k,k+1).$ 
Along the cusp locus, 
\[
\partial_{x_1} F_{i,j}  =  \phi(x_2) d_x f^U_{i,j}(-3/8) + (1 - \phi(x_2)) d_x f^D_{\sigma_D(i), \sigma_D(j)}(-3/8).
\]
For all cases except when $(i,j) = (k,k+2)$ in square (12),  we have $\sigma_D(i) < \sigma_D(j).$
Thus $\partial_{x_1} F_{i,j}$ (where defined) is a convex combination of two positive terms; see Proposition \ref{prop:CuDef}.

In the exceptional case, at the cusp point
$$
d_x f^U_{k,k+2}(-3/8) = 1.5 =-d_xf^D_{k+1,k}(-3/8) =-d_x f^D_{\sigma_D(k), \sigma_D(k+2)}(-3/8).
$$
[The first (resp. second) equality holds since $U$ (resp.  $D$) is a (Cu) edge with the cusp occurring between the sheets that appear above $U$ (resp. $D$) in positions $k$ and $k+1$ (resp. $k+1$ and $k+2$).  The exact evaluation of these derivatives at $-3/8$ is in item (8) of Proposition \ref{prop:CuDef}.]
Since $\phi^{-1}(0) = \{1/2\},$ see (\ref{eq:InterpolatePhi}), this implies there is a unique $(k+1,k+2)$-switch point $P = \partial_{x_1} F_{k,k+2}^{-1}(0) \cap \{x_1 = -3/8\} = (-3/8,0)$ with $-\nabla  F_{k+1,k+2}$ pointing left (resp. right) of the cusp locus above (resp. below) $P$. (Recall that $F_{k,k+2}$ and $F_{k+1,k+2}$ agree to first order along the cusp edge.)
Similarly, using (\ref{eq:08ea}) we evaluate $f^U_{k,k+2}(-3/8) = .12\ea = -f^D_{k+1,k}(-3/8)=-f^D_{\sigma_D(k),\sigma_D(k+2)}(-3/8);$ so  
%(\ref{eq:fkk0.5.12}) implies
\begin{eqnarray*}
F_{k,k+2}(-3/8,0) & = & f^L_{\sigma_L(k),\sigma_L(k+2)}(0) + \phi(0) f^U_{k,k+2}(-3/8) + (1-\phi(0))f^D_{\sigma_D(k),\sigma_D(k+2)}(-3/8)\\
&=&  f^L_{k-0.5, k}(0) >0
% \\
%& = & \frac{\tilde{\phi}(x_2)f^L_{k-1}(x_2) + (1-\tilde{\phi}(x_2))f^L_{k+1}(x_2) -f^L_k(x_2)}{2} + (2\phi(x_2) - 1)f^U_{k,k+2}(-3/8)
\end{eqnarray*}
where the inequality is from item (1) of Lemma \ref{lem:MainProps0.5}.  
%Note $(2\phi(x_2) - 1)f^U_{k,k+2}(-3/8)$ has a unique zero at $x_2 = 0$ and
% $f^L_{k-0.5, k}(x_2)$ has a unique zero at some value $\lambda <0,$ by Lemma \ref{lem:MainProps0.5} item (1).
% Moreover, both expressions are monotonic increasing for $\lambda \le x_2 \le 0.$ 
 Thus, the unique intersection of the crossing locus and the cusp locus, $Q = F_{k,k+2}^{-1}(0) \cap \{x_1 = -3/8\},$
  occurs below $P$ on the cusp locus.

%To prove the last claim, for points $(-3/8,x_2)$ which lie between $P$ and $Q,$ since $x_2 < 0$ we see that 
%$\partial_{x_1} F_{k+1,k+2}(-3/8,x_2) = (2\phi(x_2) - 1)d_xf^U_{k,k+2}(-3/8)$ is negative and so 
%$-\nabla F_{k+1,k+2}(-3/8,x_2)$ points in the direction with more sheets.

\end{proof}

\begin{proposition}
\label{prop:Switches}
Property \ref{pr:switches} holds.
\end{proposition}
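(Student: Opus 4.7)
The plan is to separate the claim into geometric statements about the cusp locus $\Sigma$ and dynamical statements about switch points, then reduce everything to Lemma \ref{lem:Aprops} combined with the explicit model formulas of Section \ref{sec:funcO1}. The geometric part is essentially immediate: Lemma \ref{lem:Aprops}.A1 already asserts that $\Sigma$ lies in the vertical strip $|x_1 - (-3/8)| \le \epsilon_2$ inside $O_2$, coincides with $\{x_1 = -3/8\}$ outside $O_1$, and has slope bounded below in magnitude by $2N+1.1$ when $1/32 \le |x_2| \le 1/16$; this last bound in particular makes the projection $\Sigma \to x_2$-axis injective. The position of the swallowtail $Q$ on $\{x_2 = 0\}$ with $-7/16 < x_1 < -3/8$ and the identification of which two sheets merge on each half of $\Sigma$ come directly from properties (P1)--(P4) of $S$ together with the parametrization and Lemma \ref{lem:crossingx2}.

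For the switch analysis, the central observation is that inside $O_1$ one has $\alpha(r) \equiv 0$, so (\ref{eq:FHCAG}) gives $F_l = H + C\,A_l$ for $l \in \{k,k+1,k+2\}$, hence
\[
F_{l,m} = C\,A_{l,m}, \qquad F_{i,l} = (F_i - H) - C\,A_l, \qquad F_{l,j} = C\,A_l + (H - F_j).
\]
A point on $\Sigma$ is an $(r,s)$-switch point iff the tangential derivative $\partial_\tau F_{r,s}$ along $\Sigma$ vanishes there, where $\tau = (c'(x_2),1)$ parametrizes $\Sigma$. I will treat the four enumerated cases in the order (4), (3), (1), (2).

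For Case (4), $F_{k+2,k+1} = -C\,A_{k+1,k+2}$, and using the explicit expression $\nabla a_{k+1,k+2} = (r+2s)(r,1)$ from Corollary \ref{cor:aijGrad} together with the chain rule through $S\circ T$, I will solve the tangency equation $\partial_\tau A_{k+1,k+2} = 0$ on the lower half $\{s=-2r\}$ of $\Sigma$ and verify that the unique solution is non-degenerate (the second tangential derivative is non-zero by a direct polynomial expansion). Case (3) is the easy case: along $\Sigma$, $\partial_\tau F_{k+2,j} = \partial_\tau(H-F_j) + C\,\partial_\tau A_{k+2}$; the first term is uniformly bounded below by the monotonicity of the $1$-dimensional $f^U, f^D, f^R, f^L$ appearing in $F_j - H$ (an estimate in the spirit of Lemma \ref{lem:Bprops}.B1), while the second is $O(\epsilon_1)$ by (\ref{eq:CACO2}), so no switch occurs.

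Cases (1) and (2) are the main obstacle, because the model term $C\nabla A_k$ (respectively $C\nabla A_{k+1}$) degenerates at $Q$ where all three model gradients coincide. My approach is to write $\partial_\tau F_{i,k} = \partial_\tau(F_i - H) - C\,\partial_\tau A_k$ and argue as follows. Near the boundary of $O_1$ where $\Sigma$ is essentially vertical, $\partial_\tau F_{i,k} \approx \partial_{x_2}(F_i - H)$ has a definite negative sign by the monotonicity of the one-dimensional functions that compose $F_i - H$, while the correction $C\,\partial_\tau A_k$ is $O(\epsilon_1)$. As $\Sigma$ is traversed toward $Q$, the term $C\,\partial_\tau A_k$ varies in a controlled way: using the parametrization (\ref{eq:stParam}) to expand $A_k$ in $(r,s)$-coordinates along $\{s=r\}$ and comparing with $\partial_\tau(F_i - H)$, the tangential derivative $\partial_\tau F_{i,k}$ is seen to be strictly monotone along $\Sigma$, giving a unique non-degenerate zero. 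The constraint $|x_2| \le 1/32$ then follows because for $1/32 \le |x_2| \le 1/16$ the vertical component $\partial_{x_2}(F_i - H)$ still dominates, and outside $|x_2| \ge 1/16$ the cusp is exactly vertical, so $\partial_\tau = \partial_{x_2}$ and the small $C\,\partial_{x_2}A_k$ cannot cancel the dominant monotonic term. Case (2) is identical after interchanging the roles of the upper and lower sheets at the cusp edge.
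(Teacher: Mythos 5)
Your reduction to the model formulas $F_l = H + C\,A_l$ inside $O_1$ and the splitting $F_{i,k} = (F_i-H) - C\,A_k$ is exactly the right starting point, and your treatment of the geometric statements about $\Sigma$ is fine. But the dynamical part of your argument rests on a mistranslation of the definition of a switch point. An $(r,s)$-switch point is a point of the cusp locus where $-\nabla F_{r,s}$ is \emph{tangent} to $\Sigma$, i.e.\ where $\nabla F_{r,s}$ is \emph{parallel} to the tangent vector $\tau$. The equation you write down, $\partial_\tau F_{r,s} = \nabla F_{r,s}\cdot\tau = 0$, instead locates the points where $\nabla F_{r,s}$ is \emph{orthogonal} to $\tau$ — the opposite condition. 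At an actual switch point the tangential derivative is generically \emph{nonzero} (it equals $\pm|\nabla F_{r,s}||\tau|$ there), so solving $\partial_\tau F_{r,s}=0$ in Case (4), and showing $\partial_\tau F_{k+2,j}$ is bounded away from zero in Case (3), prove nothing about the existence, uniqueness, or absence of switch points. The correct scalar equation is the vanishing of the \emph{normal} component, equivalently the equality of slopes $\theta_1 = \theta_2$ where $\theta_1$ is the slope of the tangent to $\Sigma$ and $\theta_2$ is the slope of $\nabla F_{r,s}$; this is how the paper sets up the problem.

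Once the equation is corrected, the remaining work is also more delicate than your sketch suggests. Existence in Case (1) comes from an intermediate-value argument: $\theta_1$ runs from $0$ at the swallowtail point to a value exceeding $2N+1.1$ at $|x_2|=1/32$ (Lemma \ref{lem:Aprops}.A1), while $\theta_2 = \partial_{x_2}F_{i,k}/\partial_{x_1}F_{i,k}$ is pinned in $[\tfrac{1}{2N},\,2N+1]$ by explicit first-derivative bounds on $F_i-H$ and $C A_k$. Uniqueness requires a quantitative Mean Value Theorem comparison, $\theta_1'(t)=2$ versus $|\theta_2'(t)|<2$ along the parametrization $t\mapsto(6t^2,8t^3)$ of $\Sigma_+$, which in turn needs second-derivative bounds on $E=F_i-H$ and the smallness conditions (\ref{eq:CACO3}) and (P4) relating $C$ and $M$; "is seen to be strictly monotone" hides precisely this computation. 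Finally, item (1) also asserts that the $(i,k)$-switch point has $x_2>0$, so you must separately rule out switch points on the lower half of $\Sigma$; there the cusp slope is negative or of magnitude at least $2N+1.1$ while $\theta_2$ remains positive and at most $2N+1$, so the slopes never coincide — this is a sign/magnitude comparison, not a consequence of any monotonicity of a tangential derivative.
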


\begin{proof}
The claimed properties of the cusp locus $\Sigma \subset N(e^2_\alpha)$ are easily verified from the construction of Sections \ref{sec:funcO1} and \ref{sec:Straight}.  [In Section \ref{sec:funcO1}, we started with an explicit model for the swallow tail with cusp locus 
\begin{equation} \label{eq:firstSTlocus}
\Sigma_{st}= \{(6r^2,8r^3) \, | \, r \in \R\}.
\end{equation}
Then, $\Sigma$ is obtained in Section \ref{sec:Straight} by straightening $\Sigma_{st}$ outside of $O_1$ via the application of two diffeomorphisms, $S$ then $T$.  Here, $S$ is a composition of a dialation and translation in the $x_1$-direction that relocates the swallowtail point to $Q= (q_1,0)$ with $-3/8 - \ec< q_1 < -3/8$.  Then, leaving $\Sigma$ fixed for $|x_2| \leq 1/32$, $T$ translates points on the cusp locus in $N(e^2_\alpha) \cap \{|x_2| \geq 1/32\}$ horizontally so that outside of $O_1$ they agree with $x_1 = -1/32$.  That the $x_2$-coordinate is monotonic along all of $\Sigma$ is easily verified in (\ref{eq:firstSTlocus}), and is preserved by $S$ and $T$.  That the $x_1$-coordinate is monotonic along either branch of $\Sigma$ when $|x_2| \leq 1/32$ is checked for (\ref{eq:firstSTlocus}) and is preserved by $S$, with $T$ leaving this portion of $\Sigma$ fixed.  That the minimum of the $x_1$-coordinate is at $Q$ follows when $|x_2|\leq 1/32$, and for $|x_2| \geq 1/32$ the cusp locus is at or to the right 
of $x_1 = -3/8$.]    

We establish the claims about switch points in the following Steps 1-3.

\medskip

\noindent {\bf Step 1.}  {\it There are no switch points outside of $O_1$.}

\medskip

Along the $(k,k+1)$- and $(k,k+2)$-cusp edges, $\nabla F_k = \nabla F_{k+1}$ and $\nabla F_{k} = \nabla F_{k+2}$ respectively.  Outside of $O_1$, $\Sigma$ agrees with the vertical line $x_1= -3/8$.  Therefore, it suffices to check the following. 
\begin{enumerate}
\item[(i)] For any $i < k$ or $j> k+2$, $\partial_{x_1} F_{i,k}$ and $\partial_{x_1} F_{k,j}$ are positive when $x_1 =-3/8$ and $|x_2| \geq R_1$.
\item[(ii)] When $x_1 =-3/8$ and $|x_2| \geq R_1$, $\partial_{x_1} F_{k+1,k+2}$ is non-vanishing.
\end{enumerate}

 Outside of $\overline{L}$, the result is clear since there $F_{i,k}$, $F_{k,j}$, and $F_{k+1,k+2}$ can each be written as the sum of a function of $x_2$ and a function of $x_1$ that does not have a critical point at $x_1=-3/8$.  Within $\overline{L}$, (ii) was established in the proof of Lemma \ref{lem:tildeLST}, Claim 5 where it was shown that $\sgn(\partial_{x_1}F_{k+1,k+2}) = \sgn(x_2)$ in a region of $\overline{L}$ that includes $x_1 =-3/8$.

For (i), suppose $i <k$ and $k+2 <j$. 
% Outside of $\overline{L}$, the result is clear since $F_{i,k}$ is a sum of a function of $x_2$ and a function of $x_1$ that does not have a critical point at $x_1=-3/8$.   
Assuming $(x_1,x_2) \in \overline{L}\cap\{x_1=-3/8\}$, we write
\begin{equation} \label{eq:FiHHFk}
F_{i,k} = (F_i - H)+ (H-F_{k}), \quad \mbox{and} \quad F_{k, j} = (F_k -H)+ (H-F_j). 
\end{equation}
Computing from (\ref{eq:interpolating}) and (\ref{eq:Hdef}) (keeping in mind that $f^U_i=f^D_i$) gives
\[
\begin{array}{cl} \partial_{x_1}(F_i-H)(-3/8,x_1)  & = (1 - \psi(x_2))d_xf^U_{i,k+2}(-3/8) + \psi(x_2) \cdot d_x(f^U_{i} - \widehat{f}_{k+1})(-3/8) \\
& \geq (1 - \psi(x_2))d_xf^U_{i,k+2}(-3/8) + \psi(x_2) \cdot d_xf^U_{i,k}(-3/8) 
\end{array}
\]
where we used that $d_x\widehat{f}_{k+1}(-3/8) \leq d_xf^U_{k}(-3/8)$ as in (\ref{eq:Est21}).  
Item (8') of Proposition \ref{prop:CuDef} 
implies $d_x f^U_{i,k+2}(-3/8) = k+2-i$ and $d_xf^U_{i,k} = (-3/8) = k+1/2-i.$
Thus,
\begin{equation}  \label{eq:x1FiHgeq}
\begin{array}{cl} \partial_{x_1}(F_i-H)(-3/8,x_2)  & \geq (1-\psi(x_2))(k+2-i) + \psi(x_2) \cdot ( k+1/2 -i)  \\
 & \geq (1-\psi(x_2))(3) + \psi(x_2) \cdot ( 3/2) \geq 9/4.
\end{array}
\end{equation}
where in the last inequality we used that $(1-\psi(x_2))\geq 1/2$;  see (\ref{eq:psibounds}).

In a similar manner, with the estimate
%\footnote{\dr{This requirement needs to be added to (21).}\ms{2/26/15: Do you mean equation (\ref{eq:estfk1hat})? 
%Should Proposition \ref{prop:fhatexist} be rewritten or is it `obvious' enough from Figure \ref{fig:fHat}?}} 
$d_x\widehat{f}_{k+1}(-3/8)  \geq d_x f^U_{k+2}(-3/8)$  from (\ref{eq:linearfk1hat}) 
%(\ref{eq:Est21}), 
we find
\begin{equation}  \label{eq:HFjjk2124}
\begin{array}{cl} \partial_{x_1}(H-F_j)(-3/8,x_2)  & \geq [1 - \psi(x_2)]d_xf^U_{k+2,j}(-3/8) + \psi(x_2) \cdot d_xf^U_{k+2,j}(-3/8) \\
& = (j-(k+2)) \geq 1.
\end{array}
\end{equation}
%when $x_1 = -3/8$.

For the other terms in (\ref{eq:FiHHFk}), we can compute from (\ref{eq:FHCAG})
\[
H-F_k =  -[1-\alpha(r)] \cdot C A_k + \alpha(r) \cdot ( H - G_k).
\]
Since $r$ is a polar coordinate centered at $(-3/8,0),$ 
when $x_1= -3/8$, $\dd r/\dd x_1 = 0$, and we have
\begin{equation} \label{eq:HGkpsi1}
\begin{array}{cl} \partial_{x_1}(H-F_k)(-3/8,x_2)  & =
-[1-\alpha(r)] \cdot C \partial_{x_1}A_k(-3/8,x_2) + \alpha(r) \cdot \partial_{x_1}( H - G_k)(-3/8,x_2) \\
 &  =  O(\ea) + \alpha(r) \partial_{x_1}( H - G_k)(-3/8,x_2)
\end{array}
\end{equation}
where $O(\ea)$
 indicates a term with absolute value bounded by $\ea$; see 
(\ref{eq:CACO2}).
Equations (\ref{eq:Gkdef})-(\ref{eq:Hdef}) imply
\begin{equation} \label{eq:HGkpsi}
\partial_{x_1}( H - G_k)(-3/8,x_2)= (1-\psi(x_2))\cdot d_x f^U_{k+2,k}(-3/8) + \psi(x_2) \cdot d_x(\widehat{f}_{k+1} - f^U_k)(-3/8)
\end{equation}
Note that for all $x_2$, $1/2 \leq (1-\psi(x_2))\leq 1$, so since $d_x f^U_{k+2,k}(-3/8)= -1.5$ (by (8') of Proposition \ref{prop:CuDef})  and $d_x(\widehat{f}_{k+1} - f^U_k)(-3/8) \leq 0$ (by (\ref{eq:Est21})) we deduce from (\ref{eq:HGkpsi1}) and (\ref{eq:HGkpsi}) that $\partial_{x_1}(H-F_k)(-3/8,x_2) <\ea$.  
Combining this observation with (\ref{eq:HFjjk2124}) and (\ref{eq:FiHHFk}) then establishes 
\[
\partial_{x_1} F_{k,j} = \partial_{x_1}(F_k -H)+ \partial_{x_1}(H-F_j) \geq -\ea + 1 > 0.
\]

The inequality $\partial_{x_1} F_{i,k} >0$ remains to be proven.  For this we use  (\ref{eq:HGkpsi}), (\ref{eq:Est21}), and (\ref{eq:linearfk1hat}) to bound
\[
|\partial_{x_1}(H - G_k)(-3/8,x_2)| \leq  |d_x f^U_{k+2,k}(x_2)| = 1.5.
\]
In view of (\ref{eq:x1FiHgeq}) and (\ref{eq:HGkpsi1}), we then achieve the desired inequality by 
\[
\partial_{x_1} F_{i,k}(-3/8,x_2) = \partial_{x_1}(F_i - H)(-3/8,x_2)+ (H-F_{k})(-3/8,x_2) \geq 9/4 - (1.5 +\epsilon_1) > 0.
\]

\medskip

\noindent {\bf Step 2.}  {\it Existence and uniqueness of $(i,k)$ and $(k+1,j)$ switch points within $O_1 \cap \{x_2 \geq 0\}$ for $i<k$ and $k+2 < j$.}

\medskip

Our outline for 
%establishing statements (1) and (2) of Proposition \ref{prop:SwPoints} concerning existence and uniqueness of $(i,k)$ and $(k+1,j)$ switch points within the upper-half of $O_1$ when $i < k$ and $k+2< j$ 
proving Step 2 is as follows.  We consider functions $\theta_1$ and $\theta_2$ defined on the upper branch of the cusp locus, $\Sigma_+ := \Sigma \cap O_1 \cap \{x_2 \geq 0\}$, that are respectively the slope of the tangent to $\Sigma_+$ and the slope of $\nabla F_{i,k}$ or $\nabla F_{k+1,j}= \nabla F_{k,j}$ along $\Sigma_+$.  Our goal is to show that there is a unique point in $\Sigma_+$ where $\theta_1$ and $\theta_2$ agree.

We begin by establishing some slightly more detailed bounds for partial derivatives of $F_{i,k}$ 
in the strip $O_1 \cap \left([-3/8 - \ec, -3/8 + \ec] \times [-1,1] \right) $ that contains $\Sigma$ (by Lemma \ref{lem:Aprops}.A1).
%\footnote{Need to right this property into the Straightening the cusp edge section.\ms{2/26/15:Where is this section?}} ???. 
The $F_{k,j}$ case is similar.
 Note that, within $O_1$, the formula (\ref{eq:FHCAG}) for $F_k$ simplifies to $F_k = H + C \, A_{k}$. Thus, for any $(x_1,x_2) \in O_1$, 
%with $|x_1-(-3/8)| < \epsilon_\bullet$,
 we have
\[
F_{i,k} = f^L_{i,k}(x_2) + [1-\psi(x_2)]\cdot f^U_{i,k+2}(x_1) + \psi(x_2)\cdot [ f^U_i(x_1) -\widehat{f}_{k+1}(x_1)] - C \, A_k.
\]
Now, for $-1/4 \leq x_2 \leq 1/4$, Corollary \ref{cor:summary} item (5) gives $|d_x f^L_{i,k}(x_2) - 2(k-i)| \le N \epsilon_1.$   In addition, using  item (3) of Corollary \ref{cor:summary} and (\ref{eq:fhat14est}), for $x_1 \leq -1/4$ we have $|f^U_{i,k+2}(x_1)|<  N \epsilon_1$ and $|f^U_i(x_1) -\widehat{f}_{k+1}(x_1)| <  N \epsilon_1+N\epsilon_1$.  Combining these observations with $|\psi'(x_2)| <3$ from (\ref{eq:psibounds}) as well as (\ref{eq:CACO2}) gives that
\[
\begin{array}{cl} \partial_{x_2} F_{i,k}(x_1,x_2) & = d_{x}f^L_{i,k}(x_2) + \psi'(x_2) \cdot [ -f^U_{i,k+2}(x_1) + f^U_i(x_1) -\widehat{f}_{k+1}(x_1)] - C \, \partial_{x_2} A_k \\
   & \leq  2(k-i) + N\epsilon_1 + 3 \cdot[ 3N \epsilon_1] + \epsilon_1 \leq  2 N + 1.
\end{array}
\]
On the other hand, the previously established estimate (\ref{eq:parx2Fi}) together with (\ref{eq:CACO2}) gives the lower bound $1/2 \leq \partial_{x_2} F_{i,k}(x_1,x_2)$.  In summary, we have
\begin{equation} \label{eq:12leqx2}
1/2 \leq \partial_{x_2} F_{i,k}(x_1,x_2) \leq 2 N + 1, \quad \forall (x_1,x_2) \in O_1.
\end{equation}

For the purpose of bounding 
\[
\partial_{x_1} F_{i,k}= \partial_{x_1}(F_i-H) - C\,\partial_{x_1}A_k = (1 - \psi(x_2))d_xf^U_{i,k+2}(x_1) + \psi(x_2) \cdot d_x(f^U_{i} - \widehat{f}_{k+1})(x_1)-C\,\partial_{x_1}A_k,
\]
 we restrict attention to  $(x_1,x_2) \in O_1$ with $x_1 \in [-3/8 - \ec, -3/8 + \ec].$ 
  Proposition \ref{prop:CuDef}, item (8') implies
%  \[
% d_xf^U_{i,k+2}(x_1) = k+2 -i  \quad \mbox{while}  \quad d_x(f^U_{i} - \widehat{f}_{k+1})(x_1)= K
% \] 
% \footnote{\ms{6/18/15: $d_x(f^U_{i} - \widehat{f}_{k+1})(x_1)= K$ uses that $d_x\widehat{f}_{k+1})$ is constant in a neighborhood of $-3/8.$ This is explicit in the proof of Proposition \ref{prop:fhatexist}.
% Also maybe right after (\ref{eq:Est21})  there should be the claim that $k-i+1/2 = d_x f_{i,k}(-3/8).$
% Then we get that this is less than $d_x f_{i,k+2}(-3/8) = k+2-i.$}}
 %\footnote{\dr{This used the revised statement about $\widehat{f}_{k+1}$ is linear in $[-3/8 - \epsilon_\bullet, -3/8 + \epsilon_\bullet]$ with slope $\ell$ satisfying $d_x f^{PV}_{k+1} \leq \ell < d_xf^{CUSP}_{k+1}(-3/8)$.  The linear assumption is mainly there for the $2$nd derivative estimates but can be done without if necessary. (DR)}}
 %\ms{2/26/15: It seems like you use $B:= d_x(f^U_{i} - \widehat{f}_{k+1}) \ge d_xf_{i,k+1} = k-i+1$ from Figure in Section \ref{sec:Straight}. Is this correct?} } 
 $d_xf^U_{i,k+2}(x_1) = k+2 -i$, and in conjunction with (\ref{eq:linearfk1hat}) implies that $d_x(f^U_{i} - \widehat{f}_{k+1})(x_1)= K$
where  $K$ is constant with $k+1/2-i \leq K \leq k+2-i$.  
Thus, again using (\ref{eq:CACO2}) to bound the $C\,\partial_{x_1}A_k$ term, we have
\begin{equation}  \label{eq:12leqx1}
1 \leq k+1/2-i- \epsilon_1 \leq \partial_{x_1}F_{i,k} \leq k+2-i+\epsilon_1 \leq N ,  
\end{equation} 
for all $(x_1,x_2) \in O_1$ with  $x_1 \in [-3/8 - \ec, -3/8 + \ec].$

In proving uniqueness, we will also use some bounds on second
%\footnote{Used a new bound $|\psi''(x) \leq 65|$.\ms{2/26/15: This is already incorporated when you wrote ST construction section.}} 
order partial derivatives.  
Note that defining functions of sheets that meet a cusp edge are $C^1$ but not $C^2$ along the cusp edge; see the standard form in Proposition \ref{prop:CuDef}.  However, we can write
\begin{equation}  \label{eq:E}
F_{i,k} = E - C A_k  \quad \mbox{where} \quad E = F_i-H
\end{equation}
and $E$ is $C^\infty$.  
Consider $(x_1,x_2) \in O_1 \cap \{-3/8-\ec \le x_1 \le -3/8+\ec\}.$
The linearity of $f^U_i(x_1)$ (when $i \ne k,k+1$) and $\widehat{f}_{k+1}(x_1)$ from Proposition \ref{prop:CuDef} item (8') and (\ref{eq:linearfk1hat}), the bounds on the derivative of $\psi$ in (\ref{eq:psibounds}), and the above computations  imply the following upper bounds:
%\footnote{\ms{6/19/15: Do we need to observe (\ref{eq:phix2}) here as well? Is this for $O_1$ or $O_1 \cap \{|x_1 --3/8| <\epsilon_2\}$?}}
\[
\left|\frac{\partial^2E}{\partial x_1^2} (x_1,x_2)\right| = 0;
%C \left|\frac{\partial^2A_{k}}{\partial x_1^2}\right| \leq \epsilon_1 ;
\]
\[
\left|\frac{\partial^2 E}{\partial x_2^2} (x_1,x_2)\right| = \left|\psi''(x_2)\cdot [ -f^U_{i,k+2}(x_1) + f^U_i(x_1) -\widehat{f}_{k+1}(x_1)]
%- C \, \frac{\partial^2 A_k}{\partial x_2^2}  
\right |  \leq 65 \cdot [ 3N \epsilon_1]  < 1/2;
\]
\[
\left|\frac{\partial^2 E}{\partial x_1\partial x_2} (x_1,x_2)\right| = \left|  \psi'(x_2)\cdot[K- k+2 -i ] 
%- C\,\frac{\partial^2 A_k}{\partial x_1\partial x_2} (x_1,x_2)
\right| \leq 3 \cdot 3/2 <5. 
\]

With these preliminaries out of the way, we now examine the functions $\theta_1$ and $\theta_2$.  For $(x_1,x_2) \in \Sigma_+$, by definition, we have
\[
\theta_2(x_1,x_2) = \frac{ \partial_{x_2}F_{i,k}}{\partial_{x_1}F_{i,k}}. 
\]
Using the inequalities (\ref{eq:12leqx2}) and (\ref{eq:12leqx1}) shows that for any $(x_1,x_2) \in \Sigma_+$,
\[
 \frac{1}{2 \,N}\leq \theta_2(x_1,x_2) \leq 2 N + 1.
\]
On the other hand, $\theta_1(x_1,x_2)$ is the slope of the tangent vector to $\Sigma_+$.  For $0 \leq x_2 \leq R_1/2 = 1/32$, $\theta_1$ takes non-negative real number values.  Moreover, at the swallow tail point, $\theta_1$ is $0$ while by  Lemma \ref{lem:Aprops}.A1  we 
%\dr{at by ???, we}\footnote{This bound and the next needs to be written into straightening section\ms{2/26/15:What straightening section?}. \dr{See 13.2.2 ``Straightening the cusp edge''.}} 
have  $\theta_1(-3/8,1/32) > 2N+1.1$.  Therefore, the Intermediate Value Theorem implies that there is at least $1$ point in $\Sigma_+ \cap \{ x_2 \leq 1/32\}$ where $\theta_1 = \theta_2$, i.e. there is at least one $(i,k)$-switch point.

Observe that Lemma \ref{lem:Aprops}.A1 gives  
\[
\theta_1(x_1,x_2) \in [2N+1.1, +\infty) \cup \{\infty\} \cup (-\infty, 0), \mbox{   when $x_2 \geq 1/32$,}
\]
so to verify the uniqueness claim of Step 2 we need only check that there is only one point in $\Sigma_+ \cap \{0 \leq x_2 \leq 1/32\}$ where $\theta_1$ and $\theta_2$ agree.  For this purpose, we use the parametrization 
\begin{equation}  \label{eq:paramSigma}
\left[\, 0\, ,\, \frac{M^{1/3}}{4^{4/3}} \,\right] \stackrel{\cong}{\rightarrow} \Sigma_+ \cap \{ x_2 \leq 1/32\}, \quad t \mapsto \left( x_1(t),x_2(t) \right) = \left( \frac{1}{M}(6 t^2 -X), \frac{1}{M}(8 t^3) \right).
\end{equation} 
Here, the constants $X$ and $M>0$ are those used in the definition of the diffeomorphism,  $S(x_1,x_2) = (M\,x_1+X, M\, x_2)$, from Section \ref{sec:Straight}.  Now, the second diffeomorphism $T$ used in that section is the identity when $|x_2| \leq 1/32$, so from (\ref{eq:6r28r3}) and (\ref{eq:SigmaT1S1}) we have
\[
\Sigma_+ = S^{-1}(\{(6 t^2, 8 t^3)\, |\, t\geq 0 \ \})\cap \{ x_2 \leq 1/32\}
\]
so that allowing $0 \leq t\leq \frac{M^{1/3}}{4^{4/3}}$ as in (\ref{eq:paramSigma}) does in fact parametrize $\Sigma_+$.

Now, as 
\[
x_1'(t) = \frac{12 \, t}{M}, \quad \mbox{and} \quad x_2'(t) = \frac{24 \, t^2}{M},
\]
we compute
\begin{equation}  \label{eq:theta1}
\theta_1(t) = \frac{x_2'(t)}{x_1'(t)} = 2 t, \quad \mbox{and} \quad \theta_1'(t) = 2.
\end{equation}
Therefore, the uniqueness statement will follow from the Mean Value Theorem provided we can show that $|\theta_2'(t)| < 2$ for all $0 \leq t\leq \frac{M^{1/3}}{4^{4/3}}$.  

In verifying that $|\theta_2'(t)|<2$, we will shorten notation to $F = F_{i,k}$.
% and let $|| u || = \sup \{|u(x_1,x_2)|\,  :\, (x_1,x_2) \in \Sigma_+\}$ for arbitrary $u.$  
We have
\[
\theta_2(t) = \frac{\partial_{x_2} F(x(t))}{\partial_{x_1} F(x(t))}.
\]
Moreover, using notation as in (\ref{eq:E}), 
\begin{equation} \label{eq:E2}
\partial_{x_m} F(x(t)) = \partial_{x_m} E(x(t)) - C \partial_{x_m} A_k(x(t)).
\end{equation}
The second term can be computed quite explicitly using Proposition \ref{prop:stCompute}.  There, a parametrization of the preliminary version of the swallowtail $L_{st}$ by variables $(r,s)$ is given, and the upper half of the cusp edge is given by setting $r=s$ with the gradient of the local defining function along the cusp edge given by
\[
\nabla a_{k}(x_1(s,s),x_2(s,s)) = (-s^2,s).
\]
(as in equation (\ref{eq:nablaz})).
After the modification of Section \ref{sec:Straight}, for $0 \leq x_2 \leq 1/32$, the local defining function becomes
\[
A_{k}(x_1,x_2) = a_{k}\circ S(x_1,x_2).
\]  
Moreover, the parametrization of the cusp edge used in (\ref{eq:paramSigma}) is related to the parametrization from Proposition \ref{prop:stCompute} by 
\[
(x_1(t),x_2(t)) = S^{-1}(x_1(t,t),x_2(t,t)).   
\]
Therefore, since the chain rule gives
\[
\nabla A_{k}(x_1,x_2) = M \cdot \nabla a_{k}(S(x_1,x_2)), 
\]
we have
\begin{equation} \label{eq:E3}
\nabla A_{k}(x(t)) = (\partial_{x_1}A_{k}(x(t)), \partial_{x_2}A_{k}(x(t))) = M\cdot(-t^2,t).
\end{equation}

Now, combining (\ref{eq:E2}) and (\ref{eq:E3})   gives 
\[
\theta_2(t) = \frac{\partial_{x_2} F(x(t))}{\partial_{x_1} F(x(t))} = \frac{\partial_{x_2} E(x(t))- M \cdot C \,t}{\partial_{x_1} E(x(t))+ M \cdot C \,t^2}
\]
We can then compute $\theta'_2(t)$ using the quotient and chain rules
\begin{align*}
\theta_2'(t)  = & \frac{\left[\frac{\partial^2E}{\partial x_1 \partial x_2} \cdot x'_1(t) + \frac{\partial^2E}{\partial x_2^2}\cdot x_2'(t) - M\cdot C\right] \cdot \partial_{x_1}F - \partial_{x_2}F \cdot \left[ \frac{\partial^2E}{\partial x_1^2}\cdot x_1'(t) + \frac{\partial^2E}{\partial x_1\partial x_2} \cdot x_2'(t) + 2 M \cdot C \, t\right]}{\partial_{x_1}F(x(t))^2}. \\
\end{align*}
Using that the absolute values of the second partial derivatives of $E$ are bounded by $5$, together with (\ref{eq:12leqx2}) and (\ref{eq:12leqx1}),
% = \sup \{ |u(x_1,x_2)| \, : \, (x_1,x_2) \in \Sigma_+, \,\, 0 \leq x_2 \leq 1/32\}$, 
we have
\begin{align*}
|\theta_2'(t)| \leq & \left[5|x'_1(t)| + 5| x_2'(t)| + M\cdot C\right] \cdot N + (2N+1)\cdot \left[ 5|x_1'(t)| + 5| x_2'(t)| + |2 M \cdot C \, t|\right]. \\
\end{align*}

%\begin{align*}
%|\theta_2'(s)|  = & \left| \frac{[\partial_{x_2} F(x(s))]' \cdot \partial_{x_1} F(x(s)) - \partial_{x_2} F(x(s)) \cdot[\partial_{x_1} F(x(s))]'}{(\partial_{x_1} F(x(s)))^2} \right| \\
 %\leq & \frac{\left[ \|\frac{\partial^2F}{\partial x_1 \partial x_2}\| \cdot |x'_1(s)| + \|\frac{\partial^2F}{\partial x_2^2}\|\cdot |x_2'(s)| \right] \cdot \| %\partial_{x_1}F\| + \|\partial_{x_2}F\|\cdot \left[ \|\frac{\partial^2F}{\partial x_1^2}\|\cdot|x_1'(s)| + \|\frac{\partial^2F}{\partial x_1\partial x_2}\|\cdot|x_2'(s)| \right]}{|\partial_{x_1}F(x(s))|^2} \\
% \leq & 2\left( \frac{24}{4^{8/3} M^{1/3}}\right) \frac{N(1/24) + 2 N + 1 }{(1/24)^2}.
% \leq & \left(\left[ \|\frac{\partial^2F}{\partial x_1 \partial x_2}\| \cdot |x'_1(s)| + \|\frac{\partial^2F}{\partial x_2^2}\|\cdot |x_2'(s)| \right] \cdot \| \partial_{x_1}F\| + \right. \\
% & \left. \|\partial_{x_2}F\|\cdot \left[ \|\frac{\partial^2F}{\partial x_1^2}\|\cdot|x_1'(s)| + \|\frac{\partial^2F}{\partial x_1\partial x_2}\|\cdot|x_2'(s)| \right]\right)/|\partial_{x_1}F(x(s))|^2 
%\end{align*}
%[To obtain the final inequality, we used the bound of $1$ for all norms of second derivatives; used that 
%\[
%\mbox{Max}\{|x'_1(s)|,|x'_2(s)| \} \leq \mbox{Max}\{|x'_1\left(\frac{M^{1/3}}{4^{4/3}}\right)|,|x'_2\left(\frac{M^{1/3}}{4^{4/3}}\right)|\} = \frac{24}{4^{8/3} M^{1/3}};
%\]
%and used bounds from (\ref{eq:12leqx2}) and (\ref{eq:12leqx1}).]  
%Note that\footnote{\ms{2/26/15: Setting $x'_1(s) = 12s/M, x_2'(s)=24s^2/M,$ I get $24s^2/M =24/(4^{2/3}M^{1/3}).$} \dr{1/27/16: I get what is written. }}
\[
\mbox{Max}\{|x'_1(t)|,|x'_2(t)| \} \leq \mbox{Max}\left\{\left|x'_1\left(\frac{M^{1/3}}{4^{4/3}}\right)\right|,\left|x'_2\left(\frac{M^{1/3}}{4^{4/3}}\right)\right|\right\} = \frac{24}{4^{8/3} M^{1/3}} < \frac{24}{M^{1/3}},
\]
so we can estimate
\begin{align*}
|\theta_2'(t)| & \leq \left[ \frac{240}{M^{1/3}} + M\cdot C\right]N + (2 N +1) \left[ \frac{240}{M^{1/3}} + 2 M C\left(\frac{M^{1/3}}{4^{4/3}} \right) \right] \\
 & \leq (2 N+1) \left( \frac{480}{M^{1/3}} + (M+M^{4/3})C \right) < 1/2 + 1/2.
\end{align*}
where at the last inequality we used the lower bound on $M$ from (P4) in Section \ref{sec:Straight} and the upper bound (\ref{eq:CACO3}) on $C$.
%\footnote{\ms{2/26/15:Is this suppose to be (\ref{eq:CAijCO2}) and (\ref{eq:CACO2})? How does $C$ control $u?$} \dr{1/27:  It was supposed to be $M$ is really big, so it makes the $x_m'$ terms small, and $C$ is even smaller than $M$ is big.  The second part needed to be added to the definition of $C$.}} 
This gives  the desired inequality $|\theta_2'(t)|<2$.

%Now, as $M$ was chosen to satisfy\footnote{Write this in.} (\ref{}), it follows that $|\theta_2'(s)|<2$ which completes the proof of (1) and (2).

%The statement (3) is established as follows.  With $\theta_1$ and $\theta_2$ now considered on $\Sigma \cap \{ x_2 \leq 0 \}$.  

\medskip

\noindent {\bf Step 3.}  {\it Non-existence of $(i,k)$ and $(k+2,j)$ switch points within $O_1 \cap \{x_2 \leq 0\}$ for $i<k$ and $k+2 < j$.}

\medskip

We establish the non-existence of $(i,k)$ switch points. The $(k+2,j)$ switch points are similar.
 When $x_2 \leq 0$, the slope of the cusp locus is negative or, when $x_2 \leq -1/32$, contained in $(-\infty, -(2N+1.1)] \cup \{\infty\} \cup [2N+1.1, +\infty)$ by Lemma \ref{lem:Aprops}.A1.  In contrast, (\ref{eq:12leqx2}) and (\ref{eq:12leqx1}) show that the slope of $\nabla F_{i,k}$ is positive and bounded above by $2 N+1 < 2N+1.1$.  

\medskip

\noindent {\bf Step 4.}  {\it Existence and uniqueness of $(k+1,k+2)$ and $(k+2,k+1)$ switch points.}

\medskip

Again, we let $\theta_1(t)$ and $\theta_2(t)$ denote the slope of the cusp locus and $\nabla F_{k+1,k+2}$ respectively at the point 
\[
(x_1(t),x_2(t)) = T^{-1} \circ S^{-1} ( 6 t^2, 8 t^3).
\]  To obtain the entire portion of the cusp locus within $O_1$ where $-1/16 \leq x_2 \leq 1/16$ we need to consider $-M^{1/3}/2^{7/3} \leq t \leq M^{1/3}/2^{7/3}$.  Within $O_1$,  we have
\[
F_{k+1,k+2} = C A_{k+1,k+2} = C\cdot a_{k+1,k+2} \circ S \circ T.
\]
Thus, for a point $(w_1,w_2) = T^{-1} \circ S^{-1} (  x_1,  x_2)$ in $O_1$, we can compute the differential of $\left(\frac{1}{C}\right) F_{k+1,k+2}$ to be
\[
d(a_{k+1,k+2} \circ S \circ T)(T^{-1} \circ S^{-1} ( x_1, x_2)) = d a_{k+1,k+2} (x_1,x_2) \cdot dS ( S^{-1}(x_1,x_2)) \cdot d T (w_1, w_2).
\]
Now, from definitions of $S$ and $T$ in Section \ref{sec:Straight}, we have 
\begin{equation} \label{eq:dScc}
dS = \left[ \begin{array}{cc} M & 0 \\ 0 & M \end{array} \right] \quad \mbox{and} \quad d T(w_1,w_2) = \left[ \begin{array}{cc} 1 & b'(|w_2|) \cdot \sgn(w_2) \\ 0 & 1 \end{array} \right].
\end{equation}

To evaluate $d a_{k+1,k+2} (x_1,x_2)$ using Corollary \ref{cor:aijGrad} we need to make use of the parametrization from (\ref{eq:stParam}) with parameter values belonging to the wedge $W$ (see Figure \ref{fig:STParam}). To parametrize the upper half of the cusp locus we take $(r,s) = (t,t)$ with $0 \leq t \leq M^{1/3}/2^{7/3}$, and for the bottom half we take $(r,s) = (t, -2t)$ with  $-M^{1/3}/2^{7/3} \leq  t \leq 0$.  Notice that these substitutions into (\ref{eq:stParam}) produce $(x_1(t),x_2(t)) = ( 6 t^2, 8 t^3)$, so that plugging $(r,s) = (t,t)$ or $(r,s) = (t, -2t)$ into Corollary \ref{cor:aijGrad} to evaluate the  $d a_{k+1,k+2} (x_1,x_2)$ term from  (\ref{eq:dScc}) and then transposing the result will give the vector whose slope is $\theta_2(t)$.  In this manner, Corollary \ref{cor:aijGrad} produces
\[
d a_{k+1,k+2}(x_1(t),x_2(t)) = \left\{ \begin{array}{cr} (t+2t)(t,1) & \mbox{if $t \geq 0$,} \\
(t+2(-2t)) (t,1) & \mbox{if $t \leq 0$.} \end{array} \right.
\]
Thus, regardless of the sign of $t$, using (\ref{eq:dScc}) we have
\[
\theta_2(t) = \frac{t \cdot b'(|w_2|) \cdot \sgn(w_2) + 1}{t}.
\]
When $|w_2| \leq 1/32$, this simplifies to $\theta_2(t) = 1/t$, and in this range we have already computed in (\ref{eq:theta1}) that $\theta_1(t) = 2t$.  Therefore, there is a $(k+1,k+2)$-switch point at $t= 1/\sqrt{2}$, and a $(k+2,k+1)$-switch point $t= -1/\sqrt{2}$.   These points are both in the range $|t|  \leq M^{1/3}/2^{8/3}$ where $|w_2| \leq 1/32$ by the requirement (P4) from Section \ref{sec:Straight}.

On the other hand, when $1/32 \leq |w_2| \leq 1/16$,  Lemma \ref{lem:Aprops}.A1 gives that
\[
|\theta_1(t)| \geq 2N+1.1,
\]  
while we can use the defining properties of $b$ in Section \ref{sec:Straight} to estimate
\[
|\theta_2(t)| \leq |b'(|w_2|)| + \frac{1}{|t|} \leq \frac{1}{3(2N+1.1)} + \frac{1}{|t|} \leq 1 
\]
where we have used $M^{1/3}/2^{8/3} \leq |t| \leq M^{1/3}/2^{7/3}$ and
%\footnote{\ms{2/26/15: I assume second $2^{8/3}$ is $2^{7/3}$.} \dr{1-27.  Right, thanks!}} 
the lower bound on $M$ from (P4) of \ref{sec:Straight} to obtain the last inequality.
%\end{proof}

\end{proof}

%\subsubsection{Proofs of Properties \ref{pr:STBnew},  \ref{pr:leftC}, \ref{pr:monoLtilde} and \ref{pr:SwitchBarriers}}
%We prove the remaining properties of Section \ref{sec:SwallowComp} which have shorter proofs.

\begin{proposition}
\label{prop:PropertySTBnew}
Property \ref{pr:STBnew} holds.
%\footnote{\ms{5/31/15: corrections needed in region containing $P_2$ as written in property statement. I will change the region to $[-1-1/32, 1/2] \times [-17/64, -1/4].$}}
\end{proposition}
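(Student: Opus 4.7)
The plan is to adapt the strategy of Proposition \ref{prop:PropertiesMonotonicityIV}, which established the analogous Property \ref{pr:monotonicityIV} for squares (1)--(12), extending the vertical barrier to cover the enlarged range $x_2 \in [-3/8, 3/4]$ and adding the horizontal barrier $P_2$ that reaches across the cusp locus. The common ingredient is to choose both barriers so that either $x_1$ or $x_2$ lies in the narrow interval $[1/3-\ea, 1/3+\ea]$; by item (6) of Propositions \ref{prop:PV1Cr2Cr} and \ref{prop:CuDef}, the corresponding derivatives $d_x f^X_{i,j}$ of the one-variable functions are of order $\ec$ or $\ed$ there, which (together with the constant choices (\ref{eq:epsilon3})--(\ref{eq:epsilon4})) makes one of the partials of $F_{i,j}$ negligibly small compared to the other.

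For $P_1$, I would take the line segment from $(1/3-\delta, -3/8)$ to $(1/3+\delta, 3/4)$ with $\delta \le \ea/2$. The condition $x_1 \in [1/3-\ea/2, 1/3+\ea/2]$ places $P_1$ outside $O_2$, in the region where $\widehat{f}_{k+1} = f^U_{k+1}$ and $\widehat{f}_{k+2} = f^U_{k+2}$ by (\ref{eq:hatk1})--(\ref{eq:hatk2}), and in the $\phi(x_1) = 1$ regime. Consequently, by (\ref{eq:FHCAG}) the coefficient $\alpha(r) = 1$ and in (\ref{eq:Gkdef})--(\ref{eq:Gk2def}) the $f^{ST}$ terms drop out, so the defining functions reduce along $P_1$ to the same form as in squares (1)--(12). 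This yields $|\partial_{x_1} F_{i,j}| \le N\max(\ec, \ed)$ uniformly, while $|\partial_{x_2} F_{i,j}|$ can be bounded below in three sub-cases: for $x_2 \in [1/4, \beta^R_{i,j}-\epsilon]$ via the constant $M$ from (\ref{eq:Mdef}); for $x_2 \in [-3/8, -1/4]$ via a new compactness constant $M'' > 0$ (available since $\tilde{\eta}^R_{j,i}$ lies in $[-3/4, -1/2]$); and for the transitional range $x_2 \in [-1/4, 1/4]$ by a pair-by-pair check combining the $\phi'(x_2)\,[f^U_{i,j} - f^D_{\sigma_D(i), \sigma_D(j)}]$ contribution with $d_x f^R_{i,j}(x_2)$, using the tabulated $y^X_{i,j}$-values from Section \ref{sec:Constructions}. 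In each case, the ratio $|\partial_{x_2} F_{i,j} / \partial_{x_1} F_{i,j}|$ exceeds the slope of $P_1$, which is the standard geometric condition for $-\nabla F_{i,j}$ to point transversally to the right of $P_1$.

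For $P_2$, I would take a polygonal path starting at the lower endpoint of $P_1$, going monotonically leftward in $x_1$ and terminating on the left boundary of $N(e^2_\alpha)$, staying entirely in $\{-3/8 \le x_2 \le -1/4\}$. Throughout $P_2$, $\phi(x_2) = \phi'(x_2) = 0$ and $\psi(x_2) = 0$, so for non-swallowtail pairs
\[
\partial_{x_2} F_{i,j}(x_1, x_2) = \phi(x_1)\, d_x f^R_{i,j}(x_2) + (1-\phi(x_1))\, d_x f^L_{\sigma_L(i), \sigma_L(j)}(x_2)
\]
is a convex combination of two terms of the same positive sign and admitting a uniform positive lower bound by compactness (again because the only $\tilde{\eta}^R_{j,i}$ and $\tilde{\eta}^L_{j,i}$ lie in $[-3/4, -1/2]$). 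For the differences involving $\widetilde{F}_k$, the calculation from Claim 2 of Lemma \ref{lem:tildeLST} reduces $\partial_{x_2}(F_i - H)$ to $d_x f^L_{i,k}(x_2)$ (since $\psi' = 0$ there), again positive and bounded below; similarly for $\widetilde{F}_k - F_j$. Choosing the slope of $P_2$ of sufficiently small absolute value then ensures that $-\nabla F_{i,j}$ points transversally below $P_2$. The pair $(k+1, k+2)$, for which $F_{k+1, k+2} < 0$ throughout $P_2$, is excluded by the hypothesis $F_{i,j} > 0$, consistent with the $(k+2, k+1)$ exclusion in the property statement.

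The main obstacle will be the transitional case analysis for $P_1$ in $x_2 \in [-1/4, 1/4]$, where the term $\phi'(x_2)\,[f^U - f^D]$ is of order one for swallowtail pairs (such as $(k+1, k+2)$, for which the explicit formula $G_{k+1, k+2} = f^R_{k+1, k+2}(x_2) + (2\phi(x_2)-1) f^U_{k+1, k+2}(x_1)$ from Lemma \ref{lem:PsiST} is crucial) and could a priori dominate or even change the sign of $\partial_{x_2} F_{i,j}$. Working through the finitely many swallowtail pair cases using the tabulated $y^X_{i,j}$-values should show that the contributions are compatible and that positivity of $\partial_{x_2} F_{i,j}$ is preserved, but the bookkeeping is the most delicate step.
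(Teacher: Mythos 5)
Your treatment of $P_1$ is essentially the paper's: the paper takes the segment from $(1/3-\ea,-3/8)$ to $(1/3+\ea,3/4)$, bounds $|\partial_{x_1}F_{i,j}|\leq N\ed$ via item (6) of Corollary \ref{cor:summary}, and gets the lower bound $\partial_{x_2}F_{i,j}\geq\mathrm{Min}\{d_xf^R_{i,j}(x_2),1/5\}$ by combining $\partial_{x_2}F_{i,j}=d_xf^R_{i,j}(x_2)$ for $|x_2|\geq 1/4$ with Lemma \ref{lem:18est}, which was already extended to the region $\overline{R}$ of the (13) square in the proof of Lemma \ref{lem:tildeUtildeDtildeRST}. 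So the pair-by-pair bookkeeping you flag as the "main obstacle" is already packaged in Lemma \ref{lem:URLST} (3)--(4); you do not need to redo it. One tuning point: the constant in (\ref{eq:epsilon4}) is calibrated to a segment of horizontal half-width $\ea$, giving slope $9/(16\ea)$; your choice $\delta\leq\ea/2$ doubles the slope and the guaranteed ratio $(1/5)/(N\ed)>9/(16\ea)$ no longer clears it, so take $\delta=\ea$.

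The genuine gap is in $P_2$. Your claim that $\partial_{x_2}F_{i,j}$ is "a convex combination of two terms of the same positive sign admitting a uniform positive lower bound" fails for the pairs $(i,j)=(k,k+1)$ and $(k,k+2)$. On the strip $-3/8\leq x_2\leq-1/4$ one has $\psi(x_2)=\phi(x_2)=0$, so by (\ref{eq:STk1434}) and (\ref{eq:STk134})--(\ref{eq:STk214}) all three $f^{ST}_l$ coincide with $f^L_k$ there and $f^{ST}_{k,k+1}=f^{ST}_{k,k+2}\equiv 0$; hence
\[
\partial_{x_2}F_{k,k+1}(x_1,x_2)=\phi(x_1)\,d_xf^R_{k,k+1}(x_2),
\]
which vanishes identically once $x_1\leq-1/4$ (while $F_{k,k+1}>0$ and the sheets are still defined for $x_1>-3/8$). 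There $-\nabla F_{k,k+1}$ points exactly in the $-x_1$ direction, so a horizontal or positively sloped piece of $P_2$ is tangent to it, not transverse, and "sufficiently small absolute slope" does not rescue the argument. The fix is what the paper does: give $P_2$ a \emph{strictly negative} slope ($-\ea$) on $[-7/8,1/3]$, so that transversality follows from $\partial_{x_1}F_{i,j}>0$ alone (a purely leftward vector crosses a negatively sloped segment into the region below it), with $\partial_{x_2}F_{i,j}\geq 0$ only needed non-strictly; then for $x_1\leq-7/8$, where the swallowtail sheets no longer exist separately and every remaining difference has $\partial_{x_2}F_{i,j}>0$ strictly, $P_2$ may be taken horizontal, with Property \ref{pr:1cmono} covering the final stretch inside $N(e^1_L)$.
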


\begin{proof}
Let $P_1$ be the line segment from $(1/3 - \epsilon_1, -3/8)$ to $(1/3 + \epsilon_1, 3/4).$
To verify item (2) of the property, fix a pair of sheets $S_i,S_j$ with $i<j$.
For any $(x_1,x_2) \in P_1,$ since $x_1 \geq 1/4$, %(\ref{eq:FHCAG})
%, the second formula in
% (\ref{eq:FHCAG}) is not applicable, and
%$\widehat{f}_{k+1}(x_1) = f^U_{k+1}(x_1) = f^D_{k+1}(x_1), \widehat{f}_{k+2}(x_1) = f^U_{k+2}(x_1) =f^D_{k+2}(x_1)$
%in (\ref{eq:Gk1def})-(\ref{eq:Gk2def}).
%Also, $\phi(x_1) = 0$ wherever it appears in the formulae.
%
equations (\ref{eq:FHCAG}) and (\ref{eq:interpolating}) simplify to
\[
F_{i,j}(x_1,x_2) = f^R_{i,j}(x_2) + \phi(x_2) f^U_{i,j}(x_1)
+(1-\phi(x_2)) f^D_{\sigma_D(i),\sigma_D(j)}(x_1).
\]
Using item (6) of Corollary \ref{cor:summary} gives 
\[
|\partial_{x_1}F_{i,j}(x_1,x_2)| \leq  \phi(x_2) |d_xf^U_{i,j}(x_1)|
+ (1-\phi(x_2)) |d_xf^D_{\sigma_D(i),\sigma_D(j)}(x_1)| \leq N \ed.
\]
On the other hand, for $(i,j) \neq (k+1,k+2)$, Lemma \ref{lem:18est} (which, as observed in the proof of Lemma \ref{lem:tildeUtildeDtildeRST}, remains true in the region $\overline{R}$ for the (13) square), combined with the fact that, when $|x_2| \geq 1/4$, $\partial_{x_2}F_{i,j} = d_xf^R_{i,j}(x_2)$ gives
\[
\partial_{x_2}F_{i,j}(x_1,x_2) \geq \mbox{Min}\{ d_xf^R_{i,j}(x_2), 1/5\}. 
\]
Thus, when $x_2 < \beta^R_{i,j} -\e = \eta^R_{i,j} - \e_\eta$, the $x_2$-component of $-\nabla F_{i,j}$ is negative, and to see that $-\nabla F_{i,j}$ points to the right of $P_1$ it suffices to verify that
\[
\frac{|\partial_{x_2}F_{i,j}(x_1,x_2)|}{|\partial_{x_1}F_{i,j}|} \geq \frac{\mbox{Min}\{ d_xf^R_{i,j}(x_2), 1/5\}}{N\ed} > \frac{9}{16 \ea} = \mbox{Slope($P_1$)}.
\]
[At the second inequality we used the definition of $\ed$ from (\ref{eq:epsilon4}).]

%
%In order to write all cases down in one equation, we define some new notation.
%Recall $\sigma_D(l)$ denotes the ordering of sheet $S_l$ as it appears over $x_1 = 1, x_2 = -1.$
%(So $\sigma_D(l) = l$ if $l \ne k_1,k+2.$)
%Define $D(l,x_2) \in \{D,U\}$ to be: 
%$D$ if $l \ne k+1,k+2,$ or if $j \in \{k+1,k+2\}$ and $x_2 \ge 0;$ $U$ if $l \in \{k+1,k+2\}$ and $x_2 < 0.$
%Equations (\ref{eq:FHCAG}) (for $F_{k}, F_{k+1}, F_{k+2}$) and (\ref{eq:interpolating}) (for all other $F_l$)
% imply that
%\[
%F_{i,j}(x_1,x_2) = f^R_{i,j}(x_2) + 
%(1-\phi(x_2)) (f^{D(i,x_2)}_{\sigma_D(i)}(x_1)-f^{D(j,x_2)}_{\sigma_D(j)}(x_1)) + \phi(x_2) f^U_{i,j}(x_1).
%\]
%
%Let $y^{D(i,j)}$ denote the maximum value, over $-1 \le x_2 \le 1,$ of  
%$f^{D(i,x_2)}_{\sigma_D(i)}(x_2)-f^{D(j,x_2)}_{\sigma_D(j)}(x_2).$
%Note that $y^U_{i,j} - y^{D(i,j)} =0$ if $i,j \notin \{k+1,k+2\}.$
%
%
%
%If $-1/4 \le x_2 \le 1/4$ then 
%the Lemma \ref{lem:MainPropsCr}  and \ref{lem:MainPropsCu} analogues of 
%Lemma \ref{lem:MainPropsPV} items (2),(5), (6), as well as equation (\ref{eq:InterpolatePhi}) imply
%\begin{eqnarray*}
%\partial_{x_2} F_{i,j}(x_1,x_2) & =  & d_x f^R_{i,j}(x_2) + 
%d_x \phi(x_2) \left(f^U_{i,j}(x_1) - (f^{D(i,x_2)}_{\sigma_D(i)}(x_1)-f^{D(j,x_2)}_{\sigma_D(j)}(x_1))\right),\\
%& \ge & 2y^R_{i,j} - 3N \epsilon_1  - (2+\epsilon_2)(|y^U_{i,j} - y^{D(i,j)}|+ \epsilon_1)
%\end{eqnarray*}

Next, define $P_2$ to go from the lower endpoint of $P_1$ to $x_1= -7/8$ with slope $-\ea$, and then continue with slope $0$ to the left boundary of $N(e^2_\alpha)$.
For any $(x_1,x_2)$, since $-3/8 \leq x_2 \leq -1/4$ and all $f^{ST}_l$ functions agree with $f^L_k$ in this region, we have
\[
F_{i,j}(x_1,x_2) = f^D_{\sigma_D(i),\sigma_D(j)}(x_1) +(1-\phi(x_1)) f^L_{\sigma'(i),\sigma'(j)}(x_2) + \phi(x_1) f^R_{i,j}(x_2).
\]
where $\sigma'$ is as in (\ref{eq:sigmaprime}).
In verifying item (4), we need only consider the case where $i<j$ and $(i,j) \neq (k+1,k+2)$, and this implies that for $-7/8\leq x_1 \leq 1/2$,
\[
\partial_{x_1}F_{i,j}(x_1,x_2) >0
\]
since when $-1/4 \leq x_1 \leq 1/4$, $d_xf^D_{\sigma_D(i),\sigma_D(j)}$ dominates the other terms which are each bounded by $4N\ea$ in absolute value via items (3) and (5) of Corollary \ref{cor:summary} and (\ref{eq:C0flst}).  In addition, $\partial_{x_2}F_{i,j}(x_1,x_2)$ is an interpolation of non-negative terms.  We conclude that for $-7/8\leq x_1 \leq 1/2$, $-\nabla F_{i,j}(x_1,x_2)$ points into the closed lower left quadrant, and therefore into the region below $P_2$. 

Now, when $x_1 \leq -7/8$ there is only one sheet (defined by $\tilde{F}_k$) that can produce an $f^L_k$ term in $f^L_{\sigma'(i),\sigma'(j)}$.  Therefore, when $0\leq x_1 \leq-7/8$, we have strict inequality $-\partial_{x_2}F_{i,j}(x_1,x_2)<0$ as required.  
Finally, by Property \ref{pr:1cmono}, $-\nabla F_{i,j}$ continues to point downward when $P_2$ passes into the part of $N(e^1_L)$ that lies in a neighboring square.

\end{proof}

\begin{proposition}
\label{prop:leftC}
Property \ref{pr:leftC} holds.
\end{proposition}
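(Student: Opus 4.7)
The plan is to exploit several simplifications that apply at the vertical line $x_1 = -17/64$ in a Type (13) square. First, $-17/64 > -9/32 = -3/8 + R_2$, so this line lies outside $O_2$; thus $\alpha(r) \equiv 1$ in (\ref{eq:FHCAG}), which gives $F_l = G_l$ for $l \in \{k, k+1, k+2\}$. Second, since $\ea \ll 1/64$, we have $-17/64 < -1/4$, so $\phi(-17/64) = 0$ and $d_x\phi(-17/64) = 0$ by (\ref{eq:InterpolatePhi}). Third, since $-17/64 > -22/64 = -3/8 + R_1/2$, the cutoff functions $\widehat{f}_{k+1}$ and $\widehat{f}_{k+2}$ agree with $f^U_{k+1}$ and $f^U_{k+2}$ respectively at $x_1 = -17/64$, by (\ref{eq:hatk1}) and (\ref{eq:hatk2}). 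Finally, since the edges $U$ and $D$ of a Type (13) square have identical edge type (both (Cu) with $n$ sheets and cusp between positions $k, k+1$), the one-dimensional defining functions produced by Proposition \ref{prop:CuDef} satisfy $f^U_j(x) \equiv f^D_j(x)$ as one-variable functions.

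Substituting these observations into (\ref{eq:Gkdef})-(\ref{eq:Gk2def}) and (\ref{eq:FifLfR}), straightforward differentiation at $x_1 = -17/64$ yields
\[
\partial_{x_1} F_{i,k}(-17/64, x_2) = d_xf^U_{i,k}(-17/64),
\]
while for $l \in \{k+1, k+2\}$ the formulas for $x_2 \geq 0$ and $x_2 \leq 0$ collapse to the same expression once $f^D = f^U$ is invoked, giving
\begin{align*}
\partial_{x_1} F_{i,k+1}(-17/64, x_2) &= (1-\phi(x_2))\, d_xf^U_{i,k+2}(-17/64) + \phi(x_2)\, d_xf^U_{i,k+1}(-17/64), \\
\partial_{x_1} F_{i,k+2}(-17/64, x_2) &= (1-\phi(x_2))\, d_xf^U_{i,k+1}(-17/64) + \phi(x_2)\, d_xf^U_{i,k+2}(-17/64).
\end{align*}
Since $\phi$ takes values in $[0,1]$, the latter two expressions are manifest convex combinations of the quantities $d_xf^U_{i,l}(-17/64)$ for $l \in \{k+1, k+2\}$.

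To conclude, since $i < k$, none of the pairs $(i,k), (i,k+1), (i,k+2)$ coincide with the cusp pair $(k, k+1)$ on edge $U$; item (2) of Proposition \ref{prop:CuDef} then guarantees strict positivity of each $d_xf^U_{i,l}$ on $(-1, 1/2]$, and in particular at $-17/64$. Hence all three partials $\partial_{x_1}F_{i,l}(-17/64, x_2)$ are strictly positive, equivalently $-\grad_{x_1}F_{i,l} < 0$. The main obstacle I anticipate is notational bookkeeping: one must correctly interpret subscripts such as $f^D_{k+2}$ (position $k+2$ on edge $D$, which is sheet $S_{k+1}$ from the $(+1,+1)$ labeling) and recognize the crucial identification $f^U \equiv f^D$ as one-variable functions, which is exactly what permits the $x_2 \geq 0$ and $x_2 \leq 0$ branches of $G_{k+1}, G_{k+2}$ to unify into a single expression.
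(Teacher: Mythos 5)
Your proposal is correct and follows essentially the same route as the paper's own proof: reduce to $F_{i,j}=F_i-G_j$ using $\alpha(r)=1$, $\phi(x_1)=0$, $\widehat{f}_{k+1}=f^U_{k+1}$, $\widehat{f}_{k+2}=f^U_{k+2}$, and $f^U_l=f^D_l$ at $x_1=-17/64$, so that $\partial_{x_1}F_{i,j}$ becomes a convex combination of the positive derivatives $d_xf^U_{i,l}$ supplied by Proposition \ref{prop:CuDef}(2). No gaps.
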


\begin{proof}
Let $j \in \{k,k+1,k+2\}.$
The segment $x_1 = -17/64$ is to the left of $x_1 = -1/4$ (so $\phi(-17/64) =0$), to the right of $x_1 = -3/8+R_1/2$ (see (\ref{eq:hatk1})-(\ref{eq:hatk2})), 
and is disjoint from $O_2.$ 
So (\ref{eq:hatk1})-(\ref{eq:hatk2}),  (\ref{eq:Gkdef})-(\ref{eq:Gk2def})  and (\ref{eq:FHCAG}) imply 
%(*almost)
that in a neighborhood of the segment
\[
 F_{i,j}(x_1,x_2)  =   F_i -  G_j = 
f^{L}_i(x_2) - f^{ST}_{j}(x_2) + \phi(x_2) f^{U}_{i,j}(x_1) + (1-\phi(x_2)) f^{D}_{i,\sigma_D(j)}(x_1)
\]
%& =&  \phi(x_2) d_x f^U_{i,l}(x_1) + (1-\phi(x_2)) d_x f^{D}_{i,\sigma_D(l)}(x_1) + \partial_{x_1}(f_i^L(x_2) - f_l^{ST}(x_2))
%\end{eqnarray*}
where $\sigma_D$ transposes $k+1$ and $k+2.$
We compute
\[
\partial_{x_1} F_{i,j}(x_1,x_2) = \phi(x_2) d_xf^{U}_{i,j}(x_1) + (1-\phi(x_2)) d_xf^{D}_{i,\sigma_D(j)}(x_1) >0.
\]
The sum is a convex combination of positive terms since $i<j$ and $i< \sigma_D(j).$

%*The exception  to this is when $l = k+2$ and $x_2 > 0,$ in which case we may not be able to substitute 
%$(1-\phi(x_2)) f^{Cu}_{i,\sigma_D(k+2)}(x_1)$ as above for the $(1-\phi(x_2)) (f^U_i(x_1) - \widehat{f}_{k+1}(x_1))$ term coming from (\ref{eq:Gk2def}). This is because (\ref{eq:hatk2}) is not specified at $x_1 = -3/8.$ 
%Nonetheless, from (\ref{eq:Est21}) we get $d_x f^U_i(x_1) > d_x f^U_k(x_1) >
%d_x \widehat{f}_{k+1}(x_1),$ so our estimate still holds.
\end{proof}

\begin{proposition}
\label{prop:PropertymonoLtilde}
Property \ref{pr:monoLtilde} holds.
\end{proposition}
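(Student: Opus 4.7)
The plan is to deduce both parts of Property \ref{pr:monoLtilde} by leveraging work already done in the proof of Lemma \ref{lem:tildeLST}. First I would observe that the first two strict inequalities, namely $-\partial_{x_2}(F_i-w)<0$ and $-\partial_{x_2}(w-F_j)<0$ for $i<k$, $j>k+2$ and $w\in\{F_k,F_{k+1},F_{k+2},\widetilde{F}_k\}$, are precisely the conclusions of Claim 2 in the proof of Lemma \ref{lem:tildeLST}. That claim was established throughout $\overline{L}\cap(\text{domain of }w)$ via the splitting $F_i-w=(F_i-H)+(H-w)$, the lower bound on $\partial_{x_2}(F_i-H)$ given by equation (\ref{eq:parx2Fi}), and the $C^0$-small bound on $\partial_{x_2}(H-w)$ given by equation (\ref{eq:x2HFi}). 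The interior of the region $\{x_1\leq -1/4\}\cap\{-1/4\leq x_2\leq 1/4\}$ coincides with $\overline{L}$, so the inequalities hold there directly, and they extend to the closed region (including the portion slightly beyond $x_1=-1$ that lies in $\widetilde{N}(e^2_\alpha)$) by a continuity argument, since the same pointwise estimates from (\ref{eq:parx2Fi}) and (\ref{eq:x2HFi}) remain valid in the parametrization of a small collar of the left edge glued from the neighboring $2$-cell via Proposition \ref{prop:smooth1}.

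For the crossing-locus assertion, my plan is as follows. First I would identify the location of the crossing locus in the relevant region. By Lemma \ref{lem:crossingx2} the crossing locus of the un-straightened model $L_{st}$ lies on $\{x_2=0\}$, and this is preserved under the diffeomorphisms $S$ and $T$ of Section \ref{sec:Straight} (which fix the coordinate $x_2$ when $|x_2|\leq 1/32$); by Lemma \ref{lem:GkGk1}(3) and the interpolation formula (\ref{eq:FHCAG}) this persists for the final Legendrian. So the portion of the crossing locus with $x_1\leq -1/4$ is a subarc of $\{x_2=0\}$. Along this arc, $F_{k+1}=F_{k+2}$ and $\nabla(F_{k+2}-F_{k+1})$ is perpendicular to the locus, i.e., purely in the $x_2$-direction. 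Convention \ref{conv:stlabeling} places $S_{k+1}$ above $S_{k+2}$ at $(+1,+1)$, so $F_{k+1}>F_{k+2}$ for $x_2>0$ near the locus, hence $\partial_{x_2}(F_{k+2}-F_{k+1})<0$ there.

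Combining these two observations, at any crossing-locus point with $x_1\leq -1/4$ the $x_2$-component of $-\nabla(F_i-w)$ is $-\partial_{x_2}(F_i-w)<0$ (from the first part just established), and the $x_1$-component of $\nabla(F_{k+2}-F_{k+1})$ vanishes there. Thus the inner product
\[
-\nabla(F_i-w)\cdot\nabla(F_{k+2}-F_{k+1})=\bigl(-\partial_{x_2}(F_i-w)\bigr)\cdot\partial_{x_2}(F_{k+2}-F_{k+1})
\]
is a product of two negative numbers, hence strictly positive. This is exactly the statement that $-\nabla(F_i-w)$ points into the half-space $\{F_{k+2}>F_{k+1}\}$.

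I do not anticipate a serious obstacle: the only real work has already been done inside Claim 2 of Lemma \ref{lem:tildeLST}, and the remaining ingredients are bookkeeping about the location of the crossing locus and the sign of $\partial_{x_2}(F_{k+2}-F_{k+1})$ dictated by our labeling convention. The mildest technical point is verifying that the region in the property, which allows $x_1$ to extend slightly past $-1$ into $\widetilde{N}(e^2_\alpha)$, is still covered by the argument; this follows from the smooth gluing described in Proposition \ref{prop:smooth1}, since the estimates of Claim 2 depend only on $C^0$ and $C^1$ bounds that are uniform across the gluing strip.
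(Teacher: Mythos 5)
Your proof is correct and follows essentially the same route as the paper: the first part is exactly the estimate $\partial_{x_2}(F_i-w)\geq 1-6N\ea-250N\ea>0$ already established as Claim 2 (equation (\ref{eq:dx2Fiw})) in the proof of Lemma \ref{lem:tildeLST}, and the crossing-locus assertion follows because $F_{k+1,k+2}^{-1}(0)\cap\{x_1\leq-1/4\}\subset\{x_2=0\}$, so pointing into $\{F_{k+2}>F_{k+1}\}=\{x_2<0\}$ amounts to the negativity of the $x_2$-component. Your inner-product reformulation of that last step is an equivalent, if slightly more elaborate, way of saying the same thing.
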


\begin{proof}

For any $(x_1,x_2)$ considered in the Property  \ref{pr:monoLtilde} statement, we showed in (\ref{eq:dx2Fiw}) that
$\partial_{x_2} (F_i  - w) \ge 1-6\epsilon_1-250\epsilon_1.$ The case for 
$\partial_{x_2} (w-F_j )$ is similar. This proves the first statement. Since $F_{k+1,k+2}^{-1}(0) \cap \{x_1 \leq -1/4\} \subset \{x_2 = 0\},$ this also proves
the crossing locus statement.

\end{proof}

\begin{proposition}
\label{prop:PropertySwitchBarriers}
Property \ref{pr:SwitchBarriers} holds.
%\footnote{\ms{4/1/15: I Assume $P_{k+1,k+2}, P_{k+2,k+1}$ are NOT vertical, as indicated by DR's footnote in the property statement.}}
\end{proposition}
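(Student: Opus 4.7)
The plan is to exploit the fact that inside $O_1$ the local defining functions have the simple form $F_l = H + C\,A_l$ (and $\widetilde{F}_k = H + C\,B$), so that for any $i,j \in \{k,k+1,k+2\}$ the difference functions collapse to $F_{i,j} = C\,A_{i,j} = C\,a_{i,j}\circ S\circ T$. Moreover, from Step~4 of the proof of Proposition~\ref{prop:Switches} the two switch points are located at parameter values $t = \pm 1/\sqrt{2}$, and an easy computation shows that both $S^{-1}(6t^2, 8t^3)\big|_{t=1/\sqrt{2}}$ and $S^{-1}(6t^2, 8t^3)\big|_{t=-1/\sqrt{2}}$ have the same $x_1$-coordinate. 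The lower bound on $M$ from (P4) in Section~\ref{sec:Straight} guarantees that both points lie well inside $\{|x_2|\le 1/32\}$, where $T$ is the identity, so the $(r,s)$-parametrization of Proposition~\ref{prop:stCompute} applies without modification.

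I would take both $L_{k+1,k+2}$ and $L_{k+2,k+1}$ to be the \emph{same} vertical segment in $(x_1,x_2)$-coordinates joining these two switch points. Its lift to the $(r,s)$-plane is the arc of the ellipse $\{r^2+rs+s^2 = 3/2\}$ that runs from $(1/\sqrt{2},1/\sqrt{2})$ to $(-1/\sqrt{2},\sqrt{2})$, which lies entirely in the wedge $W = \{s\ge r\}\cap\{s\ge -2r\}$ with tangencies only at its endpoints. The image of this arc crosses the crossing locus $\{r=0\}$ at the single point $(r,s)=(0,\sqrt{3/2})$.

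For the required sign conditions I would compute, via Corollary~\ref{cor:aijGrad} and equation~(\ref{eq:Dx1Aij}), the $x_1$-components of the gradients along the arc. For $L_{k+1,k+2}$ the key identities are $\partial_{x_1}F_{k+1,k+2} = CM\cdot r(r+2s)$ and $\partial_{x_1}F_{k,k+2} = CM\cdot s(2r+s)$; since $r+2s\ge 0$ on $W$ (vanishing only on the lower cusp) and $\mathrm{sgn}(r)=\mathrm{sgn}(x_2)$, the first quantity is nonnegative above or on the crossing locus, which gives the "right to left" condition for $F_{k+1,k+2}$; since $s>0$ on the arc and $2r+s>0$ except at the lower cusp endpoint, the second quantity is strictly positive away from that endpoint. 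Reading the same segment in reverse for $L_{k+2,k+1}$, we get $\partial_{x_1}F_{k+2,k+1} = -CM\cdot r(r+2s)\ge 0$ below or on the crossing locus, and $\partial_{x_1}F_{k,k+1} = CM\cdot(s-r)(r+s)$; here $s-r\ge 0$ on $W$ (vanishing only at the upper cusp endpoint), and $r+s$ equals $\sqrt{2}$ at one endpoint and $1/\sqrt{2}$ at the other and does not vanish on the portion of the ellipse inside $W$, so $\partial_{x_1}F_{k,k+1}>0$ except at the endpoint on the $(k,k+1)$ cusp edge.

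The main obstacle I anticipate is bookkeeping rather than mathematical depth: one must verify that the chosen segment lies entirely in $O_1\cap\{|x_2|\le 1/32\}$ so that $T$ contributes nothing (the numerical check reduces to the estimate $2\sqrt{2}/M < 1/32$, which is easily implied by (P4)), and confirm that the lifted ellipse arc really stays in $W$ throughout (which amounts to observing that on the arc $s>\max(r,-2r)$ except at the two endpoints, where each defining inequality of $W$ becomes tight in turn). Given these verifications, all four sign conditions of Property~\ref{pr:SwitchBarriers} follow immediately from the explicit formulas of Corollary~\ref{cor:aijGrad}.
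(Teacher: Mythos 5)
Your reduction is essentially the paper's: inside $O_1$ one has $F_{i,j}=C\,A_{i,j}$, and the sign information you read off from Corollary \ref{cor:aijGrad} is exactly what the paper has already packaged as items A3 and A4 of Lemma \ref{lem:Aprops}. The location of the two switch points at $t=\pm 1/\sqrt2$ (hence at equal $x_1$-coordinates), the lift of the vertical segment to the arc of $\{r^2+rs+s^2=3/2\}$ inside $W$, and the four sign computations along that arc are all correct, as are the checks that $r+s$ and $r+2s$ do not vanish on it and that $2r+s$ and $s-r$ vanish only at the two cusp endpoints.

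The gap is the choice of a \emph{vertical} segment. Property \ref{pr:SwitchBarriers} is a barrier statement, so ``points from right to left'' must mean that $-\nabla F$ meets the segment transversally and points into its left side, and this is demanded ``above \emph{or on} the crossing locus.'' At the one point where your segment meets the crossing locus you have $r=0$, so $\nabla a_{k+1,k+2}=(r+2s)(r,1)$ is purely vertical there; consequently $-\nabla F_{k+1,k+2}$ (and likewise $-\nabla F_{k+2,k+1}$, read from the other side) is \emph{tangent} to a vertical segment at that point. Your step ``$\partial_{x_1}F_{k+1,k+2}=CM\,r(r+2s)$ is nonnegative for $r\ge 0$, which gives the right-to-left condition'' is exactly where this slips: nonnegativity of $\partial_{x_1}F$ is not the right-to-left condition at the point where it vanishes. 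This is precisely why the paper's proof insists that $L_{k+1,k+2}$ have ``sufficiently negative slope (but non-vertical)'' and $L_{k+2,k+1}$ positive slope: a vector in the closed lower-left (resp.\ upper-left) quadrant, even one pointing straight down (resp.\ straight up) at the crossing locus, is then still transverse to the segment and points to its left side. The repair is immediate --- tilt each of your two segments slightly in the appropriate direction, keeping them inside $O_1\cap\{|x_2|\le 1/32\}$, and your computations then yield the property verbatim --- but as written the chosen barrier fails the stated condition at the crossing-locus point.
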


\begin{proof}
Take $L_{k+1,k+2}$ to be a segment connecting the $(k+1,k+2)$-switch point to the $(k,k+2)$-cusp edge with sufficiently negative slope (but non-vertical) so that $L_{k+1,k+2}$ is contained in the portion of $O_1$ with $|x_2| \leq 1/32$.  [Such an $L_{k+1,k+2}$ exists by Property \ref{pr:switches}.]  Then, Lemma \ref{lem:Aprops} items (A3) and (A4) implies that at points of $L_{k+1,k+2}$ above or on the cusp locus $-\nabla F_{k+1,k+2}$ points into the closed lower left quadrant, and hence to the left side of $L_{k+1,k+2}$.  Also, at all points of $L_{k+1,k+2}$, except for the endpoint on the $(k,k+2)$-cusp locus where $\nabla F_{k,k+2} = 0$, $-\nabla F_{k,k+2}$ points into the closed lower left quadrant.   

For $L_{k+2,k+1}$, take a segment from the $(k+2,k+1)$-switch point to the $(k,k+1)$-cusp edge that has positive slope and is contained in $|x_2| \leq 1/32$, and use Lemma \ref{lem:Aprops} item (A3) and (A4) in a similar manner to verify the directionality of $-\nabla F_{k+2,k+1}$ and $-\nabla F_{k,k+1}$.

%Equation (\ref{eq:FHCAG}) implies $-\nabla F_{i,j} = -C \nabla A_{i,j}.$ 
% Lemma \ref{lem:Aprops} item (A3)  then implies the property.\footnote{\ms{6/19/15: I thought this was more subtle. Has the property changed recently? For example, originally I had written that item (A4) was also used...And why can't $L_{k+1,k+2}$ and $L_{k+2,k+1}$ just be the same vertical line?} \dr{2-1: I forgot to write into the property that transversality continues to hold for $-\nabla F_{k+1,k+2}$ at the point where $L_{k+1,k+2}$ intersects the crossing locus.  This seems to be needed for perturbation invariance of the property.}}
 \end{proof}

\section{Proof of Theorem \ref{thm:PropertiesofLtilde} Part 4: Transversality}
\label{sec:ProofSetup}

%\footnote{\ms{6/26/15: The following paragraph should make up part of proof environment to appear right after Thm 7.1 statement
%``
%The proof involves lots of basic but technical calculus to construct functions whose one-jets represent the Legendrian
%sheets in the square models (1)-(14) of Figure \ref{fig:generators}.
%In Section \ref{sec:Constructions} we construct the square models (1)-(12), establishing a few lemmas to be used later.
%In Section \ref{sec:ConstructionsST} we complete the construction in square models (13)-(14) when there is a swallowtail singularity present.
%In Section \ref{sec:Properties}, we show that our construction satisfies Properties 1-19.
%In Section \ref{sec:ProofSetup}, we show that we can perturb our explicit construction to achieve transversality of GFTs, while
%preserving the properties.
%}}

The Legendrian metric pair $(\tilde{L}_0, g_0)$ constructed in Sections \ref{sec:Constructions} and \ref{sec:ConstructionsST} has now been verified to satisfy all requirements of Theorem \ref{thm:PropertiesofLtilde} except for the $1$-regular condition.  In combination, the following Propositions \ref{prop:LemmaA1} and \ref{prop:LemmaA2} show that the $1$-regular condition may be obtained by a perturbation that preserves the other properties required in Theorem \ref{thm:PropertiesofLtilde}.  
%In Proposition \ref{prop:LemmaA1}, we give a mild extension of  \cite[Theorem 1.1 (a)]{Ekholm}  to show that $(\tilde{L}_0,g_0)$ can be made $1$-regular after a $C^\infty$-small perturbation that leaves $\tilde{L}_0$ fixed in a neighborhood of the cusp edges of $\tilde{L}_0$.  We then show in Proposition \ref{prop:LemmaA2} that such a perturbation can be assumed to preserve we extend the To complete the proof of Theorem \ref{thm:PropertiesofLtilde}

\begin{convention} In this final section, the singular set of $L$ refers to cusp edges and swallow tail points only.  \emph{Note that this differs from earlier sections, where we included crossings arcs in the singular set.}
\end{convention}

\begin{proposition}  \label{prop:LemmaA1}
Consider the Legendrian $L \subset J^1(S)$ with metric $g$ on $S$ as constructed in Sections \ref{sec:Constructions}  and \ref{sec:ConstructionsST}.
There exists an open neighborhood $U \subset L$ of the singular set of $L$ (pre-image of cusp edges and swallowtail points) such that within any neighborhood of $(L,g)$ in the $C^\infty$-topology, there exist $(L',g')$ such that 
$L$ agrees with $L'$ on $U$ and $L'$ is $1$-regular with respect to $g'.$
\end{proposition}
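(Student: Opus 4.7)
The plan is to adapt Ekholm's transversality argument from \cite[Theorem 1.1 (a)]{Ekholm07} while constraining the perturbations to be supported away from $U$. Since the Properties \ref{pr:14models}--\ref{pr:SwitchBarriers} are open conditions on $(L,g)$, any sufficiently small perturbation preserves them; thus the only obstacle is to verify that $1$-regularity can be achieved within the restricted class of perturbations.

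First I would fix the neighborhood $U$ to be a small tubular neighborhood of the cusp edges and swallowtail points in $L$, chosen small enough that all the rigid coordinate models of Sections \ref{sec:ConstructionsST} and \ref{sec:Straight} remain in effect on $U$, and, moreover, so that within the base projection of $U$ the only possible internal vertices of GFTs are $Y_1$- and $\mathit{sw}$-vertices (located exactly on the cusp locus) and $e$-vertices, all of which are forced to lie on $\pi_x(\Sigma)$. This uses that the switch points and swallowtail points have already been placed and made non-degenerate in Property \ref{pr:switches}, which is an open condition. I would then introduce the Banach manifold $\mathcal{P}$ of pairs $(L',g')$ such that $L'$ is a $C^\infty$-small Legendrian perturbation of $L$ that equals $L$ on $U$, and $g'$ is a $C^\infty$-small perturbation of $g$ (with no constraint on the support of $g' - g$, although one may even require $g'=g$ on $\pi_x(U)$).

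Next, for each combinatorial type $\tau$ of a GFT with one positive puncture — specified by the domain tree, the ordered labelling of sheets at each edge, the vertex types, and the Reeb chord / $e$-vertex asymptotics — I would repeat Ekholm's argument from \cite[Section 3]{Ekholm07} to express the moduli space $\mathcal{M}_\tau$ as the zero locus of a smooth section over a finite-dimensional parameter space, and interpret the universal moduli space $\mathcal{M}_\tau^{\mathrm{univ}} \subset \mathcal{M}_\tau \times \mathcal{P}$ as the zero set of a section of a Banach bundle. The key point is to show that this universal section is transverse to the zero section. The verification reduces, at each transverse intersection of an ascending manifold with a descending manifold built out of edges, to showing that the evaluation map at a point interior to some non-constant edge, varied over $\mathcal{P}$, is surjective to $L$ at that point. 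Since every non-constant edge of a rigid GFT must contain interior points lying in $L \setminus U$ — the cusp locus and swallowtail points meet edges only at $Y_1$-, $\mathit{sw}$- or $e$-vertices, while ordinary internal edges are non-constant gradient trajectories of positive local difference functions — one can realize this surjectivity by perturbing $L$ in a small neighborhood of such an interior point, disjoint from $U$.

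The main obstacle will be ensuring this surjectivity for GFTs whose edges terminate at, or pass very close to, the cusp locus, particularly near $Y_1$- and $\mathit{sw}$-vertices where the defining functions degenerate. Here one exploits that the flow lines of $-\nabla F_{i,j}$ feeding into such a vertex have, by Properties \ref{pr:CuspTransversality}--\ref{pr:SwitchBarriers}, only a short final segment inside $U$; the remainder of each edge lies in $L \setminus U$ and affords the needed room to perturb. Once universal transversality is established, the Sard--Smale theorem produces a residual set of $(L',g') \in \mathcal{P}$ for which every moduli space $\mathcal{M}_\tau$ of formal dimension $\leq 0$ is either empty (negative dimension) or a transversely cut-out $0$-manifold, and standard Gromov compactness then gives finiteness. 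Intersecting these residual sets over the countably many combinatorial types $\tau$ yields, by Baire, arbitrarily small $(L',g')$ that are $1$-regular and agree with $(L,g)$ on $U$.
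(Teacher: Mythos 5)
Your overall strategy---redo Ekholm's transversality argument with perturbations supported away from a neighborhood $U$ of the singular set---is the right one, but your framework differs from the paper's: you propose a universal moduli space over a Banach manifold of perturbations plus Sard--Smale and Baire, whereas the paper follows Ekholm's finite-dimensional inductive argument on flow-out manifolds (ascending manifolds of critical points and the upward flow-out of the cusp edges, made transverse one iterated intersection at a time, with the perturbation localized near the point being made transverse). Your route is not impossible in principle, but you would have to build the Fredholm setup for trees with $Y_0$-, $Y_1$- and $\mathit{sw}$-vertices from scratch, since \cite{Ekholm07} does not provide it.

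The genuine gap is your key claim that ``every non-constant edge of a rigid GFT must contain interior points lying in $L\setminus U$.'' Your justification---that the cusp locus and swallowtail points meet edges only at $Y_1$-, $\mathit{sw}$- or $e$-vertices---concerns the base projection of the edge, but $U$ is a neighborhood of the singular set \emph{in $L$}, and what you actually need is that at some interior point of the edge at least one of the two $1$-jet lifts $\gamma^{\pm}$ lies in $L\setminus U$; otherwise no admissible perturbation of $L$ moves that edge at all. This is not automatic: with $U$ of the form $\{F_{k,k+1}<\delta\}$ near each cusp edge, both lifts of an edge lie in $U$ whenever the upper lift is $\delta$-close to one cusp edge and the lower lift is $\delta$-close to a \emph{different} cusp edge, which is exactly what can happen in a Type (11) square near the transverse intersection of two cusp loci (and similarly near a swallowtail point, where all three local sheets are $\delta$-close). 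The paper closes this in two steps: it chooses $\delta$ smaller than every Reeb chord length and every switch-point value, so that monotonicity of the sheet difference along a PFT forces at least one sheet of each relevant edge out of $U$ in all but the Type (11) configuration; and it then gives a dedicated geometric argument (the wedge region $W$ bounded by the lines $l_m$ and $l_M$ in a square $R$ around $(-3/8,-3/8)$) showing that no PFT can begin in $W$ with upper sheet among $S_k,S_{k+1}$ and lower sheet among $S_l,S_{l+1}$. Without identifying this case and supplying such an argument, your surjectivity claim---and hence the whole transversality scheme---does not go through.
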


\begin{proof}
%Summary:  Ekholm shows in \cite{Ekholm07} how to arrange that an arbitrary Legendrian metric pair $(L,g)$ become $1$-regular after a $C^\infty$-small perturbation.  This involves an initial perturbation to arrange Preliminary transversality conditions...describe.  In our setting this is unnecessary for the following reason.  

%Given a Legendrian with PTC, GFTs are made to be transversally cut out by arranging an inductively defined sequence of Secondary Transversality Conditions.  Starting with all intersections are made to be transverse.  Then the flow outs of these intersections are made to be transverse.  As we are only interested in GFTs with a single positive puncture, He establishes an A Priori bound that shows the process cannot go on forever.  

%At the base case and each inductive step, transversality of a given flow out with all existing flow outs is arranged via perturbation of $(L,g)$ in a neighborhood of critical points or in neighborhood of the flow out.  

This is a modification of \cite[Theorem 1.1a]{Ekholm07} which states that any Legendrian metric pair $(L,g)$ can be made $1$-regular by a $C^\infty$-small perturbation.  
%, which follows from  \cite[Lemma 3.7 and Proposition 3.14]{Ekholm07}, with a modification of the first part of \cite[Proposition 3.14]{Ekholm07}, specifically, pages 1113--1114.  
We will show that in our setting the perturbation procedure used in the proof of \cite[Theorem 1.1a]{Ekholm07} can be carried out leaving a neighborhood $U \subset L$ of the cusp locus fixed. 

We review the outline of the proof of \cite[Theorem 1.1a]{Ekholm07}:

\begin{enumerate}
\item   After making a small perturbation if necessary, it is assumed that $(L,g)$ satisfies certain Preliminary Transversality Conditions (PTC).
\item  When $(L,g)$ satisfies PTC, any GFT, $\Gamma$, is assigned a {\it geometric dimension}, $\mathit{gdim}(\Gamma)$, and \cite[Lemma 3.7]{Ekholm07} establishes the inequality
\[
\mathit{gdim}(\Gamma) \leq \mathit{fdim}(\Gamma),
\]  
where $\mathit{fdim}(\Gamma)$ is the formal dimension of $\Gamma$.  (See Section \ref{sec:Background}.)
%Recall that a GFT has a the formal and geometric dimensions of a GFT from Section \ref{sec:Background}. 
\item \cite[Proposition 3.14]{Ekholm07} states that for any $D>0$, there exists a $C^\infty$-small perturbation of $(L,g)$ after which any GFT with $\mathit{fdim}(\Gamma) \leq D$ is transversally cut out, and a space of nearby GFTs with the same geometric properties as $\Gamma$ (e.g. homeomorphic domain trees) forms a manifold of dimension $\mathit{gdim}(\Gamma)$.  
%whose (true) dimension equal their geometric dimensions. 
\end{enumerate}
Taking $D =1$, the Legendrian resulting after (3) is $1$-regular.  [A GFT of negative formal dimension would have to belong to a manifold of negative dimension and therefore cannot exist.]  
%And if a GFT has formal dimension 0,  its true dimension is no bigger than 0; thus, the GFT exists, is rigid, and is transversely cut-out, if and only if it has the appropriate vertices.

To establish our strengthened result, we show that 
\begin{itemize}
\item[(i)] our $(L,g)$ already satisfies PTC, and hence does not need to be perturbed at part (1) of the argument, and 
\item[(ii)] there is a neighborhood $U$ of the cusp locus that can be left fixed during the perturbation at part (3).
\end{itemize}

\medskip

%Second, we briefly review some conditions on $L$ needed before modifying the \cite[p. 1113--1114]{Ekholm07} argument: ``front-genericity" \cite[p.1093]{Ekholm07} and ``preliminary transversality conditions" 
%\cite[p.1101--1103]{Ekholm07}.
%Let  $\Sigma \subset S$ denote the cusp locus, that is, the set of points over which at least one sheet fails to be immersed. 
%This set has a stratification  $\Sigma = \Sigma_1 \supset \Sigma_2 \supset \ldots.$ 
%A Legendrian surface is front generic if $\Sigma_1$ is an immersed one-manifold representing where  two sheets come together in standard cuspoidal model; $\Sigma_2$ is an isolated set of points representing transverse self-intersections of $\Sigma_1$ and standard swallowtail points; and $\Sigma_3 = \emptyset.$
%Property \ref{pr:14models} and the construction of the swallowtail point in Section \ref{sec:funcO1} imply $L$ is front-generic with no perturbation required.
%For $L$ front generic, 
{\bf (i):}  We list the Preliminary Transversality Conditions  following  
\cite[p.1101--1103]{Ekholm07} simplified to our two-dimensional set-up.  First $L$ should have generic base and front projections.  Then, the singular set $\Sigma \subset S$, that is, the set of points over which at least one sheet fails to be immersed has a stratification  $\Sigma = \Sigma_1 \supset \Sigma_2.$  Here, $\Sigma_2$ is an isolated set of points representing projections of standard swallowtail points and transverse intersections of the projection of two cusp edges.
The Preliminary Transversality Conditions are:
\begin{enumerate}
\item 
All switch points belong to $\Sigma_1\setminus \Sigma_2$ and are non-degenerate.  
%Moreover, if a switch point involves sheets $S_i,S_k,S_k+1$ with $S_{k}, S_{k+1}$ forming the cups pair, then $\nabla F_{i,k}$ has a first order tangency to $\Sigma_1$at that point.
\item
Let $\Sigma$ and $\Sigma'$ be the cusp loci of the two locally defined sheet pairs $S_{k},S_{k+1}$ and $S_{l},S_{l+1}.$ 
For $x \in \Sigma \cap \Sigma',$ $\nabla F_{k, l}(x)$ is non-vanishing. 
\item
At a swallowtail point, all $\nabla F_{i,j}$ (involving at least one sheet that does not meet the swallowtail point) are transverse to the two cusp loci.
\end{enumerate}
That $(L,g)$ satisfies Condition 1 follows from Property \ref{pr:CuspTransversality} and \ref{pr:switches}.
Condition 2 follows from Property \ref{pr:CuspTransversality}.
Condition 3 follows from Claim 1 and 2 in proof of Lemma \ref{lem:tildeLST} since the tangent to the cusp locus is horizontal at the swallowtail point. 
%Alternatively, Condition 2, 3 and all cases of Condition 1 aside from the switch point involving the three sheets that make the swallowtail, follow after a generic small perturbation of the Legendrian away from its cusp edges.

\medskip

{\bf (ii):}  The outline of the proof of \cite[Proposition 3.14]{Ekholm07} is as follows.  Spaces of PFTs and GFTs are made to be transversally cut out through an inductive procedure.  At the inductive step, we are given a collection of spaces of PFTs and GFTs that have already been made transversally cut out.  By a perturbation, transversality is then arranged for new spaces of GFTs arising from joining existing PFTs together at a vertex as allowed by the definition.  (The perturbation is taken to be small enough to preserve transversality of existing spaces as well.)  An a priori bound on the number of edges and vertices for GFTs satisfying a given bound on formal dimension and number of positive punctures is established in \cite[Lemma 3.12]{Ekholm07}, and this guarantees that the inductive procedure is completed after a finite number of steps.

It remains to verify that holding $L$ fixed near its cusps does not disrupt the above transversality argument.
From this Proposition's statement, let $U$  be a small neighborhood of the cusp loci so that if $x \in U$ is close to the cusping sheets 
$S_k,S_{k+1}$ then $F_{k,k+1}(x)< \delta$, where $\delta >0$ is much smaller than the positive critical value for the smallest Reeb chord of $L$
%(which is $\epsilon_3$ by Lemma \ref{lem:MainPropsPV} item (4) and analogous statements in Lemmas \ref{lem:MainPropsCr}-\ref{lem:MainProps2Cr}) 
and the value of the smallest switch point.  
By this second condition, we mean if $\nabla F_{i,j}$ is tangent to the cusp locus of $S_j, S_l$ at $Q$ then  $\delta < |F_{i,j}(Q)|.$  Moreover, we can assume that the base projection of $U$ sits in (one side of) a small neighborhood of $\Sigma$, and $\pi_x|_U$ is at most $2$-to-$1$ except 
\begin{itemize}
\item[(a.)] in neighborhoods of swallow tail points that we take to be small enough so the difference between any of the $3$ swallowtail sheets is less than $\delta$, and 
\item[(b.)] in Type (11) squares in a small region $W$ near the intersection point of the two cusp loci at $(-3/8,-3/8)$.  We now place some restrictions on $W$.  At $(-3/8,-3/8)$, for all $F_{i}> F_{j}$ that do not meet one another at a cusp edge, $-\nabla F_{i,j}$ has both components strictly negative.  [This is easily checked using (\ref{eq:interpolating}).]  Therefore, by compactness, there exist $0<m<M$ such that in a square $R$ centered at $(-3/8,-3/8)$ all such $F_{i}>F_{j}$ satisfy
\[
m < \frac{\partial_{x_2} F_{i,j}}{\partial_{x_1} F_{i,j}} < M;
\]
i.e., in $R$, $-\nabla F_{i,j}$ points into the wedge in the down left quadrant bounded by lines through the origin of slope $m$ and $M$.  
Consider a point $P=(-3/8+\alpha, -3/8+\alpha)$ with $\alpha$ chosen so that lines $l_m$ and $l_M$ of slope $m$ and $M$ through $P$ intersect both cusp loci, $\{x_1= -3/8\}$ and $\{x_2=-3/8\}$, within $R$.  Then, we require that the region $W$ is contained in the closed subset with bounding segments $\{x_1 = -3/8\}$ (on left), $\{x_1 = -3/8\}$ (below), $l_M$ (on right), and $l_m$ (above).  See Figure \ref{fig:W}.

%points strictly closed down left quadrant for all $F_i<F_j$ 
%These neighborhood should be small enough so that in their closures $-\nabla F_{i,j}$ points strictly closed down left quadrant for all $F_i<F_j$ such that $F_{i}$ and $F_{j}$ do not meet one another at a cusp edge.  (Such a neighborhood exists since $\partial_{x_i}F_{i,j}(-3/8,-3/8)  <0$ is easily checked from (\ref{eq:interpolating}).)
\end{itemize}
During the upcoming inductive perturbation, we can take all perturbations small enough to preserve the condition from (b.) that $-\nabla F_{i,j}$ points strictly into the wedge bounded by $l_m$ and $l_M$ within $R$. 

%\dr{Take a closed rectangle containing $(-3/8,-3/8)$.  By compactness, there exists some bound such that all of the mentioned $-\nabla F_{i,j}$ point into a wedge in the down left quadrant with upper boundary a line of positive slope.  and right boundary with large positive slope.  Require that the $4$-to-$1$ map of is in a region bounded by such a wedge and the cusp locus within that rectangle.  Then, the argument below goes through. }

\begin{figure}
\labellist
\small
\pinlabel $P$ [bl] at 180 180
\pinlabel $l_m$ [tr] at -2 110
\pinlabel $l_M$ [br] at 112 14
\pinlabel $R$ [b] at 70 228
\pinlabel $W$ at 156 156
\endlabellist
\centerline{ \includegraphics[scale=.6]{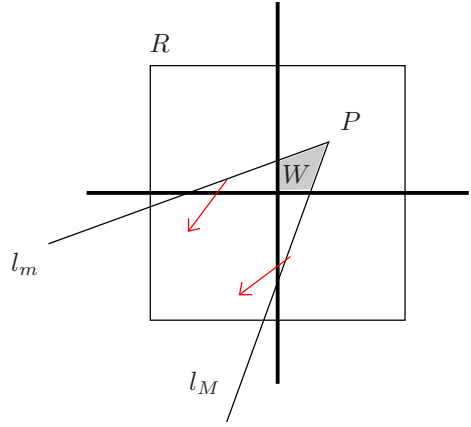} }
%\centerline{ \includegraphics[scale=.8]{images/SubdivideSq} }
\caption{The projection of $U$ to $S$ is $2$-to-$1$ or less outside of the shaded region $W$.  The thickened lines represent $\Sigma$ while the red arrows indicate the directionality of $-\nabla F_{i,j}$ (with $F_i$ and $F_j$ not meeting at a cusp) along the bordering segments $l_m$ and $l_M$ within the square $R$.}
\label{fig:W}
\end{figure}

%We will show that the perturbations used in this argument can be assumed to leave the 
As we consider only the case of GFTs with $1$ positive puncture, we can carry out the inductive procedure starting with the output vertices and working up.  To provide a slightly more detailed discussion of the induction from this point of view, we first introduce some terminology.  

Recall that the projection of the singular set is stratified as $\Sigma = \Sigma_1 \supset \Sigma_2$, and via the Preliminary Transversality Conditions
%,  $\Sigma_1\setminus \Sigma_2$ is has a further stratifications with codimension $1$ stratum the 
we can further refine $\Sigma$ by adding the image of switch points into $\Sigma_2$.
Given a subset $X \subset S$ with an upper and lower sheet of $S$  assigned to each point of $X$, the {\bf upward} (resp. {\bf downward}) {\bf flow out} manifold of $X$ is the union of all flow lines for $-\nabla F_{i,j}$  
starting at $x \in X$ when $t=0$ and continued in negative (resp. positive) time for their maximal interval of definition, where $F_i$ and $F_j$ define the upper and lower sheet assigned to $x$.

We make some observations and technical remarks: 
\begin{enumerate}
\item Flow out manifolds are not embedded since for instance, a flow line for two sheets may follow the Legendrian around and return to the same point above $S$ but with different upper and/or lower sheet.  To address this issue, one can consider flow out manifolds as belonging to the subset of the fiber product $L*L =\{(a,b) \in L \times L \,|\, \pi_x(a) = \pi_x(b)\}$ with $z(a) \geq z(b)$.   However, two or more of the initial points in $X$ may belong to the same flow line of $-\nabla F_{i,j}$, so that even in $L*L$ flow outs are non-embedded.  In the following, we (implicitly) view flow outs as maps to $L* L$.
\item Flow lines may reach the end of their domain of definition in finite time.  As such, flow out manifolds are stratified by their intersections with the various strata of $\Sigma$.
\item When $X \subset \Sigma$ has points on a cusp edge for the upper or lower sheet, there are multiple  upward/downward flow out manifolds determined by a choice of upper or lower sheet bordering the cusp edge.
%Converse statement.  
\item The sheet difference $F_{i,j}(x)$ decreases as $t$ increases.  In particular, at any point of an upward flow out manifold of $X$ the sheet difference is larger than at the corresponding initial point on $X$.  
\end{enumerate}

Returning now to the inductive procedure, at the base case we begin with the ascending manifolds of critical points, $X_1, \ldots, X_m$, and the upward flow out of all cusp edges (with respect to $-\nabla F_{k,k+1}$ where $F_k$ and $F_{k+1}$ are local defining functions for the upper and lower sheet at a cusp).  Denote the latter flow out manifold by $X_\Sigma$.  We arrange that $X_1,\ldots, X_m, X_\Sigma$  intersect the descending manifolds of critical points and descending manifolds of the stratified parts of the cusp locus transversally. (The latter condition assures that the stratified components of this initial collection of manifolds, as in (2) are again manifolds.)  In addition, we arrange that $X_1, \ldots, X_m, X_\Sigma$ all intersect one another transversally, and that their stratified components intersect transversally in the various strata of $\Sigma$.

 As discussed in \cite[pg. 1113-1114]{Ekholm07}, this is all accomplished by perturbing $(L,g)$ in a neighborhood of the critical points.  At any critical point of some $F_{i,j}$, at least one of the sheets is disjoint from $U$; perturbing this sheet accomplishes an arbitrary perturbation of $F_{i,j}$, so we can leave $U$ fixed at this step.  Note that no perturbation of $X_\Sigma$ is required since it is top-dimensional and hence has intersections that are automatically transverse.    

The intersection of the $X_i$ (resp. $X_\Sigma$) with descending manifolds of critical points are identified with GFTs with a single edge ending at a Reeb chord (resp. $e$-vertex).  The (iterated) intersections of (components of) the $X_i$ and $X_\Sigma$ with one another, restricted to points where the upper sheet of the first manifold agrees with the lower sheet of the second manifold,  specify possible  locations of a lower most vertex in a PFT or GFT.  We write this collection of intersections as $Z_1, \ldots, Z_n$, where if $Z_i = X_{i_1} \cap X_{i_2}$ with the lower sheet of $X_{i_1}$ agreeing with the upper sheet of $X_{i_2}$, then the upper (resp. lower) sheet assigned to $Z_i$ is the corresponding upper sheet of $X_{i_1}$ (resp. lower sheet of $X_{i_2}$).

Now, we inductively arrange that the upward flow out of each $Z_i$ is transverse to (i) descending manifolds of critical points, (ii) downward flow outs of the stratified components of $\Sigma$, (iii) all components of each of the $X_i$, $X_\Sigma$, and upward flow outs of $Z_l$ with $l<i$ at points where lower (resp. upper) sheet of $Z_i$ agrees with the upper (resp. lower) sheet of the other manifold.  As discussed in \cite[pages 1113--1114]{Ekholm07} this may be achieved by an arbitrarily small perturbation of $(L,g)$ along the upward flow out of $Z_i$.  We then append each non-empty intersection as in (iii), to the list of $Y_j$ with the upper and lower sheet assigned as above, and proceed inductively with  \cite[Lemma 3.12]{Ekholm07} showing that the process is completed after finitely many steps. 

Notice that since the sheet difference increases along ascending manifolds of critical points and upward flow out manifolds, we see that either: 
\begin{enumerate} 
\item The $Z_i$ arises (inductively) from some intersections of $X_{\Sigma}$ with itself.  In this case, since such intersections are top dimensional,  there is no need to perturb $Z_i$. 
\item Or, the sheet difference of the upper sheet and lower sheet of $Z_i$ must be larger than the value $\delta$ used to define $U$.
\end{enumerate}
  In case (2), the only way that both the upper and lower sheet of a point
     on $Z_i$ or its upward flow out belong to $U$ is if the endpoints are associated to two distinct cusp edges in a square of type (11) as in (b.) from the definition of $U$.  
	%In the definition of $U$, we arranged that this could only occur in squares of type (11) and (13)-(14) as prescribed in (a.) and (b.) above.  
	We now rule out the possibility of such a $Z_i$ existing, so that at this perturbation step we can always perturb a sheet disjoint from $U$.

A point on the upward flow out of $Z_i$ is the initial point of a PFT.  Thus, it suffices to show there is no PFT, $\Gamma$, that starts in the region $W$ from (b.) with upper endpoint belonging to one of $S_{k}, S_{k+1}$ and lower endpoint belonging to one of $S_{l}, S_{l+1}$ where $S_k,S_{k+1}$ and $S_{l},S_{l+1}$ are the two pairs of cusp sheets. 

Supposing $\Gamma$ exists, we build a path $\gamma$ from a sequence of consecutive edges of $\Gamma$ as follows.  Begin at the inital point in $W$, and follow the domain of $\Gamma$ according to its orientation. 
%we can find a sequence of edges  
% from its initial point in $W$ we 
Note that as long as the image remains in the square $R$, whenever a vertex is reached, at least one
 outgoing edge can be found that is a flowline involving one of the sheets $S_k,S_{k+1}, S_{l},S_{l+1}$, but is not a $(k,k+1)$-flow line or a $(l,l+1)$-flow line.  We continue $\gamma$ by following this edge.   
%Following such a sequence of edges, produces a path, $\gamma$, that is a union of edges (with their orientation) of $\Gamma$ starting in $W$ that while it 
Note that while $\gamma$ remains in $R$, it cannot cross the lines $l_m$ or $l_M$ (by the directionality of the $-\nabla F_{i,j}$).  Moreover, $\gamma$ cannot terminate at an $e$-vertex in $R$ since none of its edges are $(k,k+1)$ or $(l,l+1)$ flow lines.  Therefore,  
$\gamma$ must eventually leave $R$, but doing so without crossing $l_m$ or $l_M$ forces the image to pass into the region where none of the sheets $S_{k},S_{k+1},S_{l},S_{l+1}$ exist.  (See Figure \ref{fig:W}.)  This is impossible since along any edge of $\gamma$ the upper or lower sheet belongs to $S_k,S_{k+1},S_{l},S_{l+1}$.

\end{proof}

\begin{proposition}  \label{prop:LemmaA2}
Let $(L,g)$ denote the Legendrian and metric constructed in Sections \ref{sec:Constructions} and \ref{sec:ConstructionsST} that satisfies Properties 1-19.  Then, for any given open neighborhood, $U \subset L$, of the singular set, there exists some neighborhood $\mathcal{V}$ of $(L,g)$ in the $C^\infty$-topology such that if, 
\begin{enumerate}
\item $(L',g') \in \mathcal{V}$; and
\item $L'$ agrees with $L$ on $U$,
%\item $g'$ agrees with $g$ on $\pi_x(U)$,  
\end{enumerate}
then $(L',g')$ also satisfies Properties 1-19.
\end{proposition}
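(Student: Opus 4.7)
The plan is to verify that each of the nineteen Properties is preserved under a sufficiently small $C^\infty$-perturbation that holds $L$ fixed on a neighborhood $U$ of the singular set. The argument naturally divides the Properties into three families according to their logical character, and for each family the preservation follows from standard openness considerations; the overall $\mathcal{V}$ is then obtained as the finite intersection of the finitely many neighborhoods produced in each step.

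First I would handle the Properties that depend only on the singular set of $L'$: namely Property \ref{pr:14models} (the location of cusps, crossings and swallowtails), Property \ref{pr:CuspTransversality} (cusp transversality in squares (1)--(12)), Property \ref{pr:switches} (existence and location of switch points in (13) squares), and the cusp/crossing-locus portions of Properties \ref{pr:monotonicityIIST} and \ref{pr:SwitchBarriers}. Since $L' = L$ on $U$ and $U$ contains the singular set, the singular sets of $L$ and $L'$ coincide as subsets of $J^1(S)$, and the statements about directions of $-\nabla F_{i,j}$ along the cusp loci or at switch points are either equalities that follow from the agreement on $U$ (after possibly shrinking $U$) or strict inequalities at finitely many points, each of which is a $C^1$-open condition.

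Second I would treat the Properties concerning Reeb chords: Properties \ref{pr:Reeb0}, \ref{pr:Reeb1}, \ref{pr:Location1}, \ref{pr:Reeb2}, \ref{pr:Location2}. All Reeb chords of $(L,g)$ in each $N(e^d_\alpha)$ are non-degenerate critical points of difference functions $F_{i,j}$ (Morse minima, saddles, or maxima). By the implicit function theorem applied to $\nabla F_{i,j}$, each such critical point persists under $C^2$-small perturbation to a unique non-degenerate critical point of the same Morse index lying in any prescribed neighborhood; in particular the perturbed chords can be made to lie in the open intervals $(\beta_{i,j}-\epsilon,\beta_{i,j}+\epsilon)$, etc., specified by Properties \ref{pr:Location1} and \ref{pr:Location2}. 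To rule out spurious extra critical points, note that away from the Reeb chords of $L$ the quantity $|\nabla F_{i,j}|$ is bounded below on each compact $N(e^d_\alpha)$ (including at cusp endpoints, where Property \ref{pr:CuspTransversality} guarantees $-\nabla F_{i,j}$ does not vanish), and this lower bound is preserved in $C^1$. Since the number of Reeb chords is finite and the count matches the $L$-count, uniqueness is preserved.

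Third I would treat the Properties consisting of strict inequalities or transversality conditions on the gradient vector fields: \ref{pr:0cells}, \ref{pr:1cells}, \ref{pr:1cmono}, \ref{pr:monotonicityI}, \ref{pr:monotonicityII}, \ref{pr:monotonicityIV}, \ref{pr:monotonicityIIST}, \ref{pr:STBnew}, \ref{pr:leftC}, \ref{pr:monoLtilde}, \ref{pr:SwitchBarriers}. Each of these asserts that on a specified compact subset of some square, either a component of $-\nabla F_{i,j}$ has a definite sign, or $-\nabla F_{i,j}$ is transverse to a specified line segment (the segments $P_1,P_2$ of Property \ref{pr:1cells}, the Swallowtail Barrier $P$, the barriers $B_1,B_2, L_{k+1,k+2},L_{k+2,k+1}$, etc.). These segments were fixed once and for all during the construction in Section \ref{sec:Properties} and I would keep the same segments for $L'$. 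Since each such strict inequality $\partial_{x_l} F_{i,j} \lessgtr 0$ or $\langle -\nabla F_{i,j}, n\rangle \lessgtr 0$ on a compact set is $C^1$-open, and there are only finitely many (square, pair of sheets, region) combinations to consider, taking a common $C^\infty$-small neighborhood preserves all of them simultaneously.

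The hard part of the argument is really just bookkeeping: collecting the finitely many openness conditions into a single neighborhood $\mathcal{V}$. The one delicate point is Property \ref{pr:switches}, because its existence and uniqueness claims for switch points are not open \emph{per se}; however, since $L'=L$ on $U$, the entire cusp locus and the values of all $\nabla F_{i,j}$ along it are unchanged within the disc $O_1$, and the non-degeneracy of the switch points stated in Property \ref{pr:switches} then guarantees that both the locations and the uniqueness are literally preserved (not merely approximately). Likewise the identification of $\Sigma$ with the straightened swallowtail model used in Property \ref{pr:14models} is preserved outright. Once these observations are in hand the Proposition follows by intersecting the finitely many $C^\infty$-open neighborhoods produced above.
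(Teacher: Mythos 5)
Your overall strategy (persistence of non-degenerate critical points plus $C^1$-openness of finitely many strict inequalities on compact sets) is the same as the paper's, and it works for most of the Properties. But two of your specific claims fail. First, you assert that the barrier segments can be ``kept the same'' for $L'$ and that the switch points are ``literally preserved'' because $L'=L$ on $U$. Neither is true. The switch point for a pair $(i,k)$ is a tangency of $-\nabla_{g'}(F_i'-F_k')$ to the cusp locus: the sheet $S_i$ lies above the cusp edge and is \emph{not} part of the singular set, so $F_i'$ may differ from $F_i$, and even for the swallowtail sheets (which do lie in $U$) the gradient depends on the metric $g'$, which has changed. So the switch points move, and with them the required endpoints of $L_{k+1,k+2}$ and $L_{k+2,k+1}$ in Property \ref{pr:SwitchBarriers}; the paper handles this by realizing the switch point as the unique non-degenerate zero of $\theta_1-\theta_2$ in a small neighborhood (an open condition) and by choosing the new segment parallel to the old one inside a compact union of parallel segments on which the transversality already holds. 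Similarly, the crossing locus is \emph{not} contained in the singular set under this section's convention, so it moves under perturbation; Property \ref{pr:1cells}(4) pins the non-smooth vertices of $P_l$ to the crossing points, so $P_1,P_2$ must be modified (the paper replaces $Q_1,Q_2$ by nearby parallel translates meeting the new crossing locus), and the crossing-locus clause of Property \ref{pr:monoLtilde} needs the crossing locus of $L'$ to stay $C^1$-close to that of $L$.

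Second, you are missing the technical device needed to control gradient \emph{directions} near the cusp edge, where $\nabla F_{k,k+1}$ vanishes. Mere $C^2$-closeness of $\nabla_{g'}F'_{k,k+1}$ to $\nabla_g F_{k,k+1}$ does not bound the angle between them near the zero locus, yet Properties \ref{pr:1cells} (transversality of $-\nabla F_{k,k+1}$ to $P_l$ up to the cusp edge on a (Cu) edge) and \ref{pr:SwitchBarriers} require exactly such a directional bound. The paper's condition (III) — that for sheets agreeing on $U$ the differentials $dF_{k,k+1}$ coincide there, so the change in gradient direction is controlled purely by the $C^0$-closeness of the induced map on the unit sphere bundle $g^\#\circ(g'^\#)^{-1}$ — is what makes this work, and your argument needs it.
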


Given a neighborhood, $\mathcal{V}$, of $(L,g)$ in the $C^\infty$-topology, and an open set $U \subset L$, we write $\mathcal{V}_U$ to denote those $(L',g') \in \mathcal{V}$ such that $L'$ agrees with $L$ on $U.$

\begin{proof}
For a suitably chosen small neighborhood $\mathcal{V}$ of $(L,g)$, it is the case that for $(L',g') \in \mathcal{V}_U$, $L'$  
has the same singular set as $L$ (cusp edges and swallow tail points). We can further arrange that: 
\begin{enumerate}
\item[(I)] The local defining functions of $L'$ belong to chosen $C^2$-neighborhoods of the defining functions of $L$. 
\item[(II)] Gradient vector fields of defining functions for $L'$ with respect to $g'$ belong to chosen $C^2$-neighborhoods of the gradient vector fields of defining functions of $L$ with respect to $g$.  
\item[(III)] For gradients of difference functions $-\nabla F_{k,k+1}$ that correspond to two sheets within $U$, we can arrange a uniform bound between the direction of $-\nabla_{g'} F_{k,k+1}$ (where non-zero),
%viewed as a function form the base surface to $S^1$ belongs 
and the direction of $-\nabla_{g} F_{k,k+1}$.  (We will need to use this property for two sheets that meet at a cusp edge.  Since $\nabla_gF_{k,k+1}$ vanishes along the cusp edge, in a neighborhood of the cusp edge the uniform perturbation of direction does not follow from just the statement that $-\nabla_{g'} F_{k,k+1}$ $C^2$-approximates $-\nabla_{g} F_{k,k+1}$.)  
\end{enumerate}

[To verify (III), note that for $(L,g)$ and $(L',g') \in \mathcal{V}_U$, the differentials $dF_{k,k+1}$ and $dF'_{k,k+1}$ are identical in $U$.  Thus, the change between $-\nabla_g F_{k,k+1}$ and $-\nabla_{g'} F'_{k,k+1}$ is as a result of the isomorphism
\[
TS \stackrel{g^\#}{\longrightarrow} T^*S \stackrel{(g'^\#)^{-1}}{\longrightarrow} TS. 
\]
This isomorphism induces a map between the unit sphere bundles $S(TS) \rightarrow S(TS)$, that by taking $g'$ close enough to $g$ can be made $C^0$ close to the identity on the base projection of $U$ (which is a relatively compact subset of $S$).]

More concretely, for arbitrarily fixed $\e>0$, we can take $\mathcal{V}$ so that on any finite number of  compact subsets $K_1, \ldots, K_r \subset S$, contained in the domains of local defining functions $F_1, \ldots, F_r$, we have that for any $(L',g')$ in $\mathcal{V}_U$ the local defining functions for the same sheets $F_1', F_2', \ldots, F'_r$, satisfy, $||F_i'-F_i||_{C^2(K_i)}<\e$ and $||\nabla F'_{i}- \nabla  F_i||_{C^2(K_i)} < \e$, for all $1 \leq i \leq r$.  (We can use any choice of local coordinate on $S$ to set up the $C^2(K_i)$-norms.)  Moreover, $F_i=F_i'$ holds identically on $U$, and when $F_{k,k+1}$ is a difference function for two sheets that meet at a cusp edge, in the base projection of $U$, $\nabla F'_{k,k+1} \neq 0$ exactly where $\nabla F_{k,k+1} \neq 0$, and the angle between $\nabla F'_{k,k+1}$ and $\nabla F_{k,k+1}$ is less than $\e$.

%Supposing $(L,g)$ satisfies Properties 1-19, 
We need to show that we can make all $(L',g') \in \mathcal{V}_U$ satisfy Properties 1-19 by an appropriate choice of $K_i$ and $\e$.

\medskip

\noindent {\bf Property 1.}  The cusp locus of $L$ and $L'$ agree exactly. In any particular square, the crossing locus between sheets defined by $F_i$ and $F_j$ is the zero set of $F_i-F_j$, and it is transversally cut out, i.e. $0$ is a regular value, since $L$ is embedded.   Thus, a suitably $C^2$-small perturbation of $F_i-F_j$, results in the crossing locus of $L'$ remaining within any chosen $C^1$-neighborhood of the crossing locus of $L$. 

[By taking the $F'_i$ and $F'_j$ suitably close to $F_i$ and $F_j$ on $L\setminus U$, we can arrange that the crossing locus of $F'_i$ and $F'_j$ is contained in any given set of the form $\{|F_i-F_j| < \delta\}$, and hence keep the crossing locus of $L'$ within an arbitrary neighborhood of the crossing locus of $L$.  
%Taking $\delta$ small enough keeps the crossing locus within a $C^0$ neighborhood of itself.  
Moreover, the tangent to the crossing locus of $L'$ is the orthogonal complement of $\nabla_{g'} (F'_i-F'_j)$.  By keeping $\nabla_{g'}(F'_i-F'_j)$ close enough to $\nabla_{g}(F_i-F_j)$ and $g'$ close enough to $g$, we can keep $\nabla_{g'}(F'_i-F'_j)$ non-zero along the crossing locus of $L'$ and keep the respective $g$-orthogonal complements as close as desired.]

\medskip

\noindent {\bf Properties 2, 3, 7, 10, 11, 12, 14, 16, 17, 18, 19.}  All of these properties involve the existence of certain compact subsets (sometimes explicitly defined)
along which the gradients of particular difference functions have their directions constrained.  For instance, in Properties 2 and 3, gradients of positive local difference functions need to point strictly inward along the paths that form the boundary of $N(e^0_\alpha)$ and $N(e^1_\alpha)$, while Property 10 specifies that the $x_1$- and $x_2$-components of $\nabla (F_i-F_j)$  should have a particular (strict) sign along certain $2$-dimensional compact subsets of $N(e^2_\alpha)$.  In all cases, the constraints imposed on $-\nabla (F_i-F_j)$ can be rephrased as a finite number of properties of the form 
\begin{equation} \label{eq:strictgFX}
\forall \, x \in K,   \quad g_0(-\nabla (F_i-F_j), X) >0 
\end{equation}
with $K \subset N(e^d_\alpha)$ some compact set; $X$ is an appropriately defined (continuous) vector field (with image in $T S$) defined along  $K$, and $g_0$ is the euclidean metric in coordinates on some $N(e^2_\alpha)$ or $N(e^1_\alpha)$.   

With properties 2, 7, 10, 11, 12, 14, 16, 17, there is no issue with using the same $K$ for both $(L,g)$ and $(L',g)$, and since the inequality is strict on a compact set, it is preserved by suitably small perturbation of $(L,g)$.  
Properties 3, 18, and 19 are handled similarly except that some extra subtleties need to be addressed.  

Property 3 asserts the existence of polygonal paths $P_1$ and $P_2$ that in coordinates lie within $1/32$ of boundary edges of the squares $[-1,1] \times[-1,1]$ that parametrize $2$-cells.  Along, $P_1$ and $P_2$, all $-\nabla F_{i,j}$ must point into $N(e^1_\alpha)$ at points where $F_{i}>F_j$.  

In the case of a (PV) edge, this Property is maintained exactly as above.  For a (Cu) edge, the only subtlety is that for the sheets $F_k$ and $F_{k+1}$, the inequality (\ref{eq:strictgFX}) becomes non-strict at the point of $P_1$ that intersects the cusp edge.  However, the angle between $-\nabla F_{k,k+1}$ and $P_1$ remains bounded away from $0$ up to the cusp edge itself.  [In equation (\ref{eq:smooth1eq}) from Proposition \ref{prop:smooth1}, we will have $\sigma_{-}(k) = \sigma_{-}(k+1) = k-.5$.  Thus, for $x_1 \leq -1/4$, $F_{k,k+1}(x_1,x_2) = f_{k,k+1}(x_1)$, so that the gradient $-\nabla F_{k,k+1}$ points in the negative $x_1$ direction everywhere along the cusp edge.  From the proof of Proposition \ref{prop:Properties1cells}, $P_1$ has slope $.001$.]    Therefore, we can instead use (III) to see that this condition can be maintained.  

In the case of a (1Cr) or (2Cr) edge, any point $(x_1,x_2) \in P_l$ where distinct sheets $F_i$ and $F_j$ meet at a crossing, must be a vertex of $P_l$ where edges $Q_1$ and $Q_2$ meet.  Supposing $F_i\geq F_j$ above $Q_1$ and $F_j \geq F_i$ above $Q_2$, it is required that $-\nabla F_{i,j}$ (resp. $-\nabla F_{j,i}$) continues to point transversally to $Q_1$ (resp. $Q_2$) at $(x_1,x_2)$. 

To maintain this condition, for $(L',g') \in \mathcal{V}_U$ (for appropriate $\mathcal{V}$), we must modify $P_l$ to $P_l'$ because the crossing loci of $L$ and $L'$ are distinct (but $C^1$-close).  This is done by, at each such crossing point $(x_1,x_2)$, replacing $Q_1$ and $Q_2$ with new segments $Q_1'$ and $Q_2'$ as follows.  Since $-\nabla F_{i,j}$ is transverse to $Q_1$ at the crossing point $(x_1,x_2)$ it (as well as all other $-\nabla F_{k,l}$ with $F_k >F_l$) remains transverse to a small extension of the line segment $Q_1$ passed the crossing locus, call it $\widehat{Q}_1$, and to all close enough parallel translates of $\widehat{Q}_1$; a similar statement applies to $\nabla F_{j,i}$ and a small extension $\widehat{Q}_2$ of $Q_2$.  Take $\mathcal{V}$ small enough so that (along with satisfying the other requirements) all $L' \in \mathcal{V}_U$ have (i) $F'_{i,j}$ and $F'_{j,i}$ are still transverse to translations of $\widehat{Q}_1$ and $\widehat{Q}_2$ and (ii) replace $Q_1$ and $Q_2$ with portions of parallel translates of the $\widehat{Q}_{i}$ chosen to intersect the crossing locus of $L'$ at a common point.  With this modification $P_l'$, $(L',g)$ also satisfies Property 3.

%, an extra subtlety is that even when sheets defined by $F_k$ and $F_{k+1}$ crossings above $N(e^1_\alpha)$, we include the requirement that for $x \in \partial N(e^1_\alpha)$ where $F_k-F_{k+1} >0$, we include the requirement that $-\nabla (F_k-F_{k+1})$ points into $N(e^1_\alpha)$ along the polygonal path $P$ that forms part of the boundary of $N(e^1_\alpha)$ at the crossing locus, up to and including the crossing locus...  As observed above, the crossing locus undergoes a $C^1$-small perturbation when we change the local defining functions from $L$ to $L' \in \mathcal{V}_U$.  At the crossing locus of $L$ two line segments of $P$ meet at a vertex.  Say that $P_1$ is the segment that sits on the side where $F_{k} >F_{k+1}$.  Then, we can extend $P_1$ to a longer line segment, so that $F_{k}-F_{k+1}$ continues to point transversaly to $P_1$ even in the part where $F_k< F_{k+1}$.  Taking $\mathcal{V}_U$ small enough, we know that the crossing locus intersects $P_1$ in a unique spot, and we take this for the new $P_1$.  Do a similar procedure with $P_2$.  Thus, we have shown that for any $L'$ in appropriate $V_U$ we can redefine $P$ so that the Property continues to hold.

In Property 18, the only extra subtlety is in the statement about the directionality of $-\nabla (F_i -w)$  along the crossing locus.  This can be preserved  by choosing $\mathcal{V}$ so that the crossing loci of $L' \in \mathcal{V}_U$ remain suitably $C^1$-close to that of $L$.

In Property 19, there are two extra issues to address:  (i) The segments are required to start at switch points, and these locations may be distinct for $(L,g)$ and $(L',g')$.  (ii)  Again, we require an inequality to hold along the segments $L_{k+1,k+2}$ and $L_{k+2,k+1}$ up to the crossing locus.    

We discuss how to modify $L_{k+1,k+2}$ to $L'_{k+1,k+2}$ so that $L'_{k+1,k+2}$ starts at the switch point of $(L',g')$ and retains the desired properties.  (A similar argument applies to modify the segment $L_{k+2,k+1}$.)  
%Since the $L_{k+1,k+2} \cap \pi_x(L \setminus U)$ is compact, and  $-\nabla F_{k+1,k+2}$ and $-\nabla F_{k,k+2}$ are strictly transverse to $L_{k+1,k+2}$ on $L \setminus U$,  
Using a suitable combination of (II) and (III) ((III) is used since $-\nabla F_{k,k+2}$ becomes $0$ at the cusp edge), 
we can find a compact neighborhood of $L_{k+1,k+2}$, denoted by $N$, that is a union of line segments parallel to $L_{k+1,k+2}$ and such that the required transversality of $-\nabla F_{k+1,k+2}$ and $-\nabla F_{k,k+2}$ holds along each such segment.  
%(Transversality can be preserved in $U$ 
%(Again, it was important to remove $U$ since $-\nabla F_{k,k+2}=0$ at the $(k,k+2)$-cusp locus.)  
Taking  $\mathcal{V}$ to be small enough, the $(k+1,k+2)$-switch point of any $L' \in \mathcal{V}_U$ will continue to lie in $N$, and the required transversality will still hold on all segments in $N$.  Then, we can take $L'_{k+1,k+2}$ to be the line segment that is parallel to $L_{k+1,k+2}$ and starts at the $(k+1,k+2)$-switch point of $(L',g')$.

%the continue to hold on a 
%the location of switch points can be kept within any desired neighborhood of the original switch points by a suitable choice of $\mathcal{V}_U$.  Thus, we take the new line segments $L_{k+1,k+2}'$ and $L_{k+2,k+}'$ to be parallel to original $L_{k+1,k+2}$ and $L_{k+2,k+1}$.  Of course, the strict inequality will hold for all such segments in some open neighborhood of $L_{k+1,k+2}$ consisting of parallel translates of the original $L_{k+1,k+2}$.  Thus, it will hold on $L_{k+1,k+2}'$.

\medskip

\noindent {\bf Properties 13, 15.}  Statements about the location of the cusp locus of  $L$ hold as well for $L'$ since the cusp loci of all $(L',g') \in \mathcal{V}_U$ agree.  All remaining statements concern the direction of transversality of $-\nabla (F_i-F_k)$ and $-\nabla (F_{k+1}- F_j)$ to some $(k,k+1)$-cusp locus and the existence and location of non-degenerate switch points (which are precisely the tangencies of $-\nabla (F_i-F_k)$ and $-\nabla (F_{k+1}- F_j)$ to the cusp locus).  Here, $F_i$ and $F_j$ are defining functions for sheets located above or below the cusp edge respectively.  For notational convenience, we restrict attention to the case of $-\nabla(F_i-F_k)$.  

As a preliminary, note that from the standard form for defining functions near a cusp edge,  $\nabla F_k$ and $\nabla F_{k+1}$ are not differentiable along the cusp locus when considered as functions of two variables.  However, their restrictions to the cusp edge are $C^\infty$.  Since for $(L',g') \in \mathcal{V}_U$, $F_k'=F_k$ and $F_{k+1}'=F_{k+1}$ in $\pi_x(U)$, a suitable choice of $\mathcal{V}$ results in $-\nabla (F_i-F_k)$ and $-\nabla(F_i'-F_k')$ remaining $C^2$-close along the cusp locus.  
In particular, for any chosen neighborhood, $N$, of  the $(i,k)$-switch point, we can preserve the property of having a unique $(i,k)$-switch point in $N$, and maintain the direction of transversality of $-\nabla (F_{i}-F_k)$ to the cusp locus outside of $N$.  [Within $N$ the $(i,k)$-switch point is the unique zero of a lift to $\R$ of the circle-valued difference $\theta_1 -\theta_2$ where $\theta_1$ is the slope of the cusp locus and $\theta_2$ is the slope of $-\nabla(F_i-F_k)$.  The statement that switch points are non-degenerate is equivalent to $0$ being a regular value, so the existence of a unique, non-degenerate $0$ of $\theta_1 -\theta_2$ in $N$ is preserved as long as the perturbation of $-\nabla(F_i-F_k)$ is suitably $C^1$-small.]

%%In a neighborhood of a non-degenerate switch point, 
%
%%\footnote{Perhaps  a footnote in the background:  There is a subtle point, that the Gradient of a cusp sheet is not differentiable as a function of two variables because the local defining function is $C^1$ but not $C^2$.  However, the restriction of the gradient of a cusp edge sheet to the cusp edge it self is a $C^-\infty$ vector field (with value in $TS$) on the cusp locus.  Or, not.  A comment like this may already be written into the proof of the switch point property for the swallowtail square.}   
%
%%Explain why the locations of sw points is perturbed.  (Really it is similar to the Reeb chord claim, just in positive codimension.)
%
%%Recall the setup with the two angle functions $\theta_1$ and $\theta_2$, one for the tangent vector to the cusp edge, and one for the restriction of the local difference gradient.  The switch points are $0$'s of $\theta_1-\theta_2$, are non-degenerate provided $(\theta_1-\theta_2)'\neq 0$.  The location of such non-degenerate $0$'s is of course perturbed when $\theta_1-\theta_2$ is perturbed, and when we replace $F$ with $F'$, $\theta_1$ remains fixed while $\theta_2$ is perturbed.  

\medskip

\noindent {\bf Properties 4, 5, 6, 8, 9.}  These properties concern the location and index of critical points of the difference functions $F_i-F_j$.  Since all critical points are non-degenerate, it is standard that a suitably $C^2$-small perturbation of defining functions will result in a perturbation of the critical points of the $F_i-F_j$ that will preserve Morse indices.
\end{proof}

%\begin{proposition} \label{prop:lemmaA}
%Suppose that there exists some $\tilde{L}$ satisfying the preliminary\footnote{\dr{Double check that this terminology matches the background.}} transversality conditions and all of the conditions of Theorem \ref{thm:PropertiesofLtilde} except that $\tilde{L}$ may not be $1$-regular.  Then, Theorem \ref{thm:PropertiesofLtilde} holds.
%\end{proposition}

\begin{proof}[Conclusion of the proof of Theorem \ref{thm:PropertiesofLtilde}]
With $(\tilde{L}_0,g_0)$ as constructed in Sections \ref{sec:Constructions} and \ref{sec:ConstructionsST}, let $U \subset \tilde{L}_0$ be a neighborhood of the singular set of $L$ satisfying the conclusion of Proposition \ref{prop:LemmaA1}.  Then, let $\mathcal{V}$ be a neighborhood of $(\tilde{L}_0,g_0)$, as in Proposition \ref{prop:LemmaA2}, with respect to $U$.  The two Propositions show that within $\mathcal{V}$ there is a Legendrian metric pair, $(\tilde{L},g)$, that satisfies the Properties 1-19 and such that $\tilde{L}$ is $1$-regular with respect to $g$.  That $\tilde{L}$ was smooth with crossing arcs and cusp arcs as prescribed by the square types (1)-(14) was verified in Section \ref{sec:Properties}, and these conditions also continue to hold provided we take $\mathcal{V}$ small enough.
%, $(L',g)$ will also satisfy the first condition stated in Theorem \ref{thm:PropertiesofLtilde}.
\end{proof}

\end{document}